\numberwithin{section}{part}
\numberwithin{equation}{section}
\newcounter{Hequation}
\g@addto@macro\equation{\stepcounter{Hequation}}
\renewcommand{\tocsection}[3]
 { \indentlabel{\@ifnotempty{#2}{\parbox{2.5em}{\ignorespaces#1 #2.}\quad}}#3}
\newcommand{\bbold}{\mathbb}
\newcommand{\cal}{\mathcal}
\def\R { {\bbold R} }
\def\Q { {\bbold Q} }
\def\Z { {\bbold Z} }
\def\C { {\bbold C} }
\def\N { {\bbold N} }
\def\T { {\bbold T} }
\def\E {{\mathcal E}}
\def\cc {\operatorname{c}}
\def\g {\operatorname{g}}
\def \I{\operatorname{I}}
\def\trig{\operatorname{trig}}
\def\tl{\operatorname{tl}}
\def \order{\operatorname{order}}
\def \val{\operatorname{mul}}
\newcommand\dval{\operatorname{dmul}}
\def \exc {{\mathscr E}}
\def \ex{\operatorname{e}}
\def \wr {\operatorname{wr}}
\def \Frac {\operatorname{Frac}}
\def \Univ {{\operatorname{U}}}
\renewcommand\epsilon{\varepsilon}
\def \d{\operatorname{d}}
\def \ev{\operatorname{e}}
\def \bar {\overline}
\def \<{\langle}
\def \>{\rangle}
\def \tilde {\widetilde}
\def \mult{\operatorname{mult}}
\def \hat {\widehat}
\def \supp {\operatorname{supp}}
\def \((  {(\!(}
\def \)) {)\!)}
\def \Hom{\operatorname{Hom}}
\def \res{\operatorname{res}}
\def \k {{{\boldsymbol{k}}}}
\DeclareMathSymbol{\precequ}{\mathrel}{symbols}{"16}
\DeclareMathSymbol{\succequ}{\mathrel}{symbols}{"17}
\def \nasymp{\not\asymp}
\renewcommand{\Re}{\operatorname{Re}}
\renewcommand{\Im}{\operatorname{Im}}
\newcommand{\claim}[2][\!\!]{\medskip\noindent {\bf Claim #1:} {\it #2}\medskip}
\newtheorem{theorem}{Theorem}[section]
\newtheorem*{normtheorem}{Normalization Theorem} 
\newcounter{tmptheorem}
\newcounter{tmpequation}
\newtheorem{lemma}[theorem]{Lemma}
\newtheorem{prop}[theorem]{Proposition}
\newtheorem{cor}[theorem]{Corollary}
\theoremstyle{definition}
\newtheorem{definition}[theorem]{Definition}
\theoremstyle{remark}
\newtheorem*{example}{Example}
\newtheorem{exampleNumbered}[theorem]{Example}
\newtheorem*{remark}{Remark}
\newtheorem{remarkNumbered}[theorem]{Remark}
\newcommand{\abs}[1]{\lvert#1\rvert}
\newcommand{\dabs}[1]{\lVert#1\rVert}
\def \fM {{\mathfrak M}}
\def \fd {{\mathfrak d}}
\def \fm {{\mathfrak m}}
\def \fp {{\mathfrak p}}
\def \fn {{\mathfrak n}}
\def \fv {{\mathfrak v}}
\def \fw {{\mathfrak w}}
\def \Ric{\operatorname{Ri}}
\def \id{\operatorname{id}}
\def \GL{\operatorname{GL}}
\let\oldi\i
\let\oldj\j
\renewcommand\i{\relax\ifmmode{\boldsymbol{i}}\else\oldi\fi}
\renewcommand\j{\relax\ifmmode{\boldsymbol{j}}\else\oldj\fi}
\renewcommand\leq{\leqslant}
\renewcommand\geq{\geqslant}
\renewcommand\preceq{\preccurlyeq}
\renewcommand\succeq{\succcurlyeq}
\renewcommand\le{\leq}
\renewcommand\ge{\geq}
\DeclareMathAlphabet{\mathbf}{OML}{cmm}{b}{it}
\DeclareFontFamily{U}{fsy}{}
\DeclareFontShape{U}{fsy}{m}{n}{<->s*[.9]psyr}{}
\DeclareSymbolFont{der@m}{U}{fsy}{m}{n}
\DeclareMathSymbol{\der}{\mathord}{der@m}{182}
\DeclareSymbolFont{der@m}{U}{fsy}{m}{n}
\DeclareMathSymbol{\derdelta}{\mathord}{der@m}{100}
\newcommand\dotprec{\mathrel{\dot\prec}}
\newcommand\wt{\operatorname{wt}}
\newcommand\bsigma{\boldsymbol{\sigma}}
\newcommand\m{\mathfrak m}
\newcommand\dwv{\operatorname{dwm}}
\newcommand\dwt{\operatorname{dwt}}
\newcommand\nwt{\operatorname{nwt}}
\newcommand\ndeg{\operatorname{ndeg}}
\newcommand\nval{\operatorname{nmul}}
\DeclareSymbolFont{imag@m}{OT1}{cmr}{m}{ui}
\DeclareMathSymbol{\imag}{\mathord}{imag@m}{105}
\DeclareFontFamily{OMS}{smallo}{}
\DeclareFontShape{OMS}{smallo}{m}{n}{<->s*[.65]cmsy10}{}
\DeclareSymbolFont{smallo@m}{OMS}{smallo}{m}{n}
\DeclareMathSymbol{\smallo}{\mathord}{smallo@m}{79}
\DeclareFontFamily{OMS}{largerdot}{}
\DeclareFontShape{OMS}{largerdot}{m}{n}{<->s*[.8]cmsy10}{}
\DeclareSymbolFont{largerdot@m}{OMS}{largerdot}{m}{n}
\DeclareMathSymbol{\largerdot}{\mathord}{largerdot@m}{15}
\newcommand\Aut{\operatorname{Aut}}
\DeclareMathSymbol{\llambda}{\mathord}{der@m}{108}
\DeclareMathSymbol{\rrho}{\mathord}{der@m}{114}
\def \upg{\upgamma}
\def \Upg{\Upgamma}
\def \upl{\uplambda}
\def \Upl{\Uplambda}
\def \upo{\upomega}
\def \Upd{\Updelta}
\newcommand{\equationqed}[1]{\[\pushQED{\qed}#1 \qedhere\popQED\]\let\qed\relax}
\newcommand{\alignqed}[1]{\begin{align*}\pushQED{\qed} #1 \qedhere\popQED\end{align*}\let\qed\relax}
\newcommand{\dminus}{\mathbin{\text{\@dminus}}}
\newcommand{\@dminus}{%
  \ooalign{\hidewidth\raise1ex\hbox{\bf.}\hidewidth\cr$\m@th-$\cr}%
}
\def\ddeg{\operatorname{ddeg}}
\def\dwm{\operatorname{dwm}}
\def \cf{\operatorname{cf}}
\def \O{\mathcal{O}}
\renewcommand\part{\@startsection{part}{0}%
  \z@{\linespacing\@plus\linespacing}{.5\linespacing}%
  {\normalfont\bfseries\centering}}
\renewcommand\theindex{\@restonecoltrue\if@twocolumn\@restonecolfalse\fi
  \columnseprule\z@ \columnsep 35\p@
  \twocolumn[\@xp\part\@xp*\@xp{\bf Index}\bigskip]%
  \let\item\@idxitem
  \parindent\z@  \parskip\z@\@plus.3\p@\relax
  \small}  
\renewenvironment{thebibliography}[1]{%
  \@xp\part\@xp*\@xp{\refname}%
  \normalfont\small\labelsep .5em\relax
  \renewcommand\theenumiv{\arabic{enumiv}}\let\p@enumiv\@empty
  \list{\@biblabel{\theenumiv}}{\settowidth\labelwidth{\@biblabel{#1}}%
    \leftmargin\labelwidth \advance\leftmargin\labelsep
    \usecounter{enumiv}}%
  \sloppy \clubpenalty\@M \widowpenalty\clubpenalty
  \sfcode`\.=\@m
}{%
  \def\@noitemerr{\@latex@warning{Empty `thebibliography' environment}}%
  \endlist
}
\def\astr{$\,({}^{*})$}
\newcommand{\smallbullet}{} 
\DeclareRobustCommand\smallbullet{%
  \mathord{\mathpalette\smallbullet@{0.6}}%
}
\newcommand{\smallbullet@}[2]{%
  \vcenter{\hbox{\scalebox{#2}{$\m@th#1\bullet$}}}%
}
\begin{document}

\title{Normalizing    Asymptotic Differential Equations}
\author[Aschenbrenner]{Matthias Aschenbrenner}
\address{Kurt G\"odel Research Center for Mathematical Logic\\
Universit\"at Wien\\
1090 Wien\\ Austria}
\email{matthias.aschenbrenner@univie.ac.at}

\author[van den Dries]{Lou van den Dries}
\address{Department of Mathematics\\
University of Illinois at Urbana-Cham\-paign\\
Urbana, IL 61801\\
U.S.A.}
\email{vddries@illinois.edu}

\author[van der Hoeven]{Joris van der Hoeven}
\address{CNRS, LIX (UMR 7161)\\ 
Campus de l'\'Ecole Polytechnique\\  91120 Palaiseau \\ France}
\email{vdhoeven@lix.polytechnique.fr}

\date{April, 2026}

\begin{abstract}  We define the universal exponential extension of an algebraically closed differential field and investigate its properties in the presence of a nice valuation and in connection with linear differential equations. Next we prove normalization theorems for algebraic differential equations over $H$-fields, as a tool in solving such equations in suitable extensions. The results in this monograph are essential in our work on Hardy fields in \cite{ADH5}.
\end{abstract}

\maketitle
\thispagestyle{empty}
\pagestyle{empty}

\bigskip

\tableofcontents

\newpage
\vspace*{\fill}

\begin{quote}
Fearful of extending this paper beyond its due limits, I have abstained from introducing any researches not essential to the development of that general method in analysis which it was proposed to exhibit. It may however be remarked that the principles on which the method is founded have a much wider range. They may be applied to the solution of functional equations, to the theory of expansions, and, to a certain extent, to the integration of non-linear differential equations. The position which I am most anxious to establish is, that any great advance in the higher analysis must be sought for by an increased attention to the laws of the combinations of symbols. The value of this principle can scarcely be overrated~[\dots]\\

---\, George Boole, \textit{On a general method in analysis \cite{Boole}.}
\end{quote}
 \vspace*{\fill}

\begin{quote}
[The] field that is normally classified as algebra really consists of two quite separate fields. Let us call them algebra one and algebra two, for lack of a  better language. Algebra one is the algebra whose bottom lines are algebraic geometry or algebraic number theory. Algebra one has by far a  better pedigree than algebra two, and has reached a  high degree of sophistication and breadth. [\,\dots] Algebra two has had a more accidented history. It can be traced back to George Boole, who was the initiator of three well-known branches of algebra two, namely: in the first place, Boolean algebra, in the second place, the operational calculus that views the derivative as an operator $D$, on which Boole wrote two books of great beauty, and finally, invariant theory [\,\dots] G.~H.~Hardy subtly condemned algebra two in England in the latter half of the nineteenth century, with the exclamation `Too much $f(D)$!' G.~H.~Hardy must be turning in his grave now.\\

---\, Gian-Carlo~Rota, \textit{Combinatorics,  representation  theory and invariant theory: the story of a m\'enage \`a trois \cite{Rota}.}
\end{quote}
 \vspace*{\fill}

\newpage 
\pagestyle{plain}
\setcounter{page}{1}

\setcounter{tmptheorem}{\value{theorem}}
\setcounter{theorem}{0} 
\renewcommand\thetheorem{0.\arabic{theorem}}

\setcounter{tmpequation}{\value{equation}}
\setcounter{equation}{0} 
\renewcommand\theequation{0.\arabic{equation}}

\addtocontents{toc}{\protect\setcounter{tocdepth}{0}}

\part*{Introduction}

\medskip
  
\noindent
This monograph follows up on our book [ADH]. That book contains a
model-theoretic analysis of the ordered differential field $\mathbb{T}$ of
transseries (introduced by {\'E}calle~\cite{Ecalle} in connection with his work on
Dulac's Problem), including an explicit complete axiomatization and a
quantifier elimination.

In 2021 we settled one of the main open problems left in [ADH] (see
also~{\cite{ADH2}}), by proving that all maximal Hardy fields are
elementarily equivalent, as ordered differential fields, to $\mathbb{T}$.
Here, a~{\it Hardy field}\/ is a differential field of germs at $+ \infty$ of
differentiable one-variable real-valued functions defined on intervals $(a, +
\infty)$. Hardy fields were introduced by Bourbaki~\cite{Bou}, revived in the 1980s
by Boshernitzan and Rosenlicht~\cite{Boshernitzan81, Ros}, and became important in the study of
{o-minimal} expansions of the real field~\cite{Miller}. A~Hardy field is {\it maximal}\/ if it has no proper
Hardy field extension. By Zorn, every Hardy field extends to a maximal one.
(There are in fact very many maximal Hardy fields, by \cite[Section 7]{ADHfgh}.)

The full solution of this problem takes a lot of space and is in the unwieldy 
manuscript~\cite{ADH4}. In the present monograph we have extracted the main
differential-algebraic (that is, non-analytic) parts, because we intend to use
this material also for other purposes. It 
 has
four chapters, each including its own introduction, with the following
dependencies: 
$$
\xymatrix@R-1em{ & \fbox{\parbox{7em}{\small Chapter~\ref{part:dents in H-fields} \\ \it Slots in $H$-Fields}} &  \\ \fbox{\parbox{9em}{\small Chapter~\ref{part:universal exp ext}  \\ \it The Universal \newline Exponential Extension}} \ar@/^/[ru] & & \fbox{\parbox{8em}{\small Chapter~\ref{part:normalization} \\ \it Normalizing Holes \newline and Slots}} \ar@/_/[lu] \\ &  \ar@/^/[lu] \fbox{\parbox{5.5em}{\small Chapter~\ref{part:preliminaries}\\ \it Preliminaries}} \ar@/_/[ru] &}
$$
The application to Hardy fields
requires in addition a good dose of analysis and some model theory, and is
presented in~\cite{ADH5}.

To explain the overall aim of this monograph, 
consider an arbitrary algebraic differential equation with transseries coefficients, for example  
$$  \label{ode1} y^7 - \ex^{\ex^x} y'' y^3 + 7 \,\Gamma (x^x)\, y' y''' -
  \frac{\zeta (x)}{x + \log x + 1}\ =\ 0 \qquad (x>\R). $$
By the general theory from [ADH]  this equation happens to have a  
solution~$y \in \mathbb{T}$. (Indeed, this follows from the differential polynomial on the left-hand side of this equation having odd degree, see
 [ADH, pp.~17--18].) 
However, a concrete solution cannot be
derived in a transparent and direct way from the equation. It is even less
clear why there should also be a solution in a Hardy field.

Our main goal  is to reduce algebraic differential equations such as the one above   to
normal forms that can be solved more easily and explicitly in various
contexts. Some of this can be accomplished using   existing tools from [ADH], but for \cite{ADH5} we need
more precision.    After laying the groundwork in Chapters~\ref{part:preliminaries},~\ref{part:universal exp ext},~\ref{part:normalization}   we prove the main results of this monograph in Chapter~\ref{part:dents in H-fields}. Some ideas from Chapters~\ref{part:normalization} and~\ref{part:dents in H-fields}
also occur in~\cite{vdH:hfsol} in the more restricted context of grid-based transseries
solutions to algebraic differential equations over $\R$.  However, several shortcuts apply
in this case that are not available for~\cite{ADH5}.  

In the rest of this introduction we illustrate some key ideas informally by a few examples, before stating the main result in detail. We  make frequent reference to~[ADH];
the section \textit{Concepts and Results from~\textup{[ADH]}} at the end of this introduction should help in
consulting this source.  

\section*{Solving Quasilinear Equations}

\noindent
Our book~[ADH] contains a differential analogue of the
Newton diagram method to reduce arbitrary univariate algebraic  differential equations with asymptotic side conditions ({\it asymptotic equations}\/, for short) over certain valued differential fields such as $\T$ to
quasilinear equations, which can be considered as a first, crude kind of normal form for asymptotic equations.
An example of    a
 \textit{quasilinear}\/     equation  is
\begin{equation}\tag{E}
  \label{ode2} y' - y \ =\  x^{-1} yy''  -\ex^{-x} y^2 +x^{-1}, \qquad
  \quad y\, \prec\, 1.
\end{equation}
``Quasilinear'' means that the \textit{linear part}\/ of the equation (that is, the homogeneous part of degree~$1$, placed here on the left-hand side) dominates the rest in a certain sense. 
It is straightforward to compute a formal transseries solution of a quasilinear
equation such as \eqref{ode2} by transfinite recursion: we first determine its \textit{dominant term,}\/ in this
case~$f := - x^{- 1}$, and then substitute $f+y$ for $y$, with asymptotic side condition~${y \prec f}$, in the equation.  This yields a new quasilinear
equation
\begin{multline}\tag{E$_{+f}$}
  \label{refine-ode2} x^{-2}\,  y'' + y' +  ( - 1 +
  2x^{-4}  - 2x^{-1}\ex^{-x}   ) \, y\ =\  \\ x^{-1}\,y \,
  y''  -  \ex^{-x}y^2 - x^{-2} +
  2x^{-5} -  x^{-2} \ex^{-x}, \qquad y\, \prec\, x^{-1}.
\end{multline}
Such an asymptotic equation obtained by an additive change of variables together with the imposition of a possibly
stricter asymptotic constraint is called a \textit{refinement}\/ of~\eqref{ode2}. Applying this
method recursively, we obtain a particular solution
\[ y_{\ast}\ := \ - x^{-1} + x^{-2} - 2x^{-3} + 6x^{-4} -
    26x^{-5} + \cdots + x^{-2} \ex^{-x} - 2x^{-3} \ex^{-x} +
   \cdots  \]
to \eqref{ode2}.
This process is really transfinite: computing the first $\omega$
terms of~$y_{\ast}$ gives  
 $$f\ :=\   - x^{-1} +
x^{-2} - 2x^{-3} + 6x^{-4} - 26x^{-5} + \cdots $$
and then we refine the original \eqref{ode2} by substituting $f+y$ for $y$, with the new side condition
$y\prec x^{\mathbb{N}}$. 
Likewise for other limit
ordinals.
In order to obtain the general solution to~\eqref{ode2}, we refine~\eqref{ode2} by
substituting $y_{\ast}+y$ for $y$, with asymptotic side condition $y\prec 1$, which~yields
\begin{multline}\tag{E$_*$}
  \label{ult-ode2}  ( x^{-2} - x^{-3} + 2x^{-4} -
  6x^{-5} +   \cdots  ) y'' + y' +  ( - 1 +
  2x^{-4} - 6x^{-5} + \cdots  ) y\ =\  \\ {x^{-1}\, {yy''}}  -
  \ex^{-x} {y^2}, \qquad y\, \prec\, 1.
\end{multline}
The first term of a non-zero solution to this asymptotic equation is necessarily the
first term of a zero of the   linear part of this equation.
Further computations show that such a dominant term is   of
the form $c\, \fm$ with~$c \in
\mathbb{R}^{\times}$ and~$\fm:=x^{- 5} \ex^{- x^3 / 3 - x^2 / 2}$. It follows that the general solution of~\eqref{ode2} has the form
$$
  \label{gen-sol} y_{c} \ =\  y_{\ast} + {c\, \fm} + \varepsilon_{c}\qquad \text{where $c \in \R$  and $\varepsilon_{c} \prec \fm$.}
$$
However, this form of the general solution cannot be read off directly from the
original equation~\eqref{ode2}, and not even from~\eqref{refine-ode2}. This is
due to the fact that the shape of the linear part of the equation may drastically
change under refinement. Only after the computation of the first three terms
of $y_{\ast}$ do  we obtain a refinement that is sufficiently similar
to~\eqref{ult-ode2} to allow us to safely determine the form of the
general solution. Here we note that a particular solution $y_{\ast}$ is
generally not yet available during the resolution process. This prevents us from
working directly with~\eqref{ult-ode2}. We thus aim to always operate with
sufficiently good approximations to~$y_{\ast}$ and to~\eqref{ult-ode2} instead.
 
 Things become even worse if we introduce a  parameter $\varepsilon$ to perturb \eqref{ode2} to  
\begin{equation}\tag{E$_{\varepsilon}$}
  \label{param-ode2} y' - y \ =\  x^{-1}\,yy''  - \ex^{-x} y^2  +
  \varepsilon, \qquad y \prec 1,
\end{equation}
This
equation is quasilinear for every~$\varepsilon \prec 1$. For $\varepsilon = x^{- 1}$ it has
continuum many solutions in $\T$, as explained above. However, \eqref{param-ode2} has only one solution in $\T$
for~$\varepsilon = - x^{- 1}$. Indeed, in the latter case a
particular solution is given by
$$  y_{**}\ =\  x^{-1} - x^{-2}  + 2x^{-3}  -
  6x^{-4} + 22x^{-5} + \cdots + x^{-2} \ex^{-x} -
  2x^{-3} \ex^{-x} + \cdots,$$
 and  
 replacing $y$ by $y_{**} + y$ in \eqref{param-ode2}  leads to the
following counterpart of~\eqref{ult-ode2}:
\begin{multline*}  (-x^{-2} + x^{-3} - 2x^{-4} + 6x^{-5} -
   26x^{-6} + \cdots  ) y'' + y' +  ( - 1 - 2x^{-4} +
   6x^{-5} + \cdots  ) y\ =\ \\ x^{-1} yy''  - \ex^{-x}y^2,
   \qquad y \prec 1. 
\end{multline*}
One can show that $y=0$ is the only solution to this refinement of \eqref{param-ode2}, again by
examining its linear part. That  \textit{the general solution of a
quasilinear equation is not transparent from the equation}\/ and that \textit{the
resolution process is  not uniform under small perturbations}\/ are two major
reasons why it is desirable to normalize asymptotic equations beyond quasilinearity.

\section*{The Role of Factoring Linear Differential Operators}

\noindent
Here is an example of the kind of nicely normalized   equation that we are after:
\[ y' \ =\  \ex^{- x} + x^{-1} y^2, \qquad y\, \prec\, 1. \]
This equation can formally be solved by iterated integration:
\begin{align*}
  y &\, =\,  \textstyle\int \ex^{- x} + \int x^{-1} y^2\\
  &\, =\,  \textstyle\int \ex^{- x} + \int x^{-1}  \big( \int \ex^{- x} + \int
 x^{-1} y^2 \big){}^2\\[0.5em]
  &\, =\, \cdots\\[0.5em]
  &\, =\,  \textstyle\int \ex^{- x} + \int x^{-1}  \left( \int \ex^{- x} \right)\!{}^2 +
  2 \int x^{-1}  \left( \int \ex^{- x} \right)  \left( \int x^{-1}
  \left( \int \ex^{- x} \right)\!{}^2 \right) + \cdots .
\end{align*}
It turns out that this expansion converges in the formal transseries setting,
but also analytically to a germ that belongs to a Hardy field. In the formal
setting, this requires so-called  {\it distinguished integration}\/, where all integration constants are taken to be zero.  To obtain a Hardy field solution, we
systematically integrate from~$+ \infty$. For details we refer respectively to
\cite[Section~6.5]{JvdH} and~\cite{ADH5}.
 
Let us consider more generally a quasilinear equation  
\begin{equation}\tag{Q}
  L (y) \ = \ R (y), \qquad y \,\prec\, 1,  \label{qlin}
\end{equation}
where $L\in \T[\der]^{\neq}$ is a monic linear differential operator and $R\in \T\{Y\}$ is a  differential
polynomial. (For greater flexibility, $R$ is allowed to have a
non-zero linear part.) First of all, this requires the (distinguished) integration operator~$\der^{- 1}
:= \int$ to be replaced by a more general right inverse  $L^{- 1}$ to the $\R$-linear map $y\mapsto L(y)\colon\T\to\T$ defined by $L$. For
  $L = \der - f$ ($f\in\T$) of order~$1$, this is easy: identifying each $g\in\T$ with the $\R$-linear map $y\mapsto gy\colon\T\to\T$,
we may take
\[ L^{- 1}\ =\ (\der - f)^{- 1}\ :=\ \ex^{\int f} \circ\, {\der^{- 1}} \circ \ex^{{-}\int f}
   . \]
More generally, if $L$    splits over $\T$, that is, if
\[ L\  =\  (\der -f_1) \cdots (\der - f_r) \qquad \text{where $f_1,\dots,f_r\in\T$,} \]
then we may take
\[ L^{- 1} \ :=\  (\der - f_r)^{- 1} \circ \cdots \circ(\der -
   f_1)^{- 1} . \]
Now $L$ might not 
split over $\T$, but it does factor into a product of order~$1$ and
  order~$2$ operators over $\T$.  And if $\der^2+a\der + b\in
\mathbb{T} [\der]$ ($a,b\in\T$) is irreducible, it splits over the complexification~$\mathbb{T} [\imag]$ of~$\mathbb{T}$: there are $f,g\in\T$ with $g\ne 0$ and 
\begin{align*}
  \der^2 + a \der + b &\ = \  \big(\der - (f - g \imag +
  g^{\dag})\big)  \big(\der - (f + g \imag)\big), 
\nonumber \\  
  a &\ =\   - (2 f + g^{\dag}), \nonumber\\
  b &\ =\   f^2 + g^2 - f' + f g^{\dag} , \nonumber
\end{align*}
so that we can formally invert this second-order
operator as
\begin{align*}
  (\der^2 + a \der + b)^{- 1} &\,:=\,  (\ex^{\int (f+g\imag)} \circ\, \der^{- 1} 
  \ex^{-\int (f+g\imag)}) \circ  (\ex^{\int (f-g\imag)}  
    g \circ\,\der^{- 1}\circ \ex^{-\int (f-g\imag)}
  g^{- 1}) \nonumber\\
  &\ =\,  \ex^{\int (f+g\imag)}\circ\, \der^{- 1}\,\circ \ex^{-2\int g\imag}  g\circ  \ex^{-\int (f-g\imag)}
  g^{- 1}. 
\end{align*}
This discussion is to suggest    that factoring  linear differential
operators (not unlike in Boole's work \cite[pp.~129--132]{KY}) plays a key role in our normalization program, and that it will  involve
transseries in~$\mathbb{T} [\imag]$, and even
{\it oscillatory transseries}\/ in $\mathbb{T} [\imag] [\ex^{\mathbb{T}
\imag}]$ when inverting such operators. 

The
framework of asymptotic differential algebra in [ADH] was introduced in  
anticipation of this kind of developments, but the Hardy fields in \cite{ADH5} require a
very detailed and explicit treatment. Chapter~\ref{part:universal exp ext} of this monograph is dedicated to
this task in the abstract setting of $H$-fields as in~[ADH], while relying on
Chapter~\ref{part:preliminaries} for miscellaneous preliminary material. (An {\it $H$-field}\/ is an
ordered valued differential field subject to certain first-order laws. Ordered differential subfields of
$\mathbb{T}$ extending~$\R$ as well as Hardy field extensions~of~$\mathbb{R}$ are $H$-fields.)

In Chapter~\ref{part:universal exp ext} we introduce the {\it universal exponential extension}\/ of an
algebraically closed differential field and investigate its connection to
linear differential equations, especially in the presence of a compatible
valuation on the field. The universal exponential extension of $\mathbb{T}
[\imag]$ can be identified with $\mathbb{T} [\imag] [\ex^{\imag
\mathbb{T}}]$: see Section~\ref{sec:eigenvalues and splitting} as well as \cite[Sections~7.7 and~7.8]{JvdH} for this and the connection with factorization
of linear differential operators over $\mathbb{T} [\imag]$.
For a Liouville closed Hardy field $H \supseteq \mathbb{R}$ one can identify
the universal exponential extension of its algebraic closure $K = H [\imag]$
with~$K [\ex^{\imag H}]$, an integral domain of germs of
$\mathbb{C}$-valued functions. This is a key point in~\cite{ADH5} and is proved there.
(An $H$-field $H$ is said to be \textit{Liouville closed}\/ if $H$ is real closed and for
all $f, g \in H$, there exists $y \in H^{\times}$ with $y' + fy = g$.)  

\section*{Desiderata for a Normal Form}

\noindent
Returning to the quasilinear equation~\eqref{qlin},  
assume now that the
coefficients of $L$ and $R$ lie in an arbitrary $H$-field~$H$ (instead of $\T$).
Ideally,
when should we consider this equation to be in normal form? Three natural
requirements are:

\begin{itemize}
  \item[(R1)]  \textit{$L$ does not significantly change under
  refinement.} 
  
  \item[(R2)] \textit{$L$ splits   over $K := H  [\imag]$.}
  
  \item[(R3)] \textit{The equation \eqref{qlin} has a solution which is as unique as possible.}
\end{itemize}

\noindent
The first requirement amounts to $L$   \textit{strongly
dominating}\/    $R$.  What \textit{significantly\/} and 
\textit{strongly dominating\/} mean here is made precise in terms of the \textit{span} $\fv=\fv(L)$ of
$L$, which measures how far $L$ is from being ``regular singular''.
(For example, the~span of the operator on the left-hand side of~\eqref{refine-ode2} is equal
to the quotient of the coefficients of $y''$ and $y$, which is
asymptotic to~$x^{- 2}$.) It turns out that~(R1) holds if~$R$ is dominated by $\fv^m L$ for a large enough $m$, under some additional assumptions such as $\order R \leqslant \order L$ and $y \prec 1$ as the asymptotic side condition in~\eqref{qlin};
more general side conditions $y \prec \fm$ where $\fm\in H^>$ can be reduced to this case via a
multiplicative change of variables (replacing $y$ by~$y\fm$).

Concerning (R2), the condition on $H$ that {\it all}\/ monic linear operators in
$H [\der]^{\neq}$ split over~$K$ is very strong. Fortunately, for
our main application in~\cite{ADH5} and the particular operators $L$ needed there,
such factorizations come almost for free. To sketch this, consider an immediate (in the sense of valued fields)
differentially algebraic $H$-field extension $\hat{H}$ of $H$ and an $H$-field
embedding~$\iota$ of~$H$ into a Hardy field. Our main goal in \cite{ADH5}
is to extend~$\iota$ to an embedding   of~$\hat{H}$ into a Hardy
field. 
Suppose  $\hat{y} \in \hat{K} \setminus K$, where $\hat{K} := \hat{H}
[\imag]$, and we want to extend~$\iota$ to an embedding of~$H \langle\, \Re \hat{y}, \Im \hat{y}\, \rangle \subseteq
\hat{H}$  into
some Hardy field. Now the trick is to choose~$\hat H$ and~$\hat{y}$ such that the  minimal annihilator $P\in K\{Y\}$ of $\hat y$ over $K$ is of  \textit{minimal complexity}. This means in particular that~$P$
is of minimal order over $K$, say $r$, and so $K$ contains all zeros of differential polynomials over $K$ of order~$<r$,  as long as these zeros live in an   extension of $K$ of the form $E[\imag]$ for some immediate $H$-field extension $E$ of $H$.
Consequently, all $A\in K[\der]^{\neq}$ of order~$\leq r$ split  over $K$, since their associated Riccati polynomials are of order~$<r$.  

This minimality argument relies on working  
over $K$ instead of $H$. The interplay between $H$ and $K$ is subtle: we need
minimal elements to be taken in~$K$, but actual extensions to be done on the
level of $H$-fields. The length of this monograph is partly due to the fact
that some of the material applies to differential-valued fields, and
thus  to both~$H$ and~$K$, whereas other results need to be
developed separately for $H$ and~$K$, often with minor though crucial
differences.

In the present monograph we do not set ourselves
the task to actually solve any asymptotic   equations. Instead, we
wish to prepare them as much as possible in the purely algebraic and abstract
setting of $H$-fields. The resulting normalized asymptotic  
equations should then be easier to deal with in suitable contexts (Hardy
fields, various kinds of transseries, surreal numbers, etc.). 

On a technical level, this is implemented using the notion of a \textit{hole}\/ in $H$, that is,   an asymptotic  equation $P (y) = 0$, $y \prec \fm$ over $H$ that comes with
a~solution $\hat{y}$ in an immediate $H$-field extension $\hat{H}$ of
$H$, but outside $H$ itself; notation: $(P,\fm,\hat y)$. For our purposes, this asymptotic equation can  
be arranged to be quasilinear, and even non-singular in a certain sense. (This corresponds to the notion of \textit{deep}
holes.) Therefore, roughly speaking, a hole in $H$ is a witness that $H$ is not (yet) newtonian.   (In~[ADH] we defined $H$  to be {\it newtonian}\/ if every quasilinear equation~\eqref{qlin} has a solution in $H$; this is really the most fundamental first-order property of~$\T$, given by an axiom scheme.)
We then try to modify such a hole~$(P,\fm,\hat y)$ further to bring the associated asymptotic equation   into a normal form 
$P (f + y)
= 0$, $y \prec \fn$, for some choice of~$f \in H$ and~$\fn\preceq\fm$.
 This new  equation   should satisfy versions of~(R1),~(R2),~(R3) above, and hence be more amenable to solving
in the kind of extension we are really interested in, rather than in the  
abstractly given extension~$\hat{H}$ where a solution~$\hat{y} 
- f$ is already given. For example, if $H\supseteq\R$ is a Hardy field, we wish to solve this equation in an immediate Hardy field
extension of $H$.
 
Turning now to the last requirement (R3), we saw  
that in~$\T$, the equation~\eqref{param-ode2} has the unique solution~$y_{**}$ for $\varepsilon = - x^{- 1}$ and an infinite family~$(y_c)$ of
solutions if $\varepsilon = x^{- 1}$. There are cases in which
the existence of more than one solution is unavoidable. For instance, assume
that we start with an $H$-field $H =\R (x, \ex^x)$ with constant field $\R$ over
which the equation~\eqref{ode2} makes sense. Then $H$ contains no element
$f$ with $f - y_{\ast} \prec x^{-\N}$. (This follows from
the fact that the first $\omega$ terms of~$y_{\ast}$ form a~divergent power
series in $x^{-1}$, as can be checked using techniques from~\cite{Ecalle81,Ecalle, Hardy63}.) Now consider an immediate $H$-field extension $\hat{H}$ of $H$
that contains all solutions~$y_c$. Each $y_c$ realizes the same cut in the ordered set $H$   and also satisfies the same algebraic differential
equations over~$H$. 
In model theoretic terms, this means that all
  $y_c$ realize exactly the same quantifier-free $1$-type over~$H$ (in the language of ordered valued differential fields). 
 Therefore no  quantifier-free first-order condition over~$H$ can distinguish between two distinct solutions and 
there is consequently no first-order   
normalization process over $H$ for which the normal form would 
have a particular~$y_c$ as its unique solution in $\hat H$.
This kind of indistinguishability does not necessarily survive under $H$-field
extensions. For instance, consider an $H$-field extension $H_1 := H\langle f,\ex^{- x^3 / 3 - x^2 / 2}\rangle$ of $H$ where $f
\prec 1$ satisfies $f' - f = x^{-1}ff''  +  x^{-1}$.   Then for $c_1\neq c_2$, the solutions $y_{c_1}$ and $y_{c_2}$  realize distinct  quantifier-free $1$-types over~$H_1$.

In view of the above discussion, a more precise formulation of (R3)
would be to require   all solutions of~\eqref{qlin} to realize the same quantifier-free $1$-type
over $H$. In this monograph, we introduce an even more stringent requirement.
The idea is to investigate
closely how distinguishable and indistinguishable solutions arise. The
linear part of the equation is again the key here, as we already saw when obtaining the general solution~$y_c$ from the linear part
of~\eqref{ult-ode2}. Under the assumption~(R1), the asymptotic behavior of
the non-zero solutions to the linear differential equation~$L (h) = 0$ does not change under refinement. Now
assume that $\hat{y} \in \hat{H} \setminus H$ satisfies~\eqref{qlin}. Roughly
speaking, (R3) holds if $\hat{y}$ and $\hat{y} + | h |$ realize the
same cut in $H$, for all $h\prec 1$ satisfying~$L (h) = 0$. Here $h$ typically belongs to the universal exponential extension of $H[\imag]$.

\section*{The Main Theorem}

\noindent
Before we state our main result  
we  introduce a few more
concepts and motivate some hypotheses of this theorem. Again, we only do this on an informal level;   the precise definitions are given at the appropriate places in the monograph.

First, in light of (R2), our $H$-field $H$ had better be sufficiently rich from the
outset, say  Liouville closed and~$\upo$-free. Here, the property of
{\it $\upo$-freeness}\/ is a certain first-order condition about linear
differential equations of order $2$, satisfied by $\T$, which plays a crucial role in~[ADH]. With an eye towards  factorizations over the complexification $K = H [\imag]$ of $H$, we  
also assume that $K$ is \textit{$1$-linearly newtonian}\/: each quasilinear
equation~$y' + fy = g$ ($f, g \in K$) has a
solution $y\preceq 1$ in $K$.  
Minor assumptions are that the constant field of $H$ is archimedean
and  the derivation of~$H$ is \textit{small} in the sense that $h \prec 1
\Rightarrow h' \prec 1$ for all $h \in H$. Both conditions hold for $H=\T$  and any Hardy field.

Since our aim is  to solve  
quasilinear equations that do not already have a~solution in $H$, we will  
further assume that   $H$ is \textit{not}\/ newtonian. This hypothesis yields a quasilinear hole~$(Q, 1, \hat b)$ 
in $H$, that is,  a quasilinear equation~$Q (y) = 0$, $y
\prec 1$  and a solution $\hat b \notin H$    thereof in some
immediate extension of $H$. We  call such a hole   \textit{minimal}
if $Q$ has minimal complexity (among all holes in $H$), and  we   say that it is \textit{special} if $\hat b$
is a pseudolimit of a pseudocauchy sequence in $H$ exhibiting a power-type rate of
pseudoconvergence. 

One key feature of the general theory of $H$-fields from [ADH] is that
many important asymptotic properties are \textit{eventual}. This means that
they hold upon replacing the derivation $\der$ of $H$ by a
small  derivation $\phi^{- 1} \der$ with sufficiently small $\phi\in H$, $\phi>0$. 
This corresponds to replacing the underlying ``time'' variable~$x$ by faster times,
like $\ex^x$ or $\ex^{\ex^x}$. We say that $\phi\in H$ with $\phi>0$  is \textit{active} in $H$ if 
the derivation~$\phi^{- 1} \der$ remains small. For such $\phi$ the
ordered field $H$ equipped with the derivation $\phi^{-1}\der$ is again an $H$-field, denoted by~$H^{\phi}$, and
rewriting a differential polynomial $P$ over $H$ in terms of the new derivation $\phi^{-1}\der$, we obtain an equivalent differential polynomial $P^\phi$ over
$H^\phi$. Thus
any hole $(P, \fm, \hat y)$ in~$H$
naturally gives rise to a hole~$(P^{\phi}, \fm, \hat y)$ in $H^{\phi}$.

The precise formalization of the requirements (R1), (R2), (R3) from above is done stepwise, via the introduction of
progressively stronger normalization properties of holes. We actually work
also with~\textit{slots}: a bit more general than holes and useful
intermediate stages in the normalization of holes. Likewise, instead of
minimal holes we often deal with somewhat more general \textit{$Z$-minimal}\/ holes
and~slots.

At the end of the day, the requirements (R1), (R2) and
(R3) are formalized through the concepts of strongly
repulsive-normal and ultimate holes. \textit{Normality} corresponds to the
requirement~(R1). The notion \textit{repulsive} takes care of
(R2) and part of (R3); the terminology is motivated by
analytic considerations in~\cite{ADH5}. The qualifier \textit{strongly}\/ and the property
\textit{ultimate}\/ indicate further contributions to (R3).

Let us now state our main result with all the necessary fine print:

\begin{normtheorem} \label{cor4543}
Let $H$ be an
$\upo$-free Liouville closed $H$-field with small derivation, archimedean  ordered constant field $C$, and $1$-linearly newtonian algebraic closure $H [\imag]$. Suppose $H$ is not newtonian. Then
for some $Z$-minimal special hole~$(Q, 1, \hat b)$ in~$H$ with $\order Q\geq 1$ and some active~$\phi > 0$ in~$H$ with $\phi \preceq 1$, the hole
$(Q^{\phi}, 1, \hat b)$ in $H^{\phi}$ is deep, strongly repulsive-normal,
and ultimate.
\end{normtheorem}

\noindent
The assumption on $H$ that $C$  is archimedean  is not  first-order  in the logical sense; it can perhaps be dropped. (Corollary~\ref{corfirstnewtchar} characterizes newtonianity in a less sharp way, but without this hypothesis on $C$.)
As already mentioned, this Normalization Theorem is an essential tool in~\cite{ADH5}. The example~\eqref{param-ode2} also illustrates that stronger
normal forms are mandatory for the uniform resolution of asymptotic
  equations depending on parameters. We expect this to play an important
role for a deeper understanding of definable functions and better effective
versions of our quantifier elimination result from [ADH].

\section*{Advice} 

\noindent
Readers may skip Chapter~\ref{part:preliminaries}, referring back to it when necessary. Section~\ref{sec:uplupo-freeness} discusses the important concepts of $\upl$-freeness and $\upo$-freeness. The results in this section are not needed later in the monograph, but are used in
\cite{ADH7}, which in turn is needed in~\cite{ADH5}. Sections~\ref{sec:splitting} and~\ref{sec:eigenvalues and splitting} explain the role played by the universal exponential extension in factoring linear differential operators, and hence motivate the definition of ``ultimate hole''. But otherwise these two sections are not used to obtain our normalization results, but they are needed in~\cite{ADH5}.  A few subsections and individual items are marked by an asterisk~{\astr} to indicate that they are not used for our normalizations or in connection with~\cite{ADH5} or other articles by the authors listed in the bibliography.
For a summary of   various shades of normality introduced and a list of other  important normalization results proved in this monograph see {\it Notions of Normality}\/
on p.~\pageref{section:flowchart}.

\section*{Concepts and Results from [ADH]} 

\noindent
This section includes notation and terminology used throughout this monograph. 
Thus $m$,~$n$ always range over the set~$\N=\{0,1,2,\dots\}$ of natural numbers.
Given an additively written abelian group $A$ we set $A^{\ne}:=A\setminus\{0\}$.
 Rings (usually, but not always, commutative) are associative with  identity~$1$. For a ring~$R$ we let~$R^\times$ be the multiplicative group of units of $R$ (consisting of the $a\in R$ such that~$ab=ba=1$ for some $b\in R$). 
 Let $S$ be a totally ordered set and $A\subseteq S$.  We set
 $$A^\downarrow:=\{s\in S:\text{$s\leq a$ for some $a\in A$}\}, \quad  A^\uparrow:=\{s\in S:\text{$s\geq a$ for some $a\in A$}\}$$
 where  $\leqslant$ is the ordering of $S$.
 We say that $A$ is {\it downward closed}\/ in~$S$ if $A=A^\downarrow$ and {\it upward closed}\/ in~$S$ if~$A=A^\uparrow$. For $a\in S$ we also let $S^{>a}:=\{s\in S:s>a\}$. Ordered abelian groups and ordered fields are totally ordered, by convention.
 Let~$\Gamma$ be an ordered abelian group, written additively. Then $\Gamma^>:=\Gamma^{>0}$, and likewise  with~$\geq$,~$<$, or $\leq$ in place of $>$. The ordered divisible hull of $\Gamma$ is denoted by $\Q\Gamma$.

\subsection*{Differential rings and fields}
Let $R$ be 
a {\em differential ring\/}, that is, a commutative ring $R$ containing
(an isomorphic copy of) $\Q$ as a subring and equipped with a derivation~$\der\colon R \to R$. When its derivation $\der$ is clear from the context, then for~$a\in R$ we denote~$\der(a),\der^2(a),\dots,\der^n(a),\dots$ by $a', a'',\dots, a^{(n)},\dots$. If $a\in R^\times$, then~${a^\dagger:=a'/a}$ denotes the {\em logarithmic derivative\/}\index{logarithmic derivative}\index{derivation!logarithmic}\label{p:adagger} of $a$, so $(ab)^\dagger=a^\dagger + b^\dagger$ for all~$a,b\in R^\times$.  
We have a subring $C_R:=\ker\der$  of $R$, called the ring of constants of $R$, with~$\Q\subseteq C_R$. 
 A {\em differential field\/}\index{differential field} is a differential ring 
$K$ whose underlying ring is a field. In this case~$C_K$ is a subfield of $K$, and if $K$ is understood
from the context we often write~$C$ instead of~$C_K$. Note that a differential field has characteristic $0$.

Often we are given a differential field $H$ in which $-1$ is not a square, and then~$H[\imag]$ is a differential field extension with
$\imag^2=-1$:  the derivation $\der$ on $H$ uniquely extends to a derivation on $H[\imag]$, and this extension has $\imag$ in its constant field. For~$z=a+b\imag\in H[\imag]$ ($a,b \in H$) we set $\Re z:=a$, $\Im z:=b$, and $\bar{z}:=a-b\imag$. Then
$z\mapsto \bar{z}$ is an automorphism of the differential field $H[\imag]$.
If there is also given a differential field extension $F$ of $H$ in which $-1$ is not a square, we always tacitly arrange $\imag$ to be such that $H[\imag]$ is a differential subfield of the differential field extension $F[\imag]$ of $F$.

\subsection*{Differential polynomials}
Let $R$ be a differential ring. 
We have the differential ring $R\{Y\}=R[Y, Y', Y'',\dots]$ of differential polynomials in a differential indeterminate~$Y$ over $R$. \label{p:RY}
Let $P=P(Y)\in R\{Y\}$. The {\em order\/} of $P$, denoted by~$\order(P)$, is the least $r\in\N$ such that~$P\in R[Y,Y',\dots, Y^{(r)}]$.\label{p:order P}\index{order!differential polynomial}\index{differential polynomial!order} Let $\order(P)\leqslant r$. Then $$P=\sum_{\i}P_{\i}Y^{\i}$$  with~$\i$ ranging over tuples~$(i_0,\dots,i_r)\in \N^{1+r}$, $Y^{\i}:= Y^{i_0}(Y')^{i_1}\cdots (Y^{(r)})^{i_r}$,     coefficients~$P_{\i}$   in $R$, and $P_{\i}\ne 0$ for only finitely many $\i$. 
For such $\i$ we set 
$$|\i|\ :=\ i_0+i_1+ \cdots + i_r, \qquad \|\i\|\ :=\ i_1+2i_2 + \dots + ri_r.$$
The {\it multiplicity of~$P$}\/ (at~$0$) \label{p:val P}\index{multiplicity!differential polynomial}\index{differential polynomial!multiplicity}
is 
$$\val P 
\ :=\ \min\big\{|\i|:\, P_{\i}\ne 0\big\}\in \N \text{ if $P\ne 0$,} \qquad \val P\ :=\ +\infty \text{ if $P=0$,}$$
the {\em degree\/} of $P$ is  \label{p:deg P}\index{degree!differential polynomial}\index{differential polynomial!degree}
$$\deg P\ :=\ \max\big\{|\i|:\, P_{\i}\ne 0\big\}\in \N \text{ if $P\ne 0$,} \qquad \deg P\ :=\ -\infty \text{ if $P=0$,}$$ 
and the {\em weight} of $P$ is \label{p:wt P}\index{weight!differential polynomial}\index{differential polynomial!weight}
$$\wt P\ :=\ \max\big\{\|\i\|:\, P_{\i}\ne 0\big\}\in \N \text{ if $P\ne 0$,} \qquad \wt P\ :=\ -\infty \text{ if $P=0$.}$$
For $d\in \N$ we set $P_d:=\sum_{|\i|=d} P_{\i}Y^{\i}$ (the {\it homogeneous part}\/ of degree $d$ of $P$), so~$P=\sum_d P_d$, and if $P\ne 0$, then\index{differential polynomial!homogeneous part}\label{p:Pd} 
$$\val P\ =\ \min\{d:\, P_d\ne 0\},\qquad \deg P\ =\ \max\{d:\, P_d\ne 0\}.$$ 
For~${a\in R}$ we let
$$P_{+a}\ :=\ P(a+Y)\quad\text{and}\quad P_{\times a}\ :=\ P(aY)$$ be
the {\it additive conjugate}\/ and the {\it multiplicative conjugate}\/ of $P$ by~$a$, respectively. \label{p:P+a} \label{p:Pxa}
For $\phi\in R^\times$ we  let~$R^{\phi}$ be the {\it compositional conjugate of $R$ by~$\phi$}\/: the differential ring
with the same underlying ring as~$R$ but with derivation~$\phi^{-1}\der$ (usually denoted by $\derdelta$) instead of $\der$.  \label{p:Pphi}
We have an $R$-algebra isomorphism~$P\mapsto P^\phi\colon R\{Y\}\to R^\phi\{Y\}$ such that $P^\phi(y)=P(y)$ for all $y\in R$;
see~[ADH, 5.7]. 

A {\it differentially algebraic\/} (for short: {\it $\d$-algebraic}) extension of a differential field~$K$ is a differential field extension $L$ of $K$ such that for all $y\in L$ we have $P(y)=0$ for some
differential polynomial $P\in K\{Y\}^{\ne}$. See [ADH, 4.1] for more on this.\index{differential field!differentially algebraic extension}\index{differential field!d-algebraic extension@$\d$-algebraic extension}\index{extension!differentially algebraic}\index{extension!d-algebraic@$\d$-algebraic} 

\subsection*{Complexity and the separant} 
Let $K$ be a differential field and $P\in K\{Y\}\setminus K$, and set 
$r=\order P$, $s=\deg_{Y^{(r)}} P$, and $t=\deg P$. Then the {\it complexity}\/ of~$P$ is the triple
$\cc(P)=(r,s,t)\in\N^3$;\index{complexity!differential polynomial}\label{p:complexity} \index{differential polynomial!complexity}
we order $\N^3$ lexicographically. Let $a\in K$. Then~$\cc(P_{+a})=\cc(P)$, and
$\cc(P_{\times a})=\cc(P)$ if $a\neq 0$.
The differential polynomial~$S_P:=\frac{\der P}{\der Y^{(r)}}$ is called the {\it separant}\/ of $P$; \index{separant} \index{differential polynomial!separant}\label{p:separant} thus
 $\cc(S_P)<\cc(P)$ (giving complexity~$(0,0,0)$ to elements of $K$), and $S_{aP}=aS_P$ if $a\neq 0$.  Moreover,
\begin{equation}\label{eq:separant fms} 
S_{P_{+a}}\  =\  (S_P)_{+a},\quad S_{P_{\times a}}=a\cdot(S_P)_{\times a},\quad 
S_{P^\phi}\ =\  \phi^r (S_P)^\phi\text{ for $\phi\in K^\times$.}
\end{equation}
(For $S_{P_{+a}}$ and $S_{P_{\times a}}$ this is from [ADH, p.~216]; 
for $S_{P^\phi}$, express $P$ as a polynomial in $Y^{(r)}$ and use $(Y^{(r)})^\phi= \phi^r Y^{(r)}+\text{lower order terms}$.)

\subsection*{Linear differential operators} 
Let $R$ be a differential ring. We associate to $R$ the ring $R[\der]$ of linear differential operators over $R$; see [ADH, 5.1]. \label{p:Rder}
This is the ring extension of $R$ generated over $R$
by an element $\der$: we use here the same symbol that denotes the derivation of $R$,  impose
$\der^m\ne \der^n$ for all $m\ne n$, require $R[\der]$ to be free as a left $R$-module with basis
$1=\der^0, \der=\der^1, \der^2, \der^3,\dots$, and impose
$\der a = a \der + a'$ (in $R[\der]$) for  $a\in R$.
Each $A\in R[\der]$ has accordingly the form 
\begin{equation}\label{eq:lin op}
A\ =\ a_0+a_1\der+\cdots+a_n\der^n \qquad
(a_0,\dots,a_n\in R),
\end{equation}
and for such $A$ and $y\in R$ we put 
$$A(y)\ :=\ a_0y+a_1y'+\cdots+a_ny^{(n)}\in R.$$
Then $(AB)(y)=A\big(B(y)\big)$ for all $A,B\in R[\der]$ and $y\in R$.
The kernel of $A\in R[\der]$ is the   $C_R$-submodule
$$\ker A \ =\  \big\{ y\in R:\, A(y)=0 \big\}$$
of $R$. If we want to stress the dependence on $R$, we also write $\ker_R A$ for $\ker A$. \label{p:kerA}
For~$A\in R[\der]^{\neq}$ there are unique elements $a_0,\dots,a_n$  of $R$ with $a_n\neq 0$ such that~\eqref{eq:lin op} holds. Then $\order(A):=n$ is the {\it order}\/ of $A$, and we say that $A$ is {\it monic}\/ if~$a_n=1$. \index{order!linear differential operator}\index{linear differential operator!order}\index{linear differential operator!monic}\index{linear differential operator!kernel}
Let  $u\in R^\times$. For $A\in R[\der]$ we set $A_{\ltimes u}:=u^{-1}Au\in R[\der]$, the {\it twist}\/ of~$A$ by $u$. If~$A$ is monic, then so
is $A_{\ltimes u}$, and $A\mapsto A_{\ltimes u}$ is an automorphism of the ring $R[\der]$ 
which is the identity on $R$ [ADH, p.~243]; its inverse is $B\mapsto B_{\ltimes u^{-1}}$.\index{linear differential operator!twist}\label{p:twist}  
Let $\phi\in R^\times$. Then we have the ring $R^\phi[\derdelta]$ of linear differential operators over the differential ring~$R^\phi$
(with derivation $\derdelta=\phi^{-1}\der$), and we have
a ring isomorphism~$A\mapsto A^\phi\colon R[\der]\to R^\phi[\derdelta]$; 
it is the identity on $R$, with $\der^\phi=\phi\derdelta$.

\medskip
\noindent
The {\it linear part}\/ of  $P\in R\{Y\}$  is the linear differential operator
$$L_P\ :=\ \sum_n \frac{\der P}{\der Y^{(n)}}(0)\der^n\in R[\der], \qquad \text{so }\ L_{P_{+a}}\ =\ \sum_n \frac{\der P}{\der Y^{(n)}}(a)\der^n\text{ for $a\in R$.}$$ 
We have $L_P(y)=P_1(y)$
for all $y\in R$.\index{linear differential operator!monic}\index{differential polynomial!linear part}\index{linear part!differential polynomial}\label{p:lin part}

\medskip
\noindent
Suppose now $K$ is a differential field. Then   $A\in K[\der]$ is said to {\it split}\/ over~$K$ if~$A=c(\der-f_1)\cdots(\der-f_r)$
for some $c\in K^\times$, $f_1,\dots,f_r\in K$; cf.~[ADH, 5.1]. If $A$ splits over $K$, then so does $aAb$ for $a,b\in K^\times$, and $A^\phi$  splits over $K^\phi$ for $\phi\in K^\times$.\index{linear differential operator!splits}  
In~[ADH, 5.2] we defined the functions 
\begin{equation}\label{eq:def omega}
\omega\colon K\to K, \qquad \omega(z)\ :=\ -(2z'+z^2)
\end{equation}
and 
\begin{equation}\label{eq:def sigma}
\sigma\colon K^\times\to K,\qquad \sigma(y)\ :=\ \omega(z)+y^2\quad\text{where $z:=-y^\dagger$.}
\end{equation}
Then for $A=4\der^2+f$ ($f\in K$) we have 
\begin{equation}\label{eq:A splits over K}
\text{$A$ splits over $K$}  \quad\Longleftrightarrow\quad f\in \omega(K)
\end{equation}
and if $-1$ is not a square in $K$, then 
\begin{equation}\label{eq:A splits over K[i]}
\text{$A$ splits over $K[\imag]$}  \quad\Longleftrightarrow\quad f\in \omega(K)\cup\sigma(K^\times).
\end{equation}
We say that $K$ is   {\it linearly closed}\/ if every $A\in K[\der]^{\neq}$ splits over $K$.
\index{closed!linearly}
\index{differential field!linearly closed}
By [ADH, 5.8.9] this holds if $K$ is
{\it weakly differentially closed}\/:   each $P\in K\{Y\}\setminus K$ has a zero in $K$ 
(hence  $K$ is also  {\it linearly surjective}, that is,  $A(K)=K$ for all $A\in K[\der]^{\neq}$). \index{linearly!closed}\index{differential field!linearly  closed}\index{closed!linearly}\index{differential field!weakly differentially closed}\index{closed!weakly differentially}\index{linearly!surjective}\index{differential field!linearly surjective}\index{surjective!linearly}

\subsection*{Valued fields}
For a field $K$ we have $K^\times=K^{\ne}$, and
a (Krull) valuation on $K$ is a surjective map 
$v\colon K^\times \to \Gamma$ onto an ordered abelian 
group $\Gamma$ (additively written) satisfying the usual laws, and extended to
$v\colon K \to \Gamma_{\infty}:=\Gamma\cup\{\infty\}$ by $v(0)=\infty$,
where the ordering on $\Gamma$ is extended to a total ordering
on $\Gamma_{\infty}$ by $\gamma<\infty$ for all~$\gamma\in \Gamma$. 
A {\em valued field\/} $K$ is a field (also denoted by $K$) together with a valuation ring~$\mathcal O$ of that field,
and the corresponding valuation $v\colon K^\times \to \Gamma$  on the underlying field is
such that $\mathcal O=\{a\in K:va\geq 0\}$ as explained in [ADH, 3.1].

\medskip
\noindent
Let $K$ be a valued field
with valuation ring $\mathcal O_K$ and valuation $v\colon K^\times \to \Gamma_K$. Then~$\mathcal O_K$ is a local ring  with maximal ideal $\smallo_K=\{a\in K:va>0\}$  and residue field $\res(K)=\mathcal{O}_K/\smallo_K$. 
If $\res(K)$ has characteristic zero, then $K$ is said to be of equicharacteristic zero.
When, as here,  we use the capital $K$ for the valued field under consideration, then
we denote $\Gamma_K$, $\mathcal O_K$, $\smallo_K$, by $\Gamma$, $\mathcal O$, $\smallo$, respectively.\label{p:valring}
A very handy alternative notation system in connection with the valuation is as follows.  With $a,b$ ranging over $K$, set 
\begin{align*} a\asymp b &\ :\Leftrightarrow\ va =vb, & a\preceq b&\ :\Leftrightarrow\ va\ge vb, & a\prec b &\ :\Leftrightarrow\  va>vb,\\
a\succeq b &\ :\Leftrightarrow\ b \preceq a, &
a\succ b &\ :\Leftrightarrow\ b\prec a, & a\sim b &\ :\Leftrightarrow\ a-b\prec a.
\end{align*}
It is easy to check that if $a\sim b$, then $a, b\ne 0$ and $a\asymp b$, and that
$\sim$ is an equivalence relation on $K^\times$.\label{p:asymprels} We set $K^{\succ 1}:=\{a\in K:\, a\succ 1\}$, in analogy 
with notation like~$S^{>a}$ for an ordered set $S$ and $a\in S$. Likewise with $K^{\prec 1}$, etc., so $K^{\prec 1}=\smallo$.  

Given a valued field extension $L$ of $K$, we identify in the usual way~$\res(K)$ with a subfield of $\res(L)$, and $\Gamma$ with an ordered subgroup of~$\Gamma_L$. Such a valued field extension is called {\it immediate} if $\res(K)=\res(L)$ and $\Gamma=\Gamma_L$.\index{valuation!immediate extension}\index{extension!immediate} We use {\em pc-sequence\/} to abbreviate
{\em pseudocauchy sequence}, and~${a_\rho\leadsto a}$ indicates that~$(a_\rho)$ is a  pc-sequence pseudoconverging to~$a$;
here the $a_{\rho}$ and $a$ lie in a valued field understood from the context, see [ADH, 2.2,~3.2].\label{p:pc}

\medskip
\noindent
Next we summarize the complementary processes of {\it coarsening}\/ and {\it specialization}\/ of a valued field, which  play an important role in Chapters~\ref{part:normalization} and~\ref{part:dents in H-fields}. (For more details see~[ADH, 3.4].)  Let $K$ be a valued field and $\Delta$ a convex subgroup of
its value group $\Gamma$. Equip $\dot\Gamma:=\Gamma/\Delta$ with the unique ordering making it an ordered abelian group such that the residue morphism $\Gamma\to\dot\Gamma$ is increasing. Then the map~${\dot v=v_\Delta\colon K^\times\to\dot\Gamma}$ given by $\dot v f:=vf+\Delta$ is
a valuation on $K$, called the {\it coarsening}\/ of $v$ by~$\Delta$, or just the {\it $\Delta$-coarsening}\/ of $v$.
We denote the asymptotic relations associated with $\dot v$ by a subscript $\Delta$, so   $\preceq_\Delta$, $\prec_\Delta$, etc., or just by $\dot\preceq$, $\dot\prec$, etc., if $\Delta$ is clear from the context.\label{p:coarsening}
The valuation ring of~$\dot v$ is~${\dot{\mathcal O}:=\big\{a\in K:\, va\in\Delta^\uparrow \big\}}$
with maximal ideal~${\dot\smallo := \big\{a\in K:\, va > \Delta \big\}}$.
The valued field $(K,\dot{\mathcal O})$ is called the {\it coarsening} of the valued field $K$ by $\Delta$, or simply the {\it $\Delta$-coarsening}\/ of $K$.
Let~${\dot K:=\dot{\mathcal O}/\dot\smallo}$ be the residue field of $(K,\dot{\mathcal O})$, with residue morphism~${a\mapsto a+\dot\smallo\colon\dot{\mathcal O}\to \dot K}$.
Then for~${a\in\dot{\mathcal O}\setminus\dot\smallo}$, the value $va$ only depends on~$\dot a$, and we   obtain a valuation~$v\colon\dot K^\times\to\Delta$ on $\dot K$ with~$v\dot a:=va$ for $a\in\dot{\mathcal O}\setminus\dot\smallo$. The valuation ring of this valuation on $\dot K$ is~$\mathcal O_{\dot K}:=\{\dot a:a\in\mathcal O\}$. The valued field
$(\dot K,\mathcal O_{\dot K})$,   called the {\it $\Delta$-specialization}\/ of $K$, is also denoted by $\dot K$. 
The composed map
 $ \mathcal{O} \to \mathcal{O}_{\dot K} \to \res(\dot K)$ has kernel $\smallo$, and thus induces a field isomorphism $\res(K)\to \res(\dot K)$;  we use it to identify $\res(K)$ with $\res(\dot K)$. 
\index{valuation!coarsening}\index{valuation!specialization}

\subsection*{Valued differential fields}
As in~[ADH],  a {\em valued differential field\/} is a valued field of equicharacteristic zero together with a derivation, generally denoted by $\der$, on the underlying field.\index{valued differential field}\index{differential field!valued} (Unlike~\cite{VDF} we do not assume in this definition that $\der$ is continuous with respect to the valuation topology.) Let $K$ be a valued differential field and $K\<Y\>$ the fraction field of $K\{Y\}$. 
We extend the valuation $v\colon K \to \Gamma_{\infty}$ and the corresponding relations
$\prec$, $\preceq$, etc., first to $K\{Y\}$ by 
$$v(P)\ :=\  \min\big\{v(P_{\i}):\, \i\in \N^{1+r}\big\}\quad\text{ for $P\in K\{Y\}$ of order $\le r$,}$$
and then (uniquely) to a valuation $v\colon K\<Y\>\to  \Gamma_{\infty}$ (the {\em gaussian extension}). \index{valuation!gaussian extension}\index{gaussian extension}\index{extension!gaussian}
We also define the {\em dominant degree\/} $\ddeg P$ for $P\in K\{Y\}$ by\index{differential polynomial!dominant degree}\index{degree!dominant}\label{p:ddeg P}
$$ \ddeg P\, :=\, \max\big\{d:\,v(P_d)=v(P)\big\}\in \N\ \text{ if $P\ne 0$,} \qquad \ddeg P\, :=\, -\infty \text{ if $P=0$.}$$
The 
{\em dominant weight~$\dwt P$\/}  of~$P$ is defined in the same way, and so is the  {\em dominant multiplicity\/} $\dval P$ of $P$ (at $0$), with $\dval P =+\infty$ if $P=0$.\index{differential polynomial!dominant multiplicity}\index{multiplicity!dominant}\index{differential polynomial!dominant weight}\index{weight!dominant} 

\medskip
\noindent
The derivation $\der$ of a valued differential field $K$ is said to be {\it small}\/ if~$\der\smallo\subseteq\smallo$; then~$\der$ is continuous with respect to the valuation topology of $K$, and~${\der\mathcal O\subseteq\mathcal O}$~[ADH, 4.4.2], so $\der$ induces a derivation on $\res(K)$ making the residue map ${\mathcal O\to\res(K)}$ into a morphism of differential rings.\index{valued differential field!small derivation}\index{small derivation}\index{derivation!small}   

We say that $K$ is {\em differential-henselian} ({\em $\d$-henselian\/} for short) if its derivation is small, the differential residue field $\res(K)$ is linearly surjective, and for every~${P\in \mathcal{O}\{Y\}}$ with $P_0\prec P_1\asymp 1$ there exists $y\prec 1$ in $K$ such that $P(y)=0$. See~[ADH, Chapter~7] for more, and for the weaker notions of {\it $r$-$\d$-henselian\/}, $r\in \N$.\index{dhenselian@$\d$-henselian}\index{r-d-henselian@$r$-$\d$-henselian}\index{valued differential field!r-d-henselian@$r$-$\d$-henselian}\index{valued differential field!d-henselian@$\d$-henselian}\index{valued differential field!differential-henselian}

\medskip
\noindent
Suppose now the derivation of $K$ is small,  and 
$A=\sum_i a_i\der^i\in  K[\der]$ (all $a_i\in K$). In~[ADH, 5.6] we defined
$v(A) := \min_i v(a_i) \in \Gamma_\infty$, extending accordingly the relations $\prec, \preceq$, etc.,  on $K$
to $K[\der]$.  For $A\neq 0$ we set
$$ \dwm A\, :=\, \min\big\{i:\,v(a_i)=v(A)\big\}\in \N,\qquad \dwt A\, :=\, \max\big\{i:\,v(a_i)=v(A)\big\}\in \N,$$
the {\it dominant weighted multiplicity}\/ of $A$ and the {\it dominant weight}\/ of $A$, respectively.\index{linear differential operator!dominant weight}\index{weight!dominant}\index{linear differential operator!dominant weighted multiplicity}\index{multiplicity!dominant weighted}\label{p:dwm A}
Let~$y$ range over $K^\times$. Then $v(Ay)$ only depends on~$vy$, not on $y$, and we can thus define $v_A(\gamma):=v(Ay)$ for $\gamma:=vy$.
The quantity $\dwt(Ay)$ also only depends on~$vy$, so we can define $\dwt_A(\gamma):=\dwt(Ay)$ for $\gamma=vy$.
Likewise, if~$\der\mathcal O\subseteq\smallo$, then~$\dwm(Ay)$   only depends on $vy$, and we define $\dwm_A(\gamma):=\dwm(Ay)$ for~$\gamma=vy$.
The set of {\it exceptional values}\/ of $A$ is $$\exc(A)\ =\ \exc_K(A)\ :=\ \big\{vy:\, \dwm(Ay)>0\big\}\ \subseteq\ \Gamma;$$ it contains~$v(\ker^{\neq} A)$, where
$\ker^{\neq}A:= (\ker A)^{\neq}$; cf.~[ADH, 5.6.7].\index{exceptional values}\index{linear differential operator!exceptional values}\index{values!exceptional}\label{p:exc A}

\subsection*{Ordered differential fields}
An  {\em ordered differential field\/} is a differential field $K$ with an ordering on $K$ making $K$ an ordered field.\index{ordered differential field}\index{differential field!ordered} Likewise, an  {\em ordered valued differential field\/} is a  valued differential field $K$ equipped with an ordering on $K$ making~$K$ an ordered field (no relation between derivation, valuation, or ordering
being assumed).\index{differential field!ordered valued}\index{ordered valued differential field} Let $K$ be an ordered differential field. We have the convex subring 
$$\mathcal{O}\ :=\ \big\{g\in K:\text{$\abs{g} \le c$ for some $c\in C$}\big\},$$ 
which is a valuation ring of $K$ and has maximal ideal
$$\smallo\ =\ \big\{g\in K:\text{$\abs{g} < c$ for all positive $c\in C$}\big\}.$$ We call $K$ an {\it $H$-field}\index{ordered differential field!$H$-field}\index{H-field@$H$-field} 
 if for all $f\in K$ with $f>C$ we have $f'>0$, and
  $\mathcal{O}=C+\smallo$. We view such an $H$-field $K$ as an ordered valued differential field with its valuation given by $\mathcal O$. 
{\it Pre-$H$-fields}\/\index{pre-H-field@pre-$H$-field}\index{ordered valued differential field!pre-$H$-field} are the ordered valued differential subfields of $H$-fields.
 See [ADH, 10.5] for basic facts about (pre-)$H$-fields. 
 An $H$-field $K$ is said to be {\it Liouville closed}\/
 if $K$ is real closed and for all $f,g\in K$ there exists~$y\in K^\times$ with~$y'+fy=g$.
 Every $H$-field extends to  a Liouville closed one; see [ADH, 10.6].\index{H-field@$H$-field!Liouville closed}\index{closed!Liouville}

\medskip
\noindent
Let $K$ be a pre-$H$-field. In [ADH, p.~520] we singled out  the following subsets: \label{p:special subsets} 
$$\Upg(K)\ :=\ (K^{\succ 1})^\dagger,\quad \Upl(K)\ :=\ -\big[(K^{\succ 1})^{\dagger\dagger}\big], \quad 
\Upd(K)\ :=\ -\big[(K^{\neq,\prec 1})^{\prime\dagger}\big].$$ 
 If    $K$ is   Liouville closed, then the restriction of $\omega$ to $\Upl(K)$ is strictly increasing with~$\omega\big(\Upl(K)\big)=\omega\big(\Upd(K)\big)$.  The restriction of
 $\sigma$ to $\Upg(K)$ is strictly increasing, and~$\omega\big(\Upl(K)\big)<\sigma\big(\Upg(K)\big)$; see [ADH, 11.8]. We call $K$ {\it Schwarz closed}\/ if
 $K$ is Liouville closed and $K=\omega\big(\Upl(K)\big)\cup\sigma\big(\Upg(K)\big)$
  [ADH, 11.8.33].\index{closed!Schwarz}\index{H-field@$H$-field!Schwarz closed}

 \medskip\noindent
 We alert the reader that in a few places we refer to the Liouville closed $H$-field~$\T_{\g}$ of grid-based transseries from
 \cite{JvdH}, which is denoted there by $\T$. Here we  adopt the notation of [ADH] where $\T$ is the larger field of logarithmic-exponential series, which is also a Liouville closed $H$-field.\index{transseries}

\subsection*{Asymptotic fields}
In their capacity as valued differential fields, $H$-fields and pre-$H$-fields are among so-called {\em asymptotic fields}, which also include the algebraic closure of  any pre-$H$-field (where this algebraic closure is equipped with the unique derivation extending the derivation of the pre-$H$-field and any valuation extending the valuation of the pre-$H$-field).\index{asymptotic field}\index{valued differential field!asymptotic} That is one of the reasons (not the only one)  to consider this notion more closely:
An \textit{asymptotic field}\/ is a valued differential field~$K$ such that for all nonzero  $f,g\prec 1$ in $K$ we have: $f\prec g\Longleftrightarrow f'\prec g'$. Let~$K$ be an asymptotic field. Then $K^\phi$ with $\phi\in K^\times$ is also asymptotic. We associate to~$K$ its {\em asymptotic couple\/}\index{asymptotic couple}  $(\Gamma,\psi)$,
where $\psi\colon\Gamma^{\neq}\to\Gamma$ is given by 
$$\psi(vg)\ =\ v(g^\dagger)\ \text{ for $g\in K^\times$ with $vg\ne 0$}.$$ 
So the asymptotic couple of $K$ is the ordered abelian value group $\Gamma$ with a function on it that is induced by the derivation of $K$; it serves a similar purpose as the value group of a mere valued field. We put $\Psi:=\psi(\Gamma^{\neq})$, and if 
we want to stress the dependence on $K$ we also write $(\Gamma_K,\psi_K)$ and $\Psi_K$ instead of $(\Gamma,\psi)$
and $\Psi$, respectively.
See [ADH, 9.1, 9.2] for more on asymptotic couples, in particular the taxonomy of asymptotic fields introduced
via their asymptotic couples: having a {\it gap,}\/ being {\it grounded,}\/ having  {\it asymptotic integration,}\/ and having {\it rational asymptotic integration}.\index{asymptotic couple!gap}\index{asymptotic couple!grounded}\index{grounded!asymptotic couple}\index{asymptotic couple!asymptotic integration}\index{asymptotic couple!rational asymptotic integration}\index{gap}\index{grounded}\index{asymptotic integration}\index{asymptotic integration!rational} 
We now consider various conditions on our asymptotic field $K$, some mentioned in the last sentence. To define those conditions, let $f$, $g$ range over $K$, and $c$ over the constant field $C$ of $K$: 

\medskip
\noindent
\textit{$H$-asymptotic \textup{(or \textit{of $H$-type}\/)}}\/:     $0\neq f\prec g\prec 1\Rightarrow f^\dagger\succeq g^\dagger$. (Pre-$H$-fields and their algebraic closures are $H$-asymptotic. If $K$ is  $H$-asymptotic and $\phi\in K^\times$, then $K^\phi$ is $H$-asymptotic.)\index{asymptotic field!$H$-asymptotic}\index{H-asymptotic field@$H$-asymptotic field} 

\medskip
\noindent
\textit{Differential-valued \textup{(or \textit{$\d$-valued}\/)}}\/: for
 all $f\asymp 1$  
there exists $c$ with $f\sim c$. ($H$-fields and their algebraic closures are $\d$-valued.)\index{asymptotic field!$\d$-valued} 

\medskip
\noindent
{\it Pre-differential-valued \textup{(or {\it pre-$\d$-valued}\/)}}\/:  $f\preceq 1\ \&\ 0\neq g\prec 1\Rightarrow f'\prec g^\dagger$. (Pre-$H$-fields are pre-$\d$-valued. Every pre-$\d$-valued field has a canonical $\d$-valued extension, its
$\d$-valued hull $\operatorname{dv}(K)$, by [ADH, 10.3].)\index{asymptotic field!pre-$\d$-valued}\label{p:dv(K)} 

\medskip
\noindent
\textit{Grounded}\/: there is a nonzero $f\nasymp 1$ such that for all nonzero $g\nasymp 1$ we have
$g^\dagger \succeq f^\dagger$.\index{asymptotic field!grounded}\index{grounded!asymptotic field}

\medskip
\noindent
\textit{Asymptotic integration}\/:
for all $f\neq 0$ there exists $g\nasymp 1$ with $g'\asymp f$. (If $K$ has asymptotic integration, then $K$ is ungrounded with $\Gamma\ne \{0\}$. Liouville closed $H$-fields have asymptotic integration.)\index{asymptotic field!asymptotic integration}\index{asymptotic integration!asymptotic field}



\medskip
\noindent
\textit{$\upl$-free}\/: $H$-asymptotic, ungrounded, and for all $f$ there exists $g\succ 1$ with $f-g^{\dagger\dagger}\succeq g^\dagger$.\index{asymptotic field!lambda-free@$\upl$-free}\index{lambda-free@$\upl$-free}

\medskip
\noindent
\textit{$\upo$-free}\/:
$H$-asymptotic, ungrounded, and for all $f$ there exists  $g\succ 1$ such that ${f-\omega(g^{\dagger\dagger})\succeq (g^\dagger)^2}$, where $\omega$ is as in \eqref{eq:def omega}.\index{asymptotic field!omega-free@$\upo$-free}\index{omega-free@$\upo$-free}\label{p:omega}  

\medskip\noindent
Here we note that $\upo$-freeness is very robust, and powerful, and that $\T$ is $\upo$-free.  
For more on this, see [ADH, 13.6] and Section~\ref{sec:uplupo-freeness} below.

\subsection*{Flattening} This is from [ADH, 9.4].\index{H-asymptotic field@$H$-asymptotic field!flattening}\index{valuation!flattening} Suppose $K$ is $H$-asymptotic   
with asymptotic couple $(\Gamma,\psi)$. Then we have a convex subgroup $\Gamma^\flat:=\big\{\gamma\in\Gamma:\,\psi(\gamma)>0\big\}$   of
$\Gamma$, and  
the $\Gamma^\flat$-coarsening~$v^\flat\colon K^\times\to \Gamma/\Gamma^\flat$   of $v$    is called the {\it flattening}\/ of $v$. The
  differential field~$K$ together with the valuation ring of $v^\flat$ is $H$-asymptotic. We denote the     relations~$\asymp$,~$\sim$,~$\preceq$,~$\prec$ associated to $v^\flat$ by $\asymp^\flat$, $\sim^\flat$, $\preceq^\flat$, $\prec^\flat$, respectively.\label{p:flatten} For~$\phi\in K^\times$ we denote the flattened objects $v_\phi^\flat$, $\Gamma^\flat_\phi$, $\asymp^\flat_\phi$, $\sim^\flat_\phi$, $\preceq^\flat_\phi$, $\prec^\flat_\phi$ associated to~$K^\phi$ by a subscript~$\phi$. In particular, $\Gamma^\flat_\phi=\big\{\gamma\in\Gamma:\psi(\gamma)>v\phi\big\}$.
 
\subsection*{Newtonianity}
In order to define {\it newtonian\/}, assume the asymptotic field $K$ is ungrounded with ${\Gamma\ne \{0\}}$.  Then an element~$\phi\in K$ is said to be {\em active in $K$\/}\index{active}\index{element!active} if~$\phi\ne 0$ and
$\phi\succeq f^\dagger$ for some nonzero $f\nasymp 1$. If $\phi$ is active in $K$, then
the derivation~$\phi^{-1}\der$ of~$K^\phi$ is small.  Let $\phi$ range over the active elements of $K$.
A property $S(\phi)$ of (active) elements~$\phi$ is said to hold {\em eventually}\/\index{eventually} if there is an active $\phi_0$ in $K$ such that~$S(\phi)$ holds for all~$\phi\preceq \phi_0$; cf.~[ADH, p.~479].   The way to understanding Liouville closed $H$-fields such as $\T$ involves often eventual behavior of this kind. For example,  in~[ADH, 11.1] we showed that 
for $P\in K\{Y\}$, 
  $\dval P^\phi$, $\ddeg P^\phi$, and $\dwt P^\phi$ are eventually constant. The eventual values of these quantities are denoted by $\nval P$, $\ndeg P$, and $\nwt P$, respectively, and are
  called the {\it Newton multiplicity}\/ of $P$, the {\it Newton degree}\/ of $P$, and the {\it Newton weight of $P$.}\label{p:newton quants}\index{degree!Newton}\index{newton!degree}\index{differential polynomial!Newton degree}\index{multiplicity!Newton}\index{newton!multiplicity}\index{differential polynomial!Newton multiplicity}\index{weight!Newton}\index{newton!weight}\index{differential polynomial!Newton weight} 
We call $P$
 {\em quasilinear\/} if~$\ndeg P=1$.\index{differential polynomial!quasilinear}\index{quasilinear!differential polynomial} 

\begin{definition}
An asymptotic field $K$ is said to be {\em newtonian}\/ if $K$ is ungrounded of $H$-type with 
$\Gamma\ne \{0\}$, and every quasilinear $P\in K\{Y\}$ has a zero in $\mathcal{O}$.\index{asymptotic field!newtonian}\index{newtonian} 
\end{definition}

\noindent
We now list some properties of this notion from~[ADH] that we shall frequently use.
For this, assume as before that $K$ is an ungrounded $H$-asymptotic field with~${\Gamma\ne \{0\}}$. First a  consequence of~[ADH, 14.0.1 and the remarks after it]:
\begin{flalign}\label{eq:14.0.1}
&\quad\parbox[t]{24em}{ {\it If $K$ is $\upo$-free, then $K$ has an immediate $\d$-algebraic extension which is
newtonian and $\upo$-free.}\/} && \text{[ADH, 14.0.1]}
\end{flalign}
For $\d$-valued $K$ we often need the following:
\begin{flalign}\label{eq:14.5.3}
&\quad\parbox[t]{24em}{\it If $K$ is $\d$-valued, $\upo$-free,  newtonian, and algebraically closed, then $K$ is
weakly differentially closed.} 
&& \text{[ADH, 14.5.3]}
\end{flalign}
For more on $\d$-valued $K$, see~[ADH, 14.5.4] and \cite[Theorem~B]{Nigel19}. 
\begin{flalign}\label{eq:14.4.2}
&\quad\parbox[t]{24em}{\it If $K$ is newtonian, then $K$ is linearly surjective.} && \text{[ADH, 14.4.2]}
\end{flalign}
Next, suppose $K$ is $\upo$-free and $\d$-valued with divisible value group, and $L$ is a valued differential field extension of $K$ and algebraic over $K$. Then:
\begin{flalign}
&\quad\parbox[t]{24em}{\it If $K$ is newtonian, then so is $L$.} && \text{[ADH, 14.5.7]}\label{eq:14.5.7}
\\
&\quad\parbox[t]{24em}{\it $K$ is newtonian if $L$ is newtonian and $L=K(C_L)$.} && \text{[ADH, 14.5.6]}\label{eq:14.5.6} 
\end{flalign}
For example, the valued differential field $\T$ is newtonian by [ADH,  15.0.2], hence its algebraic closure~$\T[\imag]$ is also newtonian by~\eqref{eq:14.5.7}, and  thus $\T[\imag]$ is linearly closed by \eqref{eq:14.5.3}
and   linearly surjective  by~\eqref{eq:14.4.2}.

\medskip
\noindent
See also [ADH, 14.2] for  {\it $r$-newtonian}\/ and {\it $r$-linearly newtonian}, useful weakenings of {\it newtonian\/} that allow for induction on $r\in \N$.\/\index{asymptotic field!r-newtonian@$r$-newtonian}\index{r-newtonian@$r$-newtonian}\index{asymptotic field!r-linearly newtonian@$r$-linearly newtonian}\index{r-linearly newtonian@$r$-linearly newtonian}

\subsection*{Closed $H$-fields} A {\it closed $H$-field}\/ (or {\it $H$-closed field}) is a Liouville closed, $\upo$-free, and newtonian $H$-field.\index{closed!H-closed@$H$-closed}\index{closed!H-field@$H$-field}\index{H-field@$H$-field!closed}\index{H-field@$H$-field!H-closed@$H$-closed}
A fundamental fact from [ADH] about the elementary theory
of $\T$ as an ordered valued differential field is that it is completely axiomatized by the requirements of being a closed $H$-field with
 small derivation. 
Moreover, 
the closed $H$-fields  are exactly the existentially closed models of the theory of $H$-fields.

\section*{Notions of Normality}\label{section:flowchart}

\noindent
In the diagram below we collect various normality conditions on slots and holes of order~$\ge 1$, together with
the section of first occurrence and a list of preservation results.
Concepts below the dotted line apply to any
  $H$-asymptotic field with small derivation and rational asymptotic integration,  while
 those above it apply to any real closed $H$-field with small derivation and asymptotic integration, except for ``ultimate'' where we assume also being Liouville closed. 
 Arrows indicate logical implications between the various conditions. 
 
\bigskip

\tikzstyle{narrowbox} = [rectangle, draw=black, thick, fill=white,
      text width=6.5em, text ragged,  inner sep=0.5em,  rounded corners]
\tikzstyle{box} = [rectangle, draw=black, thick, fill=white,
      text width=7em, text ragged,  inner sep=0.5em,  rounded corners]
\tikzstyle{verynarrowbox} = [rectangle, draw=black, thick, fill=white,
      text width=5em, text ragged, inner sep=0.5em,  rounded corners]
\tikzstyle{widebox} = [rectangle, draw=black, thick, fill=white,
      text width=9em, text ragged,  inner sep=0.5em,  rounded corners]

\noindent\begin{tikzpicture}[node distance=12em, scale=0.85, every node/.style={scale=0.85}]
  
\node [box, text width=8.5em] (stronglyrepulsivenormal) at (-2.25,0) {\textit{strongly repulsive-normal} (\ref{sec:repulsive-normal})\\[0.1em] \small \ref{stronglyrepnormalrefine}, \ref{pqr}, \ref{cor:strongly repulsive-normal compconj}};  
      
\node [box,text width=10em, right of=stronglyrepulsivenormal] (almoststronglyrepulsivenormal) {\textit{almost strongly repulsive-normal} (\ref{sec:repulsive-normal})\\[0.1em] \small \ref{stronglyrepnormalrefine}, \ref{pqr}, \ref{cor:strongly repulsive-normal compconj}}; 
    
\node [box, text width=9.75em, right of=almoststronglyrepulsivenormal] (repulsivenormal) {\textit{repulsive-normal} (\ref{sec:repulsive-normal})\\[0.1em]\small
\ref{lem:repulsive-normal comp conj}--\ref{lem:5.21 repulsive-normal} }; 
    
\node [box, text width=8.5em] (stronglysplitnormal)  at (-2.25,-2.25)  {\textit{strongly split-normal} (\ref{sec:split-normal holes})\\[0.1em]\small \ref{stronglysplitnormalrefine}, \ref{stronglysplitnormalrefine, q}, \ref{lem:strongly split-normal compconj}};   
    
\node [widebox,  text width=8.5em,  right of=stronglysplitnormal] (almoststronglysplitnormal) {\textit{almost strongly split-normal} (\ref{sec:split-normal holes})\\[0.1em]\small \ref{stronglysplitnormalrefine}, \ref{stronglysplitnormalrefine, q}, \ref{lem:strongly split-normal compconj}}; 
    
\node [box, text width=8em, right of=almoststronglysplitnormal] (splitnormal) {\textit{split-normal} (\ref{sec:split-normal holes})\\[0.1em] \small
    \ref{lem:split-normal comp conj}, \ref{splitnormalrefine}--\ref{splnq}    }; 

\node [widebox] (strictlynormal) at (-2.25,-4.75) {\textit{strictly  normal} (\ref{sec:normalization}) \\[0.1em] \small \ref{lem:normality comp conj, strong}--\ref{rem:strongly normal refine, 2} }; 

\node [box, text width=5.75em] at (6.2,-4.75) (normal) {\textit{normal} (\ref{sec:normalization}) \\[0.1em] \small\ref{lem:normality comp conj}, \ref{ufm}, \\\ref{normalrefine}--\ref{cor:normal for small q} }; 

\node [verynarrowbox, text width=4.6em] at (7,-6.5) (steep) {\textit{steep}  (\ref{sec:normalization})\\[0.1em] \small\ref{lem:steep1}  }; 

\node [box, text width=7.25em, left of=steep] (quasilinear) {\textit{quasilinear} (\ref{sec:holes}) \\[0.1em] \small \ref{cor:ref 2n} }; 

\node [verynarrowbox, text width=4.5em] at (4,-8.5) (deep) {\textit{deep}  (\ref{sec:normalization})\\[0.1em] \small\ref{lem:deep 1}--\ref{cor:deep 2, cracks}}; 

\node [narrowbox, text width=6.25em] at (9.5,-1.75) (ultimate) {\textit{ultimate}  (\ref{sec:ultimate})\\[0.1em]\small \ref{lem:ultimate refinement}, \ref{lem:ultmult} }; 

\node [narrowbox, text width=5.75em] at (9.5,-5) (isolated) {\textit{isolated}  (\ref{sec:isolated})\\[0.1em]\small \ref{lem:isolated refinement}, \ref{lem:isolated}};

\draw [dotted, thick] (-4,-3.4) -- (10.75,-3.4);
\draw [->, thick,  double distance=0.1em, arrows = -{Stealth[length=0.8em]}] (stronglyrepulsivenormal) -- (almoststronglyrepulsivenormal);
\draw [->, thick,  double distance=0.1em, arrows = -{Stealth[length=0.8em]}] (almoststronglyrepulsivenormal) -- (repulsivenormal);
\draw [->, thick,  double distance=0.1em, arrows = -{Stealth[length=0.8em]}] (repulsivenormal) -- (splitnormal);
\draw [->, thick,  double distance=0.1em, arrows = -{Stealth[length=0.8em]}] (stronglyrepulsivenormal) -- (stronglysplitnormal);
\draw [->, thick,  double distance=0.1em, arrows = -{Stealth[length=0.8em]}] (almoststronglyrepulsivenormal) -- (almoststronglysplitnormal);
\draw [->, thick,  double distance=0.1em, arrows = -{Stealth[length=0.8em]}] (stronglysplitnormal) -- (strictlynormal);
\draw [->, thick,  double distance=0.1em, arrows = -{Stealth[length=0.8em]}] (strictlynormal) -- (normal);
\draw [->, thick,  double distance=0.1em, arrows = -{Stealth[length=0.8em]}] (splitnormal) -- (normal);
\draw [->, thick,  double distance=0.1em, arrows = -{Stealth[length=0.8em]}] (normal) -- (quasilinear);
\draw [->, thick,  double distance=0.1em, arrows = -{Stealth[length=0.8em]}] (normal) -- (steep);
\draw [->, thick,  double distance=0.1em, arrows = -{Stealth[length=0.8em]}] (deep) -- (quasilinear);
\draw [->, thick,  double distance=0.1em, arrows = -{Stealth[length=0.8em]}] (deep) -- (steep);
\draw [->, thick,  double distance=0.1em, arrows = -{Stealth[length=0.8em]}] (ultimate) -- (isolated);
\draw [->, thick,  double distance=0.1em, arrows = -{Stealth[length=0.8em]}] (stronglysplitnormal) -- (almoststronglysplitnormal);
\draw [->, thick,  double distance=0.1em, arrows = -{Stealth[length=0.8em]}] (almoststronglysplitnormal) -- (splitnormal);

 \end{tikzpicture}     

\smallskip 
\noindent
We conclude with listing significant normalization results that we prove on the way to the main theorem stated earlier in this introduction. Throughout, $r\in \N^{\ge 1}$.

\medskip
\noindent
{\bf Theorem~\ref{mainthm}.} \textit{If $K$ is an $\upo$-free $r$-linearly newtonian $H$-asymptotic field with small derivation and 
divisible value group, then every $Z$-minimal slot in $K$
of order~$r$ has a refinement $(P,\fm,\hat a)$ such that~$(P^\phi,\fm,\hat a)$ is deep and normal, eventually.}
 
\medskip
\noindent
Below 
$H$ is an  $\upo$-free real closed $H$-field with small derivation, $\hat H$ is an immediate asymptotic extension of~$H$, and
 $K:=H[\imag]$,  $\hat K:= \hat H[\imag]$.
Moreover, $(P,\fm,\hat a)$ is a minimal hole in $K$ of order $r$, with  $\fm\in H^\times$ and~$\hat a=\hat b+\hat c\imag\in\hat K\setminus K$ ($\hat b,\hat c\in\hat H$).

\medskip

{\noindent
{\bf Theorem~\ref{thm:split-normal}.}  \textit{If $H$ is $1$-linearly newtonian, then one of the following holds:
\begin{list}{}{\leftmargin=2em}
\item[$\mathrm{(i)}$] $\hat b\notin H$  and some $Z$-minimal slot $(Q,\fm,\hat b)$ in $H$ has  a re\-fine\-ment~${(Q_{+b},\fn,\hat b-b)}$ such that $(Q^\phi_{+b},\fn,\hat b-b)$ is eventually deep and split-nor\-mal; 
\item[$\mathrm{(ii)}$] $\hat c\notin H$  and some $Z$-minimal slot $(R,\fm,\hat c)$ in $H$ has a refinement~${(R_{+c},\fn,\hat c-c)}$ such that $(R^\phi_{+c},\fn,\hat c-c)$ is eventually deep and split-normal. 
\end{list}}}

\medskip
{\noindent
{\bf Theorem~\ref{thm:strongly split-normal}.} \textit{If $H$ is $1$-linearly newtonian, then one of the following holds: 
\begin{list}{}{\leftmargin=2em}
\item[\textup{(i)}] $\hat b\notin H$  and some $Z$-minimal slot $(Q,\fm,\hat b)$ in $H$ has a refinement~${(Q_{+b},\fn,\hat b-b)}$ such that $(Q^\phi_{+b},\fn,\hat b-b)$ is eventually deep and almost strongly split-normal; 
\item[\textup{(ii)}] $\hat c\notin H$  and some $Z$-minimal slot $(R,\fm,\hat c)$ in $H$ has a refinement~${(R_{+c},\fn,\hat c-c)}$ such that $(R^\phi_{+c},\fn,\hat c-c)$ is eventually deep and almost strongly split-normal. 
\end{list}
Moreover, if $H$ is $1$-linearly newtonian and either  $\deg P>1$, or $\hat b\notin H$ and   $Z(H,\hat b)$ contains an element of order~$1$, or $\hat c\notin H$ and   $Z(H,\hat c)$ contains an element of order~$1$,
then \textup{(i)} holds with ``almost'' omitted, or  \textup{(ii)} holds with ``almost'' omitted. }}

\bigskip

{\noindent\sloppy {\bf Theorem~\ref{thm:repulsive-normal}.} \textit{Suppose the constant field $C$ of $H$ is archimedean and $\deg P>1$. Then one of the following conditions is satisfied:
\begin{list}{}{\leftmargin=2em}
\item[$\mathrm{(i)}$] $\hat b\notin H$  and some $Z$-minimal  slot $(Q,\fm,\hat b)$ in $H$ has a special re\-fine\-ment~${(Q_{+b},\fn,\hat b-b)}$ such that $(Q^\phi_{+b},\fn,\hat b-b)$ is eventually deep and repulsive-nor\-mal; 
\item[$\mathrm{(ii)}$] $\hat c\notin H$  and some  $Z$-minimal  slot $(R,\fm,\hat c)$ in $H$ has a special  re\-fine\-ment~${(R_{+c},\fn,\hat c-c)}$ such that   $(R^\phi_{+c},\fn,\hat c-c)$ is eventually deep and repulsive-nor\-mal. 
\end{list}}

 \medskip
\noindent
{\bf Theorem~\ref{thm:strongly repulsive-normal}.} \textit{Suppose $H$ is Liouville closed, $C$ is archimedean, $\I(K)\subseteq K^\dagger$,  and $\deg P>1$. Then one of the following conditions is satisfied:
\begin{list}{}{\leftmargin=2em}
\item[$\mathrm{(i)}$] $\hat b\notin H$  and some $Z$-minimal slot $(Q,\fm,\hat b)$ in $H$ has a special re\-fine\-ment~${(Q_{+b},\fn,\hat b-b)}$ such that $(Q^\phi_{+b},\fn,\hat b-b)$ is eventually deep, strongly re\-pul\-sive-normal, and ultimate; 
\item[$\mathrm{(ii)}$] $\hat c\notin H$  and some $Z$-minimal slot $(R,\fm,\hat c)$ in $H$ has a special re\-fine\-ment~${(R_{+c},\fn,\hat c-c)}$ such that $(R^\phi_{+c},\fn,\hat c-c)$ is eventually deep, strongly re\-pul\-sive-normal, and ultimate. 
\end{list}}}

\section*{Acknowledgements}

\noindent
Various institutions supported this work during its genesis:
Aschenbrenner and van den Dries thank the Institut Henri Poincar\'e, Paris, for its
hospitality  during
the special semester ``Model Theory, Combinatorics and Valued Fields'' in   2018.
Aschenbrenner   also   thanks the Institut f\"ur Ma\-the\-ma\-ti\-sche Logik und Grundlagenforschung, Universit\"at M\"unster, for     a M\"unster Research Fellowship during the spring of~2019, and he    acknowledges partial support from NSF Research Grant DMS-1700439.
All three authors   received support from the Fields Institute for   Research in Mathematical Sciences, Toronto, during its 
Thematic Program ``Tame Geometry, Transseries and Applications to Analysis and Geometry'', Spring~2022.
We also thank the anonymous referees for numerous suggestions that led to an improved exposition.

\newpage

\addtocontents{toc}{\protect\setcounter{tocdepth}{1}} 
\renewcommand\theequation{\arabic{part}.\arabic{section}.\arabic{equation}}
\renewcommand\thetheorem{\arabic{part}.\arabic{section}.\arabic{theorem}}


\newpage

\part{Preliminaries}\label{part:preliminaries}

\medskip

\noindent
After generalities on linear differential operators and differential polynomials in
Section~\ref{sec:diff ops and diff polys}, we  investigate the group of
logarithmic derivatives in valued differential fields of various kinds (Section~\ref{sec:logder}). We also
assemble some basic preservation theorems for {\it $\upl$-freeness}\/ and {\it $\upo$-freeness}\/ (Section~\ref{sec:uplupo-freeness})
and continue the study of linear differential operators over $H$-asymptotic fields initiated in [ADH, 5.6, 14.2] (Section~\ref{sec:lindiff}).
In our analysis of the solutions of algebraic differential equations over $H$-asymptotic fields in Chapter~\ref{part:normalization},
special pc-sequences in the sense of~[ADH, 3.4] play an important role; Section~\ref{sec:special elements} explains why.
A cornerstone of~[ADH] is the concept of {\it newtonianity}\/,  an analogue of henselianity
appropriate for $H$-asymptotic fields with asymptotic integration [ADH, Chapter~14].
Related to this is  {\it differential-henselianity}\/~[ADH, Chapter~7], which makes sense for a broader class of
valued differential fields.
In Sections~\ref{sec:completion d-hens} and~\ref{sec:complements newton} we further explore these notions. Among other things, we study their persistence under taking the completion of a  valued differential field with small derivation (as defined in [ADH, 4.4]).

\section{Linear Differential Operators and Differential Polynomials}\label{sec:diff ops and diff polys}

\noindent
This section gathers miscellaneous facts of a general nature 
about linear differential operators and differential polynomials, sometimes in a valued differential setting. We   discuss splittings and least common left multiples of linear differential operators,  
and then prove some useful estimates for derivatives
of exponential terms and for Riccati transforms.

\subsection*{Splittings}
In this subsection $K$ is a differential field. Let $A\in K[\der]^{\neq}$ be monic of order~$r\geq 1$.
A {\bf splitting of $A$ over $K$} is a tuple~$(g_1,\dots, g_r)\in K^r$ such that~$A=(\der-g_1)\cdots(\der-g_r)$. If $(g_1,\dots, g_r)$ is a   splitting of
$A$ over $K$ and $\fn\in K^\times$, then~$(g_1-\fn^\dagger,\dots, g_r-\fn^\dagger)$
is a   splitting of~$A_{\ltimes \fn}=\fn^{-1}A\fn$ over $K$.\index{splitting}\index{linear differential operator!splitting}

\medskip
\noindent
Suppose $A=A_1\cdots A_m$ where every $A_i\in K[\der]$ is monic of positive order $r_i$ (so~$r=r_1+\cdots + r_m$). Given any splittings 
$$(g_{11},\dots, g_{1r_1}),\ \dots,\ (g_{m1},\dots,g_{mr_m})$$ of $A_1,\dots, A_m$, respectively, we obtain a splitting 
$$\big(g_{11},\dots, g_{1r_1},\ \dots,\  g_{m1},\dots, g_{mr_m}\big)$$
of $A$ by concatenating the given splittings of $A_1,\dots, A_m$ in the order indicated, and call it a splitting of $A$ {\bf induced} by the factorization $A=A_1\cdots A_m$.  \index{splitting!induced by a factorization}
 For $B\in K[\der]$ of order $r\ge 1$ we have $B=bA$ with $b\in K^{\times}$ and monic
$A\in K[\der]$, and then a {\bf  splitting of $B$ over~$K$\/} is by definition a  splitting of $A$ over $K$.
A splitting of~$B$ over $K$ remains a splitting of $aB$ over $K$, for any $a\in K^\times$. Thus:

\begin{lemma}\label{lem:split and twist} 
If $B\in K[\der]$ has order $r\geq 1$, and $(g_1,\dots, g_r)$ is a splitting of
$B$ over $K$ and $\fn\in K^\times$, then~$(g_1-\fn^\dagger,\dots, g_r-\fn^\dagger)$
is a   splitting of $B_{\ltimes\fn}$ over $K$ and  a splitting of $B\fn$ over $K$.
\end{lemma}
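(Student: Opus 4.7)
The plan is to reduce both assertions to the monic case that was already established in the paragraph preceding the statement. The key computational fact in play is that the centrally relevant operation intertwines: conjugation by $\fn\in K^\times$ commutes with left-multiplication by an element of $K^\times$, and right-multiplication by $\fn$ can be rewritten as left-multiplication by $\fn$ composed with conjugation by $\fn$.

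First I would unwind the definition of a splitting of a non-monic operator: write $B=bA$ with $b\in K^\times$ and $A\in K[\der]$ monic of order $r$, so by definition a splitting of $B$ over $K$ is a splitting of $A$, i.e.\ $A=(\der-g_1)\cdots(\der-g_r)$. The paragraph just above the lemma records that $(g_1-\fn^\dagger,\dots,g_r-\fn^\dagger)$ is a splitting of the monic operator $A_{\ltimes\fn}=\fn^{-1}A\fn$, a fact that rests on the elementary identity $\fn^{-1}(\der-g)\fn=\der-(g-\fn^\dagger)$ applied factor-by-factor.

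For the first assertion, I would simply compute
\[
B_{\ltimes\fn}\ =\ \fn^{-1}B\fn\ =\ \fn^{-1}(bA)\fn\ =\ b\,\fn^{-1}A\fn\ =\ b\,A_{\ltimes\fn},
\]
using that $b\in K$ commutes with $\fn^{\pm 1}\in K$ inside $K[\der]$. Since $A_{\ltimes\fn}$ is monic and $b\in K^\times$, a splitting of $B_{\ltimes\fn}$ over $K$ is by definition a splitting of $A_{\ltimes\fn}$, which the preceding paragraph already identifies as $(g_1-\fn^\dagger,\dots,g_r-\fn^\dagger)$.

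For the second assertion I would rewrite
\[
B\fn\ =\ bA\fn\ =\ (b\fn)\bigl(\fn^{-1}A\fn\bigr)\ =\ (b\fn)\,A_{\ltimes\fn},
\]
noting that $b\fn\in K^\times$ and $A_{\ltimes\fn}$ is monic, so again a splitting of $B\fn$ is a splitting of $A_{\ltimes\fn}$ and the same tuple works. There is no real obstacle here—the lemma is purely a bookkeeping extension of the monic statement, and the only point deserving a line of justification is the commutation $b\fn^{-1}=\fn^{-1}b$ in $K[\der]$, which is immediate since $b,\fn^{-1}\in K$.
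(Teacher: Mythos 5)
Your proof is correct and follows essentially the same route as the paper, which states the lemma as an immediate consequence ("Thus:") of the definition of a splitting of a non-monic operator via $B=bA$, the monic twisting fact, and the invariance of splittings under left-multiplication by units. Your explicit computations $B_{\ltimes\fn}=bA_{\ltimes\fn}$ and $B\fn=(b\fn)A_{\ltimes\fn}$ are exactly the bookkeeping the paper leaves to the reader.
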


\noindent
Let $A\in K[\der]^{\neq}$ and $r:=\order(A)$. 
From [ADH,~5.1,~5.7] we know that if $A$ splits over $K$, then for any~$\phi\in K^\times$ the
operator~$A^\phi\in K^\phi[\derdelta]$ splits over $K^\phi$; here is how a splitting of $A$ over $K$ transforms into
a splitting of $A^\phi$ over $K^\phi$: 

\begin{lemma}\label{lem:split and compconj}
Let $\phi\in K^\times$ and $$A\ =\ c(\der-a_1)\cdots(\der-a_r)\quad\text{ with $c\in K^\times$ and $a_1,\dots, a_r\in K$.}$$
Then  in $K^\phi[\derdelta]$ we have
$$A^\phi\ =\ c\phi^r(\derdelta-b_1)\cdots(\derdelta-b_r)\quad\text{ where
$b_j:=\phi^{-1}\big(a_j-(r-j)\phi^\dagger\big)$   \textup{(}$j=1,\dots,r$\textup{)}.}$$
\end{lemma}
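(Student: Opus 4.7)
The plan is to reduce the identity to a statement purely inside $K^\phi[\derdelta]$ by using the defining relation $\der = \phi\derdelta$, and then prove that statement by reverse induction on the position of the factor one is peeling off.

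First I would observe that in $K^\phi[\derdelta]$ we have $\der = \phi\derdelta$, hence $\der - a_j = \phi\derdelta - a_j$. So, since constants factor out, the claim becomes
\[
  \prod_{j=1}^{r}(\phi\derdelta - a_j)\ =\ \phi^{r}\prod_{j=1}^{r}(\derdelta - b_j)
  \qquad\text{in }K^\phi[\derdelta],
\]
with $b_j = \phi^{-1}\bigl(a_j - (r-j)\phi^\dagger\bigr)$. The key commutation relation I would isolate first is that for each $n\in\N$,
\[
  \derdelta\cdot \phi^n\ =\ \phi^n\derdelta + n\phi^{n-1}\phi^\dagger \qquad\text{in }K^\phi[\derdelta],
\]
which follows because $\derdelta$ is a derivation on $K^\phi$ with $\derdelta(\phi)=\phi^{-1}\phi'=\phi^\dagger$, so $\derdelta(\phi^n)=n\phi^{n-1}\phi^\dagger$.

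Next I would prove by reverse induction on $k\in\{1,\dots,r\}$ the partial identity
\[
  \prod_{j=k}^{r}(\phi\derdelta - a_j)\ =\ \phi^{\,r-k+1}\prod_{j=k}^{r}(\derdelta - b_j).
\]
The base case $k=r$ is the trivial rescaling $\phi\derdelta - a_r = \phi(\derdelta - \phi^{-1}a_r) = \phi(\derdelta - b_r)$, matching the formula since $r-j=0$ when $j=r$. For the inductive step, using the commutation relation with $n=r-k$ and multiplying on the left by $\phi$ gives $\phi\derdelta\cdot\phi^{r-k} = \phi^{r-k+1}\derdelta + (r-k)\phi^{r-k}\phi^\dagger$. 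Subtracting $a_k\phi^{r-k}$ and factoring out $\phi^{r-k+1}$ on the left yields
\[
  (\phi\derdelta - a_k)\,\phi^{r-k}\ =\ \phi^{\,r-k+1}\bigl(\derdelta - \phi^{-1}(a_k - (r-k)\phi^\dagger)\bigr)\ =\ \phi^{\,r-k+1}(\derdelta - b_k),
\]
and concatenating with the inductive hypothesis completes the step. Setting $k=1$ and multiplying by $c$ delivers the stated identity.

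The main obstacle is bookkeeping: one must get the shift $(r-j)\phi^\dagger$ precisely right. Organising the induction from the right (so that the rightmost factor $\der-a_r$, with no $\phi^\dagger$ correction, is the base case) makes the shift appear naturally as one more factor of $\phi$ is commuted past $\derdelta$ at each step.
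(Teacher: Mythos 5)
Your proof is correct and takes essentially the same route as the paper: the paper argues by induction on $r$, peeling off the leftmost factor and commuting $\phi^{r-1}$ past it, which when unwound is exactly your downward induction on $k$ with the rightmost factor $\der-a_r$ (no $\phi^\dagger$ correction) as base case. The only cosmetic difference is that you verify the commutation $\derdelta\,\phi^{n}=\phi^{n}\derdelta+n\phi^{n-1}\phi^\dagger$ by hand, whereas the paper obtains the same step by citing the twist identity $\big(\derdelta-(a_1/\phi)\big)_{\ltimes\phi^{r-1}}=\derdelta-(a_1/\phi)+(r-1)\phi^\dagger/\phi$ from [ADH, p.~243].
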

\begin{proof}
Induction on $r$. The case $r=0$ being obvious, suppose $r\geq 1$, and set~$B:=(\der-a_2)\cdots(\der-a_r)$.  By inductive hypothesis 
$$B^\phi=\phi^{r-1}(\derdelta-b_2)\cdots(\derdelta-b_r)\quad\text{ where
$b_j:=\phi^{-1}\big(a_j-(r-j)\phi^\dagger\big)$  for $j=2,\dots,r$.}$$
Then
$$A^\phi\ =\ c\phi\,\big(\derdelta-(a_1/\phi)\big)\,B^\phi\ =\ c\phi^r \, \big(\derdelta-(a_1/\phi)\big)_{\ltimes \phi^{r-1}}\, (\derdelta-b_2)\cdots(\derdelta-b_r)$$
with $$\big(\derdelta-(a_1/\phi)\big)_{\ltimes \phi^{r-1}} = \derdelta-(a_1/\phi) + (r-1)\phi^\dagger/\phi$$ by [ADH, p.~243].
\end{proof}

\noindent
A different kind of factorization, see for example~\cite{Polya},
reduces the process of solving the differential equation~$A(y)=0$ to repeated multiplication and integration:

\begin{lemma}\label{lem:Polya fact} 
Let $A\in K[\der]^{\neq}$ be monic of order $r\geq 1$. If $b_1,\dots,b_r\in K^\times$ and
$$A\ =\ b_1\cdots b_{r-1}b_r (\der b_r^{-1})( \der b_{r-1}^{-1})\cdots (\der b_1^{-1}),$$ then $(a_r,\dots,a_1)$, where $a_j:=(b_1\cdots b_j)^\dagger$ for $j=1,\dots,r$,
is a splitting of $A$ over~$K$. Conversely, if $(a_r,\dots,a_1)$ is a splitting of $A$ over $K$ and $b_1,\dots,b_r\in K^\times$ are such that
$b_j^\dagger=a_j-a_{j-1}$ for $j=1,\dots,r$ with $a_0:=0$, then $A$ is as in the display.
\end{lemma}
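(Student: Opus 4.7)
The plan is to reduce everything to the single identity
\[
  b\,\der\,b^{-1} \ = \ \der - b^\dagger \qquad (b\in K^\times)
\]
in $K[\der]$, which is immediate from $\der\cdot b^{-1}=b^{-1}\der+(b^{-1})'$ and $(b^{-1})'=-b^{-2}b'$. Once this is available, both directions of the lemma are just telescoping.

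For the first direction, set $B_j:=b_1\cdots b_j$ for $j=1,\dots,r$, and put $B_0:=1$. Then $a_j=B_j^\dagger$, so by the key identity $\der-a_j=B_j\,\der\,B_j^{-1}$, and I want to verify
\[
 (\der-a_r)(\der-a_{r-1})\cdots(\der-a_1)\ =\ b_1\cdots b_r\,(\der b_r^{-1})(\der b_{r-1}^{-1})\cdots(\der b_1^{-1}).
\]
The left-hand side equals $\bigl(B_r\der B_r^{-1}\bigr)\bigl(B_{r-1}\der B_{r-1}^{-1}\bigr)\cdots\bigl(B_1\der B_1^{-1}\bigr)$; between consecutive blocks the factor $B_j^{-1}B_{j-1}$ appears, and the telescoping $B_j^{-1}B_{j-1}=b_j^{-1}$ (together with $B_0=1$ absorbing the rightmost $B_1^{-1}$ into $b_1^{-1}$ and $B_r$ contributing the prefix $b_1\cdots b_r$) yields exactly the expression on the right. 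This gives that $(a_r,\dots,a_1)$ is a splitting of $A$ over $K$.

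For the converse, assume $(a_r,\dots,a_1)$ is a splitting of $A$ and pick $b_1,\dots,b_r\in K^\times$ with $b_j^\dagger=a_j-a_{j-1}$ ($a_0:=0$). Summing and using additivity of the logarithmic derivative,
\[
  (b_1\cdots b_j)^\dagger\ =\ \sum_{i=1}^{j}b_i^\dagger\ =\ \sum_{i=1}^{j}(a_i-a_{i-1})\ =\ a_j,
\]
so $B_j^\dagger=a_j$, i.e.\ the $b_j$'s satisfy exactly the hypothesis of the first direction. Applying the calculation above then shows $A$ equals $b_1\cdots b_r(\der b_r^{-1})\cdots(\der b_1^{-1})$.

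The only real obstacle is keeping the bookkeeping straight: the factorization reads right-to-left in $K[\der]$, the subscripts $j=r,\dots,1$ match the $B_j\der B_j^{-1}$ blocks in decreasing order, and one must be careful that $B_0=1$ handles the boundary cleanly. Once indices are fixed, both directions are a single line of telescoping.
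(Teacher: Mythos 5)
Your proof is correct: the identity $b\,\der\,b^{-1}=\der-b^\dagger$ and the telescoping $B_j^{-1}B_{j-1}=b_j^{-1}$ (with $B_0=1$) give both directions, and the converse correctly reduces to the forward computation via $(b_1\cdots b_j)^\dagger=\sum_{i\le j}b_i^\dagger=a_j$. The paper offers no details beyond ``this follows easily by induction on $r$,'' and your telescoping is exactly what that induction unwinds to, so this is essentially the same argument, just written in closed form.
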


\noindent
This follows easily by induction on $r$.

\subsection*{Real splittings} Let $H$ be a differential field in which
$-1$ is not a square. Then we let $\imag$ denote an element in a differential field extension of $H$ with $\imag^2=-1$, and consider the
differential field $K=H[\imag]$. 
Suppose $A\in H[\der]$ is monic of order $2$ and splits over $K$, so 
$$A\ =\ (\der-f)(\der-g),\qquad f,g\in K.$$ Then 
$$A\ =\ \der^2-(f+g)\der+fg-g',$$ and thus $f\in H$ iff $g\in H$. 
One checks easily that if $g\notin H$, then there are unique~$a,b\in H$ with $b\ne 0$ such that 
$$f\ =\  a-b\imag+b^\dagger, \qquad g\ =\ a+b\imag,$$
and thus $$A\  =\  \der^2-(2a+b^\dagger)\der + a^2+ b^2 -a'+ab^\dagger.$$
Conversely, if $a,b\in H$ and $b\ne 0$, then for $f:=a-b\imag+b^\dagger$ and $g:=a+b\imag$ we have~$(\der -f)(\der-g)\in H[\der]$. 

\medskip
\noindent
Let now $A\in H[\der]$ be monic of order $r\ge 1$.
Recall: $A$ is  {\it irreducible}\/ iff there are no monic $A_1,A_2\in K[\der]$ of positive order with $A=A_1A_2$; cf.~[ADH, p.~250].\index{irreducible}\index{linear differential operator!irreducible} 

\begin{lemma}\label{hkspl} Suppose $A$ splits over $K$. Then
  $A=A_1\cdots A_m$ for some  $A_1,\dots, A_m$ in $H[\der]$ that are monic and irreducible of order $1$ or $2$ and split over $K$. 
\end{lemma}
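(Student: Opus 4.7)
The plan is to induct on $r=\order A$. The base case $r=1$ is immediate: writing $A=\der-g_1$ with $g_1\in K$, the assumption $A\in H[\der]$ forces $g_1\in H$, and $A$ itself is monic, irreducible, of order $1$, and splits over $K$ trivially.

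For the inductive step with $r\geq 2$, fix a splitting $A=(\der-g_1)\cdots(\der-g_r)$ over $K$. If $g_r\in H$, then $\der-g_r\in H[\der]$ is a monic right divisor of $A$ in $K[\der]$; by uniqueness of right Euclidean division in $H[\der]$, it is a right divisor already in $H[\der]$. Writing $A=A'\cdot(\der-g_r)$ with $A'\in H[\der]$ of order $r-1$, we see $A'=(\der-g_1)\cdots(\der-g_{r-1})$ in $K[\der]$, so $A'$ splits over $K$. The inductive hypothesis applied to $A'$ gives its factorization, and appending $\der-g_r$ completes the job.

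The nontrivial case is $g_r\notin H$. Then $\bar{g_r}\neq g_r$, and applying the canonical involution $\bar{\cdot}$ of $K=H[\imag]$ to the splitting of $A$ shows that $\der-\bar{g_r}$ also right-divides $A$ in $K[\der]$. Since $g_r\neq\bar{g_r}$, the operators $\der-g_r$ and $\der-\bar{g_r}$ are coprime monic order-$1$ operators in $K[\der]$, so their least common left multiple $B\in K[\der]$ is monic of order $2$. Since $B=Q(\der-g_r)$ for some monic $Q\in K[\der]$ of order $1$, we get $B=(\der-h)(\der-g_r)$ for some $h\in K$, so $B$ splits over $K$; and $B$ right-divides $A$ in $K[\der]$ as the LCLM of two right divisors of $A$. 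Applying $\bar{\cdot}$ to $B$ returns the LCLM of $\der-\bar{g_r}$ and $\der-g_r$, namely $B$ itself, so $B\in H[\der]$; hence $B$ right-divides $A$ in $H[\der]$, and we write $A=C\cdot B$ with $C\in H[\der]$ monic of order $r-2$.

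The crucial remaining step is to show that $C$ splits over $K$. For this I would invoke the general fact that left-quotients of splitting operators split: viewing the short exact sequence of left $K[\der]$-modules
$$0\,\longrightarrow\,K[\der]/K[\der]C\,\longrightarrow\,K[\der]/K[\der]A\,\longrightarrow\,K[\der]/K[\der]B\,\longrightarrow\,0,$$
where the submodule embedding is induced by right-multiplication by $B$, any composition series of $K[\der]/K[\der]A$ with $1$-dimensional simple subquotients (which exists because $A$ splits) restricts to such a series on the submodule $K[\der]/K[\der]C$, showing that $C$ splits. Applying the inductive hypothesis to $C$ (vacuous if $r=2$, in which case $C=1$) gives $C=A_1\cdots A_l$ in the desired form, and then $A=A_1\cdots A_l\cdot B$. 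If $B$ is irreducible in $H[\der]$ we are done; otherwise $B=B_1B_2$ with $B_1,B_2\in H[\der]$ monic of order $1$, each automatically irreducible and trivially splitting over $K$, and $A=A_1\cdots A_l B_1 B_2$ is the required factorization. The main obstacle is exactly the claim that $C$ splits over $K$; the conjugation/LCLM construction of $B$ is routine once one has the description of second-order operators over $H$ given immediately before the lemma.
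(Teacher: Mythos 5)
Your proof is correct, but it proceeds along a genuinely different route from the paper's. The paper's proof is two lines, relying on two facts from~[ADH] cited as black boxes: by [ADH, 5.1.35] every monic $A\in H[\der]^{\neq}$ factors as a product of monic irreducibles in $H[\der]$ of order~$1$ or~$2$ (a general structure theorem that uses only that $-1$ is not a square in $H$, with no hypothesis on $A$), and then [ADH, 5.1.22] says precisely that factors over $K$ of an operator that splits over $K$ again split over $K$, which applies to those $H[\der]$-factors. Your argument instead builds the factorization by hand: you induct on the order, peel off the rightmost factor $\der-g_r$ of a $K$-splitting, and when $g_r\notin H$ you take $B=\operatorname{lclm}(\der-g_r,\der-\bar{g_r})$, which is monic of order~$2$, fixed by the conjugation automorphism (hence in $H[\der]$), splits over $K$, and right-divides $A$ in $H[\der]$. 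The conjugation/LCLM step is essentially a redo of Example~\ref{ex:lclm compl conj} specialized to order~$1$, and your module-theoretic argument that the left quotient $C$ splits is in substance a self-contained proof of (the relevant half of) [ADH, 5.1.22]. So what the paper gets by citation you re-derive. One small imprecision: a composition series of $K[\der]/K[\der]A$ does not literally ``restrict'' to a composition series of the submodule $K[\der]/K[\der]C$; the correct formulation is that by Jordan--H\"older the composition factors of the submodule form a sub-multiset of those of the whole module, hence are all $1$-dimensional, so $K[\der]/K[\der]C$ admits a filtration with $1$-dimensional quotients and therefore $C$ splits. With that adjustment the argument is sound. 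Your approach is more elementary and explicit than the paper's; the paper's is shorter because it can appeal to the general irreducible-factorization theorem [ADH, 5.1.35], which in particular handles all the casework you do by hand.
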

\begin{proof} By [ADH, 5.1.35], $A=A_1\cdots A_m$, where every $A_i\in H[\der]$ is monic and irreducible of order $1$ or $2$.
By [ADH, 5.1.22], such $A_i$ split over $K$.  
\end{proof}

\begin{definition}\label{def:real splitting}  \index{splitting!real}\index{linear differential operator!real splitting}
A {\bf real splitting of $A$} (over $K$) is a splitting of~$A$ over $K$ that is induced by a factorization $A=A_1\cdots A_m$ where every $A_i\in H[\der]$ is monic of order $1$ or $2$ and splits over $K$. (Note that we do not require the $A_i$ of order $2$ to be irreducible in $H[\der]$.)
\end{definition}

\noindent
Thus if $A$ splits over $K$, then  $A$ has a real splitting over $K$ by Lemma~\ref{hkspl}. Note that if~$(g_1,\dots, g_r)$ is a real splitting of
$A$ and $\fn\in H^\times$, then~$(g_1-\fn^\dagger,\dots, g_r-\fn^\dagger)$
is a real splitting of $A_{\ltimes \fn}$.  

\medskip
\noindent
It is convenient to extend the above slightly: for $B\in H[\der]$ of order $r\ge 1$ we have~$B=bA$ with $b\in H^{\times}$ and monic
$A\in H[\der]$, and then a {\bf real splitting of~$B$\/}~(over~$K$) is by definition a real splitting of~$A$ (over $K$).  

\medskip
\noindent
In later use, $H$ is a valued differential field with small derivation such that $-1$ is not a square in the differential residue field $\res(H)$. For such $H$, let $\mathcal{O}$ be the valuation ring of $H$. We make $K$ a valued differential field extension of $H$ with small derivation by taking 
$\mathcal{O}_K=\mathcal{O}+\mathcal{O}\imag$ as the valuation ring of $K$. We have the residue map
$a\mapsto \res a\colon \mathcal{O}_K\to \res(K)$, so
 $\res(K)=\res(H)[\imag]$, writing here $\imag$ for~$\res\imag$.
We extend this map to a ring morphism $B\mapsto\res B\colon\mathcal{O}_K[\der]\to\res(K)[\der]$ by sending
$\der\in \mathcal{O}[\der]$ to $\der\in \res(K)[\der]$.  

\begin{lemma}\label{lem:lift real splitting}
Suppose $(g_1,\dots,g_r)\in \res(K)^r$ is a real splitting of a monic
operator $D \in\res(H)[\der]$ of order $r\ge 1$.
Then there are $b_1,\dots,b_r\in\mathcal O_K$ such that 
$$B\ :=\ (\der-b_1)\cdots (\der-b_r)\in \mathcal O[\der],$$
$(b_1,\dots,b_r)$ is a real splitting of $B$, 
and $\res b_j=g_j$ for~$j=1,\dots,r$.
\end{lemma}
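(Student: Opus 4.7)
The plan is to proceed factor by factor. Since $(g_1,\dots,g_r)$ is a real splitting of $D$, it is induced by a factorization $D = D_1 \cdots D_m$ with each $D_i \in \res(H)[\der]$ monic of order $1$ or $2$ and splitting over $\res(K) = \res(H)[\imag]$. It suffices to lift each $D_i$ separately to some $B_i \in \mathcal{O}[\der]$ of the same order, splitting over $K$ via a splitting that reduces to the given one, and then take $B := B_1 \cdots B_m$ with the concatenated splitting. Closure of $\mathcal{O}[\der]$ under products makes $B \in \mathcal{O}[\der]$ automatic.

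The order-$1$ case is trivial: for $D_i = \der - g$ with $g \in \res(H)$, pick any $b \in \mathcal{O}$ with $\res b = g$ and take $B_i := \der - b \in \mathcal{O}[\der]$. For an order-$2$ factor $D_i$, the analysis preceding the lemma splits into two subcases according to whether the splitting of $D_i$ over $\res(K)$ lives already in $\res(H)$ or not. If $D_i = (\der - f)(\der - g)$ with $f,g \in \res(H)$, choose $a_1,a_2 \in \mathcal{O}$ lifting $f,g$; smallness of the derivation gives $a_2' \in \mathcal{O}$, so $B_i := (\der - a_1)(\der - a_2) = \der^2 - (a_1+a_2)\der + (a_1 a_2 - a_2') \in \mathcal{O}[\der]$, and $(a_1,a_2)$ is a real splitting of $B_i$.

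The interesting subcase is $g \notin \res(H)$. By the analysis preceding the lemma there are unique $a, b \in \res(H)$ with $b \neq 0$ such that $f = a - b\imag + b^\dagger$ and $g = a + b\imag$. Choose $\tilde a, \tilde b \in \mathcal{O}$ with $\res \tilde a = a$ and $\res \tilde b = b$; since $\res \tilde b = b \neq 0$, we have $\tilde b \asymp 1$, so $\tilde b \in \mathcal{O}^\times$. Smallness of the derivation then gives $\tilde b^\dagger = \tilde b'/\tilde b \in \mathcal{O}$. Set
\[
\tilde f\ :=\ \tilde a - \tilde b\,\imag + \tilde b^\dagger,\qquad \tilde g\ :=\ \tilde a + \tilde b\,\imag,
\]
so
\[
B_i\ :=\ (\der - \tilde f)(\der - \tilde g)\ =\ \der^2 - (2\tilde a + \tilde b^\dagger)\der + \tilde a^2 + \tilde b^2 - \tilde a' + \tilde a \tilde b^\dagger,
\]
whose coefficients lie in $\mathcal{O}$, so $B_i \in \mathcal{O}[\der]$. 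Because the residue map $\mathcal{O} \to \res(H)$ is a differential ring morphism and $\tilde b \in \mathcal{O}^\times$, we have $\res(\tilde b^\dagger) = (\res \tilde b)^\dagger = b^\dagger$, hence $\res \tilde f = f$ and $\res \tilde g = g$. Thus $(\tilde f, \tilde g)$ is a real splitting of $B_i$ over $K$ lifting $(f,g)$.

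Concatenating these partial splittings in the order given by $B = B_1 \cdots B_m$ yields $b_1,\dots,b_r \in \mathcal{O}_K$ with $\res b_j = g_j$ such that $(b_1,\dots,b_r)$ is a real splitting of $B \in \mathcal{O}[\der]$. The only subtle point is the order-$2$ non-real case, where it is essential that $b \neq 0$ (so $\tilde b$ is a unit and $\tilde b^\dagger \in \mathcal{O}$); everything else is routine bookkeeping and uses only small derivation plus the normal-form description from the paragraph preceding Definition~\ref{def:real splitting}.
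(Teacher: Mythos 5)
Your proof is correct and follows essentially the same route as the paper's: reduce to factors of order $1$ or $2$ via the definition of real splitting, lift the order-$1$ and real order-$2$ cases trivially, and in the non-real order-$2$ case lift $a,b$ to $\mathcal O$ (with $\tilde b\in\mathcal O^\times$, so $\tilde b^\dagger\in\mathcal O$) and use the normal form $f=a-b\imag+b^\dagger$, $g=a+b\imag$. The paper merely leaves the reduction to $r\in\{1,2\}$ and the recombination of the lifted factors implicit, which you spell out.
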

\begin{proof}
We can assume  
$r\in\{1,2\}$. The case $r=1$ is obvious, so let $r=2$. Then the case
where $g_1, g_2\in \res(H)$ is again obvious, so let $g_1=\res(a)-\res(b)\imag +(\res b)^\dagger$, $g_2=\res(a)+\res(b)\imag$ where
$a,b\in\mathcal O$, $\res b\neq 0$. Set $b_1:=a-b\imag+b^\dagger$, $b_2:=a+b\imag$. Then $b_1,b_2\in\mathcal O_K$ with~$\res b_1=g_1$,~$\res b_2=g_2$, and $B:=(\der-b_1)(\der-b_2)\in\mathcal O[\der]$ have the desired properties.
\end{proof}

\subsection*{Least common left multiples and complex conjugation} 
{\it In this subsection~$H$ is a differential field.}\/
Recall from [ADH,~5.1] the definition of the {\it least common left multiple}\/\index{least common left multiple}\index{linear differential operator!least common left multiple}
$\operatorname{lclm}(A_1,\dots,A_m)$ of operators $A_1,\dots,A_m\in H[\der]^{\neq}$, $m\geq 1$:
this is the monic operator $A\in H[\der]$ such that $H[\der]A_1\cap\cdots \cap H[\der]A_m=H[\der]A$.
For~${A,B\in H[\der]^{\neq}}$ we have:
$$\max\big\{ \!\order(A),\order(B) \big\} \ \leq\ \order\!\big(\!\operatorname{lclm}(A,B)\big)\ \leq\ \order(A)+\order(B).$$
For the inequality on the right, note that the natural $H[\der]$-module morphism 
$$H[\der]\ \to\ \big(H[\der]/H[\der]A\big)\times \big(H[\der]/H[\der]B\big)$$ has kernel $H[\der]\operatorname{lclm}(A,B)$, and
 for  $D\in H[\der]^{\ne}$,
the $H$-linear space $H[\der]/K[\der]D$ has dimension $\order D$. 

\medskip
\noindent
We now assume that $-1$ is not a square in $H$;
then we have a differential field extension $H[\imag]$ where $\imag^2=-1$.
The automorphism $a+b\imag\mapsto \bar{a+b\imag}:= a-b\imag$~(${a,b\in H}$) of the differential field $H[\imag]$ extends uniquely to an automorphism
$A\mapsto \bar{A}$ of the ring $H[\imag][\der]$ with $\bar{\der}=\der$. 
Let $A\in H[\imag][\der]$; then
$\bar{A}=A\Longleftrightarrow A\in H[\der]$. 
Hence if $A\neq 0$ is monic, then~$L:=\operatorname{lclm}(A,\overline{A})\in H[\der]$ and thus
$L= BA = \overline{B}\,\overline{A}$ where $B\in H[\imag][\der]$.

\begin{exampleNumbered}\label{ex:lclm compl conj}
Let $A=\der-a$ where $a\in H[\imag]$. If $a\in H$, then $\operatorname{lclm}(A,\overline{A})=A$, and if~$a\notin H$, then $\operatorname{lclm}(A,\overline{A})=(\der-b)(\der-a)=(\der-\overline{b})(\der-\overline{a})$ where $b\in H[\imag]\setminus H$.
\end{exampleNumbered}

\noindent
Let now $F$ be a differential field extension of $H$ in which $-1$ is not a square; we assume that $\imag$ is an element of a differential ring extension of $F$.  

{\sloppy
\begin{lemma}\label{lem:lclm compl conj}
Let $A\in H[\imag][\der]^{\neq}$ be monic, $b\in H[\imag]$,   $f\in F[\imag]$  such that~${A(f)=b}$.
Let $B\in H[\imag][\der]$ be such that $L:=\operatorname{lclm}(A,\overline{A})=BA$. Then~${L(f)=B(b)}$ and hence~$L\big(\!\Re(f)\big)=\Re\!\big(B(b)\big)$ and  $L\big(\!\Im(f)\big)=\Im\!\big(B(b)\big)$.
\end{lemma}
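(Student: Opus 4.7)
The plan is to prove the two assertions in the obvious order: first the identity $L(f)=B(b)$, which is a purely formal consequence of the factorization $L=BA$, and then the consequences for the real and imaginary parts, which use the fact that $L\in H[\der]$, i.e.\ $L$ has real coefficients.

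\textbf{Step 1: the identity $L(f)=B(b)$.} Since $A, B \in H[\imag][\der]$ act on elements of the differential field extension $F[\imag]$ of $H[\imag]$, composition of operators corresponds to composition of the associated maps: $(BA)(y)=B(A(y))$ for every $y\in F[\imag]$. Hence
$$L(f)\ =\ (BA)(f)\ =\ B(A(f))\ =\ B(b).$$

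\textbf{Step 2: the real/imaginary-part identities.} The key observation, already noted just before the lemma, is that $L=\operatorname{lclm}(A,\overline{A})$ lies in $H[\der]$. Write $L=\sum_n c_n\der^n$ with $c_n\in H$. Since the derivation of $F[\imag]$ commutes with complex conjugation $z\mapsto\bar z$ and fixes $H$, for any $y=u+v\imag\in F[\imag]$ with $u,v\in F$ we get
$$L(y)\ =\ \sum_n c_n(u^{(n)}+v^{(n)}\imag)\ =\ L(u)+L(v)\imag,$$
with $L(u),L(v)\in F$. Consequently $\Re(L(y))=L(\Re y)$ and $\Im(L(y))=L(\Im y)$. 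Applying this to $y=f$ and combining with Step~1 yields
$$L(\Re f)\ =\ \Re(L(f))\ =\ \Re(B(b)),\qquad L(\Im f)\ =\ \Im(L(f))\ =\ \Im(B(b)).$$

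There is no real obstacle here: the lemma is essentially the observation that an operator with coefficients in $H$ commutes with taking real and imaginary parts, combined with the definition of $\operatorname{lclm}$ via the factorization $L=BA$. The only thing to be mindful of is the verification that the derivation on $F[\imag]$ restricts compatibly so that $(u+v\imag)'=u'+v'\imag$ for $u,v\in F$, which is built into the standing convention fixed earlier in the \emph{Differential rings and fields} subsection.
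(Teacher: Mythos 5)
Your proof is correct and is exactly the intended argument: the paper states this lemma without proof, the word "hence" signaling precisely your two steps, namely $L(f)=(BA)(f)=B(A(f))=B(b)$ together with the fact (noted just before the lemma) that $L\in H[\der]$, so that $L$ commutes with $\Re$ and $\Im$ because conjugation is a differential automorphism fixing $H$. Nothing further is needed.
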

}

\noindent
In \cite{ADH5} we shall need a slight extension of this lemma:

\begin{remarkNumbered}\label{rem:lclm compl conj}
Let $F$ be a differential ring extension of $H$ in which $-1$ is not a square and let $\imag$ be an element of a commutative ring extension of $F$ such that~${\imag^2=-1}$ and the $F$-algebra $F[\imag]=F +  F\imag$ is a free
$F$-module with basis $1$,~$\imag$. For~$f=g+h\imag\in F[\imag]$ with $g,h\in F$ we set $\Re(f):= g$ and $\Im(f):= h$. We make $F[\imag]$ into a differential ring extension of~$F$ in the only way possible (which has $\imag'=0$). Then Lemma~\ref{lem:lclm compl conj} goes through. 
\end{remarkNumbered}

\subsection*{Estimates for derivatives} 
{\it In this subsection $K$ is an asymptotic differential field with small derivation.}\/
We  also fix
$\fm\in K^\times$ with~$\fm\prec 1$. 
Here is a useful bound:

\begin{lemma} \label{lem:diff operator at small elt}  
Let $r\in\N$ and $y\in K$ satisfy $y\prec \fm^{r+m}\prec 1$.
Then $P(y) \prec \fm^{m\mu} P$ for all  $P\in K\{Y\}^{\neq}$   of order at most~$r$ with~$\mu=\val(P)\geq 1$.
\end{lemma}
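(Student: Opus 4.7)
The plan is to derive multiplicative estimates on the iterated derivatives of $y$, and then bound $P(y)$ monomial-by-monomial.

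The core estimate is the claim that for every integer $k\ge 1$ and every $z\in K$ with $z\preceq\fm^k$, we have $z'\prec\fm^{k-1}$. To see this, set $u:=z/\fm^k\in\mathcal O$ and apply Leibniz:
$$z'\ =\ k\,\fm^{k-1}\fm'\,u+\fm^k u'.$$
Small derivation gives $\fm'\prec 1$, and [ADH, 4.4.2] (i.e.\ $\der\mathcal O\subseteq\mathcal O$) gives $u'\preceq 1$. Hence the first summand has valuation strictly greater than $(k-1)\,v(\fm)$ while the second has valuation $\ge k\,v(\fm)$, so $z'\prec\fm^{k-1}$. Applying this estimate iteratively to $y, y', y'',\dots$ (valid as long as the exponent stays $\ge 1$, which holds throughout $0\le j\le r+m$) yields $y^{(j)}\prec\fm^{r+m-j}$ for $1\le j\le r+m$; combined with the hypothesis $y\prec\fm^{r+m}$ at $j=0$ we obtain
$$y^{(j)}\ \prec\ \fm^{r+m-j}\qquad (0\le j\le r).$$

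Now expand $P=\sum_{\i}P_{\i}Y^\i$ with $\i=(i_0,\dots,i_r)$ and evaluate at $y$. For every $\i$ with $P_{\i}\ne 0$ we have $|\i|\ge\mu\ge 1$, so the previous bounds give the strict inequality
$$\sum_{j=0}^r i_j\,v(y^{(j)})\ >\ \sum_{j=0}^r i_j(r+m-j)\,v(\fm)\ =\ \bigl((r+m)|\i|-\|\i\|\bigr)\,v(\fm).$$
Since $\|\i\|\le r|\i|$, the right-hand side is at least $m|\i|\,v(\fm)\ge m\mu\,v(\fm)$; using also $v(P_{\i})\ge v(P)$ gives
$$v\bigl(P_{\i}\,y^{i_0}\cdots(y^{(r)})^{i_r}\bigr)\ >\ v(P)+m\mu\,v(\fm)\ =\ v(P\fm^{m\mu}).$$
Summing over $\i$ yields $P(y)\prec P\fm^{m\mu}$.

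The only real obstacle is the first step: we need a genuinely \emph{multiplicative} derivative estimate that loses precisely one factor of $\fm$ per differentiation, rather than the trivial bound $y^{(j)}\prec 1$ obtainable directly from small derivation. The Leibniz expansion (with the factorization $z=\fm^k u$) is exactly what extracts this extra factor of $\fm$ at each step, and once the bounds $y^{(j)}\prec\fm^{r+m-j}$ are in hand the conclusion is a clean count using $\|\i\|\le r|\i|$ and $|\i|\ge\mu$.
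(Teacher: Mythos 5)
Your proof is correct, and its overall shape is the same as the paper's: first establish $y^{(j)}\prec\fm^{r+m-j}$ for $j=0,\dots,r$, then bound $P(y)$ monomial by monomial via $y^{\i}\prec\fm^{(r+m)|\i|-\|\i\|}\preceq\fm^{m|\i|}\preceq\fm^{m\mu}$ together with $v(P_{\i})\ge v(P)$; that second half is essentially verbatim the paper's count. The one genuine difference is how the derivative estimate is obtained. The paper uses the asymptotic-field axiom directly: from $y\prec\fm^{r+m}\prec 1$ it concludes $y'\prec(\fm^{r+m})'=(r+m)\fm^{r+m-1}\fm'\prec\fm^{r+m-1}$ and iterates. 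You instead factor $z=\fm^k u$ with $u\in\mathcal O$ and apply Leibniz, using only $\der\smallo\subseteq\smallo$ (for $\fm'\prec 1$) and $\der\mathcal O\subseteq\mathcal O$ (for $u'\preceq 1$). This buys a slightly more elementary and marginally more general step: your version of the derivative bound never invokes the asymptotic hypothesis, only small derivation, whereas the paper's one-line argument leans on the asymptotic comparison of derivatives. In the context of this lemma (where $K$ is assumed asymptotic anyway) the two routes are interchangeable, and both deliver exactly the strict inequality $z\preceq\fm^k\Rightarrow z'\prec\fm^{k-1}$ needed for the iteration.
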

\begin{proof} Note that $0\ne \fm \prec 1$ and $r+m\ge 1$. Hence 
$$y'\ \prec\ (\fm^{r+m})'\ =\ (r+m)\fm^{r+m-1}\fm'\ \prec\ \fm^{r-1+m},$$ so by induction $y^{(i)}\prec \fm^{r-i+m}$ for $i=0,\dots,r$.
Hence $y^{\i} \prec \fm^{(r+m)\abs{\i}-\dabs{\i}} \preceq \fm^{m\abs{\i}}$
for nonzero $\i=(i_0,\dots,i_r)\in\N^{1+r}$, which yields the lemma.
\end{proof}

\begin{cor}\label{cor:kth der of f}
If   $f\in K$ and $f\prec \fm^n$, then  $f^{(k)} \prec \fm^{n-k}$ for $k=0,\dots,n$.
\end{cor}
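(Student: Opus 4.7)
The plan is to apply Lemma \ref{lem:diff operator at small elt} directly with the differential polynomial $P = Y^{(k)} \in K\{Y\}$. This $P$ has order $k$, multiplicity $\val P = 1$, and its sole nonzero coefficient is $1$, so $v(P) = 0$ and hence $\fm^{m\mu}P \asymp \fm^m$ when $\mu = 1$.

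More precisely, first I would dispose of the trivial boundary case $n = 0$: here necessarily $k = 0$ and the conclusion $f \prec \fm^0 = 1$ is the hypothesis itself. For $n \geq 1$ and any $k$ with $0 \leq k \leq n$, I set $r := k$ and $m := n-k \geq 0$ in the lemma. The hypothesis $f \prec \fm^n = \fm^{r+m}$ is exactly our assumption, and $\fm^{r+m} = \fm^n \prec 1$ follows from $\fm \prec 1$ and $n \geq 1$. The lemma then yields $P(f) = f^{(k)} \prec \fm^{m\mu} P \asymp \fm^{n-k}$, which is the desired bound.

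I do not foresee any real obstacle here: the corollary is essentially an unpacking of the previous lemma in the special case where $P$ is the monomial $Y^{(k)}$. The only mild care needed is noting that the case $k = 0$ is already contained in the hypothesis (so one could equivalently restrict to $k \geq 1$ when invoking the lemma, since that lemma needs $\mu \geq 1$ but here $\mu = 1$ automatically).
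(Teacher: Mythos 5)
Your proposal is correct and is exactly the paper's argument: the corollary is obtained as the special case of Lemma~\ref{lem:diff operator at small elt} with $P=Y^{(k)}$, $r=k$, $m=n-k$ (and $\mu=\val P=1$, $P\asymp 1$), just as you spell out. The boundary case $n=0$ (and more generally $k=0$) is indeed trivially the hypothesis, so no further care is needed.
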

\begin{proof}
This is a special case of 
 Lemma~\ref{lem:diff operator at small elt}.
 \end{proof}
 
 \begin{cor}\label{cor:kth der of f, preceq}
Let   $f\in K^\times$ and $n\ge 1$ be such that $f\preceq \fm^n$. Then~${f^{(k)} \prec  \fm^{n-k}}$ for $k=1,\dots,n$.
\end{cor}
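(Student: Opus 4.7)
The plan is to prove $f' \prec \fm^{n-1}$ by a direct Leibniz rule computation, and then bootstrap the higher derivatives from this by applying Corollary~\ref{cor:kth der of f} to $f'$ in place of $f$ (with $n-1$ in place of $n$).

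Since $f \preceq \fm^n$ and $f \neq 0$, we can write $f = u\fm^n$ with $u \in \mathcal O$, $u \neq 0$. The Leibniz rule gives
\[
f' \ =\ u'\fm^n \,+\, nu\fm^{n-1}\fm'.
\]
Smallness of the derivation yields $u' \in \mathcal O$ (so $u' \preceq 1$) and $\fm' \in \smallo$ (so $\fm' \prec 1$). Hence $u'\fm^n \preceq \fm^n \prec \fm^{n-1}$ because $\fm \prec 1$. Since $K$ has equicharacteristic zero and $n \geq 1$ we also have $n \asymp 1$, so combined with $u \preceq 1$ and $\fm' \prec 1$ we obtain $nu\fm^{n-1}\fm' \prec \fm^{n-1}$. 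Both summands are $\prec \fm^{n-1}$, so $f' \prec \fm^{n-1}$, settling the case $k=1$.

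For $k \geq 2$ (and only if $n \geq 2$, otherwise there is nothing left), apply Corollary~\ref{cor:kth der of f} with $f'$ in place of $f$ and $n-1$ in place of $n$: this yields $(f')^{(j)} \prec \fm^{n-1-j}$ for $j = 0, \ldots, n-1$. Setting $j = k-1$ gives $f^{(k)} \prec \fm^{n-k}$ for $k = 1,\dots,n$, as required.

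There is no real obstacle here; the argument is essentially a one-step reduction to the previous corollary via the Leibniz rule. The only care needed is in tracking strict versus non-strict asymptotic relations: starting from the non-strict hypothesis $f \preceq \fm^n$ we pick up strictness after one differentiation because both contributions to $f'$ are strictly bounded by $\fm^{n-1}$ (one through $\fm \prec 1$, the other through $\fm' \prec 1$).
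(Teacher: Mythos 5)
Your proof is correct, and it takes a somewhat different route from the paper's for the crucial first step. The paper begins by invoking [ADH, 9.1.3], a fact about asymptotic fields, to pass directly from $f\preceq \fm^n$ to $f'\preceq (\fm^n)'$ (both elements being nonzero and $\prec 1$), then computes $(\fm^n)'=n\fm^{n-1}\fm'\prec\fm^{n-1}$ and hands off to Corollary~\ref{cor:kth der of f} exactly as you do. You instead replace the appeal to [ADH, 9.1.3] by a hands-on Leibniz computation: writing $f=u\fm^n$ with $u\in\mathcal O^{\neq}$ and estimating each term of $f'=u'\fm^n+nu\fm^{n-1}\fm'$ using only the consequences $\der\mathcal O\subseteq\mathcal O$ and $\der\smallo\subseteq\smallo$ of smallness of the derivation together with $\fm\prec 1$. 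This makes the argument slightly more self-contained, at the cost of a line or two, and it highlights precisely where the strictness comes from in each summand ($\fm\prec 1$ for the first, $\fm'\prec 1$ for the second); the paper's version gets strictness only from $\fm'\prec 1$, which is in fact equivalent in force. One cosmetic remark: your case split between $k=1$ and $k\geq 2$ is unnecessary, since the single application of Corollary~\ref{cor:kth der of f} with $f'$, $n-1$ in place of $f$, $n$ already yields $f^{(k)}\prec\fm^{n-k}$ for all $k=1,\dots,n$ (the $j=0$ instance recovers $k=1$).
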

\begin{proof}
Note that   $\fm^n\neq 0$, so $f'\preceq (\fm^n)'=n\fm^{n-1}\fm'\prec\fm^{n-1}$ [ADH, 9.1.3].
Now apply Corollary~\ref{cor:kth der of f} with $f'$, $n-1$ in place of $f$, $n$.
\end{proof}

 



\subsection*{Estimates for Riccati transforms}
{\it In this subsection $K$ is a valued differential field with small derivation.}\/
Recall from [ADH, 5.8] that for a homogeneous differential polynomial $P\in K\{Y\}$ of degree $d\in\N$ the {\it Riccati transform}\/ $\Ric(P)\in K\{Z\}$ of $P$
satisfies \label{p:Ric}\index{Riccati transform!differential polynomial}\index{differential polynomial!Riccati transform}  $$\Ric(P)(z)=P(y)/y^d\quad \text{ for $y\in K^\times$, $z=y^\dagger$.}$$
We put $R_n:=\Ric(Y^{(n)})\in\Q\{Z\}$, so 
$$R_0=1,\quad R_1=Z,\quad R_2=Z^2+Z',\quad\dots.$$
For $A=a_0+a_1\der+\cdots+a_n\der^n\in K[\der]$ ($a_0,\dots,a_n\in K$) we also let\index{linear differential operator!Riccati transform}\index{Riccati transform!linear differential operator}\label{p:Ric(A)} 
$$\Ric(A):=a_0R_0+a_1R_1+\cdots+a_nR_n\in K\{Z\}.$$ 
For later use we prove variants of~[ADH, 11.1.5].

\begin{lemma}\label{Riccatipower}
If $z\in K^{\succ 1}$, then $R_n(z)=z^n(1+ \epsilon)$ with 
$v\epsilon\ge v(z^{-1})+ o(vz)>0$.  
\end{lemma}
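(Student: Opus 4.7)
The plan is to induct on $n$, using the recursion
\[ R_{n+1}(Z)\ =\ R_n(Z)' + R_n(Z)\cdot Z \]
which follows by applying $(\,\cdot\,)'$ to the defining identity $R_n(y^\dagger) = y^{(n)}/y$ (for $y \neq 0$ in a suitable differential overring) and using $y^{(n+1)} = (y^{(n)})'$. The base cases $n=0$ and $n=1$ are immediate: $R_0(z) = 1$ and $R_1(z) = z$, so we may take $\epsilon = 0$ and the bound on $v\epsilon$ holds vacuously.

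For the inductive step, write $R_n(z) = z^n(1 + \epsilon_n)$ with $v\epsilon_n \ge v(z^{-1}) + o(vz)$ as given by the IH. Substituting into the recursion and factoring out $z^{n+1}$ gives
\[ R_{n+1}(z)\ =\ z^{n+1}(1+\epsilon_n) + n z^{n-1} z'(1+\epsilon_n) + z^n\epsilon_n'\ =\ z^{n+1}(1+\epsilon_{n+1}), \]
where
\[ \epsilon_{n+1}\ =\ \epsilon_n + n\,\tfrac{z^\dagger}{z}\,(1+\epsilon_n) + \tfrac{\epsilon_n'}{z}. \]
It then remains to show $v\epsilon_{n+1} \ge v(z^{-1}) + o(vz)$ by bounding each of the three summands. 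The first is handled directly by the IH. The middle summand reduces to $v(z^\dagger/z) = \psi(vz) - vz$; since $vz < 0$, the standard $o$-estimate $\psi(vz) = o(vz)$ (an $H$-asymptotic fact of the kind invoked just above in Lemma~\ref{lem:xi zeta} via [ADH, 9.2.10(iv)]) yields $v(z^\dagger/z) \ge -vz + o(vz)$, as required. For the third summand, small derivation together with the identity $v(f') = vf + \psi(vf)$ (valid for $f \neq 0$ with $vf \ne 0$) controls $v\epsilon_n'$ in terms of $v\epsilon_n$, and dividing by $z$ puts the result into the target range.

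The main obstacle is the bookkeeping for the $\epsilon_n'/z$ term: one must check that the $o(vz)$ slack in the IH bound on $v\epsilon_n$ is not consumed when passing to $v\epsilon_n'$, so that the inductive bound is genuinely preserved rather than degrading at each step. This is where the interplay between small derivation and the $\psi$-estimate $\psi(\gamma) = o(\gamma)$ for $\gamma < 0$ is essential: the correction produced by differentiating $\epsilon_n$ contributes at worst an additional $o(vz)$, which is absorbed into the existing $o(vz)$ term. Finally, $v(z^{-1}) + o(vz) = -vz + o(vz) > 0$ because $vz < 0$, so $\epsilon_{n+1} \prec 1$, completing the induction.
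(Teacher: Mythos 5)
Your induction and the recursion $R_{n+1}(Z)=R_n(Z)'+R_n(Z)\cdot Z$ are exactly the paper's, and your formula for $\epsilon_{n+1}$ matches the paper's display. The gap is in the estimate for the middle term $n\,\tfrac{z^\dagger}{z}(1+\epsilon_n)$, which rests on the unconditional claim $\psi(vz)=o(vz)$. That is false as stated: the fact you cite via Lemma~\ref{lem:xi zeta} ([ADH, 9.2.10(iv)]) requires $\psi(\gamma)<0$, i.e.\ $z^\dagger\succ 1$. When $z^\dagger\preceq 1$ this can fail outright: take $z=x$ in $\R(\!(x^{-1})\!)$, so $vz=-1$, $z^\dagger=1/x$, and $\psi(vz)=1$, which is certainly not $o(vz)$. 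What is true and what is actually needed is the one-sided bound $v(z^\dagger)\ge o(vz)$, and its proof requires the case split you skipped: trivial when $z^\dagger\preceq 1$ (then $v(z^\dagger)\ge 0$), and the cited-type estimate only when $z^\dagger\succ 1$. There is also a generality issue: this subsection deliberately drops the asymptotic hypothesis, so $K$ is only a valued differential field with small derivation and $\psi$ is not even defined; the paper appeals to [ADH, 6.4.1(iii)], which is stated at that level of generality, rather than to the asymptotic-couple machinery your argument imports.

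Your ``main obstacle,'' the term $\epsilon_n'/z$, is in fact a non-issue and needs no bookkeeping. The inductive hypothesis gives $\epsilon_n\prec 1$; small derivation gives $\epsilon_n'\prec 1$; hence $v(\epsilon_n'/z)>v(z^{-1})$, which is strictly stronger than the required $\ge v(z^{-1})+o(vz)$, with no slack to consume or degrade. The identity $v(f')=vf+\psi(vf)$ plays no role here and again presupposes asymptotic structure not in the hypotheses.
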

\begin{proof} This is clear for $n=0$ and $n=1$. Suppose $z\succ 1$, $n\ge 1$, and
$R_n(z)=z^n(1+ \epsilon)$ with $\epsilon$ as in the lemma. As in the proof of [ADH, 11.1.5],
$$ R_{n+1}(z)\ =\  z^{n+1}\left(1 + \epsilon +    n\frac{z^\dagger}{z}(1+\epsilon) +       
\frac{\epsilon'}{z}\right).$$
Now $v(z^\dagger)\ge o(vz)$: this is obvious if $z^\dagger\preceq 1$, and follows from~$\triangledown(\gamma)=o(\gamma)$ for~$\gamma\ne 0$ if $z^\dagger\succ 1$ [ADH, 6.4.1(iii)]. 
This gives the desired result in view of  $\epsilon'\prec 1$. 
\end{proof}

\begin{lemma}\label{Riccatipower+} Suppose $\der\mathcal O\subseteq\smallo$. If 
$z\in K^{\succeq 1}$, then $R_n(z)=z^n(1+ \epsilon)$ with 
$\epsilon \prec 1$.
\end{lemma}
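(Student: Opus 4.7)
The plan is to split on whether $z \succ 1$ or $z \asymp 1$. In the first case we appeal directly to Lemma~\ref{Riccatipower}: the conclusion there gives $v\epsilon \geq v(z^{-1}) + o(vz) > 0$, which in particular means $\epsilon \prec 1$. So the real work is the case $z \asymp 1$, where the stronger hypothesis $\der\mathcal O \subseteq \smallo$ must come into play.

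For $z \asymp 1$, we have $z \in \mathcal O^\times$, so $z^{-1} \asymp 1$. The hypothesis gives $z' \in \der\mathcal O \subseteq \smallo$, i.e.\ $z' \prec 1$. Now I would induct on $n$ using the recurrence $R_{n+1} = Z R_n + R_n'$ in $\Q\{Z\}$, which expresses the identity $y^{(n+1)} = (R_n(z)y)' = \bigl(zR_n(z) + R_n(z)'\bigr)y$ when $y^\dagger = z$. The base cases $R_0(z)=1$ and $R_1(z)=z$ hold with $\epsilon=0$.

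For the inductive step, assume $R_n(z) = z^n(1+\epsilon_n)$ with $\epsilon_n \prec 1$. Then
\[
R_{n+1}(z)\ =\ zR_n(z) + R_n(z)'\ =\ z^{n+1}\left[1 + \epsilon_n + n\,\frac{z'}{z^2}(1+\epsilon_n) + \frac{\epsilon_n'}{z}\right],
\]
so we must check that each of the three added terms in the bracket is $\prec 1$. The first is $\prec 1$ by inductive hypothesis. For the second, $z^2 \asymp 1$ and $z' \prec 1$, hence $z'/z^2 \prec 1$. For the third, small derivation (implied by $\der\mathcal O \subseteq \smallo$) together with $\epsilon_n \in \smallo$ gives $\epsilon_n' \prec 1$, and then $\epsilon_n'/z \prec 1$ since $1/z \asymp 1$. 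This gives $\epsilon_{n+1} \prec 1$.

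There is no serious obstacle here; the only conceptual point is why the strengthened hypothesis $\der\mathcal O \subseteq \smallo$ is needed rather than just small derivation. Under mere small derivation one only gets $\der\mathcal O \subseteq \mathcal O$ (by [ADH, 4.4.2]), which would let $z'$ be $\asymp 1$ when $z \asymp 1$, and then $z'/z^2 \preceq 1$ rather than $\prec 1$, so the induction would fail. The hypothesis $\der\mathcal O \subseteq \smallo$ is exactly calibrated to make the dominant term $zR_n(z)$ absorb the correction $R_n(z)'$.
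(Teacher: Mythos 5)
Your proof is correct and follows exactly the route the paper takes: for $z\succ 1$ appeal to Lemma~\ref{Riccatipower}, and for $z\asymp 1$ rerun the same inductive computation, with $\der\mathcal O\subseteq\smallo$ supplying $z'\prec 1$ so that the correction terms $n\,z'/z^2$ and $\epsilon_n'/z$ stay in $\smallo$. Your closing remark about why mere small derivation (which only gives $\der\mathcal O\subseteq\mathcal O$) would not suffice is precisely the point, and the paper's terse ``proceed as in the proof of that lemma, using $\der\mathcal O\subseteq\smallo$'' is shorthand for exactly the calculation you spelled out.
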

 
\begin{proof}
The case $z\succ 1$ follows from Lemma~\ref{Riccatipower}. For $z\asymp 1$, proceed as in the proof of that lemma, using $\der\mathcal O\subseteq\smallo$.
\end{proof}

\noindent
By [ADH, 9.1.3 (iv)] the condition $\der\mathcal O\subseteq\smallo$ is satisfied
if $K$ is $\d$-valued, or  asymptotic   with 
$\Psi\cap \Gamma^{>}\ne \emptyset$.
The following observation is not used later: 

\begin{lemma}\label{lem:Riccati bd flat}
Suppose $K$ is asymptotic, and $z\in K$ with $0\neq z\preceq z'\prec 1$. Then~$R_n(z)\sim z^{(n-1)}$ for $n\geq 1$.
\end{lemma}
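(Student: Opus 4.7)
The plan is to induct on $n\geq 1$ using the recursion $R_{n+1}=R_n'+Z R_n$ in $\Q\{Z\}$ (which follows from the defining relation $y^{(n)}=R_n(z)\,y$ when $y^\dagger=z$), evaluated at~$z$. The base case $R_1(z)=z=z^{(0)}$ is immediate.

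First I would establish two preliminary facts. (a)~Each $z^{(j)}$ is nonzero and~$\prec 1$: smallness of the derivation gives the latter, while in any asymptotic field $C\cap\smallo=\{0\}$ (for if $c\in C^\times$ with $c\prec 1$, applying the asymptotic property to $c^2\prec c$ gives the contradiction $(c^2)'=0\prec 0=c'$); an induction on~$j$, starting from $z\neq 0$, then rules out $z^{(j)}=0$. (b)~The asymptotic equivalence $f\prec g\Leftrightarrow f'\prec g'$ promotes automatically to its $\preceq$-version; iterating from the hypothesis $z\preceq z'$ yields $z^{(j)}\preceq z^{(j+1)}$ for all $j\geq 0$, equivalently $\psi\big(v(z^{(j)})\big)=v(z^{(j+1)})-v(z^{(j)})\leq 0$.

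For the inductive step, assuming $R_n(z)\sim z^{(n-1)}$, I would set $E_n:=R_n(z)-z^{(n-1)}$, which is either $0$ or nonzero with $E_n\prec z^{(n-1)}\prec 1$. The asymptotic property then gives $E_n'\prec (z^{(n-1)})'=z^{(n)}$, so $(R_n(z))'\sim z^{(n)}$, and $zR_n(z)\sim z\cdot z^{(n-1)}$ follows at once from $R_n(z)\sim z^{(n-1)}$. Thus $R_{n+1}(z)=(R_n(z))'+zR_n(z)\sim z^{(n)}$ reduces to verifying $z\cdot z^{(n-1)}\prec z^{(n)}$, which in valuations reads
$$v(z)\ >\ v(z^{(n)})-v(z^{(n-1)})\ =\ \psi\big(v(z^{(n-1)})\big).$$
The right-hand side is $\leq 0$ by preliminary (b), whereas $v(z)>0$ since $z\prec 1$, so the strict inequality holds and the induction closes.

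The one substantive ingredient is the propagation in~(b); the rest is direct bookkeeping in the asymptotic couple~$(\Gamma,\psi)$ together with the fact that $\sim$ transfers across both the operators $f\mapsto f'$ (via the asymptotic property) and $f\mapsto zf$. Notably, no $H$-type or $\upo$-freeness assumption on $K$ is needed.
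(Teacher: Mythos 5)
Your proof is correct and follows essentially the same route as the paper's: a first induction giving $z\preceq z'\preceq\cdots\preceq z^{(n)}\prec 1$, then a second induction on $n$ using the recursion $R_{n+1}(z)=R_n(z)'+zR_n(z)$, with $R_n(z)'\sim z^{(n)}$ from the asymptotic property and $zR_n(z)\prec z^{(n)}$ closing the step. The only cosmetic differences are that you verify $R_n(z)'\sim z^{(n)}$ directly rather than citing [ADH, 9.1.4(ii)], you phrase $zz^{(n-1)}\prec z^{(n)}$ via $\psi$ rather than the chain $zz^{(n-1)}\prec z^{(n-1)}\preceq z^{(n)}$, and you make explicit the (implicitly used) fact that the $z^{(j)}$ stay nonzero.
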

\begin{proof}
Induction on $n$ gives $z\preceq z'\preceq\cdots \preceq z^{(n)}\prec 1$     
 for all $n$.
We now show~$R_n(z)\sim z^{(n-1)}$ for $n\geq 1$, also by induction. The case $n=1$ is clear from $R_1=Z$,
so suppose~${n\geq 1}$ and $R_n(z)\sim z^{(n-1)}$. Then
$$R_{n+1}(z)\ =\ zR_n(z)+R_n(z)'$$
where $R_n(z)'\sim z^{(n)}$ by [ADH, 9.1.4(ii)] and $zR_n(z)\asymp zz^{(n-1)} \prec z^{(n-1)}\preceq z^{(n)}$.
Hence $R_{n+1}(z)\sim z^{(n)}$.
\end{proof}

\section{The Group of Logarithmic Derivatives} \label{sec:logder}

\noindent
Let $K$ be a differential field.
The map $y\mapsto y^\dagger \colon K^\times\to K$ is a morphism 
from the multiplicative group of $K$ to the additive group of $K$, with kernel~$C^\times$.
Its image 
$$(K^\times)^\dagger\ =\ \big\{y^\dagger:\,y\in K^\times\big\}$$
is an additive  subgroup of  $K$, which we call the {\bf group of logarithmic derivatives}\index{group of logarithmic derivatives}\label{p:Kdagger} of $K$. The morphism $y\mapsto y^\dagger$ induces an isomorphism $K^\times/C^\times\to (K^\times)^\dagger$. To shorten notation, set $0^\dagger:=0$, so $K^\dagger=(K^\times)^\dagger$. 
For $\phi\in K^\times$ we have $\phi(K^\phi)^\dagger=  K^\dagger$.
The group $K^\times$ is divisible iff both $C^\times$ and $K^\dagger$ are divisible. 
If $K$ is algebraically
closed, then $K^\times$ and hence
$K^\dagger$ are divisible, making
$K^\dagger$ a  $\Q$-linear subspace of~$K$.
Likewise, if $K$ is real closed, then
the multiplicative subgroup~$K^{>}$ of $K^\times$ is divisible, so
$K^\dagger=(K^{>})^\dagger$ is a $\Q$-linear subspace of $K$.

\begin{lemma}\label{lem:Kdagger alg closure}
Suppose $K^\dagger$ is divisible, $L$ is a differential field extension of $K$ with~$L^\dagger\cap K=K^\dagger$, and
$M$ is a differential field extension of $L$ and algebraic over~$L$. Then $M^\dagger\cap K=K^\dagger$.
\end{lemma}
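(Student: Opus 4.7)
\medskip

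\noindent
The plan is to prove this by a standard Galois-theoretic norm argument, using the uniqueness of the extension of a derivation to an algebraic extension in characteristic~$0$, together with the divisibility hypothesis on $K^\dagger$.

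First, let $a\in M^\dagger\cap K$, say $a=y^\dagger$ with $y\in M^\times$. I want to show $a\in K^\dagger$. Pick a finite Galois extension $N$ of $L$ (inside some algebraic closure of $M$) containing $y$; this is possible because $M/L$ is algebraic and we are in characteristic $0$, hence separable. Since~$L$ is a differential field and $N/L$ is algebraic, the derivation extends uniquely from~$L$ to~$N$, and by uniqueness every $\sigma\in\Gal(N/L)$ commutes with this derivation.

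Next, let $y=y_1,\dots,y_n\in N$ be the conjugates of $y$ over $L$, so the minimal polynomial of $y$ over $L$ is $\prod_{i=1}^n(T-y_i)\in L[T]$. For each $i$ pick $\sigma_i\in\Gal(N/L)$ with $\sigma_i(y)=y_i$; then
\[
y_i^\dagger \ =\ \sigma_i(y)^\dagger\ =\ \sigma_i(y')/\sigma_i(y)\ =\ \sigma_i(y^\dagger)\ =\ \sigma_i(a)\ =\ a,
\]
the last equality because $a\in K\subseteq L$ is fixed by $\Gal(N/L)$. Set $z:=\prod_{i=1}^n y_i\in L^\times$ (nonzero since each $y_i\ne 0$); this is, up to sign, the constant coefficient of the minimal polynomial of $y$. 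Taking logarithmic derivatives,
\[
z^\dagger \ =\ \sum_{i=1}^n y_i^\dagger\ =\ na.
\]
Thus $na\in L^\dagger$ and $na\in K$, so $na\in L^\dagger\cap K=K^\dagger$ by hypothesis.

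Finally, since $K^\dagger$ is divisible and we are in characteristic $0$, the equation $nc=na$ in $K^\dagger$ has the unique solution $c=a$; hence $a\in K^\dagger$, completing the proof. There is no real obstacle here beyond carefully assembling the ingredients: the delicate point is invoking divisibility of $K^\dagger$ to pass from $na\in K^\dagger$ to $a\in K^\dagger$, which is what makes the divisibility hypothesis essential.
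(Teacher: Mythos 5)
Your proof is correct and is essentially the paper's argument: the paper shows $n\,f^\dagger=\operatorname{tr}_{L(f)|L}(f^\dagger)=\operatorname{N}_{L(f)|L}(f)^\dagger\in L^\dagger$ by citing an identity from [ADH, 4.4], then concludes via $L^\dagger\cap K=K^\dagger$ and divisibility, exactly as you do. The only difference is that you re-derive that norm identity inline by passing to a Galois closure and summing logarithmic derivatives of the conjugates, which is a fine (if slightly longer) way to obtain the same step.
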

\begin{proof}
Let $f\in M^\times$ be such that $f^\dagger\in K$. Then $f^\dagger\in L$, so for $n:=[L(f):L]$, 
$$nf^\dagger\ =\ \operatorname{tr}_{L(f)|L}(f^\dagger)\ =\ \operatorname{N}_{L(f)|L}(f)^\dagger  \in L^\dagger$$ by 
an identity in [ADH, 4.4].
Hence $nf^\dagger\in K^\dagger$, and thus $f^\dagger\in K^\dagger$. 
\end{proof}

\noindent
In particular, if $K^\dagger$ is divisible and $M$ is  a differential field extension of $K$ and algebraic over~$K$, then
$M^\dagger\cap K=K^\dagger$.

\medskip
\noindent
In the next two lemmas~$a,b\in K$; distinguishing  whether or not $a\in K^\dagger$ helps to  describe 
the solutions to the differential equation $y'+ay=b$:

\begin{lemma}\label{0K} Suppose $\der K = K$, and let $L$ be differential field extension of $K$ with~$C_L=C$. 
Suppose also $a\in K^\dagger$. Then for some  $y_0\in K^\times$ and
$y_1\in K$,
$$\{y\in L:\, y'+ay=b\}\ =\ \{y\in K:\, y'+ay=b\}\ =\ Cy_0 + y_1.$$
\end{lemma}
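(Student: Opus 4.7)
The plan is to use $a\in K^\dagger$ to convert the first-order linear equation into a pure integration problem, which the hypothesis $\der K=K$ then solves inside $K$, and to use $C_L=C$ to show that no extra kernel vectors appear in $L$.

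Concretely, fix $u\in K^\times$ with $u^\dagger=a$ and set $y_0:=u^{-1}\in K^\times$. A direct computation gives $y_0'+ay_0=0$. Next, observe the identity $(uy)'=u(y'+ay)$ for any $y$ in any differential field extension of $K$; this is the ``integrating factor'' rewriting of the equation. In particular, $y'+ay=b$ (in $L$ or in $K$) is equivalent to $(uy)'=ub$.

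Since $ub\in K$ and $\der K=K$, we can pick $v\in K$ with $v'=ub$, and then $y_1:=v/u\in K$ satisfies $(uy_1)'=ub$, hence $y_1'+ay_1=b$; so $y_1$ is a particular solution in $K$. The solution sets in $K$ and in $L$ are then the cosets $y_1$ plus the kernel of $y\mapsto y'+ay$ in $K$ and in $L$ respectively. Using the rewriting $(uy)'=u(y'+ay)$ once more, the kernel in any extension $M$ of $K$ equals $\{y\in M:(uy)'=0\}=u^{-1}C_M$. Applied to $M=K$ this kernel is $u^{-1}C=Cy_0$, and applied to $M=L$ it is $u^{-1}C_L=u^{-1}C=Cy_0$ by the hypothesis $C_L=C$. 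Therefore
\[
\{y\in L:y'+ay=b\}\ =\ Cy_0+y_1\ =\ \{y\in K:y'+ay=b\},
\]
as required.

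There is essentially no obstacle here: the only ingredients are (a) the integrating-factor identity $(uy)'=u(y'+ay)$ when $u^\dagger=a$, (b) the surjectivity hypothesis $\der K=K$ to find $v$ with $v'=ub$ in $K$, and (c) the constant-field equality $C_L=C$ to rule out new homogeneous solutions in $L$. Each of the three hypotheses is used exactly once, and the conclusion follows by combining them.
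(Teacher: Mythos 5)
Your proof is correct and follows essentially the same route as the paper: the paper twists $\der+a$ by $y_0$ with $y_0^\dagger=-a$, which is exactly your integrating-factor rewriting $(uy)'=u(y'+ay)$ with $y_0=u^{-1}$, then uses $\der K=K$ for the particular solution and $C_L=C$ for the kernel. You merely spell out the kernel computation that the paper leaves implicit.
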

\begin{proof} 
Take $y_0\in K^\times$ with $y_0^\dagger=-a$, so $y_0'+ay_0=0$. Twisting $\der+a\in K[\der]$ by~$y_0$ (see~[ADH, p.~243]) transforms the equation $y'+ay=b$ into $z'=y_0^{-1}b$. This gives $y_1\in K$ with $y_1'+ay_1=b$.
Using $C_L=C$, these  $y_0, y_1$ have the desired properties. 
\end{proof} 

\begin{lemma}\label{1K} Let $L$ be a differential field
extension of $K$ with $L^\dagger\cap K=K^\dagger$. Assume
$a\notin K^\dagger$. Then there is at most one $y\in L$ with $y'+ay=b$.
\end{lemma}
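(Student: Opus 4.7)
The plan is to argue by subtraction: if $y_1,y_2\in L$ both satisfy $y'+ay=b$, then their difference $z:=y_1-y_2$ satisfies the associated homogeneous equation $z'+az=0$, and we want to show $z=0$.

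Assuming for contradiction that $z\neq 0$, then $z\in L^\times$ and $z^\dagger=z'/z=-a$. Since $L^\dagger$ is a subgroup of the additive group of $L$ (in particular closed under negation, as $(z^{-1})^\dagger=-z^\dagger$), we obtain $a=-(-a)=(z^{-1})^\dagger\in L^\dagger$. But $a\in K$ by hypothesis, so $a\in L^\dagger\cap K=K^\dagger$, contradicting $a\notin K^\dagger$.

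Therefore $z=0$, i.e.\ $y_1=y_2$, proving uniqueness. No obstacles are anticipated here; the proof is a direct unwinding of the definition of $K^\dagger$ together with the hypothesis $L^\dagger\cap K=K^\dagger$, and it uses no further structure of $L$ or $K$.
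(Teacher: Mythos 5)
Your proof is correct and is essentially the paper's own argument: both reduce to the homogeneous equation and note that a nonzero difference $z=y_1-y_2$ gives $\pm a\in L^\dagger\cap K=K^\dagger$ (the paper works with $-a=z^\dagger$ and uses that $K^\dagger$ is a group, while you negate inside $L^\dagger$ via $(z^{-1})^\dagger$), contradicting $a\notin K^\dagger$.
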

\begin{proof} If $y_1$, $y_2$ are distinct solutions in $L$ of the equation
$y'+ay=b$, then we have~$-a=(y_1-y_2)^\dagger\in L^\dagger\cap K=K^\dagger$, contradicting $a\notin K^\dagger$. 
\end{proof} 

\subsection*{Logarithmic derivatives under algebraic closure}
{\it In this subsection $K$ is a differential field.}\/
We describe for real closed $K$ how $K^\dagger$ changes if we pass from~$K$ to its algebraic closure. More generally, suppose the underlying field of $K$ is euclidean; in particular, $-1$ is not a square in~$K$. 
We equip~$K$ with the unique ordering making~$K$ an ordered field.
For $y=a+b\imag \in K[\imag]$ ($a,b\in K$) we let $\abs{y}\in K^{\geq}$ be such that $\abs{y}^2=a^2+b^2$.
Then $y\mapsto \abs{y}\colon K[\imag]\to K^{\geq}$ is an absolute value on $K[\imag]$, i.e., for all $x,y\in K[\imag]$,
$$\abs{x}\ =\ 0\ \Longleftrightarrow\ x=0, \qquad \abs{xy}\ =\ \abs{x}\abs{y}, \qquad \abs{x+y}\ \leq\ \abs{x}+\abs{y}.$$
For $a\in K$ we have $\abs{a}=\max\{a,-a\}$.
We have the subgroup
\[
S\ :=\ \big\{ y\in K[\imag]: \abs{y}=1\big\}\ =\ \big\{ a+b\imag : a,b\in K,\ a^2+b^2=1 \big\}
\]
of the multiplicative group $K[\imag]^\times$.
By an easy computation all elements of $K[\imag]$ are squares in $K[\imag]$; hence $K[\imag]^\dagger$ is $2$-divisible.
The next lemma describes $K[\imag]^\dagger$; it partly generalizes~[ADH, 10.7.8]. 
For $a,b\in K$,   put $\wr(a,b):=ab'-a'b$ [ADH, 4.1].\label{p:wr(a,b)}

\begin{lemma}\label{lem:logder}
We have $K[\imag]^\times = K^>\cdot S$ with $K^>\cap S=\{1\}$, and
$$K[\imag]^\dagger\ =\  K^\dagger \oplus S^\dagger\qquad\text{\textup{(}internal direct sum of subgroups of $K[\imag]^\dagger$\textup{)}.}$$
For $a,b\in K$ with $a+b\imag\in S$ we have
$(a+b\imag)^\dagger = \wr(a,b)\imag$. Thus $K[\imag]^\dagger\cap K=K^\dagger$. 
\end{lemma}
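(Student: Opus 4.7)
The plan is to exploit the polar-like decomposition afforded by the absolute value on $K[\imag]$. First I would show that every $z\in K[\imag]^\times$ admits the factorization $z=\abs{z}\cdot (z/\abs{z})$, with $\abs{z}\in K^>$ (since $K$ is euclidean, so $\abs{z}$ is well-defined and positive) and $z/\abs{z}\in S$ (by multiplicativity of $\abs{\cdot}$). The intersection $K^>\cap S=\{1\}$ is immediate because a positive element of $K$ of absolute value $1$ is $1$. This gives $K[\imag]^\times = K^> \cdot S$ as an internal direct product of subgroups.

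Next, I would apply the group morphism $(\cdot)^\dagger\colon K[\imag]^\times\to K[\imag]$, which converts this product decomposition into a sum: $K[\imag]^\dagger = (K^>)^\dagger + S^\dagger$. The equality $(K^>)^\dagger = K^\dagger$ holds since every $a\in K^\times$ has $a^\dagger = \abs{a}^\dagger$. To upgrade the sum to a direct sum, I will compute $s^\dagger$ for $s=a+b\imag\in S$: multiplying numerator and denominator by the conjugate $a-b\imag$ and using $a^2+b^2=1$, we get
\[
(a+b\imag)^\dagger \ =\ (a'+b'\imag)(a-b\imag) \ =\ (aa'+bb') + (ab'-a'b)\imag \ =\ \tfrac{1}{2}(a^2+b^2)' + \wr(a,b)\imag \ =\ \wr(a,b)\imag.
\]
This yields the asserted formula and shows that $S^\dagger \subseteq K\imag$. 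Since $K^\dagger\subseteq K$ and $K\cap K\imag=\{0\}$, the sum $K^\dagger + S^\dagger$ is indeed direct.

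Finally, for the equality $K[\imag]^\dagger\cap K = K^\dagger$, the inclusion $\supseteq$ is obvious. For the reverse, take $\xi\in K[\imag]^\dagger\cap K$ and write it uniquely as $\xi = \alpha + \beta$ with $\alpha\in K^\dagger\subseteq K$ and $\beta\in S^\dagger\subseteq K\imag$; since $\xi\in K$, comparing real and imaginary parts forces $\beta=0$, whence $\xi = \alpha\in K^\dagger$. There is no real obstacle here: the only nontrivial ingredient is the explicit computation of $s^\dagger$ for $s\in S$, which is what ensures the directness of the sum and confines $S^\dagger$ to the imaginary axis $K\imag$.
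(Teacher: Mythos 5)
Your proof is correct, and its overall shape matches the paper's: the same polar decomposition $z=\abs{z}\cdot(z/\abs{z})$ (the paper writes it as $y=r\cdot(y/r)$ with $r^2=a^2+b^2$), and the same computation of $(a+b\imag)^\dagger$ via multiplication by the conjugate and $aa'+bb'=\tfrac12(a^2+b^2)'=0$. Where you genuinely diverge is the directness of $K^\dagger+S^\dagger$. The paper establishes directness \emph{before} computing $s^\dagger$: if $a^\dagger=s^\dagger$ with $a\in K^\times$, $s\in S$, then $a=cs$ with $c\in C_{K[\imag]}$, and since $C_{K[\imag]}=C[\imag]$ (citing [ADH, 4.6.20]) one gets $\abs{a}=\abs{c}\in C$, hence $a\in C$ and $a^\dagger=s^\dagger=0$. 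You instead compute $s^\dagger$ first, observe $S^\dagger\subseteq K\imag$, and get directness from $K\cap K\imag=\{0\}$; the same observation also gives you $K[\imag]^\dagger\cap K=K^\dagger$ by comparing real and imaginary parts. Your route is more elementary and self-contained, avoiding the constant-field fact entirely; the paper's route shows directness without needing the explicit formula (and is the natural argument if one only wants the abstract decomposition). Both arguments are sound, and reordering the steps as you do slightly streamlines the proof.
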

\begin{proof}
Let $y=a+b\imag\in K[\imag]^\times$ ($a,b\in K$), and take $r\in K^>$ with $r^2=a^2+b^2$; then~$y=r\cdot (y/r)$ with $y/r\in S$. Thus $K[\imag]^\times = K^>\cdot S$, and clearly $K^>\cap S=\{1\}$.
Hence~$K[\imag]^\dagger = K^\dagger + S^\dagger$.
Suppose $a\in K^\times$, $s\in S$, and $a^\dagger=s^\dagger$; then $a=cs$ with~${c\in C_{K[\imag]}}$, and $C_{K[\imag]}=C[\imag]$ by [ADH,~4.6.20] and hence $\max\{a,-a\}=\abs{a}=\abs{c}\in C$, so $a\in C$ and thus $a^\dagger=s^\dagger=0$; therefore the sum is direct.
Now if $a,b\in K$ and~$\abs{a+b\imag}=1$, then
\begin{align*}
(a+b\imag)^\dagger\	&=\ (a'+b'\imag)(a-b\imag) \\
					&=\ (aa'+bb')+(ab'-a'b)\imag \\ 
					&=\ \textstyle\frac{1}{2}\big( a^2+b^2 \big)' + (ab'-a'b)\imag\ =\ (ab'-a'b)\imag\ =\ \wr(a,b)\imag. \qedhere
\end{align*}				
\end{proof}

\begin{cor}\label{cor:logder abs value}
For $y\in K[\imag]^\times$ we have $\Re(y^\dagger)=\abs{y}^\dagger$, and the group morphism~$y\mapsto  \Re y^\dagger \colon K[\imag]^\times\to K$ has kernel $C^> S$.
\end{cor}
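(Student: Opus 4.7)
The plan is to read off everything directly from Lemma~\ref{lem:logder}. Given $y\in K[\imag]^\times$, the factorization $K[\imag]^\times=K^>\cdot S$ (with $K^>\cap S=\{1\}$) gives a unique decomposition $y=rs$ with $r\in K^>$ and $s\in S$; explicitly $r=\abs{y}$ and $s=y/\abs{y}$. Applying the logarithmic derivative morphism yields $y^\dagger=r^\dagger+s^\dagger=\abs{y}^\dagger+s^\dagger$. The first summand lies in $K$, while by the last part of Lemma~\ref{lem:logder}, writing $s=a+b\imag$ with $a^2+b^2=1$ we have $s^\dagger=\wr(a,b)\imag\in K\imag$, hence $\Re(s^\dagger)=0$. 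Taking real parts gives $\Re(y^\dagger)=\abs{y}^\dagger$, which is the first claim.

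For the kernel computation, the identity $\Re(y^\dagger)=\abs{y}^\dagger$ shows that $y\mapsto\Re(y^\dagger)$ factors through $y\mapsto\abs{y}\colon K[\imag]^\times\to K^>$ followed by $r\mapsto r^\dagger\colon K^>\to K$. The kernel of this second map is $C\cap K^>=C^>$, because $r^\dagger=0$ means $r'=0$, i.e., $r\in C$, and $r>0$. Thus $\Re(y^\dagger)=0$ iff $\abs{y}\in C^>$, iff $y=\abs{y}\cdot(y/\abs{y})\in C^>\cdot S$. Conversely any $y=cs$ with $c\in C^>$, $s\in S$ has $y^\dagger=c^\dagger+s^\dagger=s^\dagger\in K\imag$, so $\Re(y^\dagger)=0$. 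This establishes $\ker=C^>S$.

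There is no real obstacle here: everything is an immediate consequence of Lemma~\ref{lem:logder} together with the observation that the logarithmic derivative of an element of $S$ is purely imaginary. The only minor point worth verifying is that the product $C^>S$ is indeed a subgroup (equivalently, the image of $C^>\times S$ in $K[\imag]^\times$), which is clear since $C^>\subseteq K^>$ and both $C^>$ and $S$ are subgroups of the abelian group $K[\imag]^\times$.
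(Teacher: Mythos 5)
Your proof is correct and is exactly the argument the paper intends: the corollary is stated without proof as an immediate consequence of Lemma~\ref{lem:logder}, namely decomposing $y=\abs{y}\cdot(y/\abs{y})$ via $K[\imag]^\times=K^>\cdot S$ and using that elements of $S$ have purely imaginary logarithmic derivative. The kernel computation via $\abs{y}^\dagger=0\Leftrightarrow\abs{y}\in C^>$ is likewise the intended reading, so there is nothing to add.
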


\noindent
If $K$ is real closed and $\mathcal{O}$ a convex valuation ring of $K$, 
then $\mathcal{O}[\imag]= \mathcal{O}+ \mathcal{O}\imag$ is the unique valuation ring of $K[\imag]$ that lies over $\mathcal{O}$, and so
$S\subseteq\mathcal O[\imag]^\times$, hence
$y\asymp\abs{y}$ for all $y\in K[\imag]^\times$.
Thus by [ADH, 10.5.2(i)] and Corollary~\ref{cor:logder abs value}:

\begin{cor}\label{cor:10.5.2 variant}
If $K$ is a real closed pre-$H$-field, then for all $y,z\in K[\imag]^\times$,
$$y\prec z \quad\Longrightarrow\quad \Re y^\dagger < \Re z^\dagger.$$
\end{cor}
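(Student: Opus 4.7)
The plan is to reduce the corollary to the previously established pre-$H$-field inequality [ADH, 10.5.2(i)] applied to the positive elements $\lvert y\rvert,\lvert z\rvert\in K^{>}$, using the identities from Lemma~\ref{lem:logder} and Corollary~\ref{cor:logder abs value}. So I would pass from the complex assumption $y\prec z$ down to the real assumption $\lvert y\rvert\prec\lvert z\rvert$, invoke [ADH, 10.5.2(i)] there, and then lift back by means of $\Re(w^\dagger)=\lvert w\rvert^\dagger$.

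First I would check the valuation-theoretic step. Since $K$ is real closed and its canonical valuation ring $\mathcal O$ is convex, the unique extension to $K[\imag]$ is $\mathcal O[\imag]=\mathcal O+\mathcal O\imag$. Every $s\in S$ satisfies $s\bar s=\lvert s\rvert^2=1$, so both $s$ and $s^{-1}$ lie in $\mathcal O[\imag]^{\times}$; hence $s\asymp 1$. Using the polar decomposition $K[\imag]^{\times}=K^{>}\!\cdot S$ from Lemma~\ref{lem:logder}, I write $y=\lvert y\rvert s$ and $z=\lvert z\rvert t$ with $s,t\in S$, which yields $y\asymp\lvert y\rvert$ and $z\asymp\lvert z\rvert$ (this is exactly the observation recorded in the paragraph preceding the corollary). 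Consequently $y\prec z$ transfers to $\lvert y\rvert\prec\lvert z\rvert$ inside the pre-$H$-field $K$.

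Now I would apply [ADH, 10.5.2(i)] to the positive elements $\lvert y\rvert,\lvert z\rvert\in K^{>}$ of the pre-$H$-field $K$ to obtain $\lvert y\rvert^\dagger<\lvert z\rvert^\dagger$. Finally, Corollary~\ref{cor:logder abs value} delivers $\Re y^\dagger=\lvert y\rvert^\dagger$ and $\Re z^\dagger=\lvert z\rvert^\dagger$, so the conclusion $\Re y^\dagger<\Re z^\dagger$ drops out immediately.

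There is no real obstacle here: the nontrivial content lies entirely in Lemma~\ref{lem:logder} and Corollary~\ref{cor:logder abs value} (which supply the identity $\Re w^\dagger=\lvert w\rvert^\dagger$) together with [ADH, 10.5.2(i)] (which supplies the order-preservation of $\dagger$ on positive elements). The present corollary is just the composition of these three facts, once one notices that the absolute value is a valuation-preserving bridge from $K[\imag]^{\times}$ to $K^{>}$.
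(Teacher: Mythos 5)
Your proposal is correct and is exactly the argument the paper gives: the paragraph preceding the corollary records that $y\asymp\abs{y}$ for $y\in K[\imag]^\times$ (since $S\subseteq\mathcal O[\imag]^\times$), and the corollary is then deduced from [ADH, 10.5.2(i)] together with Corollary~\ref{cor:logder abs value}, just as you do.
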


\noindent
We also have a useful decomposition for $S$:

\begin{cor} \label{cor:decomp of S}
Suppose $K$ is a real closed $H$-field. Then $$S\ =\ S_C\cdot \big(S\cap(1+\smallo_{K[\imag]})\big)$$ where 
$S_C := S\cap C[\imag]^\times$ and $S\cap(1+\smallo_{K[\imag]})$ are subgroups of $\mathcal O[\imag]^\times$.  
\end{cor}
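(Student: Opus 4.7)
The plan is to exhibit the claimed decomposition via the residue map. Since $K$ is an $H$-field, $\mathcal O = C + \smallo$ and the residue morphism $\mathcal O\to C$ is an isomorphism of ordered fields. Extending scalars gives $\mathcal O[\imag] = C[\imag] + \smallo_{K[\imag]}$ and a surjective ring morphism $\mathcal O[\imag]\to C[\imag]$ with kernel $\smallo_{K[\imag]}$, which restricts to a group morphism $\pi\colon\mathcal O[\imag]^\times\to C[\imag]^\times$ with kernel $1 + \smallo_{K[\imag]}$. As noted in the paragraph preceding Corollary~\ref{cor:10.5.2 variant}, $S\subseteq\mathcal O[\imag]^\times$, so $\pi$ restricts to a group morphism $\pi_S\colon S\to C[\imag]^\times$.

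First I would check the range and kernel of $\pi_S$. If $y = a + b\imag\in S$ with $a,b\in \mathcal O$ and $a^2 + b^2 = 1$, then reducing modulo $\smallo$ gives $\bar a^2 + \bar b^2 = 1$ in $C$, so $\pi_S(y) = \bar a + \bar b\imag\in S_C$. Thus $\pi_S$ maps $S$ into $S_C$, and its kernel is exactly $S\cap(1+\smallo_{K[\imag]})$. The inclusion $S_C\hookrightarrow S$ is a set-theoretic section of $\pi_S$ (since $\pi$ is the identity on $C[\imag]^\times$), and is a group morphism. Hence every $y\in S$ decomposes uniquely as $y = y_0\cdot z$ with $y_0 := \pi_S(y)\in S_C$ and $z := y/y_0\in S\cap(1+\smallo_{K[\imag]})$, giving the desired product decomposition $S = S_C\cdot\bigl(S\cap(1+\smallo_{K[\imag]})\bigr)$.

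It remains to verify that both factors are subgroups of $\mathcal O[\imag]^\times$. For $S_C = S\cap C[\imag]^\times$, this is immediate: $S$ and $C[\imag]^\times$ are subgroups of $K[\imag]^\times$, and $S\subseteq\mathcal O[\imag]^\times$. For $S\cap(1+\smallo_{K[\imag]})$, note that $1+\smallo_{K[\imag]}$ is a subgroup of $\mathcal O[\imag]^\times$ (it is the kernel of $\pi$), so the intersection with the subgroup $S$ is again a subgroup contained in $\mathcal O[\imag]^\times$.

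No real obstacle is expected here; the only point requiring a touch of care is the observation that the residue map preserves the equation $a^2 + b^2 = 1$, which uses that $a,b\in\mathcal O$ (guaranteed by $|a|,|b|\leq |y| = 1$, hence $a,b\preceq 1$). Everything else is formal manipulation with the split short exact sequence $1\to S\cap(1+\smallo_{K[\imag]})\to S\xrightarrow{\pi_S} S_C\to 1$.
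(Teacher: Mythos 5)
Your proposal is correct and is essentially the paper's proof in different clothing: the paper's step of choosing $c,d\in C$ with $a-c\prec 1$, $b-d\prec 1$ and observing $c^2+d^2=1$ is exactly your $\pi_S$, and the conclusion that $(a+b\imag)/(c+d\imag)\in S\cap(1+\smallo_{K[\imag]})$ is your split exact sequence. One small wording slip: the residue map $\mathcal O\to C$ is a surjection with kernel $\smallo$ (it is the composition $C\hookrightarrow\mathcal O\to\res(K)$ that is an isomorphism), but this does not affect your argument.
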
  
\begin{proof} The inclusion $\supseteq$ is clear. For the reverse inclusion, let $a,b\in K$, $a^2+b^2=1$ and take the unique $c,d\in C$ with $a-c\prec 1$ and $b-d\prec 1$. Then $c^2+d^2=1$ and~$a+b\imag\sim c+d\imag$, and so $(a+b\imag)/(c+d\imag)\in S\cap (1+\smallo_{K[\imag]})$.
\end{proof}

\subsection*{Logarithmic derivatives in asymptotic fields}
{\em Let $K$ be an asymptotic field.}\/
If $K$  is   henselian and $\k:=\res K$, then by [ADH, remark before 3.3.33],
$K^\times$ is divisible iff the groups $\k^\times$ and $\Gamma$ are both divisible.
Recall that in~[ADH,~14.2] we defined the 
$\mathcal O$-submodule \label{p:I(K)}
$$\I(K)\ =\ \{y\in K:\, \text{$y\preceq f'$ for some $f\in\mathcal O$}\}$$ 
of $K$. We have  $\der\mathcal O\subseteq \I(K)$, hence $(1+\smallo)^\dagger\subseteq(\mathcal O^\times)^\dagger\subseteq\I(K)$.
One easily verifies:

\begin{lemma}\label{pldv}
Suppose $K$ is pre-$\d$-valued. If $\I(K)\subseteq \der K$, then
$\I(K)=\der\mathcal O$. If~$\I(K)\subseteq K^\dagger$, then 
$\I(K)= (\mathcal O^\times)^\dagger$, with $\I(K)=(1+\smallo)^\dagger$
 if $K$   is $\d$-valued.
\end{lemma}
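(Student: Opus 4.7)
\smallskip
\noindent
\textbf{Plan of proof.} The inclusions $\der\mathcal O\subseteq\I(K)$, $(\mathcal O^\times)^\dagger\subseteq\I(K)$, and $(1+\smallo)^\dagger\subseteq\I(K)$ are immediate from the definition of $\I(K)$ (for the last two, use that $u\asymp 1$ implies $u^\dagger=u'/u\asymp u'$). The content of the lemma is the reverse inclusions under the stated hypotheses, and in each case the key tool is the pre-$\d$-valuedness axiom, applied in the contrapositive form: if $h\in\mathcal O$ and $v\in K^\times$ is such that $h'\succeq v^\dagger$, then $v\not\prec 1$.

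\smallskip
\noindent
\emph{First claim.} Assume $\I(K)\subseteq\der K$ and let $y\in\I(K)$, so $y\preceq h'$ for some $h\in\mathcal O$, and $y=f'$ for some $f\in K$. I would like to conclude $f\in\mathcal O$ (no adjustment by a constant is possible, since adding a constant to $f$ keeps $f$ in the same coset modulo $\mathcal O$). Suppose toward a contradiction that $f\succ 1$. Then $f^{-1}\in K$ is nonzero with $f^{-1}\prec 1$, so pre-$\d$-valuedness yields $h'\prec (f^{-1})^\dagger = -f^\dagger$, i.e.\ $v(h')>v(f^\dagger)=v(f')-v(f)$. Since $v(f)<0$ and $v(f')\ge v(h')$ (from $f'=y\preceq h'$), this gives $v(h')>v(f')\ge v(h')$, a contradiction. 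Hence $f\in\mathcal O$ and $y\in\der\mathcal O$.

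\smallskip
\noindent
\emph{Second claim.} Assume $\I(K)\subseteq K^\dagger$. I first observe that $C\subseteq\mathcal O$ in any pre-$\d$-valued field: if $c\in C^\times$ had $c\prec 1$, then pre-$\d$-valuedness applied with $f=1$ and $g=c$ would give $0=1'\prec c^\dagger=0$, absurd; hence $C^\times\subseteq\mathcal O^\times$. Now let $y\in\I(K)$ with $y\preceq h'$ for some $h\in\mathcal O$ and $y=f^\dagger$ with $f\in K^\times$. If $f\succ 1$, then exactly as above pre-$\d$-valuedness gives $h'\prec(f^{-1})^\dagger=-f^\dagger=-y$, contradicting $y\preceq h'$. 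Symmetrically, if $0\ne f\prec 1$ then $h'\prec f^\dagger=y$, again contradicting $y\preceq h'$. So $f\asymp 1$, i.e.\ $f\in\mathcal O^\times$, and $y=f^\dagger\in(\mathcal O^\times)^\dagger$.

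\smallskip
\noindent
\emph{Third part.} Assume in addition that $K$ is $\d$-valued, so $\mathcal O=C+\smallo$. Then any $u\in\mathcal O^\times$ has the form $u=c+\varepsilon$ with $c\in C^\times$ and $\varepsilon\in\smallo$ (necessarily $c\ne 0$, else $u\in\smallo$). Writing $u=c(1+\varepsilon/c)$ gives $u^\dagger=(1+\varepsilon/c)^\dagger\in(1+\smallo)^\dagger$, since $c^\dagger=0$ and $\varepsilon/c\in\smallo$. Thus $(\mathcal O^\times)^\dagger=(1+\smallo)^\dagger$, and combined with the second claim this yields $\I(K)=(1+\smallo)^\dagger$. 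The whole argument is short and the only mild subtlety is the observation $C\subseteq\mathcal O$ needed to conclude $f\in\mathcal O^\times$ rather than merely $f\in C^\times\mathcal O^\times$; there is no serious obstacle.
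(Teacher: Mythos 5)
Your proof is correct, and the paper itself offers no proof beyond ``One easily verifies,'' so yours is precisely the direct verification that phrase alludes to: the reverse inclusions follow in each case from a short application of the pre-$\d$-valued axiom in contrapositive form. One small quibble: your closing remark slightly misplaces where $C\subseteq\mathcal O$ matters — your case analysis in the second claim shows $f\in\mathcal O^\times$ directly without needing it; the fact is genuinely used in the aside of the first claim (that shifting by a constant changes nothing) and, implicitly via $\mathcal O=C+\smallo$, in the third part.
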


\noindent
If $K$ is $\d$-valued or $K$ is pre-$\d$-valued without a gap, then
$$\I(K)\ =\ \{y\in K:\text{$y\preceq f'$ for some $f\in\smallo$}\}.$$ 
For $\phi\in K^\times$ we have $\phi \I(K^\phi)=\I(K)$.
If  $K$ has asymptotic integration and $L$ is an asymptotic extension of~$K$, then $\I(K)=\I(L)\cap K$.
The following is [ADH, 14.2.5]:
 
\begin{lemma}\label{lem:ADH 14.2.5}
If $K$ is $H$-asymptotic, has asymptotic integration, and is $1$-linearly newtonian, then it is $\d$-valued and $\der\mathcal O = \I(K) = (1 + \smallo)^\dagger$.
\end{lemma}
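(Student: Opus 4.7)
Assume the hypotheses throughout. Since $K$ is $H$-asymptotic with asymptotic integration, its derivation is small, so $\der\mathcal O \subseteq \smallo$. This yields the easy inclusions $\der\mathcal O \subseteq \I(K)$ (by definition) and $(1+\smallo)^\dagger \subseteq \I(K)$: for $\epsilon \in \smallo$ we have $(1+\epsilon)^\dagger = \epsilon'\cdot (1+\epsilon)^{-1} \in \mathcal O\cdot\der\mathcal O \subseteq \I(K)$, noting that $\I(K) = \mathcal O\cdot\der\mathcal O$ is an $\mathcal O$-module contained in $\smallo$. What remains is the reverse inclusions and $\d$-valuedness, and for these I would invoke $1$-linear newtonianity twice.

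For $\I(K) \subseteq \der\mathcal O$, given $y \in \I(K) \subseteq \smallo$, I would apply $1$-linear newtonianity to the equation $z' = y$ with side condition $z \prec 1$. The differential polynomial $Z' - y$ is quasilinear: because $y \in \smallo$, its dominant part under compositional conjugation $\der \mapsto \phi\,\derdelta$ by sufficiently small active $\phi$ is $\phi Z^{\derdelta}$, of degree $1$, giving $\ndeg(Z' - y) = 1$. Hence there is $z \prec 1$ in $K$ with $z' = y$, and $z \in \smallo \subseteq \mathcal O$ gives $y \in \der\mathcal O$. For $\I(K) \subseteq (1+\smallo)^\dagger$, apply $1$-linear newtonianity instead to the equation $w' - yw = y$ with side condition $w \prec 1$, which is again quasilinear by the same dominant-part argument since $-y, y \in \smallo$; this furnishes $w \prec 1$ in $K$ with $w' = y(1+w)$, so that $u := 1+w \in 1+\smallo$ satisfies $u^\dagger = y$.

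Finally, to obtain $\d$-valuedness, let $u \in \mathcal O^\times$ with $u \asymp 1$; then $u' \in \der\mathcal O \subseteq \I(K)$ and, since $\I(K)$ is an $\mathcal O$-module and $u^{-1} \in \mathcal O$, also $u^\dagger = u'\cdot u^{-1} \in \I(K)$. By the previous step there is $v \in 1+\smallo$ with $v^\dagger = u^\dagger$, so $(u/v)^\dagger = 0$; hence $c := u/v \in C^\times$ and $u = cv \sim c$. Combining the inclusions (or directly invoking Lemma~\ref{pldv} in its $\d$-valued form) gives $\der\mathcal O = \I(K) = (1+\smallo)^\dagger$. The principal technical point is verifying that the two displayed order-$1$ linear equations really do have Newton degree~$1$ (with the given side conditions), so that $1$-linear newtonianity applies and delivers solutions $\prec 1$; this is a routine computation of dominant parts under $\derdelta = \phi^{-1}\der$, exploiting that the coefficients involved all lie in $\smallo$.
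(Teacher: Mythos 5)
The paper itself does not prove this lemma---it cites [ADH, 14.2.5] and moves on---so there is no in-paper proof to compare against. That said, your argument is correct in its essentials and is the natural route: the containments $\der\mathcal O\subseteq\I(K)$ and $(1+\smallo)^\dagger\subseteq\I(K)$ are immediate, the reverse inclusions $\I(K)\subseteq\der\mathcal O$ and $\I(K)\subseteq(1+\smallo)^\dagger$ come from solving $Z'=y$ and $W'-yW=y$ via $1$-linear newtonianity, and $\d$-valuedness then falls out of the second equality by peeling off a unit in $1+\smallo$ from any $u\asymp 1$.

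Three points deserve more care in the write-up. First, $1$-linear newtonianity in its definitional form produces zeros in $\mathcal O$, not in $\smallo$; what you actually need is the equivalent form giving zeros in $\smallo$, which is [ADH, 14.2.4], recorded in this monograph as Proposition~\ref{prop:char 1-linearly newt}(ii). This is genuinely needed in the second application: a solution $w\asymp 1$ of $w'-yw=y$ need not give $1+w\in 1+\smallo$, and there is no obvious way to normalize it without already knowing $\d$-valuedness. Second, the sentence ``since $K$ is $H$-asymptotic with asymptotic integration, its derivation is small, so $\der\mathcal O\subseteq\smallo$'' is a non sequitur: small derivation only gives $\der\mathcal O\subseteq\mathcal O$, and the sharper inclusion would instead follow from $\Psi\cap\Gamma^>\neq\emptyset$. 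In any case the rest of your argument never uses $\I(K)\subseteq\smallo$, so this is a cosmetic blemish, not a gap. Third, in the Newton multiplicity computations, ``the coefficients lie in $\smallo$'' is not quite the operative fact: the reason $\dval P^\phi=1$ for \emph{every} active $\phi$ is that $y\in\I(K)$ forces $vy>\Psi$, hence $y\prec\phi$ for all active $\phi$, which is strictly stronger than $y\prec 1$. You do start from $y\in\I(K)$, so the computation is sound, but the explanation at the end of the proposal undersells what is being used.
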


\noindent
We now turn our attention to the condition $\I(K)\subseteq K^\dagger$.
If $\I(K)\subseteq K^\dagger$, then also~$\I(K^\phi)\subseteq (K^\phi)^\dagger$ for $\phi\in K^\times$, where 
$$(K^\phi)^\dagger\ :=\ \{\phi^{-1}f'/f:\, f\in K^\times\}\ =\ \phi^{-1}K^\dagger.$$
By [ADH,  Section~9.5 and 10.4.3]: 

\begin{lemma}\label{lem:achieve I(K) subseteq Kdagger} 
Let $K$ be of $H$-type. If $K$ is $\d$-valued, or  pre-$\d$-valued without a gap, then $K$ has an immediate henselian asymp\-to\-tic extension~$L$  with $\I(L)\subseteq L^\dagger$.   
\end{lemma}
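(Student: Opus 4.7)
The strategy is to build $L$ as the union of a transfinite chain $(K_\alpha)$ of immediate henselian $H$-asymptotic extensions of $K$, adjoining at each successor step a solution to some equation $y^\dagger = a$ with $a\in\I(K_\alpha)\setminus K_\alpha^\dagger$.

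First I would reduce to the case where $K$ is already $\d$-valued and henselian. If $K$ is only pre-$\d$-valued without a gap, replace $K$ by its $\d$-valued hull $\operatorname{dv}(K)$: this is an immediate $H$-asymptotic extension by [ADH, 10.3], where the absence of a gap is what ensures immediacy. Then pass to the henselization, which is again an immediate $H$-asymptotic extension preserving the $\d$-valued property. Both operations preserve $H$-type, so we may assume from the outset that $K$ is $\d$-valued, henselian, and $H$-asymptotic. Note also that after this reduction $\I(K)$ is well-behaved, in particular $\I(K)=\I(K^{\text{h}})\cap K$ and analogous identities persist along any immediate asymptotic extension.

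Now, suppose inductively $K_\alpha$ is an immediate henselian $\d$-valued $H$-asymptotic extension of $K$. If $\I(K_\alpha)\subseteq K_\alpha^\dagger$, stop and set $L:=K_\alpha$. Otherwise pick $a\in\I(K_\alpha)\setminus K_\alpha^\dagger$; by [ADH, 10.4.3], with the immediate extension technology of [ADH, Section~9.5], the equation $y^\dagger=a$ admits a solution in some immediate $H$-asymptotic extension of $K_\alpha$. Henselize this extension (an immediate operation) to obtain $K_{\alpha+1}$, still immediate, henselian, $\d$-valued, and of $H$-type over $K$. At limit ordinals take unions, which preserve all of these properties.

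The process must terminate: since every $K_\alpha$ is an immediate extension of $K$, the Krull--Kaplansky bound on cardinalities of immediate extensions forbids a strictly ascending chain of arbitrary length, so at some stage $L:=K_\alpha$ satisfies $\I(L)\subseteq L^\dagger$, which is what we want. The main obstacle is the successor step: one must verify that adjoining $y$ with $y^\dagger=a$ really does yield an \emph{immediate} $H$-asymptotic extension of $K_\alpha$, and not just some asymptotic extension. This is precisely the content of [ADH, 10.4.3], and it is exactly the hypothesis $a\in\I(K_\alpha)$ that permits it: elements of $\I(K_\alpha)$ are small enough (in value) that their antiderivatives-of-a-logarithm can be constructed without creating a new value or residue.
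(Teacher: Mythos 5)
Your overall architecture is the one the paper intends: its ``proof'' is nothing more than the citation of [ADH, 9.5, 10.4.3], and your successor step (adjoin $y\sim 1$ with $y^\dagger=a$ for an offending $a\in\I(K_\alpha)\setminus K_\alpha^\dagger$ via 10.4.3), the henselizations via the algebraic-extension results of [ADH, 9.5], the unions at limits, and the termination by the cardinality bound on immediate extensions fill that citation in correctly. The properties needed to keep iterating (asymptotic, of $H$-type, pre-$\d$-valued resp.\ $\d$-valued, no gap) do persist along immediate asymptotic extensions since the asymptotic couple does not change, so the recursion is stable.

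The genuine weak point is your preliminary reduction. You replace $K$ by $\operatorname{dv}(K)$ and assert that this is an \emph{immediate} extension ``by [ADH, 10.3], where the absence of a gap is what ensures immediacy.'' That is not what the cited material gives you: [ADH, 10.3.2] yields $\Gamma_{\operatorname{dv}(K)}=\Gamma$, and the only place this monograph invokes immediacy of the $\d$-valued hull (the proof of Lemma~\ref{notupl1}) does so under \emph{rational asymptotic integration}, whereas ``without a gap'' includes the grounded case, where asymptotic integration fails altogether. Since every layer of your tower must be immediate over the original $K$ for the conclusion to make sense, this unproved (and uncited-in-the-needed-generality) step is load-bearing as written. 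It is also unnecessary: [ADH, 10.4.3] applies to pre-$\d$-valued fields of $H$-type directly, and the hypothesis ``$\d$-valued, or pre-$\d$-valued without a gap'' enters exactly to guarantee that every nonzero $a\in\I(K_\alpha)$ satisfies $a\preceq f'$ with $f\prec 1$ and hence $v(a)\in(\Gamma^{>})'$ (for $\d$-valued $K$ because units are $\sim$ constants; in the no-gap case because $\Gamma\setminus(\Gamma^{\neq})'$ has at most one element, lying below $(\Gamma^{>})'$). That is the condition under which 10.4.3 produces an immediate extension containing $y\sim 1$ with $y^\dagger=a$; your closing gloss attributes this to membership in $\I(K_\alpha)$ alone, but it is the no-gap/$\d$-valued hypothesis that rules out the creation of the new value that $v(y-1)$ would otherwise have to take. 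Drop the reduction (or else supply a proof that $\operatorname{dv}(K)$ is immediate over $K$ in the grounded case) and your argument is fine.
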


\begin{cor}\label{cor:no new LDs}
Suppose $K$ has asymptotic integration. Let $L$ be an  asymptotic field extension of $K$ such that $L^\times = K^\times C_L^\times (1+\smallo_L)$.
Then $L^\dagger=K^\dagger+(1+\smallo_L)^\dagger$, and
if $\I(K)\subseteq K^\dagger$, then  $L^\dagger\cap K=K^\dagger$. 
\end{cor}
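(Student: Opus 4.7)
The plan is to prove both claims by direct computation, exploiting the given factorization of $L^{\times}$ and the identity $\I(K)=\I(L)\cap K$ valid under asymptotic integration.

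For the first equation, I would take any $y\in L^{\times}$, write $y = k c u$ with $k\in K^{\times}$, $c\in C_L^{\times}$, $u\in 1+\smallo_L$ by the hypothesis, and apply $(\,\cdot\,)^{\dagger}$: since logarithmic derivative is a homomorphism and $c^{\dagger}=0$, we get $y^{\dagger}=k^{\dagger}+u^{\dagger}\in K^{\dagger}+(1+\smallo_L)^{\dagger}$. The reverse inclusion is automatic since both $K^{\dagger}$ and $(1+\smallo_L)^{\dagger}$ sit inside $L^{\dagger}$.

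For the second claim, suppose $\I(K)\subseteq K^{\dagger}$, and let $a\in L^{\dagger}\cap K$. By the first part write $a=k^{\dagger}+u^{\dagger}$ with $k\in K^{\times}$ and $u=1+\varepsilon$, $\varepsilon\in\smallo_L$. Then $u^{\dagger}=a-k^{\dagger}\in K$. The key step is to show $u^{\dagger}\in\I(L)$: since $\varepsilon\in\mathcal{O}_L$, we have $\varepsilon'\in\I(L)$ by the very definition of $\I(L)$, and $u^{\dagger}=\varepsilon'/(1+\varepsilon)\asymp\varepsilon'$ because $1+\varepsilon\asymp 1$, so indeed $u^{\dagger}\in\I(L)$.

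Now invoke the identity $\I(L)\cap K=\I(K)$ (valid because $K$ has asymptotic integration and $L/K$ is an asymptotic extension, as noted in the paragraph just before Lemma~\ref{lem:ADH 14.2.5}) together with the hypothesis $\I(K)\subseteq K^{\dagger}$ to conclude $u^{\dagger}\in K^{\dagger}$. Writing $u^{\dagger}=m^{\dagger}$ for some $m\in K^{\times}$ gives $a=(km)^{\dagger}\in K^{\dagger}$, hence $L^{\dagger}\cap K\subseteq K^{\dagger}$; the opposite inclusion is trivial. There is no real obstacle here — the only step requiring slight care is verifying $u^{\dagger}\in\I(L)$, but this is immediate from $\mathcal O_L^{\times}$ having $(\mathcal O_L^{\times})^{\dagger}\subseteq\I(L)$, which holds in any asymptotic field.
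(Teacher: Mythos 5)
Your proof is correct and follows essentially the same route as the paper: decompose $y=kcu$ via the hypothesis on $L^\times$ for the first identity, then for the second use $(1+\smallo_L)^\dagger\subseteq\I(L)$ together with $\I(L)\cap K=\I(K)\subseteq K^\dagger$ (your inline verification of $u^\dagger\in\I(L)$ is just the remark preceding the corollary, made explicit). No gaps.
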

\begin{proof}
Let $f\in L^\times$, and 
take $b\in K^\times$, $c\in C_L^\times$, $g\in\smallo_L$
with $f=bc(1+g)$; then~$f^\dagger=b^\dagger+(1+g)^\dagger$, showing $L^\dagger=K^\dagger+(1+\smallo_L)^\dagger$.
Next, suppose $\I(K)\subseteq K^\dagger$, let 
$b$, $c$, $f$, $g$ be as before, and assume $a:=f^\dagger\in K$;
then $$a-b^\dagger \in (1+\smallo_L)^\dagger \cap K\ \subseteq\ \I(L)\cap K\ =\  \I(K)\ \subseteq\ K^\dagger$$
and hence  $a\in K^\dagger$. This shows $L^\dagger\cap K=K^\dagger$.
\end{proof}

\noindent
Two cases where the assumption on $L$ in Corollary~\ref{cor:no new LDs} is satisfied: (1)  $L$ is an immediate asymptotic field extension of $K$,  because then  $L^\times = K^\times ({1+\smallo_L})$;  and (2)
$L$ is a $\d$-valued field extension of $K$ with $\Gamma=\Gamma_L$.

\medskip\noindent
If $F$ is a henselian valued field 
of residue characteristic $0$, then clearly the subgroup~$1+\smallo_F$ of $F^\times$ is divisible. 
Hence, if $K$ and $L$ are as in Corollary~\ref{cor:no new LDs} and in addition $K^\dagger$ is divisible and $L$ is henselian, then $L^\dagger$ is divisible.

\begin{exampleNumbered}\label{ex:Kdagger} 
Let $C$ be a field of characteristic $0$ and $Q$ be a subgroup of $\Q$ with~$1\in Q$. The Hahn field $C(\!( t^{Q} )\!)=C[[x^Q]]$, with $x=t^{-1}$, is given the natural derivation with $c'=0$ for all~$c\in C$ and~$x'=1$: this derivation is defined by 
$$\bigg(\sum_{q\in Q}c_qx^q\bigg)'\ :=\ \sum_{q\in Q} qc_qx^{q-1}\qquad \text{(all $c_q\in C$)}.$$
Then~$C(\!( t^{Q} )\!)$ has constant field $C$, and is $\d$-valued of $H$-type. Thus $K:=C(\!( t^{Q} )\!)$ satisfies $\I(K)\subseteq K^\dagger$
by Lemma~\ref{lem:achieve I(K) subseteq Kdagger}. Hence by Lemma~\ref{pldv},
$$\I(K)\ =\ (1+\smallo)^\dagger\ =\ \big\{f\in K:\, f\prec x^\dagger = t\big\}\ = \ \smallo\, t.$$ 
It follows easily that
$K^\dagger=Q t\oplus\I(K)$ (internal direct sum of subgroups of $K^\dagger$) and   
thus $(K^t)^\dagger=Q\oplus\smallo\subseteq\mathcal O$. In particular, if $Q=\Z$ (so $K=C(\!( t )\!)$), then~$(K^t)^\dagger=\Z\oplus tC[[t]]$. Moreover, if $L:=\operatorname{P}(C)\subseteq C(\!( t^{\Q} )\!)$ is the differential field of Puiseux series over~$C$, then
$(L^t)^\dagger=\Q\oplus\smallo_L$.
\end{exampleNumbered}

\subsection*{The real closed case} {\em In this subsection $H$ is a real closed asymptotic field whose valuation ring $\mathcal{O}$ is convex with respect to the ordering of $H$}. (In later use $H$ is often a Hardy field, which is why we use the letter $H$ here.) The valuation ring of the asymptotic field extension $K=H[\imag]$ of $H$ is then $\mathcal{O}_K=\mathcal{O}+\mathcal{O}\imag$, from which we obtain $\I(K)=\I(H) \oplus \I(H)\imag$. Let 
$$S\ :=\ \big\{y\in K:\, |y|=1\big\}, \qquad
W\ :=\ \big\{\!\wr(a,b):\, a,b\in H,\ a^2+b^2=1\big\},$$ 
so $S$ is a subgroup of $\mathcal{O}_K^\times$ with $S^\dagger = W\imag$ and 
$K^\dagger=H^\dagger\oplus W\imag$ by Lemma~\ref{lem:logder}.
Since~$\der\mathcal O\subseteq \I(H)$,  
we have $W\subseteq \I(H)$, and thus: $W = \I(H) \ \Longleftrightarrow\  \I(H)\imag\subseteq K^\dagger$.

\begin{lemma}\label{lem:W and I(F)} The following are equivalent: \begin{enumerate}
\item[\textup{(i)}] $\I(K)\subseteq K^\dagger$;
\item[\textup{(ii)}] $W=\I(H)\subseteq H^\dagger$.
\end{enumerate}
\end{lemma}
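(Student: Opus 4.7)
The plan is to unpack both sides using the two decompositions already assembled just before the lemma, namely $\I(K)=\I(H)\oplus\I(H)\imag$ and $K^\dagger=H^\dagger\oplus W\imag$, together with the previously noted equivalence $W=\I(H)\Longleftrightarrow\I(H)\imag\subseteq K^\dagger$ and the fact $K^\dagger\cap H=H^\dagger$ from Lemma~\ref{lem:logder}. Both implications are then essentially bookkeeping with these direct-sum decompositions; no substantive new tool seems necessary.

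For (i)$\Rightarrow$(ii), I would argue as follows. Assume $\I(K)\subseteq K^\dagger$. Since $\I(H)\subseteq\I(K)$ and $\I(H)\subseteq H$, this yields $\I(H)\subseteq K^\dagger\cap H=H^\dagger$, giving the second half of (ii). For the equality $W=\I(H)$, observe that $\I(H)\imag\subseteq\I(K)\subseteq K^\dagger$, and apply the equivalence recorded just before the statement of the lemma (which is itself immediate from the decomposition $K^\dagger=H^\dagger\oplus W\imag$ and $W\subseteq\I(H)$).

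For (ii)$\Rightarrow$(i), assume $W=\I(H)\subseteq H^\dagger$. Then on the one hand $\I(H)\subseteq H^\dagger\subseteq K^\dagger$, and on the other hand $\I(H)\imag=W\imag\subseteq K^\dagger$ by the decomposition of $K^\dagger$. Adding these two inclusions inside the additive group $K$ gives $\I(K)=\I(H)+\I(H)\imag\subseteq K^\dagger$, which is (i).

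The only subtlety, and the step I would double-check carefully, is the identification $K^\dagger\cap H=H^\dagger$ used in the first implication; but this is exactly the last assertion of Lemma~\ref{lem:logder}, so no real obstacle arises. The whole argument should occupy only a few lines in final form.
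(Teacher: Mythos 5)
Your proof is correct and follows the same route as the paper's: both directions rest on the decompositions $\I(K)=\I(H)\oplus\I(H)\imag$ and $K^\dagger=H^\dagger\oplus W\imag$, the equivalence $W=\I(H)\Leftrightarrow \I(H)\imag\subseteq K^\dagger$, and the identity $K^\dagger\cap H=H^\dagger$ from Lemma~\ref{lem:logder}. The paper's (ii)$\Rightarrow$(i) step is just the one-line chain $\I(K)=\I(H)\oplus\I(H)\imag\subseteq H^\dagger\oplus W\imag=K^\dagger$, which is precisely what you wrote slightly more verbosely.
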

\begin{proof}
Assume (i). 
Then $\I(H)\,\imag\subseteq\I(K)\subseteq K^\dagger$, so $W=\I(H)$ by the equivalence preceding the lemma. Also
$\I(H)\subseteq  \I(K)$ and 
$K^\dagger\cap H=H^\dagger$ (by Lemma~\ref{lem:logder}), hence $\I(H)\subseteq H^\dagger$, so
(ii) holds. 
For the converse, assume (ii). Then \[ \I(K)\ =\ \I(H)\oplus\I(H)\imag\ \subseteq\ H^\dagger\oplus W\imag\ =\ K^\dagger.\qedhere \]
\end{proof}

\noindent
Applying now Lemma~\ref{lem:ADH 14.2.5} we obtain:

\begin{cor}\label{cor:logder}
If $H$ is $H$-asymptotic and has asymptotic integration, and~$K$ is $1$-linearly newtonian, then $K$ is $\d$-valued and $\I(K)\subseteq K^\dagger$; in particular, $W=\I(H)$.
\end{cor}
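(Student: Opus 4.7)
The plan is to apply Lemma~\ref{lem:ADH 14.2.5} to $K$ itself, then invoke Lemma~\ref{lem:W and I(F)} for the final clause. The three hypotheses needed for Lemma~\ref{lem:ADH 14.2.5} to apply to $K$ are: $K$ is $H$-asymptotic, $K$ has asymptotic integration, and $K$ is $1$-linearly newtonian. The last is given by assumption. For the first, $H$ is a real closed $H$-asymptotic field with convex valuation ring, so $H$ is a pre-$H$-field, and hence its algebraic closure $K=H[\imag]$ is $H$-asymptotic by the remark in the excerpt that algebraic closures of pre-$H$-fields are $H$-asymptotic.

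For asymptotic integration in $K$, I would first observe that $\Gamma_K=\Gamma_H$. This follows from the absolute value construction in the real closed subsection: for $a,b\in H$ we have $|a+b\imag|^2=a^2+b^2$, so
$$v(a+b\imag)\ =\ v(|a+b\imag|)\ =\ \tfrac{1}{2}v(a^2+b^2)\ =\ \min(va,vb)\in\Gamma_H.$$
Now given any $f\in K^\times$, pick $h\in H^\times$ with $vh=vf$; by asymptotic integration in $H$ there is $g\in H^\times$ with $g\nasymp 1$ and $g'\asymp h\asymp f$, which witnesses asymptotic integration in $K$.

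With all three hypotheses established, Lemma~\ref{lem:ADH 14.2.5} yields that $K$ is $\d$-valued and $\I(K)=(1+\smallo_K)^\dagger\subseteq K^\dagger$. This gives the first two conclusions of the corollary and is precisely condition~(i) of Lemma~\ref{lem:W and I(F)}, whose equivalent condition~(ii) then delivers $W=\I(H)$ (together with $W\subseteq H^\dagger$), completing the proof. There is no real obstacle here: the result is essentially an aggregation of Lemma~\ref{lem:ADH 14.2.5} and Lemma~\ref{lem:W and I(F)}, the only small verification being $\Gamma_K=\Gamma_H$ so that asymptotic integration lifts from $H$ to $K$.
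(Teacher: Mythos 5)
Your proof is correct and is essentially the paper's: the corollary is exactly an application of Lemma~\ref{lem:ADH 14.2.5} to $K$ (whose hypotheses transfer from $H$ because $K$ is an asymptotic extension of $H$ with $\Gamma_K=\Gamma_H$, hence with the same asymptotic couple), followed by Lemma~\ref{lem:W and I(F)} for the clause $W=\I(H)$. One small quibble: a real closed $H$-asymptotic field with convex valuation ring need not be a pre-$H$-field (the sign condition on derivatives of elements above $\mathcal O$ is not part of the hypotheses), but this detour is unnecessary, since $K$ being $H$-asymptotic with asymptotic integration already follows from the standing assumptions of the subsection together with your computation that $\Gamma_K=\Gamma_H$.
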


{\sloppy
\begin{cor}\label{cor:logderset ext} 
Suppose $H$  has asymptotic integration and  $W=\I(H)$. 
Let $F$ be a real closed asymptotic extension of $H$ whose valuation ring is convex.  
Then $$F[\imag]^\dagger\cap K\ =\ (F^\dagger\cap H)\oplus\I(H)\imag.$$  If in addition $H^\dagger=H$, then $F[\imag]^\dagger\cap K=H\oplus\I(H)\imag=K^\dagger$.
\end{cor}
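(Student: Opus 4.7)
\medskip\noindent
\textbf{Proof plan.} The strategy is to apply Lemma~\ref{lem:logder} to the real closed asymptotic field $F$ (whose valuation ring is convex) to obtain the decomposition
\[
F[\imag]^\dagger\ =\ F^\dagger \oplus W_F\,\imag, \qquad W_F\ :=\ \big\{\!\wr(a,b):\, a,b\in F,\ a^2+b^2=1\big\},
\]
and then to intersect with $K=H[\imag]=H\oplus H\imag$. Given $z\in F[\imag]^\dagger\cap K$, write $z=u+v\imag$ with $u\in F^\dagger$ and $v\in W_F$; since $z\in K$ we have $u,v\in H$, so $u\in F^\dagger\cap H$ and $v\in W_F\cap H$.

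The key step is to show that $W_F\cap H\subseteq \I(H)$. First, $W_F\subseteq\I(F)$: this was already observed (before Lemma~\ref{lem:W and I(F)}) for real closed asymptotic fields with convex valuation ring. Second, because $H$ has asymptotic integration and $F$ is an asymptotic extension of $H$, the remark recalled just before Lemma~\ref{lem:ADH 14.2.5} gives $\I(F)\cap H=\I(H)$. Combining these yields $W_F\cap H\subseteq \I(H)$, and by the standing hypothesis $W=\I(H)$ we get $v\in \I(H)$. Hence $F[\imag]^\dagger\cap K\subseteq (F^\dagger\cap H)\oplus \I(H)\imag$.

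For the reverse inclusion, take $a\in F^\dagger\cap H$ and $b\in \I(H)=W$. Then $a\in F^\dagger\subseteq F[\imag]^\dagger$, while $b\imag\in W\imag\subseteq K^\dagger\subseteq F[\imag]^\dagger$ by Lemma~\ref{lem:logder} applied to $H$. Thus $a+b\imag\in F[\imag]^\dagger\cap K$. Since $(F^\dagger\cap H)\subseteq H$ and $\I(H)\imag\subseteq H\imag$ have trivial intersection inside $K$, the sum is direct.

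Finally, assume $H^\dagger=H$. Then $H=H^\dagger\subseteq F^\dagger$, so $F^\dagger\cap H=H$, giving $F[\imag]^\dagger\cap K=H\oplus\I(H)\imag$; and Lemma~\ref{lem:logder} together with $H^\dagger=H$ and $W=\I(H)$ yields $K^\dagger=H\oplus\I(H)\imag$ as well. The only potential subtlety is checking that the pieces of the direct sum really meet only in $0$, but this is immediate from the $H$-versus-$H\imag$ decomposition of $K$; no deeper obstruction is expected.
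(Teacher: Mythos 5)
Your proposal is correct and follows essentially the same route as the paper: both apply Lemma~\ref{lem:logder} to $F$ to get $F[\imag]^\dagger=F^\dagger\oplus W_F\imag$, deduce $W_F\cap H\subseteq\I(F)\cap H=\I(H)$ from asymptotic integration of $H$, and obtain the reverse inclusion from $W\imag\subseteq K^\dagger\subseteq F[\imag]^\dagger$. The paper merely leaves the directness of the sum and the componentwise reading of $z\in K$ implicit, which you spell out.
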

\begin{proof} 
We have 
$$F^\dagger\cap H\subseteq F[\imag]^\dagger\cap K\quad\text{ and }\quad
\I(H)\imag = W\imag \subseteq K^\dagger\cap H\imag \subseteq F[\imag]^\dagger\cap K,$$
so~$(F^\dagger\cap H)\oplus\I(H)\imag \subseteq F[\imag]^\dagger\cap K$. For the reverse inclusion, 
$F[\imag]^\dagger=F^\dagger\oplus W_F\imag$, with 
$$W_F\ :=\ \big\{\!\operatorname{wr}(a,b):\, a,b\in F,\ a^2+b^2=1\big\}\ \subseteq\ \I(F),$$ 
hence 
\begin{align*}
F[\imag]^\dagger\cap K &\, =\, (F^\dagger\cap H)\oplus (W_F\cap H)\imag \\
&\, \subseteq\, (F^\dagger\cap H)\oplus \big(\!\I(F)\cap H\big)\imag  \, =\, (F^\dagger\cap H)\oplus\I(H)\imag,
\end{align*} 
using $\I(F)\cap H=\I(H)$, a consequence of $H$ having asymptotic integration. 
If~${H^\dagger = H}$ then clearly $F^\dagger\cap H=H$, hence $F[\imag]^\dagger\cap K=K^\dagger$.
\end{proof} }

\subsection*{Trigonometric closure\astr} {\it In this subsection $H$ is a real closed $H$-field.}\/ Let $\O$ be its valuation ring and $\smallo$ the maximal ideal of $\O$.
The algebraic closure $K= H[\imag]$ of $H$ is a $\d$-valued $H$-asymptotic extension with valuation ring $\O_K=\O+\O\imag$.
We have the ``complex conjugation'' automorphism $z=a+b\imag\mapsto \bar{z}=a-b\imag$ ($a,b\in H$) of the valued differential field $K$. For such $z$, $a$, $b$ we have 
 $$ |z|\ =\ \sqrt{z\bar{z}}\ =\ \sqrt{a^2+b^2}\ \in\  H^{\ge}.$$ 
 
 \begin{lemma}\label{lemtri1} Suppose $\theta\in H$ and $\theta'\imag\in K^\dagger$. Then $\theta'\in \der\smallo$, and there is a unique~$y\sim 1$ in $K$ such that $y^\dagger=\theta'\imag$. For this $y$ we have $|y|=1$, so $y^{-1}=\bar{y}$.
\end{lemma}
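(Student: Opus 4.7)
First, I would apply Lemma~\ref{lem:logder}: since $K^\dagger = H^\dagger\oplus W\imag$ and $\theta'\imag$ has no $H^\dagger$-component, we get $\theta' = \wr(a,b) \in W$ for some $a,b\in H$ with $a^2+b^2=1$, so $\theta'\in W\subseteq\I(H)$. The element $y_0 := a+b\imag \in S$ has $y_0^\dagger = \theta'\imag$ and $|y_0|=1$, so $y_0 \in \mathcal O_K^\times$. Reducing $y_0\bar y_0 = 1$ modulo $\smallo_K$ shows $c_0 := \res y_0 \in S\cap C_K$; the quotient $y := y_0/c_0 \in S$ then satisfies $\res y = 1$, so $y\sim 1$, and $y^\dagger = y_0^\dagger = \theta'\imag$.

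Second, I would verify $|y|=1$ together with uniqueness. For any solution $z$ of $z^\dagger = \theta'\imag$ in $K^\times$, compute
$(z\bar z)^\dagger = z^\dagger + \overline{z^\dagger} = \theta'\imag - \theta'\imag = 0,$
so $z\bar z \in C_K$; since $z\bar z \in H$, it actually lies in $H\cap C_K = C_H$. When $z\sim 1$ we have $z\bar z \sim 1$, and the $H$-field identity $C_H\cap\smallo_H = \{0\}$ (from $\mathcal O_H = C_H + \smallo_H$) then forces $z\bar z = 1$, whence $|y|=1$ and $y^{-1}=\bar y$. Uniqueness follows identically: two candidates $y_1, y_2 \sim 1$ differ by a $C_K$-constant which is $\sim 1$, hence lies in $C_K\cap(1+\smallo_K) = \{1\}$.

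Third---and this will be the main obstacle---I would prove $\theta' \in \der\smallo$, which is equivalent to $\theta \in \mathcal O_H$. Writing $y = (1+u) + v\imag$ with $u,v\in H$, the constraint $|y|^2 = 1$ forces $2u + u^2 + v^2 = 0$, so $u \in \smallo^2$ and $v \in \smallo$; unwinding $y^\dagger = \theta'\imag$ componentwise yields
$v' = \theta'(1+u)$ and $u' = -vv'/(1+u),$
whence $\theta' = v'/(1+u)$ and $\theta' - v' = -u\theta'\in\smallo^2\cdot\I(H)$. Using $u = -(u^2+v^2)/2$ together with $v^{2k}v' = \big(v^{2k+1}/(2k+1)\big)'$ and repeated integration by parts, I~would extract an explicit $f\in\smallo$ with $f' = \theta'$. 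The genuine difficulty is that the natural ``antiderivative'' of $\theta'$ is $\arcsin v$, which need not lie in $H$, so $f$ must be produced purely algebraically from the finite identities linking $\theta'$, $u$, and $v$; equivalently, one must pin down the additive constant that places the given $\theta\in H$ inside $\mathcal O_H = C_H+\smallo_H$ using only the $H$-field axioms.
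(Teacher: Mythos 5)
Your first two paragraphs are essentially correct and follow the same route as the paper: Lemma~\ref{lem:logder} gives $\theta'\in W\subseteq\I(H)$ and a witness in $S$, and dividing by its constant part (you redo Corollary~\ref{cor:decomp of S} by hand, using that $K$ is $\d$-valued) produces $y\in S\cap(1+\smallo_K)$ with $y^\dagger=\theta'\imag$; the uniqueness argument via constants and the computation $(z\bar z)^\dagger=0$ are also as in the paper. The genuine gap is your third step: the assertion $\theta'\in\der\smallo$ is never proved. You correctly reduce it to $\theta\in\mathcal O$, but the route you sketch --- writing $y=(1+u)+v\imag$ and trying to manufacture, by repeated integration by parts, an explicit $f\in\smallo$ with $f'=\theta'$ --- is not carried out, and it cannot be carried out by finite algebraic manipulation: as you note, the natural antiderivative is $\arcsin v$, and the series expansions of $\arcsin v$ or of $(1+u)^{-1}$ are unavailable in $H$. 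Note also that $\der\smallo$ is only an additive subgroup, not an $\mathcal O$-module, so knowing $\theta'=v'(1+u)^{-1}$ with $v,u\in\smallo$ only puts $\theta'$ in $\I(H)$, which you already knew.

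The missing idea is that no construction is needed: $\theta'\in\I(H)$ already forces $\theta\preceq 1$. Indeed, every nonzero element of $\I(H)$ has valuation $\geq\gamma+\psi(\gamma)$ for some $\gamma\in\Gamma^{>}$, while if $\theta\succ 1$ then $\theta'\neq 0$ and $v(\theta')=v\theta+\psi(v\theta)$ with $v\theta<0$; since $\psi$ is slowly varying [ADH, 6.5.4(ii)], the map $\gamma\mapsto\gamma+\psi(\gamma)$ is strictly increasing on $\Gamma^{\neq}$, so $v(\theta')$ lies strictly below $v\big(\I(H)^{\neq}\big)$, contradicting $\theta'\in\I(H)$. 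Hence $\theta\preceq 1$, and then the $H$-field identity $\mathcal O=C+\smallo$ (which you already use elsewhere) gives $\theta=c+\varepsilon$ with $c\in C$, $\varepsilon\in\smallo$, so $\theta'=\varepsilon'\in\der\smallo$. This is exactly how the paper disposes of the first claim in one line before constructing $y$.
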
 
\begin{proof} From $\theta'\imag \in K^\dagger$ we get $\theta'\in W\subseteq \I(H)$, so $\theta\preceq 1$, hence
$\theta'\in \der\O=\der\smallo$.  Let~$z\in K^\times$ and $z^\dagger=\theta'\imag$. Then  $\Re z^\dagger=0$, so by Corollaries~\ref{cor:logder abs value} and \ref{cor:decomp of S} we have~$z=cy$ with $c\in C_K^\times$ and $y\in S\cap(1+\smallo_K)$ where $S=\{a\in K:\ |a|=1\}$. Hence~$y\sim 1$, $|y|=1$, and $y^\dagger=\theta'\imag$. If also $y_1\in  K$ and $y_1\sim 1$, $y_1^\dagger=\theta'\imag$, then~$y_1=c_1y$ with~$c_1\in C_K^\times$, so $c_1=1$ in view of $y\sim y_1$. 
\end{proof} 

\noindent
By [ADH, 10.4.3], if $y$ in an $H$-asymptotic extension $L$ of $K$ satisfies
$y\sim 1$ and~${y^\dagger\in \der\smallo_K}$, then the asymptotic field $K(y)\subseteq L$ is an immediate extension of $K$, and so is any algebraic asymptotic extension of $K(y)$.

\medskip\noindent
Call $H$ {\bf trigonometrically closed} if
for all $\theta\prec 1$ in $H$ there is a (necessarily unique)  $y\in K$ such that $y\sim 1$ and $y^\dagger=\theta'\imag$.\index{trigonometric!closed}\index{closed!trigonometrically}\index{H-field@$H$-field!trigonometrically closed} (By convention ``trigonometrically closed'' includes ``real closed''.) For such $\theta$ and $y$ we think of
$y$ as $\ex^{\imag\theta}$ and accordingly of the elements $\frac{y+\bar{y}}{2}=\frac{y+y^{-1}}{2}$ and  $\frac{y-\bar{y}}{2\imag}=\frac{y-y^{-1}}{2\imag}$ of $H$ as $\cos \theta$ and $\sin \theta$; this explains the terminology. By Lemma~\ref{lemtri1} the restrictions $\theta\prec 1$ and $y\sim 1$ are harmless. Our aim in this subsection is to construct a canonical trigonometric closure
of $H$. 

Our interest in this notion comes from the condition $\I(K)\subseteq K^\dagger$, which   appears as a natural hypothesis
at many points in Chapter~\ref{part:dents in H-fields}, especially in Section~\ref{sec:ultimate}).
Note that if  $\I(K)\subseteq K^\dagger$, then $H$ is trigonometrically closed. As a partial converse, if~$\I(H)\subseteq H^\dagger\cap \der H$ and $H$ is trigonometrically closed, then $\I(K)\subseteq K^\dagger$; this is an
easy consequence of $\I(K)=\I(H)+\I(H)\imag$. Thus for Liouville closed $H$ we have:
$$H \text{ is trigonometrically closed}\ \Longleftrightarrow\  \I(K)\subseteq K^\dagger.$$ 
Note also that for trigonometrically closed $H$ there is no $y$ in any $H$-asymptotic extension of $K$ such that
$y\notin K$, $y\sim 1$, and $y^\dagger\in (\der\smallo)\imag$. 
If $H$ is Schwarz closed, then $H$ is trigonometrically closed by the next lemma:

\begin{lemma}\label{lem:sc=>tc}
Suppose $H$ is Liouville closed and $\omega(H)$ is downward closed. Then~$H$ is trigonometrically closed.
\end{lemma}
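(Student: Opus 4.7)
The plan is to reduce trigonometric closure to solving a specific second-order linear equation over $H$, and then to obtain the required splitting from the hypothesis on $\omega(H)$. Given $\theta\prec 1$ in $H$, the case $\theta'=0$ is handled by $y=1$; otherwise, by replacing $\theta$ with $-\theta$ and conjugating at the end if necessary, I may assume $\theta'>0$, so that $\phi:=\sqrt{\theta'}\in H^{>}$ is at hand.

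First, I would identify the operator $L:=\der^{2}-(\theta')^{\dagger}\der+(\theta')^{2}\in H[\der]$. Given any $u\in H^{\times}$ with $Lu=0$, set $w:=-u'/\theta'\in H$; the relation $Lu=0$ produces $(u^{2}+w^{2})'=0$, so $u^{2}+w^{2}=d^{2}$ for some $d\in C^{>}$ (using that the constant field of a real closed $H$-field is real closed). With $a:=u/d$, $b:=w/d\in H$, a brief computation gives $a^{2}+b^{2}=1$ and $ab'-a'b=\theta'$, hence by Lemma~\ref{lem:logder}, $y_{0}:=a+b\imag\in S$ satisfies $y_{0}^{\dagger}=\theta'\imag$. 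Using $\mathcal{O}=C+\smallo$, write $a=c_{1}+a_{1}$, $b=c_{2}+b_{1}$ with $c_{i}\in C$ and $a_{1},b_{1}\in\smallo$; expanding $a^{2}+b^{2}=1$ and using $C\cap\smallo=\{0\}$ forces $c_{1}^{2}+c_{2}^{2}=1$, and then $y:=y_{0}/(c_{1}+c_{2}\imag)$ satisfies $|y|=1$, $y\sim 1$, and $y^{\dagger}=\theta'\imag$.

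Second, the substitution $u=\phi v$ removes the first-order term and transforms $Lu=0$ into $4v''+\sigma(2\theta')v=0$, where $\sigma(2\theta')=4(\theta')^{2}+\omega(-(\theta')^{\dagger})\in H$. If one has $\sigma(2\theta')=\omega(z_{0})$ with $z_{0}\in H$, then $4\der^{2}+\sigma(2\theta')=(2\der+z_{0})(2\der-z_{0})$, and Liouville closedness (which supplies $H^{\dagger}=H$) provides $v_{0}\in H^{\times}$ with $v_{0}^{\dagger}=z_{0}/2$, so $(2\der-z_{0})v_{0}=0$; then $u_{0}:=\phi v_{0}\in H^{\times}$ is a nonzero solution of $Lu_{0}=0$, completing the reduction of the previous paragraph.

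It remains to show $\sigma(2\theta')\in\omega(H)$, and this is where downward closedness enters. I would exhibit $z\in H$ with $\omega(z)>\sigma(2\theta')$, namely $z:=-(\theta')^{\dagger}-3\theta\theta'\in H$. Setting $\epsilon:=-3\theta\theta'$ and expanding $\omega(-(\theta')^{\dagger}+\epsilon)-\omega(-(\theta')^{\dagger})=-2\epsilon'+2(\theta')^{\dagger}\epsilon-\epsilon^{2}$, the $\theta\theta''$ contributions cancel via $(\theta')^{\dagger}\theta'=\theta''$, leaving
\[
\omega(z)\ -\ \omega(-(\theta')^{\dagger})\ =\ (\theta')^{2}\bigl(6-9\theta^{2}\bigr).
\]
Since $\theta\in\smallo$ forces $9\theta^{2}<2$ in $H$ and $(\theta')^{2}>0$, subtracting $4(\theta')^{2}$ yields $\omega(z)-\sigma(2\theta')=(\theta')^{2}(2-9\theta^{2})>0$. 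Downward closedness of $\omega(H)$ then gives $\sigma(2\theta')\in\omega(H)$, as required. The main obstacle is locating the right $\epsilon$: it must be large enough that the linear-in-$\epsilon$ contribution exceeds $4(\theta')^{2}$, yet small enough that the quadratic $\epsilon^{2}$ does not spoil the margin; the choice $\epsilon=-3\theta\theta'=-\tfrac{3}{2}(\theta^{2})'$ is engineered so that the $\theta\theta''$ cross-terms cancel and the clean inequality above survives.
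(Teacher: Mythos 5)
Your proof is correct. The skeleton is the same as the paper's: you work with the operator $\der^2-(\theta')^\dagger\der+(\theta')^2$, pass via the substitution $v\mapsto v\sqrt{\theta'}$ to the equation $4v''+\sigma(2\theta')v=0$, obtain a nonzero solution in $H$ from $\sigma(2\theta')\in\omega(H)$ together with Liouville closedness (the factorization $(2\der+z_0)(2\der-z_0)=4\der^2+\omega(z_0)$ and $H^\dagger=H$), and then package real and imaginary parts into $y=a+b\imag$ with $y^\dagger=\theta'\imag$ --- your $w=-u'/\theta'$ is exactly the paper's $\sin\theta=-(\cos\theta)'/h$. The genuine difference is in how downward closedness of $\omega(H)$ is brought to bear on the crucial membership $\sigma(2\theta')\in\omega(H)$: the paper quotes the machinery of [ADH, 11.8], namely that $2\theta'\in H^>\cap\I(H)$ forces $2\theta'\notin\Upg(H)$ (11.8.19) and hence $\sigma(2\theta')\in\omega(H)^\downarrow=\omega(H)$ (11.8.31), whereas you bypass $\Upg(H)$ and $\I(H)$ entirely by exhibiting the explicit comparison element $z=-(\theta')^\dagger-3\theta\theta'$ and computing $\omega(z)-\sigma(2\theta')=(\theta')^2(2-9\theta^2)>0$, which makes the key step elementary and self-contained at the cost of an ad hoc (though cleverly engineered) witness; your by-hand normalization of $y$ to $y\sim 1$, $|y|=1$ via the constants $c_1,c_2$ likewise replaces the paper's appeal to Lemma~\ref{lemtri1} and Corollary~\ref{cor:decomp of S}, a minor but harmless duplication. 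Both routes rely on the same background facts ($\mathcal O=C+\smallo$, Lemma~\ref{lem:logder}, Liouville closedness); the paper's version generalizes more readily since 11.8.19/11.8.31 isolate exactly what is needed, while yours has the virtue of being verifiable by direct calculation.
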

\begin{proof}
Let $0\ne \theta\prec 1$ in $H$. By Lemma~\ref{lemtri1} it suffices to show that then~${\theta'\imag\in K^\dagger}$.
Note that $h:=\theta'\in\I(H)^{\neq}$; we arrange $h>0$. Now
$$f\ :=\ \omega(-h^\dagger)+4h^2\ =\ \sigma(2h), \qquad 2h\in H^>\cap\I(H),$$ 
hence $2h\in H^>\setminus\Upg(H)$
by [ADH, 11.8.19]. So $f\in\omega(H)^\downarrow=\omega(H)$ by [ADH, 11.8.31],  and thus
$\dim_{C_H} \ker 4\der^2+f\ge 1$ by [ADH, p.~ 258].   
Put $A:=\der^2-h^\dagger \der+h^2\in H[\der]$.
The isomorphism $y\mapsto y\sqrt{h}\colon\ker({4\der^2+f})\to\ker A$ of $C_H$-linear spaces~[ADH, 5.1.13] then yields an element of $\ker^{\neq} A$ that for suggestiveness we denote by $\cos \theta$. 
Put~$\sin\theta:=-(\cos\theta)'/h$.
Then
\begin{align*}
(\sin\theta)'\	&=\  -(\cos\theta)''/h+(\cos\theta)'h^\dagger/h\\
	&=\  \big( {-h^\dagger(\cos\theta)'+h^2\cos\theta } \big)/h+(\cos\theta)'h^\dagger/h\ =\ h\cos\theta
\end{align*}
and thus $y^\dagger=\theta'\imag$ for $y:=\cos\theta+\imag\sin\theta\in K^\times$.
\end{proof}

\noindent
If $H$ is $H$-closed, then $H$ is Schwarz closed by [ADH, 14.2.20], and thus trigonometrically closed.  Using also Lemma~\ref{lem:W and I(F)}  and remarks preceding it  this yields:

\begin{cor}\label{cor:sc=>tc}
If $H$ is   $H$-closed,  then $\I(K) \subseteq K^\dagger = H\oplus\I(H)\imag$.
\end{cor}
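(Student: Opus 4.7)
The plan is to chain together the preceding lemmas and the reference [ADH, 14.2.20] in a nearly mechanical way. All the hard work has been done in Lemmas \ref{lem:logder}, \ref{lem:W and I(F)}, and \ref{lem:sc=>tc}, together with the fact from [ADH] that $H$-closedness forces Schwarz closedness. What remains is to verify the right hypotheses line up, and to track how $W$ and $H^\dagger$ specialize under $H$-closedness so that the direct-sum decomposition displays the advertised form $H \oplus \I(H)\imag$.

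First, since $H$ is $H$-closed it is in particular Liouville closed and $\upo$-free and newtonian, so by [ADH, 14.2.20] the $H$-field $H$ is Schwarz closed; that is, $H$ is Liouville closed and $H = \omega(\Upl(H)) \cup \sigma(\Upg(H))$. In particular $\omega(H)$ is downward closed (even all of $H$), so Lemma \ref{lem:sc=>tc} applies and yields that $H$ is trigonometrically closed. Because $H$ is Liouville closed, the equivalence established in the remarks just before Lemma \ref{lem:sc=>tc} upgrades trigonometric closedness to the inclusion $\I(K) \subseteq K^\dagger$, which is the first conclusion of the corollary.

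Next I would use Liouville closure to note that $H^\dagger = H$: given $h \in H$, Liouville closedness supplies $y \in H^\times$ with $y^\dagger = h$ (taking the $H$-field version of the exponential-of-integral axiom). Then Lemma \ref{lem:W and I(F)} converts the just-established inclusion $\I(K) \subseteq K^\dagger$ into the identity $W = \I(H)$ (and also $\I(H) \subseteq H^\dagger = H$, which is automatic). Finally, Lemma \ref{lem:logder} gives the decomposition
\[
K^\dagger \ =\ H^\dagger \oplus W\imag \ =\ H \oplus \I(H)\,\imag,
\]
completing the proof.

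There isn't really a substantial obstacle here; the only point one must be a bit careful about is citing the correct ingredient for $H^\dagger = H$, since Liouville closure was stated in the form ``for all $f,g \in H$ there is $y \in H^\times$ with $y' + fy = g$''; one applies this with $g = 0$ (and uses that the equation $y' - hy = 0$ has a nonzero solution in $H^\times$ by Liouville closure) to produce the desired exponential integral. Once this is in hand, Lemmas \ref{lem:logder} and \ref{lem:W and I(F)} collapse the general decomposition $K^\dagger = H^\dagger \oplus W\imag$ to the clean form in the statement.
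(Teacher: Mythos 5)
Your proof is correct and follows essentially the same route the paper sketches in the sentence preceding the corollary: $H$-closed $\Rightarrow$ Schwarz closed by [ADH, 14.2.20] $\Rightarrow$ trigonometrically closed via Lemma~\ref{lem:sc=>tc}, then the displayed equivalence for Liouville closed $H$ gives $\I(K)\subseteq K^\dagger$, and Lemmas~\ref{lem:W and I(F)} and~\ref{lem:logder} together with $H^\dagger=H$ yield $K^\dagger=H^\dagger\oplus W\imag=H\oplus\I(H)\imag$. One small correction: your parenthetical claim that $\omega(H)$ is ``even all of $H$'' is false — Schwarz closedness says $H=\omega\big(\Upl(H)\big)\cup\sigma\big(\Upg(H)\big)$, not $\omega(H)=H$ (e.g.\ $\omega(\T)\neq\T$, since $f\gg 0$ with $4\der^2+f$ splitting only over $\T[\imag]$ lies outside $\omega(\T)$). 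This does not damage the argument, since the fact you actually need, that $\omega(H)$ is downward closed for Schwarz closed $H$, is precisely what the paper's remark before Lemma~\ref{lem:sc=>tc} invokes, but the parenthetical justification should be dropped.
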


\noindent
Suppose now that $H$ is {\it not}\/ trigonometrically closed; so we have $\theta\prec 1$ in $H$ with~$\theta'\imag\notin K^\dagger$. 
Then [ADH, 10.4.3] provides an immediate asymptotic extension~$K(y)$ of $K$ with $y\sim 1$ and
$y^\dagger=\theta'\imag$. To simplify notation and for suggestiveness we set
$$\cos \theta\ :=\ \frac{y+y^{-1}}{2}, \qquad \sin \theta\ :=\ \frac{y-y^{-1}}{2\imag},$$ so $y=\cos\theta+ \imag\sin \theta$ and
$(\cos\theta)^2+ (\sin\theta)^2=1$.  Moreover $(\cos \theta)'=-\theta'\sin \theta$ and~$(\sin \theta)'=\theta'\cos \theta$. It follows that
$H^+:=H(\cos\theta, \sin\theta)$ is a differential subfield of~$K(y)$ with $K(y)=H^+[\imag]$, and  thus $H^+$, as a valued differential
subfield of $H(y)$, is an asymptotic extension of $H$. 

\begin{lemma}\label{lemtri2}$H^+$ is an immediate extension of $H$.
\end{lemma}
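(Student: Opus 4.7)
The plan is to exploit that $K(y)/K$ is immediate together with a ``complex conjugation'' symmetry of $K(y)$ extending that of $K$ over $H$.

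First I would handle the value group. Since $H$ is real closed with convex valuation ring, the unique extension of its valuation to $K=H[\imag]$ has valuation ring $\mathcal{O}_K=\mathcal{O}+\mathcal{O}\imag$ and $\Gamma_K=\Gamma_H$. Combined with the immediacy of $K(y)/K$, this gives $\Gamma_{K(y)}=\Gamma_H$. From $H\subseteq H^+\subseteq K(y)$ the chain $\Gamma_H\subseteq\Gamma_{H^+}\subseteq\Gamma_{K(y)}=\Gamma_H$ forces $\Gamma_{H^+}=\Gamma_H$.

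For the residue field I would introduce an involution $\sigma$ of $K(y)$ determined by $\sigma|_H=\id$, $\sigma(\imag)=-\imag$, and $\sigma(y)=y^{-1}$. Since $K$ is algebraically closed (hence henselian), any immediate algebraic extension is trivial; as $y\notin K$, this forces $y$ to be transcendental over $K$, making $\sigma$ well-defined as a field automorphism of $K(y)$. A short computation shows $\sigma$ commutes with the derivation: $\sigma(y')=\sigma(y\theta'\imag)=y^{-1}\cdot(-\theta'\imag)=-y^{-2}y'=(y^{-1})'=\sigma(y)'$. Granting that $\sigma$ also preserves the valuation (discussed below), $\sigma$ descends to an involution $\bar\sigma$ of $\res(K(y))=\res(H)[\imag]$, with $\bar\sigma(\imag)=\res(-\imag)=-\imag$; that is, $\bar\sigma$ is complex conjugation on $\res(H)[\imag]$, whose fixed field is $\res(H)$. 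The fixed field $F$ of $\sigma$ on $K(y)$ contains $H$, $\cos\theta=(y+y^{-1})/2$, and $\sin\theta=(y-y^{-1})/(2\imag)$, hence $H^+\subseteq F$. Consequently $\res(a)\in\res(H)$ for every $a\in F\cap\mathcal{O}_{K(y)}$, giving $\res(H^+)\subseteq\res(H)$, and the reverse inclusion is trivial.

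The hard part will be verifying that $\sigma$ preserves the valuation on $K(y)$. My approach is to observe that $v\circ\sigma$ is a valuation on $K(y)$ restricting to $v$ on $K$ (since complex conjugation stabilizes $\mathcal{O}_K=\mathcal{O}+\mathcal{O}\imag$). In $(K(y),v\circ\sigma)$ the element $y^{-1}$ satisfies $y^{-1}\sim 1$ and has logarithmic derivative $-\theta'\imag$, exhibiting $(K(y),v\circ\sigma)$ as an immediate asymptotic extension of $K$ of precisely the type produced by \textup{[ADH, 10.4.3]} applied to the datum $-\theta'\imag\notin K^\dagger$. A uniqueness argument for such immediate extensions generated by a prescribed pseudo-limit, available over the henselian field $K$, then identifies $v\circ\sigma$ with $v$, completing the proof.
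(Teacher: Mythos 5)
Your proposal is correct and is essentially the paper's own argument: the same involution $\sigma$ with $\sigma|_H=\id$, $\sigma(\imag)=-\imag$, $\sigma(y)=y^{-1}$, the same appeal to the uniqueness property of [ADH, 10.4.3] to make $\sigma$ compatible with the valuation, and the same descent to the residue field via the induced conjugation on $\res\big(K(y)\big)=\res(H)[\imag]$ (the value-group part being immediate since $K(y)$ is an immediate extension of $K$). The only organizational difference is that the paper invokes the uniqueness in [ADH, 10.4.3] just once, to extend complex conjugation of $K$ directly to an automorphism of the \emph{valued} differential field $K(y)$ sending $y$ to $y^{-1}$, so your separate verification that $v\circ\sigma=v$ (your ``hard part'', whose reference to pseudo-limits and henselianity is beside the point, though the cited lemma is the right one) is absorbed into that single step.
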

\begin{proof} Since $(y^{-1})^\dagger=-\theta'\imag$, the uniqueness property stated in [ADH, 10.4.3] allows us to extend the complex conjugation automorphism
of $K$ (which is the identity on~$H$ and sends $\imag$ to $-\imag$) to 
 an automorphism $\sigma$ of the valued differential field $K(y)$ such that $\sigma(y)=y^{-1}$.
Then $\sigma(\cos \theta)=\cos\theta$ and $\sigma(\sin \theta)=\sin\theta$, so $H^+= \text{Fix}(\sigma)$.
Let~$\k$ be the residue field of $H$; so $\k[\res\imag]$ is the residue field of $K$ and of its immediate extension $K(y)$.
Now $\sigma(\O_{K(y)})=\O_{K(y)}$, so $\sigma$ induces an automorphism of this residue field $\k[\res\imag]$
which is the identity on $\k$ and sends $\res\imag$ to $-\res \imag$. Hence~$\res\imag$ does not lie in the residue field of $H^+$,   so this residue field is just~$\k$. 
\end{proof} 

\noindent
Equip $H^+$ with the unique field ordering making it an ordered field extension of $H$ in which $\O_{H^+}$ is convex; see 
[ADH, 10.5.8].  Then $H^+$ is an $H$-field, and its real closure is an immediate real closed $H$-field extension of $H$.

\begin{lemma}\label{lemtri3} The $H$-field $H^+$ embeds uniquely over $H$ into any trigonometrically
closed $H$-field extension of $H$. 
\end{lemma}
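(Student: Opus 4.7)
The plan is to use the uniqueness clause of [ADH, 10.4.3] invoked in constructing $K(y)$ together with Lemma~\ref{lemtri1} applied inside the target field.

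For existence, let $L$ be a trigonometrically closed $H$-field extension of $H$. Since $\theta\prec 1$ lies in $H\subseteq L$ and $L$ is trigonometrically closed, there is a unique $\tilde y\sim 1$ in $L[\imag]$ with $\tilde y^\dagger=\theta'\imag$; by Lemma~\ref{lemtri1} applied inside $L$ we have $|\tilde y|=1$, so $\tilde y^{-1}=\overline{\tilde y}$. The pair $(K(y),y)$ was obtained from $\theta'\imag$ via [ADH, 10.4.3], and the uniqueness clause of that result extends the inclusion $K\hookrightarrow L[\imag]$ to a valued differential field embedding $\iota\colon K(y)\to L[\imag]$ over $K$ sending $y\mapsto\tilde y$. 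Then $\iota$ sends $\cos\theta=(y+y^{-1})/2$ to $\Re(\tilde y)\in L$ and $\sin\theta=(y-y^{-1})/(2\imag)$ to $\Im(\tilde y)\in L$, so $\iota$ restricts to a valued differential field embedding $H^+=H(\cos\theta,\sin\theta)\to L$ over $H$. Pulling back the ordering of $L$ yields an ordering on $H^+$ extending that of $H$ and making $\O_{H^+}$ convex; by the uniqueness of such an ordering (cf.~[ADH, 10.5.8]) this coincides with the given ordering of $H^+$, so $\iota$ restricts to an $H$-field embedding.

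For uniqueness, any $H$-field embedding $\phi\colon H^+\to L$ over $H$ extends uniquely to a field embedding $\tilde\phi\colon K(y)=H^+[\imag]\to L[\imag]$ sending $\imag\mapsto\imag$; since the valuation rings of $H^+[\imag]$ and $L[\imag]$ are $\O_{H^+}+\O_{H^+}\imag$ and $\O_L+\O_L\imag$ respectively, $\tilde\phi$ preserves the valuation. Hence $\tilde\phi(y)\sim 1$ in $L[\imag]$ and $\tilde\phi(y)^\dagger=\theta'\imag$, so Lemma~\ref{lemtri1} applied inside $L$ forces $\tilde\phi(y)=\tilde y$, determining $\phi(\cos\theta)=\Re(\tilde y)$ and $\phi(\sin\theta)=\Im(\tilde y)$.

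I expect the main subtlety to be checking that a valued differential field embedding $H^+\to L$ extending $H\hookrightarrow L$ is automatically order-preserving; this hinges on the uniqueness of the ordering on $H^+$ that extends the ordering of $H$ and makes $\O_{H^+}$ convex.
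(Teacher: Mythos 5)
Your proof is correct and takes essentially the same route as the paper's own argument: uniqueness by extending an $H$-field embedding to $H^+[\imag]=K(y)\to L[\imag]$ with $\imag\mapsto\imag$ and observing it must send $y$ to the unique $\tilde y\sim 1$ with $\tilde y^\dagger=\theta'\imag$, and existence via the uniqueness/embedding property of [ADH, 10.4.3] together with the uniqueness of the ordering on $H^+$ extending that of $H$ in which $\O_{H^+}$ is convex. The only difference is that you spell out a couple of steps the paper leaves implicit (e.g.\ $|\tilde y|=1$ via Lemma~\ref{lemtri1} to see that $\cos\theta,\sin\theta$ land in $L$), which is harmless.
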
 
\begin{proof} Let $H^*$ be a trigonometrically closed $H$-field extension of $H$. Take the unique $z\sim 1$ in $H^*$ such that $z^\dagger=\theta'\imag$. Then any $H$-field embedding $H^+\to H^*$ over $H$ extends to a valued differential field embedding $H^+[\imag]=K(y)\to H^*[\imag]$ sending $\imag\in K$ to $\imag\in H^*[\imag]$, and this extension
must send $y$ to $z$. Hence there is at most one $H$-field embedding $H^+\to H^*$ over $H$. 
For the existence of such an embedding, the uniqueness properties from [ADH, 10.4.3] yield a valued differential field embedding $K(y)\to H^*[\imag]$ over $H$ sending $\imag\in K$ to $\imag\in H^*[\imag]$ and $y$ to $z$. This embedding maps $H^+$ into $H^*$. The uniqueness property of the ordering
on $H^+$ shows that this embedding restricts to an $H$-field embedding $H^+\to H^*$. 
\end{proof}

\noindent
By iterating the extension step that leads from $H$ to $H^{+}$, alternating it with taking real closures, and taking unions at
limit stages we obtain:

\begin{prop}\label{protrig} $H$ has a trigonometrically closed $H$-field extension $H^{\trig}$
that embeds uniquely over $H$ into any trigonometrically closed $H$-field extension of $H$. 
\end{prop}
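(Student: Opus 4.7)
The plan is to iterate the one-step construction from Lemmas~\ref{lemtri2}--\ref{lemtri3}, interleaved with taking real closures. I would build a transfinite chain $(H_\lambda)$ of real closed $H$-field extensions of $H$ as follows. Put $H_0:=H$. At a successor stage $\lambda+1$: if $H_\lambda$ is already trigonometrically closed, set $H_{\lambda+1}:=H_\lambda$; otherwise pick some $\theta\prec 1$ in $H_\lambda$ with $\theta'\imag\notin H_\lambda[\imag]^\dagger$, form the $H$-field $H_\lambda^+=H_\lambda(\cos\theta,\sin\theta)$ via the construction preceding Lemma~\ref{lemtri2}, and let $H_{\lambda+1}$ be its real closure, equipped with the unique extensions of the ordering, valuation, and derivation. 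At a limit ordinal $\lambda$, take the union $H_\lambda:=\bigcup_{\mu<\lambda}H_\mu$, which is again a real closed $H$-field since real closedness and the $H$-field axioms pass to unions of chains.

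Next I would verify that every $H_\lambda$ is an \emph{immediate} real closed $H$-field extension of $H$. At successor stages, $H_\lambda^+$ is immediate over $H_\lambda$ by Lemma~\ref{lemtri2}, and the real closure of a valued field of residue characteristic zero is an immediate algebraic extension (real closures are henselian, and henselian algebraic extensions in equicharacteristic zero preserve value group and residue field). Unions of chains of immediate extensions are immediate. Hence every $H_\lambda$ has the same value group and residue field as $H$. The cardinality of any immediate $H$-field extension of $H$ is bounded by a fixed cardinal---for instance by the cardinality of a maximal immediate extension of $H$---so the strictly ascending chain $(H_\lambda)$ must stabilize at some ordinal $\kappa$. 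By the way the successor step is defined, $H^{\trig}:=H_\kappa$ is then trigonometrically closed.

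For the universal property, let $H^*$ be any trigonometrically closed $H$-field extension of $H$; by our convention this includes being real closed. I would construct compatible $H$-field embeddings $\iota_\lambda\colon H_\lambda\to H^*$ by transfinite induction on $\lambda$. The base case $\iota_0$ is the inclusion. At a successor stage, Lemma~\ref{lemtri3} gives a \emph{unique} extension of $\iota_\lambda$ to an $H$-field embedding $H_\lambda^+\to H^*$, and the standard uniqueness of the real closure of an ordered field over itself extends this further, uniquely, to an $H$-field embedding $H_{\lambda+1}\to H^*$ (using that $H^*$ is real closed). At a limit ordinal the union of the prior embeddings serves, and is again unique. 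The resulting embedding $H^{\trig}\to H^*$ is therefore the unique one over $H$.

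The main obstacle is the termination argument: what guarantees that the strictly ascending chain $(H_\lambda)$ eventually stops is a uniform cardinality bound on immediate $H$-field extensions of $H$. This should follow from the existence and cardinality-control of maximal immediate extensions for asymptotic fields as developed in~[ADH]. Once that bound is in hand, everything else is routine transfinite-induction bookkeeping anchored in Lemmas~\ref{lemtri2} and~\ref{lemtri3}, together with the uniqueness (up to unique isomorphism) of real closures of ordered fields.
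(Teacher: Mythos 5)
Your proposal is correct and follows essentially the same route as the paper's: iterate the one-step extension $H\mapsto H^+$ of Lemmas~\ref{lemtri2} and~\ref{lemtri3}, alternating with real closures and taking unions at limit stages, with the universal property obtained stage by stage from Lemma~\ref{lemtri3} (your explicit cardinality-bound termination argument is fine and is left implicit in the paper). One small repair: the immediacy of the real-closure steps does not follow from the claim that algebraic extensions of henselian fields in equicharacteristic zero are immediate (that is false in general), but rather from the fact that the value group is already divisible and the residue field already real closed, since each $H_\lambda$ is an immediate extension of the real closed $H$ --- exactly as stated in the paper right after Lemma~\ref{lemtri2}.
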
 

\noindent
This is an easy consequence of Lemma~\ref{lemtri3}. Note that the universal property stated in Proposition~\ref{protrig} determines $H^{\trig}$ up-to-unique-isomorphism of $H$-fields over $H$. We refer to such $H^{\trig}$ as the {\bf trigonometric closure} of $H$.\index{trigonometric!closure}\index{closure!trigonometric}\index{H-field@$H$-field!trigonometric closure}\label{p:Htrig} Note that~$H^{\operatorname{trig}}$ is an immediate extension of $H$, by Lemma~\ref{lemtri2},
and that $H^{\operatorname{trig}}[\imag]$ is a Liouville extension of $K$ and thus of $H$.

\medskip\noindent
A {\bf trigonometric  extension\/}\index{extension!trigonometric}\index{trigonometric!extension} of $H$ is a real closed $H$-field extension $E$ of $H$ such that for all $a\in E$ there are
real closed $H$-subfields $H_0\subseteq H_1\subseteq \cdots \subseteq H_n$ of $E$ such that \begin{enumerate}
\item $H_0=H$ and $a\in H_n$;
\item for $j=0,\dots, n-1$ there are $\theta_j\in H_j$ and $y_j\in H_{j+1}[\imag]\subseteq E[\imag]$ such that~$y_j\sim 1$, $\theta_j'\imag=y_j^\dagger$, and $H_{j+1}[\imag]$ is algebraic over $H_j[\imag](y_j)$.
\end{enumerate}
If $E$ is a trigonometric extension of $H$, then $E$ is an immediate extension of $H$ and $E[\imag]$ is an immediate Liouville extension of $K$ and thus of $H$.
The next lemma states some further easy consequences of the definition above: 

\begin{lemma} If $E$ is a trigonometric extension of $H$, then $E$  is a trigonometric extension of any 
real closed $H$-subfield $F\supseteq H$ of $E$.
If $H$ is trigonometrically closed, then $H$ has no proper trigonometric extension. 
\end{lemma}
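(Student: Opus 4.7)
The plan is to handle the two statements separately. For the first, given $a\in E$ with a witnessing chain $H=H_0\subseteq\cdots\subseteq H_n$ of real closed $H$-subfields of $E$ and $a\in H_n$, I would set $F_j$ to be the real closure inside $E$ of the compositum $F(H_j)$, so that $F_0=F$ and $a\in F_n$; each $F_j$ is then a real closed $H$-subfield of $E$ with $F_j\subseteq F_{j+1}$. The original data $\theta_j\in H_j\subseteq F_j$ and $y_j\in H_{j+1}[\imag]\subseteq F_{j+1}[\imag]$ should still witness the trigonometric step for $F_j\subseteq F_{j+1}$: the relations $y_j\sim 1$ and $y_j^\dagger=\theta_j'\imag$ are valuation-theoretic and persist in $E[\imag]$, and algebraicity of $F_{j+1}[\imag]$ over $F_j[\imag](y_j)$ follows because $F_{j+1}[\imag]$ is algebraic over $F[\imag](H_{j+1})=F[\imag]\cdot H_{j+1}[\imag]$, which by hypothesis is algebraic over $F[\imag]\cdot H_j[\imag](y_j)\subseteq F_j[\imag](y_j)$.

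For the second statement, assume $H$ is trigonometrically closed and let $E$ be a trigonometric extension of $H$. I would show that any $a\in E$ lies in $H$ by induction on the length of a witnessing chain $H=H_0\subseteq\cdots\subseteq H_n$ with $a\in H_n$, proving at each stage that $H_j=H$. For the inductive step, assuming $H_j=H$, the goal reduces to showing $y_j\in K$: once this is known, $H_{j+1}[\imag]$ is algebraic over $K(y_j)=K$, which forces $H_{j+1}[\imag]=K$ (since $K$ is algebraically closed), and hence $H_{j+1}=H$ (since $\imag\notin H_{j+1}$ and $[K:H]=2$).

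To show $y_j\in K$, I would apply Lemma~\ref{lemtri1} inside $H_{j+1}$: $\theta_j\in H\subseteq H_{j+1}$ and $y_j$ exhibits $\theta_j'\imag\in H_{j+1}[\imag]^\dagger$, so $\theta_j'\in\der\smallo_{H_{j+1}}$; write $\theta_j'=f'$ with $f\in\smallo_{H_{j+1}}$. Trigonometric extensions are immediate, so $H_{j+1}\supseteq H$ is an immediate $H$-field extension, whence the constant fields agree ($C_{H_{j+1}}=C_H$) and $\smallo_{H_{j+1}}\cap H=\smallo_H$; then $\theta_j-f\in C_{H_{j+1}}=C_H\subseteq H$ gives $f\in H\cap\smallo_{H_{j+1}}=\smallo_H$, and thus $\theta_j'\in\der\smallo_H$. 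The remark preceding Lemma~\ref{lem:sc=>tc} — that under trigonometric closedness no $y$ in an $H$-asymptotic extension of $K$ with $y\sim 1$ and $y^\dagger\in(\der\smallo)\imag$ can lie outside $K$ — then delivers $y_j\in K$, completing the induction.

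The only slightly subtle point is the descent in the second part from $\der\smallo_{H_{j+1}}$ down to $\der\smallo_H$, which rests on immediacy together with the identification of constant fields in immediate $H$-field extensions; the rest is bookkeeping.
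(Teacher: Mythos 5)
Your proof is correct; the paper states this lemma without proof as an easy consequence of the definition, and your argument is exactly the intended verification (push the witnessing chain up through the real closures of the composita $F(H_j)$ for the first claim, and collapse the chain step by step for the second). The only delicate points — that the $F_j$ are again real closed $H$-subfields of $E$ (they contain $C_E=C_H$), and the descent from $\theta_j'\in\der\smallo_{H_{j+1}}$ to $\theta_j'\in\der\smallo_H$ via $C_{H_{j+1}}=C_H$ (immediacy) before invoking the remark preceding Lemma~\ref{lem:sc=>tc} — are handled correctly.
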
 

\noindent
Induction on $m$ shows that if $E$ is a trigonometric extension of $H$, then for any $a_1,\dots, a_m\in E$ there are real closed $H$-subfields
$H_0\subseteq H_1\subseteq \cdots \subseteq H_n$ of $E$ such that $H_0=H$, $a_1,\dots, a_m\in H_n$ and (2) above holds. This helps in proving:

\begin{cor} A trigonometric extension of a trigonometric extension of $H$ is a trigonometric extension of $H$, and $H^{\trig}$ is a trigonometric extension of $H$. 
\end{cor}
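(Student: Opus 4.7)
The plan is to prove the transitivity statement first, and then deduce the claim about $H^{\trig}$ by transfinite induction on the construction sketched before Proposition~\ref{protrig}.

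For transitivity, fix trigonometric extensions $H \subseteq F \subseteq E$ and $a \in E$; the goal is to exhibit a finite chain over $H$ inside $E$ reaching a real closed $H$-subfield containing $a$. The idea is to \emph{splice} a chain for $a$ over $F$ onto a chain over $H$ inside $F$. I would start with a chain $F = F_0 \subseteq \cdots \subseteq F_n$ in $E$ with $a \in F_n$ and witnesses $\theta_j \in F_j$, $y_j \in F_{j+1}[\imag]$ provided by $E$ being a trigonometric extension of $F$. Working backwards from $a$, I would extract finite sets $S_j \subseteq F_j$ with $\theta_j \in S_j$: set $S_n := \{a\}$, and for $j<n$, each $b \in S_{j+1}$ is algebraic over $F_j[\imag](y_j)$, so put into $S_j$ the element $\theta_j$ together with the real and imaginary parts of the finitely many coefficients in $F_j[\imag]$ needed to describe the minimal polynomials of the elements of $S_{j+1}$ as rational functions in $y_j$.

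With $S_0 \subseteq F$ finite in hand, the multi-element version of the trigonometric-extension property of $F$ over $H$ (the remark preceding this corollary, proved by induction on $m$) delivers a chain $H = G_0 \subseteq \cdots \subseteq G_{k_0}$ of real closed $H$-subfields of $F \subseteq E$ satisfying~(2) with $S_0 \subseteq G_{k_0}$. I would then prolong this chain inside $E$ by setting, for $j = 0, \dots, n-1$, $G_{k_0+j+1}$ to be the real closure in $E$ of $G_{k_0+j}(\Re y_j,\Im y_j)$, taking $(\theta_j, y_j)$ as the witness for this step. The essential verification is, by induction on $j$, that $S_j \subseteq G_{k_0+j}$ so that $\theta_j$ is indeed a legal witness: given $S_j \subseteq G_{k_0+j}$, the minimal polynomials of the elements of $S_{j+1}$ over $F_j[\imag](y_j)$ have coefficients inside $G_{k_0+j}[\imag](y_j) \subseteq G_{k_0+j+1}[\imag]$, and since $G_{k_0+j+1}$ is real closed its algebraic closure $G_{k_0+j+1}[\imag]$ already contains all elements algebraic over this subfield; hence $S_{j+1} \subseteq G_{k_0+j+1}[\imag] \cap E = G_{k_0+j+1}$. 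At the top, $a \in S_n \subseteq G_{k_0+n}$, as required.

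The claim about $H^{\trig}$ then follows by transfinite induction on the construction from Proposition~\ref{protrig}: at successor stages, $H_{\alpha+1}$ is obtained from $H_\alpha$ by adjoining some $\cos\theta,\sin\theta$ ($\theta \in H_\alpha$, $\theta \prec 1$) and taking the real closure, which makes $H_{\alpha+1}$ a trigonometric extension of $H_\alpha$ via a chain of length one, so the transitivity just proved promotes it to a trigonometric extension of $H$; at limit stages, every element already lies in some earlier $H_\beta$, whose chain over $H$ is still a chain in $H_\alpha$. Taking the final union yields $H^{\trig}$ as a trigonometric extension of $H$.

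The main obstacle is the backwards bookkeeping in the transitivity step: trigonometric chains are finite, but the $F_j$ need not be finitely generated over $H$, so I must distil from each $F_j$ precisely enough finite data to carry $a$ down through the chain. The algebraic fact that makes this work is that each intermediate real closure $G_{k_0+j+1}[\imag]$ swallows every element algebraic over $G_{k_0+j}[\imag](y_j)$, which is what lets the sets $S_j$ remain finite and still hand off the required element at each level.
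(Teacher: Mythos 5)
Your overall strategy — finite backwards bookkeeping along the chain over $F$, then the multi-element remark over $H$, then splicing — is exactly what the paper intends (it offers no proof beyond pointing at that remark), and your verification that $S_j\subseteq G_{k_0+j}$ is correct. But one clause of condition~(2) is never checked for the spliced steps: you must show that $G_{k_0+j+1}[\imag]$ is \emph{algebraic over} $G_{k_0+j}[\imag](y_j)$. Since you define $G_{k_0+j+1}$ as the real closure in $E$ of $G_{k_0+j}(\Re y_j,\Im y_j)$, this amounts to showing that $\Re y_j$ and $\Im y_j$ are algebraic over $G_{k_0+j}[\imag](y_j)$, and a priori they need not be: adjoining both real and imaginary parts could add a second transcendental beyond $y_j$. (The analogous point is also what makes $G_{k_0+j}(\Re y_j,\Im y_j)$ a \emph{differential} subfield, via $(\Re y_j)'=-\theta_j'\,\Im y_j$ and $(\Im y_j)'=\theta_j'\,\Re y_j$, which does follow from $\theta_j\in G_{k_0+j}$; but the algebraicity clause needs a separate argument.)

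The clause does hold, by an observation you should add. From $y_j^\dagger=\theta_j'\imag$ and $\overline{y_j}^\dagger=\overline{y_j^\dagger}=-\theta_j'\imag$ we get $(y_j\overline{y_j})^\dagger=0$, so $c_j:=y_j\overline{y_j}=(\Re y_j)^2+(\Im y_j)^2$ is a positive constant of $E$. Trigonometric extensions are immediate, so $C_E=C_F=C_H\subseteq H\subseteq G_{k_0+j}$, hence $c_j\in G_{k_0+j}$ and
$$\Re y_j\ =\ \frac{y_j^2+c_j}{2y_j},\qquad \Im y_j\ =\ \frac{y_j^2-c_j}{2\imag\,y_j}$$
lie in $G_{k_0+j}[\imag](y_j)$; thus $G_{k_0+j+1}[\imag]$ is algebraic over $G_{k_0+j}[\imag](y_j)$ as required. (Alternatively, you could stay entirely within your bookkeeping scheme and also put into $S_j$ the coefficient data of minimal polynomials of $\Re y_j,\Im y_j$ over $F_j[\imag](y_j)$.) With this supplied, the transitivity argument is complete, and your transfinite induction for $H^{\trig}$ is fine, provided you note — as you implicitly do — that each $H^{+}$-step of the construction preceding Proposition~\ref{protrig} is merged with the subsequent real closure into a single chain step, so that every stage is a real closed $H$-field and the length-one chain satisfies condition~(2) (there $y\overline{y}=1$ by construction, so the algebraicity clause is immediate at those steps).
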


\subsection*{Asymptotic fields of Hardy type} 
Let $(\Gamma,\psi)$ be an asymptotic couple, $\Psi:=\psi(\Gamma^{\neq})$, and let~$\gamma$,~$\delta$ range over $\Gamma$.
Recall that
$[\gamma]$
denotes the archimedean class of~$\gamma$ [ADH, 2.4].\label{p:[gamma]}
Following  \cite[Section~3]{Rosenlicht81} we say that   $(\Gamma,\psi)$ is of {\bf Hardy type} if 
for all $\gamma,\delta\neq 0$ we have
$[\gamma] \leq [\delta]  \Longleftrightarrow  \psi(\gamma)\geq\psi(\delta)$. \index{Hardy field!asymptotic couple}\index{Hardy type} \index{asymptotic couple!of Hardy type}
Note that then $(\Gamma,\psi)$ is of $H$-type, and~$\psi$ induces an order-reversing bijection~$[\Gamma^{\neq}]\to\Psi$.
If $\Gamma$ is archimedean, then $(\Gamma,\psi)$ is of Hardy type. If  $(\Gamma,\psi)$  is of Hardy type, then so is
 $(\Gamma,\psi+\delta)$ for each~$\delta$. 
We also say that an asymptotic field is of Hardy type if its asymptotic couple is. Every asymptotic subfield 
and every compositional conjugate of 
an asymptotic field of Hardy type is also of Hardy type. 
Moreover, every Hardy field is of Hardy type~[ADH, 9.1.11].
Let now~$\Delta$ be a convex  subgroup of $\Gamma$. Note that 
$\Delta$ contains the archimedean class~$[\delta]$ of each~$\delta\in\Delta$. Hence, if
 $\delta\in\Delta^{\neq}$ and $\gamma\notin\Delta$, then  $[\delta] < [\gamma]$ and thus:

\begin{lemma}\label{lem:Hardy type}
If $(\Gamma,\psi)$ is of Hardy type and $\gamma\notin\Delta$, $\delta\in\Delta^{\neq}$,  then
$\psi(\gamma)<\psi(\delta)$.
\end{lemma}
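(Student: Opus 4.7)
The plan is to read this off directly from the observation immediately preceding the lemma, combined with the defining property of ``Hardy type''. The author has already pointed out that under the hypotheses $\delta\in\Delta^{\neq}$ and $\gamma\notin\Delta$, one has the strict inequality $[\delta]<[\gamma]$ in the archimedean classes: indeed, $\Delta$ being a convex subgroup is a union of archimedean classes, downward closed under $[\,\cdot\,]$, so it contains $[\delta]$, and since $\gamma\notin\Delta$, $\gamma$ must sit in a strictly larger archimedean class than $\delta$.

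Next I would apply the definition of Hardy type: for nonzero elements $\gamma,\delta$, the equivalence $[\gamma]\leq[\delta]\Longleftrightarrow\psi(\gamma)\geq\psi(\delta)$ is an \emph{if and only if}, so its contrapositive holds as well, namely $[\gamma]>[\delta]\Longleftrightarrow\psi(\gamma)<\psi(\delta)$. Both $\gamma$ and $\delta$ are nonzero (as $\gamma\notin\Delta$ forces $\gamma\neq 0$, and $\delta\in\Delta^{\neq}$ by hypothesis), so the equivalence applies. Combining the two steps yields $\psi(\gamma)<\psi(\delta)$, as desired.

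There is really no serious obstacle here; the statement is essentially a restatement of the remark preceding it, with the sole subtlety being to pass from the non-strict definition of Hardy type to a strict conclusion. That is immediate because the definition is biconditional: strict inequality in archimedean classes is equivalent to strict inequality in $\psi$-values (with the order reversed). So the whole argument fits in a sentence or two, and no additional hypothesis on $(\Gamma,\psi)$, $\Gamma$, or $\Delta$ is needed beyond what is already assumed.
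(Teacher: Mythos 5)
Your proof is correct and is essentially the paper's own argument: the paper proves the lemma via the remark immediately preceding it, namely that $\Delta$, being convex, contains $[\delta]$ for $\delta\in\Delta^{\neq}$, so $\gamma\notin\Delta$ forces $[\delta]<[\gamma]$, and then the biconditional in the definition of Hardy type gives $\psi(\gamma)<\psi(\delta)$. Nothing further is needed.
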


\begin{cor}\label{cor:psi(gamma)<0} 
Suppose $(\Gamma,\psi)$ is of Hardy type with small derivation, $\gamma,\delta\neq 0$, $\psi(\delta) \leq 0$, and $[\gamma']>[\delta]$.  Then $\psi(\gamma)<\psi(\delta)$. 
\end{cor}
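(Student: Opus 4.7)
The plan is to reduce the conclusion to Lemma~\ref{lem:Hardy type} applied with $\Delta:=\{\alpha\in\Gamma:[\alpha]\le[\delta]\}$, the smallest convex subgroup of $\Gamma$ containing $\delta$. Since $\delta\in\Delta^{\neq}$, that lemma yields $\psi(\gamma)<\psi(\delta)$ as soon as we know that $\gamma\notin\Delta$, that is, $[\gamma]>[\delta]$.

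I would argue for $[\gamma]>[\delta]$ by contradiction, assuming $\gamma\in\Delta^{\neq}$. Since $\gamma'=\gamma+\psi(\gamma)$, the hypothesis $[\gamma']>[\delta]$ would be immediately contradicted as soon as $\gamma'\in\Delta$, and the whole argument therefore reduces to the following \emph{subclaim}: $\psi(\gamma)\in\Delta$.

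To establish the subclaim I would split into cases according to the sign of $\psi(\gamma)$. The case $\psi(\gamma)=0$ is immediate. If $\psi(\gamma)<0$, small derivation applied to $|\gamma|\in\Gamma^{>}$, combined with $\psi(|\gamma|)=\psi(\gamma)$, gives $|\gamma|+\psi(\gamma)>0$, hence $|\psi(\gamma)|<|\gamma|$ and therefore $[\psi(\gamma)]\le[\gamma]\le[\delta]$. The delicate case is $\psi(\gamma)>0$: here I would assume for contradiction that $\psi(\gamma)\notin\Delta$, i.e., $[\psi(\gamma)]>[\delta]$. Lemma~\ref{lem:Hardy type} applied with $\psi(\gamma)$ in the role of the ``outside'' element and unchanged $\delta$ then yields $\psi(\psi(\gamma))<\psi(\delta)\le 0$. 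On the other hand, the asymptotic-couple axiom that $\alpha+\psi(\alpha)>\psi(\beta)$ whenever $\alpha>0$ and $\beta\in\Gamma^{\neq}$ (see [ADH, 9.1]), applied with $\alpha=\psi(\gamma)>0$ and $\beta=\gamma$, yields $\psi(\gamma)+\psi(\psi(\gamma))>\psi(\gamma)$, that is, $\psi(\psi(\gamma))>0$---the desired contradiction.

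The principal obstacle is precisely this last subcase: small derivation pins down $\psi(\gamma)$ only when $\psi(\gamma)$ is negative, so for $\psi(\gamma)>0$ one must bring in the stronger asymptotic-couple axiom, together with the order-reversing bijection $[\Gamma^{\neq}]\to\Psi$ coming from the Hardy type hypothesis, in order to force $\psi(\psi(\gamma))$ to be simultaneously $<0$ and $>0$.
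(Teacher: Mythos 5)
Your proof is correct, and its skeleton matches the paper's: both take $\Delta$ to be the smallest convex subgroup of $\Gamma$ containing $\delta$, note that $[\gamma']>[\delta]$ means $\gamma'\notin\Delta$, and reduce everything to showing $\gamma\notin\Delta$, after which Lemma~\ref{lem:Hardy type} gives $\psi(\gamma)<\psi(\delta)$. Where you diverge is in how $\gamma\notin\Delta$ is obtained. The paper applies its hypotheses to $\delta$: from $\psi(\delta)\leq 0$ and small derivation it gets $\psi(\delta)\in\Delta$ via [ADH, 9.2.10(iv)], and then cites [ADH, 9.2.25], a general closure result for convex subgroups, to conclude $\gamma\notin\Delta$ directly from $\gamma'\notin\Delta$. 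You instead argue by contradiction on $\gamma$ itself: assuming $\gamma\in\Delta^{\neq}$, you prove $\psi(\gamma)\in\Delta$ by a case split on the sign of $\psi(\gamma)$ — small derivation applied to $|\gamma|$ handles $\psi(\gamma)<0$, while the case $\psi(\gamma)>0$ uses the axiom $\psi(\beta)<\alpha+\psi(\alpha)$ (with $\alpha=\psi(\gamma)$, $\beta=\gamma$) against the Hardy-type inequality $\psi(\psi(\gamma))<\psi(\delta)\leq 0$ — and then $\gamma'=\gamma+\psi(\gamma)\in\Delta$ gives the contradiction. Your route is self-contained modulo the asymptotic-couple axioms and avoids [ADH, 9.2.25] entirely, at the cost of the extra case analysis (in effect you re-derive the special case of that closure result you need); the paper's version is shorter because it outsources exactly that step. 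Both uses of the hypotheses check out, so there is no gap.
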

\begin{proof}
Let $\Delta$ be the smallest convex subgroup of $\Gamma$ with $\delta\in\Delta$; then $\gamma'\notin\Delta$, and~$\psi(\delta)\in\Delta$
by [ADH, 9.2.10(iv)]. Thus $\gamma\notin\Delta$ by [ADH, 9.2.25].
\end{proof}

\noindent
In \cite[Section~7]{AvdD3} we say that an $H$-field $H$ is {\it closed under powers}\/ if for all $c\in C$ and~$f\in H^\times$ there is a $y\in H^\times$
with $y^\dagger=cf^\dagger$. (Think of $y$ as $f^c$.) \index{closed!under powers} Thus if $H$ is Liouville closed, then $H$ is closed under powers.
{\it In the rest of this subsection we let~$H$ be an $H$-field closed under powers, with asymptotic couple $(\Gamma,\psi)$ and constant field~$C$.} 
We recall some basic facts from \cite[Section~7]{AvdD3}.
First, we can make the value group $\Gamma$ into
an ordered vector space over the constant field $C$:

\begin{lemma}\label{OrderedVectorSpace}
For all $c\in C$ and $\gamma=vf$ with $f\in H^\times$ and each $y\in H^{\times}$
with~$y^\dagger = cf^\dagger$, the element $vy\in \Gamma$ only depends on 
$(c,\gamma)$ \textup{(}not on the choice of $f$ and $y$\textup{)}, and is denoted by $c\cdot \gamma$. The scalar multiplication
$(c,\gamma) \mapsto c \cdot\gamma  \colon C\times \Gamma \to \Gamma$
makes~$\Gamma$ into an ordered vector space over the ordered field $C$.
\end{lemma}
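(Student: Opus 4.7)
The plan is to first establish well-definedness of the assignment $(c, \gamma) \mapsto c \cdot \gamma$, and then verify the vector-space axioms and order compatibility. The case $c = 0$ is immediate, since then $y^\dagger = 0$ forces $y \in C^\times$ and hence $vy = 0$; so assume $c \in C^\times$. For fixed $f \in H^\times$ with $vf = \gamma$, any two solutions $y_1, y_2$ of $y^\dagger = cf^\dagger$ differ by a nonzero constant, so have the same valuation. Independence from the choice of $f$ reduces, upon setting $u := f_1/f_2$ and $z := y_1/y_2$, to the key claim: \emph{if $u \asymp 1$ in $H$ and $w \in H^\times$ satisfies $w^\dagger = cu^\dagger$, then $w \asymp 1$.}

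To prove this, write $u = c_0 + \epsilon$ with $c_0 \in C^\times$ and $\epsilon \in \smallo$ (using $\mathcal{O} = C + \smallo$). If $\epsilon = 0$ then $w^\dagger = 0$, whence $w \in C^\times$. Otherwise $u^\dagger = \epsilon'/u \asymp \epsilon'$, and since $c \in C^\times$ one has $v(w^\dagger) = v(\epsilon')$. The pre-$\d$-valued property of $H$, applied with $\epsilon$ in place of $f$ and arbitrary $g \in H^\times$ with $g \prec 1$, yields $\epsilon' \prec g^\dagger$; thus $v(\epsilon')$ strictly exceeds every element of $\Psi$. If $vw \neq 0$, then $v(w^\dagger) = \psi(vw) \in \Psi$, a contradiction; hence $vw = 0$. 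This is the main obstacle, and is the only place where the $H$-field hypothesis is used in an essential way; the remaining axioms are largely computational.

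Additivity in $\gamma$ follows from $(f_1 f_2)^\dagger = f_1^\dagger + f_2^\dagger$ combined with $(y_1 y_2)^\dagger = c(f_1 f_2)^\dagger$; additivity in $c$ is dual, via $(y_1 y_2)^\dagger = (c_1 + c_2) f^\dagger$; the identity $1 \cdot \gamma = \gamma$ is witnessed by $y := f$; and $(c_1 c_2)\gamma = c_1(c_2\gamma)$ follows by applying the construction twice, noting that $y_2^\dagger = c_2 f^\dagger$ and $z^\dagger = c_1 y_2^\dagger$ give $z^\dagger = (c_1 c_2) f^\dagger$.

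For order compatibility, I must show that $c > 0$ and $\gamma > 0$ imply $c \cdot \gamma > 0$. Pick $f \in H^>$ with $vf = \gamma > 0$. Then $1/f \in H^>$ lies outside the convex valuation ring $\mathcal{O}$, so $1/f > C$; the $H$-field axiom gives $(1/f)' > 0$, whence $f' < 0$ and $f^\dagger < 0$, so $cf^\dagger < 0$. Take $y \in H^>$ with $y^\dagger = cf^\dagger$ (replacing $y$ by $-y$ if necessary). If $vy < 0$, then $y > C$ forces $y^\dagger > 0$, contradicting $y^\dagger < 0$; if $vy = 0$, the argument of the key claim gives $v(y^\dagger) \notin \Psi$, yet $v(y^\dagger) = \psi(\gamma) \in \Psi$. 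Hence $vy > 0$, as required.
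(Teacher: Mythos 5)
The paper itself gives no proof of this lemma: it is recalled from \cite[Section~7]{AvdD3}, so there is no in-paper argument to compare yours against. Your proof is correct and is essentially the standard argument behind that citation: the decomposition $\mathcal{O}=C+\smallo$ together with pre-$\d$-valuedness of $H$ shows that logarithmic derivatives of elements $\asymp 1$ have valuation strictly greater than $\Psi$, whereas $v(y^\dagger)=\psi(vy)\in\Psi$ when $vy\neq 0$; this yields well-definedness (your ``key claim''), the module axioms follow by the routine computations with $(y_1y_2)^\dagger$ that you list, and the sign argument via the $H$-field axiom ($f>C\Rightarrow f'>0$) correctly settles order compatibility.
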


\noindent
Let $G$ be an ordered vector space over the ordered field $C$. From [ADH, 2.4] recall that
the $C$-archimedean class  of $a\in G$ is defined
as  $$[a]_C := \big\{ b\in G:\, 
\textstyle\frac{1}{c}|a|\leq |b|\leq c |a| \text{ for some }c\in C^{>}\big\}.$$
Thus if $C=\Q$, then $[a]_\Q$ is just the archimedean class $[a]$ of $a\in G$.
Moreover, if $C^*$ is an ordered subfield of $C$, then $[a]_{C^*}\subseteq [a]_C$ for each $a\in G$,
with equality if $C^*$ is cofinal in $C$.
Hence  if
$C$ is archimedean, then $[a]=[a]_C$ for all $a\in G$.
Put~$[G]_C:=\bigl\{[a]_C:a\in G\bigr\}$ and linearly order $[G]_C$ by
$$[a]_C<[b]_C \quad :\Longleftrightarrow\quad
\text{$[a]_C\neq [b]_C$ and $|a|<|b|$.}$$ 
Thus $[G]_C$ has smallest element $[0]_C=\{0\}$. We also set $[G^{\neq}]_C:=[G]_C\setminus\big\{[0]_C\big\}$.
From \cite[Proposition~7.5]{AvdD3} we have: 

\begin{prop}\label{prop:Hahn space property}
For all $\gamma,\delta\neq 0$ we have
$$[\gamma]_C \leq [\delta]_C \quad\Longleftrightarrow
\quad \psi(\gamma)\geq\psi(\delta).$$ 
Hence $\psi$ induces an order-reversing bijection $[\Gamma^{\neq}]_C\to\Psi=\psi(\Gamma^{\neq})$.
\end{prop}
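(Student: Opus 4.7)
The plan is to run the argument in three stages. The first move is to record the simple but crucial invariance
$\psi(c\cdot\gamma)=\psi(\gamma)$ for every $c\in C^\times$ and $\gamma\in\Gamma^{\neq}$: writing $\gamma=vf$ with $f\in H^\times$ and choosing $y\in H^\times$ with $y^\dagger = cf^\dagger$, Lemma~\ref{OrderedVectorSpace} gives $c\cdot\gamma = vy$, whence $\psi(c\cdot\gamma) = v(y^\dagger) = v(cf^\dagger) = v(f^\dagger) = \psi(\gamma)$. With this in hand, the $\Rightarrow$ direction is immediate: if $[\gamma]_C \le [\delta]_C$, pick $c\in C^>$ with $|\gamma| \le c|\delta| = |c\cdot\delta|$; then $[\gamma] \le [c\cdot\delta]$ as ordinary archimedean classes of $\Gamma$, and the $H$-type property of $(\Gamma,\psi)$ yields $\psi(\gamma)\ge \psi(c\cdot\delta) = \psi(\delta)$.

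For the $\Leftarrow$ direction I would first establish the auxiliary implication
$\psi(\gamma) = \psi(\delta) \Rightarrow [\gamma]_C = [\delta]_C$. Writing $\gamma = vf$, $\delta = vg$, the hypothesis says $f^\dagger/g^\dagger \in \mathcal{O}^\times$, and since $H$ is an $H$-field we have $\mathcal O = C+\smallo$, so $f^\dagger = cg^\dagger + \epsilon g^\dagger$ for some $c\in C^\times$ and $\epsilon \prec 1$. Closure under powers produces $h\in H^\times$ with $h^\dagger = cg^\dagger$, so $vh = c\cdot\delta$. If $f/h\in C^\times$, then $\gamma = c\cdot\delta$ and we are done; otherwise $(f/h)^\dagger = \epsilon g^\dagger$ has value strictly greater than $\psi(\gamma)$, and the contrapositive of $H$-type then forces $[\gamma - c\cdot\delta] < [\gamma]$. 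In particular $n|\gamma-c\cdot\delta| < |\gamma|$ for every $n\ge 1$, which by a standard squeeze gives $|\gamma|/2 \le |c\cdot\delta| \le 2|\gamma|$. Using $|c\cdot\delta| = |c|\cdot|\delta|$ in the ordered $C$-vector space $\Gamma$, this yields $[\gamma]_C = [c\cdot\delta]_C = [\delta]_C$, as desired.

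The $\Leftarrow$ direction of the main equivalence now follows from trichotomy in the linearly ordered set $[\Gamma]_C$: supposing $\psi(\gamma)\ge\psi(\delta)$ but $[\gamma]_C > [\delta]_C$, the already-proved $\Rightarrow$ direction applied to $[\delta]_C < [\gamma]_C$ gives $\psi(\delta) \ge \psi(\gamma)$, hence $\psi(\gamma) = \psi(\delta)$, and the auxiliary equivalence then forces $[\gamma]_C = [\delta]_C$, contradicting $[\delta]_C < [\gamma]_C$. The ``Hence'' clause drops out: the displayed equivalence shows $\psi$ is constant on $C$-archimedean classes and separates distinct ones, so it induces a well-defined injection $[\Gamma^{\neq}]_C \to \Psi$; surjectivity is built into the definition of $\Psi$, and order reversal is exactly the main equivalence read in its ``$\le\,\Leftrightarrow\,\ge$'' form.

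The main obstacle is the step $\psi(\gamma)=\psi(\delta)\Rightarrow[\gamma]_C=[\delta]_C$: this is precisely the place where both standing hypotheses are essential. The identification $\mathcal O = C+\smallo$ (i.e.\ $H$ being an $H$-field, not merely pre-$H$) is what lets us extract the constant $c\in C^\times$ from the ratio $f^\dagger/g^\dagger$, while closure under powers is what realizes $c\cdot\delta$ back inside $\Gamma$ as a genuine logarithmic-derivative value. Without either assumption, $\psi$ can fail to separate $C$-archimedean classes, in direct analogy with the well-known failure of $H$-asymptotic couples of non-Hardy type to have $\psi$ separate ordinary archimedean classes.
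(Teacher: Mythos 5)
Your proof is correct, but there is nothing in the paper to compare it against at the level of detail you supply: the monograph does not prove this proposition, it simply imports it from \cite[Proposition~7.5]{AvdD3} (``From \cite[Proposition~7.5]{AvdD3} we have:''). So your blind argument is a genuine self-contained proof where the paper only cites. The route you take is the natural one and, as far as the cited source goes, the expected one: the invariance $\psi(c\cdot\gamma)=\psi(\gamma)$ together with the $H$-type monotonicity of $\psi$ on ordinary archimedean classes gives the forward implication, and the reverse implication reduces, by trichotomy in $[\Gamma]_C$, to showing that $\psi$ separates $C$-archimedean classes; that is exactly where you correctly spend both standing hypotheses, $\mathcal O=C+\smallo$ to write $f^\dagger/g^\dagger=c+\epsilon$ with $c\in C^\times$, $\epsilon\prec 1$, and closure under powers to realize $c\cdot\delta$ as $vh$ with $h^\dagger=cg^\dagger$. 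The squeeze $|\gamma|/2\le|c\cdot\delta|\le 2|\gamma|$ and the final bookkeeping for the ``Hence'' clause are all fine.

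One small wording repair in the auxiliary step: your dichotomy should be on whether $v(f/h)=0$, not on whether $f/h\in C^\times$. It can happen that $f/h\asymp 1$ without $f/h$ being a constant (e.g.\ $f/h=1+u$, $0\neq u\prec 1$); in that subcase $\gamma=c\cdot\delta$ holds outright, but your stated justification via the contrapositive of the $H$-type property is not applicable, since $\psi(\gamma-c\cdot\delta)$ is undefined when $\gamma-c\cdot\delta=0$. The fix is immediate: if $v(f/h)=0$ you are done at once, and if $v(f/h)\neq 0$ then $\epsilon\neq 0$, $\psi(\gamma-c\cdot\delta)=v(\epsilon g^\dagger)>\psi(\gamma)$, and your contrapositive argument gives $[\gamma-c\cdot\delta]<[\gamma]$ as claimed. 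With that adjustment the proof is complete.
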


\noindent
Proposition~\ref{prop:Hahn space property} yields:

\begin{cor}\label{cor:Hardy type arch C}
$H$ is of Hardy type $\Longleftrightarrow$  $[\gamma]=[\gamma]_C$ for all $\gamma$.
Hence if   $C$ is ar\-chi\-me\-dean, then $H$ is of Hardy type; if $\Gamma\neq\{0\}$, then the converse also holds.
\end{cor}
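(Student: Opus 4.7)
The plan is to derive the corollary directly from Proposition~\ref{prop:Hahn space property}, using the fact that $\Q\subseteq C$ (as ordered subrings of $C$) so that the inclusion $[\gamma]\subseteq[\gamma]_C$ always holds.

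First I would reformulate the Hardy-type condition in light of Proposition~\ref{prop:Hahn space property}: the definition says $H$ is of Hardy type iff for all nonzero $\gamma,\delta$,
\[ [\gamma]\leq[\delta]\ \Longleftrightarrow\ \psi(\gamma)\geq\psi(\delta), \]
while the proposition gives $[\gamma]_C\leq[\delta]_C \Longleftrightarrow \psi(\gamma)\geq\psi(\delta)$. Combining these, $H$ is of Hardy type iff the preorders $\leq$ induced by $[\cdot]$ and by $[\cdot]_C$ agree on $\Gamma^{\neq}$.

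Next I would argue that agreement of these preorders, together with the inclusion $[\gamma]\subseteq[\gamma]_C$, forces set-theoretic equality $[\gamma]=[\gamma]_C$. Indeed, for $\gamma=0$ both classes are $\{0\}$; for $\gamma\neq 0$ and any $\delta\in [\gamma]_C^{\neq}$, agreement of the preorders promotes the equality $[\delta]_C=[\gamma]_C$ to $[\delta]=[\gamma]$, giving $\delta\in[\gamma]$. The converse direction (if $[\gamma]=[\gamma]_C$ always, then $H$ is of Hardy type) is then trivial.

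For the second sentence, one direction uses the remark recorded just before the proposition: when $C$ is archimedean, $[a]=[a]_C$ for every $a\in G$, applied to $G=\Gamma$. For the converse, assuming $\Gamma\neq\{0\}$ and $C$ not archimedean, I would pick $c\in C$ with $c>\N$ and $\gamma\in\Gamma^>$ (which exists), and observe that by Lemma~\ref{OrderedVectorSpace} the element $c\cdot\gamma$ lies in $\Gamma$ with $c\cdot\gamma>n\gamma$ for all $n\in\N$; hence $c\cdot\gamma\in[\gamma]_C\setminus[\gamma]$, so $[\gamma]\neq[\gamma]_C$ and $H$ is not of Hardy type. The only subtlety worth verifying carefully is that the $C$-scalar multiplication on $\Gamma$ from Lemma~\ref{OrderedVectorSpace} really does satisfy $c\cdot\gamma>n\gamma=n\cdot\gamma$ when $c>n$ in $C$, which follows from the ordered-vector-space axioms provided by that lemma.
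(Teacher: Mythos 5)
Your argument is correct and follows exactly the route the paper intends: the paper gives no written-out proof beyond "Proposition~\ref{prop:Hahn space property} yields:", and your derivation — combining that proposition with the inclusion $[\gamma]\subseteq[\gamma]_C$, the remark that equality holds when $C$ is archimedean, and the scalar $c\cdot\gamma$ from Lemma~\ref{OrderedVectorSpace} for the converse — fills in precisely those implicit details. The one subtlety you flag ($c\cdot\gamma>n\gamma$ for $c>n$ in $C$, $\gamma>0$) is indeed immediate from the ordered-vector-space structure, so nothing is missing.
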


\section{\texorpdfstring{$\upl$}{Lambda}-freeness and \texorpdfstring{$\upo$}{omega}--freeness}\label{sec:uplupo-freeness} 

\noindent
This section contains  preservation results for  the important properties of {\it $\upl$-freeness}\/ and {\it $\upo$-freeness}\/ from [ADH].  Let  $K$ be an ungrounded $H$-asymptotic field such that~${\Gamma\ne \{0\}}$, and
as in [ADH, 11.5],  fix a logarithmic sequence~$(\ell_\rho)$ for $K$ and define the pc-sequences~$(\upl_\rho)=(-\ell_\rho^{\dagger\dagger})$ and~$(\upo_\rho)=\big(\omega(\upl_\rho)\big)$ in~$K$, where~$\omega(z):=-2z'-z^2$.  
Recall that~$K$ is  {\it $\upl$-free}\/ iff $(\upl_\rho)$ does not have a pseudolimit in $K$, and~$K$ is   {\it $\upo$-free}\/ iff~$(\upo_\rho)$ does not have a pseudolimit in~$K$.
 If $K$ is $\upo$-free, then~$K$ is $\upl$-free.
We refer to~[ADH, 11.6, 11.7] for this and other basic facts about $\upl$-freeness and $\upo$-freeness used below.  As in [ADH], $L$ being $\upl$-free or $\upo$-free includes $L$ being an ungrounded $H$-asymptotic field with $\Gamma_L\ne \{0\}$.

\subsection*{Preserving $\upl$-freeness and $\upo$-freeness}
{\it In this subsection $K$ is an ungrounded $H$-asymptotic field with $\Gamma\ne \{0\}$, and~$(\ell_\rho)$, $(\upl_\rho)$, 
$(\upo_\rho)$ are as above.}\/
If $K$ has  a $\upl$-free $H$-asymptotic field extension $L$ such that $\Gamma^<$ is cofinal in $\Gamma_L^<$, then $K$ is
$\upl$-free, and similarly with ``$\upo$-free'' in place of ``$\upl$-free'' [ADH, remarks after 11.6.4, 11.7.19].
The property of $\upo$-freeness is very robust; indeed, by~[ADH, 13.6.1]:

\begin{theorem}\label{thm:ADH 13.6.1}
If $K$ is $\upo$-free and $L$ is a pre-$\d$-valued $\d$-algebraic $H$-asymptotic  extension  of $K$, then $L$  is $\upo$-free and $\Gamma^<$ is cofinal in $\Gamma_L^<$.
\end{theorem}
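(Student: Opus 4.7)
My plan is to reduce to the one-step case and then handle cofinality and $\upo$-freeness in turn. Let $\mathcal F$ be the family of intermediate pre-$\d$-valued $H$-asymptotic subextensions $M$ of $L/K$ that are $\d$-algebraic over $K$, $\upo$-free, and have $\Gamma^<$ cofinal in $\Gamma_M^<$. A routine check using the remark preceding the theorem (namely, that an $H$-asymptotic extension $L \supseteq K$ with $\Gamma^<$ cofinal in $\Gamma_L^<$ is $\upo$-free whenever the base is) shows that $\mathcal F$ is closed under unions of chains. Zorn's Lemma then gives a maximal $M \in \mathcal F$, and it suffices to show that any simple extension $M\langle a \rangle \subseteq L$ (with $a \in L \setminus M$ $\d$-algebraic over $M$) lies in $\mathcal F$; so we reduce to $L = K\langle a\rangle$.

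For the cofinality, the immediate case is trivial, so assume $\Gamma_L \supsetneq \Gamma$. Let $P \in K\{Y\}^{\ne}$ be a minimal annihilator of $a$, of order $r$. Every element of $\Gamma_L$ is of the form $v(Q(a))$ for some $Q \in K\{Y\}^{\ne}$ of order at most $r$, which in turn is controlled by the finite tuple $(va, va', \dots, va^{(r)})$ together with $\Gamma$. Using the $H$-type asymptotic couple structure on $\Gamma_L$ and that $K$ is $\upl$-free (so $\Psi$ has no largest element), one shows that $\Gamma_L$ is contained in the convex hull of such a finitely generated extension, and any negative element of $\Gamma_L$ is then bounded below by some $\delta \in \Gamma^<$; this is standard in the style of Rosenlicht's analysis of $H$-type asymptotic couples. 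For $\upo$-freeness of $L$, suppose for contradiction that $\upo_\rho \leadsto f$ for some $f \in L$; if $f \in K$ this directly contradicts $\upo$-freeness of $K$, so $f \in L \setminus K$. Since $(\upo_\rho)$ is a pc-sequence of $\d$-algebraic type over $K$, a minimal annihilator $P \in K\{Y\}^{\ne}$ of $f$ exists. Using Taylor expansion of $P$ about $\upo_\rho$, the separant $S_P$, and the defining identity $\upo_\rho = \omega(\upl_\rho) = -2\upl_\rho' - \upl_\rho^2$, one extracts from $P(f)=0$ an equation relating $(\upl_\rho)$ and elements of $K$ whose solvability would yield a pseudolimit of $(\upo_\rho)$ already in $K$, contradicting the $\upo$-freeness of $K$.

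The main obstacle is the final contradiction in the $\upo$-freeness step: $\omega$ is nonlinear (quadratic-plus-derivative), so one cannot simply invert it. The argument must simultaneously track the linear behavior of the Riccati-like sequence $(\upl_\rho)$ and the quadratic behavior of $(\upo_\rho)$, and the minimal-complexity reduction on $P$ must respect both its order and its degree. Without the detailed Newton-diagram/Riccati machinery developed in [ADH, Chapters~11 and~13], this step would be difficult; invoking [ADH, 13.6.1] as a black box encapsulates precisely this technical core.
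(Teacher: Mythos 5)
This theorem is not proved in the paper at all: it is imported verbatim from [ADH, 13.6.1] (the sentence introducing it reads ``indeed, by [ADH, 13.6.1]''), so the paper's ``proof'' is a citation of the book. Your proposal must therefore stand as an independent argument, and it has genuine gaps at exactly the two places where the real work lies. For cofinality, the claim that $v(Q(a))$ is ``controlled by'' the tuple $(va,va',\dots,va^{(r)})$ together with $\Gamma$ is false as stated, since cancellation among the monomials of $Q(a)$ is precisely the difficulty; and cofinality of $\Gamma^<$ in $\Gamma_L^<$ is a no-gap-type assertion (no element of $\Gamma_L^<$ may lie above all of $\Gamma^<$) which is not ``standard Rosenlicht'' --- compare Proposition~\ref{prop:rat as int and cofinality} of this very paper, which obtains it only for extensions of transcendence degree $\le 1$ and only from rational asymptotic integration; to iterate along a tower one needs $\upo$-freeness (or at least rational asymptotic integration) at every intermediate stage, which is the other half of the theorem. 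For the $\upo$-freeness step, ``one extracts from $P(f)=0$ an equation \dots whose solvability would yield a pseudolimit of $(\upo_\rho)$ in $K$'' is exactly the technical core of [ADH, Chapter~13] (the Newton degree machinery over $\upo$-free fields), and you concede you cannot carry it out; but then ``invoking [ADH, 13.6.1] as a black box'' is invoking the very statement to be proved, so the argument is circular. If that citation is allowed, the entire proof collapses to the citation (which is what the paper does) and your scaffolding adds nothing; if it is not allowed, the proposal is not a proof.

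A smaller but real error: you quote the remark preceding the theorem backwards. That remark transfers the property \emph{down}: if the extension $L$ is $\upl$-free (resp.\ $\upo$-free) and $\Gamma^<$ is cofinal in $\Gamma_L^<$, then the base $K$ is $\upl$-free (resp.\ $\upo$-free). The upward transfer you use to close $\mathcal{F}$ under unions of chains is, as a general principle, exactly what the theorem must rule out (a pseudolimit of $(\upo_\rho)$ can appear in an $H$-asymptotic extension with the cofinality property without appearing in $K$), and restricted to the $\d$-algebraic members of your chain it is the theorem itself. The chain step can be repaired directly --- by cofinality the logarithmic sequence of $K$ serves for every member of the chain, and a pseudolimit of $(\upo_\rho)$ in the union would already lie in some member --- but this repair does not touch the main gaps above.
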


\noindent
In contrast, $\upl$-freeness is more delicate:  Theorem~\ref{thm:ADH 13.6.1} fails with ``$\upl$-free'' in place of ``$\upo$-free'', as the next example shows.

\begin{exampleNumbered}\label{ex:rat as int and cofinality} 
The $H$-field $K=\R\langle\upo\rangle$ from [ADH, 13.9.1] is $\upl$-free, but its $H$-field extension
$L=\R\langle \upl\rangle$ is not, and this extension is $\d$-algebraic: $2\upl'+\upl^2+\upo=0$.
\end{exampleNumbered}

\noindent
In the rest of this subsection we consider cases where parts of Theorem~\ref{thm:ADH 13.6.1} do hold.
Recall from [ADH, 11.6.8]  that if~$K$ is $\upl$-free, then~$K$ has (rational) asymptotic integration, and $K$
is $\upl$-free iff its algebraic closure is $\upl$-free.
Moreover, $\upl$-freeness is preserved under adjunction of constants: 

\begin{prop}\label{prop:const field ext}
Suppose $K$ is $\upl$-free and $L=K(D)$ is an $H$-asymptotic extension of $K$ with  $D\supseteq C$ a subfield of $C_L$. 
Then $L$ is $\upl$-free with $\Gamma_L=\Gamma$.
\end{prop}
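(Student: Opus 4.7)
The proposition has two conclusions — $\Gamma_L=\Gamma$ and the $\upl$-freeness of $L$ — and I would establish them in that order.

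\emph{Value group.} In any $H$-asymptotic field every nonzero constant has valuation zero: if $d\in C_L^\times$ then $d^\dagger=0$ gives $v(d^\dagger)=\infty$, while $\psi_L$ maps $\Gamma_L^{\neq}$ into $\Gamma_L$, forcing $vd=0$. Hence $D^\times\subseteq\mathcal{O}_L^\times$, the residue map restricts to a field embedding $D\hookrightarrow\res(L)$, and from $D\subseteq C_L$ together with $K\cap C_L=C$ we get $D\cap K=C$. Standard valuation theory then yields $\Gamma_L=\Gamma$: adjoining to $K$ a subfield $D$ of $\mathcal{O}_L^\times\cup\{0\}$ with $D\cap K=C$ and with $\res D$ and $\res K$ linearly disjoint over $\res C$ preserves the value group; the needed residue-disjointness is verified by lifting any $\res(K)$-relation among monomials in elements of $D$ back to a $K$-relation via $D\cap K=C$.

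\emph{$\upl$-freeness.} With $\Gamma_L=\Gamma$ we have $\Psi_L=\Psi$, so $L$ is ungrounded, $(\ell_\rho)$ remains a logarithmic sequence for $L$, and $(\upl_\rho)$ is its associated pc-sequence in $L$; it therefore suffices to show $(\upl_\rho)$ has no pseudolimit in $L$. Suppose for contradiction $\hat\upl\in L$ is such a pseudolimit. Choose a finitely generated intermediate field $C\subseteq D_0\subseteq D$ with $\hat\upl\in K(D_0)$, and let $d_1,\dots,d_n\in D_0$ be a $C$-transcendence basis. After reducing the algebraic-over-$C(d_1,\dots,d_n)$ part by taking a trace, I may assume $\hat\upl\in K(d_1,\dots,d_n)$, say $\hat\upl=P/Q$ with $P,Q\in K[d_1,\dots,d_n]$. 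Translating $d_i\mapsto d_i-c_i$ with suitable $c_i\in C\subseteq D$, I arrange that $Q$ has nonzero constant coefficient $q_0\in K^\times$; let $p_0\in K$ be the constant coefficient of $P$. Then $v(\hat\upl-\upl_\rho)=v(P-\upl_\rho Q)-vQ$, and — using that monomials in the $d_i$'s have $\res(K)$-independent residues by Step~1, so $v(y)=\min_\alpha v(c_\alpha)$ for $y=\sum_\alpha c_\alpha d^\alpha\in K[d_1,\dots,d_n]$ — we obtain $v(p_0-\upl_\rho q_0)\ge v(P-\upl_\rho Q)\to\infty$. Hence $v(p_0/q_0-\upl_\rho)\to\infty$ in $\Gamma$, so $p_0/q_0\in K$ is a pseudolimit of $(\upl_\rho)$, contradicting $\upl$-freeness of $K$.

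\emph{Main obstacle.} The crucial and most delicate ingredient is the principle $v(y)=\min_\alpha v(c_\alpha)$ for $y=\sum_\alpha c_\alpha d^\alpha\in K[d_1,\dots,d_n]$, which enables the ``constant-coefficient projection'' to descend pseudo-convergence from $L$ to $K$. This principle rests on the $\res(K)$-linear independence of the monomial residues $\overline{d_1^{\alpha_1}\cdots d_n^{\alpha_n}}$, itself a consequence of $D\cap K=C$ from Step~1. Handling the algebraic part of $D_0/C(d_1,\dots,d_n)$ via trace, and checking that the translation $d_i\mapsto d_i-c_i$ does not disturb the $K$-pc-sequence $(\upl_\rho)$, are the supporting technicalities that finalize the argument.
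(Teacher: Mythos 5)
Your opening observations are fine (constants of an asymptotic field have valuation $0$, so $D\subseteq\mathcal O_L$ and $D\cap K=C$), but the load-bearing step of your proof is false in the generality of the proposition. You claim that $D\cap K=C$ yields linear disjointness of $\res D$ and $\res K$ over $\res C$, hence the Gauss-valuation formula $v\bigl(\sum_\alpha c_\alpha d^\alpha\bigr)=\min_\alpha v(c_\alpha)$ on $K[d_1,\dots,d_n]$, "by lifting any $\res(K)$-relation among monomials back to a $K$-relation". A relation among residues is only a congruence modulo $\smallo_L$, not an identity in $L$, so it cannot be lifted: $D\cap K=C$ rules out equalities $d=a$ with $a\in K\setminus C$, but not asymptotic relations $d-a\prec 1$. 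Such relations occur precisely within the scope of the proposition: since $K$ is $\upl$-free it has rational asymptotic integration, so if $K$ is not $\d$-valued (which the hypotheses allow), its $\d$-valued hull $\operatorname{dv}(K)=K\big(C_{\operatorname{dv}(K)}\big)$ is an \emph{immediate} $H$-asymptotic extension of $K$ generated by new constants; for such a constant $d$ there is $f\in K$ with $d\sim f$, so $\res d\in\res K$, the monomial residues are $\res K$-dependent, and $v(d-f)>0=\min\{v(d),v(f)\}$. Thus both your Step 1 (and with it your argument for $\Gamma_L=\Gamma$) and the "constant-coefficient projection" in Step 2 collapse exactly in the hardest case the proposition must cover. (Your trace reduction is also shaky, since pseudolimits need not be preserved under traces, but that is secondary.)

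This is why the paper's proof takes a different route. After a Zorn reduction to $L=K(d)$ with $d\in C_L$, it first shows $L$ is pre-$\d$-valued (Lemma~\ref{conpred}; in the delicate case $\res d\in\res K$ this uses $\upl$-freeness of $K$ to exclude that $vb'$ is a gap), then passes to $\d$-valued hulls: Gehret's theorem (Lemma~\ref{lem:Gehret}) gives that $\operatorname{dv}(K)$ is $\upl$-free with the same value group, and the $\d$-valued case (Lemma~\ref{lem:const field ext}, together with [ADH, 10.5.15] for $\Gamma_L=\Gamma$) is settled by showing that a pseudolimit $\upl$ of $(\upl_\rho)$ in $L$ would make $K(\upl)$ an immediate extension of $K$, forcing $d$ to be algebraic over $C$. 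In particular, $\Gamma_L=\Gamma$ is not "standard valuation theory" here; any repair of your approach would essentially have to reprove these ingredients.
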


\noindent
We are going  to deduce this from the next three lemmas. 
Recall that $K$ is pre-$\d$-valued, by  [ADH, 10.1.3]. Let $\operatorname{dv}(K)$ be the $\d$-valued hull
of $K$ (see [ADH, 10.3]).

\begin{lemma}\label{lem:Gehret}
Suppose $K$ is $\upl$-free. Then  $L:=\operatorname{dv}(K)$ is $\upl$-free and $\Gamma_L=\Gamma$.
\end{lemma}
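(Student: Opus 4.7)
The plan is to deduce both claims from the explicit construction of the $\d$-valued hull recalled in [ADH, 10.3]. There, $L = \operatorname{dv}(K)$ is obtained from $K$ by iteratively adjoining, for each $f \in \mathcal{O}$ with no $c \in C$ satisfying $f \sim c$, a new transcendental constant $c_f \in C_L$ with $f \sim c_f$. Since $v(c_f) = v(f) \in \Gamma$ and $\operatorname{res}(c_f) = \operatorname{res}(f) \in \operatorname{res}(K)$, each single step is an immediate extension of valued fields; taking the union of these stages, $L = K(C_L)$ with $\Gamma_L = \Gamma$. I would open the proof by extracting this from [ADH, 10.3], which already settles the second claim.

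For $\upl$-freeness, the equality $\Gamma_L = \Gamma$ ensures that the logarithmic sequence $(\ell_\rho)$ for $K$ remains one for $L$, so $(\upl_\rho)$ is the pc-sequence to test. Suppose toward contradiction that $y \in L$ satisfies $(\upl_\rho) \leadsto y$. Then $y \notin K$ because $K$ is $\upl$-free, and since $L = K(C_L)$ there is a minimal finite tuple $c_1, \dots, c_n \in C_L \setminus C$ with $y \in K(c_1,\dots,c_n)$. By induction on $n$, the problem reduces to a single adjunction $K \subseteq K(c)$ where $c$ is a new transcendental constant with $c \sim c_0$ for some $c_0 \in K^\times$, and $y \in K(c) \setminus K$. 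Writing $y = P(c)/Q(c)$ with coprime $P, Q \in K[X]$ and expanding both in powers of the element $c - c_0$ (which satisfies $(c-c_0)' = -c_0' \in K$ and $c - c_0 \prec 1$), one obtains a canonical $K(c_0)$-expression for $y$ and for each derivative $y^{(k)}$.

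The crux of the proof is to push the pseudo-convergence $(\upl_\rho) \leadsto y$ down from $K(c)$ to $K$. Because $K(c)/K$ is an immediate valued field extension (so that valuations of elements of $K(c)$ are controlled by the valuations of their $K$-coefficients in the $(c-c_0)$-expansion, using that $c - c_0$ behaves as a valuation-theoretic ``infinitesimal'' transcendental), one can match the dominant term of $y - \upl_\rho$ along this expansion and conclude that the leading $K$-coefficient of $y$ must itself be a pseudolimit of $(\upl_\rho)$ in $K$, contradicting the $\upl$-freeness of $K$. The main obstacle is this matching step: although $c$ is a constant, so $c' = 0$, the element $c - c_0$ has non-trivial derivative in $K$, and one must carefully track how the derivation interacts with the $(c-c_0)$-expansion of $y$ and rule out cancellations between the leading parts of $P$ and $Q$; this is the technical content that makes the argument work specifically for an adjunction of constants in the $\d$-valued hull, where an arbitrary immediate extension could in principle fill the pseudo-limit gap of $(\upl_\rho)$.
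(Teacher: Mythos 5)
The paper's own proof is a one-line citation: the first claim is Gehret's Theorem~10.2 (a standalone result in the reference \cite{Gehret}), and $\Gamma_L=\Gamma$ is [ADH, 10.3.2(i)] combined with the fact that $\upl$-freeness gives rational asymptotic integration, so $\operatorname{dv}(K)$ is immediate over $K$. Your proposal instead tries to reprove Gehret's theorem from first principles, and it has a genuine gap: you identify the ``matching step'' as the crux and explicitly leave it unresolved. That step does not work as described. In an immediate valued field extension $L\supseteq K$, a pseudolimit $y\in L$ of a pc-sequence from $K$ gives \emph{no} canonical ``leading $K$-coefficient'' that must itself be a pseudolimit; immediate extensions are precisely where divergent pc-sequences from $K$ acquire pseudolimits, and no amount of dominant-term matching along a formal $(c-c_0)$-expansion passes such a pseudolimit back down to $K$. (Also, for rational $y=P(c)/Q(c)$ the expansion in powers of $c-c_0$ is an infinite formal series, not an identity in $K(c)$, so the bookkeeping you sketch is not even well-posed as stated.)

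The argument that actually works in this circle of ideas, and which the paper uses slightly later for the $\d$-valued case (Lemma~\ref{lem:const field ext}), is of a different character: one shows that $(\upl_\rho)$ is of transcendental type over $K$, so a hypothetical pseudolimit $\upl\in L$ is transcendental over $K$ and generates an immediate subextension $K(\upl)$; a transcendence-degree count then makes $L$ algebraic over $K(\upl)$, and a residue-field comparison yields a contradiction. But notice that the residue-field step there uses $\res K\cong C$, i.e.\ that $K$ is $\d$-valued, not merely pre-$\d$-valued. Lemma~\ref{lem:Gehret} is exactly the tool the paper needs to pass from the pre-$\d$-valued setting to the $\d$-valued one before that argument becomes available. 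So trying to prove Lemma~\ref{lem:Gehret} via a transcendence/residue argument, or via coefficient matching, either presupposes what you are trying to prove or requires a genuinely new idea — which is why the paper simply invokes Gehret's theorem. You should do the same.
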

\begin{proof}
The first statement is  \cite[Theorem~10.2]{Gehret},
and the second statement follows from [ADH, 10.3.2(i)].
\end{proof}

\noindent
If $L=K(D)$ is a differential  field extension of $K$
with $D\supseteq C$ a subfield of $C_L$, then~$D=C_L$, and $K$ and $D$ are linearly disjoint over $C$ [ADH, 4.6.20].
If $K$ is $\d$-valued and $L=K(D)$ is an $H$-asymptotic extension of $K$ with $D\supseteq C$ a subfield of $C_L$,
then $L$ is $\d$-valued  and~$\Gamma_L=\Gamma$  [ADH, 10.5.15].

\begin{lemma}\label{lem:const field ext}
Suppose $K$ is $\d$-valued and $\upl$-free, and $L=K(D)$ is an $H$-asymptotic extension of $K$ with $D\supseteq C$ a subfield of $C_L$. Then $L$ is $\upl$-free.
\end{lemma}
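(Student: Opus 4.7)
The plan is to argue by contradiction: suppose the pc-sequence $(\upl_\rho)$ pseudoconverges to some $y\in L$, and then show that already $(\upl_\rho)$ pseudoconverges to an element of $K$, contradicting the $\upl$-freeness of $K$. The remarks preceding the lemma (in particular [ADH, 10.5.15]) give that $L$ is $\d$-valued with $\Gamma_L=\Gamma$; combined with the hypotheses this makes $L$ an ungrounded $H$-asymptotic field with $\Gamma_L\ne\{0\}$ and $\Psi_L=\Psi$, so the logarithmic sequence $(\ell_\rho)$ for $K$ remains one for $L$, and the $\upl$-sequences computed in $K$ and in $L$ coincide. Thus $\upl$-freeness is defined for~$L$.

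The technical heart of the argument is a gaussian identity:
\[
v_L\!\bigg(\sum_{i=1}^n c_i e_i\bigg)\ =\ \min_i v_K(c_i) \qquad (c_1,\dots,c_n\in K),
\]
valid for any finite $C$-linearly independent subset $\{e_1,\dots,e_n\}\subseteq D$. Its proof uses the observation that in every $\d$-valued asymptotic field the valuation is trivial on the constant field and every element of the residue field is the residue of a constant; combined with $C_L=D$ from [ADH, 4.6.20], this gives $\res K=C$ and $\res L=D$. After rescaling so that $\min_i v_K(c_i)=0$, the element $\sum_i c_i e_i\in\mathcal O_L$ has residue $\sum_i\bar c_i\, e_i\in D=\res L$, which is a nontrivial $C=\res K$-linear combination of the $C$-linearly independent $e_i\in D$, hence nonzero in $\res L$; so $v_L(\sum_i c_i e_i)=0$, as required.

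To apply this, I use that $L$ is the fraction field of $K\otimes_C D$ (the tensor product being a domain because $K$ and $D$ are linearly disjoint over $C$): I write $y=\alpha/\beta$ with $\alpha=\sum_i a_ie_i$ and $\beta=\sum_i b_ie_i$ for a suitable finite $C$-linearly independent subset $\{e_1,\dots,e_n\}\subseteq D$, coefficients $a_i,b_i\in K$, and $\beta\ne 0$ (so some $b_i\ne 0$). From $\upl_\rho\leadsto y$ in $L$ it follows that $v_L(\alpha-\beta\upl_\rho)=v_L(\beta)+v_L(y-\upl_\rho)$ is cofinal in $\Gamma$; the gaussian identity then forces $\min_iv_K(a_i-b_i\upl_\rho)$ to be cofinal in~$\Gamma$, and therefore $v_K(a_i-b_i\upl_\rho)$ is cofinal in $\Gamma$ for each $i$. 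Picking any $i$ with $b_i\ne 0$ yields $\upl_\rho\leadsto a_i/b_i$ in $K$, the desired contradiction. The central obstacle is the gaussian identity; it relies essentially on the $\d$-valued hypothesis, without which $\res L$ could strictly exceed $D$ and unexpected $\res K$-linear relations among the $\bar e_i$ might allow $y\in L\setminus K$ as a genuine new pseudolimit.
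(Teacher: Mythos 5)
Your overall strategy is genuinely different from the paper's: you push the valuation of $L=K(D)$ down to $K$ via a gaussian identity on $K\otimes_C D$ and transfer a pseudolimit of $(\upl_\rho)$ from $L$ to $K$, whereas the paper first reduces, via Zorn and the $\forall\exists$-form of the $\upl$-freeness axiom, to $L=K(d)$ with $d$ transcendental over $K$, and then gets a contradiction from the transcendental type of $(\upl_\rho)$, immediacy of $K(\upl)$ over $K$ [ADH, 3.2.6], and a residue-field computation. Your gaussian identity and its proof (using $\res K\cong C$, $\res L\cong C_L=D$, linear disjointness) are correct. The flaw is in the final transfer step. You claim that $\upl_\rho\leadsto y$ makes $v_L(y-\upl_\rho)$, hence $v_L(\alpha-\beta\upl_\rho)$, cofinal in $\Gamma$. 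Pseudoconvergence only gives that these values are \emph{eventually strictly increasing}, and for the $\upl$-sequence they are certainly not cofinal in $\Gamma$: eventually $v(y-\upl_\rho)=v(\upl_{\rho+1}-\upl_\rho)=\psi(v\ell_\rho)\in\Psi$, and $\Psi<(\Gamma^{>})'$, so these values stay in $\Psi^{\downarrow}$. Consequently the deductions ``$v_K(a_i-b_i\upl_\rho)$ is cofinal in $\Gamma$ for each $i$'' and ``picking \emph{any} $i$ with $b_i\ne 0$ yields $\upl_\rho\leadsto a_i/b_i$'' are unjustified; note also that eventual strict increase of a minimum of finitely many terms does not give eventual strict increase of each term, and when a pseudolimit does arise it is $a_i/b_i$ only for an index realizing the minimum cofinally, not for an arbitrary $i$ with $b_i\ne 0$.

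The gap is repairable inside your framework, and cleanly so. By the gaussian identity, $\min_i v_K(a_i-b_i\upl_\rho)=v_L(\beta)+v_L(y-\upl_\rho)$ is eventually strictly increasing. Since $(\upl_\rho)$ is a pc-sequence, for each $i$ with $b_i\ne 0$ the sequence $v_K(a_i-b_i\upl_\rho)=vb_i+v_K(a_i/b_i-\upl_\rho)$ is either eventually strictly increasing, in which case $\upl_\rho\leadsto a_i/b_i$ in $K$ and you have your contradiction with $\upl$-freeness of $K$, or eventually constant; and the terms with $b_i=0$ (and $a_i\neq0$) are constant. If no index produced a pseudolimit in $K$, all finitely many terms would be eventually constant, so their minimum would be eventually constant, contradicting the strict increase above. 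With this substitution for your last paragraph, the argument goes through and gives an alternative, more computational proof of the lemma; the paper's route avoids the explicit valuation identity at the cost of the reduction to a single transcendental constant $d$ and the appeal to [ADH, 3.2.6, 3.2.7].
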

\begin{proof} First, $(\upl_{\rho})$ is of transcendental type over $K$: otherwise, [ADH, 3.2.7] would give an algebraic extension of $K$ that is not $\upl$-free. Next, our logarithmic sequence~$(\ell_\rho)$ for $K$ remains a logarithmic sequence for $L$. 

Zorn and the $\forall\exists$-form of the $\upl$-freeness axiom~[ADH, 1.6.1(ii)] reduce us to the case~$D=C(d)$, $d\notin C$, $d$ transcendental over $K$, so $L=K(d)$. 
 Suppose~$L$ is not $\upl$-free. Then~$\upl_\rho\leadsto\upl\in L$, and such $\upl$ is transcendental over $K$ and gives an immediate extension $K(\upl)$ of $K$ by [ADH, 3.2.6].
Hence $L$ is algebraic over  $K(\upl)$,   so 
$\res L$ is algebraic over~$\res K(\upl)=\res K\cong C$ and thus $d$ is algebraic over $C$, a contradiction.
\end{proof}

\begin{lemma}\label{conpred}
Suppose $K$ is $\upl$-free and $L$ is an $H$-asymptotic extension of $K$, where~$L=K(d)$ with $d\in C_L$.  
Then $L$ is pre-$\d$-valued.
\end{lemma}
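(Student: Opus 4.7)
The plan is to verify the pre-$\d$-valued condition for the $H$-asymptotic field $L$, which for $\Gamma_L\neq\{0\}$ is equivalent to $v_L(\mathcal O_L')>\Psi_L$. Since $K$ is $\upl$-free, it is itself pre-$\d$-valued with asymptotic integration (by~[ADH, 10.1.3, 11.6.8]), so this inequality holds inside~$K$; I would propagate it to $L$ by exploiting that $d\in C_L$ forces $d'=0$. The derivation on $L=K(d)$ therefore extends $\der_K$ coefficient-wise on $K[d]$: for $P=\sum a_iX^i\in K[X]$, $(P(d))'=P^{[\der]}(d)$ with $P^{[\der]}:=\sum a_i'X^i$, and every logarithmic derivative in $L$ can be written in terms of such coefficient derivatives.

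First I would show that either $d\in K$ (whence $L=K$ and there is nothing to prove) or $d\asymp 1$ in $L$. Indeed, if $d\notin K$ had $v_L(d)=\alpha\neq 0$, then after replacing $d$ by $d^{-1}$ to arrange $\alpha>0$, pick any $a\in K^\times$ with $a\prec 1$ (so $a'\neq 0$, since $K$ is asymptotic); then $g_1:=d$ and $g_2:=d(1+a)$ satisfy $g_1\asymp g_2\prec 1$, yet $g_1'=0$ and $g_2'=da'\neq 0$ so $g_1'\prec g_2'$, contradicting that $L$ is asymptotic.

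With $d\asymp 1$ in hand, I would proceed by a case split on~$d$. \emph{(i)}~If $d$ is algebraic over $K$, then $L/K$ is a finite algebraic $H$-asymptotic extension, and pre-$\d$-valuedness transfers to $L$ by standard means (for instance, via the $\d$-valued hull of $K$ provided by Lemma~\ref{lem:Gehret}, into which $L$ embeds). \emph{(ii)}~If $d$ is transcendental over $K$ with residue $\bar d\in\res(L)$ transcendental over $\res(K)$, then the valuation on $L$ is the gaussian extension, so $\Gamma_L=\Gamma_K$ and $\Psi_L=\Psi_K$; writing any $h=P(d)/Q(d)\in\mathcal O_L$ normalized so that $P,Q$ have coefficients in $\mathcal O_K$ and $v_L(Q(d))=0$, one obtains $v_L(h')\geq\min_i v_K(a_i')>\Psi_K$ directly from pre-$\d$-valuedness of $K$ applied to the coefficients. \emph{(iii)}~The remaining case is $d$ transcendental with $\bar d\in\res(K)$, so that $d-a\prec 1$ for some $a\in K$; here new elements $v_L(d-a)\in\Gamma_L$ may appear, and the new $\psi_L$-values are computed via $(d-a)^\dagger=-a'/(d-a)$.

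The main obstacle is case~\emph{(iii)}: one must verify that the new $\psi_L$-values of the shape $v_K(a')-v_L(d-a)$ (and their iterates obtained upon successively refining $a$ when $d$ admits better $K$-approximations), together with all $\psi_L$-values inherited from $\Psi_K$, remain strictly below $v_L(\mathcal O_L')$. This is done by combining pre-$\d$-valuedness of $K$ applied to $a$ with the $H$-asymptotic hypothesis on $L$, which pins down the structure of $(\Gamma_L,\psi_L)$ and yields the required inequality $v_L(\mathcal O_L')>\Psi_L$.
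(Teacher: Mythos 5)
The core of this lemma is your case \emph{(iii)}, and there your proposal stops exactly where the real work begins. When $d$ is transcendental over $K$ with $\res(d)\in\res(K)$, take $b\in\mathcal O$ with $y:=b-d\prec 1$; then $y'=b'$, and one must analyze the $H$-asymptotic extension $L=K(y)$ of $K$ with $y'=b'\in K$. The paper's proof shows that $b'\notin\der\smallo$ (otherwise $y\in K$ and hence $d\in K$) and, crucially, that $vb'\in(\Gamma^>)'$: this is where $\upl$-freeness genuinely enters, since $vb'>\Psi$ by pre-$\d$-valuedness of $K$, and if $vb'\notin(\Gamma^>)'$ then $vb'$ would be a gap in $K$, which $\upl$-freeness forbids. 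Only then can one invoke [ADH, 10.2.4, 10.2.5(iii)] with $s:=b'$, which classify the possible $H$-asymptotic structures on such $K(y)$ (a new value $v(d-a)\notin\Gamma$ may appear, or $v(d-K)$ may behave like an immediate extension, etc.) and show that each possibility is pre-$\d$-valued. Your sketch replaces all of this by the assertion that pre-$\d$-valuedness of $K$ together with $H$-asymptoticity of $L$ ``pins down the structure of $(\Gamma_L,\psi_L)$ and yields the required inequality''---but that is precisely the statement to be proved, and pre-$\d$-valuedness of $K$ alone does not suffice (it does not exclude $vb'$ being a gap); you never identify where $\upl$-freeness is used in this case.

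Two further points. Your trichotomy is not exhaustive: $\res(d)$ could be algebraic over $\res(K)$ without lying in $\res(K)$. The paper's opening move---replacing $K$, $L$ by $K^{\operatorname{a}}$, $K^{\operatorname{a}}(d)$ inside an algebraic closure of $L$, using [ADH, 10.1.22], the fact that $\upl$-freeness passes to the algebraic closure, and that pre-$\d$-valuedness descends to asymptotic subfields---is exactly what legitimizes the dichotomy ``$\res(d)\in\res(K)$ or transcendental over $\res(K)$''; you skip it. Also, in your case \emph{(i)} the proposed mechanism is off: an algebraic asymptotic extension $L$ of $K$ need not embed into $\operatorname{dv}(K)$ over $K$ (e.g.\ if $K$ is already $\d$-valued and $L$ is a proper algebraic extension); the correct fact is simply that pre-$\d$-valuedness is preserved under algebraic asymptotic extensions [ADH, 10.1.22]. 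Your case \emph{(ii)} computation (gaussian valuation, $\Gamma_L=\Gamma$, coefficientwise differentiation since $d'=0$) is correct, though the paper gets it faster: $\Gamma_L=\Gamma$ gives $L$ asymptotic integration, hence $L$ is pre-$\d$-valued by [ADH, 10.1.3].
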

\begin{proof} 
Let $L^{\operatorname{a}}$ be an algebraic closure of the $H$-asymptotic field $L$, and let $K^{\operatorname{a}}$ be the algebraic closure~ of~$K$ inside $L^{\operatorname{a}}$. Then $K^{\operatorname{a}}$  is pre-$\d$-valued by [ADH, 10.1.22]. Replacing $K$, $L$ by $K^{\operatorname{a}}$, $K^{\operatorname{a}}(d)$ we arrange that
$K$ is algebraically closed. We may assume $d\notin C$, so $d$ is transcendental over $K$ by [ADH, 4.1.1, 4.1.2].

Suppose first that $\operatorname{res}(d)\in\operatorname{res}(K)\subseteq \operatorname{res}(L)$, and take~$b\in\mathcal O$ such that
$y:=b-d\prec 1$.
Then   $b'\notin\der\smallo$: otherwise $y'=b'=\delta'$ with $\delta\in\smallo$, so $y=\delta\in K$ and hence~$d\in K$,
a contradiction. 
Also~$vb'\in (\Gamma^>)'$: otherwise  $vb'<(\Gamma^>)'$, by [ADH, 9.2.14], and $vb'$ would be a gap in $K$, contradicting
$\upl$-freeness of $K$. Hence~$L=K(y)$ is pre-$\d$-valued by [ADH, 10.2.4, 10.2.5(iii)]  applied to $s:=b'$.  

If $\operatorname{res}(d)\notin\operatorname{res}(K)$, then $\res(d)$ is transcendental over $\res(K)$ by~[ADH, 3.1.17], hence
$\Gamma_L=\Gamma$ by [ADH, 3.1.11], and so $L$ has asymptotic integration and thus is pre-$\d$-valued by [ADH, 10.1.3].
\end{proof}

\begin{proof}[Proof of Proposition~\ref{prop:const field ext}] By Zorn we reduce to the case $L=K(d)$ with $d\in C_L$.
Then $L$ is pre-$\d$-valued by Lemma~\ref{conpred}.
By Lemma~\ref{lem:Gehret}, the $\d$-valued hull $K_1:=\operatorname{dv}(K)$ of $K$ is $\upl$-free with $\Gamma_{K_1}=\Gamma$, and by the universal property
of $\d$-valued hulls we may arrange that $K_1$ is a $\d$-valued subfield of $L_1:=\operatorname{dv}(L)$ [ADH, 10.3.1].
The proof of~[ADH, 10.3.1] gives $L_1=L(E)$ where $E=C_{L_1}$, and so
$L_1=K_1(E)$. Hence by Lemma~\ref{lem:const field ext} and the remarks preceding it, $L_1$ is $\upl$-free with $\Gamma_{L_1}=\Gamma_{K_1}=\Gamma$.
Thus $L$ is $\upl$-free with~$\Gamma_L=\Gamma$.
\end{proof}

\begin{lemma}\label{lemtrigupl} Let $H$ be a $\upl$-free real closed $H$-field. Then the trigonometric closure $H^{\trig} $ of $H$ is $\upl$-free.
\end{lemma}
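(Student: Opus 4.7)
I plan to analyze the transfinite construction of $H^{\trig}$ described just before Proposition~\ref{protrig} and verify that $\upl$-freeness persists at each step. That construction iterates three operations on the current real closed $H$-field $F$: (i)~the basic step $F \to F^+$ of Lemma~\ref{lemtri2}; (ii)~taking real closures; and (iii)~taking unions at limit ordinals. Since each of these preserves the property of being a real closed $H$-field (after combining (i) with (ii)), it suffices to show each preserves $\upl$-freeness.

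Two of the three operations can be disposed of immediately. For (iii), if $F=\bigcup_\alpha F_\alpha$ is a directed union of $\upl$-free $H$-asymptotic fields all containing $H$ and all immediate over $H$ (so they share the same value group and hence the same $\upl$-sequence~$(\upl_\rho)$), then a pseudolimit of $(\upl_\rho)$ in $F$ would already lie in some $F_\alpha$; so $F$ is $\upl$-free. For (ii), [ADH, 11.6.8] says that an $H$-asymptotic field is $\upl$-free iff its algebraic closure is; thus the real closure, sandwiched between a $\upl$-free $H$-field and its algebraic closure, is itself $\upl$-free.

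Consequently the whole proof reduces to step (i): if $H$ is $\upl$-free and real closed and $H^+ = H(\cos\theta,\sin\theta)$ is constructed from some $\theta\prec 1$ in $H$ with $\theta'\imag\notin K^\dagger$ as in the paragraph preceding Lemma~\ref{lemtri2}, then $H^+$ is $\upl$-free. For this I would pass to the algebraic closure: by [ADH, 11.6.8] once more, $H^+$ is $\upl$-free iff $H^+[\imag] = K(y)$ is $\upl$-free, where $K=H[\imag]$ is already known to be $\upl$-free. Recall from the proof of Lemma~\ref{lemtri2} that $K(y)$ is the immediate asymptotic extension of $K$ furnished by [ADH, 10.4.3] with $y\sim 1$ and $y^\dagger=\theta'\imag$. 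Because $K$ is algebraically closed and $y^\dagger\notin K^\dagger$, the element $y$ is transcendental over $K$, and so $K(y)/K$ is an immediate transcendental extension.

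Now suppose for contradiction that $(\upl_\rho)\leadsto\mu$ in $K(y)$; necessarily $\mu\notin K$ by the $\upl$-freeness of $K$. In the immediate transcendental extension $K(y)$, the element $y$ is the pseudolimit of an explicit pc-sequence in $K$, for instance the partial sums $a_n := \sum_{k=0}^n (\imag\theta)^k/k!$ of $\exp(\imag\theta)$, whose widths are $v(a_{n+1}-a_n)=(n+1)v(\theta)$, an arithmetic progression in $\Z v(\theta)\subseteq\Gamma^>$. By the Kaplansky-type uniqueness that underlies [ADH, 10.4.3], the hypothesis $(\upl_\rho)\leadsto\mu\in K(y)\setminus K$ forces $(\upl_\rho)$, after subtraction of a suitable element of $K$, to be equivalent as a pc-sequence to a $K$-rational transform of $(a_n)$. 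The main obstacle is to derive a contradiction from this: the widths of $(\upl_\rho)$ are governed by the logarithmic tower $(\ell_\rho)$ of $K$ and cannot match the arithmetic-progression widths $\{(n+1)v(\theta)\}$ inherited from $(a_n)$ without forcing a pseudolimit of $(\upl_\rho)$ already into $K$, violating the $\upl$-freeness of $K$. Making this width-incompatibility rigorous, using the differential equation $y^\dagger=\theta'\imag$ to transport pc-behaviour under rational transformations in $K(y)$, is the technical heart of the argument.
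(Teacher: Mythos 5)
Your structural reductions are sound and agree with what the paper leaves implicit: unions at limit stages are harmless because all stages are immediate over $H$ (so the logarithmic sequence, hence $(\upl_\rho)$, never changes), real closures are handled by [ADH, 11.6.8], and the basic step $H\to H^+$ of Lemma~\ref{lemtri2} reduces, again via [ADH, 11.6.8], to showing that $K(y)$ is $\upl$-free, where $K=H[\imag]$ is $\upl$-free, $y\sim 1$ and $y^\dagger=\theta'\imag\in\imag\der\smallo_H$, $y^\dagger\notin K^\dagger$. The gap is at precisely this remaining step. The paper does not prove it from scratch: it invokes \cite[Proposition~7.2]{Gehret}, which says exactly that adjoining such a $y$ to a $\upl$-free $K$ yields a $\upl$-free field. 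You propose to replace that citation by a ``width incompatibility'' argument, and you yourself flag its core as unproven; as written, the proposal therefore does not establish the lemma.

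Moreover, the sketch rests on an unjustified leap rather than a merely unfinished computation. From $\upl$-freeness of the algebraically closed field $K$ one does get that $(\upl_\rho)$ is of transcendental type over $K$, so a hypothetical pseudolimit $\mu\in K(y)\setminus K$ would generate an immediate subextension $K(\mu)$ over which $K(y)$ is algebraic; but no ``Kaplansky-type uniqueness'' behind [ADH, 10.4.3] identifies $(\upl_\rho)$, up to a $K$-rational transform, with your sequence $(a_n)$ of partial exponential sums, nor does it transfer the width set $\big\{(n+1)v\theta\big\}$ from $y$ to $\mu$. The element $\mu$ is some rational function $f(y)$ over $K$, and comparing $v(\mu-K)$ with $v(y-K)$ requires controlling how the cut of $y$ over $K$ propagates through arbitrary rational functions; different elements of one and the same immediate extension can have quite unrelated approximation sets, so the desired incompatibility does not follow formally. (Even the preliminary identification of $v(y-K)$ with the downward closure of $\big\{(n+1)v\theta\big\}$ needs care: the computation $v(y-a_n)=(n+1)v\theta$ is a short $\d$-valued argument, but you must also exclude better approximations to $y$ from $K$.) To repair the proof, either cite \cite[Proposition~7.2]{Gehret} as the paper does, or supply a genuine proof that $(\upl_\rho)$ has no pseudolimit in $K(y)$ — which amounts to analyzing the valuations of elements $f(y)$, i.e.\ to reproving the quoted result — rather than the heuristic as it stands.
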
 
\begin{proof} We show that $H^+$ as in Lemma~\ref{lemtri2} is $\upl$-free. There $H^+[\imag]=K(y)$ where~$K$ is the $H$-asymptotic extension $H[\imag]$ of $H$ and $y\sim 1$, $y^\dagger\notin K^\dagger$,  $y^\dagger\in\imag\der\smallo_H$.  Then $K$ is $\upl$-free, so $K(y)$ is $\upl$-free by \cite[Proposition~7.2]{Gehret},   
hence $H^+$ is $\upl$-free. 
\end{proof}

\noindent
In Example~\ref{ex:rat as int and cofinality} we have a $\upl$-free $K$ and an $H$-asymptotic extension $L$ of $K$ that is not $\upl$-free, with $\operatorname{trdeg}(L|K) = 1$.
The next proposition shows that the second part of the conclusion of Theorem~\ref{thm:ADH 13.6.1} nevertheless
holds for such $K$, $L$. 

\begin{prop}\label{prop:rat as int and cofinality}
The following are equivalent:
\begin{enumerate}
\item[\textup{(i)}] $K$ has rational asymptotic integration;
\item[\textup{(ii)}] for every $H$-asymptotic extension $L$ of $K$ with $\operatorname{trdeg}(L|K)\le 1$ we have that
$\Gamma^{<}$  is cofinal in $\Gamma_L^{<}$.
\end{enumerate} 
\end{prop}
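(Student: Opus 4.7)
My plan is to prove the two implications separately.

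\smallskip

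\noindent\emph{(i) $\Rightarrow$ (ii).} Suppose $K$ has rational asymptotic integration and let $L$ be an $H$-asymptotic extension of $K$ with $\operatorname{trdeg}(L|K)\leq 1$. Since $\Gamma^<$ is cofinal in $(\Q\Gamma)^<$, it suffices to show that $(\Q\Gamma)^<$ is cofinal in $\Gamma_L^<$; and because the algebraic closure of a pre-$\d$-valued $H$-asymptotic field is again pre-$\d$-valued $H$-asymptotic with value group inside $\Q\Gamma$ and retains rational asymptotic integration (by the preservation theorems in~[ADH, 9.5, 10.1.22]), I would pass to the algebraic closure of $K$ inside an ambient $H$-asymptotic extension of~$L$, reducing to the case where $K$ is algebraically closed, so that $L = K$ or $L = K(y)$ with $y$ transcendental over $K$. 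The case $L=K$ is trivial; otherwise invoke the trichotomy for one-generator $H$-asymptotic extensions from~[ADH, Section~10.2]: either $L|K$ is immediate (so $\Gamma_L=\Gamma$), or $\operatorname{res}(y)$ is transcendental over $\operatorname{res}(K)$ (so $\Gamma_L=\Gamma$ by~[ADH, 3.1.11]), or $vy$ creates a new element outside $\Gamma$. In the first two cases cofinality is immediate. The decisive case is the value-transcendental one: I would use the identity $v(y^\dagger)=\psi_L(vy)$ together with $\psi_L(vy)\in\Psi\subseteq\Gamma$ and rational asymptotic integration to force $vy$ into the same archimedean class as some element of $\Q\Gamma$, so that $vy$ cannot sit strictly below $\Gamma^<$.

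\smallskip

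\noindent\emph{(ii) $\Rightarrow$ (i).} I prove the contrapositive. Suppose $K$ fails rational asymptotic integration. Since $K$ is ungrounded $H$-asymptotic with $\Gamma\neq\{0\}$, the asymptotic couple $(\Q\Gamma,\psi)$ has a gap, that is, an element $\beta\in\Q\Gamma$ with $\Psi<\beta$ and $\beta\notin(\Q\Gamma^{\neq})'$ (where $\alpha':=\alpha+\psi(\alpha)$). After clearing denominators we may take $\beta\in\Gamma$, and we fix $s\in K^\times$ with $vs=\beta$. By the gap-creation construction of transcendental $H$-asymptotic extensions in~[ADH, 10.2.4, 10.2.5] (compare the proof of Lemma~\ref{conpred} above), one can adjoin a transcendental $y$ with $y'=s$ forming an $H$-asymptotic field extension $L=K(y)$ of transcendence degree~$1$ in which $vy$ is a new value-group element with $vy<\Gamma$. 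Then $\Gamma^<$ is not cofinal in $\Gamma_L^<$, as required.

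\smallskip

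The main technical obstacle is the value-transcendental sub-case of~(i)$\Rightarrow$(ii): one must convert the divisible-hull hypothesis of rational asymptotic integration into an explicit bound on where new value-group elements can land under a transcendence-degree-one extension. Once this is pinned down via~[ADH, 9.5 and 10.2], the rest is bookkeeping and a direct appeal to the construction results of~[ADH, 10.2].
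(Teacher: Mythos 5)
Your proposal goes wrong in the decisive value-transcendental sub-case of (i)~$\Rightarrow$~(ii), and you rightly flag it as the obstacle; but the claim you lean on there is unjustified and likely false. You write that $\psi_L(vy)\in\Psi\subseteq\Gamma$. There is no reason for $\psi_L(vy)$ to lie in $\Psi$: if $\gamma:=vy$ satisfies $\Gamma^{<}<\gamma<0$, then $[\gamma]$ is strictly smaller than $[\delta]$ for every $\delta\in\Gamma^{\ne}$, which forces $\psi_L(\gamma)\geq\psi(\delta)$ for all such $\delta$, i.e.\ $\psi_L(\gamma)\geq\sup\Psi$ if that exists and in any case $\psi_L(\gamma)$ need not lie in $\Gamma$ at all. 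So the ``force $vy$ into the archimedean class of something in $\Q\Gamma$'' step is not established, and you have not explained how rational asymptotic integration rules out a bad~$\gamma$. A second, smaller issue: after passing to the algebraic closure of $K$ you still should not assume $L=K(y)$; you need $L$ algebraically closed too (then $L$ is the algebraic closure of $K(y)$), and the reduction to the one-generator trichotomy has to be stated with that extra step.

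The paper's own argument for (i)~$\Rightarrow$~(ii) avoids the trichotomy entirely and is worth internalizing. After arranging $K$ and $L$ algebraically closed, suppose toward a contradiction that $\gamma\in\Gamma_L$ with $\Gamma^{<}<\gamma<0$. Then $\gamma':=\gamma+\psi_L(\gamma)$ satisfies $\Psi<\gamma'<(\Gamma^{>})'$, so by rational asymptotic integration (used via [ADH, 2.4.16, 2.4.17]) the group $\Gamma$ is dense in $\Gamma+\Q\gamma'$, and since $\gamma$ sits in the gap between $\Gamma^{<}$ and $0$ it cannot lie in $\Gamma+\Q\gamma'$. Hence $\gamma$ and $\gamma'$ are $\Q$-linearly independent over $\Gamma$, which by [ADH, 3.1.11] forces $\operatorname{trdeg}(L|K)\geq 2$, a contradiction. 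This directly converts ``bad $\gamma$'' into two new value-group elements, with no case split. Your (ii)~$\Rightarrow$~(i) is essentially the paper's, except that ``clearing denominators'' to get $\beta\in\Gamma$ is not licit in general (a gap $\beta\in\Q\Gamma$ need not become a gap after multiplying by $n$, and need not lie in $\Gamma$); the clean fix, as in the paper, is simply to arrange $K$ algebraically closed first, whence $\Gamma$ is divisible and $\Q\Gamma=\Gamma$.
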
 
\begin{proof} For  (i)~$\Rightarrow$~(ii), assume (i), and let  $L$ be an $H$-asymptotic extension of $K$ with~$\operatorname{trdeg}(L|K)\le 1$. Towards showing that $\Gamma^{<}$ is cofinal in $\Gamma_L^{<}$ we can arrange that $K$ and $L$ are algebraically closed. Suppose towards a contradiction that $\gamma\in \Gamma_L$ and
$\Gamma^{<} < \gamma < 0$. Then $\Psi < \gamma' < (\Gamma^{>})'$, and so $\Gamma$ is dense in
$\Gamma+\Q\gamma'$ by [ADH, 2.4.16, 2.4.17], in particular, $\gamma\notin \Gamma+\Q\gamma'$. Thus
$\gamma$, $\gamma'$ are $\Q$-linearly independent over $\Gamma$, which contradicts $\operatorname{trdeg}(L|K)\le 1$ by [ADH, 3.1.11]. 

As to (ii)~$\Rightarrow$~(i), we prove the contrapositive, so assume $K$ does not have rational asymptotic integration.
We arrange again that $K$ is algebraically closed. 
Then $K$ has a gap $vs$ with $s\in K^\times$, and so [ADH, 10.2.1 and its proof] gives an $H$-asymptotic extension $K(y)$ of $K$ with $y'=s$ and $0 < vy < \Gamma^{>}$. 
\end{proof}

\noindent
Recall from [ADH, 11.6] that Liouville closed $H$-fields are $\upl$-free. 
To prove the next result we also use Gehret's theorem~\cite[Theorem 12.1(1)]{Gehret}  
that an $H$-field $H$  has up to isomorphism over $H$ exactly one Liouville closure iff $H$ is grounded or $\upl$-free. Here {\it isomorphism\/} means of course {\it isomorphism of $H$-fields\/}, and likewise with the embeddings referred to in the next result:

\begin{prop}\label{proptl} Let $H$ be a grounded or $\upl$-free  $H$-field. Then $H$ has a trigonometrically closed 
and Liouville closed $H$-field extension $H^{\tl}$ that embeds over $H$ into any trigonometrically closed 
Liouville closed $H$-field extension of $H$.
\end{prop}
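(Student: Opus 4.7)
The plan is to construct $H^{\tl}$ as the union of an alternating chain of Liouville closures and trigonometric closures. Define inductively an increasing chain
$$ H\ =\ H_0\ \subseteq\ H_1\ \subseteq\ H_2\ \subseteq\ \cdots $$
of $H$-field extensions by taking $H_{2n+1}$ to be a Liouville closure of $H_{2n}$ (which exists by [ADH, 10.6]) and $H_{2n+2}:=H_{2n+1}^{\trig}$ (which exists by Proposition~\ref{protrig}); then set $H^{\tl}:=\bigcup_n H_n$, equipped with the union $H$-field structure.

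For the construction to be well-behaved I need, at every stage, that $H_{2n}$ is grounded or $\upl$-free, so that Gehret's theorem applies and the Liouville closure is unique over $H_{2n}$ up to isomorphism. For $n=0$ this is the hypothesis on $H$. For $n\geq 1$, the field $H_{2n-1}$ is Liouville closed, hence real closed and $\upl$-free, so by Lemma~\ref{lemtrigupl} the trigonometric closure $H_{2n}=H_{2n-1}^{\trig}$ is $\upl$-free, and the induction goes through. Next I would verify that $H^{\tl}$ has both closure properties: every $\theta\prec 1$ in $H^{\tl}$ lies in some $H_{2n+1}$, hence the required $y\sim 1$ with $y^\dagger=\theta'\imag$ lies in $H_{2n+2}\subseteq H^{\tl}$ (and each $H_{2n+2}$ is real closed), so $H^{\tl}$ is trigonometrically closed; and every pair $f,g\in H^{\tl}$ lies in some $H_{2n}$, so a solution $y\in H_{2n+1}^\times$ of $y'+fy=g$ lies in $H^{\tl}$, making $H^{\tl}$ Liouville closed.

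For the universal property, let $L$ be a trigonometrically closed Liouville closed $H$-field extension of $H$; I would build embeddings $\iota_n\colon H_n\to L$ over $H$ by induction, starting with $\iota_0:=\id_H$. Given $\iota_{2n}$, the Liouville closure of $\iota_{2n}(H_{2n})$ computed inside the Liouville closed field $L$ is a Liouville closure of $H_{2n}$ via $\iota_{2n}$; by Gehret's uniqueness (applicable since $H_{2n}$ is grounded or $\upl$-free) it is isomorphic over $\iota_{2n}$ to $H_{2n+1}=H_{2n}^{\Li}$, yielding an embedding $\iota_{2n+1}\colon H_{2n+1}\to L$ extending $\iota_{2n}$. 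Given $\iota_{2n+1}$, the universal property in Proposition~\ref{protrig} (applied to the trigonometrically closed $H$-field extension $L$ of $H_{2n+1}$ via $\iota_{2n+1}$) supplies a unique extension $\iota_{2n+2}\colon H_{2n+2}=H_{2n+1}^{\trig}\to L$. The union $\iota:=\bigcup_n\iota_n$ is the desired embedding of $H^{\tl}$ into $L$ over $H$.

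The main obstacle is ensuring that $\upl$-freeness is preserved at every odd-to-even step, so that Gehret's uniqueness theorem can be invoked indefinitely; this is exactly Lemma~\ref{lemtrigupl} together with the fact that Liouville closed $H$-fields are $\upl$-free. All other steps are straightforward once the construction is set up and the alternation is justified.
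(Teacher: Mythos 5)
Your proposal is correct and is essentially the paper's own proof: an alternating tower of Liouville closures and trigonometric closures, with Lemma~\ref{lemtrigupl} (plus $\upl$-freeness of Liouville closed $H$-fields) guaranteeing that Gehret's uniqueness theorem and the embedding property of Proposition~\ref{protrig} can be applied at every stage, and the embedding into a given trigonometrically closed Liouville closed extension built by the same recursion. The only (harmless) difference is that the paper starts the chain with a real closure of $H$, whereas you start with $H$ itself, which affects nothing in the statement being proved.
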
 
\begin{proof} We build real closed $H$-fields $H_0\subseteq H_1\subseteq H_2\subseteq \cdots$ as follows: $H_0$ is a real closure of $H$, and, recursively,
  $H_{2n+1}$ is a Liouville closure of $H_{2n}$, and~$H_{2n+2}:= H_{2n+1}^{\trig}$ is the trigonometric closure of $H_{2n+1}$. 
Then $H^{*}:= \bigcup_n H_n$ is a trigonometrically closed Liouville closed $H$-field extension of $H$. 
Induction using Lemma~\ref{lemtrigupl} shows that all $H_n$ with $n\ge 1$ are $\upl$-free, and that 
$H_{2n}$ has for all~$n$ up to isomorphism over $H$ a unique Liouville closure. Given any trigonometrically closed Liouville closed $H$-field extension $E$ of $H$ we then use the embedding properties of {\it Liouville closure\/} and {\it trigonometric closure\/} to construct by a similar recursion embeddings $H_n\to E$ that extend to an embedding $H^{*}\to E$ over $H$.  
\end{proof} 

\noindent
For $H$ as in Proposition~\ref{proptl}, the $H^*$ constructed in its proof is minimal: Let~${E\supseteq H}$ be any trigonometrically closed Liouville closed $H$-subfield of $H^*$. Then induction on $n$ yields $H_n\subseteq~E$ for all $n$, so $E=H^*$.
It follows that any $H^{\tl}$ as in Proposition~\ref{proptl} is isomorphic over $H$ to $H^*$, and we refer to such
$H^{\tl}$ as a {\bf  trigonometric-Liouville closure\/} of $H$.\index{closure!trigonometric-Liouville}
\index{trigonometric!Liouville closure}\label{p:Htl} Here are some useful facts about $H^{\tl}$:

\begin{cor}\label{cortrig} Let $H$ be a $\upl$-free $H$-field. Then $C_{H^{\tl}}$ is a real closure of $C_H$, the $H$-asymptotic extension $K^{\tl}:= H^{\tl}[\imag]$ of $H^{\tl}$
is a Liouville extension of $H$ with~$\I(K^{\tl})\subseteq (K^{\tl})^\dagger$, and $\Gamma_H^{<}$ is cofinal in $\Gamma_{H^{\tl}}^{<}$. Moreover,
$$ \text{$H$ is $\upo$-free}\ \Longleftrightarrow\  \text{$H^{\tl}$ is $\upo$-free.}$$ 
\end{cor}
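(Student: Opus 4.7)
I work with the explicit tower $H_0 \subseteq H_1 \subseteq H_2 \subseteq \cdots$ constructed in the proof of Proposition~\ref{proptl}: $H_0$ is a real closure of $H$, $H_{2n+1}$ is a Liouville closure of $H_{2n}$, $H_{2n+2} := H_{2n+1}^{\trig}$, and $H^{\tl} = \bigcup_n H_n$. Each arrow in this tower is either algebraic, a Liouville closure step, or a single trig-closure step $F \to F^+$ of the type constructed in Lemma~\ref{lemtri2}. Statement~(3) is then immediate: $H^{\tl}$ is by construction both Liouville closed and trigonometrically closed, and the equivalence recorded just before Lemma~\ref{lem:sc=>tc} (for Liouville closed $H$-fields, trigonometric closedness is equivalent to $\I(K)\subseteq K^\dagger$) yields $\I(K^{\tl}) \subseteq (K^{\tl})^\dagger$. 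For statement~(2), I verify that each single step gives a Liouville extension at the complexified level: real closures and Liouville closure steps trivially so; a trig-closure step $F \to F^+ = F(\cos\theta,\sin\theta)$ with $\theta'\imag = y^\dagger$, $y\sim 1$, satisfies $F^+[\imag] = F[\imag](y)$, obtained from $F[\imag]$ by adjoining an exponential of an integral of an element of $F[\imag]$. Passing to the union and composing with $H\subseteq K$ (algebraic), $K^{\tl}$ is Liouville over $H$.

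For statement~(1), I argue inductively along the tower that $C_{H_n}$ is algebraic over $C_H$. Real closures add only algebraic constants. Single trig-closure steps are immediate by Lemma~\ref{lemtri2}, so preserve the constant field; the subsequent real closure only adds algebraic constants. The Liouville closure of an $H$-field has constant field the real closure of the original constant field (see [ADH,~10.6]). Since $H^{\tl}$ is real closed, $C_{H^{\tl}}$ is real closed, hence a real closure of $C_H$.

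Statement~(4) is established by induction along the tower using Proposition~\ref{prop:rat as int and cofinality}. The proof of Proposition~\ref{proptl} already records that every $H_n$ with $n \geq 1$ is $\upl$-free, hence has rational asymptotic integration. Each step is itself obtained as the union of a transfinite chain of trdeg-$\leq 1$ $\upl$-free $H$-asymptotic extensions: algebraic extensions have trdeg $0$; a Liouville closure refactors as a tower of single adjoin-integral or adjoin-exponential-of-integral steps, each of trdeg at most $1$; a trig-closure step $F \to F^+$ is immediate by Lemma~\ref{lemtri2}, so carries no new value-group elements, and is followed by a real closure. At each such single-extension step Proposition~\ref{prop:rat as int and cofinality} yields cofinality of the respective value-group negatives, and cofinality propagates through the direct limit.

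For statement~(5), suppose first that $H$ is $\upo$-free. Real closures are algebraic and Liouville extensions are $\d$-algebraic, while the trig-closure elements $\cos\theta$ and $\sin\theta$ satisfy the equation $u'' = (\theta''/\theta')\,u' - (\theta')^2 u$ over $F$ (when $\theta'\neq 0$); hence $H^{\tl}$ is a pre-$\d$-valued $\d$-algebraic $H$-asymptotic extension of $H$, and Theorem~\ref{thm:ADH 13.6.1} forces $H^{\tl}$ to be $\upo$-free. Conversely, if $H^{\tl}$ is $\upo$-free, then since $\Gamma_H^<$ is cofinal in $\Gamma_{H^{\tl}}^<$ by~(4), the preservation statement recorded at the top of the subsection \emph{Preserving $\upl$-freeness and $\upo$-freeness} (from [ADH,~11.7.19]) makes $H$ itself $\upo$-free. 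The main technical hurdle is~(4): the Liouville closure step contributes transcendental adjunctions of unbounded transcendence degree, so a naive application of Proposition~\ref{prop:rat as int and cofinality} does not apply; refactoring the closure into a tower of trdeg-$1$ $\upl$-free extensions (each preserving rational asymptotic integration) is what makes the induction work.
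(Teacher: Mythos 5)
Your overall route is the paper's: you work with the tower $H_0\subseteq H_1\subseteq H_2\subseteq\cdots$ from the proof of Proposition~\ref{proptl}, read off the statements about $C_{H^{\tl}}$, the Liouville extension property, and $\I(K^{\tl})\subseteq(K^{\tl})^\dagger$ from the construction, obtain cofinality by induction along the tower, and deduce the $\upo$-freeness equivalence from Theorem~\ref{thm:ADH 13.6.1} together with the remark preceding it. Those parts are in order.

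The gap is in your treatment of cofinality at the Liouville-closure steps $H_{2n}\to H_{2n+1}$, exactly the point you flag as the main technical hurdle. You refactor the Liouville closure into a transfinite tower of trdeg-$\leq 1$ adjunctions and assert that each intermediate field is $\upl$-free, so that Proposition~\ref{prop:rat as int and cofinality} can be applied at every single step. That assertion is unjustified, and it is not a formality: Proposition~\ref{prop:rat as int and cofinality} needs rational asymptotic integration of the base of each step, and the cofinality produced at one step does not yield rational asymptotic integration (let alone $\upl$-freeness) of the newly adjoined field. Proposition~\ref{prop:upl-free and as int} only gives asymptotic integration for a single trdeg-$\leq 1$ extension of the $\upl$-free base $H_{2n}$, so it cannot be iterated along your tower; and Example~\ref{ex:rat as int and cofinality} shows that $\upl$-freeness can be destroyed by a single trdeg-$1$ $\d$-algebraic $H$-field extension, so the preservation you need is special to Liouville adjunctions and is a genuine theorem: it is precisely Gehret's result recorded as Proposition~\ref{prop:Gehret}, that a Liouville $H$-field extension of a $\upl$-free $H$-field is again $\upl$-free with value group negatives cofinal. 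Once that proposition is invoked, your refactoring becomes unnecessary, since it handles each Liouville-closure step wholesale; combined with Lemma~\ref{lemtrigupl}, the immediateness of the trigonometric-closure steps, and the real-closure case, the induction then closes. Without it (or an equivalent argument that single Liouville adjunctions preserve $\upl$-freeness, respectively rational asymptotic integration), your induction cannot be continued past the first adjunction in the Liouville tower.
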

\begin{proof} The construction of $H^*$ in the proof of Proposition~\ref{proptl} gives that $C_{H^{*}}$ is a real closure of $C_H$,
and that the $H$-asymptotic extension $K^{*}:=H^*[\imag]$ of~$H^*$
is a Liouville extension of $H$ with $\I(K^{*})\subseteq (K^{*})^\dagger$.  Induction using Lemma~\ref{lemtrigupl} and Proposition~\ref{prop:rat as int and cofinality} shows that $H_n$ is $\upl$-free  and $\Gamma_H^{<}$ is cofinal in $\Gamma_{H_n}^{<}$, for all $n$, so  $\Gamma_H^<$ is cofinal in $\Gamma_{H^*}^{<}$. 

The final equivalence follows from Theorem~\ref{thm:ADH 13.6.1} and  a remark preceding it.  
\end{proof}

\noindent
Proposition~\ref{prop:rat as int and cofinality} and
[ADH, remarks after~11.6.4 and   after~11.7.19] yield:

\begin{cor}\label{cor:rat as int and cofinality}
Suppose $K$ has rational asymptotic integration, and let $L$ be an $H$-asymptotic extension of $K$ with $\operatorname{trdeg}(L|K)\leq 1$.
If $L$ is $\upl$-free, then so is $K$, and if $L$ is $\upo$-free, then so is $K$.
\end{cor}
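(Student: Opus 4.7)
The plan is to reduce the corollary directly to Proposition~\ref{prop:rat as int and cofinality} combined with the cofinality-descent principle for $\upl$-freeness and $\upo$-freeness recalled at the start of this subsection. First I would verify that the notions of $\upl$-free and $\upo$-free even apply to $K$: since $K$ has rational asymptotic integration, $K$ in particular has asymptotic integration, and so (as recalled in the preliminaries) $K$ is ungrounded with $\Gamma_K \neq \{0\}$; moreover $K$ is $H$-asymptotic as a subfield of the $H$-asymptotic field $L$. So both assertions are meaningful.

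Next, I would apply Proposition~\ref{prop:rat as int and cofinality} to the extension $K \subseteq L$: since $K$ has rational asymptotic integration and $\operatorname{trdeg}(L|K) \leq 1$, the proposition yields that $\Gamma_K^{<}$ is cofinal in $\Gamma_L^{<}$. This is the only nontrivial ingredient.

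Finally, I would invoke the cofinality descent recalled immediately after the statement of Theorem~\ref{thm:ADH 13.6.1} (quoting [ADH, remarks after 11.6.4 and 11.7.19]): if $K$ admits a $\upl$-free $H$-asymptotic extension $L$ with $\Gamma_K^{<}$ cofinal in $\Gamma_L^{<}$, then $K$ itself is $\upl$-free; and the analogous implication holds with $\upo$-free in place of $\upl$-free. Applying this with our $L$ yields both halves of the corollary simultaneously.

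I do not expect any genuine obstacle here: the corollary is a formal packaging of Proposition~\ref{prop:rat as int and cofinality} with the cofinality-descent principle, and the hypothesis $\operatorname{trdeg}(L|K) \leq 1$ is exactly what Proposition~\ref{prop:rat as int and cofinality} demands. The only mild point worth noting is that the descent principle is stated for $H$-asymptotic extensions, which is satisfied by hypothesis, and that it requires cofinality of the negative parts of the value groups rather than, say, cofinality of $\Psi$ — but Proposition~\ref{prop:rat as int and cofinality} delivers precisely the former.
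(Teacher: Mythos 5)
Your proposal is correct and coincides exactly with the paper's own argument: the paper deduces Corollary~\ref{cor:rat as int and cofinality} directly from Proposition~\ref{prop:rat as int and cofinality} together with the cofinality-descent remarks cited from [ADH, after 11.6.4 and after 11.7.19]. (The descent principle is in fact recalled just \emph{before} Theorem~\ref{thm:ADH 13.6.1}, not after it, but that is a matter of location only and does not affect the substance.)
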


\noindent
We also have a similar characterization of $\upl$-freeness:

\begin{prop}\label{prop:upl-free and as int} 
The following are equivalent:
\begin{enumerate}
\item[\textup{(i)}] $K$ is $\upl$-free;
\item[\textup{(ii)}] every $H$-asymptotic extension $L$ of $K$ with $\operatorname{trdeg}(L|K)\le 1$ has asymptotic integration.
\end{enumerate} 
\end{prop}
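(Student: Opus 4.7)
For the implication (ii)$\Rightarrow$(i) I would argue by contraposition. Suppose $K$ is not $\upl$-free. If $K$ lacks asymptotic integration, take $L:=K$; otherwise $K$ has asymptotic integration and $(\upl_\rho)\leadsto\upl$ for some $\upl\in K$, and the construction underlying [ADH,~Section~11.5] that converts a pseudolimit of the logarithmic sequence into an adjoined element produces an $H$-asymptotic extension $L=K(\hat\ell)$ with $\operatorname{trdeg}(L|K)=1$ in which $\hat\ell\succ\ell_\rho$ for every $\rho$ and $v\hat\ell$ is a gap in $\Gamma_L$; in particular $L$ has no asymptotic integration, witnessing the failure of~(ii).

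For (i)$\Rightarrow$(ii), assume $K$ is $\upl$-free. By~[ADH,~11.6.8] its algebraic closure $K^\alg$ is also $\upl$-free, so $K$ has rational asymptotic integration; hence by Proposition~\ref{prop:rat as int and cofinality}, $\Gamma^<$ is cofinal in $\Gamma_L^<$ for any $H$-asymptotic extension $L$ of $K$ with $\operatorname{trdeg}(L|K)\le 1$. Suppose towards a contradiction that such an $L$ lacks asymptotic integration, so $\Psi_L$ has an upper bound $\beta\in\Gamma_L$ (namely $\beta=\max\Psi_L$ if $L$ is grounded, a gap value otherwise). By~[ADH,~3.1.11] we have $\operatorname{trdeg}(\res L\,|\,\res K)+\operatorname{rank}(\Gamma_L/\Gamma)\le 1$. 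If $\Gamma_L=\Gamma$, then $\Psi_L=\Psi_K$ and $(\Gamma_L^>)'=(\Gamma^>)'$, so asymptotic integration of $K$ transfers to $L$---a contradiction. Hence $\operatorname{rank}(\Gamma_L/\Gamma)=1$ and $\res L=\res K$.

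In this remaining case pick $\alpha\in\Gamma_L\setminus\Gamma$ with $\Gamma_L\subseteq\Gamma+\Q\alpha$. Since $K$ has asymptotic integration, $\Psi_K$ has no upper bound in $\Gamma$, so $\beta\notin\Gamma$. Combining the cofinality of $\Gamma^<$ in $\Gamma_L^<$ with the explicit extension of $\psi_K$ from $\Gamma$ to $\Gamma+\Q\alpha$, one shows that the values $\psi(v\ell_\rho)$ must approach $\beta$ in a $\Gamma$-controlled way, and pulling this back through the cofinality yields a pseudolimit of $(\upl_\rho)=(-\ell_\rho^{\dagger\dagger})$ in $K$---contradicting $\upl$-freeness. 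The main obstacle is this final translation: converting the existence of a sup $\beta$ for $\Psi_L$ living in $\Gamma+\Q\alpha\setminus\Gamma$ into a concrete pseudolimit of $(\upl_\rho)$ in $K$ itself, using only the rank-one data in $\Gamma_L/\Gamma$ together with the cofinality hypothesis. One should expect to need a careful computation with the valuations $v\ell_\rho$ and $v\ell_\rho^{\dagger\dagger}$ modulo $\Gamma$ to make this explicit.
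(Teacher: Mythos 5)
Your proof of (i)$\Rightarrow$(ii) stops exactly where the real work begins. After reducing to a bound $\beta\in\Gamma_L\setminus\Gamma$ for $\Psi_L$, you assert that ``one shows'' this pulls back, via a computation with $v\ell_\rho$ and $v\ell_\rho^{\dagger\dagger}$ modulo $\Gamma$, to a pseudolimit of $(\upl_\rho)$ in $K$ --- and you yourself flag this translation as the main obstacle without carrying it out. Moreover, the mechanism you propose is not the one that works: the pseudolimit needed is a field element, and rank-one value-group data cannot manufacture one inside $K$; indeed $\upl$-freeness of $K$ is perfectly compatible with $(\upl_\rho)$ having pseudolimits in extensions, and the natural pseudolimit here lives in $L$, not in $K$. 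The paper's argument runs differently: arrange $K$ and $L$ algebraically closed, note $L$ is ungrounded (since $\Psi$ is cofinal in $\Psi_L$ by the cofinality you established), so failure of asymptotic integration yields a gap $vf$ with $f\in L^\times$; set $\upl:=-f^\dagger\in L$, get $\upl_\rho\leadsto\upl$ by [ADH, 11.5.9, 11.5.6], and then use $\upl$-freeness of $K$ together with [ADH, 3.2.6, 3.2.7] to see that $K\langle\upl\rangle=K(\upl)$ is an \emph{immediate} extension of $K$ of transcendence degree $1$; the budget $\operatorname{trdeg}(L|K)\le 1$ forces $L$ to be algebraic over $K(\upl)$, hence $\Gamma_L=\Gamma$ (by divisibility), so $vf$ is a gap in $K$ itself, contradicting the (rational) asymptotic integration that $\upl$-freeness guarantees. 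No pseudolimit in $K$ is ever produced; $\upl$-freeness enters through the transcendental-type/immediate-extension mechanism, which a computation in $\Gamma_L/\Gamma$ has no access to. Two further slips in the same direction: the justification ``since $K$ has asymptotic integration, $\Psi_K$ has no upper bound in $\Gamma$'' is false ($\Psi<(\Gamma^>)'$ always, so $\Psi$ is bounded in $\Gamma$); what is true, and what you need, is that $\beta\in\Gamma$ would be a gap in $K$ or a maximum of $\Psi$. And your dichotomy ``$\Gamma_L=\Gamma$ or $\operatorname{rank}(\Gamma_L/\Gamma)=1$'' omits the case $\Gamma\subsetneq\Gamma_L\subseteq\Q\Gamma$ (torsion quotient); the paper sidesteps all of this by passing to algebraic closures at the outset.

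Your (ii)$\Rightarrow$(i) follows the same idea as the paper (a pseudolimit of $(\upl_\rho)$ in $K$ creates a gap in an extension of transcendence degree $\le 1$), but the details are off in two places. First, the element furnished by the gap-creation result [ADH, 11.5.14] is an $f$ with $f^\dagger=-\upl$, and the gap is $vf>\Psi$ (in the transseries picture $f$ behaves like $1/(\ell_0\ell_1\ell_2\cdots)$); it is not a new element $\hat\ell\succ\ell_\rho$ for all $\rho$ whose own valuation is the gap. Second, the paper applies [ADH, 11.5.14] to $K^{\operatorname{a}}$ and splits on whether $K^{\operatorname{a}}$ (not $K$) has asymptotic integration; with your split on $K$ you still must handle the sub-case where $K$ has asymptotic integration but $K^{\operatorname{a}}$ does not, which your construction over $K$ does not cover but which is settled at once by taking $L:=K^{\operatorname{a}}$.
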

\begin{proof}
Assume $K$ is $\upl$-free, and let $L$ be an $H$-asymptotic extension of $K$ such that~$\operatorname{trdeg}(L|K)\le 1$.
By Proposition~\ref{prop:rat as int and cofinality},  $\Gamma^{<}$ is cofinal in $\Gamma_L^{<}$,  so $L$ is ungrounded.
Towards
a contradiction, suppose $vf$ ($f\in L^\times$) is a gap in $L$. 
Passing to algebraic closures we arrange that $K$ and $L$ are algebraically closed. 
 Set $\upl:=-f^\dagger$. Then for all active $a$ in $L$
we have $\upl+a^\dagger\prec a$ by~[ADH, 11.5.9] and hence $\upl_\rho\leadsto\upl$ by~[ADH, 11.5.6]. By $\upl$-freeness of $K$
and~[ADH, 3.2.6, 3.2.7], the valued field extension~$K(\upl)\supseteq K$ is  immediate of transcendence degree~$1$, so~$L\supseteq K(\upl)$ is algebraic and~$\Gamma=\Gamma_L$. Hence $vf$ is a gap in $K$, a contradiction. This shows~(i)~$\Rightarrow$~(ii).

To show the contrapositive of (ii)~$\Rightarrow$~(i),  
 suppose  $\upl\in K$ is a pseudolimit of~$(\upl_\rho)$. If the algebraic closure $K^{\operatorname{a}}$ of $K$ does not have asymptotic integration,
 then clearly~(ii) fails. If $K^{\operatorname{a}}$ has asymptotic integration,
 then $-\upl$ creates a gap over~$K$ by~[ADH, 11.5.14] applied to $K^{\operatorname{a}}$ in place of $K$, hence (ii) also fails.
\end{proof}

\noindent
The next two lemmas include converses to Lemmas~\ref{lem:Gehret} and~\ref{lem:const field ext}.

\begin{lemma}\label{notupl1} Let $E$ be a pre-$\d$-valued $H$-asymptotic field. Then:\begin{enumerate}
\item[\textup{(i)}] if $E$ is not $\upl$-free, then $\operatorname{dv}(E)$ is not $\upl$-free; 
\item[\textup{(ii)}] if $E$ is not $\upo$-free, then $\operatorname{dv}(E)$ is not $\upo$-free.
\end{enumerate}
\end{lemma}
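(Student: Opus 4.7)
\medskip\noindent
\textbf{Proof plan.} The strategy is to observe that passing from $E$ to its $\d$-valued hull $\operatorname{dv}(E)$ changes nothing at the level of the asymptotic couple, so the same $\upl$- and $\upo$-sequences serve for both fields, and any pseudolimit in $E$ remains one in $\operatorname{dv}(E)$.

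More precisely, I would first recall from [ADH, 10.3.2(i)] (already invoked in the proof of Lemma~\ref{lem:Gehret}) that $\operatorname{dv}(E)$ is an $H$-asymptotic extension of $E$ with $\Gamma_{\operatorname{dv}(E)}=\Gamma_E$, and hence with the same asymptotic couple $(\Gamma,\psi)$ and the same set $\Psi$. In particular, $E$ is ungrounded iff $\operatorname{dv}(E)$ is ungrounded, and $\Gamma_E\ne\{0\}$ iff $\Gamma_{\operatorname{dv}(E)}\ne\{0\}$. So if $E$ fails either of the prerequisites to be $\upl$-free (being ungrounded $H$-asymptotic with nontrivial value group), so does $\operatorname{dv}(E)$, and we are done in that trivial case; the same remark handles (ii).

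For the remaining case, assume $E$ is ungrounded $H$-asymptotic with $\Gamma_E\ne\{0\}$. Since $\Gamma_{\operatorname{dv}(E)}=\Gamma_E$ with the same $\psi$, any logarithmic sequence $(\ell_\rho)$ for $E$ is also a logarithmic sequence for $\operatorname{dv}(E)$, and the associated pc-sequences $(\upl_\rho)=(-\ell_\rho^{\dagger\dagger})$ and $(\upo_\rho)=(\omega(\upl_\rho))$ in $E$ are literally the same sequences viewed inside $\operatorname{dv}(E)$. Now if $E$ is not $\upl$-free, then some $\upl\in E$ satisfies $\upl_\rho\leadsto\upl$; since $E\subseteq\operatorname{dv}(E)$ as valued differential fields, the relation $\upl_\rho\leadsto\upl$ still holds in $\operatorname{dv}(E)$, and $\upl\in\operatorname{dv}(E)$, so $\operatorname{dv}(E)$ is not $\upl$-free. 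The identical argument with $\upo_\rho\leadsto\upo$ in place of $\upl_\rho\leadsto\upl$ yields (ii).

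The only point requiring care is the invariance $\Gamma_{\operatorname{dv}(E)}=\Gamma_E$ of the value group (and $\psi$) under the $\d$-valued hull construction, but this is exactly [ADH, 10.3.2(i)] and no further work is needed beyond citing it. The remainder of the argument is a one-line transfer of pseudoconvergence across the inclusion $E\subseteq\operatorname{dv}(E)$.
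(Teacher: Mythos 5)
Your proof is essentially correct, but it takes a slightly different route from the paper and relies on a stronger reading of [ADH, 10.3.2(i)] than that reference likely supports. The paper's own proof splits on whether $E$ has rational asymptotic integration: if not, then $\operatorname{dv}(E)$ lacks it too by [ADH, 10.3.2], hence cannot be $\upl$- or $\upo$-free (since those properties imply rational asymptotic integration by [ADH, 11.6.8]); if so, then [ADH, 10.3.2] gives that $\operatorname{dv}(E)$ is an \emph{immediate} extension of $E$, and then the pseudolimit transfers as you describe. The key point is that the paper invokes [ADH, 10.3.2] only for these two specific facts, and the very structure of its case split strongly suggests that [ADH, 10.3.2(i)] does \emph{not} assert the unconditional equality $\Gamma_{\operatorname{dv}(E)}=\Gamma_E$, but rather the conditional statement that $\operatorname{dv}(E)\supseteq E$ is immediate when $E$ has rational asymptotic integration. (Lemma~\ref{lem:Gehret} cites [ADH, 10.3.2(i)] for $\Gamma_L=\Gamma$ under the hypothesis that $K$ is $\upl$-free, which implies rational asymptotic integration; that citation goes through via the immediacy, not via an unconditional statement.) Your claim that the asymptotic couple is preserved unconditionally is very plausibly true, because the proof of [ADH, 10.3.1] constructs $\operatorname{dv}(E)$ by adjoining constants to $E$ (this is even used in the paper's proof of Proposition~\ref{prop:const field ext}), so no new values arise; but that is a different source than the one you cite. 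If you instead appeal to the construction in [ADH, 10.3.1], or to the observation that $\operatorname{dv}(E)=E(C')$ for a constant field extension $C'$ together with [ADH, 10.5.15]-type reasoning, your argument becomes fully justified. With that citation repaired, the two proofs are of comparable length: you avoid the rational-asymptotic-integration dichotomy by observing directly that groundedness, $H$-type, and $\Gamma\ne\{0\}$ are all invariants of the (unchanged) asymptotic couple, and then transfer pseudoconvergence; the paper instead exploits the dichotomy so as to lean only on what [ADH, 10.3.2] states.
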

\begin{proof} This is clear if $E$ does not have rational asymptotic integration, because then neither does $\operatorname{dv}(E)$, by [ADH, 10.3.2]. Assume $E$ has rational asymptotic integration. Then $\operatorname{dv}(E)$
is an immediate extension of $E$ by~[ADH, 10.3.2], and then (i) and (ii) follow from the characterizations of $\upl$-freeness and 
$\upo$-freeness in terms of nonexistence of certain pseudolimits. 
\end{proof} 

\begin{lemma}\label{notupl2} Let $E$ be a $\d$-valued $H$-asymptotic field and $F$ an $H$-asymptotic extension of $E$ such that
$F=E(C_F)$.
Then: \begin{enumerate}
\item[\textup{(i)}] if $E$ is not $\upl$-free, then $F$ is not $\upl$-free; 
\item[\textup{(ii)}] if $E$ is not $\upo$-free, then $F$ is not $\upo$-free.
\end{enumerate}
\end{lemma}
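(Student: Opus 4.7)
The plan is to show that the passage from $E$ to $F = E(C_F)$ does not alter the value group, the asymptotic couple, or the pc-sequences $(\upl_\rho)$ and $(\upo_\rho)$, so that any pseudolimit witnessing failure of $\upl$-freeness or $\upo$-freeness in $E$ transfers immediately to $F$. In this sense the lemma is the direct converse of Lemma~\ref{lem:const field ext}.

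First I verify that $F$ is $\d$-valued with $\Gamma_F = \Gamma_E$: since $E$ is $\d$-valued $H$-asymptotic and $C_F$ contains $C_E$, this is exactly the content of the remarks preceding Lemma~\ref{lem:const field ext} (ultimately [ADH, 10.5.15]). In particular the asymptotic couples of $E$ and $F$ coincide, so $E$ is grounded (resp.\ has $\Gamma_E = \{0\}$) iff $F$ is. If either of these holds then both $E$ and $F$ fail to be $\upl$-free and fail to be $\upo$-free by the convention recalled at the start of Section~\ref{sec:uplupo-freeness}, and (i) and (ii) are immediate.

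In the remaining case $E$ is ungrounded with $\Gamma_E \ne \{0\}$, so the same holds for $F$. I then fix a logarithmic sequence $(\ell_\rho)$ for $E$; since the asymptotic couples agree, $(\ell_\rho)$ is also a logarithmic sequence for $F$, and the associated pc-sequences $(\upl_\rho) = (-\ell_\rho^{\dagger\dagger})$ and $(\upo_\rho) = (\omega(\upl_\rho))$ are literally the same sequences, viewed in $E \subseteq F$. If $E$ is not $\upl$-free, I pick $\upl \in E$ with $\upl_\rho \leadsto \upl$; because the valuation of $F$ extends that of $E$, we still have $\upl_\rho \leadsto \upl$ in $F$, giving (i). The identical argument with $\upo$ in place of $\upl$ yields (ii). The only point requiring care is the persistence of $(\ell_\rho)$ as a logarithmic sequence for $F$, which amounts to the coincidence of the $\psi$-maps on $\Gamma_E = \Gamma_F$; no genuine obstacle arises.
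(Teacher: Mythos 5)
Your proof is correct and follows essentially the same route as the paper: establish $\Gamma_F=\Gamma_E$ (hence a common asymptotic couple) via [ADH, 10.5.15], and then transfer the failure of $\upl$-/$\upo$-freeness through the pseudolimit characterizations, since the logarithmic sequence and the pc-sequences $(\upl_\rho)$, $(\upo_\rho)$ are unchanged and pseudoconvergence persists in the valued field extension. The only difference is cosmetic: the paper organizes the degenerate case as ``no rational asymptotic integration'' (mirroring the proof of Lemma~\ref{notupl1}), while you split off the grounded or trivial-value-group case directly; both dispositions are fine.
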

\begin{proof} By [ADH, 10.5.15] $E$ and $F$ have the same value group. The rest of the proof is like that for the previous lemma, with $F$ instead of $\operatorname{dv}(E)$. 
\end{proof}

\noindent
{\it In the rest of this subsection $K$ is in addition a pre-$H$-field and $L$ a pre-$H$-field extension of $K$.}\/
The following is shown in the proof of~\cite[Lemma~12.5]{Gehret}:

\begin{prop}[Gehret] \label{prop:Gehret} 
Suppose $K$  is a $\upl$-free $H$-field and $L$ is a Liou\-ville $H$-field extension of $K$.
Then $L$ is $\upl$-free and $\Gamma^<$ is cofinal in~$\Gamma_L^<$. 
\end{prop}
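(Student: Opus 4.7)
The plan is to reduce, by transfinite induction along a Liouville tower $K=L_0\subseteq L_1\subseteq\cdots$ with $\bigcup_\beta L_\beta=L$, to the case of a single Liouville step. Limit stages present no difficulty: in a directed union any putative pseudolimit of $(\upl_\rho)$ would already lie in some $L_\beta$, and cofinality of $\Gamma_K^<$ trivially passes to the union. So it suffices to treat each successor step $F':=L_{\beta+1}$ over $F:=L_\beta$, where $F'$ is either algebraic over $F$, or of the form $F(y)$ with $y'\in F$, or of the form $F(y)$ with $y^\dagger\in F$; in the latter two cases $\operatorname{trdeg}(F'|F)\leq 1$.

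For the cofinality assertion in the successor case, the algebraic step changes the value group only to a subgroup of its divisible hull $\Q\Gamma_F$, in which $\Gamma_F^<$ remains cofinal. Each transcendental step gives an $H$-asymptotic extension of transcendence degree at most one, so Proposition~\ref{prop:rat as int and cofinality} applies (using that $F$, being $\upl$-free, has rational asymptotic integration) and yields cofinality of $\Gamma_F^<$ in $\Gamma_{F'}^<$. Combined with the inductive cofinality of $\Gamma_K^<$ in $\Gamma_F^<$, this gives cofinality of $\Gamma_K^<$ in $\Gamma_{F'}^<$.

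The heart of the argument is preservation of $\upl$-freeness at each successor step. Algebraic steps are handled by the known fact (recorded before Proposition~\ref{prop:const field ext}) that $K$ is $\upl$-free iff its algebraic closure is; for the transcendental steps, one argues by contradiction. If $\upl_\rho\leadsto\upl$ with $\upl\in F'\setminus F$, then $(\upl_\rho)$ is of transcendental type over $F$ (otherwise [ADH, 3.2.7] would yield an algebraic $H$-asymptotic extension of $F$ violating $\upl$-freeness of $F$), so by [ADH, 3.2.6] the intermediate extension $F(\upl)/F$ is immediate of transcendence degree one. Hence $F'=F(y)=F(\upl)$ as valued differential fields up to algebraic issues, and one uses the classification of one-step Liouville extensions from [ADH, Chapter~10] to dichotomize: either $F'/F$ is immediate, in which case $\upl\in F$ after all, contradicting $\upl$-freeness of $F$; or $F'/F$ strictly enlarges $\Gamma_F$ or $\operatorname{res}(F)$, which is incompatible with $\upl$, a pseudolimit of an $F$-valued sequence, living in $F(\upl)$ immediate over $F$.

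The main obstacle is this final case analysis in the exponential-integral step $y^\dagger=a\in F$, since that step genuinely can enlarge $\Gamma_F$ (when $a\notin\Upg(F)^\downarrow$, roughly) and can also enlarge $\operatorname{res}(F)$; one must verify that in each subcase the combinatorics of $v(y^{\mathbb{Z}})$ or $\operatorname{res}(y^{\mathbb{Z}})$ forbids a pseudolimit of the canonical sequence $(\upl_\rho)$ from appearing. This is precisely Gehret's valuation-theoretic bookkeeping in \cite[proof of Lemma~12.5]{Gehret}, which the present proposition is extracted from; a complete proof would mirror that calculation, invoking [ADH, 10.4.3] to control the form of $F(y)$ and [ADH, 11.5.9, 11.5.14] to link pseudolimits of $(\upl_\rho)$ to the appearance of a gap or a flattening, both ruled out by $\upl$-freeness of $F$.
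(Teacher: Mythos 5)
The paper does not actually prove this proposition: it is stated with the attribution [Gehret], and the entire justification given is that it ``is shown in the proof of \cite[Lemma~12.5]{Gehret}''. Measured against that, your treatment is at the same level of completeness: the outer reduction is sound (limit stages are harmless, cofinality is transitive, and at each transcendental step Proposition~\ref{prop:rat as int and cofinality} applies because the current stage, being $\upl$-free by the inductive hypothesis, has rational asymptotic integration), and for the heart of the matter you, like the paper, point to the proof of Gehret's Lemma~12.5.

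Judged as a proof, however, the one place where you do attempt an argument---preservation of $\upl$-freeness across a single transcendental Liouville step---is incorrect, and it is exactly the step that cannot be waved through. If $F'/F$ is immediate it does not follow that $\upl\in F$: $\upl$-freeness of $F$ only excludes pseudolimits of $(\upl_\rho)$ \emph{inside} $F$, and a divergent pc-sequence always has pseudolimits in suitable immediate extensions; the whole danger is that adjoining an integral or an exponential integral might realize one (compare Example~\ref{ex:rat as int and cofinality}, where a $\d$-algebraic $H$-field extension of a $\upl$-free $H$-field does realize $\upl$; what must be shown is that Liouville adjunctions never do). In the other branch, knowing that $F(\upl)\subseteq F'$ is immediate over $F$ with $F'$ algebraic over $F(\upl)$ only forces $\Gamma_{F'}\subseteq\Q\Gamma_F$ and $\res(F')$ algebraic over $\res(F)$, which is perfectly compatible with a non-immediate Liouville step, so no contradiction is obtained there either. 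The genuine exclusion of a pseudolimit requires the case-by-case valuation-theoretic analysis of the adjunctions $y'=s$ and $y^\dagger=s$ (in the style of [ADH, 10.2, 10.4, 11.5] together with Gehret's preservation lemmas), which is precisely what you defer to \cite[Lemma~12.5]{Gehret}. So as a self-contained argument the proposal has a genuine gap at its core, though you do locate correctly where the missing work lives.
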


\begin{exampleNumbered}\label{ex:Gehret} 
Let $K=\R\langle\upo\rangle$ be the $\upl$-free but non-$\upo$-free $H$-field from [ADH, 13.9.1]. Then 
$K$ has a unique Liouville closure $L$, up to isomorphism over $K$, by~\cite[Theorem~12.1(1)]{Gehret}.
By Proposition~\ref{prop:Gehret}, $L$
is not $\upo$-free; another proof of this fact is in \cite{ADH3}. By [ADH, 13.9.5] we can take
here $K$ to be a Hardy field, and then $L$ is isomorphic  over $K$ to a Hardy field
extension of $K$ [ADH, 10.6.11].

Applying Corollary~\ref{cortrig} to $H:=\R\langle \upo\rangle$ yields a Liouville closed $H$-field~$H^{\tl}$ that is not
$\upo$-free but does satisfy $\I(K^{\tl})\subseteq (K^{\tl})^\dagger$ for $K^{\tl}:= H^{\tl}[\imag]$. 
\end{exampleNumbered}

 
\begin{lemma}\label{lem:upl in D(H)}
Suppose $K$ is $\upl$-free, $\upl\in \Upl(L)^\downarrow$, $\upo:=\omega(\upl)\in K$, and suppose~$\omega\big(\Upl(K)\big) <  \upo   < \sigma\big(\Upg(K)\big)$. Then $\upl_\rho\leadsto \upl$, and the pre-$H$-sub\-field~$K\langle \upl\rangle=K(\upl)$ of~$L$   is an immediate extension of $K$ \textup{(}and so $K\langle \upl\rangle$ is not $\upl$-free\textup{)}.
\end{lemma}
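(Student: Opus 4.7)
The plan is to prove this in three stages: first $\upl_\rho\leadsto\upl$ in $L$; second $K\langle\upl\rangle=K(\upl)$; third, that this extension is immediate (whence not $\upl$-free).

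For the first stage, I would first show $\upo_\rho\leadsto\upo$ in $L$. The pc-sequence $(\upo_\rho)=(\omega(\upl_\rho))$ is strictly increasing and cofinal from below in $\omega(\Upl(K))$, since $(\upl_\rho)$ is cofinal from below in $\Upl(K)$ and $\omega|_{\Upl(K)}$ is strictly increasing by [ADH, 11.8]. Combined with the gap hypothesis $\omega(\Upl(K))<\upo<\sigma(\Upg(K))$, standard results from [ADH, 11.7--11.8] then force $\upo_\rho\leadsto\upo$. I would next transfer this pseudoconvergence back to $(\upl_\rho)$ using the identity
$$\upo-\upo_\rho\ =\ \omega(\upl)-\omega(\upl_\rho)\ =\ -2(\upl-\upl_\rho)'\,-\,(\upl-\upl_\rho)(\upl+\upl_\rho),$$
bounding $\upl+\upl_\rho$ via $\upl,\upl_\rho\in\Upl(L)^\downarrow$ and using the asymptotic integration in $L$ to control the derivative summand.

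For the second stage, note that $\omega(\upl)=\upo\in K$ rewrites as $2\upl'=-\upl^2-\upo$, so $\upl'\in K(\upl)$; inductively, $\upl^{(n)}\in K(\upl)$ for all $n$, and hence the differential subfield $K\langle\upl\rangle$ of $L$ equals the field $K(\upl)$. For the third stage, $K$ being $\upl$-free implies that its algebraic closure is also $\upl$-free by [ADH, 11.6.8]; if $(\upl_\rho)$ were of algebraic type over $K$, then [ADH, 3.2.7] would produce a pseudolimit in an algebraic extension of $K$, contradicting $\upl$-freeness of that extension. So $(\upl_\rho)$ is of transcendental type over $K$, whence [ADH, 3.2.6] yields that $K(\upl)|K$ is immediate, and it is not $\upl$-free because $\upl_\rho\leadsto\upl$ in it.

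The main obstacle will be the transfer $\upo_\rho\leadsto\upo\Rightarrow\upl_\rho\leadsto\upl$ in stage~(1): the displayed algebraic identity is clean, but converting it into a valuation-theoretic statement about $v(\upl-\upl_\rho)$ requires carefully exploiting the position $\upl\in\Upl(L)^\downarrow$ and the smallness of the derivation on $L$, not merely the bare fact $\omega(\upl)\in K$.
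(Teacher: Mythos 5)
Your Stage~2 matches what's standard (the identity $2\upl' = -\upl^2 - \upo$ forces $K\langle\upl\rangle = K(\upl)$), and Stage~3 (transcendental type via [ADH, 3.2.7], then immediacy via [ADH, 3.2.6]) is a legitimate alternative to the paper's single invocation of [ADH, 11.7.13]. But Stage~1 contains a genuine gap, and you yourself flag it as the ``main obstacle'' without resolving it. Transferring $\upo_\rho\leadsto\upo$ back to $\upl_\rho\leadsto\upl$ via the identity
\[
\upo - \upo_\rho\ =\ -2(\upl-\upl_\rho)'\,-\,(\upl-\upl_\rho)(\upl+\upl_\rho)
\]
is not just technically delicate; it is the wrong direction. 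Pseudoconvergence of $(\upo_\rho)$ to $\upo$ means $v(\upo-\upo_\rho)$ is eventually strictly increasing, and the identity has two terms whose contributions could conspire so that $v(\upo-\upo_\rho)$ increases while $v(\upl-\upl_\rho)$ does not. One would need sharp asymptotic control on $(\upl-\upl_\rho)'$ versus $(\upl-\upl_\rho)$ that the bare hypothesis $\upl\in\Upl(L)^\downarrow$ does not obviously supply, and the proposal sketches no actual mechanism.

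The paper avoids this entirely by pinning down the \emph{position} of $\upl$ first: from $\Upl(L)<\Upd(L)$ and $\Upd(K)\subseteq\Upd(L)$ one gets $\upl<\Upd(K)$, and from the strict monotonicity of $\omega$ on $\Upl(L)^\downarrow$ together with $\omega(\Upl(K))<\upo=\omega(\upl)$ one gets $\Upl(K)<\upl$. The sandwich $\Upl(K)<\upl<\Upd(K)$ then yields $\upl_\rho\leadsto\upl$ directly by [ADH, 11.8.16], with no transfer from $\upo$ at all. The fact $\upo_\rho\leadsto\upo$ is obtained separately from [ADH, 11.8.30] (essentially your ``standard results'' sketch) and is used together with $\upl_\rho\leadsto\upl$ to invoke [ADH, 11.7.13] for immediacy. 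So the missing idea in your proposal is: don't try to invert $\omega$ at the level of valuations; instead use $\omega$'s strict monotonicity only to locate $\upl$ strictly above $\Upl(K)$, and combine with a bound $\upl<\Upd(K)$ coming from the $L$-side to trigger the sandwich criterion for pseudoconvergence.
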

\begin{proof} From $\Upl(L)<\Upd(L)$ [ADH, p.~522] and $\Upd(K)\subseteq\Upd(L)$ we
obtain $\upl<\Upd(K)$. 
The restriction of  $\omega$ to $\Upl(L)^\downarrow$ is strictly increasing~[ADH, p.~526] and $\Upl(K)\subseteq\Upl(L)$,  so~$\omega\big(\Upl(K)\big)<\upo=\omega(\upl)$ gives $\Upl(K) < \upl$.
Hence $\upl_\rho\leadsto \upl$ by [ADH, 11.8.16]. Also~$\upo_\rho\leadsto\upo$ by [ADH, 11.8.30]. Thus $K\langle \upl\rangle$ is an immediate   extension of~$K$ by~[ADH, 11.7.13].
\end{proof}

\subsection*{Achieving $\upo$-freeness for pre-$H$-fields}
{\it In the rest of this section $H$ is a pre-$H$-field and  $L$
is a Liouville closed $\d$-algebraic   $H$-field extension of $H$.}\/
Thus if~$H$ is $\upo$-free, then so is~$L$, by Theorem~\ref{thm:ADH 13.6.1}. 
The lemmas below give conditions guaranteeing that $L$ is $\upo$-free, while $H$ is not.
 
\begin{lemma}\label{lem:Li(H) upo-free} 
Suppose  $H$ is grounded or has a gap.  Then~$L$ is  $\upo$-free.
\end{lemma}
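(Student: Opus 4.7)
The strategy is to exhibit an $\upo$-free pre-$H$-subfield $H^{*}$ of $L$ containing $H$; then the $\d$-algebraic pre-$\d$-valued $H$-asymptotic extension $L/H^{*}$, to which Theorem~\ref{thm:ADH 13.6.1} applies, will force $L$ itself to be $\upo$-free. Note that $L$ is automatically pre-$\d$-valued (being $\d$-valued, as it is a Liouville closed $H$-field) and $\d$-algebraic over any intermediate pre-$H$-subfield, so only the construction of $H^{*}$ requires work.

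I would first reduce to the case where $H$ is grounded. If $H$ has a gap $vs$ ($s\in H^\times$), then since $L$ is Liouville closed it has asymptotic integration, so pick $y\in L$ with $y'=s$. A standard calculation (cf.\ [ADH, 10.2.1, 10.2.5]) shows that $H\langle y\rangle \subseteq L$ is a grounded pre-$H$-subfield of $L$, with $\max \Psi_{H\langle y\rangle}=\psi(vy)=vs-vy$ newly introduced above $\Psi_H$. Replace $H$ by this $H\langle y\rangle$.

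With $H$ now grounded, fix $\fm \in H^\times$ with $\fm \succ 1$ and $\psi(v\fm)=\max \Psi_H$. Set $\ell_0 := \fm$, and using Liouville closedness of $L$ recursively pick $\ell_{n+1}\in L^{\succ 1}$ with $\ell_{n+1}' = \ell_n^\dagger$. Let $H^{*}\subseteq L$ be the pre-$H$-subfield generated over $H$ by $\{\ell_n:n\geq 1\}$. The sequence $\bigl(\psi(v\ell_n)\bigr)_{n\geq 0}$ is strictly increasing and cofinal in $\Psi_{H^{*}}$, so $H^{*}$ is ungrounded, and $(\ell_n)$ is a logarithmic sequence for $H^{*}$ in the sense of [ADH, 11.5].

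To show $H^{*}$ is $\upo$-free, consider the associated pc-sequences $\upl_n := -\ell_n^{\dagger\dagger}$ and $\upo_n := \omega(\upl_n)$. Any putative pseudolimit in $H^{*}$ would lie in some finitely-generated intermediate subfield $H_N := H(\ell_1,\ldots,\ell_N)$, which is grounded with $\max\Psi_{H_N}=\psi(v\ell_N)$. But a grounded pre-$H$-field cannot accommodate pseudolimits of $(\upl_n)$ or $(\upo_n)$ coming from a strictly longer logarithmic sequence, by the pc-sequence characterization of $\upl$- and $\upo$-freeness (cf.\ Lemma~\ref{lem:upl in D(H)} and [ADH, 11.6, 11.7]). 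Hence $H^{*}$ is $\upl$-free and $\upo$-free, and Theorem~\ref{thm:ADH 13.6.1} applied to the $\d$-algebraic pre-$\d$-valued $H$-asymptotic extension $L/H^{*}$ yields $\upo$-freeness of $L$. The main obstacle is the verification of $\upo$-freeness of $H^{*}$: carefully tracking $\Psi$-sets and archimedean classes inside the tower of grounded extensions $H \subseteq H_1 \subseteq H_2 \subseteq \cdots$ in $L$, and invoking the correct pc-sequence criteria from [ADH, Section~11.7] to rule out pseudolimits of both $(\upl_n)$ and $(\upo_n)$ at every finite stage.
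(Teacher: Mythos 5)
Your overall skeleton is the same as the paper's: reduce the gap case to the grounded case by integrating the gap element inside $L$ (the paper takes $a\in L$ with $a'=b$, $a\nasymp 1$, and cites [ADH, 9.8.2] to see that $H(a)$ is grounded), and then, in the grounded case, produce an $\upo$-free pre-$H$-subfield of $L$ containing $H$ and quote Theorem~\ref{thm:ADH 13.6.1}. The difference is how that $\upo$-free subfield is obtained: the paper simply invokes the $\upo$-free pre-$H$-field extension $H_{\upo}$ of a grounded $H$ constructed in [ADH, 11.7.17], together with its embedding property over $H$ into $L$, whereas you rebuild the logarithmic tower $H^{*}=H(\ell_1,\ell_2,\dots)$ by hand inside $L$.

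That is where your argument has a genuine gap. The whole content of the grounded case is precisely the assertion that $H^{*}$ is $\upo$-free, and your justification for it — ``a grounded pre-$H$-field cannot accommodate pseudolimits of $(\upl_n)$ or $(\upo_n)$ coming from a strictly longer logarithmic sequence, by the pc-sequence characterization'' — is not a proof. The pc-sequence characterizations of $\upl$- and $\upo$-freeness apply to the ungrounded field $H^{*}$ itself; groundedness of a finite stage $H_N$ does not, by any general principle, prevent an element of $H_N$ from being a pseudolimit of the pc-sequences $(\upl_n)$ or $(\upo_n)$ formed in the larger field $H^{*}$. Ruling this out requires the explicit asymptotic estimates on $\upl_n$ and $\upo_n$ relative to the grounded stages, and this is exactly what [ADH, 11.7.15--11.7.17] carry out; you acknowledge this as ``the main obstacle'' but do not supply the argument. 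The fix is either to actually reproduce those computations or, as in the paper, to cite [ADH, 11.7.17] for the existence of the $\upo$-free extension $H_{\upo}$ of grounded $H$ (which also packages the embedding into $L$ that you obtain via Liouville closedness). A smaller point: in the gap case you should arrange $y\nasymp 1$ (subtract a constant, using $\mathcal O_L=C_L+\smallo_L$), since it is the nonzero value $vy$ with $(vy)'=vs$ that grounds $H\langle y\rangle$.
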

\begin{proof}
Suppose~$H$ is grounded. Let $H_{\upo}$ be the $\upo$-free pre-$H$-field extension of $H$ introduced in connection with~[ADH, 11.7.17] (where we use the letter $F$ instead of~$H$).  Identifying $H_{\upo}$ with its image in $L$ under an embedding $H_{\upo}\to L$ over $H$ of pre-$H$-fields, we apply Theorem~\ref{thm:ADH 13.6.1} to $K:=H_{\upo}$ to conclude that $L$ is $\upo$-free. 

  Next, suppose $H$ has a gap $\beta=vb$, $b\in H^\times$. Take~$a\in  L$
with~$a'=b$ and~$a\nasymp 1$. Then $\alpha:= va$ satisfies $\alpha'=\beta$, and so the pre-$H$-field
$H(a)\subseteq L$ is grounded, by~[ADH,  9.8.2 and remarks following its proof].
 Now apply the previous case to~$H(a)$ in place of $H$.
\end{proof}

\begin{lemma}\label{lem:D(H) upo-free, 2}
Suppose $H$ has asymptotic integration and divisible value group, and~$s\in H$ creates a gap over $H$. Then $L$ is $\upo$-free.
\end{lemma}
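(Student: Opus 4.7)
The plan is to reduce to Lemma~\ref{lem:Li(H) upo-free} by exhibiting a pre-$H$-subfield $F$ of $L$ containing $H$ that has a gap; once this is done, $L$ will be a Liouville closed $H$-field extension of $F$ (since $L$ itself is Liouville closed) that is $\d$-algebraic over $F$ (since $L$ is already $\d$-algebraic over $H\subseteq F$), and that lemma will then deliver $\upo$-freeness of $L$ at once.

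To build $F$, use that $L$ is Liouville closed, hence $\der L=L$, to pick $a\in L$ with $a'=s$, and set $F:=H\langle a\rangle\subseteq L$, a pre-$H$-field extension of $H$. The main step is the claim that $va$ is a gap in $F$. The hypothesis that $s$ creates a gap over $H$ provides some $H$-asymptotic extension of $H$ obtained by adjoining an antiderivative $y$ of $s$ for which $vy$ is a gap. Combined with the divisibility of $\Gamma_H$, the uniqueness properties of such extensions (in the style of [ADH, 10.2.1, 10.2.5]) let us identify this abstract gap-creating extension with $F=H\langle a\rangle$ over $H$, so $va$ is indeed a gap in $F$. In particular $a\notin H$, as expected since $H$ itself has asymptotic integration and hence no gap of its own.

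With $F$ in hand, applying Lemma~\ref{lem:Li(H) upo-free} with $F$ in place of $H$ yields that $L$ is $\upo$-free. The principal obstacle is matching the abstract extension in which $s$ is declared to create a gap with the concrete subextension $H\langle a\rangle\subseteq L$; the divisibility of $\Gamma_H$ is precisely what is needed to ensure this identification goes through (uniqueness up to isomorphism of such gap-creating antiderivative extensions over $H$), so that the valuation inherited from $L$ on $F$ really does exhibit $va$ as a gap.
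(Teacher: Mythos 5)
Your overall strategy is the right one and is in fact the paper's: produce a pre-$H$-subfield of $L$ containing $H$ that has a gap, and then invoke Lemma~\ref{lem:Li(H) upo-free}. But there is a genuine gap in how you produce that subfield: you have misread what ``$s$ creates a gap over $H$'' means. In [ADH, 11.5] this notion is defined via an \emph{exponential integral} of $s$, not an antiderivative: $s$ creates a gap over $H$ means that $vf$ is a gap in $H\langle f\rangle$ for some (and, given divisibility of $\Gamma$, by the remark after [ADH, 11.5.14], for every) nonzero $f$ in an $H$-asymptotic extension with $f^\dagger=s$. Your element $a$ with $a'=s$ is the wrong object: the hypothesis gives you no information whatsoever about $H\langle a\rangle$, and there is no reason for $va$ to be a gap there (indeed $H$ has asymptotic integration, so one can already integrate asymptotically inside $H$; nothing forces the antiderivative extension to acquire a gap). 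Consequently the ``identification via uniqueness'' step has nothing to identify with, and Lemma~\ref{lem:Li(H) upo-free} cannot be applied to $F=H\langle a\rangle$.

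The repair is exactly the paper's argument: since $L$ is Liouville closed, take $f\in L^\times$ with $f^\dagger=s$; by the remark after [ADH, 11.5.14] (this is where divisibility of the value group enters, and it applies to \emph{every} such $f$, so no abstract-versus-concrete matching is needed), $vf$ is a gap in $H\langle f\rangle=H(f)$; now apply Lemma~\ref{lem:Li(H) upo-free} with $H\langle f\rangle$ in place of $H$, noting that $L$ is a Liouville closed $\d$-algebraic $H$-field extension of $H\langle f\rangle$.
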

\begin{proof}
Take $f\in L^\times$ with $f^\dagger=s$.
Then by [ADH, remark after 11.5.14], $vf$ is a gap in~$H\langle f\rangle=H(f)$, so~$L$ is $\upo$-free by Lemma~\ref{lem:Li(H) upo-free} applied to $H\langle f\rangle$ in place of~$H$. 
\end{proof}

\begin{lemma}\label{lem:D(H) upo-free, 3}
Suppose $H$ is not $\upl$-free. Then $L$ is $\upo$-free.
\end{lemma}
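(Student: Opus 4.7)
The plan is to reduce to the two cases already settled, namely Lemma~\ref{lem:Li(H) upo-free} ($H$ grounded or with a gap) and Lemma~\ref{lem:D(H) upo-free, 2} ($H$ has as int, divisible value group, and admits $s\in H$ creating a gap). First, if $H$ is grounded, or if $H$ has a gap, then $L$ is $\upo$-free by Lemma~\ref{lem:Li(H) upo-free}. So assume that $H$ is ungrounded and has asymptotic integration.

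To apply Lemma~\ref{lem:D(H) upo-free, 2} in this setting, I would pass from $H$ to its real closure $H^{\operatorname{rc}}$ inside $L$: it is a pre-$H$-field (in fact an $H$-field) extension of~$H$ with divisible value group, hence with rational asymptotic integration, and $L$ remains a Liouville closed $\d$-algebraic $H$-field extension of $H^{\operatorname{rc}}$. Crucially, $H^{\operatorname{rc}}$ is still \emph{not} $\upl$-free: this is because $\upl$-freeness is preserved under passage to and from the algebraic closure (by the remark after [ADH, 11.6.8] quoted in Section~\ref{sec:uplupo-freeness}), so if $H^{\operatorname{rc}}$ were $\upl$-free, so would be the algebraic closure $H^{\operatorname{rc}}[\imag]$, and hence $H$. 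Replace $H$ by $H^{\operatorname{rc}}$.

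Now $H$ has rational asymptotic integration but is not $\upl$-free, so by [ADH, 11.6] there is $\upl\in H$ with $\upl_\rho\leadsto\upl$. Using Liouville closedness of $L$, pick $f\in L^\times$ with $f^\dagger=-\upl$. The key step is then to argue that $vf$ is a gap in the pre-$H$-subfield $H\langle f\rangle=H(f)$ of~$L$: this is precisely the ``gap creation'' statement [ADH, 11.5.14], which applies because $H$ has rational asymptotic integration and $-\upl$ is (up to sign) a pseudolimit of the logarithmic sequence. Granted this, $H\langle f\rangle\subseteq L$ has a gap, and $L$ is a Liouville closed $\d$-algebraic $H$-field extension of $H\langle f\rangle$ (using that $L$ is $\d$-algebraic over $H$, and $H\subseteq H\langle f\rangle\subseteq L$); one final application of Lemma~\ref{lem:Li(H) upo-free} then yields that $L$ is $\upo$-free.

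The main obstacle lies in the invocation of [ADH, 11.5.14]: one must verify that the pseudolimit $\upl\in H$, together with $f\in L$ satisfying $f^\dagger=-\upl$, genuinely produces a gap in $H\langle f\rangle$ (and in particular that $f\notin H$, so that $H\langle f\rangle$ is a proper extension). This amounts to ruling out $-\upl\in H^\dagger$, which in the rational asymptotic integration setting would contradict $H$ being not $\upl$-free (since an element of $H^\dagger$ cannot serve as a pseudolimit of $(\upl_\rho)$ under that hypothesis, by the structure of logarithmic sequences discussed in [ADH, 11.5]).
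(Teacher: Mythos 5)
Your overall route is the paper's: pass to the real closure (still not $\upl$-free, by [ADH, 11.6.8]), produce a gap from a pseudolimit of $(\upl_\rho)$ via gap creation, and finish with Lemma~\ref{lem:Li(H) upo-free}; your second half simply inlines the proof of Lemma~\ref{lem:D(H) upo-free, 2}, citing [ADH, 11.5.14] directly where the paper cites [ADH, 11.6.1] and then that lemma. There is, however, one step that does not hold up as written. After arranging that $H$ (ungrounded) has asymptotic integration, you replace $H$ by $H^{\operatorname{rc}}$ and assert that it has rational asymptotic integration ``because it has divisible value group.'' Divisibility of the value group only makes rational asymptotic integration \emph{equivalent} to asymptotic integration; it does not supply it. What you actually need is that $H^{\operatorname{rc}}$ still has asymptotic integration, i.e., that asymptotic integration of the asymptotic couple $(\Gamma,\psi)$ of $H$ passes to its divisible hull $(\Q\Gamma,\psi)$ --- precisely the nontrivial point that the separate notion of rational asymptotic integration is meant to isolate, and which your sketch nowhere establishes. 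Without it, [ADH, 11.5.14] (which in the form you need requires this hypothesis for $H^{\operatorname{rc}}$) cannot be invoked, so the key gap-creation step is not justified.

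The repair is exactly the paper's ordering: perform the dichotomy \emph{after} real-closing rather than before. Since $\Psi$ does not change under algebraic extensions, $H^{\operatorname{rc}}$ remains ungrounded, so by the trichotomy either $H^{\operatorname{rc}}$ has asymptotic integration --- and then your gap-creation argument (equivalently, [ADH, 11.6.1] together with Lemma~\ref{lem:D(H) upo-free, 2}) applies to $H^{\operatorname{rc}}$ --- or $H^{\operatorname{rc}}$ has a gap, in which case Lemma~\ref{lem:Li(H) upo-free} applied to $H^{\operatorname{rc}}$ in place of $H$ finishes the proof. With that reordering your argument coincides with the paper's.
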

\begin{proof}
By [ADH, 11.6.8], the real closure   $H^{\operatorname{rc}}$ of $H$ inside $L$
is not $\upl$-free, hence replacing $H$ by $H^{\operatorname{rc}}$  we arrange that $H$ is real closed.
If $H$ does not have asymptotic integration, then we are done 
by  Lemma~\ref{lem:Li(H) upo-free}. So suppose  $H$  has asymptotic integration. Then some
$s\in H$ creates a gap over $H$, by [ADH, 11.6.1], so $L$ is $\upo$-free by Lemma~\ref{lem:D(H) upo-free, 2}.
\end{proof}

\begin{cor}\label{cor:D(H) upo-free, 3}
Suppose $H$ is $\upl$-free and $\upl\in\Upl(L)^\downarrow$ is such that $\upo:=\omega(\upl)\in H$ and
$\omega\big(\Upl(H)\big) <  \upo   < \sigma\big(\Upg(H)\big)$. Then $L$ is $\upo$-free.
\end{cor}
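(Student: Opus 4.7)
The plan is to reduce to Lemma~\ref{lem:D(H) upo-free, 3} by passing through the intermediate pre-$H$-field $H\langle\upl\rangle\subseteq L$. The idea is that although $H$ itself is $\upl$-free by hypothesis, adjoining $\upl$ destroys $\upl$-freeness, and this is exactly the trigger we need.

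First, I would apply Lemma~\ref{lem:upl in D(H)} with $K:=H$. The hypotheses of that lemma are verbatim the hypotheses we are given: $H$ is $\upl$-free, $\upl\in\Upl(L)^\downarrow$, $\upo=\omega(\upl)\in H$, and $\omega\big(\Upl(H)\big) < \upo < \sigma\big(\Upg(H)\big)$. The lemma yields that $H\langle\upl\rangle = H(\upl)$ is a pre-$H$-subfield of $L$ and an immediate valued differential extension of $H$; in particular it is \emph{not} $\upl$-free.

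Next I would verify that the pair $\bigl(H(\upl),L\bigr)$ satisfies the running assumptions of this subsection, so that Lemma~\ref{lem:D(H) upo-free, 3} applies with $H(\upl)$ in place of $H$. Indeed, $H(\upl)$ is a pre-$H$-field (as a pre-$H$-subfield of the $H$-field $L$); $L$ remains Liouville closed by hypothesis; and since $L$ is $\d$-algebraic over $H$ and $H\subseteq H(\upl)\subseteq L$, it is also $\d$-algebraic over $H(\upl)$.

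Finally, applying Lemma~\ref{lem:D(H) upo-free, 3} to the non-$\upl$-free pre-$H$-field $H(\upl)$ and its Liouville closed $\d$-algebraic $H$-field extension $L$ yields that $L$ is $\upo$-free, which is what we want. I do not anticipate any real obstacle here: the content is entirely in Lemma~\ref{lem:upl in D(H)} (which produces the loss of $\upl$-freeness one level up from $H$) combined with Lemma~\ref{lem:D(H) upo-free, 3} (which converts that failure of $\upl$-freeness into $\upo$-freeness further up); the only point requiring a brief sentence is that $\d$-algebraicity passes from the extension $L/H$ to the extension $L/H(\upl)$.
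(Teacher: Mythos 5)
Your proposal is correct and follows exactly the paper's own argument: invoke Lemma~\ref{lem:upl in D(H)} to obtain $H\langle\upl\rangle=H(\upl)$ as an immediate non-$\upl$-free pre-$H$-subfield of $L$, then apply Lemma~\ref{lem:D(H) upo-free, 3} with $H(\upl)$ in place of $H$. The brief verification that $L$ remains a Liouville closed $\d$-algebraic $H$-field extension of $H(\upl)$ is the right sanity check and matches what the paper leaves implicit.
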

\begin{proof}
By Lemma~\ref{lem:upl in D(H)}, the pre-$H$-subfield~$H\langle \upl\rangle=H(\upl)$ of $L$ is
an immediate non-$\upl$-free extension of $H$.  Now apply  Lemma~\ref{lem:D(H) upo-free, 3} to~$H\langle \upl\rangle$ in place of~$H$.
\end{proof}

\section{Complements to [ADH] on Linear Differential Operators}\label{sec:lindiff}

\noindent
In this section we tie up loose ends from the material on linear differential operators in [ADH,~14.2] and \cite[Section~8]{VDF}.
{\em Throughout $K$ is an ungrounded asymptotic field with $\Gamma=v(K^\times)\ne \{0\}$, $a$,~$b$,~$f$,~$g$,~$h$  range over arbitrary elements of $K$, and $\phi$ over those active in $K$, in particular,  $\phi\ne 0$}.
Recall   our use of the term ``eventually'': a property $S(\phi)$ of   
$\phi$ is said to hold {\em eventually\/} if for some active $\phi_0$ in $K$, $S(\phi)$ holds for all~$\phi\preceq \phi_0$. 

\medskip
\noindent
We shall consider linear differential operators $A\in K[\der]^{\ne}$ and  set $r:=\order(A)$. 
In~[ADH, 11.1] we showed that for each $\gamma\in\Gamma$ the quantity $\dwt_{A^\phi}(\gamma)$ is eventually constant; its eventual value is denoted by $\nwt_A(\gamma)$. 
We also introduced the set
$$\exc^{\ev}(A)\ =\ \exc^{\ev}_K(A)\ :=\ \big\{ \gamma\in\Gamma:\,  \nwt_A(\gamma)\geq 1  \big\}\ =\ \bigcap_{\phi}\exc(A^\phi)$$
of {\em eventual exceptional values of $A$}. \label{p:excev}\index{linear differential operator!eventual exceptional values}\index{values!eventual exceptional}\index{exceptional values!eventual}For $a \neq 0$ we have $\exc^{\ev}(aA) = \exc^{\ev}(A)$ and $\exc^{\ev}(Aa) = \exc^{\ev}(A) - va$. An easy consequence of the definitions: $\exc^{\ev}(A^f)=\exc^{\ev}(A)$ for~$f\ne 0$. 
 A key fact about $\exc^{\ev}(A)$ is that if~$y\in K^\times$,
$vy\notin \exc^{\ev}(A)$, then $A(y)\asymp A^\phi y$, eventually.
Since $A^\phi y\ne 0$ for $y\in K^\times$, this gives
$v(\ker^{\neq} A)\subseteq \exc^{\ev}(A)$. 

\begin{lemma}\label{lemexc} If $L$ is an ungrounded asymptotic extension of $K$, then we have ${\exc^{\ev}_{L}(A)\cap\Gamma}\subseteq \exc^{\ev}(A)$, with equality if $\Psi$ is cofinal in $\Psi_L$. 
 \end{lemma}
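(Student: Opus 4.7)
The plan rests on the characterization $\exc^{\ev}(A) = \{\gamma\in\Gamma : \nwt_A(\gamma)\geq 1\}$ (and its analogue for $L$) stated in the excerpt, together with the standard fact from \textup{[ADH, 11.1]} that $\phi\mapsto \dwt_{A^\phi}(\gamma)$ is weakly decreasing in $v\phi$ on active $\phi$ and stabilizes at $\nwt_A(\gamma)$, so that $\dwt_{A^\phi}(\gamma)\geq \nwt_A(\gamma)$ for \emph{every} active $\phi$. The critical observation is this: any $\phi\in K^\times$ active in $K$ is automatically active in $L$ (since $\Psi\subseteq\Psi_L$), and for such $\phi$ and $y\in K^\times$ with $vy=\gamma\in\Gamma$, the operator $A^\phi y$ lies in $K^\phi[\derdelta]$, so its coefficients all lie in $K$; as the valuation on $L^\phi$ restricts to that on $K^\phi$, we have $\dwt_L(A^\phi y)=\dwt_K(A^\phi y)$.

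For the first inclusion, let $\gamma\in\exc^{\ev}_L(A)\cap\Gamma$. Then $\nwt^L_A(\gamma)\geq 1$, and the monotonicity applied in $L$ gives $\dwt_L(A^\phi\tilde y)\geq 1$ for \emph{every} $\phi$ active in $L$ and every $\tilde y\in L^\times$ with $v\tilde y=\gamma$. Specializing to $\phi\in K^\times$ active in $K$ and $y\in K^\times$ with $vy=\gamma$, the critical observation yields $\dwt_K(A^\phi y)=\dwt_L(A^\phi y)\geq 1$. Since $\dwt_K(A^\phi y)=\nwt^K_A(\gamma)$ once $\phi$ is sufficiently small, this gives $\nwt^K_A(\gamma)\geq 1$, hence $\gamma\in\exc^{\ev}(A)$.

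For the reverse inclusion under cofinality of $\Psi$ in $\Psi_L$, let $\gamma\in\exc^{\ev}(A)$, so $\nwt^K_A(\gamma)\geq 1$ and, by monotonicity in $K$, $\dwt_K(A^\phi y)\geq 1$ for every $\phi\in K^\times$ active in $K$ and every $y\in K^\times$ with $vy=\gamma$. Pick $\phi_0$ active in $L$ with $\dwt_L(A^\phi\tilde y)=\nwt^L_A(\gamma)$ for all active $\phi\preceq\phi_0$ in $L$, and choose $\delta\in\Gamma_L^{\neq}$ with $v\phi_0\leq\psi_L(\delta)$. Cofinality yields $\alpha\in\Psi$ with $\alpha\geq\psi_L(\delta)$; picking $\phi\in K^\times$ with $v\phi=\alpha$ makes $\phi$ active in $K$ and $\phi\preceq\phi_0$ in $L$, so $\nwt^L_A(\gamma)=\dwt_L(A^\phi y)=\dwt_K(A^\phi y)\geq 1$. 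Hence $\gamma\in\exc^{\ev}_L(A)$, giving the equality.

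The main obstacle is invoking the monotonicity of $\dwt_{A^\phi}(\gamma)$ under composition conjugation correctly — namely, that the eventual value $\nwt_A(\gamma)$ is approached from above as $\phi$ shrinks, so that the condition $\nwt\geq 1$ propagates backwards to $\dwt\geq 1$ at every active $\phi$. Without this, part~(1) would already require matching up some $K$-active $\phi$ with the $L$-eventual range, which is exactly what cofinality supplies in part~(2) but which is not available in general.
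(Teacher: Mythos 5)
Your proof is correct and follows essentially the same route as the paper: the key input in both is the monotonicity of $\dwt(A^\phi)$ in $v\phi$ from [ADH, 11.1.12], which shows the eventual value $\nwt_A(\gamma)$ can only drop when passing from $K$ to $L$, giving the inclusion, while cofinality of $\Psi$ in $\Psi_L$ lets a $K$-active $\phi$ reach the $L$-eventual range, giving equality. The paper's proof is just terser, spelling out only the inclusion and leaving the cofinality direction implicit; your write-up supplies those details correctly.
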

 \begin{proof} For the inclusion, use that $\dwt(A^\phi)$ decreases as $v\phi$ strictly increases~[ADH, 11.1.12]. Thus its eventual value $\nwt(A)$, evaluated in $K$, cannot strictly increase when evaluated in an ungrounded asymptotic extension of $K$. 
 \end{proof} 

\noindent
{\em In the rest of this section we assume in addition that $K$ is $H$-asymptotic with asymptotic integration.}
Then by [ADH, 14.2.8]:

\begin{prop}\label{kerexc}
If $K$ is $r$-linearly newtonian, then $v(\ker^{\neq} A) = \exc^{\ev}(A)$.
\end{prop}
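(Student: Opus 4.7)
The inclusion $v(\ker^{\ne} A)\subseteq \exc^{\ev}(A)$ was already recorded in the excerpt; for the reverse, let $\gamma\in \exc^{\ev}(A)$, and look for $y\in K^\times$ with $vy=\gamma$ and $A(y)=0$. First reduce to $\gamma=0$: fix $g\in K^\times$ with $vg=\gamma$ and pass to the twist $B:=A_{\ltimes g}=g^{-1}Ag$, still of order $r$. Using $\ker B = g^{-1}\ker A$ together with $\exc^{\ev}(aA)=\exc^{\ev}(A)$ and $\exc^{\ev}(Aa)=\exc^{\ev}(A)-va$ from the text, one gets $0\in\exc^{\ev}(B)$; then any $y_0\asymp 1$ in $\ker B$ yields $y:=gy_0\in\ker^{\ne} A$ of valuation $\gamma$. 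So I suppose $\gamma=0$ and try to find $y\asymp 1$ with $A(y)=0$.

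Write $A=a_0+a_1\der+\cdots+a_r\der^r$. Since re-expressing $A$ in the basis $1,\derdelta,\derdelta^2,\dots$ of $K^\phi[\derdelta]$ leaves the constant coefficient equal to $a_0$, the assumption $0\in\exc^{\ev}(A)$ (i.e.\ $\dwm(A^\phi)\geq 1$ eventually) unpacks to
\[
v(a_0)\ >\ v(A^\phi) \qquad\text{eventually in }\phi.
\]
I now search for $y$ in the form $y=1+z$ with $z\prec 1$. Such a $y$ lies in $\ker A$ iff $A(z)=-a_0$. Setting $P(Z):=A(Z)+a_0$, the two homogeneous components are $P_0=a_0$ and $P_1=A$, and the displayed inequality gives $v(P_1^\phi)<v(P_0)$ eventually; hence $\ddeg P^\phi=1$ eventually, so $P$ is quasilinear.

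Since $\order P\leq r$ and $K$ is $r$-linearly newtonian, the quasilinear asymptotic equation $P(Z)=0$, $Z\prec 1$ has a zero $z\in K$ with $z\prec 1$. Then $y:=1+z\in\mathcal O^\times$ satisfies $A(y)=a_0+A(z)=0$, which is the desired element of $\ker^{\ne} A$ of valuation $0$.

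The main technical obstacle is to check that quasilinearity genuinely survives the side condition $Z\prec 1$, i.e.\ that $\ndeg_{\prec 1}P=1$ and not just $\ndeg P=1$. This requires tracking $\ddeg P_{\times\fn}^\phi$ uniformly in $\fn\prec 1$; the underlying reason it works is that the hypothesis $\nwt_A(0)\geq 1$ forces the dominance of $A^\phi$ to be carried by some $a_i^\phi$-term with $i\geq 1$, so that after substituting $Z=\fn U$ ($U\asymp 1$) the image $A(\fn U)$ still strictly dominates $a_0$ in valuation once $\phi$ is small enough. Granting this verification, the proof closes as above, and the fact that one needs exactly $r$-linear newtonianity (rather than a weaker fragment) is visible in the use of an order-$r$ operator.
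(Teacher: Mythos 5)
The paper itself offers no argument here—the proposition is quoted verbatim from [ADH, 14.2.8]—so your proof can only be measured on its own terms. Its architecture is the natural one and the reductions are correct: twisting by $g$ with $vg=\gamma$ does carry $\exc^{\ev}$ and $\ker$ along as you say, the constant coefficient of $A^\phi$ is indeed $A^\phi(1)=A(1)=a_0$, so $0\in\exc^{\ev}(A)$ does unpack to $a_0\prec A^\phi$ eventually, and the equation $A(1+z)=0$, $z\prec 1$, is exactly $P(z)=0$, $z\prec1$, for $P=A(Z)+a_0$. The one genuine gap is precisely the point you flag and then "grant": quasilinearity of $P$ together with the definition of ($r$-linear) newtonianity only yields a zero $z\in\mathcal O$, and a zero with $z\sim-1$ gives $y=1+z\prec 1$, which says nothing about $0\in v(\ker^{\ne}A)$. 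So some multiplicity-at-$0$ refinement really is needed, and your proposed route (controlling $\ddeg P^\phi_{\times\fn}$ uniformly in $\fn\prec 1$ to get $\ndeg_{\prec 1}P=1$) is left unexecuted.

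The good news is that the verification is already contained in your displayed inequality, read correctly: $v(a_0)>v(A^\phi)$ eventually gives not only $\ddeg P^\phi=1$ but $\dval P^\phi=1$ eventually, i.e.\ $\nval P=1$ (since $(P^\phi)_0=a_0\prec A^\phi=(P^\phi)_1$). For $P$ of degree $1$ and order $\le r$ with $\nval P=1$, $r$-linear newtonianity produces a zero in $\smallo$, not merely in $\mathcal O$: this is [ADH, 14.2.4], the very equivalence this paper quotes (for $r=1$) in Proposition~\ref{prop:char 1-linearly newt} and uses in the form of [ADH, 14.2.6] in the proof of Lemma~\ref{rlcrln}. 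Substituting that citation for your $\ndeg_{\prec 1}$ discussion closes the proof (the degenerate case $a_0=0$ being trivial, since then $y=1$ already lies in $\ker^{\ne}A$). With that repair, your argument is, as far as one can tell, the standard route to [ADH, 14.2.8].
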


\begin{remarkNumbered}\label{rem:kerexc}
If  $K$ is $\d$-valued, then $\abs{v(\ker^{\neq} A)} = \dim_C \ker A \leq r$ by [ADH, 5.6.6], using a reduction to the case of ``small derivation'' by compositional conjugation. 
\end{remarkNumbered}

\begin{cor} \label{cor:nonexc sol} Suppose $K$ is $\d$-valued, $\exc^{\ev}(A)=v(\ker^{\neq} A)$, and $0\ne f\in A(K)$. Then $A(y)=f$ for some $y\in K$ with $vy\notin \exc^{\ev}(A)$.
\end{cor}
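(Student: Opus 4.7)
The plan is straightforward iteration: start with any $y_0\in K$ satisfying $A(y_0)=f$ and successively modify it by elements of $\ker A$ so that the valuation strictly increases until it leaves $\exc^{\ev}(A)$. The key is that $\exc^{\ev}(A)$ is \emph{finite}: by hypothesis it equals $v(\ker^{\neq}A)$, and since $K$ is $\d$-valued, Remark~\ref{rem:kerexc} gives $\abs{v(\ker^{\neq}A)}\leq r$.

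In detail: given $y_n\ne 0$ with $A(y_n)=f$, suppose $\gamma:=vy_n\in\exc^{\ev}(A)=v(\ker^{\neq}A)$. Then pick $h\in\ker^{\neq}A$ with $vh=\gamma$; then $u:=y_n/h$ satisfies $u\asymp 1$, and since $K$ is $\d$-valued, there is $c\in C$ with $u\sim c$, so in particular $c\asymp 1$, hence $c\neq 0$. Setting $y_{n+1}:=y_n-ch$ yields $A(y_{n+1})=A(y_n)-cA(h)=f$ and
\[
v(y_{n+1})\ =\ v(h)+v(u-c)\ >\ vh\ =\ vy_n.
\]
Note also $y_{n+1}\neq 0$, since otherwise $f=A(y_{n+1})=0$, contradicting $f\neq 0$.

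Starting from an arbitrary $y_0\in K$ with $A(y_0)=f$ (which exists since $f\in A(K)$), this produces a sequence with strictly increasing valuations $vy_0<vy_1<vy_2<\cdots$ in $\Gamma$, as long as each $vy_n$ lies in $\exc^{\ev}(A)$. Since $\exc^{\ev}(A)$ is finite, the process must terminate: for some $n$ we have $vy_n\notin\exc^{\ev}(A)$, and $y:=y_n$ is the desired element.

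The only subtle point is the existence of $c\in C^\times$ with $y_n\sim ch$; this is exactly the defining property of $\d$-valuedness applied to $u=y_n/h\asymp 1$. There is no substantive obstacle beyond this.
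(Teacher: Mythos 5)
Your proof is correct and rests on the same core idea as the paper's: if $vy\in\exc^{\ev}(A)=v(\ker^{\neq}A)$, use $\d$-valuedness to produce $z\in\ker A$ with $z\sim y$, then pass to $y-z$, which is again a preimage of $f$ with strictly larger valuation. The paper phrases this extremally---``let $y\in K$, $A(y)=f$, with $vy$ maximal''---and derives a contradiction from maximality, whereas you iterate and invoke the finiteness $\abs{\exc^{\ev}(A)}\leq r$ from Remark~\ref{rem:kerexc} to force termination. Your version is arguably slightly tighter, since it makes explicit why the process stops rather than implicitly presupposing that a $y$ with maximal $vy$ exists; but the two proofs are essentially the same argument.
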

\begin{proof} Let $y\in K$, $A(y)=f$, with $vy$ maximal. Then $vy\notin \exc^{\ev}(A)$:
otherwise we have $z\in \ker A$ with $z\sim y$, so $A(y-z)=f$ and $v(y-z)>vy$.
\end{proof}

\begin{cor}\label{cor:sum of nwts}
Suppose $K$ is $\upo$-free. Then
$\sum_{\gamma\in\Gamma} \nwt_A(\gamma)=\abs{\exc^{\ev}(A)} \leq r$.
\end{cor}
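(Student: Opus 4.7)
The strategy is to reduce to the newtonian setting using $\upo$-freeness, apply Proposition~\ref{kerexc} and Remark~\ref{rem:kerexc} for the cardinality bound, and then attack the sum identity by right-dividing $A$ by first-order factors induced from its kernel.

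By [ADH, 14.0.1], since $K$ is $\upo$-free, it embeds in an immediate $\upo$-free newtonian $\d$-algebraic extension $\hat K$. Immediacy gives $\Gamma_{\hat K}=\Gamma$ and $\Psi_{\hat K}=\Psi$, and the valuations of the coefficients of $A^\phi y$ (for $y\in K^\times$ and $\phi\in K^\times$ active) do not change in passing to $\hat K$; hence Lemma~\ref{lemexc} yields $\exc^{\ev}_{\hat K}(A)=\exc^{\ev}_K(A)$, and likewise the function $\gamma\mapsto\nwt_A(\gamma)$ agrees whether computed in $K$ or $\hat K$. So I may replace $K$ by $\hat K$ and assume $K$ is newtonian. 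Then $K$ is $\d$-valued by~[ADH, 14.2.5].

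Since newtonianity entails $r$-linear newtonianity, Proposition~\ref{kerexc} gives $\exc^{\ev}(A)=v(\ker^{\ne}A)$, and Remark~\ref{rem:kerexc} then gives $|v(\ker^{\ne}A)|=\dim_C\ker A\le r$. This already establishes $|\exc^{\ev}(A)|\le r$.

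The remaining identity $\sum_\gamma\nwt_A(\gamma)=|\exc^{\ev}(A)|$ reduces (since $\nwt_A(\gamma)\ge 1$ exactly when $\gamma\in\exc^{\ev}(A)$) to showing $\nwt_A(\gamma)\le 1$ for each $\gamma\in\exc^{\ev}(A)$. I would prove this by induction on $r=\order(A)$, the case $r=1$ being immediate. For the inductive step, fix $\gamma\in\exc^{\ev}(A)$, pick $y\in\ker^{\ne}A$ with $vy=\gamma$ (which exists by Proposition~\ref{kerexc}), and right-divide to get $A=B(\der-y^\dagger)$ with $B\in K[\der]$ of order $r-1$. A direct calculation gives $(\der-y^\dagger)y=y\der$ in $K[\der]$, hence
\[
A^\phi y\ =\ (B^\phi\cdot y\phi)\,\derdelta\qquad\text{in }K^\phi[\derdelta],
\]
so $\dwt_{A^\phi}(\gamma)=\dwt_{B^\phi}(\gamma+v\phi)+1$ and $\nwt_A(\gamma)=1+N$, where $N$ is the eventual value of $\dwt_{B^\phi}(\gamma+v\phi)$ as $\phi$ shrinks through active elements. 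The main obstacle is to show $N=0$: by the inductive hypothesis $\exc^{\ev}(B)$ is finite, so for $\phi$ sufficiently small $\gamma+v\phi$ strictly exceeds every element of $\exc^{\ev}(B)$; one then combines the monotonicity of $\dwt_{B^\phi}$ in $v\phi$ (cf.~[ADH, 11.1.12]) with the fact that $\nwt_B$ vanishes off $\exc^{\ev}(B)$ to conclude $\dwt_{B^\phi}(\gamma+v\phi)=0$ eventually, yielding $\nwt_A(\gamma)=1$ as required.
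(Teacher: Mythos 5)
Your reduction to an immediate $\upo$-free newtonian extension and the use of Proposition~\ref{kerexc} and Remark~\ref{rem:kerexc} to get $\abs{\exc^{\ev}(A)}=\dim_C\ker A\le r$ match the paper. The divergence is in how the bound $\nwt_A(\gamma)\le 1$ is obtained: the paper quotes [ADH, 13.7.10], whereas you try to derive it by induction on $r$ via right-division. Your computation $A^\phi y=(B^\phi\cdot\phi y)\,\derdelta$ and the resulting identity $\dwt_{A^\phi}(\gamma)=1+\dwt_{B^\phi}(\gamma+v\phi)$ are fine, but the claim that $\dwt_{B^\phi}(\gamma+v\phi)=0$ eventually is a genuine gap, not a routine verification.

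The assertion $\nwt_B(\delta)=0$ for $\delta\notin\exc^{\ev}(B)$ means: for each \emph{fixed} $\delta$ there is an active $\phi_\delta$ with $\dwt_{B^\phi}(\delta)=0$ for all active $\phi\preceq\phi_\delta$, and $\phi_\delta$ may depend on $\delta$. In your situation the target $\delta=\gamma+v\phi$ moves with $\phi$, so this does not apply, and the monotonicity of $\dwt(B^\phi)$ in $v\phi$ from [ADH, 11.1.12] concerns a fixed operator and does not close the circle (the operator $B\phi y\in K[\der]$, whose compositional conjugate $B^\phi\phi y$ you are examining, also changes with $\phi$). More tellingly, your justification for $N=0$ uses only that $y\in\ker^{\ne}A$ with $vy=\gamma$, and never invokes $\upl$-freeness. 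That cannot work: by Corollary~\ref{coruplnwteq}, the bound $\nwt(A)\le 1$ for order-$2$ operators is \emph{equivalent} to $\upl$-freeness. Concretely, if $K$ is not $\upl$-free and $\upl\in K$ satisfies $\phi^\dagger+\upl\prec\phi$ for all active $\phi$, then $A:=(\der+\upl)\der$ has $1\in\ker^{\ne}A$ and $\nwt_A(0)=\nwt(A)=2$, while the factorization $A=B(\der-y^\dagger)$ with $B=\der+\upl$, $y=1$ gives $\dwt(B^\phi\phi)=1$ (never $0$) for all active $\phi\prec 1$. So the eventual vanishing fails exactly where the $\upl$-free hypothesis must do work; [ADH, 13.7.10] is what supplies that input, and the paper's proof is simply the short route of citing it directly.
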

\begin{proof} By \eqref{eq:14.0.1} we have
an immediate newtonian asymptotic extension~$L$ of~$K$.   
Then $L$ is $\d$-valued 
by Lemma~\ref{lem:ADH 14.2.5},   hence~$\abs{\exc^{\ev}(A)} = \abs{\exc_L^{\ev}(A)}\leq r$ by Proposition~\ref{kerexc} and Remark~\ref{rem:kerexc}.
By~[ADH, 13.7.10]  we have $\nwt_{A}(\gamma)\leq 1$ for all $\gamma\in\Gamma$, thus~$\sum_{\gamma\in\Gamma} \nwt_A(\gamma)=\abs{\exc^{\ev}(A)}$.
\end{proof}

\noindent
In [ADH, 11.1] we defined ${v_A^{\ev}\colon\Gamma\to\Gamma}$ by requiring that for all $\gamma\in\Gamma$:\label{p:vAev}
\begin{equation}\label{eq:vAev}
v_{A^\phi}(\gamma)\ =\ v_A^{\ev}(\gamma)+\nwt_A(\gamma)v\phi,\qquad\text{ eventually.}
\end{equation}
We recall from that reference that for $a\neq 0$ and $\gamma\in\Gamma$ we have 
$$v_{aA}^{\ev}(\gamma)\ =\ va+v_A^{\ev}(\gamma),\qquad v_{Aa}^{\ev}(\gamma)=v_A^{\ev}(va+\gamma).$$
As an example from [ADH, p.~481], $v_{\der}^{\ev}(\gamma)=\gamma + \psi(\gamma)$ for $\gamma\in \Gamma\setminus \{0\}$ and $v_{\der}^{\ev}(0)=0$.
By [ADH, 14.2.7 and the remark preceding it] we have:

\begin{lemma}\label{lem:ADH 14.2.7}
The restriction of $v_A^{\ev}$ to a  function $\Gamma\setminus\exc^{\ev}(A)\to\Gamma$ is strictly increasing, and
$v\big(A(y)\big) = v_A^{\ev}(vy)$ for all $y\in K$ with $vy\in \Gamma\setminus\exc^{\ev}(A)$.
Moreover, if~$K$ is $\upo$-free, then $v_A^{\ev}\big(\Gamma\setminus \exc^{\ev}(A)\big)=\Gamma$.  
\end{lemma}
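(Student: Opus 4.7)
The plan is to derive all three assertions from the defining equation~\eqref{eq:vAev}, combined with the monotonicity properties of $v_{A^\phi}$ for small-derivation operators from [ADH, 11.1] and, for the surjectivity clause, from the immediate newtonian extension supplied by~\eqref{eq:14.0.1}.

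For the formula, fix $y\in K^\times$ with $\gamma:=vy\notin\exc^{\ev}(A)$. Then $\nwt_A(\gamma)=0$ by the very definition of $\exc^{\ev}(A)$, so~\eqref{eq:vAev} specializes to $v_{A^\phi}(\gamma)=v_A^{\ev}(\gamma)$ for every $\phi$ with $v\phi$ large enough. Since $A^\phi(y)=A(y)$ and $v_{A^\phi}(\gamma)=v(A^\phi(y))$ by the definition of $v_{A^\phi}$ in the small-derivation setting of $K^\phi$, this gives $v(A(y))=v_A^{\ev}(\gamma)$, which is the second assertion. For strict monotonicity, take $\gamma<\delta$ both in $\Gamma\setminus\exc^{\ev}(A)$. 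Because $\exc^{\ev}(A)=\bigcap_\phi\exc(A^\phi)$, we may choose a single $\phi$ so small that simultaneously $\gamma,\delta\notin\exc(A^\phi)$ and $v_{A^\phi}(\gamma)=v_A^{\ev}(\gamma)$, $v_{A^\phi}(\delta)=v_A^{\ev}(\delta)$. The standard strict monotonicity of $v_{A^\phi}$ on $\Gamma\setminus\exc(A^\phi)$ for small-derivation operators (from [ADH, 11.1]) then yields $v_A^{\ev}(\gamma)=v_{A^\phi}(\gamma)<v_{A^\phi}(\delta)=v_A^{\ev}(\delta)$.

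For the surjectivity clause, assume $K$ is $\upo$-free and apply~\eqref{eq:14.0.1} to obtain an immediate newtonian $\upo$-free $\d$-algebraic $H$-asymptotic extension $L$ of $K$. Being immediate, $L$ satisfies $\Psi_L=\Psi$, so Lemma~\ref{lemexc} gives $\exc^{\ev}_L(A)=\exc^{\ev}_K(A)$, and the eventual behavior defining $v_A^{\ev}$ in $L$ matches that in $K$; in particular the formula of claim~2, now applied in $L$, reads $v(A(y))=v_A^{\ev}(vy)$ for all $y\in L^\times$ with $vy\notin\exc^{\ev}_K(A)$. By Lemma~\ref{lem:ADH 14.2.5} the field $L$ is $\d$-valued; by~\eqref{eq:14.4.2} it is linearly surjective; and by Proposition~\ref{kerexc}, $v(\ker^{\neq}_L A)=\exc^{\ev}_L(A)=\exc^{\ev}_K(A)$. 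Given $\beta\in\Gamma$, pick $f\in K^\times$ with $vf=\beta$; linear surjectivity of $L$ supplies $y_0\in L^\times$ with $A(y_0)=f$, and Corollary~\ref{cor:nonexc sol} applied inside $L$ lets us adjust $y_0$ by a kernel element to obtain $y\in L^\times$ with $A(y)=f$ and $vy\notin\exc^{\ev}_K(A)$. Then $\gamma:=vy\in\Gamma_L=\Gamma$ and $v_A^{\ev}(\gamma)=v(A(y))=\beta$.

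The main point to watch is the input used in the monotonicity step, namely that $v_{A^\phi}$ is strictly increasing off $\exc(A^\phi)$ for small-derivation $A^\phi$; this is used off-the-shelf from the small-derivation theory of [ADH, 11.1], but it is the one fact that is not immediate from the definitions given in this section. Everything else is bookkeeping between $K$ and its immediate newtonian extension $L$, underwritten by the preservation properties collected in Lemmas~\ref{lemexc} and~\ref{lem:ADH 14.2.5} and Proposition~\ref{kerexc}.
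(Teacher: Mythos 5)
Your argument is essentially sound, but it is worth saying up front that the paper does not prove this lemma at all: it is imported verbatim from [ADH, 14.2.7 and the remark preceding it], so you are reconstructing a cited result rather than matching an in-paper proof. Your reconstruction works, with two caveats. First, the step ``$v_{A^\phi}(\gamma)=v(A^\phi(y))$ by the definition of $v_{A^\phi}$'' is not a definition: by definition $v_{A^\phi}(\gamma)=v(A^\phi y)$, the gaussian value of the operator $A^\phi y$, and its agreement with $v\big(A^\phi(y)\big)=v\big(A(y)\big)$ is exactly the nontrivial point that can fail at exceptional values. What you need is the ``key fact'' stated just before Lemma~\ref{lemexc} (for $vy\notin\exc^{\ev}(A)$ one has $A(y)\asymp A^\phi y$ eventually, since $\nwt_A(vy)=0$ forces $vy\notin\exc(A^\phi)$ eventually); with that citation the formula follows from \eqref{eq:vAev} as you say. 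Second, the monotonicity input you attribute to [ADH, 11.1] is really the estimate $v(Ab)=v(A)+vb+o(vb)$ of [ADH, 6.1.3], which in fact shows $v_{A^\phi}$ is strictly increasing on all of $\Gamma$, so no care about $\exc(A^\phi)$ is needed at that point; your choice of a common $\phi$ then gives strict increase of $v_A^{\ev}$ off $\exc^{\ev}(A)$. Your surjectivity argument (pass to an immediate newtonian $\upo$-free extension $L$ via \eqref{eq:14.0.1}, use Lemma~\ref{lemexc} and the invariance of eventual quantities --- which you could also cite as Corollary~\ref{cor:13.7.10} --- then Lemma~\ref{lem:ADH 14.2.5}, \eqref{eq:14.4.2}, Proposition~\ref{kerexc}, and Corollary~\ref{cor:nonexc sol}) is correct and is the same style of transfer the paper uses elsewhere, e.g.\ for Corollary~\ref{cor:sum of nwts}. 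One caution: Proposition~\ref{kerexc} is [ADH, 14.2.8], the result immediately following the one you are proving, and in [ADH] its proof may well rest on 14.2.7 or the remark preceding it; within this paper's citation structure your route is consistent, but it should not be read as an independent derivation that could replace the reference to [ADH].
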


\noindent
A differential field $F$ is said to be {\it $r$-linearly surjective}\/ ($r\in \N$) if $A(F)=F$ for every  $A\in F[\der]^{\neq}$ of order at most~$r$.\index{r-linearly surjective@$r$-linearly surjective!differential field}\index{differential field!r-linearly surjective@$r$-linearly surjective}
The following is [ADH, 14.2.10] without the hypothesis of $\upo$-freeness:

\begin{cor}\label{cor:14.2.10, generalized} 
Suppose $K$ is $r$-linearly newtonian. Then for each $f\neq 0$ there exists $y\in K^\times$ such that $A(y)=f$,
$vy\notin\exc^{\ev}(A)$, and $v^{\ev}_A(vy)=vf$.
\end{cor}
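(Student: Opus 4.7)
The idea is to follow the argument of [ADH, 14.2.10] but bypass the use of $\upo$-freeness by leaning directly on Proposition~\ref{kerexc} and Corollary~\ref{cor:nonexc sol}, which do not require it.

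First I would dispose of the trivial case $r=0$: then $A\in K^\times$, so $y:=f/A$ works, with $\exc^{\ev}(A)=\emptyset$ and $v_A^{\ev}(\gamma)=vA+\gamma$ on all of $\Gamma$. So assume $r\geq 1$. Since $K$ is $r$-linearly newtonian and $r\ge 1$, in particular $K$ is $1$-linearly newtonian, so by Lemma~\ref{lem:ADH 14.2.5} (i.e.~[ADH, 14.2.5]) $K$ is $\d$-valued. Moreover, by the $r$-linearly newtonian analogue of \eqref{eq:14.4.2} (the implication ``$r$-linearly newtonian $\Rightarrow$ $r$-linearly surjective'' from [ADH, 14.2]), every operator in $K[\der]^{\neq}$ of order $\leq r$ is surjective on $K$; in particular $A(K)=K$, so $f\in A(K)$.

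Next I would apply Proposition~\ref{kerexc} to conclude $v(\ker^{\neq} A)=\exc^{\ev}(A)$. Since $K$ is now $\d$-valued, $f\neq 0$, and $f\in A(K)$, Corollary~\ref{cor:nonexc sol} yields $y\in K$ with $A(y)=f$ and $vy\notin\exc^{\ev}(A)$. As $f\neq 0$, we have $y\neq 0$, so $y\in K^\times$. Finally, Lemma~\ref{lem:ADH 14.2.7} gives
\[ vf\ =\ v\!\big(A(y)\big)\ =\ v_A^{\ev}(vy), \]
which completes the three required conclusions.

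The only non-routine ingredient is the implication ``$r$-linearly newtonian $\Rightarrow$ $r$-linearly surjective'', but this is a basic consequence of the definition in [ADH, 14.2] (a quasilinear equation $A(y)=f$ with $f\neq 0$ can be normalized, after multiplicative conjugation, to the form considered in the definition of $r$-linearly newtonian); so I do not expect any real obstacle here. The value of this corollary over [ADH, 14.2.10] is precisely that $\upo$-freeness has been eliminated, which is possible because Proposition~\ref{kerexc} and Corollary~\ref{cor:nonexc sol} already encapsulate the exceptional-value analysis that previously required $\upo$-freeness via Corollary~\ref{cor:sum of nwts}.
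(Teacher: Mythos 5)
Your proof is correct and is essentially the paper's own argument: dispose of $r=0$, use Lemma~\ref{lem:ADH 14.2.5} for $\d$-valuedness, Proposition~\ref{kerexc} for $v(\ker^{\neq}A)=\exc^{\ev}(A)$, $r$-linear surjectivity (which the paper cites as [ADH, 14.2.2]) for $f\in A(K)$, then Corollary~\ref{cor:nonexc sol} and Lemma~\ref{lem:ADH 14.2.7}. No substantive difference from the published proof.
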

\begin{proof} If $r=0$, then $\exc^{\ev}(A)=\emptyset$ and our claim is obviously valid. Suppose $r\ge 1$. Then
 $K$ is $\d$-valued by Lemma~\ref{lem:ADH 14.2.5}, and $v(\ker^{\neq} A) = \exc^{\ev}(A)$
by Proposition~\ref{kerexc}, Moreover, by [ADH, 14.2.2], $K$ is $r$-linearly surjective, hence $f\in A(K)$.
Now Corollary~\ref{cor:nonexc sol} yields $y\in K^\times$ with $A(y)=f$ and $vy\notin \exc^{\ev}(A)$.
By Lemma~\ref{lem:ADH 14.2.7} we have $v^{\ev}_A(vy)=v\big(A(y)\big)=vf$.
\end{proof}

\noindent
From the proof of [ADH, 14.2.10] we extract the following:

{\sloppy
\begin{cor}\label{cor:ADH 14.2.10 extract}
Suppose $K$ is $r$-linearly newtonian with small derivation, and~$A\in~\mathcal O[\der]$ with $a_0:=A(1)\asymp 1$,
and $f\asymp^\flat 1$. Then there is $y\in K^\times$ such that~${A(y)=f}$ and $y\sim f/a_0$. For any such $y$ we have $vy\notin\exc^{\ev}(A)$
and $v_A^{\ev}(vy)=vf$.
\end{cor}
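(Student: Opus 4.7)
The plan is to construct $y$ via a twist-and-Newton argument and to derive the exceptional-value assertions from the observation $\exc^{\ev}(A) \cap \Gamma^\flat = \emptyset$; both hinge on the fact that for elements of flat valuation, the higher logarithmic derivatives are small under the small-derivation hypothesis.

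Set $y_1 := f/a_0$. Since $a_0 \asymp 1$ we have $vy_1 = vf \in \Gamma^\flat$, so $v(y_1^\dagger) = \psi(vy_1) > 0$, and combined with small derivation the Riccati identities $y_1^{(k)}/y_1 = R_k(y_1^\dagger)$ give $y_1^{(k)}/y_1 \in \smallo$ for all $k \ge 1$. The twist $B := A_{\ltimes y_1} = \sum_l b_l \der^l$ has coefficients $b_l = \sum_{k \ge 0}\binom{k+l}{k}\, a_{k+l}\, y_1^{(k)}/y_1 \in \mathcal O$, with $b_0 = B(1) = a_0 + \smallo \sim a_0 \asymp 1$. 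Via $y = y_1(1+w)$, the target $A(y) = f$ with $y \sim f/a_0$ becomes finding $w \prec 1$ with $B(w) = a_0 - b_0 \in \smallo$.

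Consider the affine differential polynomial $Q(W) := B(W) - (a_0 - b_0)$; its constant part $Q_0 \in \smallo$ while the linear part $Q_1 = B$ has $v(B) = 0$ (since $b_0 \asymp 1$ and all $b_l \in \mathcal O$). For any active $\phi \preceq 1$, $\phi \in \mathcal O$ gives $B^\phi \in \mathcal O[\derdelta]$ with $(B^\phi)(1) = B(1) = b_0 \asymp 1$, so $v(B^\phi) = 0$ and the dominance $v(Q_0^\phi) > v(Q_1^\phi)$ persists under conjugation; hence $\ndeg Q = 1$, and $r$-linear newtonianity yields $w \in \mathcal O$ with $Q(w) = 0$. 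I then rule out $w \asymp 1$ as follows: in that case $v(y_1 w) = vy_1 \in \Gamma^\flat$, so by the flat-kernel lemma below $vy_1 \notin \exc^{\ev}(A)$, and Lemma~\ref{lem:ADH 14.2.7} combined with $A(y_1) \sim f$ gives $v(A(y_1 w)) = v_A^{\ev}(vy_1) = v(A(y_1)) = vf$; but $A(y_1 w) = y_1(a_0 - b_0)$ forces $v(A(y_1 w)) > vy_1 = vf$, a contradiction. Hence $w \in \smallo$ and $y := y_1(1+w)$ satisfies $A(y) = f$ with $y \sim f/a_0$.

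The flat-kernel lemma $\exc^{\ev}(A) \cap \Gamma^\flat = \emptyset$ follows from Proposition~\ref{kerexc}: if $z \in \ker^{\neq} A$ had $vz \in \Gamma^\flat$ with $vz \ne 0$, the same Riccati-plus-small-derivation computation gives $z^{(k)}/z \in \smallo$ for $k \ge 1$, hence $A(z)/z = a_0 + \sum_{k \ge 1} a_k\, z^{(k)}/z \sim a_0 \asymp 1$, contradicting $A(z) = 0$. Finally, any $y$ with $A(y) = f$ and $y \sim f/a_0$ has $vy = vf \in \Gamma^\flat$, so $vy \notin \exc^{\ev}(A)$, and Lemma~\ref{lem:ADH 14.2.7} yields $v_A^{\ev}(vy) = v(A(y)) = vf$. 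The main obstacle is the edge case $vz = 0$ (i.e., $z \in \mathcal O^\times$) in the flat-kernel lemma, where $\psi(vz)$ is undefined and the Riccati argument breaks down; here one must argue via the residue operator $\res(A) \in \res(K)[\der]$, using that $\res(A)(1) = \res(a_0) \ne 0$, though closing this gap likely requires extra structure linked to the specific hypothesis $f \asymp^\flat 1$ or to properties of $r$-linearly newtonian asymptotic fields.
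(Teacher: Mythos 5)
There is a genuine gap, and it is the one you flag yourself: the point $\gamma=0$. Your opening claim $y_1^\dagger\prec 1$ uses $\psi(vy_1)>0$, and both your device for ruling out $w\asymp 1$ and your final assertions rest on $\exc^{\ev}(A)\cap\Gamma^\flat=\emptyset$ applied at $\gamma=vf$; all of this breaks down exactly when $vf=0$, i.e.\ $f\asymp 1$, which is permitted by the hypothesis $f\asymp^\flat 1$. So the gap is not confined to the exceptional-value statements: when $f\asymp 1$ your existence argument is also incomplete, since $b_0\sim a_0$ already needs $y_1^\dagger\prec 1$. The residue-operator idea does not close it: from $A(z)=0$, $z\asymp 1$, one only gets $\res(A)(\res z)=0$ with $\res z\neq 0$, and $\res(A)(1)=\res(a_0)\neq 0$ is perfectly compatible with that (a first-order operator $\der-g$ over $\res(K)$ kills units whenever $g$ is a logarithmic derivative there), so no contradiction follows. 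The missing ingredient is $\d$-valuedness: for $r\geq 1$, $r$-linear newtonianity together with the section's standing assumptions makes $K$ $\d$-valued by Lemma~\ref{lem:ADH 14.2.5}, so any $u\asymp 1$ satisfies $u\sim c$ with $c$ a constant and hence $u^\dagger\prec 1$ (using $\der\smallo\subseteq\smallo$). With that in hand, your own Riccati computation handles both $y_1^\dagger\prec 1$ when $f\asymp 1$ and the kernel case $vz=0$, and your proof closes (the case $r=0$ being trivial).

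For comparison, the paper avoids kernels and Proposition~\ref{kerexc} altogether. After invoking Lemma~\ref{lem:ADH 14.2.5} to get $f^\dagger\prec 1$, it twists by $f$ (essentially your twist by $f/a_0$) to replace $A$, $f$ by $A_{\ltimes f}$, $1$, quotes the argument of [ADH, 14.2.10] for the existence of $y\sim 1/a_0$ with $A(y)=1$, and then reads off the remaining claims at the single point $0$: $a_0\asymp 1$ gives $\dwm(A)=0$, hence $\nwt(A)=0$ by [ADH, 11.1.11(ii)], so $0\notin\exc^{\ev}(A)$, and $A^\phi(1)=a_0\asymp 1$ gives $v^{\ev}(A)=0$, i.e.\ $v_A^{\ev}(vy)=vf$. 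This route is shorter and sidesteps the need for $\exc^{\ev}(A)\cap\Gamma^\flat=\emptyset$ at nonzero points; your kerexc-plus-Riccati argument for nonzero $\gamma\in\Gamma^\flat$ is correct but becomes unnecessary once the reduction to $f=1$ is made.
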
}
\begin{proof} The case $r=0$ is trivial. 
Assume $r\geq 1$, so $K$ is $\d$-valued by Lemma~\ref{lem:ADH 14.2.5}. Hence $f^\dagger\prec 1$, that is, $f'\prec f$,
so $f^{(n)}\prec f$ for all $n\ge 1$ by [ADH, 4.4.2].  Then $Af\preceq~f$ by [ADH, (5.1.3), (5.1.2)], and $A(f)\sim a_0f$, so 
$A_{\ltimes f}\in\mathcal O[\der]$ and~$A_{\ltimes f}(1)\sim~a_0$.  Thus we may replace $A$, $f$ by $A_{\ltimes f}$, $1$ to arrange~${f=1}$.
Now~${a_0\asymp 1}$ gives $\operatorname{dwm}(A)=0$, so $\operatorname{dwt}(A^\phi)=0$ eventually, by [ADH, 11.1.11(ii)],
that is, $\nwt(A)=0$. Also $A^\phi(1)=A(1)=a_0\asymp 1$,  so  $v^{\ev}(A)=0$.  Arguing as in the proof of [ADH, 14.2.10] we obtain
$y\in K^\times$ with $A(y)=1$ and~$y\sim 1/a_0$. It is clear that~$vy=0\notin \exc^{\ev}(A)$ and $v_A^{\ev}(vy)=v^{\ev}(A)=0=vf$
for any such $y$.
\end{proof}

\noindent
In the next few subsections below we consider more closely the case of order $r=1$, and in the last subsection the case of arbitrary order.

\subsection*{First-order operators}
{\em In this subsection $A=\der-g$}. By [ADH, p.~481],   
$$\exc^{\ev}(A)\ =\ \exc^{\ev}_K(A)\ =\ \big\{vy:\, y\in K^\times,\ v(g-y^\dagger)>\Psi\big\}$$ 
has at most one element. 
We also have
$\abs{v(\ker^{\neq}A)} = \dim_C\ker A \leq 1$ in view of~$C^\times\subseteq\mathcal O^\times$. Proposition~\ref{kerexc} holds under a weaker assumption on $K$ for $r=1$: 

\begin{lemma}\label{lem:v(ker)=exc, r=1}
Suppose $\I(K)\subseteq K^\dagger$. Then $v(\ker^{\neq} A)=\exc^{\ev}(A)$.
\end{lemma}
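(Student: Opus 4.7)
The plan is to exploit the fact that for the first-order operator $A=\der-g$, the set $\exc^{\ev}(A)$ contains at most one element (as recalled immediately after its description above). Combined with the universally valid inclusion $v(\ker^{\neq}A)\subseteq\exc^{\ev}(A)$ noted earlier, this reduces the task to the following: assuming $\exc^{\ev}(A)\neq\emptyset$, produce a single element of $\ker^{\neq}A$. Any such $z$ will then satisfy $vz\in v(\ker^{\neq}A)\subseteq\exc^{\ev}(A)$, and both sides being singletons forces $v(\ker^{\neq}A)=\exc^{\ev}(A)$.

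Suppose accordingly that $\exc^{\ev}(A)\neq\emptyset$. The description of $\exc^{\ev}(\der-g)$ provides $y\in K^\times$ with $v(g-y^\dagger)>\Psi$, and we set $h:=g-y^\dagger$. If $h=0$, then $y^\dagger=g$ gives $y\in\ker^{\neq}A$ directly. Otherwise the plan is to find $u\in K^\times$ with $u^\dagger=h$ and take $z:=yu\in K^\times$; by construction $z^\dagger=y^\dagger+u^\dagger=y^\dagger+h=g$, hence $A(z)=z'-gz=0$, so $z\in\ker^{\neq}A$ as required.

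The main step is therefore showing $h\in K^\dagger$, which, given the hypothesis $\I(K)\subseteq K^\dagger$, reduces to the preliminary claim that $vh>\Psi$ forces $h\in\I(K)$. Since $K$ has asymptotic integration, there is $f\in K$ with $f\nasymp 1$ and $f'\asymp h$. The case $f\succ 1$ is ruled out as follows: then $v(f^\dagger)\in\Psi$, while from $v(f')=vf+v(f^\dagger)$ and $vf'=vh>\Psi$ we would obtain $vf=vf'-v(f^\dagger)>0$, contradicting $f\succ 1$. Hence $f\prec 1$, so $f\in\smallo\subseteq\mathcal{O}$ and $h\preceq f'$, which is exactly the defining condition for $h\in\I(K)$. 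The hypothesis $\I(K)\subseteq K^\dagger$ now produces the desired $u\in K^\times$ with $u^\dagger=h$, and the construction of $z$ above completes the argument. The only genuinely delicate point is the valuation computation excluding $f\succ 1$; everything else is bookkeeping.
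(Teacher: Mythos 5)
Your proof is correct and rests on the same key idea as the paper's: the only substantive direction is $\supseteq$, and the witness $y$ with $v(g-y^\dagger)>\Psi$ is converted into an actual element of $\ker^{\neq}A$ by using $\I(K)\subseteq K^\dagger$ to integrate the small part $g-y^\dagger$ logarithmically. The paper packages this by first reducing to $\exc^{\ev}(A)=\{0\}$ via twisting and citing the identification $\I(K)=\{h: vh>\Psi\}$, whereas you work with a general witness and re-derive that identification from asymptotic integration; this is only a minor organizational variation.
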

\begin{proof}
It remains to show ``$\supseteq$''.
Suppose $\exc^{\ev}(A)=\{0\}$. Then $g-y^\dagger\in \I(K)$ with~${y\asymp 1}$ in $K$, hence
$g\in \I(K)\subseteq K^\dagger$, so $g=h^\dagger$ with $h\asymp 1$, and thus~$0=vh\in v(\ker^{\neq} A)$. The general case reduces to the case $\exc^{\ev}(A)=\{0\}$ by twisting.
\end{proof}

\begin{lemma}\label{lemexc, order 1}
Suppose $L$ is an ungrounded $H$-asymptotic extension of $K$. Then  
$\exc^{\ev}_{L}(A) \cap \Gamma  = \exc^{\ev}(A)$.
\end{lemma}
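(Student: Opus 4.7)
The plan is to establish the non-trivial inclusion $\exc^{\ev}(A)\subseteq\exc^{\ev}_L(A)\cap\Gamma$; the reverse inclusion is already provided by Lemma~\ref{lemexc}. Since $\exc^{\ev}(A)=\{vy:y\in K^\times,\ v(y^\dagger-g)>\Psi\}$ has at most one element, I will fix $\gamma\in\exc^{\ev}(A)$, pick a witness $y\in K^\times$ with $vy=\gamma$ and $v(y^\dagger-g)>\Psi$, and aim to show that this same $y$, viewed in $L$, still witnesses $\gamma\in\exc^{\ev}_L(A)$.

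If $y^\dagger=g$ then $A(y)=y(y^\dagger-g)=0$, so $y\in\ker^{\neq}_L A$ and $\gamma\in v(\ker^{\neq}_L A)\subseteq\exc^{\ev}_L(A)$, leaving nothing to do. Assume now $y^\dagger\neq g$, and set $\beta:=v(y^\dagger-g)\in\Gamma$, so $\beta>\Psi$. I will show $\beta>\Psi_L$; this suffices because for every $\phi$ active in $L$ one has $v\phi\leq\psi_L(\delta)<\beta$ for some $\delta\in\Gamma_L^{\neq}$, and expanding $(\phi\derdelta-g)\cdot y$ gives the order-$1$ identity $A^\phi\cdot y=A(y)+\phi y\,\derdelta$, from which the $\derdelta$-coefficient $\phi y$ (of valuation $v\phi+\gamma$) dominates the $\derdelta^0$-coefficient $A(y)=y(y^\dagger-g)$ (of valuation $\gamma+\beta$); hence $\dwt_{A^\phi}(\gamma)=1$ for every such $\phi$, so $\nwt_A(\gamma)=1$ in $L$ and $\gamma\in\exc^{\ev}_L(A)$.

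The substantive step is thus the transfer $\beta>\Psi\Rightarrow\beta>\Psi_L$. Here I will exploit that $K$ has asymptotic integration: the decomposition $\Gamma=\Psi^\downarrow\sqcup(\Gamma^{\neq})'$, together with $\beta>\Psi$, forces $\beta=\alpha'$ for some $\alpha\in\Gamma^{\neq}$, and $\alpha$ must be positive, since $\alpha<0$ would give $\alpha'=\alpha+\psi(\alpha)<\psi(\alpha)\in\Psi$, contradicting $\beta>\Psi$. Then $\alpha\in\Gamma_L^{>}$ with $\psi_L(\alpha)=\psi(\alpha)$, so $\beta=\alpha+\psi_L(\alpha)\in(\Gamma_L^{>})'$, and the $H$-asymptotic property of $L$ (namely $\Psi_L<(\Gamma_L^{>})'$) delivers $\beta>\Psi_L$ as required.

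The main obstacle is precisely this final transfer step: it is the place where both the asymptotic integration of $K$ and the $H$-type property of $L$ are essential, and it is also what confines the argument to order~$1$---for a higher-order $A$, $\exc^{\ev}$ would no longer be pinned down by a single scalar like $v(y^\dagger-g)$ amenable to this clean sign analysis, so new $\psi_L$-values coming from archimedean classes in $\Gamma_L$ strictly below those of $\Gamma_K^{\neq}$ could in principle catch up with $\beta$.
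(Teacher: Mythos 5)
Your proof is correct and follows essentially the same route as the paper's: after quoting Lemma~\ref{lemexc} for one inclusion, the key step is that $v(g-y^\dagger)>\Psi$ together with asymptotic integration of $K$ forces $v(g-y^\dagger)\in(\Gamma^{>})'\subseteq(\Gamma_L^{>})'$, hence $v(g-y^\dagger)>\Psi_L$ by $H$-asymptoticity of $L$, and then the order-one description of $\exc^{\ev}$ (which you re-derive by computing $A^\phi y$, and which the paper simply cites from [ADH, p.~481]) yields $vy\in\exc^{\ev}_L(A)$. The only blemish is your phrase ``decomposition $\Gamma=\Psi^{\downarrow}\sqcup(\Gamma^{\neq})'$'' (the union is not disjoint, since $(\Gamma^{<})'\subseteq\Psi^{\downarrow}$; the disjoint decomposition uses $(\Gamma^{>})'$), but the way you actually use it---$\beta>\Psi$ plus $\Gamma=(\Gamma^{\neq})'$ gives $\beta=\alpha'$ with necessarily $\alpha>0$---is exactly right.
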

\begin{proof}
Lemma~\ref{lemexc} gives
$\exc^{\ev}_{L}(A) \cap \Gamma \subseteq \exc^{\ev}(A)$.
Next, let $vy\in \exc^{\ev}(A)$, $y\in K^\times$.
Then $v(g-y^\dagger)>\Psi$ and so $v(g-y^\dagger)\in (\Gamma^>)'$ since $K$ has asymptotic integration. 
Hence $v(g-y^\dagger)>\Psi_L$ and thus $vy\in\exc^{\ev}_{L}(A)$, by [ADH, p.~481].
\end{proof}

\noindent
Recall also from [ADH,~9.7] that for an ordered abelian group $G$ and $U \subseteq G$, a function~$\eta\colon U \to G$ is said to be {\it slowly varying}\/ if $\eta(\alpha)-\eta(\beta) = o(\alpha-\beta)$ for all~$\alpha\neq\beta$ in~$U$;\index{slowly varying function} then the function
$\gamma\mapsto\gamma+\eta(\gamma)\colon U\to G$ is strictly increasing. The quintessential example of a slowly varying function
is $\psi\colon \Gamma^{\neq}\to\Gamma$   [ADH, 6.5.4(ii)]. 

\begin{prop}\label{prop:slow}
There is a unique slowly varying function $\psi_A\colon \Gamma\setminus\exc^{\ev}(A)\to\Gamma$
such that for all $y\in K^\times$ with $vy\notin \exc^{\ev}(A)$ we have $v\big(A(y)\big)=vy+\psi_A(vy)$.
\end{prop}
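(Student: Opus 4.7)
The approach is to build $\psi_A$ directly from the eventual valuation function $v_A^{\ev}$ introduced earlier: define $\psi_A\colon \Gamma\setminus\exc^{\ev}(A)\to\Gamma$ by $\psi_A(\gamma) := v_A^{\ev}(\gamma)-\gamma$. By Lemma~\ref{lem:ADH 14.2.7}, for every $y\in K^\times$ with $vy \notin \exc^{\ev}(A)$ one has $v(A(y)) = v_A^{\ev}(vy) = vy + \psi_A(vy)$, giving the required identity. Uniqueness is immediate: any function $\tilde\psi_A$ satisfying the identity must, at every $\gamma\in\Gamma\setminus\exc^{\ev}(A)$, agree with $\psi_A$, as seen by choosing a lift $y$ with $vy=\gamma$ and reading off $\tilde\psi_A(\gamma) = v(A(y))-\gamma = \psi_A(\gamma)$.

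The remaining task, proving that $\psi_A$ is slowly varying, is where the real work lies. For this step I would use the first-order form $A = \der - g$: since $A(y) = y(y^\dagger - g)$ for $y\in K^\times$, one has the explicit formula
\[
\psi_A(\gamma)\ =\ v(y^\dagger - g)\qquad\text{whenever $y\in K^\times$ and $vy = \gamma\notin\exc^{\ev}(A)$.}
\]
Given $\alpha\ne\beta$ in $\Gamma\setminus\exc^{\ev}(A)$, pick lifts $y, z\in K^\times$ with $vy=\alpha$, $vz=\beta$, and set $u:=y/z$. Then $vu = \alpha-\beta\neq 0$, $v(u^\dagger) = \psi(\alpha-\beta)$, and
\[
y^\dagger - g\ =\ (z^\dagger-g) + u^\dagger,
\]
so the ultrametric inequality pins down $\psi_A(\alpha)$ in terms of $\psi_A(\beta) = v(z^\dagger - g)$ and $\psi(\alpha-\beta)$.

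The desired estimate $\psi_A(\alpha) - \psi_A(\beta) = o(\alpha-\beta)$ will follow from two ingredients: the slowly varying of $\psi$ itself (so $\psi(\alpha-\beta) = o(\alpha-\beta)$, by~[ADH,~6.5.4(ii)]), together with the bound $\psi_A(\gamma) \le \psi(\eta)$ for some $\eta\ne 0$ coming from $\gamma\notin\exc^{\ev}(A)$ (i.e.\ $\psi_A(\gamma)\not>\Psi$). Whenever $\psi_A(\gamma) > \psi(\alpha-\beta)$, the strict order-reversal of $\psi$ between archimedean classes (valid in the $H$-type setting) forces $[\eta] < [\alpha-\beta]$, and then $\psi_A(\gamma) \le \psi(\eta)$ combined with the slowly varying of $\psi$ yields $\psi_A(\gamma) = o(\alpha-\beta)$. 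In the non-borderline cases $\psi_A(\beta) \ne \psi(\alpha-\beta)$, the ultrametric equality determines $\psi_A(\alpha)$ exactly as $\min\{\psi_A(\beta),\psi(\alpha-\beta)\}$, and a short computation using the two ingredients above gives the estimate.

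The main obstacle is the borderline case $\psi_A(\beta) = \psi(\alpha-\beta)$, where cancellation in $(z^\dagger - g) + u^\dagger$ can raise $\psi_A(\alpha)$ strictly above the common value, so the ultrametric inequality no longer pins down $\psi_A(\alpha)$. Here the exceptional-value bound must be applied at $\alpha$ itself, yielding $\psi_A(\alpha) \leq \psi(\eta')$ for some $\eta'$ with $\psi(\eta')\geq\psi_A(\alpha) > \psi(\alpha-\beta)$; the same archimedean-class argument then gives $\psi_A(\alpha) = o(\alpha-\beta)$, and symmetrically for $\psi_A(\beta)$, so the difference is $o(\alpha-\beta)$.
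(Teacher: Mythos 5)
Your opening moves are fine: defining $\psi_A(\gamma):=v_A^{\ev}(\gamma)-\gamma$ and quoting Lemma~\ref{lem:ADH 14.2.7} gives the identity and its uniqueness, and since $A=\der-g$ in this subsection, the explicit formula $\psi_A(vy)=v(y^\dagger-g)$ together with the decomposition $y^\dagger-g=(z^\dagger-g)+u^\dagger$, $v(u^\dagger)=\psi(\alpha-\beta)$, is exactly the right setup for a direct proof. (Note that the paper itself does not argue this way: it passes to the $\d$-valued hull $\operatorname{dv}(K)$, which has the same value group, and cites \cite[8.4]{VDF}; so you are supplying a self-contained argument where the paper gives a reduction plus citation.)

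The gap is in how you close the estimate. Twice you infer smallness of individual elements where only smallness of differences is available: you claim $\psi(\alpha-\beta)=o(\alpha-\beta)$ ``by slow variation'', and that $\psi(\alpha-\beta)<\psi_A(\gamma)\le\psi(\eta)$ with $[\eta]<[\alpha-\beta]$ yields $\psi_A(\gamma)=o(\alpha-\beta)$. Both inferences are false: slow variation of $\psi$ bounds $\psi(\gamma)-\psi(\delta)$, not $\psi(\gamma)$ itself, and in the asymptotic couple of $\T$ one has for instance $\psi(\gamma)=\gamma$ at $\gamma=v(x^{-1})$, and even $\gamma=o\big(\psi(\gamma)\big)$ at $\gamma=v(1/\log x)$; likewise $\psi_A(\gamma)$, being squeezed between $\psi(\alpha-\beta)$ and $\psi(\eta)$, is close to $\psi(\alpha-\beta)$ but in general nowhere near $o(\alpha-\beta)$. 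In the borderline case the ``symmetric'' claim for $\psi_A(\beta)=\psi(\alpha-\beta)$ is also unavailable, since no exceptional-value bound applies there. The correct way to finish with exactly your ingredients is a sandwich argument: $[\eta]<[\alpha-\beta]$ gives $[\eta-(\alpha-\beta)]=[\alpha-\beta]$, so slow variation of $\psi$ yields $0\le\psi(\eta)-\psi(\alpha-\beta)=o(\alpha-\beta)$; and in each of your cases $\psi_A(\alpha)$ and $\psi_A(\beta)$ either coincide or both lie between $\psi(\alpha-\beta)$ and $\psi(\eta)$ (using $\gamma\notin\exc^{\ev}(A)\Rightarrow\psi_A(\gamma)\in\Psi^{\downarrow}$ at whichever of $\alpha$, $\beta$ the bound is needed), whence $\psi_A(\alpha)-\psi_A(\beta)=o(\alpha-\beta)$. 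With that repair your argument goes through; as written, the decisive step fails.
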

\begin{proof}
For $\d$-valued $K$, use \cite[8.4]{VDF}. In general, pass to the $\d$-valued hull $L:=\operatorname{dv}(K)$ of $K$ from 
[ADH,~10.3] and use $\Gamma_{L}=\Gamma$ [ADH, 10.3.2].
\end{proof}

\noindent
If $b\neq 0$, then $\exc^{\ev}(A_{\ltimes b})=\exc^{\ev}(A)-vb$ and $\psi_{A_{\ltimes b}}(\gamma)=\psi_A(\gamma+vb)$ for $\gamma\in\Gamma\setminus \exc^{\ev}(A_{\ltimes b})$.

\begin{example}
We have
$\exc^{\ev}(\der)=\{0\}$ and
$\psi_\der=\psi$.
More generally, if $g=b^\dagger$, $b\neq 0$, then~$A_{\ltimes b}=\der$ and so $\exc^{\ev}(A)=\{vb\}$ and
$\psi_A(\gamma)=\psi(\gamma-vb)$ for $\gamma\in\Gamma\setminus\{vb\}$. 
\end{example}

\noindent
If $\Gamma$ is divisible, then $\Gamma\setminus v\big(A(K)\big)$ has at most one element by [ADH,  11.6.16]. Also,
 $K$ is $\upl$-free iff $v\big(A(K)\big)=\Gamma_\infty$ for all $A=\der-g$ by [ADH, 11.6.17].

{\sloppy
\begin{lemma}\label{lem:slow}
Suppose $K$ is $\upl$-free and $f\neq 0$. Then for some $y\in K^\times$ we have~${A(y)\asymp f}$ and~${vy\notin\exc^{\ev}(A)}$.
\textup{(}Hence $\gamma\mapsto \gamma+\psi_A(\gamma)\colon \Gamma\setminus\exc^{\ev}(A)\to\Gamma$ is surjective.\textup{)}
\end{lemma}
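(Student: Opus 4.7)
The plan is to split into two cases according to whether $\exc^{\ev}(A)$ is empty or a singleton $\{\gamma_0\}$; these are the only possibilities, since $\exc^{\ev}(A)$ has at most one element for a first-order $A=\der-g$. In either case, the task reduces to exhibiting $\gamma \in \Gamma\setminus\exc^{\ev}(A)$ with $\gamma+\psi_A(\gamma)=vf$, because then any $y \in K^\times$ with $vy=\gamma$ satisfies $v(A(y))=vf$ by Proposition~\ref{prop:slow}, so $A(y)\asymp f$ while $vy\notin\exc^{\ev}(A)$.

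If $\exc^{\ev}(A)=\emptyset$, I would simply invoke [ADH, 11.6.17], which gives $v(A(K))=\Gamma_\infty$ as a consequence of $\upl$-freeness; so some $y\in K^\times$ has $v(A(y))=vf$, and then $vy\notin\exc^{\ev}(A)$ holds vacuously.

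The substantive case is $\exc^{\ev}(A)=\{\gamma_0\}$. Here I would first identify $\psi_A$ explicitly on $\Gamma\setminus\{\gamma_0\}$. Fix $y_0 \in K^\times$ with $vy_0=\gamma_0$ and $v(y_0^\dagger-g)>\Psi$, which exists by the description of $\exc^{\ev}(A)$ for $A=\der-g$ recalled at the start of the subsection. For any $y \in K^\times$ with $vy=\gamma\neq\gamma_0$, write
\[
y^\dagger-g\ =\ (y/y_0)^\dagger + (y_0^\dagger-g),
\]
and observe that $v\bigl((y/y_0)^\dagger\bigr)=\psi(\gamma-\gamma_0)\in\Psi$ while $v(y_0^\dagger-g)>\Psi$. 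Hence $v(y^\dagger-g)=\psi(\gamma-\gamma_0)$, and therefore
\[
v\bigl(A(y)\bigr)\ =\ \gamma+\psi(\gamma-\gamma_0),
\]
so $\psi_A(\gamma)=\psi(\gamma-\gamma_0)$ on $\Gamma\setminus\{\gamma_0\}$.

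It then remains to solve $\delta+\psi(\delta)=vf-\gamma_0$ for some $\delta\in\Gamma^{\neq}$ and set $\gamma:=\gamma_0+\delta$, which automatically differs from $\gamma_0$. This is where asymptotic integration enters: the image of $\delta\mapsto\delta+\psi(\delta)$ on $\Gamma^{\neq}$ is $(\Gamma^{\neq})'=\Gamma$, since $K$ has asymptotic integration as a consequence of $\upl$-freeness via [ADH, 11.6.8]. The main obstacle in executing this plan is the identification $\psi_A(\gamma)=\psi(\gamma-\gamma_0)$ in the singleton case; once one sees the additive decomposition of $y^\dagger-g$ and compares valuations against $\Psi$, the remainder is bookkeeping.
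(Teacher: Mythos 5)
Your argument is correct, but it is not the route the paper takes. The paper's proof is a three-line argument with no case distinction: it invokes the eventual form of [ADH, 11.6.17] to get $y\in K^\times$ with $A^\phi y\asymp f$ eventually, writes $A^\phi y=\phi y\derdelta-(g-y^\dagger)y$ in $K^\phi[\derdelta]$, reads off from $v(A^\phi y)=vf$ that $g-y^\dagger\succ\phi$ eventually (so $vy\notin\exc^{\ev}(A)$), and then $A(y)\asymp A^\phi y\asymp f$ follows from the key fact recorded at the start of the section. You instead split on $\abs{\exc^{\ev}(A)}\in\{0,1\}$, use the plain formulation $v\big(A(K)\big)=\Gamma_\infty$ of [ADH, 11.6.17] when $\exc^{\ev}(A)=\emptyset$, and in the singleton case $\exc^{\ev}(A)=\{\gamma_0\}$ compute $v\big(A(y)\big)=vy+\psi(vy-\gamma_0)$ directly from $A(y)=y(y^\dagger-g)$ and the witness $y_0$ with $v(y_0^\dagger-g)>\Psi$, then solve $\delta'=vf-\gamma_0$ using $(\Gamma^{\neq})'=\Gamma$. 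All steps check out: the decomposition $y^\dagger-g=(y/y_0)^\dagger+(y_0^\dagger-g)$ with $\psi(\gamma-\gamma_0)\in\Psi<v(y_0^\dagger-g)$ is valid, and surjectivity of $\delta\mapsto\delta+\psi(\delta)$ on $\Gamma^{\neq}$ is exactly asymptotic integration, which is a standing hypothesis in this section (so your appeal to [ADH, 11.6.8] is harmless but not even needed). What your route buys: it makes $\psi_A$ explicit in the exceptional case, extending the Example after Proposition~\ref{prop:slow} from $g\in K^\dagger$ to any $A$ with $\exc^{\ev}(A)\neq\emptyset$, and it isolates where $\upl$-freeness is really used — only when $\exc^{\ev}(A)=\emptyset$; when $\exc^{\ev}(A)\neq\emptyset$ asymptotic integration alone suffices. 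What the paper's route buys: brevity, no case split, and no need to identify $\psi_A$, at the cost of relying on the ``eventual'' machinery ($A(y)\asymp A^\phi y$ for non-exceptional $vy$).
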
}
\begin{proof}{} [ADH, 11.6.17] gives $y\in K^\times$ with $A^\phi y\asymp f$ eventually. Now
$$A^\phi y\ =\ \phi y\derdelta-(g-y^\dagger)y\ \text{ in }K^\phi[\derdelta],\qquad \derdelta:=\phi^{-1}\der.$$ 
Since $v(A^{\phi} y)=vf$ eventually, this forces $g-y^\dagger\succ\phi$ eventually, so $vy\notin \exc^{\ev}(A)$.
\end{proof}

\noindent
Call $A$ {\bf steep} if $g\succ^\flat 1$, that is, $g\succ 1$ and $g^\dagger\succeq 1$. If $K$ has small derivation and
$A$ is steep, then $g^\dagger\prec g$ by [ADH, 9.2.10]. \index{steep!linear differential operator}\index{linear differential operator!steep}

\begin{lemma}\label{lem:order1, 2}
Suppose $K$ has small derivation,  $A$ is steep, and $y\in K^\times$ such that~$A(y)=f\ne 0$, $g \succ f^\dagger$, and $vy\notin \exc^{\ev}(A)$. Then $y\sim -f/g$.
\end{lemma}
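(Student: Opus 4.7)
I would set $u := y^\dagger - g = f/y \in K^\times$ and observe that the desired conclusion $y \sim -f/g$ is equivalent to $u \sim -g$: unfolding $\sim$ gives $y + f/g \prec y$, i.e.\ $y'/g \prec y$, i.e.\ $y^\dagger \prec g$, which amounts to $u = y^\dagger - g \sim -g$. Taking logarithmic derivatives in $u = f/y$ yields $u^\dagger = f^\dagger - y^\dagger = f^\dagger - g - u$, i.e.
\[
u + u^\dagger\ =\ f^\dagger - g\ \sim\ -g,
\]
using the hypothesis $f^\dagger \prec g$. From this single identity I would then rule out every configuration other than $u \sim -g$ by cases on the comparison of $u$ with $g$.

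If $u \succ g$, then $u \succ 1$, so $v(u^\dagger) = \psi(vu)$ with $\psi(vu) = o(vu)$ by [ADH, 9.2.10(iv)]; hence $u^\dagger \prec u$ and $u + u^\dagger \sim u$, forcing $u \sim -g$, contrary to $u \succ g$. If $u \asymp g$ but $u \not\sim -g$, then $u + g \asymp g$, so from $u^\dagger = f^\dagger - (u+g)$ and $f^\dagger \prec g$ we get $u^\dagger \asymp g$, whence $v(u') = vu + v(u^\dagger) = 2vg$. But $u \asymp g$ with $g \not\asymp 1$ forces $u' \asymp g'$ by [ADH, 9.1.4], giving $v(u') = vg + \psi(vg)$; equating yields $\psi(vg) = vg$, which contradicts $\psi(vg) = o(vg)$ since $vg \neq 0$.

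The remaining case $u \prec g$ is where the hypothesis $vy \notin \exc^{\ev}(A)$ enters. Here $u + u^\dagger \sim -g$ combined with $u \prec g$ forces $u^\dagger \sim -g$. Small derivation rules out $u \asymp 1$ (otherwise $u \in \mathcal O^\times$ would give $u' \preceq 1$, but $v(u') = v(u^\dagger) = vg < 0$), so $vu \neq 0$ and $\psi(vu) = v(u^\dagger) = vg$. Then $\psi(vu) = o(vu)$ gives $[vg] < [vu]$, so $|vu| \gg |vg|$; combined with $vu > vg$ and $vg < 0$, this forces $vu > 0$. To see $v(g - y^\dagger) = vu > \Psi$, take any $\gamma \in \Gamma^{\neq}$: if $[\gamma] \geq [vu]$, then $\psi(\gamma) \leq \psi(vu) = vg < 0 < vu$; if $[\gamma] < [vu]$, then $[\psi(\gamma)] < [\gamma] < [vu]$, so $|\psi(\gamma)| \ll vu$ and a fortiori $\psi(\gamma) < vu$. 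Hence $vu > \Psi$, which by the characterization of $\exc^{\ev}(A)$ recalled on [ADH, p.~481] places $vy$ in $\exc^{\ev}(A)$, contradicting the hypothesis.

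The main obstacle is this third case: the relations between $u$, $g$, and the archimedean class structure of $\Psi$ must be tracked carefully in order to invoke the defining condition of $\exc^{\ev}(A)$ for first-order operators. The first two cases are dispatched by elementary manipulations with the identity $u + u^\dagger \sim -g$.
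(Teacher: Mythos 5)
Your proof is correct, and it takes a genuinely different route from the one in the paper. The paper proceeds by constructing the candidate $-f/g$, checking that $v(f/g)\notin\exc^{\ev}(A)$ via $(f/g)^\dagger-g\sim -g\succ g^\dagger$, computing $A(-f/g)\sim f$, and then invoking Proposition~\ref{prop:slow} (the existence of the slowly varying $\psi_A$) to conclude from $A(y)\asymp A(-f/g)$ that $y\asymp f/g$; the refinement $y\sim -f/g$ then follows from $y^\dagger\prec g$. Your argument instead works directly with the Riccati-type quantity $u=y^\dagger-g=f/y$ and the identity $u+u^\dagger=f^\dagger-g\sim -g$, eliminating the configurations $u\succ g$, $u\asymp g$ with $u\not\sim -g$, and $u\prec g$ by elementary asymptotic-couple estimates; the hypothesis $vy\notin\exc^{\ev}(A)$ is used only to kill the third case, through the characterization $\exc^{\ev}(\der-g)=\{vz: v(g-z^\dagger)>\Psi\}$. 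The net effect is that you bypass Proposition~\ref{prop:slow} entirely, at the cost of a longer case analysis and a careful archimedean-class computation in the last case; the paper's proof is shorter and more conceptual but leans on the machinery of asymptotically slowly varying functions. Both use the same facts about $\psi$ ($\psi(\gamma)=o(\gamma)$ for $\gamma\ne 0$, monotonicity on archimedean classes, equivalently $g^\dagger\prec g$ for steep $A$) in essentially equivalent ways.
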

\begin{proof}
We have $$(f/g)^\dagger-g=f^\dagger-g^\dagger-g\sim-g \succ g^\dagger,$$ hence $v(f/g)\notin \exc^{\ev}(A)$, and
$$A(f/g)\ =\ (f/g)'-(f/g)g\ =\  (f/g)\cdot \big( f^\dagger-g^\dagger-g \big)\ \sim\ (f/g)\cdot (-g)\ =\ -f.$$
Since $A(y)=f\sim A(-f/g)$ and $vy,v(f/g)\in\Gamma\setminus\exc^{\ev}(A)$, this gives $y = u\cdot f/g$ where $u\asymp 1$, by Proposition~\ref{prop:slow}.
Now $u^\dagger \prec 1\prec g$ and $(f/g)^\dagger=f^\dagger-g^\dagger\prec g$, 
hence~$y^\dagger\prec g$ and so $$f=A(y)=y\cdot(y^\dagger-g)\sim - y g.$$
Therefore $y\sim -f/g$.
\end{proof}

\begin{lemma}\label{prlemexc}
Suppose $K$ has small derivation and $y\in K^\times$ is such that $A(y)=f\ne 0$, $g-f^\dagger\succ^\flat 1$ and $vy\notin\exc^{\ev}(A)$.  Then
$y\sim f/(f^\dagger-g)$.
\end{lemma}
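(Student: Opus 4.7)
The plan is to reduce this to the already-proved Lemma~\ref{lem:order1, 2} by a multiplicative change of variables that turns the right-hand side $f$ into $1$ and converts the coefficient $g$ into $g-f^\dagger$. Concretely, set $z:=y/f\in K^\times$. A direct computation from $y'-gy=f$ gives
\[
z'-(g-f^\dagger)z\ =\ 1,
\]
so $B(z)=1$ where $B:=\der-h$ with $h:=g-f^\dagger$. This is exactly the twist identity $A_{\ltimes f}=\der-(g-f^\dagger)=B$, which I would state as the organizing remark of the proof.

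Next I would verify that Lemma~\ref{lem:order1, 2} applies to $B$, the right-hand side $1$, and the solution $z$. The hypothesis ``$B$ is steep'' is the assumption $h=g-f^\dagger\succ^\flat 1$. The hypothesis ``$h\succ 1^\dagger$'' is immediate since $1^\dagger=0$ and $h\succ 1$. The only non-immediate check is the exceptional-value condition: I need $vz\notin\exc^{\ev}(B)$. For this I would argue that $\exc^{\ev}(B)=\exc^{\ev}(A)-vf$, either by appealing directly to the description
\[
\exc^{\ev}(\der-g)\ =\ \{vy:y\in K^\times,\ v(g-y^\dagger)>\Psi\}
\]
from [ADH, p.~481] (so $vz=vu-vf$ lies in $\exc^{\ev}(B)$ iff $vu\in\exc^{\ev}(A)$ for $u=fz$), or by combining the identity $A_{\ltimes f}=B$ with the twist rule $\exc^{\ev}(Aa)=\exc^{\ev}(A)-va$ from earlier in this section. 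Since by hypothesis $vy\notin\exc^{\ev}(A)$ and $vz=vy-vf$, this gives $vz\notin\exc^{\ev}(B)$.

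With all hypotheses of Lemma~\ref{lem:order1, 2} verified, I conclude $z\sim -1/h=-1/(g-f^\dagger)=1/(f^\dagger-g)$. Multiplying by $f$ (which preserves $\sim$) yields $y=fz\sim f/(f^\dagger-g)$, as desired.

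I do not expect any real obstacle here: the substitution $z=y/f$ is forced by the wish to normalize the right-hand side, and once one recognizes that this substitution is exactly the twist of $A$ by $f$, the hypotheses transfer cleanly (steepness of $B$ from $g-f^\dagger\succ^\flat 1$, and $vz\notin\exc^{\ev}(B)$ from $vy\notin\exc^{\ev}(A)$). The only bookkeeping point worth stating carefully is the identification $\exc^{\ev}(A_{\ltimes f})=\exc^{\ev}(A)-vf$, which is a one-line consequence of the explicit description of $\exc^{\ev}$ for first-order operators.
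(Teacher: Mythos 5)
Your proof is correct and follows essentially the same route as the paper: the paper also twists by $f$ (i.e.\ passes to $A_{\ltimes f}=\der-(g-f^\dagger)$ and the solution $y/f$ of $A_{\ltimes f}(y/f)=1$), notes $\exc^{\ev}(A_{\ltimes f})=\exc^{\ev}(A)-vf$, and applies Lemma~\ref{lem:order1, 2} with $1$ in the role of $f$. The paper's proof records the additional (but not essential for your argument) observations that $vf\notin\exc^{\ev}(A)$ and $y\prec f$; otherwise the two arguments coincide.
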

\begin{proof} From $g-f^\dagger\succ 1$ we get $vf\notin\exc^{\ev}(A)$. Now $A(y)=f\prec f(f^\dagger-g)=A(f)$, so $y\prec f$ by [ADH, 5.6.8], and
$v(y/f)\notin\exc^{\ev}(A_{\ltimes f})=\exc^{\ev}(A)-vf$. Since $A_{\ltimes f}=\der-(g-f^\dagger)$ is steep,
Lemma~\ref{lem:order1, 2} applies to $A_{\ltimes f}$, $y/f$, $1$ in the role of $A$, $y$, $f$.
\end{proof}

\noindent
Suppose $K$ is $\upl$-free and $f\ne 0$.  Then [ADH, 11.6.1] gives an active~$\phi_0$ in $K$  with ${f^\dagger-g-\phi^\dagger}\succeq \phi_0$ 
for all $\phi\prec\phi_0$. The convex subgroups $\Gamma^\flat_{\phi}$ of $\Gamma$ become arbitrarily small as we let $v\phi$ increase cofinally
in $\Psi^{\downarrow}$, so $\phi \prec^\flat_\phi \phi_0$ eventually, and hence~$f^\dagger-g-\phi^\dagger \succ^\flat_\phi \phi$ eventually, that is,
$\phi^{-1}(f/\phi)^\dagger-g/\phi\succ^\flat_\phi 1$ eventually. So replacing $K$ by $K^\phi$, $A$ by 
$\phi^{-1} A^\phi=\derdelta-(g/\phi)$ in $K^\phi[\derdelta]$,
and $f$ and $g$ by~$f/\phi$ and~$g/\phi$, for suitable $\phi$, we arrange $f^\dagger-g\succ^\flat 1$. 
Thus by Lemma~\ref{prlemexc}:

\begin{cor}\label{cor:prlemexc}
If $K$ is $\upl$-free, $y\in K^\times$, $A(y)=f\ne 0$, and $vy\notin\exc^{\ev}(A)$, then~$y\sim f/\big((f/\phi)^\dagger-g\big)$,  eventually.
\end{cor}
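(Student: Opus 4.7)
The plan is to apply Lemma~\ref{prlemexc} after the compositional conjugation sketched in the paragraph immediately preceding the corollary. Fix $\phi$ active in $K$ with $\phi \prec \phi_0$ as described there, and work in $K^\phi$ with derivation $\derdelta = \phi^{-1}\der$. Let
\[
\tilde A\ :=\ \phi^{-1}A^\phi\ =\ \derdelta - (g/\phi)\ \in\ K^\phi[\derdelta],\qquad \tilde g\ :=\ g/\phi,\qquad \tilde f\ :=\ f/\phi.
\]
Then $\tilde A(y) = \phi^{-1}A(y) = \tilde f \ne 0$, and $K^\phi$ has small derivation since $\phi$ is active in $K$. Writing $\tilde f^{\,\tilde\dagger} := \derdelta(\tilde f)/\tilde f = \phi^{-1}(f/\phi)^\dagger$, the identity $(f/\phi)^\dagger=f^\dagger-\phi^\dagger$ yields
\[
\tilde f^{\,\tilde\dagger} - \tilde g\ =\ \phi^{-1}\big(f^\dagger - g - \phi^\dagger\big),
\]
so the estimate $f^\dagger - g - \phi^\dagger \succ^\flat_\phi \phi$ from the paragraph before the corollary translates precisely into $\tilde f^{\,\tilde\dagger} - \tilde g \succ^\flat 1$ in $K^\phi$ (eventually in $\phi$).

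Next I would verify that $vy \notin \exc^{\ev}_{K^\phi}(\tilde A)$. Using the identities $\exc^{\ev}(aA)=\exc^{\ev}(A)$ for $a\in K^\times$ and the compatibility of eventual exceptional values with compositional conjugation (writing $\exc^{\ev}_{K^\phi}(\tilde A) = \bigcap_{\psi} \exc_{K^\phi}(\tilde A^\psi)$, where $\psi$ runs through active elements of $K^\phi$, and noting that then $\phi\psi$ runs cofinally through the active elements of $K$ below $\phi$), one sees that $\exc^{\ev}_{K^\phi}(\tilde A) = \exc^{\ev}_K(A)$, so the hypothesis $vy \notin \exc^{\ev}(A)$ passes over.

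Now apply Lemma~\ref{prlemexc} in $K^\phi$ to $\tilde A$, $y$, $\tilde f$ in place of $A$, $y$, $f$. The conclusion yields
\[
y\ \sim\ \frac{\tilde f}{\tilde f^{\,\tilde\dagger} - \tilde g}\ =\ \frac{f/\phi}{\phi^{-1}\big((f/\phi)^\dagger - g\big)}\ =\ \frac{f}{(f/\phi)^\dagger - g},
\]
which is exactly the asserted eventual equivalence.

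The routine part is the algebraic manipulation and the invocation of the already established Lemma~\ref{prlemexc}; the main technical point is the bookkeeping that turns $\succ^\flat_\phi$ in $K$ into $\succ^\flat$ in $K^\phi$ after division by $\phi$, together with the verification that $vy \notin \exc^{\ev}(\tilde A)$ in $K^\phi$. Both essentially amount to unraveling definitions, so no serious obstacle is expected beyond careful chasing of the compositional-conjugation identities.
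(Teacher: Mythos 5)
Your proposal is correct and takes the same route as the paper: the paper proves the corollary precisely by the compositional conjugation laid out in the paragraph immediately before it (replacing $K$, $A$, $f$, $g$ by $K^\phi$, $\phi^{-1}A^\phi$, $f/\phi$, $g/\phi$ to arrange $f^\dagger - g \succ^\flat 1$) and then quoting Lemma~\ref{prlemexc}. Your unwinding of the flattening identities $\Gamma^\flat(K^\phi)=\Gamma^\flat_\phi$ and of the invariance $\exc^{\ev}_{K^\phi}(\phi^{-1}A^\phi)=\exc^{\ev}_K(A)$ merely makes explicit what the paper treats as routine bookkeeping.
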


\begin{example}
If $K$ is $\upl$-free and $y\in K$, $y'=f\ne 0$ with $y\nasymp 1$, then $y\sim f/(f/\phi)^\dagger$, eventually.
\end{example}

\subsection*{From $K$ to $K[\imag]$} {\em In this subsection $K$ is a real closed $H$-field}. 
Then $K[\imag]$ (${\imag^2=-1}$) is an $H$-asymptotic extension of $K$, with $\Gamma_{K[\imag]}=\Gamma$.
Consider a linear differential operator $B=\der-(g+h\imag)$ over $K[\imag]$.
Note that $g+h\imag\in K[\imag]^\dagger$ iff $g\in K^\dagger$ and~${h\imag\in K[\imag]^\dagger}$, by
Lemma~\ref{lem:logder}.
Under further assumptions on $K$, the next two results give explicit descriptions of $\psi_B$ when $g\in K^\dagger$. 

\begin{prop}\label{prop:psiB} Suppose $K[\imag]$ is $1$-linearly newtonian and $g\in K^\dagger$. Then: \begin{enumerate}
\item[$\rm(i)$] if $h\imag\in K[\imag]^\dagger$, then for some $\beta\in \Gamma$ we have $$\exc^{\ev}(B)\ =\ \{\beta\}, \qquad \psi_B(\gamma)\ =\ \psi(\gamma-\beta)\ \text{ for all }\gamma\in\Gamma\setminus\{\beta\};$$
\item[$\rm(ii)$] if $h\imag\notin K[\imag]^\dagger$ and $g=b^\dagger$, 
$b\ne 0$, then 
$$\exc^{\ev}(B)\ =\ \emptyset, \qquad 
\psi_B(\gamma)\ =\ \min\!\big( \psi(\gamma-vb),vh\big)\  \text{ for all }\gamma\in\Gamma.$$
\end{enumerate}
\end{prop}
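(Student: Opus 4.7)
\medskip

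\noindent
\textbf{Proof plan.} The strategy is to reduce each case to a computation for a suitable twist $B_{\ltimes u}$ (with $u\in K[\imag]^\times$), exploiting the transformation rules $\exc^{\ev}(A_{\ltimes u})=\exc^{\ev}(A)-vu$ and $\psi_{A_{\ltimes u}}(\gamma)=\psi_A(\gamma+vu)$, which follow routinely from those recorded for $\exc^{\ev}(Aa)$ together with the identity $A_{\ltimes u}(z)=u^{-1}A(uz)$. A key preliminary: since $K$ is a real closed $H$-field (hence $H$-asymptotic with asymptotic integration) and $K[\imag]$ is $1$-linearly newtonian, Corollary~\ref{cor:logder} gives $\I(K[\imag])\subseteq K[\imag]^\dagger$; then Lemma~\ref{lem:v(ker)=exc, r=1} yields $v(\ker^{\neq}\widetilde B)=\exc^{\ev}(\widetilde B)$ for any first-order operator $\widetilde B\in K[\imag][\der]$.

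For part (i), the hypotheses give $g+h\imag=y^\dagger$ for some $y\in K[\imag]^\times$, since $g\in K^\dagger\subseteq K[\imag]^\dagger$ and $h\imag\in K[\imag]^\dagger$. A direct calculation then yields $B_{\ltimes y}=\der$. Combined with $\exc^{\ev}(\der)=\{0\}$ and the transformation rules, this gives $\exc^{\ev}(B)=\{vy\}$, so set $\beta:=vy$. Writing $B=y\der y^{-1}$, one computes $v\big(B(z)\big)=vy+v\big((z/y)'\big)$ for $z\in K[\imag]^\times$; for $vz\neq\beta$ this simplifies to $vz+\psi(vz-\beta)$, whence $\psi_B(\gamma)=\psi(\gamma-\beta)$ on $\Gamma\setminus\{\beta\}$.

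For part (ii), twist by $b$ to obtain $B':=B_{\ltimes b}=\der-h\imag$ (using $g=b^\dagger$). To show $\exc^{\ev}(B')=\emptyset$, argue by contradiction: if $\gamma\in\exc^{\ev}(B')$, the preliminary observation furnishes $u\in\ker^{\neq}B'$ with $vu=\gamma$, so $u^\dagger=h\imag$, forcing $h\imag\in K[\imag]^\dagger$ against hypothesis. Hence $\exc^{\ev}(B)=\exc^{\ev}(B')+vb=\emptyset$. To compute $\psi_{B'}(\gamma)$, use that $K$ is real closed, so $v(K^\times)=\Gamma$: pick $u\in K^\times$ with $vu=\gamma$ (or $u=1$ if $\gamma=0$). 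Then $B'(u)=u(u^\dagger-h\imag)$, and since $u^\dagger\in K$ and $-h\imag\in K\imag$ lie in complementary $K$-summands of $K[\imag]$, we obtain $v(u^\dagger-h\imag)=\min(vu^\dagger,vh)=\min(\psi(\gamma),vh)$ (with the convention $\psi(0)=+\infty$). Translating back by $vb$ yields the announced formula for $\psi_B$.

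The main technical subtlety is in part (ii): we must justify that the value computed from a specific $u\in K^\times$ equals the slowly varying function $\psi_{B'}$ from Proposition~\ref{prop:slow}. This is exactly what the uniqueness clause of that proposition ensures, so once we have chosen the most convenient $u$ the orthogonality of the $K\oplus K\imag$ decomposition finishes the estimate. All other steps are routine manipulations with the twist formulas and the definitions of $\exc^{\ev}$ and $\psi_A$.
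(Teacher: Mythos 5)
Your proof is correct. Part (i) is the same argument as the paper's: both reduce to the example following Proposition~\ref{prop:slow} by observing that $g+h\imag\in K[\imag]^\dagger$ under the hypotheses; you re-derive that example explicitly while the paper simply cites it. Part (ii) is valid but organized differently from the paper's. The paper makes a single computation: using Corollary~\ref{cor:logder} to write $K[\imag]^\dagger=K^\dagger\oplus\I(K)\imag$, it decomposes $y^\dagger=z^\dagger+s\imag$ for arbitrary $y\in K[\imag]^\times$ and shows $v(y^\dagger-h\imag)=\min(\psi(vy),vh)\in\Psi^{\downarrow}$, from which both $\exc^{\ev}(B)=\emptyset$ and the formula for $\psi_B$ fall out simultaneously. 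You instead split this into two steps: first, $\exc^{\ev}(B')=\emptyset$ by contradiction via the kernel identity $v(\ker^{\neq}B')=\exc^{\ev}(B')$ of Lemma~\ref{lem:v(ker)=exc, r=1} (which requires $\I(K[\imag])\subseteq K[\imag]^\dagger$, correctly obtained from Corollary~\ref{cor:logder}); then the $\psi_{B'}$ formula by evaluating on a real representative $u\in K^\times$ and exploiting that $u^\dagger\in K$ and $h\imag\in K\imag$ avoid cancellation. Both routes are sound; your step of picking real $u$ is licensed by $\Gamma_{K[\imag]}=\Gamma=v(K^\times)$ and the well-definedness asserted in Proposition~\ref{prop:slow}, though it is really the \emph{existence} clause (which asserts $v(B'(y))$ depends only on $vy$) rather than the uniqueness clause that does the work here — a small imprecision in your closing remark. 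One tiny caveat: your ``convention $\psi(0)=+\infty$'' is harmless for the stated result because $vh\in\Psi^{\downarrow}<v(u^\dagger)$ for $u\asymp 1$, but it is worth noting that $v(u^\dagger)$ is finite (not $+\infty$) for non-constant $u\asymp 1$; the computation still works because the minimum is nonetheless $vh$.
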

\begin{proof} As to (i), apply the example following Proposition~\ref{prop:slow} to $K[\imag]$, $B$, $g+h\imag$ in the roles
of $K$, $A$, $g$. For (ii), assume $h\imag\notin K[\imag]^\dagger$, $g=b^\dagger$, $b\ne 0$. 
Replacing $B$ by~$B_{\ltimes b}$ we arrange $g=0$, $b=1$, $B=\der-h\imag$.
Corollary~\ref{cor:logder} gives $K[\imag]^\dagger=K^\dagger\oplus \I(K)\imag$, so~$h\notin \I(K)$, and thus $vh\in \Psi^{\downarrow}$.
Let $y\in K[\imag]^\times$, and take  $z\in K^\times$ and $s\in\I(K)$ with $y^\dagger=z^\dagger+s\imag$. Then
$vh<vs$, hence
$$v(y^\dagger-h\imag)\ =\ \min\!\big( v(z^\dagger), v(s-h) \big)\ =\ \min\!\big( v(z^\dagger), vs, vh \big)\ =\
 \min\!\big( v(y^\dagger), vh \big),$$
 where the last equality uses $v(y^\dagger)=\min \big(v(z^\dagger), vs\big)$. Thus $v(y^\dagger-h\imag)\in \Psi^{\downarrow}$ and
$$v\big(B(y)\big)-vy\ =\ v(y^\dagger-h\imag)\ =\ \min\!\big(v(y^\dagger),vh\big)\ =\ \min\!\big(\psi(vy),vh\big),$$
which gives the desired result.
\end{proof}

\begin{cor}\label{cor:psiB} Suppose $K$ is $\upo$-free, $g\in K^\dagger$, $g=b^\dagger$, $b\ne 0$. Then either for some $\beta\in \Gamma$ we have $\exc^{\ev}(B) = \{\beta\}$ and $\psi_B(\gamma) = \psi(\gamma-\beta)$ for all $\gamma\in\Gamma\setminus\{\beta\}$, or~$\exc^{\ev}(B) = \emptyset$ and
$\psi_B(\gamma) =\min\!\big( \psi(\gamma-vb),vh\big)$  for all $\gamma\in\Gamma$.
\end{cor}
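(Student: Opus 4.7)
The plan is to deduce this corollary from Proposition~\ref{prop:psiB} by first passing to an immediate $H$-field extension $\tilde K$ of $K$ for which $\tilde K[\imag]$ is $1$-linearly newtonian, and then transferring the conclusion back to $K[\imag]$ via Lemma~\ref{lemexc, order 1} and Proposition~\ref{prop:slow}.

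First I would construct $\tilde K$. Since $K$ is $\upo$-free, \eqref{eq:14.0.1} provides an immediate $\d$-algebraic newtonian $\upo$-free $H$-asymptotic extension $L_0$ of $K$. Taking the real closure $\tilde K$ of $L_0$ (with the induced valuation and derivation) keeps $\tilde K$ immediate over $K$, because $\Gamma$ is already divisible and $\res K$ is already real closed. By \eqref{eq:14.5.7}, $\tilde K$ remains newtonian, and by Theorem~\ref{thm:ADH 13.6.1} it remains $\upo$-free; moreover it is $\d$-valued as an algebraic extension of the $\d$-valued $L_0$ (which is $\d$-valued by Lemma~\ref{lem:ADH 14.2.5}). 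Applying \eqref{eq:14.5.7} once more to the quadratic extension $\tilde K[\imag]\supseteq\tilde K$ makes $\tilde K[\imag]$ newtonian, hence in particular $1$-linearly newtonian, and $\tilde K[\imag]$ is an immediate ungrounded $H$-asymptotic extension of $K[\imag]$ with $\Gamma_{\tilde K[\imag]}=\Gamma$.

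Next I would apply Proposition~\ref{prop:psiB} to $\tilde K$ in place of $K$. Since $g=b^\dagger$ with $b\in K^\times\subseteq\tilde K^\times$, the hypotheses are satisfied. Either $h\imag\in\tilde K[\imag]^\dagger$, in which case clause~(i) of that proposition yields $\exc^{\ev}_{\tilde K[\imag]}(B)=\{\beta\}$ for some $\beta\in\Gamma_{\tilde K}=\Gamma$ and $\psi_B(\gamma)=\psi_{\tilde K}(\gamma-\beta)$ for all $\gamma\in\Gamma\setminus\{\beta\}$; or $h\imag\notin\tilde K[\imag]^\dagger$, in which case clause~(ii) applies (using $g=b^\dagger$) and yields $\exc^{\ev}_{\tilde K[\imag]}(B)=\emptyset$ together with $\psi_B(\gamma)=\min\!\big(\psi_{\tilde K}(\gamma-vb),\,v_{\tilde K}h\big)$ for all $\gamma\in\Gamma$.

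Finally I would transfer the conclusion down to $K[\imag]$. Lemma~\ref{lemexc, order 1}, applied to $B$ over $K[\imag]$ with the ungrounded $H$-asymptotic extension $\tilde K[\imag]$, gives $\exc^{\ev}_{K[\imag]}(B)=\exc^{\ev}_{\tilde K[\imag]}(B)\cap\Gamma$, which is $\{\beta\}$ or $\emptyset$ in the two respective cases. The function $\psi_B$ is pinned down by $v\bigl(B(y)\bigr)=vy+\psi_B(vy)$ for $y\in K[\imag]^\times$ with $vy\notin\exc^{\ev}(B)$ (Proposition~\ref{prop:slow}); because $B(y)$ is computed identically in $K[\imag]$ and $\tilde K[\imag]$, and $\psi_{\tilde K}$ restricted to $\Gamma$ coincides with $\psi$ by immediacy, the formulas for $\psi_B$ transfer verbatim (using also $v_{\tilde K}h=vh$ as $h\in K$). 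The main technical point is the setup in the second paragraph, namely verifying that the real closure $\tilde K$ of $L_0$ is still immediate, $\upo$-free, and newtonian and carries a compatible $H$-field structure so that Proposition~\ref{prop:psiB} becomes applicable; once that is in hand the remaining transfer is mechanical.
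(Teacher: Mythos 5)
Your proposal is correct and follows essentially the same route as the paper: pass via \eqref{eq:14.0.1} to an immediate newtonian $\upo$-free extension of $K$, observe that it is still a real closed $H$-field whose algebraic closure is newtonian by \eqref{eq:14.5.7}, and apply Proposition~\ref{prop:psiB} there, the conclusions transferring back unchanged because the extension is immediate. The only inessential detour is taking a real closure of $L_0$: by [ADH, 10.5.8, 3.5.19] the immediate newtonian extension is already a real closed $H$-field, so that step (and the accompanying immediacy check) is vacuous, which is exactly how the paper handles it.
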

\begin{proof}By \eqref{eq:14.0.1}  
we have an immediate newtonian extension $L$ of~$K$. Then $L$ is still a real closed $H$-field   [ADH, 10.5.8, 3.5.19], and $L[\imag]$ is newtonian by \eqref{eq:14.5.7}, so Proposition~\ref{prop:psiB} applies to $L$ in place of $K$. 
\end{proof}

\subsection*{Higher-order operators} We begin with the following observation:

\begin{lemma}\label{lem:exce product}
Let $B\in K[\der]^{\neq}$ and $\gamma\in\Gamma$. Then
$\nwt_{AB}(\gamma)\geq \nwt_B(\gamma)$, and
$$ \gamma\notin\exc^{\ev}(B)\ \Longrightarrow\ 
 \nwt_{AB}(\gamma)\ =\ \nwt_A\!\big( v^{\ev}_B(\gamma) \big) \text{ and }\ v^{\ev}_{AB}(\gamma)\ =\ v^{\ev}_A\big(v^{\ev}_B(\gamma)\big).$$
\end{lemma}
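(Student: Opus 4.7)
The plan is to work inside the Ore ring $K^\phi[\derdelta]$ for $\phi$ active and cofinally small. Fix $y\in K^\times$ with $vy=\gamma$ and set $D_\phi:=B^\phi\cdot y\in K^\phi[\derdelta]$ (operator product). The ring isomorphism $K[\der]\to K^\phi[\derdelta]$ gives $(AB)^\phi = A^\phi B^\phi$, so $(AB)^\phi\cdot y = A^\phi\cdot D_\phi$. By the definitions of the eventual quantities, the identities
\begin{align*}
v(D_\phi)\  &=\  v_B^{\ev}(\gamma)+\nwt_B(\gamma)v\phi,  &  \dwt(D_\phi)\  &=\  \nwt_B(\gamma), \\
v(A^\phi D_\phi)\  &=\  v_{AB}^{\ev}(\gamma)+\nwt_{AB}(\gamma)v\phi,  &  \dwt(A^\phi D_\phi)\  &=\  \nwt_{AB}(\gamma)
\end{align*}
hold eventually, so the problem reduces to computing $v(A^\phi D_\phi)$ and $\dwt(A^\phi D_\phi)$ in terms of $A^\phi$ and $D_\phi$.

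For the inequality $\nwt_{AB}(\gamma)\ge\nwt_B(\gamma)$ I would establish the operator-algebra fact, valid in $K^\phi[\derdelta]$ with $\derdelta$ small, that left-multiplication by a nonzero operator cannot strictly lower the dominant weight: $\dwt(CE)\ge\dwt(E)$. Expanding $CE$ via $\derdelta^k d=\sum_{i}\binom{k}{i}\derdelta^i(d)\,\derdelta^{k-i}$, the coefficient of $\derdelta^{\dwt(C)+\dwt(E)}$ contains the leading-times-leading product $c_{\dwt C}e_{\dwt E}$ of valuation $v(C)+v(E)$, while all other terms contributing at this or higher levels have strictly greater valuation once $\phi$ is small enough to force $\derdelta$ to weakly raise the valuations of the finitely many coefficients $c_j, e_i$ involved. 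Hence $\dwt(CE)\ge\dwt(C)+\dwt(E)\ge\dwt(E)$, and applying this with $C=A^\phi$, $E=D_\phi$ and passing to eventual values yields the claim.

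For the equality under $\gamma\notin\exc^{\ev}(B)$, note $\nwt_B(\gamma)=0$, so eventually $\dwm(D_\phi)=\dwt(D_\phi)=0$: the constant coefficient of $D_\phi$, which a quick direct computation identifies as $B(y)$, strictly dominates all higher-order coefficients. Thus $v(B(y)) = v_B^{\ev}(\gamma)=:\delta$ and the coefficient $\beta_l$ of $\derdelta^l$ in $D_\phi$ satisfies $v(\beta_l)>\delta$ for $l\ge 1$. Writing
\[
A^\phi D_\phi\  =\  A^\phi\cdot B(y)\,+\,\sum_{l\ge 1}(A^\phi \beta_l)\derdelta^l,
\]
the first summand has $v=v_{A^\phi}(\delta)$ and $\dwt=\dwt_{A^\phi}(\delta)$ by the very definition of $v_{A^\phi}$ and $\dwt_{A^\phi}$, while each remaining summand has $v=v_{A^\phi}(v\beta_l)>v_{A^\phi}(\delta)$ by the strict monotonicity of $v_{A^\phi}$ on the complement of $\exc(A^\phi)$ (the eventual analogue of Lemma~\ref{lem:ADH 14.2.7}) combined with $v\beta_l>\delta$, once $\phi$ is small enough that $\exc(A^\phi)=\exc^{\ev}(A)$ and the relevant $v\beta_l$ lie outside this finite set. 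So eventually $v(A^\phi D_\phi)=v_{A^\phi}(\delta)$ and $\dwt(A^\phi D_\phi)=\dwt_{A^\phi}(\delta)$; substituting into the displayed identities and equating constant and $v\phi$-linear parts as $v\phi$ varies cofinally yields $v_{AB}^{\ev}(\gamma)=v_A^{\ev}(\delta)$ and $\nwt_{AB}(\gamma)=\nwt_A(\delta)$.

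The main obstacle is the operator-algebra bookkeeping behind the two estimates $\dwt(CE)\ge\dwt(E)$ and $v(A^\phi\beta_l)>v_{A^\phi}(\delta)$: both depend on controlling $v(\derdelta^k(c))$ for the finitely many coefficients $c$ appearing in $A^\phi$, $B^\phi$, and iterates of $\derdelta$ on $y$, and on ruling out that the $v(\beta_l)$ fall into the finite set $\exc^{\ev}(A)$. Both become tractable once $\phi$ is pushed far enough along the cofinal family of small active elements: the set $\exc(A^\phi)$ stabilizes at $\exc^{\ev}(A)$ and the relevant derivatives begin to behave predictably, at which point the rest of the argument is direct unravelling of the defining eventual identities.
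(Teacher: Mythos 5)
Your reduction to computing $v$ and $\dwt$ of the operator product $A^\phi D_\phi=(AB)^\phi y$ and then unwinding \eqref{eq:vAev} is the right skeleton, but both of your substitutes for the product formulas of [ADH, 5.6] are flawed. First, the inequality $\dwt(CE)\ge\dwt(C)+\dwt(E)$ is simply false: take $C=\derdelta$ and $E=y$ of order $0$ with $\derdelta y\succ y$; then $CE=y\derdelta+\derdelta(y)$ has $\dwt(CE)=0<1=\dwt(C)+\dwt(E)$. Moreover the mechanism you invoke to rule this out is backwards: for a fixed $c$ with $vc\ne 0$ we have $v(\derdelta c)=vc+\psi(vc)-v\phi$, and since $v\phi$ increases cofinally in $\Psi^{\downarrow}$ as $\phi$ shrinks, eventually $\derdelta c\succ c$ — shrinking $\phi$ does not make $\derdelta$ valuation-nondecreasing on a prescribed finite set (and the coefficients of $A^\phi$ and of $D_\phi$ vary with $\phi$ anyway). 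The true statement, and all that is needed, is $\dwt(CE)=\dwt_C\big(v(E)\big)+\dwt(E)\ge\dwt(E)$, valid for every $\phi$ with no eventual tricks; this, together with $v_{CE}=v_C\circ v_E$, is exactly what [ADH, 5.6] provides and what the paper's proof cites.

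Second, in the equality case your strict domination $v(A^\phi\beta_l)>v(A^\phi\beta_0)$ for $l\ge 1$ rests on the claim that for small enough $\phi$ one has $\exc(A^\phi)=\exc^{\ev}(A)$. That is false in general: each $\exc(A^\phi)$ is typically infinite (already $\exc(\derdelta)\supseteq\Gamma^{\flat}_\phi$), and only the intersection $\bigcap_\phi\exc(A^\phi)$ equals the finite set $\exc^{\ev}(A)$. Without it you have no way to guarantee that the $\phi$-dependent values $v\beta_l$ avoid $\exc(A^\phi)$, and the monotonicity you invoke (Lemma~\ref{lem:ADH 14.2.7}) concerns $v_A^{\ev}$ on $\Gamma\setminus\exc^{\ev}(A)$, not $v_{A^\phi}$ on $\Gamma\setminus\exc(A^\phi)$, so the comparison $v_{A^\phi}(v\beta_l)>v_{A^\phi}(\delta)$ is unsupported. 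Both gaps disappear if you replace the hand computation by the identities $v_{A^\phi B^\phi}=v_{A^\phi}\circ v_{B^\phi}$ and $\dwt_{A^\phi B^\phi}(\gamma)=\dwt_{A^\phi}\!\big(v_{B^\phi}(\gamma)\big)+\dwt_{B^\phi}(\gamma)$ from [ADH, 5.6]; the remaining bookkeeping in your last paragraph (using that $\delta=v_B^{\ev}(\gamma)$ is independent of $\phi$ when $\nwt_B(\gamma)=0$) then goes through as you wrote it and coincides with the paper's argument.
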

\begin{proof}
We have 
$\nwt_{AB}(\gamma)=\dwt_{(AB)^\phi}(\gamma)$ eventually, and 
$(AB)^\phi=A^\phi B^\phi$. Hence by [ADH, 5.6] and the definition of $v^{\ev}_B(\gamma)$ in  \eqref{eq:vAev}:
\begin{align*}
\nwt_{AB}(\gamma)\	&=\ \dwt_{A^\phi}\!\big(v_{B^\phi}(\gamma)\big)+\dwt_{B^\phi}(\gamma) \\
					&=\ \dwt_{A^\phi}\!\big(v_B^{\ev}(\gamma)+\nwt_B(\gamma)v\phi\big) + \nwt_B(\gamma), \text{ eventually}, 
\end{align*}
so $\nwt_{AB}(\gamma)\geq \nwt_B(\gamma)$. Now suppose  $\gamma\notin \exc^{\ev}(B)$. Then $\nwt_B(\gamma)=0$, so  
$$\nwt_{AB}(\gamma) = \dwt_{A^\phi}\!\big(v_B^{\ev}(\gamma)\big)   = \nwt_A\!\big(v_B^{\ev}(\gamma)\big), \qquad\text{eventually.}$$ 
Moreover, $v_{(AB)^\phi}= v_{A^\phi B^\phi} =v_{A^\phi}\circ v_{B^\phi}$, hence using   \eqref{eq:vAev}: 
$$v_{(AB)^\phi}(\gamma)\ =\ v_{A^\phi}\big(v_{B^\phi}(\gamma)\big)\ =\ v_{A^\phi}\big(v_{B}^{\ev}(\gamma)\big), \text{ eventually},$$
and thus eventually 
\begin{align*}
v^{\ev}_{AB}(\gamma)\ 	&=\  
     v_{(AB)^\phi}(\gamma)-\nwt_{AB}(\gamma)v\phi \\
						&=\  v_{A^\phi}\big(v_{B}^{\ev}(\gamma)\big) - \nwt_A\!\big(v_B^{\ev}(\gamma)\big)v\phi\ =\ 
v^{\ev}_A\big(v^{\ev}_B(\gamma)\big).\qedhere
\end{align*}
\end{proof} 

\noindent
Lemmas~\ref{lem:ADH 14.2.7} and~\ref{lem:exce product} yield: 

\begin{cor}\label{cor:exce product}
Let $B\in K[\der]^{\neq}$. Then
$$\exc^{\ev}(AB)\  =\  (v^{\ev}_B)^{-1}\big( \exc^{\ev}(A) \big) \cup \exc^{\ev}(B)$$
and hence
$\abs{\exc^{\ev}(AB)} \leq \abs{\exc^{\ev}(A)}+\abs{\exc^{\ev}(B)}$, 
with equality if $v^{\ev}_B\big(\Gamma\setminus\exc^{\ev}(B)\big)=\Gamma$.
\end{cor}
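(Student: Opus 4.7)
\medskip\noindent
\textbf{Proof plan.} The statement is essentially an immediate unpacking of Lemma~\ref{lem:exce product} combined with the strict monotonicity of $v_B^{\ev}$ on $\Gamma\setminus\exc^{\ev}(B)$ from Lemma~\ref{lem:ADH 14.2.7}. I will split on whether $\gamma\in\exc^{\ev}(B)$ or not.

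First I verify the set-theoretic identity. Let $\gamma\in\Gamma$. If $\gamma\in\exc^{\ev}(B)$, then $\nwt_B(\gamma)\geq 1$, and the first assertion of Lemma~\ref{lem:exce product} gives $\nwt_{AB}(\gamma)\geq\nwt_B(\gamma)\geq 1$, so $\gamma\in\exc^{\ev}(AB)$. If instead $\gamma\notin\exc^{\ev}(B)$, the second assertion of Lemma~\ref{lem:exce product} gives $\nwt_{AB}(\gamma)=\nwt_A\!\big(v_B^{\ev}(\gamma)\big)$, so $\gamma\in\exc^{\ev}(AB)$ iff $v_B^{\ev}(\gamma)\in\exc^{\ev}(A)$, i.e., iff $\gamma\in(v_B^{\ev})^{-1}\!\big(\exc^{\ev}(A)\big)$. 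Taking unions over the two cases yields the displayed equality.

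For the cardinality bound, the previous paragraph gives the sharper inclusion
\[
\exc^{\ev}(AB)\ \subseteq\ \Big((v_B^{\ev})^{-1}\!\big(\exc^{\ev}(A)\big)\cap\big(\Gamma\setminus\exc^{\ev}(B)\big)\Big)\ \cup\ \exc^{\ev}(B),
\]
with the two sets on the right disjoint. By Lemma~\ref{lem:ADH 14.2.7} the restriction of $v_B^{\ev}$ to $\Gamma\setminus\exc^{\ev}(B)$ is strictly increasing, hence injective, so the first set has cardinality at most $|\exc^{\ev}(A)|$. Adding $|\exc^{\ev}(B)|$ gives the desired inequality.

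Finally, assume $v_B^{\ev}\!\big(\Gamma\setminus\exc^{\ev}(B)\big)=\Gamma$. Then for each $\delta\in\exc^{\ev}(A)$ there is a (unique, by injectivity) $\gamma_\delta\in\Gamma\setminus\exc^{\ev}(B)$ with $v_B^{\ev}(\gamma_\delta)=\delta$, and by Case~2 above such $\gamma_\delta$ lies in $\exc^{\ev}(AB)$. The $\gamma_\delta$ are pairwise distinct by injectivity and all lie outside $\exc^{\ev}(B)$, while $\exc^{\ev}(B)\subseteq\exc^{\ev}(AB)$ by Case~1. This exhibits $|\exc^{\ev}(A)|+|\exc^{\ev}(B)|$ distinct elements of $\exc^{\ev}(AB)$, giving the reverse inequality and hence equality. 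The main subtlety is ensuring that the union in the displayed identity is refined to a disjoint union before counting; after that, everything reduces to injectivity/surjectivity of $v_B^{\ev}$ on $\Gamma\setminus\exc^{\ev}(B)$.
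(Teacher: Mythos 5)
Your proof is correct and follows exactly the route the paper intends: the paper gives no details beyond "Lemmas~\ref{lem:ADH 14.2.7} and~\ref{lem:exce product} yield" the corollary, and your case split on $\gamma\in\exc^{\ev}(B)$ versus $\gamma\notin\exc^{\ev}(B)$, together with the injectivity of $v_B^{\ev}$ on $\Gamma\setminus\exc^{\ev}(B)$, is precisely that routine unpacking.
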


\noindent
As an easy consequence we have a variant of Corollary~\ref{cor:sum of nwts}:

\begin{cor}\label{cor:size of excev}
If $A$ splits over $K$, then $\abs{\exc^{\ev}(A)}\leq r$. 
\end{cor}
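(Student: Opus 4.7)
The proof is essentially an induction on $r=\order(A)$, using the preceding Corollary~\ref{cor:exce product} together with the fact, recorded at the beginning of the subsection on first-order operators, that $|\exc^{\ev}(\der-g)|\leq 1$ for every $g\in K$.

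First I would reduce to the monic case: since $\exc^{\ev}(aB)=\exc^{\ev}(B)$ for $a\in K^\times$, writing $A=c(\der-f_1)\cdots(\der-f_r)$ with $c\in K^\times$ and $f_1,\dots,f_r\in K$, it suffices to bound $|\exc^{\ev}(B)|$ for the monic operator $B:=(\der-f_1)\cdots(\der-f_r)$. Then I would proceed by induction on $r$. The base case $r=0$ gives $B=1$ and $\exc^{\ev}(B)=\emptyset$. For the inductive step, set $B_1:=\der-f_1$ and $B_2:=(\der-f_2)\cdots(\der-f_r)$, so $B=B_1B_2$, and apply Corollary~\ref{cor:exce product}:
\[
|\exc^{\ev}(B)|\ \leq\ |\exc^{\ev}(B_1)|+|\exc^{\ev}(B_2)|\ \leq\ 1+(r-1)\ =\ r,
\]
where the bound $|\exc^{\ev}(B_1)|\leq 1$ is by the opening remark of the subsection on first-order operators, and $|\exc^{\ev}(B_2)|\leq r-1$ is the inductive hypothesis.

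There is essentially no obstacle here; the content has already been packaged into Corollary~\ref{cor:exce product}. One small thing to verify is just the harmless reduction to the monic case (already used freely elsewhere in the section from the identity $\exc^{\ev}(aA)=\exc^{\ev}(A)$ for $a\neq 0$ noted after the definition of $\exc^{\ev}$), and the base observation about first-order operators, which was proved earlier via the description $\exc^{\ev}(\der-g)=\{vy:y\in K^\times,\ v(g-y^\dagger)>\Psi\}$.
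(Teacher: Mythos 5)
Your proof is correct and matches what the paper has in mind: the paper labels this corollary an "easy consequence" of Corollary~\ref{cor:exce product} and gives no further detail, and the intended argument is exactly the induction you carry out, using the subadditivity $|\exc^{\ev}(AB)|\leq|\exc^{\ev}(A)|+|\exc^{\ev}(B)|$ together with the base fact $|\exc^{\ev}(\der-g)|\leq 1$ and the reduction to the monic case via $\exc^{\ev}(aA)=\exc^{\ev}(A)$.
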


\noindent
To study $v^{\ev}_A$ in more detail we introduce the function 
$$ \psi_A\ \colon\ \Gamma\setminus\exc^{\ev}(A)\to\Gamma, \qquad \gamma\mapsto v_A^{\ev}(\gamma)-\gamma.$$
For monic $A$ of order $1$ this agrees with $\psi_A$ as defined in Proposition~\ref{prop:slow}.  
For~${A=a}$~($a\neq 0$) we have $\exc^{\ev}(A)=\emptyset$ and  $\psi_A(\gamma)=va$ for all $\gamma\in\Gamma$.

\begin{lemma}\label{lem:psiAB}
Let  $B\in K[\der]^{\neq}$ and $\gamma\in\Gamma\setminus\exc^{\ev}(AB)$. Then  
$$\psi_{AB}(\gamma)\ =\ \psi_A\big(v_B^{\ev}(\gamma)\big)+\psi_B(\gamma).$$
\end{lemma}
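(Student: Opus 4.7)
The plan is to reduce the claim to a purely formal telescoping computation, given that all the heavy lifting has already been done in Lemma~\ref{lem:exce product} and Corollary~\ref{cor:exce product}.

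First, I would unpack what the hypothesis $\gamma \in \Gamma \setminus \exc^{\ev}(AB)$ buys us. By Corollary~\ref{cor:exce product},
\[
\exc^{\ev}(AB)\ =\ (v^{\ev}_B)^{-1}\big(\exc^{\ev}(A)\big) \cup \exc^{\ev}(B),
\]
so $\gamma \notin \exc^{\ev}(AB)$ simultaneously gives $\gamma \notin \exc^{\ev}(B)$ and $v^{\ev}_B(\gamma) \notin \exc^{\ev}(A)$. In particular the three quantities $\psi_{AB}(\gamma)$, $\psi_B(\gamma)$, and $\psi_A\!\big(v^{\ev}_B(\gamma)\big)$ are all defined, so the identity in the lemma at least makes sense.

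Next I would apply the second part of Lemma~\ref{lem:exce product}: since $\gamma \notin \exc^{\ev}(B)$, we have
\[
v^{\ev}_{AB}(\gamma)\ =\ v^{\ev}_A\!\big(v^{\ev}_B(\gamma)\big).
\]
Using the definition $\psi_D(\delta) := v^{\ev}_D(\delta) - \delta$ for any $D \in K[\der]^{\ne}$ and any $\delta \in \Gamma \setminus \exc^{\ev}(D)$, a one-line telescoping computation yields
\begin{align*}
\psi_{AB}(\gamma)\ &=\ v^{\ev}_{AB}(\gamma) - \gamma \\
                   &=\ v^{\ev}_A\!\big(v^{\ev}_B(\gamma)\big) - v^{\ev}_B(\gamma) + v^{\ev}_B(\gamma) - \gamma \\
                   &=\ \psi_A\!\big(v^{\ev}_B(\gamma)\big) + \psi_B(\gamma),
\end{align*}
which is the desired identity.

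There is no real obstacle: once the factorization of $\exc^{\ev}(AB)$ is in hand (Corollary~\ref{cor:exce product}) and the compositional identity $v^{\ev}_{AB}(\gamma) = v^{\ev}_A\!\big(v^{\ev}_B(\gamma)\big)$ is established (Lemma~\ref{lem:exce product}), the lemma is a formal consequence of the definition $\psi_D = v^{\ev}_D - \mathrm{id}$. The only subtlety worth being careful about is that one must verify $v^{\ev}_B(\gamma)$ lies in the domain of $\psi_A$ (i.e.\ outside $\exc^{\ev}(A)$) before invoking the definition of $\psi_A$ there, and this is exactly what the decomposition of $\exc^{\ev}(AB)$ guarantees.
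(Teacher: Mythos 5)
Your proof is correct and follows essentially the same route as the paper: both obtain $\gamma\notin\exc^{\ev}(B)$ and $v_B^{\ev}(\gamma)\notin\exc^{\ev}(A)$ from Corollary~\ref{cor:exce product}, invoke the compositional identity $v^{\ev}_{AB}(\gamma)=v^{\ev}_A\big(v^{\ev}_B(\gamma)\big)$ from Lemma~\ref{lem:exce product}, and conclude by the telescoping computation from the definition of $\psi$. No gaps.
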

\begin{proof}
We have $\gamma\notin\exc^{\ev}(B)$ and $v_B^{\ev}(\gamma)\notin\exc^{\ev}(A)$ by Corollary~\ref{cor:exce product}, hence
$$\psi_{AB}(\gamma)\ =\ v_A^{\ev}\big(v_B^{\ev}(\gamma)\big)-\gamma\ =\  v_B^{\ev}(\gamma)+\psi_A\big(v_B^{\ev}(\gamma)\big) - \gamma\ =\ \psi_A\big(v_B^{\ev}(\gamma)\big)+\psi_B(\gamma)$$
by Lemma~\ref{lem:exce product}. 
\end{proof}

\noindent
Thus for $a\neq 0$ and $\gamma\in\Gamma$ we have
$$\psi_{aA}(\gamma)=va+\psi_A(\gamma)\text{ if $\gamma\notin\exc^{\ev}(A)$,}\quad \psi_{Aa}(\gamma)=\psi_A(va+\gamma)+va \text{ if 
$\gamma\notin\exc^{\ev}(A)-va$.}$$

\begin{example} Suppose $K$ has small derivation and $x\in K$, $x'\asymp 1$. Then $vx<0$ and~$\exc^{\ev}(\der^2)=\{vx,0\}$, and~$\psi_{\der^2}(\gamma)=\psi\big(\gamma+\psi(\gamma)\big)+\psi(\gamma)$ for $\gamma\in\Gamma\setminus\exc^{\ev}(\der^2)$.
\end{example}

\begin{lemma}\label{lem:v(A(y)) convex subgp}
Suppose $\psi_A$ is slowly varying. Let $\Delta$ be a convex subgroup of $\Gamma$ and let  $y,z\in K^\times$ be such that $vy,vz\notin\exc^{\ev}(A)$.
Then  $$v_\Delta (y) < v_\Delta (z) \ \Longleftrightarrow\ v_\Delta\big(A(y)\big) < v_\Delta\big(A(z)\big).$$ 
\end{lemma}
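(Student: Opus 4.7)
The plan is to translate the claim into a statement purely in the value group $\Gamma$. By Lemma~\ref{lem:ADH 14.2.7} and the definition of $\psi_A$, the hypothesis $vy,vz\notin \exc^{\ev}(A)$ gives $v\big(A(y)\big) = vy + \psi_A(vy)$ and $v\big(A(z)\big) = vz + \psi_A(vz)$. Setting $\alpha:=vy$, $\beta:=vz$, $\alpha':=v\big(A(y)\big)$, $\beta':=v\big(A(z)\big)$, we have
\[
\beta'-\alpha'\ =\ (\beta-\alpha)+\bigl(\psi_A(\beta)-\psi_A(\alpha)\bigr).
\]
Recalling that the ordering on $\Gamma/\Delta$ is defined so that the residue morphism is increasing, $v_\Delta(y)<v_\Delta(z)$ is equivalent to $\beta-\alpha>\Delta$, i.e.\ $\beta-\alpha>\delta$ for all $\delta\in\Delta$. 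So the task reduces to proving $\beta-\alpha>\Delta \Leftrightarrow \beta'-\alpha'>\Delta$.

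The case $\alpha=\beta$ is trivial, as then $\beta'=\alpha'$ and both sides are false. So assume $\alpha\neq\beta$. Slow variation of $\psi_A$ yields $\psi_A(\beta)-\psi_A(\alpha)=o(\beta-\alpha)$, which in particular says $\bigl[\psi_A(\beta)-\psi_A(\alpha)\bigr]<[\beta-\alpha]$ and $\bigl|\psi_A(\beta)-\psi_A(\alpha)\bigr|<|\beta-\alpha|$. Consequently, $\beta'-\alpha'$ has the same sign and the same archimedean class as $\beta-\alpha$.

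Now I invoke the elementary fact that for any positive $\gamma\in\Gamma$, the condition $\gamma>\Delta$ depends only on the archimedean class of $\gamma$ and on positivity: indeed $\gamma>\Delta$ means $\gamma$ lies above every $\delta\in\Delta$, equivalently the smallest convex subgroup of $\Gamma$ containing $\gamma$ strictly contains $\Delta$ (so $\gamma\notin\Delta$), and this property is shared by any element with the same archimedean class and sign as $\gamma$. Applying this to $\gamma=\beta-\alpha$ and to $\gamma=\beta'-\alpha'$, whose sign and archimedean class coincide, yields $\beta-\alpha>\Delta \Longleftrightarrow \beta'-\alpha'>\Delta$, as required.

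I do not anticipate any real obstacle; the only delicate point is translating ``$>\Delta$'' into an archimedean-class condition, and this follows routinely from convexity of $\Delta$ together with the slow variation hypothesis.
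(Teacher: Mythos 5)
Your proof is correct and takes essentially the same route as the paper: apply Lemma~\ref{lem:ADH 14.2.7} (with the definition of $\psi_A$) to get $v(A(y))-v(A(z)) = (vy-vz) + (\psi_A(vy)-\psi_A(vz))$ with the second term $o(vy-vz)$, then conclude. The paper's own proof stops after the display, leaving the convex-subgroup/archimedean-class deduction implicit; you have merely spelled out that routine final step, correctly.
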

\begin{proof}
By Lemma~\ref{lem:ADH 14.2.7} we have 
$$v\big(A(y)\big)-v\big(A(z)\big)\ =\ v_A^{\ev}(vy)-v_A^{\ev}(vz)\ =\ vy-vz+\psi_A(vy)-\psi_A(vz)$$
and $\psi_A(vy)-\psi_A(vz)=o(vy-vz)$ if $vy\ne vz$.
\end{proof}

\noindent
Call $A$ {\bf asymptotically surjective} \index{asymptotically surjective} \index{linear differential operator!asymptotically surjective}\index{surjective!asymptotically}
if $v_A^{\ev}\big( \Gamma\setminus\exc^{\ev}(A) \big) = \Gamma$  and 
$\psi_A$ is slowly varying.
If $A$ is asymptotically surjective, then so are~$aA$ and $Aa$ for $a\neq 0$, and
if $A$ has order $0$, then $A$ is asymptotically surjective.
If $K$ is $\upl$-free and $A$ has order $1$, then~$A$ is asymptotically surjective, thanks to Proposition~\ref{prop:slow} and Lemma~\ref{lem:slow}.

\noindent
The next lemma has an obvious proof.

\begin{lemma}\label{lem:slowly varying}
Let $G$ be an ordered abelian group and $U, V\subseteq G$. If $\eta_1,\eta_2\colon U\to G$ are slowly varying,
then so is $\eta_1+\eta_2$. If $\eta\colon U\to G$ and $\zeta\colon V\to G$ 
are slowly varying and $\gamma+\zeta(\gamma)\in U$ for all $\gamma\in V$, then the function~$\gamma\mapsto \eta\big(\gamma+\zeta(\gamma)\big)\colon V\to G$ is also slowly varying.
\end{lemma}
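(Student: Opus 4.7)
The plan is to unwind the definition of \emph{slowly varying} in the setting of ordered abelian groups, relying only on the elementary fact that if $x,y\in G$ with $y=o(x)$, then $x\pm y$ lies in the same archimedean class as $x$ (in particular, $x+y\ne 0$ when $x\ne 0$).

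For the first assertion, let $\alpha\ne\beta$ in $U$. I would simply write
\[
(\eta_1+\eta_2)(\alpha)-(\eta_1+\eta_2)(\beta)\ =\ \big(\eta_1(\alpha)-\eta_1(\beta)\big)+\big(\eta_2(\alpha)-\eta_2(\beta)\big),
\]
note that each summand is $o(\alpha-\beta)$ by hypothesis, and invoke the (trivial) fact that a sum of two elements of archimedean class strictly below $[\alpha-\beta]$ has archimedean class strictly below $[\alpha-\beta]$ as well.

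For the second assertion, let $\gamma_1\ne\gamma_2$ in $V$ and set $\delta_i:=\gamma_i+\zeta(\gamma_i)\in U$. Since $\zeta$ is slowly varying, $\zeta(\gamma_1)-\zeta(\gamma_2)=o(\gamma_1-\gamma_2)$, and therefore
\[
\delta_1-\delta_2\ =\ (\gamma_1-\gamma_2)+\big(\zeta(\gamma_1)-\zeta(\gamma_2)\big)
\]
is either zero (in which case $\eta(\delta_1)-\eta(\delta_2)=0=o(\gamma_1-\gamma_2)$, and we are done) or lies in the same archimedean class as $\gamma_1-\gamma_2$. In the latter case, since $\eta$ is slowly varying we get $\eta(\delta_1)-\eta(\delta_2)=o(\delta_1-\delta_2)$, and combining with $[\delta_1-\delta_2]=[\gamma_1-\gamma_2]$ yields $\eta(\delta_1)-\eta(\delta_2)=o(\gamma_1-\gamma_2)$, as required.

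There is no real obstacle here: the statement is formal, both parts follow directly from the definition and the stability of $o(\cdot)$ under addition and under replacing the reference element by something in the same archimedean class. The only minor care needed is the split into the cases $\delta_1=\delta_2$ and $\delta_1\ne\delta_2$, since the definition of \emph{slowly varying} only controls differences of distinct arguments.
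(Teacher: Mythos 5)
Your proof is correct, and it is exactly the routine unwinding of the definition that the paper has in mind when it says the lemma "has an obvious proof" (no proof is printed there): additivity of the $o(\cdot)$ relation for the first part, and invariance of $o(\cdot)$ under replacing the reference element by one in the same archimedean class for the second. One tiny remark: the case $\delta_1=\delta_2$ in your second argument is in fact vacuous, since adding $\zeta(\gamma_1)-\zeta(\gamma_2)=o(\gamma_1-\gamma_2)$ to $\gamma_1-\gamma_2\neq 0$ cannot yield $0$ (it preserves the archimedean class), but handling it as you do is harmless.
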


\begin{lemma}
If $A$ and $B\in K[\der]^{\neq}$ are asymptotically surjective, then so is $AB$. 
\end{lemma}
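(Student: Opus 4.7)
The plan is to verify the two defining conditions of asymptotic surjectivity for $C:=AB$, leveraging Corollary~\ref{cor:exce product} (which identifies $\exc^{\ev}(AB)$ and relates the $v^{\ev}$-maps) together with Lemmas~\ref{lem:exce product}, \ref{lem:psiAB}, and \ref{lem:slowly varying}.

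First I would verify that $v_{AB}^{\ev}\big(\Gamma\setminus\exc^{\ev}(AB)\big)=\Gamma$. Given $\gamma\in\Gamma$, asymptotic surjectivity of $A$ supplies $\beta\in\Gamma\setminus\exc^{\ev}(A)$ with $v_A^{\ev}(\beta)=\gamma$, and then asymptotic surjectivity of $B$ supplies $\alpha\in\Gamma\setminus\exc^{\ev}(B)$ with $v_B^{\ev}(\alpha)=\beta$. Since $\alpha\notin\exc^{\ev}(B)$ and $v_B^{\ev}(\alpha)=\beta\notin\exc^{\ev}(A)$, Corollary~\ref{cor:exce product} tells us $\alpha\notin\exc^{\ev}(AB)$, and Lemma~\ref{lem:exce product} then yields $v_{AB}^{\ev}(\alpha)=v_A^{\ev}\big(v_B^{\ev}(\alpha)\big)=v_A^{\ev}(\beta)=\gamma$.

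Next I would check that $\psi_{AB}$ is slowly varying on $V:=\Gamma\setminus\exc^{\ev}(AB)$. By Lemma~\ref{lem:psiAB}, for $\gamma\in V$ we have $\psi_{AB}(\gamma)=\psi_A\big(v_B^{\ev}(\gamma)\big)+\psi_B(\gamma)$, where $v_B^{\ev}(\gamma)=\gamma+\psi_B(\gamma)$ by definition of $\psi_B$. Since $V\subseteq\Gamma\setminus\exc^{\ev}(B)$, the restriction of $\psi_B$ to $V$ is slowly varying by hypothesis on $B$. Moreover, for $\gamma\in V$, Corollary~\ref{cor:exce product} guarantees $\gamma+\psi_B(\gamma)=v_B^{\ev}(\gamma)\in\Gamma\setminus\exc^{\ev}(A)$, so the second clause of Lemma~\ref{lem:slowly varying} applies with $\eta=\psi_A$ on $U=\Gamma\setminus\exc^{\ev}(A)$ and $\zeta=\psi_B{\restriction} V$, giving slow variation of $\gamma\mapsto\psi_A\big(v_B^{\ev}(\gamma)\big)$ on $V$. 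The first clause of Lemma~\ref{lem:slowly varying} then delivers slow variation of the sum, which is $\psi_{AB}$.

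The argument is essentially bookkeeping with the machinery already in place; the only delicate point is keeping the various domains aligned and confirming that $v_B^{\ev}$ maps $V$ into $\Gamma\setminus\exc^{\ev}(A)$, which is exactly the content of Corollary~\ref{cor:exce product}. I do not anticipate any substantive obstacle beyond this bookkeeping.
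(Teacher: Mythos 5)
Your proof is correct and follows essentially the same route as the paper's: surjectivity of $v_{AB}^{\ev}$ is obtained by chaining the preimages supplied by the asymptotic surjectivity of $A$ and of $B$ together with Corollary~\ref{cor:exce product} and Lemma~\ref{lem:exce product}, and slow variation of $\psi_{AB}$ follows from Lemmas~\ref{lem:psiAB} and~\ref{lem:slowly varying}. You merely make explicit the domain bookkeeping that the paper's one-line appeal to those lemmas leaves implicit.
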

\begin{proof}
Let $A$,~$B$ be asymptotically surjective and $\gamma\in \Gamma$. This gives $\alpha\in\Gamma\setminus\exc^{\ev}(A)$
with $v_A^{\ev}(\alpha)=\gamma$ and $\beta\in\Gamma\setminus\exc^{\ev}(B)$
with $v_B^{\ev}(\beta)=\alpha$. Then $\beta\notin\exc^{\ev}(AB)$
by Corollary~\ref{cor:exce product}, and $v_{AB}^{\ev}(\beta)=\gamma$ by Lemma~\ref{lem:exce product}. Moreover, $\psi_{AB}$ is slowly varying
by Lemmas~\ref{lem:psiAB} and \ref{lem:slowly varying}. 
\end{proof}

\noindent
A straightforward induction on $r$ using this lemma yields:

\begin{cor}\label{cor:well-behaved}
If $K$ is $\upl$-free and $A$ splits over $K$, then $A$ is asymptotically surjective. 
\end{cor}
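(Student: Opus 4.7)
The corollary essentially asks to combine three ingredients already in hand: (a) the product lemma immediately preceding it, which says asymptotic surjectivity is preserved under composition; (b) the explicit observation that scalars $a\neq 0$ (order $0$) are asymptotically surjective, with $\psi_a(\gamma)=va$; and (c) the remark after the definition of asymptotic surjectivity that, for $\upl$-free $K$, every order-$1$ operator is asymptotically surjective, using Proposition~\ref{prop:slow} (slow variation of $\psi_{\der-g}$) and Lemma~\ref{lem:slow} (surjectivity of $\gamma\mapsto\gamma+\psi_{\der-g}(\gamma)$).

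The plan is a straightforward induction on $r=\order(A)$. For the base case $r=0$ we have $A=a\in K^\times$, and both clauses of asymptotic surjectivity are trivial. For the inductive step, assume the conclusion holds for all operators over $K$ of order $<r$ that split over $K$, and suppose $A\in K[\der]^{\ne}$ of order $r\geq 1$ splits as
\[
A \ =\ c(\der-f_1)(\der-f_2)\cdots(\der-f_r),\qquad c\in K^\times,\ f_1,\dots,f_r\in K.
\]
Write $A=B\cdot C$ with $B:=c(\der-f_1)\cdots(\der-f_{r-1})$ of order $r-1$ (splitting over $K$) and $C:=\der-f_r$ of order $1$. By the inductive hypothesis $B$ is asymptotically surjective, and $C$ is asymptotically surjective by point (c) above, which uses $\upl$-freeness of $K$. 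The preceding lemma then gives that the composition $A=BC$ is asymptotically surjective, completing the induction.

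There is no real obstacle here: the corollary is explicitly advertised as following by ``a straightforward induction on $r$ using this lemma''. The only point at which one has to be a bit attentive is the order-$1$ step, which is where the hypothesis of $\upl$-freeness actually enters, via Lemma~\ref{lem:slow}; without it, $v^{\ev}_{\der-g}$ need not hit all of $\Gamma$, so the first clause of asymptotic surjectivity could fail for an individual factor. Once the order-$1$ case is secured, the product lemma does all the remaining work.
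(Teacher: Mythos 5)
Your proof is correct and is exactly the argument the paper intends: induction on $r$, peeling off a first-order factor $\der-f_r$ (asymptotically surjective by $\upl$-freeness via Proposition~\ref{prop:slow} and Lemma~\ref{lem:slow}), applying the inductive hypothesis to the remaining split factor of order $r-1$, and invoking the product lemma. Nothing to add.
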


\noindent
We can now add to Lemma~\ref{lem:ADH 14.2.7}:

\begin{cor}\label{cor1524}
Suppose $K$ is $\upo$-free. Then $A$ is asymptotically surjective.
\end{cor}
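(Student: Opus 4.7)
My plan is to reduce the statement to Corollary \ref{cor:well-behaved}, which handles the case of a $\upl$-free field over which $A$ splits, by constructing a sufficiently large extension of $K$ with these properties and then transferring the conclusion back to $K$. The surjectivity part of ``asymptotically surjective'', namely $v_A^{\ev}(\Gamma \setminus \exc^{\ev}(A)) = \Gamma$, is already given for free by Lemma \ref{lem:ADH 14.2.7} from the $\upo$-freeness hypothesis, so the real content to verify is that $\psi_A$ is slowly varying.

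First I would build the target extension $M$ in three steps. Starting from the algebraic closure $K^{\operatorname{a}}$ of $K$, which is pre-$\d$-valued by [ADH, 10.1.22], Theorem \ref{thm:ADH 13.6.1} gives that $K^{\operatorname{a}}$ is $\upo$-free with $\Gamma^<$ cofinal in $\Gamma_{K^{\operatorname{a}}}^<$; its value group $\mathbb{Q}\Gamma$ is divisible. Applying \eqref{eq:14.0.1} to $K^{\operatorname{a}}$ produces an immediate $\d$-algebraic extension $L$ that is newtonian and $\upo$-free, and by Lemma \ref{lem:ADH 14.2.5} $L$ is $\d$-valued with divisible value group. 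Let $M$ be the algebraic closure of $L$ equipped with the unique extensions of derivation and valuation: Theorem \ref{thm:ADH 13.6.1} makes $M$ $\upo$-free (and $\d$-valued), \eqref{eq:14.5.7} makes $M$ newtonian, and then \eqref{eq:14.5.3} forces $M$ to be weakly differentially closed, hence linearly closed. In particular $A$ splits over $M$, and since $\upo$-freeness implies $\upl$-freeness, Corollary \ref{cor:well-behaved} gives that $A$ is asymptotically surjective when viewed over $M$.

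The final step is to transfer slow variation from $\psi_A^M$ back down to $\psi_A^K$. Composing the cofinality clause of Theorem \ref{thm:ADH 13.6.1} at each stage, $\Gamma^<$ is cofinal in $\Gamma_M^<$; since the asymptotic couples are of $H$-type and $\psi$ reverses the order of archimedean classes, this forces $\Psi$ to be cofinal in $\Psi_M$. Lemma \ref{lemexc} then yields $\exc^{\ev,M}(A) \cap \Gamma = \exc^{\ev,K}(A)$. For any $\gamma \in \Gamma \setminus \exc^{\ev,K}(A)$, choose $y \in K^\times$ with $vy = \gamma$; applying Lemma \ref{lem:ADH 14.2.7} inside both $K$ and $M$ to the single element $A(y)$ gives $v_A^{\ev,K}(\gamma) = v(A(y)) = v_A^{\ev,M}(\gamma)$, hence $\psi_A^K$ is the restriction of $\psi_A^M$ to $\Gamma \setminus \exc^{\ev,K}(A)$. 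Because archimedean classes of elements of $\Gamma$ are preserved under the embedding $\Gamma \hookrightarrow \Gamma_M$, the slow variation of $\psi_A^M$ restricts to slow variation of $\psi_A^K$, completing the argument.

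The main obstacle I expect is purely organizational: threading the chain of extensions $K \subseteq K^{\operatorname{a}} \subseteq L \subseteq M$ so that at each stage the required hypotheses for \eqref{eq:14.0.1}, \eqref{eq:14.5.3}, and especially \eqref{eq:14.5.7} (which demands a divisible value group) are in place, while also tracking that the cofinality $\Gamma^< \subseteq \Gamma_M^<$ and the resulting equality of eventual exceptional sets on $\Gamma$ survive. Once $M$ is fixed and the equalities $\exc^{\ev,M}(A) \cap \Gamma = \exc^{\ev,K}(A)$ and $v_A^{\ev,K} = v_A^{\ev,M}|_{\Gamma \setminus \exc^{\ev,K}(A)}$ are in hand, the transfer of slow variation is just definition-chasing.
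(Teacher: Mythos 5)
Your proof is correct and takes essentially the same route as the paper: it reduces, via an $\upo$-free newtonian algebraically closed extension built from \eqref{eq:14.0.1}, \eqref{eq:14.5.7}, \eqref{eq:14.5.3} and Lemma~\ref{lem:ADH 14.2.5}, to the split case settled by Corollary~\ref{cor:well-behaved}, with the surjectivity half coming from Lemma~\ref{lem:ADH 14.2.7}. The only difference is presentational: where the paper simply says one may replace $K$ by any $\upo$-free extension with $\Psi$ cofinal in $\Psi_L$ (relying on the invariance of $\nwt_A$, $v_A^{\ev}$ and $\exc^{\ev}(A)$ under such extensions), you spell out this down-transfer explicitly via Lemma~\ref{lemexc} and Lemma~\ref{lem:ADH 14.2.7}.
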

\begin{proof}
By the second part of Lemma~\ref{lem:ADH 14.2.7} it is enough to show that $\psi_A$ is slowly varying.
For this we may replace $K$ by any $\upo$-free extension $L$ of~$K$ with $\Psi$ cofinal in $\Psi_L$.
Thus we can arrange by~\eqref{eq:14.0.1} and \eqref{eq:14.5.7}  that~$K$ is newtonian, and by passing
to the algebraic closure,  algebraically closed. Then~$A$ splits over $K$ by  \eqref{eq:14.5.3} and Lemma~\ref{lem:ADH 14.2.5}, 
so $A$ is asymptotically surjective by Corollary~\ref{cor:well-behaved}.
\end{proof}

\section{Special Elements}\label{sec:special elements} 

\noindent
Let $K$ be a valued field and let $\hat a$ be an element of an immediate extension of~$K$ with~$\hat a\notin K$.
{\samepage Recall that $$v(\hat a-K)\ =\ \big\{v(\hat a-a):a\in K\big\}$$ is a nonempty downward closed subset of $\Gamma:= v(K^\times)$ without a largest element.}
Call~$\hat a$ {\it special}\/ over~$K$ if \index{special} \index{element!special}
some nontrivial  convex subgroup of~$\Gamma$ is cofinal in~$v({\hat a-K})$~[ADH, p.~167].
In this case $v(\hat a-K)\cap\Gamma^>\neq\emptyset$, and there is a unique such nontrivial convex subgroup $\Delta$ of $\Gamma$,
namely
$$\Delta\ =\ \big\{ \delta\in\Gamma:\, \abs{\delta}\in v(\hat a-K) \big\}.$$
We also call~$\hat a$ {\it almost special}\/ over $K$ if $\hat a/\fm$ is special over $K$ for some~$\fm\in K^\times$. \index{special!almost} \index{element!almost special}
If $\Gamma\neq\{0\}$ is archimedean, then $\hat a$ is special over $K$ iff $v(\hat a-K)=\Gamma$, iff $\hat a$ is the limit of a divergent c-sequence in $K$. (Recall that ``c-sequence'' abbreviates ``cauchy sequence'' [ADH, p.~82].)
In the next lemma $a$ ranges over $K$ and $\fm$, $\fn$ over $K^\times$.

\begin{lemma}\label{lem:special refinement}
Suppose $\hat a\prec \fm$ and $\hat a/\fm$ is special over $K$. Then for all $a$, $\fn$, if~$\hat a-a\prec\fn\preceq\fm$, then $(\hat a-a)/\fn$ is special over $K$.
\end{lemma}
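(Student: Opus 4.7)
\medskip

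\noindent
\textbf{Proof plan.} The main observation is the scaling identity for shifted value sets: for any $\fp\in K^\times$ and any $b\in K$,
$$v\big(({\hat a}-b)/\fp - K\big)\ =\ v(\hat a-K)-v\fp,$$
since $b+\fp K=K$. Writing $\Delta$ for the unique nontrivial convex subgroup of $\Gamma$ witnessing that ${\hat a}/\fm$ is special over $K$ — so that $\Delta$ is cofinal in the downward closed set $v({\hat a}/\fm - K)$ — the identity converts the hypothesis into the equation $v({\hat a}-K) = \Delta^\downarrow + v\fm$, and reduces the goal (that $({\hat a}-a)/\fn$ is special over $K$) to showing that the same convex subgroup $\Delta$ is cofinal in $v({\hat a}-K)-v\fn$.

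The plan is therefore to prove that $v\fn-v\fm\in\Delta$, whence translating $\Delta^\downarrow$ by the $\Delta$-element $-(v\fn-v\fm)$ leaves it unchanged and gives
$$v({\hat a}-K)-v\fn\ =\ \Delta^\downarrow + (v\fm-v\fn)\ =\ \Delta^\downarrow,$$
as required. To obtain $v\fn-v\fm\in\Delta$, use $\fn\preceq\fm$ to get $v\fn-v\fm\geq 0$, and use $\hat a-a\prec\fn$ to get $v({\hat a}-a)-v\fm>v\fn-v\fm\geq 0$; since $v({\hat a}-a)\in v({\hat a}-K)=\Delta^\downarrow+v\fm$, the positive element $v({\hat a}-a)-v\fm$ lies in $\Delta^\downarrow\cap\Gamma^>$. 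But $\Delta^\downarrow\cap\Gamma^>=\Delta\cap\Gamma^>$ by convexity of $\Delta$ (a positive $\gamma\leq\delta\in\Delta$ is squeezed between $0$ and $\delta$, both in~$\Delta$). Hence $v({\hat a}-a)-v\fm\in\Delta$, and convexity together with $0\leq v\fn-v\fm\leq v({\hat a}-a)-v\fm$ gives~$v\fn-v\fm\in\Delta$.

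Finally one should note $({\hat a}-a)/\fn\notin K$, which is immediate since ${\hat a}\notin K$ while $a,\fn\in K$. The only genuinely tricky point is the bookkeeping around the definition of ``cofinal'' — once one identifies the witness condition as $v({\hat a}/\fm-K)=\Delta^\downarrow$ and exploits that $\Delta^\downarrow$ is stable under $\Delta$-translation, the whole argument collapses to the arithmetic above; no delicate properties of the valuation or of the immediate extension are needed beyond the descriptions of $v({\hat a}-K)$ and the convexity of $\Delta$.
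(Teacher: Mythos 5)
Your proof is correct and follows essentially the same route as the paper's: both rest on the identity $v\big((\hat a-b)/\fp-K\big)=v(\hat a-K)-v\fp$ together with the observation that the relevant value difference ($v\fn-v\fm$, which after the paper's normalization $\fm=1$ is just $v\fn$) lies in $\Delta$, so that translating $\Delta^\downarrow$ by it changes nothing. The paper merely packages this by first arranging $\fm=1$ and splitting into an additive-shift step and a division step, while you carry out the computation in one pass and make explicit the convexity/downward-closedness argument the paper leaves implicit.
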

\begin{proof}
Replacing $\hat a$, $a$, $\fm$, $\fn$ by $\hat a/\fm$, $a/\fm$, $1$, $\fn/\fm$, respectively, we arrange $\fm=1$. So let $\hat a$ be special over $K$ with $\hat a \prec 1$. It is enough to show: (1)~$\hat a-a$ is special over $K$, for all $a$; (2)~for all $\fn$, if $\hat a\prec\fn\preceq 1$, then $\hat a/\fn$ is special over $K$. Here~(1) follows from $v(\hat a-a-K)=v(\hat a-K)$. For (2), note that if $\hat a\prec\fn\preceq 1$, 
then $v\fn\in\Delta$ with $\Delta$ as above, and so $v(\hat a/\fn-K)=v(\hat a-K)-v\fn=v(\hat a-K)$.
\end{proof}

\noindent
The remainder of this section is devoted to showing that (almost) special elements   arise naturally in the analysis of
certain immediate $\d$-algebraic extensions of valued differential fields.  
We first treat the case of asymptotic fields with small derivation, and then 
focus on the linearly newtonian $H$-asymptotic case. 

We  recall some notation: for an ordered abelian group $\Gamma$ and  $\alpha\in \Gamma_{\infty}$, 
${\beta\in \Gamma}$, $\gamma\in \Gamma^{>}$
we mean by ``$\alpha \ge \beta+o(\gamma)$'' that
$\alpha\ge \beta-(1/n)\gamma$ for all $n\ge 1$, while~``${\alpha < \beta +o(\gamma)}$'' is its negation, that is,
$\alpha < \beta-(1/n)\gamma$ for some~$n\ge 1$; see [ADH, p.~312].  Here and later inequalities are
in the sense of the ordered divisible hull $\Q\Gamma$ of the relevant $\Gamma$.

\subsection*{A source of special elements}  {\it In this subsection~$K$ is an asymptotic field with small derivation, value group $\Gamma=v(K^\times)\ne \{0\}$, and differential residue field $\k$; we also let $r\in \N^{\ge 1}$.}\/ Below we use the notion {\em neatly surjective\/} from~[ADH,~5.6]: ${A\in K[\der]^{\ne}}$ is neatly surjective iff for all $b\in K^\times$ there exists $a\in K^\times$ with~$A(a)=b$ and $v_A(va)=vb$.\index{linear differential operator!neatly surjective}\index{neatly surjective}\index{surjective!neatly} 
For use in the next proof, recall from~[ADH, 7.1] the notion of a valued differential field being {\it $r$-differential-hen\-selian}\/ (or {\it $r$-$\d$-henselian,}\/ for short).\index{r-d-henselian@$r$-$\d$-henselian}\index{valued differential field!r-d-henselian@$r$-$\d$-henselian}
We often let $\hat f$ be an element in an immediate asymptotic extension~$\hat K$ of $K$, but in the statement of the next lemma we take $\hat f\in K$: 

\begin{lemma}\label{neat1} Assume $\k$ is $r$-linearly surjective, $A\in K[\der]^{\ne}$ of order $\le r$ is neatly surjective, $\gamma\in \Q\Gamma$, $\gamma>0$, $\hat f\in K^\times$, and $v\big(A(\hat f)\big)\ge v(A\hat f)+\gamma$. Then $A(f)=0$ and~$v(\hat f-f)\ge v(\hat f)+ \gamma+o(\gamma)$ for some $f\in K$. 
\end{lemma}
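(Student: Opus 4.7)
The strategy is a single neat-surjectivity correction. If $A(\hat f) = 0$ already, set $f := \hat f$; otherwise $b := -A(\hat f) \in K^\times$, and since $A$ is neatly surjective of order $\le r$, I choose $a \in K^\times$ with $A(a) = b$ and $v_A(va) = vb$. Setting $f := \hat f + a \in K$ gives $A(f) = A(\hat f) + b = 0$ and $v(\hat f - f) = va$, so the construction of $f$ itself is painless; the content is the valuation estimate.

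To estimate $va$, I combine the hypothesis $v(A(\hat f)) \ge v(A\hat f) + \gamma$ with $vb = v(A(\hat f)) = v_A(va)$ and $v(A\hat f) = v_A(v\hat f)$ to obtain
\[
v_A(va)\ \ge\ v_A(v\hat f) + \gamma.
\]
The neat-surjectivity hypothesis (underpinned by $\k$ being $r$-linearly surjective, through the $r$-$\d$-henselian machinery of [ADH,~Chapter~7] and the behaviour of $v_A$ in [ADH,~5.6]) ensures that $v_A$ is well-controlled on its domain. Invoking the slow-variation behaviour of the function $\alpha\mapsto v_A(\alpha)-\alpha$ for small derivation (in the spirit of [ADH,~6.5], paralleling the order-one case of Proposition~\ref{prop:slow}), together with the monotonicity of $v_A$ which forces $va \ge v\hat f$, one can rewrite the previous display as
\[
(va - v\hat f) + o(va - v\hat f)\ \ge\ \gamma,
\]
from which $va - v\hat f \ge \gamma + o(\gamma)$ follows on the convex-hull side $\Q\Gamma$.

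The main obstacle is making precise the final slow-variation step: the error term in $v_A(\alpha_1)-v_A(\alpha_2)-(\alpha_1-\alpha_2)$ needs to be $o(\gamma)$ in the sense of [ADH,~p.~312] (i.e.\ bounded by $(1/n)\gamma$ for each $n\geq 1$), not merely $o(va - v\hat f)$ with no a priori bound on $va - v\hat f$. If the direct slow-variation estimate for $v_A$ does not immediately deliver this form, the fallback is an iterative Hensel-style refinement: build $\hat f = \hat f_0, \hat f_1, \ldots$ in $K$ with $\hat f_{n+1} := \hat f_n + a_{n+1}$, $A(a_{n+1}) = -A(\hat f_n)$ chosen neatly, so that $v(\hat f_{n+1}-\hat f_n)$ grows strictly; such a process either terminates at an exact zero or produces the required $f$ after enough steps, exploiting that $v_A$ ``absorbs'' shifts predictably under small derivation.
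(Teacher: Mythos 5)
Your construction is sound as far as it goes: taking $a\in K^\times$ with $A(a)=-A(\hat f)$ and $v_A(va)=v\big(A(\hat f)\big)$, and setting $f:=\hat f+a$, does give $A(f)=0$ with $v(\hat f-f)=va$, and it reduces everything to the implication
$$v_A(va)\ \ge\ v_A(v\hat f)+\gamma\ \Longrightarrow\ va\ \ge\ v\hat f+\gamma+o(\gamma).$$
This is a genuinely different route from the paper, which corrects $\hat f$ \emph{multiplicatively}: it normalizes $B:=g^{-1}A\hat f\asymp 1$, finds a small zero of the Riccati transform $\Ric(B)$ in an $r$-$\d$-henselian immediate extension (this is where $r$-linear surjectivity of $\k$ enters, via [ADH, 7.2.7] and the proof of [ADH, 7.5.1] together with [ADH, 7.2.2]), produces $y\sim 1$ with $B(y)=0$ and $v(y-1)\ge\gamma+o(\gamma)$, and only then uses neat surjectivity, through [ADH, 7.5.7], to conclude $y\in K$; the desired element is $f=\hat fy$. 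The paper thereby never needs any quantitative control of the function $v_A$ beyond its definition.

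The gap in your argument is exactly the displayed implication, and it is not covered by what you cite. [ADH, 6.1.3] gives $v_A(\alpha)=v(A)+\alpha+o(\alpha)$, with error measured against $\alpha$, not against differences; so for $v\hat f$, $va$ large compared to $\gamma$ it says nothing about whether $v_A(va)-v_A(v\hat f)$ can exceed $va-v\hat f$ by as much as $\gamma$. Proposition~\ref{prop:slow} is also not applicable: it concerns order~$1$, pre-$\d$-valued $K$, the \emph{eventual} quantity $v^{\ev}_A$, values of $A$ at elements (not the operator-valuation $v_A$), and excludes exceptional values. Moreover your phrase ``the monotonicity of $v_A$ which forces $va\ge v\hat f$'' assumes essentially the same unproven fact. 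What is actually needed is a difference estimate of twisting type, in the spirit of Lemma~\ref{lem:Atwist} of this monograph: writing $va-v\hat f=v\fn$, one wants $\big|v\big((A\hat f)_{\ltimes\fn}\big)-v(A\hat f)\big|$ bounded by $r\,|v(\fn^\dagger)|$ (and $0$ when $\fn^\dagger\preceq 1$), after which $\psi(v\fn)=o(v\fn)$ and a short case distinction give the implication. But Lemma~\ref{lem:Atwist} is proved under $\der\mathcal O\subseteq\smallo$, which \emph{fails} in the setting of the lemma (since $\k$ is $r$-linearly surjective with $r\ge1$, its derivation is nontrivial), so you would have to supply a version of that estimate valid with small derivation only; nothing of this appears in your text, and you yourself flag the step as unresolved. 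The fallback iteration is not a proof either: you give no reason the process terminates, and if it does not, its limit lives a priori only in an immediate extension, so you would still need a descent-to-$K$ argument — precisely the role neat surjectivity plays in the paper via [ADH, 7.5.7].
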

\begin{proof} Set $B:= g^{-1}A\hat f$, where we take $g\in K^\times$ such that $vg=v(A\hat f)$. Then~$B\asymp 1$, $B$ is still neatly surjective, and $B(1)=g^{-1}A(\hat f)$, $v\big(B(1)\big)\ge \gamma$. It suffices to find~$y\in K$ such that $B(y)=0$ and $v(y-1)\ge \gamma+o(\gamma)$, because then $f:=\hat fy$ has the desired property.
If $B(1)=0$, then $y=1$ works, so assume $B(1)\ne 0$. By~[ADH, 7.2.7] we have an immediate extension $\hat{K}$ of $K$ that is $r$-differential henselian. Then~$\hat{K}$ is asymptotic by [ADH, 9.4.2 and 9.4.5]. 
Set $R(Z):= \Ric(B)\in K\{Z\}$.
Then the proof of [ADH, 7.5.1] applied to $\hat{K}$ and~$B$ in the roles
of $K$ and $A$ yields~$z\prec 1$ in $\hat{K}$ with $R(z)=0$. Now $R(0)=B(1)$,  
hence by [ADH, 7.2.2] we can take such $z$ with $v(z)  \ge  \beta+o(\beta)$ where $\beta:=v\big(B(1)\big)\ge \gamma$. 
As in the proof of
[ADH, 7.5.1] we next take $y\in \hat{K}$ with $v(y-1)>0$ and
$y^\dagger=z$ to get~$B(y)=0$, and observe that then
$v(y-1)\ge \beta+o(\beta)$,  by [ADH, 9.2.10(iv)], hence $v(y-1)\ge \gamma+o(\gamma)$. It remains to note that $y\in K$ by [ADH, 7.5.7].  
\end{proof} 

\noindent
By a remark following the proof of [ADH, 7.5.1] the assumption that $\k$ is $r$-linearly surjective in the lemma above can be replaced for $r\ge 2$ by the assumption that $\k$ is $(r-1)$-linearly surjective. 

\medskip
\noindent
Next we establish a version of the above with $\hat f$ in an immediate asymptotic extension of $K$. Recall that an asymptotic extension of $K$ with the same value group as~$K$ has small derivation, by [ADH, 9.4.1].\index{r-linearly surjective@$r$-linearly surjective!valuation ring}

\begin{lemma}\label{neat2} Assume $\k$ is $r$-linearly surjective, $A\in K[\der]^{\ne}$ of order $\le r$ is neatly surjective, $\gamma\in \Q\Gamma$, $\gamma > 0$, $\hat{K}$ is an immediate asymptotic extension of $K$, $\hat f\in \hat K^\times$, and~$v\big(A(\hat f)\big)\ge v(A\hat f)+\gamma$. Then for some $f\in K$ we have
$$A(f)\ =\ 0, \qquad v(\hat f-f)\ \ge\ v(\hat f)+\gamma+o(\gamma).$$ 
\end{lemma}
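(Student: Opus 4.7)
My plan is to reduce Lemma~\ref{neat2} to Lemma~\ref{neat1} by approximating $\hat f$ closely enough by an element of $K$. If $\hat f\in K$ we are done by Lemma~\ref{neat1}, so assume $\hat f\notin K$. Since the extension $\hat K\supseteq K$ is immediate, there is a pc-sequence $(a_\rho)$ in $K$ with $a_\rho\leadsto\hat f$; in particular $v(\hat f-a_\rho)$ is strictly increasing in $\rho$ and cofinal in some downward closed subset of $\Gamma$ with no maximum.

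The derivation of $\hat K$ is small [ADH,~9.4.1], so $\hat K$ is a topological differential ring by [ADH,~4.4.7], and every operator in $\hat K[\der]$ acts continuously on $\hat K$. Hence $A(a_\rho)\to A(\hat f)$ in the valuation topology, i.e., $v\!\big(A(\hat f)-A(a_\rho)\big)=v\!\big(A(\hat f-a_\rho)\big)\to\infty$. Moreover, the coefficients of the operator $A\hat f-Aa_\rho=A(\hat f-a_\rho)\in\hat K[\der]$ are, via the commutation rule $\der^i y=\sum_{j\le i}\binom{i}{j}y^{(j)}\der^{i-j}$, $K$-linear combinations of the derivatives $(\hat f-a_\rho)^{(i)}$ for $i\le r$; each such derivative has valuation tending to $\infty$ by continuity of $\der$, so $v(A\hat f-Aa_\rho)\to\infty$ as well.

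Fix $\rho$ large enough that $v(\hat f-a_\rho)\ge v(\hat f)+\gamma$ and that both $v(A\hat f-Aa_\rho)$ and $v\!\big(A(\hat f)-A(a_\rho)\big)$ exceed $v(A\hat f)+\gamma$. Then $v(a_\rho)=v(\hat f)$, $v(Aa_\rho)=v(A\hat f)$, and $v\!\big(A(a_\rho)\big)\ge v(A\hat f)+\gamma=v(Aa_\rho)+\gamma$. Apply Lemma~\ref{neat1} to $a_\rho\in K^{\times}$ in place of $\hat f$ (with the same $\gamma$): this yields $f\in K$ with $A(f)=0$ and $v(a_\rho-f)\ge v(a_\rho)+\gamma+o(\gamma)=v(\hat f)+\gamma+o(\gamma)$. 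Combining with the bound on $v(\hat f-a_\rho)$,
\[
v(\hat f-f)\ \ge\ \min\!\big(v(\hat f-a_\rho),\,v(a_\rho-f)\big)\ \ge\ v(\hat f)+\gamma+o(\gamma),
\]
as required.

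The only non-bookkeeping step is the continuity claim for the operator $A\hat f-Aa_\rho\in\hat K[\der]$: one must know that as $vy\to\infty$, every coefficient of $Ay\in\hat K[\der]$ — each of which is a fixed $K$-linear combination of the $y^{(i)}$ with $i\le r$ — has valuation tending to $\infty$. This is exactly where the small derivation hypothesis on $K$ (and hence on $\hat K$) and the boundedness $i\le r=\operatorname{ord}(A)$ are used.
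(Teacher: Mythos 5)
There is a genuine gap, and it sits at the two sentences ``Hence $A(a_\rho)\to A(\hat f)$ in the valuation topology'' and ``Fix $\rho$ large enough that $v(\hat f-a_\rho)\ge v(\hat f)+\gamma$.'' Pseudoconvergence $a_\rho\leadsto\hat f$ only says that $v(\hat f-a_\rho)$ is eventually strictly increasing; it does \emph{not} give convergence in the valuation topology unless $v(\hat f-K)$ is cofinal in $\Gamma$. In general $v(\hat f-K)$ is just a downward closed subset of $\Gamma$ without largest element, and nothing in the hypotheses (immediacy of $\hat K$ plus the estimate on $v\big(A(\hat f)\big)$) tells you a priori that it contains any element $\ge v(\hat f)+\gamma$; so the index $\rho$ you ``fix'' need not exist. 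Indeed, producing an element of $K$ that close to $\hat f$ is essentially the content of the lemma itself --- and note that even the lemma's conclusion only yields $v(\hat f-f)\ge v(\hat f)+\gamma+o(\gamma)$, which in a non-archimedean $\Gamma$ can be strictly smaller than $v(\hat f)+\gamma$, so the approximation you require is, if anything, stronger than what is being proved. The remainder of your reduction (transferring the hypothesis from $\hat f$ to $a_\rho$ via $v(Aa_\rho)=v(A\hat f)$ and invoking Lemma~\ref{neat1} over $K$) would be fine \emph{if} such an $a_\rho$ were available, but its availability is precisely the missing point, so the argument is circular as written.

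The paper goes in the opposite direction: instead of pulling $\hat f$ down into $K$, it pushes the ambient field up. One extends $\hat K$ to an immediate asymptotic extension that is $r$-$\d$-henselian; by [ADH, 7.1.8] the operator $A$ remains neatly surjective over this extension, so Lemma~\ref{neat1} applies there with $\hat f$ itself in the role of the given element, producing $f$ in the extension with $A(f)=0$ and $v(\hat f-f)\ge v(\hat f)+\gamma+o(\gamma)$. The descent back to $K$ is then automatic: by [ADH, 7.5.7] any such zero of $A$ in an immediate extension already lies in $K$ (this is where $r$-linear surjectivity of $\k$ is used), so no approximation of $\hat f$ by elements of $K$ is ever needed. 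If you want to salvage your approach, you would have to replace the appeal to pseudoconvergence by an argument producing good approximations of $\hat f$ in $K$, which in effect reproves the lemma.
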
 
\begin{proof} By extending $\hat{K}$ we can arrange that 
$\hat{K}$ is $r$-differential henselian, 
so $A$ remains neatly surjective as an element of $\hat{K}[\der]$, by [ADH, 7.1.8]. Then by Lem\-ma~\ref{neat1} with $\hat{K}$ in the role of $K$ we get $f\in \hat{K}$ such that $A(f)=0$ and
$v(\hat f-f)\ge v(\hat f)+\gamma+o(\gamma)$. It remains to note that
$f\in K$ by [ADH, 7.5.7]. 
\end{proof}

\noindent
We actually need an inhomogeneous variant of the above: 

\begin{lemma}\label{neat3} Assume $\k$ is $r$-linearly surjective, $A\in K[\der]^{\ne}$ of order $\le r$ is neatly surjective, $b\in K$, $\gamma\in \Q\Gamma$, $\gamma>0$, $v(A)=o(\gamma)$, $v(b)\ge o(\gamma)$, $\hat{K}$ is an immediate asymptotic extension of $K$, $\hat f\in \hat K$,
$\hat f\preceq 1$, and $v\big(A(\hat f)-b\big)\ge \gamma+o(\gamma)$. 
Then 
$$A(f)\ =\ b,\qquad v(\hat f -f)\ \ge\ (1/2)\gamma +o(\gamma)$$
for some $f\in K$. 
\end{lemma}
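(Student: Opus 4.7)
The strategy is to reduce the inhomogeneous equation $A(f)=b$ to the homogeneous case treated in Lemma~\ref{neat2} by first finding a rough solution $f_0\in K$ and then correcting it. If $b=0$, set $f_0:=0$; otherwise use the neat surjectivity of $A$ to pick $f_0\in K$ with $A(f_0)=b$ and $v(Af_0)=vb$. Using $v(b)\ge o(\gamma)$ and $v(A)=o(\gamma)$, one arranges $f_0\preceq 1$. Set $\hat h:=\hat f-f_0\in\hat K$, so that $A(\hat h)=A(\hat f)-b$ and hence $v\bigl(A(\hat h)\bigr)\ge\gamma+o(\gamma)$; also $\hat h\preceq 1$, so $v(\hat h)\ge 0$. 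The problem reduces to finding $g\in K$ with $A(g)=0$ and $v(\hat h-g)\ge(1/2)\gamma+o(\gamma)$, since then $f:=f_0+g$ satisfies $A(f)=b$ and $\hat f-f=\hat h-g$.

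Split into two cases according to $v(\hat h)$. If $v(\hat h)\ge(1/2)\gamma$, take $g:=0$ (so $f:=f_0$) and the conclusion is immediate. Otherwise $v(\hat h)<(1/2)\gamma$, and the plan is to apply Lemma~\ref{neat2} to $\hat h$ with $\gamma':=(1/2)\gamma\in\Q\Gamma^{>0}$ in place of $\gamma$. This requires verifying the hypothesis $v\bigl(A(\hat h)\bigr)\ge v(A\hat h)+(1/2)\gamma$, where $v(A\hat h)$ denotes the valuation of the operator $A\cdot\hat h\in\hat K[\der]$. We first normalize by dividing $A$ (and $b$) by an element of $K^\times$ of valuation $v(A)$; since $v(A)=o(\gamma)$, this perturbs $v(b)$ and $v(A(\hat f)-b)$ only by $o(\gamma)$, so we may assume $v(A)=0$. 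Bounding $v(A\hat h)$ above by the valuation of a suitable coefficient of the operator $A\hat h$ (e.g.\ the coefficient of $\der^m$ with $m=\dwm A$, where the dominant term is $a_m\hat h$ of valuation $v(\hat h)$, and the derivative corrections have valuation $\ge 0$ by the small derivation on $\hat K$) yields $v(A\hat h)\le v(\hat h)+o(\gamma)<(1/2)\gamma+o(\gamma)$. Combined with $v\bigl(A(\hat h)\bigr)\ge\gamma+o(\gamma)$, this gives the required inequality.

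Lemma~\ref{neat2} applied to $\hat h$ with $\gamma'=(1/2)\gamma$ then yields $g\in K$ with $A(g)=0$ and $v(\hat h-g)\ge v(\hat h)+(1/2)\gamma+o(\gamma)\ge(1/2)\gamma+o(\gamma)$. Setting $f:=f_0+g$ completes the proof.

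The main obstacle is the tight upper bound on $v(A\hat h)$: the naive bound from the leading coefficient $v(A\hat h)\le v(a_N)+v(\hat h)$ is insufficient because $v(a_N)$ need only satisfy $v(a_N)\ge v(A)=o(\gamma)$ and can be much larger. One must instead exploit a coefficient tied to $\dwm A$ and control the derivative-induced error terms via the small derivation, making essential use of $\hat h\preceq 1$ (which is why the hypothesis $\hat f\preceq 1$ is needed and why the constant $1/2$ appears, rather than the full $\gamma$ of Lemma~\ref{neat2}).
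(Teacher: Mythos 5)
Your skeleton is exactly the paper's: use neat surjectivity to get a rough solution $f_0\in K$ of $A(f_0)=b$, pass to $\hat h:=\hat f-f_0$, split into the cases $v(\hat h)\ge(1/2)\gamma+o(\gamma)$ and $v(\hat h)<(1/2)\gamma+o(\gamma)$, and in the second case apply Lemma~\ref{neat2} with $(1/2)\gamma$; the paper then sets $f:=f_0+g$ just as you do. The gap is in your verification of the hypothesis $v\big(A(\hat h)\big)\ge v(A\hat h)+(1/2)\gamma$ of Lemma~\ref{neat2}, and it is twofold. First, you are not entitled to ``arrange $f_0\preceq 1$'': neat surjectivity together with $v(A)=o(\gamma)$ and $v(b)\ge o(\gamma)$ only yields $v(f_0)\ge o(\gamma)$ (which is what the paper uses, and suffices, since the $o(\gamma)$ is absorbed in the final estimate), so $\hat h\preceq 1$ — on which your derivative bounds rest — is not available. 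Second, and more seriously, the single-coefficient argument at $m=\dwm A$ does not give the upper bound $v(A\hat h)\le v(\hat h)+o(\gamma)$: the coefficient of $\der^m$ in the operator $A\hat h$ is $a_m\hat h+\sum_{j>m}{j\choose m}a_j\hat h^{(j-m)}$, and knowing only that the correction terms have valuation $\ge 0$ does not prevent them from having the same valuation as $a_m\hat h$ and cancelling it (already when $v(\hat h)=0$ all terms lie in $\mathcal O$), in which case that coefficient can have valuation strictly larger than $v(\hat h)+o(\gamma)$ and your bound fails for it. Ruling out such cancellation across the coefficients is precisely the nontrivial content of the estimate you need; it cannot be read off the dominant coefficient in this naive way.

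What closes the case $v(\hat h)<(1/2)\gamma+o(\gamma)$ in the paper is the two-sided estimate $v(A\hat h)=v(A)+v(\hat h)+o\big(v(\hat h)\big)$ for linear differential operators over a valued differential field with small derivation, i.e.\ [ADH, 6.1.3], applicable in $\hat K$ because an immediate asymptotic extension of $K$ again has small derivation [ADH, 9.4.1]. With $v(A)=o(\gamma)$ and $o(\gamma)\le v(\hat h)<(1/2)\gamma+o(\gamma)$ it gives $v(A\hat h)<(1/2)\gamma+o(\gamma)$, hence $v\big(A(\hat h)\big)\ge\gamma+o(\gamma)\ge v(A\hat h)+(1/2)\gamma$, and Lemma~\ref{neat2} applies. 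So: keep your structure, but obtain the key valuation estimate from [ADH, 6.1.3] (or reprove it properly) rather than from the dominant-coefficient sketch, and replace the normalization $f_0\preceq 1$ by $v(f_0)\ge o(\gamma)$, which is all that is needed in both cases.
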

\begin{proof} Take $y\in K$ with $A(y)=b$ and $v(y)\ge o(\gamma)$. Then $A(\hat g)=A(\hat f)-b$ for~$\hat g:= \hat f-y$, so $v\big(A(\hat g)\big)\ge \gamma+o(\gamma)$ and $v(\hat g)\ge o(\gamma)$. We distinguish two cases:

\medskip\noindent
(1) $v(\hat g)\ge (1/2)\gamma+o(\gamma)$. Then $v(\hat f - y)\ge (1/2)\gamma+o(\gamma)$, so $f:= y$ works. 

\medskip\noindent
(2) $v(\hat g) < (1/2)\gamma+o(\gamma)$. Then by [ADH, 6.1.3],
$$v(A\hat g)\ <\ (1/2)\gamma+o(\gamma), \qquad v\big(A(\hat g)\big)\ \ge\ \gamma+o(\gamma),$$ so
$v\big(A(\hat g)\big)\ge v(A\hat g) + (1/2)\gamma$. Then 
Lemma~\ref{neat2}
gives an element $g\in K$ such that~$A(g)=0$ and 
$v(\hat g-g)\ge (1/2)\gamma+o(\gamma)$. Hence $f:= y+g$ works. 
\end{proof}

\noindent
Recall from [ADH, 7.2] that $\O$ is said to be {\it $r$-linearly surjective} if for every $A$ in~$K[\der]^{\neq}$ of order $r$ with $v(A)=0$ there exists $y\in\O$ with $A(y)=1$.

\begin{prop}\label{propsp} Assume $\O$ is $r$-linearly surjective,
$P\in K\{Y\}$, $\order(P)\le r$, $\ddeg P=1$, 
and $P(\hat a)=0$, where $\hat a\preceq 1$ lies in an immediate asymptotic extension of $K$ and $\hat a\notin K$. Then $\hat a$ is special over $K$. 
\end{prop}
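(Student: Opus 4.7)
The plan is to argue by contradiction: assume $\hat a$ is not special over $K$; we will derive $\hat a\in K$, contradicting the hypothesis. The engine is a Newton-style iteration powered by Lemma~\ref{neat3}.

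After scaling $P$ by an element of $K^\times$, we arrange $v(P)=0$; since $\ddeg P=1$, this gives $v(L_P)=v(P_1)=0$ and $v(P_d)>0$ for all $d\ge 2$. Because the extension is immediate and $\hat a\preceq 1$, there is $a_0\in K$ with $\gamma_0:=v(\hat a-a_0)>0$. Given $a\in K$ with $\gamma:=v(\hat a-a)>0$, set $\epsilon:=\hat a-a\prec 1$ and $Q:=P_{+a}\in K\{Y\}$. The equation $Q(\epsilon)=0$ rearranges to
\[
L_Q(\epsilon)+P(a)\ =\ -\sum_{d\ge 2}Q_d(\epsilon),
\]
and Lemma~\ref{lem:diff operator at small elt} applied to each $Q_d$ at $\epsilon\prec 1$ gives $v\big(Q_d(\epsilon)\big)\ge d\gamma+O(1)$ for $d\ge 2$. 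Hence $v\big(L_Q(\epsilon)+P(a)\big)\ge 2\gamma+o(\gamma)$. Invoking Lemma~\ref{neat3} with $A:=L_Q$ (of order $\le r$), $b:=-P(a)$, $\hat f:=\epsilon$, and parameter $\asymp 2\gamma$ yields $f\in K$ with $L_Q(f)=-P(a)$ and $v(\epsilon-f)\ge\gamma+o(\gamma)$. Setting $a':=a+f\in K$ gives $v(\hat a-a')>\gamma$: a strict improvement.

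Iterating this Newton step transfinitely, starting from $a_0$, produces a pc-sequence $(a_\rho)$ in $K$ with $a_\rho\leadsto\hat a$ whose widths $\gamma_\rho=v(\hat a-a_\rho)$ roughly double at successor steps ($\gamma_{\rho+1}\lesssim 2\gamma_\rho$), so all $\gamma_\rho$ stay within the convex subgroup $\Delta_0$ of $\Gamma$ generated by $\gamma_0$. If $\hat a$ were not special over $K$, there would exist $a^*\in K$ with $v(\hat a-a^*)$ outside $\Delta_0$; restarting the iteration at $a^*$ gives a finer pc-sequence pseudoconverging to $\hat a$ with widths in a strictly larger convex subgroup. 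Running this nested restart argument across all convex subgroups and using uniqueness of pseudolimits in immediate asymptotic extensions---leveraging the $r$-linear surjectivity of $\mathcal{O}$ via Lemma~\ref{neat3}---would force $\hat a$ to agree with some element produced in $K$, a contradiction.

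The two main technical obstacles are: (i) verifying the hypotheses of Lemma~\ref{neat3} at each iteration step, in particular the neat surjectivity of $L_{P_{+a}}$, which should follow from the $r$-linear surjectivity of $\mathcal{O}$ combined with $\ddeg P=1$ (ensuring $L_{P_{+a}}$ has dominant weighted multiplicity zero when $a$ is close to $\hat a$, so $v(L_Q)=o(\gamma)$ as required); and (ii) turning the ``restart'' heuristic into a rigorous extraction of $\hat a\in K$ from the non-speciality assumption, which will require delicate bookkeeping of pc-sequences and their widths across coarsenings of $v$ by various convex subgroups of $\Gamma$.
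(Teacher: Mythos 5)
Your core engine---linearize at a good approximation $a$, bound the nonlinear terms quadratically, and solve the linearized equation inside $K$ via Lemma~\ref{neat3}---is exactly the paper's engine, and your worry (i) is easily dispatched: neat surjectivity of every $A\in K[\der]^{\ne}$ of order $\le r$ is a consequence of $\O$ being $r$-linearly surjective (this is the first sentence of the paper's proof), not something to extract from $\ddeg P=1$. The genuine gap is in your iteration step. Recall the convention in force: $\alpha\ge\beta+o(\gamma)$ means $\alpha\ge\beta-(1/n)\gamma$ for all $n\ge 1$, so these error terms may be \emph{negative}. Applying Lemma~\ref{neat3} with parameter $2\gamma$ to $A=L_{P_{+a}}$, $b=-P(a)$, $\hat f=\hat a-a$ yields only $v(\hat a-a')\ge \tfrac12(2\gamma)+o(2\gamma)=\gamma+o(\gamma)$: the factor $\tfrac12$ in the conclusion of Lemma~\ref{neat3} exactly cancels the quadratic gain, and the residual $o(\gamma)$ slack does not give $v(\hat a-a')>\gamma$, nor even $\ge\gamma$ (take an error in a smaller archimedean class). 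So no strict improvement is obtained, and a fortiori not the \emph{commensurate} improvement that speciality actually requires. The paper's fix is precisely the step you skipped: multiplicatively conjugate by $g$ with $vg=\gamma$ and divide by $g$, so that $\hat f:=g^{-1}(\hat a-a)\asymp 1$ and the degree-$\ge 2$ part is $\ \ge\gamma+o(\gamma)$ at unit scale; then Lemma~\ref{neat3} with parameter $\gamma$ gives $f\in K$ with $v(\hat f-f)\ge\tfrac12\gamma+o(\gamma)\ge\tfrac13\gamma$, whence $v\big(\hat a-(a+gf)\big)=\gamma+v(\hat f-f)\ge\tfrac43\gamma$. The $o(\gamma)$ slack is now harmless because it is absorbed into a gain measured on top of the already achieved $\gamma$.

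Even granting a strict improvement, your global scaffolding (contradiction, transfinite Newton iteration, restarts across convex subgroups, forcing $\hat a\in K$) does not prove the statement and is not needed. Speciality asserts that some nontrivial convex subgroup is cofinal in $v(\hat a-K)$; merely producing better approximations cannot yield a contradiction with $\hat a\notin K$, since $v(\hat a-K)$ has no largest element in any case, and your upper-bound claim $\gamma_{\rho+1}\lesssim 2\gamma_\rho$ (needed to keep widths inside the convex subgroup generated by $\gamma_0$) follows from nothing in the construction and certainly not at limit stages; the concluding appeal to ``uniqueness of pseudolimits'' is also unfounded. The paper proceeds directly, with no contradiction and no transfinite process: for every $\gamma>0$ in $v(\hat a-K)$ it produces an element $\ge\tfrac43\gamma$ of $v(\hat a-K)$; iterating and using that $v(\hat a-K)$ is downward closed and contains a positive element (the extension is immediate, $\hat a\preceq 1$, $\hat a\notin K$), one gets $n\gamma\in v(\hat a-K)$ for all $n$, so $\big\{\delta:\abs{\delta}\in v(\hat a-K)\big\}\cup\{0\}$ is a nontrivial convex subgroup cofinal in $v(\hat a-K)$, i.e., $\hat a$ is special over $K$. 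The ``delicate bookkeeping'' you defer in (ii) is exactly the missing content; replacing it by the single $(4/3)\gamma$-improvement claim, proved via the multiplicative conjugation above, is the route to a complete proof.
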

\begin{proof} The hypothesis on $\O$ yields: $\k$ is $r$-linearly surjective and all $A\in K[\der]^{\ne}$
of order $\le r$ are neatly surjective. Let $0 <\gamma\in v(\hat a -K)$; we claim that $v(\hat a -K)$ has an element $\ge (4/3)\gamma$. 
We arrange $P\asymp 1$. Take $a\in K$ with $v(\hat a -a)=\gamma$. Then~${P_{+a}\asymp 1}$, $\ddeg P_{+a}=1$, so 
$$P_{+a,1}\ \asymp\ 1, \quad P_{+a,>1}\ \prec\ 1, \quad P_{+a}\ =\ P(a) + P_{+a,1} + P_{+a,>1}$$
and
$$0\ =\ P(\hat a)\ =\ P_{+a}(\hat a -a)\ =\ P(a) + P_{+a,1}(\hat a -a) + P_{+a,>1}(\hat a -a),$$
with $$v\big(P_{+a,1}(\hat a -a)+P_{+a,>1}(\hat a -a)\big)\ge \gamma+o(\gamma),$$ and thus $v(P(a))\ge \gamma+o(\gamma)$.
Take $g\in K^\times$ with $vg=\gamma$ and set $Q:= g^{-1}P_{+a,\times g}$, so $Q=Q_0+ Q_1 + Q_{>1}$ with
$$Q_0\ =\ Q(0)\ =\ g^{-1}P(a),\quad Q_1\ =\ g^{-1}(P_{+a,1})_{\times g},\quad Q_{>1}\ =\ g^{-1}(P_{+a,>1})_{\times g}, $$
hence 
$$v(Q_0)\ge o(\gamma), \quad v(Q_1)=o(\gamma), \quad v(Q_{>1})\ge \gamma+o(\gamma).$$ 
We set $\hat f:= g^{-1}(\hat a -a)$, so $Q(\hat f)=0$ and $\hat f\asymp 1$, and $A:=L_Q\in K[\der]$. Then~$Q(\hat f)=0$ gives
$$Q_0+A(\hat f)\ =\ Q_0+Q_1(\hat f)\ =\ -Q_{>1}(\hat f),\ \text{ with }v\big(Q_{>1}(\hat f)\big)\ \ge\ \gamma+o(\gamma),$$ so $v\big(Q_0+A(\hat f)\big)\ge \gamma+o(\gamma)$. Since $v(A)=v(Q_1)=o(\gamma)$, Lemma~\ref{neat3} then gives~$f\in K$ with $v(\hat f - f)\ge (1/3)\gamma$.
In view of $\hat a-a=g\hat f$, this yields
$$v\big(\hat a -(a+gf)\big)\ =\ \gamma+v(\hat f - f)\ \ge\ (4/3)\gamma,$$ 
which proves our claim. It gives the desired result.    
\end{proof}

\subsection*{A source of almost special elements} {\it In this subsection
$K$, $\Gamma$, $\k$, and $r$ are as in the previous subsection, and we assume that $\O$ is
$r$-linearly surjective.}\/ (So $\k$ is $r$-linearly surjective, and $\sup \Psi=0$ by [ADH, 9.4.2].) 
Let 
$\hat a$ be an element in an immediate asymptotic extension of $K$ such that $\hat a\notin K$ and 
$K\<\hat a\>$ has transcendence degree $\le r$
over $K$. We shall use Proposition~\ref{propsp} to show:

\begin{prop} \label{propalsp}  
If $\Gamma$ is divisible, then $\hat a$ is almost special over $K$. 
\end{prop}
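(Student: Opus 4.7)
The plan is to reduce to Proposition~\ref{propsp} by applying it to $\hat b := (\hat a - a)/\fm$ for appropriate $a \in K$ and $\fm \in K^\times$ satisfying $v\fm \leq v(\hat a - a)$. Since the set $v(\hat a - K)$ is unchanged by replacing $\hat a$ with $\hat a - a$, showing that $\hat b$ is special over $K$ yields at once that $\hat a$ is almost special over $K$ (witnessed by $\fm$).

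Since $\operatorname{trdeg}(K\<\hat a\>|K) \leq r$, the elements $\hat a, \hat a', \dots, \hat a^{(r)}$ are algebraically dependent over $K$, so there is a nonzero $P \in K\{Y\}$ of order $\leq r$ with $P(\hat a) = 0$. I would choose such $P$ of minimal complexity; then by minimality $S_P(\hat a) \neq 0$, so the linear part $L_{P_{+\hat a}}$ of $P$ at $\hat a$ has order exactly $r_0 := \order P$. Fix a pc-sequence $(a_\rho)$ in $K$ with $a_\rho \leadsto \hat a$. Then $v(\hat a - a_\rho)$ grows cofinally in $v(\hat a - K)$, and from $P(\hat a) = 0$ via the differential Taylor expansion of $P$ at $\hat a$ one obtains $v(P(a_\rho)) \geq v(L_{P_{+\hat a}}) + v(\hat a - a_\rho)$; in particular $v(P(a_\rho))$ grows without bound, while $v(P_{+a_\rho, d})$ for $d \geq 1$ stabilizes to $v(P_{+\hat a, d})$.

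For $\rho$ large, the point $(0, v(P(a_\rho)))$ in the Newton polygon of $P_{+a_\rho}$ (the lower convex hull of the points $\{(d, v(P_{+a_\rho, d}))\} \subseteq \N \times \Gamma$) is arbitrarily high while the points for $d \geq 1$ are bounded, so $(1, v(P_{+a_\rho, 1}))$ becomes a vertex of the lower hull. Consequently there is an open interval of values $\gamma \in \Q\Gamma$, with upper endpoint approximately $v(P(a_\rho)) - v(P_{+a_\rho, 1})$, such that $\ddeg P_{+a_\rho, \times\fm} = 1$ whenever $v\fm = \gamma$. Using divisibility of $\Gamma$, I would pick $v\fm \in \Gamma$ in this interval with $v\fm \leq v(\hat a - a_\rho)$ (which is possible since the upper endpoint dominates $v(\hat a - a_\rho)$ for large $\rho$). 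Then $Q := P_{+a_\rho, \times\fm}$ satisfies $\order Q \leq r$ and $\ddeg Q = 1$, and $\hat b := (\hat a - a_\rho)/\fm \preceq 1$ lies in an immediate asymptotic extension of $K$ with $\hat b \notin K$ and $Q(\hat b) = P_{+a_\rho}(\hat a - a_\rho) = P(\hat a) = 0$. Proposition~\ref{propsp} then yields $\hat b$ special over $K$, and hence $\hat a$ is almost special over $K$, as required.

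The hard part will be making the Newton polygon analysis in the preceding paragraph rigorous in the general setting of an asymptotic field with only small derivation (without $H$-type or asymptotic integration), where the valuation $v((P_{+a, d})_{\times\fm})$ is not exactly $v(P_{+a, d}) + d\cdot v\fm$ but involves corrections dictated by the valuations $v(\fm^{(j)})$ and thus by the asymptotic couple of $K$. Establishing the required estimates on the multiplicative conjugates of the homogeneous components of $P_{+a_\rho}$, together with the use of divisibility of $\Gamma$ to select $v\fm$ inside the resulting open interval of $\Q\Gamma$, constitute the main technical content of the proof.
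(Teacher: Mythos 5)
Your endgame coincides with the paper's: both routes produce $a\in K$ and $g\in K^\times$ with $\ddeg P_{+a,\times g}=1$ and $(\hat a-a)/g\preceq 1$, and then apply Proposition~\ref{propsp}. The gap is that the existence of such $a$, $g$ --- precisely the step you defer as ``the hard part'' --- is the entire content of the proof, and your Newton-polygon sketch of it does not close. The paper does not obtain it by such a computation: it first replaces $\hat a$ by an element $\hat b$ with $v(\hat b-K)=v(\hat a-K)$ whose minimal annihilator is a minimal differential polynomial of the divergent pc-sequence $(a_\rho)$ (Lemma~\ref{replace}, via [ADH, 6.8.1, 6.9.2, 6.9.3]), and only then invokes the external theorem \cite[Proposition~3.1]{Nigel}, which is stated for such minimal differential polynomials and uses divisibility of $\Gamma$ in an essential way. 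Your proposal never arranges this minimality for the pc-sequence, and your use of divisibility (merely to pick $v\fm$ inside an interval) does not substitute for it.

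Two concrete failures in the sketch. First, the stabilization of the points $(d,v(P_{+a_\rho,d}))$ for $d\ge 1$ is unjustified when $P$ is only a minimal annihilator of $\hat a$: the lower-complexity pieces $P_{(\i)}$ with $\abs{\i}\ge 1$ satisfy $P_{(\i)}(\hat a)\neq 0$, but nothing prevents them from lying in $Z(K,\hat a)$, and then the values $v(P_{(\i)}(a_\rho))$ (hence the minima over $\abs{\i}=d$) need not stabilize along $(a_\rho)$; eliminating this is exactly the purpose of the replacement step. Second, and decisively: your admissible interval for $v\fm$ has lower endpoint roughly $\max_{d\ge 2}\bigl(v(P_{+a_\rho,1})-v(P_{+a_\rho,d})\bigr)/(d-1)\in\Q\Gamma$, which (granting stabilization) does not move with $\rho$, while you also require $v\fm\le v(\hat a-a_\rho)$. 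In the only case that matters, $v(\hat a-K)$ is bounded above in $\Gamma$ (if it were cofinal in $\Gamma$, then $\hat a$ would already be special with $\Delta=\Gamma$), and there is no reason that $v(\hat a-a_\rho)$ ever exceeds that lower endpoint; when it does not, every choice with $v\fm\le v(\hat a-a_\rho)$ gives $\ddeg P_{+a_\rho,\times\fm}\ge 2$, and your construction yields nothing. That the dominant degree can nevertheless be forced to $1$ with $g\asymp\hat a-a$ is a genuine theorem about minimal differential polynomials of pc-sequences over asymptotic fields with divisible value group, not a soft Newton-polygon fact; without proving it (or citing \cite[Proposition~3.1]{Nigel} together with the reduction of Lemma~\ref{replace}), the argument is incomplete.
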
 

\noindent
Towards the proof we first note that $\hat a$ has a minimal annihilator $P(Y)$\index{minimal!annihilator}\index{minimal!differential polynomial}\index{differential polynomial!minimal}
 over $K$ of order~$\le r$. We also fix a divergent pc-sequence $(a_{\rho})$ in $K$ such that $a_{\rho}\leadsto \hat a$. 
  (See [ADH, 4.1] for ``minimal annihilator'', and [ADH, 4.4] for ``minimal differential polynomial of $(a_\rho)$ over $K$''.)
 We next show how to improve $\hat a$ and $P$ (without assuming divisibility of $\Gamma$):

\begin{lemma}\label{replace} For some $\hat{b}$ in an immediate asymptotic extension of $K$ we have: \begin{enumerate}
\item[\textup{(i)}] $v(\hat a -K)=v(\hat b -K)$;
\item[\textup{(ii)}] $(a_{\rho})$ has a minimal differential polynomial $Q$ over $K$ of order~$\le r$ such that~$Q$
is also a minimal annihilator of $\hat b$ over $K$.
\end{enumerate}
\end{lemma}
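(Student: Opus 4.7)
My plan is to construct $\hat b$ as a pseudolimit of $(a_\rho)$ in an immediate extension of $K$ which is annihilated by a minimal differential polynomial $Q$ of the pc-sequence $(a_\rho)$ over $K$. First, I would observe that since $P \in K\{Y\}^{\ne}$ is a minimal annihilator of $\hat a$ with $P(\hat a)=0$ and $a_\rho \leadsto \hat a$ in an immediate asymptotic extension of $K$, the standard continuity argument for differential polynomials under pseudoconvergence (expand $P_{+\hat a}(Y) = \sum_{i \ge 1} T_i(Y)$ into homogeneous parts of positive degree and use $\hat a - a_\rho \leadsto 0$) yields $P(a_\rho) \leadsto 0$. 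Hence $(a_\rho)$ is of $\d$-algebraic type over $K$ in the sense of [ADH, 4.4], so a minimal differential polynomial $Q \in K\{Y\}$ of $(a_\rho)$ exists, with $\cc(Q) \leq \cc(P)$; in particular $\order Q \leq \order P \leq r$.

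Next, I would invoke the standard construction of immediate pseudolimits for algebraic pc-sequences from [ADH, 4.4]: this produces a differential field extension $K\langle \hat b\rangle$ of $K$ with $a_\rho \leadsto \hat b$ and $Q(\hat b)=0$; by divergence of $(a_\rho)$ in $K$ we have $\hat b \notin K$, and by the Kaplansky-type construction the extension $K\langle \hat b\rangle|K$ is immediate. Minimality of $Q$ as annihilator of $\hat b$ is then automatic: if some $Q_1 \in K\{Y\}^{\ne}$ annihilated $\hat b$ with $\cc(Q_1) < \cc(Q)$, the same continuity argument would give $Q_1(a_\rho)\leadsto 0$, contradicting the choice of $Q$. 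Finally, $K\langle \hat b\rangle$ is asymptotic because $K$ has small derivation and an immediate valued differential field extension of such a $K$ is asymptotic (cf.~[ADH, 9.4]). This gives (ii).

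For (i), I would use the elementary fact that any two pseudolimits of a common pc-sequence cut $K$ in the same way valuationally. Concretely, given $a \in K$ and a pseudolimit $\hat c$ of $(a_\rho)$, the sequence $v(\hat c - a_\rho)$ is eventually strictly increasing; hence by the ultrametric triangle inequality, once $v(\hat c - a_\rho) > v(a - a_\rho)$ we have $v(\hat c - a) = v(a_\rho - a)$, a quantity depending only on the pc-sequence and on $a$, not on $\hat c$. Applying this to both $\hat c = \hat a$ and $\hat c = \hat b$ gives $v(\hat a - a) = v(\hat b - a)$ for every $a \in K$, which is exactly (i).

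The main technical point is the simultaneous control expressed in (ii): ensuring not only that $Q$ annihilates the chosen $\hat b$ but that $Q$ is itself \emph{a minimal annihilator} of $\hat b$. This is handled uniformly by choosing $Q$ to be a minimal differential polynomial of the pc-sequence $(a_\rho)$ (rather than starting from $P$ and trying to descend), after which minimality against $\hat b$ follows by transporting any potential smaller annihilator of $\hat b$ back to the pc-sequence via the continuity principle used in the first paragraph.
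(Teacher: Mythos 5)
Your overall route is the paper's: from the annihilator $P$ of $\hat a$ pass to the statement that $(a_\rho)$ is of $\d$-algebraic type over $K$, take a minimal differential polynomial $Q$ of $(a_\rho)$ (so $\order Q\le\order P\le r$), realize $Q$ as a minimal annihilator of a pseudolimit $\hat b$ of $(a_\rho)$ in an immediate asymptotic extension, and get (i) because $\hat a$ and $\hat b$ are pseudolimits of the same divergent pc-sequence. The paper does exactly this by citing [ADH, 6.8.1, 6.9.2] for the first step and [ADH, 6.9.3, 9.4.5] for the second; your argument for (i) is the standard computation and is fine.

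The genuine gap is your justification of the two ``continuity'' steps, namely that $P(a_\rho)\leadsto 0$, and later that a hypothetical smaller annihilator $Q_1$ of $\hat b$ would give $Q_1(a_\rho)\leadsto 0$. For differential polynomials there is no such naive continuity along pc-sequences: in the expansion of $P_{+\hat a}$ into homogeneous parts, the terms involve the derivatives $(\hat a-a_\rho)'$, $(\hat a-a_\rho)''$, \dots, whose valuations are not controlled by $v(\hat a-a_\rho)$ even when the derivation is small (small derivation only gives $\der\smallo\subseteq\smallo$, not $v(\epsilon')\ge v(\epsilon)$), so no homogeneous part can be seen to dominate, and $P(a_\rho)$ need not pseudoconverge to $P(\hat a)$ at all. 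This is precisely the difficulty that [ADH, 6.8.1] addresses, and even that result only produces an \emph{equivalent} pc-sequence $(b_\rho)$ with $P(b_\rho)\leadsto P(\hat a)$; its proof is a substantial piece of the dominant-degree machinery, not a routine Taylor estimate. The gap is repairable without changing your plan: since ``of $\d$-algebraic type'' and ``minimal differential polynomial of $(a_\rho)$'' are defined via equivalent pc-sequences, the equivalent-sequence version suffices both to produce $Q$ with $\cc(Q)\le\cc(P)$ and for your transport-of-minimality argument. Likewise, the construction of $\hat b$ should rest on [ADH, 6.9.3] (together with [ADH, 9.4.5] to see that the extension is asymptotic), not on the purely valuation-theoretic Kaplansky construction you invoke; note that [ADH, 6.9.3] already yields that $Q$ is a minimal annihilator of $\hat b$, so the transport step is then not even needed.
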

\begin{proof} By [ADH, 6.8.1, 6.9.2], $(a_{\rho})$ is of $\d$-algebraic type over $K$ with a minimal differential polynomial $Q$ over $K$ such that $\order Q \le \order P\le r$. 
By [ADH, 6.9.3, 9.4.5] this gives an element $\hat b$ in an immediate asymptotic extension of $K$ such that~$Q$ is a minimal annihilator of $\hat b$ over $K$ and $a_{\rho}\leadsto \hat b$. Then $Q$ and $\hat b$ have the desired properties. 
\end{proof}

\begin{proof}[Proof of Proposition~\ref{propalsp}] Replace $\hat a$ and $P$
by $\hat b$ and $Q$ from Lemma~\ref{replace} (and rename) to arrange
that $P$ is a minimal differential polynomial of $(a_{\rho})$ over $K$.
Now assuming $\Gamma$ is divisible,~\cite[Proposition~3.1]{Nigel} gives $a\in K$ and $g\in K^\times$ such that~$\hat a -a\asymp g$ and $\ddeg P_{+a,\times g} = 1$. 

Set $F:= P_{+a,\times g}$ and
$\hat f:= (\hat a  -a)/g$. Then $\ddeg F=1$, $F(\hat f)=0$, and $\hat f\preceq 1$. Applying Proposition~\ref{propsp} to
$F$ and $\hat f$ in the role of $P$ and $\hat a$ yields a
nontrivial convex subgroup $\Delta$ of $\Gamma$ that is
cofinal in $v(\hat f -K)$. Setting $\alpha:= vg$, it follows that~$\alpha+\Delta$ is cofinal in $v\big((\hat a -a)-K\big)=v(\hat a -K)$.   
\end{proof}

\noindent
We can trade the divisibility assumption in Proposition~\ref{propalsp} against a stronger hypothesis on $K$, the proof  using \cite[3.3]{Nigel} instead of \cite[3.1]{Nigel}:

\begin{cor}
If $K$ is henselian and $\k$ is linearly surjective, then $\hat a$ is almost special over~$K$. 
\end{cor}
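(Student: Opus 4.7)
\medskip\noindent
The plan is to mimic the proof of Proposition~\ref{propalsp} almost verbatim, replacing the input from \cite[Proposition~3.1]{Nigel} (which required $\Gamma$ divisible) by the input from \cite[Proposition~3.3]{Nigel}, whose hypotheses match ours (henselianity of $K$ plus linear surjectivity of $\k$). The structural backbone is unchanged: first trim $\hat a$ and its minimal annihilator so that the annihilator is a minimal differential polynomial of a pc-sequence pseudoconverging to $\hat a$, then rescale by suitable~$a\in K$ and~$g\in K^\times$ to get dominant degree~$1$, and finally invoke Proposition~\ref{propsp} on the rescaled datum to locate the relevant convex subgroup cofinal in $v(\hat a - K)$.

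\medskip\noindent
Concretely, I would proceed as follows. Fix a divergent pc-sequence $(a_\rho)$ in $K$ with $a_\rho\leadsto\hat a$, and let $P\in K\{Y\}$ be a minimal annihilator of $\hat a$ over $K$ with $\order P\le r$. Apply Lemma~\ref{replace} (which makes no divisibility assumption) to obtain an element $\hat b$ in an immediate asymptotic extension of $K$ with $v(\hat a-K)=v(\hat b-K)$ and a minimal differential polynomial $Q$ of $(a_\rho)$ over $K$ with $\order Q\le r$ that is also a minimal annihilator of $\hat b$ over $K$. Rename $\hat b,Q$ back to $\hat a,P$; thus we may assume $P$ is itself a minimal differential polynomial of $(a_\rho)$ over $K$.

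\medskip\noindent
Next, since $K$ is henselian and $\k$ is linearly surjective, \cite[Proposition~3.3]{Nigel} applied to $P$ and $(a_\rho)$ yields $a\in K$ and $g\in K^\times$ such that
\[
\hat a - a\ \asymp\ g\qquad\text{and}\qquad \ddeg P_{+a,\times g}\ =\ 1.
\]
Set $F:=P_{+a,\times g}\in K\{Y\}$ and $\hat f:=(\hat a-a)/g$. Then $F(\hat f)=0$, $\hat f\asymp 1$, and $\order F\le r$. Because our standing hypothesis has $\O$ $r$-linearly surjective, Proposition~\ref{propsp} applies to $F$ and $\hat f$ (in the roles of $P$ and $\hat a$ there), and—noting that $\hat f\notin K$ since $\hat a\notin K$—produces a nontrivial convex subgroup $\Delta\subseteq\Gamma$ cofinal in $v(\hat f-K)$.

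\medskip\noindent
Finally, since $\hat a-a=g\hat f$, for every $y\in K$ we have $v\big(\hat a-(a+gy)\big)=vg+v(\hat f-y)$. Hence $vg+\Delta$ is cofinal in $v\big((\hat a-a)-K\big)=v(\hat a-K)$, so the convex subgroup $\Delta$ witnesses that $\hat a/g$ is special over $K$, i.e.\ $\hat a$ is almost special over $K$. The only nontrivial ingredient is the availability of \cite[Proposition~3.3]{Nigel}; all remaining work is a direct transcription of the proof of Proposition~\ref{propalsp}, so I do not anticipate any real obstacle beyond verifying that the hypotheses of \cite[3.3]{Nigel} are matched by our assumption that $K$ is henselian with linearly surjective residue field.
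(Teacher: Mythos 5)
Your proposal is correct and matches the paper's own argument exactly. The paper states that the corollary follows by the proof of Proposition~\ref{propalsp} with \cite[3.3]{Nigel} substituted for \cite[3.1]{Nigel}, which is precisely the substitution you make; the rest of your write-up is a faithful transcription of that proof (Lemma~\ref{replace} to normalize the minimal annihilator, the rescaling via $a,g$, Proposition~\ref{propsp} on $F=P_{+a,\times g}$ and $\hat f=(\hat a-a)/g$, and the final cofinality shift by $vg$).
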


\subsection*{The linearly newtonian setting} {\em In this subsection $K$ is an $\upo$-free $r$-linearly newtonian $H$-asymptotic field,
$r\ge 1$.}\/ Thus $K$ is
$\d$-valued by Lemma~\ref{lem:ADH 14.2.5}. We let~$\phi$ range over the elements active in $K$.
We now mimick the material in the previous two subsections. Note that for $A\in K[\der]^{\ne}$ and any element $\hat f$ in an asymptotic extension of $K$ we have $A(\hat f)\preceq A^\phi \hat f$, since
$A(\hat f)=A^\phi(\hat f)$.

\begin{lemma}\label{neneat1} Assume that $A\in K[\der]^{\ne}$ has order $\le r$, $\gamma\in \Q\Gamma$, $\gamma>0$, ${\hat f\in K^\times}$, and $v\big(A(\hat f)\big)\ge v(A^\phi\hat f)+\gamma$, eventually. Then there exists an $f\in K$ with~${A(f)=0}$ and $v(\hat f-f)\ge v(\hat f) +\gamma+o(\gamma)$. 
\end{lemma}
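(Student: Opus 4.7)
The plan is to reduce to Lemma~\ref{neat1} by passing to the compositional conjugate $K^\phi$ for a sufficiently small active $\phi$. Since $K$ is $\upo$-free and $r$-linearly newtonian, it is $\d$-valued and has asymptotic integration (by Lemma~\ref{lem:ADH 14.2.5}). By the correspondence between $r$-linear newtonianity of $K$ and eventual $r$-$\d$-henselianity of its compositional conjugates worked out in [ADH, Chapter~14] (in particular around [ADH, 14.2.2] and the definitions in [ADH, 14.2]), there is an active $\phi_0$ in $K$ such that $K^\phi$ is $r$-$\d$-henselian for every active $\phi\preceq\phi_0$. For such $\phi$ the derivation $\derdelta:=\phi^{-1}\der$ of $K^\phi$ is small, the residue field $\res(K^\phi)$ is $r$-linearly surjective, and every operator in $K^\phi[\derdelta]^{\ne}$ of order $\le r$ is neatly surjective with respect to the valuation of $K^\phi$, by [ADH, 7.1.8].

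Choose an active $\phi\preceq\phi_0$ small enough that in addition the given inequality $v\big(A(\hat f)\big)\ge v(A^\phi\hat f)+\gamma$ holds. Inside $K^\phi$ one has $A^\phi(\hat f)=A(\hat f)$ and, by smallness of $\derdelta$ together with [ADH, 5.6], $v(A^\phi\hat f)=v(A^\phi)+v(\hat f)$; hence the hypothesis reads $v\big(A^\phi(\hat f)\big)\ge v(A^\phi\hat f)+\gamma$ in $K^\phi$. This is precisely the hypothesis of Lemma~\ref{neat1} applied to the asymptotic field $K^\phi$ (whose residue field is $r$-linearly surjective), the neatly surjective operator $A^\phi\in K^\phi[\derdelta]^{\ne}$ of order $\le r$, and the element $\hat f\in(K^\phi)^\times=K^\times$.

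Moreover, because $K^\phi$ is itself $r$-$\d$-henselian, the immediate $r$-$\d$-henselian extension $\hat K$ invoked in the proof of Lemma~\ref{neat1} may simply be taken to be $K^\phi$, so the Riccati-transform step of that proof produces a $y\in K^\phi$ with $y\sim 1$ and $v(y-1)\ge\gamma+o(\gamma)$ directly in $K^\phi$. Setting $f:=\hat f\,y\in K^\phi=K$ gives $A^\phi(\hat f\,y)=g\cdot B(y)=0$ in the notation of the proof of Lemma~\ref{neat1} (with $B:=g^{-1}A^\phi\hat f$ and $g$ chosen so that $vg=v(A^\phi\hat f)$), and therefore $A(f)=A^\phi(f)=0$ together with $v(\hat f-f)=v(\hat f)+v(1-y)\ge v(\hat f)+\gamma+o(\gamma)$, as required.

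The main obstacle is the invocation of the translation "$K$ is $r$-linearly newtonian $\Longrightarrow$ $K^\phi$ is eventually $r$-$\d$-henselian"; once this is in hand (and it is a standard fact in the framework of [ADH, Chapter~14], using $\upo$-freeness to secure asymptotic integration and to control the coarsening behaviour as $v\phi$ increases cofinally in $\Psi^{\downarrow}$), the remainder is bookkeeping around the compositional conjugation that turns the statement into an instance of Lemma~\ref{neat1}.
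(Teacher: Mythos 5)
There is a genuine gap. Your reduction hinges on the claim that, because $K$ is $r$-linearly newtonian (and $\upo$-free), $K^\phi$ is $r$-$\d$-henselian for all sufficiently small active $\phi$. That claim is false in this setting: by Lemma~\ref{lem:ADH 14.2.5}, $K$ is $\d$-valued, so $\mathcal O=C+\smallo$, and for the small derivation $\derdelta=\phi^{-1}\der$ of $K^\phi$ we get $\derdelta\mathcal O=\derdelta\smallo\subseteq\smallo$; hence the induced derivation on $\res(K^\phi)=\res(K)$ is trivial, and a differential field with trivial derivation is not even $1$-linearly surjective (the equation $y'=1$ has no solution there). So $K^\phi$ is never $r$-$\d$-henselian for $r\ge 1$, [ADH, 7.1.8] does not apply, and the hypothesis of Lemma~\ref{neat1} that the differential residue field be $r$-linearly surjective is exactly what fails. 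No choice of active $\phi$ repairs this, since compositional conjugation changes neither the valuation nor its residue field; in particular your shortcut of taking the immediate $r$-$\d$-henselian extension in the proof of Lemma~\ref{neat1} to be $K^\phi$ itself is unavailable.

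What is needed, and what the paper does, is to coarsen the valuation rather than merely conjugate. Choose $\phi$ with $v\phi\ge\gamma^\dagger$ and with the hypothesis inequality holding, and take $\beta\in\Gamma$ with $\beta\ge\gamma$ and $v\big(A(\hat f)\big)\ge v(A^\phi\hat f)+\beta$. Then $\beta>\Gamma_\phi^\flat$, and by [ADH, 14.2.1] the valuation ring of the flattening $(K^\phi,v_\phi^\flat)$ is $r$-linearly surjective; this supplies both hypotheses of Lemma~\ref{neat1} for the coarsened field (its differential residue field is $r$-linearly surjective, and all operators of order $\le r$ are neatly surjective there). Applying Lemma~\ref{neat1} to $(K^\phi,v_\phi^\flat)$, $A^\phi$, $\dot\beta:=\beta+\Gamma_\phi^\flat$ in the roles of $K$, $A$, $\gamma$ gives $f\in K$ with $A(f)=0$ and $v_\phi^\flat(\hat f-f)\ge v_\phi^\flat(\hat f)+\dot\beta+o(\dot\beta)$, which translates back to $v(\hat f-f)\ge v(\hat f)+\beta+o(\beta)\ge v(\hat f)+\gamma+o(\gamma)$; the requirement $v\phi\ge\gamma^\dagger$ (hence $v\phi\ge\beta^\dagger$) is precisely what makes $\dot\beta>0$ and this back-translation legitimate. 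A smaller point: your identity $v(A^\phi\hat f)=v(A^\phi)+v(\hat f)$ is not valid in general, but it is also not needed, since $A^\phi(\hat f)=A(\hat f)$ already puts the hypothesis in the form required by Lemma~\ref{neat1}.
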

\begin{proof} Take $\phi$ such that $v\phi\ge \gamma^\dagger$ and $v\big(A(\hat f)\big)\ge v(A^\phi\hat f)+\gamma$. 
Next, take $\beta\in \Gamma$ such that  $\beta\ge \gamma$ and $v\big(A(\hat f)\big)\ge v(A^\phi\hat f)+\beta$.
Then $v\phi\ge \beta^\dagger$, so $\beta> \Gamma_{\phi}^{\flat}$, hence the valuation ring of the flattening $(K^\phi, v_{\phi}^\flat)$ is $r$-linearly surjective, by [ADH, 14.2.1]. 
We now apply
Lemma~\ref{neat1} to $$(K^\phi,v_{\phi}^\flat),\quad
A^\phi,\quad \dot{\beta}:= \beta+\Gamma_{\phi}^{\flat}$$ in the role of $K$, $A$, $\gamma$ to give $f\in K$ with
$A(f)=0$ and $v_{\phi}^{\flat}(\hat f -f)\ge v_{\phi}^{\flat}(\hat f) + \dot{\beta} + o(\dot{\beta})$.
Then also $v(\hat f-f)\ge v(\hat f) +\beta+o(\beta)$, and thus $v(\hat f -f)\ge v(\hat f) + \gamma+o(\gamma)$.  
\end{proof}

\begin{lemma}\label{neneat2} Assume $A\in K[\der]^{\ne}$ has order $\le r$,  $\hat{K}$ is an immediate $\d$-algebraic asymptotic extension of $K$, $\gamma\in \Q\Gamma$, $\gamma>0$, $\hat f\in \hat K^\times$, and  $v\big(A(\hat f)\big)\ge v(A^\phi\hat f)+\gamma$ eventually. Then $A(f)=0$ and $v(\hat f-f)\ge v(\hat f) +\gamma+o(\gamma)$ for some $f\in K$.
\end{lemma}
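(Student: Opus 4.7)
The strategy mimics the proof of Lemma~\ref{neat2}, but with newtonianity playing the role that $\d$-henselianity played there. First I would enlarge $\hat K$ to an ambient field in which a version of Lemma~\ref{neneat1} applies to $\hat f$, then descend the solution back to $K$ using that $K$ is already $r$-linearly newtonian.

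\emph{Step 1 (producing an $r$-linearly newtonian extension).} Since $\hat K$ is an immediate $\d$-algebraic $H$-asymptotic extension of the $\upo$-free, $\d$-valued field $K$, Theorem~\ref{thm:ADH 13.6.1} shows that $\hat K$ itself is $\upo$-free (and $\d$-valued, being immediate over a $\d$-valued field). Applying \eqref{eq:14.0.1} to $\hat K$ gives an immediate $\d$-algebraic asymptotic extension $L$ of $\hat K$ that is newtonian and $\upo$-free. Composing, $L$ is an immediate $H$-asymptotic extension of $K$; in particular $\Gamma_L=\Gamma$, $\Psi_L=\Psi$, and $C_L=C$, and $L$ is $r$-linearly newtonian.

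\emph{Step 2 (applying Lemma~\ref{neneat1} in $L$).} The hypotheses of Lemma~\ref{neneat1} hold for $L$ in place of $K$: it is $\upo$-free, $r$-linearly newtonian, and $H$-asymptotic, and the set of elements $\phi\in K$ active in $K$ remains a cofinal set of active elements of $L$ (thanks to $\Psi_L=\Psi$), so the assumption $v\!\big(A(\hat f)\big)\ge v(A^{\phi}\hat f)+\gamma$ eventually continues to hold in $L$ for $\hat f\in L^\times$. Lemma~\ref{neneat1} therefore yields $f\in L$ with $A(f)=0$ and $v(\hat f-f)\ge v(\hat f)+\gamma+o(\gamma)$.

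\emph{Step 3 (descending $f$ to $K$).} It remains to show $f\in K$, i.e.\ that $\ker_L A=\ker_K A$. Both $K$ and $L$ are $\d$-valued (Lemma~\ref{lem:ADH 14.2.5}) and $r$-linearly newtonian, so Proposition~\ref{kerexc} gives $v(\ker^{\neq}_K A)=\exc^{\ev}_K(A)$ and $v(\ker^{\neq}_L A)=\exc^{\ev}_L(A)$. Since $L$ is an immediate extension of $K$, Lemma~\ref{lemexc} (with $\Psi$ cofinal in $\Psi_L$ trivially, as they coincide) gives $\exc^{\ev}_L(A)=\exc^{\ev}_K(A)$. Combining this with Remark~\ref{rem:kerexc} applied in both fields, and using $C_L=C$, we obtain
\[ \dim_{C}\ker_K A\ =\ |\exc^{\ev}_K(A)|\ =\ |\exc^{\ev}_L(A)|\ =\ \dim_{C}\ker_L A. \]
As $\ker_K A\subseteq\ker_L A$ is an inclusion of $C$-subspaces of equal finite dimension, we conclude $\ker_L A=\ker_K A$, and so $f\in K$, completing the proof.

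\emph{Main obstacle.} The only genuinely delicate point is the transfer of the ``eventually'' hypothesis from $K$ to $L$ in Step~2, which requires that active elements of $K$ remain cofinal in those of $L$; this is automatic because the extension is immediate, but it is the one place where one must verify that nothing is lost by passing to $L$. The descent step is routine once Proposition~\ref{kerexc} and the immediacy of $L/K$ are in hand.
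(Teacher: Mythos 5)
Your proof is correct and takes essentially the same route as the paper's: use Theorem~\ref{thm:ADH 13.6.1} and \eqref{eq:14.0.1} to pass to a newtonian (hence $r$-linearly newtonian) immediate $\d$-algebraic extension, apply Lemma~\ref{neneat1} there, and then descend the zero of $A$ back to $K$. The only difference is in the descent: the paper simply cites [ADH, line before 14.2.10] for $f\in K$, whereas you re-derive this via Proposition~\ref{kerexc}, Remark~\ref{rem:kerexc}, and Lemma~\ref{lemexc} (equality of kernels by a dimension count over the common constant field), which is a valid substitute.
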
 
\begin{proof} Since $K$ is $\upo$-free, so is $\hat K$ by Theorem~\ref{thm:ADH 13.6.1}. By \eqref{eq:14.0.1}  
we can extend $\hat{K}$ to arrange that 
$\hat{K}$ is also newtonian. Then by Lemma~\ref{neneat1} with $\hat{K}$ in the role of $K$ we get $f\in \hat{K}$ with $A(f)=0$ and~$v(\hat f-f)\ge v(\hat f)+\gamma+o(\gamma)$. Now use that
$f\in K$ by [ADH, line before 14.2.10]. 
\end{proof}

\begin{lemma}\label{neneat3} Assume $A\in K[\der]^{\ne}$ has order $\le r$, $b\in K$, $\gamma\in \Q\Gamma$, $\gamma>0$, $\hat{K}$ is an immediate $\d$-algebraic asymptotic extension of $K$, and $\hat f\in \hat K$, $v(\hat f)\ge o(\gamma)$. Assume also that
eventually $v(b)\ge v(A^\phi)+o(\gamma)$ and $v\big(A(\hat f)-b\big)\ge v(A^\phi)+\gamma+o(\gamma)$. 
Then for some~$f\in K$ we have $A(f)=b$ and
$v(\hat f -f)\ge (1/2)\gamma+o(\gamma)$.
\end{lemma}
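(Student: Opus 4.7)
The proof follows the template of Lemma~\ref{neat3}, with Lemma~\ref{neneat2} playing the role of Lemma~\ref{neat2} and the eventual-value machinery of Section~\ref{sec:lindiff} replacing neat surjectivity when producing an inhomogeneous solution.

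First, I would produce $y\in K$ with $A(y)=b$ and $vy\ge o(\gamma)$: if $b=0$, take $y=0$; otherwise Corollary~\ref{cor:14.2.10, generalized} (applicable since $K$ is $r$-linearly newtonian) yields $y\in K^\times$ with $A(y)=b$, $vy\notin\exc^{\ev}(A)$, and $v^{\ev}_A(vy)=vb$. To establish $vy\ge o(\gamma)$, use that by~\eqref{eq:vAev} we have $v_{A^\phi}(vy)=vb$ eventually, while $v_{A^\phi}(vy)$ and $v(A^\phi)+vy$ agree modulo $\Gamma_\phi^\flat$ (under the choice $v\phi\ge\gamma^\dagger$ making $\Gamma_\phi^\flat$ absorb $o(\gamma)$-errors); then the hypothesis $vb\ge v(A^\phi)+o(\gamma)$ eventually forces $vy\ge o(\gamma)$.

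Next, set $\hat g:=\hat f-y$. Then $A(\hat g)=A(\hat f)-b$, so eventually $v(A(\hat g))\ge v(A^\phi)+\gamma+o(\gamma)$, and $v(\hat g)\ge\min\{v(\hat f),vy\}\ge o(\gamma)$. I case-split on $v(\hat g)$. If $v(\hat g)\ge(1/2)\gamma+o(\gamma)$, take $f:=y$, giving $v(\hat f-f)=v(\hat g)\ge(1/2)\gamma+o(\gamma)$. If instead $v(\hat g)<(1/2)\gamma+o(\gamma)$, then eventually $v(A^\phi\hat g)\le v(A^\phi)+(1/2)\gamma+o(\gamma)$ (by the analog of [ADH,~6.1.3] in the flattened setting $(K^\phi,v_\phi^\flat)$), hence eventually $v(A(\hat g))\ge v(A^\phi\hat g)+(1/2)\gamma+o(\gamma)$; applying Lemma~\ref{neneat2} to $\hat g$ with $(1/2)\gamma$ in place of $\gamma$ yields $g\in K$ with $A(g)=0$ and $v(\hat g-g)\ge v(\hat g)+(1/2)\gamma+o(\gamma)\ge(1/2)\gamma+o(\gamma)$; take $f:=y+g$.

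The main obstacle is the inhomogeneous step producing $y$ with $vy\ge o(\gamma)$. This requires careful tracking of $v^{\ev}_A$ relative to $v(A^\phi)$ near $0$: since $v^{\ev}_A(0)$ may differ from $v(A^\phi)$ eventually by the correction $\nwt_A(0)v\phi$ when $0\in\exc^{\ev}(A)$, one must arrange that this correction lies inside the $o(\gamma)$ slack, which is done via the choice $v\phi\ge\gamma^\dagger$ (so that $\gamma>\Gamma_\phi^\flat$). A secondary point is the eventual upper bound on $v(A^\phi\hat g)$ in the second case, a flattened counterpart of [ADH,~6.1.3] relying on the same $\Gamma_\phi^\flat$-absorption.
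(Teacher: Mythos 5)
Your proposal is correct and takes essentially the same route as the paper: produce $y$ with $A(y)=b$ and $vy\ge o(\gamma)$ via Corollary~\ref{cor:14.2.10, generalized} and [ADH, 6.1.3], reduce to $\hat g:=\hat f-y$, split on whether $v(\hat g)\ge (1/2)\gamma+o(\gamma)$, and in the hard case apply Lemma~\ref{neneat2} with $(1/2)\gamma$ in place of $\gamma$. Two small points worth tightening: the paper derives the strict bound $v(A(\hat g))\ge v(A^\phi\hat g)+(1/2)\gamma$ (without an extra $o(\gamma)$) from the juxtaposition of a $\ge\cdots+o(\gamma)$ and a $<\cdots+o(\gamma)$ estimate, which is what Lemma~\ref{neneat2} literally requires — your ``$\le\cdots+(1/2)\gamma+o(\gamma)$'' is not one of the defined notations and yields a nominally weaker conclusion, though it is easy to repair; and the verification $vy\ge o(\gamma)$ does not really need the flattening detour, since $v_{A^\phi}(vy)=vb$ eventually together with [ADH, 6.1.3] ($v_{A^\phi}(vy)=v(A^\phi)+vy+o(vy)$) and the hypothesis $vb\ge v(A^\phi)+o(\gamma)$ already give it directly.
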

{\sloppy\begin{proof} We take $y\in K$ with $A(y)=b$ as follows: If $b=0$, then $y:=0$. If~$b\ne 0$, then Corollary~\ref{cor:14.2.10, generalized} yields $y\in K^\times$ such that
$A(y)=b$, $vy\notin \exc^{\ev}(A)$, and~${v_A^{\ev}(vy)=vb}$. In any case, $vy\ge o(\gamma)$: when $b\ne 0$, the sentence preceding
[ADH, 14.2.7] gives~$v_{A^{\phi}}(vy)=vb$, eventually, 
to which we apply [ADH, 6.1.3].
 
 Now $A(\hat g)=A(\hat f)-b$ for $\hat g:= \hat f-y$, so
$v(\hat g)\ge o(\gamma)$, and eventually $v\big(A(\hat g)\big)\ge v(A^\phi)+\gamma+o(\gamma)$. We distinguish two cases:

\medskip\noindent
(1) $v(\hat g)\ge (1/2)\gamma+o(\gamma)$. Then $v(\hat f - y)\ge (1/2)\gamma+o(\gamma)$, so $f:= y$ works. 

\medskip\noindent
(2) $v(\hat g) < (1/2)\gamma+o(\gamma)$. Then by [ADH, 6.1.3] we have eventually
$$v(A^\phi\hat g)\ <\ v(A^\phi)+(1/2)\gamma+o(\gamma), \qquad v\big(A(\hat g)\big)\ \ge\ v(A^\phi)+\gamma+o(\gamma),$$ so
$v(A(\hat g))\ge v(A^\phi\hat g) + (1/2)\gamma$, eventually.   
Lemma~\ref{neneat2}
gives an ele\-ment~${g\in K}$ with $A(g)=0$ and 
$v(\hat g-g)\ge (1/2)\gamma+o(\gamma)$. Hence $f:= y+g$ works. 
\end{proof}}

{\sloppy
\begin{prop}\label{nepropsp}\label{prop:hata special} Suppose that
$P\in K\{Y\}$, $\order P\le r$, $\ndeg P=1$, 
and~$P(\hat a)=0$, where $\hat a\preceq 1$ lies in an immediate asymptotic extension of $K$ and~$\hat a\notin K$. Then $\hat a$ is special over $K$. 
\end{prop}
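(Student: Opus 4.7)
The strategy is to mimic the proof of Proposition~\ref{propsp}, replacing its use of Lemma~\ref{neat3} (which required $\mathcal O$ to be $r$-linearly surjective) by Lemma~\ref{neneat3} in the present $\upo$-free, $r$-linearly newtonian setting. Given any $0<\gamma\in v(\hat a-K)$, the plan is to produce $a^\ast\in K$ with $v(\hat a-a^\ast)\ge (3/2)\gamma+o(\gamma)$, and in particular $\ge (4/3)\gamma$; iterating this step forces a nontrivial convex subgroup of $\Gamma$ to be cofinal in $v(\hat a-K)$, which is what being special means.

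Without loss of generality, by replacing $K$ with $K^{\phi_0}$ for a suitably small active $\phi_0$ and then multiplying $P$ by an element of $K^\times$, I arrange that $\ddeg P^\phi=1$ with $P^\phi\asymp P_1^\phi$ for every active $\phi\preceq 1$ in~$K$; these reductions preserve all hypotheses and leave $v(\hat a-K)$ unchanged. Pick $a\in K$ with $v(\hat a-a)=\gamma$ (so in particular $a\preceq 1$) and $g\in K^\times$ with $vg=\gamma$, and define
\[
Q\ :=\ g^{-1}P_{+a,\times g}\in K\{Y\}, \qquad \hat f\ :=\ (\hat a-a)/g,
\]
so that $\hat f\asymp 1$ and $Q(\hat f)=0$. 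Decomposing $Q=Q_0+Q_1+Q_{>1}$ by homogeneous degree and setting $A:=L_Q\in K[\der]$ (of order $\le r$), the relation $Q(\hat f)=0$ rearranges to $Q_0+A(\hat f)=-Q_{>1}(\hat f)$. Applying Lemma~\ref{neneat3} with this $A$, $b:=-Q_0$, $\hat K:=K\langle\hat a\rangle$ (an immediate $\d$-algebraic asymptotic extension of $K$), and this $\hat f$, I obtain $f\in K$ satisfying $A(f)=-Q_0$ and $v(\hat f-f)\ge (1/2)\gamma+o(\gamma)$. Setting $a^\ast:=a+gf\in K$ then gives $\hat a-a^\ast=g(\hat f-f)$, hence $v(\hat a-a^\ast)\ge (3/2)\gamma+o(\gamma)$, as required.

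The main obstacle is verifying the hypotheses of Lemma~\ref{neneat3}: beyond the trivial $v(\hat f)=0\ge o(\gamma)$, one needs eventually in $\phi$ that (i) $v(Q_0)\ge v(A^\phi)+o(\gamma)$ and (ii) $v\bigl(Q_{>1}(\hat f)\bigr)\ge v(A^\phi)+\gamma+o(\gamma)$. Both should follow from the normalization $\ddeg P^\phi=1$, $P^\phi\asymp P_1^\phi$ by transporting the Newton-polygon data through the additive conjugation by $a\preceq 1$ and the multiplicative conjugation by $g$ with $vg=\gamma>0$. Concretely, this should yield eventually $\ddeg Q^\phi=1$, $Q_1^\phi\asymp Q^\phi$, and the sharper bounds $v(Q_0^\phi),\ v(Q_{>1}^\phi)\ge v(Q_1^\phi)+\gamma+o(\gamma)$; combined with $\hat f\asymp 1$ (which forces $v(Q_{>1}(\hat f))\ge v(Q_{>1}^\phi)$ eventually, via estimates like Lemma~\ref{lem:diff operator at small elt} applied to $K^\phi$), these give (i) and (ii). The principal technical hurdle is precisely this bookkeeping — tracking uniformly in $\phi$ how $\ddeg$, $v(P^\phi)$, and $v(P_d^\phi)$ behave under the simultaneous additive and multiplicative conjugation that produces $Q$.
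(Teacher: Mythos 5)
Your outline reproduces the paper's own route: the same $(4/3)\gamma$-gain scheme, the same conjugations producing $Q=g^{-1}P_{+a,\times g}$ and $\hat f=(\hat a-a)/g$, and the same application of Lemma~\ref{neneat3} with $A=L_Q$, $b=-Q_0$, $\hat K=K\langle\hat a\rangle$. The gap lies in how you propose to verify hypothesis (i), namely $v(Q_0)\ge v(A^\phi)+o(\gamma)$ eventually. The intermediate bound you assert, $v(Q_0)\ge v(Q_1^\phi)+\gamma+o(\gamma)$, is false in general: $Q_0=g^{-1}P(a)$, and the most one can prove is $v\big(P(a)\big)\ge v(P^\phi_{+a,1})+\gamma+o(\gamma)$, which after dividing by $g$ gives exactly (i) and nothing sharper (for $P=Y'+Y-u$ one typically has $v\big(P(a)\big)=v(P^\phi_{+a,1})+\gamma$ on the nose). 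More seriously, even the correct bound does not follow from ``transporting Newton-polygon data through the conjugations'': the normalization $\ddeg P^\phi_{+a}=1$ only yields $P(a)\preceq P^\phi_{+a,1}$, i.e.\ $v(Q_0)\ge v(Q_1^\phi)-\gamma+o(\gamma)$, which is off by a full $\gamma$ from what Lemma~\ref{neneat3} requires. The missing idea is a quantitative use of $P(\hat a)=0$: expanding
$$0\ =\ P(\hat a)\ =\ P(a)+P^\phi_{+a,1}(\hat a-a)+P^\phi_{+a,>1}(\hat a-a)$$
and using $\hat a-a\asymp g$, [ADH, 6.1.3], and $P^\phi_{+a,>1}\preceq P^\phi_{+a,1}$ eventually, gives $v\big(P(a)\big)\ge v(P^\phi_{+a,1})+\gamma+o(\gamma)$ eventually; this is precisely the computation with which the paper's proof begins.

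The gap is repairable inside your own framework as well: from the identity $Q_0=-A(\hat f)-Q_{>1}(\hat f)$, together with $\hat f\asymp 1$ and smallness of the derivation of $\hat K^\phi$, one gets $v\big(A(\hat f)\big)\ge v(A^\phi)$ and $v\big(Q_{>1}(\hat f)\big)\ge v(A^\phi)+\gamma+o(\gamma)$ eventually (the latter is the bound you already need for (ii)), whence (i). Everything else in your sketch — the bound on $Q_{>1}^\phi$ via multiplicative conjugation, the deduction of specialness from the $(4/3)\gamma$-claim, and the harmless preliminary compositional conjugation (which is in fact unnecessary, since the argument runs on ``eventually'' statements anyway) — is sound and matches the paper.
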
}

\noindent
The proof is like that of Proposition~\ref{propsp}, but there are some differences that call for further details.

\begin{proof} Given $0 <\gamma\in v(\hat a -K)$, we claim that $v(\hat a -K)$ has an element $\ge (4/3)\gamma$. 
Take $a\in K$ with $v(\hat a -a)=\gamma$. Then $\ndeg P_{+a}=1$ by [ADH, 11.2.3(i)], so eventually we have
$$P(a)\ \preceq\ P^\phi_{+a,1}\ \succ\ P^\phi_{+a,>1}, \quad P^\phi_{+a}\ =\ P(a) + P^\phi_{+a,1} + P^\phi_{+a,>1}$$
and
\begin{align*} 
0\  =\ P(\hat a) &\ =\ P^\phi_{+a}(\hat a -a) \\ \ &\ =\ P(a) +  P^\phi_{+a,1}(\hat a -a) + P^\phi_{+a,>1}(\hat a -a),\\
&\phantom{=\ P(a)+} v\big(P^\phi_{+a,1}(\hat a -a)+P^\phi_{+a,>1}(\hat a -a)\big)\ \ge\ v(P^\phi_{+a,1})+\gamma+o(\gamma),
\end{align*}
and thus eventually $v\big(P(a)\big)\ \ge\ v(P^\phi_{+a,1})+\gamma+o(\gamma)$.
Take $g\in K^\times$ with $vg=\gamma$ and set $Q:= g^{-1}P_{+a,\times g}$, so $Q=Q_0+ Q_1 + Q_{>1}$ with
$$Q_0\ =\ Q(0)\ =\ g^{-1}P(a),\quad Q_1\ =\ g^{-1}(P_{+a,1})_{\times g},\quad Q_{>1}\ =\ g^{-1}(P_{+a,>1})_{\times g}. $$
Then $v(Q_0)=v\big(P(a)\big)-\gamma\ge v(P^\phi_{+a,1})+o(\gamma)$,
eventually. By [ADH, 6.1.3], $$v(Q_1^\phi)\ =\ v(P_{+a,1}^\phi)+o(\gamma),\qquad  v(Q_{>1}^\phi)\ \ge\ v(P_{+a,>1}^\phi)+\gamma+o(\gamma)$$
for all $\phi$. Since $P^\phi_{+a, >1}\preceq P^\phi_{+a,1}$, eventually, the last two displayed inequalities give~$v(Q^\phi_{>1})\ge v(Q^\phi_1)+\gamma+o(\gamma)$, eventually. We set $\hat f:= g^{-1}(\hat a -a)$, so $Q(\hat f)=0$ and~$\hat f\asymp 1$. Set $A:=L_Q\in K[\der]$. Then $Q(\hat f)=0$ gives
$$Q_0+A(\hat f)\ =\ Q_0+Q_1(\hat f)\ =\ -Q^\phi_{>1}(\hat f),$$
with $v\big(Q^\phi_{>1}(\hat f)\big)\ge v(Q^\phi_1)+\gamma+o(\gamma)$, eventually, so 
$$v\big(Q_0+A(\hat f)\big)\ \ge\ v(A^\phi)+\gamma+o(\gamma),\quad \text{eventually}.$$ 
Moreover, $v(Q_0)\ge v(A^\phi)+o(\gamma)$, eventually. Lemma~\ref{neneat3} then gives $f\in K$ with~$v(\hat f - f)\ge (1/3)\gamma$.
In view of $\hat a-a=g\hat f$, this yields
$$v\big(\hat a -(a+gf)\big)\ =\ \gamma+v(\hat f - f)\ \ge\ (4/3)\gamma,$$ 
which proves our claim.
\end{proof}

\noindent
{\it In the rest of this subsection we assume that $\hat a\notin K$ lies in an immediate asymptotic extension of $K$ and $K\<\hat a\>$ has transcendence degree $\le r$ over $K$.}\/

\begin{prop}\label{npropalsp}\label{prop:hata almostspecial}  If $\Gamma$ is divisible, then 
$\hat a$ is almost special over $K$.  
\end{prop}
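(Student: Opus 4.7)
The proof will follow the blueprint of the small-derivation analogue (Proposition~\ref{propalsp}), but now using the Newton-degree tools developed for the $\upo$-free $r$-linearly newtonian setting. First I would fix a divergent pc-sequence $(a_\rho)$ in $K$ with $a_\rho\leadsto\hat a$, and invoke Lemma~\ref{replace} (whose statement is purely about asymptotic extensions and goes through here) to replace $\hat a$ and its minimal annihilator $P$ by a pair $(\hat b,Q)$ such that $Q$ is simultaneously a minimal differential polynomial of $(a_\rho)$ over $K$ and a minimal annihilator of $\hat b$ over $K$, with $\order Q\leq r$ and $v(\hat b-K)=v(\hat a-K)$. Since $v(\hat a-K)=v(\hat b-K)$, it suffices to prove $\hat b$ almost special, so after renaming we assume $P$ itself is a minimal differential polynomial of $(a_\rho)$ over $K$.

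The second step mirrors the application of \cite[Proposition~3.1]{Nigel} in the proof of Proposition~\ref{propalsp}, but now in Newton-degree form: using the divisibility of $\Gamma$ together with the minimality of $P$ for $(a_\rho)$, one obtains $a\in K$ and $g\in K^\times$ such that $\hat a-a\asymp g$ and $\ndeg P_{+a,\times g}=1$. (This is the Newton-degree counterpart of \cite[3.1]{Nigel}; divisibility is what allows $g$ to be chosen in $K^\times$ realizing the required value.) Set $F:=P_{+a,\times g}$ and $\hat f:=(\hat a-a)/g$, so $F(\hat f)=0$, $\hat f\asymp 1$, and $\hat f\notin K$ (otherwise $\hat a=a+g\hat f\in K$, contradicting $\hat a\notin K$).

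Now $\order F\leq r$, $\ndeg F=1$, and $\hat f\preceq 1$ lies in an immediate asymptotic extension of $K$ with $\hat f\notin K$, so Proposition~\ref{nepropsp} (the $\upo$-free $r$-linearly newtonian analogue already proved) applies to $F$ and $\hat f$ in place of $P$ and $\hat a$. This yields a nontrivial convex subgroup $\Delta$ of $\Gamma$ cofinal in $v(\hat f-K)$. Setting $\alpha:=vg$, we then have $v\big((\hat a-a)-K\big)=\alpha+v(\hat f-K)$, which shares its cofinal structure with $\alpha+\Delta$; since $a/g\in K$, we have $v(\hat a/g-K)=v(\hat f-K)$, so $\hat a/g$ is special over $K$ with witnessing convex subgroup $\Delta$. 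Hence $\hat a$ is almost special over $K$ via $\fm:=g$.

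The only non-routine ingredient is step two, where divisibility of $\Gamma$ is used to produce $a,g$ normalizing the Newton degree to $1$; this is where the hypothesis on $\Gamma$ is essential (and where the trade-off mentioned in the corollary to Proposition~\ref{propalsp}, using \cite[3.3]{Nigel} instead, would apply if one weakened divisibility in favor of henselianity plus linear surjectivity of $\k$). The remaining steps are a direct transcription of the small-derivation argument, with Proposition~\ref{nepropsp} replacing Proposition~\ref{propsp}.
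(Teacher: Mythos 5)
Your proof is correct and follows essentially the same route as the paper: replace $(\hat a,P)$ so that $P$ is a minimal differential polynomial of $(a_\rho)$, use divisibility of $\Gamma$ to get $a\in K$, $g\in K^\times$ with $\hat a-a\asymp g$ and $\ndeg P_{+a,\times g}=1$ (the paper cites [ADH, 14.5.1] for exactly this), apply Proposition~\ref{nepropsp} to $F:=P_{+a,\times g}$ and $\hat f:=(\hat a-a)/g$, and shift the resulting convex subgroup by $vg$. The only slip is citing Lemma~\ref{replace}, whose proof uses the small-derivation and ``$\mathcal O$ is $r$-linearly surjective'' hypotheses of the earlier subsection and so does not automatically ``go through'' here; the paper instead proves Lemma~\ref{nreplace} for the present setting, and with that substitution your argument coincides with the paper's.
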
 

\noindent
Towards the proof, we fix  a minimal annihilator $P(Y)$ of $\hat a$
 over $K$, so $\order P\le r$. We also fix a divergent pc-sequence $(a_{\rho})$ in $K$ such that $a_{\rho}\leadsto \hat a$. We next show how to improve $\hat a$ and $P$ if necessary:

\begin{lemma}\label{nreplace} For some $\hat{b}$ in an immediate asymptotic extension of $K$ we have: \begin{enumerate}
\item[\textup{(i)}] $v(\hat a -a)=v(\hat b -a)$ for all $a\in K$;
\item[\textup{(ii)}] $(a_{\rho})$ has a minimal differential polynomial $Q$ over $K$ of order $\le r$ such that~$Q$
is also a minimal annihilator of $\hat b$ over $K$.
\end{enumerate}
\end{lemma}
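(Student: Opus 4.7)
The plan is to mimic the proof of Lemma~\ref{replace} essentially verbatim, supplementing it with a brief verification that the common pseudoconvergence enforces the pointwise equality demanded in (i). The construction of $\hat b$ and $Q$ is entirely standard from the general theory of pc-sequences of $\d$-algebraic type developed in~[ADH, 6.8--6.9].

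First I would apply [ADH, 6.8.1]: the relations $P(\hat a)=0$ and $a_\rho\leadsto\hat a$ force $P(a_\rho)\leadsto 0$, so $(a_\rho)$ is of $\d$-algebraic type over $K$, and [ADH, 6.9.2] then produces a minimal differential polynomial $Q\in K\{Y\}^{\ne}$ of $(a_\rho)$ over $K$ with $\order Q\le\order P\le r$. Next, [ADH, 6.9.3] provides a valued differential field extension $K\langle\hat b\rangle$ of $K$, immediate over $K$, such that $a_\rho\leadsto\hat b$ and $Q$ is a minimal annihilator of $\hat b$ over $K$. Since $K$ is asymptotic, [ADH, 9.4.5] guarantees that this immediate extension is again asymptotic. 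This yields the data needed for~(ii).

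For (i), fix $a\in K$. Because $a_\rho\leadsto\hat a$, the sequence $\bigl(v(\hat a-a_\rho)\bigr)$ is eventually strictly increasing and cofinal in $v(\hat a-K)$; in particular, eventually $v(\hat a-a_\rho)>v(\hat a-a)$, and for such $\rho$ the ultrametric inequality gives $v(a_\rho-a)=v\bigl((a_\rho-\hat a)+(\hat a-a)\bigr)=v(\hat a-a)$. The same argument with $\hat b$ in place of $\hat a$ shows that the eventual value of $v(a_\rho-a)$ equals $v(\hat b-a)$. Hence $v(\hat a-a)=v(\hat b-a)$, proving~(i).

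There is no genuine obstacle here: the content lies in the cited [ADH] results, and the only subtlety is noticing that (i) is in fact stronger than the analogous condition $v(\hat a-K)=v(\hat b-K)$ of Lemma~\ref{replace} but is automatic once $\hat a$ and $\hat b$ are realized as pseudolimits of the \emph{same} pc-sequence $(a_\rho)$.
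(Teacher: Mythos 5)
Your verification of (i) is fine: since $(a_\rho)$ is divergent in $K$, for each fixed $a\in K$ the values $v(\hat a-a_\rho)$ must eventually exceed $v(\hat a-a)$ (otherwise $a_\rho\leadsto a$ in $K$), so $v(a_\rho-a)$ is eventually equal to $v(\hat a-a)$, and likewise for $\hat b$; this is a correct and standard argument, and it is indeed the reason the stronger pointwise condition (i) can be demanded once $\hat a$ and $\hat b$ are pseudolimits of the same divergent pc-sequence. The gap lies in your construction of $Q$ and $\hat b$. You transplant the proof of Lemma~\ref{replace} verbatim, but that lemma sits in the preceding subsection, whose standing hypotheses include that $K$ is an asymptotic field with \emph{small derivation} (and $\mathcal O$ is $r$-linearly surjective, forcing $\sup\Psi=0$). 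The results you invoke, [ADH, 6.8.1, 6.9.2, 6.9.3] together with [ADH, 9.4.5], are all proved under the small-derivation standing assumption of [ADH, Chapter~6]. In the present subsection $K$ is only assumed to be an $\upo$-free $r$-linearly newtonian $H$-asymptotic field; these properties are invariant under compositional conjugation by arbitrary $\phi\in K^\times$, so the derivation of $K$ need not be small, and the Chapter~6 machinery is not applicable as stated. This is exactly why the paper's proof of Lemma~\ref{nreplace} switches toolboxes: it observes $P\in Z(K,\hat a)$ (remarks after [ADH, 11.4.3]), takes $Q\in Z(K,\hat a)$ of minimal complexity, concludes that $Q$ is a minimal differential polynomial of $(a_\rho)$ over $K$ by [ADH, 11.4.13], and obtains $\hat b$ with property (i) and with $Q$ a minimal annihilator of $\hat b$ from [ADH, 11.4.8 and its proof]; the $Z(K,\hat a)$/Newton-degree machinery of [ADH, 11.4] is designed precisely for ungrounded $H$-asymptotic fields without any smallness hypothesis, and it delivers (i) directly rather than through the pseudolimit computation.

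Your route is not hopeless, but it needs repair that your write-up does not supply: one could compositionally conjugate by an active element of $K$ to make the derivation small, apply the Chapter~6 results there, and transfer back, noting that pc-sequences, pseudolimits, immediate asymptotic extensions, minimal annihilators, and the property of being a minimal differential polynomial of $(a_\rho)$ are unaffected by such conjugation (since $P^\phi(y)=P(y)$ and complexity is preserved); one would still have to verify that the remaining hypotheses of [ADH, 6.9.2, 6.9.3] hold after conjugation. As written, the proposal asserts the conclusions of those lemmas in a setting where their hypotheses have not been secured, so the construction of $Q$ and $\hat b$ is not justified.
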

\begin{proof} By the remarks following the proof of [ADH, 11.4.3] we have $P\in Z(K,\hat a)$.  Take $Q\in Z(K,\hat a)$ of minimal complexity.
Then $\order Q \le \order P\le r$, and $Q$ is a minimal differential polynomial of $(a_{\rho})$ over $K$ by [ADH, 11.4.13]. 
By [ADH, 11.4.8 and its proof] this gives an element $\hat b$ in an immediate asymptotic extension of $K$ such that (i) holds and  $Q$ is a minimal annihilator of $\hat b$ over $K$. Then $Q$ and $\hat b$ have the desired properties. 
\end{proof}

\begin{proof}[Proof of Proposition~\ref{npropalsp}] 
Assume $\Gamma$ is divisible. Replace $\hat a$, $P$
by $\hat b$, $Q$ from Lemma~\ref{nreplace} and rename to arrange
that $P$ is a minimal differential polynomial of $(a_{\rho})$ over $K$.
By [ADH, 14.5.1] we have $a\in K$ and $g\in K^\times$ such that $\hat a -a\asymp g$ and~$\ndeg P_{+a,\times g} = 1$.
Set $F:= P_{+a,\times g}$ and
$\hat f:= (\hat a  -a)/g$. Then $\ndeg F=1$, $F(\hat f)=0$, and $\hat f\preceq 1$. Applying Proposition~\ref{nepropsp} to
$F$ and $\hat f$ in the role of~$P$ and~$\hat a$ yields a
nontrivial convex subgroup $\Delta$ of $\Gamma$ that is
cofinal in $v(\hat f -K)$. Setting~$\alpha:= vg$, it follows that $\alpha+\Delta$ is cofinal in $v\big((\hat a -a)-K\big)=v(\hat a -K)$.   
\end{proof}

\begin{cor} 
If $K$ is henselian, then $\hat a$ is almost special over $K$.  
\end{cor}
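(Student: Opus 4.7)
The plan is to reduce to Proposition~\ref{npropalsp} by passing to an algebraic valued differential field extension $L$ of $K$ with divisible value group. Since $K$ is henselian of equicharacteristic zero, the valuation and derivation of $K$ extend uniquely to the algebraic closure of $K$ as a valued field, call it $L$; this $L$ is an $H$-asymptotic extension of $K$ with $\Gamma_L=\Q\Gamma$ divisible, $\d$-valued (as the algebraic closure of the $\d$-valued field $K$), and $\upo$-free (by Theorem~\ref{thm:ADH 13.6.1}).

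Next I would lift $\hat a$. Henselianity of $K$ together with equicharacteristic zero forces immediate algebraic extensions of $K$ to be trivial, so $\hat a$ must be transcendental over $K$ and hence $\hat a\notin L$. Inside a sufficiently large common extension, $L\langle \hat a\rangle$ is an immediate asymptotic extension of $L$ of transcendence degree $\leq r$ over $L$ (the minimal annihilator of $\hat a$ over $K$, of order $\leq r$, remains an annihilator over $L$). Proposition~\ref{npropalsp} then yields $\fm^*\in L^\times$ and a nontrivial convex subgroup $\Delta$ of $\Gamma_L$ cofinal in $v(\hat a/\fm^*-L)$.

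For the descent to $K$, set $\Delta_0:=\Delta\cap\Gamma$: this is a nontrivial convex subgroup of $\Gamma$, cofinal in $\Delta$ since each $\delta\in\Q\Gamma$ has a positive integer multiple in $\Gamma$. Take $n\geq 1$ with $nv\fm^*\in\Gamma$ and choose $\fm\in K^\times$ suitably, so that a cofinality comparison (using $v(\hat a-K)\subseteq v(\hat a-L)$ and the identity $v(\hat a/\fm^*-L)=v(\hat a-L)-v\fm^*$) shows that $\Delta_0$ is cofinal in $v(\hat a/\fm-K)$, establishing that $\hat a/\fm$ is special over $K$.

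The main obstacle is verifying that $L$ is $r$-linearly newtonian, which is required in order to invoke Proposition~\ref{npropalsp}: full newtonianity-preservation~\eqref{eq:14.5.7} requires divisibility of $\Gamma_K$, which we lack, but the $r$-linear analog for algebraic extensions should be available from the treatment of $r$-linear newtonianity in [ADH,~14.2]. A secondary subtlety is the descent step itself, since $K$ need not be dense in $L$ in the valuation topology; controlling the approximation of an element of $L$ by an element of $K$ in the relevant valuation range will hinge on the specific structure of the algebraic extension $L/K$ in the henselian equicharacteristic-zero case, and on the fact that $\hat a\notin L$ so that $v(\hat a-L)$ has no maximum.
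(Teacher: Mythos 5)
There is a genuine gap, and it is exactly where you flagged it: your whole reduction hinges on the algebraic closure $L$ of $K$ being $r$-linearly newtonian, since that is a standing hypothesis needed to invoke Proposition~\ref{prop:hata almostspecial} over $L$. No such transfer is available in the paper's toolkit: the only ascent result for newtonianity along algebraic extensions, \eqref{eq:14.5.7} ([ADH, 14.5.7]), requires the \emph{base} field to have divisible value group, which is precisely the hypothesis this corollary is designed to avoid, and [ADH,~14.2] contains no $r$-linear analogue for algebraic extensions of a base with non-divisible value group (the paper itself only proves the \emph{descent} direction, Lemma~\ref{lem:descent r-linear newt}). So ``should be available'' is not a proof, and without it the appeal to Proposition~\ref{npropalsp} over $L$ is unjustified. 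Your secondary worry, by contrast, is repairable: one checks that $\hat a$ lies in an immediate asymptotic extension of $L$ (take the algebraic closure of $\hat K$, which has value group $\Q\Gamma$ and residue field $\res(L)$), and if some $\ell\in L$ satisfied $v(\hat a-\ell)>v(\hat a-K)$, then $v(\ell-K)=v(\hat a-K)$ would have no largest element, making $\ell$ a pseudolimit of a divergent pc-sequence of algebraic type over $K$ and contradicting the algebraic maximality of the henselian equicharacteristic-zero field $K$; hence $v(\hat a-K)$ is cofinal in $v(\hat a-L)$ and specialness descends. But this repair does not rescue the argument, because the newtonianity transfer remains unproven.

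The paper's intended proof avoids base change altogether: it repeats the proof of Proposition~\ref{npropalsp} verbatim over $K$, replacing the single step where divisibility of $\Gamma$ is used---the appeal to [ADH, 14.5.1] producing $a\in K$ and $g\in K^\times$ with $\hat a-a\asymp g$ and $\ndeg P_{+a,\times g}=1$---by \cite[3.3]{Nigel19}, which supplies the same conclusion with henselianity of $K$ in place of divisibility. After that, Proposition~\ref{nepropsp} applies over $K$ itself, so no $\upo$-freeness or $r$-linear newtonianity needs to be transferred to an extension and no descent argument is needed. If you want to keep your approach, you would first have to prove an ascent theorem for $r$-linear newtonianity along algebraic extensions without divisibility of the value group, which is a substantial detour compared with quoting Pynn-Coates's result.
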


\noindent
The proof is like that of Proposition~\ref{npropalsp}, using \cite[3.3]{Nigel19} instead of~[ADH, 14.5.1].

\subsection*{The case of order $1$\astr} 
We show here that Proposition~\ref{prop:hata special}  goes through in the case of order $1$ under weaker assumptions:
 {\it in this subsection $K$ is a  $1$-linearly newtonian $H$-asymptotic field with asymptotic integration.}\/ Then $K$ is $\d$-valued with~$\I(K)\subseteq K^\dagger$, by Lemma~\ref{lem:ADH 14.2.5}, and $\upl$-free, by [ADH, 14.2.3].
We let $\phi$ range over elements active in~$K$. In the next two lemmas $A\in K[\der]^{\neq}$ has order~$\leq 1$,   $\gamma\in \Q\Gamma$, $\gamma>0$, and $\hat{K}$ is an immediate  asymptotic extension of $K$.

\begin{lemma}\label{neneat2, r=1} Let $\hat f\in \hat K^\times$ be such that  $v\big(A(\hat f)\big)\ge v(A^\phi\hat f)+\gamma$ eventually. Then there exists~$f\in K$ such that
$A(f)=0$ and~$v(\hat f-f)\ge v(\hat f) +\gamma$.
\end{lemma}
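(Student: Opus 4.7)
The lemma is vacuous when $\order A = 0$: then $A = A_0 \in K^\times$, and a direct computation gives $v(A(\hat f)) = v(A^\phi \hat f) = v(A_0 \hat f)$, so the hypothesis with $\gamma > 0$ cannot hold. Assume therefore $\order A = 1$, and write $A = A_1(\der - g)$ with $A_1 \in K^\times$ and $g \in K$. Working in $\hat K^\phi[\derdelta]$ we get $A^\phi \cdot \hat f = A(\hat f) + A_1 \phi \hat f \derdelta$, so $v(A^\phi \hat f) = \min\{v(A(\hat f)), v(A_1 \phi \hat f)\}$; because $\gamma > 0$ the hypothesis reduces to $v(s) \ge v(\phi) + \gamma$, eventually, where $s := \hat f^\dagger - g \in \hat K$.

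The first structural observation is that either $s = 0$, or $v(s) > \Psi$. Indeed, given any $\alpha \in \Psi$, the set $\Psi^\downarrow$ is closed under binary maxima, so we may pick an active $\phi \in K^\times$ with $v(\phi) = \max(\alpha, v(\phi_0))$, where $\phi_0$ is the ``eventually'' threshold; this yields $v(s) \ge v(\phi) + \gamma \ge \alpha + \gamma > \alpha$, and since $\alpha \in \Psi$ was arbitrary we conclude $v(s) > \Psi$ when $s \neq 0$.

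Now extend $\hat K$ to an immediate asymptotic extension $\tilde K$ that is $1$-linearly newtonian: such $\tilde K$ exists since $K$, and hence $\hat K$, is $\upl$-free by \textup{[ADH, 14.2.3]}, so iterated immediate adjunction of solutions to quasilinear order-$1$ obstructions produces it. By Lemma~\ref{lem:ADH 14.2.5}, $\I(\tilde K) = (1+\smallo_{\tilde K})^\dagger \subseteq \tilde K^\dagger$, and $C_{\tilde K} = C$ as $\tilde K \mid K$ is an immediate extension of $\d$-valued fields. Taking a primitive $y \in \tilde K$ of $s$ (using $1$-linear surjectivity of $\tilde K$), if $vy < 0$ then the $H$-asymptotic formula gives $v(y') = vy + \psi(vy) < \psi(vy) \in \Psi$, contradicting $v(s) > \Psi$; hence $y \in \mathcal O_{\tilde K}$, so $s \in \der \mathcal O_{\tilde K} = \I(\tilde K)$, and we may write $s = (1+\epsilon)^\dagger$ for some $\epsilon \in \smallo_{\tilde K}$.

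It remains to descend $g$ to $K^\dagger$ and control $v(\epsilon)$. Since $g = \hat f^\dagger - s \in \tilde K^\dagger \cap K$, and any $u \in \tilde K^\times$ factors as $u = k(1+\delta)$ with $k \in K^\times$, $\delta \in \smallo_{\tilde K}$, whose log-derivative $(1+\delta)^\dagger$ lies in $\I(\tilde K) \cap K = \I(K) \subseteq K^\dagger$, we get $\tilde K^\dagger \cap K = K^\dagger$, so $g = h^\dagger$ for some $h \in K^\times$. Then $\hat f^\dagger = (h(1+\epsilon))^\dagger$ gives $\hat f = c_0 h(1+\epsilon)$ with $c_0 \in C$; setting $f := c_0 h \in K$ we have $A(f) = 0$ and $\hat f - f = c_0 h \epsilon$, so $v(\hat f - f) = v(\hat f) + v(\epsilon)$ using $v(1+\epsilon) = 0$. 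Finally, when $\epsilon \neq 0$, $v(s) = v(\epsilon') = v(\epsilon) + \psi(v\epsilon)$; choosing an active $\phi \in K^\times$ with $v(\phi) = \max(\psi(v\epsilon), v(\phi_0))$ --- which is available since $\psi(v\epsilon) \in \Psi \subseteq \Psi^\downarrow$ --- the eventual inequality gives $v(s) \ge \psi(v\epsilon) + \gamma$, whence $v(\epsilon) \ge \gamma$ exactly, without an $o(\gamma)$ correction. The main obstacle is producing the $1$-linearly newtonian immediate extension $\tilde K$ and calibrating the choice of $\phi$ to $\psi(v\epsilon)$ so that the improved bound $v(\hat f - f) \ge v(\hat f) + \gamma$ --- strictly stronger than what Lemma~\ref{neneat2} provides --- drops out of the $H$-asymptotic formula cleanly.
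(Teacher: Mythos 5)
Your reductions are fine: the order must be $1$, the hypothesis is equivalent (for $A(\hat f)\neq 0$) to $v(s)\ge v\phi+\gamma$ eventually with $s:=\hat f^\dagger-g$, hence $s=0$ or $v(s)>\Psi$; and your final calibration, choosing active $\phi$ with $v\phi=\max\big(\psi(v\epsilon),v\phi_0\big)$ to extract $v\epsilon\ge\gamma$ from $v(s)=v\epsilon+\psi(v\epsilon)$, is a correct and clean way to avoid any $o(\gamma)$ loss. The genuine gap is the construction of $\tilde K$. You justify it by ``$K$, and hence $\hat K$, is $\upl$-free'', but $\upl$-freeness does not pass up to immediate extensions: by Corollary~\ref{cor:Liouville closed => 1-lin newt} a Liouville closed $H$-field $K$ that is not $\upo$-free is $1$-linearly newtonian (so $\upl$-free by [ADH, 14.2.3]), yet by Lemma~\ref{lem:upl-free, not upo-free} it has an immediate asymptotic extension $\hat K=K\langle\upl\rangle$ containing a pseudolimit of $(\upl_\rho)$ --- exactly the situation discussed in the remark after Proposition~\ref{prop:hata special, r=1}. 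Worse, for such $\hat K$ \emph{no} immediate asymptotic extension $\tilde K\supseteq\hat K$ can be $1$-linearly newtonian: a logarithmic sequence for $K$ remains one for $\tilde K$, so $\tilde K$ contains a pseudolimit of $(\upl_\rho)$ and is not $\upl$-free, whereas $1$-linear newtonianity forces $\upl$-freeness. So the object on which your whole middle section rests ($s\in\der\mathcal O_{\tilde K}=(1+\smallo_{\tilde K})^\dagger$, $\tilde K^\dagger\cap K=K^\dagger$, $C_{\tilde K}=C$) need not exist, and the ``iterated adjunction of order-$1$ solutions'' sketch cannot repair this, since each adjunction can itself destroy $\upl$-freeness.

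The detour through $\tilde K$ is also unnecessary. From $v(s)>\Psi$, pick $y\in K^\times$ with $y\sim\hat f$ (possible since $\hat K$ is immediate over $K$); then $v(\hat f^\dagger-y^\dagger)\in(\Gamma^>)'>\Psi$, so $v(g-y^\dagger)>\Psi$, i.e.\ $v\hat f\in\exc^{\ev}(A)$, and Lemma~\ref{lem:v(ker)=exc, r=1} (this is where $\I(K)\subseteq K^\dagger$ enters) produces $f\in K^\times$ with $A(f)=0$ and, after adjusting by a constant, $f\sim\hat f$; in particular $g=f^\dagger\in K^\dagger$ comes for free, with no new field. Setting $\epsilon:=\hat f/f-1\in\smallo_{\hat K}$, your calibrated-$\phi$ computation then finishes the proof. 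Note that this endgame differs from the paper's, which after producing the same $f$ compositionally conjugates to make the twisted operator steep and invokes Lemma~\ref{prlemexc} to estimate $\hat f-f$; your $\psi$-calculation is arguably more direct. But as written, the proof has a real gap at the $\tilde K$ step.
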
 
\begin{proof}
Note that $\order(A)=1$; 
we arrange $A=\der-g$ ($g\in K$).
If~$A(\hat f)=0$, then~$\hat f$ is in $K$ [ADH, line before 14.2.10], and $f:=\hat f$ works. 
Assume~$A(\hat f)\neq 0$.
Then $$v\big(A^\phi(\hat f)\big)=v\big(A(\hat f)\big)\ge v(A^\phi\hat f)+\gamma>v(A^\phi\hat f),\quad\text{ eventually,}$$ 
so
$v(\hat f)\in\exc^{\ev}(A)$, and Lemma~\ref{lem:v(ker)=exc, r=1} yields an $f\in K$ with $f\sim \hat f$ and~$A(f)=0$.
We claim that this $f$ has the desired property. 
Set $b:= A(\hat{f})$. By the remarks preceding Corollary~\ref{cor:prlemexc} we can
replace~$K$,~$\hat K$,~$A$,~$b$ by~$K^\phi$,~$\hat K^\phi$,~$\phi^{-1}A^\phi$,~$\phi^{-1}b$, respectively, for suitable $\phi$,
to arrange that $K$ has small derivation and~$b^\dagger-g\succ^\flat 1$.  Using the hypothesis of the lemma we also arrange $vb\geq v(A\hat f)+\gamma$.
It remains to show that for~$\hat g:=\hat f-f\neq 0$ we have $v(\hat g)\ge v(\hat f)+\gamma$. 
Now $A(\hat g)=b$ with~$v(\hat g)\notin\exc^{\ev}(A)$, hence~$\hat g\sim b/(b^\dagger-g) \prec^\flat b$ by Lemma~\ref{prlemexc}, and thus
$v(\hat g)>vb\geq v(A\hat f)+\gamma$, so it is enough to show $v(A\hat f)\ge v(\hat f)$. 
Now $b=A(\hat f)=\hat f(\hat{f}^\dagger-g)$ and  $A\hat f=\hat f\big(\der+{\hat f}^\dagger -g\big)$. As $vb\ge v(A\hat f) + \gamma> v(A\hat f)$, this yields $v(\hat{f}^\dagger-g) > 0$, so $v(A\hat f)=v(\hat f)$.
\end{proof}

\begin{lemma}\label{neneat3, r=1} Let $b\in K$ and $\hat f\in \hat K$ with $v(\hat f)\ge o(\gamma)$. Assume also that
eventually $v(b)\ge v(A^\phi)+o(\gamma)$ and $v\big(A(\hat f)-b\big)\ge v(A^\phi)+\gamma+o(\gamma)$. 
Then for some~$f\in K$ we have $A(f)=b$ and
$v(\hat f -f)\ge (1/2)\gamma+o(\gamma)$.
\end{lemma}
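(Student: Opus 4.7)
The plan is to repeat verbatim the argument used for Lemma~\ref{neneat3}, with Lemma~\ref{neneat2, r=1} taking the place of Lemma~\ref{neneat2}. The order-$1$ hypothesis is perfectly compatible with the tools invoked there, since $K$ is $1$-linearly newtonian and Corollary~\ref{cor:14.2.10, generalized} applies in that case.

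First I would produce a particular solution $y\in K$ of $A(y)=b$: take $y:=0$ if $b=0$, and otherwise use Corollary~\ref{cor:14.2.10, generalized} (with $r=1$) to obtain $y\in K^\times$ with $A(y)=b$, $vy\notin\exc^{\ev}(A)$, and $v_A^{\ev}(vy)=vb$. Exactly as in the proof of Lemma~\ref{neneat3}, the sentence preceding [ADH, 14.2.7] gives $v_{A^\phi}(vy)=vb$ eventually, and combining with the hypothesis $v(b)\ge v(A^\phi)+o(\gamma)$ eventually, an application of [ADH, 6.1.3] yields $vy\ge o(\gamma)$.

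Next I would set $\hat g := \hat f-y\in \hat K$, so $A(\hat g)=A(\hat f)-b$; hence $v(\hat g)\ge o(\gamma)$ and eventually $v\bigl(A(\hat g)\bigr)\ge v(A^\phi)+\gamma+o(\gamma)$. I would then split into two cases exactly as before. If $v(\hat g)\ge (1/2)\gamma+o(\gamma)$, then $f:=y$ satisfies the required conclusion. Otherwise $v(\hat g)<(1/2)\gamma+o(\gamma)$, and by [ADH, 6.1.3] we get eventually
\[ v(A^\phi\hat g)\ <\ v(A^\phi)+(1/2)\gamma+o(\gamma),\qquad v\bigl(A(\hat g)\bigr)\ \ge\ v(A^\phi)+\gamma+o(\gamma), \]
so $v\bigl(A(\hat g)\bigr)\ge v(A^\phi\hat g)+(1/2)\gamma$ eventually. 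At this point I invoke Lemma~\ref{neneat2, r=1} (with $(1/2)\gamma$ in place of $\gamma$) to produce $g\in K$ with $A(g)=0$ and $v(\hat g-g)\ge v(\hat g)+(1/2)\gamma+o(\gamma)\ge (1/2)\gamma+o(\gamma)$, and then $f:=y+g$ works.

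There is essentially no obstacle: the only substantive difference from the $r$-linearly newtonian proof is that we rely on Lemma~\ref{neneat2, r=1}, which was established directly in the $1$-linearly newtonian setting without appealing to a $\d$-algebraic extension being newtonian (a fact available in the general $r$ case via \eqref{eq:14.0.1} but not assumed here). The mild thing worth double-checking is that the absorption calculation $v(\hat g-g)\ge v(\hat g)+(1/2)\gamma+o(\gamma)$ coming out of Lemma~\ref{neneat2, r=1} indeed gives $v(\hat g-g)\ge (1/2)\gamma+o(\gamma)$, which is immediate from $v(\hat g)\ge o(\gamma)$.
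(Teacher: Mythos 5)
Your proof is correct and is exactly the proof the paper has in mind: the text following the lemma says verbatim that "The proof is like that of Lemma~\ref{neneat3}, using Lemma~\ref{neneat2, r=1} instead of Lemma~\ref{neneat2}." You have also correctly checked the two small points that need attention, namely that Corollary~\ref{cor:14.2.10, generalized} with $r=1$ is available under the ambient assumption that $K$ is $1$-linearly newtonian $H$-asymptotic with asymptotic integration (no $\upo$-freeness needed), and that $v(\hat g)\ge o(\gamma)$ lets the bound from Lemma~\ref{neneat2, r=1} absorb into the desired $(1/2)\gamma+o(\gamma)$.
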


\noindent
The proof is like that of Lemma~\ref{neneat3}, using Lemma~\ref{neneat2, r=1} instead of Lemma~\ref{neneat2}.
In the same way Lemma~\ref{neneat3} gave Proposition~\ref{prop:hata special}, Lemma~\ref{neneat3, r=1} now yields:

\begin{prop}\label{nepropsp, r=1}\label{prop:hata special, r=1} If
$P\in K\{Y\}$, $\order P \le 1$, $\ndeg P=1$, 
and $P(\hat a)=0$, where~$\hat a\preceq 1$ lies in an immediate asymptotic extension of $K$ and $\hat a\notin K$, then~$\hat a$ is special over $K$. 
\end{prop}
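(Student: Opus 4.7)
My plan is to follow the proof of Proposition~\ref{nepropsp} essentially verbatim, replacing the application of Lemma~\ref{neneat3} by Lemma~\ref{neneat3, r=1}, which is the order-$1$ version requiring only the hypotheses in force here ($K$ being $1$-linearly newtonian $H$-asymptotic with asymptotic integration, rather than $\upo$-free and $r$-linearly newtonian). The author's remark preceding the statement indicates exactly this kind of template proof, so the outline below should cover the parallel.

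First, as in the earlier proof, it suffices to show: for every $\gamma \in v(\hat a - K)$ with $\gamma > 0$, the set $v(\hat a - K)$ contains an element $\geq (4/3)\gamma$; then the convex hull in $\Gamma$ of a single positive element of $v(\hat a - K)$ will be cofinal in $v(\hat a - K)$, witnessing that $\hat a$ is special over $K$. To establish the claim, pick $a\in K$ with $v(\hat a - a)=\gamma$. Since $\ndeg P=1$, we have $\ndeg P_{+a}=1$ by [ADH, 11.2.3(i)], so eventually $P(a)\preceq P^\phi_{+a,1}\succ P^\phi_{+a,>1}$. Expanding $0 = P(\hat a) = P^\phi_{+a}(\hat a - a)$ and using $v(\hat a - a) = \gamma$, one extracts
\[
v\!\big(P(a)\big)\ \geq\ v(P^\phi_{+a,1}) + \gamma + o(\gamma),\qquad \text{eventually},
\]
exactly as in the proof of Proposition~\ref{nepropsp}.

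Next I would perform the multiplicative conjugation that is the heart of the method: take $g \in K^\times$ with $vg = \gamma$ and set $\hat f := (\hat a - a)/g$, $Q := g^{-1} P_{+a,\times g}$, so that $\hat f \asymp 1$, $Q(\hat f) = 0$, and $Q = Q_0 + Q_1 + Q_{>1}$ with $Q_0 = g^{-1}P(a)$, $Q_1 = g^{-1}(P_{+a,1})_{\times g}$, $Q_{>1} = g^{-1}(P_{+a,>1})_{\times g}$. Writing $A := L_Q \in K[\der]$, a direct computation (as in the previous proof) shows $v(Q_0)\geq v(A^\phi) + o(\gamma)$ and $v(Q^\phi_{>1})\geq v(Q^\phi_1) + \gamma + o(\gamma)$ eventually, together with $Q_0 + A(\hat f) = -Q^\phi_{>1}(\hat f)$, yielding
\[
v\!\big(Q_0 + A(\hat f)\big)\ \geq\ v(A^\phi) + \gamma + o(\gamma),\qquad \text{eventually}.
\]
Crucially, because $\order P \leq 1$, we also have $\order A \leq 1$, so the order-$1$ version Lemma~\ref{neneat3, r=1} is applicable with $b := -Q_0$ and $\hat f$ in its roles. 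That lemma is valid under the standing hypothesis of this subsection ($K$ is $1$-linearly newtonian $H$-asymptotic with asymptotic integration), which matches the hypothesis of the proposition.

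Applying Lemma~\ref{neneat3, r=1} produces $f \in K$ with $A(f) = -Q_0$ and $v(\hat f - f) \geq (1/2)\gamma + o(\gamma)$, hence in particular $v(\hat f - f) \geq (1/3)\gamma$. Translating back via $\hat a - a = g\hat f$ gives
\[
v\!\big(\hat a - (a + gf)\big)\ =\ \gamma + v(\hat f - f)\ \geq\ (4/3)\gamma,
\]
so $a + gf \in K$ witnesses an element of $v(\hat a - K)$ of size $\geq (4/3)\gamma$, completing the argument. The only real content that needed checking relative to the earlier proof is that the ``eventually'' manipulations and the estimate on $v(Q_0 + A(\hat f))$ still go through under the weaker background assumptions; this is automatic because none of those steps used $\upo$-freeness or linear newtonianity in orders beyond $1$. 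The main (albeit mild) obstacle is just to be careful that $A = L_Q$ genuinely has order $\leq 1$, which is where the order-$1$ hypothesis on $P$ is indispensable and which is precisely what lets us avoid the stronger framework of Proposition~\ref{nepropsp}.
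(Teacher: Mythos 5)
Your proposal is correct and is exactly the route the paper takes: the paper states that Lemma~\ref{neneat3, r=1} yields Proposition~\ref{prop:hata special, r=1} in the same way that Lemma~\ref{neneat3} gave Proposition~\ref{prop:hata special}, i.e., by running the proof of the latter with the order-$1$ lemma substituted, which is what you have written out.
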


\begin{remark} 
Proposition~\ref{prop:hata almostspecial} does not hold for $r=1$ under present assumptions.
To see this, let $K$ be a Liouville closed $H$-field which is not $\upo$-free, as in Example~\ref{ex:Gehret} or \cite{ADH3}.
Then $K$ is $1$-linearly newtonian by Corollary~\ref{cor:Liouville closed => 1-lin newt} below.
Consider the pc-sequences $(\upl_\rho)$ and $(\upo_\rho)$  in $K$ as in [ADH, 11.7],
 let $\upo\in K$ with~$\upo_\rho\leadsto\upo$, and~$P=2Y'+Y^2+\upo$. 
Then [ADH, 11.7.13] gives an element $\upl$ in an immediate asymptotic extension of $K$ but not in $K$
with $\upl_\rho\leadsto\upl$ and $P(\upl)=0$. However, $\upl$ is not almost special over $K$ [ADH, 3.4.13, 11.5.2].
\end{remark}

\subsection*{Relating $Z(K,\hat a)$ and $v(\hat a-K)$ for special $\hat a$} 
{\em In this subsection $K$ is a valued differential field with small derivation $\der\ne 0$
such that $\Gamma\ne \{0\}$ and $\Gamma^{>}$ has no least element.}\/ We recall from \cite{VDF} that a valued differential field extension $L$ of $K$ is said to be {\em strict\/}\index{extension!strict} if for all $\phi\in K^\times$,
$$\der \smallo\subseteq \phi \smallo\ \Rightarrow\ \der\smallo_L\subseteq \phi \smallo_L, \qquad \der \O\subseteq \phi \smallo\ \Rightarrow\ \der\O_L\subseteq \phi \smallo_L.$$
(If $K$ is asymptotic, then any immediate asymptotic extension of~$K$ is automatically strict, by \cite[1.11]{VDF}.)
Let $\hat a$ lie in an immediate strict extension of $K$ such that~${\hat a\preceq 1}$, $\hat a\notin K$, and~$\hat a$ is special over $K$.
 We adopt from \cite[Sections~2,~4]{VDF} the definitions of~$\ndeg P$ for $P\in K\{Y\}^{\ne}$ and of the set $Z(K,\hat a)\subseteq K\{Y\}^{\ne}$. 
Also recall that~$\Gamma(\der):=\{v\phi:\, \phi\in K^\times,\, \der\smallo\subseteq\phi\smallo\}$.

\begin{lemma}\label{Zp1} Let $P\in Z(K,\hat a)$ and $P\asymp 1$. Then
$v\big(P(\hat a)\big) >  v(\hat a-K)$. 
\end{lemma}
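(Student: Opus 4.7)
Choose a pc-sequence $(a_\rho)$ in $K$ with $a_\rho\leadsto \hat a$ and $P(a_\rho)\leadsto 0$, as furnished by $P\in Z(K,\hat a)$. Since $\hat a$ is special over $K$, the values $\gamma_\rho:=v(\hat a-a_\rho)$ are eventually strictly increasing and cofinal in the nontrivial convex subgroup $\Delta$ of $\Gamma$ cofinal in $v(\hat a-K)$; also $\hat a\preceq 1$ forces $a_\rho\preceq 1$ eventually. Expand $P(\hat a)=P(a_\rho)+R_\rho$ via additive conjugation, with
\[
R_\rho\ :=\ Q(\hat a-a_\rho),\qquad Q\ :=\ P_{+a_\rho}-P(a_\rho)\in K\{Y\}.
\]
Then $\val Q\geq 1$, and $Q\preceq 1$ because $P\asymp 1$ and $a_\rho^{(k)}\preceq 1$ (by smallness of $\der$) force each coefficient of $P_{+a_\rho}$ to be $\preceq 1$.

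Now fix $a\in K$ and set $\gamma:=v(\hat a-a)$; the goal is $v(P(\hat a))>\gamma$. The first technical task is to show $v(R_\rho)>\gamma$ for all sufficiently large $\rho$. Put $r:=\order Q$. Using that $\gamma_\rho$ is cofinal in $\Delta$ and $\Gamma^{>}$ has no least element (and observing that if $\gamma>0$ then $\gamma\in\Delta$ since $\Delta$ is cofinal in $v(\hat a-K)\cap\Gamma^{>}$), one can pick $\fm\in K^\times$ with $v\fm>0$, $v\fm>\gamma$ (vacuous if $\gamma\leq 0$), and $(r+1)v\fm<\gamma_\rho$. Then $\hat a-a_\rho\prec\fm^{r+1}$, and a Taylor-style estimate in the spirit of Lemma~\ref{lem:diff operator at small elt} (applied inside the strict immediate extension containing $\hat a$, which by strictness inherits the smallness of $\der$ from $K$) yields
\[
v(R_\rho)\ =\ v\bigl(Q(\hat a-a_\rho)\bigr)\ >\ v\fm+v(Q)\ \geq\ v\fm\ >\ \gamma.
\]

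To conclude, let $\beta_\rho:=v(P(a_\rho))$, eventually strictly increasing by $P(a_\rho)\leadsto 0$. Suppose towards a contradiction that $\beta_\rho\leq\gamma$ for all large $\rho$. Then $v(R_\rho)>\gamma\geq\beta_\rho$, so $v(P(\hat a))=v(P(a_\rho)+R_\rho)=\beta_\rho$; this is absurd, since $P(\hat a)$ is a fixed element of $\hat K$ while $\beta_\rho$ is strictly increasing. Therefore $\beta_\rho>\gamma$ eventually, and combined with $v(R_\rho)>\gamma$ we obtain $v(P(\hat a))>\gamma$, as required.

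\textbf{Main obstacle.} The decisive step is the estimate $v(R_\rho)>v\fm$ in the second paragraph. Lemma~\ref{lem:diff operator at small elt} is stated for $y\in K$ and with $K$ asymptotic, but here $y=\hat a-a_\rho$ lies in the extension and $K$ is only assumed to be a valued differential field with small derivation. The argument therefore hinges on transferring the derivative bounds used in the proof of that lemma to the strict immediate extension $L\supseteq K$; this is exactly where strictness does its work, ensuring $\der\smallo_L\subseteq\smallo_L$ and that the resulting monomial estimates $(\hat a-a_\rho)^{\i}\prec\fm^{|\i|}$ for $|\i|\geq 1$ remain valid inside $L$.
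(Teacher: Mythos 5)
Your proposal follows a genuinely different route from the paper's, but it has a real gap at exactly the step you flag as the ``main obstacle,'' and the attempted repair via strictness does not close it.

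The paper's proof avoids the issue entirely by a change of coordinates: it writes $P(\hat a)=P_{+a_\rho,\times g_\rho}(y_\rho)$ with $y_\rho:=(\hat a-a_\rho)/g_\rho\asymp 1$, conjugates compositionally by $\phi_\rho$ with $v(\phi_\rho)\in\Gamma(\der)$ to make the $d$-th homogeneous part dominant (using $\ndeg_{\boldsymbol a}P\ge 1$ from [VDF, 4.7]), and then applies the gaussian estimate $v(Q(y))\ge v(Q)$ for $y\preceq 1$ in a valued differential field with small derivation. Evaluating a \emph{small} differential polynomial at an element $\asymp 1$ only needs small derivation (strictness transfers this to the extension); the heavy lifting is done by the Newton-degree dominance estimate, not by any bound of the form $y^{(k)}\prec\fm^{\,r+1-k}$.

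Your approach instead expands $P(\hat a)=P(a_\rho)+Q(\hat a-a_\rho)$ with $Q=P_{+a_\rho}-P(a_\rho)$, $\val Q\ge 1$, $Q\preceq 1$, and then needs $v\bigl(Q(\hat a-a_\rho)\bigr)>v\fm$ from $\hat a-a_\rho\prec\fm^{r+1}\prec 1$. To get this you invoke the monomial estimate $(\hat a-a_\rho)^{\i}\prec\fm^{|\i|}$ for $|\i|\ge 1$, i.e.\ you need $(\hat a-a_\rho)^{(k)}\prec\fm^{r+1-k}$ for $k\le r$. That inference uses the asymptotic-field property $0\ne f\prec g\prec 1\Rightarrow f'\prec g'$, which is precisely what Lemma~\ref{lem:diff operator at small elt} relies on. But in this subsection $K$ is only a valued differential field with small derivation, and strictness of $L\supseteq K$ buys you exactly $\der\smallo_L\subseteq\smallo_L$ (and the coarsened variants) --- that is, $y\prec 1\Rightarrow y^{(k)}\prec 1$ for all $k$, nothing finer. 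So for a tuple $\i$ with $i_0=0$ and some $i_k\ge 1$ ($k\ge 1$) you only get $v\bigl((\hat a-a_\rho)^{\i}\bigr)>0$, not $>v\fm$, and the estimate $v(R_\rho)>\gamma$ fails. The contradiction argument in your final paragraph is fine once the estimate is available, but the estimate itself requires the compositional-conjugation mechanism (or the hypothesis that $K$ is asymptotic, which is not given here); strictness alone is not enough.
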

\begin{proof} 
Take a divergent pc-sequence $(a_{\rho})$ in $\mathcal O$ with $a_{\rho} \leadsto \hat a$, and  
as in [ADH, 11.2] let $\boldsymbol  a:=c_K(a_\rho)$. Then 
$\ndeg_{\boldsymbol a} P\geq 1$ by~\cite[4.7]{VDF}. We arrange $\gamma_{\rho}:= v(\hat a-a_{\rho})$ to be strictly increasing as a function of~$\rho$, with $0 < 2\gamma_{\rho} < \gamma_{s(\rho)}$  for all $\rho$. 
Take~$g_{\rho}\in \smallo$ with $g_{\rho} \asymp \hat a - a_{\rho}$; then $1 \leq d:= \ndeg_{\boldsymbol a} P = \ndeg P_{+a_{\rho}, \times g_{\rho}}$ for all sufficiently large~$\rho$, and we arrange that this holds for all $\rho$. 
We have~$\hat a = a_{\rho} + g_{\rho}y_{\rho}$ with $y_{\rho}\asymp 1$, and 
$$P(\hat a)\ =\ P_{+a_{\rho},\times g_{\rho}}(y_{\rho})\ =\ \sum_i (P_{+a_{\rho},\times g_{\rho}})_i(y_{\rho}).$$
Pick for every $\rho$ an element $\phi_{\rho}\in K^\times$ such that $0\le v(\phi_{\rho})\in \Gamma(\der)$ and
$(P^{\phi_{\rho}}_{+a_{\rho},\times g_{\rho}})_i\ \preceq\ (P^{\phi_{\rho}}_{+a_{\rho},\times g_{\rho}})_{d}$
for all $i$. Then for all $\rho$ and $i$, 
\begin{align*} (P_{+a_{\rho},\times g_{\rho}})_i(y_{\rho})\ & = (P^{\phi_{\rho}}_{+a_{\rho},\times g_{\rho}})_i(y_{\rho})\ \preceq\  (P^{\phi_{\rho}}_{+a_{\rho},\times g_{\rho}})_i\ \preceq\ (P^{\phi_{\rho}}_{+a_{\rho},\times g_{\rho}})_{d}\ 
\text{ with }\\
 v\big((P^{\phi_{\rho}}_{+a_{\rho},\times g_{\rho}})_{d}\big)\ &\ge\ d \gamma_{\rho} + o(\gamma_{\rho})\ \ge\ \gamma_{\rho} + o(\gamma_{\rho}),
 \end{align*}
where for the next to last inequality we use [ADH, 11.1.1, 5.7.1, 5.7.5, 6.1.3].  
Hence~$v\big(P(\hat a)\big) \ge \gamma_{\rho} + o(\gamma_{\rho})$ for all $\rho$, and thus $v\big(P(\hat a)\big)> v(\hat a -K)$.
\end{proof}

\noindent
We also have a converse under extra assumptions:

\begin{lemma} \label{lem:ZKhata} Assume $K$ is asymptotic and $\Psi\subseteq v(\hat a-K)$. Let $P\in K\{Y\}$ be such that $P\asymp 1$ and $v\big(P(\hat a)\big)> v(\hat a -K)$. Then $P\in Z(K,\hat a)$.
\end{lemma}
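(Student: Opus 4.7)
The approach is to deduce the converse of Lemma~\ref{Zp1}. The plan is to invoke the characterization $P\in Z(K,\hat a)\iff\ndeg_{\boldsymbol a}P\ge 1$ from \cite{VDF} (where $\boldsymbol a:=c_K(a_\rho)$ is the cut of a divergent pc-sequence $(a_\rho)\leadsto\hat a$, and the forward implication was used in Lemma~\ref{Zp1} via \cite[4.7]{VDF}), which reduces the task to proving $\ndeg_{\boldsymbol a}P\ge 1$ under the hypothesis $v(P(\hat a))>v(\hat a-K)$.

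I would set up as in Lemma~\ref{Zp1}: fix a divergent pc-sequence $(a_\rho)$ in $\mathcal O$ with $a_\rho\leadsto\hat a$, arrange $\gamma_\rho:=v(\hat a-a_\rho)$ to be strictly increasing and cofinal in $v(\hat a-K)$, pick $g_\rho\in\mathcal O$ with $g_\rho\asymp\hat a-a_\rho$, and set $y_\rho:=(\hat a-a_\rho)/g_\rho\asymp 1$ and $Q_\rho:=P_{+a_\rho,\times g_\rho}$, so that $(Q_\rho)_0=P(a_\rho)$ and $Q_\rho(y_\rho)=P(\hat a)$. I would then argue by contradiction: assuming $\ndeg Q_\rho=0$ for all large $\rho$ gives $\ddeg Q_\rho=0$, hence $v(P(a_\rho))=v((Q_\rho)_0)<v((Q_\rho)_d)$ for $d\ge 1$. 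Smallness of the derivation on $K$ forces $y_\rho^{\i}\in\mathcal O$ for every index $\i$, so $(Q_\rho)_d(y_\rho)\preceq(Q_\rho)_d\prec P(a_\rho)$ for $d\ge 1$; summing gives $Q_\rho(y_\rho)\sim P(a_\rho)$, and hence $v(P(\hat a))=v(P(a_\rho))$ for all such $\rho$.

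The final step is to bound $v((Q_\rho)_d)$ from above for some $d\ge 1$, thereby contradicting $v(P(\hat a))>v(\hat a-K)$. One may assume $d:=\deg P\ge 1$, else $v(P(\hat a))=v(P)=0$ would lie in $v(\hat a-K)$ already. Since $(P_{+a_\rho})_d=(P)_d$, we have $(Q_\rho)_d=(P)_d(g_\rho Y)$; choosing a monomial $Y^\i$ with $|\i|=d$ and coefficient $c_\i\asymp 1$ in $(P)_d$, the $Y^\i$-coefficient of $(Q_\rho)_d$ equals $c_\i g_\rho^d$ plus cross contributions of the form $c_{\i'}\cdot\prod_j g_\rho^{(k_j)}$ of total $g_\rho$-weight $d$. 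The estimates $v(g_\rho^{(k)})=\gamma_\rho+v(R_k(g_\rho^\dagger))$ together with $v(g_\rho^\dagger)=\psi(\gamma_\rho)\in\Psi\subseteq v(\hat a-K)$ yield $v((Q_\rho)_d)\le d\gamma_\rho+\delta$ for some $\delta\in v(\hat a-K)$; since the convex subgroup $\Delta$ cofinal in $v(\hat a-K)$ absorbs $d\gamma_\rho+\delta$, this element lies in $v(\hat a-K)$, so $v(P(\hat a))=v(P(a_\rho))<d\gamma_\rho+\delta\in v(\hat a-K)$, the desired contradiction. The hard part will be verifying the upper bound $v((Q_\rho)_d)\le d\gamma_\rho+\delta$ in the face of possible cancellations among the cross-terms at the chosen $Y^\i$; one would navigate this by selecting $\i$ so that no such cancellation occurs, leveraging the hypothesis $\Psi\subseteq v(\hat a-K)$ to keep all derivative-induced corrections inside $v(\hat a-K)$.
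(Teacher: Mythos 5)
Your central reduction step is where the argument breaks. You assert that $\ndeg Q_\rho=0$ "gives" $\ddeg Q_\rho=0$, but the implication runs the other way: $\ddeg Q_\rho^\phi$ is nonincreasing as $v\phi$ increases, so always $\ndeg Q_\rho\le\ddeg Q_\rho$, and $\ndeg Q_\rho=0$ only yields $\ddeg Q_\rho^{\phi_\rho}=0$ for a suitable compositional conjugation $\phi_\rho$ with $v\phi_\rho\in\Gamma(\der)$. Consequently the whole chain $(Q_\rho)_d\prec P(a_\rho)$, $Q_\rho(y_\rho)\sim P(a_\rho)$, $v(P(\hat a))=v(P(a_\rho))$ must be carried out for $Q_\rho^{\phi_\rho}$ in $K^{\phi_\rho}$ (and the bound $y_\rho^{\i}\preceq 1$ needs smallness of the \emph{conjugated} derivation on the strict immediate extension, not on $K$), and your later estimates with $g_\rho^{(k)}$, $R_k(g_\rho^\dagger)$, $\psi(\gamma_\rho)$ would all have to be redone with respect to $\derdelta=\phi_\rho^{-1}\der$, which your write-up does not address. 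On top of this, the "final step" is exactly the crux and is left unproved: $P\asymp 1$ does not provide a coefficient $\asymp 1$ in the top homogeneous part $P_d$ (consider $P=Y+cY^2$ with $c\prec 1$), the valuation of the chosen coefficient of $P_d$ need not lie in $v(\hat a-K)$ at all, and the possible cancellations you acknowledge are a genuine obstruction that $\Psi\subseteq v(\hat a-K)$ alone does not remove. Without that upper bound there is no contradiction, since $v(P(\hat a))=v(P(a_\rho))$ is by itself compatible with $v(P(\hat a))>v(\hat a-K)$. Finally, your reduction to $\ndeg_{\boldsymbol a}P\ge 1$ uses the converse of the direction of \cite[4.7]{VDF} that was invoked in Lemma~\ref{Zp1}; that converse is plausible but would itself need a justification (via the monotonicity properties of $\ndeg$ under additive and multiplicative conjugation) rather than a bare citation.

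The paper avoids all coefficient estimates by a different route: let $\Delta$ be the convex subgroup cofinal in $v(\hat a-K)$ and pass to the $\Delta$-coarsening and its specializations $\dot K\subseteq\dot L$, $L=K\langle\hat a\rangle$. There the pc-sequence $(a_\rho)$ becomes a c-sequence with $\dot a_\rho\to a^*$ (the image of $\hat a$), so by continuity of differential polynomials [ADH, 4.4.5] and $v\big(P(\hat a)\big)>\Delta$ one gets $\dot P(\dot a_\rho)\to\dot P(a^*)=0$; the hypotheses that $K$ is asymptotic and $\Psi\subseteq v(\hat a-K)$ enter only to show the induced derivation on $\dot K$ is nontrivial, so that [ADH, 4.4.10] allows perturbing the $a_\rho$ (without changing $\gamma_\rho$) to make $\dot P(\dot a_\rho)\neq 0$; then $P(a_\rho)\leadsto 0$ along a cofinal subsequence and $P\in Z(K,\hat a)$ by \cite[4.6]{VDF}. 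If you want to salvage your direct approach, you would need both the conjugation repair above and a genuinely new idea to control cancellation; as it stands the proposal has real gaps.
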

\begin{proof} 
Let $\Delta$ be the nontrivial convex subgroup of $\Gamma$ that is cofinal in $v(\hat a -K)$. Let $\kappa:=\cf(\Delta)$.
Take a divergent pc-sequence $(a_{\rho})_{\rho< \kappa}$ in $K$ such that $a_{\rho}\leadsto \hat a$.   
We arrange $\gamma_{\rho}:= v(\hat a -a_{\rho})$ is strictly increasing as a function of $\rho$, with $\gamma_{\rho}>0$ for all~$\rho$; thus
$a_{\rho}\preceq 1$ for all $\rho$. 
Consider the $\Delta$-coarsening $\dot v=v_{\Delta}$ of the valuation $v$ of~$K$; it has valuation ring $\dot{\O}$ with
differential residue field $\dot K$. Consider likewise the $\Delta$-coarsening of the valuation of
the immediate extension $L=K\<\hat a\>$ of $K$.  Let~$a^*$ be the image of $\hat a$ in the differential residue field
$\dot{L}$ of $(L,\dot v)$. Note that $\dot{L}$ is an immediate extension of $\dot{K}$. The pc-sequence $(a_{\rho})$ then
yields a sequence $(\dot{a_{\rho}})$  in~$\dot{K}$ with
$v(a^*-\dot{a_{\rho}})=\gamma_{\rho}$ for all $\rho$. Thus $(\dot a_{\rho})$ is a c-sequence in $\dot{K}$ with $\dot a_{\rho} \to a^*$, so
$\dot{P}(\dot a_{\rho})\to \dot{P}(a^*)$ by [ADH, 4.4.5]. From $v\big(P(\hat a)\big)>\Delta$ we obtain~$\dot{P}(a^*)=0$, and so $\dot{P}(\dot a_{\rho})\to 0$.
So far we have not used our assumption that $K$ is asymptotic and~$\Psi\subseteq v(\hat a-K)$. Using this now, we note that
for $\alpha\in \Delta^{>}$ we have 
$0 <\alpha'=\alpha+\alpha^\dagger$, so $\alpha' \in \Delta$, hence
the derivation of $\dot{K}$ is nontrivial. Thus we can apply [ADH, 4.4.10] to $\dot{K}$ and modify the $a_{\rho}$ 
without changing $\gamma_{\rho}=v(a^*-\dot a_{\rho})$ to arrange that in addition~$\dot{P}(\dot a_{\rho})\ne 0$
for all $\rho$.  Since $\kappa=\cf(\Delta)$, we can replace $(a_{\rho})$ by a cofinal subsequence so that $P(a_{\rho})\leadsto 0$, hence $P\in Z(K,\hat a)$ by \cite[4.6]{VDF}.
\end{proof}

\noindent
To elaborate on this, let $\Delta$ be a convex subgroup of $\Gamma$ and $\dot K$ the
valued differential residue field of the  $\Delta$-coarsening $v_\Delta$ of the valuation~$v$ of $K$. We view $\dot K$ in the usual way as a valued differential subfield of the valued differential residue field $\dot{\hat K}$ of the $\Delta$-coarsening of the valuation of $\hat K$ by $\Delta$; see [ADH, pp.~159--160 and~4.4.4].

\begin{cor}\label{cor:ZKhata} 
Suppose $K$ is asymptotic,  $\Psi\subseteq v(\hat a -K)$, and  $\Delta$  is cofinal in~$v({\hat a-K})$. 
 Let $P\in K\{Y\}$ with $P \asymp 1$. Then
$P\in Z(K,\hat a)$ if and only if~$\dot P(\dot{\hat a})=0$ in $\dot{\hat K}$. Also,
 $P$ is an element of $Z(K,\hat a)$ of minimal complexity if and only if $\dot P$ is a minimal
annihilator of $\dot{\hat a}$ over $\dot K$ and $\dot P$ has the same complexity as~$P$.
\end{cor}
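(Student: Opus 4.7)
My first step will be to identify how $\dot P(\dot{\hat a})$ arises from $P(\hat a)$: since $P \asymp 1$ and $\hat a \preceq 1$ give $P(\hat a) \in \mathcal{O}_{\hat K} \subseteq \dot{\mathcal{O}}_{\hat K}$, and since reduction commutes with differential polynomial evaluation, the image of $P(\hat a)$ in the residue field $\dot{\hat K}$ of the $\Delta$-coarsening of $\hat K$ is exactly $\dot P(\dot{\hat a})$. Therefore $\dot P(\dot{\hat a}) = 0$ is equivalent to $v\big(P(\hat a)\big) > \Delta$. Using that $\Delta$ is cofinal in the downward-closed set $v(\hat a - K)$ (which has no largest element), this is in turn equivalent to $v\big(P(\hat a)\big) > v(\hat a - K)$.

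Part~(1) then follows immediately from the two lemmas just proved: the forward direction ``$P \in Z(K, \hat a) \Rightarrow v\big(P(\hat a)\big) > v(\hat a - K)$'' is Lemma~\ref{Zp1}, while the converse is Lemma~\ref{lem:ZKhata}, whose hypothesis $\Psi \subseteq v(\hat a - K)$ is included in the standing assumptions.

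For part~(2), I will compare the minimum complexities on the two sides. Let $c_{\min}$ be the minimum of $\cc(Q)$ over $Q \in Z(K, \hat a)$ with $Q \asymp 1$, and let $c'_{\min}$ be the minimum of $\cc(R)$ over nonzero $R \in \dot K\{Y\}$ satisfying $R(\dot{\hat a}) = 0$. From any $P$ in $Z(K, \hat a)$ with $P \asymp 1$, part~(1) gives $\dot P(\dot{\hat a}) = 0$ and clearly $\cc(\dot P) \leq \cc(P)$, so $c'_{\min} \leq c_{\min}$. Conversely, given $R$ realizing $c'_{\min}$, I first rescale by a coefficient of minimal valuation to arrange that $R$ has all coefficients in $\mathcal{O}_{\dot K}$ with at least one a unit; then, using the surjectivity of $\mathcal{O} \twoheadrightarrow \mathcal{O}_{\dot K}$ from the coarsening/specialization setup recalled in the introduction, I lift each nonzero coefficient of $R$ monomial-by-monomial to $\mathcal{O}$. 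This produces $P \in \mathcal{O}\{Y\}$ with $P \asymp 1$, $\dot P = R$, and $\cc(P) = \cc(R)$. Part~(1) now puts $P$ in $Z(K, \hat a)$, so $c_{\min} \leq c'_{\min}$.

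With $c_{\min} = c'_{\min}$ established, the equivalence in~(2) is a matter of inspection. If $P$ has minimal complexity in $Z(K, \hat a)$, then $c'_{\min} \leq \cc(\dot P) \leq \cc(P) = c_{\min} = c'_{\min}$ forces $\cc(\dot P) = \cc(P)$ and makes $\dot P$ a minimal annihilator of $\dot{\hat a}$ over $\dot K$; conversely, if $\dot P$ is a minimal annihilator with $\cc(\dot P) = \cc(P)$, then $\cc(P) = c'_{\min} = c_{\min}$, and part~(1) puts $P$ in $Z(K, \hat a)$ at minimal complexity. The main obstacle I anticipate is the lifting step in the reverse complexity comparison: one must be careful to choose lifts of nonzero coefficients to elements of $\mathcal{O}$ that remain nonzero modulo $\dot{\smallo}$, so that no monomial is dropped and $\cc(P) = \cc(R)$ holds on the nose rather than merely with $\leq$.
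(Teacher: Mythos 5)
Your proof is correct and follows essentially the same route as the paper's: part (1) is Lemmas~\ref{Zp1} and~\ref{lem:ZKhata} combined with the observation that $\dot P(\dot{\hat a})=0$ means $v\big(P(\hat a)\big)>\Delta$, which (by cofinality of $\Delta$ in $v(\hat a-K)$) is the same as $v\big(P(\hat a)\big)>v(\hat a-K)$; and part (2) uses exactly the two facts the paper invokes, namely $\cc(\dot R)\leq\cc(R)$ for $R\in\dot{\mathcal O}\{Y\}$ and the existence of a lift preserving the zero/nonzero pattern of coefficients (hence complexity). Your bookkeeping via $c_{\min}$ and $c'_{\min}$ is a more explicit packaging of the same comparison, and the rescaling-before-lifting step you flag as the ``main obstacle'' is handled correctly.
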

\begin{proof}
The first statement is immediate from  Lemmas~\ref{Zp1} and~\ref{lem:ZKhata}.
For the rest use that 
for $R\in\dot{\mathcal O}\{Y\}$ we have $\cc(\dot R)\leq\cc(R)$ and
that for all $Q\in \dot K\{Y\}$ there is an $R\in\dot{\mathcal O}\{Y\}$
with  $Q=\dot R$
and $Q_\i\neq 0$ iff $R_\i\neq 0$ for all $\i$, so
$\cc(\dot R)=\cc(R)$.
\end{proof}

\section{Differential Henselianity of the Completion}\label{sec:completion d-hens}

\noindent
{\em Let $K$ be a valued differential field with small derivation.}\/ We let $\Gamma:= v(K^\times)$ be the value group of $K$ and 
$\k:=\res(K)$ be the differential residue field of $K$, and we let $r\in\N$.  
The following summarizes [ADH, 7.1.1, 7.2.1]:

\begin{lemma}
The valued differential field $K$ is $r$-$\d$-henselian iff  for each 
$P$ in~$K\{Y\}$ of order~$\leq r$ with $\ddeg P=1$  there is a $y\in\mathcal O$ with $P(y)=0$.
\end{lemma}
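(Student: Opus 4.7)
I would prove the equivalence by showing both implications, relying on residue-field lifting arguments and a multiplicative-conjugation trick for the backward direction.

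For $(\Rightarrow)$, assume $K$ is $r$-$\d$-henselian, and let $P \in K\{Y\}$ of order $\leq r$ with $\ddeg P=1$. After rescaling by a unit I would reduce to $P\in\mathcal O\{Y\}$ with $v(P)=0$, so $P_1\asymp 1$ and $P_d\prec 1$ for $d\geq 2$. If $P_0\prec 1$, the definition of $r$-$\d$-henselianity applies directly to give $y\prec 1$ with $P(y)=0$. If instead $P_0\asymp 1$, I would invoke the $r$-linear surjectivity of $\res K$ (part of being $r$-$\d$-henselian) to solve $(\res P_1)(\bar u)=-\res P_0$ for some $\bar u\in\res K$; lifting to $u\in\mathcal O$ and setting $Q(Z):=P(u+Z)$, a residue computation using $P_d\prec 1$ for $d\geq 2$ shows $\res Q_0=\res P_0+(\res P_1)(\bar u)=0$, hence $Q_0\prec 1$, while $\res Q_1=\res P_1\neq 0$, so $Q_1\asymp 1$. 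Applying the definition to $Q$ yields $z\prec 1$ with $Q(z)=0$, whence $y:=u+z\in\mathcal O$ solves $P(y)=0$.

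For $(\Leftarrow)$, small derivation is already given. To derive $r$-linear surjectivity of $\res K$, take a nonzero $A\in\res K[\der]$ of order $\leq r$ and $c\in\res K$, lift to $\tilde A\in\mathcal O[\der]$ with $v(\tilde A)=0$ and $\tilde c\in\mathcal O$, and apply the hypothesis to $P(Y):=\tilde A(Y)-\tilde c$, which has only homogeneous parts in degrees $0$ and $1$ and satisfies $\ddeg P=1$ (since $v(P_1)=0$); the residue of the resulting $y\in\mathcal O$ gives $A(\res y)=c$. For the Hensel clause (existence of $y\prec 1$), given $P\in\mathcal O\{Y\}$ of order $\leq r$ with $P_0\prec P_1\asymp 1$, I would multiplicatively conjugate: pick $\fm\in K^\times$ with $0<v(\fm)\leq v(P_0)$ and set $Q:=P_{\times\fm}$. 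Using the Leibniz expansion of $(\fm Z)^{(n)}$ together with the small-derivation bound $v(\fm^{(k)})\geq v(\fm)$ (propagated through $\der\mathcal O\subseteq\mathcal O$), one verifies $v(Q_1)=v(\fm)$, $v(Q_0)=v(P_0)\geq v(\fm)$, and $v(Q_d)\geq d\cdot v(\fm)>v(\fm)$ for $d\geq 2$, giving $\ddeg Q=1$. The hypothesis applied to $Q$ then supplies $z\in\mathcal O$ with $Q(z)=0$, and $y:=\fm z\in\smallo$ solves $P(y)=0$ with $y\prec 1$.

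The main technical obstacle will be the valuation bookkeeping for $Q=P_{\times\fm}$: controlling each homogeneous part $(P_{\times\fm})_d$ requires the estimate $v(\fm^{(k)})\geq v(\fm)$ for $\fm\in\smallo$, which must be justified from small derivation via induction on $k$ using the Leibniz rule and standard [ADH, Chapter~4] machinery. Once this is in place, everything else reduces to routine residue lifts and unwinding of the defining clauses of $r$-$\d$-henselianity.
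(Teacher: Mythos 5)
The paper itself gives no proof of this lemma — it is quoted as a summary of [ADH, 7.1.1, 7.2.1] — so I can only assess your argument on its own terms. Your forward direction is fine (the reduction to $v(P)=0$, the case split on $P_0$, and the residue-lift via $r$-linear surjectivity of $\res K$ all work, using that for small derivation the residue map is a morphism of differential rings and commutes with additive conjugation), and so is the derivation of $r$-linear surjectivity of $\res K$ in the backward direction.

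The backward Hensel step, however, has a genuine gap exactly at the point you flag as the ``main technical obstacle.'' The inequality $v(\fm^{(k)})\geq v(\fm)$ for $\fm\in\smallo$ does \emph{not} follow from small derivation: smallness only gives $\fm^{(k)}\in\smallo$, not $\fm^{(k)}\preceq\fm$. Already in $\T$ (which has small derivation) take $\fm=\exp(-\exp x)$; then $\fm'=-\exp(x)\,\fm\succ\fm$, so $v(\fm')<v(\fm)$. Hence your claimed bounds $v(Q_1)=v(\fm)$ and $v(Q_d)\geq d\,v(\fm)$ for $Q=P_{\times\fm}$ are unjustified, and with them the conclusion $\ddeg P_{\times\fm}=1$. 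What is actually true for small derivations is the weaker estimate with error terms, $v(g')\geq v(g)+o\big(v(g)\big)$ and correspondingly $v\big((P_d)_{\times\fm}\big)=v(P_d)+d\,v(\fm)+o\big(v(\fm)\big)$ (this is the content of [ADH, 6.1.3, 6.1.4], used repeatedly in Section~\ref{sec:completion d-hens} of the paper). With these corrected estimates the inequality $v(Q_d)>v(Q_1)$ for $d\geq 2$ does come out for every $\fm\prec 1$, but the other requirement $v(Q_1)\leq v(Q_0)=v(P_0)$ is no longer automatic for an arbitrary $\fm$ with $0<v(\fm)\leq v(P_0)$: taking $v(\fm)=v(P_0)$, the $o(v\fm)$-error can push $v(Q_1)$ strictly above $v(P_0)$, so that $\ddeg P_{\times\fm}=0$ and your hypothesis does not apply. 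So the choice of $\fm$ must be made more carefully (for instance $2\,v(\fm)\leq v(P_0)$ when the value group permits, with a separate short argument when no such positive element exists), or the step must be rerouted through the dominant-part machinery; as written, the recipe ``pick any $\fm$ with $0<v(\fm)\leq v(P_0)$'' plus the false derivative bound does not establish $\ddeg P_{\times\fm}=1$.
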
 

\noindent
Recall that the derivation of $K$ being small, it is continuous [ADH, 4.4.6], and hence extends uniquely to a continuous derivation
on  the completion $K^{\cc}$ of the valued field $K$ [ADH, 4.4.11]. We equip  $K^{\cc}$ with this derivation, which remains small~[ADH, 4.4.12],  so $K^{\cc}$ is an immediate valued differential field extension of~$K$ with small derivation, in particular, $\k=\res(K^{\cc})$. \index{valued differential field!completion} \index{completion}

Below we  characterize in a first-order way when 
$K^{\cc}$ is $r$-$\d$-henselian. We shall use tacitly that for $P\in K\{Y\}$ 
we have $P(g)\preceq P_{\times g}$ for all $g\in K$;
to see this, replace~$P$ by $P_{\times g}$ to reduce to $g=1$, and observe that
$P(1)=\sum_{\dabs{\bsigma}=0} P_{[\bsigma]}\preceq P$.

\begin{lemma}\label{lem:Kc 1}
Let  $P\in K^{\cc}\{Y\}$, $b\in K^{\cc}$ with $b\preceq 1$ and $P(b)=0$, and 
$\gamma\in\Gamma^>$. Then there
is an $a\in\mathcal O$ with $v\big(P(a)\big)>\gamma$.
\end{lemma}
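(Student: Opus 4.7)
The plan is to approximate the root $b\in K^{\cc}$ by an element $a$ of $K$ that is close enough to $b$ in valuation to force $P(a)$ to be close to $P(b)=0$. Two ingredients will do this: $K$ is dense in $K^{\cc}$, and each iterated derivation $\der^n\colon K^{\cc}\to K^{\cc}$ is continuous (the derivation of $K^{\cc}$ is small, hence continuous by [ADH, 4.4.6], and compositions of continuous maps are continuous).

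First I would form the additive conjugate $Q:=P_{+b}\in K^{\cc}\{Y\}$. Since $P(b)=0$, we have $Q(0)=0$, so in the expansion $Q=\sum_{\i}Q_{\i}Y^{\i}$ every $\i$ with $Q_{\i}\ne 0$ satisfies $|\i|\geq 1$. Let $r:=\order(P)=\order(Q)$, and let $M\in\Gamma$ be the minimum of $v(Q_{\i})$ over the (finitely many) $\i$ with $Q_{\i}\ne 0$.

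Next, using continuity of $\der^n$ at $0$ for $n=0,1,\dots,r$, I would pick $\delta\in\Gamma^{>}$ so large that every $y\in K^{\cc}$ with $v(y)>\delta$ satisfies $v(y^{(n)})>\max(\gamma-M,0)$ for all $n\leq r$ simultaneously; this is achieved by taking the maximum of the finitely many thresholds produced by the continuity of the individual $\der^n$'s. By density of $K$ in the valued field $K^{\cc}$, choose $a\in K$ with $v(a-b)>\delta$. Since $b\preceq 1$ and $v(a-b)>\delta>0$, we have $a\preceq 1$, i.e., $a\in\mathcal O$.

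To finish, I would expand
\[
P(a)\ =\ P_{+b}(a-b)\ =\ Q(a-b)\ =\ \sum_{\i\neq\mathbf{0}}Q_{\i}(a-b)^{\i},
\]
and estimate each term. For $\i$ with $Q_{\i}\ne 0$, pick some $n_{0}$ with $i_{n_{0}}\geq 1$; then $v\!\big((a-b)^{(n_{0})}\big)>\gamma-M$, while for all other $n$ with $i_{n}\geq 1$ we have $v\!\big((a-b)^{(n)}\big)>0$, so $v\!\big((a-b)^{\i}\big)>\gamma-M$, whence $v\!\big(Q_{\i}(a-b)^{\i}\big)>M+(\gamma-M)=\gamma$. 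Summing yields $v\!\big(P(a)\big)>\gamma$. The only real subtlety is the uniform choice of $\delta$ across all $n\leq r$ and all multi-indices $\i$ occurring in $Q$, which is harmless because both sets are finite.
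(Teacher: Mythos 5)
Your proof is correct, and it shares the paper's skeleton: approximate $b$ by $a\in K$ using density of $K$ in $K^{\cc}$, write $P(a)=P_{+b}(a-b)$, and exploit that $P_{+b}$ has zero constant term because $P(b)=0$. The difference is in how the final estimate is carried out. The paper first normalizes $P\asymp 1$ (replacing $P$, $\gamma$ by $f^{-1}P$, $\gamma-vf$ with $f\in K$, $f\asymp P$), takes $a\sim b$ with $v(a-b)>2\gamma$, and then quotes the conjugation estimates [ADH, 4.5.1(i), 6.1.4] to get $v\big(P_{+b}(g)\big)\geq v\big((P_{+b})_{\times g}\big)\geq v(P_{+b})+vg+o(vg)>\gamma$ for $g:=a-b$; the explicit closeness requirement $v(a-b)>2\gamma$ is what those uniform estimates buy. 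You instead use only that the derivation of $K^{\cc}$ is small, hence continuous, to force $v\big((a-b)^{(n)}\big)$ above a single threshold for all $n\leq\order P$ simultaneously, and then bound $Q_{\i}(a-b)^{\i}$ monomial by monomial; this is more elementary (no appeal to the Chapter~6 conjugation machinery), at the cost of a non-explicit threshold $\delta$, which is all the lemma needs. Two small points worth recording: the degenerate case $P=0$ (where your $M$ is a minimum over the empty set) is settled by taking $a=0$; and your remark that $v(a-b)>0$ together with $b\preceq 1$ already yields $a\in\mathcal O$ neatly avoids the paper's side assumption $b\neq 0$, which it needs only to arrange $a\sim b$.
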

\begin{proof}
To find  an $a$ as claimed we take $f\in K$ satisfying $f\asymp P$ and replace~$P$,~$\gamma$ by $f^{-1}P$, $\gamma-vf$, respectively, to arrange $P\asymp 1$ and thus  $P_{+b}\asymp 1$. We also assume~$b\ne 0$.  
Since $K$ is dense in~$K^{\cc}$ we can take $a\in K$ such that $a\sim b$ (so~$a\in\mathcal O$) and
$v(a-b)>2\gamma$. Then  with $g:=a-b$,   using [ADH, 4.5.1(i) and 6.1.4] we conclude
$$v\big(P(a)\big)=v\big(P_{+b}(g)\big) \geq v\big((P_{+b})_{\times g}\big)
\geq v(P_{+b}) + vg + o(vg) = vg+o(vg) >\gamma$$
as required. 
\end{proof}

\noindent
Recall that if $K$ is asymptotic, then so is $K^{\cc}$ by [ADH, 9.1.6]. 

\begin{lemma}\label{lem:Kc 2}
Suppose $K$ is asymptotic, $\Gamma\ne \{0\}$, and for every $P\in K\{Y\}$ of order at most~$r$ with $\ddeg P=1$ and every $\gamma\in\Gamma^>$ there
is an $a\in\mathcal O$ with~${v\big(P(a)\big)>\gamma}$.
Then~$K^{\cc}$ is $r$-$\d$-henselian.
\end{lemma}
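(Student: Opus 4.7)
The plan is to show that $K^{\cc}$ is $r$-$\d$-henselian by using the criterion recalled just before Lemma~\ref{lem:Kc 1}: for each $P\in K^{\cc}\{Y\}$ of order $\leq r$ with $\ddeg P=1$, produce a zero $y\in\mathcal O_{K^{\cc}}$. After multiplying by a suitable element of $(K^{\cc})^\times$, I would assume $P\asymp 1$. The zero will be constructed as the limit in $K^{\cc}$ of a Cauchy sequence $(a_n)$ built in $\mathcal O$ by transfinite recursion along a cofinal sequence $(\gamma_n)$ in $\Gamma^>$. Using continuity of polynomial evaluation (which holds since the derivation is small, hence continuous, by [ADH, 4.4]), $P(y)=\lim_n P(a_n)=0$.

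The inductive step is the heart of the argument. Starting from $a_n\in\mathcal O$ with $v(P(a_n))>\gamma_n$, consider $P_{+a_n}\in K^{\cc}\{Y\}$; since $P(a_n)\prec 1$ and $\ddeg P=1$, one checks that $\ddeg P_{+a_n}=1$ and $P_{+a_n}\asymp 1$, with constant term $P(a_n)$ of valuation $>\gamma_n$. To force the next correction to be small, I would pick $g\in K^\times$ with $vg=\alpha_n$ strictly below (but close to) $\gamma_n$, and pass to $R_n(Z):=g^{-1}P_{+a_n}(gZ)\in K^{\cc}\{Z\}$. A degree-by-degree computation (using that $(gZ)^{(k)}=\sum_j\binom{k}{j}g^{(j)}Z^{(k-j)}$) shows that $v((R_n)_0)=v(P(a_n))-\alpha_n>0$, $v((R_n)_1)=0$, and $v((R_n)_d)\geq(d-1)\alpha_n>0$ for $d\geq 2$, so $\ddeg R_n=1$ and $R_n\asymp 1$. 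Then I would approximate $R_n$ by $\tilde R_n\in K\{Z\}$ (of order $\leq r$) with $v(R_n-\tilde R_n)$ very large; the hypothesis applied to $\tilde R_n$ supplies $Z_0\in\mathcal O$ with $v(\tilde R_n(Z_0))>\gamma_{n+1}-\alpha_n$, and Lemma~\ref{lem:Kc 1}-style error estimation ($(R_n-\tilde R_n)(Z_0)\preceq R_n-\tilde R_n$ for $Z_0\in\mathcal O$) gives the same bound for $R_n(Z_0)$. Setting $a_{n+1}:=a_n+gZ_0\in\mathcal O$, one gets $v(a_{n+1}-a_n)\geq\alpha_n$ and $v(P(a_{n+1}))=\alpha_n+v(R_n(Z_0))>\gamma_{n+1}$. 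Choosing $\alpha_n\to\infty$ cofinally makes $(a_n)$ Cauchy.

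The main obstacle is verifying $\ddeg R_n=1$, which in the coefficient computation above implicitly requires that the derivatives $g^{(m)}$ (for $m\leq r$) be asymptotically controlled by $g$, typically $g^{(m)}\preceq g$, equivalently $g^\dagger\preceq 1$, i.e.~$\psi(vg)\geq 0$. This is not automatic in a general asymptotic field with small derivation: the asymptotic couple $(\Gamma,\psi)$ only satisfies $\Psi<(\Gamma^>)'$. I would therefore need to argue that we may choose $vg$ in a suitable cofinal subset of $\Gamma^>$ where $\psi$ is well-behaved enough to bound the cross-terms $\binom{k}{j}g^{(j)}Z^{(k-j)}$ against the dominant term $gZ^{(k)}$; alternatively, a direct check with the Gaussian valuation on $K[\der]$ and the bounds on $v(g^{(m)})$ coming from smallness of $\der$ (namely $v(g^{(m)})\geq 0$ for $g\in\mathcal O$) may suffice to locate the correct dominant-degree part after division by $g$, provided one takes $\alpha_n$ strictly smaller than $\gamma_n$ by a margin that absorbs $\max_m v(g^{(m)}/g)$.

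Once the Cauchy sequence is produced, the conclusion follows immediately: $(a_n)$ converges in $K^{\cc}$ to some $y$ with $vy\geq 0$ (since $\mathcal O_{K^{\cc}}$ is closed), and continuity of $P$ together with $v(P(a_n))\to\infty$ yields $P(y)=0$. By the criterion stated before Lemma~\ref{lem:Kc 1}, $K^{\cc}$ is $r$-$\d$-henselian.
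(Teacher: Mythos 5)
Your successor-step estimates are essentially sound, and the obstacle you flag is not the real problem: in an asymptotic field with small derivation, multiplicative conjugation shifts the Gaussian valuation of each homogeneous part by $d\,vg+o(vg)$ (this is exactly [ADH, 6.1.3], used repeatedly in this paper, e.g.\ in the proofs of Propositions~\ref{propsp} and~\ref{prop:hata special}); no condition like $g^\dagger\preceq 1$ is needed, and your ``margin'' fix (take, say, $2\alpha_n\le\gamma_n$) absorbs the $o(\alpha_n)$ errors, so $\ddeg R_n=1$ does hold. Your first proposed fix, restricting $vg$ to a cofinal set where $\psi$ is well behaved, is unnecessary and not the right mechanism.

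The genuine gap is in the limiting process. Your recursion must run for $\operatorname{cf}(\Gamma)$ many steps, and $\Gamma$ may have uncountable cofinality. At a limit stage $\lambda$ the partial sequence $(a_\rho)_{\rho<\lambda}$ has step sizes $\alpha_\rho$ that are \emph{bounded} in $\Gamma$ (they are not cofinal, since $\lambda<\operatorname{cf}(\Gamma)$), so it is not a c-sequence and need not have a limit in $K^{\cc}$: the completion is complete but not spherically complete, and the nested balls $\{x:v(x-a_\rho)\ge\alpha_\rho\}$ can have empty intersection, so there is no admissible choice of $a_\lambda$; restarting with a fresh approximate zero destroys the c-sequence property of the full length-$\operatorname{cf}(\Gamma)$ sequence. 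Thus your construction only produces a zero when $\Gamma$ has countable cofinality. The paper's proof is structured precisely to avoid any such transfinite construction: from the hypothesis applied to linear $P\in\mathcal O\{Y\}$ it first gets that $\res(K)$ is $r$-linearly surjective, then takes an \emph{exact} zero $b\preceq 1$ of $P$ in an immediate $\d$-henselian asymptotic extension $L$ of $K^{\cc}$ ([ADH, remarks after 9.4.11]), and shows $b\in K^{\cc}$ one $\gamma$ at a time: using density and the hypothesis it finds $a\in\mathcal O$ with $v\big(P(a)\big)>2\gamma$, $v(b-a)\ge$ previous approximations, and—this is the key step your scheme has no substitute for—$v(b-a)\notin\exc_L(L_{P_{+b}})$, which is possible since $\abs{\exc_L(L_{P_{+b}})}\le r$ by [ADH, 7.5.3]; outside the exceptional values one gets $v\big(P(a)\big)=v(b-a)+o\big(v(b-a)\big)$, hence $v(b-a)>\gamma$, and then $b\in K^{\cc}$ by [ADH, 3.2.15, 3.2.16]. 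To repair your argument you would either have to handle limit stages (in effect re-proving the existence of the zero in an immediate extension) or adopt some device, such as the exceptional-value bound, that converts a single good approximate zero into a good approximation of an actual zero.
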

\begin{proof}
The hypothesis applied to $P\in\mathcal O\{Y\}$ of order~$\leq r$ with~${\ddeg P=\deg P=1}$
yields that $\k$ is $r$-linearly surjective.
Let now $P\in K^{\cc}\{Y\}$ be of order~$\leq r$ with~$\ddeg P=1$.   We need to show that there exists $b\in K^{\cc}$ with $b\preceq 1$ and~$P(b)=0$. First we arrange $P\asymp 1$.  
By [ADH, remarks after 9.4.11] we can take $b\preceq 1$ in an  immediate $\d$-henselian asymptotic field extension~$L$ 
of~$K^{\cc}$ with $P(b)=0$.  We prove below that~$b\in K^{\cc}$. We may assume $b\notin K$, so
$v(b-K)$ has no largest element, since~$L\supseteq K$ is immediate.
Note also that   $\ddeg P_{+b}=1$ by [ADH, 6.6.5(i)]; since~${P(b)=0}$ we thus have $\ddeg P_{+b,\times g}=1$
for all $g\preceq 1$ in $L^\times$ by [ADH, 6.6.7].

\claim{Let $\gamma\in\Gamma^>$ and $a\in K$ with $v(b-a)\geq 0$.  There is a $y\in\mathcal O$ such that~$v\big(P(y)\big)>\gamma$ and $v(b-y)\geq v(b-a)$.}

{\sloppy
\noindent
To prove this claim, take $g\in K^\times$ with $vg=v(b-a)$. Then by [ADH, 6.6.6] and the observation 
preceding the claim we have~${\ddeg P_{+a,\times g}=\ddeg P_{+b,\times g}=1}$. 
Thanks to density of $K$ in $K^{\cc}$ we may take $Q\in K\{Y\}$ of order~$\leq r$ with $P_{+a,\times g}\sim Q$ and $v(P_{+a,\times g}-Q)>\gamma$. Then 
$\ddeg Q=1$, so by the hypothesis of the lemma  we have~${z\in\mathcal O}$ with $v\big(Q(z)\big)>\gamma$.
Set $y:=a+gz\in\mathcal O$; then we have~$v\big(P(y)\big)=v\big(P_{+a,\times g}(z)\big)>\gamma$ and $v(b-y)=v(b-a-gz)\geq v(b-a)= vg$ as claimed.}

\medskip
\noindent
Let now $\gamma\in\Gamma^>$;  to show that $b\in K^{\cc}$,
it is enough by [ADH, 3.2.15, 3.2.16] to show that then $v(a-b)>\gamma$ for some $a\in K$.
Let   $A:=L_{P_{+b}}\in L[\der]$; then $A\asymp 1$. Since $\abs{\exc_L(A)}\leq r$   by [ADH, 7.5.3], the claim gives an $a\in\mathcal O$ with $v\big(P(a)\big)>2\gamma$ and $0<v(b-a)\notin\exc_L(A)$. Put $g:=a-b$ and $R:=(P_{+b})_{>1}$. Then
$R\prec 1$ and
$$P(a)\ =\ P_{+b}(g)\ =\ A(g) + R(g)$$
where by the definition of $\exc_L(A)$ and [ADH, 6.4.1(iii), 6.4.3] we have in $\Q\Gamma$:
$$v\big(A(g)\big)\ =\ v_A(vg)\ =\ vg+o(vg)\ <\ vR + (3/2)vg\   \leq\ v_R(vg)\ \leq\ v\big(R(g)\big)$$
and so $v\big(P(a)\big) = vg+o(vg) > 2\gamma$. Therefore $v(a-b)=vg>\gamma$ as required.
\end{proof}

\noindent
The last two lemmas yield an analogue of  [ADH, 3.3.7] for $r$-$\d$-hensel\-ianity and a partial generalization of [ADH, 7.2.15]:

\begin{cor}\label{cor:Kc d-henselian}
Suppose $K$ is asymptotic and $\Gamma\ne \{0\}$.
Then the following are equivalent:
\begin{enumerate}
\item[$\mathrm{(i)}$] $K^{\cc}$ is $r$-$\d$-henselian;
\item[$\mathrm{(ii)}$] for every $P\in K\{Y\}$ of order at most $r$ with $\ddeg P=1$ and every $\gamma\in\Gamma^>$ there
exists $a\in\mathcal O$ with $v\big(P(a)\big)>\gamma$.
\end{enumerate}
In particular, if $K$ is $r$-$\d$-henselian, then so is $K^{\cc}$.
\end{cor}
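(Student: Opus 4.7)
The plan is to derive the corollary directly from Lemmas~\ref{lem:Kc 1} and~\ref{lem:Kc 2}, which have already been proved in the excerpt, and to note that the final ``in particular'' assertion is an immediate consequence of the main equivalence.

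For the implication (i)~$\Rightarrow$~(ii), I would start from a given $P\in K\{Y\}$ of order at most $r$ with $\ddeg P=1$ together with some $\gamma\in\Gamma^>$. Since $K\subseteq K^{\cc}$ and $K^{\cc}$ is an immediate valued differential extension of $K$, we may view $P$ as an element of $K^{\cc}\{Y\}$ of the same order and with $\ddeg P = 1$ (the value group is unchanged). Invoking the characterization of $r$-$\d$-henselianity recalled at the start of the section, hypothesis (i) furnishes some $b\in\mathcal{O}_{K^{\cc}}$ with $P(b)=0$. Now Lemma~\ref{lem:Kc 1}, applied to this $P\in K^{\cc}\{Y\}$, the zero $b$, and the given $\gamma$, produces an $a\in\mathcal{O}$ with $v(P(a))>\gamma$, which is exactly what (ii) demands.

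The implication (ii)~$\Rightarrow$~(i) is essentially a direct application of Lemma~\ref{lem:Kc 2}: its hypothesis $K$ is asymptotic with $\Gamma\neq\{0\}$ matches ours, and the approximation condition appearing there is precisely (ii). So (i) is immediate.

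Finally, suppose $K$ itself is $r$-$\d$-henselian. To show $K^{\cc}$ is $r$-$\d$-henselian via the main equivalence, it suffices to verify (ii); but given $P\in K\{Y\}$ of order $\le r$ with $\ddeg P=1$, the $r$-$\d$-henselianity of $K$ already provides $y\in\mathcal{O}$ with $P(y)=0$, so $v(P(y))=\infty>\gamma$ for any $\gamma\in\Gamma^>$. I do not anticipate any real obstacle here, since all the technical work was absorbed into the two preceding lemmas; the only minor point to be careful about is that when applying Lemma~\ref{lem:Kc 1} in the first implication, the auxiliary scaling element $f\in K$ with $f\asymp P$ can be chosen in $K$ (not merely in $K^{\cc}$) because $K$ and $K^{\cc}$ share the same value group.
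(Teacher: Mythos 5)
Your proposal is correct and follows exactly the paper's route: the paper obtains the corollary by combining Lemma~\ref{lem:Kc 1} (for (i)~$\Rightarrow$~(ii), after using the characterization of $r$-$\d$-henselianity to get a zero $b\preceq 1$ of $P$ in $K^{\cc}$) with Lemma~\ref{lem:Kc 2} (for (ii)~$\Rightarrow$~(i)), and the ``in particular'' clause is handled just as you do, via $v(P(y))=\infty>\gamma$ for a zero $y\in\mathcal O$.
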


\section{Complements to [ADH] on Newtonianity}\label{sec:complements newton}

\noindent 
{\em In this section $K$ is an ungrounded $H$-asymptotic field with $\Gamma=v(K^\times)\neq\{0\}$}. Note that then $K^{\cc}$ is also $H$-asymptotic. We let $r$ range over $\N$ and $\phi$ over the active elements of $K$.
Our first aim is a newtonian analogue of Corollary~\ref{cor:Kc d-henselian}:

\begin{prop}\label{prop:Kc newtonian}
For $\d$-valued and $\upo$-free $K$, the  following are equivalent:
\begin{enumerate}
\item[$\mathrm{(i)}$] $K^{\cc}$ is $r$-newtonian;
\item[$\mathrm{(ii)}$]  for every $P\in K\{Y\}$ of order at most $r$ with $\ndeg P=1$ and every $\gamma\in\Gamma^>$ there
is an $a\in\mathcal O$ with $v\big(P(a)\big)>\gamma$.
\end{enumerate}
If $K$ is $\d$-valued, $\upo$-free, and $r$-newtonian, then so is $K^{\cc}$.
\end{prop}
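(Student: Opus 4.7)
The plan is to mimic the proof of Corollary~\ref{cor:Kc d-henselian}, with Newton degree replacing dominant degree throughout: Lemma~\ref{lem:Kc 1} still serves the direction (i) $\Rightarrow$ (ii), and a Newton analogue of Lemma~\ref{lem:Kc 2} serves (ii) $\Rightarrow$ (i). The final clause of the proposition is the special case of (ii) $\Rightarrow$ (i) where $P$ already has a zero in $\mathcal O$, making (ii) trivial.

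For (i) $\Rightarrow$ (ii), given $P\in K\{Y\}$ of order $\le r$ with $\ndeg P=1$ and $\gamma\in\Gamma^>$: since $K^{\cc}$ is an immediate $H$-asymptotic extension of $K$, the definition of $\ndeg$ as the eventual value of $\ddeg P^\phi$ yields $\ndeg_{K^{\cc}} P = \ndeg_K P = 1$. Then $r$-newtonianity of $K^{\cc}$ produces $b\in\mathcal O_{K^{\cc}}$ with $P(b)=0$, and Lemma~\ref{lem:Kc 1} extracts the required $a\in\mathcal O$.

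For (ii) $\Rightarrow$ (i), let $P\in K^{\cc}\{Y\}$ of order $\le r$ with $\ndeg P=1$; normalize so $P\asymp 1$. Iterating \eqref{eq:14.0.1} through immediate extensions of $K$ yields a newtonian $\upo$-free $H$-asymptotic immediate extension $L\supseteq K$ large enough (e.g.\ spherically complete) that the immediate extension $K^{\cc}\supseteq K$ embeds into $L$ over $K$. Then $\ndeg_L P = 1$, so $P(b)=0$ for some $b\in\mathcal O_L$. To show $b\in K^{\cc}$, by [ADH, 3.2.15, 3.2.16] it suffices that for each $\gamma\in\Gamma^>$ some $a\in K$ satisfies $v(b-a)>\gamma$. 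The central intermediate step, paralleling the Claim in the proof of Lemma~\ref{lem:Kc 2}, is: for each $\gamma\in\Gamma^>$ and each $a\in K$ with $v(b-a)\ge 0$ there exists $y\in\mathcal O$ with $v(P(y))>\gamma$ and $v(b-y)\ge v(b-a)$. To prove the claim, take $g\in K^\times$ with $vg=v(b-a)$; Newton analogues of [ADH, 6.6.6, 6.6.7] from [ADH, 11.2] yield $\ndeg P_{+a,\times g} = \ndeg P_{+b,\times g} = 1$ (the second using $P(b)=0$). Density of $K$ in $K^{\cc}$ then provides $Q\in K\{Y\}$ of order $\le r$ with $P_{+a,\times g}\sim Q$ and $v(P_{+a,\times g}-Q)$ sufficiently large to guarantee $\ndeg Q=1$. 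Applying (ii) to $Q$ furnishes $z\in\mathcal O$ with $v(Q(z))>\gamma$; then $y:=a+gz\in\mathcal O$ satisfies $v(P(y)) = v(P_{+a,\times g}(z))>\gamma$ and $v(b-y)\ge vg$. Applying the claim once while forcing $v(b-a)\notin \exc^{\ev}_L(L_{P_{+b}})$, a finite set of cardinality $\le r$ by Proposition~\ref{kerexc} and Remark~\ref{rem:kerexc} in the newtonian $L$, and then using the estimates on $v_A^{\ev}$ from Lemma~\ref{lem:ADH 14.2.7} in place of the linear-operator estimates of Lemma~\ref{lem:Kc 2}, gives the required $a\in K$ with $v(b-a)>\gamma$.

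The main obstacle is the Newton-degree approximation step: unlike the dominant-degree case, $\ndeg$ reflects eventual behavior of $\ddeg P^\phi$ under compositional conjugation, so to force $\ndeg Q = 1$ for the $K$-approximation $Q$ to $P_{+a,\times g}\in K^{\cc}\{Y\}$ one must fix an active $\phi$ witnessing $\ddeg(P_{+a,\times g})^\phi = 1$ and approximate $P_{+a,\times g}$ tightly enough relative to the valuations of the coefficients of its compositional conjugate, so that $\ddeg Q^\phi = 1$ as well. A secondary technicality is the construction of the spherically complete newtonian immediate extension $L$ containing $K^{\cc}$; this is routine given \eqref{eq:14.0.1} and the theory of maximal immediate extensions, but needs to be stated carefully because \eqref{eq:14.0.1} is applied to $K$ (which is $\upo$-free by hypothesis) rather than directly to $K^{\cc}$, whose $\upo$-freeness is not part of the hypotheses.
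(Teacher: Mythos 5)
You follow the paper's overall architecture (Lemma~\ref{lem:Kc 1} for (i)$\Rightarrow$(ii), a Newton analogue of Lemma~\ref{lem:Kc 2} for (ii)$\Rightarrow$(i), and the final clause as the trivial instance of (ii)), but your construction of the auxiliary newtonian extension $L$ in the (ii)$\Rightarrow$(i) direction does not work. You propose to obtain, by ``iterating \eqref{eq:14.0.1}'', a spherically complete newtonian $\upo$-free immediate extension $L$ of $K$ into which $K^{\cc}$ embeds over $K$. But \eqref{eq:14.0.1} only produces immediate \emph{$\d$-algebraic} extensions, and any iteration or union of such extensions remains $\d$-algebraic over $K$, whereas $K^{\cc}$ in general is not; moreover no cited result provides a spherically complete (or even complete) newtonian immediate extension of $K$ --- newtonianity of a completion is precisely what the final clause of the proposition asserts, so this route has a circular flavor. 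The detour is also unnecessary: your worry that $\upo$-freeness of $K^{\cc}$ ``is not part of the hypotheses'' is unfounded, since $K^{\cc}$ is $\d$-valued and $\upo$-free whenever $K$ is, by [ADH, 9.1.6, 11.7.20]. This is exactly how the paper begins the proof of Lemma~\ref{lem:Kc 3}: after compositionally conjugating to arrange small derivation and $\Gamma^\flat\neq\Gamma$, it applies \eqref{eq:14.0.1} directly to $K^{\cc}$ to obtain an immediate newtonian extension $L$ of $K^{\cc}$ containing a zero $b\preceq 1$ of $P$.

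The step you flag as the main obstacle is also not resolved by your proposed fix. Fixing one active $\phi$ with $\ddeg(P_{+a,\times g})^\phi=1$ and approximating $P_{+a,\times g}$ by $Q\in K\{Y\}$ closely enough that $\ddeg Q^\phi=1$ controls only that single compositional conjugate: for smaller active $\phi'$ the valuation $v\big((P_{+a,\times g})^{\phi'}\big)$ can drift beyond any fixed bound on $v(P_{+a,\times g}-Q)$, so nothing prevents $\ddeg Q^{\phi'}$ from eventually dropping to $0$, that is $\ndeg Q=0$; and then hypothesis (ii) cannot be applied to $Q$, so your Claim cannot be completed as written. What is needed, and what the paper does, is a closeness condition stable under \emph{all} compositional conjugations by $\phi\preceq 1$: having arranged small derivation and $\Gamma^\flat\neq\Gamma$, density of $K$ in $K^{\cc}$ yields $Q$ with $P_{+a,\times g}\sim^\flat Q$, and Lemma~\ref{lem:same ndeg} then gives $\ndeg Q=\ndeg P_{+a,\times g}=1$. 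With that repair, the remainder of your sketch (the Claim, the choice of $a$ with $0<v(b-a)\notin\exc^{\ev}_L(L_{P_{+b}})$ using $\abs{\exc^{\ev}_L(L_{P_{+b}})}\leq r$, and the concluding valuation estimate) is essentially the paper's argument; the paper finishes the estimate by a further compositional conjugation reducing to the computation in Lemma~\ref{lem:Kc 2} rather than by invoking Lemma~\ref{lem:ADH 14.2.7}, but that difference is cosmetic.
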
 

\noindent
The final statement in this proposition extends [ADH, 14.1.5]. Towards the proof we first
state a variant of [ADH, 13.2.2] which follows easily from [ADH, 11.1.4]:

\begin{lemma}\label{lem:same ndeg} \label{lem:ndeg of nearby diffpoly} 
Assume $K$ has small derivation and let $P,Q\in K\{Y\}^{\neq}$ and 
$\phi\preceq 1$. Then  $P^\phi \asymp^\flat P$, and so we have the three implications
$$P \preceq^\flat Q\ \Longrightarrow\ P^\phi\preceq^\flat Q^\phi,\quad 
P\prec^\flat Q\ \Longrightarrow\ 
P^\phi \prec^\flat Q^\phi,\quad P \sim^\flat Q\ \Longrightarrow\ P^\phi\sim^{\flat} Q^\phi.$$  
The last implication gives:  $P\sim^\flat Q\ \Longrightarrow\ \ndeg P=\ndeg  Q\text{ and }\nval P=\nval Q.$
\end{lemma}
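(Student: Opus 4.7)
The plan is to derive the whole lemma from the single assertion $P^\phi \asymp^\flat P$, which is the essential content of [ADH, 11.1.4]. Granted this, the three displayed implications follow by a two-line formal chase, and the concluding equality of Newton degrees and multiplicities drops out of the third implication together with the stabilization definition of $\ndeg$ and $\nval$.

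First I would invoke [ADH, 11.1.4] to extract $P^\phi \asymp^\flat P$ for $\phi \preceq 1$. The intuition is that expanding $(Y^{(n)})^\phi = \phi^n Y^{(n)} + \text{(lower-order $\derdelta$-terms in $\phi', \phi'', \ldots$)}$ and using smallness of $\der$ together with $\phi \preceq 1$, every off-diagonal correction coefficient lies in $\smallo$, whereas the diagonal contribution $\phi^{\|\i\|} P_{\i}$ survives unperturbed; hence the dominant monomial of $P$ transports to $P^\phi$ modulo $\Gamma^\flat$. This is exactly what the cited lemma packages.

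Second, the three implications are one-line chases: if $P \preceq^\flat Q$, the chain $P^\phi \asymp^\flat P \preceq^\flat Q \asymp^\flat Q^\phi$ yields $P^\phi \preceq^\flat Q^\phi$, and identical arguments handle $\prec^\flat$ and $\sim^\flat$.

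Finally, to deduce $\ndeg P = \ndeg Q$ and $\nval P = \nval Q$ from $P \sim^\flat Q$, I would apply the third implication and then split by total degree: writing $P = \sum_d P_d$ and $Q = \sum_d Q_d$, the relation $P - Q \prec^\flat P$ propagates componentwise to $P_d - Q_d \prec^\flat P$, so $P_d^\phi \sim^\flat Q_d^\phi$ whenever $v(P_d^\phi) = v(P^\phi)$ (and symmetrically). Consequently $\{d : v(P_d^\phi) = v(P^\phi)\} = \{d : v(Q_d^\phi) = v(Q^\phi)\}$, giving $\ddeg P^\phi = \ddeg Q^\phi$ and $\dval P^\phi = \dval Q^\phi$ for every $\phi \preceq 1$; picking $\phi$ small enough to stabilize both sides at their eventual values yields $\ndeg P = \ndeg Q$ and $\nval P = \nval Q$. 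The main obstacle is the verification of $P^\phi \asymp^\flat P$ itself, since the compositional conjugate introduces many correction terms and one must certify that no cancellation occurs at the dominant level — this is where the bookkeeping in [ADH, 11.1.4] does the real work.
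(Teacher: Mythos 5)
Your proposal is correct and follows essentially the same route the paper intends: cite [ADH, 11.1.4] for $P^\phi \asymp^\flat P$, deduce the three implications by a formal chase (for the $\sim^\flat$ case one applies the $\prec^\flat$ implication to $P-Q$ and $P$ rather than chaining $\asymp^\flat$ through $\sim^\flat$, but this is what you effectively do in the last paragraph), and then read off the equality of Newton quantities by comparing dominant homogeneous parts of $P^\phi$ and $Q^\phi$ for active $\phi\preceq 1$ small enough. Your homogeneous-decomposition argument correctly fills in what the paper leaves implicit by pointing at a variant of [ADH, 13.2.2]; note only that the final step needs $K$ to be ungrounded $H$-asymptotic for the eventual values $\ndeg$, $\nval$ to be defined (a standing hypothesis of the section, and precisely the assumption the paper's remark says can be dropped for the earlier parts but not the last).
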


\noindent
For $P^\phi\asymp^\flat P$ and the subsequent three implications 
in the lemma above we can drop the assumption that $K$ is ungrounded.

\begin{lemma}\label{lem:Kc 3}
Suppose $K$ is $\d$-valued, $\upo$-free, and for every $P\in K\{Y\}$ of order at most $r$ with $\ndeg P=1$ and every $\gamma\in\Gamma^>$ there
is an $a\in\mathcal O$ with $v\big(P(a)\big)>\gamma$. Then $K^{\cc}$ is $\d$-valued, $\upo$-free, and $r$-newtonian.
\end{lemma}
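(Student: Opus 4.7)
The plan is to adapt the strategy of Lemma~\ref{lem:Kc 2} to the newtonian setting, replacing $\d$-henselianity by newtonianity, dominant degree $\ddeg$ by Newton degree $\ndeg$, and the exceptional-value set $\exc$ by its eventual version $\exc^{\ev}$. Since $K^{\cc}/K$ is immediate, $K^{\cc}$ automatically inherits $\d$-valuedness from $K$. For $\upo$-freeness of $K^{\cc}$, I would argue directly: a pseudolimit $\upo^{\ast}\in K^{\cc}$ of $(\upo_\rho)$ would force $(\upo_\rho)$ to be a c-sequence in $K$, and then standard pc-sequence analysis ([ADH, 11.7]) together with the immediate structure of $K^{\cc}/K$ propagates such a pseudolimit back to one inside $K$, contradicting $\upo$-freeness of $K$.

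\textbf{Main step: $r$-newtonianity.} Let $P\in K^{\cc}\{Y\}$ have order $\leq r$ and $\ndeg P=1$; I seek $b\in\mathcal O_{K^{\cc}}$ with $P(b)=0$, and after scaling arrange $P\asymp 1$. Invoking \eqref{eq:14.0.1} for the now-$\upo$-free field $K^{\cc}$ yields an immediate $\d$-algebraic newtonian $H$-asymptotic extension $L\supseteq K^{\cc}$; Lemma~\ref{lem:ADH 14.2.5} makes $L$ $\d$-valued and Theorem~\ref{thm:ADH 13.6.1} keeps it $\upo$-free. Quasilinearity of $P$ supplies $b\in\mathcal O_L$ with $P(b)=0$. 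Assuming $b\notin K^{\cc}$, by density ([ADH, 3.2.15, 3.2.16]) it suffices to find, for each $\gamma\in\Gamma^{>}$, some $a\in K$ with $v(a-b)>\gamma$.

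\textbf{The density claim and conclusion.} Paralleling the claim in the proof of Lemma~\ref{lem:Kc 2}, the key intermediate assertion is: for any $\gamma\in\Gamma^{>}$ and $a\in K$ with $v(b-a)\geq 0$, there exists $y\in\mathcal O$ with $v\big(P(y)\big)>\gamma$ and $v(b-y)\geq v(b-a)$. To prove it, choose $g\in K^{\times}$ with $vg=v(b-a)$. From $P(b)=0$, $\ndeg P=1$, and the $\ndeg$-analogues of [ADH, 6.6.6--6.6.7] (namely [ADH, 11.2.3] and its multiplicative version), we get $\ndeg P_{+a,\times g}=1$. By density of $K$ in $K^{\cc}$ and Lemma~\ref{lem:ndeg of nearby diffpoly}, pick $Q\in K\{Y\}$ of order $\leq r$ with $Q\sim^{\flat} P_{+a,\times g}$ and $v(P_{+a,\times g}-Q)>\gamma$, so $\ndeg Q=1$. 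The hypothesis of the lemma then produces $z\in\mathcal O$ with $v\big(Q(z)\big)>\gamma$, and $y:=a+gz$ satisfies the claim. Given this, the final paragraph of the proof of Lemma~\ref{lem:Kc 2} transposes verbatim: with $A:=L_{P_{+b}}\in L[\der]$ (so $A\asymp 1$), one selects $a\in\mathcal O$ with $v\big(P(a)\big)>2\gamma$ and $0<v(b-a)\notin\exc^{\ev}_L(A)$ (finite by Corollary~\ref{cor:sum of nwts}, thanks to $\upo$-freeness of $L$), and uses the estimate $v\big(A(g)\big)=vg+o(vg)$ supplied by Lemma~\ref{lem:ADH 14.2.7} and Corollary~\ref{cor1524} (asymptotic surjectivity of $A$) to conclude $v(a-b)>\gamma$.

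\textbf{Main obstacle.} The trickiest point is the translation from the $\d$-henselian identity $v\big(A(g)\big)=v_A(vg)$ off $\exc(A)$ to its eventual analogue $v\big(A(g)\big)=v_A^{\ev}(vg)$ off $\exc^{\ev}(A)$: the latter holds only eventually under compositional conjugation and requires slow variation of $\psi_A$ (hence the essential use of $\upo$-freeness of $L$) to extract the clean bound $\psi_A(vg)=o(vg)$. Coordinating these eventual estimates with the density approximations between $K$, $K^{\cc}$, and $L$ is the main bookkeeping burden.
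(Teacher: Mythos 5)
Your overall architecture is the same as the paper's (inherit $\d$-valuedness and $\upo$-freeness, pass to an immediate newtonian extension $L$ of $K^{\cc}$ via \eqref{eq:14.0.1}, prove the approximation claim from the hypothesis, then use finiteness of $\exc^{\ev}_L(A)$ to choose a good $a$), and the first two conclusions about $K^{\cc}$ are unproblematic. But two steps of your argument have genuine gaps. First, you never arrange, by compositional conjugation, that $K$ has small derivation and $\Gamma^\flat\ne\Gamma$. Small derivation is not among the standing assumptions here, yet Lemma~\ref{lem:ndeg of nearby diffpoly}, which you invoke to transfer $\ndeg$ from $P_{+a,\times g}$ to $Q$, assumes it; worse, if $\Gamma^\flat=\Gamma$ the coarsened valuation is trivial, so no $Q\in K\{Y\}$ with $Q\ne P_{+a,\times g}$ satisfies $Q\sim^\flat P_{+a,\times g}$, and your density step collapses. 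The paper's proof opens precisely with the replacement of $K$, $P$ by $K^\phi$, $P^\phi$ to secure these two conditions (the hypothesis and conclusion of the lemma are invariant under this); without it the claim is not proved.

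Second, and more seriously, the endgame does not ``transpose verbatim'' from Lemma~\ref{lem:Kc 2}. From $\ndeg P_{+b}=1$ you only know $\ddeg P^\phi_{+b}=1$ \emph{eventually}; at the unconjugated level neither $A:=L_{P_{+b}}\asymp 1$ (which you assert parenthetically) nor $(P_{+b})_{>1}\prec 1$ is available, so you can neither normalize $A$ nor show that $A(g)$ dominates $(P_{+b})_{>1}(g)$ in $P(a)=P_{+b}(g)$. Moreover, your proposed substitute for the dominant-degree computation --- $v\big(A(g)\big)=v_A^{\ev}(vg)$ together with slow variation of $\psi_A$ from Corollary~\ref{cor1524} --- does not yield the needed bound $\psi_A(vg)=o(vg)$: slow variation controls only differences, and in fact $\psi_A(vg)=v_A^{\ev}(0)+o(vg)$ when $0\notin\exc^{\ev}(A)$, where $v_A^{\ev}(0)$ need not be $o(vg)$ (already for $A=\der-g$ with $g=b^\dagger$, $b\ne 0$, one has $\psi_A(\gamma)=\psi(\gamma-vb)$, which for small $\gamma>0$ is $\asymp v(b^\dagger)$ and can be comparable to $\gamma$). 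So the key estimate $v\big(P(a)\big)=vg+o(vg)$, which converts $v\big(P(a)\big)>2\gamma$ into $vg>\gamma$, is not established. The paper closes exactly this gap by a \emph{second} compositional conjugation: choose $\phi$ with $vg\notin\exc_{L^\phi}(A^\phi)$ and $\ddeg P^\phi_{+b}=1$, also arranging $P^\phi_{+b}\asymp 1$; only then does the computation of Lemma~\ref{lem:Kc 2}, with genuine dominant (not eventual) quantities and [ADH, 6.1.3], go through. You correctly identify this translation as the main obstacle, but the tool you offer does not resolve it.
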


\begin{proof} By [ADH, 9.1.6 and 11.7.20], $K^{\cc}$ is $\d$-valued and $\upo$-free. 
Let $P\in K^{\cc}\{Y\}$ be of order~$\leq r$ with $\ndeg P=1$.   We need to show that $P(b)=0$ for some $b\preceq 1$ in $K^{\cc}$. 
To find $b$  we may replace $K$,~$P$ by $K^\phi$,~$P^\phi$; in particular we may assume that~$K$ has small derivation and $\Gamma^{\flat}\ne \Gamma$.
By \eqref{eq:14.0.1}  
we can take $b\preceq 1$ in 
an immediate newtonian extension $L$ of~$K^{\cc}$  such that $P(b)=0$. We claim that~$b\in K^{\cc}$.
To show this we may assume $b\notin K$, so  $v(b-K)$ does not have a largest element.
By [ADH, 11.2.3(i)] we have $\ndeg P_{+b}=1$ and so $\ndeg P_{+b,\times g}=1$
for all $g\preceq 1$ in~$L^\times$ by~[ADH, 11.2.5], in view of $P(b)=0$.

\claim{Let $\gamma\in\Gamma^>$ and $a\in K$ with $v(b-a)\geq 0$. There is a $y\in\mathcal O$ such that~$v\big(P(y)\big)>\gamma$ and $v(b-y)\geq v(b-a)$.}

\noindent
The proof is similar to that of the claim in the proof of Lemma~\ref{lem:Kc 2}:
Take $g\in K^\times$ with $vg=v(b-a)$.  Then $\ndeg P_{+a,\times g}=\ndeg P_{+b,\times g}=1$ by [ADH, 11.2.4] and the observation 
preceding the claim. 
Density of $K$ in $K^{\cc}$ yields $Q\in K\{Y\}$ of order~$\leq r$ with $v(P_{+a,\times g}-Q)>\gamma$ and $P_{+a,\times g}\sim^\flat Q$, the latter using $\Gamma^{\flat}\ne \Gamma$. Then 
$\ndeg Q=\ndeg P_{+a,\times g}=1$ by Lemma~\ref{lem:ndeg of nearby diffpoly}, so the hypothesis of the lemma gives $z\in\mathcal O$ with~$v\big(Q(z)\big)>\gamma$.
Setting $y:=a+gz\in\mathcal O$ we have $v\big(P(y)\big)=v\big(P_{+a,\times g}(z)\big)>\gamma$ and $v(b-y)=v(b-a-gz)\geq vg=v(b-a)$.

\medskip
\noindent
Let   $\gamma\in\Gamma^>$;  to get $b\in K^{\cc}$,
it is enough to show that then $v(a-b)>\gamma$
for some~$a\in K$.
Let   $A:=L_{P_{+b}}\in L[\der]$. Since $\abs{\exc^{\ev}_L(A)}\leq r$   by [ADH, 14.2.9], by the claim we can take
$a\in\mathcal O$ with $v\big(P(a)\big)>2\gamma$ and $0<v(b-a)\notin\exc^{\ev}_L(A)$. Now put $g:=a-b$ and
take $\phi$ with $vg\notin\exc_{L^\phi}(A^\phi)$; note that then $A^\phi=L_{P^\phi_{+b}}$.
Replacing~$K$,~$L$,~$P$ by $K^\phi$,~$L^\phi$,~$P^\phi$ we arrange $vg\notin \exc_L(A)$, and (changing $\phi$ if necessary) $\ddeg P_{+b}=1$. We also arrange
$P_{+b}\asymp 1$, and then  $(P_{+b})_{>1}\prec 1$. 
As in the proof of Lemma~\ref{lem:Kc 2} above we now derive   $v(a-b)=vg>\gamma$.
\end{proof}

\noindent
Combining Lemmas~\ref{lem:Kc 1} and \ref{lem:Kc 3} now yields  Proposition~\ref{prop:Kc newtonian}.  \qed

\medskip
\noindent
To show that newtonianity is preserved under specialization, we assume below that~$\Psi\cap \Gamma^{>}\ne \emptyset$, so $K$ has
small derivation.  Let $\Delta\neq\{0\}$ 
be a convex subgroup of~$\Gamma$ with $\psi(\Delta^{\neq})\subseteq\Delta$. Then $1\in \Delta$ where $1$ denotes the unique positive element of $\Gamma$ fixed by the function 
$\psi$: use that $\psi(\gamma)\ge 1$ for $0 < \gamma < 1$.
(Conversely, any convex subgroup $G$ of $\Gamma$ with $1\in G$ satisfies $\psi(G^{\ne})\subseteq G$.)  Let $\dot v$ be
the coarsening of the valuation $v$ of $K$ by $\Delta$, with valuation ring $\dot{\mathcal O}$,
maximal ideal $\dot\smallo$ of $\dot{\mathcal O}$, and residue field  $\dot K=\dot{\mathcal O}/\dot\smallo$.
The derivation of $K$ is small with respect to $\dot{v}$,
and $\dot K$ with the induced valuation $v\colon \dot K^\times\to\Delta$ and induced derivation as in [ADH, p.~405] is an asymptotic field with asymptotic couple $(\Delta,\psi|\Delta^{\neq})$, and so
is of $H$-type with small derivation. If $K$ is $\d$-valued, then so is
$\dot K$ by [ADH, 10.1.8], and if $K$ is $\upo$-free, then so is $\dot K$ by [ADH, 11.7.24].
The residue map
$a\mapsto\dot a:=a+\dot\smallo\colon \dot{\mathcal O}\to\dot K$ is a differential ring morphism, 
extends to a differential ring morphism $P\mapsto\dot P\colon \dot{\mathcal O}\{Y\}\to\dot K\{Y\}$ of differential rings
sending $Y$ to $Y$, and $\ddeg P=\ddeg\dot P$ for $P\in \dot{\mathcal O}\{Y\}$ with $\dot P \ne 0$. 

We now restrict $\phi$ to range over active elements of $\mathcal O$. Then $v\phi\le 1+1$, so~$v\phi\in \Delta$, and hence $\phi$ is a unit of $\dot{\mathcal O}$. 
It follows that $\dot\phi$ is active in $\dot K$, and every active element of $\dot K$ lying in its valuation ring is of this form. Moreover, the differential subrings~$\dot{\mathcal O}$  of $K$ and $\dot{\mathcal O}^\phi:=(\dot{\mathcal O})^\phi$ of $K^\phi$ have the same underlying ring, and the derivation of~$K^\phi$ is small with respect to $\dot{v}$. Thus the differential residue fields $\dot{K}=\dot{\mathcal O}/\dot{\smallo}$ and~$\dot{K}^\phi:=\dot{\mathcal O}^\phi/\dot{\smallo}$ have the same underlying field and are related as follows:
$$\dot{K}^\phi\ =\ (\dot{K})^{\dot\phi}.$$
For $P\in \dot{\mathcal O}\{Y\}$ we have $P^\phi\in \dot{\mathcal O}^\phi\{Y\}$, and the image of
$P^\phi$ under the residue map~$\dot{\mathcal O}^\phi\{Y\}\to\dot K^\phi\{Y\}$ equals
$\dot P^{\dot\phi}$; hence $\ndeg P=\ndeg \dot P$ for $P\in \dot{\mathcal O}\{Y\}$ satisfying~${\dot{P}\ne 0}$. 
These remarks imply:

\begin{lemma}\label{lem:dotK newtonian}
If $K$ is $r$-newtonian, then $\dot K$ is $r$-newtonian.
\end{lemma}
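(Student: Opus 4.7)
The plan is to reduce $r$-newtonianity of $\dot K$ to that of $K$: I will lift a quasilinear differential polynomial from $\dot K\{Y\}$ to one in $\dot{\mathcal O}\{Y\}$, apply $r$-newtonianity of $K$ to find a zero in $\mathcal O$, and then pass to the residue modulo $\dot\smallo$.

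First I would verify that $\dot K$ is ungrounded of $H$-type with $\Gamma_{\dot K}=\Delta\neq\{0\}$, so that the notion of $r$-newtonianity is meaningful for $\dot K$. The excerpt already furnishes the $H$-asymptotic structure with asymptotic couple $(\Delta,\psi|\Delta^{\neq})$ and small derivation; only ungroundedness needs comment. Given $\delta\in\Delta^{\neq}$, pick $\gamma\in\Gamma^{\neq}$ with $\psi(\gamma)>\psi(\delta)$; such $\gamma$ exists because $(\Gamma,\psi)$ is ungrounded. Since $(\Gamma,\psi)$ is of $H$-type, $\psi(\gamma)>\psi(\delta)$ forces $[\gamma]<[\delta]$, hence $\gamma\in\Delta^{\neq}$ by convexity of $\Delta$. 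Thus $\psi(\Delta^{\neq})$ has no maximum.

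Now let $Q\in\dot K\{Y\}$ have order $\leq r$ with $\ndeg Q=1$. Using surjectivity of the residue map $\dot{\mathcal O}\to\dot K$, I would lift each of the finitely many coefficients of $Q$ to $\dot{\mathcal O}$, obtaining some $P\in\dot{\mathcal O}\{Y\}\subseteq K\{Y\}$ with $\dot P=Q$ and $\order P\leq r$. The key input is the identity $\ndeg P=\ndeg\dot P$ recorded just before the lemma, which immediately gives $\ndeg P=1$; so $P$ is a quasilinear differential polynomial over $K$ of order $\leq r$.

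Finally I would invoke $r$-newtonianity of $K$ to produce $y\in\mathcal O$ with $P(y)=0$. Since $0\in\Delta$ we have $\mathcal O\subseteq\dot{\mathcal O}$, so the residue $\dot y$ lies in $\mathcal O_{\dot K}=\{\dot a:a\in\mathcal O\}$; and because $P\mapsto\dot P$ is a morphism of differential rings $\dot{\mathcal O}\{Y\}\to\dot K\{Y\}$, we obtain $Q(\dot y)=\dot P(\dot y)=\dot{P(y)}=0$, which exhibits the required zero of $Q$ in $\mathcal O_{\dot K}$. No genuine obstacle arises in this plan: all the real work was already packed into the preservation identity $\ndeg P=\ndeg\dot P$, itself a consequence of the bijection $\phi\mapsto\dot\phi$ between active $\phi\preceq 1$ in $K$ and active elements of $\dot K$ together with the identification $\dot K^\phi=(\dot K)^{\dot\phi}$ and the matching of $\ddeg$'s modulo $\dot\smallo$.
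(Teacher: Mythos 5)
Your proof is correct and follows exactly the route the paper intends: the paper's "proof" consists of the remarks immediately preceding the lemma (the residue morphism $P\mapsto\dot P$ being a differential ring morphism, the correspondence of active elements, and $\ndeg P=\ndeg\dot P$), and your lift-and-residue argument is precisely how those remarks yield the statement. The extra verification that $\dot K$ is ungrounded is a harmless (and correct) bit of added care.
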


\noindent
Combining Proposition~\ref{prop:Kc newtonian} and Lemmas~\ref{lem:Kc 3} and~\ref{lem:dotK newtonian}  yields:

\begin{cor}\label{cor:Kc newtonian}
Suppose $K$ is $\d$-valued, $\upo$-free, and $r$-newtonian. Then $\dot K$ and its completion are
$\d$-valued, $\upo$-free, and $r$-newtonian.
\end{cor}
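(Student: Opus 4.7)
The plan is to argue in two stages: first establish the three properties for $\dot K$ itself, and then bootstrap to the completion by invoking the last statement of Proposition~\ref{prop:Kc newtonian} with $\dot K$ in the role of $K$.

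For $\dot K$ the three properties are all already isolated in the surrounding material. The differential residue field $\dot K$ is $\d$-valued by [ADH, 10.1.8] and $\upo$-free by [ADH, 11.7.24], both of which are recalled in the paragraph preceding Lemma~\ref{lem:dotK newtonian}; the $r$-newtonianity of $\dot K$ is exactly Lemma~\ref{lem:dotK newtonian}. Before passing to the completion I will want to note that the standing hypotheses of Section~\ref{sec:complements newton} are automatically met by $\dot K$: its asymptotic couple is $(\Delta,\psi|\Delta^{\neq})$ with $\Delta\ne\{0\}$, and $\upo$-freeness, in the conventions adopted from [ADH], includes being an ungrounded $H$-asymptotic field with nonzero value group and small derivation, so the setup under which Proposition~\ref{prop:Kc newtonian} was proved is in force for $\dot K$ in place of $K$.

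With $\dot K$ verified to be $\d$-valued, $\upo$-free, and $r$-newtonian, I apply the final assertion of Proposition~\ref{prop:Kc newtonian} to $\dot K$ and conclude that $(\dot K)^{\cc}$ is $\d$-valued, $\upo$-free, and $r$-newtonian. Since the valuation and derivation structure of this completion is the canonical one from [ADH, 4.4.11--12], no ambiguity arises in what ``its completion'' means.

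Thus the corollary is a direct composition of Lemma~\ref{lem:dotK newtonian} with the preservation results already proved, and I do not foresee any genuine obstacle: all the substantive work (the passage to the completion via the density argument of Lemma~\ref{lem:Kc 3}, and the transfer of the defining property of newtonianity from $K$ to $\dot K$ via compositional conjugation on $\phi\in\mathcal O$) has already been carried out upstream. The only thing to be vigilant about is matching the ambient conventions correctly so that each cited result applies verbatim to $\dot K$.
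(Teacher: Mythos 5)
Your proof is correct and follows essentially the same route as the paper, which derives the corollary by combining Lemma~\ref{lem:dotK newtonian} (for $r$-newtonianity of $\dot K$), the background facts from [ADH, 10.1.8] and [ADH, 11.7.24] noted just before that lemma (for $\d$-valuedness and $\upo$-freeness of $\dot K$), and Lemma~\ref{lem:Kc 3}/Proposition~\ref{prop:Kc newtonian} for the passage to the completion. The only cosmetic difference is that the paper cites Lemma~\ref{lem:Kc 3} directly whereas you access it through the final sentence of Proposition~\ref{prop:Kc newtonian}; both amount to the same thing.
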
 

\noindent
We finish with a newtonian analogue of [ADH, 7.1.7]:

\begin{lemma}\label{rdhrnewt}
Suppose $(K,\dot{\mathcal O})$ is $r$-$\d$-henselian and $\dot K$ is $r$-newtonian. Then $K$ is $r$-newtonian.
\end{lemma}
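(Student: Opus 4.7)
Proof Plan: The strategy is a two-step Hensel-like procedure. Given $P\in K\{Y\}$ of order $\leq r$ with $\ndeg P=1$, the plan is to first use $r$-newtonianity of $\dot K$ to produce an approximate zero $a\in\mathcal{O}$, and then apply $r$-$\d$-henselianity of $(K,\dot{\mathcal{O}})$ to the shifted polynomial $P_{+a}$ in order to refine $a$ to an exact zero in $\mathcal{O}$.

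First, multiplying $P$ by an element of $K^\times$ of value $-v(P)$, we may assume $v(P)=0$, so $P\in\dot{\mathcal{O}}\{Y\}$ with $\dot P\neq 0$; by the remark preceding Lemma~\ref{lem:dotK newtonian}, $\ndeg\dot P=\ndeg P=1$. Since $\dot K$ is $r$-newtonian and $\dot P$ has order $\leq r$ and Newton degree $1$, some $\dot a\in\mathcal{O}_{\dot K}$ satisfies $\dot P(\dot a)=0$; lift $\dot a$ to $a\in\mathcal{O}$. Then $P(a)\equiv 0\pmod{\dot\smallo}$, and we may assume $P(a)\neq 0$ (otherwise we are done). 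Set $Q:=P_{+a}\in\dot{\mathcal{O}}\{Y\}$, whose residue $\dot Q=(\dot P)_{+\dot a}$ satisfies $\dot Q(0)=0$, and by [ADH,~11.2.3(i)] applied in $\dot K$ (using $\ndeg\dot P=1$), $\ndeg\dot Q=1$.

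For the refinement, pick $\dot\phi$ active in $\dot K$ with $\ddeg\dot Q^{\dot\phi}=1$ (such $\dot\phi$ exists by definition of $\ndeg$), and lift $\dot\phi$ to $\phi\in\dot{\mathcal{O}}^\times$. Note that $v\phi\in\Delta$, and $\phi$ is active in $K$ because $\dot\phi$ is active in $\dot K$ (whose asymptotic couple is $(\Delta,\psi\!\restriction\!\Delta^{\neq})$), so $v\phi\leq\psi(\gamma)\in\Psi$ for some $\gamma\in\Delta^{\neq}$. Consider $Q^\phi\in\dot{\mathcal{O}}\{Y\}$; the residue-compatibility from the paragraph before Lemma~\ref{lem:dotK newtonian} shows that the image of $Q^\phi$ in $\dot K^{\dot\phi}\{Y\}$ is $\dot Q^{\dot\phi}$, whence $v_{\dot v}(Q^\phi)=0$ and $\ddeg_{\dot v}Q^\phi=\ddeg\dot Q^{\dot\phi}=1$; in particular $Q^\phi_1\asymp_{\dot v} 1$. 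Moreover $Q^\phi(0)=Q(0)=P(a)\in\dot\smallo$, so $Q^\phi_0\prec_{\dot v}Q^\phi_1\asymp_{\dot v}1$. Since $r$-$\d$-henselianity is invariant under compositional conjugation by units of $\dot{\mathcal{O}}^\times$, $(K^\phi,\dot{\mathcal{O}})$ is $r$-$\d$-henselian, and applying it to $Q^\phi$ yields $y\prec_{\dot v}1$ with $Q^\phi(y)=0$, hence $Q(y)=0$ and $P(a+y)=0$. Finally $y\in\dot\smallo\subseteq\smallo$ (since $0\in\Delta$), so $a+y\in\mathcal{O}$, as desired.

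The main obstacle is bookkeeping the interplay between the coarsened valuation $\dot v$ and compositional conjugation in the refinement step, specifically: (i) verifying that $r$-$\d$-henselianity of $(K,\dot{\mathcal{O}})$ transfers to $(K^\phi,\dot{\mathcal{O}})$ for $\phi\in\dot{\mathcal{O}}^\times$ (the derivation $\phi^{-1}\der$ remains small for $\dot v$, so this should be straightforward), and (ii) ensuring that the dominant-degree identity $\ddeg_{\dot v}Q^\phi=\ddeg\dot Q^{\dot\phi}$ is justified. Both follow from the residue-versus-compositional-conjugation compatibility established in the paragraph before Lemma~\ref{lem:dotK newtonian}, but careful accounting is required to confirm that normalizations align.
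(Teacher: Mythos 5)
Your proof is correct and follows essentially the same route as the paper's: an approximate zero in $\mathcal O$ obtained from $r$-newtonianity of $\dot K$, followed by a Hensel-type refinement of the shifted polynomial using $r$-$\d$-henselianity of $(K,\dot{\mathcal O})$. The only difference is ordering: the paper compositionally conjugates at the outset to arrange $\ddeg P=1$ (citing [ADH, 7.3]) and then applies $\d$-henselianity directly to $P_{+a}$, whereas you conjugate after the shift by $a$, so the invariance of $r$-$\d$-henselianity under conjugation by units of $\dot{\mathcal O}$ that you invoke is precisely the same [ADH, 7.3] fact, and both orderings go through.
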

\begin{proof}
Let $P\in K\{Y\}$ be quasilinear of order~$\leq r$; we need to show the existence of~$b\in\mathcal O$ with $P(b)=0$. Replacing $K$, $P$ by $K^\phi$, $P^\phi$ for suitable~$\phi$ (and renaming) we arrange $\ddeg P =1$; this also uses [ADH,  7.3, subsection on compositional conjugation]. 
We can further assume that $P\asymp 1$. Put $Q:=\dot P\in\dot K\{Y\}$, so~$\ndeg Q=1$, and thus $r$-newtonianity of $\dot K$ yields an $a\in\mathcal O$ with $Q(\dot a)=0$. Then $P(a)\dotprec 1$, $P_{+a,1}\sim P_1\asymp 1$,
and $P_{+a,>1}\prec 1$. Since $(K,\dot{\mathcal O})$ is $r$-$\d$-henselian, this gives $y\in\dot\smallo$ with $P_{+a}(y)=0$,
and then $P(b)=0$ for $b:=a+y\in\mathcal O$.
\end{proof}

\noindent
Lemmas~\ref{lem:dotK newtonian},~\ref{rdhrnewt}, and  [ADH, 14.1.2] yield:

\begin{cor}
$K$ is $r$-newtonian iff $(K,\dot{\mathcal O})$ is $r$-$\d$-henselian and $\dot K$ is $r$-new\-to\-ni\-an.
\end{cor}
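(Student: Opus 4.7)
The direction ($\Leftarrow$) is  Lemma~\ref{rdhrnewt}. For the converse, suppose $K$ is $r$-newtonian. Then $\dot K$ is $r$-newtonian by Lemma~\ref{lem:dotK newtonian}, so it only remains to verify that $(K,\dot{\mathcal O})$ is $r$-$\d$-henselian.

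The plan is to appeal to [ADH, 14.1.2], which asserts that  an $r$-newtonian $H$-asymptotic field with small derivation and nontrivial value group is automatically $r$-$\d$-henselian as a valued differential field. Since $1\in\Delta$ and $\psi(\Delta^{\neq})\subseteq\Delta$, the $\Delta$-coarsening $(K,\dot{\mathcal O})$ is again an $H$-asymptotic field whose derivation remains small with respect to $\dot v$, with nontrivial value group $\Delta$. To apply [ADH, 14.1.2] to $(K,\dot{\mathcal O})$, one must check that the relevant $r$-newtonianity hypothesis transfers from the original valuation $v$ to the coarser valuation $\dot v$: given $P\in\dot{\mathcal O}\{Y\}$ of order $\leq r$ with $\ddeg_{\dot v}P^\phi=1$ (for some, equivalently all sufficiently flat active $\phi$), we need a zero of $P$ in $\dot{\mathcal O}$. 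By passing to a suitable compositional conjugate we may assume $\ddeg_{\dot v} P=1$ and $P\asymp_{\dot v}1$, and then quasilinearity of $P$ with respect to $v$ follows from quasilinearity with respect to $\dot v$ together with the coarsening relationship---exactly the same mechanism underlying Lemma~\ref{lem:dotK newtonian}. The $r$-newtonianity of $K$ then produces a zero $b\in\mathcal O\subseteq\dot{\mathcal O}$ of $P$.

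The only subtlety is making the transfer of quasilinearity between $v$ and $\dot v$ precise. Since $\phi$ ranges over active elements of $\mathcal O$, we have $v\phi\in\Delta$, so $\phi$ is a unit of $\dot{\mathcal O}$; hence a given differential polynomial has ``eventual'' behavior with respect to $v$ and with respect to $\dot v$ governed by the same family of compositional conjugates. Combined with the fact that $P\asymp 1$ and  $\ddeg P=1$  imply $\ddeg_{\dot v}P=1$ (since $P$ and each $P_i$ have well-defined dominant monomials, and the $\Delta$-coarsening only identifies values), this gives the required compatibility. The main obstacle, and the step I would handle most carefully, is this identification of quasilinearity in the two valuations; once in place, Lemma~\ref{rdhrnewt} supplies the reverse direction and completes the equivalence.
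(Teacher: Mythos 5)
The backward direction via Lemma~\ref{rdhrnewt} and the statement that $\dot K$ is $r$-newtonian via Lemma~\ref{lem:dotK newtonian} are exactly as in the paper. The gap is in the remaining claim, that $(K,\dot{\mathcal O})$ is $r$-$\d$-henselian. You read [ADH, 14.1.2] as a same-valuation statement (``$r$-newtonian $\Rightarrow$ $r$-$\d$-henselian'') to be applied to the coarsened field $(K,\dot{\mathcal O})$ after first verifying that the coarsening is $r$-newtonian. But in the setting fixed there (a convex subgroup $\Delta\neq\{0\}$ with $\psi(\Delta^{\neq})\subseteq\Delta$), [ADH, 14.1.2] is precisely the implication ``$K$ is $r$-newtonian $\Rightarrow$ $(K,\dot{\mathcal O})$ is $r$-$\d$-henselian''; the paper's proof consists of citing it together with the two lemmas, and no newtonianity of $(K,\dot{\mathcal O})$ is needed (nor is ``$r$-newtonian'' even guaranteed to be meaningful for $(K,\dot v)$: its value group is $\Gamma/\Delta$, not $\Delta$ as you write, and it may be trivial or grounded).

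More seriously, the transfer you rely on to verify your reading fails: coarsening can only increase dominant degree, since $v(P_d)=v(P)$ implies $\dot v(P_d)=\dot v(P)$ but not conversely, so from $\ddeg_{\dot v}P^\phi=1$ eventually you get only $\ddeg P^\phi\leq 1$, not $=1$. Concretely, take $a\succ 1$ with $va\in\Delta$ and $P=Y+a$ (or $P=Y'+Y+a$ if you want positive order): then $\ddeg_{\dot v}P^\phi=1$ for all active $\phi\preceq 1$, while $\ddeg P^\phi=0$, so $r$-newtonianity of $(K,v)$ says nothing about $P$; moreover every zero $y$ of $P$ satisfies $y\succ 1$ (for $P=Y'+Y+a$: if $y\preceq 1$ then $y'+y\preceq 1\prec a$), so no zero lies in $\mathcal O$ at all, and your plan of producing ``a zero $b\in\mathcal O\subseteq\dot{\mathcal O}$'' cannot succeed even in principle — the zeros relevant for the coarsened structure genuinely live in $\dot{\mathcal O}\setminus\mathcal O$. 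The step you flag as the one to ``handle most carefully'' is exactly where the argument breaks, and repairing it amounts to proving [ADH, 14.1.2] itself, which is the nontrivial ingredient the paper invokes.
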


\subsection*{Invariance of Newton quantities} 
{\it In this subsection $P\in K\{Y\}^{\neq}$.}\/
In~[ADH, 11.1] we associated to $P$ its Newton weight $\nwt P$, Newton degree $\ndeg P$, and Newton multiplicity $\nval P$ at~$0$, all elements of $\N$,
as well as the element $v^{\ev}(P)$ of $\Gamma$;\label{p:vevP}
these quantities do not change when passing to an $H$-asymptotic
extension~$L$ of~$K$ with $\Psi$ cofinal in $\Psi_L$, cf.~[ADH, p.~480], where the assumptions on $K$, $L$ are slightly weaker. 
Thus by Theorem~\ref{thm:ADH 13.6.1}, these quantities do not change for $\upo$-free~$K$ in passing to an  $H$-asymptotic pre-$\d$-valued $\d$-algebraic extension of~$K$.
Below we improve on this in several ways. First,  for $\order P\leqslant1$ we can drop $\Psi$ being cofinal in $\Psi_L$ by a strengthening of [ADH, 11.2.13]:

\begin{lemma}\label{lem:11.2.13 invariant}
Suppose $K$ is $H$-asymptotic with rational asymptotic integration and   $P\in K[Y,Y']^{\ne}$.
Then there are $w\in \N$, $\alpha\in \Gamma^{>}$, 
$A\in K[Y]^{\ne}$, and an active~$\phi_0$ in~$K$ such that for every  asymptotic extension $L$ of $K$ and active~$f\preceq\phi_0$ in $L$,
$$P^{f}\ =\  f^w A(Y)(Y')^w + R_{f},\quad R_{f}\in L^{f}[Y, Y'],\quad v(R_{f})\ \geqslant\ v(P^{f}) + \alpha.$$
For such $w, A$ we have for any ungrounded $H$-asymptotic extension $L$ of $K$,
$$ \nwt_L P\ =\ w, \quad \ndeg_L P\ =\ \deg A+w,\quad
 \nval_L P\ =\ \val A+w,\quad\  v^{\ev}_L(P) = v(A).$$
\end{lemma}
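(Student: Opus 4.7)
The plan is to exhibit the decomposition explicitly from the structure of $P$. Write $P=\sum_{j=0}^n A_j(Y)(Y')^j$ with $A_j\in K[Y]$, and set $w:=\min\{j:A_j\ne 0\}$. Expand $A_w=\sum_i a_iY^i$, let $I:=\{i:v(a_i)=v(A_w)\}$, and take $A:=\sum_{i\in I}a_iY^i\in K[Y]^{\ne}$, the \emph{dominant part} of $A_w$ in $K[Y]$. Then $v(A)=v(A_w)$, $\ddeg A=\deg A=\max I$, and $\dval A=\val A=\min I$. Recall that for $\phi\in K^\times$, writing $Y'$ for the $K^\phi$-derivative of $Y$ (so $\der Y=\phi Y'$), one has $P^\phi=\sum_j\phi^jA_j(Y)(Y')^j$ in $K^\phi[Y,Y']$.

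Next I would pick $\alpha\in\Gamma^>$ small enough that $v(A_w-A)\ge v(A)+\alpha$ (vacuous if $A_w=A$, otherwise possible since every nonzero coefficient of $A_w-A$ has valuation strictly exceeding $v(A_w)$). Using rational asymptotic integration of $K$, choose an active $\phi_0$ in $K$ with $v(\phi_0)\ge\alpha+v(A)-v(A_j)$ for each of the finitely many $j>w$ with $A_j\ne 0$. Let $L$ be any asymptotic extension of $K$; since $\Psi_K\subseteq\Psi_L$, any active $f\preceq\phi_0$ in $L$ satisfies $v(f)\ge v(\phi_0)$. Setting
\[ R_f\ :=\ P^f-f^wA(Y)(Y')^w\ =\ f^w(A_w-A)(Y)(Y')^w+\sum_{j>w}f^jA_j(Y)(Y')^j, \]
the choice of $\alpha$ gives $v\bigl(f^w(A_w-A)(Y')^w\bigr)\ge wv(f)+v(A)+\alpha$, and the choice of $\phi_0$ gives $v\bigl(f^jA_j(Y')^j\bigr)=jv(f)+v(A_j)\ge wv(f)+v(A)+\alpha$ for each $j>w$ with $A_j\ne 0$. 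Combined with $v\bigl(f^wA(Y)(Y')^w\bigr)=wv(f)+v(A)$, this shows $v(P^f)=wv(f)+v(A)$ and $v(R_f)\ge v(P^f)+\alpha$, as required.

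For the invariance claims, let $L$ be an ungrounded $H$-asymptotic extension of $K$, so active elements $f\preceq\phi_0$ in $L$ exist. The decomposition together with the bound $v(R_f)>v(P^f)$ forces the dominant part of $P^f$ to be exactly $f^wA(Y)(Y')^w$, whose homogeneous components $a_if^wY^i(Y')^w$ for $i\in I$ all have valuation $wv(f)+v(A)$ and weight $w$. Hence $\dwt P^f=w$, $\ddeg P^f=\max I+w=\deg A+w$, and $\dval P^f=\min I+w=\val A+w$, uniformly for all such $f$. Taking eventual values yields $\nwt_LP=w$, $\ndeg_LP=\deg A+w$, and $\nval_LP=\val A+w$. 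From the defining relation $v(P^f)=v_L^{\ev}(P)+\nwt_L(P)\cdot v(f)$ eventually, together with $v(P^f)=wv(f)+v(A)$ and $\nwt_LP=w$, we conclude $v_L^{\ev}(P)=v(A)$.

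The main obstacle is pinning down the right $A$. The naive choice $A:=A_w$ already yields a decomposition with the valuation bound, but gives only $\ddeg A+w$ and $\dval A+w$ for $\ndeg_LP$ and $\nval_LP$, which need not match $\deg A+w$ and $\val A+w$ when some top- or bottom-degree coefficient of $A_w$ has strictly larger valuation than $v(A_w)$. Refining $A$ to the dominant part of $A_w$ is the crucial move: it simultaneously secures $\ddeg A=\deg A$ and $\dval A=\val A$, and permits the non-dominant remainder $A_w-A$ to be absorbed into $R_f$ under the asymptotic bound.
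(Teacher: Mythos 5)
Your identification of $w$ is where the argument breaks: the Newton weight of $P$ is \emph{not} $\min\{j:A_j\neq 0\}$. The quantity $\nwt P$ is the eventual value of $\dwt P^\phi$, and the relevant $\gamma=v\phi$ range only over $\Psi^{\downarrow}$, which is in general bounded above in $\Gamma$; so a term $A_j(Y)(Y')^j$ with $j$ larger than your $w$ can dominate the weight-$w$ block for \emph{every} active $\phi$. Concretely, take $K=\T$ and $P=\ex^{-x}Y+Y'$. Your recipe gives $w=0$, $A=\ex^{-x}Y$; but $v(\ex^{-x})>\Psi$, so $\phi\succ\ex^{-x}$ for all active $\phi$, hence $\dwt P^\phi=1$ for all such $\phi$, and in fact $\nwt P=1$, $v^{\ev}(P)=0$, not $0$ and $v(\ex^{-x})$ as your formulas predict. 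Moreover your remainder $R_f=fY'$ is then exactly the dominant part of $P^f$, so no $\alpha$ can work, and the step ``choose an active $\phi_0$ with $v(\phi_0)\ge\alpha+v(A)-v(A_j)$'' is impossible here, since every active element has valuation in $\Psi^{\downarrow}$, hence $<v(\ex^{-x})$. Rational asymptotic integration does not produce active elements of arbitrarily large valuation — that would amount to $\Psi$ being cofinal in $\Gamma$, which already fails in $\T$.

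The paper's proof takes $w$ to be the genuine Newton weight, extracted from the proof of [ADH, 11.2.13] by comparing the affine functions $\gamma\mapsto j\gamma+v(A_j)$ on the relevant range of $\gamma$, and it uses rational asymptotic integration for the opposite purpose from yours: not to push $v(\phi_0)$ up, but to find $\beta_0\in\Q\Gamma$ with $\Psi<\beta_0$ (no gap) so that $v(f)$ lies in the bounded interval $[v(\phi_0),\beta_0]$ for every active $f\preceq\phi_0$ in every asymptotic extension; on that interval the strict inequalities singling out the weight-$w$ block can be made uniform with a margin $\alpha\in\Gamma^{>}$. The upper bound on $v(f)$ is indispensable precisely when $w>\min\{j:A_j\neq0\}$, i.e., in examples like the one above, where your choice of $w$ goes wrong. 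Your second half — reading off $\nwt$, $\ndeg$, $\nval$, $v^{\ev}$ from a decomposition whose distinguished block is its own dominant part — is sound, and your point that one should arrange $\ddeg A=\deg A$ and $\dval A=\val A$ (e.g.\ by passing to a dominant part) is a reasonable observation about the second display; but it does not repair the choice of $w$ or the nonexistent $\phi_0$, so the proof as proposed fails.
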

\begin{proof} Let $w$ be as in the proof of [ADH, 11.2.13]. Using its notations, this proof yields an active 
$\phi_0$ in $K$ such that 
\begin{equation}\label{eq:11.2.13 invariant, 1} w\gamma+v(A_w)\  <\  j\gamma + v(A_j)\end{equation}
for all 
$\gamma\geqslant v(\phi_0)$ in $\Psi^{\downarrow}$ and  $j\in J\setminus \{w\}$. This gives
$\beta\in \Q\Gamma$  such that $\beta>\Psi$ and~\eqref{eq:11.2.13 invariant, 1} remains true for all  $\gamma\in \Gamma$ with $v(\phi_0)\leqslant \gamma < \beta$. Since $(\Q\Gamma, \psi)$ has asymptotic integration, $\beta$ is not a gap in  $(\Q\Gamma,\psi)$, so
$\beta>\beta_0> \Psi$ with $\beta_0\in \Q\Gamma$. This yields an element $\alpha\in (\Q\Gamma)^{>}$ such that
for all $\gamma\in \Q\Gamma$ with $v(\phi_0) \le \gamma \le \beta_0$ we have
\begin{equation}\label{eq:11.2.13 invariant, 2} w\gamma + v(A_w) +\alpha\ \leqslant\ j\gamma + v(A_j)\end{equation}
Since $\Gamma$ has no least positive element, we can decrease $\alpha$ to arrange $\alpha\in \Gamma^{>}$. 
Now~\eqref{eq:11.2.13 invariant, 2} remains true for all elements $\gamma$ 
of any divisible ordered abelian group extending $\Q\Gamma$ 
with $v(\phi_0)\leqslant \gamma \le \beta_0$.
Thus $w$, $\alpha$, $A=A_w$, and $\phi_0$ are as required. 

For any ungrounded $H$-asymptotic extension $L$ of $K$ we obtain for active $f\preceq \phi_0$ in $L$ that $v(P^f)=v(A)+wv(f)$,  so $v_L^{\ev}(P)=v(A)$ in view of  the  identity in  [ADH, 11.1.15]
defining $v_L^{\ev}(P)$. 
\end{proof} 

\noindent
For quasilinear~$P$ we have:

\begin{lemma}\label{lem:13.7.10}
Suppose $K$ is $\upl$-free and $\ndeg P=1$. Then there are active~$\phi_0$ in~$K$ and~$a,b\in K$ with $a\preceq b\ne 0$ 
 such that  either \textup{(i)} or \textup{(ii)} below holds:
\begin{enumerate}
\item[\textup{(i)}]
$P^{f}\, \sim_{\phi_0}^\flat\, a+bY$
for all active~$f\preceq\phi_0$ in all  $H$-asymptotic extensions of $K$;
\item[\textup{(ii)}]
$P^{f}\, \sim_{\phi_0}^\flat\, \frac{f}{\phi_0}b\,Y'$ for all active~$f\preceq\phi_0$ in all  $H$-asymptotic extensions of~$K$.
\end{enumerate}
In particular,  for each ungrounded $H$-asymptotic extension $L$  of~$K$,
$$\nwt_L P=\nwt P\leq 1, \quad \ndeg_L P=1,\quad \nval_L P=\nval P, \quad  v_L^{\ev}(P)=v^{\ev}(P).$$
\end{lemma}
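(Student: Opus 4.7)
Plan: Decompose $P$ by homogeneous degree as $P=a+L(Y)+R$, where $a:=P(0)\in K$, $L:=L_P\in K[\der]$ is the linear part, and $R\in K\{Y\}$ collects the homogeneous parts of degree~$\geq 2$. Since $\ndeg P=1$, an application of [ADH, 11.2.3--11.2.5] yields an active $\phi_1\in K$ such that for every active $f\preceq\phi_1$ in every $H$-asymptotic extension of $K$ we have $\ddeg P^f=1$ and $R^f\prec_f^\flat a+L^f(Y)$. Thus the $\sim_{\phi_0}^\flat$-dominant behavior of $P^f$ is entirely carried by the degree-$\leq 1$ part $a+L^f(Y)$, and we are reduced to analyzing the linear operator $L^f\in K^f[\derdelta]$.

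Now apply [ADH, 13.7.10] to~$L$, using the $\upl$-freeness of $K$. If $\order L=0$ the claim is trivial (case~(i) with $b=L$), so assume $\order L\geq 1$. Expanding $\der^i=\phi^i\derdelta^i+\text{(lower order in $\derdelta$)}$ in $K^\phi[\derdelta]$, one sees that the $\derdelta^0$-coefficient of $L^\phi$ is just $a_0=L(1)$, while for $j\geq 1$ the $\derdelta^j$-coefficient is a specific polynomial expression in $\phi$ and its derivatives. By [ADH, 13.7.10], $\upl$-freeness forces $\nwt L\in\{0,1\}$, and moreover pins down the eventual dominant monomial of $L^\phi$ up to $\sim_\phi^\flat$: either
\textup{(A)} eventually $\dwt L^\phi=0$ and $L^\phi\sim_\phi^\flat a_0$, or
\textup{(B)} eventually $\dwt L^\phi=1$ with the $\derdelta$-coefficient $\sim_\phi^\flat \phi c$ for some $c\in K^\times$ determined by the $a_i$. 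Choose $\phi_0\preceq\phi_1$ active in $K$ past which these assertions hold, and set $b:=a_0$ in case (A) or $b:=\phi_0 c$ in case (B); this gives $P^f\sim_{\phi_0}^\flat a+bY$ (case (i)) respectively $P^f\sim_{\phi_0}^\flat (f/\phi_0)bY'$ (case (ii)). The inequality $a\preceq b$ is forced: otherwise the constant part would swallow the linear part and we would not have $\ddeg P^f=1$.

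The persistence of the $\sim_{\phi_0}^\flat$ identity in an arbitrary $H$-asymptotic extension $L$ of~$K$ follows from observing that the difference $P^f-(a+bY)$, resp.\ $P^f-(f/\phi_0)bY'$, has all its coefficients in $K^f$, so its $v$-values lie in $\Gamma_K$; and since $\psi_L$ extends $\psi_K$, we have $\Gamma_L^\flat\cap\Gamma_K=\Gamma_K^\flat$ (with respect to any common threshold $v(\phi_0)$), so the $\sim_{\phi_0}^\flat$ relation proved in $K$ transports to $L$. The final list of Newton-quantity equalities is then read off the dominant representative: in case~(i), $\nwt_L P=0$, $\ndeg_L P=1$, $\nval_L P=\val(a+bY)$ and $v_L^{\ev}(P)=v(b)$; in case~(ii), $\nwt_L P=1$, $\ndeg_L P=1$, $\nval_L P=1$ and $v_L^{\ev}(P)=v(b)-v(\phi_0)$. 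All four quantities are computed from $a,b,\phi_0\in K$ and are thus intrinsic to $(P,K)$; in particular they coincide with the corresponding quantities computed in $K$.

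The principal obstacle is extracting from [ADH, 13.7.10] not only the weight bound $\nwt L\leq 1$ but the precise $\sim_\phi^\flat$-shape of the dominant $\derdelta$-coefficient of $L^\phi$ in case~(B), since this is what is needed to produce $b\in K$ explicitly. This requires tracking the contributions of all $a_i$ ($i\geq 1$) to the $\derdelta^1$-coefficient under compositional conjugation and verifying, using $\upl$-freeness, that exactly one such contribution is eventually dominant; the role of the factor $f/\phi_0$ in case~(ii) arises precisely from the homogeneity $(f/\phi_0)^1$ in $\phi$ of this dominant coefficient.
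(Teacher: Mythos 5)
Your strategy diverges from the paper's, and as written it has two genuine gaps. The paper applies [ADH, 13.7.10] to the whole differential polynomial $P$ (not just to its linear part), getting in one stroke an active $\phi_0$ and $a\preceq b$ in $K$ with $P^{\phi_0}\sim^\flat_{\phi_0}a+bY$ or $P^{\phi_0}\sim^\flat_{\phi_0}bY'$. Your preliminary reduction to the linear part is where the first gap lies: from $\ndeg P=1$ and [ADH, 11.2.3--11.2.5] you only get $\ddeg P^f=1$ eventually, i.e.\ $P^f_{>1}\prec P^f_{\leq 1}$ in the ordinary sense; the assertion $R^f\prec^\flat_f a+L^f(Y)$ is strictly stronger, since the gap in values could lie inside the flattening subgroup $\Gamma^\flat_f$. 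Producing this \emph{flattened} dominance of the degree-$\leq 1$ part over the rest is exactly the nontrivial content of [ADH, 13.7.10], which uses $\upl$-freeness and already covers all of $P$ --- so your reduction step is both unjustified by the lemmas you cite and unnecessary.

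The second gap is the transfer to an arbitrary $H$-asymptotic extension $L$ of $K$. You argue that $P^f-(a+bY)$ has ``coefficients in $K^f$, so its $v$-values lie in $\Gamma_K$''; but for active $f\preceq\phi_0$ in $L\setminus K$ the coefficients of $P^f$ are polynomial expressions in $f$, its derivatives and $f^{-1}$, hence generally not in $K$, so this argument breaks down. The paper instead writes $P^f=(P^{\phi_0})^{f/\phi_0}$ with $f/\phi_0\preceq 1$ and invokes Lemma~\ref{lem:same ndeg} (and the remark following its proof), i.e.\ the invariance of $\asymp^\flat$, $\prec^\flat$, $\sim^\flat$ under compositional conjugation by elements $\preceq 1$, applied over $L^{\phi_0}$; this also accounts cleanly for the factor $(f/\phi_0)^w$ in case (ii). A smaller slip: in case (i) your formula $\nval_L P=\val(a+bY)$ is wrong when $0\neq a\prec b$ (there $\nval P=1$ while $\val(a+bY)=0$); the intended conclusion that the Newton quantities agree in $K$ and $L$ does not depend on that formula, but it should be read off, as in the paper, from the displayed $\sim^\flat_{\phi_0}$-relation and the identity [ADH, 11.1.15] defining $v^{\ev}$.
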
 
\begin{proof}
From [ADH, 13.7.10] we obtain an active $\phi_0$ in $K$ and $a,b\in K$ with $a\preceq b$ such that in $K^{\phi_0}\{Y\}$, 
either $P^{\phi_0} \sim^\flat_{\phi_0} a+bY$ or $P^{\phi_0}\ \sim^\flat_{\phi_0}\ b\,Y'$ (so $b\ne 0$). 
In the first case, set $A(Y):=a+bY$, $w:=0$; in the second case, set $A(Y):=bY'$, $w:=1$. Then
$P^{\phi_0}  = A + R$ where~$R\prec_{\phi_0}^\flat b\asymp P^{\phi_0}$ in $K^{\phi_0}\{Y\}$. 

Let $L$ be an $H$-asymptotic extension of $K$.
Then~$R\prec_{\phi_0}^\flat P^{\phi_0}$ remains
true in~$L^{\phi_0}\{Y\}$, and if $f\preceq \phi_0$ is active in $L$, then
$P^f = (P^{\phi_0})^{f/\phi_0}=(f/\phi_0)^w A+R^{f/\phi_0}$ where~$R^{f/\phi_0}\prec_{\phi_0}^\flat P^f$  by Lemma~\ref{lem:same ndeg} and the remark following its proof. As to~$v_L^{\ev}(P)=v^{\ev}(P)$ for ungrounded $L$,  the  identity from  [ADH, 11.1.15]
defining these quantities  shows that both are $vb$ in case  (i), and  $v(b)-v(\phi_0)$ in case (ii).
\end{proof}

\noindent
Lemma~\ref{lem:13.7.10}  has the following consequence, partly generalizing Corollary~\ref{cor:sum of nwts}:

\begin{cor} \label{cor:13.7.10}
Suppose $K$ is $\upl$-free, $A\in K[\der]^{\neq}$ and $L$ is an ungrounded $H$-asymptotic extension
of~$K$. Then for $\gamma\in\Gamma$ the quantities $\nwt_A(\gamma)\leq 1$ and $v^{\ev}_A(\gamma)$ do not change when passing from $K$ to $L$; in particular,
$$\exc^{\ev}(A)\ =\ \big\{\gamma\in\Gamma: \nwt_A(\gamma)=1\big\}\  =\  \exc^{\ev}_L(A)\cap\Gamma.$$
\end{cor}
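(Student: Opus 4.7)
The plan is to reduce the statement about the operator $A$ at a fixed value $\gamma\in\Gamma$ to a statement about a linear differential polynomial of Newton degree $1$, to which Lemma~\ref{lem:13.7.10} applies directly. Fix $y_0\in K^\times$ with $vy_0=\gamma$, and set $B:=Ay_0\in K[\der]^{\neq}$ and $P(Y):=B(Y)=A(y_0Y)\in K\{Y\}$. Then $P$ is linear in $Y,Y',\dots,Y^{(r)}$ (where $r=\order A$) with no constant term, hence $P\neq 0$, $\deg P=1$, and $\val P=1$; thus $\ddeg P\ge 1$ so $\ndeg P=1$, and Lemma~\ref{lem:13.7.10} applies to~$P$ over the $\upl$-free field $K$ and the ungrounded $H$-asymptotic extension~$L$.

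The main identification step is to match the invariants of $P$ to the invariants of $A$ at $\gamma$. Writing $B=\sum b_k\der^k$, the linear differential polynomial $P=\sum b_kY^{(k)}$ satisfies $v(P)=\min_k v(b_k)=v(B)=v_A(\gamma)$ and $\dwt P=\max\{k:v(b_k)=v(B)\}=\dwt(B)=\dwt_A(\gamma)$. Since the $K$-algebra isomorphism $Q\mapsto Q^\phi\colon K\{Y\}\to K^\phi\{Y\}$ sends linear differential polynomials to linear differential polynomials and respects evaluation, the linear polynomial $P^\phi$ corresponds to the operator $B^\phi=A^\phi y_0\in K^\phi[\derdelta]$. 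Hence $v(P^\phi)=v_{A^\phi}(\gamma)$ and $\dwt P^\phi=\dwt_{A^\phi}(\gamma)$. Taking eventual values yields $\nwt P=\nwt_A(\gamma)$ and $v^{\ev}(P)=v^{\ev}_A(\gamma)$. Exactly the same identification works over $L$ with the same $y_0$, giving $\nwt_L P=\nwt^L_A(\gamma)$ and $v^{\ev}_L(P)=v^{\ev}_{A,L}(\gamma)$.

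Now Lemma~\ref{lem:13.7.10} (applied to $P$) gives $\nwt P\le 1$ and the invariances $\nwt_LP=\nwt P$, $v^{\ev}_L(P)=v^{\ev}(P)$. Translating via the identification of the previous paragraph yields $\nwt_A(\gamma)\le 1$ together with $\nwt^L_A(\gamma)=\nwt_A(\gamma)$ and $v^{\ev}_{A,L}(\gamma)=v^{\ev}_A(\gamma)$, which are the first two claims. For the last displayed equality, observe that by the definition of $\exc^{\ev}$ we have $\exc^{\ev}(A)=\{\gamma\in\Gamma:\nwt_A(\gamma)\ge 1\}$, and since $\nwt_A(\gamma)\le 1$ this coincides with $\{\gamma\in\Gamma:\nwt_A(\gamma)=1\}$; the invariance of $\nwt_A(\gamma)$ under passage to $L$ then gives $\exc^{\ev}(A)=\exc^{\ev}_L(A)\cap\Gamma$.

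The only real obstacle is verifying the correspondence between $P$ and $B$ after compositional conjugation, i.e.\ that $P^\phi$ as a linear differential polynomial in $K^\phi\{Y\}$ corresponds to the operator $B^\phi\in K^\phi[\derdelta]$ under the canonical identification. This is essentially immediate from the defining property $P^\phi(y)=P(y)=B(y)$ for $y\in K$ together with the fact that a linear differential polynomial is uniquely determined by the corresponding operator; no additional computation is required. Everything else is bookkeeping.
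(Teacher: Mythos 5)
Your proof is correct and follows exactly the route the paper intends: the corollary is stated as an immediate consequence of Lemma~\ref{lem:13.7.10}, obtained by applying that lemma to the degree-one differential polynomial $P(Y)=A(y_0Y)$ with $vy_0=\gamma$ and using the standard identifications $\nwt_A(\gamma)=\nwt P$, $v^{\ev}_A(\gamma)=v^{\ev}(P)$ over both $K$ and $L$. Your bookkeeping (the correspondence $L_{P^\phi}=(Ay_0)^\phi=A^\phi y_0$, and $\ndeg P=1$ from $\val P=\deg P=1$) is exactly the justification the paper leaves implicit.
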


\noindent
This leads to a variant of Corollary~\ref{cor:size of excev}:

\begin{cor}\label{cor:size of excev, strengthened} Suppose $K$ is $\upl$-free. Then $|\exc^{\ev}(A)|\leq \order A$ for all $A\in K[\der]^{\ne}$.
\end{cor}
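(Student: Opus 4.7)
The strategy is to reduce to the already-established bound for split operators (Corollary~\ref{cor:size of excev}) via a base change, using the invariance of $\exc^{\ev}(A)$ provided by Corollary~\ref{cor:13.7.10}. Combining these two corollaries, it suffices to exhibit an ungrounded $H$-asymptotic extension $L$ of $K$ with asymptotic integration over which $A$ splits: for then
$$|\exc^{\ev}(A)|\ =\ |\exc^{\ev}_L(A)\cap\Gamma|\ \leq\ |\exc^{\ev}_L(A)|\ \leq\ \order A,$$
where the first equality uses Corollary~\ref{cor:13.7.10} (applicable since $K$ is $\upl$-free) and the last inequality is Corollary~\ref{cor:size of excev} applied inside $L$.

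To construct $L$, I would argue by induction on $r=\order A$. The case $r=0$ is immediate, and $r=1$ was noted at the start of the subsection on first-order operators (where $|\exc^{\ev}(\der-g)|\leq 1$ was shown for any ungrounded $H$-asymptotic $K$ with asymptotic integration). For $r\geq 2$, I would adjoin a non-zero solution $\hat y$ of $A(y)=0$ lying in some ungrounded $H$-asymptotic extension of $K$ having asymptotic integration; then $A=B\cdot(\der-\hat y^\dagger)$ in $K\langle\hat y\rangle[\der]$ with $\order B=r-1$. Since the transcendence degree of $K\langle\hat y\rangle$ over $K$ is at most one, Proposition~\ref{prop:upl-free and as int} guarantees asymptotic integration of $K\langle\hat y\rangle$. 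One then iterates the procedure on $B$ over a further extension until $A$ is fully split.

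The main obstacle is that $\upl$-freeness may fail to persist under these extensions (cf.~Example~\ref{ex:rat as int and cofinality}), so the preservation of asymptotic integration via Proposition~\ref{prop:upl-free and as int} breaks after the first step. To sidestep this, one can first enlarge $K$ to a bigger $\upl$-free $H$-asymptotic field (for instance, by passing to the $\d$-valued hull $\operatorname{dv}(K)$ via Lemma~\ref{lem:Gehret}, and then to its algebraic closure, which remains $\upl$-free by [ADH,~11.6.8]), and at each stage choose $\hat y$ inside an extension where $\upl$-freeness is restored, for example by a judicious application of the immediate extension results of [ADH, Chapter~10]. A cleaner alternative, if available, is to embed $K$ directly into an $\upo$-free ungrounded $H$-asymptotic extension $L$ and invoke Corollary~\ref{cor:sum of nwts} to obtain $|\exc^{\ev}_L(A)|\leq r$ without needing $A$ to split; combined with Corollary~\ref{cor:13.7.10} this yields the bound in one step.
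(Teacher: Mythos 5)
Your ``cleaner alternative'' at the end is precisely the paper's proof, and it is the route that actually works. The paper says: $K$ is $\upl$-free, hence pre-$\d$-valued by [ADH,~10.1.3], hence has an $\upo$-free $H$-asymptotic extension $L$ by [ADH,~11.7.18]; now $|\exc^{\ev}_L(A)|\leq\order A$ by Corollary~\ref{cor:sum of nwts} applied to $L$, and $\exc^{\ev}(A)=\exc^{\ev}_L(A)\cap\Gamma$ by Corollary~\ref{cor:13.7.10}. You had the right chain of citations, but presented it tentatively (``if available'') because you did not have the existence of such an $\upo$-free extension in hand; that existence is exactly what [ADH,~10.1.3] together with [ADH,~11.7.18] supply, and you should have committed to this route outright.

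The first route you sketched — iteratively splitting off a first-order factor and inducting on order — has the genuine defect you yourself flagged: $\upl$-freeness need not persist after adjoining a solution of $A(y)=0$ (Example~\ref{ex:rat as int and cofinality} is exactly the warning sign), and this is not merely a cosmetic issue, since Corollary~\ref{cor:size of excev} requires $\upl$-freeness of the ambient field and its proof via Corollary~\ref{cor:well-behaved} rests on that hypothesis throughout. Your proposed repair (enlarging to $\operatorname{dv}(K)$, then the algebraic closure, then hoping that $\upl$-freeness can be ``restored'' after each transcendental adjunction) is not a proof: there is no general mechanism in the paper for restoring $\upl$-freeness after adjoining a $\d$-transcendental element, and Example~\ref{ex:rat as int and cofinality} shows it can genuinely fail in a single step. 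The fix is to abandon the inductive splitting entirely in favour of the one-shot extension to an $\upo$-free field, where $\upl$-freeness concerns evaporate.
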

\begin{proof} By [ADH, 10.1.3], $K$ is pre-$\d$-valued, hence by [ADH, 11.7.18] it has an $\upo$-free $H$-asymptotic extension. It remains to appeal to Corollaries~\ref{cor:sum of nwts} and~\ref{cor:13.7.10}.
\end{proof}

\noindent
For completeness we next state a version of Lemma~\ref{lem:13.7.10} for~$\ndeg P=0$;
the proof using [ADH, 13.7.9] is similar, but simpler, and hence omitted.

\begin{lemma}\label{lem: 13.7.9}
Suppose $K$ is $\upl$-free and $\ndeg P=0$. Then there are an active~$\phi_0$ in $K$ and~$a\in K^\times$ 
 such that   $P^f \sim_{\phi_0}^\flat a$ for all active~$f\preceq\phi_0$ in all $H$-asymptotic extensions of $K$.
\end{lemma}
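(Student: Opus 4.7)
The strategy parallels that of Lemma~\ref{lem:13.7.10}, simplified by the vanishing of the Newton degree. First I would invoke [ADH, 13.7.9] to extract an active $\phi_0$ in $K$ and a nonzero $a\in K$ such that $P^{\phi_0}\sim^\flat_{\phi_0} a$ in $K^{\phi_0}\{Y\}$. Writing this as a decomposition $P^{\phi_0}=a+R$ with $R\prec^\flat_{\phi_0} a\asymp^\flat_{\phi_0} P^{\phi_0}$ in $K^{\phi_0}\{Y\}$ gives the basic data one needs to propagate to arbitrary extensions.

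Next, let $L$ be an $H$-asymptotic extension of $K$. Since $\Gamma^\flat_{\phi_0}$ is a convex subgroup of $\Gamma$, it remains a convex subgroup of $\Gamma_L$, so the relation $R\prec^\flat_{\phi_0} P^{\phi_0}$ persists verbatim in $L^{\phi_0}\{Y\}$. For any active $f\preceq\phi_0$ in $L$, compositional conjugation by $f/\phi_0\preceq 1$ gives
\[
  P^f\ =\ (P^{\phi_0})^{f/\phi_0}\ =\ a + R^{f/\phi_0},
\]
since the constant term $a\in K$ is unaffected. Applying Lemma~\ref{lem:same ndeg} to $K^{\phi_0}$ (which has small derivation) with $f/\phi_0$ in the role of $\phi$ — and noting, as flagged in the remark following that lemma's proof, that the flatness implications do not require ungroundedness — I obtain $R^{f/\phi_0}\prec^\flat_{\phi_0} (P^{\phi_0})^{f/\phi_0}=P^f$. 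Hence $P^f\sim^\flat_{\phi_0} a$ as required.

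The only mildly delicate point is bookkeeping: the subscript $\phi_0$ on $\sim^\flat$ refers to a \emph{fixed} convex subgroup $\Gamma^\flat_{\phi_0}$ of the common value group, which is unchanged both by passage from $K$ to $L$ and by any further compositional conjugation (compositional conjugation preserves the valuation). Consequently, nothing transforms in a nontrivial way and the conclusion follows. This proof is genuinely simpler than that of Lemma~\ref{lem:13.7.10} because here the Newton weight is $w=0$, there is no linear-in-derivatives summand to track, and the dichotomy between the two cases (i) and (ii) of that lemma collapses to a single case.
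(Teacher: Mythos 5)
Your argument is correct and is essentially the proof the paper intends: it omits the proof precisely because it is the simpler analogue of the proof of Lemma~\ref{lem:13.7.10} via [ADH, 13.7.9], namely extract $P^{\phi_0}\sim^\flat_{\phi_0}a$, split off $R\prec^\flat_{\phi_0}P^{\phi_0}$, and push this through compositional conjugation by $f/\phi_0$ using Lemma~\ref{lem:same ndeg} and the remark following it. Two small wording points: Lemma~\ref{lem:same ndeg} should be applied over $L^{\phi_0}$ (since $f/\phi_0\in L$), and in $L$ the flattening subgroup attached to $\phi_0$ may properly grow rather than stay "fixed" — the relation $R\prec^\flat_{\phi_0}P^{\phi_0}$ nevertheless persists because the relevant value differences lie in $\Gamma$ and the extension is $H$-asymptotic, which is exactly what the paper's proof of Lemma~\ref{lem:13.7.10} tacitly uses.
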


\noindent
In particular, for $K$, $P$ as in Lemma~\ref{lem: 13.7.9}, no $H$-asymptotic extension of $K$ contains any $y\preceq 1$ such that $P(y)=0$. 

For general $P$ and $\upo$-free $K$ we can still do better than stated earlier:

\begin{lemma}\label{lem:13.6.12} 
Suppose $K$ is $\upo$-free. Then there are $w\in\N$, $A\in K[Y]^{\neq}$, and an active $\phi_0$ in $K$ such that for all active
$f\preceq \phi_0$ in all  $H$-asymptotic extensions of~$K$, 
$$P^{f}\ \sim^\flat_{\phi_0} \  (f/\phi_0)^w A(Y)(Y')^w. $$
For such $w$, $A$, $\phi_0$ we have for any ungrounded $H$-asymptotic extension $L$ of $K$,
\begin{align*} \nwt_L P\ &=\ w, &  \ndeg_L P&\ =\ \deg A+w,\\
 \nval_L P\ &=\ \val A+w, &   v^{\ev}_L(P)&\ =\ v(A)-wv(\phi_0).
 \end{align*} 
\end{lemma}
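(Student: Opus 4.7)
I would pattern the argument on the proof of Lemma~\ref{lem:13.7.10}, replacing its input [ADH, 13.7.10] (for $\upl$-free $K$ and quasilinear $P$) by the corresponding structural result for $\upo$-free $K$ and arbitrary $P$ from [ADH, Chapter~13, esp.~13.6]. The first step is to use $\upo$-freeness of $K$ to produce $w\in\N$, $A\in K[Y]^{\neq}$, and an active $\phi_0\in K$ such that
\[
P^{\phi_0}\ \sim^\flat_{\phi_0}\ A(Y)(Y')^w\qquad\text{in $K^{\phi_0}\{Y\}$.}
\]
This is the crucial step and the main obstacle: the content is that in the $\upo$-free setting the dominant part of $P^{\phi_0}$ (with respect to the flattening $v^\flat_{\phi_0}$) eventually has the restricted ``polynomial in $Y$ times $(Y')^w$'' shape. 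This generalizes Lemma~\ref{lem:13.7.10} beyond the quasilinear case and uses $\upo$-freeness in an essential way (compare Example~\ref{ex:rat as int and cofinality} where $\upl$-freeness alone is insufficient for such structural conclusions).

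The second step is to transfer the equivalence to any $H$-asymptotic extension $L$ of $K$ and any active $f\preceq\phi_0$ in $L$. Writing $P^f=(P^{\phi_0})^{f/\phi_0}$ in $L^f\{Y\}$ and using that, under the change of derivation from $\der_{\phi_0}=\phi_0^{-1}\der$ to $\der_f=f^{-1}\der$, the equality $\der_{\phi_0}Y=(f/\phi_0)\,\der_f Y$ holds, a direct expansion gives
\[
\bigl(A(Y)(Y')^w\bigr)^{f/\phi_0}\ =\ (f/\phi_0)^w\, A(Y)(Y')^w\quad\text{in $L^f\{Y\}$.}
\]
Since $f/\phi_0\preceq 1$ in $L^{\phi_0}$, Lemma~\ref{lem:same ndeg} (applied over $L^{\phi_0}$ in place of $K$) shows that $\sim^\flat_{\phi_0}$ is preserved under compositional conjugation by $f/\phi_0$; combined with the previous display this yields
\[
P^f\ \sim^\flat_{\phi_0}\ (f/\phi_0)^w A(Y)(Y')^w\qquad\text{in $L^f\{Y\}$,}
\]
as asserted.

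For the four numerical identities, let $L$ be an ungrounded $H$-asymptotic extension of $K$ and apply the definitions of $\nwt$, $\ndeg$, $\nval$, and $v^{\ev}$ from [ADH, 11.1] to the right-hand side $(f/\phi_0)^w A(Y)(Y')^w\in L^f[Y,Y']$, for active $f\preceq\phi_0$ in $L$. The factor $(Y')^w$ contributes weight $w$ and degree $w$ to every nonzero monomial, and $A(Y)$ contributes degree at most $\deg A$ and multiplicity at least $\val A$; since $f/\phi_0\preceq 1$ is a unit, one reads off $\nwt_L P=w$, $\ndeg_L P=\deg A+w$, $\nval_L P=\val A+w$. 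For the last identity, from $v_{L^f}(P^f)=v(A)+w\bigl(v(f)-v(\phi_0)\bigr)$ eventually, and the defining identity $v_{L^f}(P^f)=v_L^{\ev}(P)+\nwt_L(P)\,v(f)$ from \eqref{eq:vAev}, we obtain $v_L^{\ev}(P)=v(A)-w\,v(\phi_0)$. One can alternatively deduce all four identities by applying the previously proved Lemma~\ref{lem:11.2.13 invariant} to $Q:=\phi_0^{-w}\cdot A(Y)(Y')^w\in K[Y,Y']$, whose Newton invariants coincide with the claimed values for $P$. The whole argument hinges on Step~1; once the dominant shape $A(Y)(Y')^w$ is secured from the $\upo$-free hypothesis, the rest is compositional-conjugation bookkeeping.
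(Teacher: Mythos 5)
Your proposal is correct and takes essentially the same route as the paper: the paper's proof obtains the dominant-part decomposition $P^{\phi_0}=A\cdot(Y')^w+R$ with $R\prec^\flat_{\phi_0}P^{\phi_0}$ directly from [ADH, 13.6.11] (the precise reference you were reaching for), and then, exactly as you describe, transfers to $L$ via $P^f=(P^{\phi_0})^{f/\phi_0}$ using Lemma~\ref{lem:same ndeg} and the remark after it, reading off the four invariants from the resulting form — mirroring the second half of the proof of Lemma~\ref{lem:13.7.10}. The only small quibble: for the $v^{\ev}_L(P)$ identity the relevant defining formula is the one from [ADH, 11.1.15] for differential polynomials, rather than \eqref{eq:vAev} which is stated for linear operators, but the content is the same.
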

\begin{proof} By [ADH, 13.6.11] we have active $\phi_0$ in $K$ and $A\in K[Y]^{\ne}$ such that
$$P^{\phi_0}\ =\ A\cdot (Y')^w + R,\quad w:= \nwt P, \quad R \in K^{\phi_0}\{Y\},\ R\prec_{\phi_0}^\flat P^{\phi_0}.$$
(Here $\phi_0$ and $A$ are the $e$ and $aA$ in [ADH, 13.6.11].) The rest of the argument is just like in the second part of the proof of
Lemma~\ref{lem:13.7.10}. 
\end{proof}

\subsection*{Remarks on newton position}
For the next lemma we put ourselves in the setting of~[ADH, 14.3]: $K$ is $\upo$-free, $P\in K\{Y\}^{\neq}$,
and~$a$ ranges over~$K$. 
Recall that~$P$ is said to be in {\it newton position at $a$}\/ if $\nval P_{+a}=1$. \index{differential polynomial!newton position} \index{newton!position}

Suppose $P$ is in newton position at~$a$; then $A:=L_{P_{+a}}\in K[\der]^{\ne}$. 
Recall the definition of~$v^{\ev}(P,a)=v^{\ev}_K(P,a)\in \Gamma_\infty$:
if $P(a)=0$, then $v^{\ev}(P,a)=\infty$; if $P(a)\neq 0$, then $v^{\ev}(P,a)=vg$ where $g\in K^\times$
satisfies $P(a) \asymp (P_{+a})^\phi_{1,\times g}$ eventually, that is,
$v_{A^\phi}(vg)=v\big(P(a)\big)$ eventually. In the latter case
$\nwt_A(vg)=0$, that is, $vg\notin\exc^{\ev}(A)$, and 
$v^{\ev}_A(vg)=v\big(P(a)\big)$, since $v_{A^\phi}(vg)=v^{\ev}_A(vg)+ \nwt_A(vg)v\phi$ eventually. 
For any~$f\in K^\times$, $P^f$ is also in newton position at $a$, and  $v^{\ev}(P^f,a)=v^{\ev}(P,a)$.
Note also
that~$P_{+a}$ is in newton position
at $0$ and $v^{\ev}(P_{+a},0)=v^{\ev}(P,a)$.
Moreover, in passing from $K$ to  an $\upo$-free extension, $P$  remains in newton position at $a$ and~$v^{\ev}(P,a)$ does not change, by Lemma~\ref{lem:13.6.12}.

\medskip
\noindent
{\it In the rest of this subsection $P$ is in newton position at $a$, and~$\hat a$ is an element of an $H$-asymptotic extension $\hat K$ of $K$  such that $P(\hat a)=0$.}\/ (We allow~$\hat a\in K$.)
We first generalize part of [ADH, 14.3.1], with a similar proof:

\begin{lemma}\label{lem:14.3.1} 
$v^{\ev}(P,a)>0$ and  $v(\hat a-a) \leq v^{\ev}(P,a)$.
\end{lemma}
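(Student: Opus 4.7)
My plan begins with dispensing with the trivial cases. If $\hat a = a$, then $P(\hat a) = 0$ forces $P(a) = 0$, so $v^{\ev}(P,a) = \infty$ by definition, and both conclusions hold. Similarly if $P(a) = 0$. So I assume $\hat a \neq a$ and $P(a) \neq 0$, and translate: replacing $(P, \hat a)$ by $(P_{+a}, \hat a - a)$ preserves the Newton-position hypothesis and satisfies $v^{\ev}(P_{+a}, 0) = v^{\ev}(P,a)$, reducing the problem to $a=0$, $\hat a \neq 0$, $P(0) \neq 0$, and $\nval P = 1$. Set $A := L_P \in K[\der]^{\neq}$ and decompose $P = P(0) + A(Y) + R(Y)$ with $R := P_{>1}$ of $\val R \geq 2$; the equation $P(\hat a) = 0$ becomes $-P(0) = A(\hat a) + R(\hat a)$ in $\hat K$. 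By the definition of $v^{\ev}(P,0)$, choose $g \in K^\times$ with $vg = v^{\ev}(P,0)$ satisfying $v(P(0)) = v_{A^\phi}(vg)$ eventually; the decomposition $v_{A^\phi}(vg) = v^{\ev}_A(vg) + \nwt_A(vg)\,v\phi$ together with the $\phi$-independence of $v(P(0))$ forces $\nwt_A(vg) = 0$, so $vg \notin \exc^{\ev}(A)$ and $v^{\ev}_A(vg) = v(P(0))$.

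For the positivity $v^{\ev}(P,0) > 0$, I use that $\nval P = 1$ together with $P(0) \neq 0$ gives $\dval P^\phi = 1$ eventually, so $v(P(0)) > v(P^\phi) = v(A^\phi) = v_{A^\phi}(0)$ for all sufficiently small active $\phi$. Fixing such a $\phi_0$, both equalities $v(P(0)) = v_{A^{\phi_0}}(vg)$ and $v(P(0)) > v_{A^{\phi_0}}(0)$ hold, so by the monotonicity of $v_{A^{\phi_0}}$ on $\Gamma$ (strict on $\Gamma \setminus \exc(A^{\phi_0})$), the strict inequality $v_{A^{\phi_0}}(vg) > v_{A^{\phi_0}}(0)$ forces $vg > 0$.

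For the upper bound $v(\hat a) \leq vg$, I argue by contradiction, assuming $v(\hat a) > vg$ in $\Gamma_{\hat K}$. Since $\upo$-freeness of $K$ entails $\upl$-freeness, Corollary~\ref{cor:13.7.10} guarantees $\exc^{\ev}_{\hat K}(A) \cap \Gamma = \exc^{\ev}(A)$ and that $v^{\ev}_A$ agrees on $\Gamma$ whether computed in $K$ or in $\hat K$. To handle $v(A(\hat a))$ cleanly I would pass, if necessary, to an $H$-asymptotic $\upo$-free $\d$-algebraic extension of $K$ containing $\hat a$ (using that $P(\hat a) = 0$ makes $\hat a$ $\d$-algebraic over $K$ and invoking Theorem~\ref{thm:ADH 13.6.1}); Lemma~\ref{lem:ADH 14.2.7} in this extension then gives $v(A(\hat a)) \geq v^{\ev}_A(v\hat a) > v^{\ev}_A(vg) = v(P(0))$, using strict monotonicity of $v^{\ev}_A$ on the complement of the finite exceptional set. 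To likewise bound $v(R(\hat a))$, I would invoke Lemma~\ref{lem:13.6.12} to choose an active $\phi_0 \preceq 1$ in $K$ so that $P^\phi \sim^\flat_{\phi_0} (\phi/\phi_0)^w B(Y)(Y')^w$ with $w + \val B = 1$, expressing the strict flat-dominance of $A$ over $R$. Combined with Lemma~\ref{lem:diff operator at small elt} applied inside the conjugated extension (where the derivation is small) and the inequality $v(\hat a) > vg > 0$, this yields $v(R(\hat a)) > v(A(\hat a))$, hence $v\big(A(\hat a) + R(\hat a)\big) = v(A(\hat a)) > v(P(0))$, contradicting $-P(0) = A(\hat a) + R(\hat a)$. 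The main technical obstacle will be the rigorous translation of the flat-dominance of $R$ by $A$ as differential polynomials into the pointwise valuation inequality $v(R(\hat a)) > v(A(\hat a))$ for $\hat a$ in the extension of $\hat K$; this rests essentially on the Newton-position hypothesis $\nval P = 1$ via Lemma~\ref{lem:13.6.12} and requires careful bookkeeping of valuations through the compositional conjugation.
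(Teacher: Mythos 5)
Your reduction to $a=0$ and your positivity argument are sound and essentially the paper's: from $v_{A^{\phi_0}}(vg)=v\big(P(0)\big)>v(A^{\phi_0})=v_{A^{\phi_0}}(0)$ and monotonicity of $v_{A^{\phi_0}}$ (which in fact holds on all of $\Gamma$, by [ADH, 6.1.3]) one gets $vg>0$. The trouble is in your contradiction argument for $v(\hat a)\leq vg$, and it is tied to the fact that this lemma assumes nothing about $\hat K$ beyond being an $H$-asymptotic extension of $K$: $\hat K$ may be grounded, and $K\langle\hat a\rangle$ need not be pre-$\d$-valued. Hence Corollary~\ref{cor:13.7.10} (which needs the extension ungrounded) and your appeal to Theorem~\ref{thm:ADH 13.6.1} (which needs the $\d$-algebraic extension to be pre-$\d$-valued) are not licensed here; avoiding eventual quantities in the extension is precisely why the paper's proof is phrased as it is. Even granting a suitable extension $L$, your chain $v\big(A(\hat a)\big)\geq v^{\ev}_A(v\hat a)>v^{\ev}_A(vg)$ has a gap at the second inequality: Lemma~\ref{lem:ADH 14.2.7} gives strict monotonicity of $v^{\ev}_A$ only on $\Gamma_L\setminus\exc^{\ev}_L(A)$, and nothing rules out $v(\hat a)\in\exc^{\ev}_L(A)$ --- the absence of exactly such a hypothesis is what separates this lemma from Lemma~\ref{lem:14.3 complement}.

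The second gap is the claim $v\big(R(\hat a)\big)>v\big(A(\hat a)\big)$: Lemma~\ref{lem:13.6.12} and Lemma~\ref{lem:diff operator at small elt} only give \emph{lower} bounds for valuations of evaluations, while $A(\hat a)$ can have arbitrarily large valuation (for instance if $\hat a$ is close to a zero of $A$ in $\hat K$), so no upper bound on $v\big(A(\hat a)\big)$ is available and the step does not follow. It is also not needed: it suffices to get $v\big(A(\hat a)\big)>v\big(P(0)\big)$ and $v\big(R(\hat a)\big)>v\big(P(0)\big)$ separately. Both follow by fixing one suitable active $\phi\preceq 1$ in $K$ (so $\hat K^\phi$ has small derivation, $v_{A^\phi}(vg)=v\big(P(0)\big)$, and $P^\phi_1\succeq P^\phi_j$ for $j\geq 2$): then $v\big(A(\hat a)\big)\geq v_{A^\phi}(v\hat a)>v_{A^\phi}(vg)=v\big(P(0)\big)$ by strict monotonicity of $v_{A^\phi}$ on all of $\Gamma_{\hat K}$, and $v\big(P^\phi_j(\hat a)\big)\geq v\big((P^\phi_j)_{\times\hat a}\big)>v\big((P^\phi_1)_{\times g}\big)=v\big(P(0)\big)$ for $j\geq 2$, using $v(\hat a)>vg>0$ and [ADH, 6.1.3]. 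This is in substance the paper's proof, which shows $(P^\phi_{\times g})_j\preceq P(0)$ eventually for all $j\geq 1$ and concludes that $P_{\times g}$ has no zero $y\prec 1$ in \emph{any} $H$-asymptotic extension of $K$; I recommend recasting your argument in that form.
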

\begin{proof}
This is clear if $P(a)=0$. Assume $P(a)\neq 0$.
Replace~$P$,~$\hat a$,~$a$ by~$P_{+a}$,~$\hat a-a$,~$0$, respectively, to arrange $a=0$.
Recall that $K^\phi$ has small derivation.
Set~$\gamma:=v^{\ev}(P,0)\in\Gamma$ and take $g\in K$ with~$vg=\gamma$. 
Now $(P_1^\phi)_{\times g} \asymp P_0$, eventually, and~$\nval P=1$ gives $P(0) \prec P_1^\phi$, eventually, hence~$g\prec 1$.
Moreover, for~$j\ge 2$, $P_1^\phi\succeq P_j^\phi$, eventually, so~$(P_1^\phi)_{\times g}\succ (P_j^\phi)_{\times g}$, eventually, by [ADH, 6.1.3]. 
Thus for~${j\geq 1}$ we have~$(P_{\times g}^\phi)_j = (P_j^\phi)_{\times g} \preceq P(0)$, eventually;
in particular, there is no~$y\prec 1$ in any $H$-asymptotic  extension of $K$ with $P_{\times g}(y)=0$.
Since~$P(\hat a)=0$, this yields~$v(\hat a) \leq \gamma=v^{\ev}(P,0)$.
\end{proof}

\noindent
Here is a situation where $v(\hat a-a) = v^{\ev}(P,a)$:

\begin{lemma}\label{lem:14.3 complement} Suppose $\Psi$ is cofinal in $\Psi_{\hat K}$, $\hat a - a\prec 1$, and
 $v(\hat a-a)\notin\exc_{\hat K}^{\ev}(A)$ where~$A:=L_{P_{+a}}$.  Then $v(\hat a-a)=v^{\ev}(P,a)$. 
\end{lemma}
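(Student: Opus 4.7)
The plan is to set $Q:=P_{+a}$, $b:=\hat a-a$, and $A:=L_Q\in K[\der]^{\neq}$, so that $Q(b)=0$, $b\prec 1$, and $\nval Q=1$. Write $\gamma:=v^{\ev}(P,a)=v^{\ev}(Q,0)$. The case $b=0$ is trivial, and the subcase $P(a)=0$ (so $\gamma=\infty$) will be closed by the same estimates used in the generic case. Assuming therefore $P(a)\neq 0$, we have $\gamma\in\Gamma$, and Lemma~\ref{lem:14.3.1} gives $v(b)\leq\gamma$; the goal is to contradict $v(b)<\gamma$.

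The argument rests on two identifications together with one domination estimate. First, the newton-position condition $\nval Q=1$ forces $\nwt_A(\gamma)=0$: otherwise, $v(Q(0))$, which does not depend on $\phi$, could not equal $v_{A^\phi}(\gamma)=v^{\ev}_A(\gamma)+\nwt_A(\gamma)\cdot v\phi$ eventually. Hence $\gamma\notin\exc^{\ev}(A)$ and $v(Q(0))=v^{\ev}_A(\gamma)$. Second, the hypothesis $v(b)\notin\exc^{\ev}_{\hat K}(A)$ gives $\dwt_{A^\phi}(v(b))=0$ eventually in $\hat K$, which identifies $A(b)$ as the dominant-valuation $\derdelta^0$-coefficient of the operator $A^\phi b$, so $v(A(b))=v_{A^\phi}(v(b))=v^{\ev}_A(v(b))$ eventually. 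Since $K$ is $\upo$-free, Corollary~\ref{cor1524} makes $A$ asymptotically surjective, so $v^{\ev}_A$ is strictly increasing on $\Gamma\setminus\exc^{\ev}(A)$; the cofinality of $\Psi$ in $\Psi_{\hat K}$ together with Corollary~\ref{cor:13.7.10} (after passing, if needed, to an ungrounded $H$-asymptotic extension of $\hat K$ with asymptotic integration) transfers this monotonicity to the relevant elements of $\Gamma_{\hat K}$. Thus $v(b)<\gamma$ yields $v(A(b))<v(Q(0))$, and the desired contradiction will follow from $Q(0)+A(b)+Q_{>1}(b)=0$ once we show $v(Q_{>1}(b))>v(A(b))$, for dominant balance then forces $v(Q(0))=v(A(b))$.

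For that last bound we exploit $b\prec 1$. For each homogeneous component $Q_i$ of $Q$ with $i\geq 2$, passing to sufficiently small active $\phi$ and using flat-asymptotic estimates on evaluations of differential polynomials at small elements (in the spirit of Lemma~\ref{lem:diff operator at small elt}) gives $v(Q_i^\phi(b))\geq v(Q_i^\phi)+iv(b)$ up to slowly varying correction; combined with $v(Q_i^\phi)\geq v(Q^\phi)=v(A^\phi)$ eventually (from $\nval Q^\phi=1$ eventually), this yields $v(Q_i^\phi(b))-v(A(b))\geq (i-1)v(b)-o(v(b))>0$. Summing gives $v(Q_{>1}(b))>v(A(b))$, closing the main case. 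The same estimate, with $Q(0)=0$, rules out $b\neq 0$ when $P(a)=0$.

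The main obstacle is precisely this uniform bound on $v(Q_{>1}(b))$. When $\ndeg Q>1$ several $Q_i^\phi$ ($i\geq 2$) can be comparable in valuation to $A^\phi$ eventually, so the required strictness has to come from the extra factor $v(b)>0$ supplied by $b\prec 1$, carefully tracked in the $\phi$-conjugated (and, if needed, flattened) setting to handle the possibility that $v(b)\in\Gamma_{\hat K}\setminus\Gamma$. Once this estimate is in hand, the strict monotonicity of $v^{\ev}_A$ extracted in the previous paragraph closes the argument.
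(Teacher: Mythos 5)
Your central estimate is the same as the paper's: from $\nval P_{+a}=1$ and $b:=\hat a-a\prec 1$ you get, for $j\ge 2$ and eventually in $\phi$, that $(P_{+a,1}^\phi)_{\times b}\succ (P_{+a,j}^\phi)_{\times b}\succeq P_{+a,j}(b)$ via [ADH, 6.1.3] and $P(g)\preceq P_{\times g}$, while $v(b)\notin\exc^{\ev}_{\hat K}(A)$ gives $A(b)\asymp A^\phi b$; dominant balance in $0=P(a)+A(b)+\sum_{j\ge2}P_{+a,j}(b)$ then forces $v\big(P(a)\big)=v_{A^\phi}(v(b))$ eventually. Up to here you are reproving exactly the paper's computation (your appeal to Lemma~\ref{lem:diff operator at small elt} is not quite the right tool, but the 6.1.3-based bound you actually use is).

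The gap is in your endgame. Having $v\big(P(a)\big)=v_{A^\phi}(v(b))$ eventually, you still must compare $v(b)$ with $\gamma=v^{\ev}(P,a)$, and your justification of strict monotonicity of $v^{\ev}_A$ at the point $v(b)$ does not go through as cited: Corollary~\ref{cor1524} gives asymptotic surjectivity of $A$ over $K$, hence monotonicity of $v^{\ev}_A$ only on $\Gamma\setminus\exc^{\ev}(A)$, and Corollary~\ref{cor:13.7.10} transfers the \emph{values} $\nwt_A(\gamma)$, $v^{\ev}_A(\gamma)$ only for $\gamma\in\Gamma$; neither says anything about $v(b)\in\Gamma_{\hat K}\setminus\Gamma$. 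You cannot apply Lemma~\ref{lem:ADH 14.2.7} or Corollary~\ref{cor1524} to $\hat K$ itself, since $\hat K$ is only assumed $H$-asymptotic with $\Psi$ cofinal in $\Psi_{\hat K}$: it need not be $\upo$-free nor have asymptotic integration (e.g.\ it may be an immediate extension containing a pseudolimit of $(\upo_\rho)$). Your parenthetical ``pass to an extension of $\hat K$ with asymptotic integration'' is the right instinct but is left unconstructed and unverified: one would have to produce, say, an $\upo$-free extension of $\hat K$ (via [ADH, 11.7.18]) and then recheck that $v(b)$ stays non-exceptional (Lemma~\ref{lemexc}), that $v^{\ev}_A(\gamma)=v\big(P(a)\big)$ and $\gamma\notin\exc^{\ev}$ persist (Corollary~\ref{cor:13.7.10}), and that $\nval P_{+a}=1$ persists (Lemma~\ref{lem:13.6.12}). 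The paper avoids all of this by going \emph{down} rather than up: after arranging $a=0$ it replaces $K$ by $K\langle\hat a\rangle$, which is a pre-$\d$-valued $\d$-algebraic $H$-asymptotic extension of the $\upo$-free $K$ and hence itself $\upo$-free by Theorem~\ref{thm:ADH 13.6.1}, with newton position and $v^{\ev}(P,a)$ unchanged; then $v(\hat a)$ lies in the value group of the ground field, $\hat a$ can serve as the witness $g$ in the definition of $v^{\ev}(P,0)$, and no monotonicity transfer is needed. Either adopt that reduction or carry out the extension-and-persistence argument in detail; as written, the monotonicity step is unjustified.
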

\begin{proof} Note that $\hat K$ is ungrounded, so $\exc_{\hat K}^{\ev}(A)$ is defined, and $\hat{K}$ is pre-$\d$-valued. 
As in the proof of Lemma~\ref{lem:14.3.1} we arrange $a=0$. As an asymptotic subfield  of~$\hat K$, $K\langle \hat a\rangle$
is pre-$\d$-valued. Hence $K\langle \hat a \rangle$ is $\upo$-free by Theorem~\ref{thm:ADH 13.6.1}.
 The remarks
preceding  Lemma~\ref{lem:14.3.1} then allow us to replace~$K$ by $K\langle \hat a\rangle$  to arrange $\hat a\in K$.
The case $\hat a=0$ is trivial, so assume $0\ne \hat a\prec 1$. Now $\nval P=1$ gives for $j\ge 2$ that $P_1^\phi\succeq P_j^\phi$, eventually, hence
$(P_1^\phi)_{\times \hat a}\succ (P_j^\phi)_{\times \hat a}$, eventually, by~[ADH, 6.1.3]. Moreover, $P_1(\hat a)=A(\hat a)= A^\phi(\hat a)\asymp A^\phi \hat a$, eventually, using $v(\hat a)\notin \exc^{\ev}_{\hat K}(A)$ in the last step, so for $j\ge 2$, eventually
$$P_1(\hat a)\ \asymp\ (P_1^\phi)_{\times \hat a}\ \succ\ (P_j^\phi)_{\times \hat a}\ \succeq P_j^\phi(\hat a)\ =\ P_j(\hat a). $$
Also $P_1(\hat a)\ne 0$, since $A^\phi\hat a \ne 0$. Then $P(\hat a)=0$
gives $P(0)\asymp P_1(\hat a)$. 
Thus $v\big(P(0)\big)=v_{A^\phi}\big(v(\hat a)\big)$, eventually, so $v^{\ev}(P,0)=v(\hat a)$ by the definition of $v^{\ev}(P,0)$. 
\end{proof}

\begin{cor}
Suppose $\hat K$  is ungrounded and equipped with an ordering making it a pre-$H$-field, and assume $\hat a-a \prec 1$ and $v(\hat a-a) \notin \exc^{\ev}_{\hat K}(A)$ where $A:=L_{P_{+a}}$. Then $v(\hat a-a)=v^{\ev}(P,a)$.
\end{cor}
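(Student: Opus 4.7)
The strategy is to reduce the Corollary to Lemma~\ref{lem:14.3 complement} by replacing both $K$ and $\hat K$ with the intermediate field $L := K\langle\hat a\rangle \subseteq \hat K$. The hypothesis ``$\Psi$ cofinal in $\Psi_{\hat K}$'' in that Lemma becomes trivial after collapsing $\hat K$ down to $L$, and the pre-$H$-field assumption on $\hat K$ provides exactly the structure needed to justify this collapse.

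First I would observe that $L$, as an ordered valued differential subfield of the pre-$H$-field $\hat K$, is itself a pre-$H$-field, and in particular is pre-$\d$-valued and $H$-asymptotic. Since $L$ is a pre-$\d$-valued, $\d$-algebraic, $H$-asymptotic extension of the $\upo$-free field $K$, Theorem~\ref{thm:ADH 13.6.1} applies and gives that $L$ is itself $\upo$-free (so in particular ungrounded and $\upl$-free), and that $\Gamma^<$ is cofinal in $\Gamma_L^<$.

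Next I would transfer the non-exceptional-value condition from $\hat K$ down to $L$. The operator $A=L_{P_{+a}}$ has coefficients in $K \subseteq L$, so I apply Corollary~\ref{cor:13.7.10} to the $\upl$-free field $L$ and its ungrounded $H$-asymptotic extension $\hat K$: this yields $\exc^{\ev}_L(A) = \exc^{\ev}_{\hat K}(A) \cap \Gamma_L$. Since $v(\hat a-a) \in \Gamma_L$ and $v(\hat a-a) \notin \exc^{\ev}_{\hat K}(A)$ by hypothesis, we conclude $v(\hat a-a) \notin \exc^{\ev}_L(A)$.

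Finally, by the remarks preceding Lemma~\ref{lem:14.3.1} (which rest on Lemma~\ref{lem:13.6.12}), $P$ remains in newton position at $a$ when viewed over the $\upo$-free extension $L$, and $v^{\ev}_L(P,a) = v^{\ev}(P,a)$. Now apply Lemma~\ref{lem:14.3 complement} with $L$ taking the role of both the base field $K$ and the ambient extension $\hat K$: the hypothesis ``$\Psi$ cofinal in $\Psi_{\hat K}$'' reduces to $\Psi_L\subseteq\Psi_L$ and is trivial, while $\hat a-a\prec 1$ and $v(\hat a-a)\notin\exc^{\ev}_L(A)$ are already verified. The Lemma then delivers $v(\hat a-a) = v^{\ev}_L(P,a) = v^{\ev}(P,a)$, as required.

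The main subtlety is comparing exceptional value sets of $A$ computed in the intermediate $L$ versus the ambient $\hat K$; this is what requires Corollary~\ref{cor:13.7.10} rather than merely Lemma~\ref{lemexc}, and it is here that we crucially use the $\upl$-freeness of $L$ --- itself a consequence of the pre-$H$-field assumption on $\hat K$ via Theorem~\ref{thm:ADH 13.6.1}. Once this identification is in hand, the reduction to Lemma~\ref{lem:14.3 complement} is purely formal.
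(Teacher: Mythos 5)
Your argument is correct, and each step checks out: $K\langle\hat a\rangle$ is an ordered valued differential subfield of the pre-$H$-field $\hat K$, hence pre-$\d$-valued and $H$-asymptotic, so Theorem~\ref{thm:ADH 13.6.1} makes it $\upo$-free; Corollary~\ref{cor:13.7.10} (and not merely Lemma~\ref{lemexc}, as you rightly note, since the latter gives the inclusion in the unhelpful direction without cofinality) yields $\exc^{\ev}_{K\langle\hat a\rangle}(A)=\exc^{\ev}_{\hat K}(A)\cap\Gamma_{K\langle\hat a\rangle}$; and the remark before Lemma~\ref{lem:14.3.1} (via Lemma~\ref{lem:13.6.12}) lets you pass the newton position and the value $v^{\ev}(P,a)$ to the $\upo$-free extension, so that Lemma~\ref{lem:14.3 complement} applied with $K\langle\hat a\rangle$ as both base and ambient field (where the cofinality hypothesis is vacuous) finishes the proof. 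However, this is a genuinely different route from the paper's. The paper goes upward instead of downward: it first extends $\hat K$ to an $\upo$-free newtonian Liouville closed $H$-field (using the ordering and [ADH, 14.5.11], with Lemma~\ref{lemexc} preserving the non-exceptional-value hypothesis under the extension), then takes the Newton--Liouville closure $L$ of the real closure of the $H$-field hull of $K\langle\hat a\rangle$ inside $\hat K$, transfers the hypothesis down to $L$ by elementarity $L\preccurlyeq\hat K$ [ADH, 16.2.5], gets $\Psi$ cofinal in $\Psi_L$ from $\d$-algebraicity of $L$ over $K$ [ADH, 14.5.9] and Theorem~\ref{thm:ADH 13.6.1}, and applies Lemma~\ref{lem:14.3 complement} with the original $K$ still as base (so no invariance of $v^{\ev}(P,a)$ is needed). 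Your version is lighter: it avoids newtonization, Liouville closures, and the model completeness underlying $L\preccurlyeq\hat K$, trading them for the invariance results of Section~\ref{sec:complements newton}; it also reveals that the ordering on $\hat K$ is barely used, since pre-$\d$-valuedness of $K\langle\hat a\rangle$ already follows from $\hat K$ being ungrounded and $H$-asymptotic, as in the proof of Lemma~\ref{lem:14.3 complement} itself. The paper's route, on the other hand, keeps the base field fixed and exemplifies the elementary-extension technique it uses elsewhere.
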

\begin{proof}
In view of Lemma~\ref{lemexc} and using [ADH, 14.5.11] we can extend $\hat K$ to arrange that it is an $\upo$-free newtonian Liouville closed $H$-field. Next, let $H$ be the real closure of the $H$-field hull of $K\langle \hat a\rangle$, all inside~$\hat K$. Then $H$ is $\upo$-free, by Theorem~\ref{thm:ADH 13.6.1}, and hence has a Newton-Liouville closure $L$ inside~$\hat K$~[ADH, 14.5]. Since $L\preccurlyeq \hat K$ by [ADH, 16.2.5], we have~$v({\hat a-a}) \notin \exc^{\ev}_{L}(A)$. 
Now $L$ is $\d$-algebraic over $K$ by [ADH, 14.5.9], so $\Psi$ is cofinal in $\Psi_{L}$ by  Theorem~\ref{thm:ADH 13.6.1}. It remains to apply Lemma~\ref{lem:14.3 complement}. 
\end{proof}

\subsection*{Newton position in the order $1$ case\astr} 
{\it In this subsection $K$ is assumed to be $\upl$-free, $P\in K\{Y\}$ has order~$1$, and $a\in K$.}\/ We basically copy here a definition and two lemmas from~[ADH, 14.3] with the $\upo$-free assumption there replaced by the weaker
$\upl$-freeness, at the cost of restricting $P$ to have order $1$. 

{\sloppy
Suppose $\nval P=1$, $P_0\neq 0$. Then  [ADH, 11.6.17] yields $g\in K^\times$ such that~${P_0\asymp P^\phi_{1,\times g}}$, eventually. Since $P_0\prec P_1^\phi$, eventually, we have $g\prec 1$. 
Moreover, if~${i\geq 2}$, then $P_1^\phi\succeq P_i^\phi$, eventually, hence
$P^\phi_{1,\times g} \succ P^\phi_{i,\times g}$, eventually. Therefore~$\ndeg P_{\times g}=1$.}

Define $P$ to be in {\bf newton position at $a$} if $\nval P_{+a}=1$. \index{differential polynomial!newton position} \index{newton!position} Suppose $P$ is in newton position at $a$; set $Q:=P_{+a}$, so $Q(0)=P(a)$.
If $P(a)\neq 0$, then the above yields $g\in K^\times$ with
$P(a)=Q(0)\asymp Q^\phi_{1,\times g}$, eventually; as $vg$ does not depend on
the choice of such $g$, we set $v^{\ev}(P,a):=vg$. If $P(a)=0$, then we set~$v^{\ev}(P,a):=\infty\in\Gamma_\infty$.
 In passing from $K$ to  a $\upl$-free extension, $P$  remains in newton position at $a$ and~$v^{\ev}(P,a)$ does not change, by Lemma~\ref{lem:11.2.13 invariant}. {\em In the rest of this subsection we assume $P$ is in newton position at $a$}.

\begin{lemma}\label{newt4, order 1}
If $P(a)\ne 0$, then  there exists~$b\in K$ with the following properties: \begin{enumerate}
\item[\textup{(i)}] $P$ is in newton position at $b$, $v(a-b)= v^{\ev}(P,a)$, and $P(b)\prec P(a)$; 
\item[\textup{(ii)}] for all $b^*\in K$ with $v(a-b^*)\ge v^{\ev}(P,a)$: $\ P(b^*)\prec P(a)\Leftrightarrow a-b\sim a-b^*$;
\item[\textup{(iii)}] for all $b^*\in K$, if $a-b\sim a-b^*$, then $P$ is in newton position at $b^*$ and~$v^{\ev}(P,b^*)> v^{\ev}(P,a)$.
\end{enumerate}
\end{lemma}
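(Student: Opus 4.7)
The plan is to mirror the strategy behind [ADH, 14.3.2], but with the $\upo$-free hypothesis weakened to $\upl$-freeness by exploiting the restriction to $\order P \leq 1$, so that Lemma~\ref{lem:slow} and Proposition~\ref{prop:slow} supply what the $\upo$-free setting of [ADH, 14.3] extracts from sharper Newton-polygon information. Replacing $P$ by $P_{+a}$ (which preserves newton position and respects the translation $b \leadsto b - a$), reduce to the case $a = 0$. Then $P_0 := P(0) \neq 0$, $\nval P = 1$, and $\gamma := v^{\ev}(P,0) \in \Gamma$ is characterized by the existence of $g \in K^\times$ with $vg = \gamma$ and $P_0 \asymp P^\phi_{1,\times g}$ eventually. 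With $A := L_P \in K[\der]$ of order $\leq 1$, this characterization translates into $\gamma \notin \exc^{\ev}(A)$ and $v^{\ev}_A(\gamma) = v(P_0)$.

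\textbf{Constructing $b$.} The core step is to exhibit $b \in K$ with $vb = \gamma$ and $A(b) \sim -P_0$: then $P(b) = P_0 + A(b) + P_{\geq 2}(b)$ will satisfy $P(b) \prec P_0$, because the first two terms cancel to leading order, while eventually $P_i^\phi(b) \preceq P^\phi_{i,\times g} \prec P^\phi_{1,\times g} \asymp P_0$ for $i \geq 2$ (by [ADH, 6.1.3] together with the dominance of the degree-one part that $\nval P = 1$ forces). Apply Lemma~\ref{lem:slow} to find $y \in K^\times$ with $A(y) \asymp P_0$ and $vy \notin \exc^{\ev}(A)$; by Lemma~\ref{lem:ADH 14.2.7} this gives $v^{\ev}_A(vy) = v(P_0) = v^{\ev}_A(\gamma)$, so the strict monotonicity in Proposition~\ref{prop:slow} forces $vy = \gamma$. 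Write $A(y) = -u P_0$ with $u \in K^\times$, $u \asymp 1$. Using the first-order product rule $A(fz) = f\, A(z) + a_1 f' z$ for $A = a_0 + a_1 \der$, the candidate $b := y/u$ satisfies $A(b) = -P_0 - a_1 (u'/u^2) y$; after passing to $K^\phi$ for $\phi$ active and sufficiently small (which leaves all relevant Newton-style data invariant by Lemma~\ref{lem:11.2.13 invariant} and Corollary~\ref{cor:13.7.10}), the correction term is strictly smaller than $P_0$, giving $A(b) \sim -P_0$.

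\textbf{Verifying (i)--(iii).} Conclusion (i) is now immediate: $v(a - b) = vy = \gamma = v^{\ev}(P,a)$, $P(b) \prec P(a)$, and newton position at $b$ follows from [ADH, 11.2.3(i)]. For (ii), expand $P(b^*) = P_0 + A(b^* - a) + P_{\geq 2}(b^* - a)$ for $b^* \in K$ with $v(a - b^*) \geq \gamma$; the higher-degree contribution is eventually $\prec P_0$ as in Step 2, so $P(b^*) \prec P(a) \Leftrightarrow A(b^* - a) \sim -P_0$. If $v(a - b^*) > \gamma$, Lemma~\ref{lem:ADH 14.2.7} forces $v(A(b^* - a)) > v(P_0)$, so $P(b^*) \sim P_0 \not\prec P(a)$ and also $a - b^* \not\sim a - b$; if $v(a - b^*) = \gamma$, the slow-variation argument from Step 2 reduces $A(b^* - a) \sim -P_0$ to $b^* - a \sim b - a$, which is the required biconditional. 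For (iii), if $a - b^* \sim a - b$, then by (ii), $P(b^*) \prec P_0$, so $P$ is in newton position at $b^*$, and the linear operator attached to $P_{+b^*}$ agrees with $A$ up to strictly lower-order corrections; the defining relation $v^{\ev}_A(v^{\ev}(P, b^*)) = v(P(b^*)) > v(P_0) = v^{\ev}_A(\gamma)$, combined again with strict monotonicity of $v^{\ev}_A$ off $\exc^{\ev}(A)$, yields $v^{\ev}(P, b^*) > \gamma$.

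\textbf{Main obstacle.} The principal subtlety is arranging $A(b) \sim -P_0$ rather than merely $A(b) \asymp P_0$: under $\upl$-freeness alone we do not have the full cokernel control enjoyed by $\upo$-free or linearly newtonian fields, so we must compensate via the product-rule correction and a careful choice of $\phi$ making the correction truly negligible. Once this is secured, parts (ii) and (iii) follow by routine valuation estimates using slow variation of $\psi_A$.
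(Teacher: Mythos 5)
Your route—running everything through $A:=L_P$, $v^{\ev}_A$, $\exc^{\ev}(A)$, Lemma~\ref{lem:slow} and Proposition~\ref{prop:slow}—differs from the paper's, which simply repeats the proof of [ADH, 14.3.2] for $P_{\times g}$ using the order-$1$ substitutes set up just before the lemma (ADH 11.6.17 and Lemma~\ref{lem:11.2.13 invariant}). Your route can be completed, but as written it has genuine gaps. First, in the construction of $b$: the correction term $a_1(1/u)'y$ is a fixed element of $K$, so ``passing to $K^\phi$ for sufficiently small $\phi$'' does not shrink it; what is needed is the pair of estimates $(1/u)'\prec\phi$ (pre-$\d$-valuedness, $u\asymp 1$) and $a_1\phi y\preceq A(y)$ eventually, the latter coming from $vy\notin\exc^{\ev}(A)$ via the operator identity $A^\phi y=A(y)+a_1\phi y\,\derdelta$, so that $\min\{v(A(y)),v(a_1\phi y)\}=v_{A^\phi}(vy)=v^{\ev}_A(vy)=v(A(y))$ eventually. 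You assert neither, and the mechanism you do name is not the one doing the work. Second, in (ii) you invoke Lemma~\ref{lem:ADH 14.2.7} to get $A(w)\prec P_0$ for $v(w)>\gamma$, but that lemma applies only when $v(w)\notin\exc^{\ev}(A)$; the (unique) exceptional value of $A$ may lie above $\gamma$, so this case needs a separate argument (for instance: $A(w)\preceq A^\phi w$ and $v(A^\phi w)=v(A^\phi g)+\big(v(w)-\gamma\big)+o\big(v(w)-\gamma\big)>v(P_0)$ eventually, by [ADH, 6.1.3], which in fact covers all $w$ with $v(w)>\gamma$ at once).

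Third, and most seriously, in (iii) the defining relation is $v^{\ev}_{A^*}\big(v^{\ev}(P,b^*)\big)=v\big(P(b^*)\big)$ with $A^*:=L_{P_{+b^*}}$, not with $A$. Replacing $A^*$ by $A$ is not licensed by ``agrees with $A$ up to strictly lower-order corrections'': a perturbation $B\prec A$ can alter $v^{\ev}$ and $\exc^{\ev}$ at valuations far from where the perturbation is small (compare $\der$ with $\der+\epsilon$), and the unknown point $\gamma^*=v^{\ev}(P,b^*)$ is precisely such a point. The repair is to compare at $\gamma$ itself: since $A^*-A=L_{(P_{\geq 2})_{+b^*}}$ and $b^*\asymp g$, one gets $(A^*-A)^\phi g\preceq (P_{\geq 2})^\phi_{\times g}\prec (P_1^\phi)_{\times g}\asymp P_0\asymp A^\phi g$ eventually, whence $\gamma\notin\exc^{\ev}(A^*)$ and $v^{\ev}_{A^*}(\gamma)=v(P_0)$, and then strict monotonicity of $v^{\ev}_{A^*}$ off $\exc^{\ev}(A^*)$ gives $\gamma^*>\gamma$. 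Also, ``$P$ in newton position at $b^*$'' follows from $b^*-a\prec 1$ together with $\nval P_{+a}=1$ (ADH 11.2.3), not from $P(b^*)\prec P(a)$; and Lemma~\ref{lem:slow} is stated for monic $\der-g$, so applying it to $L_P$ of order $\leq 1$ needs the (easy) reduction. With these repairs your argument goes through, but in its present form the three key steps are asserted rather than proved, and two of them are supported by reasons that are not the actual reasons.
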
 

\noindent
This is shown as in [ADH, 14.3.2]. Next an analogue of [ADH, 14.3.3], 
with the same proof, but using Lemma~\ref{newt4, order 1} in place of [ADH, 14.3.2]:

\begin{lemma}\label{newt.imm, order 1} If there is no $b$ with~${P(b)=0}$ and 
$v(a-b) = v^{\ev}(P,a)$, then there is a divergent pc-sequence~$(a_\rho)_{\rho<\lambda}$ in~$K$, indexed by all ordinals $\rho$ smaller than 
some infinite limit ordinal $\lambda$, such that~$a_0=a$, $v(a_\rho-a_{\rho'})=v^{\ev}(P,a_\rho)$ for all $\rho<\rho'<\lambda$, and $P(a_\rho) \leadsto 0$.
\end{lemma}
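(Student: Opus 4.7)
\smallskip
\noindent
The plan is to mimic the proof of [ADH,~14.3.3] by building $(a_\rho)$ via transfinite recursion, with successor steps handled by Lemma~\ref{newt4, order 1}(i),(iii) and limit steps by taking pseudo-limits in $K$ when available.

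\smallskip
\noindent
Start with $a_0:=a$; the hypothesis of the lemma rules out $P(a)=0$ (otherwise $b:=a$ would satisfy $P(b)=0$ and $v(a-b)=\infty=v^{\ev}(P,a)$, contradicting it). Given $a_\rho\in K$ at which $P$ is in newton position with $P(a_\rho)\ne 0$, apply Lemma~\ref{newt4, order 1}(i),(iii) to obtain $a_{\rho+1}\in K$ with $P$ in newton position at $a_{\rho+1}$, $v(a_\rho-a_{\rho+1})=v^{\ev}(P,a_\rho)$, $P(a_{\rho+1})\prec P(a_\rho)$, and $v^{\ev}(P,a_{\rho+1})>v^{\ev}(P,a_\rho)$. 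Moreover, were $P(a_{\rho+1})=0$, then $v(a-a_{\rho+1})=v^{\ev}(P,a)$ (by the next paragraph's identity applied with $0,\rho+1$ in the roles of $\sigma,\sigma'$), again contradicting the hypothesis; thus $P(a_{\rho+1})\ne 0$ and the recursion survives at successors. An easy induction shows $v(a_\sigma-a_{\sigma'})=v^{\ev}(P,a_\sigma)$ for all $\sigma<\sigma'$, so $(a_\rho)$ is a pc-sequence with strictly increasing widths, and $P(a_\rho)\leadsto 0$ because $v\big(P(a_\rho)\big)$ is strictly increasing.

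\smallskip
\noindent
At a limit ordinal $\rho$, attempt to continue: if some pseudo-limit $c\in K$ of $(a_\sigma)_{\sigma<\rho}$ exists, set $a_\rho:=c$. Ultrametric arithmetic and the monotonicity of the widths force $v(a_\sigma-c)=v^{\ev}(P,a_\sigma)$ for every $\sigma<\rho$; in particular $v(a-c)=v^{\ev}(P,a)$, so the hypothesis gives $P(c)\ne 0$. Also, for each $\sigma<\rho$, $a_\sigma-a_{\sigma+1}\sim a_\sigma-c$ (both have valuation $v^{\ev}(P,a_\sigma)$, and their difference $a_{\sigma+1}-c$ has strictly larger valuation), so Lemma~\ref{newt4, order 1}(iii) applied to $a_\sigma,a_{\sigma+1},c$ in the roles of $a,b,b^\ast$ gives that $P$ is in newton position at $c$ and $v^{\ev}(P,c)>v^{\ev}(P,a_\sigma)$. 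Hence the recursion can indeed be continued from $a_\rho$. Otherwise, no pseudo-limit of $(a_\sigma)_{\sigma<\rho}$ exists in $K$; we halt, set $\lambda:=\rho$, and the sequence $(a_\sigma)_{\sigma<\lambda}$ is the desired divergent pc-sequence.

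\smallskip
\noindent
Since $K$ is a set, the recursion cannot run through all ordinals, and by the above no successor-stage obstruction arises, so it must halt at some limit ordinal $\lambda\ge\omega$ where $(a_\rho)_{\rho<\lambda}$ has no pseudo-limit in $K$. The main technical obstacle is the limit step: ensuring that a pseudo-limit in $K$ (when one exists) both lies at newton position with respect to $P$ and avoids being a zero of $P$; once Lemma~\ref{newt4, order 1}(iii) is in hand this reduces to bookkeeping with the strictly increasing sequence $\big(v^{\ev}(P,a_\sigma)\big)$.
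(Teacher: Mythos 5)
Your proof is correct and follows exactly the approach the paper intends: the paper's own proof is just the one-line remark that this ``is shown as in [ADH, 14.3.3], with the same proof, but using Lemma~\ref{newt4, order 1} in place of [ADH, 14.3.2]'', and your transfinite recursion is precisely that argument. The only bookkeeping item left implicit is that $P(a_\rho)\prec P(a_\sigma)$ for $\sigma<\rho$ also holds when $\rho<\lambda$ is a limit ordinal (needed for $P(a_\rho)\leadsto 0$); this follows by applying Lemma~\ref{newt4, order 1}(ii) to $a_\sigma,a_{\sigma+1},a_\rho$ in the roles of $a,b,b^*$, exactly as you already apply part~(iii) at the same step.
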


\noindent
The next result is proved just like Lemma~\ref{lem:14.3.1}:

{\sloppy
\begin{lemma}\label{lem:14.3.1, order 1} If $P(\hat a)=0$ with~$\hat a$ in an $H$-asymptotic extension of $K$, then~$v^{\ev}(P,a)>0$ and $v(\hat a-a) \leq v^{\ev}(P,a)$.
\end{lemma}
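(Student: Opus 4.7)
My plan is to mimic the proof of Lemma~\ref{lem:14.3.1} step by step, replacing appeals to $\upo$-freeness with consequences of $\upl$-freeness available for differential polynomials of order $1$ (in particular Lemma~\ref{lem:11.2.13 invariant} and the asymptotic bookkeeping around newton position discussed just before the lemma).

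First I would dispose of the trivial case $P(a)=0$, in which $v^{\ev}(P,a)=\infty$ and both assertions are vacuous. Assuming~${P(a)\ne 0}$, I replace $P$, $\hat a$, $a$ by $P_{+a}$, $\hat a-a$, $0$: this is legal because the remarks preceding the lemma record that $P_{+a}$ is in newton position at $0$ with $v^{\ev}(P_{+a},0)=v^{\ev}(P,a)$, and obviously $P_{+a}(\hat a-a)=0$. So it suffices to show that $\gamma:=v^{\ev}(P,0)\in\Gamma$ satisfies $\gamma>0$ and $v(\hat a)\le\gamma$.

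Next, by the definition of $v^{\ev}(P,0)$ in the order-$1$ setting preceding the lemma, I pick $g\in K^\times$ with~$vg=\gamma$ such that $P(0)\asymp P_{1,\times g}^\phi$ eventually. Since $P$ is in newton position at $0$, i.e.\ $\nval P=1$, we have $\dval P^\phi=1$ eventually, which forces $P(0)=P^\phi_0\prec P^\phi\asymp P_1^\phi$ eventually; combined with $P(0)\asymp P_{1,\times g}^\phi$ this yields $g\prec 1$, so $\gamma>0$. Moreover, for each $j\ge 2$ the inequality $P_j^\phi\preceq P^\phi\asymp P_1^\phi$ holds eventually (just from the definition of $\ddeg$ and $\nval P=1$), so by~[ADH,~6.1.3] applied with $g\prec 1$ we obtain $(P_{1}^\phi)_{\times g}\succ (P_{j}^\phi)_{\times g}$ eventually. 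Combining the two bounds gives $(P_{\times g}^\phi)_j=(P_j^\phi)_{\times g}\preceq P(0)$ for every $j\ge 1$, eventually.

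To conclude, I argue that no element $y\prec 1$ in any $H$-asymptotic extension of $K$ can satisfy $P_{\times g}(y)=0$. Fix $\phi$ active in $K$ (hence active in $\hat K$) for which all the above estimates hold; then $K^\phi$, and so $\hat K^\phi$, have small derivation. For $y\prec 1$ in $\hat K$ and $j\ge 1$, the homogeneous part $(P_{\times g}^\phi)_j$ of positive degree evaluated at $y$ satisfies $(P_{\times g}^\phi)_j(y)\prec (P_{\times g}^\phi)_j\preceq P(0)$, so $\sum_{j\ge 1}(P_{\times g}^\phi)_j(y)\prec P(0)$, whence $P_{\times g}(y)=P_{\times g}^\phi(y)\sim P(0)\ne 0$. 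Since $P_{\times g}(\hat a/g)=P(\hat a)=0$, this forces $\hat a/g\not\prec 1$, i.e.\ $v(\hat a)\le vg=\gamma$, as required.

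The main obstacle is the last step, where one transfers the eventual estimates from $K^\phi$ into $\hat K^\phi$; this requires only that $\phi$, chosen active in $K$, remains active in $\hat K$ (automatic, since elements witnessing activeness lie in $K$) and that smallness of the derivation of $K^\phi$ propagates to $\hat K^\phi$ — precisely the setting in which the homogeneous-of-positive-degree bound $Q(y)\prec Q$ for $y\prec 1$ holds. The $\upo$-free vs.\ $\upl$-free distinction, which was the essential difference from Lemma~\ref{lem:14.3.1}, is bypassed because $P$ is restricted to order $1$, so the asymptotic behavior of~$P^\phi$ needed here is already governed by Lemma~\ref{lem:11.2.13 invariant}.
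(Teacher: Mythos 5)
Your proposal is correct and reproduces the paper's argument: the paper states that Lemma~\ref{lem:14.3.1, order 1} ``is proved just like Lemma~\ref{lem:14.3.1}'', and your proof is precisely that argument (reduce to $a=0$, show $g\prec 1$, bound $(P^\phi_j)_{\times g}\preceq P(0)$ for $j\ge 1$, then rule out any root $y\prec 1$ in an $H$-asymptotic extension), with the only difference being that you spell out in detail the final ``in particular'' step. The observation that the order-$1$ hypothesis and Lemma~\ref{lem:11.2.13 invariant} substitute for $\upo$-freeness is exactly what makes the transplant legitimate.
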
}

\noindent
Next an analogue of Lemma~\ref{lem:14.3 complement} using Propositions~\ref{prop:rat as int and cofinality} and~\ref{prop:upl-free and as int} in its proof: 

{\sloppy
\begin{lemma}\label{lem:14.3 complement, order 1} 
Suppose $\hat a$ in an ungrounded $H$-asymptotic extension  $\hat{K}$ of $K$ satisfies $P(\hat a)=0$, $\hat a - a\prec 1$, and
 $v(\hat a-a)\notin\exc^{\ev}_{\hat K}(A)$, where $A:=L_{P_{+a}}$.  Then~$v({\hat a-a})  =  v^{\ev}(P,a)$. 
\end{lemma}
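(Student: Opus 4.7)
\medskip
\noindent
The plan is to mimic the proof of Lemma~\ref{lem:14.3 complement} more or less line by line, replacing each appeal to the $\upo$-freeness-based invariance of Newton quantities by its $\upl$-freeness-based counterpart for order-$1$ differential polynomials. First, since $\hat K$ is ungrounded and $H$-asymptotic, it is pre-$\d$-valued and $\exc^{\ev}_{\hat K}(A)$ is well-defined. As in the proof of Lemma~\ref{lem:14.3 complement}, I would replace $P$, $\hat a$, $a$ by $P_{+a}$, $\hat a-a$, $0$ and rename, reducing to the case $a=0$; the hypothesis that $P$ is in newton position at $a$ translates to $\nval P=1$, the hypothesis $v(\hat a-a)\notin\exc^{\ev}_{\hat K}(A)$ becomes $v(\hat a)\notin\exc^{\ev}_{\hat K}(A)$ with $A=L_P\in K[\der]^{\ne}$, and the desired conclusion becomes $v(\hat a)=v^{\ev}(P,0)$.

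Next I would pass to the asymptotic subfield $K\langle \hat a\rangle\subseteq \hat K$. Because $\order P\le 1$, this extension has transcendence degree $\le 1$ over $K$, so by Proposition~\ref{prop:upl-free and as int} it has asymptotic integration, and in particular is an ungrounded $H$-asymptotic extension of $K$. Lemma~\ref{lem:11.2.13 invariant} (which uses only the rational asymptotic integration of $K$) then guarantees that $\nval P=1$, the element $v^{\ev}(P)\in\Gamma$, and (via the same $w$, $A$, $\phi_0$) the value $v^{\ev}(P,0)$ are the same when computed in $K\langle\hat a\rangle$ or in $\hat K$ instead of $K$. Similarly, Corollary~\ref{cor:13.7.10} yields $\nwt_A(v\hat a)=0$ and that $v_A^{\ev}$ evaluated at elements of $\Gamma$ agrees in $K$ and in $\hat K$. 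Thus, in what follows, I may compute freely in $\hat K$ and transfer the result back to~$K$.

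Having set this up, the core computation proceeds exactly as in the proof of Lemma~\ref{lem:14.3 complement}. The case $\hat a=0$ is trivial, so assume $0\ne \hat a\prec 1$. From $\nval P=1$, eventually $P_1^\phi\succeq P_j^\phi$ for all $j$; combined with $\hat a\prec 1$ and [ADH, 6.1.3] this yields $(P_1^\phi)_{\times\hat a}\succ (P_j^\phi)_{\times\hat a}$ eventually, for each $j\ge 2$. Since $v(\hat a)\notin\exc^{\ev}_{\hat K}(A)$, Lemma~\ref{lem:ADH 14.2.7} applied in $\hat K$ gives $A(\hat a)=A^\phi(\hat a)\asymp A^\phi\hat a$ eventually; in particular $A(\hat a)\ne 0$, so $P_1(\hat a)\asymp (P_1^\phi)_{\times \hat a}$ (evaluated at~$1$) dominates $P_j(\hat a)\preceq (P_j^\phi)_{\times \hat a}$ for $j\ge 2$. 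Then $P(\hat a)=0$ forces $P_0\asymp P_1(\hat a)\asymp A^\phi\hat a$ eventually, so $v(P_0)=v_{A^\phi}(v\hat a)$ eventually; by the definition of $v^{\ev}(P,0)$ computed in $K\langle\hat a\rangle$ (and transferred back to $K$ by the invariance above) this gives $v^{\ev}(P,0)=v\hat a$.

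The main obstacle is bookkeeping across three fields $K\subseteq K\langle\hat a\rangle\subseteq \hat K$: the defining property of $v^{\ev}(P,0)$ and the asymptotic formulas for $A^\phi$ require a $\upl$-free base, but $K\langle\hat a\rangle$ and $\hat K$ need not be $\upl$-free; Lemma~\ref{lem:11.2.13 invariant} and Corollary~\ref{cor:13.7.10} are precisely the tools that compensate for this at order~$1$. A secondary subtlety is arguing that $v\hat a\in\Gamma$, which falls out from the equality $v(P_0)=v_A^{\ev}(v\hat a)$ (with $v(P_0)\in\Gamma$) together with the strict monotonicity of $v_A^{\ev}$ on $\Gamma_{\hat K}\setminus\exc^{\ev}_{\hat K}(A)$.
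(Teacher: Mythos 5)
Your skeleton is the paper's: reduce to $a=0$, pass to $L:=K\langle\hat a\rangle$, get asymptotic integration for $L$ from Proposition~\ref{prop:upl-free and as int}, and rerun the computation from the proof of Lemma~\ref{lem:14.3 complement}. The gap is in how you handle the exceptional-value hypothesis and the final comparison. You keep the computation in $\hat K$ and, at the crucial last step, invoke the strict monotonicity of $v^{\ev}_A$ on $\Gamma_{\hat K}\setminus\exc^{\ev}_{\hat K}(A)$, i.e., Lemma~\ref{lem:ADH 14.2.7} applied to $\hat K$ (you also cite that lemma for $A(\hat a)\asymp A^\phi\hat a$). But Lemma~\ref{lem:ADH 14.2.7}, like Proposition~\ref{prop:slow}, is stated under the standing hypothesis that the ambient field is $H$-asymptotic with asymptotic integration, and $\hat K$ is only an ungrounded $H$-asymptotic extension, so it may lack asymptotic integration. (The $\asymp$ step itself is fine: it is the ``key fact'' about $\exc^{\ev}$ recorded at the start of Section~\ref{sec:lindiff}, valid for ungrounded asymptotic fields; the monotonicity is not.) Your invariance tools, Lemma~\ref{lem:11.2.13 invariant} and Corollary~\ref{cor:13.7.10}, only control Newton quantities and $v^{\ev}_A(\gamma)$ at $\gamma\in\Gamma$, whereas $v(\hat a)$ a priori lies only in $\Gamma_{\hat K}$; since $v\hat a\in\Gamma$ is part of what must be proved, this does not close the circle (your appeal to Corollary~\ref{cor:13.7.10} for $\nwt_A(v\hat a)=0$ has the same problem).

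The paper's proof is organized precisely to avoid this. It first transfers the hypothesis $v(\hat a)\notin\exc^{\ev}_{\hat K}(A)$ down to $L$ via Lemma~\ref{lemexc, order 1}, applied with $L$, $\hat K$ in the roles of $K$, $L$: this is the order-$1$-specific equality $\exc^{\ev}_{\hat K}(A)\cap\Gamma_L=\exc^{\ev}_L(A)$, which uses $L$'s asymptotic integration; note that the general Lemma~\ref{lemexc} only gives the inclusion in the direction that is useless here, so this step cannot be skipped. It then records that $\Psi$ is cofinal in $\Psi_L$ by Proposition~\ref{prop:rat as int and cofinality}, so that ``eventually'' over $L$ meshes with ``eventually'' over $K$, and carries out the entire computation over $L$, where Lemma~\ref{lem:ADH 14.2.7} (or Proposition~\ref{prop:slow}) legitimately yields $v(\hat a)=v^{\ev}(P,0)$. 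To repair your argument, insert the Lemma~\ref{lemexc, order 1} step and do the comparison over $L$ rather than $\hat K$; as written, the proposal rests on applying results to $\hat K$ outside their hypotheses.
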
}
\begin{proof}
We arrange $a=0$ and assume $\hat a\neq 0$.
Then $L:=K\langle\hat a\rangle$ has asymptotic integration, by
Proposition~\ref{prop:upl-free and as int}, and $v(\hat a)\notin \exc^{\ev}_L(A)$ by
Lemma~\ref{lemexc, order 1} (applied with~$L$,~$\hat K$ in place of $K$, $L$).
Moreover, $\Psi$ is cofinal in $\Psi_L$
by Proposition~\ref{prop:rat as int and cofinality}.  As in the proof of Lemma~\ref{lem:14.3 complement} this leads to
$P_1(\hat a)=A(\hat a)=A^\phi(\hat a)\asymp A^{\phi}\hat a$, eventually, and then as in the rest of that proof we derive
$v^{\ev}(P,0)=v(\hat a)$. 
\end{proof}

\subsection*{Zeros of differential polynomials  of order and degree $1$} {\sloppy
{\it In this subsection~$K$ has asymptotic integration.}\/
We fix a differential polynomial
$$P(Y)\ =\ a(Y'+gY-u)\in K\{Y\}\qquad (a,g,u\in K,\  a\neq 0),$$
and set $A:=L_P=a(\der+g)\in K[\der]$. Section~\ref{sec:logder} gives  for $y\in K$ the equivalence~${y\in \I(K)\Leftrightarrow vy>\Psi}$, so by Section~\ref{sec:lindiff}, $\exc^{\ev}(A)=\emptyset\Leftrightarrow g\notin \I(K)+K^\dagger$,
and~$v(\ker_{\hat K}^{\neq} A)\subseteq \exc^{\ev}(A)$ for each immediate $H$-asymptotic field extension $\hat K$ of~$K$.
Thus:}

\begin{lemma}\label{lem:at most one zero}
If $g\notin \I(K) + K^\dagger$, then each immediate $H$-asymptotic extension of $K$ contains at most one~$y$ such that~$P(y)=0$.
\end{lemma}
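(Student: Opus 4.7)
The plan is to reduce the statement to the two ingredients the author has already assembled in the paragraph immediately preceding the lemma, namely (a) the equivalence
$$\exc^{\ev}(A)=\emptyset\ \Longleftrightarrow\ g\notin\I(K)+K^\dagger,$$
and (b) the containment $v(\ker_{\hat K}^{\neq}A)\subseteq\exc^{\ev}(A)$ valid for every immediate $H$-asymptotic extension $\hat K$ of $K$, where $A=L_P=a(\der+g)$. Once both are in place, the lemma follows purely by linearity, so there is essentially no obstacle beyond unpacking definitions.

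Concretely, let $\hat K$ be an immediate $H$-asymptotic extension of $K$ and suppose $y_1,y_2\in\hat K$ satisfy $P(y_1)=P(y_2)=0$. First I would set $z:=y_1-y_2$ and observe that
$$A(z)\ =\ a(z'+gz)\ =\ P(y_1)-P(y_2)\ =\ 0,$$
so $z\in\ker_{\hat K}A$. Assume for contradiction that $z\neq 0$. Since $\hat K\supseteq K$ is immediate, $\Gamma_{\hat K}=\Gamma$, and in particular $v(z)\in\Gamma$. By the general fact $v(\ker_{\hat K}^{\neq}A)\subseteq\exc_{\hat K}^{\ev}(A)$ recalled in Section~\ref{sec:lindiff} together with Lemma~\ref{lemexc, order 1}, which gives $\exc_{\hat K}^{\ev}(A)\cap\Gamma=\exc^{\ev}(A)$, we conclude $v(z)\in\exc^{\ev}(A)$.

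It then suffices to show $\exc^{\ev}(A)=\emptyset$ under the hypothesis $g\notin\I(K)+K^\dagger$. This is just the equivalence (a), which in turn is a direct consequence of the description of $\exc^{\ev}$ for first-order operators from Section~\ref{sec:lindiff}: $vy\in\exc^{\ev}(A)$ iff $v(g+y^\dagger)>\Psi$, iff $g+y^\dagger\in\I(K)$ (using that $K$ has asymptotic integration, so that $\{h\in K:vh>\Psi\}=\I(K)$), iff $g\in\I(K)-y^\dagger\subseteq\I(K)+K^\dagger$. Under the assumption $g\notin\I(K)+K^\dagger$, no such $y$ exists and $\exc^{\ev}(A)=\emptyset$, contradicting $v(z)\in\exc^{\ev}(A)$. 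Hence $z=0$, i.e., $y_1=y_2$.

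The only subtlety to mention is the invocation of Lemma~\ref{lemexc, order 1}, which is needed because $\exc^{\ev}(A)$ is a priori a subset of $\Gamma$ while the natural kernel lives in $\hat K$; this is where the hypothesis that $\hat K\supseteq K$ is immediate (and not merely $H$-asymptotic) enters. Everything else is formal, so I would not expect a genuine obstacle in the argument.
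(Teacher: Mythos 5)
Your proposal is correct and follows essentially the same route as the paper: the lemma is stated there as an immediate consequence ("Thus:") of the two facts recalled in the preceding paragraph, namely $\exc^{\ev}(A)=\emptyset\Leftrightarrow g\notin\I(K)+K^\dagger$ and $v(\ker_{\hat K}^{\neq}A)\subseteq\exc^{\ev}(A)$ for immediate $H$-asymptotic extensions $\hat K$, combined with linearity of $A=L_P$ exactly as you do. Your extra care in passing from $\exc^{\ev}_{\hat K}(A)$ to $\exc^{\ev}(A)$ via Lemma~\ref{lemexc, order 1} (the inclusion from Lemma~\ref{lemexc} would already suffice) just makes explicit what the paper leaves implicit.
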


\noindent
If  $\der K=K$ and $g\in K^\dagger$, then $P(y)=0$ for some $y\in K$, and if moreover $K$ is $\d$-valued, then any $y$ in any immediate $H$-asymptotic extension of $K$ with $P(y)=0$ lies in $K$. (Lemma~\ref{0K}.)
If $y\prec 1$ in an immediate $H$-asymptotic extension of~$K$ satisfies~$P(y)=0$, then by [ADH, 11.2.3(ii), 11.2.1] 
we have
$$ \nval P\ =\ \nval P_{+y}\ =\ \val P_{+y}\ =\ 1.$$
Lemma~\ref{newt.imm, order 1} 
 yields the following partial converse (a variant of \cite[Lemma~8.5]{VDF}):

\begin{cor}\label{cor:nmul=1}
Suppose $K$ is $\upl$-free and $\nval P=1$. Then there is a $y\prec 1$ in an immediate $H$-asymptotic extension of $K$ with $P(y)=0$.
\end{cor}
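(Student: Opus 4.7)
My plan is to apply Lemma~\ref{newt.imm, order 1} at $a=0$ and then realize the resulting divergent pseudo-Cauchy sequence as a pseudo-limit in an immediate $H$-asymptotic extension where $P$ still vanishes. Since $\nval P=1$ the differential polynomial $P$ is in newton position at $0$, and the construction of $v^{\ev}(P,0)$ in the paragraph preceding Lemma~\ref{newt4, order 1} exhibits a $g\in K^\times$ with $g\prec 1$ and $P(0)\asymp P^\phi_{1,\times g}$ eventually; hence $v^{\ev}(P,0)=vg>0$.

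Now invoke Lemma~\ref{newt.imm, order 1} at $a=0$. If some $b\in K$ satisfies $P(b)=0$ with $v(b)=v^{\ev}(P,0)>0$, then $y:=b\in K$ already solves $P(y)=0$ with $y\prec 1$ and the statement is proved. Otherwise the lemma furnishes a divergent pc-sequence $(a_\rho)_{\rho<\lambda}$ in $K$ with $a_0=0$, $v(a_\rho-a_{\rho'})=v^{\ev}(P,a_\rho)$ for all $\rho<\rho'<\lambda$, and $P(a_\rho)\leadsto 0$. The ultrametric inequality together with $a_0=0$ forces $v(a_\rho)=v^{\ev}(P,0)>0$ for all $\rho\ge 1$, so $a_\rho\prec 1$ eventually.

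The sequence $(a_\rho)$ is then of $\d$-algebraic type over $K$, with $P$ pseudo-vanishing on it; since $\cc(P)=(1,1,1)$, any minimal differential polynomial $Q$ of $(a_\rho)$ over $K$ has complexity at most $(1,1,1)$. Compositionally conjugating by an active $\phi\in K$---which is available because $K$ has asymptotic integration, and which preserves $\upl$-freeness, $H$-asymptoticity of immediate extensions, pseudo-convergence of $(a_\rho)$, and the values $P^\phi(a_\rho)=P(a_\rho)$---reduces matters to the case of small derivation. In that regime the standard construction of immediate asymptotic extensions realizing pc-sequences of $\d$-algebraic type (cf.\ [ADH, Chapter~6 and~\S9.4]) yields an immediate $H$-asymptotic extension $K\langle\hat y\rangle$ of $K$ with $a_\rho\leadsto\hat y$, and the continuity of differential polynomials in the valuation topology gives $P(a_\rho)\to P(\hat y)$. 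Combined with $P(a_\rho)\to 0$ this forces $P(\hat y)=0$, while $v(\hat y)=v^{\ev}(P,0)>0$ yields $\hat y\prec 1$.

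The main obstacle is precisely the last step: guaranteeing that the constructed pseudo-limit $\hat y$ really satisfies $P(\hat y)=0$, rather than merely $Q(\hat y)=0$ for a minimal-complexity $Q\in Z(K,(a_\rho))$. This rests on the continuity argument, which requires reducing to the small-derivation setting via compositional conjugation; once that reduction is in place, the rest of the argument is routine bookkeeping built on the order-$1$ newton-position machinery (Lemmas~\ref{newt4, order 1} and~\ref{newt.imm, order 1}) already established in this section.
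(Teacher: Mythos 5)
Your overall skeleton (apply Lemma~\ref{newt.imm, order 1} at $a=0$, note $v^{\ev}(P,0)>0$, then realize a pseudolimit of the resulting divergent pc-sequence in an immediate asymptotic extension) is the paper's skeleton, but the step you yourself flag as the main obstacle is a genuine gap, and the fix you propose does not work. Pseudoconvergence is not convergence in the valuation topology: $a_\rho\leadsto\hat y$ only says that $v(\hat y-a_\rho)$ is eventually strictly increasing, not that it is cofinal in the value group, and likewise $P(a_\rho)\leadsto 0$ is not $P(a_\rho)\to 0$ topologically. So "continuity of differential polynomials in the valuation topology" gives you nothing here, with or without the reduction to small derivation; and the standard construction from [ADH, Ch.~6, 9.4] only produces a pseudolimit annihilating a \emph{minimal} differential polynomial of $(a_\rho)$, not an arbitrary $P$ that pseudo-vanishes along the sequence. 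If your continuity argument were valid, the entire theory of pc-sequences of $\d$-algebraic type and minimal differential polynomials would be superfluous; the whole point of that machinery is that a pseudolimit in an arbitrary immediate extension need not kill $P$, and one must build the extension so that the minimal annihilating polynomial vanishes at it.

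The missing idea is the paper's first move: replace $K$ by its henselization, which is an immediate extension and is again $\upl$-free by [ADH, 11.6.7] (the standing hypotheses of the subsection are preserved, and $\nval P$ does not change). Once $K$ is henselian it is algebraically maximal (equicharacteristic zero), so the divergent pc-sequence furnished by Lemma~\ref{newt.imm, order 1} cannot be of algebraic type; since every differential polynomial of complexity strictly below $\cc(P)=(1,1,1)$ has order $0$, it follows that $P$ itself is a minimal differential polynomial of $(a_\rho)$ over $K$. Only then does [ADH, 9.7.6] apply to $P$, yielding a pseudolimit $y$ with $P(y)=0$ in an immediate asymptotic extension, and $y\prec 1$ from $v(a_\rho)=v^{\ev}(P,0)>0$. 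Without the henselization, the minimal differential polynomial of your sequence could be an ordinary polynomial in $Y$, the constructed pseudolimit would merely be algebraic over $K$, and there is no reason it should satisfy $P(y)=0$; your argument does not close this hole.
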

\begin{proof}
Replacing $K$ by its henselization and using [ADH, 11.6.7], we arrange that~$K$ is henselian.
Suppose that
$P$ has no zero in $\smallo$. Then $P$ is in newton position at~$0$,
and so   Lemma~\ref{newt.imm, order 1} yields a divergent pc-sequence $(a_{\rho})_{\rho<\lambda}$ in $K$, indexed by all ordinals~$\rho$ smaller than 
some infinite limit ordinal $\lambda$,   with $a_0=0$, $v(a_\rho-a_{\rho'})=v^{\ev}(P,a_\rho)$ for all $\rho<\rho'<\lambda$, and $P(a_{\rho}) \leadsto 0$. Since $\deg P=\order P=1$ and $K$ is henselian, $P$ is a minimal differential polynomial of~$(a_\rho)$ over $K$, and $v(a_\rho)=v^{\ev}(P,0)>0$ for all $\rho>0$.
Hence [ADH, 9.7.6] yields a pseudolimit~$y$ of~$(a_\rho)$ in an immediate asymptotic extension of $K$ with $P(y)=0$
and~$y\prec 1$, as required.
\end{proof}

\noindent
We say that $P$ is {\bf proper} if $u\neq 0$ and $g+u^\dagger\succ^\flat 1$. \index{proper}\index{differential polynomial!proper}\index{proper!differential polynomial} If $P$ is proper, then so is $bP$ for each $b\in K^\times$. For $\fm\in K^\times$ we have
$$P_{\times\fm}\ =\ a\fm\big(Y'+(g+\fm^\dagger)Y-u\fm^{-1}\big),$$
hence if $P$ is proper, then so is $P_{\times\fm}$.
If   $u\neq 0$, then
$P$ is proper iff~$a^{-1}A_{\ltimes u}=\der+(g+u^\dagger)$ is steep, as defined in Section~\ref{sec:lindiff}.
Note that
$$P^\phi\  =\ a\phi\big( Y'+(g/\phi)Y-(u/\phi) \big).$$

\begin{lemma}\label{lem:proper compconj}
Suppose $K$ has small derivation, and $P$ is proper. Then $P^\phi$ is proper $($with respect to  $K^\phi)$ for all $\phi\preceq 1$.
\end{lemma}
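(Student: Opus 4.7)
The plan is to reduce ``$P^\phi$ is proper'' to a single inequality in the value group of $K^\phi$ that follows directly from the hypothesis $h := g + u^\dagger \succ^\flat 1$ in $K$ and from the fact that the derivation of $K^\phi$ is small. First I would unpack the definition. Reading off the triple $(a\phi, g/\phi, u/\phi)$ from the given formula $P^\phi = a\phi\big(Y' + (g/\phi)Y - u/\phi\big)$ in $K^\phi\{Y\}$, and noting $u/\phi\neq 0$, the condition ``$P^\phi$ is proper'' amounts to
\[
\frac{g}{\phi} + \left(\frac{u}{\phi}\right)^{\!\dagger_\phi} \ \succ^\flat_\phi\ 1\ \text{ in }K^\phi.
\]
Since $y^{\dagger_\phi} = \phi^{-1}y^\dagger$ for $y\in K^\times$, a short computation gives $(u/\phi)^{\dagger_\phi} = \phi^{-1}(u^\dagger-\phi^\dagger)$, so
\[
\frac{g}{\phi} + \left(\frac{u}{\phi}\right)^{\!\dagger_\phi}\ =\ \phi^{-1}(g + u^\dagger - \phi^\dagger)\ =\ \phi^{-1}(h-\phi^\dagger).
\]
Thus the task becomes: show $\phi^{-1}(h-\phi^\dagger)\succ^\flat_\phi 1$ in $K^\phi$.

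Next I would use activity of $\phi$ to control the correction term $\phi^\dagger$. Since $\phi\preceq 1$ is active in $K$, the derivation $\derdelta = \phi^{-1}\der$ of $K^\phi$ is small, so $\derdelta\,\mathcal O\subseteq\mathcal O$ (as $K$, $K^\phi$ share the valuation ring). With $\phi\in\mathcal O$, this forces $\phi^\dagger = \derdelta(\phi)\in\mathcal O$, i.e., $v(\phi^\dagger)\geq 0$. Since $vh<0$ (a consequence of $h\succ^\flat 1$), we get $v(\phi^\dagger)>vh$, hence $h-\phi^\dagger\sim h$ and  in particular $v(h-\phi^\dagger)=vh$.

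Finally I would verify $\phi^{-1}(h-\phi^\dagger)\succ^\flat_\phi 1$ on values. Using the step just done,
\[
v\big(\phi^{-1}(h-\phi^\dagger)\big)\ =\ vh-v\phi,
\]
and I must show $-(vh-v\phi)=v\phi-vh > \Gamma^\flat_\phi$. Because $v\phi\geq 0$, one has the inclusion $\Gamma^\flat_\phi = \{\gamma:\psi(\gamma)>v\phi\}\subseteq\{\gamma:\psi(\gamma)>0\}=\Gamma^\flat$. The hypothesis $h\succ^\flat 1$ unpacks precisely as $-vh>\Gamma^\flat$, so
\[
v\phi-vh\ \geq\ -vh\ >\ \Gamma^\flat\ \supseteq\ \Gamma^\flat_\phi,
\]
which is exactly what was needed. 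I do not anticipate a real obstacle here: the only delicate point is recognizing that the two-part characterization ``$\succ^\flat_\phi 1$'' (i.e., $\succ_\phi 1$ and $^{\dagger_\phi}\succeq_\phi 1$) is equivalent, in $H$-type, to the single flattening inequality $v(\,\cdot\,)<-\Gamma^\flat_\phi$, so that no separate verification of the log-derivative part is required. The conceptual kernel of the proof is simply that ``$\phi$ active'' gives $v(\phi^\dagger)\geq 0$, which is the precise amount of smallness needed to absorb $\phi^\dagger$ into $h$.
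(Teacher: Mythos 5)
Your proof is correct and takes essentially the same route as the paper: rewrite the properness condition for $P^\phi$ as $\phi^{-1}\big((g+u^\dagger)-\phi^\dagger\big)\succ^\flat_\phi 1$, absorb $\phi^\dagger$ into $h=g+u^\dagger$ using that it is negligible, and then pass from $\succ^\flat$ to $\succ^\flat_\phi$ via $\Gamma^\flat_\phi\subseteq\Gamma^\flat$ (which uses $v\phi\geq 0$). The only cosmetic difference is how you control $\phi^\dagger$: you get $v(\phi^\dagger)\geq 0$ directly from $\derdelta\mathcal O\subseteq\mathcal O$ (smallness of $\derdelta=\phi^{-1}\der$), whereas the paper first notes $\phi\asymp^\flat 1$ and concludes $\phi^\dagger\asymp^\flat\phi'\preceq 1$ — both give $\phi^\dagger\prec^\flat h$, and the rest of the argument is identical.
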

\begin{proof} Let $\phi\preceq 1$. Then we have
$\phi\asymp^\flat 1$ and hence $\phi^\dagger\asymp^\flat \phi' \preceq 1\prec^\flat g+u^\dagger$.
Thus
$$g +(u/\phi)^\dagger =(g+u^\dagger)-\phi^\dagger \sim^\flat g+u^\dagger\succ^\flat 1\succeq \phi,$$ hence
$(g/\phi)+\phi^{-1}(u/\phi)^\dagger\succ^\flat  1$ and so~${(g/\phi) + \phi^{-1}(u/\phi)^\dagger} \succ^\flat_\phi 1$.
Therefore $P^\phi$ is proper (with respect to $K^\phi$).
\end{proof}

\begin{lemma}\label{lem:proper evt}
Suppose $K$ is $\upl$-free and $u\neq 0$. Then there is an active $\phi_0$ in~$K$ such that for all $\phi\prec\phi_0$, $P^\phi$ is proper with
$g+(u/\phi)^\dagger\sim g+(u/\phi_0)^\dagger$.
\end{lemma}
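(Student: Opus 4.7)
My plan is to reduce the statement to the standard $\upl$-freeness fact cited just before Corollary~\ref{cor:prlemexc}, applied to the element $F := g+u^\dagger \in K$. Observe that $(u/\phi)^\dagger = u^\dagger - \phi^\dagger$ for any $\phi \in K^\times$, so
\[
g + (u/\phi)^\dagger \ =\  F - \phi^\dagger.
\]
Thus both conclusions of the lemma concern the behavior of $F - \phi^\dagger$ as $\phi$ runs through small active elements, and both boil down to the fact that, by $\upl$-freeness, $F$ cannot be a pseudolimit of $(\upl_\rho) = (-\ell_\rho^{\dagger\dagger})$.

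First I would invoke [ADH, 11.6.1] (applied to $F$; see the discussion preceding Corollary~\ref{cor:prlemexc}) to obtain an active $\phi_0 \in K$ such that $F - \phi^\dagger \succeq \phi_0$ for every active $\phi \preceq \phi_0$ in $K$. With this $\phi_0$ in hand, properness of $P^\phi$ follows by the argument already spelled out before Corollary~\ref{cor:prlemexc}: the convex subgroups $\Gamma^\flat_\phi$ shrink as $v\phi$ increases cofinally in $\Psi^\downarrow$, so after possibly enlarging~$v\phi_0$ we have $\phi \prec^\flat_\phi \phi_0$ eventually, hence $F - \phi^\dagger \succeq \phi_0 \succ^\flat_\phi \phi$, which rewrites as $(g/\phi) + \phi^{-1}(u/\phi)^\dagger \succ^\flat_\phi 1$, i.e., $P^\phi$ is proper in $K^\phi$.

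Next, for the asymptotic equivalence $F - \phi^\dagger \sim F - \phi_0^\dagger$, I compute the difference
\[
(F - \phi^\dagger) - (F - \phi_0^\dagger) \ =\  \phi_0^\dagger - \phi^\dagger \ =\  (\phi_0/\phi)^\dagger,
\]
whose valuation is $\psi(v\phi_0 - v\phi)$ since $\phi_0/\phi \succ 1$. Because $K$ is of $H$-type with asymptotic integration and both $v\phi, v\phi_0 \in \Psi^\downarrow$, standard estimates (e.g.\ [ADH, 9.2.10(iv)] together with the fact that $\psi$ is slowly varying on $\Gamma^{\neq}$) force $(\phi_0/\phi)^\dagger \prec \phi_0$. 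Combined with $F - \phi_0^\dagger \succeq \phi_0$ from the previous step, this yields $(F - \phi^\dagger) - (F - \phi_0^\dagger) \prec F - \phi_0^\dagger$, i.e., $F - \phi^\dagger \sim F - \phi_0^\dagger$.

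The main obstacle is the estimate $(\phi_0/\phi)^\dagger \prec \phi_0$ for all active $\phi \prec \phi_0$: this requires some care because $\phi$ and $\phi_0$ need not be comparable to $1$, and both $v\phi, v\phi_0$ lie in $\Psi^\downarrow$, where the interplay of $\psi$ with archimedean classes has to be tracked. However, once one appeals to [ADH, 6.5.4(ii)] (slow variation of $\psi$) and the fact that $v\phi_0 - v\phi$ lives in the same convex subgroup as the active elements, the estimate falls out, and the proof is complete.
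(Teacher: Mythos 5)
Your argument follows the paper's own proof almost step for step: both reduce to the discussion preceding Corollary~\ref{cor:prlemexc} (i.e.\ [ADH, 11.6.1] applied to $g+u^\dagger$, using $\upl$-freeness) to get an active $\phi_0$ with $g+u^\dagger-\phi^\dagger\succeq\phi_0$ for $\phi\prec\phi_0$, deduce properness of $P^\phi$ from $\phi\prec^\flat_\phi\phi_0$ eventually, and get the asymptotic equivalence from the estimate $\phi^\dagger-\phi_0^\dagger\prec\phi_0$ for active $\phi\prec\phi_0$. The only soft spot is your justification of that last estimate, which you yourself flag as the main obstacle: the paper simply quotes it (remark just before [ADH, 11.5.3]), whereas slow variation of $\psi$ together with [ADH, 9.2.10(iv)] and the claim that $v\phi_0-v\phi$ lies in the same convex subgroup as the active elements does not deliver it as stated (the values of active elements form the downward closed set $\Psi^\downarrow$, not a convex subgroup, and slow variation alone does not compare $\psi(v\phi-v\phi_0)$ with $v\phi_0$). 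The estimate is nevertheless true, with a one-line proof: set $\gamma:=v\phi-v\phi_0>0$, so $v(\phi^\dagger-\phi_0^\dagger)=\psi(\gamma)$; since $\phi$ is active there is $\delta\neq 0$ with $v\phi\leq\psi(\delta)$, and the basic fact $\Psi<(\Gamma^{>})'$ gives $\gamma+\psi(\gamma)=\gamma'>\psi(\delta)\geq v\phi$, hence $\psi(\gamma)>v\phi-\gamma=v\phi_0$, i.e.\ $\phi^\dagger-\phi_0^\dagger\prec\phi_0$. With that step either cited as in the paper or repaired as above, your proof is complete and coincides with the paper's; the final shrinking of $\phi_0$ to secure properness is harmless for the equivalence, exactly as you indicate, by transitivity of $\sim$.
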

{\sloppy
\begin{proof}
The argument before Corollary~\ref{cor:prlemexc} yields an active $\phi_0$ in $K$ such that~${u^\dagger +g-\phi^\dagger\succeq \phi_0}$ for all $\phi\prec \phi_0$. For   such~$\phi$ we have
$\phi^\dagger - \phi_0^\dagger\prec \phi_0$  as noted just before~[ADH, 11.5.3], and so $(u/\phi)^\dagger+g \sim (u/\phi_0)^\dagger+g$. 
The argument before Corollary~\ref{cor:prlemexc} also gives $\phi^{-1}(u/\phi)^\dagger+g/\phi\succ^\flat_\phi 1$ eventually,
and if  $\phi^{-1}(u/\phi)^\dagger+g/\phi\succ^\flat_\phi 1$, then~$P^\phi$ is proper.
\end{proof}}

\begin{lemma}\label{lem:proper nmul 1}
We have $\nval P=1$ iff $u\prec g$ or $u\in\I(K)$. Moreover, if $K$ is $\upl$-free, $\nval P=1$, and $u\neq 0$, then $u\prec^\flat_\phi g+(u/\phi)^\dagger$, eventually.
\end{lemma}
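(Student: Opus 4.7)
The plan is to handle the two assertions in turn, the first by a direct computation of valuations in the compositional conjugate and the second by reducing everything to a single numerical inequality and then exploiting properness from Lemma~\ref{lem:proper evt}.

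For the first equivalence, I would expand
$$P\ =\ -au + agY + aY',\qquad\text{so}\qquad P^\phi\ =\ -au + agY + a\phi\, Y^{\derdelta}\quad\text{in $K^\phi\{Y\}$.}$$
Then $(P^\phi)_0=-au$ and $(P^\phi)_1 = agY + a\phi Y^{\derdelta}$ have valuations $v(au)$ (or $\infty$ if $u=0$) and $v(a)+\min\{v(g),v(\phi)\}$. Hence $\dval P^\phi=1$ iff $v(u)>\min\{v(g),v(\phi)\}$, i.e.\ $u\prec g$ or $u\prec \phi$. Taking eventual values, $\nval P=1$ iff $u\prec g$, or eventually $u\prec \phi$; and the second disjunct is equivalent to $v(u)>\psi(\gamma)$ for every $\gamma\ne 0$ (using that $v(\phi)$ ranges cofinally in $\Psi^\downarrow$), which, since $K$ is pre-$\d$-valued with asymptotic integration, is exactly $u\in\I(K)$.

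For the second assertion, let $K$ be $\upl$-free, $u\ne 0$, and $\nval P=1$. By Lemma~\ref{lem:proper evt}, fix an active $\phi_0$ such that, for all active $\phi\prec\phi_0$, $P^\phi$ is proper and $w := g+(u/\phi)^\dagger \sim w_0 := g+(u/\phi_0)^\dagger$. Set $\alpha := v(w_0)$; eventually $v(w)=\alpha$. The goal $u\prec^\flat_\phi w$ is the condition $v(u)-\alpha >\Gamma^\flat_\phi$; since $\Gamma^\flat_\phi$ is a convex subgroup, this is equivalent to $v(u)>\alpha$ together with $\psi(v(u)-\alpha)\leq v(\phi)$, and the second part holds eventually because $\psi(v(u)-\alpha)$ is a fixed element of $\Psi$ while $\Psi$ has no maximum. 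Thus the whole task is to show $v(u)>\alpha$. Moreover, $\upl$-freeness applied to $f:=u^\dagger+g$ gives $f-\phi^\dagger\succeq\phi$ eventually, whence $\alpha\leq v(\phi_0)$ and in particular $\alpha\in\Psi^\downarrow$.

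When $u\in\I(K)$, $v(u)>\psi(\gamma)$ for every $\gamma\ne 0$; pick $\gamma_0$ with $\alpha\leq\psi(\gamma_0)\in\Psi$, and then $v(u)>\psi(\gamma_0)\geq\alpha$. In the remaining case, $u\prec g$ with $u\notin\I(K)$, the aim is to show $\alpha=v(g)$; combined with $v(u)>v(g)$ this yields $v(u)>\alpha$. To this end I would argue that, for $\phi_0$ chosen sufficiently small as permitted by Lemma~\ref{lem:proper evt}, the term $u^\dagger-\phi_0^\dagger$ cannot match $-g$ in top order: using $H$-type behavior one has $\psi(v(u))\geq\psi(v(g))>v(g)$ whenever $g\nasymp 1$ (so $g^\dagger\prec g$), whence $u^\dagger\prec g$; and $\phi_0^\dagger\prec g$ is arranged by taking $v(\phi_0)$ in a suitable archimedean class, which is possible because $\Psi$ contains cofinally many such classes. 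Hence $w_0\sim g$ and $\alpha=v(g)$.

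The main obstacle is the last step: ruling out the scenario in which $u^\dagger-\phi_0^\dagger$ cancels the leading term of $-g$ for every admissible $\phi_0$, leaving $v(w_0)>v(g)$ and potentially $v(w_0)\geq v(u)$. The careful point is that such cancellation would force $v(g)\in\Psi$ and would propagate to every admissible $\phi_0$ (since $\alpha$ is independent of $\phi_0$), so one must combine the shape of $\Psi$ with the properness inequality $v(\phi_0)-v(w_0)>\Gamma^\flat_{\phi_0}$ from Lemma~\ref{lem:proper evt} to derive a contradiction or to upgrade the inequality $\alpha\leq v(\phi_0)$ to $\alpha<v(u)$.
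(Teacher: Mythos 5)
Your first equivalence is correct and is essentially the paper's argument (the paper states the contrapositive: $\nval P=0$ iff $u\succeq g$ and $u\succeq\phi$ eventually), and your treatment of the case $u\in\I(K)$ in the second assertion is also fine and close to the paper's. The gap is in the remaining case $u\prec g$, $u\notin\I(K)$, which you yourself flag as the "main obstacle" and do not close; moreover the repair you sketch cannot work. The inequality $\psi(v(u))\geq\psi(v(g))$ does not follow from $u\prec g$: $H$-type monotonicity of $\psi$ compares absolute values (archimedean classes), and $v(u)>v(g)$ gives no such comparison. Concretely, in $K=\T$ take $g=x^{-1}$ and $u=(x\log x)^{-1}$, so $u\prec g$, $u\neq 0$, $u\notin\I(\T)$, and $\nval P=1$; then $g^\dagger\asymp g$ (so even your parenthetical "$g^\dagger\prec g$" fails), $u^\dagger\sim -x^{-1}\asymp g$, and $\phi^\dagger\sim -x^{-1}\asymp g$ for \emph{every} sufficiently small active $\phi$, so no choice of the archimedean class of $v(\phi_0)$ makes $\phi_0^\dagger\prec g$, and "the term $u^\dagger-\phi_0^\dagger$ cannot match $-g$ in top order" is false term-by-term: here $g+u^\dagger=-u$ exactly.

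What saves the lemma, and what the paper uses, is that the two troublesome terms must be treated \emph{together}: since $u\notin\I(K)$ gives $v(u)\in\Psi^{\downarrow}$, [ADH, 9.2.11] yields $(u/\phi)^\dagger=u^\dagger-\phi^\dagger\prec u$ eventually (in the example above $(u/\phi)^\dagger\sim (x\log x\log\log x)^{-1}\prec u$ even though $u^\dagger\asymp\phi^\dagger\asymp g$). Hence $g+(u/\phi)^\dagger\sim g$ eventually, so $v(u)-v\big(g+(u/\phi)^\dagger\big)=v(u)-v(g)>0$ is a fixed positive element of $\Gamma$, which eventually lies above $\Gamma^\flat_\phi$; this is exactly your reduction "$v(u)>\alpha$ plus $\psi(v(u)-\alpha)\leq v\phi$ eventually", so your framework is fine once the missing input $(u/\phi)^\dagger\prec u$ (rather than separate bounds on $u^\dagger$ and $\phi_0^\dagger$) is supplied.
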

\begin{proof}
For the equivalence, note that the identity above for $P^\phi$ yields: $$\nval P=0\ \Longleftrightarrow\  u\succeq g, \text{ and $u/\phi\succeq 1$ eventually}.$$
Suppose $K$ is $\upl$-free, $\nval P=1$, and $u\neq 0$.
If $u\in\I(K)$, then~$u\prec \phi\prec^\flat_\phi g+(u/\phi)^\dagger$, eventually,
by Lemma~\ref{lem:proper evt}.
Suppose  $u\notin \I(K)$. Then $v(u)\in \Psi^{\downarrow}$ and $u\prec g$. Hence by [ADH, 9.2.11] we have~$(u/\phi)^\dagger \prec u\prec g$, eventually, and thus $u\prec g\sim g+(u/\phi)^\dagger$, eventually. Thus~$u\prec^\flat_\phi g+(u/\phi)^\dagger$, eventually.
\end{proof}

\noindent
Assume now $P(y)=0$ with $y$ in
an immediate $H$-asymptotic extension  of $K$; so~$A(y)=u$. Note: if~$vy\in\Gamma\setminus\exc^{\ev}(A)$, then $u\neq 0$. 
From Lemma~\ref{prlemexc} we get:

\begin{lemma}\label{lem:proper asymp}
If $K$ has small derivation, $P$ is proper, and $vy\in\Gamma\setminus\exc^{\ev}(A)$, then~$y\sim u/(g+u^\dagger)$.
\end{lemma}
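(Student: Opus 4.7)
The plan is to reduce the claim to Lemma~\ref{prlemexc} after factoring out the leading coefficient $a$ of $A$ and taking care of signs.

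First, write $B:=\der+g\in K[\der]$, so $A=aB$. From $P(y)=0$ we obtain $y'+gy=u$, that is, $B(y)=u$. The identity $\exc^{\ev}(aB)=\exc^{\ev}(B)$ from the opening remarks of Section~\ref{sec:lindiff} gives $\exc^{\ev}(B)=\exc^{\ev}(A)$, so $vy\in\Gamma\setminus\exc^{\ev}(B)$. In particular $u\neq 0$: otherwise $B(y)=0$ with $y\neq 0$ would give $vy\in v(\ker^{\neq}B)\subseteq\exc^{\ev}(B)$, contradiction. (Properness of $P$ also bakes in $u\neq 0$.)

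Second, let $L$ be the immediate $H$-asymptotic extension of $K$ containing $y$. Since $K$ has asymptotic integration, it is ungrounded, and $L$ is an ungrounded $H$-asymptotic extension of $K$ with $\Gamma_L=\Gamma$ and small derivation (by [ADH, 9.4.1]). Lemma~\ref{lemexc, order 1} applied to $L$ and the order-one operator $B$ yields $\exc^{\ev}_L(B)\cap\Gamma=\exc^{\ev}(B)$, so $vy\in\Gamma_L\setminus\exc^{\ev}_L(B)$.

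Third, apply Lemma~\ref{prlemexc} over $L$ with $B=\der-(-g)$ in the role of $A=\der-g$ there (so the symbol $g$ of that lemma corresponds to $-g$ here) and $f:=u$. The three hypotheses of Lemma~\ref{prlemexc} hold: (a)~$B(y)=u\neq 0$; (b)~$(-g)-u^\dagger\succ^\flat 1$, which is the same as $g+u^\dagger\succ^\flat 1$, exactly the properness of $P$; (c)~$vy\notin\exc^{\ev}_L(B)$. The conclusion reads
\[
y\ \sim\ \frac{u}{u^\dagger-(-g)}\ =\ \frac{u}{g+u^\dagger},
\]
which is the desired estimate. The proof is purely a translation, with no genuine obstacle; the only point requiring mild care is transporting the hypothesis $vy\notin\exc^{\ev}(A)$ from $K$ to $L$, handled by Lemma~\ref{lemexc, order 1}.
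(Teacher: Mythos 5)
Your proof is correct and takes essentially the same route as the paper, whose entire argument for this lemma is the phrase ``From Lemma~\ref{prlemexc} we get'' preceding the statement. You simply make explicit the routine steps the paper leaves implicit: dividing out the unit $a$ (using $\exc^{\ev}(aB)=\exc^{\ev}(B)$), the sign translation so that the $g$ of Lemma~\ref{prlemexc} is $-g$ here, and transferring the hypothesis $vy\notin\exc^{\ev}(A)$ to the immediate extension containing $y$ via Lemma~\ref{lemexc, order 1}.
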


\noindent
By Lemmas~\ref{lem:proper evt} and \ref{lem:proper asymp}, and using Lemma~\ref{lem:proper nmul 1} for the last part:

\begin{cor}\label{cor:proper}
If $K$ is $\upl$-free  and $vy\in\Gamma\setminus\exc^{\ev}(A)$, then  
$$y\sim  u/\big( g+(u/\phi)^\dagger\big) \quad\text{ eventually.}$$
If in addition $\nval P=1$, then $y\prec 1$. 
\end{cor}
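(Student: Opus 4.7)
The strategy is to pass to a suitable compositional conjugate $K^\phi$ where $K^\phi$ has small derivation and $P^\phi$ becomes proper, so that Lemma~\ref{lem:proper asymp} applies directly.

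First I would observe that since $vy\in\Gamma\setminus\exc^{\ev}(A)$, the remark preceding Lemma~\ref{lem:proper asymp} gives $u\neq 0$. Lemma~\ref{lem:proper evt} then yields an active $\phi_0$ in $K$ such that for every $\phi\prec\phi_0$ active in $K$, $P^\phi$ is proper in $K^\phi$ and $g+(u/\phi)^\dagger\sim g+(u/\phi_0)^\dagger$. Fix such $\phi$; then $K^\phi$ has small derivation, and $\phi^{-1}A^\phi$ in $K^\phi[\derdelta]$ takes the form $a\phi\bigl(\derdelta+(g/\phi)\bigr)$, where $\derdelta=\phi^{-1}\der$. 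Consequently, with respect to $K^\phi$,
\[
P^\phi\ =\ a\phi\bigl(\derdelta Y + (g/\phi)Y - (u/\phi)\bigr)
\]
is of the same shape as $P$ with coefficients $a\phi$, $g/\phi$, $u/\phi$, and it is proper by choice of $\phi_0$.

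Next, since $\exc^{\ev}$ is invariant under compositional conjugation (as noted before Lemma~\ref{lemexc}), we still have $vy\in\Gamma\setminus\exc^{\ev}_{K^\phi}(A^\phi)$; moreover $y$ lies in the immediate $H$-asymptotic extension of $K^\phi$ obtained from the given one for $K$, and $P^\phi(y)=P(y)=0$. Applying Lemma~\ref{lem:proper asymp} to $K^\phi$ and $P^\phi$, I would obtain
\[
y\ \sim\ \frac{u/\phi}{\,g/\phi+(u/\phi)^{\dagger_{\phi}}\,},
\]
where $f^{\dagger_\phi}=\derdelta f/f=\phi^{-1}f^\dagger$ for $f\in (K^\phi)^\times$. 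Substituting $(u/\phi)^{\dagger_\phi}=\phi^{-1}(u/\phi)^\dagger$ and clearing $\phi^{-1}$ from numerator and denominator yields the asymptotic equivalence
\[
y\ \sim\ \frac{u}{g+(u/\phi)^\dagger},
\]
valid for all sufficiently small active $\phi$, i.e.\ eventually.

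For the final assertion, suppose in addition that $\nval P=1$. By Lemma~\ref{lem:proper nmul 1}, eventually $u\prec^\flat_\phi g+(u/\phi)^\dagger$, and combining with the displayed asymptotic equivalence gives $y\prec^\flat_\phi 1$ eventually; since $\prec^\flat_\phi$ refines $\prec$ above $1$, this forces $y\prec 1$. The main obstacle here is merely bookkeeping of the compositional conjugation: one must verify that ``proper'', ``$\exc^{\ev}$'', and the relevant $\flat$-asymptotic relations transform correctly under the passage $K\rightsquigarrow K^\phi$, all of which is handled by Lemmas~\ref{lem:proper compconj}, \ref{lem:proper evt}, and \ref{lem:proper nmul 1} together with the invariance of eventual exceptional values.
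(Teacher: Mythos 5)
Your proof is correct and follows exactly the paper's route: the paper derives the corollary directly from Lemmas~\ref{lem:proper evt} and~\ref{lem:proper asymp}, with Lemma~\ref{lem:proper nmul 1} for the final claim, and your write-up just makes the compositional-conjugation bookkeeping (properness of $P^\phi$, invariance of $\exc^{\ev}$, rescaling of the logarithmic derivative, and $\prec^\flat_\phi 1\Rightarrow\,\prec 1$) explicit. No gaps.
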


\subsection*{A characterization of $1$-linear newtonianity} 
{\it In this subsection $K$ has asymptotic integration.}\/
We first expand~[ADH, 14.2.4]:

\begin{prop}\label{prop:char 1-linearly newt} 
The following are equivalent:
\begin{enumerate}
\item[\textup{(i)}] $K$ is $1$-linearly newtonian;
\item[\textup{(ii)}] every $P\in K\{Y\}$ with $\nval P=\deg P=1$ and $\order P\leq 1$ has a zero in~$\smallo$;
\item[\textup{(iii)}] $K$ is $\d$-valued, $\upl$-free, and $1$-linearly surjective, with $\I(K)\subseteq K^\dagger$.
\end{enumerate}
\end{prop}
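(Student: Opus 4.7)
The plan is to prove the cycle (i) $\Rightarrow$ (iii) $\Rightarrow$ (ii) $\Rightarrow$ (i). The first two implications can be assembled quickly from already-established results; the third carries the bulk of the content.

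For (i) $\Rightarrow$ (iii): $1$-linear newtonianity forces $\upl$-freeness by [ADH, 14.2.3]; Lemma~\ref{lem:ADH 14.2.5} then gives $\d$-valuedness together with $\I(K) = (1+\smallo)^\dagger \subseteq K^\dagger$; and $1$-linear surjectivity is [ADH, 14.2.2].

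For (iii) $\Rightarrow$ (ii), let $P = aY' + bY + c$ with $\deg P = \nval P = 1$. A direct computation of $\dval P^\phi$ for small active $\phi$ shows that $\nval P = 1$ forces the following trichotomy. If $a = 0$, then $b \neq 0$ together with $c = 0$ or $c \prec b$, and $y := -c/b \in \smallo$ is a zero. If $a \neq 0$ and $c = 0$, take $y := 0$. Otherwise $a, b, c$ are all nonzero with $c \prec b$; then the equation reads $A(y) = -c$ for $A := a\der + b \in K[\der]^{\neq}$. Here $1$-linear surjectivity gives $-c \in A(K)$; the hypothesis $\I(K) \subseteq K^\dagger$ makes Lemma~\ref{lem:v(ker)=exc, r=1} applicable, yielding $\exc^{\ev}(A) = v(\ker^{\neq}A)$, so Corollary~\ref{cor:nonexc sol} (using $\d$-valuedness) produces a solution $y$ with $A(y) = -c$ and $vy \notin \exc^{\ev}(A)$. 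Finally, Lemma~\ref{lem:proper evt} combined with $\upl$-freeness makes $P$ eventually proper, and Corollary~\ref{cor:proper} together with $\nval P = 1$ then forces $y \prec 1$, i.e., $y \in \smallo$.

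For (ii) $\Rightarrow$ (i), let $P \in K\{Y\}$ with $\order P \leq 1$ and $\ndeg P = 1$; the goal is $y \in \mathcal O$ with $P(y) = 0$. Since (ii), quasilinearity, and the conclusion are invariant under compositional conjugation, and $\ddeg P^\phi = 1$ eventually, we may assume $\ddeg P = 1$, writing $P = L + R$ with $L := P_0 + P_1$ and $R := \sum_{d \geq 2} P_d \prec L$. Set up a Newton-style iteration: $y_0 := 0$, and given $y_n \in \mathcal O$, consider $Q_n(Y) := L_{P_{+y_n}}(Y) + P(y_n)$; after a further compositional conjugation to arrange $\nval Q_n = \deg Q_n = 1$, hypothesis (ii) provides $\delta_n \in \smallo$ with $Q_n(\delta_n) = 0$, and we put $y_{n+1} := y_n + \delta_n$. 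Quasilinearity of $P$ yields $P(y_{n+1}) \prec P(y_n)$, so $(y_n)$ is pseudo-Cauchy in $\mathcal O$ and its pseudo-limit gives the sought $y \in \mathcal O$.

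The principal difficulty is the last direction. Hypothesis (ii) only addresses polynomials of the very specific form $aY' + bY + c$ with $\nval = \deg = 1$, whereas (i) must be verified for arbitrary quasilinear $P$ of order $\leq 1$ and any degree. The Newton iteration therefore needs, at each stage, that the linearization $Q_n$ can be put into the form required by (ii); this forces one to extract $\d$-valuedness, $1$-linear surjectivity, and especially $\upl$-freeness from (ii) as intermediate consequences, and then to argue that the pseudo-Cauchy sequence $(y_n)$ actually pseudo-converges inside $K$ rather than in some proper immediate extension. The last point is the most delicate, and $\upl$-freeness is essential for securing it.
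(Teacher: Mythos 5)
Your directions (i)$\Rightarrow$(iii) and (iii)$\Rightarrow$(ii) are essentially sound, but (ii)$\Rightarrow$(i) contains the genuine gap, and it is twofold. First, you are proving the wrong statement: $1$-linear newtonianity concerns, by definition [ADH, 14.2], only quasilinear $P$ with $\deg P=1$ and $\order P\leq 1$, whereas your Newton iteration targets arbitrary quasilinear $P$ of order $\leq 1$ ``of any degree'' --- that is essentially $1$-newtonianity, a stronger property neither asserted by the proposition nor obviously a consequence of (ii). (The paper gets (i)$\Leftrightarrow$(ii) for free by citing [ADH, 14.2.4]; if you do not want to cite it, you must at least restrict to $\deg P=1$.) Second, even granting the restricted target, the iteration as sketched fails: the step ``after a further compositional conjugation to arrange $\nval Q_n=\deg Q_n=1$'' is impossible, since $\nval$ is by definition the eventual value of $\dval$ under compositional conjugation and hence invariant under it; changing $\nval$ requires additive/multiplicative conjugation, i.e., exactly the newton-position machinery of [ADH, 14.3] that you are trying to bypass. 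Finally, the convergence step is only asserted: a pc-sequence $(y_n)$ produced this way need not pseudoconverge in $K$, and even a pseudolimit in an immediate extension must still be brought down into $K$ --- which is the entire content of a newtonianity statement. You acknowledge that $\upl$-freeness (and $\d$-valuedness, $1$-linear surjectivity) would have to be ``extracted from (ii)'' to secure this, but no such extraction, nor the descent argument, is given. As it stands, (ii)$\Rightarrow$(i) is a plan, not a proof.

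On the other two directions: (i)$\Rightarrow$(iii) coincides with the paper's (citations to [ADH, 14.2.2, 14.2.3] and Lemma~\ref{lem:ADH 14.2.5}). Your (iii)$\Rightarrow$(ii) takes a genuinely different and workable route: you solve $A(y)=-c$ inside $K$ by $1$-linear surjectivity, use $\I(K)\subseteq K^\dagger$ via Lemma~\ref{lem:v(ker)=exc, r=1} and Corollary~\ref{cor:nonexc sol} to choose $y$ with $vy\notin\exc^{\ev}(A)$, and then get $y\prec 1$ from Corollary~\ref{cor:proper}; the paper instead produces a small zero in an immediate extension via Corollary~\ref{cor:nmul=1} and descends to $K$ by a uniqueness argument (Lemma~\ref{lem:at most one zero} when $g\notin K^\dagger$, Lemma~\ref{0K} when $g\in K^\dagger$). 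One correction, though: your claimed trichotomy is not exhaustive. By Lemma~\ref{lem:proper nmul 1}, for $P=a(Y'+gY-u)$ with $a\neq 0$ one has $\nval P=1$ iff $u\prec g$ \emph{or} $u\in\I(K)$; so for instance $a\neq 0$, $b=0$, $0\neq c\in\I(K)$ occurs and fits none of your three cases. Fortunately your third-case argument never actually uses $c\prec b$ (only $a\neq 0$, $c\neq 0$), so the fix is merely to replace that case by ``$a\neq 0$ and $c\neq 0$''; but the assertion that $\nval P=1$ \emph{forces} $c\prec b$ there is false and should be removed.
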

\begin{proof} 
The equivalence of (i) and (ii) is [ADH, 14.2.4], and the implication (i)~$\Rightarrow$~(iii) follows from [ADH, 14.2.2, 14.2.3, 14.2.5].
To show (iii)~$\Rightarrow$~(ii), suppose (iii) holds, and
let $g,u\in K$ and $P=Y'+gY-u$ with $\nval P=1$. We need to find~$y\in \smallo$ such that $P(y)=0$.
Corollary~\ref{cor:nmul=1} gives an element~$y\prec 1$ in an immediate $H$-asymptotic extension $L$ of $K$ with $P(y)=0$.
It suffices to show that then
$y\in K$ (and thus~${y\in \smallo}$). 
If $g\notin K^\dagger$, then  this follows from Lemma~\ref{lem:at most one zero}, using $\I(K)\subseteq K^\dagger$ and $1$-linear surjectivity of $K$; if $g\in K^\dagger$, then this follows from Lemma~\ref{0K} and~$\der K=K$.
\end{proof}

\noindent
By the next corollary, each Liouville closed $H$-field is $1$-linearly newtonian:

\begin{cor} \label{cor:Liouville closed => 1-lin newt}
Suppose $K^\dagger=K$. Then the following are equivalent:
\begin{enumerate}
\item[$\mathrm{(i)}$] $K$ is $1$-linearly newtonian;
\item[$\mathrm{(ii)}$] $K$ is $\d$-valued and $1$-linearly surjective;
\item[$\mathrm{(iii)}$] $K$ is $\d$-valued and $\der K=K$.
\end{enumerate}
\end{cor}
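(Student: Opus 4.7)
\medskip

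The plan is to derive this from Proposition~\ref{prop:char 1-linearly newt}, which already characterizes $1$-linear newtonianity as the conjunction of being $\d$-valued, $\upl$-free, and $1$-linearly surjective, together with $\I(K)\subseteq K^\dagger$. Since we assume $K^\dagger=K$, the condition $\I(K)\subseteq K^\dagger$ is automatic, so under the hypothesis $K^\dagger=K$ the characterization simplifies to: $K$ is $1$-linearly newtonian iff $K$ is $\d$-valued, $\upl$-free, and $1$-linearly surjective. The task is therefore to show that, when $K^\dagger=K$, being $\d$-valued together with $\der K=K$ already captures the full strength of (i).

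\medskip

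First I would dispose of the easy directions. The implication (i)~$\Rightarrow$~(ii) is immediate from Proposition~\ref{prop:char 1-linearly newt}. For (ii)~$\Rightarrow$~(iii), apply $1$-linear surjectivity to the operator $\der\in K[\der]^{\ne}$, which has order $1$, to get $\der K=K$.

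\medskip

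The main content is (iii)~$\Rightarrow$~(i). Assume $K$ is $\d$-valued with $\der K=K$ and $K^\dagger=K$. I claim $K$ is $1$-linearly surjective. Let $A=a_0+a_1\der\in K[\der]^{\ne}$ of order $\le 1$; the case $a_1=0$ is trivial, so assume $a_1\ne 0$, and dividing by $a_1$ reduce to $A=\der+g$ with $g\in K$. Given $h\in K$, use $K^\dagger=K$ to choose $f\in K^\times$ with $f^\dagger=-g$, and then use $\der K=K$ to pick $z\in K$ with $z'=h/f$. A short computation gives $(\der+g)(fz)=fz'=h$, establishing $A(K)=K$. Thus $K$ is $1$-linearly surjective.

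\medskip

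It then remains to verify $\upl$-freeness, which is where the subsection hypothesis of asymptotic integration enters. Here I would simply invoke the characterization [ADH, 11.6.17] already recalled in the excerpt: for an $H$-asymptotic field with rational asymptotic integration, $\upl$-freeness is equivalent to $v\bigl(A(K)\bigr)=\Gamma_\infty$ for every $A=\der-g$. But this is immediate from the $1$-linear surjectivity just established. Combining everything, $K$ is $\d$-valued, $\upl$-free, and $1$-linearly surjective, with $\I(K)\subseteq K=K^\dagger$, so Proposition~\ref{prop:char 1-linearly newt} yields that $K$ is $1$-linearly newtonian. The only nontrivial step is the twisting argument for $1$-linear surjectivity, but thanks to the hypothesis $K^\dagger=K$ this is essentially a single-line calculation; the subtlety of the equivalence is really encapsulated in Proposition~\ref{prop:char 1-linearly newt} and in [ADH, 11.6.17].
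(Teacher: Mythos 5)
Your proof is correct. It shares the paper's main skeleton — everything is funneled through Proposition~\ref{prop:char 1-linearly newt}, with $\I(K)\subseteq K=K^\dagger$ automatic — but the two auxiliary ingredients are handled differently. The paper first notes that $K^\dagger=K$ (with the standing hypotheses of ungroundedness and asymptotic integration) already makes $K$ $\upl$-free, by the remark following [ADH, 11.6.2], and then quotes [ADH, example following 5.5.22] for the equivalence of (ii) and (iii); so its proof is two separate equivalences, each a citation. You instead run the cycle (i)$\Rightarrow$(ii)$\Rightarrow$(iii)$\Rightarrow$(i), write out the twisting computation $(\der+g)(fz)=fz'$ with $f^\dagger=-g$ explicitly (this is exactly the content of the ADH example the paper cites), and obtain $\upl$-freeness only afterwards, as a consequence of the $1$-linear surjectivity you just proved, via the criterion $v\big(A(K)\big)=\Gamma_\infty$ for all $A=\der-g$ from [ADH, 11.6.17] as recalled in Section~\ref{sec:lindiff} (whose standing hypotheses — ungrounded $H$-asymptotic with asymptotic integration — are indeed satisfied here; note the recalled statement assumes asymptotic integration rather than rational asymptotic integration, as you phrased it). Both routes are sound: the paper's is shorter and gets $\upl$-freeness directly from $K^\dagger=K$ independently of surjectivity, while yours is more self-contained on the twisting step at the cost of threading $\upl$-freeness through the surjectivity argument.
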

\begin{proof}
Note that $K$ is $\upl$-free by [ADH, remarks following 11.6.2].
Hence the equivalence of (i) and (ii) follows from Proposition~\ref{prop:char 1-linearly newt}. For the equivalence of (ii) with~(iii), see [ADH, example following 5.5.22]. 
\end{proof}

\subsection*{Linear newtonianity descends} 
{\em In this subsection $H$ is $\d$-valued with valuation ring 
$\mathcal{O}$ and constant field $C$. Let~$r\in\N^{\ge 1}$}.
If $H$ is $\upo$-free, $\Gamma$ is divisible, and~$H$ has a newtonian algebraic extension $K=H(C_K)$, then
$H$ is also newtonian, by \eqref{eq:14.5.6}.  Here is an analogue of this for $r$-linear newtonianity:

\begin{lemma}\label{lem:descent r-linear newt}
Let $K=H(C_K)$ be an algebraic asymptotic extension of $H$ which is $r$-linearly newtonian. Then $H$ is $r$-linearly newtonian.
\end{lemma}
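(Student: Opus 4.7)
The plan is to lift a solution from $K$ back to $H$ by decomposing along a $C$-basis of constants and then averaging via the Galois group after passing to a henselization. Let $P\in H\{Y\}$ be quasilinear of order $\le r$; write $P(Y)=A(Y)-b$ with $A\in H[\der]^{\ne}$ of order $\le r$ and $b\in H$. Since $K=H(C_K)$ is an algebraic asymptotic extension with $\Gamma_K=\Gamma_H$ (by [ADH, 10.5.15]), and the coefficients of every $P^\phi$ lie in $H$ with valuation unchanged, the Newton data of $P$ is preserved in $K\{Y\}$. So $P$ remains quasilinear over $K$, and the $r$-linear newtonianity of $K$ supplies $y\in\mathcal O_K$ with $A(y)=b$.

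Next, by [ADH, 4.6.20], $H$ and $C_K$ are linearly disjoint over $C$ in $K=H\otimes_C C_K$. To extract an explicit solution in $H$, I would pick a finite subfield $C\subseteq C'\subseteq C_K$ with $y\in H(C')$, pass to the Galois closure $C''$ of $C'$ over $C$ inside an algebraic closure, and form $L:=H(C'')=H\otimes_C C''$ with $G:=\Gal(C''/C)$. Choose a $C$-basis $1=c_0,c_1,\dots,c_n$ of $C''$ with $\operatorname{tr}_{C''/C}(c_i)=0$ for $i\ge 1$ (possible by nondegeneracy of the trace form in characteristic zero). Write $y=\sum_i h_i c_i$ uniquely with $h_i\in H$. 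Because $A\in H[\der]$ and $c_i'=0$, comparing $A(y)=\sum_i A(h_i)c_i$ with $A(y)=b=b\cdot c_0$ in $\bigoplus_i H c_i$ forces $A(h_0)=b$ and $A(h_i)=0$ for $i\ge 1$. The trace condition moreover gives $y_0:=h_0=|G|^{-1}\operatorname{tr}_{L/H}(y)=|G|^{-1}\sum_{\sigma\in G}\sigma(y)$, where each $\sigma\in G$ lifts to the $H$-automorphism of $L$ fixing $H$ and acting on $C''$. Thus $y_0\in H$ is a solution of $P$, and the remaining task is to show $y_0\in\mathcal O_H$.

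For this—the main technical step—I would pass to the henselization $H^{\operatorname{h}}$ of $(H,v_H)$ in an algebraic closure and set $L^{\operatorname{h}}:=H^{\operatorname{h}}(C'')$, extending the valuation from $K$ through $L$ up to $L^{\operatorname{h}}$ in a compatible way. Since $L^{\operatorname{h}}/H^{\operatorname{h}}$ is algebraic over the henselian field $H^{\operatorname{h}}$, the extension of $v_{H^{\operatorname{h}}}$ to $L^{\operatorname{h}}$ is unique, and hence every $H^{\operatorname{h}}$-automorphism of $L^{\operatorname{h}}$ preserves it. In particular the lifts of the $\sigma\in G$ to $L^{\operatorname{h}}$ do, so $v(\sigma(y))=v(y)\ge 0$ for all $\sigma$. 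As $|G|^{-1}\in C$ has valuation $0$, averaging yields $v(y_0)\ge 0$ in $L^{\operatorname{h}}$, and since $y_0\in H$ with $v_{H^{\operatorname{h}}}|_H=v_H$, we conclude $y_0\in\mathcal O_{H^{\operatorname{h}}}\cap H=\mathcal O_H$, as required.

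The hard part is this valuation-preservation argument. Because $H$ need not itself be henselian, the Galois lifts $\sigma$ need not preserve $v$ directly on $K$ or on $L$, so a naive Galois average in $L$ yields no a priori control over the valuation of $y_0$. Passing to the henselization $H^{\operatorname{h}}$ finesses this obstacle via uniqueness of valuation extension to algebraic extensions of henselian fields, which forces the required Galois-invariance. The $H$-asymptotic hypothesis enters tacitly here, through the general fact that in any asymptotic field every nonzero constant has valuation $0$, which is what puts the scalar $|G|^{-1}$ (and, in an alternative inversion-of-change-of-basis version of the argument, the entries of the relevant inverse matrix) into the valuation ring.
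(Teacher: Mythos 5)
Your first half coincides with the paper's own proof: decompose the solution $y\in\mathcal O_K$ along a $C$-basis of the constants containing $1$, use linear disjointness of $H$ from the constants over $C$ and the fact that the basis elements are constants to compare coefficients, and conclude that the coefficient of $1$ solves $P$ in $H$. Where you diverge is the remaining step, that this coefficient lies in $\mathcal O_H$. The paper gets this at once from the structure of constant field extensions of $\d$-valued fields: since $\operatorname{res}(H)=C$ and $\Gamma_K=\Gamma$, the residues of a $C$-basis $B$ of $C_K$ are $C$-linearly independent over $\operatorname{res}(H)$, so $v\big(\sum_b y_b b\big)=\min_b v(y_b)$ for all families $(y_b)$ in $H$ with finite support; hence \emph{all} coordinates of $y\in\mathcal O_K$ already lie in $\mathcal O$ (this is the paper's ``$y_b\in\mathcal O$ for all $b$''), and no Galois closure, trace-zero basis, or henselization is needed. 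Your Galois-averaging detour is therefore strictly heavier than what the situation requires.

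It also has a concrete gap. You need $w(\sigma(y))\ge 0$ for \emph{every} $\sigma\in G=\Gal(C''/C)$, and you derive it from the uniqueness of the extension of $v_{H^{\operatorname{h}}}$ to $L^{\operatorname{h}}=H^{\operatorname{h}}(C'')$, applied to ``the lifts of the $\sigma\in G$''. But $\sigma$ extends to an $H^{\operatorname{h}}$-automorphism of $L^{\operatorname{h}}$ only if it fixes $L\cap H^{\operatorname{h}}$ pointwise: $\Gal(L^{\operatorname{h}}/H^{\operatorname{h}})\cong\Gal(L/L\cap H^{\operatorname{h}})$, a priori a proper subgroup of $G$. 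If $C''\cap H^{\operatorname{h}}\ne C$, some conjugates $\sigma(y)$ are not reached by valuation-preserving automorphisms over $H^{\operatorname{h}}$, and the average $|G|^{-1}\sum_\sigma\sigma(y)$ is not controlled. The missing ingredient is that $C''\cap H^{\operatorname{h}}=C$: every element of $C''$ is a constant in any differential field extension (being algebraic over $C$), while $H^{\operatorname{h}}$ is an immediate asymptotic extension of the $\d$-valued field $H$ (see [ADH, 9.5]) and hence is $\d$-valued with constant field $C$ (see [ADH, 9.1.2]); together with linear disjointness this gives $L\cap H^{\operatorname{h}}=H$, after which your argument goes through. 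So your proof is repairable, but as written the key valuation-invariance step is unjustified—and, as explained above, the entire detour can be bypassed by the one-line coordinate argument the paper uses.
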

\begin{proof}
Take a basis $B$ of the $C$-linear space $C_K$ with $1\in B$, and let $b$ range over~$B$.
We have $H(C_K)=H[C_K]$, and $H$ is linearly disjoint from $C_K$ over $C$ [ADH, 4.6.16],
so $B$ is a basis of the $H$-linear space $H[C_K]$.
Let $P\in H\{Y\}$ with~$\deg P=1$ and~$\order(P)\leq r$ be quasilinear; then $P$ as element of~$K\{Y\}$ remains
quasilinear, since~${\Gamma_K=\Gamma}$ by  [ADH, 10.5.15].
Let $y\in\mathcal O_K$ be a zero of $P$. Take~$y_b\in H$ ($b\in B$)  with
$y_b=0$ for all but finitely many $b$ and $y=\sum_b y_b\,b$. Then $y_b\in\mathcal O$ for all~$b$, and
$$0\ =\ P(y)\ =\ P_0 + P_1(y)\ =\ P_0 + \sum_b P_1(y_b)b,$$
so $P(y_1) = P_0 + P_1(y_1) = 0$.
\end{proof}
 
\noindent
Thus if $H[\imag]$ with $\imag^2=-1$ is $r$-linearly newtonian, then $H$ is $r$-linearly newtonian. 

\subsection*{Cases of bounded order} 
{\it In the rest of this section $r\in\N^{\ge 1}$}.\index{H-asymptotic field@$H$-asymptotic field!strongly $r$-newtonian}\index{strongly!$r$-newtonian}\index{r-newtonian@$r$-newtonian!strongly}
We define~$K$ to be {\bf strongly $r$-newtonian\/} if $K$ is $r$-newtonian and for each divergent pc-sequence~$(a_{\rho})$ in $K$ with minimal differential polynomial $G(Y)$ over $K$ of order~$\le r$ we have~${\ndeg_{\boldsymbol a} G =1}$,
where ${\boldsymbol a}:=c_K(a_{\rho})$. Given $P\in K\{Y\}^{\ne}$, a {\bf $K$-external zero of $P$\/}\index{zero!K-external@$K$-external}\index{K-external zero@$K$-external zero}\index{differential polynomial!K-external zero@$K$-external zero}
is an element~$\hat{a}$ of some immediate asymptotic extension $\hat{K}$ of $K$ with~$P(\hat{a})=0$ and $\hat{a}\notin K$. Now~[ADH, 14.1.11] extends as follows with the same proof: 

\begin{lemma}\label{14.1.11.r} Suppose $K$ has rational asymptotic integration and $K$ is strongly $r$-newtonian. Then no $P\in K\{Y\}^{\ne}$ of order $\le r$ can have a
$K$-external zero.
\end{lemma}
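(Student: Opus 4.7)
The plan is to transplant the proof of [ADH, 14.1.11] verbatim, keeping the order bound $r$ in sight at every step. First I will argue by contradiction: assuming that some $P\in K\{Y\}^{\ne}$ of order $\le r$ has a $K$-external zero $\hat a\in \hat K\setminus K$, I pick among all such $(P,\hat a)$ one with $P$ of minimal complexity, rename this $P$ to $G$, and record that $G$ is then the minimal annihilator of $\hat a$ over $K$, with $\order G\le r$. Since $\hat K\supseteq K$ is immediate and $\hat a\notin K$, standard valuation theory supplies a divergent pc-sequence $(a_\rho)$ in $K$ with $a_\rho\leadsto \hat a$.

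The second step is to realign $\hat a$ with this pc-sequence. Appealing to Lemma~\ref{nreplace} (and the underlying results [ADH, 11.4.8, 11.4.13]), I may replace $\hat a$---without changing $v(\hat a-a)$ for any $a\in K$---by an element $\hat b$ of a possibly different immediate asymptotic extension for which $G$ is \emph{also} a minimal differential polynomial of $(a_\rho)$ over $K$. After renaming $\hat b$ back to $\hat a$, both minimality conditions hold simultaneously. This realignment step is the crucial preparatory move; it is what allows the Newton-diagram machinery of [ADH,~Section~11.2] to be invoked against the cut of $K$ determined by $(a_\rho)$.

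With $\boldsymbol a := c_K(a_\rho)$ this cut, the strongly $r$-newtonian hypothesis now gives $\ndeg_{\boldsymbol a} G = 1$. By the eventual nature of $\ndeg_{\boldsymbol a}$, this yields $a\in K$ and $g\in K^\times$ with $\hat a-a\asymp g$ and $\ndeg G_{+a,\times g}=1$ (in the spirit of the $\ndeg$-calculations in the proof of Proposition~\ref{npropalsp}). The refinement $G_{+a,\times g}$ has order $\le r$, so $r$-newtonianity of $K$ produces $y\in\mathcal O$ with $G_{+a,\times g}(y)=0$, hence a zero $b:=a+gy\in K$ of $G$ with $v(b-a)\ge vg=v(\hat a-a)$.

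The remaining---and main---obstacle is to extract from such zeros an actual pseudolimit of $(a_\rho)$ inside $K$. Following [ADH, 14.1.10--11], one iterates the refinement cofinally in $\rho$: using that $\ndeg_{\boldsymbol a} G=1$ is a statement eventual in $\rho$ and that the Newton-diagram decomposition is stable along the pc-sequence, the zeros $b_\rho\in K$ of $G$ produced by the previous paragraph satisfy $v(b_\rho-a_\rho)$ tracking $v(\hat a-a_\rho)$ cofinally, so $(b_\rho)$ pseudoconverges to the cut $\boldsymbol a$. Because $G$ is a minimal differential polynomial of $(a_\rho)$ over $K$, the $b_\rho$ are then pseudolimits of $(a_\rho)$ in $K$, contradicting divergence of $(a_\rho)$. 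This cofinal/iterative step is the technically heaviest part, but it is formally identical to the one in the proof of [ADH, 14.1.11] once the order bound $\le r$ is carried through each of the preceding stages.
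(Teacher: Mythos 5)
Your skeleton is the intended one: the paper proves this lemma by noting that the proof of [ADH, 14.1.11] goes through verbatim once the order bound is tracked, i.e.\ take a divergent pc-sequence $a_\rho\leadsto\hat a$, pass to a minimal differential polynomial $G$ of $(a_\rho)$ over $K$ of order $\le r$, use strong $r$-newtonianity to get $\ndeg_{\boldsymbol a}G=1$, and then produce a pseudolimit of $(a_\rho)$ in $K$, contradicting divergence. Two remarks on your reduction: the realignment via Lemma~\ref{nreplace} is both unnecessary and invoked outside that lemma's standing hypotheses ($\upo$-freeness and $r$-linear newtonianity); what you actually need are the underlying facts [ADH, 11.4.8, 11.4.13] (cf.\ Corollary~\ref{mindivmin}), which is where rational asymptotic integration enters, and your minimal-complexity choice of $G$ already makes $G$ of minimal complexity in $Z(K,\hat a)$, hence a minimal differential polynomial of $(a_\rho)$, without changing $\hat a$.

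The genuine gap is in your final step, which you yourself call the main obstacle. From $r$-newtonianity you get, for each $\rho$, a zero $b_\rho\in K$ of $G$ with $v(b_\rho-a_\rho)\ge v(\hat a-a_\rho)$; but this does not make any $b_\rho$ a pseudolimit of $(a_\rho)$: for $\sigma>\rho$ one only gets $v(b_\rho-a_\sigma)\ge v(\hat a-a_\rho)$, with no strict increase in $\sigma$, and the minimality of $G$ yields nothing of the sort --- a nonzero differential polynomial can have many zeros in $K$ approximating the cut without the cut being realized (think of $Y'$ and its constant zeros). Likewise, even if the $b_\rho$ formed a pc-sequence equivalent to $(a_\rho)$, that is just another divergent pc-sequence and gives no contradiction. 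What actually closes the argument --- and what the proof of [ADH, 14.1.11] invokes --- is [ADH, 14.1.10] in its order-$\le r$ form (its proof goes through using only $r$-newtonianity, since $G$ has order $\le r$): if $G$ is a minimal differential polynomial over $K$ of the pc-sequence $(a_\rho)$ with $\ndeg_{\boldsymbol a}G=1$ and $K$ is ($r$-)newtonian, then $G(a)=0$ and $a_\rho\leadsto a$ for some $a\in K$. That lemma has a genuinely nontrivial proof and is not recoverable from the two sentences you give, so as written your argument has a hole exactly where the contradiction should appear; replacing your sketch by an appeal to this $r$-version of [ADH, 14.1.10] makes your proof coincide with the paper's.
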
 

\noindent
The following is important in certain inductions on the order. 
A differential field~$F$ is {\it $r$-linearly closed}\/ ($r\in\N$) if every $A\in F[\der]^{\ne}$ of order $\leqslant r$ splits over $F$.
So $F$ is {\it linearly closed}\/ iff it is $r$-linearly closed for all $r\in\N$.\index{differential field!r-linearly closed@$r$-linearly closed}\index{r-linearly closed@$r$-linearly closed}\index{closed!r-linearly@$r$-linearly}

\begin{lemma}\label{rlcrln} Suppose $K$ has asymptotic integration, is $1$-linearly newtonian, and
$r$-linearly closed. Then $K$ is $r$-linearly newtonian.
\end{lemma}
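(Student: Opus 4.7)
The plan is to induct on $r$. The case $r = 1$ is the hypothesis. For the inductive step, assume $r \geq 2$ and the lemma for $r - 1$; since $r$-linear closedness entails $(r-1)$-linear closedness, the inductive hypothesis yields that $K$ is $(r-1)$-linearly newtonian. Take a quasilinear $P \in K\{Y\}$ with $\deg P \leq 1$ and $\order P \leq r$, and write $P = L(Y) - g$ with $L \in K[\der]$ and $g \in K$; the goal is $y \in \mathcal O$ with $P(y) = 0$. If $\order L < r$ the inductive hypothesis finishes the job, and if $g = 0$ we take $y = 0$. So assume $\order L = r$ and $g \neq 0$.

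Unpacking $\ndeg P = 1$ for $P = L(Y) - g$ gives eventually $v(L^\phi) \leq vg$. Combining this with the eventual identity $v(L^\phi) = v_{L^\phi}(0) = v_L^{\ev}(0) + \nwt_L(0)\, v\phi$ from \eqref{eq:vAev}, and using that $v\phi$ grows without bound as the active $\phi$ shrinks, forces $\nwt_L(0) = 0$ and $v_L^{\ev}(0) \leq vg$. Since $K$ is $r$-linearly closed, $L$ splits over $K$; a splitting produces a factorization $L = L_1 L_2$ in $K[\der]$ with $\order L_1 = 1$ and $\order L_2 = r - 1$, both splitting over $K$ (e.g.\ $L_1 = c(\der - f_1)$ and $L_2 = (\der - f_2)\cdots(\der - f_r)$ when $L = c(\der - f_1)(\der - f_2)\cdots(\der - f_r)$). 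By Lemma~\ref{lem:exce product}, $\nwt_{L_2}(0) \leq \nwt_L(0) = 0$, so $0 \notin \exc^{\ev}(L_2)$; setting $\gamma := v_{L_2}^{\ev}(0) \in \Gamma$, the same lemma yields $\gamma \notin \exc^{\ev}(L_1)$ and $v_L^{\ev}(0) = v_{L_1}^{\ev}(\gamma)$.

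Now choose $\fm \in K^\times$ with $v\fm = \gamma$ and put $\tilde L_1 := L_1\fm \in K[\der]$. The right-multiplication formulas $\exc^{\ev}(L_1\fm) = \exc^{\ev}(L_1) - v\fm$ and $v_{L_1\fm}^{\ev}(0) = v_{L_1}^{\ev}(v\fm)$ recorded at the start of Section~\ref{sec:lindiff} give $0 \notin \exc^{\ev}(\tilde L_1)$ and $v_{\tilde L_1}^{\ev}(0) = v_{L_1}^{\ev}(\gamma) = v_L^{\ev}(0) \leq vg$, so the first-order equation $\tilde L_1(W) - g$ is quasilinear. By $1$-linear newtonianity some $w \in \mathcal O$ satisfies $\tilde L_1(w) = g$, i.e.\ $L_1(\fm w) = g$. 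Set $z := \fm w \in K$; then $L_1(z) = g$, and since $g \neq 0$ forces $w \neq 0$, $vz = v\fm + vw \geq \gamma$. The order-$(r-1)$ equation $L_2(Y) - z$ is then quasilinear as well (from $0 \notin \exc^{\ev}(L_2)$ and $v_{L_2}^{\ev}(0) = \gamma \leq vz$; the case $z = 0$ is trivial with $y = 0$), so the inductive hypothesis produces $y \in \mathcal O$ with $L_2(y) = z$. Finally $L(y) = L_1 L_2(y) = L_1(z) = g$, so $P(y) = 0$.

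The main technical input is the compatibility of Newton weights and eventual valuations with composition of linear differential operators encoded in Lemma~\ref{lem:exce product}. The key insight, and the hardest part to spot, is that a single scalar twist $\fm$ with valuation $\gamma = v_{L_2}^{\ev}(0)$ simultaneously makes the order-$1$ subequation quasilinear (via the right-multiplication formulas for $\exc^{\ev}$ and $v^{\ev}$) and arranges for the resulting intermediate $z$ to satisfy $vz \geq \gamma$, which is exactly the valuation threshold required to keep the order-$(r-1)$ subequation quasilinear. Once this coordinated bookkeeping is arranged, the induction runs smoothly.
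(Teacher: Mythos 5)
There is a genuine gap at the claim that quasilinearity ``forces $\nwt_L(0)=0$.'' You justify this by asserting that $v\phi$ ``grows without bound'' as the active $\phi$ shrinks, but that is false: $v\phi$ ranges cofinally over $\Psi^{\downarrow}$, which is bounded above in $\Gamma$ (by any element of $(\Gamma^>)'$, using that $K$ has asymptotic integration). Hence $\nwt_L(0)\,v\phi$ stays bounded, and the eventual inequality $v_L^{\ev}(0)+\nwt_L(0)\,v\phi\leq vg$ is perfectly compatible with $\nwt_L(0)\geq 1$ whenever $vg-v_L^{\ev}(0)$ dominates $\nwt_L(0)\cdot\Psi^{\downarrow}$. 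A concrete instance already with $r=2$: take $L=\der^2$ in a field with small derivation and $x'\asymp 1$; then $L$ splits over $K$, yet $\exc^{\ev}(\der^2)=\{vx,0\}$ (example after Lemma~\ref{lem:psiAB}), so $0\in\exc^{\ev}(L)$, while $P=Y''-g$ is quasilinear (indeed has $\nval P=1$) for $g$ with $vg$ large. Since the vanishing of $\nwt_L(0)$ is exactly what licenses $0\notin\exc^{\ev}(L_2)$ and $\gamma\notin\exc^{\ev}(L_1)$ via Lemma~\ref{lem:exce product}, and those in turn are what make the order-$1$ and order-$(r-1)$ subequations quasilinear, the error is not localized — the whole factor-and-solve induction collapses without it. (There is also a minor slip in that you pass from $\ndeg=1$ and appeal to $1$-linear newtonianity, which requires $\nval=1$; this would be fixable, but the larger issue above is not.)

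For comparison, the paper's proof takes a completely different route and never needs $0\notin\exc^{\ev}(L_P)$: it compositionally conjugates, passes to an elementary extension, and scales to arrange $P_0\prec^\flat 1$ and $P_1\asymp 1$, then observes that the valuation ring of the flattening $(K,v^\flat)$ is $1$-linearly surjective, so order-$1$ operators are neatly surjective for $v^\flat$, and since $L_P$ splits it is neatly surjective for $v^\flat$ by [ADH, 5.6.10(ii)]. Neat surjectivity in the coarsened sense then directly produces $y$ with $L_P(y)=-P_0$ and $v^\flat y>0$. The coarsening effectively absorbs the bounded quantity $\nwt_L(0)\,v\phi$ that sabotages your argument, which is why the paper can dispense with any case analysis on exceptional values.
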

\begin{proof} Note that $K$ is $\upl$-free and $\d$-valued by Proposition~\ref{prop:char 1-linearly newt}. 
Let~$P\in K\{Y\}$ be such that $\operatorname{nmul} P=\deg P=1$ and $\order P\le r$; by~[ADH, 14.2.6] it suffices to show that then $P$ has a zero in $\smallo$. By [ADH, proof of 13.7.10] we can compositionally conjugate, pass to an elementary extension, and multiply by an element of $K^\times$ to arrange that $K$ has small derivation, $P_0\prec^{\flat} 1$, and $P_1\asymp 1$. 
Let $A:= L_P$. The valuation ring of the flattening 
$(K, v^\flat)$ is $1$-linearly surjective by~[ADH, 14.2.1],
so all operators in $K[\der]$ of order $1$ are neatly
surjective in the sense of $(K, v^\flat)$. Since~$A$ splits over $K$,
we obtain from [ADH, 5.6.10(ii)] that~$A$ is neatly surjective in the sense of $(K,v^\flat)$. As $v^{\flat}(A)=0$ and $v^{\flat}(P_0)>0$, this gives~$y\in K$ with
$v^\flat(y)>0$ such that $P_0+A(y)=0$, that is, $P(y)=0$. 
\end{proof}  

\noindent
Using the terminology of $K$-external zeros,
we can add another item to the list of equivalent statements in Proposition~\ref{prop:char 1-linearly newt}:

\begin{lemma}\label{lem:char 1-linearly newt}
Suppose $K$ has asymptotic integration. Then we have:
\begin{align*} \text{$K$ is $1$-linearly newtonian}\ \Longleftrightarrow\ 
&\text{$K$ is $\upl$-free and no $P\in K\{Y\}$ with $\deg P=1$}\\
&\text{and $\order P=1$ has a $K$-external zero.}
\end{align*} 
\end{lemma}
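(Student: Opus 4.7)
The plan is to prove the biconditional by invoking Proposition~\ref{prop:char 1-linearly newt} on both sides and then handling the (equivalent) concrete statement that every quasilinear $P \in K\{Y\}$ of order $\leq 1$ and degree $1$ with $\nval P = 1$ has a zero in $\smallo$. After normalizing, it suffices to treat $P = Y'+bY-u$ with $b,u \in K$.

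For the forward direction, I would assume $K$ is $1$-linearly newtonian. Proposition~\ref{prop:char 1-linearly newt} immediately yields $\upl$-freeness (as required by the equivalence), along with the extra properties that $K$ is $\d$-valued, $1$-linearly surjective (in particular $\der K = K$), and satisfies $\I(K) \subseteq K^\dagger$. Now suppose $P \in K\{Y\}$ has $\deg P = 1$ and $\order P = 1$; after dividing by its leading coefficient we may assume $P = Y'+bY-u$. I split on whether $b \in K^\dagger$. If $b \notin K^\dagger$, then $b \notin \I(K)+K^\dagger$ by the inclusion above, so Lemma~\ref{lem:at most one zero} says every immediate asymptotic extension contains at most one zero of $P$; but $1$-linear surjectivity puts a zero of $P$ in $K$, so no $K$-external zero can exist. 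If $b \in K^\dagger$, the remarks immediately after Lemma~\ref{lem:at most one zero} (which combine $\der K = K$ and $\d$-valuedness with Lemma~\ref{0K}) state that every zero of $P$ in an immediate $H$-asymptotic extension of $K$ already lies in $K$, again ruling out a $K$-external zero.

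For the backward direction, I would assume $K$ is $\upl$-free and no $P$ of the specified form has a $K$-external zero, and aim to verify condition~(ii) of Proposition~\ref{prop:char 1-linearly newt}. So let $P \in K\{Y\}$ satisfy $\nval P = \deg P = 1$ and $\order P \leq 1$. If $\order P = 0$, then $P = aY + c$ with $a \neq 0$, and a direct check of $\dval$ on the homogeneous components shows $\nval P = 1$ forces $va < vc$ (or $c = 0$), whence the unique zero $y = -c/a$ lies in $\smallo$. If $\order P = 1$, normalize to $P = Y'+bY-u$. Since $K$ is $\upl$-free and $\nval P = 1$, Corollary~\ref{cor:nmul=1} yields some $y \prec 1$ in an immediate $H$-asymptotic extension $\hat K$ of $K$ with $P(\hat y) = 0$; the standing hypothesis then prohibits $\hat y \notin K$, so $\hat y \in K$, and $\hat y \prec 1$ gives $\hat y \in \smallo$.

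There is no real obstacle here: the proof is a bookkeeping exercise that routes everything through Proposition~\ref{prop:char 1-linearly newt}, Lemma~\ref{lem:at most one zero} (together with the paragraph of remarks that follows it, which packages Lemma~\ref{0K} with $\d$-valuedness), and Corollary~\ref{cor:nmul=1}. The only minor point worth noting is that one must split the forward argument on $b \in K^\dagger$ versus $b \notin K^\dagger$, since uniqueness of zeros in immediate extensions is handled by two different results in the two cases.
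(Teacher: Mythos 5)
Your proof is correct and follows essentially the same route as the paper: the converse is exactly the paper's argument (Corollary~\ref{cor:nmul=1} plus (ii)$\Rightarrow$(i) of Proposition~\ref{prop:char 1-linearly newt}), and the forward direction likewise passes through (i)$\Rightarrow$(iii) of that proposition. The only cosmetic difference is that for uniqueness of zeros in immediate extensions you cite Lemma~\ref{lem:at most one zero} and the remark following it, whereas the paper invokes $L^\dagger\cap K=K^\dagger$ (via Corollary~\ref{cor:no new LDs}) together with Lemmas~\ref{0K} and~\ref{1K} directly --- the same ingredients in a slightly different packaging.
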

\begin{proof}
Suppose $K$ is $1$-linearly newtonian. Then by (i)~$\Rightarrow$~(iii) in Proposition~\ref{prop:char 1-linearly newt}, $K$ is
$\upl$-free, $\d$-valued, $1$-linearly surjective, and~$\I(K)\subseteq K^\dagger$.
Let $P\in K\{Y\}$ where~$\deg P=\order P=1$ and  $y$ in an immediate asymptotic extension $L$ of~$K$ with~$P(y)=0$. Then [ADH, 9.1.2] and Corollary~\ref{cor:no new LDs} give $L^\dagger\cap K=K^\dagger$, so $y\in K$ by
Lemmas~\ref{0K} and~\ref{1K}. This gives the direction $\Rightarrow$.  
 The converse follows from Corollary~\ref{cor:nmul=1} and  
(ii)~$\Rightarrow$~(i) in Proposition~\ref{prop:char 1-linearly newt}.
\end{proof}

\noindent
Here is a higher-order version of Lemma~\ref{lem:char 1-linearly newt}:

\begin{lemma}\label{lem:char r-linearly newt}
Suppose $K$ is $\upo$-free. Then
\begin{align*} \text{$K$ is $r$-linearly newtonian}\quad \Longleftrightarrow\quad 
&\text{no $P\in K\{Y\}$ with $\deg P=1$ and $\order P\leq r$}\\
&\text{has a $K$-external zero.}
\end{align*}
\end{lemma}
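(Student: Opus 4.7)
The plan is to prove the two implications separately, with the forward direction carrying the main work. For ``$\Rightarrow$'', suppose $K$ is $r$-linearly newtonian and let $P=b+A(Y)\in K\{Y\}$ with $b\in K$ and $A\in K[\der]$ of order $\le r$ have a zero $\hat y$ in some immediate asymptotic extension $L$ of $K$. The case $A=0$ is trivial, so assume $A\neq 0$. By [ADH, 14.2.2] there exists $y_0\in K$ with $A(y_0)=-b$, so $\hat y-y_0\in \ker_L A$; it therefore suffices to show $\ker_L A\subseteq K$.

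Suppose for contradiction some $z\in\ker_L A$ lies in $L\setminus K$, and pick a pc-sequence $(c_\rho)$ in $K$ with $c_\rho\leadsto z$. Because $L\supseteq K$ is immediate, $\Gamma_L=\Gamma$ and $\Psi_L=\Psi$; Corollary~\ref{cor:13.7.10} then yields $\exc^{\ev}_L(A)=\exc^{\ev}(A)$, a finite set by Corollary~\ref{cor:sum of nwts}. Eventually $v(c_\rho-z)\notin\exc^{\ev}(A)$, so Lemma~\ref{lem:ADH 14.2.7} (applied in $L$, with $v^{\ev}_A$ invariant by Corollary~\ref{cor:13.7.10}) gives $v(A(c_\rho))=v(A(c_\rho-z))=v^{\ev}_A(v(c_\rho-z))\to\infty$; in particular $A(c_\rho)\ne 0$ eventually. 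Corollary~\ref{cor:14.2.10, generalized} then produces $y_\rho\in K^\times$ with $A(y_\rho)=A(c_\rho)$, $vy_\rho\notin\exc^{\ev}(A)$, and $v^{\ev}_A(vy_\rho)=v(A(c_\rho))$; the strict monotonicity and ($\upo$-free) surjectivity of $v^{\ev}_A$ on $\Gamma\setminus\exc^{\ev}(A)$ (Lemma~\ref{lem:ADH 14.2.7}) force $vy_\rho\to\infty$. Setting $w_\rho:=y_\rho-c_\rho\in V:=\ker_K A$, the differences $w_{\rho'}-w_\rho$ lie in $V$ and satisfy $v(w_{\rho'}-w_\rho)\ge\min\{v(y_{\rho'}-y_\rho),\,v(c_{\rho'}-c_\rho)\}\to\infty$; since $v(V^{\ne})=\exc^{\ev}(A)$ is finite (Proposition~\ref{kerexc}), the $w_\rho$ are eventually equal to a constant $w\in V$. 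Then $c_\rho=y_\rho-w$ for large $\rho$, so $v(c_\rho+w)=vy_\rho\to\infty$; combined with $v(c_\rho-z)\to\infty$ this forces $v(z+w)=\infty$, i.e., $z=-w\in K$, a contradiction.

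For ``$\Leftarrow$'' assume no $P\in K\{Y\}$ with $\deg P=1$ and $\order P\le r$ admits a $K$-external zero. Pick via \eqref{eq:14.0.1} an immediate $\d$-algebraic $\upo$-free newtonian extension $K'$ of $K$; Theorem~\ref{thm:ADH 13.6.1} ensures $\Psi$ is cofinal in $\Psi_{K'}$, so Newton quantities of any $P\in K\{Y\}$ are unchanged in $K'$. Given $P\in K\{Y\}$ with $\ndeg P=\deg P=1$ and $\order P\le r$, $P$ thus remains quasilinear in $K'$, and newtonianity of $K'$ supplies a zero $y\in\mathcal O_{K'}$ of $P$; the no-external-zeros hypothesis forces $y\in K$, hence $y\in\mathcal O_K$. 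This verifies the defining condition of $r$-linear newtonianity for $K$.

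The main obstacle is the forward direction's pc-sequence argument: orchestrating the coercivity of $v^{\ev}_A$ on $\Gamma\setminus\exc^{\ev}(A)$ (supplied by $\upo$-freeness) against the finite-dimensional rigidity of $V=\ker_K A$ (supplied by $v(V^{\ne})=\exc^{\ev}(A)$ being a finite set via Proposition~\ref{kerexc}), so that the auxiliary sequence $(w_\rho)$ inside $V$ must stabilize and thereby trap $z$ in $K$.
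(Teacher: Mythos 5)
Your proof is correct, and the backward direction is essentially the paper's: cite \eqref{eq:14.0.1} for a zero $\hat a\preceq 1$ in an immediate $\upo$-free newtonian extension, then invoke the no-external-zeros hypothesis. The divergence is in the forward direction. After reducing, as the paper also does, to showing $\ker_L A\subseteq K$ (via [ADH, 14.2.2] producing $y_0\in K$ with $A(y_0)=-b$), the paper simply cites [ADH, remarks after 14.2.9] for the fact $\ker_L A=\ker_K A$ and is done. You instead re-prove that fact from scratch by a pc-sequence argument: take $z\in\ker_L A\setminus K$, approximate it by $(c_\rho)$ in $K$, use Corollary~\ref{cor:14.2.10, generalized} to produce $y_\rho\in K^\times$ with $A(y_\rho)=A(c_\rho)$ and $vy_\rho=v(c_\rho-z)$ (the equality follows from injectivity of $v^{\ev}_A$ on $\Gamma\setminus\exc^{\ev}(A)$, Lemma~\ref{lem:ADH 14.2.7}, which you might state explicitly rather than the informal ``$\to\infty$''), and observe that $w_\rho:=y_\rho-c_\rho\in\ker_K A$ must eventually stabilize because $v(\ker_K^{\ne}A)=\exc^{\ev}(A)$ is a finite set (Proposition~\ref{kerexc}); the cofinality of $v(c_\rho-z)$ in $v(z-K)$ then forces $z=-w\in K$. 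This is a valid reconstruction, and it has the minor pedagogical virtue of making visible exactly why the kernel cannot grow in an immediate extension — namely, the finiteness of $\exc^{\ev}(A)$ combined with the surjectivity guaranteed by $r$-linear newtonianity. But it is considerably longer than the paper's one-line citation, and of course the cited remark in [ADH] encapsulates precisely this argument. Both approaches then pass to $\hat y=y_0+(\hat y-y_0)$ with $\hat y-y_0\in\ker_L A\subseteq K$, so $\hat y\in K$.
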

\begin{proof}
Suppose $K$ is $r$-linearly newtonian. Then $K$ is $\d$-valued by Lemma~\ref{lem:ADH 14.2.5}.
Let~$P\in K\{Y\}$ be of degree~$1$ and order~$\leq r$, and let $y$ be in an immediate asymptotic extension $L$ of $K$
with $P(y)=0$. Then $A(y)=b$ for~${A:=L_P\in K[\der]}$, $b:=-P(0)\in K$. By~[ADH, 14.2.2] there is also a $z\in K$ with $A(z)=b$, hence~${y-z\in\ker_L A=\ker A}$ by   [ADH, remarks after 14.2.9] and so $y\in K$. 
This gives the direction~$\Rightarrow$. For the converse note that every quasilinear  $P\in K\{Y\}$   has a zero~${\hat a\preceq 1}$ in an immediate asymptotic extension of $K$ by \eqref{eq:14.0.1}.    
\end{proof}

\noindent
We also have the following $r$-version of \eqref{eq:14.0.1}:   

\begin{prop}\label{14.0.1r} If $K$ is $\upl$-free and no $P\in K\{Y\}^{\ne}$ of order $\le r$ has a $K$-external zero, then $K$ is
$\upo$-free and $r$-newtonian.
\end{prop}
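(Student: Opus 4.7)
The plan is to derive the two conclusions in order, using the non-existence of $K$-external zeros of order $\le r$ at each step. The key observation is that, since $r\ge 1$, already the order-$1$ case of the hypothesis is strong enough to force $\upo$-freeness; and once $\upo$-freeness is in hand, \eqref{eq:14.0.1} provides a newtonian immediate $\d$-algebraic extension of $K$ in which we can find zeros of quasilinear polynomials that, by hypothesis, must already lie in $K$.

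First I would show that $K$ is $\upo$-free. Suppose not. Then since $K$ is $\upl$-free, the pc-sequence $(\upo_\rho)=\big(\omega(\upl_\rho)\big)$ has a pseudolimit $\upo\in K$, but $(\upl_\rho)$ itself has no pseudolimit in $K$. By [ADH, 11.7.13], there is an element $\upl$ in an immediate asymptotic extension of $K$ with $\upl_\rho\leadsto\upl$ and $2\upl'+\upl^2+\upo=0$; $\upl$-freeness of $K$ forces $\upl\notin K$. Thus $\upl$ is a $K$-external zero of the polynomial
\[
P(Y)\ :=\ 2Y'+Y^2+\upo\ \in\ K\{Y\}^{\neq}
\]
of order $1\le r$, contradicting the hypothesis. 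Hence $K$ is $\upo$-free.

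Next I would show that $K$ is $r$-newtonian. Let $P\in K\{Y\}$ be quasilinear with $\order P\le r$; I need a zero of $P$ in $\mathcal O$. Applying \eqref{eq:14.0.1}  to the now-$\upo$-free field $K$ yields an immediate $\d$-algebraic $H$-asymptotic extension $\hat K$ of $K$ that is newtonian (and $\upo$-free). Since $\hat K\supseteq K$ is immediate, $\Psi=\Psi_{\hat K}$, so $\ndeg P$ is unchanged when $P$ is viewed as an element of $\hat K\{Y\}$ (cf.\ the remarks after [ADH, 11.1] and Lemma~\ref{lem:13.6.12}); in particular $P$ remains quasilinear of order $\le r$ over~$\hat K$. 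Newtonianity of $\hat K$ provides $\hat a\in\mathcal O_{\hat K}$ with $P(\hat a)=0$. If $\hat a\notin K$, then $\hat a$ is a $K$-external zero of $P$ of order $\le r$, contradicting the hypothesis; so $\hat a\in K\cap\mathcal O_{\hat K}=\mathcal O$, and this is the required zero.

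The only slightly delicate point is the invocation of [ADH, 11.7.13] to manufacture the order-$1$ differential equation $2Y'+Y^2+\upo=0$ from a putative pseudolimit of $(\upo_\rho)$ in $K$; everything else is a direct application of the hypotheses and of \eqref{eq:14.0.1}, together with the standard fact that $\ndeg$ is preserved under immediate extensions (equivalently, under asymptotic extensions in which $\Psi$ stays cofinal).
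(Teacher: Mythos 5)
Your proof is correct, and its first half coincides with the paper's: the paper's one-line remark ``the $\upo$-freeness follows as before from [ADH, 11.7.13]'' is exactly your argument, namely that a pseudolimit $\upo\in K$ of $(\upo_\rho)$ would via [ADH, 11.7.13] and $\upl$-freeness produce a $K$-external zero of $2Y'+Y^2+\upo$ of order $1\le r$ (compare Lemma~\ref{lem:upl-free, not upo-free}). For the newtonianity half the routes genuinely differ: the paper does not invoke \eqref{eq:14.0.1} but instead re-runs the direct argument of [ADH, p.~653] (newton position, pc-sequences, minimal differential polynomials, and an appeal to [ADH, 11.4.8]-type results) with all orders capped at $r$, whereas you treat \eqref{eq:14.0.1} as a black box: pass to an immediate $\d$-algebraic newtonian $\upo$-free extension $\hat K$, use that Newton degree is unchanged because $\Psi$ is cofinal in $\Psi_{\hat K}$ (the extension being immediate), get a zero $\hat a\in\mathcal O_{\hat K}$ of a given quasilinear $P$ of order $\le r$, and force $\hat a\in K\cap\mathcal O_{\hat K}=\mathcal O$ by the no-external-zero hypothesis; this indeed yields $r$-newtonianity in the sense used throughout this section (every quasilinear $P$ of order $\le r$ has a zero in $\mathcal O$, as in the proofs of Lemmas~\ref{lem:Kc 3} and~\ref{rdhrnewt}). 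There is no circularity, since [ADH, 14.0.1] is proved in [ADH] independently of the present proposition, so your shortcut is legitimate and somewhat slicker: it shows the $r$-version follows formally from the unrestricted existence theorem plus invariance of $\ndeg$ under immediate extensions. What the paper's route buys is that it stays parallel to [ADH]'s own proof and avoids leaning on the full-strength Theorem~14.0.1, making transparent that the order-$\le r$ restriction propagates through the original argument; what yours buys is brevity and self-containedness relative to the facts already quoted in this monograph.
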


\begin{proof}
The $\upo$-freeness follows as before from [ADH, 11.7.13]. 
The rest of the proof is as in [ADH, p.~653] with $P$ restricted to have order $\le r$.
\end{proof}

\subsection*{Application to solving asymptotic equations} {\it Here $K$ is $\d$-valued, $\upo$-free, with small derivation, and $\fM$ is a monomial group of $K$.}\/ See [ADH, 3.3] for ``monomial group'', and [ADH, 13.8] for ``asymptotic equation''.  We let $a$,~$b$,~$y$ range over~$K$. 
In addition we fix a $P\in K\{Y\}^{\ne}$ of order~$\le r$ and
a $\preceq$-closed set~$\E\subseteq K^\times$. (Recall that $r\ge 1$.) This gives the asymptotic equation
\begin{equation}\label{eq:asymp equ}\tag{E}
P(Y)=0,\qquad Y\in \E.
\end{equation} 
This gives the following
$r$-version of [ADH, 13.8.8], with basically the same proof: 

\begin{prop}\label{1388} Suppose $\Gamma$ is divisible, no $Q\in K\{Y\}^{\ne}$ of order $\le r$ has a $K$-external zero, $d:=\ndeg_{\E} P\ge 1$, and there is no $f\in \E\cup\{0\}$ with $\operatorname{mul} P_{+f}=d$. Then \eqref{eq:asymp equ} has an unraveler.
\end{prop}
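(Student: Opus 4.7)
The plan is to mimic the proof of [ADH, 13.8.8] verbatim, replacing every appeal to the hypothesis ``no $Q\in K\{Y\}^{\ne}$ has a $K$-external zero'' by its order-bounded analogue, and replacing every appeal to [ADH, 14.0.1] and [ADH, 14.1.11] by Proposition~\ref{14.0.1r} and Lemma~\ref{14.1.11.r}, respectively. Concretely, I would first set up a recursive refinement process producing a (possibly transfinite) sequence $(a_\rho)$ in $\E\cup\{0\}$ and an associated strictly increasing sequence of values $(v(\hat a - a_\rho))$, together with monomials $\fm_\rho\in\fM$, such that at each stage the refined equation
\[
P_{+a_\rho}(Y)\ =\ 0,\qquad Y\in\fm_\rho\cdot (\E-a_\rho),
\]
still has Newton degree $d$ and its multiplicity is strictly less than $d$; existence of $a_\rho$ at each step is guaranteed by the hypothesis $\operatorname{mul} P_{+f}\ne d$ for $f\in\E\cup\{0\}$, combined with the standard Newton-degree/multiplicity machinery from [ADH, Chapter~11], which is available since $K$ is $\upo$-free (a consequence of no order-$\le r$ differential polynomial having a $K$-external zero, via Proposition~\ref{14.0.1r}).

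The key structural observation is that, all along the construction, the orders of the differential polynomials involved never exceed the order of $P$, hence are $\le r$. This is crucial because we only have the external-zero hypothesis for orders $\le r$. Whenever the proof in [ADH, 13.8.8] invokes that a pc-sequence cannot pseudoconverge to an element outside $K$ (which is how one extracts termination of the process, or existence of the unraveler), we instead invoke Lemma~\ref{14.1.11.r}: no such $(a_\rho)$ can have a pseudolimit $\hat a$ in an immediate asymptotic extension that satisfies a differential polynomial of order $\le r$ over $K$, because such $\hat a$ would be a $K$-external zero of a minimal differential polynomial of $(a_\rho)$, whose order is $\le\order P\le r$ by construction of the refinements. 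Divisibility of $\Gamma$ is used (as in [ADH, 13.8.8]) to ensure that the appropriate monomial $\fm_\rho$ can always be chosen in $\fM$, leveraging the fact that $\fM$ is a monomial group of $K$ and $v\fM = \Gamma$.

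Iterating the refinement produces, under the assumption that the process does not terminate in an unraveler at some finite or limit ordinal stage, a divergent pc-sequence whose minimal differential polynomial has order $\le r$ and which pseudoconverges to a $K$-external zero in a suitable immediate asymptotic extension --- by combining Proposition~\ref{14.0.1r} (to produce the extension and guarantee an $\upo$-free newtonian environment) with the standard pseudolimit construction from [ADH, 11.4]. But this is exactly ruled out by Lemma~\ref{14.1.11.r} applied to the strongly $r$-newtonian $K$ (strongness follows from the external-zero hypothesis: a divergent pc-sequence with minimal differential polynomial $G$ of order $\le r$ and $\ndeg_{\boldsymbol a}G>1$ would yield a $K$-external zero of a differential polynomial of order $\le r$ by [ADH, 11.4]). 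Hence the recursive process terminates, and the terminal refinement is the desired unraveler for \eqref{eq:asymp equ}.

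The main obstacle, and the step requiring the most care, is verifying that in the order-bounded setting the recursion indeed stays within order $\le r$: every refinement by additive/multiplicative conjugation preserves the order of $P$, so the minimal differential polynomials extracted from the resulting pc-sequences in $K$ likewise have order $\le r$, allowing the external-zero hypothesis to be applied. This bookkeeping is exactly what distinguishes the $r$-version from the unrestricted version and is why the reductions go through unchanged apart from tracking orders, as the author indicates.
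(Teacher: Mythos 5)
Your proposal follows the same route as the paper, which establishes Proposition~\ref{1388} simply by running the proof of [ADH, 13.8.8] and noting that every differential polynomial entering that argument (additive/multiplicative/compositional conjugates of $P$, and minimal differential polynomials of the pc-sequences that arise, which lie in the relevant sets $Z(K,\cdot)$ containing $P$) has order at most $r$, so the order-restricted external-zero hypothesis can stand in for the unrestricted maximality hypothesis of [ADH, 13.8.8] --- exactly the bookkeeping point you single out. Only note that your detours through Proposition~\ref{14.0.1r} and Lemma~\ref{14.1.11.r} are superfluous (the latter circular, since its conclusion is precisely the stated hypothesis), and that $\upo$-freeness need not be derived: it is a standing assumption of this subsection, while ``no $Q\in K\{Y\}^{\ne}$ of order $\le r$ has a $K$-external zero'' is to be used directly wherever [ADH, 13.8.8] invokes asymptotic $\d$-algebraic maximality.
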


\noindent
Here is an $r$-version of [ADH, 14.3.4] with the same proof:

\begin{lemma} Suppose $K$ is $r$-newtonian. Let $g\in K^\times$ be an approximate zero of~$P$ with $\ndeg P_{\times g}=1$.
Then there exists $y\sim g$ such that $P(y)=0$. 
\end{lemma}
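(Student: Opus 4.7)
\medskip

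\noindent
\textbf{Proof plan.} The idea is to reduce the statement to the existence of a small zero for a suitable transformation of $P$. Set
\[
Q\ :=\ P_{+g,\times g}\ =\ (P_{\times g})_{+1}\ \in\ K\{Y\},
\]
so that $Q(u) = P\bigl(g(1+u)\bigr)$ for every $u\in K$; in particular $Q(0)=P(g)$. Then $\order Q=\order P\leq r$, and any zero $u\prec 1$ of $Q$ yields the desired $y:=g(1+u)\sim g$ with $P(y)=0$. The whole task therefore reduces to producing such a small zero of $Q$.

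\medskip

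\noindent
First I would argue that $Q$ is quasilinear and has its constant term dominated by its linear part, that is, $\nval Q=\ndeg Q=1$. The equality $\ndeg Q=1$ follows from $\ndeg P_{\times g}=1$ together with the invariance of Newton degree $1$ under additive conjugation ([ADH, 11.2.3(i)]: for $\ndeg P_{\times g}=1$, $\ndeg(P_{\times g})_{+a}=1$ for all $a\in K$, applied with $a=1$). For $\nval Q\geq 1$, I would invoke the very definition of $g$ being an approximate zero of $P$: this says precisely that $P(g)=Q(0)$ is eventually small relative to the linear part of $Q^\phi$, which translates to $\nval Q\geq 1$. Combined with $\nval Q\leq\ndeg Q=1$, we obtain $\nval Q=\ndeg Q=1$.

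\medskip

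\noindent
Next, since $Q$ is quasilinear of order $\leq r$, the hypothesis that $K$ is $r$-newtonian yields $u\in\mathcal{O}$ with $Q(u)=0$. The main obstacle is to promote $u\in\mathcal{O}$ to $u\prec 1$. This is where the extra information $\nval Q=1$ enters: the eventual form
\[
Q^\phi\ =\ Q^\phi_0+Q^\phi_1+Q^\phi_{>1}, \qquad Q^\phi_0\ \prec\ Q^\phi_1\ \succeq\ Q^\phi_{>1}
\]
(guaranteed by $\nval Q=\ndeg Q=1$; cf.\ Lemma~\ref{lem:13.7.10}) forces any solution $u\in\mathcal{O}$ of $Q(u)=0$ to lie in $\smallo$: if $u\asymp 1$, then $Q^\phi_1(u)\asymp Q^\phi_1\succ Q^\phi_0$ eventually (outside the finite exceptional set for $L_Q$), while $Q^\phi_{>1}(u)\preceq Q^\phi_{>1}\preceq Q^\phi_1$, so $Q(u)=Q^\phi(u)\sim Q^\phi_1(u)\neq 0$. (If $u\asymp 1$ happens to lie above an eventual exceptional value of $L_Q$, one adjusts using a refinement via $L_Q$ as in the proof of [ADH, 14.3.4], or invokes Lemma~\ref{newt4, order 1} for order $1$ in the inductive step.) Hence in fact $u\prec 1$, and $y=g(1+u)\sim g$ is the required zero of $P$.

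\medskip

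\noindent
The hard part is the last step, namely extracting $u\prec 1$ from the $r$-newtonianity axiom, which a priori only guarantees $u\in\mathcal{O}$. This is handled exactly as in [ADH, 14.3.4] using the normal form provided by $\nval Q=\ndeg Q=1$; the only adjustment here is that the order bound $\order Q\leq r$ lets us invoke $r$-newtonianity rather than full newtonianity, and nothing else in the argument uses orders larger than $r$.
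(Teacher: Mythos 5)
Your reduction to $Q := P_{+g,\times g}$ and the identification $\ndeg Q=\nval Q=1$ are fine, but the crucial last step — producing a zero $u\prec 1$ of $Q$ — has a real hole that you point at but do not close. The $r$-newtonianity axiom only delivers a zero $u\in\mathcal O$, and your sieve fails exactly when $0\in\exc^{\ev}(L_Q)$: then $Q^\phi_1(u)$ can be much smaller than $Q^\phi_1$ even though $u\asymp 1$, and the conclusion $Q(u)\sim Q^\phi_1(u)\ne 0$ collapses. This is not a fringe case. Take $Q=Y'-a$ with $a\in\I(K)^{\ne}$: then $\ndeg Q=\nval Q=1$, $Q(0)\ne 0$, $\exc^{\ev}(L_Q)=\{0\}$, and the solution set is $u_0+C$ for some particular $u_0\prec 1$, so the axiom is perfectly entitled to hand you $u=u_0+1\asymp 1$. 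Your parenthetical repair — ``adjust using a refinement via $L_Q$ as in the proof of [ADH, 14.3.4]'' — is circular, since that is precisely the lemma being proved, and the fallback appeal to Lemma~\ref{newt4, order 1} is undeveloped: it only treats order $1$ and there is no induction set up that would let it do any work here.

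The ``same proof as [ADH, 14.3.4]'' in the paper refers to a different route: not one blunt application of the newtonianity axiom to $Q$, but the newton-position refinement loop. Since $g$ is an approximate zero with $\ndeg P_{\times g}=1$, $P$ is in newton position at $g$; one then iterates the improvement step of [ADH, 14.3.2] (choosing $a_{\rho+1}$ with $v(a_\rho-a_{\rho+1})=v^{\ev}(P,a_\rho)>0$). Either this terminates at $y$ with $P(y)=0$ and $v(g-y)>0$, so $y\sim g$, or [ADH, 14.3.3] yields a divergent pc-sequence with $P(a_\rho)\leadsto 0$ and minimal differential polynomial $P$ of order $\le r$ — which $r$-newtonianity rules out via the equivalence with the nonexistence of $K$-external zeros of order $\le r$ (Corollary~\ref{14.5.2.r}). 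It is exactly that iteration that steps past the exceptional-value contingency your direct argument cannot handle.
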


\noindent
For the next three results we assume the following:

\medskip\noindent
{\em  $C$ is algebraically closed, $\Gamma$ is divisible, and 
no $Q\in K\{Y\}^{\ne}$ of order $\le r$ has a $K$-ex\-ter\-nal zero}.

\medskip\noindent
These three results are $r$-versions of [ADH, 14.3.5, 14.3.6, 14.3.7] with the same proofs, using Propositions~\ref{14.0.1r} and~\ref{1388} instead of \eqref{eq:14.0.1} and [ADH,  13.8.8]:

\begin{prop} If $\ndeg_{\E}P > \operatorname{mul}(P)=0$, then \eqref{eq:asymp equ} has a solution. 
\end{prop}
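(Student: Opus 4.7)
The plan is to induct on $d:=\ndeg_{\E}P\ge 1$, with the base case handled by the approximate-zero lemma stated immediately before this Proposition, and the induction step fed by the unraveler produced by Proposition~\ref{1388}. The underlying engine is the $r$-newtonianity of $K$: since $K$ is $\upo$-free (hence $\upl$-free) and no $Q\in K\{Y\}^{\ne}$ of order $\le r$ has a $K$-external zero, Proposition~\ref{14.0.1r} applies and shows that $K$ is $r$-newtonian. This converts approximate zeros into genuine zeros once the Newton degree has been driven down to $1$.

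For the base case $d=1$, the definition of $\ndeg_{\E}P$ together with $\E$ being $\preceq$-closed produces an approximate zero $g\in\E$ of $P$ with $\ndeg P_{\times g}=1$; the preceding $r$-newtonian lemma then yields $y\sim g$ with $P(y)=0$, and $y\in\E$ since $y\sim g\in\E$ and $\E$ is $\preceq$-closed. For the inductive step $d>1$ I distinguish two subcases. If some $f\in\E\cup\{0\}$ realises $\operatorname{mul} P_{+f}=d$, then $\operatorname{mul} P_{+f}\ge 1$ forces $P(f)=0$; and since $\operatorname{mul} P=0\ne d$ we have $f\ne 0$, so $f\in\E$ already solves \eqref{eq:asymp equ}. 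Otherwise, all hypotheses of Proposition~\ref{1388} hold ($\Gamma$ divisible, no $K$-external zeros of order $\le r$, $d\ge 1$, no such $f$), so Proposition~\ref{1388} furnishes an unraveler of \eqref{eq:asymp equ}: a finite family of refinements, each of strictly smaller Newton degree on the corresponding $\preceq$-closed set, and such that any solution of a member refinement lifts back to a solution of \eqref{eq:asymp equ}. One then invokes the induction hypothesis on a refined equation to conclude.

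The main obstacle is that a refinement $P_{+f,\times g}$ appearing in the unraveler need not satisfy $\operatorname{mul}=0$, so the inductive hypothesis on a refined equation cannot be applied naively. But $\operatorname{mul} P_{+f,\times g}>0$ means $P_{+f}(0)=P(f)=0$, which short-circuits the induction: either $f\in\E$, in which case $f$ already solves \eqref{eq:asymp equ}, or $f=0\notin\E$ is excluded directly by $\operatorname{mul} P=0$. Once this dichotomy is incorporated into the induction, the rest of the argument is a bookkeeping copy of [ADH, 14.3.5], with ``order $\le r$'' systematically replacing ``arbitrary order'' throughout, and with Propositions~\ref{14.0.1r} and~\ref{1388} standing in for \eqref{eq:14.0.1} and [ADH, 13.8.8]. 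The only genuinely new ingredient needed beyond the stated tools is a careful check that the unraveler constructed in the proof of Proposition~\ref{1388} is built from refinements $(P_{+f_i,\times g_i},\E_i)$ in which the newly arising equations are themselves of the shape allowed by the induction.
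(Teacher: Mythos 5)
Your proposal correctly identifies the ingredients (Proposition~\ref{14.0.1r} for $r$-newtonianity, Proposition~\ref{1388} for the case where no $f\in\E\cup\{0\}$ has $\operatorname{mul} P_{+f}=d$), but the engine you build from them — an induction on $d=\ndeg_{\E}P$ — is unsupported, because it rests on an invented reading of ``unraveler''. What Proposition~\ref{1388} produces is an unraveler in the sense of [ADH, 13.8]: a single object attached to the asymptotic equation, not ``a finite family of refinements, each of strictly smaller Newton degree, such that any solution of a member lifts back''. No statement of that kind appears in this paper or in [ADH], and your closing sentence concedes that this crucial property is left as ``a careful check'' of the proof of Proposition~\ref{1388}; that check is exactly the missing proof. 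Worse, the proposed decreasing quantity is illusory: refinements do not in general lower the Newton degree — for additive refinements with the same monomial it is \emph{preserved} (compare the identity $\ndeg P_{+a,\times\fm}=\ndeg P_{\times\fm}$ established for slots in Section~\ref{sec:holes}), and unravelers exist precisely to handle the situation where one cannot reach multiplicity $d$ inside $K$. So the inductive step has nothing to descend on.

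The intended argument — the paper simply transposes the proof of [ADH, 14.3.5], with Propositions~\ref{14.0.1r} and~\ref{1388} replacing \eqref{eq:14.0.1} and [ADH, 13.8.8] — is a dichotomy, with no induction. Note that the hypothesis ``there is no $f\in\E\cup\{0\}$ with $\operatorname{mul} P_{+f}=d$'' in Proposition~\ref{1388} is the negation of the favorable case: if such an $f$ exists, then $f\neq 0$ because $\operatorname{mul}P=0<d$, so $f\in\E$ and $P(f)=P_{+f}(0)=0$, i.e.\ $f$ solves \eqref{eq:asymp equ}; if no such $f$ exists, Proposition~\ref{1388} yields an unraveler, which is then played off against the standing hypothesis that no $Q\in K\{Y\}^{\ne}$ of order $\le r$ has a $K$-external zero to dispose of this case — that confrontation, not a degree descent, is where the work and Proposition~\ref{14.0.1r} enter. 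Your base case is also off: $\ndeg_{\E}P=1$ gives $g\in\E$ with $\ndeg P_{\times g}=1$, but such a $g$ need not be an approximate zero of $P$ (that is a condition on $P_{+g}$, not on $P_{\times g}$), so the lemma preceding the Proposition does not apply to it; what works is $r$-newtonianity directly: $P_{\times g}$ is quasilinear, hence has a zero $z\preceq 1$, and $y=gz\neq 0$ since $\operatorname{mul}P=0$ means $P(0)\neq 0$, whence $y\preceq g$ and $y\in\E$.
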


\begin{cor}\label{wrdc} $K$ is weakly $r$-differentially closed: for each $Q\in K\{Y\}\setminus K$ of or\-der~$\leq r$ there is a $y\in K$ with $Q(y)=0$.
\end{cor}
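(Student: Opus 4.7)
The plan is to deduce Corollary~\ref{wrdc} directly from the immediately preceding proposition by choosing a suitable $\preceq$-closed set $\E \subseteq K^\times$, exactly as in the proof of [ADH, 14.3.7] but with Proposition~\ref{14.0.1r} and Proposition~\ref{1388} replacing the references to \eqref{eq:14.0.1} and [ADH, 13.8.8]. Note first that under the standing hypotheses of this subsection, Proposition~\ref{14.0.1r} already supplies $\upo$-freeness and $r$-newtonianity of $K$, so the full Newton-degree machinery is available for differential polynomials of order $\leq r$.

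Concretely: let $Q \in K\{Y\} \setminus K$ of order $\leq r$. If $Q(0) = 0$, then $y := 0$ is a zero and we are done. Otherwise $\operatorname{mul}(Q) = 0$, and since $Q \notin K$ has at least one non-constant term, $\deg Q \geq 1$. I would take $\E := K^\times$ (or, if more convenient, a tail $\{y \in K^\times : y \preceq \fn\}$ for $v\fn$ sufficiently negative), which is $\preceq$-closed. Applying the preceding proposition to $P := Q$ with this $\E$ gives the desired element $y \in \E \subseteq K$ with $Q(y) = 0$, provided one checks the remaining hypothesis $\ndeg_{\E} Q \geq 1$.

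The main obstacle is precisely this verification that $\ndeg_{\E} Q \geq 1$ for the chosen $\E$. This amounts to exhibiting some $\fm \in \E$ and some active $\phi$ such that the conjugate $Q_{\times\fm}^{\phi}$ admits a dominant homogeneous part of positive degree in $Y$. Here the hypotheses do the work: divisibility of $\Gamma$ allows one to realize any rational linear combination of existing valuations by an actual $\fm \in K^\times$, so that for appropriate $v\fm$ the positive-degree parts of $Q_{\times\fm}$ can be made to dominate $Q(0)$ after flattening; algebraic closedness of $C$ supplies the needed flexibility at the residue-field level; and $\upo$-freeness of $K$ makes the invariants $\ndeg Q_{\times\fm}$ well-behaved across compositional conjugates (Lemma~\ref{lem:13.6.12}). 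Combining these ingredients yields $\ndeg_{\E} Q \geq 1$, and then the preceding proposition completes the proof.
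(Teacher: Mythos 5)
Your overall plan is correct and matches the paper's: deduce the corollary from the immediately preceding proposition by choosing a suitable $\preceq$-closed $\E$, which is the proof of [ADH, 14.3.6] (you wrote [ADH, 14.3.7]; that is the subsequent corollary about approximate zeros). The case split on whether $Q(0)=0$ and the reduction to exhibiting $\fm$ with $\ndeg Q_{\times\fm}\geq 1$ are both the right moves.

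What you misattribute is which hypotheses do the work at the step you flag as the obstacle. Neither divisibility of $\Gamma$ nor algebraic closedness of $C$ is involved in showing $\ndeg_\E Q\geq 1$: that is a soft asymptotic fact requiring only $Q\notin K$ (so $d:=\deg Q\geq 1$) and $\upo$-freeness. Take $\fm\in K^\times$ with $v\fm$ sufficiently negative; then $v\big((Q_d)_{\times\fm}\big)=v(Q_d)+d\,v\fm+o(v\fm)$ by [ADH, 6.1.3], so $(Q_d)_{\times\fm}$ eventually dominates $Q(0)$ and the lower-degree homogeneous parts, giving $\ndeg Q_{\times\fm}=d\geq 1$ and thus $\ndeg_\E Q\geq 1$ for $\E:=\{y\in K^\times:y\preceq\fm\}$. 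The hypotheses you invoke are consumed upstream: divisibility of $\Gamma$ and algebraic closedness of $C$ enter via Proposition~\ref{1388} (constructing unravelers) and Proposition~\ref{14.0.1r}, which is what makes the preceding proposition available at all. So your proof works, but the standing hypotheses are load-bearing in different places than your sketch suggests.
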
 

\begin{cor} Suppose $g\in K^\times$ is an approximate zero of $P$. Then $P(y)=0$ for some $y\sim g$. 
\end{cor}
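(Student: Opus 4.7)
The plan is to combine three ingredients: Proposition~\ref{14.0.1r} to promote our hypotheses to $r$-newtonianity of $K$, Proposition~\ref{1388} to successively reduce the Newton degree of a refinement of the equation, and the immediately preceding lemma (which handles the case $\ndeg P_{\times g}=1$) to close out the quasilinear stage. Since $K$ is $\upo$-free (hence $\upl$-free) and the standing hypothesis forbids $K$-external zeros of differential polynomials of order $\le r$, Proposition~\ref{14.0.1r} applies and yields that $K$ is $r$-newtonian.

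Multiplicative conjugation reduces us to the case $g=1$: replacing $P$ by $P_{\times g}$ makes $1\in K^\times$ an approximate zero of the new polynomial, and any solution $y\sim 1$ of the conjugated equation translates back to a solution $gy\sim g$ of the original. Under this reduction, the approximate-zero hypothesis gives $d:=\ndeg_{\preceq 1} P\geq 1$. If $d=1$, the preceding lemma applied to $P$ and $g=1$ already produces the desired $y\sim 1$, so we may assume $d\geq 2$, and, after translating by any zero of $P$ in $\mathcal O$ if one exists (otherwise we are done), also $P(0)\ne 0$, i.e.\ $\mul P=0$.

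Put $\E:=\{y\in K^\times:y\preceq 1\}$. This is $\preceq$-closed, and the hypotheses of Proposition~\ref{1388} hold: $\Gamma$ is divisible, no differential polynomial over $K$ of order $\le r$ has a $K$-external zero, $d\ge 1$, and we may assume there is no $f\in\E\cup\{0\}$ with $\mul P_{+f}=d$ (any such $f$ would immediately solve the problem via the shape of the Newton polynomial). Thus the equation $(P(Y)=0,\ Y\in \E)$ admits an unraveler. Each component of this unraveler has strictly smaller Newton degree than $d$ and is obtained by an additive refinement $Y\leftsquigarrow c+Y'$, where the dominant constant $c$ is located by solving a polynomial equation over $C$; here algebraic closedness of $C$ is what makes such $c$ available, and $c$ can be taken in $C^\times$ so that the running refinement stays $\sim 1$. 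Iterating, after finitely many refinements we reach a refined equation of Newton degree exactly $1$ of the form $P(f+Y')=0$, $Y'\prec \fn$, for some $f\sim 1$ in $K$ and $\fn\in K^\times$ with $\fn\prec 1$. Applying the preceding lemma to this quasilinear refinement (together with the appropriate multiplicative rescaling by $\fn$) supplies a genuine zero, which pulled back yields $y\sim 1$ with $P(y)=0$. Translating back through the original multiplicative conjugation by $g$, we obtain $y\sim g$ with $P(y)=0$.

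The main obstacle is the bookkeeping of the unraveling and the verification that the chosen dominant terms at each step lie in $C^\times$ rather than merely in $K^\times$, so that the approximate solution stays within the asymptotic class $\sim 1$ throughout the process. The algebraic closedness of $C$ together with $\Gamma$ being divisible is exactly what allows one to solve the auxiliary ``dominant term'' polynomial equations in $C$ and to choose the required monomials in the value group at each refinement; without these the reduction to the Newton-degree-$1$ case handled by the preceding lemma would break down.
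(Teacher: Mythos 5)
Your proposal takes a genuinely different route from the paper, and the route has a gap that prevents it from closing. The paper's proof (copied from ADH~14.3.7, with the two indicated substitutions) does not unravel $(P,\{y\preceq 1\})$ from scratch; it applies the $r$-version of ADH~14.3.5 — the Proposition ``If $\ndeg_{\E}P > \operatorname{mul}(P)=0$, then \eqref{eq:asymp equ} has a solution'' — to $P_{+g}$ with the $\preceq$-closed set $\E'=\{y: y\prec g\}$. A solution $z\in\E'$ of $P_{+g}$ then gives $P(g+z)=0$ with $z\prec g$, i.e.\ $g+z\sim g$, and this is where the asymptotic information $y\sim g$ comes from. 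Your proof never invokes that Proposition; you work directly with the original $P$ (after the $\times g$ reduction) over $\E=\{y\preceq 1\}$, and this is too weak a constraint to deliver the conclusion.

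Concretely, the step ``any such $f$ would immediately solve the problem'' is where the argument breaks: if $f\in\E\cup\{0\}$ with $\mul P_{+f}=d\geq 1$, then indeed $P(f)=0$, but $f\preceq 1$ does not give $f\sim 1$; you could have $f\prec 1$ or $f\asymp 1$, $f\not\sim 1$, in which case $f$ is useless for the corollary. Related to this, the unraveler that Prop.~\ref{1388} produces for $(P,\E)$ and its iterates need not track the asymptotic class $\sim 1$: unravelers refine by $Y\leftsquigarrow f+\fm Y$ where $\fm$ ranges over the monomial group $\fM$ and the dominant term $c\fm$ involves a monomial, not just a constant in $C^\times$, so there is no reason the running partial sum stays $\sim 1$ throughout. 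Finally, the closing use of the preceding Lemma requires the iterated refinement to \emph{still} be an approximate zero — a hypothesis you never verify is maintained. The clean fix is simply to follow the paper's intended proof: reduce to $g=1$, assume $P(1)\neq 0$ (otherwise done), note the approximate-zero condition yields $\ndeg_{\E'}P_{+1}\geq 1$ with $\E'=\{y\prec 1\}$ and $\operatorname{mul}P_{+1}=0$, and invoke the preceding Proposition for $P_{+1}$ over $\E'$ to get $z\prec 1$ with $P(1+z)=0$.
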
 

\subsection*{A useful equivalence} {\it Suppose $K$ is $\upo$-free.}\/ 
(No small derivation or monomial group assumed.) Recall that $r\ge 1$. Here is an $r$-version of \cite[3.4]{Nigel19}: 

\begin{cor}\label{14.5.2.r} The following are equivalent: 
\begin{enumerate}
\item[\textup{(i)}] $K$ is $r$-newtonian;
\item[\textup{(ii)}] $K$ is strongly $r$-newtonian;
\item[\textup{(iii)}] no $P\in K\{Y\}^{\ne}$ of order $\le r$ has a $K$-external zero.
\end{enumerate}
\end{cor}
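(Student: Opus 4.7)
The plan is to establish the equivalence by means of four implications: \textup{(ii)}~$\Rightarrow$~\textup{(i)}, \textup{(ii)}~$\Rightarrow$~\textup{(iii)}, \textup{(iii)}~$\Rightarrow$~\textup{(i)}, and \textup{(iii)}~$\Rightarrow$~\textup{(ii)}, leaving \textup{(i)}~$\Rightarrow$~\textup{(iii)} as the main content. Three of these are short applications of earlier results. First, \textup{(ii)}~$\Rightarrow$~\textup{(i)} is immediate from the definition of \emph{strongly $r$-newtonian}. Second, \textup{(ii)}~$\Rightarrow$~\textup{(iii)} follows from Lemma~\ref{14.1.11.r}, since $\upo$-freeness of $K$ forces $\upl$-freeness and hence rational asymptotic integration via [ADH, 11.6.8]. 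Third, \textup{(iii)}~$\Rightarrow$~\textup{(i)} is precisely Proposition~\ref{14.0.1r}, again invoking $\upo$-free~$\Rightarrow$~$\upl$-free.

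For \textup{(iii)}~$\Rightarrow$~\textup{(ii)} I observe that the second clause in the definition of \emph{strongly $r$-newtonian} becomes vacuous under \textup{(iii)}: should a divergent pc-sequence $(a_\rho)$ in $K$ have a minimal differential polynomial $G$ of order $\le r$, then by the pseudolimit construction in [ADH, 11.4.8] it would pseudoconverge to some $\hat a$ in an immediate asymptotic extension of $K$ with $\hat a\notin K$ and $G(\hat a)=0$, exhibiting a $K$-external zero of $G$ and contradicting \textup{(iii)}.

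The substantive direction is \textup{(i)}~$\Rightarrow$~\textup{(iii)}. Assume $K$ is $r$-newtonian and $\upo$-free; then $K$ is $r$-linearly newtonian by [ADH, 14.2.2] and $\d$-valued by Lemma~\ref{lem:ADH 14.2.5}. Suppose, for contradiction, that some $P\in K\{Y\}^{\neq}$ of order $\le r$ has a $K$-external zero $\hat a$ in an immediate extension $\hat K$ of $K$. Using minimality considerations (as in [ADH, 11.4.13]), I may assume $\hat a$ has a minimal annihilator $G\in K\{Y\}^{\ne}$ of order $\le r$ of least complexity, and choose a divergent pc-sequence $(a_\rho)$ in $K$ with $a_\rho\leadsto\hat a$; then $G$ is a minimal differential polynomial of $(a_\rho)$ over $K$. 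Invoking [ADH, 14.5.1] — as in the proofs of Propositions~\ref{npropalsp} and~\ref{prop:hata almostspecial} — produces $a\in K$ and $g\in K^\times$ with $\hat a-a\asymp g$ and $\ndeg G_{+a,\times g}=1$. The quasilinear polynomial $G_{+a,\times g}$ of order $\le r$ then has, by $r$-newtonianity of $K$, a zero $y\in\mathcal{O}$, so $b:=a+gy\in K$ satisfies $G(b)=0$ and $v(\hat a-b)>vg$. Iterating this construction produces a pc-sequence $(b_n)\subseteq K$ with $v(\hat a-b_n)$ strictly increasing; arranging the construction so that $\{v(\hat a-b_n)\}$ is cofinal in $v(\hat a-K)$, one obtains a pc-sequence in $K$ whose cut coincides with that of $(a_\rho)$ and contradicts the minimality of $G$ (the new shifted polynomial at $b_n$ would have strictly smaller invariants while retaining $\hat a-b_n$ as a $K$-external zero).

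The main obstacle is that the standard formulation of [ADH, 14.5.1] requires $\Gamma$ to be divisible. I plan to handle this by passing to the algebraic closure $K^{\mathrm{a}}$ equipped with the unique compatible valuation and derivation: $K^{\mathrm{a}}$ has divisible value group, remains $\upo$-free by Theorem~\ref{thm:ADH 13.6.1}, and $r$-newtonianity persists via an $r$-analog of~\eqref{eq:14.5.7}. The relevant $\ndeg$-quantities and $K$-external zero considerations are preserved under this passage by Lemmas~\ref{lem:13.6.12} and~\ref{lemexc}, so the contradiction derived in $K^{\mathrm{a}}$ pulls back to the statement for $K$. A subsidiary technicality is verifying that the iteration in the closing step truly produces a cofinal sequence in $v(\hat a-K)$, which I expect to follow from routine applications of Lemma~\ref{lem:slow} together with properties of $\ndeg_{\boldsymbol a}$ under refinement.
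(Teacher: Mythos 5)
Your treatment of the easy implications matches the paper's: (ii)~$\Rightarrow$~(iii) is Lemma~\ref{14.1.11.r} (with $\upo$-freeness supplying rational asymptotic integration), and (iii)~$\Rightarrow$~(i) is Proposition~\ref{14.0.1r}; your added (iii)~$\Rightarrow$~(ii) is fine once you note that the $r$-newtonianity part of ``strongly $r$-newtonian'' comes from your (iii)~$\Rightarrow$~(i). The difference is the hard direction. The paper closes the cycle with (i)~$\Rightarrow$~(ii) by citing \cite[3.3]{Nigel19} --- a result whose entire purpose is to prove the Newton-degree-one statement \emph{without} divisibility of $\Gamma$ --- whereas you try to prove (i)~$\Rightarrow$~(iii) directly from [ADH, 14.5.1], and that is where the proposal has genuine gaps.

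First, your reduction to the divisible case rests on ``an $r$-analog of \eqref{eq:14.5.7}'' to transfer $r$-newtonianity from $K$ to $K^{\operatorname{a}}$. No such analog exists in the paper, and even the full statement \eqref{eq:14.5.7} is quoted there under the hypotheses that $K$ is $\d$-valued, $\upo$-free \emph{with divisible value group} --- exactly the hypothesis your detour is meant to circumvent; so the reduction begs the question, and the missing transfer result is of essentially the same nature and difficulty as what \cite{Nigel19} supplies. Second, the closing iteration does not work as written: from a zero $y\preceq 1$ of the quasilinear $G_{+a,\times g}$ you get $G(b)=0$ for $b=a+gy$, but nothing forces $v(\hat a-b)>vg$ (you only get $v(\hat a-b)\geq vg$), the cofinality of the values $v(\hat a-b_n)$ in $v(\hat a-K)$ is asserted rather than proved, and the purported contradiction with minimality of $G$ (``strictly smaller invariants'') is unsubstantiated, since additive and multiplicative conjugation preserve complexity. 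The standard way to exploit $\ndeg_{\boldsymbol a}G=1$ --- as in the proof of [ADH, 14.1.11], on which Lemma~\ref{14.1.11.r} is modeled --- is the newton-position machinery producing a pseudolimit of $(a_\rho)$ in $K$, contradicting divergence; but the input $\ndeg_{\boldsymbol a}G=1$ is precisely the strong $r$-newtonianity you are trying to establish, so without first securing (i)~$\Rightarrow$~(ii) (which is what the citation to \cite[3.3]{Nigel19} accomplishes) your argument for (i)~$\Rightarrow$~(iii) does not close.
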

\begin{proof} Since $K$ is $\upo$-free  it has rational asymptotic integration [ADH, p.~515]. Also,
if $K$ is $1$-newtonian, then $K$ is henselian [ADH, p.~645] and $\d$-valued [ADH, 14.2.5].
For~(i)~$\Rightarrow$~(ii), use 
\cite[3.3]{Nigel19}, for (ii)~$\Rightarrow$~(iii), use Lemma~\ref{14.1.11.r}, and for (iii)~$\Rightarrow$~(i), use Proposition~\ref{14.0.1r}.
\end{proof}  

\noindent
Next an $r$-version of  \eqref{eq:14.5.3}:  

\begin{cor}\label{14.5.3.r} Suppose $K$ is $r$-newtonian, $\Gamma$ is divisible, and $C$ is algebraically closed. Then $K$ is weakly $r$-differentially closed, so $K$ is $(r+1)$-linearly closed and thus~${(r+1)}$-linearly newtonian.  
\end{cor}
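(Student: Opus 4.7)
The plan is to prove the three claims in sequence. Since $r \ge 1$, being $r$-newtonian implies $1$-newtonian, hence $1$-linearly newtonian, so by Lemma~\ref{lem:ADH 14.2.5} $K$ is also $\d$-valued. Corollary~\ref{14.5.2.r} then supplies the key input: no $P \in K\{Y\}^{\ne}$ of order $\le r$ admits a $K$-external zero. Combined with the divisibility of $\Gamma$ and algebraic closedness of $C$, this matches the hypotheses needed for Corollary~\ref{wrdc}, modulo two ambient assumptions of its subsection (small derivation and a monomial group) that are not in force here.

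For the weak $r$-differential closedness of $K$, the idea is to invoke Corollary~\ref{wrdc}. Small derivation can be arranged by compositional conjugation $K \leadsto K^\phi$ for a suitably small active $\phi$: all our hypotheses persist, and a zero of $Q \in K\{Y\}$ in $K$ is the same as a zero of $Q^\phi \in K^\phi\{Y\}$ in $K^\phi$, so the desired conclusion is invariant. The monomial group is the main obstacle and is addressed in the last paragraph.

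Granted weak $r$-differential closedness, the remaining two claims follow formally. For $(r+1)$-linear closedness I would induct on $n := \order A$, with $A \in K[\der]^{\ne}$; the cases $n \le 1$ are trivial. For $2 \le n \le r+1$, scale $A$ to be monic. The Riccati transform $\Ric(A) \in K\{Z\}$ has order $n - 1 \le r$ and degree $n$ in $Z$, so is non-constant, and thus admits a zero $z \in K$. By the standard Riccati correspondence—seen by noting that on the one-dimensional left $K[\der]$-module $K[\der]/K[\der](\der-z)$ the derivation $\der$ acts as multiplication by $z$, so any $B \in K[\der]$ acts as multiplication by $\Ric(B)(z)$—the vanishing of $\Ric(A)(z)$ forces $A = B(\der - z)$ with $B \in K[\der]$ monic of order $n-1$. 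The inductive hypothesis splits $B$, and concatenation splits $A$. The $(r+1)$-linear newtonianity then follows from Lemma~\ref{rlcrln} applied with $r+1$ in the role of $r$: $K$ has asymptotic integration (being $\upo$-free), is $1$-linearly newtonian, and is $(r+1)$-linearly closed.

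The hard part is the monomial group assumption required to invoke Corollary~\ref{wrdc}. I would handle it by passing to an immediate $H$-asymptotic extension $\hat K$ of $K$ that does carry a monomial group—for instance a suitable spherically complete immediate extension—arranged so that $\upo$-freeness, $r$-newtonianity, divisibility of the value group and algebraic closedness of the constant field all persist (using the preservation machinery of Sections~\ref{sec:completion d-hens} and~\ref{sec:complements newton}). Corollary~\ref{wrdc} in $\hat K$ then produces a zero $\hat y \in \hat K$ of any non-constant $Q \in K\{Y\}$ of order $\le r$, and since $Q$ admits no $K$-external zero by Corollary~\ref{14.5.2.r}, this forces $\hat y \in K$. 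All the other steps—compositional conjugation, the Riccati factorization, and the appeal to Lemma~\ref{rlcrln}—are essentially bookkeeping.
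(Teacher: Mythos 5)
Your overall architecture (compositional conjugation for small derivation, then Corollaries~\ref{wrdc} and~\ref{14.5.2.r} for weak $r$-differential closedness, then splitting via Riccati and Lemma~\ref{rlcrln}) is the paper's route; your Riccati induction merely reproves by hand the $r$-version of [ADH, 5.8.9] that the paper cites, and the appeal to Lemma~\ref{rlcrln} with $r+1$ in place of $r$ is exactly right. The gap is in the step you yourself flag as the hard part: supplying the monomial group.

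Passing to an immediate extension cannot work as you describe. Corollary~\ref{wrdc} is proved under the standing assumptions of its subsection, which include $\upo$-freeness, and no spherically complete (or even maximal) immediate extension $\hat K$ of $K$ is $\upo$-free: since $K$ is $\upo$-free, the pc-sequence $(\upo_\rho)$ diverges in $K$, and in a spherically complete immediate extension it acquires a pseudolimit, which is precisely the failure of $\upo$-freeness. So the hypothesis you need to invoke Corollary~\ref{wrdc} in $\hat K$ is destroyed by the very construction you use to obtain the monomial group; nothing in Sections~\ref{sec:completion d-hens} and~\ref{sec:complements newton} produces an immediate extension that both carries a monomial group and preserves $\upo$-freeness (the completion preserves $\upo$-freeness but is not shown to have a monomial group). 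The correct device is the one the paper uses: pass to a suitable \emph{elementary} extension. All the relevant hypotheses ($\upo$-freeness, $r$-newtonianity, divisibility of $\Gamma$, algebraic closedness of $C$, small derivation) are first-order, hence hold in any elementary extension; a sufficiently saturated one has a monomial group; Corollary~\ref{wrdc} applies there; and since ``$Q$ has a zero'' is an existential statement with parameters in $K$, it transfers back down by elementarity -- no appeal to the absence of $K$-external zeros is needed for the descent. With that replacement your argument goes through.
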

\begin{proof} To show that $K$ is weakly $r$-differentially closed we arrange by compositional conjugation and passing to a suitable elementary extension that $K$ has small derivation and $K$ has a monomial group. Then
$K$ is weakly $r$-differentially closed by 
Corollaries~\ref{wrdc} and~\ref{14.5.2.r}. The rest uses [ADH, 5.8.9] and
Lemma~\ref{rlcrln}. 
\end{proof}

\newpage  

\part{The Universal Exponential Extension}\label{part:universal exp ext}

\medskip

\noindent
Let $K$ be an algebraically closed differential field.
In Section~\ref{sec:univ exp ext} below we extend~$K$ in a canonical way to a differential integral  domain $\Univ=\Univ_K$
whose differential fraction field has the same constant field $C$ as $K$,
called the {\it universal exponential extension}\/ of~$K$. (The universal exponential extension of $\mathbb T[\imag]$ appeared in \cite{JvdH} in the guise of ``oscillating transseries''; we explain
the connection at the end of Section~\ref{sec:eigenvalues and splitting}.)
The underlying ring of $\Univ$ is a   group ring of a certain abelian group over~$K$, and we therefore first review some relevant basic facts about such group rings
in Section~\ref{sec:group rings}. The main feature of $\Univ$ is that if $K$ is $1$-linearly surjective, then   each $A\in K[\der]$ of order $r\in\N$ which splits over $K$ has~$r$  many $C$-linearly independent zeros in $\Univ$. This is explained in Section~\ref{sec:eigenvalues and splitting}, after some differential-algebraic preliminaries in Section~\ref{sec:splitting}, where we consider a novel kind of {\it spectrum}\/ of a linear differential operator over a differential field.
In Section~\ref{sec:valuniv} we introduce for $H$-asymptotic~$K$ with small derivation and asymptotic integration
the  {\it ultimate exceptional values}\/ of a given linear differential operator $A\in K[\der]^{\neq}$. These  
help to isolate the zeros of~$A$ in~$\Univ$ much like
the exceptional values of $A$ help to locate the zeros of~$A$ in immediate asymptotic extensions of $K$ as in~Section~\ref{sec:lindiff}.

Of this chapter,  the construction of $\Univ$ (Sections~\ref{sec:group rings} and~\ref{sec:univ exp ext})  and the set of ultimate exceptional values with its basic properties (Section~\ref{sec:valuniv}) are essential in Chapter 4, but the definition of ``ultimate exceptional values''  uses notation and terminology from Sections~\ref{sec:splitting} and~\ref{sec:eigenvalues and splitting}. 
 In \cite{ADH5} we discuss the analytic meaning of~$\Univ$ when $K$ is the algebraic closure of a Liouville closed Hardy field containing $\R$ as a subfield.
 In a follow-up paper~\cite{ADH6} we use the main theorem of~\cite{ADH5} together with the results from Sections~\ref{sec:splitting} and~\ref{sec:eigenvalues and splitting}   to study the solutions of linear differential equations over Hardy fields.

\section{Some Facts about Group Rings}\label{sec:group rings}

\noindent
{\it In this section $G$ is a torsion-free abelian group, written multiplicatively, $K$ is a field, and~$\gamma$,~$\delta$ range over $G$.}\/
For use in Section~\ref{sec:univ exp ext} below we recall some facts about the group ring $K[G]$: a commutative $K$-algebra with $1\ne 0$ that contains $G$ as a subgroup of its multiplicative group~$K[G]^\times$ and
which, as a $K$-linear space, decomposes as \index{group ring}
$$K[G]\ =\ \bigoplus_\gamma K\gamma\qquad\text{(internal direct sum)}.$$
Hence for any $f\in K[G]$ we have a unique family $(f_\gamma)$ of elements of $K$, with $f_\gamma=0$  for all but finitely many $\gamma$,
such that 
\begin{equation}\label{eq:elt of group ring}
f\ =\ \sum_\gamma f_\gamma \gamma.
\end{equation}
We define the support of $f\in K[G]$ as above by
$$\supp(f)\ :=\ \{\gamma:\, f_\gamma\neq 0\}\ \subseteq\ G.$$
{\em In the rest of this section~$f$,~$g$,~$h$ range over $K[G]$}.
For any $K$-algebra $R$, every group morphism $G\to R^\times$ extends uniquely to a $K$-algebra morphism $K[G]\to R$. 

\medskip
\noindent
Clearly $K[G]^\times \supseteq K^\times G$; in fact:

\begin{lemma}\label{lem:only trivial units}
The ring $K[G]$ is an integral domain and $K[G]^\times = K^\times G$.
\end{lemma}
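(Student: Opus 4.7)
The plan is to exploit the fact that every torsion-free abelian group admits a total order compatible with its group operation (a classical theorem of Levi); once such an order is fixed on $G$, both assertions follow by looking at extremal elements of supports.

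\textbf{Step 1: order $G$.} I would begin by invoking the result that the torsion-free abelian group $G$ admits a total ordering $\leq$ compatible with multiplication, i.e.\ $\gamma\leq\delta$ implies $\alpha\gamma\leq\alpha\delta$ for all $\alpha\in G$. (If the paper prefers a self-contained derivation rather than citing this, one can reduce to the finitely generated case, where $G$ is a finitely generated torsion-free abelian group, hence free of finite rank, and free abelian groups are orderable lexicographically.) Using this order, for any nonzero $f\in K[G]$ I write $\min\supp(f)$ and $\max\supp(f)$ for the extremes of its (finite) support.

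\textbf{Step 2: integral domain.} Let $f,g\in K[G]$ be nonzero, and set $\gamma:=\max\supp(f)$, $\delta:=\max\supp(g)$. Compatibility of $\leq$ with multiplication, together with the fact that a product $\alpha\beta$ in $\supp(fg)$ can only arise as $\alpha=\alpha'\beta'$ with $\alpha'\in\supp(f)$, $\beta'\in\supp(g)$, gives that $\gamma\delta\in\supp(fg)$ with coefficient $f_\gamma g_\delta\neq 0$. Hence $fg\neq 0$, proving $K[G]$ is a domain.

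\textbf{Step 3: units.} Suppose $fg=1$ with $f,g\in K[G]$. Then $f,g$ are nonzero, and set $\gamma_1:=\min\supp(f)$, $\gamma_2:=\max\supp(f)$, $\delta_1:=\min\supp(g)$, $\delta_2:=\max\supp(g)$. By the same extremal argument as in Step 2, $\gamma_1\delta_1$ is the unique minimal element of $\supp(fg)$ with nonzero coefficient $f_{\gamma_1}g_{\delta_1}$, and similarly $\gamma_2\delta_2$ is the unique maximal element. Since $\supp(1)=\{1_G\}$, we must have $\gamma_1\delta_1=\gamma_2\delta_2=1_G$. Cancelling in $G$ forces $\gamma_1=\gamma_2$ and $\delta_1=\delta_2$, so $\supp(f)=\{\gamma_1\}$ and $\supp(g)=\{\delta_1\}$. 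Thus $f=f_{\gamma_1}\gamma_1\in K^\times G$ (and similarly for $g$), which gives $K[G]^\times\subseteq K^\times G$. The reverse inclusion is trivial.

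\textbf{Main obstacle.} There is no real obstacle beyond invoking (or proving) the orderability of $G$; the only subtlety is ensuring the ordering is multiplicatively compatible so that $\max$ and $\min$ of supports behave correctly under multiplication. Once that is in hand, Steps 2 and 3 are immediate.
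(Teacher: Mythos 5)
Your proof is correct and follows essentially the same route as the paper's: fix a compatible total order on the torsion-free abelian group $G$ and track the extremal (min and max) elements of supports under multiplication, for both the domain claim and the unit claim. (One small wording quibble in Step 3: the conclusion $\gamma_1=\gamma_2$, $\delta_1=\delta_2$ does not follow from $\gamma_1\delta_1=\gamma_2\delta_2$ by cancellation alone — it uses that $\gamma_1\le\gamma_2$, $\delta_1\le\delta_2$ and the order-compatibility of multiplication force $\gamma_1\delta_1\le\gamma_2\delta_2$ with equality pinning both down; this is exactly the point where the ordering is used, as in the paper.)
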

\begin{proof} We take an ordering of~$G$ making $G$ into an ordered abelian group; see~[ADH, 2.4]. Let $f,g\neq 0$ and set 
$$\gamma^-:=\min\supp(f),\ \gamma^+:=\max\supp(f),\quad 
 \delta^-:=\min\supp(g),\ \delta^+:=\max\supp(g);$$
so  $\gamma^- \leq \gamma^+$ and $\delta^-\leq \delta^+$.
We have 
$(fg)_{\gamma^- \delta^-} = f_{\gamma^-}g_{\delta^-}\neq 0$, and likewise with~$\gamma^+$,~$\delta^+$ in place of $\gamma^-$, $\delta^-$.
In particular, 
$fg\neq 0$, showing that~$K[G]$ is an integral domain.
Now suppose $fg=1$. Then $\supp(fg)=\{1\}$, hence $\gamma^- \delta^-=1=\gamma^+\delta^+$, so $\gamma^-=\gamma^+$, and thus $f\in K^\times G$.
\end{proof}

\begin{lemma}\label{lem:Frac U not alg closed} 
Suppose $K$ has characteristic $0$ and $G\neq\{1\}$. Then the fraction field $\Omega$ of $K[G]$ is not algebraically closed.
\end{lemma}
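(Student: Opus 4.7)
The plan is to exhibit an explicit degree-$2$ polynomial over $\Omega$ with no root in $\Omega$. Concretely, pick any $\gamma \in G$ with $\gamma \ne 1$, which exists since $G \ne \{1\}$. I will show that $Y^2 - (1+\gamma)$ has no root in $\Omega$, proceeding by contradiction: suppose $\alpha \in \Omega$ satisfies $\alpha^2 = 1+\gamma$.

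First I reduce to a finitely generated subgroup. Writing $\alpha = f/g$ with $f,g \in K[G]$, $g \ne 0$, let $G'$ be the subgroup of $G$ generated by $\{\gamma\} \cup \supp(f) \cup \supp(g)$. Then $G'$ is a finitely generated torsion-free abelian group, so $G' \cong \Z^n$ for some $n \ge 1$ by the structure theorem, and $\alpha \in \Frac(K[G'])$ since the inclusion $K[G'] \hookrightarrow K[G]$ of integral domains embeds fraction fields. Identifying $K[G']$ with the Laurent polynomial ring $K[t_1^{\pm 1},\ldots,t_n^{\pm 1}]$, the element $\gamma$ becomes a Laurent monomial $t_1^{a_1}\cdots t_n^{a_n}$ with exponent vector $(a_1,\ldots,a_n) \in \Z^n \setminus\{0\}$. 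Setting $d := \gcd(a_1,\ldots,a_n) \ge 1$, the tuple $(a_1/d,\ldots,a_n/d)$ is primitive in $\Z^n$ and hence extends to a $\Z$-basis of $\Z^n$; after the corresponding change of basis of $G'$, I may assume $\gamma = t_1^d$.

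The problem thus reduces to showing that $1+t_1^d$ is not a square in $K(t_1,\ldots,t_n) = \Frac(K[G'])$. Here the characteristic zero hypothesis is used: the derivative $\partial_{t_1}(1+t_1^d) = d\,t_1^{d-1}$ is nonzero and coprime to $1+t_1^d$ in $K[t_1]$, so $1+t_1^d$ is squarefree in $K[t_1]$. Every irreducible factor of $1+t_1^d$ in $K[t_1]$ remains prime in the UFD $K[t_1,\ldots,t_n]$ (as a prime of the base ring of a polynomial extension, via Gauss's lemma), so $1+t_1^d$ is squarefree in $K[t_1,\ldots,t_n]$. A straightforward $p$-adic valuation argument at any such prime factor $p$ — namely, $\alpha^2 = 1+t_1^d$ written as $f^2 = g^2(1+t_1^d)$ for $f,g$ in the UFD forces $2v_p(f) = 2v_p(g)+1$, an absurdity — contradicts the assumed square relation.

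The main technical point requiring care is the change-of-basis step, which rests on the classical fact that primitive vectors in $\Z^n$ extend to bases (e.g.\ via Smith normal form); the characteristic zero hypothesis enters only at the very end, to ensure separability and hence squarefreeness of $1+t_1^d$. In positive characteristic $p \mid d$ this particular witness would fail (one then has $1+t_1^d = (1+t_1^{d/p})^p$), and a different polynomial would be needed, so the reduction to $K(t_1,\ldots,t_n)$ followed by a UFD squarefreeness argument is the crux.
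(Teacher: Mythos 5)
Your proof is correct. It ends with the same mechanism as the paper's argument---produce an element of the group ring with a prime factor of multiplicity one in a suitable UFD, so that $f^2=g^2\cdot(\text{that element})$ forces an odd valuation to equal an even one---but your reduction to that situation is genuinely different. The paper first replaces $G$ by its divisible hull, splits off the rank-one piece $\gamma^{\Q}$ against a complementary divisible subgroup $H$, and works in the one-variable Laurent ring $K(H)[\gamma^{\pm 1}]$; because rational exponents must then be cleared by the automorphism $\gamma\mapsto\gamma^m$, the witness has to be taken of the form $1-\gamma^n$ for arbitrary $n\geq 1$ (the exponent changes under the reduction), and the contradiction comes from $1-\gamma$ being a multiplicity-one prime factor of $1-\gamma^n$ in $K[\gamma]$. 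You instead go downward: restrict to the finitely generated subgroup $G'$ generated by $\gamma$ and the supports of $f$, $g$, use that a primitive vector of $\Z^n$ extends to a basis to normalize $\gamma=t_1^d$, and then check squarefreeness of $1+t_1^d$ in $K[t_1,\dots,t_n]$ via the derivative criterion (this is where characteristic $0$ enters, exactly as it does implicitly for $1-\gamma^n$ in the paper). Your route avoids divisible hulls and complements at the cost of working in a multivariable polynomial ring and invoking the Smith-normal-form/basis-extension fact; the paper's route gets to one variable immediately but over the larger coefficient field $K(H)$ and must carry the exponent $n$ through the reduction. Both are fine; the single fixed witness $1+\gamma$ is a small aesthetic advantage of your version, while the paper's statement-with-$n$ is forced by (and tailored to) its own reduction.
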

\begin{proof} 
Let $\gamma\in G\setminus\{1\}$ and $n\geq 1$. We claim that there is no $y\in\Omega$ with~$y^2=1-\gamma^n$.
For this, first replace $G$ by its divisible hull to arrange that $G$ is divisible. 
Towards a contradiction, suppose $f,g\in K[G]^{\neq}$ and $f^2=g^2(1-\gamma^n)$.
Take a divisible subgroup~$H$ of  $G$ that is complementary to the smallest divisible subgroup $\gamma^{\Q}$ of $G$ containing $\gamma$, so $G=H\gamma^{\Q}$ and $G\cap \gamma^{\Q}=\{1\}$. Then
$K[G]\subseteq K(H)[\gamma^{\Q}]$ (inside $\Omega$), so we may
replace $K$, $G$ by $K(H)$, $\gamma^{\Q}$ to arrange~$G=\gamma^{\Q}$. 
For suitable~$m\geq 1$ we apply the $K$-algebra automorphism of~$K[G]$ given by $\gamma\mapsto \gamma^m$ to arrange
$f,g\in K[\gamma,\gamma^{-1}]$ (replacing $n$ by $mn$). 
Then replace~$f$,~$g$ by~$\gamma^m f$,~$\gamma^m g$ for suitable $m\ge 1$  
to arrange~$f,g\in K[\gamma]$. Now use that~$1-\gamma$ is a prime divisor of~$1-\gamma^n$ of multiplicity~$1$  in the UFD $K[\gamma]$ to get a contradiction.
\end{proof}

\noindent
The $K$-linear map 
$$f\mapsto \operatorname{tr}(f):=f_1\ \colon\  K[G]\to K$$ is called the
{\bf trace} of $K[G]$. \index{group ring!trace} \index{trace} Thus
$$\operatorname{tr}(fg)\ =\ \sum_\gamma f_\gamma g_{\gamma^{-1}}.$$
We claim that $\operatorname{tr}\circ\sigma=\operatorname{tr}$ for every automorphism $\sigma$ of the $K$-algebra $K[G]$. This invariance comes from an intrinsic description of $\operatorname{tr}(f)$ as follows: given $f$ we have a unique finite set $U\subseteq K[G]^\times = K^\times G$
such that $f=\sum_{u\in U} u$ and $u_1/u_2\notin K^\times$ for all distinct
$u_1, u_2\in U$; if $U\cap K^\times=\{c\}$, then~$\operatorname{tr}(f)=c$;  if $U\cap K^\times=\emptyset$, then~$\operatorname{tr}(f)=0$. 
If $G_0$ is a subgroup of $G$ and $K_0$ is a  subfield  of $K$, then~$K_0[G_0]$ is a subring of $K[G]$, and the trace of $K[G]$ extends the trace of $K_0[G_0]$.

\subsection*{The automorphisms of $K[G]$} For a commutative group $H$, 
written multiplicatively, $\Hom(G,H)$ denotes the set of group morphisms $G\to H$, made into a group by pointwise multiplication. Any $\chi\in\Hom(G,K^\times)$---sometimes called a {\em character}---gives  a $K$-algebra automorphism $f\mapsto f_\chi$ of $K[G]$ defined by
\begin{equation}\label{eq:fchi}
f_\chi\ :=\ \sum_\gamma f_\gamma \chi(\gamma)\gamma.
\end{equation}
This yields a group action of $\Hom(G,K^\times)$ on $K[G]$ by $K$-algebra automorphisms:
$$\Hom(G,K^\times)\times K[G]\to K[G], \qquad (\chi, f)\mapsto f_{\chi}.$$ 
Sending $\chi\in\Hom(G,K^\times)$ to  $f\mapsto f_\chi$ yields an   embedding of the group $\Hom(G,K^\times)$ into the group~$\Aut(K[G]|K)$ 
of automorphisms of the $K$-algebra $K[G]$; its 
 image is the (commutative) subgroup of $\Aut(K[G]|K)$ consisting of the $K$-algebra automorphisms~$\sigma$ of $K[G]$ such that $\sigma(\gamma)/\gamma\in K^\times$ for all $\gamma$. Identify $\Hom(G,K^\times)$ with its image under this embedding. From $K[G]^\times=K^\times G$ we obtain $\sigma(K^\times G)=K^\times G$ for all  $\sigma\in\Aut(K[G]|K)$, and using this one verifies easily that $\Hom(G,K^\times)$ is a normal subgroup of $\Aut(K[G]|K)$. 
 We also have the group embedding 
 $$\Aut(G)\ \to\ \Aut(K[G]|K)$$ assigning to each
$\sigma\in \Aut(G)$ the unique automorphism of the $K$-algebra $K[G]$ extending $\sigma$. Identifying $\Aut(G)$ with its image in $\Aut(K[G]|K)$ via this embedding we have $\Hom(G,K^\times)\cap\Aut(G)=\{\text{id}\}$ and 
$\Hom(G,K^\times)\cdot \Aut(G)=\Aut(K[G],| K)$ inside $\Aut(K[G]|K)$, and thus 
 $\Aut(K[G]|K)=\Hom(G,K^\times)\rtimes\Aut(G)$, an internal semidirect product of subgroups of $\Aut(K[G]|K)$.

\subsection*{The gaussian extension}
{\it In this subsection $v\colon K^\times\to\Gamma$ is a valuation on the field $K$.}\/
We extend $v$ to a map $v_{\g}\colon K[G]^{\neq}\to\Gamma$ by setting
\begin{equation}\label{eq:gaussian ext}
v_{\g}f\ :=\ \min_\gamma vf_\gamma\qquad \text{($f\in K[G]^{\neq}$ as in \eqref{eq:elt of group ring}).}
\end{equation}

\begin{prop} The map $v_{\g}\colon K[G]^{\neq}\to\Gamma$ is a valuation on the domain $K[G]$.
\end{prop}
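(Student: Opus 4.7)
The plan is to verify the two standard axioms for a valuation on the domain $K[G]$ (noting that $K[G]$ is indeed a domain by Lemma~\ref{lem:only trivial units}), with the convention $v_\g(0)=\infty$. The ultrametric inequality $v_\g(f+g)\geq\min\!\big(v_\g(f),v_\g(g)\big)$ is immediate from the definition~\eqref{eq:gaussian ext}: the coefficient of $\gamma$ in $f+g$ is $f_\gamma+g_\gamma$, so $v(f_\gamma+g_\gamma)\geq\min(vf_\gamma,vg_\gamma)$ and one takes $\min_\gamma$.

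The multiplicativity $v_\g(fg)=v_\g(f)+v_\g(g)$ splits as usual into two inequalities. The direction $v_\g(fg)\geq v_\g(f)+v_\g(g)$ is a routine consequence of the convolution formula $(fg)_\gamma=\sum_\delta f_\delta g_{\delta^{-1}\gamma}$ together with the ultrametric inequality for $v$.

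For the reverse inequality $v_\g(fg)\leq v_\g(f)+v_\g(g)$, my plan is the standard reduction to the residue field. Let $\mathcal{O}$ be the valuation ring of $v$ with maximal ideal $\smallo$ and residue field $k=\mathcal{O}/\smallo$. Let $f,g\in K[G]^{\neq}$, set $\alpha:=v_\g(f)$, $\beta:=v_\g(g)$, and pick $a,b\in K^\times$ with $va=\alpha$, $vb=\beta$. Then $a^{-1}f$ and $b^{-1}g$ lie in $\mathcal{O}[G]$ and each has at least one coefficient in $\mathcal{O}^\times$, so their images under the coefficient-wise reduction map $\mathcal{O}[G]\to k[G]$ are nonzero. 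Since $G$ is torsion-free abelian and $k$ is a field, Lemma~\ref{lem:only trivial units} applied to $k[G]$ shows that $k[G]$ is a domain, hence the product of these reductions is nonzero in $k[G]$. Lifting back, $(ab)^{-1}fg\in\mathcal{O}[G]$ has some coefficient in $\mathcal{O}^\times$, which gives $v_\g\!\big((ab)^{-1}fg\big)=0$, i.e., $v_\g(fg)=\alpha+\beta$.

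The one mild obstacle is making sure the reduction-to-residue-field argument is legitimate, i.e.\ that $\mathcal{O}[G]$ maps onto $k[G]$ coefficient-wise and that this map is a ring morphism; this is clear because the reduction $\mathcal{O}\to k$ is a ring morphism and $G$ maps identically to itself, so the universal property of group rings supplies the needed $\mathcal{O}$-algebra morphism $\mathcal{O}[G]\to k[G]$ with kernel $\smallo[G]$. Everything else is bookkeeping, and the proof is essentially the classical Gauss-lemma argument, just carried out in the group-ring setting rather than the polynomial-ring setting.
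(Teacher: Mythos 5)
Your proof is correct, but it takes a genuinely different route from the paper's. The paper reduces to the finitely generated case: $K[G]$ is the directed union of the subrings $K[G_0]$ with $G_0\subseteq G$ finitely generated, and for such $G_0$ one has a group isomorphism $G_0\cong\Z^n$, hence a $K$-algebra isomorphism $K[G_0]\cong K[X_1,X_1^{-1},\dots,X_n,X_n^{-1}]$ under which $v_{\g}$ corresponds to the gaussian extension of $v$ to the rational function field restricted to the Laurent subring; the claim is then cited from [ADH, 3.1]. Your approach instead verifies the axioms directly, proving the hard multiplicativity inequality by the classical Gauss-lemma argument: normalize so that $a^{-1}f, b^{-1}g\in\mathcal O[G]$ each have a unit coefficient, reduce coefficient-wise to $k[G]$, and invoke the fact (Lemma~\ref{lem:only trivial units} for the residue field $k$) that $k[G]$ is a domain. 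What your version buys is self-containment: it relies only on the domain property of group rings over a field already established in the section, rather than on a cited fact about gaussian extensions for rational function fields. What the paper's version buys is brevity and the observation that nothing new is happening — the gaussian extension on $K[G]$ is literally the same object as the familiar one on Laurent polynomials, localized over a directed system. Either is a legitimate proof; they are complementary in emphasis rather than one being a repair of the other.
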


{\sloppy
 \begin{proof} 
We can reduce to the case that $G$ is finitely generated,
since $K[G]$ is the directed union of its subrings $K[G_0]$ with $G_0$ a finitely generated subgroup of~$G$. 
We then have a group isomorphism~$G\to \Z^n$ inducing a $K$-al\-ge\-bra isomorphism~${K[G]\to K[X_1, X_1^{-1},\dots, X_n, X_n^{-1}]}$ (with distinct indeterminates~$X_1,\dots,X_n$) under which~$v_{\g}$ corresponds to the gaussian extension of the valuation of $K$ to $K(X_1,\dots,X_n)$ restricted to its subring
$K[X_1, X_1^{-1},\dots, X_n, X_n^{-1}]$; see~{[ADH,~3.1].} 
\end{proof}}

{\sloppy
\noindent
We call $v_{\g}$ the {\bf gaussian extension} of the valuation of $K$ to~$K[G]$.\index{group ring!gaussian extension of a valuation}\index{gaussian extension}\index{valuation!gaussian extension}\index{extension!gaussian} We
denote by~$\preceq_{\g}$ the dominance relation on $\Omega:=\Frac(K[G])$ associated to the extension of~$v_{\g}$ to a valuation on the field $\Omega$  [ADH, (3.1.1)], with corresponding asymptotic relations~$\asymp_{\g}$ and~$\prec_{\g}$. For the subring $\mathcal O[G]$ of $K[G]$ generated by $G$ over $\mathcal O$ we have
$$\mathcal O[G]\ =\ \{ f:\, f\preceq_{\g} 1\}.$$
The residue morphism~$\mathcal O\to\k:=\mathcal O/\smallo$
extends to a surjective ring mor\-phism~$\mathcal O[G]\to\k[G]$ with $\gamma\mapsto\gamma$ for all~$\gamma$ and whose kernel
is the ideal $$\smallo[G]:=\ \{f :\, f\prec_{\g} 1\}$$
of~$\mathcal O[G]$. Hence this ring morphism induces an isomorphism  $\mathcal O[G]/\smallo[G]\cong\k[G]$.
If~$G_0$ is subgroup of $G$ and $K_0$ is a valued subfield of $K$, then the restriction of~$v_{\g}$ to a valuation on $K_0[G_0]$ is the gaussian extension of the valuation of $K_0$ to $K_0[G_0]$.\label{p:vg} }

\subsection*{An inner product and two norms}
{\it In the rest of this section $H$ is a real closed subfield of $K$ such that $K=H[\imag]$ where $\imag^2=-1$.}\/ In later use $H$ will be a Hardy field, which is why we use the letter $H$ here. Note that the only nontrivial automorphism of the (algebraically closed) field $K$ over $H$ is {\em complex conjugation}\/: 
$$z=a+b\imag \mapsto \overline{z}:=a-b\imag\qquad (a,b\in H).$$ 
For $f$ as in \eqref{eq:elt of group ring} we set
$$ f^*\ :=\ \sum_\gamma \overline{f_\gamma} \gamma^{-1},$$
so $(f^*)^*=f$, and $f\mapsto f^*$ lies in $\Aut\!\big(K[G]|H\big)$. 
We define the function $$(f,g)\mapsto \langle f,g\rangle\ :\ K[G]\times K[G]\to K$$ by
$$\langle f,g\rangle\ :=\ \operatorname{tr}\!\big(f g^*\big)\ =\ \sum_\gamma f_\gamma  \overline{g_\gamma}.$$
One verifies easily that this  is a ``positive definite hermitian form'' on the $K$-linear space $K[G]$: it is additive on the left and on the right, and for all $f$,~$g$ and all~$\lambda\in K$: $\langle \lambda f, g \rangle=\lambda \langle f,g \rangle$, $\langle g,f\rangle=\overline{\langle f,g\rangle}$, 
$\langle f, f\rangle \in H^{\ge}$, and $\langle f, f\rangle=0\Leftrightarrow f=0$, and thus also $\langle f, \lambda g \rangle=\overline{\lambda}\langle f,g \rangle$. (Hermitian forms are usually defined only on $\mathbb{C}$-linear spaces and are $\mathbb{C}$-valued, which is why we used quotation marks, as we do below for {\em norm\/} and {\em orthonormal basis}; see \cite[Chapter~XV, \S{}5]{Lang} for
the more general case.)
Note:
$$\langle f,gh\rangle\ =\ \operatorname{tr}\!\big(f(g h)^*\big)\ =\ \big\langle f g^*,h\big\rangle.$$

\begin{lemma}\label{lem:inner prod intrinsic}
Let $u,w\in K[G]^\times$. If $u\notin K^\times w$, then $\langle u,w\rangle = 0$, and if $u\in K^\times w$, then $\langle u,w\rangle = uw^*$.
\end{lemma}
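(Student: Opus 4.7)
The plan is to reduce everything to the trivial-unit description of $K[G]^\times$ supplied by Lemma~\ref{lem:only trivial units}, after which the claim becomes a one-line computation involving the trace. Write $u=c\gamma$ and $w=d\delta$ with $c,d\in K^\times$ and $\gamma,\delta\in G$; this is possible precisely because $K[G]^\times=K^\times G$. Then from the very definition of the involution $(\cdot)^*$ we have $w^*=\bar d\,\delta^{-1}$, so
\[
uw^*\ =\ c\bar d\,\gamma\delta^{-1}\ \in\ K^\times G.
\]

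Now translate the hypotheses into conditions on $\gamma,\delta$. The condition $u\in K^\times w$ amounts to $c\gamma\in K^\times d\delta$, that is, $\gamma=\delta$ (since $G$ intersects $K^\times\delta$ in the single element $\delta$, as $K[G]=\bigoplus_\eta K\eta$ is a direct sum); conversely $u\notin K^\times w$ says exactly $\gamma\neq\delta$.

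In the first case $uw^*=c\bar d\cdot 1\in K$, so $\operatorname{tr}(uw^*)=c\bar d=uw^*$, giving $\langle u,w\rangle=uw^*$. In the second case $uw^*\in K^\times(\gamma\delta^{-1})$ with $\gamma\delta^{-1}\neq 1$, so its coefficient on $1$ is zero and $\langle u,w\rangle=\operatorname{tr}(uw^*)=0$. This settles both cases.

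There is no real obstacle here; the only point that requires any care is making sure that $u\in K^\times w$ really does force $\gamma=\delta$ (and not merely $\gamma\in K^\times\delta$ in some weaker sense), but this is immediate from the direct sum decomposition of $K[G]$ and the fact that $G$ is a $K$-linearly independent subset of $K[G]$.
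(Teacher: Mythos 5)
Your proof is correct and follows essentially the same route as the paper: write $u=c\gamma$, $w=d\delta$ via $K[G]^\times=K^\times G$ (Lemma~\ref{lem:only trivial units}), observe that $u\in K^\times w$ is equivalent to $\gamma=\delta$, and read off $\langle u,w\rangle=\operatorname{tr}(uw^*)$ in each case. The extra remark about the direct sum decomposition forcing $\gamma=\delta$ is a fine, if brief, elaboration of what the paper leaves implicit.
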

\begin{proof}
Take $a,b\in K^\times$ and $\gamma$, $\delta$ such that $u=a\gamma$, $w=b\delta$.
If $u\notin K^\times w$, then~$\gamma\neq\delta$, so~$\langle u,w\rangle = 0$.
If $u\in K^\times w$, then $\gamma=\delta$, hence $\langle u,w\rangle = a\overline{b} = uw^*$.
\end{proof}

\noindent
For $z\in K$ we set $\abs{z}:=\sqrt{z\overline{z}}\in H^{\ge}$, and then define $\dabs{\,\cdot\,}\colon K[G]\to H^{\geq}$ by
$$\dabs{f}^2\ =\ \langle f,f\rangle\ =\ \sum_\gamma\, \abs{f_\gamma}^2.$$
As in the case $H=\R$ and $K=\mathbb{C}$ one derives the Cauchy-Schwarz Inequality:
$$|\langle f,g\rangle|\ \le\ \dabs{f}\cdot\dabs{g}.$$
Thus $\dabs{\,\cdot\,}$ is a ``norm'' on the $K$-linear space $K[G]$: for all $f,g$ and all $\lambda\in K$,  
$$\dabs{f+g}\le \dabs{f} + \dabs{g}, \quad \dabs{\lambda f}=\abs{\lambda}\cdot\dabs{f}, \quad \dabs{f}=0\Leftrightarrow f=0.$$
Note that $G$ is an ``orthonormal basis'' of $K[G]$ with respect to $\langle\ ,\,\rangle$, and  $f_\gamma= \langle f,\gamma\rangle$. 
We also use the function $\dabs{\,\cdot\,}_1\colon K[G]\to H^{\geq}$ given by
$$\dabs{f}_1\ :=\ \sum_\gamma\, \abs{f_\gamma},$$
which is a ``norm'' on $K[G]$ in the sense of obeying the same laws as we
mentioned for $\dabs{\,\cdot\,}$.  \index{group ring!norms}
The two ``norms'' are in some sense equivalent:
\[
\dabs{f}\ \leq\ \dabs{f}_1\ \leq\ \sqrt{\abs{\supp(f)}}\cdot \dabs{f}.
\]
where the first inequality follows from the triangle inequality for $\dabs{\,\cdot\,}$ and the second is of Cauchy-Schwarz type. Moreover:

\begin{lemma}\label{lem:submult}
Let $u\in K[G]^\times$. Then
$\dabs{f u} = \dabs{f}\,\dabs{u}$ and $\dabs{f u}_1 = \dabs{f}_1\,\dabs{u}_1$.
\end{lemma}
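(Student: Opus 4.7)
The plan is to exploit Lemma~\ref{lem:only trivial units}, which tells us that the unit group of $K[G]$ is as small as possible: $K[G]^\times = K^\times G$. So I would begin by writing $u = c\gamma$ with $c\in K^\times$ and $\gamma\in G$. This single structural fact is the whole content of the lemma; the rest is a mechanical verification.

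Next I would compute both sides separately. Writing $f = \sum_{\delta} f_\delta\delta$, multiplication by $u$ gives $fu = \sum_{\delta} (cf_\delta)(\delta\gamma)$. Since $\delta\mapsto \delta\gamma$ is a bijection of $G$, the coefficient of $\delta\gamma$ in $fu$ is precisely $cf_\delta$, and different $\delta$'s give different group elements, so no cancellation occurs. Using multiplicativity of the absolute value $|\cdot|\colon K\to H^{\ge}$ (which holds because $H$ is real closed and $|\cdot|$ is induced by the hermitian form $z\mapsto z\bar z$), I would then compute
\[
\dabs{fu}^2 \ =\ \sum_{\delta} |cf_\delta|^2\ =\ |c|^2\sum_{\delta}|f_\delta|^2\ =\ |c|^2\dabs{f}^2,
\]
and likewise $\dabs{fu}_1 = \sum_{\delta}|cf_\delta| = |c|\dabs{f}_1$. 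Noting that $\dabs{u} = \dabs{c\gamma} = |c|$ and $\dabs{u}_1 = |c|$ (both supports are the singleton $\{\gamma\}$) completes both identities.

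There is essentially no obstacle here: the only thing that could go wrong would be an interaction between distinct support elements after multiplication by $u$, but because $u$ is a \emph{unit}---equivalently, a scalar multiple of a single group element---this cannot happen. The lemma would fail for general $u\in K[G]$, so the hypothesis $u\in K[G]^\times$ is doing all the work via Lemma~\ref{lem:only trivial units}.
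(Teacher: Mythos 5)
Your proof is correct and follows essentially the same route as the paper: both rest on $K[G]^\times=K^\times G$ to write $u=c\gamma$, reduce by homogeneity to multiplication by a single group element $\gamma$, and check that this preserves both norms. The only cosmetic difference is that you verify the $\gamma$-step by reindexing coefficients via the bijection $\delta\mapsto\delta\gamma$, whereas the paper gets it from the hermitian-form identity $\langle f\gamma,f\gamma\rangle=\langle f\gamma\gamma^*,f\rangle$ with $\gamma^*=\gamma^{-1}$.
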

\begin{proof} 
We have
$$\dabs{f\gamma}\ =\ \langle f\gamma, f\gamma\rangle\ =\ \big\langle f\gamma \gamma^*,f \big\rangle\ =\ 
\big\langle f ,f\big\rangle\ =\
\dabs{f} $$
using $\gamma^*=\gamma^{-1}$. 
Together with $K[G]^\times=K^\times G$ this yields
the first claim; the second claim follows easily from the definition of $\dabs{\,\cdot\,}_1$.
\end{proof}

\begin{cor}\label{cor:submult}
$\dabs{fg}\leq\dabs{f}\,\cdot\dabs{g}_1$ and $\dabs{fg}_1\leq\dabs{f}_1\,\cdot\dabs{g}_1$.
\end{cor}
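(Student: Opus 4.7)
The plan is to deduce both inequalities directly from Lemma~\ref{lem:submult} by decomposing $g$ into its ``monomial'' summands $g_\gamma\gamma$, each of which is either zero or lies in $K[G]^\times = K^\times G$. The key point is that Lemma~\ref{lem:submult} already handles multiplication by a single unit exactly, and the triangle inequality for the two ``norms'' lets me pass from one summand to the full sum.

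Concretely, writing $g = \sum_\gamma g_\gamma\gamma$ (a finite sum), I first apply the triangle inequality for $\dabs{\,\cdot\,}$ to get
\[
\dabs{fg} \ =\ \Bigl\Vert\sum_\gamma f\,g_\gamma\gamma\Bigr\Vert \ \leq\ \sum_\gamma \dabs{f\,g_\gamma\gamma}.
\]
For each $\gamma$ with $g_\gamma\neq 0$, the element $g_\gamma\gamma$ lies in $K[G]^\times$, so Lemma~\ref{lem:submult} applies and gives $\dabs{f\,g_\gamma\gamma} = \dabs{f}\,\dabs{g_\gamma\gamma}$. Moreover $\dabs{g_\gamma\gamma}^2 = \langle g_\gamma\gamma, g_\gamma\gamma\rangle = |g_\gamma|^2$, so $\dabs{g_\gamma\gamma} = |g_\gamma|$ (and this trivially holds when $g_\gamma=0$). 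Summing gives
\[
\dabs{fg}\ \leq\ \sum_\gamma |g_\gamma|\,\dabs{f}\ =\ \dabs{f}\,\dabs{g}_1,
\]
which is the first inequality.

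The second inequality is proved in exactly the same way, replacing $\dabs{\,\cdot\,}$ by $\dabs{\,\cdot\,}_1$ throughout: the triangle inequality for $\dabs{\,\cdot\,}_1$ yields $\dabs{fg}_1 \leq \sum_\gamma \dabs{f\,g_\gamma\gamma}_1$, and the second identity in Lemma~\ref{lem:submult} together with $\dabs{g_\gamma\gamma}_1 = |g_\gamma|$ gives $\dabs{f\,g_\gamma\gamma}_1 = |g_\gamma|\,\dabs{f}_1$, whence $\dabs{fg}_1 \leq \dabs{f}_1\,\dabs{g}_1$. There is no real obstacle here — the whole content is in Lemma~\ref{lem:submult} — so the proof reduces to a one-line reduction via the triangle inequality.
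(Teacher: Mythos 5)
Your proof is correct and is essentially identical to the paper's: both decompose $g=\sum_\gamma g_\gamma\gamma$, apply the triangle inequality, and invoke Lemma~\ref{lem:submult} termwise (with $\dabs{g_\gamma\gamma}=\abs{g_\gamma}$), handling $\dabs{\,\cdot\,}_1$ the same way.
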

\begin{proof}
By  the triangle inequality for $\dabs{\,\cdot\,}$ and  the previous lemma,
$$\dabs{fg}\  \leq\ \sum_\gamma\, \dabs{f g_\gamma\gamma}\ 
=\ \sum_\gamma\, \dabs{f}\, \dabs{g_\gamma\gamma}\ =\ \dabs{f} \sum_\gamma\, \abs{g_\gamma}\  =\ \dabs{f}\,\dabs{g}_1.$$
The inequality involving $\dabs{fg}_1$ follows likewise.
\end{proof}

\noindent
In the next lemma we let $\chi\in\Hom(G,K^\times)$; recall from \eqref{eq:fchi} the automorphism~$f\mapsto f_\chi$ of the $K$-algebra $K[G]$. 

\begin{lemma}\label{lem:commuting with f*}
 $(f_\chi)^*=(f^*)_\chi$ iff~$\abs{\chi(\gamma)}=1$ for all $\gamma\in\supp(f)$.
\end{lemma}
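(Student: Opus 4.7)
The proof is a direct computation using the uniqueness of the expansion of an element of $K[G]$ with respect to the basis $G$ of $K[G]$ over $K$. First I would write $f = \sum_\gamma f_\gamma\gamma$ and compute both sides by chasing through the definitions of $(\cdot)^*$ and $(\cdot)_\chi$.

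On the one hand, from $f_\chi = \sum_\gamma f_\gamma\chi(\gamma)\gamma$ one reads off
\[
(f_\chi)^*\ =\ \sum_\gamma \overline{f_\gamma\chi(\gamma)}\,\gamma^{-1}\ =\ \sum_\gamma \overline{f_\gamma}\,\overline{\chi(\gamma)}\,\gamma^{-1}.
\]
On the other hand, writing $f^* = \sum_\gamma \overline{f_\gamma}\,\gamma^{-1}$ and re-indexing by $\delta = \gamma^{-1}$ gives $(f^*)_\delta = \overline{f_{\delta^{-1}}}$, so that
\[
(f^*)_\chi\ =\ \sum_\gamma \overline{f_\gamma}\,\chi(\gamma^{-1})\,\gamma^{-1}\ =\ \sum_\gamma \overline{f_\gamma}\,\chi(\gamma)^{-1}\,\gamma^{-1}.
\]

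Since $G$ is a $K$-basis of $K[G]$, the equality $(f_\chi)^* = (f^*)_\chi$ is equivalent to the coefficient-wise identity $\overline{f_\gamma}\,\overline{\chi(\gamma)} = \overline{f_\gamma}\,\chi(\gamma)^{-1}$ for every $\gamma \in G$. For $\gamma \notin \supp(f)$ this is automatic, and for $\gamma\in\supp(f)$ (where $f_\gamma\ne 0$, hence $\overline{f_\gamma}\ne 0$) it reduces to $\overline{\chi(\gamma)} = \chi(\gamma)^{-1}$, i.e., $\chi(\gamma)\overline{\chi(\gamma)} = 1$, i.e., $|\chi(\gamma)|=1$. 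There is no real obstacle here; the only thing to be careful about is the re-indexing in computing $(f^*)_\chi$, ensuring one correctly identifies the coefficient of $\gamma^{-1}$ as $\overline{f_\gamma}$ and then multiplies by $\chi(\gamma^{-1}) = \chi(\gamma)^{-1}$.
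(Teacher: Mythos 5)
Your computation is correct and is essentially the paper's argument: the paper also evaluates $\big((a\gamma)_\chi\big)^* = \overline{a\chi(\gamma)}\,\gamma^{-1}$ and $\big((a\gamma)^*\big)_\chi = \overline{a}\,\chi(\gamma)^{-1}\gamma^{-1}$ and compares coefficients, which is exactly your term-by-term comparison (using that $\chi(\gamma)\overline{\chi(\gamma)}=1$ is equivalent to $\abs{\chi(\gamma)}=1$). Your careful re-indexing of $(f^*)_\chi$ is the only point of substance, and you handled it correctly.
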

\begin{proof}
Let $a\in K$; then $\big((a\gamma)_\chi\big){}^*=\overline{a\chi(\gamma)}\gamma^{-1}$ and
$\big((a\gamma)^*\big){}_\chi = \overline{a}\chi(\gamma)^{-1}\gamma^{-1}$.
\end{proof}

\begin{cor}\label{cor:commuting with f*}
Let $\chi\in\Hom(G,K^\times)$ with $\abs{\chi(\gamma)}=1$ for all $\gamma$. Then~$\langle f_\chi,g_\chi \rangle=\langle f,g\rangle$ for all $f$, $g$, and hence~$\dabs{f_\chi}=\dabs{f}$ for all $f$.
\end{cor}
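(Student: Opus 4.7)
The plan is to combine Lemma~\ref{lem:commuting with f*} with the fact that $f\mapsto f_\chi$ is a $K$-algebra automorphism and that the trace is invariant under the character action. First I would note that since $\abs{\chi(\gamma)}=1$ holds for \emph{every} $\gamma\in G$, Lemma~\ref{lem:commuting with f*} applied to arbitrary $g\in K[G]$ (after reducing to the case where $\supp(g)$ is finite, which is automatic) gives $(g_\chi)^*=(g^*)_\chi$. Thus for all $f,g\in K[G]$,
\[
f_\chi\cdot(g_\chi)^*\ =\ f_\chi\cdot(g^*)_\chi\ =\ (fg^*)_\chi,
\]
where the last equality uses that $h\mapsto h_\chi$ is a ring homomorphism.

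The key remaining observation is that $\operatorname{tr}$ is invariant under the character action: for any $h=\sum_\gamma h_\gamma\gamma\in K[G]$, we have $h_\chi=\sum_\gamma h_\gamma\chi(\gamma)\gamma$, so $(h_\chi)_1=h_1\chi(1)=h_1$, since $\chi(1)=1$. Hence $\operatorname{tr}(h_\chi)=\operatorname{tr}(h)$. Applying this to $h=fg^*$ gives
\[
\langle f_\chi,g_\chi\rangle\ =\ \operatorname{tr}\!\big(f_\chi\cdot(g_\chi)^*\big)\ =\ \operatorname{tr}\!\big((fg^*)_\chi\big)\ =\ \operatorname{tr}(fg^*)\ =\ \langle f,g\rangle.
\]

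For the second assertion, specialize to $g=f$ to obtain $\dabs{f_\chi}^2=\langle f_\chi,f_\chi\rangle=\langle f,f\rangle=\dabs{f}^2$; since $\dabs{\,\cdot\,}$ takes values in $H^{\geq}$, taking square roots yields $\dabs{f_\chi}=\dabs{f}$. There is no real obstacle here — the proof is essentially bookkeeping — but the one point that deserves a moment of care is the verification that Lemma~\ref{lem:commuting with f*} can be applied uniformly across all $g$ (the hypothesis $\abs{\chi(\gamma)}=1$ for all $\gamma\in G$, not merely for $\gamma\in\supp(g)$, is what makes this unconditional).
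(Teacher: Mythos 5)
Your proof is correct and follows essentially the same route as the paper: apply Lemma~\ref{lem:commuting with f*} to get $f_\chi(g_\chi)^*=(fg^*)_\chi$ and then use invariance of the trace. The only (harmless) difference is that you verify $\operatorname{tr}(h_\chi)=\operatorname{tr}(h)$ directly via $\chi(1)=1$, whereas the paper invokes the previously noted fact that $\operatorname{tr}\circ\sigma=\operatorname{tr}$ for every $K$-algebra automorphism $\sigma$ of $K[G]$.
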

\begin{proof}
Since $\operatorname{tr}\circ\sigma=\operatorname{tr}$ for every automorphism $\sigma$ of the $K$-algebra $K[G]$, 
$$\langle f_\chi,g_\chi\rangle\ =\ \operatorname{tr}\!\big(f_\chi(g_\chi)^*\big)\ =\ \operatorname{tr}\!\big( (fg^*)_\chi \big)\ =\
\operatorname{tr}(fg^*)\ =\ \langle f,g\rangle,$$
where we use Lemma~\ref{lem:commuting with f*} for the second equality.
\end{proof}

\subsection*{Valuation and norm}
Let $v\colon H^\times\to\Gamma$ be a convex valuation on the ordered field~$H$, 
extended uniquely to a valuation~$v\colon K^\times\to\Gamma$ on the field $K=H[\imag]$, so~$a\asymp\abs{a}$ for~$a\in K$. 
 (See the remarks before Corollary~\ref{cor:10.5.2 variant}.) Let  $v_{\g}\colon K[G]^{\neq}\to\Gamma$ 
be the gaussian extension of $v$, given by \eqref{eq:gaussian ext}.   

\begin{lemma}\label{absval}
$\dabs{f}_1\preceq 1\Leftrightarrow f\preceq_{\g} 1$, and  $\dabs{f}_1\prec 1\Leftrightarrow f\prec_{\g} 1$.
\end{lemma}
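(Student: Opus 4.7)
The plan is to reduce both equivalences to the single identity
\[
v\bigl(\dabs{f}_1\bigr)\ =\ v_{\g}(f),
\]
since then the claim follows immediately from the definitions: $\dabs{f}_1\preceq 1$ iff $v\bigl(\dabs{f}_1\bigr)\geq 0$ iff $v_{\g}(f)\geq 0$ iff $f\preceq_{\g} 1$, and likewise with $\prec$ replacing $\preceq$.

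To establish the identity, I would first recall that because $\mathcal O_H$ is convex in $H$ with $\mathcal O_K=\mathcal O_H+\mathcal O_H\imag$ (this was set up just before Corollary~\ref{cor:10.5.2 variant}), we have $a\asymp\abs{a}$ for every $a\in K$; in particular $v\abs{a}=va$. Thus it suffices to show that for any finite family $(b_\gamma)$ of elements in $H^{\geq}$ with at least one $b_\gamma\ne 0$,
\[
v\!\left(\sum_\gamma b_\gamma\right)\ =\ \min_\gamma v(b_\gamma),
\]
and to apply this with $b_\gamma=\abs{f_\gamma}$. The ultrametric inequality gives the $\geq$ direction, and equality would follow if the minimum is attained uniquely; the possible worry is cancellation among several terms of the same valuation.

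That worry disappears because the $b_\gamma$ are non-negative elements of $H$: if $M:=\max_\gamma b_\gamma$, then
\[
M\ \leq\ \sum_\gamma b_\gamma\ \leq\ n\, M,\qquad n:=\abs{\{\gamma : b_\gamma \ne 0\}},
\]
so $\sum_\gamma b_\gamma\asymp M$ (using that $vn=0$ since $H$ has residue field of characteristic $0$, the residue field being contained in $\res(K)$ of equicharacteristic zero). Hence $v(\sum_\gamma b_\gamma)=vM=\min_\gamma v(b_\gamma)$, as desired.

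The only delicate point is the remark about $vn=0$ for positive integers $n$, which is automatic in our equicharacteristic-zero setting, so no serious obstacle arises; the argument is essentially just a careful bookkeeping of the definitions of $\dabs{\,\cdot\,}_1$, $v_{\g}$, and the extension of $v$ to $K$.
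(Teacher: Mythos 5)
Your proof is correct, but it inverts the logical order relative to the paper. The paper proves the lemma directly via a short chain of equivalences: using convexity of $\mathcal O_H$, $\sum_\gamma\abs{f_\gamma}\preceq 1$ iff each $\abs{f_\gamma}\preceq 1$, which holds iff each $f_\gamma\preceq 1$ (since $\abs{a}\asymp a$), which is exactly $f\preceq_{\g}1$; the $\prec$ version is the same. What you prove is the sharper identity $v(\dabs{f}_1)=v_{\g}(f)$, i.e.\ $\dabs{f}_1\asymp_{\g} f$, which is the content of the paper's Corollary~\ref{cor:valuation and norm} -- and the paper then derives that corollary \emph{from} the lemma. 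So you've essentially proved the corollary first and read off the lemma as a consequence. The underlying mechanism is the same in both cases: a finite sum of non-negative elements of $H$ is controlled by its largest term, by convexity of $\mathcal O_H$. Your argument is a touch longer because you establish the valuation identity outright, and it requires the extra (but harmless) observation $vn=0$ in equicharacteristic zero, which the paper's proof of the lemma sidesteps by only checking membership in $\mathcal O_H$ rather than computing exact valuations.

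One small remark: the invocation of the ultrametric inequality is a red herring in your writeup -- the two-sided bound $M\leq\sum_\gamma b_\gamma\leq nM$ already forces $v(\sum_\gamma b_\gamma)=vM=\min_\gamma v(b_\gamma)$ directly, so no separate appeal to ultrametricity is needed, and the "worry about cancellation" never arises.
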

\begin{proof} Using that the valuation ring of $H$ is convex we have
$$\dabs{f}_1=\sum_\gamma\, \abs{f_\gamma}\preceq 1\ \Longleftrightarrow\  \text{$\abs{f_\gamma}\preceq 1$ for
all $\gamma$} \ \Longleftrightarrow\ \text{$f_\gamma\preceq 1$ for
all $\gamma$} \ \Longleftrightarrow\ f\preceq_{\g} 1.$$
 Likewise one shows:
$\dabs{f}_1\prec 1\Leftrightarrow f\prec_{\g} 1$.
\end{proof}

\begin{cor}\label{cor:valuation and norm}
$\dabs{f}\asymp\dabs{f}_1\asymp_{\g} f$.
\end{cor}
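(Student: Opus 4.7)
The corollary asserts two separate asymptotic equivalences, so my plan is to split the proof into two steps.

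First I would establish $\dabs{f} \asymp \dabs{f}_1$. The inequalities displayed just before Lemma~\ref{lem:submult}, namely $\dabs{f} \leq \dabs{f}_1 \leq \sqrt{n}\,\dabs{f}$ with $n := |\supp(f)|$, already bracket the two norms up to the factor $\sqrt{n}$. Since we are in characteristic zero, every positive integer is a unit in the valuation ring of $H$, so $v(n) = 0$ and hence $v(\sqrt{n}) = 0$, i.e.\ $\sqrt{n} \asymp 1$. Combining this with the two-sided bound immediately gives $\dabs{f} \asymp \dabs{f}_1$ whenever $f \neq 0$; the case $f = 0$ is trivial.

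Next I would prove $\dabs{f}_1 \asymp_{\g} f$, again assuming $f \neq 0$. The idea is to normalize $f$ so that Lemma~\ref{absval} applies. Pick any $a \in K^\times$ with $v(a) = v_{\g}(f)$, so that the rescaled element $g := a^{-1}f$ satisfies $v_{\g}(g) = 0$, i.e.\ $g \asymp_{\g} 1$. By Lemma~\ref{absval} (using both the $\preceq_{\g}$ and $\prec_{\g}$ versions), this translates to $\dabs{g}_1 \asymp 1$. On the other hand, pulling the scalar $a^{-1}$ out of the sum gives $\dabs{a^{-1}f}_1 = |a^{-1}|\cdot\dabs{f}_1$, and because $a \in K$ satisfies $|a|^2 = a\bar a$ with $v(a) = v(\bar a)$, we have $|a| \asymp a$. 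Therefore $a^{-1}\dabs{f}_1 \asymp 1$, which rearranges to $\dabs{f}_1 \asymp a \asymp_{\g} f$, as required.

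Neither step poses a genuine obstacle: the first is bookkeeping with already-proved inequalities plus the observation that positive integers are units, and the second is a standard ``reduce to valuation zero and invoke the equivalence'' argument using Lemma~\ref{absval}. The only point that deserves a moment's care is verifying $|a| \asymp a$ for $a \in K^\times$, which is where the convexity of the valuation ring of $H$ and the identity $|a|^2 = a\bar a$ enter.
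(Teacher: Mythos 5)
Your proof is correct and takes essentially the same route as the paper's, which also reduces to the case $f\asymp_{\g}1$ and then invokes Lemma~\ref{absval}; the only difference is that the paper chooses the normalizing element $a$ in $H^{>}$ (so that $\abs{a}=a$ and no rescaling subtlety arises), whereas you take $a\in K^\times$ and use $\abs{a}\asymp a$, a fact already recorded at the start of the subsection. One small quibble: the reason $n$ and $\sqrt{n}$ are units of $\mathcal O$ is the convexity of the valuation ring of $H$ (equivalently, residue characteristic zero), not characteristic zero of $H$ by itself.
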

\begin{proof} This is trivial for $f=0$, so assume $f\neq 0$. Take $a\in H^>$ with $a\asymp_{\g} f$, and replace~$f$ by $f/a$, to arrange $f\asymp_{\g} 1$. Then $\dabs{f}\asymp \dabs{f}_1\asymp_{\g} 1$ by Lemma~\ref{absval}. 
\end{proof}

\section{The Universal Exponential Extension}\label{sec:univ exp ext}

\noindent
As in [ADH, 5.9], given a differential ring $K$, a {\it differential $K$-algebra\/} is a
differential ring $R$ with a morphism $K \to R$ of differential rings. If $R$ is a differential ring extension of a differential ring $K$ we consider $R$ as a differential $K$-algebra via the in\-clu\-sion~${K \to R}$. \index{differential algebra}

\subsection*{Exponential extensions}
{\em In this subsection $R$ is a differential ring and $K$ is a differential subring of $R$}.
Call $a \in R$   {\bf exponential over~$K$} if $a' \in a K$. \index{element!exponential over}\index{exponential!element}
Note that if  
$a\in R$ is exponential over $K$, then~$K[a]$ is a differential subring of $R$.
If~$a\in R$ is exponential over $K$ and $\phi\in K^\times$, then $a$, as element of the differential ring extension~$R^\phi$ of~$K^\phi$, is exponential over~$K^\phi$.
Every   $c\in C_R$ is exponential over~$K$, and every
$u\in K^\times$ is exponential over $K$.  
If $a,b\in R$ are exponential over $K$, then so is $ab$,
and if $a\in R^\times$ is exponential over~$K$, then so is~$a^{-1}$. Hence the 
units of~$R$ that are exponential over~$K$ form a subgroup~$E$ of the  group~$R^\times$ of units of~$R$ with~$E\supseteq C_R^\times\cdot K^\times$;
if~${R=K[E]}$, then
we call~$R$   {\bf exponential over~$K$}.\index{extension!exponential}\index{exponential!extension}\index{differential algebra!exponential extension} An 
{\bf exponential extension of~$K$\/} is a differential ring extension of $K$ that is exponential over $K$. 
If $R=K[E]$ where $E$ is a set  of elements of $R^\times$ which are exponential over $K$, then $R$ is exponential over $K$.
If $R$ is an exponential extension of~$K$ and $\phi\in K^\times$, then $R^\phi$ is an exponential extension of~$K^\phi$. 
The following lemma is extracted from the proof of \cite[Theorem~1]{Rosenlicht75}:

\begin{lemma}[Rosenlicht]\label{lem:Rosenlicht lin indep} 
Suppose $K$ is a field and $R$ is an integral domain with
differential fraction field $F$. Let $I\neq R$ be a differential ideal of $R$, and
let~$u_1,\dots,u_n\in R^\times$ \textup{(}$n\geq 1$\textup{)} be exponential
over $K$ with $u_i\notin u_j C_F^\times K^\times$ for $i\neq j$. Then $\sum_i u_i\notin I$.
\end{lemma}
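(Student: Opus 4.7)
The plan is to argue by induction on $n$. The base case $n=1$ is immediate: since $u_1\in R^\times$ and $I\ne R$, we have $u_1\notin I$.

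For the inductive step, suppose $n\ge 2$ and, for a contradiction, that $\sum_{i=1}^n u_i\in I$. Since each $u_i$ is exponential over $K$, write $u_i^\dagger = f_i\in K$, so $u_i' = f_i u_i$. Because $I$ is a differential ideal, we have
\[
\sum_{i=1}^n f_i u_i \ =\ \Bigl(\sum_{i=1}^n u_i\Bigr)'\ \in\ I.
\]
Multiplying $\sum_i u_i\in I$ by $f_1\in K\subseteq R$ and subtracting yields
\[
\sum_{i=2}^n (f_1-f_i)u_i\ \in\ I.
\]
The term $i=1$ has dropped out, so we have reduced the length of the sum by one.

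Next I would check that the coefficients $c_i:=f_1-f_i$ for $i\ge 2$ are nonzero elements of $K^\times$. Indeed, if $c_i=0$ for some $i\ge 2$, then $(u_i/u_1)^\dagger = f_i-f_1 = 0$ in $F$, so $u_i/u_1\in C_F^\times$, giving $u_i\in u_1 C_F^\times\subseteq u_1 C_F^\times K^\times$, contradicting the hypothesis. Hence each $v_i:=c_i u_i$ ($i=2,\dots,n$) lies in $R^\times$ and, being a product of $c_i\in K^\times$ and an element exponential over $K$, is itself exponential over $K$.

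Finally, I would verify that the hypothesis on distinctness transfers. If $v_i\in v_j C_F^\times K^\times$ for some $i\ne j$ in $\{2,\dots,n\}$, then $c_i u_i\in c_j u_j C_F^\times K^\times$, and since $c_i,c_j\in K^\times$ this gives $u_i\in u_j C_F^\times K^\times$, contradicting the hypothesis. The inductive hypothesis then yields $\sum_{i=2}^n v_i\notin I$, contradicting what was derived above. I expect no serious obstacle beyond carefully tracking the cosets $u_j C_F^\times K^\times$ through the multiplication by $c_i\in K^\times$; the substantive content is the observation that $f_1=f_i$ already forces $u_i$ and $u_1$ to lie in the same $C_F^\times K^\times$-coset, which is exactly the hypothesis we were given to rule out.
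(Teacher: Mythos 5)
Your proof is correct and is essentially the paper's own argument: the paper also forms $\sum_i u_i' - u_1^\dagger\sum_i u_i = \sum_{i>1}(u_i/u_1)^\dagger u_i \in I$ and exploits that vanishing of a coefficient $(u_i/u_1)^\dagger$ would put $u_i$ in $u_1C_F^\times$, contrary to hypothesis. The only difference is organizational: the paper takes a minimal counterexample and concludes all coefficients must vanish, whereas you run a standard induction after noting the coefficients are all nonzero and that scaling by elements of $K^\times$ preserves the hypotheses — a reformulation, not a different proof.
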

\begin{proof}
Suppose $u_1,\dots, u_n$ is a counterexample with minimal $n\geq 1$.
Then $n\geq 2$ and $\sum_i u_i'\in I$, so 
$$\sum_i u_i'-u_1^\dagger\sum_i u_i\ =\  \sum_{i>1} (u_i/u_1)^\dagger u_i\in I.$$ 
Hence
~$(u_i/u_1)^\dagger=0$ and thus $u_i/u_1\in  C_F^\times$, for all $i>1$, a contradiction.
\end{proof}

\begin{cor}\label{roscor}
Suppose $K$ is a field and~$F=K(E)$ is a differential field extension of $K$ with $C_F=C$, where $E$ is a subgroup of $F^\times$ whose elements are exponential over $K$. Then $\{y\in F^\times:\, \text{$y$ is exponential over}~$K$\}=K^\times E$.
\end{cor}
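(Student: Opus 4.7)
First I will handle the easy inclusion $K^\times E\subseteq\{y\in F^\times:y\text{ exponential over }K\}$. Since $y\mapsto y^\dagger\colon F^\times\to F$ is a group morphism (cf.\ Section~\ref{sec:logder}), the preimage of $K$---precisely the set of exponential units of $F$---is a subgroup of $F^\times$. It contains $K^\times$ (as $k^\dagger\in K$ for $k\in K^\times$) and $E$ by assumption, so it contains their product $K^\times E$.

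For the reverse inclusion, let $y\in F^\times$ be exponential over $K$. Reading $C$ as $C_K$ in accordance with the convention declared at the start of the excerpt, the hypothesis ``$C_F=C$'' gives $C_F\subseteq K$ and hence $C_F^\times K^\times=K^\times$. Since $F=K(E)=\Frac(K[E])$, I would write $y=f/g$ with $f,g\in K[E]^{\ne}$ and expand $g=\sum_{i=1}^n b_iu_i$, $f=\sum_{j=1}^m c_jv_j$ with $b_i,c_j\in K^\times$ and $u_i,v_j\in E$. Using the identity $bu+b'(ku)=(b+b'k)u$ for $k\in K^\times$, I would combine any two summands whose $E$-factors lie in a common $K^\times$-coset; iterating strictly reduces the number of terms and therefore terminates, leaving reduced expressions for $g$ and $f$ whose $E$-factors lie in pairwise distinct $K^\times$-cosets and, since $g,f\ne 0$ in $F$, retaining at least one summand each.

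The identity $yg=f$ now reads
$$\sum_{i=1}^n b_i(yu_i)-\sum_{j=1}^m c_jv_j\ =\ 0 \qquad \text{in } F,$$
a $K^\times$-linear combination of $n+m$ units of $F$, each exponential over $K$ (as a product of exponential units). Let $R$ be the differential $K$-subring of $F$ generated by these units and their inverses: $R$ is a differential integral domain, and $(0)$ is a proper differential ideal of it. A trivial rescaling argument $u_i\mapsto\lambda_iu_i$ (for $\lambda_i\in K^\times$) extends Lemma~\ref{lem:Rosenlicht lin indep} to $K^\times$-linear combinations: if units $w_1,\ldots,w_N\in R^\times$ are exponential over $K$ and pairwise lie in distinct $C_F^\times K^\times$-cosets, then no nontrivial $K^\times$-linear combination of them vanishes. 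Applied to the displayed equation, this forces some two of the $n+m$ units $yu_i,v_j$ to lie in the same $K^\times$-coset.

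The reduction excludes coincidences $yu_i\sim_{K^\times}yu_{i'}$ (which would force $u_i\sim_{K^\times}u_{i'}$) and $v_j\sim_{K^\times}v_{j'}$, so the only remaining possibility is $yu_i=kv_j$ for some $i,j$ and some $k\in K^\times$, whence $y=kv_ju_i^{-1}\in K^\times E$, as required. The main obstacle is bookkeeping---setting up the reduction so that the $K^\times$-coefficient form of Rosenlicht's lemma applies directly and so that the residual coincidence lands between a $yu_i$ and a $v_j$---rather than any deep new input.
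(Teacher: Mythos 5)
Your proof is correct and follows essentially the same route as the paper: write $y=f/g$ with $f,g\in K[E]^{\ne}$, clear the denominator, and apply Lemma~\ref{lem:Rosenlicht lin indep} with $I=\{0\}$ (using $C_F=C\subseteq K$ to identify $C_F^\times K^\times$ with $K^\times$) to force some $yu_i$ into the same $K^\times$-coset as some $v_j$. The only difference is bookkeeping: the paper picks $K$-linearly independent $u_1,\dots,u_n\in E$ and absorbs the coefficients into the units, whereas you perform an explicit coset reduction and state a $K^\times$-coefficient version of the lemma via the same rescaling observation.
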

\begin{proof}
Let $y\in F^\times$ be exponential over~$K$. Take $K$-linearly independent $u_1,\dots,u_n$ in $E$
and $a_1,\dots,a_n,b_1,\dots,b_n\in K$ with $b_j\ne 0$ for some $j$, such that
$$y\ =\ \Big(\textstyle\sum_i a_i u_i\Big)\Big/\left(\textstyle\sum_j b_j u_j\right).$$
Then $\sum_j b_jyu_j-\sum_i a_iu_i=0$, and so Lemma~\ref{lem:Rosenlicht lin indep} applied with~$R=F$,~$I=\{0\}$ gives $b_jyu_j\in a_iu_iK^\times$
for some $i$, $j$ with~$a_i,b_j\neq 0$, and thus~$y\in K^\times E$.
\end{proof}

\begin{remark}
In the context of Corollary~\ref{roscor}, see~\cite[Theorem~1]{Rosenlicht75} for the structure of the group of elements
of $F^\times$ exponential over~$K$, for finitely generated $E$.
\end{remark}

\begin{lemma}\label{lem:exponential map}
Suppose~$C_R^\times$ is divisible and $E$ is a subgroup of $R^\times$ containing $C_R^\times$.
Then there is a group morphism $e\colon E^\dagger\to E$ such that $e(b)^\dagger=b$
for all $b\in E^\dagger$. 
\end{lemma}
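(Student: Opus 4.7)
The plan is to recognize the statement as an immediate consequence of the fact that divisible abelian groups are injective $\mathbb{Z}$-modules.

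First, I would check that the logarithmic derivative yields a short exact sequence of abelian groups. The map $\partial\colon E \to E^\dagger$ given by $a \mapsto a^\dagger = a'/a$ is a group homomorphism from the multiplicative group $E$ to the additive group $E^\dagger \subseteq R$ (this uses the product rule $(ab)^\dagger = a^\dagger + b^\dagger$ recalled at the beginning of Section~\ref{sec:logder}). By definition of $E^\dagger$ it is surjective. Its kernel is $\{a \in E : a' = 0\} = E \cap C_R$, and since $E$ consists of units and $C_R^\times \subseteq E$ by hypothesis, this kernel equals $C_R^\times$. Thus we obtain a short exact sequence of abelian groups
\[
0 \longrightarrow C_R^\times \longrightarrow E \stackrel{\partial}{\longrightarrow} E^\dagger \longrightarrow 0.
\]

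The second step is to invoke the standard fact that every divisible abelian group is an injective object in the category of abelian groups (equivalently, of $\mathbb{Z}$-modules). Since $C_R^\times$ is assumed divisible, the displayed short exact sequence splits: there is a group morphism $e\colon E^\dagger \to E$ such that $\partial \circ e = \mathrm{id}_{E^\dagger}$. Unwinding the definition of $\partial$, this says exactly that $e(b)^\dagger = b$ for every $b \in E^\dagger$, which is the desired conclusion.

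There is no real obstacle here; the only point worth care is the bookkeeping that the kernel of $\dagger$ on $E$ is genuinely $C_R^\times$ and not a proper subgroup thereof, which is why the hypothesis $C_R^\times \subseteq E$ is needed (otherwise one would only know the sequence is exact with $E \cap C_R^\times$ on the left, and the splitting would produce a section into a subgroup of $E$ rather than into $E$ itself). Once that is observed, the argument is a one-line application of injectivity of divisible groups.
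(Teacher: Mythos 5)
Your proof is correct and follows essentially the same route as the paper: both identify the short exact sequence $1\to C_R^\times\to E\to E^\dagger\to 0$ and split it using divisibility of $C_R^\times$. The additional bookkeeping you include (the kernel computation and the role of the hypothesis $C_R^\times\subseteq E$) is accurate and fills in details the paper leaves implicit.
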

\begin{proof}
We have a short exact sequence of commutative groups
$$1 \to C_R^\times \xrightarrow{\ \iota\ } E \xrightarrow{\ \ell\ } E^\dagger \to 0,$$
where $\iota$ is the natural inclusion and $\ell(a):=a^\dagger$ for~$a\in E$. Since $C_R^\times$ is divisible, this sequence splits, which is what we claimed. 
\end{proof}

\noindent
Let $E$, $e$, $R$ be as in the previous lemma. Then  $e$ is injective, and its image is a complement of $C_R^\times$ in $E$.
Moreover, given also a group morphism $\tilde{e}\colon E^\dagger\to E$ such that~$\tilde{e}(b)^\dagger=b$ for all $b\in E^\dagger$, 
the map $b\mapsto e(b) \tilde{e}(b)^{-1}$ is a  group morphism~$E^\dagger\to C_R^\times$.

\subsection*{The universal exponential extension} {\it In the rest of this section we assume $K$ is a differential field with algebraically closed constant field $C$ and divisible
group $K^\dagger$ of logarithmic derivatives.}\/ (These conditions are satisfied if $K$ is an algebraically closed differential field.) In this subsection we show that  up to isomorphism over $K$ there is a unique exponential  extension $R$ of $K$ satisfying~$C_R=C$ and~${(R^\times)^\dagger=K}$.  Lemma~\ref{lem:exponential map} then yields a group embedding~$e\colon K\to R^\times$ such that~$e(b)^\dagger=b$
for all~$b\in K$; this motivates the construction below.

 First we describe  a certain exponential extension of $K$:
Take a {\bf complement\/} $\Lambda$ of $K^\dagger$,\label{p:complement} that is, a $\Q$-linear subspace of $K$ such that $K=K^\dagger\oplus \Lambda$ (internal direct sum of $\Q$-linear subspaces of~$K$).  Below~$\lambda$ ranges over~$\Lambda$.\index{group of logarithmic derivatives!complement}
Let $\ex(\Lambda)$ be a multiplicatively written abelian group,
isomorphic to the additive subgroup $\Lambda$ of~$K$, 
with isomorphism $\lambda\mapsto \ex(\lambda)\colon \Lambda\to \ex(\Lambda)$. 
Put 
$$\Univ\ :=\ K\big[\!\ex(\Lambda)\big],$$
the group ring of $\ex(\Lambda)$ over $K$, an integral domain.   
As $K$-linear space, 
$$\Univ\ =\ \bigoplus_\lambda K\ex(\lambda)\qquad (\text{an internal direct sum of $K$-linear subspaces}).$$
For every $f\in \Univ$ we have a unique family $(f_\lambda)$ in $K$ such that
$$f\  =\ \sum_{\lambda} f_{\lambda}  \ex(\lambda),$$
with $f_\lambda=0$ for all but finitely many $\lambda$; we call $(f_\lambda)$ the {\bf spectral decomposition} of $f$ (with respect to $\Lambda$).\index{extension!universal exponential}\index{universal exponential extension}\index{universal exponential extension!spectral decomposition}\index{spectral decomposition!of an element}\index{element!spectral decomposition}\index{decomposition!spectral}
We turn $\Univ$ into a differential ring extension of $K$
by 
$$\ex(\lambda)'\ =\ \lambda\ex(\lambda)\qquad \text{for all $\lambda$.}$$
(Think of $\ex(\lambda)$ as $\exp(\int \lambda)$.) Thus for $f\in \Univ$ with spectral decomposition $(f_\lambda)$,
$$f'\ =\ \sum_{\lambda} \big(f_{\lambda}'+\lambda f_{\lambda}\big) \ex(\lambda),$$
so $f'$ has spectral decomposition $(f_\lambda'+\lambda f_\lambda)$.  
Note that $\Univ$ is exponential over $K$ by Lem\-ma~\ref{lem:only trivial units}:  $\Univ^\times=K^\times  \ex(\Lambda)$, so  $(\Univ^\times)^\dagger=K^\dagger+\Lambda=K$. 

\begin{exampleNumbered}\label{ex:Q}
Let $K=C(\!( t^{\Q} )\!)$ be as in Example~\ref{ex:Kdagger}, so $K^\dagger=(\Q \oplus \smallo) t$. Take a $\Q$-linear subspace $\Lambda_{\operatorname{c}}$ of~$C$ with $C=\Q\oplus \Lambda_{\operatorname{c}}$
(internal direct sum of $\Q$-linear subspaces of $C$), and let
$$K_\succ\ :=\ \big\{f\in K:\ \supp(f) \succ 1 \big\},$$ 
a $C$-linear subspace of $K$.
Then $\Lambda:= (K_\succ \oplus \Lambda_{\operatorname{c}})t$ is a complement to $K^\dagger$,  and
hence~$t^{-1}\Lambda= K_\succ \oplus \Lambda_{\operatorname{c}}$ is a complement to $(K^t)^\dagger$ in $K^t$.
Moreover, if $L:=\operatorname{P}(C)\subseteq K$ is the differential field of Puiseux series over $C$
and $L_\succ:=K_\succ\cap L$, then $L_\succ  \oplus \Lambda_{\operatorname{c}}$ is a complement to $(L^t)^\dagger$. 
\end{exampleNumbered}

\noindent
A subgroup $\Lambda_0$ of $\Lambda$ yields a differential subring  $K\big[\!\ex(\Lambda_0)\big]$ of~$\Univ$ that is exponential over $K$ as well.  These differential subrings have a useful property.
Recall from~[ADH, 4.6] that a differential ring is said to be {\it simple\/} if $\{0\}$ is its only proper differential ideal.

\begin{lemma}\label{lem:U simple} 
Let $\Lambda_0$ be a subgroup of $\Lambda$. Then the differential subring
$K\big[\!\ex(\Lambda_0)\big]$ of $\Univ$ is simple. In particular, the differential ring $\Univ$ is simple. 
\end{lemma}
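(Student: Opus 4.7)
My plan is to prove simplicity of $R := K\big[\!\ex(\Lambda_0)\big]$ by a descent argument on the size of the support, using only the decomposition $R = \bigoplus_\lambda K\ex(\lambda)$ (sum over $\lambda \in \Lambda_0$) and the key fact that $\Lambda \cap K^\dagger = \{0\}$. The ``in particular'' clause then follows by taking $\Lambda_0 = \Lambda$.

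First I would let $I$ be a nonzero differential ideal of $R$ and choose $0 \neq f \in I$ with $n := |\!\supp(f)|$ minimal. Write
\[
f\ =\ \sum_{i=1}^n c_i\ex(\lambda_i),\qquad c_i \in K^\times,\ \lambda_i \in \Lambda_0 \text{ distinct.}
\]
Since $c_1^{-1}\ex(-\lambda_1) \in R^\times$ (here I use that $\Lambda_0$ is a \emph{subgroup}, so $\ex(-\lambda_1) \in R$), the element $c_1^{-1}\ex(-\lambda_1)f$ lies in $I$, has the same support size, and equals $1 + \sum_{i=2}^n d_i\ex(\mu_i)$ for suitable $d_i \in K^\times$ and distinct nonzero $\mu_i := \lambda_i - \lambda_1 \in \Lambda_0$. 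Renaming, I may assume
\[
f\ =\ 1 + \sum_{i=2}^n d_i\ex(\mu_i).
\]

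Next I would differentiate: using $\ex(\mu_i)' = \mu_i\ex(\mu_i)$ and computing in $R$,
\[
f'\ =\ \sum_{i=2}^n (d_i' + \mu_i d_i)\ex(\mu_i)\ \in\ I,
\]
and $\supp(f') \subseteq \{\mu_2,\dots,\mu_n\}$ has at most $n-1$ elements. The main point is now to show $f' \neq 0$ whenever $n \geq 2$: if $f' = 0$, then $d_i^\dagger = -\mu_i$ for some $i \geq 2$, so $\mu_i \in K^\dagger$; but $\mu_i \in \Lambda$ and $\Lambda \cap K^\dagger = \{0\}$ (since $\Lambda$ is a complement of $K^\dagger$), forcing $\mu_i = 0$, contradicting $\mu_i \neq 0$. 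Hence $f' \in I$ is a nonzero element of strictly smaller support than $f$, contradicting the minimality of $n$. Therefore $n = 1$, which after the normalization above means $1 \in I$, i.e., $I = R$.

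The step I expect to require care is the normalization via multiplication by $c_1^{-1}\ex(-\lambda_1)$: it is essential here that $\Lambda_0$ be closed under negation (so that $\ex(-\lambda_1) \in R$), which is exactly where the hypothesis that $\Lambda_0$ is a \emph{subgroup} of $\Lambda$ is used. Apart from that, the argument is a standard ``minimal support'' descent, and the decisive algebraic input is the defining property $K = K^\dagger \oplus \Lambda$ of the complement $\Lambda$. No use is made of $C$ being algebraically closed, but this is already baked into the construction of $\Univ$.
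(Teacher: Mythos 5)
Your proof is correct. It differs from the paper's in organization: the paper first proves a general lemma of Rosenlicht (Lemma~\ref{lem:Rosenlicht lin indep}) about arbitrary exponential extensions — namely that if $u_1,\dots,u_n \in R^\times$ are exponential over $K$ and pairwise not in each other's $C_F^\times K^\times$-coset, then $\sum u_i$ lies in no proper differential ideal — and then applies its contrapositive, deducing that any nonzero $f = \sum f_i\ex(\lambda_i) \in I$ forces $\lambda_i - \lambda_j \in K^\dagger$ for some $i \neq j$, hence $\lambda_i = \lambda_j$, a contradiction. Your proof instead runs the minimal-support descent directly in the group ring, normalizing one term to $1$ by multiplying by a unit and then differentiating. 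This is essentially a specialization of the proof of Lemma~\ref{lem:Rosenlicht lin indep} itself: the paper's manipulation $\sum u_i' - u_1^\dagger\sum u_i = \sum_{i>1}(u_i/u_1)^\dagger u_i$ is precisely your ``normalize by $u_1^{-1}$, then differentiate, then clear denominators'' written multiplicatively. What the paper's route buys is a reusable general statement (used again, e.g., in Corollary~\ref{roscor}); what yours buys is a shorter, self-contained argument that avoids invoking constants of the fraction field $C_F$ and makes the use of $\Lambda$ being a subgroup and a complement of $K^\dagger$ completely explicit. Your side remark that the argument never uses algebraic closedness of $C$ is accurate — only divisibility of $K^\dagger$ (to form $\Lambda$) and $\Lambda \cap K^\dagger = \{0\}$ are needed here.
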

\begin{proof} 
Let $I\ne R$ be a differential ideal of
$R:=K\big[\!\ex(\Lambda_0)\big]$. Let
$f_1,\dots, f_n\in K^\times$ and let $\lambda_1,\dots, \lambda_n\in \Lambda_0$ be distinct
such that $
f=\sum_{i=1}^n  f_i\ex(\lambda_i)\in I$.
If $n\geq 1$, then  Lemma~\ref{lem:Rosenlicht lin indep} yields $i\neq j$
with $\ex(\lambda_i)/\ex(\lambda_j)=cg$ for some constant $c$ in the differential  fraction field of $\Univ$ and
some $g\in K^\times$, so by taking logarithmic derivatives,  $\lambda_i-\lambda_j\in K^\dagger$ and thus $\lambda_i=\lambda_j$,
a contradiction.  Thus $f=0$.
\end{proof}

\begin{cor}\label{cor:U simple}
Any morphism
$K\big[\!\ex(\Lambda_0)\big]\to R$ of differential $K$-algebras, with~$\Lambda_0$   a subgroup of $\Lambda$ and 
$R$ a differential ring extension of $K$, is injective.
\end{cor}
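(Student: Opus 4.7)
The plan is straightforward: invoke Lemma~\ref{lem:U simple}, which asserts that the differential ring $K[\ex(\Lambda_0)]$ is simple, and combine this with the standard observation that the kernel of a morphism of differential rings is a differential ideal.

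More precisely, let $\varphi \colon K[\ex(\Lambda_0)] \to R$ be a morphism of differential $K$-algebras. First I would note that $\ker\varphi$ is a differential ideal of $K[\ex(\Lambda_0)]$: it is an ideal since $\varphi$ is a ring morphism, and it is closed under the derivation since $\varphi$ commutes with $\der$. Then by Lemma~\ref{lem:U simple}, we have either $\ker\varphi = \{0\}$ or $\ker\varphi = K[\ex(\Lambda_0)]$. The second case is ruled out because $\varphi$ is a morphism of $K$-algebras, so $\varphi(1) = 1 \neq 0$ in $R$ (using that $R$ is a differential field extension, or more generally that the structure map $K \to R$ is injective so $1_R \neq 0_R$). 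Hence $\ker\varphi = \{0\}$, and $\varphi$ is injective.

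There is no real obstacle here: the whole content of the corollary has been packaged into Lemma~\ref{lem:U simple}, and the corollary is simply the routine translation of ``simple differential ring'' into the statement ``every nonzero differential-algebra morphism out is injective.'' The only small point worth flagging explicitly in the write-up is that $R$ being a differential ring extension of $K$ guarantees $1_R \neq 0_R$, so that the trivial kernel case really is excluded.
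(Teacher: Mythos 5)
Your proof is correct and is exactly the intended argument: the paper states this as a corollary of Lemma~\ref{lem:U simple} without further comment precisely because the deduction is the routine one you give (kernel is a differential ideal, simplicity forces it to be $\{0\}$ or the whole ring, and $\varphi(1)=1\neq 0$ rules out the latter). Your parenthetical correctly notes that $R$ need only be a differential \emph{ring} extension of $K$, not a field; the point is simply that $1_R\neq 0_R$.
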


\noindent
The differential ring $\Univ$ is the directed union of its differential subrings of the form~$\Univ_0=K\big[\!\ex(\Lambda_0)\big]$ where $\Lambda_0$ is a finitely generated subgroup of $\Lambda$. These $\Univ_0$ are simple by Lemma~\ref{lem:U simple} and finitely generated as a $K$-algebra, hence their differential fraction fields have constant field $C$ by [ADH, 4.6.12]. Thus the  differential fraction field of $\Univ$  has constant field $C$. 

\begin{lemma}\label{lem:U minimal}
Suppose $R$ is an exponential extension of $K$ and 
$R_0$ is a differential subring of $R$ with $C_R^\times\subseteq C_{R_0}$ and $K\subseteq (R_0^\times)^\dagger$.
Then $R_0=R$.
\end{lemma}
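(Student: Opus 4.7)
My plan is to show that $R_0$ contains $K$ and every unit of $R$ that is exponential over $K$; since $R$ is generated by these as a $K$-algebra, this will force $R_0 = R$.

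First, I would verify that $K \subseteq R_0$. Given $a \in K$, the hypothesis $K \subseteq (R_0^\times)^\dagger$ furnishes a unit $u \in R_0^\times$ with $u^\dagger = a$, i.e., $u' = au$. Then $u \in R_0$ and $u^{-1} \in R_0$ (since $u$ is a unit of $R_0$), and $u' \in R_0$ (since $R_0$ is a differential subring of $R$), so $a = u' \cdot u^{-1} \in R_0$.

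Next, the key step: let $E := \{u \in R^\times : u^\dagger \in K\}$, which is the subgroup of units of $R$ exponential over $K$. By definition of exponential extension, $R = K[E]$. Let $u \in E$. Since $u^\dagger \in K \subseteq (R_0^\times)^\dagger$, there exists $v \in R_0^\times$ with $v^\dagger = u^\dagger$. Both $u, v$ are units of $R$, so $c := uv^{-1} \in R^\times$ and $c^\dagger = u^\dagger - v^\dagger = 0$, whence $c \in C_R^\times$. By hypothesis $C_R^\times \subseteq C_{R_0} \subseteq R_0$, so $c \in R_0$; combined with $v \in R_0$, this yields $u = cv \in R_0$.

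Thus $E \subseteq R_0$, and together with $K \subseteq R_0$ this gives $R = K[E] \subseteq R_0$, so $R_0 = R$. The argument is short and presents no real obstacle; the only subtle point is recognizing that the hypothesis $K \subseteq (R_0^\times)^\dagger$ already implies $K \subseteq R_0$, so that $R_0$ is truly a differential $K$-subalgebra of $R$ and the constant-trick $u = cv$ lands inside $R_0$.
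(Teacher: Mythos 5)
Your proof is correct and follows essentially the same route as the paper: the key step in both is that for any unit $u$ exponential over $K$, the hypothesis $K\subseteq (R_0^\times)^\dagger$ produces $v\in R_0^\times$ with $v^\dagger=u^\dagger$, whence $u/v\in C_R^\times\subseteq C_{R_0}\subseteq R_0$ and so $u\in R_0$. Your preliminary verification that $K\subseteq R_0$ is harmless but actually superfluous, since every element of $K^\times$ is itself exponential over $K$ (so lies in $E$), and thus $K\subseteq R_0$ already follows from $E\subseteq R_0$.
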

\begin{proof}
Let $E$ be the group of units of $R$ that are exponential over $K$; so $R=K[E]$.
Given $u\in E$ we have $u^\dagger\in K\subseteq (R_0^\times)^\dagger$, hence we have $u_0\in R_0^\times$ with $u^\dagger=u_0^\dagger$, so~$u=cu_0$  with $c\in C_R^\times\subseteq C_{R_0}$. Thus $E\subseteq R_0$ and so $R_0=R$.
\end{proof}

\begin{cor}
Every endomorphism of the differential $K$-algebra $\Univ$ is an au\-to\-mor\-phism.
\end{cor}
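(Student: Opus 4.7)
The plan is to use Corollary~\ref{cor:U simple} for injectivity and Lemma~\ref{lem:U minimal} for surjectivity. Let $\sigma\colon\Univ\to\Univ$ be a differential $K$-algebra endomorphism. Injectivity is immediate from Corollary~\ref{cor:U simple} applied with $\Lambda_0:=\Lambda$ and $R:=\Univ$, since $\sigma$ is a morphism of differential $K$-algebras.

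For surjectivity, set $R_0:=\sigma(\Univ)$; this is a differential subring of $\Univ$ containing $K$. The first step is to compute $\sigma(\ex(\lambda))$ for $\lambda\in\Lambda$. Since $\ex(\lambda)\ex(-\lambda)=1$, the element $\sigma(\ex(\lambda))$ is a unit of $\Univ$, so by Lemma~\ref{lem:only trivial units} we have $\sigma(\ex(\lambda))=c_\lambda\ex(\mu_\lambda)$ for some $c_\lambda\in K^\times$ and $\mu_\lambda\in\Lambda$. Taking logarithmic derivatives and using that $\sigma$ fixes $\lambda\in K$ gives
\[
\lambda\ =\ \sigma(\lambda)\ =\ \sigma\!\big(\ex(\lambda)^\dagger\big)\ =\ \sigma(\ex(\lambda))^\dagger\ =\ c_\lambda^\dagger+\mu_\lambda,
\]
so $\lambda-\mu_\lambda=c_\lambda^\dagger\in\Lambda\cap K^\dagger=\{0\}$. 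Hence $\mu_\lambda=\lambda$ and $c_\lambda\in C^\times$, that is, $\sigma(\ex(\lambda))=c_\lambda\ex(\lambda)$ with $c_\lambda\in C^\times$.

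Next, I verify the two hypotheses of Lemma~\ref{lem:U minimal} for $R:=\Univ$ and $R_0$. Since the differential fraction field of $\Univ$ has constant field $C$ (noted just after Corollary~\ref{cor:U simple}), we have $C_\Univ^\times=C^\times\subseteq K\subseteq R_0$, and these elements are constants of $R_0$. For the second hypothesis, observe that $\sigma(\ex(\lambda))$ lies in $R_0$ with inverse $\sigma(\ex(-\lambda))\in R_0$, hence $\lambda=\sigma(\ex(\lambda))^\dagger\in(R_0^\times)^\dagger$ for every $\lambda\in\Lambda$. Moreover $K^\dagger=(K^\times)^\dagger\subseteq(R_0^\times)^\dagger$ because $K\subseteq R_0$. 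Using the decomposition $K=K^\dagger\oplus\Lambda$, this yields $K\subseteq(R_0^\times)^\dagger$. Lemma~\ref{lem:U minimal} now forces $R_0=\Univ$, so $\sigma$ is surjective.

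There is no real obstacle: once one observes that $\sigma$ must send each $\ex(\lambda)$ to a constant multiple of itself, the result drops out of the two lemmas already in place. The only subtle point is ensuring that $K\subseteq(R_0^\times)^\dagger$, and this is precisely where the defining property of $\Lambda$ as a complement to $K^\dagger$ enters.
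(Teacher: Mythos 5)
Your proposal is correct and follows exactly the paper's route: injectivity via Corollary~\ref{cor:U simple} and surjectivity via Lemma~\ref{lem:U minimal} applied to $R_0=\sigma(\Univ)$; you merely spell out the (straightforward) verification of that lemma's hypotheses, which the paper leaves implicit.
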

\begin{proof}
Injectivity holds by Corollary~\ref{cor:U simple},  and surjectivity by Lemma~\ref{lem:U minimal}.
\end{proof}

\noindent
Every  exponential  extension of $K$ with constant field $C$ embeds into $\Univ$, and hence is an integral domain. More precisely:   

\begin{lemma}\label{lem:embed into U}  
Let $R$ be an exponential extension of $K$ such that~$C_R^\times$ is divisible, and set  $\Lambda_0:=\Lambda\cap (R^\times)^\dagger$, a subgroup of $\Lambda$.  
Then there exists a mor\-phism $K\big[\!\ex(\Lambda_0)\big]\to R$ of differential $K$-algebras. Any such morphism is injective, and if~$C_R=C$, then any such
morphism is an isomorphism.
\end{lemma}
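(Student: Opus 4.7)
\smallskip

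\noindent\textbf{Plan for Lemma~\ref{lem:embed into U}.} The three assertions will be handled in sequence. For existence, my plan is to apply Lemma~\ref{lem:exponential map} to the group $E$ of units of $R$ exponential over $K$. Since $C_R^\times \subseteq E$ and $C_R^\times$ is divisible by hypothesis, that lemma supplies a group morphism $e\colon E^\dagger \to E$ with $e(b)^\dagger = b$ for all $b\in E^\dagger$. The crucial observation is that $\Lambda_0 \subseteq E^\dagger$: if $\lambda\in\Lambda\cap (R^\times)^\dagger$ and $\lambda=u^\dagger$ with $u\in R^\times$, then $u'=\lambda u\in Ku$ shows $u$ is exponential over $K$, so $u\in E$ and $\lambda\in E^\dagger$. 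Restricting $e$ to $\Lambda_0$ and composing with the inverse of $\lambda\mapsto \ex(\lambda)$ yields a group morphism $\ex(\Lambda_0)\to R^\times$, which extends by the universal property of a group ring to a $K$-algebra morphism $\phi\colon K[\ex(\Lambda_0)]\to R$. Since on generators $\phi(\ex(\lambda))'=\lambda\phi(\ex(\lambda))$ matches $\ex(\lambda)'=\lambda\ex(\lambda)$, an application of the Leibniz rule confirms that $\phi$ is a morphism of differential $K$-algebras.

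Injectivity of any differential $K$-algebra morphism $K[\ex(\Lambda_0)]\to R$ is immediate from Corollary~\ref{cor:U simple}, because $\Lambda_0$ is a subgroup of $\Lambda$.

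For the isomorphism claim, assume $C_R=C$ and let $\phi$ be any such morphism with image $R_0\subseteq R$. Since $R=K[E]$, it suffices to show $E\subseteq R_0$, which I will do in the spirit of the proof of Lemma~\ref{lem:U minimal}. Given $u\in E$, we have $u^\dagger\in K$, and the decomposition $K=K^\dagger\oplus \Lambda$ allows writing $u^\dagger = g^\dagger+\lambda$ with $g\in K^\times$ and $\lambda\in\Lambda$. Then $(u/g)^\dagger=\lambda$ witnesses $\lambda\in\Lambda\cap(R^\times)^\dagger=\Lambda_0$, so $\phi(\ex(\lambda))\in R_0^\times$ has logarithmic derivative $\lambda$, and hence $u/(g\phi(\ex(\lambda)))\in C_R^\times=C^\times\subseteq K^\times\subseteq R_0$. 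Therefore $u\in R_0$, as required.

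The only slight subtlety I anticipate is the identification $\Lambda_0\subseteq E^\dagger$ that allows restricting the splitting $e$, together with the dual observation used in the surjectivity argument---namely that the $\Lambda$-component of $u^\dagger$ for any $u\in E$ automatically lies in $\Lambda_0$. Once these two (essentially identical) remarks are in place, the rest of the argument is formal bookkeeping on top of Lemmas~\ref{lem:exponential map} and~\ref{lem:U minimal} and Corollary~\ref{cor:U simple}.
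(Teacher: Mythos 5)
Your proposal is correct and follows essentially the same route as the paper: invoke Lemma~\ref{lem:exponential map} on the group $E$ of exponential units to get the section $e$, restrict it to $\Lambda_0$ (using the observation that $\Lambda_0\subseteq E^\dagger$, which the paper packages as $E^\dagger=K^\dagger+\Lambda_0$), extend to a $K$-algebra morphism by the universal property, get injectivity from Corollary~\ref{cor:U simple}, and for surjectivity when $C_R=C$ show every $u\in E$ factors through $K^\times$ and $e(\Lambda_0)$. The only cosmetic difference is that the paper records the decomposition $E = C_R^\times K^\times e_E(\Lambda_0)$ once and reads surjectivity off it, whereas you re-derive that factorization inside the surjectivity argument; the mathematics is the same.
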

\begin{proof}
Let $E$ be as in the proof of Lemma~\ref{lem:U minimal}, and let $e_E\colon E^\dagger\to E$ be the map~$e$ from Lemma~\ref{lem:exponential map}.  Since $E^\dagger=K^\dagger+\Lambda_0$  
we have 
\begin{equation}\label{eq:embed into U} 
E\ =\ C_R^\times\,  e_E(E^\dagger)\ =\ C_R^\times \, e_E(K^\dagger)\, e_E(\Lambda_0)\ =\ C_R^\times\, K^\times \, e_E(\Lambda_0).
\end{equation}
The group morphism $\ex(\lambda_0)\mapsto e_E(\lambda_0)\colon \ex(\Lambda_0)\to E$ ($\lambda_0\in \Lambda_0$) extends uniquely to a $K$-algebra morphism
$\iota\colon K\big[\!\ex(\Lambda_0)\big]\to R=K[E]$. One verifies easily that~$\iota$ is a  differential ring morphism. The injectivity claim follows from Corollary~\ref{cor:U simple}. 
If~$C_R=C$, then $E=K^\times e_E(\Lambda_0)$ by \eqref{eq:embed into U}, whence surjectivity. 
\end{proof}

\noindent
Recall that $\Univ$ is an exponential extension of $K$ with $C_{\Univ}=C$
and $(\Univ^\times)^\dagger = K$. By Lemma~\ref{lem:embed into U}, this property characterizes $\Univ$ up to isomorphism:

\begin{cor}\label{corcharexp} If $U$ is an exponential extension of $K$ such that $C_U=C$
and~$K\subseteq (U^\times)^\dagger$, then $U$ is isomorphic to $\Univ$
as a differential $K$-algebra.
\end{cor}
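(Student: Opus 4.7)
The plan is to reduce Corollary~\ref{corcharexp} to Lemma~\ref{lem:embed into U}, which already does almost all of the work. So I would apply that lemma to the given exponential extension $U$ in the role of $R$, with the same complement $\Lambda$ of $K^\dagger$ used to build $\Univ$.

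First I would check the hypotheses. The constant field $C_U = C$ is algebraically closed by the standing assumption on $K$, so in particular $C_U^\times = C^\times$ is divisible, which is all Lemma~\ref{lem:embed into U} requires of $C_U^\times$. Next I would compute the relevant group $\Lambda_0 = \Lambda \cap (U^\times)^\dagger$: by hypothesis $K \subseteq (U^\times)^\dagger$, and since $\Lambda \subseteq K$ we obtain $\Lambda_0 = \Lambda$.

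With these observations in hand, Lemma~\ref{lem:embed into U} applied to $R = U$ and $\Lambda_0 = \Lambda$ produces a morphism
\[
\iota\ \colon\ K\big[\!\ex(\Lambda)\big]\ =\ \Univ\ \longrightarrow\ U
\]
of differential $K$-algebras, which is automatically injective (by Corollary~\ref{cor:U simple}, or by the injectivity clause of Lemma~\ref{lem:embed into U}), and which is an isomorphism because $C_U = C$ (this is precisely the final clause of Lemma~\ref{lem:embed into U}). This gives the desired isomorphism of differential $K$-algebras between $U$ and $\Univ$.

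There is no real obstacle here; the conceptual work has already been done in establishing Lemma~\ref{lem:embed into U} (and in the preceding construction of $\Univ$ and the verifications that $C_{\Univ} = C$ and $(\Univ^\times)^\dagger = K$). The only thing worth emphasizing in the write-up is the slightly subtle point that although the isomorphism class of $\Univ$ a priori depends on the choice of complement $\Lambda$, the corollary shows it does not; any two such complements yield isomorphic universal exponential extensions over $K$, since each satisfies the characterizing property and hence is isomorphic to any other $U$ with the same property.
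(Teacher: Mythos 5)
Your proposal is correct and is essentially the paper's own argument: the corollary is stated there as an immediate consequence of Lemma~\ref{lem:embed into U}, applied exactly as you do, with $R=U$, noting $\Lambda_0=\Lambda\cap(U^\times)^\dagger=\Lambda$ because $\Lambda\subseteq K\subseteq(U^\times)^\dagger$, and invoking the final clause of that lemma since $C_U=C$.
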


\noindent
Now $\Univ$ is also an exponential extension of $K$ with $C_{\Univ}=C$
and with the property that every exponential extension $R$ of $K$ with $C_R=C$ embeds into $\Univ$ as a differential $K$-algebra. This property
determines $\Univ$ up to isomorphism as well:

\begin{cor}\label{cor:UnivK} Suppose $U$ is an exponential extension of $K$ with $C_U=C$ such that every exponential extension $R$ of $K$ with $C_R=C$ embeds into $U$ as a differential $K$-algebra. Then $U$ is isomorphic to $\Univ$
as a differential $K$-algebra.  
\end{cor}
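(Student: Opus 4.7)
The plan is to reduce Corollary~\ref{cor:UnivK} to Corollary~\ref{corcharexp}, which gives a characterization of $\Univ$ up to isomorphism via the two conditions $C_U=C$ and $K\subseteq (U^\times)^\dagger$. Since $U$ is already assumed to be an exponential extension of $K$ with $C_U=C$, the only thing left to verify is $K\subseteq (U^\times)^\dagger$.

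First I would apply the hypothesized universal property to the differential $K$-algebra $\Univ=K[\ex(\Lambda)]$ itself, which is an exponential extension of $K$ with constant field $C$ (as recorded in the paragraph preceding Lemma~\ref{lem:U minimal}). This yields a morphism $\iota\colon \Univ\to U$ of differential $K$-algebras. Next I observe that $\iota$ sends $\Univ^\times$ into $U^\times$: for any $u\in\Univ^\times$ one has $\iota(u)\iota(u^{-1})=\iota(1)=1$. Consequently $\iota$ induces a group morphism $\Univ^\times\to U^\times$ that commutes with the logarithmic derivative, because $\iota$ is a morphism of differential rings and $\iota|_K=\id_K$. Therefore
\[
K\ =\ (\Univ^\times)^\dagger\ =\ \iota(\Univ^\times)^\dagger\ \subseteq\ (U^\times)^\dagger.
\]

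With $K\subseteq (U^\times)^\dagger$ established, Corollary~\ref{corcharexp} applies directly to $U$ and produces an isomorphism $U\cong\Univ$ of differential $K$-algebras, completing the proof.

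There is essentially no obstacle here; the only mild point is the verification that the embedding $\iota\colon \Univ\to U$ (whose existence is given by hypothesis) automatically carries units to units and intertwines the logarithmic derivative. Everything else is immediate from the identity $(\Univ^\times)^\dagger=K$ and the previously proven Corollary~\ref{corcharexp}.
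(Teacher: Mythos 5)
Your proof is correct and is essentially the paper's own argument: the paper's one-line proof is exactly that an embedding $\Univ\to U$ (which exists since $\Univ$ is an exponential extension of $K$ with constants $C$) forces $K=(\Univ^\times)^\dagger\subseteq (U^\times)^\dagger$, after which Corollary~\ref{corcharexp} applies. Your elaboration of why the embedding carries units to units and intertwines the logarithmic derivative is just the routine detail left implicit there.
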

\begin{proof} Any embedding $\Univ\to U$ of
differential $K$-algebras gives $K\subseteq (U^\times)^\dagger$.  
\end{proof}

\noindent
The results above show to what extent $\Univ$ is independent of the choice of $\Lambda$. We call~$\Univ$ the {\bf universal exponential extension of $K$}.  If we need to indicate the dependence of $\Univ$ on $K$ we denote it by $\Univ_K$.
By [ADH, 5.1.40] every~${y\in\Univ=K\{\ex(\Lambda)\}}$ satisfies a linear differential equation $A(y)=0$ where $A\in K[\der]^{\neq}$; in the next section we isolate conditions on $K$ which ensure that every $A\in K[\der]^{\neq}$ has a zero~$y\in\Univ^\times=K^\times\ex(\Lambda)$.\index{universal exponential extension}\index{differential algebra!universal exponential extension}\label{p:UK}

\medskip
\noindent
Corollary~\ref{corcharexp} gives for $\phi\in K^\times$ an isomorphism
$\Univ_{K^\phi}\cong(\Univ_K)^\phi$ of differential $K^\phi$-algebras.  Next we investigate how $\Univ_K$ behaves when passing from $K$ to a differential field extension. Therefore,
{\it in the rest of this subsection $L$ is a differential field extension of $K$ with algebraically closed constant field $C_L$, and $L^\dagger$ is divisible.}\/ The next lemma relates the universal exponential extension $\Univ_L$ of $L$ to $\Univ_K$:

\begin{lemma}\label{lem:Univ under d-field ext}
The inclusion $K\to L$ extends to an embedding $\iota\colon\Univ_K\to\Univ_L$ of differential rings. The image of any such embedding~$\iota$ is contained in $K[E]$
where~$E:=\{u\in \Univ_L^\times:u^\dagger\in K\}$, and if $C_L=C$, then $\iota(\Univ_K)=K[E]$. 
\end{lemma}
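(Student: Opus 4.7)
The plan is to build $\iota$ via the universal property of the group ring $\Univ_K = K[\ex(\Lambda)]$, where $\Lambda$ is the complement of $K^\dagger$ used to define $\Univ_K$. Since $C_L$ is algebraically closed, $C_L^\times$ is divisible, and since $\Univ_L$ is exponential over $L$ we have $(\Univ_L^\times)^\dagger = L^\dagger + \Lambda_L = L$ for any complement $\Lambda_L$ of $L^\dagger$ in $L$. Lemma~\ref{lem:exponential map} applied with $R := \Univ_L$ and $E := \Univ_L^\times$ therefore supplies a group morphism $e\colon L \to \Univ_L^\times$ with $e(b)^\dagger = b$ for all $b \in L$. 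Its restriction to $\Lambda$ is a group morphism $\Lambda \to \Univ_L^\times$, which extends uniquely to a $K$-algebra morphism $\iota\colon \Univ_K \to \Univ_L$ with $\iota\big(\ex_K(\lambda)\big) = e(\lambda)$. I would verify that $\iota$ commutes with the derivation on the generators $\ex_K(\lambda)$ --- both sides of $\iota(\ex_K(\lambda)')=\iota(\ex_K(\lambda))'$ equal $\lambda e(\lambda)$ --- so $\iota$ is a morphism of differential rings, and injectivity follows immediately from Corollary~\ref{cor:U simple}.

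For the containment $\iota(\Univ_K) \subseteq K[E]$ for any differential ring embedding $\iota$ extending $K\hookrightarrow L$: for each $\lambda \in \Lambda$ the element $u_\lambda := \iota(\ex_K(\lambda)) \in \Univ_L^\times$ satisfies $u_\lambda^\dagger = \iota(\ex_K(\lambda)')/u_\lambda = \iota(\lambda) = \lambda \in K$, so $u_\lambda \in E$. Since $\Univ_K = \bigoplus_\lambda K\,\ex_K(\lambda)$ and $\iota$ is $K$-linear, the image lies in the $K$-submodule $\sum_\lambda K\,u_\lambda$ of $K[E]$.

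The reverse containment under the hypothesis $C_L = C$ is the only place that really requires work, and is also the main obstacle: we need a way to move from a unit in $\Univ_L$ with logarithmic derivative in $K$ back into the image of $\iota$. Given $u \in E$, I would decompose its logarithmic derivative using $K = K^\dagger \oplus \Lambda$: write $u^\dagger = \kappa^\dagger + \lambda$ with $\kappa \in K^\times$, $\lambda \in \Lambda$. Then $u$ and $\kappa\,u_\lambda$ are both units of $\Univ_L$ with the same logarithmic derivative, so their ratio is a unit whose logarithmic derivative vanishes, hence a constant in the differential fraction field of $\Univ_L$. By the discussion after Corollary~\ref{cor:U simple} that constant field equals $C_L$, and the hypothesis $C_L = C \subseteq K$ then forces $u \in K^\times u_\lambda \subseteq \iota(\Univ_K)$. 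Thus $E \subseteq \iota(\Univ_K)$, and the equality $\iota(\Univ_K) = K[E]$ follows. The role of the hypothesis $C_L=C$ is sharp here: without it one only gets $E \subseteq C_L^\times\cdot K^\times\cdot\{u_\lambda:\lambda\in\Lambda\}$, which is genuinely larger than $\iota(\Univ_K)$.
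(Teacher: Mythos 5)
Your proposal is correct and takes essentially the same route as the paper: the paper's proof just invokes Lemma~\ref{lem:embed into U} applied to the differential subring $K[E]$ of $\Univ_L$ (whose proof is exactly your construction via Lemma~\ref{lem:exponential map} and the group-ring universal property, with injectivity from Corollary~\ref{cor:U simple}) and Lemma~\ref{lem:U minimal} for the equality when $C_L=C$ (whose proof is exactly your constants argument showing $E\subseteq\iota(\Univ_K)$). The only difference is that you unpack those two lemmas inline rather than citing them.
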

\begin{proof} The differential subring
 $R:=K[E]$ of~$\Univ_L$ is exponential over $K$ with ${(R^\times)^\dagger=K}$, hence Lemma~\ref{lem:embed into U} gives an embedding $\Univ_K\to R$ of differential $K$-algebras.
Let~$\iota\colon \Univ_K\to\Univ_L$ be any embedding of differential $K$-algebras. Then $\iota\big(\!\ex(\Lambda)\big)\subseteq E$, so~$\iota(\Univ_K)\subseteq R$; if $C_L=C$, then $\iota(\Univ_K) = R$
by Lemma~\ref{lem:U minimal}.
\end{proof}

\begin{cor}\label{cor:Univ under d-field ext}
If $L^\dagger\cap K=K^\dagger$ and $\iota\colon\Univ_K\to\Univ_L$ is an embedding of differential $K$-algebras,
then $L^\times\cap \iota(\Univ_K^\times) = K^\times$. 
\end{cor}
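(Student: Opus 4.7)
The inclusion $K^\times \subseteq L^\times \cap \iota(\Univ_K^\times)$ is immediate because $\iota$ is a $K$-algebra morphism, so the work lies in the reverse containment. Take $f \in L^\times \cap \iota(\Univ_K^\times)$ and write $f = \iota(g)$ with $g \in \Univ_K^\times$. Lemma~\ref{lem:only trivial units} applied to the group ring $\Univ_K = K[\!\ex(\Lambda)]$ gives $\Univ_K^\times = K^\times \cdot \ex(\Lambda)$, so we can decompose $g = a\,\ex(\lambda)$ with $a \in K^\times$ and $\lambda \in \Lambda$. Consequently $g^\dagger = a^\dagger + \lambda \in K$.

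Because $\iota$ is a differential $K$-algebra morphism (hence fixes $K$ pointwise), we have $f^\dagger = \iota(g)^\dagger = \iota(g^\dagger) = g^\dagger \in K$. On the other hand, $f \in L^\times$ forces $f^\dagger \in L^\dagger$, so the hypothesis $L^\dagger \cap K = K^\dagger$ yields $f^\dagger \in K^\dagger$. Pick $b \in K^\times$ with $b^\dagger = f^\dagger$; then $c := f/b$ is annihilated by the derivation, that is, $c \in C_L$.

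To conclude, observe that $c = \iota(g)/b = \iota(g/b)$ lies in the differential subring $\iota(\Univ_K)$ of $\Univ_L$. By Corollary~\ref{cor:U simple}, $\iota$ is injective, hence a differential ring isomorphism onto its image, so the constants of $\iota(\Univ_K)$ coincide with $\iota(C_{\Univ_K}) = \iota(C) = C$, using that $C_{\Univ_K} = C$ (established just after Lemma~\ref{lem:U simple}) and that $\iota$ fixes $C \subseteq K$. Therefore $c \in \iota(\Univ_K) \cap C_L = C \subseteq K$, whence $f = bc \in K^\times$, as required. No genuine obstacle is expected: the proof is essentially bookkeeping combining the unit structure of the group ring $\Univ_K$ from Section~\ref{sec:group rings}, the compatibility of $\iota$ with $\dagger$, and the logarithmic-derivative hypothesis on $L \supseteq K$.
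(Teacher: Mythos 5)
Your proof is correct and follows essentially the same route as the paper: use that logarithmic derivatives of units of $\Univ_K$ lie in $K$ (the paper invokes $(\Univ_K^\times)^\dagger=K$ directly, while you unwind it via Lemma~\ref{lem:only trivial units}), apply the hypothesis $L^\dagger\cap K=K^\dagger$ to write $f=bc$ with $b\in K^\times$ and $c\in C_L^\times$, and conclude $c\in C^\times$ because the constants of $\Univ_K$ (equivalently of its isomorphic image) are just $C$. No substantive difference from the paper's argument.
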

\begin{proof} Assume $L^\dagger\cap K=K^\dagger$ and identify $\Univ_K$ with a differential $K$-subalgebra of~$\Univ_L$ via an embedding $\Univ_K\to\Univ_L$ of differential $K$-algebras.
Let $a\in L^\times\cap \Univ_K^\times$; then~$a^\dagger\in L^\dagger\cap K=K^\dagger$, so $a = bc$ where $c\in C_L^\times$, $b\in K^\times$.
Now $c=a/b\in C_L^\times\cap\Univ_K^\times=C^\times$, since $\Univ_K$ has ring of constants~$C$. So $a\in K^\times$ as required. 
\end{proof}

\noindent
Suppose $L^\dagger\cap K=K^\dagger$. Then the subspace~$L^\dagger$ of the $\Q$-linear space $L$  has a complement
$\Lambda_L\supseteq \Lambda$.  We fix such $\Lambda_L$ and
extend~$\ex\colon\Lambda\to \ex(\Lambda)$ to a group isomorphism~$\Lambda_L\to\ex(\Lambda_L)$, also denoted by $\ex$, with~$\ex(\Lambda_L)$ a multiplicatively written commutative group  extending $\ex(\Lambda)$.
   Let $\Univ_L:=L\big[\!\ex(\Lambda_L)\big]$ be the corresponding universal exponential extension of~$L$.  
Then the natural inclusion~$\Univ_K\to\Univ_L$
is an embedding of differential $K$-algebras.

\subsection*{Automorphisms of $\Univ$} 
These are easy to describe: the beginning of Section~\ref{sec:group rings} gives a group embedding
$$\chi\mapsto \sigma_{\chi}\colon \Hom(\Lambda,K^\times)\to \Aut\!\big(K[\ex(\Lambda)]|K\big)$$ into the group of $K$-algebra automorphisms of $K\big[\!\ex(\Lambda)\big]$, given by
$$ \sigma_{\chi}(f)\ := f_{\chi}\ =\ \sum_\lambda f_\lambda \chi(\lambda)\ex(\lambda)\qquad (\chi\in \Hom(\Lambda,K^\times),\ f\in K[\ex(\Lambda)]).$$
It is easy to check that if $\chi\in \Hom(\Lambda,C^\times)\subseteq \Hom(\Lambda,K^\times)$, then $\sigma_\chi\in \Aut_\der(\Univ|K)$, that is, $\sigma_{\chi}$ is a differential $K$-algebra automorphism of $\Univ$. Moreover:

\begin{lemma}\label{autolem} The map $\chi\mapsto \sigma_\chi \colon \Hom(\Lambda,C^\times)\to\Aut_\der(\Univ|K)$ is a group isomorphism. Its inverse assigns to
any $\sigma\in \Aut_\der(\Univ|K)$ the function
$\chi\colon \Lambda \to C^\times$ given by~$\chi(\lambda):=\sigma\big(\!\ex(\lambda)\big)\ex(- \lambda)$. In particular, $\Aut_\der(\Univ|K)$ is commutative.
\end{lemma}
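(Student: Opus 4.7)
The plan is as follows. By the discussion preceding the lemma, $\chi \mapsto \sigma_\chi$ already restricts to a well-defined group homomorphism $\Hom(\Lambda, C^\times) \to \Aut_\der(\Univ|K)$, and it is injective since distinct characters $\chi_1 \neq \chi_2$ differ at some $\lambda \in \Lambda$, so $\sigma_{\chi_1}$ and $\sigma_{\chi_2}$ already disagree on $\ex(\lambda)$. Hence all the work lies in producing the inverse, i.e., in surjectivity.

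Let $\sigma \in \Aut_\der(\Univ|K)$ and fix $\lambda \in \Lambda$. I would first show that $\sigma(\ex(\lambda)) \in C^\times \ex(\lambda)$. For this, set $u := \sigma(\ex(\lambda))$. Since $\sigma$ is a differential $K$-algebra automorphism, $u \in \Univ^\times$ and
\[
u^\dagger \ =\ \sigma\big(\ex(\lambda)^\dagger\big)\ =\ \sigma(\lambda)\ =\ \lambda.
\]
By Lemma~\ref{lem:only trivial units}, $\Univ^\times = K^\times \ex(\Lambda)$, so write $u = a\,\ex(\mu)$ with $a \in K^\times$ and $\mu \in \Lambda$. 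Then $\lambda = u^\dagger = a^\dagger + \mu$, i.e.\ $\lambda - \mu = a^\dagger \in K^\dagger \cap \Lambda = \{0\}$ by the direct sum decomposition $K = K^\dagger \oplus \Lambda$. So $\mu = \lambda$ and $a \in \ker(\dagger) = C^\times$. Defining $\chi(\lambda) := \sigma(\ex(\lambda))\ex(-\lambda)$ therefore yields a function $\chi\colon \Lambda \to C^\times$.

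Next I would check that $\chi$ is a group morphism: for $\lambda, \mu \in \Lambda$,
\[
\chi(\lambda+\mu)\,\ex(\lambda+\mu)\ =\ \sigma\big(\ex(\lambda+\mu)\big)\ =\ \sigma\big(\ex(\lambda)\big)\sigma\big(\ex(\mu)\big)\ =\ \chi(\lambda)\chi(\mu)\,\ex(\lambda+\mu),
\]
so $\chi(\lambda+\mu) = \chi(\lambda)\chi(\mu)$. By construction $\sigma$ and $\sigma_\chi$ are $K$-algebra automorphisms of $\Univ = K[\ex(\Lambda)]$ that agree on the $K$-algebra generating set $\ex(\Lambda)$, hence $\sigma = \sigma_\chi$. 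This proves surjectivity, and simultaneously that the assignment $\sigma \mapsto \chi$ is the inverse of $\chi \mapsto \sigma_\chi$. Finally, $\Hom(\Lambda, C^\times)$ is commutative under pointwise multiplication, so its isomorphic image $\Aut_\der(\Univ|K)$ is commutative as well.

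The argument is essentially routine once one notices that the direct sum $K = K^\dagger \oplus \Lambda$ forces $\mu = \lambda$ above; that observation is the only nontrivial ingredient, and everything else is bookkeeping with the structure $\Univ^\times = K^\times \ex(\Lambda)$ established in Lemma~\ref{lem:only trivial units}.
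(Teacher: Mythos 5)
Your proof is correct and follows essentially the same route as the paper: the paper also defines the inverse by $\chi(\lambda):=\sigma(\ex(\lambda))\ex(-\lambda)$, observes this lies in $C^\times$, and concludes $\sigma=\sigma_\chi$. The only (immaterial) difference is how constancy of $\chi(\lambda)$ is verified: the paper notes $\chi(\lambda)^\dagger=0$ and uses $C_{\Univ}=C$, while you use $\Univ^\times=K^\times\ex(\Lambda)$ together with $K=K^\dagger\oplus\Lambda$; both rest on facts already established earlier in the section.
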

\begin{proof} Let $\sigma\in \Aut_{\der}(\Univ|K)$ and let $\chi\colon \Lambda\to \Univ^\times$ be given by $\chi(\lambda):=\sigma\big(\!\ex(\lambda)\big)\ex(- \lambda)$. 
Then $\chi(\lambda)^\dagger=0$ for all $\lambda$. It follows easily that $\chi\in \Hom(\Lambda,C^\times)$ and $\sigma_{\chi}=\sigma$.
\end{proof} 

\noindent
The proof of the next result uses that the additive group $\Q$ embeds into~$C^\times$.

\begin{cor}\label{cor:fixed field}
If $f\in\Univ$ and $\sigma(f)=f$ for all $\sigma\in\operatorname{Aut}_\der(\Univ|K)$, then $f\in K$.
\end{cor}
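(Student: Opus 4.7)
The plan is to use Lemma~\ref{autolem} together with the hypothesis to reduce the claim to a statement about separating nonzero elements of $\Lambda$ by characters in $\Hom(\Lambda, C^\times)$.

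First I would write $f$ in its spectral decomposition $f=\sum_\lambda f_\lambda\ex(\lambda)$. For $\chi\in\Hom(\Lambda,C^\times)$ and $\sigma_\chi\in\operatorname{Aut}_\der(\Univ|K)$ as in Lemma~\ref{autolem}, the action is $\sigma_\chi(f)=\sum_\lambda f_\lambda\chi(\lambda)\ex(\lambda)$. Using the uniqueness of spectral decompositions, the hypothesis $\sigma_\chi(f)=f$ for all $\chi$ amounts to the following: for every $\lambda\in\supp(f)$ we have $\chi(\lambda)=1$ for all $\chi\in\Hom(\Lambda,C^\times)$. The goal therefore reduces to showing that $\bigcap_{\chi\in\Hom(\Lambda,C^\times)}\ker\chi=\{0\}$ inside $\Lambda$.

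Here is where the hint that $\Q$ embeds into $C^\times$ enters, which I would justify by picking an element $\alpha\in C^\times$ of infinite multiplicative order (such $\alpha$ exists because $C$ is an algebraically closed field of characteristic zero, hence $C^\times$ has infinite rank), and taking the divisible hull of $\alpha^{\Z}$ inside the divisible group $C^\times$; this divisible hull is a copy of $\Q$. Fix such an embedding $\iota\colon\Q\hookrightarrow C^\times$ with $\iota(1)=\alpha\neq 1$.

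Now given any $\lambda_0\in\Lambda\setminus\{0\}$, I would construct $\chi\in\Hom(\Lambda,C^\times)$ with $\chi(\lambda_0)\neq 1$. Since $\Lambda$ is a $\Q$-linear subspace of $K$ and $\lambda_0\neq 0$, the subspace $\Q\lambda_0$ has a $\Q$-linear complement $\Lambda'$ in $\Lambda$, so $\Lambda=\Q\lambda_0\oplus\Lambda'$. Define $\chi\colon\Lambda\to C^\times$ by $\chi(q\lambda_0+\mu):=\iota(q)$ for $q\in\Q$, $\mu\in\Lambda'$. This is a group homomorphism from $(\Lambda,+)$ to $(C^\times,\cdot)$ with $\chi(\lambda_0)=\alpha\neq 1$, as required. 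Consequently every $\lambda\in\supp(f)$ satisfies $\lambda=0$, so $\supp(f)\subseteq\{0\}$ and $f=f_0\ex(0)=f_0\in K$.

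The only substantive step is the construction of the separating character $\chi$; everything else is bookkeeping with spectral decompositions. The existence of the embedding $\Q\hookrightarrow C^\times$ is the indispensable input, since if $C^\times$ had only torsion (e.g.\ if $C$ were replaced by a field with $C^\times$ equal to its roots of unity), then characters valued in $C^\times$ would be unable to separate $\Q$-lines in $\Lambda$.
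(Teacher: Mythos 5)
Your proof is correct and takes essentially the same route as the paper: reduce via the spectral decomposition and Lemma~\ref{autolem} to separating nonzero elements of $\Lambda$ by characters in $\Hom(\Lambda,C^\times)$, using the embedding of the additive group $\Q$ into $C^\times$ (the paper leaves the construction of the separating character implicit). One small imprecision: the minimal divisible subgroup of $C^\times$ containing $\alpha^{\Z}$ need not be torsion-free, hence need not be a copy of $\Q$; instead choose a compatible system of roots $\alpha^{1/n!}$ to define a homomorphism $\Q\to C^\times$ sending $1\mapsto\alpha$, which is injective because $\alpha$ has infinite order.
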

\begin{proof} Suppose $f\in U$ and $\sigma(f)=f$ for all $\sigma\in\operatorname{Aut}_\der(\Univ|K)$.
For $\chi\in\operatorname{Hom}(\Lambda,C^\times)$ we have $f_\chi=f$, that is, $f_\lambda\chi(\lambda)=f_\lambda$
for all $\lambda$, so  $\chi(\lambda)=1$ whenever $f_\lambda\neq 0$. Now use that for $\lambda\ne 0$ there exists
$\chi\in\operatorname{Hom}(\Lambda,C^\times)$ such that $\chi(\lambda)\ne 1$, so $f_{\lambda}=0$. 
\end{proof}

\begin{cor}\label{cor:extend autom}
Every automorphism of the differential field $K$ extends to an automorphism of the differential ring~$\Univ$.
\end{cor}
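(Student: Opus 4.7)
The natural strategy is to apply the uniqueness-up-to-isomorphism result Corollary~\ref{corcharexp} to a ``twisted'' version of $\Univ$. Concretely, let $\sigma$ be an automorphism of the differential field $K$; I would consider the differential ring $\Univ$ itself, but equipped with a \emph{new} structure of a differential $K$-algebra given by the composition
\[
\sigma^*\colon K\ \xrightarrow{\ \sigma\ }\ K\ \hookrightarrow\ \Univ,
\]
and denote this differential $K$-algebra by $\Univ^\sigma$ (same underlying differential ring, new structural morphism).

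The first step is to verify that $\Univ^\sigma$ meets the hypotheses of Corollary~\ref{corcharexp}. As a differential ring, $\Univ^\sigma=\Univ$, so its constant ring is still $C$. Because $\sigma$ is an automorphism of $K$, the subring $\sigma^*(K)$ of $\Univ^\sigma$ equals $K\subseteq\Univ$, so the units of $\Univ^\sigma$ that are exponential over $\sigma^*(K)$ are exactly those units $u\in\Univ^\times$ with $u^\dagger\in K$, i.e.\ all of $\Univ^\times=K^\times\ex(\Lambda)$. Hence $\Univ^\sigma$ is exponential over $K$ (via $\sigma^*$) and
\[
(\Univ^{\sigma,\times})^\dagger\ =\ \Univ^\dagger\ =\ K\ =\ \sigma^*(K),
\]
so the defining property $\sigma^*(K)\subseteq (\Univ^{\sigma,\times})^\dagger$ holds.

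Applying Corollary~\ref{corcharexp} to $U:=\Univ^\sigma$ then yields an isomorphism
\[
\tau\colon \Univ\ \xrightarrow{\ \cong\ }\ \Univ^\sigma
\]
of differential $K$-algebras. Since $\Univ$ and $\Univ^\sigma$ share the same underlying differential ring, $\tau$ is a differential ring automorphism of $\Univ$. Being $K$-algebra linear means $\tau$ intertwines the two structural morphisms, i.e.\ $\tau|_K=\sigma$; thus $\tau$ is the desired extension of $\sigma$.

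There is essentially no obstacle here beyond correctly setting up the twisted structure: once one writes down $\Univ^\sigma$ and checks that the hypotheses of Corollary~\ref{corcharexp} transport verbatim (using that $\sigma(K)=K$ and that $\sigma$ commutes with $\der$), the universal property does all the work. As an aside, combining this with Lemma~\ref{autolem} one sees that the group $\Aut_\der(\Univ)$ of all differential ring automorphisms of $\Univ$ sits in a short exact sequence with kernel $\Hom(\Lambda,C^\times)$ and quotient $\Aut_\der(K)$, but this is not needed for the statement at hand.
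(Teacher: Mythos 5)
Your argument is correct, but it packages the proof differently from the paper, which constructs the extension by hand: it takes the group morphism $\mu\colon K\to\Univ^\times$ with $\mu(a)^\dagger=a$ from Lemma~\ref{lem:exponential map}, extends $\sigma$ to a ring endomorphism of $\Univ=K\big[\!\ex(\Lambda)\big]$ by $\ex(\lambda)\mapsto\mu\big(\sigma(\lambda)\big)$, checks compatibility with $\der$ by a one-line computation, and then gets injectivity from simplicity (Lemma~\ref{lem:U simple}) and surjectivity from Lemma~\ref{lem:U minimal}. You instead twist the structural morphism and invoke Corollary~\ref{corcharexp}. That works, with one point of care: Corollary~\ref{corcharexp} is stated for exponential \emph{extensions} of $K$, where the structural morphism is the inclusion, so read literally it only yields an isomorphism over $K$ (restricting to $\operatorname{id}_K$), which is useless for extending $\sigma$. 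What you actually need is the easy generalization of Lemma~\ref{lem:embed into U}/Corollary~\ref{corcharexp} to differential $K$-algebras with an injective structural morphism $j$; its proof goes through unchanged, and unwinding it for $j=\iota\circ\sigma$ reproduces exactly the paper's map $\ex(\lambda)\mapsto e_E\big(\sigma(\lambda)\big)$, with bijectivity again coming from Lemmas~\ref{lem:U simple} and~\ref{lem:U minimal}. So the two routes rest on the same ingredients; yours is mediated by the universal property, the paper's is explicit. (Alternatively you can avoid twisting structure maps by noting $\sigma(\Lambda)$ is another complement of $K^\dagger$ and using the independence of $\Univ$ from the choice of complement, via an explicit transport-of-structure isomorphism over $\sigma$.) One caution about your aside: the claimed short exact sequence for $\Aut_\der(\Univ)$ presupposes that every differential ring automorphism of $\Univ$ maps $K$ onto itself, which is not established in the paper; as you note, it is not needed for the statement at hand.
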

\begin{proof}
Lemma~\ref{lem:exponential map} yields a group morphism $\mu\colon K\to \Univ^\times$ such that $\mu(a)^\dagger=a$ for all $a\in K$.
Let $\sigma\in\operatorname{Aut}_\der(K)$. Then $\sigma$ extends to an endomorphism, denoted also by $\sigma$,
 of the ring $\Univ$, such that $\sigma\big(\!\ex(\lambda)\big)= \mu\big(\sigma(\lambda)\big)$ for all $\lambda$. 
Then  
$$\sigma\big(\!\ex(\lambda)'\big)\ =\ \sigma\big(\lambda\ex(\lambda)\big)\ =\ \sigma(\lambda)\mu\big(\sigma(\lambda)\big)\ =\ \mu\big(\sigma(\lambda)\big)'\ =\ \sigma\big(\!\ex(\lambda)\big)',$$
hence $\sigma$ is an endomorphism of the differential ring $\Univ$.  
By Lemma~\ref{lem:U simple}, $\sigma$ is injective, and by Lemma~\ref{lem:U minimal}, $\sigma$ is surjective.
\end{proof}

\subsection*{The real case} 
{\it In this subsection $K=H[\imag]$ where~$H$ is a real closed differential subfield of $K$ and $\imag^2=-1$.}\/
Set $S_C:=\big\{c\in C:\, |c|=1\big\}$, a subgroup of $C^\times$. Then by Lemmas~\ref{lem:commuting with f*} and~\ref{autolem}:

\begin{cor}\label{autolem, commuting with f*}
For $\sigma\in\Aut_\der(\Univ|K)$ we have the equivalence
$$\sigma(f^*)=\sigma(f)^*\text{ for all $f\in\Univ$}\quad\Longleftrightarrow\quad \sigma=\sigma_\chi\text{ for some $\chi\in\Hom(\Lambda,S_C)$.}$$
\end{cor}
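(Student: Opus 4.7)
The plan is to derive the corollary by combining the two lemmas cited in its statement. First I would appeal to Lemma~\ref{autolem} to write an arbitrary $\sigma\in\Aut_\der(\Univ|K)$ uniquely as $\sigma=\sigma_\chi$ for some $\chi\in\Hom(\Lambda,C^\times)$. Then I would make explicit the bridge between the two guises of $\chi$: on one hand $\chi$ is a character on the additive group $\Lambda$ with values in $C^\times\subseteq K^\times$, and on the other hand it can be regarded as a character on the multiplicative group $G:=\ex(\Lambda)$ via $\chi(\ex(\lambda)):=\chi(\lambda)$, and under this identification the $K$-algebra automorphism $\sigma_\chi$ of Lemma~\ref{autolem} coincides with the map $f\mapsto f_\chi$ from \eqref{eq:fchi} attached to $\chi\in\Hom(G,K^\times)$ in Section~\ref{sec:group rings}. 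In particular, for every $f\in\Univ=K[G]$ we have $\sigma(f^*)=(f^*)_\chi$ and $\sigma(f)^*=(f_\chi)^*$.

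Now I would apply Lemma~\ref{lem:commuting with f*}: the condition $\sigma(f^*)=\sigma(f)^*$ holding for all $f\in\Univ$ is equivalent to $(f_\chi)^*=(f^*)_\chi$ for all $f$, which, taking $f=\ex(\lambda)$ for each $\lambda\in\Lambda$, is equivalent to $\abs{\chi(\ex(\lambda))}=1$ for all $\lambda\in\Lambda$. Since $\chi$ takes values in $C^\times$ and for $c\in C^\times$ we have $\abs{c}=1\Longleftrightarrow c\in S_C$, this rephrases as $\chi(\lambda)\in S_C$ for all $\lambda\in\Lambda$, i.e., $\chi\in\Hom(\Lambda,S_C)$. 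Conversely, if $\chi\in\Hom(\Lambda,S_C)\subseteq\Hom(\Lambda,C^\times)$, then the same reading of Lemma~\ref{lem:commuting with f*} shows $\sigma_\chi(f^*)=\sigma_\chi(f)^*$ for all $f$.

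There is no real obstacle; the substantive work has already been done in the two auxiliary lemmas, and the only point requiring care is the harmless identification of $\chi$ as a character on $\Lambda$ versus on $\ex(\Lambda)$. One minor thing to double-check is that $\abs{c}$ makes sense inside $K=H[\imag]$ for $c\in C^\times$: since $C$ is stable under complex conjugation (the constants of $K$ are defined by $c'=0$, and $c'=0$ iff $\bar c'=0$), the element $\abs{c}=\sqrt{c\bar c}$ lies in $H^{\ge}$ by real closedness of $H$, so the set $S_C=\{c\in C:\abs{c}=1\}$ is well-defined and the equivalence $\abs{c}=1\Leftrightarrow c\in S_C$ is tautological.
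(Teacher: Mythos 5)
Your proposal is correct and follows exactly the paper's route: the paper derives this corollary directly from Lemma~\ref{autolem} (to write $\sigma=\sigma_\chi$ with $\chi\in\Hom(\Lambda,C^\times)$) together with Lemma~\ref{lem:commuting with f*} (to translate compatibility with $f\mapsto f^*$ into $\abs{\chi}=1$ on the support, hence on all of $\ex(\Lambda)$). Your added remarks on identifying $\chi$ as a character on $\Lambda$ versus on $\ex(\Lambda)$, and on $\abs{c}$ being defined for $c\in C^\times$, only make explicit what the paper leaves tacit.
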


\noindent
Corollaries~\ref{autolem, commuting with f*} and~\ref{cor:commuting with f*} together give:
 
\begin{cor}\label{cor:inner prod invariant}
Let  $\sigma\in\Aut_\der(\Univ|K)$ satisfy $\sigma(f^*)=\sigma(f)^*$   for all $f\in\Univ$. Then~$\big\langle\sigma(f),\sigma(g)\big\rangle=\langle f,g\rangle$ for all $f,g\in\Univ$, hence $\dabs{\sigma(f)}=\dabs{f}$ for all $f\in\Univ$.
\end{cor}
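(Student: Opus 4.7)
The plan is to reduce this statement directly to the two preceding corollaries. The key observation is that the hypothesis on $\sigma$ is precisely the one characterized in Corollary~\ref{autolem, commuting with f*}, and that characterization produces exactly the type of character to which Corollary~\ref{cor:commuting with f*} applies.

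First, I would apply Corollary~\ref{autolem, commuting with f*} to the given $\sigma$: because $\sigma$ commutes with the involution $f\mapsto f^*$, there exists $\chi\in\Hom(\Lambda,S_C)$ such that $\sigma=\sigma_\chi$. By the definition just before that corollary (and the discussion of the action of characters on a group ring at the start of Section~\ref{sec:group rings}), $\sigma$ is then the automorphism $f\mapsto f_\chi$ of the group ring $\Univ=K[\ex(\Lambda)]$ associated to the character $\chi'\in\Hom(\ex(\Lambda),K^\times)$ determined by $\chi'(\ex(\lambda))=\chi(\lambda)\in S_C\subseteq C^\times\subseteq K^\times$.

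Second, since $\chi(\lambda)\in S_C$ means $|\chi(\lambda)|=1$ for every $\lambda\in\Lambda$, the associated character $\chi'$ on $G=\ex(\Lambda)$ satisfies $|\chi'(\gamma)|=1$ for all $\gamma\in G$. Thus the hypothesis of Corollary~\ref{cor:commuting with f*} is verified, and that corollary yields $\langle f_{\chi'},g_{\chi'}\rangle=\langle f,g\rangle$ and $\dabs{f_{\chi'}}=\dabs{f}$ for all $f,g\in K[G]=\Univ$. Rewriting $f_{\chi'}=\sigma(f)$ and $g_{\chi'}=\sigma(g)$ gives the desired equalities $\langle\sigma(f),\sigma(g)\rangle=\langle f,g\rangle$ and $\dabs{\sigma(f)}=\dabs{f}$.

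There is no real obstacle here beyond matching the two corollaries up correctly, since the claim is essentially a consequence of the combination "$\sigma$ commutes with $*$ $\Leftrightarrow$ $\sigma$ comes from a unitary character" with "unitary characters act by isometries on the group ring". The only minor bookkeeping is the identification of the character $\chi\in\Hom(\Lambda,S_C)$ produced by the first corollary with a character on the multiplicative group $\ex(\Lambda)$ taking values of absolute value $1$, which is immediate from the isomorphism $\lambda\mapsto\ex(\lambda)$.
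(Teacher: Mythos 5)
Your proof is correct and follows essentially the same route as the paper: the paper derives this corollary precisely by combining Corollary~\ref{autolem, commuting with f*} with Corollary~\ref{cor:commuting with f*}, exactly as you do. The bookkeeping step identifying $\chi\in\Hom(\Lambda,S_C)$ with a unitary character on the group $\ex(\Lambda)$ is fine and is left implicit in the paper as well.
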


\noindent
Next we consider the subgroup 
$$S\ :=\ \{a+b\imag:\ a,b\in H,\ a^2+b^2=1\}$$ of $K^\times$, which is divisible,
hence so is the subgroup $S^\dagger$ of $K^\dagger$.
Lemma~\ref{lem:logder} yields~$K^\dagger = H^\dagger \oplus S^\dagger$
(internal direct sum of $\Q$-linear subspaces of $K$) and $S^\dagger\subseteq  H \imag$.
Thus we can (and do) take the complement $\Lambda$ of $K^\dagger$ in $K$ so that
$\Lambda= \Lambda_{\operatorname{r}}+\Lambda_{\operatorname{i}}\imag$ where $\Lambda_{\operatorname{r}}, \Lambda_{\operatorname{i}}$ are subspaces of the $\Q$-linear space $H$ with $\Lambda_{\operatorname{r}}$ a complement of $H^\dagger$ in $H$ and 
$\Lambda_{\operatorname{i}}\imag$
 a complement  of $S^\dagger$ in $H\imag$.
The automorphism $a+b\imag\mapsto \bar{a+b\imag}:= a-b\imag$~(${a,b\in H}$) of the differential field $K$ now satisfies in $\Univ=K[\ex(\Lambda)]$ the identity 
$$\ex(\bar{\lambda + \mu})\ =\ \ex(\bar{\lambda})\ex(\bar{\mu})\qquad(\lambda,\mu\in \Lambda),$$ so it extends to
an automorphism $f\mapsto\overline{f}$ of the ring $\Univ$ as follows: 
for $f\in \Univ$ with spectral decomposition~$(f_\lambda)$, set
$$\overline{f}\ :=\ \sum_\lambda \overline{f_\lambda}\ex(\overline{\lambda})\ =\ \sum_{\lambda}\overline{f_{\overline{\lambda}}}\ex(\lambda),  $$
so $\overline{\ex(\lambda)}=\ex(\overline{\lambda})$, and $\overline{f}$ has spectral decomposition  $(\overline{f_{\overline{\lambda}}})$.
We have $\overline{\overline{f}}=f$ for $f\in\Univ$,
and $f\mapsto\overline{f}$ lies in $\operatorname{Aut}_\der(\Univ|H)$. 
If~$H^\dagger=H$, then $\Lambda_r=\{0\}$ and hence~$\overline{f}=f^*$ for~$f\in\Univ$, where $f^*$ is as defined in Section~\ref{sec:group rings}.
For $f\in\Univ$ we set
$$\Re f\ :=\  \textstyle\frac{1}{2}(f+\overline{f}),\qquad  
\Im f\ :=\ \textstyle\frac{1}{2\imag}(f-\overline{f}).$$
(For $f\in K$ these agree with the usual real and imaginary parts of $f$ as an element of $H[\imag]$.)
Consider the differential $H$-subalgebra 
$$\Univ_{\operatorname{r}}\ :=\ \big\{f\in\Univ: \overline{f}=f\big\}$$ 
of~$\Univ$.
For $f\in\Univ$ with spectral decomposition $(f_\lambda)$ we have~$f\in \Univ_{\operatorname{r}}$ iff $f_{\overline{\lambda}}=\overline{f_\lambda}$ for all~$\lambda$;
in particular $\Univ_{\operatorname{r}}\cap K=H$.
For $f\in\Univ$ we have $f=(\Re f)+(\Im f)\imag$ with~$\Re f,\Im f\in \Univ_{\operatorname{r}}$, hence 
$$\Univ\ =\ \Univ_{\operatorname{r}}\oplus \Univ_{\operatorname{r}}\imag\quad\text{ (internal direct sum of $H$-linear subspaces).}$$
Let $D$ be a subfield of $H$ (not necessarily the constant field of $H$), so $D[\imag]$ is a  subfield  of $K$. Let 
$V$ be a $D[\imag]$-linear subspace of $\Univ$; then $V_{\operatorname{r}}:=V\cap \Univ_{\operatorname{r}}$ is a $D$-linear subspace of $V$. 
If~$\overline{V}=V$ (that is,~$V$~is closed under $f\mapsto \overline{f}$), then~$\Re f,\Im f\in V_{\operatorname{r}}$ for all $f\in V$, hence
 $V=V_{\operatorname{r}}\oplus V_{\operatorname{r}}\imag$ (internal direct sum of $D$-linear subspaces of $V$), so 
any basis of the $D$-linear space $V_{\operatorname{r}}$ is a basis of the $D[\imag]$-linear space $V$. 

\medskip
\noindent
Suppose now that $V=\bigoplus_\lambda V_\lambda$
(internal direct sum of subspaces of $V$) where $V_{\lambda}$ is for each $\lambda$
a $D[\imag]$-linear subspace of $K\ex(\lambda)$. Then~$\overline{V}=V$  iff
$V_{\overline{\lambda}}=\overline{V_\lambda}$ for all $\lambda$. Moreover:

\begin{lemma}\label{lem:real basis}
Assume $H=H^\dagger$, $V_0=\{0\}$,
and $\overline{V}=V$. Let $\mathcal V\subseteq \Univ^\times$ be a basis of the subspace~$\sum_{\Im\lambda>0} V_\lambda$ of $V$.
Then the maps $v\mapsto \Re v,\ v\mapsto \Im v\colon \mathcal{V} \to V_{\operatorname{r}}$ are injective, $\Re \mathcal{V}$ and $\Im \mathcal{V}$ are disjoint, and $\Re \mathcal{V}\cup \Im \mathcal{V}$ is  
a basis of $V_{\operatorname{r}}$.
\end{lemma}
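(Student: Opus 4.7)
The plan rests on the following structural observation. Since $\mathcal V\subseteq\Univ^\times$ and $\Univ^\times=K^\times\ex(\Lambda)$ by Lemma~\ref{lem:only trivial units}, each $v\in\mathcal V$ has the form $c\ex(\lambda)$ with $c\in K^\times$ and a unique $\lambda=\lambda(v)\in\Lambda$ satisfying $\Im\lambda>0$; in particular $v\in V_\lambda$. The hypothesis $H=H^\dagger$ yields $\Lambda_{\operatorname r}=\{0\}$, so $\Lambda\subseteq H\imag$ and $\overline\lambda=-\lambda$ on $\Lambda$, hence $\overline v\in V_{-\lambda(v)}$. Set $V_+:=\sum_{\Im\lambda>0}V_\lambda$ and $V_-:=\sum_{\Im\lambda<0}V_\lambda$. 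The hypotheses $\overline V=V$ and $V_0=\{0\}$ then give a direct sum decomposition $V=V_+\oplus V_-$ with $V_-=\overline{V_+}$ and $\mathcal V$ a $D[\imag]$-basis of $V_+$.

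Injectivity and disjointness follow immediately from this direct sum. For $v,v'\in\mathcal V$, an equality $\Re v=\Re v'$ reads $v+\overline v=v'+\overline{v'}$, and comparing $V_+$- and $V_-$-components forces $v=v'$; the argument for $\Im$ is identical. If $\Re v=\Im v'$ for some $v,v'\in\mathcal V$, then $\imag v+\imag\overline v=v'-\overline{v'}$, and again comparing the $V_+$-parts gives $\imag v=v'$, contradicting $D[\imag]$-linear independence of $\mathcal V$ (unless $v=v'=0$, which is impossible since $v\in\Univ^\times$).

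Spanning: given $w\in V_{\operatorname r}$, write $w=w_++w_-$ with $w_\pm\in V_\pm$. The relation $\overline w=w$ and $\overline{V_+}=V_-$ force $w_-=\overline{w_+}$, so $w=2\Re w_+$. Expanding $w_+=\sum_i a_iv_i$ in the $D[\imag]$-basis $\mathcal V$ and writing $a_i=b_i+c_i\imag$ with $b_i,c_i\in D$, the identity
\[
\Re\bigl((b+c\imag)u\bigr)\ =\ b\,\Re u-c\,\Im u\qquad(b,c\in H,\ u\in\Univ),
\]
(verified by expanding $u=\Re u+(\Im u)\imag$ with $\Re u,\Im u\in\Univ_{\operatorname r}$) yields $w=\sum_i(2b_i\,\Re v_i-2c_i\,\Im v_i)$, a $D$-linear combination of $\Re\mathcal V\cup\Im\mathcal V$.

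Linear independence: a $D$-linear relation $\sum_i b_i\,\Re v_i+\sum_i c_i\,\Im v_i=0$ with distinct $v_i\in\mathcal V$ rewrites, after multiplying by $2$, as
\[
\sum_i(b_i-c_i\imag)v_i\ +\ \sum_i(b_i+c_i\imag)\overline{v_i}\ =\ 0,
\]
the first sum in $V_+$ and the second in $V_-$. The direct sum $V=V_+\oplus V_-$ and $D[\imag]$-linear independence of $\mathcal V$ give $b_i-c_i\imag=0$ in $D[\imag]$, whence $b_i=c_i=0$ because $\imag\notin H\supseteq D$ (as $H$ is real closed). The only subtle point is the passage through the structural observation (the use of Lemma~\ref{lem:only trivial units} to ensure every $v\in\mathcal V$ is concentrated in a single $V_\lambda$); everything else is routine bookkeeping with the direct sum $V=V_+\oplus V_-$.
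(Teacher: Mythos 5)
Your proof is correct and takes essentially the same route as the paper: both arguments rest on the observation that each basis vector, being a unit of $\Univ$, lies in a single component $V_\lambda$ with $\Im\lambda>0$, together with the conjugation symmetry $\overline{V_+}=V_-$ inside $V=V_+\oplus V_-$. The only cosmetic difference is that the paper first records that $\mathcal V\cup\overline{\mathcal V}$ is a $D[\imag]$-basis of $V$ and then descends to $V_{\operatorname{r}}$, whereas you verify $D$-spanning and $D$-independence of $\Re\mathcal V\cup\Im\mathcal V$ in $V_{\operatorname{r}}$ directly.
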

\begin{proof} Note that $\Lambda=\Lambda_{\operatorname{i}}\imag$. Let $\mu$ range over $\Lambda_{\operatorname{i}}^{>}$ and set $\cal{V}_{\mu}=\cal{V}\cap K^\times \ex(\mu\imag)$, a basis of the $D[\imag]$-linear space $V_{\mu\imag}$. Then $\cal{V}=\bigcup_{\mu}\cal{V}_{\mu}$, a disjoint union. For~$v\in \cal{V}_{\mu}$ we have~$v=a\ex(\mu\imag)$ with $a=a_v\in K^\times$, so 
$$\Re v\ =\ \textstyle\frac{a}{2}\ex(\mu\imag) + \textstyle\frac{\bar{a}}{2}\ex(-\mu\imag), \qquad \Im v\ =\ \textstyle\frac{a}{2i}\ex(\mu\imag) - \textstyle\frac{\bar{a}}{2\imag}\ex(-\mu\imag),$$
from which it is clear that the two maps $\cal{V} \to V_{\operatorname{r}}$ in the statement of the lemma are injective. It is also easy to check that $\Re \mathcal{V}$ and $\Im \mathcal{V}$ are disjoint.  

As $\mathcal{V}$ is a basis of the $D[\imag]$-linear space $\sum_{\mu} V_{\mu \imag}=\sum_{\Im\lambda>0} V_\lambda$, its set of conjugates~$\overline{\mathcal V}$ is a basis of the $D[\imag]$-linear space $\sum_{\mu} \overline{V_{\mu \imag}}=\sum_{\mu}V_{-\mu\imag}=\sum_{\Im\lambda<0} V_\lambda$, and so~$\mathcal{V}\cup \overline{\mathcal{V}}$ (a disjoint union) is a basis of $V$. Thus~$\Re \mathcal{V}\cup \Im \mathcal{V}$ is a basis of $V$ as well. As~$\Re \mathcal{V}\cup \Im \mathcal{V}$ is contained in $V_{\operatorname{r}}$, it is a basis of the $D$-linear space $V_{\operatorname{r}}$. 
\end{proof}

\noindent
If $H=H^\dagger$, then $V:=\sum_{\lambda\neq 0}K\ex(\lambda)$ gives $\overline{V}=V$, so Lemma~\ref{lem:real basis} gives then for~$D:= H$ the basis of the $H$-linear space $V_{\operatorname{r}}$ consisting of the elements 
$$\Re\!\big(\!\ex(\lambda)\big)\ =\ \textstyle\frac{1}{2}\big(\!\ex(\lambda)+\ex(\overline{\lambda})\big),\qquad 
\Im\!\big(\!\ex(\lambda)\big)\ =\ \textstyle\frac{1}{2\imag}\big(\!\ex(\lambda)-\ex(\overline{\lambda})\big)\qquad (\Im\lambda>0).$$

\begin{cor}\label{cor:U_r} Suppose $H=H^\dagger$. Set $\operatorname{c}(\lambda):=\Re\!\big(\!\ex(\lambda)\big)$ and $\operatorname{s}(\lambda):=\Im\!\big(\!\ex(\lambda)\big)$,
for $\Im \lambda>0$. Then for $V:=\sum_{\lambda\neq 0}K\ex(\lambda)$ we have $\Univ_{\operatorname{r}}=H+V_{\operatorname{r}}$, so
$$\Univ_{\operatorname{r}}\ =\ H \oplus \bigoplus_{\Im\lambda>0} \big( H\operatorname{c}(\lambda)\oplus H\operatorname{s}(\lambda) \big) 
\quad \text{\textup{(}internal direct sum of $H$-linear subspaces\textup{)},}$$
and thus $\Univ_{\operatorname{r}} = H\big[ \operatorname{c}(\Lambda_{\operatorname{i}}^>\imag)\cup \operatorname{s}(\Lambda_{\operatorname{i}}^>\imag)\big]$.
\end{cor}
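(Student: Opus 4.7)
\medskip\noindent
\textbf{Proof plan.} The strategy is to read off Corollary~\ref{cor:U_r} as a direct application of Lemma~\ref{lem:real basis}, once the setup is verified. First I would use the hypothesis $H = H^\dagger$: by the discussion preceding Lemma~\ref{lem:real basis}, the complement $\Lambda$ decomposes as $\Lambda_{\operatorname{r}} + \Lambda_{\operatorname{i}}\imag$ with $\Lambda_{\operatorname{r}}$ a complement of $H^\dagger$ in $H$, so $H = H^\dagger$ forces $\Lambda_{\operatorname{r}} = \{0\}$ and thus $\Lambda = \Lambda_{\operatorname{i}}\imag$. In particular $\overline{\lambda} = -\lambda$ for every $\lambda \in \Lambda$, and $\Im\lambda > 0$ precisely when $\lambda \in \Lambda_{\operatorname{i}}^{>}\imag$.

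Next I would decompose $\Univ = K \oplus V$ as $K$-linear spaces, where $V = \sum_{\lambda \neq 0} K\ex(\lambda)$, and note that both summands are stable under the conjugation $f \mapsto \overline{f}$: this is clear for $K$, and for $V$ it follows from $\overline{K\ex(\lambda)} = K\ex(\overline{\lambda})$ together with $\overline{\lambda} \neq 0 \Leftrightarrow \lambda \neq 0$. Taking $\Re f + (\Im f)\imag$ of an element $f \in \Univ_{\operatorname{r}}$ in both summands therefore yields $\Univ_{\operatorname{r}} = K_{\operatorname{r}} \oplus V_{\operatorname{r}} = H \oplus V_{\operatorname{r}}$, which is the first assertion.

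To obtain the remaining decomposition, I apply Lemma~\ref{lem:real basis} with $D = H$, so $D[\imag] = K$, and with the subspaces $V_\lambda := K\ex(\lambda)$ for $\lambda \neq 0$ and $V_0 := \{0\}$, so that $V = \bigoplus_\lambda V_\lambda$ and $\overline{V_\lambda} = V_{\overline\lambda}$; the hypotheses $V_0 = \{0\}$ and $\overline V = V$ are then immediate. A $K$-basis of $\sum_{\Im\lambda > 0} V_\lambda$ is given by $\mathcal V := \{\ex(\lambda) : \Im\lambda > 0\} \subseteq \Univ^\times$. Lemma~\ref{lem:real basis} then produces the $H$-basis $\Re\mathcal V \cup \Im\mathcal V = \{\operatorname{c}(\lambda), \operatorname{s}(\lambda) : \Im\lambda > 0\}$ of $V_{\operatorname{r}}$, yielding
$$V_{\operatorname{r}}\ =\ \bigoplus_{\Im\lambda > 0}\big(H\operatorname{c}(\lambda) \oplus H\operatorname{s}(\lambda)\big),$$
and combining with $\Univ_{\operatorname{r}} = H \oplus V_{\operatorname{r}}$ gives the claimed direct-sum formula; the final generation statement $\Univ_{\operatorname{r}} = H\big[\operatorname{c}(\Lambda_{\operatorname{i}}^{>}\imag) \cup \operatorname{s}(\Lambda_{\operatorname{i}}^{>}\imag)\big]$ is just a restatement, since every basis element lies in the $H$-subalgebra on the right.

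There is essentially no obstacle here: the real content was isolated in Lemma~\ref{lem:real basis}. The only point requiring a moment's care is the indexing between $\Lambda_{\operatorname{i}}^{>}$ (a subset of $H$) and the set $\{\lambda \in \Lambda : \Im\lambda > 0\} = \Lambda_{\operatorname{i}}^{>}\imag$ actually parametrizing the summands, which is what allows one to identify the output of Lemma~\ref{lem:real basis} with the sum over $\Im\lambda > 0$ displayed in the corollary.
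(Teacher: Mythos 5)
Your proof is correct and follows essentially the same route as the paper: the remark preceding the corollary applies Lemma~\ref{lem:real basis} with $D=H$ and $V=\sum_{\lambda\neq 0}K\ex(\lambda)$ to get the $H$-basis $\operatorname{c}(\lambda),\operatorname{s}(\lambda)$ ($\Im\lambda>0$) of $V_{\operatorname{r}}$, and combines this with $\Univ_{\operatorname{r}}=H+V_{\operatorname{r}}$ exactly as you do.
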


\section{The Spectrum of a Differential Operator}\label{sec:splitting}

\noindent
{\it In this section $K$ is a differential field, $a$, $b$ range over $K$, and $A$, $B$  over~$K[\der]$.}\/
This and the next two sections are mainly differential-algebraic in nature, and deal with splittings of linear differential operators.
In the present section we introduce the concept of {\it eigenvalue}\/ of $A$ and  the {\it spectrum}\/ of $A$ (the collection of its eigenvalues).
In Section~\ref{sec:eigenvalues and splitting} we show how the eigenvalues of $A$ relate to the behavior of $A$ over the universal exponential extension of $K$.

\subsection*{Twisting}
Let $L$ be a differential field extension  of $K$ with $L^\dagger\supseteq K$.
Let $u\in L^\times$ be such that $u^\dagger=a\in K$. Then the twist \index{twist}\index{linear differential operator!twist}
$A_{\ltimes u} = u^{-1} A u$
of $A$ by $u$ has the same order as $A$ and coefficients in $K$ [ADH,~5.8.8], and only depends on $a$, not on $u$ or~$L$;
in fact, $\Ric(A_{\ltimes u})=\Ric(A)_{+a}$ [ADH, 5.8.5].
Hence for  each $a$ we may define
$$A_a\ :=\ A_{\ltimes u}\ =\ u^{-1}Au\in K[\der]$$
where $u\in L^\times$ is arbitrary with $u^\dagger=a$. 
The map
$A\mapsto A_{\ltimes u}$ is an automorphism of the ring $K[\der]$ that is
the identity on $K$ (with inverse $B\mapsto B_{\ltimes u^{-1}}$);
so  $A\mapsto A_a$ is an automorphism of the ring $K[\der]$ that is the identity on $K$  (with inverse $B\mapsto B_{-a}$). 
Note that $\der_a=\der+a$, and that
$$(a,A)\mapsto A_a\ :\ K\times K[\der] \to K[\der]$$
is an action of the additive group of $K$ on the set $K[\der]$, in particular, $A_a=A$ for~$a=0$.  
For  $b\neq 0$ we have $(A_a)_{\ltimes b} = A_{a+b^\dagger}$.

\subsection*{Eigenvalues} 
{\it In the rest of this section $A\neq 0$ and $r:=\order(A)$.}\/
We call 
$$\mult_{a}(A)\ :=\ \dim_C \ker_K A_a\in \{0,\dots,r\}$$
the {\bf multiplicity} of $A$ at $a$. 
If $B\neq 0$, then  
$\mult_a(B) \leq \mult_a(AB)$, as well as
 \index{linear differential operator!multiplicity}\index{multiplicity!linear differential operator}\label{p:multa}
\begin{equation}\label{eq:mult(AB)}
\mult_a(AB)\ \leq\ \mult_a(A)+\mult_a(B),
\end{equation}
with equality if and only if $B_a(K)\supseteq  \ker_K A_a$; see [ADH, remarks before 5.1.12]. 
For $u\in K^\times$ we have an isomorphism 
$$y\mapsto yu\ \colon\ \ker_K A_{\ltimes u} \to \ker_K A$$ 
of $C$-linear spaces, hence 
$$\mult_{a}(A)\ =\  \mult_{b}(A)\qquad\text{whenever $a-b\in K^\dagger$.}$$
Thus we may  define the
{\bf multiplicity} of $A$ at the element $[a]:=a+K^\dagger$ of $K/K^\dagger$ 
as $\mult_{[a]}(A):=\mult_{a}(A)$. \label{p:multalpha}

\medskip
\noindent
{\it In the rest of this section $\alpha$ ranges over $K/K^\dagger$.}\/
We say that $\alpha$ is an {\bf eigenvalue} of~$A$ if~$\mult_{\alpha}(A)\geq 1$.  Thus for $B\ne 0$: if
$\alpha$ is an eigenvalue of~$B$ of multiplicity~$\mu$, then
$\alpha$ is an eigenvalue of $AB$ of multiplicity~$\ge \mu$;
if $\alpha$ is an eigenvalue of~$AB$, then it is an eigenvalue
of $A$ or of $B$; and if
$B_a(K)\supseteq\ker_K(A_a)$, then 
 $\alpha=[a]$ is an eigenvalue of $AB$ if and only if it is an eigenvalue
of~$A$ or of~$B$. 

\begin{exampleNumbered}\label{ex:ev order 1}
Suppose $A=\der-a$. 
Then for each element $u\neq 0$ in a differential field extension  of $K$ with $b:=u^\dagger\in K$ we have $A_b=A_{\ltimes u}=\der-(a-b)$,
so~$\mult_b(A)\geq 1$ iff $a-b\in K^\dagger$. Hence the only eigenvalue of $A$ is $[a]$.
\end{exampleNumbered}

\noindent
The {\bf spectrum} of $A$ is the set~$\Sigma(A)=\Sigma_K(A)$ of its eigenvalues. Thus $\Sigma(A)=\emptyset$ if $r=0$, and
for~$b\neq 0$ we have~$\mult_{a}(A)=\mult_{a}(bA)=\mult_{a}(A_{\ltimes b})$,
so $A$, $bA$, and~$Ab=bA_{\ltimes b}$  all have the same spectrum.\index{linear differential operator!eigenvalue}\index{eigenvalue!linear differential operator}\index{linear differential operator!spectrum}\index{spectrum}\label{p:SigmaA}
By [ADH, 5.1.21] we have
\begin{equation}\label{eq:spec A}
\Sigma(A) = \big\{ \alpha : A \in K[\der](\der-a) \text{ for some $a$ with $[a]=\alpha$} \big\}.
\end{equation}
Hence for irreducible $A$:   $\ \Sigma(A)\neq \emptyset\ \Leftrightarrow\ r=1$. 
From \eqref{eq:mult(AB)} we obtain:

{\samepage
\begin{lemma}\label{lem:spectrum fact}
Suppose~$B\neq 0$ and set $s:=\order B$. Then $$\mult_\alpha(B)\ \leq\ \mult_\alpha(AB)\ \leq\ 
\mult_\alpha(A)+\mult_\alpha(B),$$
where the second inequality is an equality if  $K$ is $s$-linearly surjective. Hence
$$\Sigma(B)\ \subseteq\ \Sigma(AB)\ \subseteq\ \Sigma(A)\cup\Sigma(B).$$
If $K$ is $s$-linearly surjective, 
then $\Sigma(AB) = \Sigma(A)\cup\Sigma(B)$.
\end{lemma}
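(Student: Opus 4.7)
The plan is to reduce everything to the cited inequalities of [ADH, 5.1.12] (as quoted in the paragraph preceding \eqref{eq:mult(AB)}), which are stated for multiplicities at the single element~$0$, by invoking the twisting action $A\mapsto A_a$. First I would fix a representative $a\in K$ of $\alpha$ and use two facts already in the excerpt: the map $A\mapsto A_a=u^{-1}Au$ (with $u^\dagger=a$ in a suitable extension) is a ring automorphism of $K[\der]$, so $(AB)_a=A_aB_a$; and by definition $\mult_\alpha(X)=\mult_a(X)=\dim_C\ker_K X_a$. Applying the cited double inequality to $(A_a,B_a)$ — note $B_a\ne 0$ because twisting is an automorphism — and then reading the three terms as multiplicities of $A$, $B$, $AB$ at $\alpha$ yields
$$\mult_\alpha(B)\ \le\ \mult_\alpha(AB)\ \le\ \mult_\alpha(A)+\mult_\alpha(B).$$

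For the sharpened upper bound under $s$-linear surjectivity, I would observe that twisting preserves order, so $B_a$ again has order $s$. The cited reference gives equality in the upper bound precisely when $B_a(K)\supseteq\ker_K A_a$, and $s$-linear surjectivity of $K$ forces $B_a(K)=K$, so the required inclusion is automatic and the equality follows.

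The spectrum statements then fall out of the multiplicity inequalities: $\Sigma(B)\subseteq\Sigma(AB)$ comes from the left inequality, and $\Sigma(AB)\subseteq\Sigma(A)\cup\Sigma(B)$ from the right (a sum of nonnegative integers being positive means one summand is positive). Under $s$-linear surjectivity the equality $\mult_\alpha(AB)=\mult_\alpha(A)+\mult_\alpha(B)$ also gives the reverse inclusion $\Sigma(A)\cup\Sigma(B)\subseteq\Sigma(AB)$, closing the loop. I do not expect any real obstacle: the whole argument is transport of structure along the group action $(a,A)\mapsto A_a$, after which the content is entirely contained in the cited multiplicativity formula for kernels of composed operators.
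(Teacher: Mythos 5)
Your proposal is correct and follows essentially the same route as the paper: the lemma is stated there as an immediate consequence of the inequalities \eqref{eq:mult(AB)} (obtained from [ADH, remarks before 5.1.12] via the twisting automorphism $A\mapsto A_a$, which preserves order and satisfies $(AB)_a=A_aB_a$), with $s$-linear surjectivity giving $B_a(K)=K\supseteq\ker_K A_a$ and hence equality, and the spectrum statements read off from the multiplicity inequalities exactly as you describe. No gaps.
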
}

\begin{example}
For $n\geq 1$ we have  $\Sigma\big( (\der-a)^n \big)=\big\{[a]\big\}$. (By induction on $n$, using  Example~\ref{ex:ev order 1} and Lemma~\ref{lem:spectrum fact}.)
\end{example}

\noindent
It follows from Lemma~\ref{lem:spectrum fact} that $A$ has at most $r$ eigenvalues.  More precisely:
 
\begin{lemma}\label{lem:size of Sigma(A)}
We have $\sum_\alpha \mult_\alpha(A)\leq r$.
If $\sum_\alpha \mult_\alpha(A) = r$, then $A$ splits over~$K$; the converse holds if
$r=1$ or $K$  is $1$-linearly surjective. 
\end{lemma}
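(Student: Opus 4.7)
The plan is to prove all three claims by induction on $r=\order(A)$, using \eqref{eq:spec A} to peel off an eigenvalue and Lemma~\ref{lem:spectrum fact} to control how multiplicities behave under the resulting factorization. Throughout I will use Example~\ref{ex:ev order 1}, which says that for any $c\in K$, the operator $\der-c$ has $[c]$ as its unique eigenvalue and $\mult_{[c]}(\der-c)=1$.

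\textbf{First inequality.} I would argue $\sum_\alpha \mult_\alpha(A)\le r$ by induction on $r$, the case $r=0$ being vacuous. For the inductive step, if $\Sigma(A)=\emptyset$ there is nothing to show; otherwise pick an eigenvalue $\alpha_0=[a]$ of $A$, and use \eqref{eq:spec A} to write $A=B(\der-a)$ with $B\in K[\der]$ of order $r-1$. The second inequality of Lemma~\ref{lem:spectrum fact} (which does not require linear surjectivity) gives
\[
\mult_\alpha(A)\ \le\ \mult_\alpha(B)+\mult_\alpha(\der-a)
\]
for every $\alpha$. Summing over $\alpha$ and using Example~\ref{ex:ev order 1} together with the inductive hypothesis applied to $B$, the right-hand side is at most $(r-1)+1=r$.

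\textbf{If the sum equals $r$, then $A$ splits.} Induct on $r$, the case $r=0$ again being vacuous. Assume $r\ge1$ and $\sum_\alpha\mult_\alpha(A)=r$; then $\Sigma(A)\neq\emptyset$, so we may factor $A=B(\der-a)$ as before. Summing the inequality from the first part yields
\[
r\ =\ \sum_\alpha\mult_\alpha(A)\ \le\ \sum_\alpha\mult_\alpha(B)+1\ \le\ (r-1)+1\ =\ r,
\]
so equality holds throughout. In particular $\sum_\alpha\mult_\alpha(B)=r-1$, whence by the inductive hypothesis $B$ splits over $K$, and therefore so does $A=B(\der-a)$.

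\textbf{Converse for $r=1$ or $K$ $1$-linearly surjective.} If $r=1$ and $A$ splits, then $A=c(\der-a)$ for some $c\in K^\times$ and $a\in K$, so Example~\ref{ex:ev order 1} gives $\sum_\alpha\mult_\alpha(A)=1=r$. Assume now $K$ is $1$-linearly surjective and $A$ splits, say $A=c(\der-a_1)\cdots(\der-a_r)$ with $c\in K^\times$; I would again induct on $r$, the case $r=0$ being immediate. For $r\ge1$, set $B:=c(\der-a_1)\cdots(\der-a_{r-1})$, so that $A=B(\der-a_r)$ with $B$ splitting over $K$ and of order $r-1$. Since $\order(\der-a_r)=1$ and $K$ is $1$-linearly surjective, the second inequality of Lemma~\ref{lem:spectrum fact} is an equality, so $\mult_\alpha(A)=\mult_\alpha(B)+\mult_\alpha(\der-a_r)$ for all $\alpha$. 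Summing over $\alpha$, using the inductive hypothesis on $B$ and Example~\ref{ex:ev order 1}, gives $\sum_\alpha\mult_\alpha(A)=(r-1)+1=r$.

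The only point requiring care is that the equality in Lemma~\ref{lem:spectrum fact} is about the \emph{right} factor, so in the final induction one must peel off the rightmost factor $(\der-a_r)$ (of order $1$) to legitimately invoke the hypothesis that $K$ is $1$-linearly surjective; this is the main thing to get right in the write-up.
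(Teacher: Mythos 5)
Your proposal is correct and follows essentially the same route as the paper: induction on $r$, peeling off a right factor $(\der-a)$ via \eqref{eq:spec A} and bounding multiplicities with Lemma~\ref{lem:spectrum fact}, and for the converse using the splitting to make the right-hand order-$1$ factor explicit so that $1$-linear surjectivity turns the second inequality of that lemma into an equality. The paper simply packages the three parts into a single induction, but the decomposition, the key lemmas, and the point you flag about peeling off the rightmost (order~$1$) factor are exactly as in the paper's argument.
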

\begin{proof}
By induction on $r$. The case $r=0$ is obvious,
so suppose~$r>0$. We may also assume $\Sigma(A)\neq\emptyset$: otherwise  $\sum_\alpha \mult_\alpha(A)=0$ 
and $A$ does not split over~$K$. Now~\eqref{eq:spec A} gives~$a$,~$B$ with 
$A=B(\der-a)$. By Example~\ref{ex:ev order 1}  we have~$\Sigma(\der-a)=\big\{[a]\big\}$ and $\mult_a(\der-a)=1$.
By the inductive hypothesis applied to $B$ and the second inequality in Lemma~\ref{lem:spectrum fact} we thus get~$\sum_\alpha \mult_\alpha(A)\leq r$.

Suppose that $\sum_\alpha \mult_\alpha(A) = r$.
Then    $\sum_\alpha \mult_\alpha(B) = r-1$ by  
Lemma~\ref{lem:spectrum fact} and
the inductive hypothesis applied to $B$.
Therefore $B$ splits over $K$, again by the inductive hypothesis, and so does $A$.
Finally, if $K$ is $1$-linearly surjective and~$A$ splits over $K$, then we arrange that $B$ splits over $K$,
so $\sum_\alpha \mult_\alpha(B) = r-1$ by the inductive hypothesis,
hence $\sum_\alpha \mult_\alpha(A) = r$ by Lemma~\ref{lem:spectrum fact}.
\end{proof}

\noindent
Section~\ref{sec:eigenvalues and splitting} gives a more explicit proof of Lemma~\ref{lem:size of Sigma(A)},
under additional hypotheses on $K$.
Next, let $L$ be a differential field extension of $K$. Then 
$\mult_a(A)$ does not strictly decrease in passing from $K$ to $L$~[ADH, 4.1.13]. Hence the group morphism
$$a+K^\dagger\mapsto a+L^\dagger\colon K/K^\dagger\to L/L^\dagger$$ restricts to a map
$\Sigma_K(A)\to\Sigma_L(A)$; in particular, if $\Sigma_K(A)\neq\emptyset$, then $\Sigma_L(A)\neq\emptyset$.
If~$L^\dagger \cap K=K^\dagger$, then $\abs{\Sigma_K(A)}\leq\abs{\Sigma_L(A)}$, and
$\sum_\alpha \mult_\alpha(A)$ also 
does not strictly decrease if $K$ is replaced by $L$.

\begin{lemma}\label{lem:split evs}
Let $a_1,\dots, a_r\in K$ and 
$$A\ =\ (\der-a_r)\cdots(\der-a_1),\quad
\sum_{\alpha} \mult_{\alpha}(A)\ =\ r.$$
Then the spectrum of~$A$ is $\big\{[a_1],\dots,[a_r]\big\}$, and for all $\alpha$,
$$\mult_{\alpha}(A)\ =\ 
\big| \big\{ i\in\{1,\dots,r\}:\ \alpha=[a_i] \big\}  \big|.$$
\end{lemma}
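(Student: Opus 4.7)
The plan is to argue by induction on $r$, reducing to the order-one case handled in Example~\ref{ex:ev order 1}. For $r=0$ (the empty product $A=1$) the statement is vacuous, and for $r=1$ Example~\ref{ex:ev order 1} gives exactly $\Sigma(A)=\{[a_1]\}$ with multiplicity $1$. So suppose $r\ge 2$ and the result holds at order $r-1$. Write
\[
A\ =\ B(\der-a_1), \qquad B\ :=\ (\der-a_r)\cdots(\der-a_2),
\]
so $\order B = r-1$.

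The crucial observation is that we do not need any linear surjectivity assumption to upgrade the inequalities of Lemma~\ref{lem:spectrum fact} to equalities; a global counting argument suffices. Indeed, by that lemma, for every $\alpha$,
\[
\mult_\alpha(B)\ \le\ \mult_\alpha(A)\ \le\ \mult_\alpha(B)+\mult_\alpha(\der-a_1),
\]
and by Example~\ref{ex:ev order 1}, $\sum_\alpha \mult_\alpha(\der-a_1)=1$. Summing over $\alpha$ and using $\sum_\alpha\mult_\alpha(A)=r$ together with Lemma~\ref{lem:size of Sigma(A)} applied to $B$ (which yields $\sum_\alpha\mult_\alpha(B)\le r-1$) gives
\[
r\ =\ \sum_\alpha \mult_\alpha(A)\ \le\ \sum_\alpha \mult_\alpha(B)+1\ \le\ (r-1)+1\ =\ r,
\]
so all the displayed inequalities are equalities. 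In particular $\sum_\alpha \mult_\alpha(B)=r-1$, and since the per-$\alpha$ inequalities $\mult_\alpha(A)\le \mult_\alpha(B)+\mult_\alpha(\der-a_1)$ sum to equality, each is itself an equality:
\[
\mult_\alpha(A)\ =\ \mult_\alpha(B)+\mult_\alpha(\der-a_1)\qquad\text{for every $\alpha\in K/K^\dagger$.}
\]

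Now the inductive hypothesis applies to $B$: its spectrum is $\{[a_2],\dots,[a_r]\}$ and $\mult_\alpha(B)=|\{i\in\{2,\dots,r\}:\alpha=[a_i]\}|$ for all $\alpha$. Combining this with $\mult_\alpha(\der-a_1)=1$ if $\alpha=[a_1]$ and $0$ otherwise (Example~\ref{ex:ev order 1}) and the additivity established above yields
\[
\mult_\alpha(A)\ =\ \big|\{i\in\{1,\dots,r\}:\alpha=[a_i]\}\big|
\]
for every $\alpha$, from which $\Sigma(A)=\{[a_1],\dots,[a_r]\}$ is immediate.

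The main subtlety, rather than an obstacle, lies in resisting the temptation to invoke $s$-linear surjectivity to get equality in Lemma~\ref{lem:spectrum fact} locally; that hypothesis is unavailable here and unnecessary, because the equality $\sum_\alpha\mult_\alpha(A)=r$ combined with the order-bound of Lemma~\ref{lem:size of Sigma(A)} for $B$ squeezes the chain of inequalities from both ends and forces local equality pointwise in $\alpha$.
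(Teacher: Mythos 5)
Your proof is correct and is essentially the paper's argument (subadditivity of multiplicities from Lemma~\ref{lem:spectrum fact} combined with Example~\ref{ex:ev order 1}, then a counting squeeze against the hypothesis $\sum_\alpha\mult_\alpha(A)=r$), merely packaged as an induction on $r$ that peels off the factor $\der-a_1$ and invokes Lemma~\ref{lem:size of Sigma(A)} for $B$, whereas the paper gets the pointwise bound $\mult_\alpha(A)\le\big|\{i:\alpha=[a_i]\}\big|$ in one stroke and sums over $\alpha$. One small inaccuracy: the inequality $\mult_\alpha(B)\le\mult_\alpha(A)$ you state is not what Lemma~\ref{lem:spectrum fact} gives (it bounds the multiplicity of the \emph{right} factor by that of the product, here $\mult_\alpha(\der-a_1)\le\mult_\alpha(A)$, and the left-factor version needs a surjectivity condition), but since you never use it, the argument is unaffected.
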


\begin{proof}
Let  $i$ range over $\{1,\dots,r\}$.
By Lemma~\ref{lem:spectrum fact} and Example~\ref{ex:ev order 1},
$$\mult_\alpha(A)\ \leq\ \sum_i \mult_\alpha(\der-a_i)\ =\ 
\big| \big\{ i : \alpha=[a_i] \big\}  \big|$$
and hence
$$r\ =\ \sum_{\alpha} \mult_{\alpha}(A)\ \leq\ \sum_\alpha \big| \big\{ i : \alpha=[a_i] \big\}  \big|\ =\  r.$$
Thus for each $\alpha$ we have $\mult_\alpha(A) = 
\big| \big\{ i : \alpha=[a_i] \big\}  \big|$ as required.
\end{proof}

\noindent
Recall from [ADH, 5.1.8] that~$D^*\in K[\der]$ denotes the {\it adjoint}\/ of~$D\in K[\der]$, and that the map $D\mapsto D^*$ is an involution of the ring $K[\der]$ with  $a^*=a$ for all~$a$ and~$\der^*=-\der$.\index{linear differential operator!adjoint}\index{adjoint}\label{p:A*}
If $A$ splits over $K$, then so does $A^*$. Furthermore, $(A_a)^*=(A^*)_{-a}$ for all $a$. By
Lemmas~\ref{lem:size of Sigma(A)} and~\ref{lem:split evs}:

\begin{cor}\label{cor:spectrum, self-adjoint, 1}
Suppose $K$ is $1$-linearly surjective and $\sum_\alpha \mult_\alpha(A)=r$.  Then $\mult_\alpha(A)=\mult_{-\alpha}(A^*)$
for all $\alpha$. In particular,
the map~$\alpha\mapsto-\alpha$ restricts to a bijection~$\Sigma(A)\to\Sigma(A^*)$.
\end{cor}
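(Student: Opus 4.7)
The plan is to reduce to Lemma~\ref{lem:split evs} applied to both $A$ and $A^*$, using a splitting of $A$ over $K$ and the formula for the adjoint of a splitting.

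First, since $K$ is $1$-linearly surjective and $\sum_\alpha \mult_\alpha(A)=r$, Lemma~\ref{lem:size of Sigma(A)} gives a splitting
$$ A\ =\ c(\der-a_r)(\der-a_{r-1})\cdots(\der-a_1) \qquad (c\in K^\times,\ a_1,\dots,a_r\in K). $$
Since $\mult_\alpha(A)=\mult_\alpha(c^{-1}A)$ for all $\alpha$ (as $(c^{-1}A)_a = c^{-1}A_a$, so the kernels agree), Lemma~\ref{lem:split evs} applied to $c^{-1}A$ shows
$$ \mult_\alpha(A)\ =\ \big|\{i\in\{1,\dots,r\}:\alpha=[a_i]\}\big| \qquad\text{for all $\alpha$.} $$

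Next I would compute the adjoint. Using $(BD)^*=D^*B^*$, $\der^*=-\der$, and $a^*=a$ on each factor $\der-a_i$, whose adjoint is $-\der-a_i=-(\der+a_i)$, yields
$$ A^*\ =\ (-1)^r c\,(\der+a_1)(\der+a_2)\cdots(\der+a_r)\ =\ c'\,\big(\der-(-a_1)\big)\big(\der-(-a_2)\big)\cdots\big(\der-(-a_r)\big), $$
where $c':=(-1)^r c\in K^\times$. Thus $A^*$ splits over $K$, and since $K$ is $1$-linearly surjective, a second application of Lemma~\ref{lem:size of Sigma(A)} gives $\sum_\beta \mult_\beta(A^*)=r$. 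Hence Lemma~\ref{lem:split evs} applies to $(c')^{-1}A^*$ (with the same remark about rescaling not affecting multiplicities), and for every $\alpha$ we get
$$ \mult_{-\alpha}(A^*)\ =\ \big|\{i:-\alpha=[-a_i]\}\big|\ =\ \big|\{i:\alpha=[a_i]\}\big|\ =\ \mult_\alpha(A), $$
which is the desired identity. The bijectivity of $\alpha\mapsto -\alpha\colon \Sigma(A)\to \Sigma(A^*)$ is then immediate since this map is an involution of $K/K^\dagger$ that exchanges the supports of the multiplicity functions.

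There is no serious obstacle here; the only points requiring care are that multiplicities and spectra are unaffected by multiplication by a unit of $K$ (needed to strip the leading $c'$ before invoking Lemma~\ref{lem:split evs} for $A^*$), and that $1$-linear surjectivity of $K$ is used twice, once for each direction of Lemma~\ref{lem:size of Sigma(A)}: first the hypothesis $\sum_\alpha\mult_\alpha(A)=r$ provides a splitting of $A$, and then the splitting of $A^*$ derived from it forces $\sum_\beta\mult_\beta(A^*)=r$.
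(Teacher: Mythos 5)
Your argument is correct and is essentially the paper's intended proof: the corollary is derived there directly from Lemmas~\ref{lem:size of Sigma(A)} and~\ref{lem:split evs}, exactly as you do, with $1$-linear surjectivity entering only through the converse direction of Lemma~\ref{lem:size of Sigma(A)} applied to $A^*$. One small slip: since $D\mapsto D^*$ is an anti-automorphism, $A=c(\der-a_r)\cdots(\der-a_1)$ gives $A^*=(-1)^r(\der+a_1)\cdots(\der+a_r)\,c$ with the unit $c$ on the right, not the left as in your display. This is harmless: multiplicities are unchanged by multiplying by a unit on either side (by the identities $\mult_a(A)=\mult_a(bA)=\mult_a(A_{\ltimes b})$ and $Ab=bA_{\ltimes b}$ noted before \eqref{eq:spec A}), or equivalently, moving $c$ to the left replaces each $a_i$ by $a_i+c^\dagger$ with $c^\dagger\in K^\dagger$, leaving the classes $[-a_i]$ and hence your computation of $\mult_{-\alpha}(A^*)$ intact.
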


\noindent
Let $\phi\in K^\times$. Then $(A^\phi)_a=(A_{\phi a})^\phi$ and hence
$$\mult_a(A^\phi)\  =\  \mult_{\phi a}(A),$$
so the group isomorphism 
\begin{equation}\label{eq:Kphidagger}
[a]\mapsto [\phi a]\ \colon\  K^\phi/\phi^{-1}K^\dagger\to K/K^\dagger
\end{equation}
maps $\Sigma(A^\phi)$ onto $\Sigma(A)$.

\medskip
\noindent
Note that $K[\der]/K\der]A$ as a $K$-linear space has dimension $r=\order A$. 
Recall from~[ADH, 5.1] that  $A$ and $B\neq 0$  are said to {\it have the same type}\/ if the (left) $K[\der]$-modules
$K[\der]/K[\der]A$ and $K[\der]/K[\der]B$ are isomorphic (and so $\order B=r$).  By [ADH, 5.1.19]:\index{linear differential operator!type}\index{type}

\begin{lemma}\label{lem:same type}
The operators $A$ and $B\neq 0$ have the same type iff $\order B=r$ and there is $R\in K[\der]$ of order~$<r$ with $1\in K[\der]R+K[\der]A$ and $BR\in K[\der]A$.
\end{lemma}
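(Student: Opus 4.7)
The plan is to establish a bijection between isomorphisms $\psi\colon K[\der]/K[\der]B \to K[\der]/K[\der]A$ and elements $R + K[\der]A$ satisfying the stated conditions, using the standard fact that a left $K[\der]$-module morphism out of a cyclic module $K[\der]/K[\der]B$ is determined by the image of the generator $1 + K[\der]B$, together with a $K$-dimension count.

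For the forward direction, suppose $\psi\colon K[\der]/K[\der]B \to K[\der]/K[\der]A$ is an isomorphism of left $K[\der]$-modules. Since these quotients have $K$-dimension $\order A=r$ and $\order B$ respectively, we immediately get $\order B = r$. Write $\psi(1+K[\der]B) = R_0 + K[\der]A$ for some $R_0 \in K[\der]$, and use left-division by $A$ (which has invertible leading coefficient) to replace $R_0$ by the remainder $R$ of order $<r$ in the same coset. The identity $\psi(B+K[\der]B) = 0$ says exactly $BR \in K[\der]A$. Since $\psi$ is surjective, there exists $S \in K[\der]$ with $\psi(S+K[\der]B) = 1 + K[\der]A$; but by $K[\der]$-linearity $\psi(S+K[\der]B) = SR + K[\der]A$, so $1 - SR \in K[\der]A$, giving $1 \in K[\der]R + K[\der]A$.

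For the reverse direction, assume $\order B = r$ and $R$ of order $<r$ satisfies both $BR \in K[\der]A$ and $1 \in K[\der]R + K[\der]A$. Define
\[
\psi\ \colon\ K[\der]/K[\der]B \to K[\der]/K[\der]A, \qquad P + K[\der]B \mapsto PR + K[\der]A.
\]
This is well-defined because $QB \cdot R = Q(BR) \in K[\der]A$ for all $Q \in K[\der]$, and it is visibly a morphism of left $K[\der]$-modules. Writing $1 = SR + TA$ with $S, T \in K[\der]$, for any $P + K[\der]A$ we have $P \equiv PSR \pmod{K[\der]A}$, so $\psi(PS + K[\der]B) = P + K[\der]A$; hence $\psi$ is surjective. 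Both source and target are $K$-vector spaces of the same finite dimension $r$, so $\psi$ is automatically an isomorphism.

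The main non-obstacle (i.e., the content actually doing the work) is the compatibility bookkeeping: noticing that $BR \in K[\der]A$ encodes well-definedness of $\psi$, while $1 \in K[\der]R + K[\der]A$ encodes its surjectivity, and then promoting surjectivity to bijectivity via the equal $K$-dimensions. No delicate computation is needed; the key conceptual point is simply that maps out of the cyclic module $K[\der]/K[\der]B$ are parametrized by choices of the image of the cyclic generator, subject exactly to the annihilation condition $BR \in K[\der]A$.
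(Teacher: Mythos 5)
Your proof is correct. The paper does not prove this lemma itself but cites [ADH, 5.1.19], and your argument is the standard one behind that reference: isomorphisms out of the cyclic module $K[\der]/K[\der]B$ are parametrized by the image $R+K[\der]A$ of the generator, with $BR\in K[\der]A$ encoding well-definedness, $1\in K[\der]R+K[\der]A$ encoding surjectivity, and the equality of $K$-dimensions upgrading surjectivity to bijectivity. (Only a terminological quibble: writing $R_0=QA+R$ with $\order R<r$ is division on the right by $A$, legitimate since $K[\der]$ is right Euclidean; calling it ``left-division'' is harmless but nonstandard.)
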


\noindent
Hence if $A$, $B$ have the same type, then they also have the same type as elements of~$L[\der]$, for any differential field
extension $L$ of $K$.   Since $B\mapsto B_a$ is an automorphism of the ring~$K[\der]$, Lemma~\ref{lem:same type} and~[ADH, 5.1.20] yield:

\begin{lemma}\label{lem:same type and eigenvalues}
If $A$ and $B\neq 0$ have the same type, then so do $A_a$,~$B_a$, for all $a$, and thus
$A$, $B$ have the same eigenvalues, with same multiplicity. \end{lemma}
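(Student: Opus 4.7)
The strategy is to transport the characterization of ``same type'' from Lemma~\ref{lem:same type} along the twisting automorphism, and then invoke [ADH, 5.1.20] to pass from isomorphic module quotients to equal kernel dimensions.

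First I would recall the structural fact already noted in the text: for each $a\in K$, the map $D\mapsto D_a\colon K[\der]\to K[\der]$ is a ring automorphism that is the identity on $K$ (its inverse is $E\mapsto E_{-a}$). In particular, it preserves the order of every operator, sends left ideals to left ideals, and carries $K[\der]D$ to $K[\der]D_a$ for every $D$. Using Lemma~\ref{lem:same type}, since $A$ and $B$ have the same type, we have $\order B = r$ and some $R\in K[\der]$ of order $<r$ with $1\in K[\der]R+K[\der]A$ and $BR\in K[\der]A$. Applying the automorphism $D\mapsto D_a$ to these relations we obtain $\order B_a=r$, $\order R_a<r$, and
\[
1\ =\ 1_a\ \in\ K[\der]R_a+K[\der]A_a, \qquad B_aR_a\ =\ (BR)_a\ \in\ K[\der]A_a.
\]
By the converse direction of Lemma~\ref{lem:same type} this shows that $A_a$ and $B_a$ have the same type. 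Since $a$ was arbitrary, the first assertion of the lemma is proved.

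For the statement about eigenvalues and their multiplicities, fix $a\in K$ and observe that $A_a$ and $B_a$ having the same type means, by definition, that the left $K[\der]$-modules $K[\der]/K[\der]A_a$ and $K[\der]/K[\der]B_a$ are isomorphic. Invoking [ADH, 5.1.20], this module isomorphism yields an isomorphism $\ker_K A_a\cong \ker_K B_a$ of $C$-linear spaces. Taking $C$-dimensions gives
\[
\mult_a(A)\ =\ \dim_C \ker_K A_a\ =\ \dim_C \ker_K B_a\ =\ \mult_a(B),
\]
and since this holds for every $a\in K$, the function $\mult_{(-)}$ agrees on $A$ and on $B$ at every class $\alpha=[a]\in K/K^\dagger$. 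In particular $\Sigma(A)=\Sigma(B)$ with matching multiplicities.

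The verifications are essentially routine once Lemma~\ref{lem:same type} and [ADH, 5.1.20] are in hand; the only point requiring a moment's care is the observation that $D\mapsto D_a$ is a ring automorphism of $K[\der]$ (not merely an additive map), so that it indeed transports the identities $1\in K[\der]R+K[\der]A$ and $BR\in K[\der]A$ term-by-term. There is no real obstacle; the lemma is a direct consequence of the automorphism property of twisting together with the fact that ``same type'' is a module-theoretic condition that is preserved under any ring automorphism of $K[\der]$.
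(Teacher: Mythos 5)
Your proof is correct and follows exactly the paper's (very terse) argument: since twisting $D\mapsto D_a$ is a ring automorphism of $K[\der]$ that is the identity on $K$ and preserves order, the characterization of ``same type'' in Lemma~\ref{lem:same type} transports to $A_a$, $B_a$, and [ADH, 5.1.20] then gives $\dim_C\ker_K A_a=\dim_C\ker_K B_a$ for all $a$, i.e.\ equal multiplicities and hence the same spectrum. Nothing further is needed.
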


\noindent
By this lemma the spectrum of $A$ depends only on the type of $A$, that is, on the isomorphism type of the $K[\der]$-module
$K[\der]/K[\der]A$, suggesting one might try to associate a spectrum to each differential module over $K$. 
(Recall from [ADH, 5.5] that  a differential module over $K$ is a $K[\der]$-module of finite dimension as $K$-linear space.) We do not develop this point of view further  in the present monograph, where
our focus is on linear differential operators. (There will be more on this in \cite{ADH6}.) But we remark here that  it  motivates the terminology of ``eigenvalues'' originating in the case of the
differential field of Puiseux series over~$\C$ treated in~\cite{vdPS}. 

\section{Eigenvalues and Splittings}\label{sec:eigenvalues and splitting}

\noindent
{\it In this section $K$ is a differential field such that $C$ is algebraically closed and $K^\dagger$ is divisible.}\/
We let $A$, $B$ range over $K[\der]$, and we assume $A\neq 0$ and set $r:=\order A$.

\subsection*{Spectral decomposition of  differential operators}
Fix a complement $\Lambda$ of the subspace $K^\dagger$ of the $\Q$-linear space $K$, let $\Univ:= K\big[\!\ex(\Lambda)\big]$ be the
universal exponential extension of $K$, let $\Omega$ be the differential fraction field of the differential $K$-algebra~$\Univ$, and let $\lambda$ range over $\Lambda$. Then
$$A_\lambda\ =\ A_{\ltimes\!\ex(\lambda)}\ =\ \ex({-\lambda})A\ex(\lambda)\in K[\der].$$ 
Moreover, for every $a\in K$ there is a unique $\lambda$ with $a-\lambda\in K^\dagger$, so 
$\mult_{[a]}(A)=\mult_{\lambda}(A)$. Call $\lambda$ an {\bf eigenvalue} of $A$ with respect 
to our complement~$\Lambda$ of $K^\dagger$ in~$K$ if~$[\lambda]$ is an eigenvalue of $A$; thus the group isomorphism $\lambda\mapsto [\lambda]\colon \Lambda \to K/K^\dagger$ maps the set of eigenvalues of~$A$ with respect to~$\Lambda$ onto the spectrum of $A$.  
For~$f\in\Univ$ with spectral decomposition~$(f_\lambda)$ we have \index{eigenvalue}
$$A(f)\ =\ \sum_\lambda A_\lambda(f_\lambda)\,\ex(\lambda),$$
so $A(\Univ^\times)\subseteq \Univ^\times\cup \{0\}$.
We call the family $(A_\lambda)$ the {\bf spectral decomposition} of $A$ (with respect to $\Lambda$).
Given a $C$-linear subspace $V$ of $\Univ$, we set $V_\lambda := V\cap K\ex(\lambda)$, a $C$-linear subspace  of $V$; the sum $\sum_\lambda V_\lambda$ is direct. For $V:=\Univ$ we have $\Univ_\lambda=K\ex(\lambda)$, and $\Univ=\bigoplus_\lambda \Univ_\lambda$
with $A(\Univ_\lambda)\subseteq\Univ_\lambda$ for all $\lambda$.
Taking $V:=\ker_{\Univ} A$, we obtain~$V_\lambda=(\ker_{K} A_\lambda)\ex(\lambda)$ 
and hence $\dim_C V_\lambda = \mult_\lambda(A)$,
and $V=\bigoplus_\lambda V_\lambda$. Thus  \index{spectral decomposition!of a differential operator}\index{decomposition!spectral} \index{linear differential operator!spectral decomposition}
\begin{equation}\label{eq:bd on sum mults}
\abs{\Sigma(A)}\ \leq\ \sum_\lambda \mult_\lambda(A)\ =\ \dim_C \ker_{\Univ} A\ \leq\ r.
\end{equation}
Moreover:

\begin{lemma}\label{newlembasis} The $C$-linear space $\ker_{\Univ} A$ has a basis contained in $\Univ^\times=K^\times\ex(\Lambda)$.
\end{lemma}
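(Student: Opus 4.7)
The plan is to exploit the spectral decomposition $\Univ = \bigoplus_\lambda K\ex(\lambda)$ from the preceding discussion, together with the fact that $A$ respects this decomposition. First I would observe that for any $f \in \Univ$ with spectral decomposition $(f_\lambda)$, the computation $A(f) = \sum_\lambda A_\lambda(f_\lambda)\ex(\lambda)$ (just recalled above the lemma) shows that $A$ maps each summand $\Univ_\lambda = K\ex(\lambda)$ into itself, and that $A(f) = 0$ if and only if $A_\lambda(f_\lambda) = 0$ for every~$\lambda$.

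Consequently
\[
\ker_\Univ A \ =\ \bigoplus_\lambda (\ker_\Univ A \cap \Univ_\lambda)\ =\ \bigoplus_\lambda (\ker_K A_\lambda)\ex(\lambda),
\]
where each $\ker_K A_\lambda$ is a finite-dimensional $C$-linear subspace of $K$, of dimension $\mult_\lambda(A)$, which is nonzero for only finitely many~$\lambda$ by~\eqref{eq:bd on sum mults}. For each $\lambda$ with $\mult_\lambda(A) \geq 1$ I would pick a $C$-basis $y_{\lambda,1},\dots,y_{\lambda,m_\lambda}$ of $\ker_K A_\lambda$, with all $y_{\lambda,i} \in K^\times$, and form the finite set
\[
\mathcal{B}\ :=\ \big\{\, y_{\lambda,i}\ex(\lambda) \ :\ \lambda \in \Sigma(A),\ 1 \le i \le m_\lambda\,\big\}.
\]
Since the scaling isomorphism $y \mapsto y\ex(\lambda)\colon \ker_K A_\lambda \to (\ker_K A_\lambda)\ex(\lambda)$ is $C$-linear and bijective for each~$\lambda$, and the decomposition above is direct, $\mathcal{B}$ is a $C$-basis of $\ker_\Univ A$.

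Finally, each element of $\mathcal{B}$ lies in $K^\times \ex(\Lambda)$, and this set equals $\Univ^\times$ by Lemma~\ref{lem:only trivial units} applied to the group ring $\Univ = K[\ex(\Lambda)]$. No serious obstacle is anticipated; the entire argument is a direct bookkeeping consequence of the spectral decomposition and the identification of the units of $\Univ$.
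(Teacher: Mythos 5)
Your argument is correct and follows exactly the route the paper intends: the lemma is stated without a separate proof because it is an immediate consequence of the direct-sum decomposition $\ker_{\Univ}A=\bigoplus_\lambda(\ker_K A_\lambda)\ex(\lambda)$ established in the lines just above it, and your proof simply unpacks that. (One micro-simplification: you don't need to invoke \eqref{eq:bd on sum mults} for finiteness or to ``pick'' bases with entries in $K^\times$ — any $C$-basis of a nonzero subspace of $K$ automatically consists of nonzero, hence invertible, elements — but this changes nothing.)
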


\begin{example} 
We have a $C$-algebra isomorphism $P(Y)\mapsto P(\der)\colon C[Y]\to C[\der]$.
Suppose~$A\in C[\der]\subseteq K[\der]$,  let $P(Y)\in C[Y]$, $P(\der)=A$, and let
$c_1,\dots,c_n\in C$ be the distinct zeros of $P$, of respective multiplicities $m_1,\dots,m_n\in\N^{\geq 1}$ (so $r=\deg P=m_1+\cdots+m_n$).
Suppose also $C\subseteq \Lambda$, and $x\in K$ satisfies $x'=1$. 
(This holds in Example~\ref{ex:Q}.)
Then the $x^i\ex(c_j)\in \Univ$ ($1\leq j\leq n$, $0\leq i<m_j$)
form a basis of the $C$-linear space $\ker_{\Univ} A$ by [ADH, 5.1.18].
So the eigenvalues of~$A$ with respect to $\Lambda$ are $c_1,\dots,c_n$, with respective multiplicities $m_1,\dots,m_n$.
\end{example}

\begin{cor}\label{cor:sum of evs}
Suppose $\dim_C \ker_{\Univ} A = r\ge 1$ and $A=\der^r+a_{r-1}\der^{r-1}+\cdots+a_0$ where~$a_0,\dots,a_{r-1}\in K$.
Then
$$\sum_{\lambda} \mult_{\lambda}(A)\lambda\  \equiv\  -a_{r-1} \mod K^\dagger.$$
In particular,  $\sum_{\lambda} \mult_{\lambda}(A)\lambda=0$ iff $a_{r-1}\in K^\dagger$.
\end{cor}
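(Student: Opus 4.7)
The plan is to use the Wronskian and the classical Abel--Liouville formula $W^\dagger = -a_{r-1}$ for any fundamental system of solutions of $A(y)=0$, together with the explicit form of solutions in $\Univ$ furnished by Lemma~\ref{newlembasis} and the preceding direct-sum decomposition.

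First, since $V:=\ker_{\Univ}A=\bigoplus_\lambda V_\lambda$ with $V_\lambda=(\ker_K A_\lambda)\ex(\lambda)$ of $C$-dimension $m_\lambda:=\mult_\lambda(A)$, I would for each $\lambda$ choose a $C$-basis of $V_\lambda$; concatenating these bases yields a $C$-basis $u_1,\dots,u_r$ of $\ker_{\Univ}A$ (using $\sum_\lambda m_\lambda=r$) such that each $u_i$ has the shape $u_i=b_i\ex(\lambda_i)$ with $b_i\in K^\times$ and, for each $\lambda$, exactly $m_\lambda$ of the indices $i$ satisfy $\lambda_i=\lambda$. Thus $\sum_{i=1}^r\lambda_i=\sum_\lambda m_\lambda\,\lambda$.

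Next I would compute the Wronskian $W:=\Wr(u_1,\dots,u_r)$ in $\Univ$. Since $\ex(\lambda_i)^\dagger=\lambda_i\in K$, the $K$-linear subspace $K\ex(\lambda_i)$ of $\Univ$ is stable under $\der$; inductively, each derivative $u_i^{(j)}$ lies in $K\ex(\lambda_i)$, so we may write $u_i^{(j)}=q_{ij}\ex(\lambda_i)$ with $q_{ij}\in K$. Factoring $\ex(\lambda_j)$ out of column $j$ of the Wronskian matrix yields
\[
W\ =\ \ex\!\Big(\sum_{i=1}^r\lambda_i\Big)\cdot M,\qquad M:=\det(q_{j,i-1})_{i,j=1}^{r}\in K.
\]
Because the $u_i$ are $C$-linearly independent in $\Univ\subseteq\Omega$ and $C_\Omega=C$ (as noted after Lemma~\ref{lem:U simple}), the standard Wronskian criterion gives $W\neq 0$, whence $M\in K^\times$. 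Taking logarithmic derivatives,
\[
W^\dagger\ =\ \sum_{i=1}^r\lambda_i\ +\ M^\dagger\ \equiv\ \sum_\lambda m_\lambda\,\lambda \pmod{K^\dagger}.
\]

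The Abel--Liouville formula (a direct consequence of expanding $W'$ along the last row and using $u_i^{(r)}=-\sum_{k<r}a_k u_i^{(k)}$) gives $W^\dagger=-a_{r-1}$. Combining,
\[
\sum_\lambda \mult_\lambda(A)\,\lambda\ \equiv\ -a_{r-1}\pmod{K^\dagger},
\]
which is the first claim. For the addendum: $\sum_\lambda m_\lambda\,\lambda$ lies in $\Lambda$, and $K^\dagger\cap\Lambda=\{0\}$ by the definition of complement, so this sum vanishes iff $-a_{r-1}\in K^\dagger$, iff $a_{r-1}\in K^\dagger$. The only nontrivial ingredients are the Wronskian nonvanishing (ensured because $\Omega$ has constant field $C$) and the Abel--Liouville identity; everything else is bookkeeping on the spectral decomposition.
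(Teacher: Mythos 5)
Your proof is correct and takes essentially the same route as the paper: choose a basis of $\ker_{\Univ}A$ consisting of elements $f_j\ex(\lambda_j)$, factor the exponentials out of the Wronskian to get $w = (\det M)\ex(\sum_j\lambda_j)$ with $M\in\operatorname{GL}_r(K)$, apply the Abel--Liouville identity $w^\dagger=-a_{r-1}$, and read off the congruence. Your explicit verification that exactly $\mult_\lambda(A)$ of the $\lambda_j$ equal each $\lambda$ (so that $\sum_j\lambda_j=\sum_\lambda\mult_\lambda(A)\lambda$) and your handling of the addendum via $K^\dagger\cap\Lambda=\{0\}$ are spelled out a bit more fully than in the paper, but the substance is identical.
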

\begin{proof}
Take a basis $y_1,\dots,y_r$ of $\ker_{\Univ} A$ with $y_j=f_j\ex(\lambda_j)$, $f_j\in K^\times$, $\lambda_j\in\Lambda$. 
The Wronskian matrix $\operatorname{Wr}(y_1,\dots,y_r)$ of $(y_1,\dots,y_r)$ [ADH, p.~206] equals
$$\operatorname{Wr}(y_1,\dots,y_r)\ =\  M\begin{pmatrix} \ex(\lambda_1) & & \\ & \ddots & \\ & & \ex(\lambda_r) \end{pmatrix}\qquad\text{where $M\in\operatorname{GL}_n(K)$.}$$
Then $w:=\operatorname{wr}(y_1,\dots,y_r)=\det \operatorname{Wr}(y_1,\dots,y_r)\neq 0$ by [ADH, 4.1.13] and
 $$-a_{r-1}\ =\ w^\dagger\ =\ (\det M)^\dagger + \lambda_1+\cdots+\lambda_r$$ where we used [ADH, 4.1.17] for the first equality.
\end{proof}

\noindent
If $A$ splits over $K$, then so does $A_\lambda$.
Moreover, if $A_\lambda(K)=K$, then $A(\Univ_\lambda)=\Univ_\lambda$:
for   $f,g\in K$ with $A_\lambda(f)=g$ we have
$A\big(f\ex(\lambda)\big)=g\ex(\lambda)$.
Thus:

\begin{lemma}\label{lem:A(U)=U}
Suppose $K$ is $r$-linearly surjective, or $K$ is $1$-linearly surjective and~$A$ splits over $K$.  Then $A(\Univ_\lambda)=\Univ_\lambda$ for all $\lambda$ and hence $A(\Univ)=\Univ$.
\end{lemma}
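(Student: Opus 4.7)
The proof is essentially immediate from the spectral decomposition of $A$ and from the observation already recorded just before the lemma, namely that $A\bigl(f\ex(\lambda)\bigr)=A_\lambda(f)\,\ex(\lambda)$ for $f\in K$, so $A(\Univ_\lambda)=\Univ_\lambda$ as soon as $A_\lambda(K)=K$. Thus the plan reduces to showing $A_\lambda(K)=K$ for every $\lambda\in\Lambda$ under either hypothesis, and then assembling these equalities to get $A(\Univ)=\Univ$.

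First I would treat Case~1, where $K$ is $r$-linearly surjective. Since twisting preserves order, $A_\lambda\in K[\der]^{\neq}$ has order exactly $r$, so $A_\lambda(K)=K$ directly from the definition of $r$-linear surjectivity.

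Next I would treat Case~2, where $K$ is $1$-linearly surjective and $A$ splits over~$K$. The remark just before the lemma states that $A_\lambda$ also splits over $K$; so write
\[
A_\lambda\ =\ c(\der-a_1)\cdots(\der-a_r)\qquad (c\in K^\times,\ a_1,\dots,a_r\in K).
\]
Each factor $\der-a_i$ is surjective on $K$ by $1$-linear surjectivity, and multiplication by $c\in K^\times$ is also surjective; so the composition $A_\lambda$ is surjective on $K$.

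Finally, to conclude $A(\Univ)=\Univ$, use that every $g\in\Univ$ has spectral decomposition $g=\sum_\lambda g_\lambda\ex(\lambda)$ with $g_\lambda=0$ for all but finitely many $\lambda$; for each such~$\lambda$ pick $f_\lambda\in K$ with $A_\lambda(f_\lambda)=g_\lambda$ (and $f_\lambda=0$ whenever $g_\lambda=0$), then $f:=\sum_\lambda f_\lambda\ex(\lambda)\in\Univ$ satisfies $A(f)=g$. There is no real obstacle here; the only thing to verify is that twisting by $\ex(\lambda)$ preserves both the order of~$A$ and the property of splitting over~$K$, both of which are already recorded in the text.
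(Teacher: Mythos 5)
Your proof is correct and follows exactly the route the paper intends: the remarks immediately preceding the lemma already reduce the claim to $A_\lambda(K)=K$ for each $\lambda$, and you verify this in each case (trivially in Case~1, by factoring through first-order surjective operators in Case~2) before assembling over the finitely many nonzero spectral components. No issues.
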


\noindent
In the next subsection we  study the 
connection between splittings of $A$ and
bases of the $C$-linear space $\ker_{\Univ} A$ in more detail.

\subsection*{Constructing splittings and bases}
Recall that $\order A=r\in\N$. Set $\Univ=\Univ_K$, so $\Univ^\times =  K^\times\ex(\Lambda)$. Let 
$y_1,\dots,y_r\in \Univ^\times$. We construct a sequence~$A_0,\dots,A_n$  of monic operators in $K[\der]$ with~$n\leq r$ as follows.
First, set $A_0:=1$. Next, given~$A_0,\dots,A_{i-1}$ in $K[\der]^{\ne}$ ($1\leq i\leq r$), set $f_i:=A_{i-1}(y_i)$; if $f_i\ne 0$, then $f_i\in \Univ^\times$, so $f_i^\dagger\in K$, and the next term in the sequence
is
$$A_i\ :=\ (\der-a_i)A_{i-1}, \qquad a_i\ :=\ f_i^\dagger,$$ 
whereas if $f_i=0$, then $n:=i-1$ and the construction is finished.

\begin{lemma}\label{lem:distinguished splitting}
$\ker_{\Univ} A_i=Cy_1\oplus\cdots\oplus Cy_i$ \textup{(}internal direct sum\textup{)} for $i=0,\dots,n$.
\end{lemma}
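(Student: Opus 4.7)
The plan is to prove the lemma by induction on $i$, using the recursive structure of the $A_j$ and the general dimension bound $\dim_C \ker_{\Univ} A \leq \order A$ from \eqref{eq:bd on sum mults} (which is itself a consequence of the corresponding bound for the differential field $\Omega$, since $\Univ \subseteq \Omega$ and both have constant field $C$).

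The base case $i=0$ is immediate: $A_0 = 1$ has kernel $\{0\}$, which coincides with the empty direct sum. For the inductive step, assume $\ker_{\Univ} A_{i-1} = Cy_1 \oplus \cdots \oplus Cy_{i-1}$; in particular, $y_1,\dots,y_{i-1}$ are $C$-linearly independent elements of $\ker_{\Univ} A_i$ (since $A_i \in K[\der]A_{i-1}$). I would then verify directly that $y_i \in \ker_{\Univ} A_i$: by construction $a_i = f_i^\dagger$ with $f_i = A_{i-1}(y_i)$, so
\[
A_i(y_i)\ =\ (\der-a_i)A_{i-1}(y_i)\ =\ f_i' - f_i^\dagger\, f_i\ =\ 0.
\]

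Next I would show that $y_i$ is $C$-linearly independent from $y_1,\dots,y_{i-1}$: if it were in their $C$-span, then $y_i \in \ker_{\Univ} A_{i-1}$ by inductive hypothesis, i.e., $f_i = A_{i-1}(y_i) = 0$, contradicting the construction (the sequence would have terminated at step $i-1$, giving $n < i$). Hence $Cy_1 \oplus \cdots \oplus Cy_i$ is an internal direct sum of dimension $i$ sitting inside $\ker_{\Univ} A_i$.

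Finally, the dimension count closes the argument: since $\order A_i = \order A_{i-1} + 1 = i$ (each step prepends a first-order factor), \eqref{eq:bd on sum mults} applied to $A_i$ yields $\dim_C \ker_{\Univ} A_i \leq i$, forcing equality with the $i$-dimensional subspace exhibited above. There is no real obstacle here; the only thing to be careful about is invoking the dimension bound correctly, since $\Univ$ is an integral domain rather than a field—but this is handled by working inside $\Omega$ and using $C_\Omega = C$.
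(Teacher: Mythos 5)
Your proof is correct, and its skeleton (induction on $i$, checking $A_i(y_i)=0$ and that $y_i$ is not a $C$-combination of $y_1,\dots,y_{i-1}$ because $f_i\neq 0$ for $i\le n$) matches the paper's. Where you diverge is in how the inductive step is closed: the paper does not do a dimension count at all, but instead cites the exact kernel-extension fact [ADH, 5.1.14(i)] — if $A_{i-1}(y_i)=f_i\neq 0$ and $A_i=(\der-f_i^\dagger)A_{i-1}$, then $\ker_{\Univ}A_i=\ker_{\Univ}A_{i-1}\oplus Cy_i$ — which gives the equality directly, factor by factor, without appealing to any global bound. You instead exhibit the $i$-dimensional subspace $Cy_1\oplus\cdots\oplus Cy_i\subseteq\ker_{\Univ}A_i$ and force equality from $\dim_C\ker_{\Univ}A_i\le\order A_i=i$, using \eqref{eq:bd on sum mults}; your justification of that bound (pass to $\Omega=\Frac(\Univ)$, whose constant field is $C$, and use the standard Wronskian bound for differential fields) is legitimate and non-circular, since both the bound and $C_\Omega=C$ are established earlier in the paper. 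The trade-off is mild: the paper's route is local and self-contained at the level of order-one factors, while yours leans on the already-available global dimension bound and in exchange spells out explicitly the verifications ($A_i(y_i)=f_i'-f_i^\dagger f_i=0$, with $f_i\in\Univ^\times$ so that $a_i=f_i^\dagger\in K$) that the paper leaves implicit in its citation. Both arguments are sound.
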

\begin{proof}
By induction on $i\le n$. The case $i=0$ being trivial, suppose $1\le i\le n$ and the claim holds
for $i-1$ in place of $i$.  Then $A_{i-1}(y_i)=f_i\neq 0$, hence $y_i\notin \ker_{\Univ} A_{i-1}=Cy_1\oplus\cdots\oplus Cy_{i-1}$,
and $A_i=(\der-f_i^\dagger)A_{i-1}$, so by [ADH, 5.1.14(i)] we have
$\ker_{\Univ} A_i=\ker_{\Univ} A_{i-1}\oplus Cy_i=Cy_1\oplus\cdots\oplus Cy_i$.
\end{proof}

\noindent
We denote the tuple $(a_1,\dots,a_n)\in K^n$ just constructed by $\operatorname{split}(y_1,\dots,y_r)$,
so~$A_n=(\der-a_n)\cdots(\der-a_1)$. 
Suppose $r\geq 1$. Then $n\ge 1$, $a_1=y_1^\dagger$, $A_1=\der-a_1$,
$A_1(y_2),\dots, A_1(y_n)\in \Univ^\times$,  and we have
$$  (a_2,\dots,a_n)\ =\ \operatorname{split}\!\big(A_1(y_2),\dots,A_1(y_n)\big).$$
 By Lemma~\ref{lem:distinguished splitting},  $n\leq r$ is maximal such that $y_1,\dots,y_n$ are $C$-linearly independent. 
In particular, $y_1,\dots,y_r$ are $C$-linearly independent iff $n=r$.

 \begin{cor}\label{corbasissplit} 
If $A(y_i)=0$ for $i=1,\dots,n$, then $A\in K[\der]A_n$.
Thus if~$n=r$ and $A(y_i)=0$ for $i=1,\dots,r$, then  
$A=a(\der-a_r)\cdots(\der-a_1)$ where~$a\in K^\times$.
\end{cor}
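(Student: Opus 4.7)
The plan is to use right Euclidean division in $K[\der]$. Since $A_n$ is monic, there exist unique $Q, R \in K[\der]$ with $A = QA_n + R$ and $\order R < n$. Evaluating both sides at each $y_i$ (for $i = 1, \dots, n$) and using Lemma~\ref{lem:distinguished splitting} (which gives $y_i \in \ker_{\Univ} A_n$), we obtain $R(y_i) = A(y_i) - Q\!\big(A_n(y_i)\big) = 0 - 0 = 0$.

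Next I would rule out $R \neq 0$ by an elementary dimension count. The differential fraction field $\Omega$ of $\Univ$ has constant field $C$ (a fact already recorded in Section~\ref{sec:univ exp ext}, via the simplicity of the finitely generated pieces and [ADH, 4.6.12]). The inclusion $\Univ \hookrightarrow \Omega$ preserves $C$-linear independence, so $y_1, \dots, y_n$ remain $C$-linearly independent in $\ker_\Omega R$. But the standard Wronskian-based bound (cf.\ [ADH, 5.1.13] and [ADH, 5.6.6], applied inside the field $\Omega$) gives $\dim_C \ker_\Omega R \leq \order R$; if $R \neq 0$ this is $< n$, contradicting the existence of the $n$ independent elements $y_1, \dots, y_n$ in $\ker_\Omega R$. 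Hence $R = 0$, which gives $A = QA_n \in K[\der]A_n$, establishing the first assertion.

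For the final statement, observe that when $n = r$ the operator $A_n = A_r$ is monic of order $r$ and $A$ has order $r$, so from $A = QA_r$ comparing orders and leading coefficients forces $Q = a \in K^\times$. Substituting the formula $A_r = (\der - a_r)\cdots(\der - a_1)$ from the construction yields the claimed factorization. I do not anticipate a real obstacle here; the only thing to verify carefully is the constant-field identification $C_\Omega = C$, which is needed to apply the kernel bound in $\Omega$ rather than just in $\Univ$ (where the notion of ``dimension'' of a kernel is less directly available since $\Univ$ is only an integral domain, not a field).
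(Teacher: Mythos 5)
Your proof is correct and is essentially the approach the paper takes: the paper's one‑line proof points to [ADH, 5.1.15(i)] together with Lemma~\ref{lem:distinguished splitting}, and what you've written out (Euclidean division by the monic $A_n$, then killing the remainder by the kernel‑dimension bound $\dim_C \ker_\Omega R \le \order R$ in the differential fraction field $\Omega$, whose constant field is $C$) is exactly the content of that cited result. The only nit is that your citations for the dimension bound are slightly off — the inequality for differential fields is established earlier in [ADH, 5.1], not in 5.1.13/5.6.6 (5.6.6 concerns the valued setting) — but this does not affect the substance of the argument.
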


\noindent
This follows from [ADH, 5.1.15(i)] and Lemma~\ref{lem:distinguished splitting}.

\medskip\noindent
Suppose that $H$ is a differential subfield of $K$ and $y_1^\dagger,\dots,y_r^\dagger\in H$.  Then we have~$\operatorname{split}(y_1,\dots,y_r)\in~H^n$: use that $y'\in Hy$ with $y\in U$ gives $y^{(m)}\in Hy$ for all $m$, so~$B(y)\in Hy$
for all $B\in H[\der]$, hence for such $B$, if $f:= B(y)\neq 0$, then~$f^\dagger\in H$.

\begin{cor}\label{corbasiseigenvalues} 
Suppose $\dim_C \ker_{\Univ} A = r$. Then $\ker_{\Univ} A = \ker_{\Omega} A$ and $A$ splits over $K$. If 
$A = (\der-a_r)\cdots(\der-a_1),\ a_1,\dots,a_r\in K$,
then the spectrum of~$A$ is~$\big\{[a_1],\dots,[a_r]\big\}$, and for all $\alpha\in K/K^\dagger$,
$$\mult_{\alpha}(A)\  =\ 
\big| \big\{ i\in\{1,\dots,r\}:\ \alpha=[a_i] \big\}  \big|.$$
\end{cor}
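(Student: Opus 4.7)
The plan is to combine Lemma~\ref{newlembasis} with the construction $\operatorname{split}(y_1,\dots,y_r)$ from the preceding subsection, and then extract the spectral statement from Lemma~\ref{lem:split evs}.

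First I would use Lemma~\ref{newlembasis} to pick a basis $y_1,\dots,y_r$ of $\ker_{\Univ} A$ contained in $\Univ^\times = K^\times\ex(\Lambda)$. Since the $y_i$ are $C$-linearly independent, the construction preceding Lemma~\ref{lem:distinguished splitting} runs all the way to $n=r$ (recall that $n$ is characterized as the largest index for which $y_1,\dots,y_n$ are $C$-linearly independent). As $A(y_i)=0$ for every $i$, Corollary~\ref{corbasissplit} applies and yields
\[
A \ = \ c\,(\der-b_r)\cdots(\der-b_1)
\]
for some $c\in K^\times$ and $b_1,\dots,b_r\in K$, which already proves that $A$ splits over $K$.

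Next I would verify $\ker_{\Univ} A = \ker_{\Omega} A$. The inclusion $\subseteq$ is trivial from $\Univ\subseteq\Omega$. For the reverse inclusion, recall from the discussion following Corollary~\ref{cor:U simple} that the constant field of $\Omega$ is $C$; hence $\dim_C\ker_\Omega A\leq \order A = r$ by the standard bound [ADH, 5.1.12]. Since $\ker_\Univ A$ is already $r$-dimensional over $C$ and sits inside $\ker_\Omega A$, we must have equality.

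Finally, for the spectral statement, combine the dimension hypothesis with~\eqref{eq:bd on sum mults}:
\[
\sum_{\lambda} \mult_{\lambda}(A)\ =\ \dim_C \ker_{\Univ} A\ =\ r.
\]
Under the isomorphism $\lambda\mapsto[\lambda]\colon\Lambda\to K/K^\dagger$ this gives $\sum_{\alpha}\mult_\alpha(A)=r$, so given the splitting $A=(\der-a_r)\cdots(\der-a_1)$ from the hypothesis of the second assertion, Lemma~\ref{lem:split evs} applies verbatim and delivers $\Sigma(A)=\{[a_1],\dots,[a_r]\}$ together with the multiplicity formula. I do not anticipate any real obstacle: the only slightly subtle point is that the splitting used in Lemma~\ref{lem:split evs} must be the one fixed in the hypothesis (rather than the one produced by $\operatorname{split}(y_1,\dots,y_r)$), but Lemma~\ref{lem:split evs} is stated for \emph{any} splitting of $A$ satisfying $\sum_\alpha\mult_\alpha(A)=r$, so this causes no difficulty.
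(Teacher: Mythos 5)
Your proposal is correct and follows essentially the same route as the paper's own (much terser) proof: Lemma~\ref{newlembasis} plus the $\operatorname{split}$ construction plus Corollary~\ref{corbasissplit} to get the splitting of $A$ over $K$, a dimension count in $\Omega$ (whose constant field is $C$) for $\ker_{\Univ} A=\ker_\Omega A$, and then \eqref{eq:bd on sum mults} combined with Lemma~\ref{lem:split evs} for the spectral statement. You have merely spelled out the steps that the paper compresses into ``the rest follows from Lemma~\ref{lem:split evs} in view of $\sum_\lambda\mult_\lambda(A)=\dim_C\ker_{\Univ}A$''.
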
 
\begin{proof} 
$A$ splits over $K$ by
 Lemma~\ref{newlembasis} and Corollary~\ref{corbasissplit}. 
The rest follows from Lemma~\ref{lem:split evs} in view of $\sum_\lambda \mult_\lambda(A)=\dim_C \ker_{\Univ} A$.
\end{proof}

\noindent
Conversely, we can associate to a given splitting of $A$ over $K$ a basis of $\ker_{\Univ} A$ consisting of $r$ elements
of~$\Univ^\times$, provided $K$ is $1$-linearly surjective when $r\ge 2$: 

\begin{lemma}\label{lem:basis of kerUA}
Assume $K$ is $1$-linearly surjective in case $r\ge 2$.  Let
$$A\ =\ (\der-a_r)\cdots (\der-a_1)\qquad\text{where $a_i=b_i^\dagger+\lambda_i$, $b_i\in K^\times$, $\lambda_i\in \Lambda$ \textup{(}$i=1,\dots,r$\textup{)}.}$$
Then there are $C$-linearly independent $y_1,\dots,y_r\in \ker_{\Univ} A$   
 such that $y_i\in K^\times\ex(\lambda_i)$ for~$i=1,\dots,r$ and $\operatorname{split}(y_1,\dots,y_r)=(a_1,\dots,a_r)$. 
\end{lemma}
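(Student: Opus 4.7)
I plan to prove the lemma by induction on $r$, using Lemma~\ref{lem:A(U)=U} to invoke $1$-linear surjectivity in the inductive step.

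\textbf{Base case $r=1$.} Here $A = \der - a_1$ with $a_1 = b_1^\dagger + \lambda_1$. Set $y_1 := b_1 \ex(\lambda_1) \in K^\times \ex(\lambda_1)$. Then $y_1^\dagger = b_1^\dagger + \lambda_1 = a_1$, so $A(y_1) = 0$, and $\operatorname{split}(y_1) = (y_1^\dagger) = (a_1)$.

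\textbf{Inductive step ($r \geq 2$).} Set $B := (\der - a_{r-1}) \cdots (\der - a_1)$, so $A = (\der - a_r)B$, and note that $B$ splits over $K$. Apply the inductive hypothesis to $B$ (using the same $b_1,\dots,b_{r-1}$ and $\lambda_1,\dots,\lambda_{r-1}$) to obtain $C$-linearly independent $y_1,\dots,y_{r-1} \in \ker_{\Univ} B$ with $y_i \in K^\times \ex(\lambda_i)$ and $\operatorname{split}(y_1,\dots,y_{r-1}) = (a_1,\dots,a_{r-1})$. The latter means that in the splitting construction, the $i$th partial operator $A_i$ coincides with $(\der - a_i)\cdots(\der - a_1)$ for $i\leq r-1$; in particular $A_{r-1} = B$.

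Next, construct $y_r$ as follows. Since $B$ splits over $K$ and $K$ is $1$-linearly surjective, Lemma~\ref{lem:A(U)=U} gives $B(\Univ_{\lambda_r}) = \Univ_{\lambda_r}$, so there exists $u \in K$ with $B\bigl(u\ex(\lambda_r)\bigr) = b_r \ex(\lambda_r)$. Since $b_r \neq 0$ we have $u \neq 0$. Set $y_r := u \ex(\lambda_r) \in K^\times \ex(\lambda_r)$. Then
\[
 A(y_r) \ =\  (\der - a_r)\bigl(b_r \ex(\lambda_r)\bigr)\ =\ b_r\ex(\lambda_r)\bigl( b_r^\dagger + \lambda_r - a_r\bigr)\ =\ 0,
\]
so $y_r \in \ker_{\Univ} A$. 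Also $y_1,\dots,y_{r-1} \in \ker_{\Univ} B \subseteq \ker_{\Univ} A$.

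\textbf{Verification.} Continuing the splitting construction from $y_1,\dots,y_{r-1}$: we get $f_r = A_{r-1}(y_r) = B(y_r) = b_r \ex(\lambda_r) \in \Univ^\times$, so the construction extends with $a_r' := f_r^\dagger = b_r^\dagger + \lambda_r = a_r$. Thus $\operatorname{split}(y_1,\dots,y_r) = (a_1,\dots,a_r)$, and Lemma~\ref{lem:distinguished splitting} then gives that $y_1,\dots,y_r$ are $C$-linearly independent (as the construction ran for $r$ steps).

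The only delicate point is the existence of $u \in K$ solving $B_{\lambda_r}(u) = b_r$ in the inductive step; this is precisely where $1$-linear surjectivity of $K$ enters, via Lemma~\ref{lem:A(U)=U} applied to the split operator $B$ of order $r-1$. Everything else is bookkeeping against the recursive definition of $\operatorname{split}$.
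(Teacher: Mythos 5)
Your proof is correct, but it runs the induction in the opposite direction from the paper, so the two arguments are worth contrasting. The paper writes $A=B(\der-a_1)$ with $B=(\der-a_r)\cdots(\der-a_2)$, applies the inductive hypothesis to $B$ to get $z_2,\dots,z_r\in\ker_{\Univ}B$ with $z_i\in K^\times\ex(\lambda_i)$, then uses Lemma~\ref{lem:A(U)=U} for the \emph{first-order} operator $\der-a_1$ to lift each $z_i$ to $y_i\in K^\times\ex(\lambda_i)$ with $(\der-a_1)(y_i)=z_i$, and adjoins $y_1:=b_1\ex(\lambda_1)$ spanning $\ker_{\Univ}(\der-a_1)$; linear independence and the identity $\operatorname{split}(y_1,\dots,y_r)=(a_1,\dots,a_r)$ are then checked via the recursion remark following Lemma~\ref{lem:distinguished splitting}. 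You instead peel off the leftmost factor, $A=(\der-a_r)B$ with $B=(\der-a_{r-1})\cdots(\der-a_1)$: the inductively obtained $y_1,\dots,y_{r-1}$ already lie in $\ker_{\Univ}B\subseteq\ker_{\Univ}A$ and are left untouched, and only one new element is needed, found by solving $B(y_r)=b_r\ex(\lambda_r)$ in $\Univ_{\lambda_r}$ via Lemma~\ref{lem:A(U)=U} applied to the order-$(r-1)$ operator $B$. The trade-off is mild: the paper only ever invokes Lemma~\ref{lem:A(U)=U} for order-$1$ operators (the most elementary use of $1$-linear surjectivity) but must lift $r-1$ elements and re-verify the split afterwards, whereas you need the ``$1$-linearly surjective and splits over $K$'' clause of that lemma at order $r-1$, in exchange for which the split bookkeeping is immediate, since the first $r-1$ steps of the construction are unchanged and the last step gives $f_r=b_r\ex(\lambda_r)$, $f_r^\dagger=a_r$. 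Both arguments use exactly the stated hypotheses, and your appeal to Lemma~\ref{lem:distinguished splitting} (or the remark after the construction) for $C$-linear independence is sound.
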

{\sloppy\begin{proof}
By induction on $r$. The case $r=0$ is trivial, and for $r=1$ we can take~$y_1=b_1\ex(\lambda_1)$. Let $r\geq 2$ and suppose inductively that for
$B := {(\der-a_r)\cdots (\der-a_{2})}$
we have $C$-linearly independent $z_2,\dots,z_{r}\in\ker_{\Univ} B$ such that
$z_i \in K^\times\ex(\lambda_{i})$ for~${i=2,\dots,r}$ and~$\operatorname{split}(z_2,\dots,z_r)=(a_2,\dots,a_r)$.
For $i=2,\dots,r$, Lemma~\ref{lem:A(U)=U} gives~${y_i\in K^\times \ex(\lambda_i)}$ with $(\der-a_1)(y_i)=z_i$. Set $y_1:=b_1\ex(\lambda_1)$, so $\ker_{\Univ} (\der-a_1)=~Cy_1$.  Then~$y_1,\dots,y_r\in\ker_{\Univ} A$
are $C$-linearly independent such that $y_i\in K^\times\ex(\lambda_i)$ for~${i=1,\dots,r}$, and one
verifies easily that $\operatorname{split}(y_1,\dots,y_r)=(a_1,\dots,a_r)$.
\end{proof}}

\begin{cor}\label{cor:basis of kerUA}
Assume $K$ is $1$-linearly surjective when $r\ge 2$. Then
$$\text{$A$ splits over~$K$}\ \Longleftrightarrow\ \dim_C \ker_{\Univ} A\ =\ r.$$
\end{cor}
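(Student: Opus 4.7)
\medskip\noindent\textit{Proof plan.} The implication $(\Leftarrow)$ is immediate from Corollary~\ref{corbasiseigenvalues}, which asserts that $\dim_C \ker_{\Univ} A = r$ already forces $A$ to split over $K$ (no surjectivity hypothesis needed for this direction). So the real content lies in $(\Rightarrow)$.

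For $(\Rightarrow)$, the plan is to reduce directly to Lemma~\ref{lem:basis of kerUA}. The case $r = 0$ is trivial: then $A \in K^\times$ and $\ker_{\Univ} A = 0$. Assume $r \geq 1$ and that $A$ splits over $K$, so $A = c(\der - a_r)\cdots(\der - a_1)$ with $c \in K^\times$ and $a_1,\dots,a_r \in K$; since the kernel is unchanged by multiplying $A$ on the left by a unit of $K$, we may assume $c = 1$. Using that $K = K^\dagger \oplus \Lambda$ as $\Q$-linear spaces, write $a_i = b_i^\dagger + \lambda_i$ with $b_i \in K^\times$ and $\lambda_i \in \Lambda$ for $i = 1,\dots,r$.

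If $r \geq 2$, the hypothesis that $K$ is $1$-linearly surjective is in force, so Lemma~\ref{lem:basis of kerUA} applies and produces $C$-linearly independent $y_1,\dots,y_r \in \ker_{\Univ} A$, giving $\dim_C \ker_{\Univ} A \geq r$. If $r = 1$, then $A = \der - a_1$ with $a_1 = b_1^\dagger + \lambda_1$, and $y_1 := b_1\ex(\lambda_1) \in \Univ^\times$ satisfies $y_1^\dagger = a_1$, hence $A(y_1) = 0$; so $\dim_C \ker_{\Univ} A \geq 1 = r$ in this case too. In both cases, the reverse inequality $\dim_C \ker_{\Univ} A \leq r$ is given by \eqref{eq:bd on sum mults}, yielding equality.

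There is no real obstacle here, since the substantive construction—lifting a splitting to a basis of the kernel in $\Univ$—has already been carried out in Lemma~\ref{lem:basis of kerUA}; the corollary is essentially a repackaging of that lemma together with Corollary~\ref{corbasiseigenvalues} and the general bound in \eqref{eq:bd on sum mults}.
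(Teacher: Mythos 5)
Your proof is correct and follows exactly the route the paper intends: the forward direction is Lemma~\ref{lem:basis of kerUA} (which already covers $r=1$ without any surjectivity hypothesis) combined with the bound in \eqref{eq:bd on sum mults}, and the converse is Corollary~\ref{corbasiseigenvalues}. The paper leaves the proof implicit precisely because it is this immediate repackaging, so there is nothing to add.
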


\begin{remark}
If $\dim_{C} \ker_{\Univ} A=r$ and $\lambda_1,\dots,\lambda_d$ are the eigenvalues of $A$ with respect to $\Lambda$, then the differential subring $K\big[\!\ex(\lambda_1),\ex(-\lambda_1),\dots,\ex(\lambda_d),\ex(-\lambda_d)\big]$ of $\Univ$ is the Picard-Vessiot ring for $A$ over $K$; see \cite[Section~1.3]{vdPS}.  
If $K$ is linearly closed  and linearly surjective,
then~$\Univ$ is by Corollary~\ref{cor:basis of kerUA} the universal Picard-Vessiot ring of the differential field~$K$ as defined in \cite[Chapter~10]{vdPS}.
Our construction of~$\Univ$ above is modeled on the description of the universal Picard-Vessiot ring 
of the algebraic closure of $C(\!( t)\!)$ given in    \cite[Chapter~3]{vdPS}. 
\end{remark}

\noindent
Recalling our convention that $r=\order A$, here is a complement to Lemma~\ref{newlembasis}: 

\begin{cor}\label{cor:newlembasis}
Let $V$ be a $C$-linear subspace of $\Univ$ with $r=\dim_C V$. Then 
there is at most one monic $A$ with $V=\ker_{\Univ} A$. Moreover,
the following are equivalent:
\begin{enumerate}
\item[\textup{(i)}] $V=\ker_{\Univ} A$ for some monic $A$  that splits over $K$;
\item[\textup{(ii)}] $V=\ker_{\Univ} B$ for some $B\neq 0$;
\item[\textup{(iii)}] $V=\sum_\lambda V_\lambda$;
\item[\textup{(iv)}] $V$ has a basis contained in $\Univ^\times$.
\end{enumerate}
\end{cor}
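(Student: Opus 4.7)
My plan is to establish uniqueness first, then prove the cycle (i) $\Rightarrow$ (ii) $\Rightarrow$ (iii) $\Rightarrow$ (iv) $\Rightarrow$ (i).

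For uniqueness, suppose $V=\ker_{\Univ} A=\ker_{\Univ} A'$ with $A,A'$ monic of order $r$ (forced because $\dim_C\ker_{\Univ} D\leq\order D$ for any $D\in K[\der]^{\ne}$, applied to $A$ and $A'$ using $\Univ\subseteq\Omega$ and $C_\Omega=C$). Then $A-A'$ has order $<r$, yet $V\subseteq\ker_{\Univ}(A-A')$; so $A-A'=0$.

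For (i)$\Rightarrow$(ii) take $B:=A$. For (ii)$\Rightarrow$(iii), let $f\in V$ have spectral decomposition $f=\sum_\lambda f_\lambda\ex(\lambda)$. Since $B=\sum_\mu\cdots$ acts diagonally on the decomposition $\Univ=\bigoplus_\lambda K\ex(\lambda)$, i.e.\ $B(f)=\sum_\lambda B_\lambda(f_\lambda)\ex(\lambda)$, the equation $B(f)=0$ forces $B_\lambda(f_\lambda)=0$, hence $B\big(f_\lambda\ex(\lambda)\big)=0$, so $f_\lambda\ex(\lambda)\in V\cap K\ex(\lambda)=V_\lambda$. Thus $V=\sum_\lambda V_\lambda$. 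For (iii)$\Rightarrow$(iv), pick for each $\lambda$ a $C$-basis of $V_\lambda\subseteq K\ex(\lambda)$; its nonzero elements lie in $K^\times\ex(\lambda)\subseteq\Univ^\times$, and concatenating these bases yields a basis of $V$ contained in $\Univ^\times$.

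The remaining implication (iv)$\Rightarrow$(i) is where the earlier machinery pays off. Let $y_1,\dots,y_r\in\Univ^\times$ be a $C$-basis of $V$. Run the construction preceding Lemma~\ref{lem:distinguished splitting} to form $\operatorname{split}(y_1,\dots,y_r)=(a_1,\dots,a_n)$ and monic operators $A_0,\dots,A_n$. Since $y_1,\dots,y_r$ are $C$-linearly independent, the remark following Lemma~\ref{lem:distinguished splitting} gives $n=r$, so $A:=A_r=(\der-a_r)\cdots(\der-a_1)\in K[\der]$ is monic of order $r$ and splits over $K$. By Lemma~\ref{lem:distinguished splitting}, $V=Cy_1\oplus\cdots\oplus Cy_r\subseteq\ker_{\Univ} A$, and the dimension bound $\dim_C\ker_{\Univ} A\leq r=\dim_C V$ forces equality.

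No step poses a real obstacle; the only place needing care is the diagonal action used in (ii)$\Rightarrow$(iii), which is immediate from $A(\Univ_\lambda)\subseteq\Univ_\lambda$ as noted just after the definition of the spectral decomposition of $A$.
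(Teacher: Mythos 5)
Your proof is correct and follows essentially the same route as the paper's: (ii)$\Rightarrow$(iii) via the diagonal action of $B$ on the spectral decomposition, (iii)$\Rightarrow$(iv) by choosing bases of the $V_\lambda$, and (iv)$\Rightarrow$(i) via $\operatorname{split}(y_1,\dots,y_r)$ together with Lemma~\ref{lem:distinguished splitting} (which in fact yields $\ker_{\Univ}A=Cy_1\oplus\cdots\oplus Cy_r$ directly, so your extra dimension count, while harmless, is not needed). The only divergence is the uniqueness claim, where the paper appeals to [ADH, 5.1.15] applied to the differential fraction field of $\Univ$, whereas you argue directly that $A-A'$ has order $<r$ yet annihilates the $r$-dimensional space $V$ — an equally valid, self-contained variant given the bound $\dim_C\ker_{\Univ}D\leq\order D$ already available in this section.
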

\begin{proof}
The first claim follows from [ADH, 5.1.15] applied to the differential fraction field of $\Univ$ in place of $K$.
The implication (i)~$\Rightarrow$~(ii) is clear,
(ii)~$\Rightarrow$~(iii)  was  noted
before Lem\-ma~\ref{newlembasis}, and (iii)~$\Rightarrow$~(iv) is obvious.
For   (iv)~$\Rightarrow$~(i),
 let~$y_1,\dots,y_r\in \Univ^\times$ be a basis of~$V$. Then $\operatorname{split}(y_1,\dots,y_r)=(a_1,\dots, a_r)\in K^r$,
 so  $V=\ker_{\Univ} A$ for~$A=(\der-a_r)\cdots(\der-a_1)$ by Lemma~\ref{lem:distinguished splitting}, so~(i) holds.
\end{proof}

\noindent
Let $y_1,\dots,y_r\in\Univ^\times$ and $(a_1,\dots, a_n):=\operatorname{split}(y_1,\dots,y_r)$. We finish this subsection with some remarks about~$(a_1,\dots,a_n)$ for use in \cite{ADH6}. 
Let~$A_0,\dots,A_n\in K[\der]$ be as above and
recall that $n\leq r$ is maximal such that~$y_1,\dots,y_n$ are $C$-linearly independent.

\begin{lemma}\label{lem:bases with same split} Assume $n=r$.  
Let $z_1,\dots,z_r\in\Univ^\times$. The following are equivalent:
\begin{enumerate}
\item[\textup{(i)}] $z_1,\dots,z_r$ are $C$-linearly independent and $(a_1,\dots, a_r)=\operatorname{split}(z_1,\dots,z_r)$;
\item[\textup{(ii)}] for $i=1,\dots, r$ there are $c_{ii}, c_{i,i-1},\dots, c_{i1}\in C$  such that
$$z_i\ =\ c_{ii}y_i + c_{i,i-1}y_{i-1}+\cdots+c_{i1}y_1\  \text{and}\  c_{ii}\neq 0.$$
\end{enumerate}
\end{lemma}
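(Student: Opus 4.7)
The plan is to handle the two implications separately, with Lemma~\ref{lem:distinguished splitting} as the main tool and a simple induction carrying the argument. Throughout, the key soft fact I will lean on is that if $c\in C^\times$ then $c^\dagger=0$, so multiplying a nonzero element of $\Univ^\times$ by a nonzero constant preserves its logarithmic derivative.

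For the direction (ii)~$\Rightarrow$~(i), I first note that the transition matrix from $(y_1,\dots,y_r)$ to $(z_1,\dots,z_r)$ given by the hypothesis is lower triangular with nonzero diagonal entries $c_{ii}$, so in particular the $z_i$ lie in $\Univ^\times$ and are $C$-linearly independent. To compute $\operatorname{split}(z_1,\dots,z_r)$, let $A_0',A_1',\dots$ denote the operators produced by that construction, and prove by induction on $i$ that $A_i'=A_i$ (where $A_0,\dots,A_r$ are the operators coming from $\operatorname{split}(y_1,\dots,y_r)$). In the inductive step, use Lemma~\ref{lem:distinguished splitting} to kill the lower terms: $A_{i-1}(y_j)=0$ for $j<i$, so $f_i':=A_{i-1}'(z_i)=A_{i-1}(z_i)=c_{ii}A_{i-1}(y_i)=c_{ii}f_i$. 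Since $f_i\neq 0$ and $c_{ii}\neq 0$ we get $f_i'\neq 0$, and then $a_i'=(f_i')^\dagger=(c_{ii}f_i)^\dagger=f_i^\dagger=a_i$, so $A_i'=(\der-a_i')A_{i-1}'=(\der-a_i)A_{i-1}=A_i$.

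For the converse (i)~$\Rightarrow$~(ii), the hypothesis that $\operatorname{split}(z_1,\dots,z_r)$ equals $(a_1,\dots,a_r)$ means that the operators $A_i'$ constructed from $z_1,\dots,z_r$ coincide with $A_i$ for $i=0,\dots,r$. Lemma~\ref{lem:distinguished splitting}, applied to each of the two bases, therefore yields the identity of kernels
\[
Cz_1\oplus\cdots\oplus Cz_i\ =\ \ker_{\Univ} A_i\ =\ Cy_1\oplus\cdots\oplus Cy_i
\]
for every $i\leq r$. Writing $z_i$ in the $y$-basis therefore produces $c_{i1},\dots,c_{ii}\in C$ with $z_i=c_{ii}y_i+c_{i,i-1}y_{i-1}+\cdots+c_{i1}y_1$. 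To see that $c_{ii}\neq 0$, observe that $c_{ii}=0$ would place $z_i$ in $\ker_{\Univ} A_{i-1}$, forcing $f_i'=A_{i-1}(z_i)=0$ and thereby aborting the construction of $\operatorname{split}(z_1,\dots,z_r)$ before step $i$; this contradicts the hypothesis that the split has length $r$.

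There is no real obstacle here: the only subtle point is keeping careful track of the fact that the output of $\operatorname{split}$ depends only on the operators $A_i$ (not directly on the basis), so that equality of splittings translates into equality of the associated kernel filtrations; after that, the triangular shape and nonvanishing of the diagonal coefficients fall out immediately from Lemma~\ref{lem:distinguished splitting} and the fact that constants have zero logarithmic derivative.
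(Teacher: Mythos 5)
Your proof is correct, and both implications rest, as in the paper, on Lemma~\ref{lem:distinguished splitting}; but the organization is genuinely different. The paper proves the lemma by a single induction on $r$: for (i)~$\Rightarrow$~(ii) it applies $A_1=\der-a_1$ to $y_2,\dots,y_r$ and $z_2,\dots,z_r$, uses the identity $(a_2,\dots,a_r)=\operatorname{split}\!\big(A_1(y_2),\dots,A_1(y_r)\big)$ (and likewise for the $z$'s, since $B_1=A_1$), invokes the inductive hypothesis for these $r-1$ elements, and then lifts the resulting relations back modulo $\ker_{\Univ}A_1=Cy_1$; for (ii)~$\Rightarrow$~(i) it again inducts on $r$, getting $b_j=a_j$ for $j<r$ from the truncated data and then computing $g_r=c_{rr}f_r$. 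You instead avoid the downward recursion entirely: for (i)~$\Rightarrow$~(ii) you observe that equality of splittings forces $A_i'=A_i$ and then read the triangular expansion directly off the equality of kernel filtrations $Cz_1\oplus\cdots\oplus Cz_i=\ker_{\Univ}A_i=Cy_1\oplus\cdots\oplus Cy_i$, with $c_{ii}\neq 0$ because otherwise the $z$-construction would abort before step $i$ (equivalently, it would contradict $C$-linear independence of the $z_i$); for (ii)~$\Rightarrow$~(i) you run a forward induction on $i$ showing $A_i'=A_i$ via $f_i'=c_{ii}f_i$ and $(c_{ii}f_i)^\dagger=f_i^\dagger$. What the paper's route buys is direct reuse of the already-recorded remark about $\operatorname{split}\!\big(A_1(y_2),\dots,A_1(y_r)\big)$; what yours buys is transparency—the equivalence is exhibited as the statement that the splitting determines and is determined by the flag of kernels—and it dispenses with the lift-back step. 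One small blemish: in (ii)~$\Rightarrow$~(i) you say the triangular relation shows the $z_i$ lie in $\Univ^\times$; that does not follow (a $C$-linear combination of units need not be a unit), but it is harmless since $z_1,\dots,z_r\in\Univ^\times$ is a standing hypothesis of the lemma rather than part of (i) or (ii).
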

\begin{proof} The case $r=0$ is trivial. Let $r=1$. If (i) holds, then $y_1^\dagger=a_1=z_1^\dagger$, hence~$z_1\in C^\times\, y_1$, so (ii) holds. The converse is obvious. 
Let $r\ge 2$, and assume (i) holds. Put $\tilde{y}_i:=A_1(y_i)$ and $\tilde{z}_i:=A_1(z_i)$ for $i=2,\dots,r$. Then
$$\operatorname{split}(\tilde{y}_2,\dots,\tilde{y}_r)\ =\ (a_2,\dots,a_r)\ =\ 
\operatorname{split}(\tilde{z}_2,\dots,\tilde{z}_r) ,$$
so we can assume inductively to have $c_{ij}\in C$ ($2\leq j\le i\leq r$)   with 
$$\tilde{z}_i\ =\ c_{ii}\tilde{y}_i+c_{i,i-1}\tilde{y}_{i-1}+\cdots+c_{i2}\tilde{y}_2\quad\text{and}\quad c_{ii}\neq 0 \qquad (2\le i\le r).$$
Hence for $2\le i\le r$,
$$z_i \in c_{ii}y_i + c_{i,i-1}y_{i-1}+\cdots+c_{i2}y_2+\ker_{\Univ} A_1.$$
Now use $\ker_{\Univ} A_1=Cy_1$ to conclude (ii).  For the converse, let
$c_{ij}\in C$  be as in (ii).  Then clearly $z_1,\dots,z_r$ are $C$-linearly independent. 
Let $(b_1,\dots,b_r):=\operatorname{split}(z_1,\dots,z_r)$ and $B_{r-1}:=(\der-b_{r-1})\cdots (\der-b_1)$. 
Then $a_r=f_r^\dagger$ where $f_r=A_{r-1}(y_r)\neq 0$, and $b_r=g_r^\dagger$ where $g_r:=B_{r-1}(z_r)\neq 0$.
Now inductively we have~$a_j=b_j$ for $j=1,\dots,r-1$, so $A_{r-1}=B_{r-1}$, and $A_{r-1}(y_i)=0$ for $i=1,\dots,r-1$   by Lemma~\ref{lem:distinguished splitting}.
Hence $g_r=c_{rr}f_r$, and thus $a_r=b_r$.
\end{proof}

\begin{lemma}\label{lem:split mult conj}
Let   $z\in\Univ^\times$. Then 
$\operatorname{split}(y_1 z,\dots,y_r z) = (a_1+z^\dagger, \dots, a_n+z^\dagger)$.
\end{lemma}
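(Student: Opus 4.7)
The plan is to prove inductively that if $(a_1,\dots,a_n)=\operatorname{split}(y_1,\dots,y_r)$ with associated operators $A_0=1, A_1, \dots, A_n$ and auxiliary elements $f_i=A_{i-1}(y_i)$, and if we write $\tilde{a}_i, \tilde{A}_i, \tilde{f}_i$ for the analogous objects built from $(y_1 z,\dots,y_r z)$, then
\[
\tilde{A}_i \ = \ z A_i z^{-1} \ = \ (A_i)_{\ltimes z^{-1}} \qquad\text{for } i=0,\dots,n,
\]
and that the two sequences terminate at exactly the same index $n$. Once this identity is established, the conclusion that $\tilde{a}_i=a_i+z^\dagger$ will follow immediately from the computation of logarithmic derivatives at each step.

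The base case $i=0$ is trivial since $\tilde{A}_0=1=A_0$. For the inductive step, assuming $\tilde{A}_{i-1}=zA_{i-1}z^{-1}$, I would compute
\[
\tilde{f}_i \ = \ \tilde{A}_{i-1}(y_i z) \ = \ z A_{i-1}(z^{-1}\cdot y_i z) \ = \ z A_{i-1}(y_i) \ = \ z f_i.
\]
In particular $\tilde{f}_i\ne 0 \Leftrightarrow f_i\ne 0$, so both constructions stop at the same index $n$. When $f_i\ne 0$ we then obtain
\[
\tilde{a}_i \ = \ \tilde{f}_i^{\,\dagger} \ = \ (zf_i)^\dagger \ = \ z^\dagger + f_i^\dagger \ = \ z^\dagger + a_i,
\]
which is the desired formula. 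To complete the induction, the key remaining identity is
\[
(\der - z^\dagger - a_i)\cdot z A_{i-1} z^{-1} \ = \ z(\der - a_i)A_{i-1}z^{-1} \ = \ zA_i z^{-1},
\]
which reduces to the one-line calculation $z\der z^{-1}=\der - z^\dagger$ in $K[\der]$ (using $(z^{-1})'=-z^{-1}z^\dagger$). This yields $\tilde{A}_i = zA_iz^{-1}$, closing the induction.

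There is no real obstacle here: the only thing to watch is that $z\in\Univ^\times$ is exponential over $K$, so $z^\dagger\in K$ and the twist $(A_i)_{\ltimes z^{-1}}$ remains in $K[\der]$, ensuring that the $\tilde{a}_i$ are genuinely elements of $K$ and that the construction of $\operatorname{split}(y_1z,\dots,y_rz)$ proceeds inside $K[\der]$ as required by its definition.
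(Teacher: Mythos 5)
Your proof is correct. It differs in structure from the paper's, though the underlying algebraic ingredient (the twist identity $zAz^{-1} = A_{\ltimes z^{-1}}$, built from $z\der z^{-1}=\der-z^\dagger$) is the same. The paper proceeds by induction on the length $n$ of the output tuple: it first observes that the lengths agree because $y_1z,\dots,y_mz$ are $C$-linearly independent iff $y_1,\dots,y_m$ are, then peels off the top level — using $b_1=(y_1z)^\dagger=a_1+z^\dagger$ and $\der-b_1=(\der-a_1)_{\ltimes z^{-1}}$ — and invokes the recursion $(a_2,\dots,a_n)=\operatorname{split}(A_1(y_2),\dots,A_1(y_n))$ noted after Lemma~\ref{lem:distinguished splitting}. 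You instead run a forward induction through the construction itself, establishing the global invariant $\tilde A_i=zA_iz^{-1}$ at each stage; this simultaneously yields $\tilde f_i=zf_i$ (so the two constructions terminate at the same index without any appeal to linear independence or to Lemma~\ref{lem:distinguished splitting}) and $\tilde a_i=a_i+z^\dagger$. Your route is a bit more self-contained and closer to the definitions; the paper's is terser because it leans on previously recorded recursive descriptions of $\operatorname{split}$. Both are fine.
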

\begin{proof}
Since for $m\leq r$, the units $y_1z,\dots,y_mz$ of $\Univ$ are $C$-linearly independent iff~$y_1,\dots,y_m$ are
$C$-linearly independent, we see that the tuples $\operatorname{split}(y_1 z,\dots,y_r z)$ and
$\operatorname{split}(y_1,\dots,y_r)$ have the same length $n$.
Let  $(b_1,\dots,b_n):=\operatorname{split}(y_1 z,\dots,y_r z)$; we show $(b_1,\dots,b_n)=(a_1+z^\dagger,\dots,a_n+z^\dagger)$
by induction on $n$. The case $n=0$ is obvious, so suppose $n\geq 1$.
Then $a_1=y_1^\dagger$ and~$b_1=(y_1z)^\dagger=a_1+z^\dagger$ as required.
By remarks following the proof of  Lemma~\ref{lem:distinguished splitting}  we have
$$  (a_2,\dots,a_n)\ =\ \operatorname{split}\!\big(A_1(y_2),\dots,A_1(y_n)\big)\qquad\text{where $A_1:=\der-a_1$.}$$ 
Now $B_1:=\der-b_1=(A_1)_{\ltimes z^{-1}}$, so likewise
$$ (b_2,\dots,b_n)\ =\ \operatorname{split}\!\big(B_1(y_2z),\dots,B_1(y_nz)\big) \ =\ 
\operatorname{split}\!\big( A_1(y_2)z,\dots,A_1(y_n)z\big).$$
Hence $b_2=a_2+z^\dagger,\dots,b_n=a_n+z^\dagger$ by our inductive hypothesis.
\end{proof}

\noindent
For $f\in \der K$ we let $\int f$ denote an element of $K$ such that~$(\int f)'=f$.  

\begin{lemma}\label{lem:Polya fact, 2}
Let $g_1,\dots,g_r\in K^\times$ and
$$A\  =\  g_1\cdots g_r (\der g_r^{-1}) (\der g_{r-1}^{-1})\cdots  (\der g_1^{-1}),$$
and suppose the integrals below can be chosen such that
$$y_1\ =\ g_1,\quad y_2\ =\ g_1\textstyle\int g_2,\quad \dots,\quad y_r\ =\ g_1\int (g_2\int g_3(\cdots(g_{r-1}\int g_r)\cdots)),$$
Then $y_1,\dots, y_r\in K^\times$, $n=r$, and  $a_i=(g_1\cdots g_i)^\dagger$ for $i=1,\dots,r$.
\end{lemma}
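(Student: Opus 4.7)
The proof will be a direct, computational verification that the given $y_i$ play the role required by the construction of $\operatorname{split}$. Here is the plan.

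First, I will verify $y_i \in K^\times$ by induction on $i$, together with nonvanishing of the intermediate nested integrals. The base case $y_1=g_1\in K^\times$ is trivial. For the inductive step, note that if $h\in K^\times$ and $\int h\in K$ is any antiderivative, then $\int h\ne 0$ (else $h=(\int h)'=0$); hence $h\int h'\ne 0$ for $h,h'\in K^\times$, and so on, peeling from the innermost integral outward. No deep content here, but one should be careful that the hypothesis of the lemma only guarantees existence, not nonvanishing, of each antiderivative.

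The real work is the operator identity
$$(\der-a_i)(\der-a_{i-1})\cdots(\der-a_1)\ =\ g_1\cdots g_i\cdot B_i\qquad(i=0,1,\dots,r),$$
where I set $B_i := (\der g_i^{-1})(\der g_{i-1}^{-1})\cdots(\der g_1^{-1})$ and $a_j := (g_1\cdots g_j)^\dagger$ (so $a_0=0$ and $B_0=1$). This is essentially contained in Lemma~\ref{lem:Polya fact}; I will give it a direct inductive proof. The case $i=0$ is vacuous, and for the inductive step I use $\der g_i^{-1}=g_i^{-1}(\der-g_i^\dagger)$, so the right-hand side becomes $g_1\cdots g_{i-1}(\der-g_i^\dagger)B_{i-1}$. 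Using $\der\cdot g_1\cdots g_{i-1}=g_1\cdots g_{i-1}\der+(g_1\cdots g_{i-1})'$ and the inductive hypothesis, the left-hand side equals $g_1\cdots g_{i-1}\bigl(\der+(g_1\cdots g_{i-1})^\dagger-a_i\bigr)B_{i-1}$, and the choice $a_i=(g_1\cdots g_i)^\dagger$ matches the two expressions.

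Next I will compute $B_j(y_i)$. Peeling operators $(\der g_k^{-1})$ from left to right (so using the innermost layer of the nested integral defining $y_i$ at each step), I find $B_k(y_i)=g_{k+1}\int g_{k+2}\int\cdots g_{i-1}\int g_i$ for $k<i$, $B_i(y_i)=0$, and hence $B_j(y_i)=0$ for all $j\ge i$. In particular, $B_{i-1}(y_i)=g_i$.

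Combining these two pieces: for each $i$, the operator $A_{i-1}:=(\der-a_{i-1})\cdots(\der-a_1)$ satisfies $A_{i-1}(y_i)=g_1\cdots g_{i-1}\cdot B_{i-1}(y_i)=g_1\cdots g_i\ne 0$. Thus, running the $\operatorname{split}$ construction with inputs $y_1,\dots,y_r$, we get at step $i$ that $f_i=A_{i-1}(y_i)=g_1\cdots g_i\ne 0$, so the construction does not terminate early, giving $n=r$ (and hence $C$-linear independence of $y_1,\dots,y_r$), and the next exponent produced is $f_i^\dagger=(g_1\cdots g_i)^\dagger=a_i$, agreeing with the claim. The main obstacle is not mathematical but notational: keeping the indexing of the nested integrals and of the factors $(\der g_j^{-1})$ consistent when peeling one layer at a time.
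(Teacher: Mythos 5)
Your proof is correct and follows essentially the same route as the paper's: express $A_{i-1}$ in its P\'olya-factored form and observe $A_{i-1}(y_i)=g_1\cdots g_i\ne 0$, which forces $n\geq i$ and $a_i=(g_1\cdots g_i)^\dagger$ at each inductive step. The paper cites Lemma~\ref{lem:Polya fact} for the operator identity and leaves the nested-integral peeling and the verification $y_i\in K^\times$ implicit, whereas you re-derive the former and spell out the latter; these are elaborations, not a different argument.
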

\begin{proof}
Let $b_i:=(g_1\cdots g_i)^\dagger$ for $i=1,\dots,r$. By induction on $i=0,\dots,r$ we show~$n\geq i$ and
$(a_1,\dots,a_i)=(b_1,\dots,b_i)$. This is clear for $i=0$, so suppose~${i\in\{1,\dots,r\}}$, $n\geq i-1$, and $(a_1,\dots,a_{i-1})=(b_1,\dots,b_{i-1})$. 
Then 
$$A_{i-1}=(\der-a_{i-1})\cdots (\der-a_1)=(\der-b_{i-1})\cdots(\der-b_1)=g_1\cdots g_{i-1} (\der g_{i-1}^{-1} )  \cdots   (\der g_1^{-1}),$$
using Lemma~\ref{lem:Polya fact} for the last equality. So $A_{i-1}(y_{i})=g_1\cdots g_{i}\neq 0$, and thus $n\geq i$ and
 $a_{i}=A_{i-1}(y_i)^\dagger=b_i$. 
\end{proof}

\subsection*{The case of real operators} 
We now continue the subsection {\it The real case}\/ of Section~\ref{sec:univ exp ext}.
Thus $K=H[\imag]$ where $H$ is a real closed differential subfield of~$K$ and~${\imag^2=-1}$, and $\Lambda=\Lambda_{\operatorname{r}}+\Lambda_{\operatorname{i}}\imag$ where $\Lambda_{\operatorname{r}}$, $\Lambda_{\operatorname{i}}$ are subspaces
of the $\Q$-linear space~$H$.
The complex conjugation automorphism $z\mapsto \bar{z}$ of the differential field~$K$ extends uniquely to an automorphism
$B\mapsto \bar{B}$ of the ring $K[\der]$ with~$\bar{\der}=\der$. We have~$\overline{A(f)}=\overline{A}(\overline{f})$ for $f\in\Univ$, from which it follows that $\dim_C \ker_K A=\dim_C \ker_K\overline{A}$,
$(\overline{A})_{\lambda}=\overline{(A_{\overline{\lambda}})}$, 
$\mult_\lambda \overline{A} = \mult_{\overline{\lambda}} A$, and $f\mapsto\overline{f}\colon\Univ\to \Univ$ restricts to a $C_H$-linear bijection~$\ker_{\Univ} A \to \ker_{\Univ} \overline{A}$.

\medskip
\noindent
{\it In the rest of this subsection we assume $H=H^\dagger$ $($so $\Lambda=\Lambda_{\operatorname{i}}\imag)$ and $A\in H[\der]$ $($and by earlier conventions, $A\ne 0$ and $r:=\order A)$.}\/
Then $A=\overline{A}$, hence for all $\lambda$ we have $A_{\overline{\lambda}}=\overline{A_\lambda}$ and $\mult_\lambda  A = \mult_{\overline{\lambda}} A$. Thus with $\mu$ ranging over $\Lambda_{\operatorname{i}}^{>}$:
$$\sum_\lambda \mult_\lambda(A)\ =\ \mult_0(A) + 2\sum_{\mu} \mult_{\mu\imag}(A).$$
Note that  $0$ is an eigenvalue of $A$ iff $\ker_H A\neq\{0\}$.

\medskip
\noindent
Let $V:=\ker_{\Univ} A$, a  subspace of the $C$-linear space $\Univ$ with $\overline{V}=V$ and $\dim_C V\leq r$.
Recall that we have the differential $H$-subalgebra $\Univ_{\operatorname{r}}=\{f\in\Univ:\overline{f}=f\}$ of $\Univ$
and the $C_H$-linear subspace $V_{\operatorname{r}} = \ker_{\Univ_{\operatorname{r}}} A$ of $\Univ_{\operatorname{r}}$.
Now
$V=V_{\operatorname{r}}\oplus V_{\operatorname{r}}\imag$ (internal direct sum of $C_H$-linear subspaces), so 
$\dim_C V = \dim_{C_H} V_{\operatorname{r}}$. 
Combining Lemma~\ref{newlembasis} and the remarks preceding it with Lemma~\ref{lem:real basis} and its proof yields:

\begin{cor}\label{cor:complex and real basis}
{\samepage The $C$-linear space $V$ has a basis 
$$a_1\ex(\mu_1\imag),\,\overline{a_1}\ex(-\mu_1\imag),\ \dots,\ a_m\ex(\mu_m\imag),\,\overline{a_m}\ex(-\mu_m\imag), \ h_1,\ \dots,\ h_n \qquad (2m+n\leq r),$$ 
where $a_1,\dots,a_m\in K^\times$, $\mu_1,\dots,\mu_m\in \Lambda_{\operatorname{i}}^{>}$, $h_1,\dots,h_n\in H^\times$.} For such a basis,
$$\Re\!\big(a_1\ex(\mu_1\imag)\big),\,\Im\!\big(a_1\ex(\mu_1\imag)\big),\ \dots,\ \Re\!\big(a_m\ex(\mu_m\imag)\big),\,\Im\!\big(a_m\ex(\mu_m\imag)\big), \ h_1,\ \dots,\ h_n$$
is a basis of the $C_H$-linear space $V_{\operatorname{r}}$, and $h_1,\dots,h_n$ is a basis of the $C_H$-linear subspace~$\ker_H A=V\cap H$ of $H$. 
\end{cor}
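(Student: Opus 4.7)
The plan is to use the spectral decomposition $V=\bigoplus_\lambda V_\lambda$ together with the symmetry $V_{\overline{\lambda}}=\overline{V_\lambda}$ coming from $A=\overline A$, and then invoke Lemma~\ref{lem:real basis} to pass to $V_{\operatorname r}$. First I would split off the zero eigenvalue and write
\[
V\ =\ V_0\oplus V',\qquad V'\ :=\ \bigoplus_{\mu\in\Lambda_{\operatorname i}^{>}}\!\big(V_{\mu\imag}\oplus V_{-\mu\imag}\big),
\]
both summands being closed under conjugation. For $V_0=\ker_K A$, note that since $A\in H[\der]$ acts componentwise on $u+v\imag$ ($u,v\in H$), we have $V_0=\ker_H A\oplus(\ker_H A)\imag$; so any basis $h_1,\dots,h_n\in H^\times$ of $\ker_H A$ over $C_H$ is simultaneously a $C$-basis of $V_0$.

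Next, for each $\mu\in\Lambda_{\operatorname i}^{>}$, by Lemma~\ref{newlembasis} and $\Univ^\times=K^\times\ex(\Lambda)$ every nonzero element of $V_{\mu\imag}\subseteq K\ex(\mu\imag)$ lies in $K^\times\ex(\mu\imag)$, so $V_{\mu\imag}$ admits a $C$-basis of the form $a_{i,1}\ex(\mu\imag),\dots,a_{i,k_\mu}\ex(\mu\imag)$ with $a_{i,j}\in K^\times$; applying conjugation produces the corresponding basis $\overline{a_{i,j}}\ex(-\mu\imag)$ of $V_{-\mu\imag}$. Relabeling these pairs across all $\mu$ as $(a_1,\mu_1),\dots,(a_m,\mu_m)$ gives the claimed basis of $V$, and $2m+n=\dim_C V\leq r$ by~\eqref{eq:bd on sum mults}.

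For the second statement, apply Lemma~\ref{lem:real basis} to $V'$ (which has trivial $\lambda=0$-component and is closed under conjugation) with $D:=C_H$ and $\mathcal V:=\{a_i\ex(\mu_i\imag):1\leq i\leq m\}$, a basis of $\sum_{\Im\lambda>0}V_\lambda$: it yields that $\Re\mathcal V\cup\Im\mathcal V$ is a $C_H$-basis of $V'_{\operatorname r}=V'\cap\Univ_{\operatorname r}$. Combined with $(V_0)_{\operatorname r}=\ker_H A$, which has $C_H$-basis $h_1,\dots,h_n$ by construction, this assembles into the required basis of $V_{\operatorname r}=(V_0)_{\operatorname r}\oplus V'_{\operatorname r}$. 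The last assertion, that $h_1,\dots,h_n$ is a basis of $\ker_H A=V\cap H$, is built into the construction. There is no real obstacle here; the proof is essentially bookkeeping, the only subtle point being the decomposition $V_0=\ker_H A\oplus(\ker_H A)\imag$, which reduces the ``real'' part of the kernel to genuine elements of $H$.
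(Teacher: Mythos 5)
Your proof is correct and follows essentially the same route as the paper, whose own proof simply combines the spectral decomposition $V=\bigoplus_\lambda V_\lambda$ and Lemma~\ref{newlembasis} with Lemma~\ref{lem:real basis}, handling the $\lambda=0$ component via $\ker_K A=\ker_H A\oplus(\ker_H A)\imag$ exactly as you do. The only point to add is that the clause ``for such a basis'' quantifies over \emph{all} bases of the displayed shape, not just the one you construct; your argument covers this as well once you observe that the vectors of any such basis, each lying in a single component $V_\lambda$, automatically restrict to bases of the respective $V_\lambda$, so that the $h_j$ form a $C_H$-basis of $\ker_H A$ and the $a_i\ex(\mu_i\imag)$ a basis of $\sum_{\Im\lambda>0}V_\lambda$, after which your application of Lemma~\ref{lem:real basis} proceeds verbatim.
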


\noindent
Using $H=H^\dagger$, arguments as in the proof of Lemma~\ref{lem:basis of kerUA} show:
 
\begin{lemma} \label{lem:basis of kerUA, real}
Assume $H$ is $1$-linearly surjective when $r\ge 2$.  Let $a_1,\dots, a_r\in H$ be such that
$A =(\der-a_r)\cdots (\der-a_1)$. 
Then the $C_H$-linear space $\ker_H A$ has a basis~$y_1,\dots, y_r$
such that $\operatorname{split}(y_1,\dots,y_r)=(a_1,\dots, a_r)$. 
\end{lemma}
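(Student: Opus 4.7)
I plan to proceed by induction on $r$, mirroring the proof of Lemma~\ref{lem:basis of kerUA} but working entirely in $H$ instead of $\Univ$. The case $r=0$ is vacuous. For $r=1$, the hypothesis $H=H^\dagger$ directly supplies a $y_1\in H^\times$ with $y_1^\dagger=a_1$; this $y_1$ obviously lies in $\ker_H(\der-a_1)$, and by definition $\operatorname{split}(y_1)=(a_1)$.

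For $r\geq 2$, set $B:=(\der-a_r)\cdots(\der-a_2)\in H[\der]$, which still splits over $H$. By the inductive hypothesis (applied to $B$, whose order is $r-1$; the case $r-1=1$ only needs $H=H^\dagger$, while $r-1\geq 2$ inherits $1$-linear surjectivity from our hypothesis on $H$) we obtain a basis $z_2,\dots,z_r$ of $\ker_H B$ with $\operatorname{split}(z_2,\dots,z_r)=(a_2,\dots,a_r)$. Next, pick $y_1\in H^\times$ with $y_1^\dagger=a_1$ using $H=H^\dagger$, so $\ker_H(\der-a_1)=C_Hy_1$. Since $r\geq 2$, $H$ is $1$-linearly surjective, hence $(\der-a_1)\colon H\to H$ is surjective; this produces $y_2,\dots,y_r\in H$ with $(\der-a_1)(y_i)=z_i$ for $i=2,\dots,r$. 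Then $A(y_i)=B(z_i)=0$ for $i=2,\dots,r$ and $A(y_1)=B(0)=0$, so $y_1,\dots,y_r\in\ker_H A$.

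It remains to verify $\operatorname{split}(y_1,\dots,y_r)=(a_1,\dots,a_r)$. In the notation of the split procedure, $f_1=y_1\neq 0$ with $f_1^\dagger=a_1$, so $A_1=\der-a_1$. By the recursion identity following Lemma~\ref{lem:distinguished splitting},
\[
(a_2,\dots,a_r)\ =\ \operatorname{split}\!\big(A_1(y_2),\dots,A_1(y_r)\big)\ =\ \operatorname{split}(z_2,\dots,z_r),
\]
and the right-hand side equals $(a_2,\dots,a_r)$ by the inductive hypothesis, as required. Hence the split has full length $n=r$; by the remark after Lemma~\ref{lem:distinguished splitting} (which concerns $C$-linear independence in $\Univ$, but whose proof yields $C_H$-linear independence when all terms lie in $H$ and are viewed in $\ker_H A\subseteq H$), the elements $y_1,\dots,y_r$ are $C_H$-linearly independent in $\ker_H A$. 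Since $\dim_{C_H}\ker_H A\leq r$ by the standard bound for linear ODEs (e.g.\ [ADH, 5.1.10]), they form a basis of $\ker_H A$.

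There is no real obstacle here: the argument is a mechanical transcription of the proof of Lemma~\ref{lem:basis of kerUA}, with Lemma~\ref{lem:A(U)=U} replaced by the fact that $(\der-a_1)(H)=H$ from $1$-linear surjectivity of $H$, and the initial choice of a unit eigenvector $b_1\ex(\lambda_1)$ replaced by choosing $y_1^\dagger=a_1$ via $H=H^\dagger$. The only minor point to check carefully is that the induction on $r$ respects the hypothesis "$1$-linearly surjective when $r\geq 2$" when we drop from $r$ to $r-1$, but this is immediate since that hypothesis is preserved when the order decreases.
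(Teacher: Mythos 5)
Your proposal is correct and is exactly the argument the paper intends: the paper's proof of this lemma consists entirely of the remark "Using $H=H^\dagger$, arguments as in the proof of Lemma~\ref{lem:basis of kerUA} show:", and you have faithfully carried out that transcription, with $1$-linear surjectivity of $H$ replacing Lemma~\ref{lem:A(U)=U} and $H=H^\dagger$ replacing the choice of $b_1\ex(\lambda_1)$. One small simplification: to get $C_H$-linear independence of $y_1,\dots,y_r$ you don't need to re-inspect the proof of Lemma~\ref{lem:distinguished splitting}; since $n=r$, that lemma directly gives $C$-linear independence in $\Univ$, and $C$-linear independence of elements of $H$ trivially implies $C_H$-linear independence because $C_H\subseteq C$.
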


\noindent
Recall from Lemma~\ref{lem:size of Sigma(A)} that
if $r=1$ or   $K$ is   $1$-linearly surjective,   then
$$\text{$A$ splits over $K$}\quad\Longleftrightarrow\quad \sum_\lambda \mult_\lambda(A)=r.$$
Now $\mult_\lambda(A)=\mult_{\overline{\lambda}}(A)$ for all $\lambda$, so if
$\mult_\lambda(A)=r\ge 1$, then $\lambda=0$.
Also, for~$W:=V\cap K=\ker_K A$  and $W_{\operatorname{r}}:=W\cap\Univ_{\operatorname{r}}$ we have~$W_{\operatorname{r}}=\ker_H A$ and
$$W\ =\ W_{\operatorname{r}}\oplus W_{\operatorname{r}}\imag\quad\text{ (internal direct sum of $C_H$-linear subspaces)},$$ so $\mult_0(A)=\dim_C \ker_K A=\dim_{C_H} \ker_H A$. If $y_1,\dots,y_r$ is a basis of the $C_H$-linear space~$\ker_H A$, then  $\operatorname{split}(y_1,\dots,y_r)\in H^r$ in reversed order is a splitting of~$A$ over $H$ by Corollary~\ref{corbasissplit}.
These remarks and Lem\-ma~\ref{lem:basis of kerUA, real} now yield:

\begin{cor}\label{cor:unique eigenvalue, 1}
If 
$\mult_0(A)=r$, then $A$ splits over $H$. The converse
holds if~$H$ is $1$-linearly surjective or $r = 1$. 
\end{cor}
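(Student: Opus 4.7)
The plan is to reduce both directions to the machinery already in place: the construction $\operatorname{split}(y_1,\dots,y_r)$ with Corollary~\ref{corbasissplit}, Lemma~\ref{lem:basis of kerUA, real}, and the identification $\mult_0(A)=\dim_{C_H}\ker_H A$ noted just before the statement.

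For the forward direction, I would start by assuming $\mult_0(A)=r$ and picking a $C_H$-basis $y_1,\dots,y_r$ of $\ker_H A$ (which has dimension $r$ over $C_H$ by the identification above). These $y_i$ are units of $\Univ$ and $C$-linearly independent, so the tuple $\operatorname{split}(y_1,\dots,y_r)$ has maximal length $r$; write it as $(a_1,\dots,a_r)$. Since each $y_i^{\dagger}\in H$, the observation made directly after Corollary~\ref{corbasissplit} guarantees that $(a_1,\dots,a_r)\in H^r$. Corollary~\ref{corbasissplit} itself then yields $A=c(\der-a_r)\cdots(\der-a_1)$ for some $c\in K^\times$, and a comparison of leading coefficients forces $c\in H^\times$. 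Hence $A$ splits over $H$.

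For the converse, I would split into the two cases allowed by the hypothesis. If $r=1$, write $A=c(\der-a)$ with $c\in H^\times$ and $a\in H$; since $H=H^{\dagger}$, take $y\in H^\times$ with $y^{\dagger}=a$, so $y\in\ker_H A$, giving $\mult_0(A)\geq 1=r$, hence equality in view of \eqref{eq:bd on sum mults} (or Lemma~\ref{lem:size of Sigma(A)}). If $H$ is $1$-linearly surjective, then for arbitrary $r\geq 1$ Lemma~\ref{lem:basis of kerUA, real} applies to any splitting $A=(\der-a_r)\cdots(\der-a_1)$ over $H$ and produces a $C_H$-basis $y_1,\dots,y_r$ of $\ker_H A$. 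Therefore $\mult_0(A)=\dim_{C_H}\ker_H A=r$.

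The only point that needs a moment of care is the stability assertion that $\operatorname{split}(y_1,\dots,y_r)$ remains in $H$ when $y_1^\dagger,\dots,y_r^\dagger\in H$; but this is exactly the remark preceding Corollary~\ref{corbasissplit}, so no genuine obstacle appears. In short, both implications are immediate corollaries of the $(\operatorname{split},\text{kernel-basis})$ correspondence recorded in Corollaries~\ref{corbasissplit} and Lemma~\ref{lem:basis of kerUA, real}, combined with the real-field identity $\dim_{C_H}\ker_H A=\mult_0(A)$.
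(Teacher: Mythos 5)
Your proof is correct and follows essentially the same route as the paper: the identification $\mult_0(A)=\dim_{C_H}\ker_H A$ plus Corollary~\ref{corbasissplit} (with the remark after it giving $\operatorname{split}(y_1,\dots,y_r)\in H^r$) for the forward direction, and Lemma~\ref{lem:basis of kerUA, real} for the converse. Your separate hands-on treatment of the case $r=1$ via $H=H^\dagger$ is exactly how that case is handled inside the proof of Lemma~\ref{lem:basis of kerUA, real}, so it is not a genuinely different argument.
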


\begin{cor}\label{cor:unique eigenvalue, 2} 
Suppose $r\ge 1$, and $K$ is $1$-linearly surjective if $r\geq 2$.  Then
$$\text{$A$ splits over~$H$} \quad \Longleftrightarrow\quad  \mult_0(A)=r\quad \Longleftrightarrow\quad \abs{\Sigma(A)}=1.$$
\end{cor}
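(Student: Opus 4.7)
Proof plan: I will establish the two equivalences separately. The equivalence ``$A$ splits over $H \iff \mult_0(A) = r$'' follows from Corollary~\ref{cor:unique eigenvalue, 1}, once I observe that the hypothesis ``$K$ is $1$-linearly surjective'' (for $r \geq 2$) descends to $H$: given $c, b \in H$ and any $y = y_1 + y_2\imag \in K$ with $y' - cy = b$, the real parts give $y_1 \in H$ with $y_1' - cy_1 = b$. The implication ``$\mult_0(A) = r \Rightarrow |\Sigma(A)| = 1$'' is immediate from Lemma~\ref{lem:size of Sigma(A)}: the chain $r = \mult_0(A) \leq \sum_\alpha \mult_\alpha(A) \leq r$ forces $\mult_\alpha(A) = 0$ for all $\alpha \neq [0]$.

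The substantive direction is ``$|\Sigma(A)| = 1 \Rightarrow \mult_0(A) = r$''. First I identify the unique eigenvalue as $[0]$: since $A \in H[\der]$ gives $\mult_\lambda(A) = \mult_{\overline\lambda}(A)$, the unique eigenvalue $[a]$ satisfies $[a] = [\overline a]$ in $K/K^\dagger$, so $2\Im(a)\imag \in K^\dagger$. By Lemma~\ref{lem:logder} and $H = H^\dagger$ we have $K^\dagger = H \oplus S^\dagger$ with $S^\dagger \subseteq H\imag$ divisible (as $S$ is divisible, $H$ being real closed); since $2\Im(a)\imag$ is pure imaginary we obtain $\Im(a)\imag \in S^\dagger$, whence $a = \Re(a) + \Im(a)\imag \in H \oplus S^\dagger = K^\dagger$, so $[a] = [0]$.

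I would then prove $\mult_0(A) = r$ by induction on $r$. The base $r = 1$ is immediate from $1 \leq \mult_0(A) \leq r = 1$. For $r \geq 2$: using $\mult_0(A) \geq 1$ and the decomposition $\ker_K A = \ker_H A + (\ker_H A)\imag$, choose $y_1 \in H^\times$ with $A(y_1) = 0$. Setting $a_1 := y_1^\dagger \in H \subseteq K^\dagger$, factor $A = B(\der - a_1)$ with $B \in H[\der]$ of order $r - 1 \geq 1$. Since $K$ is $1$-linearly surjective, Lemma~\ref{lem:spectrum fact} gives the equalities $\Sigma(A) = \Sigma(B) \cup \{[0]\}$ and $\mult_0(A) = \mult_0(B) + 1$; in particular $\Sigma(B) \subseteq \{[0]\}$.

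The main obstacle is to upgrade $\Sigma(B) \subseteq \{[0]\}$ to $\Sigma(B) = \{[0]\}$ by ruling out $\Sigma(B) = \emptyset$, which is required to apply the inductive hypothesis. For this I invoke Lemma~\ref{hkspl}: $B$ factors in $H[\der]$ as $B = B_1 \cdots B_m$ with each $B_i$ monic and irreducible of order $1$ or $2$, each splitting over $K = H[\imag]$. The rightmost factor $B_m$ has nonempty spectrum---trivially if $\order B_m = 1$, and otherwise via its $K$-splitting $B_m$ proportional to $(\der - g_1)(\der - g_2)$ contributing $[g_2] \in \Sigma(B_m)$ by Lemma~\ref{lem:spectrum fact}. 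Iterating the containment $\Sigma(D) \subseteq \Sigma(CD)$ from that same lemma along the factorization yields $\emptyset \neq \Sigma(B_m) \subseteq \Sigma(B)$, hence $\Sigma(B) = \{[0]\}$. The induction hypothesis then gives $\mult_0(B) = r - 1$, and so $\mult_0(A) = \mult_0(B) + 1 = r$, completing the proof.
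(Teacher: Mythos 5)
Your handling of the first equivalence and of $\mult_0(A)=r\Rightarrow\abs{\Sigma(A)}=1$ is fine: the observation that $1$-linear surjectivity descends from $K$ to $H$ by taking real parts is correct, and this part is in line with how the paper obtains it from Corollary~\ref{cor:unique eigenvalue, 1}, Lemma~\ref{lem:basis of kerUA, real}, and the remarks preceding them. Your identification of the unique eigenvalue as $[0]$ (conjugation-stability of the spectrum plus divisibility of $S^\dagger$) is also correct and matches those remarks.

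The genuine gap is the step where you rule out $\Sigma(B)=\emptyset$ by invoking Lemma~\ref{hkspl}. That lemma carries the hypothesis that the operator splits over $K$; it does not assert that an arbitrary $B\in H[\der]$ factors in $H[\der]$ into monic irreducibles of order $\le 2$ that split over $K$ (the results \textup{[ADH, 5.1.35, 5.1.22]} behind it likewise presuppose splitting over $K$). Used as you use it, the lemma would show that every $B\in H[\der]$ of order $\ge 1$ splits over $K$, which is false in general and certainly not a consequence of $K$ being $1$-linearly surjective: for $B=4\der^2+f$ with $f\in H$, splitting over $K$ is equivalent to $f\in\omega(H)\cup\sigma(H^\times)$ by \eqref{eq:A splits over K[i]}, a genuinely second-order condition on which first-order surjectivity has no bearing. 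Concretely, nothing in your argument excludes $A=B\der$ with $B\in H[\der]$ of order $2$ not splitting over $K$: then $\{[0]\}=\Sigma(\der)\subseteq\Sigma(A)\subseteq\Sigma(B)\cup\{[0]\}=\{[0]\}$ while $\mult_0(A)\le\mult_0(B)+\mult_0(\der)=1<\order A$, so your induction cannot close. The missing input is precisely that $A$ splits over $K$, equivalently $\sum_\lambda\mult_\lambda(A)=r$ (Lemma~\ref{lem:size of Sigma(A)} under the surjectivity hypothesis); that is how the remarks before Corollary~\ref{cor:unique eigenvalue, 1} handle the spectrum condition, since once $\sum_\lambda\mult_\lambda(A)=r$ the unique eigenvalue has multiplicity $r$, hence is $0$ by the conjugation argument, and $\mult_0(A)=r$ follows, e.g.\ via Lemma~\ref{lem:split evs}. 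As written, your proof never secures this splitting, so the direction $\abs{\Sigma(A)}=1\Rightarrow\mult_0(A)=r$ remains unproved.
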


\noindent
We now focus on the order $2$ case:

\begin{lemma}\label{lem:order 2 eigenvalues}
Suppose  $r=2$ and $A$ splits over~$K$ but not over $H$. 
Then  $$\dim_{C} \ker_{\Univ} A\ =\ 2.$$
If $H$ is $1$-linearly surjective, 
then $A$ has  two distinct eigenvalues. 
\end{lemma}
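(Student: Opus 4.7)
The plan is to exploit the fact that $A\in H[\der]$ splits over $K=H[\imag]$ in two \emph{conjugate} ways, each producing a line in $\ker_{\Univ}A$. First, since $A$ has order $2$ and splits over $K$ but not over $H$, I can normalize to monic $A$ (dividing by its leading coefficient, which lies in $H^\times$) and invoke the classification of such operators from the discussion preceding Definition~\ref{def:real splitting}: there exist $a,b\in H$ with $b\ne 0$ such that $A=(\der-g_1)(\der-g_2)$ where $g_2=a+b\imag$ and $g_1=a-b\imag+b^\dagger$. Applying complex conjugation to this identity and using $\bar A=A$ gives a second splitting $A=(\der-\bar g_1)(\der-\bar g_2)$ over~$K$.

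From each of these splittings I extract a unit of $\Univ$ annihilated by $A$. Writing $g_2=v^\dagger+\lambda$ with $v\in K^\times$, $\lambda\in\Lambda$ (the unique decomposition from $K=K^\dagger\oplus\Lambda$), the element $y_1:=v\ex(\lambda)\in\Univ^\times$ satisfies $y_1^\dagger=g_2$, hence $A(y_1)=0$; an analogous construction based on $\bar g_2$ yields $y_2\in\Univ^\times$ with $y_2^\dagger=\bar g_2$ and $A(y_2)=0$. If $y_2=cy_1$ for some $c\in C^\times$, then $y_1^\dagger=y_2^\dagger$, forcing $\bar g_2=g_2$ and hence $b=0$, a contradiction. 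So $y_1,y_2$ are $C$-linearly independent, and together with the upper bound in \eqref{eq:bd on sum mults} this gives $\dim_C\ker_{\Univ}A=2$, as claimed.

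For the second conclusion, assume $H$ is $1$-linearly surjective and suppose towards a contradiction that $[g_1]=[g_2]$ in $K/K^\dagger$. Then $g_1-g_2=-2b\imag+b^\dagger\in K^\dagger$, and since $b^\dagger\in K^\dagger$ and $K^\dagger$ is divisible, I conclude $b\imag\in K^\dagger$, so $g_2-a=b\imag\in K^\dagger$ gives $[a]=[g_2]$. Because the subsection assumes $H=H^\dagger$, there exists $u_0\in H^\times$ with $u_0^\dagger=a$; then $A_{\ltimes u_0}\in H[\der]$ and
\[
\mult_0(A_{\ltimes u_0})\ =\ \mult_a(A)\ =\ \mult_{[g_2]}(A)\ =\ 2\ =\ r,
\]
where the last equality uses that $[g_2]$ is the unique eigenvalue of $A$ (by Lemma~\ref{lem:split evs}, applicable because $\sum_\lambda\mult_\lambda(A)=\dim_C\ker_{\Univ}A=2$). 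The forward implication of Corollary~\ref{cor:unique eigenvalue, 1} then forces $A_{\ltimes u_0}$ to split over $H$, and conjugating back by $u_0\in H^\times$ shows that $A$ itself splits over $H$, contradicting the hypothesis. Hence $[g_1]\ne[g_2]$ and $A$ has two distinct eigenvalues. The main obstacle is really the first step: recognizing that the $H$-rationality of $A$ automatically supplies the conjugate splitting which, paired with the original one, produces the required $C$-linearly independent kernel elements; once that is in hand, the contradiction argument for the second part is a routine shift-and-apply.
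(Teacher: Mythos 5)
Your proof is correct. The first half is essentially the paper's own argument: the paper also takes a monic splitting $A=(\der-f)(\der-g)$ with $g=a+b\imag$, $b\ne 0$, writes $g=d^\dagger+\mu\imag$ (your decomposition $g_2=v^\dagger+\lambda$), gets $d\ex(\mu\imag)\in\ker_{\Univ}A$ and its conjugate in $\ker_{\Univ}A$ from $A=\bar A$, and checks $C$-independence by showing the ratio is not a constant; your comparison of logarithmic derivatives is just a slightly slicker way to see the same thing, avoiding the paper's case split on $\mu$. The second half is where you genuinely diverge. The paper assumes $H$ is $1$-linearly surjective and quotes [ADH, 5.1.21, 5.2.10] to get $\dim_{C_H}\ker_H A=\dim_C\ker_K A=0$, hence $g\notin K^\dagger$, so the two eigenvalues $\pm\mu\imag+K^\dagger$ are distinct. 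You instead suppose $[g_1]=[g_2]$, concentrate the multiplicity at the class of $a\in H$ via Lemma~\ref{lem:split evs}, twist by $u_0\in H^\times$ with $u_0^\dagger=a$ (using the standing assumption $H=H^\dagger$), and invoke the unconditional forward implication of Corollary~\ref{cor:unique eigenvalue, 1} to force a splitting of $A$ over $H$, a contradiction. This is a legitimate alternative; it stays entirely inside the tools of this subsection rather than citing [ADH], and in fact it never uses $1$-linear surjectivity of $H$, so it establishes the distinctness of the eigenvalues under the standing hypotheses alone, whereas the paper's route buys the additional information that $\ker_K A=\ker_H A=0$ and exhibits the eigenvalues explicitly as $\pm\mu\imag+K^\dagger$. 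The only step you leave tacit is that $g_2\notin H$ (needed to invoke the classification with $b\ne 0$), which is immediate from the paper's remark that $f\in H$ iff $g\in H$ together with the hypothesis that $A$ does not split over $H$.
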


{\sloppy
\begin{proof}
We can assume $A$ is monic, so $A=(\der-f)(\der-g)$ with $f,g\in K$ and~$g=a+b\imag$, $a,b\in H$, $b\ne 0$. Then  $g=d^\dagger+\mu\imag$ with $d\in K^\times$ and $\mu\in \Lambda_{\operatorname{i}}$, and so  $d\ex(\mu\imag)\in \ker_{\operatorname{U}}A$. From $A=\bar{A}$ we obtain $\bar{d}\ex(-\mu\imag)\in \ker_{\operatorname{U}}A$. These two elements of~$\ker_{\Univ} A$ are $C$-linearly independent, since 
$$d\ex(\mu\imag)/\bar{d}\ex(-\mu\imag)\ =\ (d/\bar{d})\ex(2\mu\imag)\notin C:$$
this is clear if $\mu\ne 0$, and if $\mu=0$, then $d^\dagger=g$, so~${(d/\bar{d})^\dagger=g-\bar{g}=2b\imag\ne 0}$, and hence $d/\bar{d}\notin C$.   Thus $\dim_{C} \ker_{\operatorname{U}} A\ =\ 2$, and $\mu\imag$, $-\mu \imag$ are eigenvalues of~$A$ with respect to $\Lambda$. Now assume $H$ is $1$-linearly surjective. Then we claim that~${\mu\neq 0}$. To see this note that [ADH, 5.1.21, 5.2.10] and the assumption that $A$ does not split over $H$ yield $ \dim_{C_H} \ker_H A = \dim_C \ker_K A =0$, hence $g\notin K^\dagger$ and thus~${\mu\imag=g-d^\dagger\neq 0}$.
\end{proof}}

\noindent
Combining Lemmas~\ref{lem:basis of kerUA, real} and~\ref{lem:order 2 eigenvalues}    yields:  

\begin{cor}\label{spldcr2} If $H$ is $1$-linearly surjective, $A$ has order~$2$, and $A$  splits over~$K$, then~$\dim_{C} \ker_{\operatorname{U}} A\ =\ 2$.
\end{cor}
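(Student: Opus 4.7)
The plan is to split into two cases according to whether $A$ splits over $H$ or only over $K$. The second case is handled directly by Lemma~\ref{lem:order 2 eigenvalues}: under the hypotheses that $r=2$ and $A$ splits over $K$ but not over $H$, that lemma yields $\dim_C \ker_{\Univ} A = 2$ without any further work. So the only thing left is the case where $A$ actually splits over $H$.

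In the case where $A$ splits over $H$, I would invoke Lemma~\ref{lem:basis of kerUA, real}, which is stated precisely under the hypothesis that $H$ is $1$-linearly surjective (available to us). Writing $A = (\der - a_2)(\der - a_1)$ with $a_1, a_2 \in H$, that lemma produces a basis $y_1, y_2$ of the $C_H$-linear space $\ker_H A$ with $\operatorname{split}(y_1, y_2) = (a_1, a_2)$. In particular $\dim_{C_H} \ker_H A = 2$.

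To pass from $\ker_H A$ to $\ker_{\Univ} A$, I would use the identifications recorded just before Corollary~\ref{cor:unique eigenvalue, 1}: since $A = \bar A$, we have $\ker_K A = \ker_H A \oplus (\ker_H A)\imag$ as $C_H$-linear spaces, and consequently $\mult_0(A) = \dim_C \ker_K A = \dim_{C_H} \ker_H A = 2$. Because $\mult_0(A) \leq \dim_C \ker_{\Univ} A \leq r = 2$ (the first inequality from the decomposition $\ker_{\Univ} A = \bigoplus_\lambda (\ker_K A_\lambda)\ex(\lambda)$, and the second from \eqref{eq:bd on sum mults}), we conclude $\dim_C \ker_{\Univ} A = 2$ in this case as well.

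The argument is essentially a bookkeeping exercise combining two lemmas that have already done all the heavy lifting, so there is no real obstacle — the only subtlety is remembering to treat both the ``splits over $H$'' and ``splits over $K$ but not over $H$'' cases separately, as the respective lemmas apply to disjoint situations.
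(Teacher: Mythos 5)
Your proposal is correct and follows essentially the same route as the paper, whose proof is exactly the combination of Lemma~\ref{lem:basis of kerUA, real} (for the case where $A$ splits over $H$, with the passage from $\ker_H A$ to $\ker_{\Univ}A$ via $\mult_0(A)$ and the bound $\dim_C\ker_{\Univ}A\le r$) with Lemma~\ref{lem:order 2 eigenvalues} (for the case where $A$ splits over $K$ but not over $H$). The only cosmetic point is that in the first case one should first normalize $A$ to be monic before applying Lemma~\ref{lem:basis of kerUA, real}, which does not affect the kernel.
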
 

\noindent
{\it In the rest of this subsection $H$ is $1$-linearly surjective and $A=4\der^2+f$, $f\in H$.}\/
 Let~$\omega\colon H \to H$ and~$\sigma\colon H^\times \to H$ be  as in \eqref{eq:def omega} and \eqref{eq:def sigma}.  Then
 by~\eqref{eq:A splits over K} and \eqref{eq:A splits over K[i]}: 
\begin{align*}
\text{$A$ splits over $H$} &\quad\Longleftrightarrow\quad f\in\omega(H), \\
\text{$A$ splits over $K$} &\quad\Longleftrightarrow\quad f\in \sigma(H^\times)\cup\omega(H).
\end{align*}
If $A$ splits over $H$, then $\Sigma(A)=\{0\}$  and~$\operatorname{mult}_0(A)=2$, by Corollary~\ref{cor:unique eigenvalue, 2}.
Suppose~$A$ splits over $K$ but not over~$H$, and let~$y\in H^\times$ satisfy~$\sigma(y)=f\notin\omega(H)$.
Then by~[ADH, p.~262] we have~$A=4(\der+g)(\der-g)$ where~$g=\frac{1}{2}(-y^\dagger+y\imag)$.
Hence the two distinct eigenvalues of~$A$
are~$(y/2)\imag+K^\dagger$ and~$-(y/2)\imag+K^\dagger$. 

\subsection*{The case of oscillating transseries} 
We now apply the results above to the algebraically closed differential field $K=\T[\imag]$. Note that $\T[\imag]$ has constant field $\C$ and extends the (real closed) differential field $\T$ of transseries. \index{transseries!oscillating}   
After \eqref{eq:14.5.7} in the introduction, we already remarked:

\begin{lemma}\label{lem:T[imag] lc and ls}
$\T[\imag]$ is linearly closed and linearly surjective.
\end{lemma}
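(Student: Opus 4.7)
The plan is essentially to read off the conclusion from facts already assembled in the introductory survey, using Theorems \eqref{eq:14.5.3}, \eqref{eq:14.5.7}, and \eqref{eq:14.4.2} from [ADH].

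First I would record that $\T$ is a newtonian $H$-field: this is [ADH, 15.0.2]. Moreover $\T$ is $\upo$-free and $\d$-valued (as an $H$-field), with divisible value group (since $\T$ is Liouville closed, hence real closed, so $\Gamma_\T$ is divisible). The extension $\T[\imag]$ is algebraic over $\T$, so by \eqref{eq:14.5.7}, $\T[\imag]$ is newtonian. It remains $H$-asymptotic, $\upo$-free (being $\d$-algebraic over an $\upo$-free $H$-asymptotic field, via Theorem~\ref{thm:ADH 13.6.1}), $\d$-valued, and of course algebraically closed.

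Next, to get linear closedness, I would invoke \eqref{eq:14.5.3}: a $\d$-valued, $\upo$-free, newtonian, and algebraically closed asymptotic field is weakly differentially closed. Then by [ADH, 5.8.9], weak differential closedness implies linear closedness (every nonzero linear differential operator splits), giving the first half of the lemma. Finally, for linear surjectivity, I would apply \eqref{eq:14.4.2}, which states that every newtonian $H$-asymptotic field is linearly surjective; this applies directly to $\T[\imag]$ and gives the second half.

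The proof is thus purely a matter of assembling cited results in the correct order, and there is no real obstacle: the substantive work is hidden in the references to [ADH, 15.0.2], \eqref{eq:14.5.3}, \eqref{eq:14.5.7}, and \eqref{eq:14.4.2}. The only point that requires a moment's thought is verifying the hypotheses of \eqref{eq:14.5.3} (in particular $\upo$-freeness of $\T[\imag]$), which follows from Theorem~\ref{thm:ADH 13.6.1} applied to the $\d$-algebraic extension $\T[\imag]|\T$, together with the observation that $\T[\imag]$ is pre-$\d$-valued as an algebraic extension of the pre-$H$-field $\T$.
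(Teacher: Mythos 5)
Your proposal is correct and follows exactly the route the paper takes: the lemma is proved there by the remark after \eqref{eq:14.5.7} in the introduction, namely $\T$ is newtonian by [ADH, 15.0.2], hence $\T[\imag]$ is newtonian by \eqref{eq:14.5.7}, then linearly closed via \eqref{eq:14.5.3} (with [ADH, 5.8.9]) and linearly surjective via \eqref{eq:14.4.2}. Your extra verification of the hypotheses ($\upo$-freeness of $\T[\imag]$ via Theorem~\ref{thm:ADH 13.6.1}, divisibility of the value group) is sound and just makes explicit what the paper leaves implicit.
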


\noindent
Now  applying Corollary~\ref{cor:basis of kerUA} and Lemma~\ref{newlembasis} to $\T[\imag]$ gives:

\begin{cor}
For $K=\T[\imag]$, there are $\mathbb C$-linearly independent units $y_1,\dots,y_r$ of $\Univ_{\T[\imag]}$ with $A(y_1)=\cdots = A(y_r)=0$.
\end{cor}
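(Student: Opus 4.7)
The plan is to apply the machinery already developed in the section directly to the specific field $K = \T[\imag]$. Since the excerpt explicitly signals that this corollary is obtained by combining Corollary~\ref{cor:basis of kerUA} and Lemma~\ref{newlembasis} with Lemma~\ref{lem:T[imag] lc and ls}, the work is primarily in verifying that the standing hypotheses of the section are met by $\T[\imag]$.

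First I would check that $K = \T[\imag]$ fits the standing assumptions of this section: $K$ is algebraically closed (hence $K^\dagger$ is divisible, since $K^\times$ is divisible for any algebraically closed field), and the constant field $C = \mathbb{C}$ is algebraically closed. Thus the universal exponential extension $\Univ = \Univ_{\T[\imag]}$ is defined and all the general theory applies.

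Next I would invoke Lemma~\ref{lem:T[imag] lc and ls}: $\T[\imag]$ is linearly closed and linearly surjective. Linear closedness applied to $A \in K[\der]^{\neq}$ of order $r$ yields that $A$ splits over $K$; linear surjectivity gives in particular that $K$ is $1$-linearly surjective. These are exactly the hypotheses of Corollary~\ref{cor:basis of kerUA}, whose conclusion $\dim_C \ker_{\Univ} A = r$ (when $A$ splits) then applies, so $\ker_{\Univ} A$ is an $r$-dimensional $\mathbb{C}$-linear subspace of $\Univ$. Finally, Lemma~\ref{newlembasis} furnishes a basis of this kernel contained in $\Univ^\times = K^\times \ex(\Lambda)$; taking $y_1, \dots, y_r$ to be such a basis gives $\mathbb{C}$-linearly independent units with $A(y_i) = 0$ for $i = 1, \dots, r$.

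There is no real obstacle here --- the statement is essentially a summary observation recording the payoff of the preceding development for the concrete case $K = \T[\imag]$. The only mild subtlety worth mentioning explicitly in the write-up is the case $r = 0$, which is vacuous (the empty tuple works), so one tacitly assumes $r \geq 1$; Corollary~\ref{cor:basis of kerUA} already accommodates this by only requiring $1$-linear surjectivity when $r \geq 2$.
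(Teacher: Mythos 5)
Your proposal matches the paper's argument exactly: the paper derives this corollary by applying Corollary~\ref{cor:basis of kerUA} and Lemma~\ref{newlembasis} to $\T[\imag]$, using Lemma~\ref{lem:T[imag] lc and ls} to supply the splitting and $1$-linear surjectivity hypotheses, just as you do. Your verification of the standing assumptions and your remark on the trivial case $r=0$ are fine additions but introduce nothing at variance with the paper's route.
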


\noindent
Next we describe another incarnation of $\Univ_{\T[\imag]}$, namely as a ring $\mathbb O$ of ``oscillating'' transseries. Towards this goal we first note that by  
 [ADH, 11.5.1, 11.8.2] we have
\begin{align*}
\I(\T)\	&=\ \big\{y\in\T:\ \text{$y\preceq f'$ for some $f\prec 1$ in $\T$} \big\}\\
		&=\ \big\{y\in \T:\ \text{$y\prec 1/(\ell_0\cdots\ell_n)$ for all $n$} \big\},
\end{align*}
so a complement $\Lambda_{\T}$ of $\I(\T)$ in $\T$ is given by
$$
\Lambda_{\T}\	:=\ \big\{y\in\T:\  \text{$\supp(y) \succ 1/(\ell_0\cdots\ell_{n-1}\ell_n^2)$ for all $n$} \big\}.$$
Since $\T^\dagger=\T$ and $\I\!\big(\T[\imag]\big)\subseteq \T[\imag]^\dagger$ we
have $\T[\imag]^\dagger=\T\oplus \I(\T)\imag$ by Lemmas~\ref{lem:logder} and~\ref{lem:W and I(F)}.
We now take $\Lambda=\Lambda_{\T}\imag$ as our complement $\Lambda$ of $\T[\imag]^\dagger$ in $\T[\imag]$ and 
explain how the universal exponential extension~$\Univ$ of $\T[\imag]$ for this~$\Lambda$ was introduced in \cite[Section~7.7]{JvdH} in a different way.
Let $$\T_{\succ}\ :=\ \{f\in\T:\supp f\succ 1\},$$ and similarly with $\prec$ in place of $\succ$; then $\T_{\prec}=\smallo_{\T}$ and~$\T_{\succ}$ are
$\R$-linear subspaces of $\T$, and~$\T$ decomposes as an internal direct sum 
\begin{equation}\label{eq:T decomp}
\T\ =\ \T_{\succ}\oplus \R \oplus \T_{\prec}
\end{equation}
of $\R$-linear subspaces of $\T$.
Let $\ex^{\imag \T_{\succ}}=\{\ex^{\imag f}:f\in\T_{\succ}\}$ be a multiplicative copy of the additive group~$\T_{\succ}$, with isomorphism $f\mapsto \ex^{\imag f}$. Then we have the group ring
$$\mathbb{O}\ :=\ K\big[\ex^{\imag\T_{\succ}}\big]$$
of $\ex^{\imag\T_{\succ}}$ over $K=\T[\imag]$. 
We make $\mathbb{O}$ into a differential ring extension of $K$ by
$$(\ex^{\imag f})'\ =\  \imag f' \ex^{\imag f}\qquad (f\in\T_{\succ}).$$
Hence $\mathbb{O}$ is an exponential extension of $K$.
The elements of $\mathbb{O}$ are called {\it oscillating trans\-series.}\/ 
For each $f\in\T$ there is a unique $g\in\T$, to be denoted by $\int f$,
such that~$g'=f$ and $g$ has constant term $g_1=0$.
The injective map $\int\colon \T\to\T$ is $\R$-linear; we use this map 
to show that $\Univ$ and $\mathbb{O}$ are disguised versions of each other:

\begin{prop}\label{prop:osc transseries}
There is a unique isomorphism $\Univ=K\big[\!\ex(\Lambda)\big] \to \mathbb{O}$ of differential $K$-algebras sending $\ex(h\imag)$ to $\ex^{\imag\int h}$ for all $h\in \Lambda_{\T}$.
\end{prop}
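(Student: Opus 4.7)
The plan is to construct $\Phi$ on the generators of the $K$-algebra $\Univ$ and verify it has the claimed properties. For each $h\in\Lambda_{\T}$ the element $\int h\in\T$ has zero constant term, so the decomposition~\eqref{eq:T decomp} reduces to $\int h = u+v$ with $u\in\T_{\succ}$ and $v\in\T_{\prec}=\smallo_{\T}$. Since $\imag v\in\smallo_K$, the exponential $\exp(\imag v)$ is well-defined in $K^\times$ via the convergent power series $\sum_n (\imag v)^n/n!$, and we set
\[
\varphi(h\imag)\ :=\ \ex^{\imag u}\cdot\exp(\imag v)\ \in\ \mathbb{O}^\times.
\]
Additivity of $\int$ together with the multiplicativity of $f\mapsto\ex^{\imag f}$ on $\T_{\succ}$ and of $\exp$ on $\imag\T_{\prec}$ shows that $\varphi\colon\Lambda=\Lambda_{\T}\imag\to\mathbb{O}^\times$ is a group morphism. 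By the universal property of the group ring $\Univ=K\big[\ex(\Lambda)\big]$ it extends uniquely to a $K$-algebra morphism $\Phi\colon\Univ\to\mathbb{O}$, characterized by $\Phi(\ex(h\imag))=\ex^{\imag\int h}$ once we parse the right-hand side as $\ex^{\imag u}\exp(\imag v)$.

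To see that $\Phi$ is a differential ring morphism it suffices, as $\Phi$ is a $K$-algebra map between differential rings, to check compatibility with $\der$ on the generators $\ex(h\imag)$. Using the Leibniz rule together with $(\ex^{\imag u})'=\imag u'\ex^{\imag u}$ in $\mathbb{O}$ and the standard identity $(\exp(\imag v))'=\imag v'\exp(\imag v)$ in $K$ (which holds for $v\in\smallo_{\T}$ by differentiating the power series term by term), one computes
\[
\Phi(\ex(h\imag))'\ =\ \imag(u'+v')\,\Phi(\ex(h\imag))\ =\ \imag h\,\Phi(\ex(h\imag))\ =\ \Phi(\ex(h\imag)').
\]
Uniqueness of $\Phi$ is automatic from its prescription on generators, and injectivity follows from Corollary~\ref{cor:U simple} applied to $\mathbb{O}$, which is an integral domain by Lemma~\ref{lem:only trivial units} and hence a differential ring extension of $K$.

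The remaining task is surjectivity: it suffices to exhibit each generator $\ex^{\imag f}$ ($f\in\T_{\succ}$) in $\operatorname{im}\Phi$. Given such $f$, decompose $f'\in\T=\Lambda_{\T}\oplus\I(\T)$ as $f'=h+r$ with $h\in\Lambda_{\T}$ and $r\in\I(\T)$. Since $\T$ is $H$-closed, in particular $H$-asymptotic with asymptotic integration and $1$-linearly newtonian (Corollary~\ref{cor:Liouville closed => 1-lin newt}), Lemma~\ref{lem:ADH 14.2.5} yields $\I(\T)=\der\O_{\T}=\der\smallo_{\T}=\der(\T_{\prec})$, so we may pick $v\in\T_{\prec}$ with $v'=r$. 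Then $\int h$ and $f-v$ are both antiderivatives of $h$ with zero constant term, hence equal; this is the decomposition of $\int h$ in $\T_{\succ}\oplus\T_{\prec}$ with $\T_{\succ}$-part $f$ and $\T_{\prec}$-part $-v$. Thus $\Phi(\ex(h\imag))=\ex^{\imag f}\exp(-\imag v)$, and therefore $\ex^{\imag f}=\exp(\imag v)\cdot\Phi(\ex(h\imag))\in\operatorname{im}\Phi$. The main point requiring real work is the identity $\I(\T)=\der(\T_{\prec})$, which is what makes the $\T_{\succ}$-component of $\int h$ range over all of $\T_{\succ}$ as $h$ ranges over $\Lambda_{\T}$; the rest is bookkeeping with the two decompositions $\T=\Lambda_{\T}\oplus\I(\T)$ and $\T=\T_{\succ}\oplus\R\oplus\T_{\prec}$.
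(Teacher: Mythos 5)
Your construction produces a differential $K$-algebra isomorphism $\Univ\to\mathbb{O}$, but it does not prove the proposition as stated, because it never shows that $\int h\in\T_{\succ}$ for $h\in\Lambda_{\T}$. At this point of the paper, $\ex^{\imag f}$ is defined only for $f\in\T_{\succ}$ (the exponential on all of $K$ is introduced only in the next proposition), so the defining property ``$\ex(h\imag)\mapsto\ex^{\imag\int h}$'' implicitly asserts $\int\Lambda_{\T}\subseteq\T_{\succ}$. Your ``parsing'' of $\ex^{\imag\int h}$ as $\ex^{\imag u}\exp(\imag v)$ is a redefinition rather than a verification: your map agrees with the one in the statement only if $v=0$ for every $h$, which is exactly the missing point. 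That containment, equivalently $\der(\T_{\succ})\subseteq\Lambda_{\T}$, is the entire content of the paper's proof: it comes from Lemma~\ref{succP}, a support computation for derivatives of LE-monomials $\fm\succ 1$, after which applying $\der$ to \eqref{eq:T decomp} gives $\T=\der(\T_{\succ})\oplus\der(\T_{\prec})$, and comparison with $\T=\Lambda_{\T}\oplus\I(\T)$ forces $\der(\T_{\succ})=\Lambda_{\T}$, i.e.\ $\int\Lambda_{\T}=\T_{\succ}$.

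The ingredients you cite cannot close this gap. From $\I(\T)=\der\mathcal{O}_{\T}=\der(\T_{\prec})$ (your correct use of Lemma~\ref{lem:ADH 14.2.5}) together with $\T=\der(\T_{\succ})\oplus\der(\T_{\prec})$ you only learn that $\der(\T_{\succ})$ is \emph{some} complement of $\I(\T)$ in $\T$; complements are not unique, so this does not identify $\der(\T_{\succ})$ with the particular complement $\Lambda_{\T}$ cut out by the support condition. Likewise, your surjectivity argument shows that the $\T_{\succ}$-component of $\int h$ sweeps out all of $\T_{\succ}$, not that the $\T_{\prec}$-component vanishes. The rest of your argument (extension via the universal property of the group ring, checking $\der$-compatibility on generators, injectivity from Corollary~\ref{cor:U simple}, surjectivity from $\I(\T)=\der\smallo_{\T}$) is sound, but once $\int\Lambda_{\T}=\T_{\succ}$ is available the detour through $\exp(\imag v)$ is unnecessary: the assignment $\ex(h\imag)\mapsto\ex^{\imag\int h}$ is then a group isomorphism $\ex(\Lambda)\to\ex^{\imag\T_{\succ}}$, and the proposition follows immediately, which is how the paper argues.
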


\noindent
This requires the next lemma. We assume familiarity with [ADH, Appendix~A], especially with the ordered group $G^{\operatorname{LE}}$ (a subgroup of $\T^\times$) of
logarithmic-exponential monomials and its subgroup
$G^{\operatorname{E}} = \bigcup_n G_n$ of exponential monomials.

\begin{lemma}\label{succP} If $\fm\in G^{\operatorname{LE}}$ and $\fm\succ 1$, then $\supp \fm'\ \subseteq\ \Lambda_{\T}$. 
\end{lemma}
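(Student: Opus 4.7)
The plan is to establish the stronger statement that $\fm^\dagger \in \Lambda_{\T}$ for every $\fm \in G^{\operatorname{LE}}$ with $\fm \succ 1$. This implies the lemma, since $\supp \fm' = \fm \cdot \supp \fm^\dagger$, and whenever $\fm \succ 1$ and $\fp$ is a monomial with $\fp \succ 1/(\ell_0 \cdots \ell_{k-1} \ell_k^2)$ for all $k$, we have $\fm\fp \succ \fp$, so $\fm\fp$ inherits the same lower bounds; thus $\Lambda_{\T}$-monomials are closed under multiplication by any $\fm \in G^{\operatorname{LE}}$ with $\fm \succ 1$.

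The argument proceeds by induction on the exponential depth of $\fm \in G^{\operatorname{LE}}$. For the base case, $\fm = \ell_0^{r_0} \cdots \ell_N^{r_N}$ is a pure log-monomial with $(r_0, \ldots, r_N)$ lexicographically positive, and $\fm^\dagger = \sum_{i=0}^N r_i/(\ell_0 \cdots \ell_i)$. Each nonzero summand is a scalar multiple of $1/(\ell_0 \cdots \ell_i) = \ell_i^\dagger$, and for every $k$ the ratio $(\ell_0 \cdots \ell_{k-1} \ell_k^2)/(\ell_0 \cdots \ell_i)$ dominates $1$: for $k \leq i$ the ratio simplifies to $\ell_k/(\ell_{k+1} \cdots \ell_i)$, while for $k > i$ it becomes $\ell_{i+1} \cdots \ell_{k-1} \ell_k^2$; both $\succ 1$ by the standard asymptotic domination among log-iterates.

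For the inductive step, decompose $\fm = \ex^g \fn$ with $\fn$ a pure log-monomial and $g \in \T$ either zero or with $\supp g \succ 1$, the latter case having positive leading coefficient since $\fm \succ 1$. Then $\fm^\dagger = g' + \fn^\dagger$; the summand $\fn^\dagger$ lies in $\Lambda_{\T}$ by the base-case computation. For the $g'$ term, write $g = \sum_\lambda c_\lambda \fm_\lambda$ with each $\fm_\lambda \in \supp g \subseteq G^{\operatorname{LE}}$ satisfying $\fm_\lambda \succ 1$ and having strictly smaller exponential depth than $\fm$ (since the $\fm_\lambda$ appear under the outer $\ex$ of $\fm$). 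By the inductive hypothesis, $\fm_\lambda^\dagger \in \Lambda_{\T}$, and combining with the multiplicative closure above, $\fm_\lambda' = \fm_\lambda \fm_\lambda^\dagger$ has support in the $\Lambda_{\T}$-monomials; therefore $\supp g' \subseteq \bigcup_\lambda \supp \fm_\lambda'$ is likewise contained in the $\Lambda_{\T}$-monomials, so $g' \in \Lambda_{\T}$.

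The main obstacle lies in the base case: the uniform verification of the inequalities for all $k \geq 0$ ultimately reduces to the fact that $\ell_k$ asymptotically dominates every finite product of real powers of $\ell_j$ for $j > k$, applied in a slightly delicate case analysis on how $k$ compares with $i$ and $N$. Once this asymptotic fact is in hand, the inductive step is essentially bookkeeping, relying on the closure of $\Lambda_{\T}$ both under multiplication by $G^{\operatorname{LE}}$-monomials $\succ 1$ and under infinite Hahn-type sums whose supports remain in the $\Lambda_{\T}$-monomials.
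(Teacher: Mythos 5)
Your proposal is correct in substance, but it takes a genuinely different route from the paper. The paper first shows, by induction along the tower $G_{n+1}=G_n\exp(A_n)$ and using convexity of $G_n$ in $G_{n+1}$, that $\supp \fm'\succ 1/x$ for $\fm\in G^{\operatorname{E}}$ with $\fm\succ 1$, and then transfers this to arbitrary $\fm\in G^{\operatorname{LE}}$ via the shift identity $(\fm{\uparrow}^n)'=(\fm'\cdot\ell_0\cdots\ell_{n-1}){\uparrow}^n$; applying ${\downarrow}_n$ gives the single bound $\fn\succ 1/(\ell_0\cdots\ell_n)$ for each $\fn\in\supp\fm'$, which already implies membership in $\Lambda_{\T}$. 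You instead prove the stronger statement $\fm^\dagger\in\Lambda_{\T}$ directly, with the explicit computation $\ell_i^\dagger=1/(\ell_0\cdots\ell_i)\in\Lambda_{\T}$ as base case and a recursion through $\log\fm=g+\log\fn$; your ratio computations and the closure properties of $\Lambda_{\T}$ (under sums with supports in $\Lambda_{\T}$-monomials, and under multiplication by monomials $\succ 1$) are all fine, and your inductive step essentially re-derives the inclusion $\der(\T_{\succ})\subseteq\Lambda_{\T}$ that the lemma is invoked for in the proof of Proposition~\ref{prop:osc transseries}, which is a pleasant by-product. Two caveats. First, the parenthetical claim that $g$ has positive leading coefficient when $\fm\succ 1$ is false (e.g.\ $\fm=x\exp(-\ell_1^{1/2})\succ 1$ has $g=-\ell_1^{1/2}$), but you never use it. Second, the crux ``each $\fm_\lambda\in\supp g$ has strictly smaller exponential depth'' is exactly where the structural content sits and is left informal: to make it precise one needs a well-founded measure, say $e(\fm):=\min\{n:\ \fm\in G_n{\downarrow}_m\text{ for some }m\}$, together with the fact that for $\fm\in G_n{\downarrow}_m$, $n\geq 1$, one has $\supp\log\fm\subseteq\{\ell_1,\ell_2,\dots\}\cup G_{n-1}{\downarrow}_m$, which follows from $G_n=G_{n-1}\exp(A_{n-1})$ with $A_{n-1}\subseteq\R[[G_{n-1}]]$ and the handling of the downward shifts — precisely the bookkeeping the paper's upward-shift trick is designed to bypass. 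With that spelled out, your recursion terminates (purely logarithmic monomials are dispatched directly at every level, all other monomials strictly decrease $e$), and the argument is complete.
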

\begin{proof} We first prove by induction on $n$ a fact about elements of $G^{\operatorname{E}}$: 
$$\text{ if $\fm\in G_n$, $\fm\succ 1$,
then $\supp \fm' \succ 1/x$}.$$ For $r\in\R^>$ we have $(x^r)'=rx^{r-1}\succ 1/x$,
so the claim holds for $n=0$. Suppose the claim holds for a certain $n$. Now
$G_{n+1}=G_n\exp(A_n)$, $G_n$ is a convex subgroup of $G_{n+1}$, and 
$$A_n\ =\ \big\{f\in \R[[G_n]]:\ \supp f\succ G_{n-1}\big\}\qquad\text{ (where $G_{-1}:=\{1\}$).}$$
Let $\fm=\fn\exp(a)\in G_{n+1}$ where $\fn\in G_n$, $a\in A_n$; then
$$\fm\succ 1\quad\Longleftrightarrow\quad\text{$a>0$, or $a=0$, $\fn\succ 1$.}$$
Suppose $\fm\succ 1$. If $a=0$, then $\fm=\fn$, and we are done by inductive hypothesis,
so assume $a>0$. Then $\fm' = (\fn'+\fn a')\exp(a)$ and 
$(\fn'+\fn a')\in \R[[G_n]]$, a differential subfield of $\T$, and
$\exp(a) > \R[[G_n]]$,  hence $\supp \fm' \succ 1\succ 1/x$ as required.

Next, suppose $\fm\in G^{\operatorname{LE}}$ and $\fm\succ 1$. Take $n\ge 1$ such that $\fm{\uparrow}^n\in G^{\operatorname{E}}$. 
We have~$(\fm{\uparrow}^n)'= (\fm'\cdot \ell_0\ell_1\cdots\ell_{n-1}){\uparrow}^n$. For $\fn\in\supp\fm'$ and using $\fm{\uparrow}^n\succ 1$
this gives
$$ (\fn\cdot \ell_0\ell_1\cdots\ell_{n-1} ){\uparrow}^n\ \succ\ 1/x$$
by what we proved for monomials in $G^{\operatorname{E}}$.  Applying ${\downarrow}_n$ this yields
$\fn\succ 1/(\ell_0\ell_1\cdots\ell_n)$,
hence $\fn\in \Lambda_{\T}$ as claimed.
\end{proof}

\begin{proof}[Proof of Proposition~\ref{prop:osc transseries}] Applying $\der$ to the decomposition~\eqref{eq:T decomp} gives $$\T\ =\ \der(\T_{\succ}) \oplus \der(\T_{\prec}).$$ Now $\der(\T_{\succ})\subseteq \Lambda_{\T}$ by Lemma~\ref{succP}, and $\der(\T_{\prec})\subseteq \I(\T)$, and so these two inclusions are equalities. Thus $\int \Lambda_{\T}= \T_{\succ}$, from which the proposition follows.
\end{proof}

\begin{prop} There is a unique group morphism $\exp \colon K=\T[\imag]\to \mathbb O^\times$
that extends the given exponential maps $\exp\colon \T\to \T^\times$ and
$\exp\colon \mathbb C \to \mathbb C^\times$, and such that~$\exp(\imag f)=\ex^{\imag f}$ for all $f\in \T_{\succ}$ and $\exp(\epsilon)=\sum_n \frac{\varepsilon^n}{n!}$ for all $\epsilon\in \smallo$. It is surjective, has kernel $2\pi\imag \Z\subseteq \mathbb C$, and satisfies $\exp(f)'=f'\exp(f)$ for all $f\in K$.   
\end{prop}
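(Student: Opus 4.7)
My plan is to define $\exp$ piecewise via the $\R$-linear direct sum decomposition
\[ K\ =\ \T\oplus \imag\T\ =\ (\T_\succ\oplus\R\oplus\T_\prec)\oplus \imag(\T_\succ\oplus\R\oplus\T_\prec) \]
coming from \eqref{eq:T decomp}: on $\T$ I use the given transseries exponential, on $\imag\T_\succ$ I set $\exp(\imag f):=\ex^{\imag f}$ as prescribed, on $\imag\R\subseteq\C$ I use the given complex exponential, and on $\imag\T_\prec\subseteq\smallo_K$ I use the formal power series $\sum_n(\imag\varepsilon)^n/n!$, which converges in the valuation topology of the immediate extension $\mathbb O\supseteq K$. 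For $f\in K$ with components $f_1,\dots,f_6$ in the six summands, I set $\exp(f):=\prod_i\exp(f_i)\in\mathbb O^\times$. Uniqueness is immediate: any candidate morphism satisfying the stated conditions must take these prescribed values on each summand, and additivity then determines it on sums.

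To verify the homomorphism identity $\exp(f+g)=\exp(f)\exp(g)$ I will use commutativity of $\mathbb O^\times$ to shuffle the six factors and reduce to the restriction of $\exp$ to each summand, where the identity is either classical (transseries exponential, complex exponential, the formal series identity in $\smallo_K$) or immediate from the group isomorphism $f\mapsto\ex^{\imag f}$. The derivation formula $\exp(f)'=f'\exp(f)$ then follows by Leibniz applied to this six-fold product, using on each summand the corresponding identity: definitional on $\T$, vanishing on $\C$, the computation $(\ex^{\imag f})'=\imag f'\ex^{\imag f}$ on $\imag\T_\succ$, and termwise differentiation of the power series on $\imag\T_\prec$.

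For surjectivity I use $\mathbb O^\times=K^\times\cdot\ex^{\imag\T_\succ}$ from Lemma~\ref{lem:only trivial units}; since $\ex^{\imag\T_\succ}\subseteq\exp(K)$ by construction, it suffices to cover $K^\times$. Any $z\in K^\times$ factors as $z=\abs{z}\cdot(z/\abs{z})$ with $\abs{z}=\sqrt{z\bar{z}}\in\T^{>}$ (by real closedness of $\T$) and $z/\abs{z}\in S=\{u\in K:\abs{u}=1\}$, and surjectivity of the transseries exponential onto $\T^{>}$ handles the factor $\abs{z}$. For $s=a+b\imag\in S$, from $a^2+b^2=1$ I get $a,b\preceq 1$, so $a=a_0+\tilde a$, $b=b_0+\tilde b$ with $a_0,b_0\in\R$ and $\tilde a,\tilde b\in\smallo_\T$; reduction modulo $\smallo_\T$ gives $a_0^2+b_0^2=1$, so $a_0+b_0\imag=\exp(\imag\theta_0)$ for some $\theta_0\in\R$. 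Then $s/\exp(\imag\theta_0)\in 1+\smallo_K$ has absolute value $1$; writing its formal logarithm as $u+v\imag$ with $u,v\in\smallo_\T$, the identity $s\bar{s}=1$ forces $\exp(2u)=1$, whence $u=0$ by injectivity of $\exp$ on $\smallo_\T$, yielding $s=\exp\!\big(\imag(\theta_0+v)\big)$.

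For the kernel, given $\exp(f)=1$ with $f$ decomposed into its six components, the product decomposition $\mathbb O^\times=K^\times\cdot\ex^{\imag\T_\succ}$ first kills the $\imag\T_\succ$-component of $f$; looking at the valuation in $K^\times$ then forces $\exp(f_1)\asymp 1$ for the $\T_\succ$-component $f_1$, which kills $f_1$; passing to the residue modulo $\smallo_K$ reduces to the classical identity $\exp(c)=1$ in $\C^\times$, giving the $\C$-part in $2\pi\imag\Z$; and injectivity of $\exp$ on $\smallo_K$ (via $\log\circ\exp=\mathrm{id}$ as a formal series identity) kills the remaining $\smallo_K$-component. The main obstacle I anticipate is the trigonometric step in surjectivity: concluding that the formal logarithm of a norm-one element of $1+\smallo_K$ is purely imaginary requires the compatibility of complex conjugation with the series $\log(1+\eta)=\sum_{n\ge 1}(-1)^{n-1}\eta^n/n$ together with injectivity of $\exp$ on $\smallo_K$, and this together with the clean bookkeeping for the six-summand decomposition is where the technical work will concentrate.
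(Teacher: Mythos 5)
Your proof is correct and follows essentially the same route as the paper: define $\exp$ summand-wise on a direct sum decomposition of $K$ (the paper uses $\T\oplus\imag\T_{\succ}\oplus\imag\R\oplus\imag\smallo_{\T}$, which your finer six-summand splitting refines harmlessly), get surjectivity from $\mathbb O^\times=K^\times\ex^{\imag\T_\succ}$ together with the factorizations of Lemma~\ref{lem:logder} and Corollary~\ref{cor:decomp of S}, and read off the kernel from the decomposition. The only avoidable extra work is your trigonometric step showing that a norm-one element of $1+\smallo_K$ has purely imaginary logarithm: since $\exp$ is already prescribed on all of $\smallo=\smallo_{\T}\oplus\imag\smallo_{\T}$ and $\epsilon\mapsto\sum_n\epsilon^n/n!$ is a bijection $\smallo\to 1+\smallo$, the factor $S\cap(1+\smallo)$ (indeed all of $1+\smallo$) lies in $\exp(\smallo)$ directly, which is how the paper argues.
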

\begin{proof} The first statement follows easily from the decompositions
$$ K\ =\ \T \oplus \imag \T\ =\ \T \oplus \imag \T_{\succ} \oplus \imag \R \oplus \imag \smallo_{\T},\qquad \mathbb C\ =\ \R\oplus \imag \R, \qquad \smallo\ =\ \smallo_{\T}\oplus \imag \smallo_{\T}$$
of $K$, $\mathbb C$, and $\smallo=\smallo_K$ as internal direct sums of $\R$-linear subspaces. Next, 
$$\mathbb O^\times\ =\  K^\times \ex^{\imag \T_{\succ}}=\ \T^{>}\cdot S_{\mathbb C}\cdot (1+\smallo)\cdot\ex^{\imag\T_{\succ}}, \qquad S_{\mathbb C}\ :=\ \big\{z\in \mathbb{C}:\ |z|=1\big\},$$
by Lemmas~\ref{lem:only trivial units} and~\ref{lem:logder}, and Corollary~\ref{cor:decomp of S}.
Now $\T^{>}=\exp(\T)$ and $S_{\mathbb C}=\exp(\imag \R)$, so surjectivity follows from
$\exp(\smallo)=1+\smallo$, a consequence of
the well-known bijectivity of the map $\epsilon\mapsto \sum_n\frac{\varepsilon^n}{n!}\colon \smallo\to 1+\smallo$, whose inverse is given by
$$1+\delta\mapsto \log(1+\delta)  :=\sum_{n=1}^\infty \frac{(-1)^{n-1}}{n} \delta^n \qquad(\delta\in \smallo).$$
That the kernel is $2\pi\imag\Z$ follows from the initial decomposition of the additive group of $K$ as $\T \oplus \imag \T_{\succ} \oplus \imag \R \oplus \imag\smallo_{\T}$. The identity $\exp(f)'=f'\exp(f)$ for $f\in K$ follows from it being satisfied for $f\in\T$, $f\in\imag \T_{\succ}$, 
$f\in\mathbb C$, and $f\in\smallo$.  
\end{proof}  

\noindent
To integrate oscillating transseries, note first that the
$\R$-linear operator $\int\colon \T\to \T$ extends uniquely to a $\C$-linear operator $\int\colon \T[\imag] \to \T[\imag]$. This in turn extends  uniquely to a~$\C$-linear operator~$\int\colon \mathbb{O} \to \mathbb{O}$ such that~$(\int \Phi)'=\Phi$ for all $\Phi\in \mathbb{O}$ and
$\int \T[\imag] \ex^{\phi\imag}\subseteq \T[\imag]\ex^{\phi\imag}$ for all $\phi\in \T_{\succ}$: given~$\phi\in \T_{\succ}^{\ne}$ and~${g\in \T[\imag]}$, there is a unique
$f\in \T[\imag[$ such that $(f\ex^{\phi\imag})'=g\ex^{\phi\imag}$: existence holds because~${y'+y\phi'\imag=g}$ has a solution in $\T[\imag]$, the latter being linearly surjective, and uniqueness holds by Lemma~\ref{1K} applied to $K=L=\T[\imag]$, because $\phi'\imag\notin \T[\imag]^\dagger$ in view of remarks preceding Lemma~\ref{lem:W and I(F)}. 

\medskip
\noindent
The operator $\int$ is a right-inverse of
the linear differential operator~$\der$ on~$\mathbb{O}$. To extend this to other linear differential operators, make the subgroup~$G^{\mathbb{O}}:=G^{\operatorname{LE}}\ex^{\imag \T_{\succ}}$ of~$\mathbb{O}^\times$ into an ordered group so that the ordered subgroup $G^{\operatorname{LE}}$ of $\T^{>}$ is a convex ordered subgroup of $G^{\mathbb{O}}$ and 
$\ex^{\imag\phi} \succ G^{\operatorname{LE}}$ for $\phi>0$ in $\T_{\succ}$. (Possible in only one way.) Next, extend the natural inclusion 
$\T[\imag]\to \C[[G^{\operatorname{LE}}]]$ to a $\C$-algebra embedding~${\mathbb{O}\to \C[[G^{\mathbb{O}}]]}$ by sending
$\ex^{\imag\phi}\in \mathbb{O}$ to $\ex^{\imag\phi}\in G^{\mathbb{O}}\subseteq \C[[G^{\mathbb{O}}]]$. Identify $\mathbb{O}$
with a subalgebra of~$\C[[G^{\mathbb{O}}]] $ via this embedding, so $\supp f \subseteq G^{\mathbb{O}}$ for $f\in \mathbb{O}$. It makes the Hahn space~$\C[[G^{\mathbb{O}}]]$ over $\C$ an immediate extension of its valued subspace $\mathbb{O}$. The latter is in particular also a Hahn space over $\C$.

Let $A\in\T[\imag][\der]^{\neq}$. Then $A(\mathbb{O})=\mathbb{O}$ by Lemmas~\ref{lem:A(U)=U},~\ref{lem:T[imag] lc and ls}, and Proposition~\ref{prop:osc transseries}. 
The proof of [ADH, 2.3.22]  now  gives for each
$g\in \mathbb{O}$ a unique element~${f=:A^{-1}(g)\in\mathbb{O}}$ with $A(f)=g$ and
$\supp(f)\cap \fd\big(\!\ker_{\mathbb O}^{\neq} A\big)=\emptyset$.
This requirement on $\supp A^{-1}(g)$ yields a $\C$-linear   operator~$A^{-1}$ on $\mathbb O$  with~$A\circ A^{-1}=\id_{\mathbb O}$; we call it the {\bf distinguished} right-inverse of the operator $A$ on $\mathbb O$.  
With this definition~$\der^{-1}$ is the operator~$\int$ on $\mathbb{O}$ specified earlier.  

\medskip
\noindent
In the next section we explore various valuations on universal exponential extensions (such as $\mathbb{O}$)
with additional properties. 

\section{Valuations on the Universal Exponential Extension}\label{sec:valuniv}

{\sloppy
\noindent
{\it In this section $K$ is a valued differential field with algebraically closed constant field~${C\subseteq\mathcal O}$ and divisible group $K^\dagger$ of
logarithmic derivatives.}\/
Then $\Gamma=v(K^\times)$ is also divisible,
since we have a group isomorphism $$va\mapsto a^\dagger+(\mathcal O^\times)^\dagger\ :\ \Gamma\to K^\dagger/(\mathcal O^\times)^\dagger \qquad (a\in K^\times).$$ 
Let $\Lambda$ be a complement of the $\Q$-linear subspace~$K^\dagger$ of $K$, let $\lambda$ range over $\Lambda$, 
let~${\Univ=K\big[\!\ex(\Lambda)\big]}$ be the universal exponential extension of $K$ constructed in Section~\ref{sec:univ exp ext}
and set $\Omega:=\Frac(\Univ)$. Thus $\Omega$ is a differential field with
constant field~$C$.}

\subsection*{The gaussian extension}
We equip   $\Univ$
with the gaussian extension~$v_{\g}$ of the valuation of~$K$ as defined in Section~\ref{sec:group rings}; so for $f\in\Univ$ with spectral decomposition~$(f_\lambda)$: $$v_{\g}(f)\ =\ \min_\lambda v(f_\lambda),$$
and hence 
$$v_{\g}(f')\ =\ \min_\lambda v(f_\lambda'+\lambda f_\lambda).   $$
The field $\Omega$ with the valuation extending $v_{\g}$ is a valued differential field extension of $K$, but it can happen that 
$K$ has small derivation, whereas  $\Omega$  does not:

\begin{example} Let $K=C(\!( t^{\Q} )\!)$ and $\Lambda$ be as in Example~\ref{ex:Q}, so $t\prec 1\prec x=t^{-1}$ and~$t'=-t^2$. Then $K$ is $\d$-valued of $H$-type with small derivation, but in $\Omega$ with the above valuation, $$t\ex(x)\ \prec\ 1, \qquad \big(t\ex(x)\big){}'\ =\ -t^2\ex(x)+\ex(x)\ \sim\ \ex(x)\ \asymp\ 1.$$
To obtain an example where $K=H[\imag]$ for a Liouville closed $H$-field $H$ and $\imag^2=-1$, take $K:=\T[\imag]$ and $\Lambda:=\Lambda_{\T}\imag$ as at the end of Section~\ref{sec:eigenvalues and splitting}. Now $x\in \Lambda_{\T}$ and in $\Omega$ equipped with the above valuation we have for $t:=x^{-1}$:
$$t\ex(x\imag)\ \prec\ 1,\qquad \big(t\ex(x\imag)\big){}'\ =\ -t^2\ex(x\imag)+\imag\ex(x\imag)\ \sim\ \imag\ex(x\imag)\ \asymp\ 1,$$
so $\big(t\ex(x\imag)\big){}' \not\prec t^\dagger$, hence $\Omega$ is neither asymptotic nor has small derivation.
\end{example}

\noindent
However, we show next that under certain assumptions on $K$ with small derivation, $\Omega$ has also a valuation
which does  make $\Omega$ a valued differential field extension of~$K$ with small derivation.
For this we rely on results from [ADH, 10.4].
Although such a valuation is less canonical than $v_{\g}$, it is useful for  harnessing the finiteness statements about the set $\exc^{\ev}(A)$ of eventual exceptional values of $A\in K[\der]^{\neq}$
from  Section~\ref{sec:lindiff} to obtain similar facts about the set of {\it ultimate exceptional values}\/ of~$A$ introduced  later in this section.

\subsection*{Spectral extensions}
{\it In this subsection $K$ is $\d$-valued of $H$-type with $\Gamma\neq\{0\}$ and with small derivation.}\/
 
\begin{lemma}\label{specex}
The valuation of $K$ extends to a valuation on the field~$\Omega$ that makes~$\Omega$ a $\d$-valued extension of $K$ of $H$-type with small derivation.
\end{lemma}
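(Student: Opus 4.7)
The plan is to extend the valuation of $K$ to a compatible valuation on $\Omega$ by iteratively adjoining $\ex(\lambda)$ for $\lambda$ ranging over a $\Q$-basis of $\Lambda$, applying at each step the exponential adjunction results from [ADH, Section~10.4] to preserve $\d$-valuedness, $H$-type, and smallness of the derivation.

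First, I would fix a well-ordered $\Q$-basis $(\lambda_i)_{i<\kappa}$ of the $\Q$-linear space $\Lambda$. By transfinite recursion on $i<\kappa$, build a chain of $\d$-valued $H$-asymptotic differential field extensions $E_i$ of $K$ with small derivation and constant field $C$, where each $E_i$ contains elements $y_j$ (for $j\leq i$) satisfying $y_j^\dagger=\lambda_j$, such that $E_i=K(y_j:j\leq i)$ and each $y_j$ is transcendental over $K(y_\ell:\ell<j)$. At a limit stage take $E_{<i}:=\bigcup_{j<i}E_j$, which retains all three properties as a directed union. At a successor stage, apply the exponential adjunction theorem of [ADH, 10.4] to the valued differential field $E_{<i}$ and the element $s:=\lambda_i\in E_{<i}$. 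The hypothesis $s\notin E_{<i}^\dagger$ needed there holds because one shows by induction that $E_{<i}^\dagger\cap K\subseteq K^\dagger+\sum_{j<i}\Q\lambda_j$ (using Corollary~\ref{cor:no new LDs} at each inductive step, since each stage is either an immediate extension or has value group enlarged only by terms coming from the prescribed $\lambda_j$'s), and $\lambda_i$ is not in this subspace by choice of basis. The output is a transcendental exponential extension $E_i=E_{<i}(y_i)$, still $\d$-valued of $H$-type with small derivation and constant field $C$, and with $y_i^\dagger=\lambda_i$.

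Second, set $E:=\bigcup_{i<\kappa}E_i$, a $\d$-valued $H$-asymptotic extension of $K$ with small derivation and constant field $C$. The universal property of the group ring $\Univ=K[\ex(\Lambda)]$ applied to the group morphism $\ex(\lambda_i)\mapsto y_i$ yields a morphism $\iota\colon\Univ\to E$ of differential $K$-algebras; this morphism is injective by Corollary~\ref{cor:U simple}. Since $E$ is generated as a field over $K$ by the $y_i=\iota(\ex(\lambda_i))$, we have $E=\operatorname{Frac}\iota(\Univ)$, so $\iota$ extends uniquely to an isomorphism $\Omega=\Frac\Univ\to E$ of differential fields. Pulling back the valuation of $E$ along this isomorphism gives the sought valuation on $\Omega$, under which $\Omega$ becomes a $\d$-valued $H$-asymptotic extension of $K$ with small derivation.

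The main obstacle will be extracting from [ADH, Section~10.4] a single exponential adjunction result that guarantees simultaneous preservation of all three properties---$\d$-valued, $H$-type, and small derivation---for the adjunction of a transcendental $y$ with $y^\dagger=s\in E_{<i}$, regardless of whether $s\preceq 1$ or $s\succ 1$ asymptotically. In the latter case one must carefully choose the value $vy$ in a suitable extension of the value group so that the relation $y'=sy$ is compatible with smallness of the derivation, and verify that the resulting ordered abelian group and asymptotic couple realize the $H$-type axioms; the choice is essentially forced by $\psi(vy)=vs$ when $y\not\asymp 1$.
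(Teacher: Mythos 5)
There are genuine gaps in your argument. The most serious one is in your second step: $\ex(\Lambda)\cong\Lambda$ is a \emph{divisible} group (a $\Q$-linear space), so prescribing $\ex(\lambda_i)\mapsto y_i$ on a $\Q$-basis does not determine -- and in general does not even admit -- a group morphism $\ex(\Lambda)\to E^\times$: for $\lambda=\sum_j q_j\lambda_{i_j}$ with non-integer $q_j$ you would need coherent rational powers $y_{i_j}^{q_j}$ in $E$, and the field $E=K(y_i:i<\kappa)$ you built has no reason to contain them. So the morphism $\iota\colon\Univ\to E$ does not exist as claimed, and a fortiori $\Frac\iota(\Univ)=E$ fails; at best Lemma~\ref{lem:embed into U} would give you an embedding of $K\big[\!\ex(\Lambda_0)\big]$ with $\Lambda_0=\Lambda\cap(E^\times)^\dagger$, which need not be all of $\Lambda$. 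A second gap: your justification that $\lambda_i\notin E_{<i}^\dagger$ via Corollary~\ref{cor:no new LDs} does not apply. That corollary requires asymptotic integration and $\I(K)\subseteq K^\dagger$ (neither is assumed in Lemma~\ref{specex}, where $K$ is only $\d$-valued of $H$-type with small derivation and $\Gamma\neq\{0\}$), and its hypothesis $L^\times=K^\times C_L^\times(1+\smallo_L)$ fails here because each exponential adjunction typically enlarges the value group ($vy_j$ is a new value unless the step is immediate). Since $\lambda_i\in E_{<i}^\dagger$ would create a new constant and destroy $C_{E_i}=C$, this point cannot be waved through. Finally, the ``main obstacle'' you flag at the end is real: [ADH, 10.4] does not contain a single adjunction statement preserving $\d$-valuedness, $H$-type, small derivation and the constant field uniformly in $s$; assembling one case by case, together with the bookkeeping above, amounts to re-proving the packaged result [ADH, 10.4.7].

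For comparison, the paper's proof sidesteps all of this in one stroke: apply [ADH, 10.4.7] to an \emph{algebraic closure} of $K$ to get an algebraically closed $\d$-valued extension $L$ of $H$-type with small derivation, $C_L=C$ and $L^\dagger\supseteq K$. Algebraic closedness makes $L^\times$ divisible, so $E:=\{y\in L^\times:\,y^\dagger\in K\}$ satisfies $E^\dagger=K$, and $K[E]$ is an exponential extension of $K$ with constant field $C$; Corollary~\ref{corcharexp} then identifies $\Univ$ with $K[E]$ inside $L$, the embedding extends to $\Omega\to L$, and the valuation of $L$ is pulled back. If you want to salvage your iterative construction, you would have to adjoin exponentials $u_\lambda$ coherently for \emph{all} $\lambda\in\Lambda$ (not just a basis), or interleave algebraic closures so that the needed rational powers exist, and replace the appeal to Corollary~\ref{cor:no new LDs} by the value-group and constant-field bookkeeping that [ADH, 10.4.1--10.4.7] actually provides.
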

\begin{proof} Applying [ADH, 10.4.7] to an algebraic closure of $K$ gives a $\d$-valued algebraically closed extension $L$ of $K$ of $H$-type with small derivation and $C_L=C$
such that~$L^\dagger\supseteq K$.
Let~$E:=\{y\in L^\times:\, y^\dagger\in K\}$, so $E$ is a subgroup of~$L^\times$, $E^\dagger=K$, and~$K[E]$ is an exponential extension of $K$ with $C_{K[E]}=C$. Then Corollary~\ref{corcharexp}
gives an embedding~${\Univ\to L}$ of differential $K$-algebras with image $K[E]$, which extends to an
embedding $\Omega\to L$ of differential fields. Using this embedding to transfer the valuation of $L$ to $\Omega$ gives a valuation as required.
\end{proof}

\noindent
A {\bf spectral extension} of the valuation of $K$ to $\Omega$ is a valuation on the field $\Omega$ with the properties stated in Lemma~\ref{specex}.\index{valuation!spectral extension}\index{spectral extension}\index{extension!spectral}
If $K$ is $\upo$-free, then so is $\Omega$ equipped with any spectral extension of the valuation of $K$,
by [ADH, 13.6] (and then $\Omega$ has rational asymptotic integration by [ADH, 11.7]).
We do not know whether this goes through with ``$\upl$-free'' instead of ``$\upo$-free''. Here is something weaker:

\begin{lemma}\label{lem:Omega as int} 
Suppose $K$ is algebraically closed and $\upl$-free. Then some spectral extension of the valuation of $K$ to $\Omega$  makes 
$\Omega$ a $\d$-valued field with divisible value group and asymptotic integration.
\end{lemma}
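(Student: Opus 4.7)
The plan is to construct a suitable valued differential field extension $L \supseteq K$ with $L^\dagger \supseteq K$ and asymptotic integration, then embed $\Omega$ into $L$ exactly as in the proof of Lemma~\ref{specex} and transfer the valuation. Fix a $\Q$-basis $(\lambda_i)_{i<\kappa}$ of the complement $\Lambda$ of $K^\dagger$. I would build $L$ by transfinite recursion, starting from $K_0 := K$. At a successor stage $i+1$, I would adjoin $y_i$ with $y_i^\dagger = \lambda_i$, realizing $K_i\langle y_i\rangle$ as an asymptotic field extension of $K_i$ with $C_{K_i\langle y_i\rangle} = C$ via [ADH, 10.4.3] (or [ADH, 10.4.6]), then take the algebraic closure to obtain $K_{i+1}$. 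At limit stages I take unions. Setting $L := \bigcup_{i<\kappa} K_i$, the resulting $L$ is an algebraically closed $\d$-valued $H$-asymptotic field with small derivation, constant field $C$, divisible value group, and $L^\dagger \supseteq K$.

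The critical property to maintain through the recursion is $\upl$-freeness, since this gives asymptotic integration at the end by [ADH, 11.6.8]. Each adjunction $K_i \to K_i\langle y_i\rangle$ is a Liouville step (a logarithm of an element of $K_i$), and the subsequent passage to the algebraic closure preserves $\upl$-freeness by [ADH, 11.6.8]; at each step $\Gamma_{K_i}^<$ remains cofinal in $\Gamma_{K_{i+1}}^<$ via Proposition~\ref{prop:rat as int and cofinality}, so limit-stage unions preserve $\upl$-freeness by the remarks following [ADH, 11.6.4].

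Once $L$ is built, the remainder of the argument mirrors the proof of Lemma~\ref{specex}. The subgroup $E := \{y \in L^\times : y^\dagger \in K\}$ of $L^\times$ satisfies $E^\dagger = K$, so $K[E]$ is an exponential extension of $K$ with $C_{K[E]} = C$ and $K \subseteq (K[E]^\times)^\dagger$. By Corollary~\ref{corcharexp}, there is an isomorphism $\Univ \to K[E]$ of differential $K$-algebras, which composed with the inclusion $K[E] \hookrightarrow L$ and extended to fraction fields yields a differential field embedding $\iota \colon \Omega \hookrightarrow L$. Pulling back the valuation of $L$ through $\iota$ gives a valuation on $\Omega$ extending that of $K$; by construction of $L$ this spectral extension makes $\Omega$ a $\d$-valued field with divisible value group and asymptotic integration.

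The hard part will be justifying preservation of $\upl$-freeness at each successor step: Gehret's Proposition~\ref{prop:Gehret} is stated for $H$-fields, but here $K$ (and hence each $K_i$) is algebraically closed, hence not an $H$-field. I expect two viable routes. One is to choose at each step a real closed $H$-subfield $H_i \subseteq K_i$ with $K_i = H_i[\imag]$ and apply Gehret's theorem to $H_i$ together with preservation results under $K\mapsto K[\imag]$ from [ADH, 10.5] and Section~\ref{sec:uplupo-freeness}. The second, more direct route is to argue via pseudoconvergence: if $(\upl_\rho) \leadsto \upl$ in $K_i\langle y_i\rangle$, transcendentality of $y_i$ over $K_i$ together with [ADH, 3.2.6, 3.2.7] and the identity $\psi(vy_i) \in \Psi_{K_i}$ forces a contradiction with $\upl$-freeness of $K_i$. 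Either route should go through, but verifying the details carefully—especially the interaction between algebraic closure and $\upl$-freeness at limits—is the main technical burden.
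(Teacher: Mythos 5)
There is a genuine gap, and it sits exactly where you flag "the hard part": preservation of $\upl$-freeness along the successor steps $K_i\to K_i\langle y_i\rangle$ is asserted but not proved, and neither of your two routes works as stated. Route 1 presupposes that $K_i=H_i[\imag]$ for some real closed $H$-field $H_i$ compatible with the valuation and derivation; the lemma's hypotheses give no such $H$, so Gehret's Proposition~\ref{prop:Gehret} (and Lemma~\ref{lemtrigupl}, which is proved only for the special trigonometric adjunctions over real closed $H$-fields) is not applicable. Route 2 does not produce a contradiction: if $\upl_\rho\leadsto\upl\in K_i\langle y_i\rangle$, the argument via [ADH, 3.2.6, 3.2.7] only shows that $K_i\langle y_i\rangle$ would be an \emph{immediate} extension of $K_i$, and that is not absurd --- extensions $K(y)$ with $y^\dagger=\lambda\notin K^\dagger$ can perfectly well be immediate (this is precisely what happens in the trigonometric closure construction of Section~\ref{sec:logder}). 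So the claim you need is a substantive preservation theorem for $\upl$-freeness under exponential adjunctions (whose truth may even depend on the choice of extension realizing $y_i^\dagger=\lambda_i$), not a routine verification. There is also a secondary gap at the end: even granting that your $L$ is $\upl$-free and hence has asymptotic integration, asymptotic integration does not pass to the valued differential subfield $\Omega\subseteq L$ unless you know that asymptotic integrals of elements of $\Gamma_\Omega$ can be found inside $\Gamma_\Omega$; you would need $\Gamma_\Omega=\Gamma_L$ (true in your construction via $\Gamma_L=\Gamma+v(E)$ and divisibility of $E$, but nowhere argued), and similarly for divisibility of $\Gamma_\Omega$.

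The paper avoids all of this. It takes the \emph{same} $L$, $E$, and embedding as in Lemma~\ref{specex} --- no $\upl$-freeness or asymptotic integration of $L$ is needed --- and notes that $\Gamma_\Omega=\Gamma_L=\Gamma+v(E)$ is divisible by [ADH, 10.4.7(iv)] together with divisibility of $E$ (as $L$ is algebraically closed). Asymptotic integration of $\Omega$ is then obtained element-by-element: every $\gamma\in\Gamma_\Omega$ is $v(ay)$ with $a\in K^\times$, $y\in E$, and the subfield $K(y)\subseteq\Omega$ is an $H$-asymptotic extension of the $\upl$-free field $K$ of transcendence degree $\leq 1$, hence has asymptotic integration by Proposition~\ref{prop:upl-free and as int}; thus $v(ay)\in(\Gamma_{K(y)}^{\neq})'\subseteq(\Gamma_\Omega^{\neq})'$. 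In other words, $\upl$-freeness of $K$ is used only through its rank-one consequence inside $\Omega$, which is exactly what makes the transfinite construction (and the problematic preservation claim) unnecessary.
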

\begin{proof}
Take $L$, $E$ and an embedding $\Omega\to L$   as in the proof  of Lemma~\ref{specex}. Use this embedding  to identify~$\Omega$ with a differential subfield of~$L$, so $U=K[E]$ and $\Omega=K(E)$, and
equip $\Omega$ with the spectral extension of the valuation of $K$ obtained  by restricting the valuation of $L$ to $\Omega$.
Since $L$ is algebraically closed, $E$ is divisible, and $\Gamma_L=\Gamma+v(E)$ by [ADH, 10.4.7(iv)]. So~$\Gamma_\Omega=\Gamma_L$ is divisible.
Let~$a\in K^\times$, $y\in E$. Then $K(y)$  
has  asymptotic integration by Proposition~\ref{prop:upl-free and as int}, hence $v(ay)\in (\Gamma_{K(y)}^{\neq})'\subseteq (\Gamma_\Omega^{\neq})'$. Thus $\Omega$ has asymptotic integration.
\end{proof} 

\noindent
{\it In the rest of this subsection $\Omega$ is equipped with a spectral extension $v$ \textup{(}with value group $\Gamma_{\Omega}$\textup{)} of the valuation of~$K$.}\/ The proof of Lemma~\ref{specex} and [ADH, 10.4.7] show that we can choose $v$ so that $\Psi_\Omega\subseteq\Gamma$;
but under suitable hypotheses on $K$, this is automatic: 

{\sloppy
\begin{lemma}\label{lem:v(ex(Q))}
Suppose   $K$ has asymptotic integration and $\I(K)\subseteq K^\dagger$. 
Then~${\Psi_{\Omega}\subseteq \Gamma}$, the group morphism
\begin{equation}\label{eq:v(ex(q))}
\lambda\mapsto v\big(\!\ex(\lambda)\big)\ \colon\ \Lambda\to\Gamma_\Omega
\end{equation}
is injective, and $\Gamma_\Omega$ is divisible with $\Gamma_\Omega=\Gamma\oplus v\big(\!\ex(\Lambda)\big)$ \textup{(}internal direct sum of $\Q$-linear subspaces of $\Gamma_\Omega$\textup{)}. Moreover,
$\Psi_\Omega=\Psi^{\downarrow}$ in $\Gamma$. 
\end{lemma}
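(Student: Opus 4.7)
The plan is to prove the four assertions in sequence, with the one essential input being the known partition $\Gamma = \Psi^{\downarrow} \sqcup (\Gamma^{>})'$ valid for $H$-type asymptotic couples with asymptotic integration and divisible value group; this identifies $v\big(\I(K)^{\neq}\big) = (\Gamma^{>})'$ and hence $v(K^{\times}\setminus\I(K)) = \Psi^{\downarrow}$, and also yields $v(K^{\dagger,\neq}) = \Psi \cup (\Gamma^{>})'$. First I would prove injectivity of \eqref{eq:v(ex(q))} simultaneously with $\Gamma \cap v(\ex(\Lambda)) = \{0\}$ as follows: if $\lambda \in \Lambda$ satisfies $v(\ex(\lambda)) = va$ for some $a \in K^\times$, then $\ex(\lambda)/a \asymp 1$ in the $\d$-valued field $\Omega$ with $C_\Omega = C$, so $\ex(\lambda)/a \sim c$ for some $c \in C^\times$. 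Setting $\epsilon := \ex(\lambda)/(ca) - 1 \prec 1$ yields $\lambda - a^\dagger = (1+\epsilon)^\dagger = \epsilon'/(1+\epsilon) \in \I(\Omega)$, hence $\lambda - a^\dagger \in \I(\Omega) \cap K = \I(K) \subseteq K^\dagger$ by the identity $\I(L) \cap K = \I(K)$ from Section~\ref{sec:logder} for asymptotic extensions of a field with asymptotic integration, so $\lambda \in K^\dagger \cap \Lambda = \{0\}$.

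For the value group structure I would use the proof of Lemma~\ref{specex}, which embeds $\Omega$ into a $\d$-valued $H$-type extension $L$ of $K$ with $L^\dagger \supseteq K$ and $C_L = C$, sending $\Univ$ onto $K[E]$ where $E := \{y \in L^\times : y^\dagger \in K\}$; [ADH, 10.4.7] yields $\Gamma_L = \Gamma + v(E)$. The decomposition $K = K^\dagger \oplus \Lambda$ together with $C_L = C$ forces $E = K^\times \cdot \ex(\Lambda)$, hence $\Gamma_L = \Gamma + v(\ex(\Lambda))$. Sandwiching $\Gamma + v(\ex(\Lambda)) \subseteq \Gamma_\Omega \subseteq \Gamma_L$ with the trivial-intersection result from the first paragraph then gives $\Gamma_\Omega = \Gamma \oplus v(\ex(\Lambda))$, with both summands divisible by $\Q$-linearity of $\lambda \mapsto v(\ex(\lambda))$.

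For $\Psi_\Omega$ I would write every $\gamma \in \Gamma_\Omega^{\neq}$ as $\gamma = v(a\ex(\lambda))$ with $a \in K^\times$ and $\lambda \in \Lambda$, so that $\psi_\Omega(\gamma) = v(a^\dagger + \lambda)$. The conditions $\Lambda \cap K^\dagger = \{0\}$ and $\I(K) \subseteq K^\dagger$ force $a^\dagger + \lambda \notin \I(K)$ whenever $\gamma \neq 0$ (otherwise $\lambda \in K^\dagger$, so $\lambda = 0$ and $a \in C^\times$, giving $\gamma = 0$), and the structural fact then places $v(a^\dagger + \lambda)$ in $\Psi^{\downarrow}$; this proves $\Psi_\Omega \subseteq \Psi^{\downarrow} \subseteq \Gamma$. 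The reverse inclusion splits into two cases: $\Psi \subseteq \Psi_\Omega$ is immediate, since any $f \in K^\times$ with $vf \neq 0$ gives $\psi(vf) = v(f^\dagger) \in \Psi_\Omega$; for $\delta \in \Psi^{\downarrow} \setminus \Psi$ one has $\delta \notin v(K^{\dagger,\neq}) = \Psi \cup (\Gamma^{>})'$, so taking any $c \in K^\times$ with $vc = \delta$ and decomposing $c = b + \lambda$ with $b \in K^\dagger$ and $\lambda \in \Lambda$, the fact $vb \neq \delta$ (either $b = 0$ or $vb \in v(K^{\dagger,\neq})$) forces $v(\lambda) = \delta$, whence $\delta = \psi_\Omega(v(\ex(\lambda))) \in \Psi_\Omega$.

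The hard part will be establishing the partition $\Gamma = \Psi^{\downarrow} \sqcup (\Gamma^{>})'$ and the identification $v\big(\I(K)^{\neq}\big) = (\Gamma^{>})'$; these rest on $H$-type plus asymptotic integration with divisible value group and on the slow variation of $\psi$ (which separates $(\Gamma^{>})'$ from $\Psi^{\downarrow}$ since $\psi(\gamma') - \psi(\gamma) = o(\gamma - \gamma')$ precludes $\gamma + \psi(\gamma) \leq \psi(\gamma')$ for $0 < \gamma' < \gamma$). Once these background facts are in hand, everything else reduces to bookkeeping with the decomposition $K = K^\dagger \oplus \Lambda$ and the explicit formula $\psi_\Omega(\gamma) = v(a^\dagger + \lambda)$.
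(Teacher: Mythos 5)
There is a genuine gap in your proof of the inclusion $\Psi^{\downarrow}\subseteq\Psi_\Omega$, in the case $\delta\in\Psi^{\downarrow}\setminus\Psi$. From $c=b+\lambda$ with $vc=\delta$, $b\in K^\dagger$, $\lambda\in\Lambda$ and $vb\neq\delta$ you cannot conclude $v\lambda=\delta$: the decomposition $K=K^\dagger\oplus\Lambda$ is merely $\Q$-linear, with no compatibility with the valuation, so the case $vb=v\lambda<\delta$ with cancellation of dominant terms (that is, $b\sim-\lambda$) is not excluded; then the $\Lambda$-component of $c$ has value $<\delta$, and nothing guarantees $\delta\in v(\Lambda^{\neq})$ at all (for instance, if $K/K^\dagger$ is $1$-dimensional and the basis vector of $\Lambda$ has been perturbed by an element of $K^\dagger$ of smaller value, then $v(\Lambda^{\neq})$ is a single value which need not equal $\delta$). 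The repair is to realize $\delta$ not as $\psi_\Omega\big(v(\ex(\lambda))\big)$ but as $\psi_\Omega(vu)$ for a suitable $u=a\ex(\lambda)\in\Univ^\times$: take $c\in K^\times$ with $vc=\delta$, write $c=a^\dagger+\lambda$ with $a\in K^\times$, $\lambda\in\Lambda$ (possible since $K=(\Univ^\times)^\dagger$), and set $u:=a\ex(\lambda)$, so $u^\dagger=c$; then $u\not\asymp 1$, since otherwise $c\in(\mathcal O_\Omega^\times)^\dagger\cap K\subseteq\I(\Omega)\cap K=\I(K)$, contradicting $\delta\in\Psi^{\downarrow}$; hence $\delta=v(u^\dagger)=\psi_\Omega(vu)\in\Psi_\Omega$. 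This is exactly how the paper concludes ($\Psi_\Omega=\Psi^{\downarrow}$ ``follows from $K=(\Univ^\times)^\dagger\subseteq\Omega^\dagger$ and $K$ having asymptotic integration''), and it needs no case split between $\Psi$ and $\Psi^{\downarrow}\setminus\Psi$.

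A secondary issue: your derivation of $\Gamma_\Omega=\Gamma\oplus v\big(\!\ex(\Lambda)\big)$ routes through the particular extension $L$ constructed in the proof of Lemma~\ref{specex}, but the lemma is about an arbitrary spectral extension of the valuation of $K$ to $\Omega$, which need not be the restriction of the valuation of such an $L$; so the upper bound $\Gamma_\Omega\subseteq\Gamma_L$ in your sandwich is not available. This is easily avoided with what you already have: your first-paragraph computation shows $a_1\ex(\lambda_1)\nasymp a_2\ex(\lambda_2)$ for $a_1,a_2\in K^\times$ and distinct $\lambda_1,\lambda_2\in\Lambda$, hence $vf=\min_\lambda v\big(f_\lambda\ex(\lambda)\big)$ for $f\in\Univ^{\neq}$, which yields $\Gamma_\Omega=\Gamma+v\big(\!\ex(\Lambda)\big)$ directly (and also $\Psi_\Omega\subseteq\Gamma$), as in the paper. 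Apart from these two points, your paragraph 1 and your proof of $\Psi_\Omega\subseteq\Psi^{\downarrow}$ follow the paper's argument (modulo the harmless overstatement that $a^\dagger\in\I(K)$ forces $a\in C^\times$; it only forces $a\asymp 1$, which suffices).
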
}
\begin{proof}
For $a\in K^\times$ we have $\big(a\ex(\lambda)\big){}^\dagger=a^\dagger+\lambda\in K$, and 
if $a\ex(\lambda)\asymp 1$, then 
$$a^\dagger+\lambda\ =\ \big(a\ex(\lambda)\big){}^\dagger\in (\mathcal O^\times_\Omega)^\dagger\cap K\ \subseteq\ \I(\Omega)\cap K\ =\ \I(K),$$ 
so $\lambda\in \Lambda\cap\big(\I(K)+K^\dagger\big)=\Lambda\cap K^\dagger=\{0\}$ and  $a\asymp 1$.  Thus for $a_1,a_2\in K^\times$ and 
distinct $\lambda_1, \lambda_2\in \Lambda$ we have $a_1\ex(\lambda_1)\nasymp a_2\ex(\lambda_2)$, and so
for $f\in\Univ$ with spectral decomposition $(f_\lambda)$ we have
$vf=\min_\lambda v\big(f_\lambda\ex(\lambda)\big)$. Hence
$$\Psi_{\Omega}\ \subseteq 
\big\{v(a^\dagger+\lambda):\ a\in K^\times,\ \lambda\in \Lambda\big\}\ =\ v(K)\ =\ \Gamma_{\infty},$$ the map~\eqref{eq:v(ex(q))} is injective and 
$\Gamma\cap v\big(\!\ex(\Lambda)\big) = \{0\}$, and so 
$\Gamma_\Omega=\Gamma\oplus v\big(\!\ex(\Lambda)\big)$ (internal direct sum of subgroups
of $\Gamma_\Omega$). Since $\Gamma$ and $\Lambda$ are divisible, so is $\Gamma_\Omega$. 
Now $\Psi_\Omega=\Psi^{\downarrow}$  follows from $K=(\Univ^\times)^\dagger\subseteq \Omega^\dagger$
 and $K$ having asymptotic integration.
\end{proof}

\noindent
We can now improve on Lemma~\ref{newlembasis}: 

\begin{cor}\label{cor:spectral valuation basis}
Suppose $K$ has asymptotic integration and $\I(K)\subseteq K^\dagger$, and let~$A\in K[\der]^{\neq}$. Then the $C$-linear space $\ker_{\operatorname{U}} A$ has a basis
$\mathcal B\subseteq \operatorname{U}^\times$ such that $v$ is injective on $\mathcal B$ and
$v(\mathcal B)=v(\ker^{\neq}_{\operatorname{U}} A)$, and thus $\abs{v(\ker^{\neq}_{\operatorname{U}} A)} =\dim_C \ker_{\operatorname{U}} A$.  
\end{cor}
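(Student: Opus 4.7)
The strategy is to exploit the spectral decomposition
\[
\ker_{\operatorname{U}} A \;=\; \bigoplus_{\lambda} (\ker_K A_\lambda)\,\ex(\lambda)
\]
(with $\lambda$ ranging over $\Lambda$), recorded in Section~\ref{sec:eigenvalues and splitting}, in order to reduce the problem to producing, for each single $\lambda$, a $C$-basis of $\ker_K A_\lambda$ whose elements have pairwise distinct valuations. The $\lambda$-blocks will then be glued together into a global basis $\mathcal{B}\subseteq \operatorname{U}^\times$ via Lemma~\ref{lem:v(ex(Q))}, and the cardinality identity $|v(\ker_{\operatorname{U}}^{\ne} A)| = \dim_C \ker_{\operatorname{U}} A$ in the statement will follow at once from $v$ being injective on $\mathcal{B}$ together with $v(\mathcal{B})=v(\ker_{\operatorname{U}}^{\ne} A)$.

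For a fixed $\lambda$ I plan to invoke Remark~\ref{rem:kerexc} applied to $A_\lambda \in K[\der]^{\ne}$: the standing assumption of the subsection \emph{Spectral extensions} makes $K$ a $\d$-valued $H$-asymptotic field with small derivation, so the remark delivers
\[
|v(\ker_K^{\ne} A_\lambda)| \;=\; \dim_C \ker_K A_\lambda \;=:\; m_\lambda .
\]
Selecting one representative $b_{\lambda,i}\in \ker_K^{\ne} A_\lambda$ of each of the $m_\lambda$ distinct valuations in $v(\ker_K^{\ne} A_\lambda)$ will produce a basis of $\ker_K A_\lambda$ with pairwise distinct valuations, because elements of $K^\times$ with pairwise distinct valuations are automatically $C$-linearly independent (using $v(C^\times)=\{0\}$) and we have exactly $m_\lambda$ of them. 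This per-block step is the only substantive piece of the argument, and I expect it to be the main obstacle; everything hinges on the equality in Remark~\ref{rem:kerexc}, which itself rests on [ADH, 5.6.6] after a compositional-conjugation reduction to the small-derivation case.

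For the gluing I will set
\[
\mathcal{B}\;:=\;\bigcup_{\lambda:\,\mult_\lambda(A)\geq 1}\big\{\,b_{\lambda,1}\ex(\lambda),\,\ldots,\,b_{\lambda,m_\lambda}\ex(\lambda)\,\big\}\;\subseteq\;\operatorname{U}^\times,
\]
which is a $C$-basis of $\ker_{\operatorname{U}} A$ by the spectral decomposition. Lemma~\ref{lem:v(ex(Q))} provides the internal direct sum $\Gamma_\Omega = \Gamma \oplus v(\ex(\Lambda))$: valuations coming from different $\lambda$-blocks then lie in distinct cosets of $\Gamma$ in $\Gamma_\Omega$, and within a single block they are distinct by construction, so $v$ is injective on $\mathcal{B}$. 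Finally, for the equality $v(\mathcal{B})=v(\ker_{\operatorname{U}}^{\ne} A)$, the inclusion $\supseteq$ is immediate; for the reverse, I would write any nonzero $f\in\ker_{\operatorname{U}} A$ as $\sum_\lambda f_\lambda \ex(\lambda)$ with $f_\lambda\in \ker_K A_\lambda$, use the direct-sum decomposition of $\Gamma_\Omega$ to get $v(f)=\min_\lambda v(f_\lambda \ex(\lambda))$, and expand each nonzero $f_\lambda$ in the distinct-valuation basis $(b_{\lambda,i})_i$ to conclude $v(f_\lambda)=v(b_{\lambda,i_0})$ for some $i_0$, whence $v(f)\in v(\mathcal{B})$.
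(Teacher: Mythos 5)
Your proposal is correct and takes essentially the same route as the paper: both rest on the spectral decomposition $\ker_{\operatorname{U}} A = \bigoplus_\lambda (\ker_K A_\lambda)\ex(\lambda)$, a per-block basis of $\ker_K A_\lambda$ with pairwise distinct valuations, and the gluing via Lemma~\ref{lem:v(ex(Q))}. The only cosmetic difference is that the paper cites [ADH, 5.6.6] directly for each $\mathcal B_\lambda$, whereas you reconstruct such a basis from the cardinality equality $|v(\ker_K^{\ne} A_\lambda)| = \dim_C \ker_K A_\lambda$ in Remark~\ref{rem:kerexc} (itself a quotation of [ADH, 5.6.6]) by picking one representative of each valuation and noting that distinct valuations plus $C\subseteq\mathcal O$ force $C$-linear independence; this small elaboration is sound.
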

\begin{proof} By [ADH, 5.6.6] we have a basis $\mathcal B_\lambda$ of the $C$-linear space $\ker_K A_\lambda$ such that~$v$ is injective on $\mathcal{B}_{\lambda}$ and $v(\mathcal B_\lambda)=v(\ker^{\neq}_K A_\lambda)$.
Then $\mathcal B:=\bigcup_\lambda \mathcal B_\lambda\ex(\lambda)$ is a basis of $\ker_{\operatorname{U}} A$. It has the desired properties by Lemma~\ref{lem:v(ex(Q))}.
\end{proof}

\begin{cor}\label{cor:Omega as int}
Suppose $K$ is $\upl$-free and $\I(K)\subseteq K^\dagger$. Then  $\Omega$ has asymptotic integration. 
\end{cor}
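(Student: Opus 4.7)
The plan is to establish asymptotic integration for $\Omega$ directly---that is, to show $(\Gamma_\Omega^{\ne})' = \Gamma_\Omega$---after which closedness of the $H$-asymptotic couple of $\Omega$ follows automatically: Lemma~\ref{lem:v(ex(Q))} is applicable (since $\upl$-freeness of $K$ gives asymptotic integration and the second hypothesis is exactly $\I(K)\subseteq K^\dagger$), providing the direct sum decomposition $\Gamma_\Omega = \Gamma \oplus v(\ex(\Lambda))$, divisibility of $\Gamma_\Omega$, and $\Psi_\Omega = \Psi^\downarrow$ in $\Gamma$, so $\Psi_\Omega$ is downward closed; combined with asymptotic integration, this is exactly what ``closed'' means.

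For asymptotic integration, fix $\alpha \in \Gamma_\Omega$ and write it uniquely as $\alpha = \gamma + \mu$ with $\gamma \in \Gamma$ and $\mu \in v(\ex(\Lambda))$. If $\mu = 0$, then $\alpha \in \Gamma$, and since $K$ is $\upl$-free it has asymptotic integration, so there is $\delta \in \Gamma^{\ne}$ with $\delta + \psi(\delta) = \alpha$; since $\psi_\Omega$ agrees with $\psi$ on $\Gamma^{\ne}$, the same $\delta$ works in $\Gamma_\Omega$.

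The interesting case is $\mu \ne 0$. By the injectivity part of Lemma~\ref{lem:v(ex(Q))} we have a unique $\lambda \in \Lambda^{\ne}$ with $\mu = v(\ex(\lambda))$. For $b \in K^\times$ set $y := b\,\ex(\lambda) \in \Univ^\times$; then $y' = A(b)\,\ex(\lambda)$ where $A := \der + \lambda \in K[\der]$, so by Lemma~\ref{lem:v(ex(Q))} we have $v(y') = v(A(b)) + \mu$ whenever $A(b) \ne 0$. Choose any $c \in K^\times$ with $vc = \gamma$; Lemma~\ref{lem:slow}, applicable because $K$ is $\upl$-free and $c \ne 0$, yields $b \in K^\times$ with $A(b) \asymp c$, hence $v(A(b)) = \gamma$. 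Setting $\delta := vy = vb + \mu$, the direct sum decomposition $\Gamma_\Omega = \Gamma \oplus v(\ex(\Lambda))$ forces $\delta \ne 0$ (as $\mu \notin \Gamma$), and $\delta + \psi_\Omega(\delta) = v(y') = \gamma + \mu = \alpha$, as required.

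The only point requiring care is the identity $v(y') = v(A(b)) + \mu$: it depends both on the structural decomposition of $\Gamma_\Omega$ supplied by Lemma~\ref{lem:v(ex(Q))} and on the existence of $b$ with prescribed asymptotic behavior of $A(b)$ supplied by Lemma~\ref{lem:slow}, and the latter is exactly what $\upl$-freeness of $K$ delivers for first-order operators.
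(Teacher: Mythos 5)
Your proof is correct, but it reaches the asymptotic integration claim by a different route than the paper. The paper's proof is two lines: it writes an arbitrary element of $\Gamma_\Omega$ as $v\big(a\ex(\lambda)\big)$ with $a\in K^\times$, notes that $a\ex(\lambda)$ lies in the subfield $K\big(\!\ex(\lambda)\big)$ of $\Omega$, which is an $H$-asymptotic extension of $K$ of transcendence degree at most $1$ and hence has asymptotic integration by Proposition~\ref{prop:upl-free and as int} (the same device as in the proof of Lemma~\ref{lem:Omega as int}, with $\ex(\Lambda)$ in place of $E$), so that $v\big(a\ex(\lambda)\big)\in\big(\Gamma_{K(\ex(\lambda))}^{\neq}\big)'\subseteq(\Gamma_\Omega^{\neq})'$. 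You instead construct the asymptotic integral explicitly: after splitting $\alpha=\gamma+v\big(\!\ex(\lambda)\big)$ via Lemma~\ref{lem:v(ex(Q))}, you solve $(\der+\lambda)(b)\asymp c$ with $vc=\gamma$ using Lemma~\ref{lem:slow} (which encapsulates the first-order consequence of $\upl$-freeness from [ADH, 11.6.17]), and then check that $y=b\ex(\lambda)$ satisfies $vy\neq 0$ and $v(y')=v\big((\der+\lambda)(b)\big)+v\big(\!\ex(\lambda)\big)=\alpha$, all of which is sound since the spectral extension is a genuine valuation on $\Omega$ and the sum $\Gamma\oplus v\big(\!\ex(\Lambda)\big)$ is direct. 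Both arguments ultimately rest on $\upl$-freeness of $K$, but through different intermediaries: yours is more computational and produces an explicit witness $y\in\Univ^\times$ with $vy+\psi_\Omega(vy)=\alpha$, while the paper's is shorter given the structural results already in place. The closedness assertion is treated the same way in both: it is read off from Lemma~\ref{lem:v(ex(Q))} once asymptotic integration is known, exactly as the statement of the corollary indicates.
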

\begin{proof}
By Lemma~\ref{lem:v(ex(Q))}, $\Gamma_\Omega= \Gamma+ v\big(\!\ex(\Lambda)\big)$. Using Proposition~\ref{prop:upl-free and as int} as in the proof of Lemma~\ref{lem:Omega as int}, with $\ex(\Lambda)$ in place of $E$, shows $\Omega$ has asymptotic integration.
\end{proof}

\subsection*{Ultimate exceptional values}
{\it In  this subsection $K$ is $H$-asymptotic with small derivation and asymptotic integration.}\/
Also  $A\in K[\der]^{\neq}$ and $r:=\order(A)$, and $\gamma$ ranges over $\Gamma=v(K^\times)$.
We have $v(\ker^{\neq} A_\lambda)\subseteq \exc^{\ev}(A_\lambda)$, so if
$\lambda$ is an eigenvalue of~$A$ with respect to $\lambda$, then $\exc^{\ev}(A_\lambda)\neq\emptyset$.  
We call the elements of the set
$$\exc^{\operatorname{u}}(A)\ =\ \exc^{\operatorname{u}}_{K}(A)\ :=\ \bigcup_\lambda\, \exc^{\ev}(A_\lambda)\ = \ \big\{ \gamma :\,  \text{$\nwt_{A_\lambda}(\gamma)\geq 1$ for some $\lambda$} \big\}$$
the {\bf ultimate exceptional values of $A$} with respect to $\Lambda$. \label{p:excu} The definition of $\exc^{\operatorname{u}}_{K}(A)$ involves our choice of $\Lambda$,  but we are leaving this implicit to avoid complicated notation.
In Section~\ref{sec:ultimate} we shall restrict $K$  and $\Lambda$ so that~$\exc^{\operatorname{u}}(A)$ does not depend any longer on the choice of $\Lambda$. There we shall use the following observation: \index{linear differential operator!ultimate exceptional values}\index{values!ultimate exceptional}\index{exceptional values!ultimate}\index{ultimate!exceptional values}

\begin{lemma}\label{lem:excu for different Q}
Let $a,b\in K$ be such that $a-b\in (\mathcal O^\times)^\dagger$. Then for all $\gamma$ we have~$\nwt_{A_a}(\gamma)=\nwt_{A_b}(\gamma)$; in particular, $\exc^{\ev}(A_a)=\exc^{\ev}(A_b)$.
\end{lemma}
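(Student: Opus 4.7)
The proof reduces to an elementary identity about the behavior of $\nwt$ under twisting by a valuation unit. Write $a-b = c^\dagger$ for some $c \in \mathcal O^\times$, so in particular $vc = 0$. Work in a differential field extension $L$ of $K$ with $L^\dagger \supseteq K$, and choose $v \in L^\times$ with $v^\dagger = b$. Setting $u := cv \in L^\times$ we get $u^\dagger = c^\dagger + v^\dagger = (a-b)+b = a$, so by the definition of the twisting action,
\[
A_a\ =\ u^{-1}Au\ =\ (cv)^{-1}A(cv)\ =\ c^{-1}v^{-1}Avc\ =\ c^{-1}(A_b)c\ =\ (A_b)_{\ltimes c}.
\]
Thus the lemma amounts to the assertion that twisting by a unit of $\mathcal O$ leaves $\nwt$ unchanged.

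For that, recall the two elementary facts (noted in Section~\ref{sec:lindiff} and [ADH,~11.1]): for any $B\in K[\der]^{\neq}$ and any $e\in K^\times$,
\[
\nwt_{eB}(\gamma) \ =\  \nwt_B(\gamma), \qquad \nwt_{Be}(\gamma) \ =\  \nwt_B(\gamma + ve).
\]
Applying these to $B := A_b$ and $e := c$ yields
\[
\nwt_{A_a}(\gamma)\ =\ \nwt_{c^{-1}(A_b c)}(\gamma)\ =\ \nwt_{A_b c}(\gamma)\ =\ \nwt_{A_b}(\gamma + vc)\ =\ \nwt_{A_b}(\gamma),
\]
since $vc = 0$. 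This establishes the first claim, and the identity $\exc^{\ev}(A_a)=\exc^{\ev}(A_b)$ follows immediately from the characterization $\exc^{\ev}(B)=\{\gamma \in \Gamma : \nwt_B(\gamma) \geq 1\}$.

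The calculation is essentially self-contained; the only step that might require a moment of verification is the identity $A_a=(A_b)_{\ltimes c}$, which rests on commutativity in $L$ and the well-definedness of the twisting action (the fact that $u^{-1}Au\in K[\der]$ depends only on $u^\dagger = a$, not on $u$ itself). No deeper obstacle arises.
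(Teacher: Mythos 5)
Your proposal is correct and follows essentially the same route as the paper's one-line proof, whose entire content is the identity $A_a=(A_b)_{\ltimes u}$ for $u\in\mathcal O^\times$ with $a-b=u^\dagger$. You simply spell out the verification of that identity and then make explicit the routine observation that twisting by a unit of valuation $0$ does not change $\nwt$, a step the paper leaves implicit.
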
 
\begin{proof}
Use that if $u\in\mathcal O^\times$ and $a-b=u^\dagger$, then $A_a=(A_b)_{\ltimes u}$.
\end{proof}

\begin{cor}\label{cor:excu for different Q}
Let $\Lambda^*$ be a complement of the $\Q$-linear subspace $K^\dagger$ of $K$ and let~$\lambda\mapsto \lambda^*\colon \Lambda \to \Lambda^*$ be the group isomorphism with $\lambda- \lambda^*\in K^\dagger$ for all $\lambda$. If $\lambda-\lambda^*\in (\mathcal O^\times)^\dagger$ for all $\lambda$, then $\nwt_{A_\lambda}(\gamma)=\nwt_{A_{\lambda^*}}(\gamma)$ for all $\gamma$, so $\exc^{\operatorname{u}}(A)= \bigcup_\lambda\, \exc^{\ev}(A_{\lambda^*})$. 
\end{cor}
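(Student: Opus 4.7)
The plan is to derive this as an immediate consequence of Lemma~\ref{lem:excu for different Q}. Fix $\gamma \in \Gamma$ and any $\lambda \in \Lambda$. By hypothesis, $a := \lambda$ and $b := \lambda^*$ satisfy $a - b = \lambda - \lambda^* \in (\mathcal{O}^\times)^\dagger$, so Lemma~\ref{lem:excu for different Q} applies to this pair and yields
\[
\nwt_{A_\lambda}(\gamma)\ =\ \nwt_{A_{\lambda^*}}(\gamma),
\]
which is the first assertion. Specializing this equality to the condition ``$\geq 1$'' gives $\exc^{\ev}(A_\lambda) = \exc^{\ev}(A_{\lambda^*})$ for every $\lambda \in \Lambda$.

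Then the second assertion follows just by taking the union over $\lambda \in \Lambda$ in the definition of $\exc^{\operatorname{u}}(A)$:
\[
\exc^{\operatorname{u}}(A)\ =\ \bigcup_\lambda \exc^{\ev}(A_\lambda)\ =\ \bigcup_\lambda \exc^{\ev}(A_{\lambda^*}).
\]
There is essentially no obstacle here; the corollary is a direct bookkeeping consequence of the lemma, where the only ingredient to check is that the hypothesis $\lambda - \lambda^* \in (\mathcal{O}^\times)^\dagger$ puts us exactly into the situation of Lemma~\ref{lem:excu for different Q} for each individual $\lambda$.
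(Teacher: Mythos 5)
Your proposal is correct and coincides with the paper's (implicit) argument: the corollary is stated there without separate proof precisely because it follows by applying Lemma~\ref{lem:excu for different Q} to each pair $(\lambda,\lambda^*)$ and taking the union over $\lambda$ in the definition of $\exc^{\operatorname{u}}(A)$, exactly as you do.
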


\begin{remarkNumbered}\label{rem:excu for different Q, 1}
For $a\in K^\times$ we have 
$\exc^{\operatorname{u}}(aA)=\exc^{\operatorname{u}}(A)$
and $\exc^{\operatorname{u}}(Aa)=\exc^{\operatorname{u}}(A)-va$.
Note also that~$\exc^{\ev}(A) = \exc^{\ev}(A_0) \subseteq \exc^{\operatorname{u}}(A)$.
Let $\phi\in K^\times$ be active in $K$, and set~$\lambda^\phi:=\phi^{-1}\lambda$.
Then~$\Lambda^\phi:=\phi^{-1}\Lambda$ is a complement of the $\Q$-linear subspace $(K^\phi)^\dagger=\phi^{-1}K^\dagger$  of~$K^\phi$,
and $(A^\phi)_{\lambda^\phi} = (A_\lambda)^\phi$. Hence $\exc^{\operatorname{u}}_{K}(A)$ agrees with   the set~$\exc^{\operatorname{u}}_{K^\phi}(A^\phi)$ of ultimate exceptional values of $A^\phi$ with respect to $\Lambda^\phi$. 
\end{remarkNumbered}

\begin{remarkNumbered}\label{rem:excu for different Q, 2}
Suppose $L$ is an $H$-asymptotic extension of $K$ with asymptotic integration and algebraically closed constant field $C_L$ such that 
$L^\dagger$ is divisible, and~$\Psi$ is cofinal in~$\Psi_L$ or $K$ is $\upl$-free.
Then~$\exc^{\ev}(A_\lambda)=\exc^{\ev}_L(A_\lambda)\cap\Gamma$, by Lemma~\ref{lemexc} and Corollary~\ref{cor:13.7.10}. Hence if $\Lambda_L\supseteq \Lambda$ is a complement of  the  subspace~$L^\dagger$ of the $\Q$-linear space $L$, and~$\exc^{\operatorname{u}}_{L}(A)$ is  the set of ultimate exceptional values of $A$ (viewed as an element of~$L[\der]$) with respect to~$\Lambda_L$, then $\exc^{\operatorname{u}}(A)\subseteq \exc^{\operatorname{u}}_{L}(A)$. (Note that such a complement~$\Lambda_L$ exists iff~$L^\dagger\cap K=K^\dagger$.)
\end{remarkNumbered}

\noindent
In the rest of this subsection  we equip $\Univ$ with the gaussian extension $v_{\g}$ of the valuation of~$K$.
Recall that we have a decomposition $\ker_{\Univ} A = \bigoplus_\lambda (\ker A_\lambda)\ex(\lambda)$ of the $C$-linear space~$\ker_{\Univ} A$
as an internal direct sum of subspaces, 
and hence
\begin{equation}\label{eq:vU(ker A)}
v_{\g}(\ker_{\Univ}^{\neq} A)\ =\ \bigcup_\lambda\, v(\ker^{\neq} A_\lambda)\ \subseteq\ \bigcup_\lambda\, \exc^{\ev}(A_\lambda)\ =\ \exc^{\operatorname{u}}(A).
\end{equation}
Here are some consequences:

\begin{lemma}\label{lem:excu 1}
Suppose $K$ is $r$-linearly newtonian. Then $v_{\g}(\ker_{\Univ}^{\neq} A)=\exc^{\operatorname{u}}(A)$.
\end{lemma}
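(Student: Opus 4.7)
The inclusion $v_{\g}(\ker_{\Univ}^{\neq} A)\subseteq \exc^{\operatorname{u}}(A)$ is already recorded in \eqref{eq:vU(ker A)}, so only the reverse inclusion needs attention.

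The plan is to reduce $\exc^{\operatorname{u}}(A)=\bigcup_\lambda\exc^{\ev}(A_\lambda)$ to something handled by Proposition~\ref{kerexc}. Fix $\lambda$; then $A_\lambda=\ex(-\lambda)A\ex(\lambda)\in K[\der]$ has order $r$, so $r$-linear newtonianity of $K$ applies to $A_\lambda$ and gives
\[
\exc^{\ev}(A_\lambda)\ =\ v(\ker^{\neq}A_\lambda).
\]
Next I transport kernels from $K$ to $\Univ$ via the twist: for $y\in K^\times$, the identity $A\big(y\ex(\lambda)\big)=A_\lambda(y)\ex(\lambda)$ (which follows directly from $A_\lambda=\ex(-\lambda)A\ex(\lambda)$ in the differential ring $\Omega$) shows that the map $y\mapsto y\ex(\lambda)$ sends $\ker^{\neq}A_\lambda$ injectively into $\ker_{\Univ}^{\neq}A$. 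Moreover $v_{\g}\big(y\ex(\lambda)\big)=v(y)$ by the definition \eqref{eq:gaussian ext} of the gaussian extension, since the spectral decomposition of $y\ex(\lambda)$ has $y$ as its only nonzero coordinate. Hence $v(\ker^{\neq}A_\lambda)\subseteq v_{\g}(\ker_{\Univ}^{\neq}A)$.

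Taking the union over $\lambda$ yields
\[
\exc^{\operatorname{u}}(A)\ =\ \bigcup_\lambda \exc^{\ev}(A_\lambda)\ =\ \bigcup_\lambda v(\ker^{\neq}A_\lambda)\ \subseteq\ v_{\g}(\ker_{\Univ}^{\neq}A),
\]
which together with \eqref{eq:vU(ker A)} gives the claimed equality. There is no serious obstacle here: the only thing to verify beyond the invocation of Proposition~\ref{kerexc} is the purely formal observation that the twist by $\ex(\lambda)$ identifies $\ker A_\lambda$ inside $K$ with a piece of $\ker_{\Univ} A$ in a $v_{\g}$-preserving way, which is immediate from the construction of $\Univ$ in Section~\ref{sec:univ exp ext}.
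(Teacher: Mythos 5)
Your proposal is correct and is essentially the paper's own argument: both apply Proposition~\ref{kerexc} to each $A_\lambda$ (of order $r$) and combine this with the decomposition $\ker_{\Univ} A=\bigoplus_\lambda(\ker A_\lambda)\ex(\lambda)$ recorded in \eqref{eq:vU(ker A)}. The only difference is that you re-derive the $v_{\g}$-preserving identification $y\mapsto y\ex(\lambda)$ explicitly, whereas the paper treats that equality as already contained in \eqref{eq:vU(ker A)}.
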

\begin{proof}
By Proposition~\ref{kerexc} we have $v(\ker^{\neq} A_\lambda) = \exc^{\ev}(A_\lambda)$ for each $\lambda$. There\-fore~$v_{\g}(\ker_{\Univ}^{\neq} A)=\exc^{\operatorname{u}}(A)$ by \eqref{eq:vU(ker A)}.
\end{proof}

\begin{lemma}\label{lem:excu 2}
Suppose $K$ is $\d$-valued. Then  $\abs{v_{\g}(\ker_{\Univ}^{\neq} A)}\leq \dim_C \ker_{\Univ} A\leq r$.
\end{lemma}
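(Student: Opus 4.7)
The plan is to exploit the decomposition of $\ker_{\Univ} A$ into its spectral components and to apply Remark~\ref{rem:kerexc} componentwise. The second inequality $\dim_C \ker_{\Univ} A \leq r$ is already noted in \eqref{eq:bd on sum mults}, so the real content is the first inequality.

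First I would write
\[
\ker_{\Univ} A \;=\; \bigoplus_\lambda (\ker A_\lambda)\ex(\lambda) \qquad\text{(internal direct sum of $C$-linear subspaces),}
\]
as recorded just before \eqref{eq:bd on sum mults}, so that
$\dim_C \ker_{\Univ} A = \sum_\lambda \dim_C \ker A_\lambda$. Since the gaussian extension satisfies $v_{\g}(f\ex(\lambda)) = v(f)$ for $f \in K^\times$, we have $v_{\g}\big((\ker^{\neq} A_\lambda)\ex(\lambda)\big) = v(\ker^{\neq} A_\lambda)$ for each $\lambda$, and from \eqref{eq:vU(ker A)},
\[
v_{\g}(\ker_{\Univ}^{\neq} A) \;=\; \bigcup_\lambda v(\ker^{\neq} A_\lambda).
\]

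Next I would invoke Remark~\ref{rem:kerexc}: since $K$ is $\d$-valued (with small derivation, as standing in this subsection) and each $A_\lambda\in K[\der]^{\neq}$, we have $|v(\ker^{\neq} A_\lambda)| = \dim_C \ker A_\lambda$ for every $\lambda$. Summing and using subadditivity of cardinality under union,
\[
|v_{\g}(\ker_{\Univ}^{\neq} A)| \;\leq\; \sum_\lambda |v(\ker^{\neq} A_\lambda)| \;=\; \sum_\lambda \dim_C \ker A_\lambda \;=\; \dim_C \ker_{\Univ} A.
\]
Combining with $\dim_C \ker_{\Univ} A \leq r$ from \eqref{eq:bd on sum mults} finishes the proof.

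There is no real obstacle: the sum $\sum_\lambda \dim_C \ker A_\lambda$ is finite (only finitely many $\lambda$ contribute, since $\dim_C \ker_{\Univ} A \leq r$), so both the dimension identity and the cardinality inequality are honest finite statements, and no issue of overcounting arises because we are only asserting the upper bound $|A\cup B|\le |A|+|B|$, not equality.
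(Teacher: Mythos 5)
Your proof is correct and follows essentially the same route as the paper's: both rely on the spectral decomposition $\ker_{\Univ} A = \bigoplus_\lambda (\ker A_\lambda)\ex(\lambda)$, the identity \eqref{eq:vU(ker A)}, and the equality $|v(\ker^{\neq} A_\lambda)| = \dim_C \ker A_\lambda$ for $\d$-valued $K$. The only cosmetic difference is that the paper cites [ADH, 5.6.6(i)] directly where you cite Remark~\ref{rem:kerexc}, but the latter is just the paper's own restatement of the former.
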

{\samepage
\begin{proof}
By [ADH, 5.6.6(i)] applied to $A_\lambda$ in place of $A$ we have
$$\abs{v(\ker^{\neq} A_\lambda)}\ =\ \dim_C \ker A_\lambda\ =\ \mult_\lambda(A)\qquad\text{ for all~$\lambda$}$$ 
and thus by \eqref{eq:vU(ker A)},
$$\abs{v_{\g}(\ker_{\Univ}^{\neq} A)}\ \leq\ 
\sum_\lambda \,\abs{v(\ker^{\neq} A_\lambda)}\ =\ 
\sum_\lambda \,\mult_\lambda(A)\ =\ \dim_C \ker_{\Univ} A\ \leq\ r$$
as claimed.
\end{proof}
}

\begin{lemma}\label{lem:excu, r=1} 
Suppose  $\I(K)\subseteq K^\dagger$ and $r=1$. Then $$v_{\operatorname{g}}(\ker^{\neq}_{\Univ} A)\ =\ \exc^{\operatorname{u}}(A), \qquad \abs{\exc^{\operatorname{u}}(A)}\ =\ 1.$$
\end{lemma}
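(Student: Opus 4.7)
\medskip

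The plan is to reduce to the case $A = \der - g$ with $g \in K$ (permissible since $\exc^{\operatorname{u}}(A) = \exc^{\operatorname{u}}(aA)$ and scaling $A$ by a unit of $K$ does not change the kernel in $\Univ$). For each $\lambda$ a direct computation gives $A_\lambda = \ex(-\lambda)(\der-g)\ex(\lambda) = \der - (g-\lambda)$. By the description of $\exc^{\ev}$ for first-order operators recalled at the start of the subsection \emph{First-order operators} in Section~\ref{sec:lindiff}, we have
\[
\exc^{\ev}(A_\lambda)\ =\ \big\{ vy : y\in K^\times,\ v\big((g-\lambda) - y^\dagger\big) > \Psi \big\},
\]
so $\exc^{\ev}(A_\lambda) \neq \emptyset$ iff $g - \lambda \in \I(K) + K^\dagger$. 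Since $\I(K) \subseteq K^\dagger$ by hypothesis, this simplifies to $g - \lambda \in K^\dagger$.

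From the decomposition $K = K^\dagger \oplus \Lambda$ (internal direct sum of $\Q$-linear subspaces), there is a unique $\lambda_0\in\Lambda$ with $g - \lambda_0 \in K^\dagger$. Writing $g - \lambda_0 = y_0^\dagger$ with $y_0 \in K^\times$ gives $A_{\lambda_0}(y_0) = y_0' - (g-\lambda_0)y_0 = 0$, so $y_0 \in \ker^{\neq} A_{\lambda_0}$. In particular $\exc^{\ev}(A_{\lambda_0})\neq\emptyset$, and by the above $\exc^{\operatorname{u}}(A) = \bigcup_\lambda \exc^{\ev}(A_\lambda) = \exc^{\ev}(A_{\lambda_0})$, the union being really a single term.

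Now Lemma~\ref{lem:v(ker)=exc, r=1}, applied to the order-one operator $A_{\lambda_0}$ (whose coefficients lie in $K$, for which $\I(K) \subseteq K^\dagger$), yields $v(\ker^{\neq} A_{\lambda_0}) = \exc^{\ev}(A_{\lambda_0})$. Since $\ker^{\neq} A_{\lambda_0} = C^\times y_0$ and $C^\times \subseteq \mathcal O^\times$, this common set is the singleton $\{v(y_0)\}$, so $|\exc^{\operatorname{u}}(A)| = 1$. Finally, from $y_0 \ex(\lambda_0) \in \ker^{\neq}_{\Univ} A$ we get $v(y_0) = v_{\operatorname{g}}\big(y_0\ex(\lambda_0)\big) \in v_{\operatorname{g}}(\ker^{\neq}_{\Univ} A)$, giving the inclusion $\exc^{\operatorname{u}}(A) \subseteq v_{\operatorname{g}}(\ker^{\neq}_{\Univ} A)$; the reverse inclusion is \eqref{eq:vU(ker A)}. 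No step here should pose any real difficulty — the main point is simply that the hypothesis $\I(K) \subseteq K^\dagger$ is what forces at most one $\lambda \in \Lambda$ to contribute to $\exc^{\operatorname{u}}(A)$, after which Lemma~\ref{lem:v(ker)=exc, r=1} does the rest.
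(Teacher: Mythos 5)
Your proof is correct and takes essentially the same route as the paper: the paper also reduces to $A=\der-g$, writes $g=f^\dagger+\lambda$ via $K=K^\dagger\oplus\Lambda$, exhibits $f\ex(\lambda)\in\ker^{\ne}_{\Univ}A$, and invokes Lemma~\ref{lem:v(ker)=exc, r=1} together with \eqref{eq:vU(ker A)}. The only difference is organizational — you explicitly note that $\exc^{\ev}(A_\lambda)=\emptyset$ for $\lambda\neq\lambda_0$, whereas the paper reaches the same conclusion by applying Lemma~\ref{lem:v(ker)=exc, r=1} uniformly across all $\lambda$ and observing the kernels vanish.
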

\begin{proof}
Arrange $A=\der-g$, $g\in K$, and take $f\in K^\times$ and $\lambda$ such that~$g=f^\dagger+\lambda$. Then $u:=f\ex(\lambda)\in\Univ^\times$
satisfies $A(u)=0$, hence $\ker^{\neq}_{\Univ}A=Cu$ and thus~$v_{\text{g}}(\ker^{\neq}_{\Univ} A) = \{ vf \}$.
By Lemma~\ref{lem:v(ker)=exc, r=1} we have $v(\ker^{\neq} A_\lambda)=\exc^{\ev}(A_\lambda)$ for all $\lambda$ and
hence $v_{\operatorname{g}}(\ker^{\neq}_{\Univ} A) = \exc^{\operatorname{u}}(A)$ by \eqref{eq:vU(ker A)}.
\end{proof}

\begin{cor}\label{corevisu} If $\I(K)\subseteq K^\dagger$ and $a\in K^\times$, then $\exc^{\ev}(\der-a^\dagger)=\exc^{\operatorname{u}}(\der-a^\dagger)=\{va\}$. 
\end{cor}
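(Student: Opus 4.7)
The plan is to derive this corollary as a direct consequence of Lemma \ref{lem:excu, r=1}, which already delivers $|\exc^{\operatorname{u}}(A)|=1$ for any first-order $A \in K[\der]^{\neq}$ under the hypothesis $\I(K)\subseteq K^\dagger$. All that remains is to exhibit $va$ as a concrete element of these exceptional sets for the specific operator $A:=\der-a^\dagger$.

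First I would observe that $a$ itself witnesses $va\in\exc^{\ev}(A)$: we have
\[
A(a)\ =\ a'-a^\dagger\cdot a\ =\ 0,
\]
so $a\in\ker^{\ne}A\subseteq\ker^{\ne}_{\Univ}A$. Since $A$ has order $1$, Lemma \ref{lem:v(ker)=exc, r=1} (applicable because $\I(K)\subseteq K^\dagger$) gives $v(\ker^{\ne}A)=\exc^{\ev}(A)$, whence $va\in\exc^{\ev}(A)$. Alternatively, one can read this off directly from the description $\exc^{\ev}(\der-g)=\{vy:v(g-y^\dagger)>\Psi\}$ by taking $y=a$ and $g=a^\dagger$, so that $g-y^\dagger=0$.

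Finally, I would combine these facts: from Remark \ref{rem:excu for different Q, 1} we have the inclusion $\exc^{\ev}(A)\subseteq\exc^{\operatorname{u}}(A)$, and from Lemma \ref{lem:excu, r=1} the outer set $\exc^{\operatorname{u}}(A)$ is a singleton. The element $va$ therefore belongs to both, forcing
\[
\exc^{\ev}(A)\ =\ \exc^{\operatorname{u}}(A)\ =\ \{va\}.
\]
There is no real obstacle here: the heavy lifting (bounding the ultimate exceptional set by $1$ using that every unit of $\Univ$ generating the kernel is of the form $f\,\ex(\lambda)$ with $f\in K^\times$) has already been carried out in Lemma \ref{lem:excu, r=1}, so the only work in the corollary is pinpointing the explicit witness $a$.
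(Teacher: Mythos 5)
Your proposal is correct and follows essentially the intended route: the paper states this corollary as an immediate consequence of Lemma~\ref{lem:excu, r=1}, whose proof already exhibits $v_{\g}(\ker^{\neq}_{\Univ}A)=\{vf\}$ for $A=\der-g$, $g=f^\dagger+\lambda$ (here $f=a$, $\lambda=0$), and the $\exc^{\ev}$ part is pinned down exactly as you do, via the kernel element $a$ together with Lemma~\ref{lem:v(ker)=exc, r=1} (or the explicit description of $\exc^{\ev}(\der-g)$) and the inclusion $\exc^{\ev}(A)\subseteq\exc^{\operatorname{u}}(A)$. Your bookkeeping (witness $va$ plus the singleton bound from Lemma~\ref{lem:excu, r=1}) is a harmless variant of the same argument.
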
 

\noindent
Proposition~\ref{prop:finiteness of excu(A)} below partly extends Lemma~\ref{lem:excu, r=1}.

\subsection*{Spectral extensions and ultimate exceptional values}
{\it In this subsection $K$ is $\d$-valued of $H$-type with small derivation, asymptotic integration, and $\I(K)\subseteq K^\dagger$.}\/ Also $A\in K[\der]^{\ne}$ has order $r$ and $\gamma$ ranges over $\Gamma$. 

{\sloppy
Suppose~$\Omega$ is equipped with a spectral extension $v$ of the valuation of $K$. Let~${g\in K^\times}$ with $vg=\gamma$.
The Newton weight of $A_\lambda g\in K[\der]$ does not change in passing from~$K$ to $\Omega$, since
$\Psi$ is cofinal in $\Psi_\Omega$ by Lemma~\ref{lem:v(ex(Q))}; see [ADH, 11.1]. 
Thus
$$\nwt_{A_\lambda}(\gamma)\ =\ \nwt(A_\lambda g)\ =\ \nwt\!\big(Ag\ex(\lambda)\big)\ =\ \nwt_A\!\big(v(g\ex(\lambda)\big)\ =\ \nwt_A\!\big(\gamma+v(\ex(\lambda)\big)\big).$$
In particular, using $\Gamma_{\Omega}=\Gamma\oplus v\big(\!\ex(\Lambda)\big)$,
\begin{equation}\label{eq:excevOmega}
\exc^{\ev}_\Omega(A)\ =\ \bigcup_\lambda \, \exc^{\ev}(A_\lambda)+v\big(\!\ex(\lambda)\big)\qquad\text{(a disjoint union)}.
\end{equation} 
Thus $\exc^{\operatorname{u}}(A)=\pi\big(\exc^{\ev}_\Omega(A)\big)$ where $\pi\colon \Gamma_{\Omega}\to \Gamma$ is given by
~$\pi\big({\gamma+v\big(\!\ex(\lambda)}\big)\big)=\gamma$. }

\begin{lemma}\label{lem:finiteness of excu(A)}
We have $\dim_C \ker_{\Univ} A \leq \sum_\lambda\abs{\exc^{\ev}(A_\lambda)}$, and
$$\dim_C \ker_{\Univ} A\ =\  \sum_\lambda\,\abs{\exc^{\ev}(A_\lambda)}
\ \Longleftrightarrow\  v(\ker^{\neq} A_\lambda)\ =\  \exc^{\ev}(A_\lambda)\text{ for all $\lambda$.}$$
Moreover, if  $\dim_C \ker_{\Univ} A =  \sum_\lambda\,\abs{\exc^{\ev}(A_\lambda)}$, then $v_{\g}(\ker_{\Univ}^{\neq} A) = \exc^{\operatorname{u}}(A)$.
\end{lemma}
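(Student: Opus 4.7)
\medskip

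The plan is to leverage the decomposition $\ker_{\Univ}A=\bigoplus_\lambda (\ker A_\lambda)\ex(\lambda)$ of $C$-linear spaces already recorded just before \eqref{eq:vU(ker A)}, together with the fact that $K$ is $\d$-valued (which is part of the standing hypothesis of this subsection). Taking $C$-dimensions gives
\[
\dim_C\ker_{\Univ}A\ =\ \sum_\lambda \dim_C\ker A_\lambda.
\]
Since $K$ is $\d$-valued, Remark~\ref{rem:kerexc} (i.e.\ [ADH,~5.6.6]) applied to each $A_\lambda$ yields
$\dim_C\ker A_\lambda=\bigl|v(\ker^{\neq}A_\lambda)\bigr|$. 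Combining with the general inclusion $v(\ker^{\neq}A_\lambda)\subseteq\exc^{\ev}(A_\lambda)$ (recalled at the start of Section~\ref{sec:lindiff}), I obtain the desired inequality
\[
\dim_C\ker_{\Univ}A\ =\ \sum_\lambda \bigl|v(\ker^{\neq}A_\lambda)\bigr|\ \leq\ \sum_\lambda \bigl|\exc^{\ev}(A_\lambda)\bigr|.
\]

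For the equivalence, the implication ``$\Leftarrow$'' is immediate from the previous display. For ``$\Rightarrow$'', suppose the two sums agree. The left-hand sum is at most $r$, hence finite, so every $\exc^{\ev}(A_\lambda)$ is finite and the termwise inequalities $|v(\ker^{\neq}A_\lambda)|\leq|\exc^{\ev}(A_\lambda)|$ must be termwise equalities. Since $v(\ker^{\neq}A_\lambda)\subseteq\exc^{\ev}(A_\lambda)$ and both sets are finite of equal cardinality, they coincide for every $\lambda$. (If instead $\sum_\lambda|\exc^{\ev}(A_\lambda)|=\infty$, the claimed equality fails, and some $\exc^{\ev}(A_\lambda)$ properly contains the finite set $v(\ker^{\neq}A_\lambda)$, so the right-hand side of the biconditional also fails; no separate discussion is actually needed.)

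Finally, assuming the equality holds, so that $v(\ker^{\neq}A_\lambda)=\exc^{\ev}(A_\lambda)$ for every $\lambda$, I take unions over $\lambda$ and invoke \eqref{eq:vU(ker A)} to conclude
\[
v_{\g}(\ker^{\neq}_{\Univ}A)\ =\ \bigcup_\lambda v(\ker^{\neq}A_\lambda)\ =\ \bigcup_\lambda \exc^{\ev}(A_\lambda)\ =\ \exc^{\operatorname{u}}(A),
\]
finishing the proof. No step presents a real obstacle; the only point requiring a moment's care is the passage from equality of cardinal sums to termwise set equality, which is handled by the finiteness of $\dim_C\ker_{\Univ}A\leq r$.
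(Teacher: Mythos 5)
Your proof is correct, but it takes a genuinely more direct route than the paper's. The paper proves the inequality and the equivalence by passing to the differential fraction field $\Omega$ of $\Univ$ equipped with a spectral extension of the valuation: there it uses [ADH, 5.6.6(i)] and [ADH, 2.3.13] over $\Omega$, the identity $\abs{\exc^{\ev}_\Omega(A)}=\sum_\lambda\abs{\exc^{\ev}(A_\lambda)}$ from \eqref{eq:excevOmega}, and, for the ``moreover'' part, the projection $\pi\colon\Gamma_\Omega\to\Gamma$ together with Corollary~\ref{cor:spectral valuation basis}. You instead stay entirely inside $\Univ$ with the gaussian data: the direct-sum decomposition of $\ker_{\Univ}A$ gives the exact identity $\dim_C\ker_{\Univ}A=\sum_\lambda\abs{v(\ker^{\neq}A_\lambda)}$ (via [ADH, 5.6.6(i)] applied to each twist $A_\lambda$ over $K$, which is legitimate since $K$ is $\d$-valued with small derivation here — this is precisely the computation in the proof of Lemma~\ref{lem:excu 2}), and then the inequality, the termwise-equality argument (valid because the total is bounded by $r$, so all cardinals involved are finite), and the ``moreover'' part via \eqref{eq:vU(ker A)} all follow at once; the last step mirrors the proof of Lemma~\ref{lem:excu 1}. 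What the paper's detour buys is uniformity with the surrounding development, since $\exc^{\ev}_\Omega(A)$ and the projection $\pi$ are set up anyway and reused in Proposition~\ref{prop:finiteness of excu(A)} and Lemma~\ref{lem:excev cap GammaOmega}; what your route buys is a shorter, self-contained argument that also yields the sharper identity $\dim_C\ker_{\Univ}A=\sum_\lambda\abs{v(\ker^{\neq}A_\lambda)}$, from which the stated equivalence is immediate. The only point needing the care you gave it is the passage from equality of cardinal sums to termwise equality of sets, which your finiteness remark handles correctly.
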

\begin{proof}
Clearly, $\dim_C \ker_{\Univ} A \leq \dim_C \ker_\Omega A$.
Equip~$\Omega$ with a spectral extension of the valuation of $K$. 
Then $\dim_C\ker_\Omega A=\abs{v(\ker_\Omega^{\neq}A)}$ and $v(\ker_\Omega^{\neq} A)\subseteq \exc^{\ev}_\Omega(A)$ by~[ADH, 5.6.6(i)] and [ADH, p.~481], respectively, applied to $\Omega$ in the role of~$K$.
Also  $|\exc^{\ev}_{\Omega}(A)|=\sum_\lambda|\exc^{\ev}(A_\lambda)|$ (a sum of cardinals) by the remarks preceding the lemma. This yields the first claim of the lemma. 

Note that~$v(\ker^{\neq} A_\lambda)\subseteq\exc^{\ev}(A_\lambda)$ for all $\lambda$.
Hence from~\eqref{eq:excevOmega} and 
$$v(\ker_{\Univ}^{\neq} A)\ =\ \bigcup_\lambda v(\ker^{\neq}A_\lambda)+v\big(\!\ex(\lambda)\big)\qquad\text{(a disjoint union)}$$
we obtain: 
$$v(\ker_{\Univ}^{\neq} A)=\exc^{\ev}_\Omega(A)\quad\Longleftrightarrow\quad v(\ker^{\neq} A_\lambda)\ =\  \exc^{\ev}(A_\lambda) \text{ for all $\lambda$.}$$
As a valued vector space over $C$,  $\Omega$ is a Hahn space, so its finite-dimensional subspace~$\ker_{\Univ} A$ is as well. Hence
 $\abs{v(\ker_{\Univ}^{\neq} A)}=\dim_C \ker_{\Univ} A$ by~[ADH, 2.3.13], 
and $$v(\ker_{\Univ}^{\neq} A)\ \subseteq\ v(\ker_\Omega^{\neq} A)\ \subseteq\ \exc^{\ev}_\Omega(A), \qquad
|\exc^{\ev}_{\Omega}(A)|\ =\ \sum_\lambda|\exc^{\ev}(A_\lambda)|.$$  
This yields the displayed equivalence. 

Suppose $\dim_C \ker_{\Univ} A= \sum_\lambda\abs{\exc^{\ev}(A_\lambda)}$;
we need to show $v_{\g}(\ker_{\Univ}^{\neq} A)= \exc^{\operatorname{u}}(A)$.
We have~$\pi\big(\exc^{\ev}_\Omega(A)\big)=\exc^{\operatorname{u}}(A)$ for the above projection map $\pi\colon \Gamma_{\Omega} \to \Gamma$, so it is enough to show
$\pi\big(v(\ker_{\Univ}^{\neq} A)\big)=v_{\g}(\ker_{\Univ}^{\ne} A)$.
For that, note that for $\mathcal{B}\subseteq K^\times \ex(\Lambda)$ in Corollary~\ref{cor:spectral valuation basis} we have
$$\pi\big(v(\ker_{\Univ}^{\neq} A)\big)\ =\ \pi(v\mathcal B)\ =\ v_{\g}(\mathcal B)\ =\ v_{\g}(\ker_{\Univ}^{\ne} A),$$
using for the last equality the details in the proof of Corollary~\ref{cor:spectral valuation basis}.
\end{proof}

\begin{prop}\label{prop:finiteness of excu(A)}
Suppose $K$ is $\upo$-free. 
Then  $\nwt_{A_\lambda}(\gamma)=0$ for all but finitely many  pairs $(\gamma,\lambda)$  and  
$$\abs{\exc^{\operatorname{u}}(A)}\ \leq\ 
\sum_\lambda\, \abs{\exc^{\ev}(A_\lambda)}\ =\ 
\sum_{\gamma,\lambda} \nwt_{A_\lambda}(\gamma)\ \leq\ r.$$
If $\dim_C \ker_{\Univ}A =r$, then  $\sum_\lambda\, \abs{\exc^{\ev}(A_\lambda)}=r$ and $v_{\g}(\ker_{\Univ}^{\neq} A) = \exc^{\operatorname{u}}(A)$.
\end{prop}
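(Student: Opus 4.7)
\medskip\noindent
\textbf{Proof plan.} The strategy is to push everything through to $\Omega$ equipped with a spectral extension of the valuation of $K$, exploiting the fact that $\upo$-freeness passes to such an $\Omega$, and then apply Corollary~\ref{cor:sum of nwts} at the level of $\Omega$. The two identifications that make this work are (a) the formula $\nwt_{A_\lambda}(\gamma)=\nwt_A^{\Omega}\bigl(\gamma+v(\ex(\lambda))\bigr)$ from the remarks preceding Lemma~\ref{lem:finiteness of excu(A)}, and (b) the disjoint-union decomposition \eqref{eq:excevOmega}. Together they express both $\sum_\lambda|\exc^{\ev}(A_\lambda)|$ and $\sum_{\gamma,\lambda}\nwt_{A_\lambda}(\gamma)$ as quantities attached to $A$ over the single field $\Omega$.

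First I would invoke Lemma~\ref{specex} to equip $\Omega$ with a spectral extension of the valuation of $K$; since $K$ is $\upo$-free, so is $\Omega$ (by the remark immediately following the definition of spectral extension, quoting [ADH, 13.6]). In particular $\Omega$ is ungrounded $H$-asymptotic with $\Gamma_\Omega\neq\{0\}$, so Corollary~\ref{cor:sum of nwts} applied to $A\in\Omega[\der]^{\neq}$ yields
$$\sum_{\delta\in\Gamma_\Omega}\nwt_A^{\Omega}(\delta)\ =\ \bigl|\exc^{\ev}_\Omega(A)\bigr|\ \leq\ r.$$
The same corollary applied to each $A_\lambda\in K[\der]^{\neq}$ gives $\sum_\gamma\nwt_{A_\lambda}(\gamma)=|\exc^{\ev}(A_\lambda)|$. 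Using the two identifications mentioned above, together with $\Gamma_\Omega=\Gamma\oplus v\bigl(\ex(\Lambda)\bigr)$ from Lemma~\ref{lem:v(ex(Q))}, I rewrite
$$\sum_{\gamma,\lambda}\nwt_{A_\lambda}(\gamma)\ =\ \sum_{\gamma,\lambda}\nwt_A^{\Omega}\!\bigl(\gamma+v(\ex(\lambda))\bigr)\ =\ \sum_{\delta\in\Gamma_\Omega}\nwt_A^{\Omega}(\delta)\ =\ \bigl|\exc^{\ev}_\Omega(A)\bigr|\ =\ \sum_\lambda\bigl|\exc^{\ev}(A_\lambda)\bigr|,$$
where the last equality is \eqref{eq:excevOmega}. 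This simultaneously gives the middle equality in the display of the proposition and the upper bound by $r$. Finiteness of the set of pairs $(\gamma,\lambda)$ with $\nwt_{A_\lambda}(\gamma)\geq 1$ is then immediate from the finiteness of the left-hand sum. The inequality $|\exc^{\operatorname{u}}(A)|\leq\sum_\lambda|\exc^{\ev}(A_\lambda)|$ is just subadditivity of cardinality applied to $\exc^{\operatorname{u}}(A)=\bigcup_\lambda\exc^{\ev}(A_\lambda)$.

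Finally, assume $\dim_C\ker_{\Univ}A=r$. The first assertion of Lemma~\ref{lem:finiteness of excu(A)} gives
$$r\ =\ \dim_C\ker_{\Univ}A\ \leq\ \sum_\lambda\bigl|\exc^{\ev}(A_\lambda)\bigr|\ \leq\ r,$$
so equality holds throughout; in particular $\sum_\lambda|\exc^{\ev}(A_\lambda)|=r$. The second assertion of that lemma then yields $v_{\g}(\ker^{\neq}_{\Univ}A)=\exc^{\operatorname{u}}(A)$, completing the proof.

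The only nontrivial obstacle is really a bookkeeping one: making sure that the spectral extension of $v$ to $\Omega$ exists (this is exactly Lemma~\ref{specex}, whose hypotheses are met here since $K$ is $\d$-valued of $H$-type with small derivation and $\Gamma\neq\{0\}$ from asymptotic integration), and that the translation formulas relating $\nwt_{A_\lambda}$ on $\Gamma$ to $\nwt_A^{\Omega}$ on $\Gamma_\Omega$ go through as written—both of which are assembled in the paragraph preceding Lemma~\ref{lem:finiteness of excu(A)}.
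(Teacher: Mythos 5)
Your proposal is correct and follows essentially the same route as the paper's proof: equip $\Omega$ with a spectral extension, identify $\nwt_{A_\lambda}(\gamma)$ with $\nwt_A\bigl(\gamma+v(\ex(\lambda))\bigr)$ over $\Omega$ using Lemma~\ref{lem:v(ex(Q))} and \eqref{eq:excevOmega}, apply Corollary~\ref{cor:sum of nwts} both over $\Omega$ and to each $A_\lambda$, and deduce the final statement from Lemma~\ref{lem:finiteness of excu(A)}. No gaps.
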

\begin{proof}
Equip $\Omega$ with a spectral extension $v$ of the valuation of $K$. 
Then $\Omega$ is $\upo$-free, so $\sum_\lambda|\exc^{\ev}(A_\lambda)|=|\exc^{\ev}_{\Omega}(A)|\le r$ by the remarks preceding Lemma~\ref{lem:finiteness of excu(A)}  and Corollary~\ref{cor:sum of nwts} applied to $\Omega$ in place of $K$. These remarks also give 
$\nwt_{A_\lambda}(\gamma)=0$ for all but finitely many pairs $(\gamma,\lambda)$, and so
$$\sum_{\gamma,\lambda} \nwt_{A_\lambda}(\gamma)\ =\ \sum_{\gamma,\lambda}  \nwt_A\!\big(\gamma+v(\ex(\lambda)\big)\  =\  \abs{\exc^{\ev}_\Omega(A)}\ \leq\ r.$$
Corollary~\ref{cor:sum of nwts} applied to $A_\lambda$ in place of $A$ yields $\abs{\exc^{\ev}(A_\lambda)}=\sum_\gamma \nwt_{A_\lambda}(\gamma)$ and so~$\sum_\lambda\, \abs{\exc^{\ev}(A_\lambda)} = \sum_{\gamma,\lambda} \nwt_{A_\lambda}(\gamma)$. This proves the first part (including  the display). The rest follows from this and Lemma~\ref{lem:finiteness of excu(A)}.
 \end{proof} 

\noindent
In the next lemma (to be used in the proof of Proposition~\ref{prop:stability of excu, real}), 
$L$ is a $\d$-valued $H$-asymptotic extension of $K$  with 
algebraically closed constant field and asymptotic integration (so $L$ has small derivation), such that $L^\dagger$ is divisible, $L^\dagger\cap K=K^\dagger$, and $\I(L)\subseteq L^\dagger$ .  We also fix there a complement~$\Lambda_L$ of the $\Q$-linear subspace  $L^\dagger$ of $L$ with
$\Lambda\subseteq\Lambda_L$. Let $\Univ_L=L\big[\!\ex(\Lambda_L)\big]$ be
the corresponding universal exponential extension  
of $L$ containing $\Univ=K\big[\!\ex(\Lambda)\big]$ as a differential subring, as  
described in the remarks following Corollary~\ref{cor:Univ under d-field ext},
with differential fraction field $\Omega_L$.

\begin{lemma}\label{lem:excev cap GammaOmega} Assume $C_L=C$. Let $\Omega_L$ be equipped with a spectral extension of the valuation of $L$, and take $\Omega$ as a valued subfield of $\Omega_L$; so the valuation of $\Omega$ is a spectral extension of the valuation of $K$. Suppose $\Psi$ is cofinal in $\Psi_L$ or $K$ is $\upl$-free.
Then $\exc^{\ev}_{\Omega_L}(A)\cap \Gamma_\Omega\  =\  \exc^{\ev}_\Omega(A)$.
\end{lemma}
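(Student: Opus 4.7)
\emph{Plan.} The plan is to reduce the statement about ultimate exceptional values in $\Omega$ and $\Omega_L$ to a statement about eventual exceptional values in $K$ and $L$, via the spectral decompositions of $\Gamma_\Omega$ and $\Gamma_{\Omega_L}$.

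First, I would invoke Lemma~\ref{lem:v(ex(Q))} applied to $K$ and to $L$: since both satisfy $\I(K)\subseteq K^\dagger$ (resp.\ $\I(L)\subseteq L^\dagger$) and have asymptotic integration, we get internal direct sum decompositions
\[
\Gamma_\Omega\ =\ \Gamma\oplus v\bigl(\!\ex(\Lambda)\bigr),\qquad
\Gamma_{\Omega_L}\ =\ \Gamma_L\oplus v\bigl(\!\ex(\Lambda_L)\bigr),
\]
with $\lambda\mapsto v\bigl(\!\ex(\lambda)\bigr)$ injective on $\Lambda_L$. Since $\Lambda\subseteq\Lambda_L$ and $\Gamma\subseteq\Gamma_L$, the inclusion $\Gamma_\Omega\hookrightarrow \Gamma_{\Omega_L}$ is compatible with these decompositions, and any $\alpha\in\Gamma_\Omega$ has a \emph{unique} decomposition $\alpha=\gamma+v\bigl(\!\ex(\lambda)\bigr)$ with $\gamma\in\Gamma$, $\lambda\in\Lambda$, whether viewed inside $\Gamma_\Omega$ or inside $\Gamma_{\Omega_L}$.

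Next I would apply the identity~\eqref{eq:excevOmega} to $K$ and to $L$ (using that the valuations of $\Omega$ and $\Omega_L$ are spectral extensions, so that the Newton weight computation preceding~\eqref{eq:excevOmega} applies):
\[
\exc^{\ev}_\Omega(A)=\!\!\bigcup_{\lambda\in\Lambda}\!\Bigl(\exc^{\ev}(A_\lambda)+v\bigl(\!\ex(\lambda)\bigr)\Bigr),
\quad
\exc^{\ev}_{\Omega_L}(A)=\!\!\bigcup_{\mu\in\Lambda_L}\!\Bigl(\exc^{\ev}_L(A_\mu)+v\bigl(\!\ex(\mu)\bigr)\Bigr),
\]
both as disjoint unions. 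Combined with the uniqueness from the previous paragraph, an element $\gamma+v\bigl(\!\ex(\lambda)\bigr)\in\Gamma_\Omega$ (with $\gamma\in\Gamma$, $\lambda\in\Lambda$) lies in $\exc^{\ev}_{\Omega_L}(A)$ iff $\gamma\in\exc^{\ev}_L(A_\lambda)$, and in $\exc^{\ev}_\Omega(A)$ iff $\gamma\in\exc^{\ev}(A_\lambda)$. Thus the desired equality boils down to showing
\[
\exc^{\ev}_L(A_\lambda)\cap\Gamma\ =\ \exc^{\ev}(A_\lambda)\qquad\text{for every }\lambda\in\Lambda.
\]

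Finally, this reduced statement is exactly what Lemma~\ref{lemexc} and Corollary~\ref{cor:13.7.10} provide (for each operator $A_\lambda\in K[\der]^{\ne}$ individually), covering both alternative hypotheses: if $\Psi$ is cofinal in $\Psi_L$, Lemma~\ref{lemexc} yields the equality; if $K$ is $\upl$-free, Corollary~\ref{cor:13.7.10} does. The main conceptual point, and the only mildly subtle one, is the uniqueness of the decomposition $\alpha=\gamma+v\bigl(\!\ex(\lambda)\bigr)$ as one passes from $\Gamma_\Omega$ to the larger $\Gamma_{\Omega_L}$: it is only because $\Lambda\subseteq\Lambda_L$ that the $\Lambda$-component coming from $\Omega_L$ must already lie in $\Lambda$. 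Everything else is a bookkeeping assembly of previously established preservation results.
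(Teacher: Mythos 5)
Your proposal is correct and follows essentially the same route as the paper: both use Lemma~\ref{lem:v(ex(Q))} for the direct-sum decompositions of $\Gamma_\Omega$ and $\Gamma_{\Omega_L}$, the identity~\eqref{eq:excevOmega} for both fields, injectivity of $\mu\mapsto v\bigl(\!\ex(\mu)\bigr)$ to match components, and then the equality $\exc^{\ev}_L(A_\lambda)\cap\Gamma=\exc^{\ev}(A_\lambda)$. The only cosmetic difference is that the paper cites Remark~\ref{rem:excu for different Q, 2} for this last step, while you invoke its ingredients (Lemma~\ref{lemexc} and Corollary~\ref{cor:13.7.10}) directly.
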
 
\begin{proof}
Let $\mu$ range over $\Lambda_L$.
We have 
$$\Gamma_{\Omega_L}\ =\ \Gamma_L \oplus v\big(\!\ex(\Lambda_L)\big), \qquad \Gamma_\Omega\ =\ \Gamma\oplus v\big(\!\ex(\Lambda)\big)$$ by Lemma~\ref{lem:v(ex(Q))} and
$$\exc^{\ev}_{\Omega_L}\ =\ \bigcup_{\mu} \exc^{\ev}_L(A_\mu)+v\big(\!\ex(\mu)\big), \qquad 
\exc^{\ev}_\Omega\ =\  \bigcup_\lambda \exc^{\ev}(A_\lambda)+v\big(\!\ex(\lambda)\big)$$
by \eqref{eq:excevOmega}.  Hence
$$\exc^{\ev}_{\Omega_L}(A)\cap \Gamma_\Omega\ =\  \bigcup_{\lambda} \big( \exc^{\ev}_L(A_\lambda)\cap\Gamma \big)+v\big(\!\ex(\lambda)\big)\ =\  \bigcup_{\lambda}  \exc^{\ev}(A_\lambda) +v\big(\!\ex(\lambda)\big)\  =\  \exc^{\ev}_\Omega(A),$$
where we used the injectivity of $\mu\mapsto v\big(\!\ex(\mu)\big)$ 
for the first equality and  Remark~\ref{rem:excu for different Q, 2}  for the second.
\end{proof}

\subsection*{The real case}
{\it In this subsection $H$ is a real closed $H$-field with small derivation, asymptotic integration, and $H^\dagger=H$;
also $K=H[\imag]$, $\imag^2=-1$, for our valued
differential field $K$. We also assume $\I(H)\imag\subseteq K^\dagger$.}\/
Then $K$ is $\d$-valued of $H$-type with small derivation, asymptotic integration, $K^\dagger=H+\I(H)\imag$, 
and $\I(K)\subseteq K^\dagger$. Note that $H$ and thus $K$ is $\upl$-free by [ADH, remark after 11.6.2, and 11.6.8].
Let~$A$ in $K[\der]^{\neq}$ have order $r$ and let  $\gamma$ range over $\Gamma$.

\begin{lemma}\label{lem:LambdaL}
If the real closed $H$-asymptotic extension $F$ of~$H$ has asymptotic integration
and convex valuation ring, then~$L^\dagger\cap K=K^\dagger$ for the  algebraically closed
$H$-asymptotic field extension $L:=F[\imag]$ of $K$. 
\end{lemma}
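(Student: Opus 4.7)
The plan is to observe that this is essentially a direct application of Corollary~\ref{cor:logderset ext}, once we verify its hypothesis that $W=\I(H)$. Recall that $W:=\{\wr(a,b):a,b\in H,\ a^2+b^2=1\}$ and that the equivalence stated just before Lemma~\ref{lem:W and I(F)} says
\[
W=\I(H)\quad\Longleftrightarrow\quad\I(H)\,\imag\subseteq K^\dagger.
\]
The right-hand condition is included in our standing assumptions, so $W=\I(H)$.

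Next I would check that all remaining hypotheses of Corollary~\ref{cor:logderset ext} are in place: $H$ has asymptotic integration (given), $F$ is a real closed asymptotic extension of $H$ with convex valuation ring (given), and $H^\dagger=H$ (given). Invoking that corollary in its strengthened form (the ``In particular'' clause) yields
\[
L^\dagger\cap K\ =\ F[\imag]^\dagger\cap K\ =\ H\oplus\I(H)\,\imag.
\]

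Finally, I would identify the right-hand side with $K^\dagger$. By Lemma~\ref{lem:logder} we have $K^\dagger=H^\dagger\oplus W\imag$, and substituting $H^\dagger=H$ and $W=\I(H)$ gives $K^\dagger=H\oplus\I(H)\imag$. Thus $L^\dagger\cap K=K^\dagger$, as required.

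There is no real obstacle here: the work has already been done in Lemma~\ref{lem:W and I(F)} and Corollary~\ref{cor:logderset ext}. The only thing to watch is to read off the hypothesis $\I(H)\imag\subseteq K^\dagger$ from the standing assumption for this subsection (which I would note is explicitly stated two paragraphs above the lemma) rather than trying to re-derive it from $\I(K)\subseteq K^\dagger$.
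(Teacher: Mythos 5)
Your proposal is correct and follows essentially the same route as the paper, whose proof is just "Use Corollary~\ref{cor:logderset ext} and earlier remarks in the same subsection": the standing assumption $\I(H)\imag\subseteq K^\dagger$ gives $W=\I(H)$ via the equivalence preceding Lemma~\ref{lem:W and I(F)}, and then the ``in particular'' clause of Corollary~\ref{cor:logderset ext} (using $H^\dagger=H$) yields $L^\dagger\cap K=H\oplus\I(H)\imag=K^\dagger$.
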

\begin{proof} Use Corollary~\ref{cor:logderset ext} and earlier remarks in the same subsection. 
\end{proof}

\begin{cor}\label{cor:LambdaL}
The $H$-field $H$ has an $H$-closed  extension $F$ with $C_F=C_H$, and for any such~$F$, the algebraically closed
$\d$-valued field extension $L:=F[\imag]$ of $H$-type of $K$ is $\upo$-free with~$C_L=C$, $\I(L)\subseteq L^\dagger$, and
 $L^\dagger\cap K=K^\dagger$.
\end{cor}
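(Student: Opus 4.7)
The plan is to first establish the existence of $F$ with $C_F = C_H$ and then verify the stated properties of $L := F[\imag]$.

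For existence, I first note that the constant field $C_H$ is real closed: positive constants have square roots in $C_H$, since $(\sqrt c)^2 = c$ yields $2\sqrt c\,(\sqrt c)' = 0$, so $(\sqrt c)' = 0$; and odd-degree polynomials $P \in C_H[X]$ have roots in $C_H$ by induction on degree, because in characteristic zero any irreducible $P$ is separable, so for a root $a \in H$ of $P$ we have $P'(a) \ne 0$, and differentiating $P(a) = 0$ with coefficients in $C_H$ forces $a' = 0$; a reducible $P$ has an odd-degree factor of strictly smaller degree. Since $H$ is $\upl$-free (as recorded after Lemma~\ref{lem:LambdaL}), Proposition~\ref{proptl} provides a trigonometric-Liouville closure $H^{\tl}$, and Corollary~\ref{cortrig} gives $C_{H^{\tl}} = C_H$ (being the real closure of a field already real closed). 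A standard $H$-closure construction on $H^{\tl}$ (iterating \eqref{eq:14.0.1} for newtonization, real closure via \eqref{eq:14.5.7}, and further Liouville-trigonometric steps via Proposition~\ref{proptl}, as in [ADH, Chapter~16]) now yields an $H$-closed extension $F$ of $H$; at each stage the extension is either immediate or algebraic, and for $H$-fields the residue field equals the constant field, so the constant field is preserved throughout, giving $C_F = C_H$.

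Now fix any $H$-closed $F \supseteq H$ with $C_F = C_H$. Then $F$ is real closed, Liouville closed, $\upo$-free, and newtonian, with divisible value group $\Gamma_F$. The algebraic extension $L := F[\imag]$ is a $\d$-valued $H$-asymptotic extension of $F$, and thus of $K$, algebraically closed, with $C_L = C_F[\imag] = C_H[\imag] = C$. By \eqref{eq:14.5.7}, $L$ is newtonian; by Theorem~\ref{thm:ADH 13.6.1} applied to $F$ and its $\d$-algebraic pre-$\d$-valued extension $L$, the field $L$ is $\upo$-free. Corollary~\ref{cor:sc=>tc} applied to the $H$-closed field $F$ yields $\I(L) \subseteq L^\dagger$. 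Finally, $F$ is a real closed $H$-asymptotic extension of $H$ with asymptotic integration and convex valuation ring, so Lemma~\ref{lem:LambdaL} delivers $L^\dagger \cap K = K^\dagger$.

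The principal obstacle is the construction of $F$: although each individual tool (trigonometric-Liouville closure, newtonization, real closure, further Liouville closure) is available in the text, assembling them into a single iteration that terminates in a field that is simultaneously Liouville closed, $\upo$-free, and newtonian with unchanged constant field requires care, since the operations do not all preserve every relevant property at once (for instance Liouville closure can destroy $\upo$-freeness, as Example~\ref{ex:Gehret} shows). Once $F$ is in hand, the verifications for $L$ reduce to direct appeals to results already cited.
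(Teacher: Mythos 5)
Your verification of the properties of $L=F[\imag]$ once $F$ is in hand is fine and essentially matches the paper (the paper quotes [ADH, 11.7.23] and Lemma~\ref{lem:ADH 14.2.5} where you use Theorem~\ref{thm:ADH 13.6.1} and Corollary~\ref{cor:sc=>tc}; both routes work, and the appeal to Lemma~\ref{lem:LambdaL} at the end is exactly the paper's). The problem is the existence of $F$ itself, which is the real content of the corollary, and there your construction has a genuine gap. Under the standing hypotheses of this subsection $H$ is only known to be $\upl$-free; it need \emph{not} be $\upo$-free (Example~\ref{ex:Gehret} supplies Liouville closed, hence also trigonometrically closed, $H$ with $\I(K)\subseteq K^\dagger$ that are not $\upo$-free). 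Your tower starts with $H^{\trig\!/\!\tl}$ and then iterates newtonization, real closure, and Liouville/trigonometric closure. Every one of these steps is either immediate (newtonization, trigonometric closure, Lemma~\ref{lemtri2}) or keeps $\Gamma_H^{<}$ cofinal in the value group of the extension (real closure; Liouville extensions of $\upl$-free $H$-fields by Proposition~\ref{prop:Gehret}); by the remark quoted before Theorem~\ref{thm:ADH 13.6.1} (and Corollary~\ref{cortrig} for the trigonometric-Liouville step), non-$\upo$-freeness of $H$ therefore propagates to every field in your tower. So the tower can never reach an $\upo$-free, let alone $H$-closed, field; worse, the newtonization tool you cite, \eqref{eq:14.0.1}, has $\upo$-freeness as a hypothesis and is never applicable. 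Your closing caveat that the assembly ``requires care'' is precisely the point at which the argument breaks down, and no amount of reordering these $\d$-algebraic closure operations can repair it.

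The missing ingredient is a step that \emph{creates} $\upo$-freeness without changing the constant field, and this cannot be done $\d$-algebraically over a $\upl$-free, non-$\upo$-free base. The paper's proof begins with exactly such a step: by [ADH, 16.4.1] (together with [ADH, 9.1.2]) one extends $H$ to an $\upo$-free $H$-field with the same constant field; only then does one pass to the real closure (which preserves $\upo$-freeness by [ADH, 11.7.23]) and to a Newton--Liouville closure $F$ via [ADH, 14.5.9], which is $H$-closed and still has $C_F=C_H$. With that first step inserted, the rest of your argument (including the observation that $C_H$ is real closed, which is correct but not needed on the paper's route) goes through.
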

 \begin{proof} Use [ADH, 16.4.1, 9.1.2] to extend $H$ to an $\upo$-free $H$-field with the same constant field as $H$, next use~[ADH, 11.7.23] to pass to its real closure, and then use~[ADH, 14.5.9]  to extend further to an $H$-closed $F$, still with the same constant field as $H$. 
 For any such~$F$, the $\d$-valued field
  $L:=F[\imag]$ of $H$-type is $\upo$-free by~[ADH,  11.7.23] and newtonian by~\eqref{eq:14.5.7}.  Hence~$\I(L)\subseteq L^\dagger$
  by Lemma~\ref{lem:ADH 14.2.5},
  and~$L^\dagger\cap K=K^\dagger$
  by Lemma~\ref{lem:LambdaL}.
 \end{proof}

\noindent
This leads to a variant of Proposition~\ref{prop:finiteness of excu(A)}:

\begin{prop}\label{prop:finiteness of excu(A), real}
The conclusion of Proposition~\ref{prop:finiteness of excu(A)} holds. 
\end{prop}
\begin{proof} 
Corollary~\ref{cor:LambdaL} gives  an $H$-closed extension $F$ of~$H$ with $C_F=C_H$, so~$L:=F[\imag]$ is $\upo$-free,  $C_L=C$, $\I(L)\subseteq L^\dagger$, and $L^\dagger\cap K=K^\dagger$.   Take a complement~$\Lambda_L\supseteq \Lambda$ of the  subspace~$L^\dagger$ of the $\Q$-linear space $L$. By Remark~\ref{rem:excu for different Q, 2} we have  $\exc^{\ev}(A_\lambda)=\exc^{\ev}_L(A_\lambda)\cap\Gamma$. Hence Proposition~\ref{prop:finiteness of excu(A)} applied to~$K$,~$\Lambda$ replaced by~$L$,~$\Lambda_L$, respectively, and $A$ viewed as element of $L[\der]$, yields $\sum_\lambda\, \abs{\exc^{\ev}(A_\lambda)} \leq r$.  Corollary~\ref{cor:13.7.10} applied to~$A_\lambda$ in place of~$A$ gives~$\abs{\exc^{\ev}(A_\lambda)} = \sum_\gamma \nwt_{A_\lambda}(\gamma)$.  This yields  the conclusion of Proposition~\ref{prop:finiteness of excu(A)} as in the proof of that proposition. 
\end{proof}



\newpage 

\part{Normalizing Holes and Slots}\label{part:normalization}

\medskip

\noindent
In this introduction $K$ is an $H$-asymptotic field with small derivation and rational asymptotic integration.
In Section~\ref{sec:holes} we introduce  {\it holes}\/ in $K$: A {\it hole in $K$\/} is a triple $(P,\fm,\hat a)$  with $P\in K\{Y\}\setminus K$, $\fm\in K^\times$, and $\hat a\in \hat{K}\setminus K$ for some immediate asymptotic extension
$\hat{K}$ of $K$, such that $\hat a \prec \fm$ and $P(\hat a)=0$.
The main goal of Chapter~\ref{part:normalization}  is a first normalization theorem, namely Theorem~\ref{mainthm},  that allows us to transform
under reasonable conditions a hole $(P,\fm,\hat a)$ in $K$ into a ``normal'' hole; this helps to pin down the location of
$\hat a$ relative to $K$. 
The notion of~$(P,\fm,\hat a)$ being {\em normal\/} involves the linear part of
the differential polynomial~$P_{\times\fm}$, in particular the {\it span} of this linear part. We introduce the span in the preliminary Section~\ref{sec:span}.
In Section~\ref{sec:isolated} we  study {\it isolated}\/ holes~$(P,\fm,\hat a)$ in~$K$, which under reasonable conditions ensure the uniqueness of the 
isomorphism type of $K\<\hat a\>$ as a
valued differential field over $K$; see Proposition~\ref{prop:2.12 isolated}. In Section~\ref{sec:holes of c=(1,1,1)} we   focus on holes~$(P,\fm,\hat a)$ in~$K$ where~$\order P=\deg P=1$. 
For technical reasons we actually work in Chapter~\ref{part:normalization} also with {\em slots\/} in $K$, which are a bit more general than holes in $K$. 

\medskip\noindent
First some notational conventions. Let $\Gamma$ be an ordered abelian group. For $\gamma, \delta\in \Gamma$ with 
$\gamma\ne 0$ the expression ``$\delta=o(\gamma)$'' means ``$n|\delta|< |\gamma|$ for all $n\ge 1$'' according to~[ADH, 2.4], but here we find it convenient to extend this to $\gamma=0$, in which case~``$\delta=o(\gamma)$" means ``$\delta=0$".  
Suppose $\Gamma=v(E^\times)$ is the value group
of a valued field~$E$ and $\fm\in E^\times$. Then we denote the archimedean class
$[v\fm ]\subseteq \Gamma$ of $v\fm\in \Gamma$ by just $[\fm]$. Suppose $\fm \nasymp 1$. Then we have a
proper convex subgroup\label{p:Delta}
$$\Delta(\fm)\ :=\ \big\{\gamma\in \Gamma:\, \gamma=o(v\fm)\big\}\ =\ \big\{\gamma\in \Gamma:\, [\gamma]<[\fm]\big\},$$ 
of~$\Gamma$. 
If  $\fm\asymp_{\Delta(\fm)}\fn\in E$, then~$0\ne \fn \nasymp 1$ and $\Delta(\fm)=\Delta(\fn)$. 
In particular,  if~$\fm\asymp\fn\in E$, then~$0\ne\fn \nasymp 1$ and $\Delta(\fm)=\Delta(\fn)$. Note that for $f,g\in E$ the meaning of ``$f\preceq_{\Delta(\fm)} g$'' does not change in passing to a valued field extension of~$E$, although~$\Delta(\fm)$ can increase as a subgroup of the value group of the extension.  

\section{The Span of a Linear Differential Operator} \label{sec:span}

\noindent
{\em In this section $K$ is a valued differential field with small derivation and   $\Gamma:= v(K^\times)$.  We let~$a$,~$b$, sometimes subscripted, range over $K$, and $\fm$,~$\fn$ over $K^\times$}.  Consider a linear differential operator
$$A\ =\ a_0+a_1\der+\cdots+a_r\der^r\in K[\der],\qquad   a_r\neq 0.$$
The ``span of $A$" defined below is unrelated to the linear span of a subset of a vector space. We shall use the quantities $\dwm(A)$ and $\dwt(A)$ defined in [ADH,  5.6].  We now introduce a measure $\fv(A)$ for the ``lopsidedness'' of $A$ as follows:\label{p:span}
$$\fv(A)\ :=\ a_r/a_m \in\ K^\times\qquad\text{where $m:=\dwt(A)$.}$$  
So $a_r\asymp \fv(A)A$ and $\fv(A)\preceq 1$, with
$$\fv(A)\asymp 1\quad\Longleftrightarrow\quad\dwt(A)=r\quad\Longleftrightarrow\quad\fv(A)=1.$$
Also note that $\fv(aA)=\fv(A)$ for $a\neq 0$. Moreover,  
$$\fv(A_{\ltimes\fn})A_{\ltimes\fn}\ \asymp\  a_r\ \asymp\ \fv(A)A$$ 
since $A_{\ltimes\fn}=a_r\der^r+\text{lower order terms in $\der$}$. 

\begin{example}
$\fv(a+\der)=1$ if  $a\preceq 1$, and $\fv(a+\der)=1/a$ if $a\succ 1$.
\end{example}

\noindent
We call $\fv(A)$ the {\bf span} of $A$.\index{span}\index{linear differential operator!span}
We are mainly interested in the valuation of $\fv(A)$. 
This is related to the gaussian valuation $v(A)$ of $A$: if~$A$ is monic, then $v\big(\fv(A)\big)=-v(A)$. 
An important property of the span of $A$ is that its valuation is not affected by small additive perturbations of $A$: 

\begin{lemma}\label{lem:fv of perturbed op}
Suppose $B\in K[\der]$, $\order(B)\leq r$ and $B\prec \fv(A) A$. Then: \begin{enumerate}
\item[\textup{(i)}] $A+B\sim A$, $\dwm(A+B)=\dwm(A)$, and $\dwt(A+B)=\dwt(A)$;
\item[\textup{(ii)}] $\order(A+B)=r$ and $\fv(A+B) \sim \fv(A)$. 
\end{enumerate}
\end{lemma}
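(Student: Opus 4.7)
The plan is to unpack the hypothesis $B \prec \fv(A)A$ coordinate by coordinate and observe that it forces every coefficient of $B$ to have value strictly larger than $v(a_r)$, after which both (i) and (ii) fall out of elementary valuation-theoretic bookkeeping.

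First I would set up the notation: write $B = \sum_{i=0}^{r} b_i \der^i$ (allowing $b_i = 0$), and set $m^- := \dwm(A)$, $m^+ := \dwt(A)$, so by definition $v(a_{m^-}) = v(a_{m^+}) = v(A) = \min_i v(a_i)$, with $v(a_i) > v(A)$ for $i < m^-$ or $i > m^+$. Since $\fv(A)A \asymp a_r$, the hypothesis $B \prec \fv(A)A$ becomes $v(B) > v(a_r)$, and therefore
\[
v(b_i)\ \geq\ v(B)\ >\ v(a_r)\ \geq\ v(A) \qquad\text{for all $i\in\{0,\dots,r\}$.}
\]
This is the single estimate that drives everything else.

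For (i), I would examine $v(a_i + b_i)$ case by case. For $i \in \{m^-, m^+\}$ we have $v(b_i) > v(a_r) \geq v(A) = v(a_i)$, so $a_i + b_i \sim a_i$ and thus $v(a_i+b_i) = v(A)$. For $i \notin [m^-, m^+]$ we have $v(a_i) > v(A)$ and $v(b_i) > v(A)$, so $v(a_i+b_i) > v(A)$. For $m^- < i < m^+$ we have $v(a_i) \geq v(A)$ and $v(b_i) > v(A)$, so $v(a_i + b_i) \geq v(A)$ with equality iff $v(a_i) = v(A)$. Consequently $v(A+B) = v(A)$, and the set of indices $i$ at which this minimum is attained in $A+B$ is exactly the corresponding set for $A$; in particular $\dwm(A+B) = m^-$ and $\dwt(A+B) = m^+$. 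Moreover $A + B - A = B$ with $v(B) > v(a_r) \geq v(A)$, so $B \prec A$ and hence $A+B \sim A$.

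For (ii), the case $i = r$ of the displayed estimate gives $v(b_r) > v(a_r)$, so $a_r + b_r \sim a_r \neq 0$, whence $\order(A+B) = r$. Using the already established identity $\dwt(A+B) = m^+$ and $a_{m^+} + b_{m^+} \sim a_{m^+}$ from the case analysis above, we get
\[
\fv(A+B)\ =\ \frac{a_r + b_r}{a_{m^+} + b_{m^+}}\ \sim\ \frac{a_r}{a_{m^+}}\ =\ \fv(A),
\]
as claimed. There is no real obstacle here — the only thing to watch is that the strict inequality $v(b_i) > v(a_r)$ (rather than merely $\geq v(A)$) is what rules out the possibility that the dominant indices of $A$ get shifted or killed in $A+B$; this is exactly why the hypothesis is phrased using $\fv(A)A$ rather than $A$ alone.
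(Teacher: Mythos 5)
Your proof is correct and follows essentially the same route as the paper: both reduce the hypothesis to the coefficientwise estimate $v(b_i)\geq v(B)>v(a_r)\geq v(A)$ (via $\fv(A)A\asymp a_r$ and $\fv(A)\preceq 1$) and then compare $a_i+b_i$ with $a_i$ index by index to read off (i) and the asymptotics $a_r+b_r\sim a_r$, $a_{\dwt(A)}+b_{\dwt(A)}\sim a_{\dwt(A)}$ giving (ii). The only difference is that you spell out the preservation of $\dwm$ and $\dwt$ in detail where the paper compresses this into "$B\prec A$, and thus (i)."
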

\begin{proof}
From $B\prec \fv(A) A$ and $\fv(A)\preceq 1$ we obtain $B\prec A$, and thus (i).
Set $m:=\dwt(A)$, let~$i$ range over $\{0,\dots,r\}$, and let $B=b_0+b_1\der+\cdots+b_r\der^r$. 
Then $a_i\preceq a_m$ and $b_i\prec \fv(A)A\asymp a_r \preceq a_m$.
Therefore, if $a_i\asymp a_m$, then $a_i+b_i\sim a_i$, and if $a_i\prec a_m$, then $a_i+b_i\prec a_m$. Hence 
$\fv(A+B)=(a_r+b_r)/(a_m+b_m) \sim a_r/a_m=\fv(A)$.
\end{proof}

\noindent
For $b\neq 0$, the valuation of $\fv(Ab)$ only depends on $A$ and $vb$; it is enough to check this for~$b\asymp 1$. More generally:

\begin{lemma}\label{fvmult}
Let $B\in K[\der]^{\neq}$ and $b\asymp B$. Then $\fv(AB)\asymp\fv(Ab)\fv(B)$.
\end{lemma}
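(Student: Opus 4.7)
The plan is to rewrite each span via its defining relation $\fv(D)\cdot D \asymp d_{\order D}$ (leading coefficient) and thereby reduce the claim to a single identity of gaussian valuations. Setting $s := \order(B)$, the coefficient of $\der^{r+s}$ in $AB$ is $a_r b_s$ (the product of the leading coefficients), and the coefficient of $\der^r$ in $Ab$ is $a_r b$. Consequently,
\[
\fv(AB)\ \asymp\ \frac{a_r b_s}{AB},\qquad \fv(Ab)\ \asymp\ \frac{a_r b}{Ab},\qquad \fv(B)\ \asymp\ \frac{b_s}{b},
\]
where the last equivalence uses $b\asymp B$. Multiplying the last two gives $\fv(Ab)\,\fv(B)\asymp a_r b_s/(Ab)$, so the desired conclusion $\fv(AB)\asymp \fv(Ab)\,\fv(B)$ is equivalent to $AB\asymp Ab$ in $K[\der]$, i.e.\ to the identity $v(AB)=v(Ab)$.

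To prove this identity I will factor $B = b\cdot B'$ in $K[\der]$ with $B' := b^{-1}B$, which satisfies $v(B')=0$ and has all coefficients in $\mathcal O$. Then $AB = (Ab)\cdot B'$, and the task reduces to $v\bigl((Ab)B'\bigr) = v(Ab)$. The inequality $v\bigl((Ab)B'\bigr) \ge v(Ab)$ comes from small derivation: each coefficient $b'_l\in\mathcal O$ of $B'$ has $(b'_l)^{(k)} \in \mathcal O$, so in the Leibniz expansion
\[
\bigl((Ab)B'\bigr)_j\ =\ \sum_{i+l-k=j,\,k\le i}\binom{i}{k}\,(Ab)_i\,(b'_l)^{(k)}
\]
every summand has valuation $\ge v((Ab)_i) \ge v(Ab)$. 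For the reverse inequality I will look at the coefficient of $\der^{J+L}$, where $J := \dwm(Ab)$ and $L := \dwm(B')$: the contribution $(Ab)_J\cdot b'_L$ has valuation exactly $v(Ab)$, while the definitions of $\dwm(Ab)$ (smallest $i$ with $v((Ab)_i)=v(Ab)$) and $\dwm(B')$ (smallest $l$ with $v(b'_l)=0$), combined with small derivation, force the remaining contributions to have strictly larger valuation.

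The main obstacle is ensuring that no cancellation occurs among any remaining contributions of valuation $v(Ab)$ at the chosen index $J+L$: the dangerous terms are those of the form $\binom{i}{k}(Ab)_i (b'_l)^{(k)}$ with $k\ge 1$, for which one must rule out the coincidence $v((b'_l)^{(k)})=0$ for $i\ne J$ or $l\ne L$. This is the point at which the small-derivation hypothesis, together with the minimality properties defining $\dwm$, is used to isolate the dominant summand $(Ab)_J\, b'_L$ and conclude $v\bigl((Ab)B'\bigr) = v(Ab)$.
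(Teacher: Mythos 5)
Your reduction of the lemma to the single identity $v(AB)=v(Ab)$ is correct, and it is in fact the same reduction the paper makes; the paper then simply quotes [ADH, 5.6.1(ii)] at $\gamma=0$ (that is, $v(AB)=v_{AB}(0)=v_A\big(v_B(0)\big)=v(Ab)$) instead of reproving it. The gap is in your proof of that identity, at exactly the point you flag. After writing $AB=(Ab)B'$ with $B'=b^{-1}B\in\mathcal O[\der]$, it is \emph{not} true that at the index $J+L$, $J=\dwm(Ab)$, $L=\dwm(B')$, the summand $(Ab)_J\,b'_L$ dominates: small derivation only gives $\der\mathcal O\subseteq\mathcal O$, not $\der\mathcal O\subseteq\smallo$, so a unit coefficient $b'_l$ with $l>L$ can have $(b'_l)^{(k)}\asymp 1$, and the corresponding terms with $k\ge 1$ have valuation exactly $v(Ab)$ and can cancel your main term. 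Concretely, let $K$ have small derivation with an element $x\asymp 1$, $x'=1$ (e.g.\ $K=\mathbb{Q}(x)(\!(t)\!)$, $t'=0$, derivation $d/dx$ on coefficients), and take $A=\der-x^{-1}$, $B=\der+x$, $b=1\asymp B$. Then $J=L=0$, but $AB=\der^2+(x-x^{-1})\der$ has constant coefficient $0$: the $k=1$ term $(Ab)_1(b'_0)'=x'=1$ exactly cancels $(Ab)_0b'_0=-x^{-1}\cdot x=-1$. So the minimality built into $\dwm$ does not isolate a dominant summand, and your argument as written does not yield $v\big((Ab)B'\big)\le v(Ab)$.

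The step is fixable: replace the smallest indices by the largest ones, i.e.\ take $J:=\dwt(Ab)$ and $L:=\dwt(B')$. Then for any term $\binom{i}{k}(Ab)_i(b'_l)^{(k)}$ with $i+l-k=J+L$ other than $(i,l,k)=(J,L,0)$ one has $i>J$ (so $(Ab)_i\prec Ab$) or $l>L$ (so $b'_l\in\smallo$, hence $(b'_l)^{(k)}\in\smallo$ by small derivation); indeed for $k\ge1$ we get $i+l>J+L$, which forces one of these. Hence the coefficient of $\der^{J+L}$ in $(Ab)B'$ is $\asymp Ab$, giving $v(AB)=v(Ab)$ (in the example above this is witnessed at the $\der$-coefficient, i.e.\ at index $\dwt+\dwt$). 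Alternatively, cite [ADH, 5.6.1(ii)] as the paper does, or normalize both factors to gaussian valuation $0$ and use that reduction $\mathcal O[\der]\to\res(K)[\der]$ is a ring morphism into a domain. As it stands, however, the key claim that the remaining contributions at index $\dwm(Ab)+\dwm(B')$ have strictly larger valuation is false, so the proposal has a genuine gap.
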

\begin{proof}
Let $B=b_0+b_1\der+\cdots+b_s\der^s$, $b_s\neq 0$. Then
$$AB\ =\ a_rb_s\der^{r+s}+\text{lower order terms in}\ \der,$$ so by [ADH, 5.6.1(ii)] for $\gamma=0$:
\alignqed{v\big(\fv(AB)\big)\ &=\ v(a_rb_s)-v(AB)\ =\ v(a_rb_s) - v(Ab)\\  &=\ v(a_rb)-v(Ab)+v(b_s)-v(B) \\ \ &=\ v\big(\fv(Ab)\fv(B)\big).} 
\end{proof}

\begin{cor}\label{cor:111} 
Let $B\in K[\der]^{\neq}$. If $\fv(AB)=1$, then $\fv(A)=\fv(B)=1$. The converse holds if $B$ is monic. 
\end{cor}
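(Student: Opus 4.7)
The plan is to derive both claims of Corollary~\ref{cor:111} directly from Lemma~\ref{fvmult}, using the two basic facts recorded at the start of the section: $\fv(D)\preceq 1$ for every $D\in K[\der]^{\neq}$, and $\fv(D)\asymp 1$ is equivalent to $\fv(D)=1$. First I would fix $b\in K^\times$ with $b\asymp B$; Lemma~\ref{fvmult} then supplies the asymptotic identity $\fv(AB)\asymp \fv(Ab)\,\fv(B)$, with both factors on the right of size $\preceq 1$.

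For the first implication, $\fv(AB)=1$ reads $\fv(AB)\asymp 1$, and a product of two $\preceq 1$ elements that is itself $\asymp 1$ forces each factor to be $\asymp 1$. Thus $\fv(Ab)\asymp 1$ and $\fv(B)\asymp 1$, i.e., $\fv(Ab)=1$ and $\fv(B)=1$. The latter already gives half of the claim; for the other half $\fv(A)=1$, I would exploit that $\fv(B)=1$ forces $b_s\asymp B$, so the choice $b=b_s$ is legitimate, and then invoke the remark preceding Lemma~\ref{fvmult} that $v(\fv(Ab))$ depends only on $vb$, combined with the tautology $\fv(A\cdot 1)=\fv(A)$ in the instance $b\asymp 1$, to transport $\fv(Ab_s)=1$ back to $\fv(A)=1$.

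For the converse under the monicity hypothesis on $B$, I would observe that a monic $B$ with $\fv(B)=1$ must have $\dwt(B)=s=\order B$, and since $b_s=1$ this reads $v(B)=0$, i.e., $B\asymp 1$. Hence $b=1$ is an admissible choice in Lemma~\ref{fvmult}, which then yields $\fv(AB)\asymp \fv(A)\,\fv(B)=1\cdot 1=1$; combined with $\fv(AB)\preceq 1$ this upgrades to $\fv(AB)=1$. The hard part will be making the final reduction in the forward direction fully rigorous, i.e., ensuring that the dependence of $v(\fv(Ab))$ on $vb$ is trivial enough in this setting that $\fv(Ab_s)=1$ genuinely entails $\fv(A)=1$; this may require unwinding the identity $v(\fv(AB))=v(a_rb)-v(Ab)+v(b_s)-v(B)$ from the proof of Lemma~\ref{fvmult} rather than just applying its statement.
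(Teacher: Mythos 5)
Your converse argument, and the first half of your forward argument, are essentially what the paper's one-line proof (``clear from Lemma~\ref{fvmult}'') has in mind: from $\fv(AB)=1$ and $\fv(AB)\asymp\fv(Ab)\fv(B)$ with $b\asymp B$, both factors being $\preceq 1$, you correctly conclude $\fv(Ab)=\fv(B)=1$; and for monic $B$ with $\fv(B)=1$ you correctly note $v(B)=v(b_s)=0$, so $B\asymp 1$, the choice $b=1$ is admissible, and $\fv(AB)\asymp\fv(A)\fv(B)=1$.

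The genuine gap is the last step of your forward direction, and it is not a fixable omission. The remark before Lemma~\ref{fvmult} only says that $v(\fv(Ab))$ depends on $vb$; it lets you replace $b_s$ by another element of the \emph{same} valuation, whereas comparing with $\fv(A)=\fv(A\cdot 1)$ requires $v(b_s)=0$, i.e.\ $B\asymp 1$ --- exactly what monicity together with $\fv(B)=1$ provides and what you do not have here. Unwinding the identity $v(\fv(AB))=v(a_rb)-v(Ab)+v(b_s)-v(B)$, as you propose, does not help: it merely re-derives $v(\fv(AB))=v(\fv(Ab))+v(\fv(B))$ and again yields only $\fv(Ab)=1$. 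Indeed the implication you are after fails for non-monic $B$: take $K=\T$, $A=\der+\ex^x$, $B=\ex^{-\ex^x}\in K^\times$; then $AB=\ex^{-\ex^x}\der$, so $\fv(AB)=\fv(B)=1$, while $\fv(A)=\ex^{-x}\prec 1$.

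What Lemma~\ref{fvmult} really gives in the forward direction is $\fv(A_{\ltimes b})=\fv(B)=1$ for $b\asymp B$; one recovers $\fv(A)=1$ precisely when $b$ may be taken $\asymp 1$, e.g.\ when $B$ is monic (then $\fv(B)=1$ forces $B\asymp 1$), and this is the only form in which the corollary is used later (in Corollary~\ref{corAfv1} all factors are monic, and in Lemma~\ref{lem:Ah splits strongly} one has $B=h\asymp 1$). So you should either prove the forward implication under the same hypothesis as the converse, via the very reduction $B\asymp 1$, $b=1$ you already set up, or weaken its conclusion to $\fv(Ab)=\fv(B)=1$. Your instinct that this final reduction was ``the hard part'' was correct: it is the point where the literal statement outruns the lemma, and the paper's one-line proof glosses over it.
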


\noindent
This is clear from from Lemma~\ref{fvmult}, and in turn gives:

\begin{cor}\label{corAfv1}
Suppose $A=a(\der-b_1)\cdots(\der-b_r)$. Then
$$\fv(A) =  1 \quad\Longleftrightarrow\quad b_1,\dots, b_r\preceq 1. $$
\end{cor}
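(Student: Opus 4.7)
The plan is to prove this by induction on $r$. The base case $r=0$ is immediate: then $A=a\in K^\times$ has $\dwt(A)=0=\order(A)$, so $\fv(A)=a/a=1$, and the condition on the $b_i$'s is vacuous.

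For the inductive step with $r\ge 1$, write $A=A'B$ where $A':=a(\der-b_1)\cdots(\der-b_{r-1})$ and $B:=\der-b_r$. Since $B$ is monic, the ``if'' direction of Corollary~\ref{cor:111} gives the equivalence
\[
  \fv(A)\ =\ 1\quad\Longleftrightarrow\quad \fv(A')\ =\ \fv(B)\ =\ 1.
\]
Applying the example immediately preceding Lemma~\ref{fvmult} (with $-b_r$ in the role of $a$) yields $\fv(B)=1$ iff $b_r\preceq 1$, while the inductive hypothesis applied to $A'$ gives $\fv(A')=1$ iff $b_1,\dots,b_{r-1}\preceq 1$. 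Conjoining these equivalences finishes the induction.

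There is no real obstacle here; the only thing to be a little careful about is invoking the correct direction of Corollary~\ref{cor:111}, which requires the right-hand factor $B$ in the factorization $A=A'B$ to be monic---and indeed it is, so the whole argument goes through cleanly.
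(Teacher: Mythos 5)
Your proof is correct and matches the paper's intended argument: the paper obtains this corollary directly from Corollary~\ref{cor:111} (itself deduced from Lemma~\ref{fvmult}), which is precisely your induction on $r$ combined with the order-one computation $\fv(\der-b)=1\Leftrightarrow b\preceq 1$. Only a cosmetic slip: that computation is the example given just after the definition of the span, a little before Lemma~\ref{fvmult} rather than immediately preceding it, and the equivalence you extract from Corollary~\ref{cor:111} uses both its unconditional forward implication and its converse (the latter being where monicity of $\der-b_r$ is needed), which is exactly how you apply it.
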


\begin{remark}
Suppose $K=C(\!(t)\!)$ with the $t$-adic valuation and derivation $\der=t\frac{d}{dt}$. In the literature, $A$ is called {\it regular singular}\/ if $\fv(A)=1$, and {\it irregular singular}\/ if~$\fv(A)\prec 1$; see \cite[Definition~3.14]{vdPS}.
\end{remark}

{\sloppy
\begin{lemma} 
Let $B\in K[\der]^{\neq}$.  Then $\fv(AB)\preceq\fv(B)$, and if  $B$ is monic, then~$\fv(AB)\preceq\fv(A)$.
\end{lemma}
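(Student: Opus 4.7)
The first inequality is immediate from Lemma~\ref{fvmult}: taking any $b\in K^\times$ with $b\asymp B$, we have $\fv(AB)\asymp\fv(Ab)\fv(B)$, and since $\fv(Ab)\preceq 1$ by the definition of span, we conclude $\fv(AB)\preceq\fv(B)$.

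For the second inequality, assume $B$ is monic and pick $b\asymp B$; then $v(b)=v(B)\leq 0$ by monicity. Monicity also gives $v(\fv(B))=-v(B)$. Combining this with the identities $v(\fv(A))=v(a_r)-v(A)$ and $v(\fv(Ab))=v(a_r)+v(b)-v(Ab)$, Lemma~\ref{fvmult} reduces the desired inequality $\fv(AB)\preceq\fv(A)$ to the valuation inequality $v(Ab)\leq v(A)$.

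I plan to establish $v(Ab)\leq v(A)$ by inspecting the coefficient of $\der^m$ in the operator product $Ab$, where $m:=\dwt(A)$:
\[
(Ab)_m\ =\ a_m b\ +\ \sum_{i>m}a_i\binom{i}{m}\,b^{(i-m)}.
\]
The distinguished term $a_m b$ has valuation $v(A)+v(b)\leq v(A)$. For $i>m$ the definition of $\dwt$ forces $v(a_i)>v(A)$; combined with the estimate $v(b^{(k)})\geq v(b)$ for $k\geq 0$, each remaining summand has valuation strictly greater than $v(A)+v(b)$. Hence $v((Ab)_m)=v(A)+v(b)\leq v(A)$, giving $v(Ab)\leq v(A)$ as required.

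The main obstacle is justifying the derivative estimate $v(b^{(k)})\geq v(b)$: for $b\asymp 1$ this is immediate from $\der\mathcal O\subseteq\mathcal O$, but for $v(b)<0$ it is more delicate, since small derivation only yields the weaker bound $v(b^{(k)})>k\cdot v(b)$ via $\der(b^{-1})\in\smallo$. Fortunately, by Lemma~\ref{fvmult} the value $v(Ab)$ depends only on $v(b)$, so one is free to pick a convenient representative of the $\asymp$-class of $B$, which should make the estimate tractable using the small-derivation hypothesis $\der\smallo\subseteq\smallo$.
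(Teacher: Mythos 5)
Your proof of the first inequality is correct and matches the paper's. Your reduction of the second inequality to $v(Ab)\leq v(A)$ (where $b\asymp B$, so $vb=v(B)\leq 0$) is also sound and essentially equivalent to the paper's reduction, which first normalizes $A$ to be monic so that $AB$ is monic too and the span condition $\fv(AB)\preceq\fv(A)$ becomes $v(AB)\leq v(A)$ directly.

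The attempt to establish $v(Ab)\leq v(A)$ by inspecting $(Ab)_m$ has a genuine gap, as you yourself flag: the estimate $v(b^{(k)})\geq v(b)$ fails in general when $vb<0$. Small derivation only controls derivatives of elements in $\mathcal O$; for $b\succ 1$ it is entirely possible that $b'\succ b$ (take $b=\ex^{\ex^x}$ in $\T$, say, where $b'=\ex^x b$). The proposed remedy — choosing a ``convenient representative of the $\asymp$-class of $B$'' — does not help, because that class is fixed by the value $v(B)$, and every $b$ with $vb=v(B)<0$ suffers from the same potential problem; there is no general guarantee that some representative has slowly varying derivatives. Moreover, even granting an unfavorable $b$, your computation of $v((Ab)_m)$ is incomplete: if some term $a_i\binom{i}{m}b^{(i-m)}$ with $i>m$ has the \emph{same} valuation as $a_mb$, cancellation could in principle raise $v((Ab)_m)$ above $v(A)+v(b)$, so equality $v((Ab)_m)=v(A)+v(b)$ would need a separate argument.

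The paper sidesteps all of this by invoking the abstract properties of the function $\gamma\mapsto v_A(\gamma)$ established in [ADH, 5.6.1(ii)] (compositionality: $v_{AB}=v_A\circ v_B$) and [ADH, 4.5.1(iii)] (monotonicity: $v_A(\gamma)\leq v_A(0)$ for $\gamma\leq 0$), writing $v(AB)=v_{AB}(0)=v_A(v_B(0))=v_A(v(B))\leq v_A(0)=v(A)$. That monotonicity result encodes exactly the estimate you were trying to re-derive by hand, but proved once and for all in a way that is robust to the derivative-growth issue. You should cite it rather than attempt the coefficient computation from scratch.
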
}
\begin{proof}
Lemma~\ref{fvmult} and $\fv(Ab)\preceq 1$ for $b\neq 0$  yields $\fv(AB)\preceq\fv(B)$.
Suppose~$B$ is monic, so $v(B)\leq 0$. To show $\fv(AB)\preceq\fv(A)$ we  arrange that $A$
is also monic. Then~$AB$ is monic, and $\fv(AB)\preceq\fv(A)$ is equivalent to $v(AB)\leq v(A)$. Now
$$v(AB)\ =\ v_{AB}(0)\  =\  v_A\big(v_B(0)\big)\ =\  v_A\big(v(B)\big)\  \leq\  v_A(0)\  =\  v(A)$$
by [ADH, 4.5.1(iii), 5.6.1(ii)].
\end{proof}

\begin{cor}\label{cor:bound on linear factors}
If $A=a(\der-b_1)\cdots(\der-b_r)$, then  
$b_1,\dots, b_r\ \preceq\ \fv(A)^{-1}.$
\end{cor}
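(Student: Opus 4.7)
The plan is to deduce the corollary from the preceding (unnumbered) lemma, which says that for $B\in K[\der]^{\ne}$ one has $\fv(AB)\preceq\fv(B)$, and if $B$ is monic then $\fv(AB)\preceq\fv(A)$. The scalar $a\in K^\times$ in front of the factorization does not affect the span (since $\fv(aA)=\fv(A)$), so I may assume $a=1$. The case $b_i=0$ is trivial, so fix $i$ with $b_i\ne 0$ and aim to prove $\fv(A)\preceq 1/b_i$, which is exactly the desired inequality $b_i\preceq\fv(A)^{-1}$.

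The key idea is to split the given factorization around the $i$-th factor, writing
\[
A\ =\ D\cdot(\der-b_i)\cdot E,\qquad D:=(\der-b_1)\cdots(\der-b_{i-1}),\quad E:=(\der-b_{i+1})\cdots(\der-b_r),
\]
so that both $\der-b_i$ and $E$ are monic. Apply the preceding lemma twice. First, with the product $\big(D(\der-b_i)\big)\cdot E$ and $E$ monic, one gets $\fv(A)\preceq\fv\!\big(D(\der-b_i)\big)$. Second, applied to the product $D\cdot(\der-b_i)$ in the form ``$\fv(AB)\preceq\fv(B)$'' with $B=\der-b_i$, one gets $\fv\!\big(D(\der-b_i)\big)\preceq\fv(\der-b_i)$. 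Chaining, $\fv(A)\preceq\fv(\der-b_i)$.

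It remains to bound $\fv(\der-b_i)$ by $1/b_i$. The example preceding Lemma~\ref{lem:fv of perturbed op} computes this span explicitly: $\fv(\der-b_i)=1$ when $b_i\preceq 1$, and $\fv(\der-b_i)=1/b_i$ when $b_i\succ 1$. In the second case we are immediately done; in the first case $b_i\preceq 1$ forces $1\preceq 1/b_i$, so $\fv(\der-b_i)=1\preceq 1/b_i$ as well. Combining with the previous step gives $\fv(A)\preceq 1/b_i$, i.e., $b_i\preceq\fv(A)^{-1}$.

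There is no real obstacle: the preceding lemma does all the work, and the only mildly delicate point is recognising that one should invoke \emph{both} inequalities of that lemma (the monic-$B$ version to peel off the right-hand tail $E$, and the general version to drop the left-hand prefix $D$) in order to isolate the single factor $\der-b_i$. The treatment of the case $b_i\preceq 1$ in the last step is a trivial manipulation of valuations but is worth spelling out to avoid giving the impression that $b_i\succ 1$ is needed.
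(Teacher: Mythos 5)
Your proof is correct and is precisely the argument the paper leaves implicit: the corollary is stated without proof as an immediate consequence of the preceding lemma, and the intended reasoning is exactly your two-step peeling (drop the monic tail $E$ via the monic case, then drop the prefix $D$ via the general case) followed by the explicit computation of $\fv(\der-b_i)$ from the example. No gaps; the only cosmetic remark is that for $b_i\succ 1$ one has $\fv(\der-b_i)=-1/b_i\asymp 1/b_i$, which of course does not affect the asymptotic conclusion.
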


\noindent
Let $\Delta$ be a convex subgroup of $\Gamma$, let $\dot{\mathcal O}$ be the valuation ring of the coarsening~$v_\Delta$ of the valuation~$v$ of $K$ by $\Delta$, with maximal ideal $\dot{\smallo}$,
and $\dot K=\dot{\mathcal O}/\dot{\smallo}$ be the
valued differential residue field of   $v_\Delta$.
The residue morphism $\dot{\mathcal O}\to\dot K$ extends to the ring morphism $\dot{\mathcal O}[\der]\to\dot K[\der]$ with $\der\mapsto\der$. If $A\in\dot{\mathcal O}[\der]$ and $\dot A\neq 0$,
then $\dwm(\dot A)=\dwm(A)$ and $\dwt(\dot A)=\dwt(A)$.
We set $\fv:=\fv(A)$.

\begin{lemma}\label{lem:dotfv}
If $A\in\dot{\mathcal O}[\der]$ and $\order(\dot A)=r$, then $\fv(\dot A)=\dot \fv$.
\end{lemma}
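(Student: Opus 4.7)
The plan is to reduce the identity $\fv(\dot A)=\dot\fv$ to the coefficient-level identity $\dot a_r/\dot a_m=\dot{(a_r/a_m)}$, where $m=\dwt(A)$, by verifying that each of $a_r$, $a_m$, and $\fv=a_r/a_m$ is a unit of $\dot{\mathcal O}$, and then applying the remarks recorded immediately before the lemma.

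First I would unpack the hypotheses in terms of the $\Delta$-coarsening. Writing $A=a_0+a_1\der+\cdots+a_r\der^r$, the assumption $A\in\dot{\mathcal O}[\der]$ says $a_i\succeq_\Delta 1$ for all $i$, and $\order(\dot A)=r$ says $\dot a_r\ne 0$, that is, $a_r\asymp_\Delta 1$. The remark just before the lemma therefore gives $\dwm(\dot A)=\dwm(A)$ and $\dwt(\dot A)=\dwt(A)=m$, so that $\fv(\dot A)=\dot a_r/\dot a_m$, provided we know $\dot a_m\ne 0$.

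The key verification is that $a_m$ is a unit of $\dot{\mathcal O}$. By definition $v(a_m)=v(A)=\min_i v(a_i)\le v(a_r)$, so $v_\Delta(a_m)\le v_\Delta(a_r)=0$; combined with $a_m\in\dot{\mathcal O}$ this forces $v_\Delta(a_m)=0$, i.e., $a_m\asymp_\Delta 1$. Consequently $\fv=a_r/a_m$ also satisfies $v_\Delta(\fv)=0$, so $\fv\in\dot{\mathcal O}^\times$ and $\dot\fv=\dot a_r/\dot a_m=\fv(\dot A)$, as required.

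I do not anticipate any real obstacle: everything is a direct bookkeeping of valuations under the coarsening, and the substantive input, namely the invariance of $\dwt$ under passage to $\dot A$, is already granted in the paragraph preceding the lemma.
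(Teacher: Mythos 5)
Your argument is correct and is exactly the routine bookkeeping the paper has in mind (the paper omits the proof entirely, stating the lemma immediately after the remarks $\dwm(\dot A)=\dwm(A)$, $\dwt(\dot A)=\dwt(A)$ and leaving the verification to the reader). One small slip: you write that $A\in\dot{\mathcal O}[\der]$ says $a_i\succeq_\Delta 1$ for all $i$; it should of course be $a_i\preceq_\Delta 1$, since $\dot{\mathcal O}$ is the valuation ring of $v_\Delta$. The rest of your proof uses the correct inequality (e.g.\ when you combine $v_\Delta(a_m)\le 0$ with $a_m\in\dot{\mathcal O}$ to force $v_\Delta(a_m)=0$), so this is just a typographical reversal. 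As a side remark, the fact $\dot a_m\ne 0$ can also be read off directly from $\dwt(\dot A)=m$: if $\dot a_m$ were $0$ then $v(\dot a_m)=\infty>v(\dot A)$ (using $\dot A\ne 0$), contradicting $m=\dwt(\dot A)$. Either way, once $a_r,a_m\in\dot{\mathcal O}^\times$ is known, $\dot\fv=\dot a_r/\dot a_m=\fv(\dot A)$ follows since the residue morphism is a ring map.
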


\subsection*{Behavior of the span under twisting} 
Recall that $o(\gamma):= 0\in \Gamma$ for $\gamma=0\in \Gamma$. 
With this convention, here is a consequence of [ADH, 6.1.3]: 

\begin{lemma}\label{lem:6.1.3 consequ} 
Let $B\in K[\der]^{\neq}$. Then $v(AB)=v(A)+v(B)+o\big(v(B)\big)$.
\end{lemma}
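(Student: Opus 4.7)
The plan is to reduce the identity to the cited bound on the valuation of $A$ applied to a single nonzero element, by passing to the gaussian extension of $v$ to the differential polynomial ring $K\{Y\}$ in one indeterminate $Y$. This extension is standard and preserves smallness of the derivation; we denote it again by $v$.

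First, observe that for any $C \in K[\der]^{\ne}$ we have $v(CY) = v(C)$ in the gaussian extension, because $CY = \sum_i c_i Y^{(i)}$ in $K\{Y\}$ has coefficient sequence exactly that of $C$. In particular $v(BY) = v(B)$ and $v\big((AB)Y\big) = v(AB)$. Since the action of $K[\der]$ on $K\{Y\}$ is compatible with the product in $K[\der]$, the identity $(AB)Y = A(BY)$ holds in $K\{Y\}$, and hence
\[
v(AB)\ =\ v\!\left(A(BY)\right).
\]

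Second, apply [ADH, 6.1.3] in the gaussian extension to the element $f := BY \in K\{Y\}^{\ne}$ and the operator $A$: this estimate yields $v(Af) = v(A) + v(f) + o(v(f))$ for $f$ in any valued differential extension of $K$ with small derivation, and substituting $f = BY$ gives
\[
v\!\left(A(BY)\right)\ =\ v(A) + v(BY) + o\!\left(v(BY)\right)\ =\ v(A) + v(B) + o\!\left(v(B)\right).
\]
Combining the two displays yields the lemma.

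The main technical point is to verify that [ADH, 6.1.3] applies in the gaussian extension of $K\{Y\}$. This is standard: smallness of the derivation transfers from $K$ to $K\{Y\}$ under the gaussian extension, and [ADH, 6.1.3] is stated for the action of an operator over $K$ on an element of an arbitrary valued differential overfield with small derivation, so it applies to $BY$ in $K\{Y\}$. Once this point is granted, the argument is purely formal.
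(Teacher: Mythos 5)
Your reduction to the generic element $Y$ is fine as far as it goes: with the gaussian extension one indeed has $v\big((AB)(Y)\big)=v(AB)$, $v\big(B(Y)\big)=v(B)$, and $(AB)(Y)=A\big(B(Y)\big)$, and the gaussian extension does have small derivation. But these identities merely restate the problem, and the step that is supposed to do the work rests on a misquotation of [ADH, 6.1.3]. That result is about multiplicative conjugation (equivalently, the operator product with an element of $K^\times$): for homogeneous $P\in K\{Y\}^{\neq}$ of degree $d$ and $g\in K^\times$ it gives $v(P_{\times g})=v(P)+d\,vg+o(vg)$, so for operators, $v(Ag)=v(A)+vg+o(vg)$. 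It is \emph{not} the statement you use, namely that $v\big(A(f)\big)=v(A)+v(f)+o\big(v(f)\big)$ for the \emph{application} of $A$ to a nonzero element $f$ of an arbitrary valued differential extension with small derivation. That statement is false in general: take $A=\der$ and $f$ a nonzero constant (then $A(f)=0$), or $f=1+\epsilon$ with $0\neq\epsilon\prec 1$ (then $v\big(A(f)\big)=v(\epsilon')>0=v(A)+v(f)$). For applications one only has the inequality $A(f)\preceq Af$; equality for your particular $f=B(Y)$ is exactly the ``no cancellation for the generic element'' assertion, i.e.\ the lemma itself, so your argument is circular at this point.

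The missing ingredient is supplied in the paper by [ADH, 5.6.1(ii)], which says $v_{AB}(\gamma)=v_A\big(v_B(\gamma)\big)$; taking $\gamma=0$ and choosing $b\in K^\times$ with $b\asymp B$ gives $v(AB)=v_{AB}(0)=v_A\big(v_B(0)\big)=v_A(vb)=v(Ab)$, and then the correct form of [ADH, 6.1.3] applied to the product $Ab$ (with $b\in K^\times$) yields $v(Ab)=v(A)+vb+o(vb)$, which is the lemma. So to repair your proof, replace the appeal to ``6.1.3 for applications'' by an appeal to [ADH, 5.6.1(ii)] (or reprove that composition identity); once you do, the detour through $K\{Y\}$ and its gaussian valuation is no longer needed.
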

\begin{proof}
Take $b$ with $b\asymp B$. Then 
$$v(AB)=v_{AB}(0)=v_A\big(v_B(0)\big)=v_A(vb)=v(Ab)$$
by~[ADH, 5.6.1(ii)]. Moreover, $v(Ab)=v(A)+vb+o(vb)$, by [ADH, 6.1.3].
\end{proof}

\noindent
We have $\fv(A_{\ltimes\fn})=\fv(A\fn)$, so $v(A_{\ltimes\fn})=v(A)+o(v\fn)$
by Lemma~\ref{lem:6.1.3 consequ}. Moreover:

\begin{lemma}\label{lem:An} 
$v\big(\fv(A\fn)\big)= v\big(\fv(A)\big) + o(v\fn)$.
\end{lemma}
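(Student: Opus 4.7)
The plan is to unfold the definition of the span on both sides and then appeal to Lemma~\ref{lem:6.1.3 consequ} to account for the $o(v\fn)$ correction.

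First I would identify the leading coefficient of $A\fn\in K[\der]$. Since $\der^i\fn = \fn\der^i + \text{(terms of strictly lower order in }\der\text{)}$ for each $i$, we have
\[
A\fn \ =\ \sum_i a_i\der^i \fn\ =\ a_r\fn\,\der^r + \text{(lower order terms in }\der\text{)},
\]
so $\order(A\fn)=r$ and the coefficient of $A\fn$ at $\der^r$ equals $a_r\fn\neq 0$. By definition of the span applied to $A\fn$, and using that the coefficient at position $\dwt(A\fn)$ is $\asymp A\fn$, we get
\[
v\!\big(\fv(A\fn)\big)\ =\ v(a_r\fn) - v(A\fn)\ =\ v(a_r)+v\fn-v(A\fn).
\]
Similarly $v\!\big(\fv(A)\big)=v(a_r)-v(A)$ since $a_m\asymp A$ with $m=\dwt(A)$.

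Next I would apply Lemma~\ref{lem:6.1.3 consequ} to $A$ and $B:=\fn\in K[\der]$ (viewed as an order-$0$ operator), which gives $v(A\fn)=v(A)+v\fn+o(v\fn)$. Substituting this into the displayed formula yields
\[
v\!\big(\fv(A\fn)\big)\ =\ v(a_r)+v\fn-v(A)-v\fn-o(v\fn)\ =\ v\!\big(\fv(A)\big)+o(v\fn),
\]
which is the claim. There is no real obstacle: the result is immediate from the formula for the leading coefficient of a right multiplication together with Lemma~\ref{lem:6.1.3 consequ}.
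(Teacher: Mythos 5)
Your proof is correct and is essentially the paper's argument: both reduce the claim to the identity $v(\fv(B))=v(b_{\order B})-v(B)$ together with $v(A\fn)=v(A)+v\fn+o(v\fn)$ from Lemma~\ref{lem:6.1.3 consequ}. The paper merely packages this by first normalizing $A$ to be monic and passing through the twist $A_{\ltimes\fn}$, whereas you track the top coefficient $a_r\fn$ directly — a cosmetic difference.
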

\begin{proof}
Replacing $A$ by $a_r^{-1} A$ we arrange $A$ is monic, so $A_{\ltimes\fn}$ is monic, and thus
$$v\big(\fv(A\fn)\big)\ =\ v\big(\fv(A_{\ltimes\fn})\big)=-v(A_{\ltimes\fn})=-v(A)+o(v\fn)=v\big(\fv(A)\big)+o(v\fn)$$
by remarks preceding the lemma.
\end{proof}

\noindent
Recall: we denote the archimedean class
$[v\fn]\subseteq \Gamma$ by $[\fn]$.   Lemma~\ref{lem:An} yields:

\begin{cor}\label{cor:An}
$\big[\fv(A)\big]<[\fn]\ \Longleftrightarrow\ \big[\fv(A\fn)\big]<[\fn]$.
\end{cor}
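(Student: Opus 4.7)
The plan is very short, since this corollary follows almost immediately from the preceding lemma. Set $\alpha := v\big(\fv(A)\big)$ and $\alpha' := v\big(\fv(A\fn)\big)$. By Lemma~\ref{lem:An} we have $\alpha' = \alpha + \delta$ where $\delta = o(v\fn)$; with the convention that $o(0)=0$, this also covers the case $v\fn = 0$, in which the equivalence is trivial. So assume $v\fn \neq 0$, i.e.\ $[\fn] > [0]$.

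The key observation is the following well-known compatibility between archimedean classes and addition in an ordered abelian group: if $\delta_1,\delta_2 \in \Gamma$ with $[\delta_2] < [\fn]$, then $[\delta_1] < [\fn]$ iff $[\delta_1 + \delta_2] < [\fn]$. Indeed, if $[\delta_1] < [\fn]$ then $[\delta_1+\delta_2] \leq \max\!\big([\delta_1],[\delta_2]\big) < [\fn]$; conversely, $\delta_1 = (\delta_1+\delta_2) - \delta_2$, so the same estimate applied in the other direction yields $[\delta_1] < [\fn]$.

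I would apply this observation with $\delta_1 = \alpha$ and $\delta_2 = \delta$, noting $[\delta] < [\fn]$ because $\delta = o(v\fn)$. Since $[\fv(A)] = [\alpha]$ and $[\fv(A\fn)] = [\alpha'] = [\alpha+\delta]$, this gives
\[
\big[\fv(A)\big] < [\fn] \quad \Longleftrightarrow\quad [\alpha] < [\fn] \quad\Longleftrightarrow\quad [\alpha+\delta] < [\fn] \quad\Longleftrightarrow\quad \big[\fv(A\fn)\big] < [\fn],
\]
as required. There is no real obstacle here; the only care needed is the bookkeeping around the degenerate case $v\fn = 0$ and the explicit verification of the archimedean-class estimate, both of which are routine.
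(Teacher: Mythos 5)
Your proof is correct and takes essentially the same route as the paper, which simply asserts that Corollary~\ref{cor:An} follows from Lemma~\ref{lem:An}; you have merely made explicit the elementary compatibility of archimedean classes with addition (and the trivial degenerate case $v\fn=0$) that the paper leaves tacit.
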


\noindent
Under suitable conditions on $K$ we can say more about the valuation of 
$\fv(A_{\ltimes\fn})$: Lemma~\ref{lem:Atwist} below. 

\begin{lemma}\label{lem:nepsilon, first part}
Let $\fn^\dagger\succeq 1$ and $\fm_0,\dots, \fm_r\in K^\times$ be such that
$$v(\fm_i) + v(A)\ =\ \min_{i\leq j\leq r} v(a_j)+(j-i)v(\fn^\dagger).$$
Then with $m:=\dwt(A)$ we have
$$\fm_0\ \succeq\ \cdots\ \succeq\ \fm_r\quad\text{and}\quad
(\fn^\dagger)^m\ \preceq\ \fm_0\ \preceq\ (\fn^\dagger)^r.$$
\textup{(}In particular, $[\fm_0] \leq [\fn^\dagger]$, with equality if~$m>0$.\textup{)}
\end{lemma}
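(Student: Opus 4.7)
The statement is a direct computation from the defining identity of $v\fm_i$, so the plan is to unpack the minimum and use the three basic facts $v(a_j)\geq v(A)$ (with equality at $j=m$), the definition $v(A)=v(a_m)$, and $\nu:=v(\fn^\dagger)\leq 0$ (from $\fn^\dagger\succeq 1$).

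First I would establish the monotonicity $\fm_0\succeq\cdots\succeq\fm_r$, i.e.\ $v\fm_i\leq v\fm_{i+1}$ for each $i<r$. Writing
\[
v\fm_i+v(A)\ =\ \min_{i\leq j\leq r}\bigl(v(a_j)+(j-i)\nu\bigr),\qquad
v\fm_{i+1}+v(A)\ =\ \min_{i+1\leq j\leq r}\bigl(v(a_j)+(j-i-1)\nu\bigr),
\]
one sees that the second minimum is taken over a strictly smaller index range, and each remaining summand is increased by $-\nu\geq 0$. Therefore $v\fm_{i+1}+v(A)\geq v\fm_i+v(A)$, which is the desired inequality.

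Next, the upper bound $\fm_0\preceq(\fn^\dagger)^r$, i.e.\ $v\fm_0\geq r\nu$: for every $j\in\{0,\dots,r\}$ we have $v(a_j)\geq v(A)$ and (since $\nu\leq 0$ and $j\leq r$) $j\nu\geq r\nu$, so each term in the minimum defining $v\fm_0+v(A)$ satisfies $v(a_j)+j\nu\geq v(A)+r\nu$. Taking the minimum over $j$ yields $v\fm_0\geq r\nu$.

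Finally, the lower bound $(\fn^\dagger)^m\preceq\fm_0$, i.e.\ $v\fm_0\leq m\nu$: choose the index $j=m$ in the minimum. Since $v(a_m)=v(A)$ by definition of $\dwt(A)$, the term is $v(A)+m\nu$, so $v\fm_0+v(A)\leq v(A)+m\nu$, which gives $v\fm_0\leq m\nu$. The parenthetical assertion that $[\fm_0]\leq[\fn^\dagger]$ (with equality when $m>0$) follows immediately: $v\fm_0$ is sandwiched between $r\nu$ and $m\nu$, both lying in $[\nu]=[\fn^\dagger]$ when $m>0$ and $\nu\ne 0$, and in the degenerate case $\nu=0$ all inequalities collapse to $v\fm_0=0$.

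There is no real obstacle here; the only thing to be careful about is the sign conventions, since $\fn^\dagger\succeq 1$ translates to $\nu=v(\fn^\dagger)\leq 0$ and the relation $\preceq$ corresponds to $\geq$ on valuations.
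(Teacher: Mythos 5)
Your proof is correct and follows essentially the same route as the paper's: the paper likewise derives the monotonicity directly from $v(\fn^\dagger)\leq 0$ and obtains the two bounds on $v(\fm_0)$ by noting $v(a_j/a_m)\geq 0$, bounding each term in the minimum below by $r\,v(\fn^\dagger)$ and above by the $j=m$ term. Your handling of the parenthetical remark (including the degenerate case $v(\fn^\dagger)=0$) is also fine.
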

\begin{proof}
From $v(\fn^\dagger)\leq 0$ we obtain $v(\fm_0)\leq \cdots\leq v(\fm_r)$.
We have $0\leq v(a_j/a_m)$ for~$j=0,\dots,r$ and so
$$rv(\fn^\dagger)\ \leq\ \min_{0\leq j\leq r}  v(a_j/a_m)+jv(\fn^\dagger)\ =\ v(\fm_0)\ \leq\ mv(\fn^\dagger)$$
as required.
\end{proof}

\begin{lemma}\label{lem:Atwist} 
Suppose $\der\mathcal O\subseteq\smallo$. Then
$$\fn^\dagger\preceq 1 \ \Longrightarrow\ v(A_{\ltimes \fn})=v(A),\qquad
\fn^\dagger\succ 1\ \Longrightarrow\ 
\abs{v(A_{\ltimes \fn})-v(A)} \leq -rv(\fn^\dagger).$$
\end{lemma}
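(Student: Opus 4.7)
\smallskip

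The plan is to expand $A_{\ltimes\fn}=\fn^{-1}A\fn$ explicitly. Using $\der^i\fn=\sum_{j\le i}\binom{i}{j}\fn^{(i-j)}\der^j$, the coefficient of $\der^j$ in $A_{\ltimes\fn}$ is
\[
b_j\ =\ \sum_{i=j}^{r}\binom{i}{j}a_i\,\fn^{(i-j)}/\fn\ =\ \sum_{i=j}^{r}\binom{i}{j}a_i\,R_{i-j}(\fn^\dagger),
\]
where $R_k=\Ric(Y^{(k)})\in\Q\{Z\}$. Thus every bound on $v(A_{\ltimes\fn})$ reduces to estimating the ratios $R_k(\fn^\dagger)$, and this is exactly what Lemmas~\ref{Riccatipower} and~\ref{Riccatipower+} deliver. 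Note also that $(\fn^{-1})^\dagger=-\fn^\dagger$ has the same valuation as $\fn^\dagger$, and $(A_{\ltimes\fn})_{\ltimes\fn^{-1}}=A$, so each direction of each inequality can be obtained from the other by the symmetry $(A,\fn)\rightsquigarrow(A_{\ltimes\fn},\fn^{-1})$.

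For the first implication, suppose $\fn^\dagger\preceq 1$. Since $\der\mathcal{O}\subseteq\smallo$, Lemma~\ref{Riccatipower+} (or a direct induction on $k$ via $R_{k+1}(z)=zR_k(z)+R_k(z)'$) gives $R_k(\fn^\dagger)\preceq 1$ for all $k$. Hence $v(b_j)\ge\min_{i\ge j}v(a_i)\ge v(A)$, so $v(A_{\ltimes\fn})\ge v(A)$. Applying the same argument to $A_{\ltimes\fn}$ and $\fn^{-1}$ yields the reverse inequality, so $v(A_{\ltimes\fn})=v(A)$.

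For the second implication, suppose $\fn^\dagger\succ 1$, so $v(\fn^\dagger)<0$. Lemma~\ref{Riccatipower} gives $R_k(\fn^\dagger)=(\fn^\dagger)^k(1+\epsilon_k)$ with $\epsilon_k\prec 1$, hence $v\bigl(R_k(\fn^\dagger)\bigr)=kv(\fn^\dagger)$. Therefore
\[
v(b_j)\ \ge\ \min_{i\ge j}\bigl(v(a_i)+(i-j)v(\fn^\dagger)\bigr)\ \ge\ v(A)+rv(\fn^\dagger),
\]
where the last inequality uses $v(\fn^\dagger)\le 0$ and $0\le i-j\le r$. This gives $v(A_{\ltimes\fn})-v(A)\ge rv(\fn^\dagger)$. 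The symmetry argument applied to $(A_{\ltimes\fn},\fn^{-1})$, using $v\bigl((\fn^{-1})^\dagger\bigr)=v(\fn^\dagger)$, yields $v(A)-v(A_{\ltimes\fn})\ge rv(\fn^\dagger)$, i.e.\ $v(A_{\ltimes\fn})-v(A)\le -rv(\fn^\dagger)$. Combining the two bounds gives $\abs{v(A_{\ltimes\fn})-v(A)}\le -rv(\fn^\dagger)$, as desired.

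No serious obstacle is anticipated; the main point is simply to pair the two Riccati-estimate lemmas (\ref{Riccatipower}) and (\ref{Riccatipower+}) with the expansion for $b_j$, and then invoke the $\fn\leftrightarrow\fn^{-1}$ symmetry to avoid duplicating the computation in the opposite direction.
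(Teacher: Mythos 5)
Your proof is correct and takes essentially the same route as the paper's: both expand the twist into coefficients $\sum_{i\ge j}{i\choose j}a_i R_{i-j}(\fn^\dagger)$, apply the Riccati estimates (Lemmas~\ref{Riccatipower}, \ref{Riccatipower+}) according to whether $\fn^\dagger\preceq 1$ or $\fn^\dagger\succ 1$, and get the reverse bound from the symmetry $(A,\fn)\rightsquigarrow(A_{\ltimes\fn},\fn^{-1})$. The only cosmetic difference is that the paper phrases the computation through the Riccati transform, using $v(A_{\ltimes\fn})=v\big(\Ric(A)_{+\fn^\dagger}\big)$ and citing [ADH, 4.5.1(i), 5.8.10, 5.8.11], whereas you expand $\fn^{-1}A\fn$ directly and settle the case $\fn^\dagger\preceq 1$ by a short induction on the $R_k$.
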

\begin{proof}
Let $R:=\Ric A$. Then $v(A_{\ltimes\fn})=v(R_{+\fn^\dagger})$ by [ADH, 5.8.11].
If $\fn^\dagger\preceq 1$, then  $v(R_{+\fn^\dagger})=v(R)$
by [ADH, 4.5.1(i)], hence $v(A_{\ltimes\fn})=v(R)=v(A)$ by [ADH, 5.8.10].
Now suppose $\fn^\dagger\succ 1$. {\it Claim}\/: $v(A_{\ltimes\fn})-v(A) \geq rv(\fn^\dagger)$.
To prove this claim we
replace $A$ by $a^{-1}A$, where $a\asymp A$, to arrange~$A\asymp 1$.
Let~$i$,~$j$ range over~$\{0,\dots,r\}$. We   have 
$R_{+\fn^\dagger} = \sum_i b_i R_i$ where
$$b_i\ =\ \sum_{j\geq i} {j\choose i} a_j R_{j-i}(\fn^\dagger).$$
Take $\fm_i\in K^\times$ as in Lemma~\ref{lem:nepsilon, first part}. 
By Lemma~\ref{Riccatipower+} we have $R_n(\fn^\dagger)\sim (\fn^\dagger)^n$ for all~$n$;
hence $v(b_i)\geq v(\fm_i)$ for all $i$. Thus
$$v(A_{\ltimes\fn})-v(A)\ =\ v(A_{\ltimes\fn})\ =\ v(R_{+\fn^\dagger})\ \geq\ \min_i v(b_i)\ \geq\ v(\fm_0)\ \geq\ rv(\fn^\dagger)$$ by Lemma~\ref{lem:nepsilon, first part}, proving
our claim. 
Applying this claim with $A_{\ltimes \fn}$, $\fn^{-1}$ in place of~$A$,~$\fn$ also yields
$v(A_{\ltimes\fn})-v(A) \leq -rv(\fn^\dagger)$,   thus $\abs{v(A_{\ltimes \fn})-v(A)} \leq -rv(\fn^\dagger)$.
\end{proof}

\begin{remark} Suppose that $\der\mathcal O\subseteq\smallo$ and $\fn^\dagger \succ 1$. Then Lemma~\ref{lem:Atwist} improves on Lem\-ma~\ref{lem:An}, since $v(\fn^\dagger)=o(v\fn)$ by [ADH, 6.4.1(iii)].  
\end{remark}

\begin{lemma}\label{twistprec} Suppose $\der\mathcal O\subseteq\smallo$ and $\fn^\dagger\preceq\fv(A)^{-1}$. 
Let $B\in K[\der]$ and $s\in\N$  be such that $\order(B)\leq s$ and~$B\prec \fv(A)^{s+1}A$. Then $B_{\ltimes\fn} \prec \fv(A_{\ltimes\fn})A_{\ltimes\fn}$. 
\end{lemma}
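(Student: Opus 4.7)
The plan is to reduce the desired relation $B_{\ltimes\fn} \prec \fv(A_{\ltimes\fn}) A_{\ltimes\fn}$ to a valuation inequality involving only $v(B_{\ltimes\fn})$ and $v(a_r)$, where $a_r$ is the leading coefficient of $A$. The key initial observation is that twisting preserves the leading coefficient: $A_{\ltimes\fn}$ has order $r$ and leading coefficient $a_r$, so by definition of the span we have $\fv(A_{\ltimes\fn}) A_{\ltimes\fn} \asymp a_r \asymp \fv(A) A$. Thus it suffices to prove $v(B_{\ltimes\fn}) > v(\fv(A) A) = v(\fv) + v(A)$, where I write $\fv := \fv(A)$.

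First I would translate the hypothesis. From $B \prec \fv^{s+1} A$ we get
\[
v(B) \ > \ (s+1)v(\fv) + v(A) \ =\ s v(\fv) + \bigl(v(\fv) + v(A)\bigr).
\]
The hypothesis $\fn^\dagger \preceq \fv^{-1}$ translates to $v(\fv) + v(\fn^\dagger) \geq 0$.

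Next I would control $v(B_{\ltimes\fn}) - v(B)$ using Lemma~\ref{lem:Atwist} applied to $B$ (whose order is at most $s$). This splits into two cases. If $\fn^\dagger \preceq 1$, the lemma gives $v(B_{\ltimes\fn}) = v(B)$, and since $v(\fv) \geq 0$ we get $v(B_{\ltimes\fn}) > s v(\fv) + v(\fv A) \geq v(\fv A)$, as desired. If $\fn^\dagger \succ 1$, the lemma yields $v(B_{\ltimes\fn}) \geq v(B) + s v(\fn^\dagger)$ (the relevant direction of the two-sided bound, since $v(\fn^\dagger) < 0$). Combining with the lower bound for $v(B)$ gives
\[
v(B_{\ltimes\fn}) \ >\ s\bigl(v(\fv) + v(\fn^\dagger)\bigr) + v(\fv A) \ \geq\ v(\fv A),
\]
using $v(\fv) + v(\fn^\dagger) \geq 0$.

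There is no real obstacle here: the ingredients are all supplied by Lemma~\ref{lem:Atwist} (which bounds $|v(A_{\ltimes\fn}) - v(A)|$ by $-r v(\fn^\dagger)$ in the steep case and gives equality in the flat case) together with the bookkeeping $\fv(A_{\ltimes\fn}) A_{\ltimes\fn} \asymp a_r$. The only point that requires a bit of care is the direction of the inequality $v(B_{\ltimes\fn}) \geq v(B) + s v(\fn^\dagger)$ when $\fn^\dagger \succ 1$; the two-sided bound from Lemma~\ref{lem:Atwist} supplies this, and the hypothesis $\fn^\dagger \preceq \fv^{-1}$ is then exactly what is needed to absorb the potentially negative term $s v(\fn^\dagger)$ against the buffer $s v(\fv)$ provided by the extra factor of $\fv^s$ in the hypothesis $B \prec \fv^{s+1} A$.
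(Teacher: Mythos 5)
Your proposal is correct and follows essentially the same route as the paper: reduce via $\fv(A_{\ltimes\fn})A_{\ltimes\fn}\asymp a_r\asymp\fv(A)A$ to showing $B_{\ltimes\fn}\prec\fv(A)A$, then split into the cases $\fn^\dagger\preceq 1$ and $\fn^\dagger\succ 1$ and apply Lemma~\ref{lem:Atwist} to $B$, using $\fn^\dagger\preceq\fv(A)^{-1}$ to absorb $-s\,v(\fn^\dagger)$ into the buffer $s\,v(\fv(A))$. The only cosmetic difference is that the paper first arranges $B\neq 0$ and $s=\order(B)$ before invoking Lemma~\ref{lem:Atwist}, a reduction your wording leaves implicit but which causes no difficulty since the bound for order $\leq s$ follows from the one at the exact order.
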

\begin{proof}
We may assume $B\neq 0$ and $s=\order(B)$. It suffices to show $B_{\ltimes\fn} \prec \fv(A)A$.
If $\fn^\dagger\preceq 1$, then Lemma~\ref{lem:Atwist} applied to $B$ in place of $A$ yields
$B_{\ltimes\fn} \asymp B \prec\fv(A)A$.
Suppose $\fn^\dagger\succ 1$. Then Lemma~\ref{lem:Atwist} gives
$\abs{v(B_{\ltimes \fn})-v(B)} \leq -sv(\fn^\dagger)\leq sv(\fv(A))$ and hence
$B_{\ltimes\fn} \preceq \fv(A)^{-s} B \prec \fv(A) A$.
\end{proof}

\noindent
If   $\der\mathcal{O}\subseteq \smallo$, then we have functions $\dwm_A, \dwt_A\colon \Gamma\to \N$ as defined in~[ADH, 5.6]. 
Combining Lemmas~\ref{lem:fv of perturbed op} and~\ref{twistprec} yields a variant of~[ADH, 6.1.7]: 

{\sloppy
\begin{cor}\label{cor:Atwist} Suppose $\der\mathcal O\subseteq\smallo$ and $\fn^\dagger\preceq\fv(A)^{-1}$. 
Let $B\in K[\der]$ be such that~$\order(B)\leq r$ and~$B\prec \fv(A)^{r+1}A$. 
Then $\dwm_{A+B}(v\fn)=\dwm_A(v\fn)$ and~$\dwt_{A+B}(v\fn)=\dwt_A(v\fn)$. In particular, $$v\fn\in\exc(A+B)\ \Longleftrightarrow\ 
v\fn \in \exc(A).$$
\end{cor}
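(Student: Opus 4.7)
My plan is to combine the two preceding lemmas after converting the statement about the functions $\dwm_{A+B}$ and $\dwt_{A+B}$ at the value $v\fn$ into a statement about the plain quantities $\dwm$ and $\dwt$ of appropriately twisted operators. Recall from the introductory material that $\dwm_A(v\fn)=\dwm(A\fn)$ and $\dwt_A(v\fn)=\dwt(A\fn)$, and that the factor of $\fn\in K^\times$ on the right does not affect $\dwm$ or $\dwt$ (it shifts each coefficient by the same valuation). Since $A\fn=\fn A_{\ltimes\fn}$, this gives $\dwm_A(v\fn)=\dwm(A_{\ltimes\fn})$ and likewise for $\dwt$, and the same holds with $A$ replaced by $A+B$, using $(A+B)_{\ltimes\fn}=A_{\ltimes\fn}+B_{\ltimes\fn}$. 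So it suffices to compare $\dwm$ and $\dwt$ of $A_{\ltimes\fn}$ and $A_{\ltimes\fn}+B_{\ltimes\fn}$.

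Next, I invoke Lemma~\ref{twistprec} with $s:=r$: the hypotheses $\der\mathcal O\subseteq\smallo$ and $\fn^\dagger\preceq\fv(A)^{-1}$ are in force, $\order(B)\le r=s$, and $B\prec\fv(A)^{r+1}A=\fv(A)^{s+1}A$, so the lemma yields $B_{\ltimes\fn}\prec \fv(A_{\ltimes\fn})A_{\ltimes\fn}$. Since twisting by $\fn\in K^\times$ preserves order, $\order(A_{\ltimes\fn})=r$ and $\order(B_{\ltimes\fn})\le r$, so the hypotheses of Lemma~\ref{lem:fv of perturbed op} are met for the pair $(A_{\ltimes\fn},B_{\ltimes\fn})$. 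Part~(i) of that lemma then gives
\[
\dwm(A_{\ltimes\fn}+B_{\ltimes\fn})\ =\ \dwm(A_{\ltimes\fn}),\qquad \dwt(A_{\ltimes\fn}+B_{\ltimes\fn})\ =\ \dwt(A_{\ltimes\fn}),
\]
which is exactly $\dwm_{A+B}(v\fn)=\dwm_A(v\fn)$ and $\dwt_{A+B}(v\fn)=\dwt_A(v\fn)$.

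For the ``in particular'' clause, recall from the introduction that $\exc(A)=\{vy:\dwm(Ay)>0\}$, so $v\fn\in\exc(A)$ is equivalent to $\dwm_A(v\fn)\geq 1$, and analogously for $A+B$; the equivalence $v\fn\in\exc(A+B)\Leftrightarrow v\fn\in\exc(A)$ is then immediate from the equality of $\dwm_{A+B}(v\fn)$ and $\dwm_A(v\fn)$ just established. I do not foresee any real obstacle here, as the corollary is essentially a packaging of the two cited lemmas; the only point that needs a moment of care is the bookkeeping translating $\dwm_A(v\fn)$ and $\dwt_A(v\fn)$ into $\dwm$ and $\dwt$ of $A_{\ltimes\fn}$ so that Lemma~\ref{lem:fv of perturbed op} becomes directly applicable.
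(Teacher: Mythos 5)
Your proof is correct and follows exactly the route the paper intends: the corollary is stated as a direct combination of Lemma~\ref{twistprec} and Lemma~\ref{lem:fv of perturbed op}, applied to the twisted pair $(A_{\ltimes\fn},B_{\ltimes\fn})$, and your bookkeeping identifying $\dwm_A(v\fn)=\dwm(A_{\ltimes\fn})$, $\dwt_A(v\fn)=\dwt(A_{\ltimes\fn})$ (and likewise for $A+B$, using $(A+B)_{\ltimes\fn}=A_{\ltimes\fn}+B_{\ltimes\fn}$) is the right way to make that combination precise.
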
}

\subsection*{About $A(\fn^q)$ and $A\fn^q$} 
Suppose $\fm^l=\pm \fn^k$ where $k,l\in \Z$, $l\ne0$. Then 
$\fm^\dagger=q\fn^\dagger$ with $q=k/l\in \Q$. In particular, if  $K$ is real closed or algebraically closed, then for any $\fn$ and $q\in \Q$ we have $\fm^\dagger=q\fn^\dagger$ for some $\fm$. 

\medskip
\noindent
{\em Below in this subsection~$K$ is $\d$-valued and $\fn$ is such that for all 
$q\in \Q^{>}$ we are given an element of $K^\times$, denoted by~$\fn^q$ for suggestiveness, with $(\fn^q)^\dagger=q\fn^\dagger$.}

\medskip
\noindent
Let $q\in \Q^{>}$; then $v(\fn^q)=qv(\fn)$: to see this we may arrange that~$K$ is algebraically closed 
by [ADH, 10.1.23], and hence contains an~$\fm$ such that $v\fm=q\,v\fn$ and~$\fm^\dagger=q\fn^\dagger=(\fn^q)^\dagger$, and thus $v(\fn^q)=v\fm=q\,v\fn$.   

\begin{lemma}\label{qlA} Suppose $\fn^\dagger\succeq 1$. Then for all but finitely many $q\in \Q^{>}$,
$$v\big(A(\fn^q)\big)\ =\ v(\fn^q) + \min_j v(a_j)+jv(\fn^\dagger).$$ 
\end{lemma}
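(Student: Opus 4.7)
\textbf{Proof plan for Lemma~\ref{qlA}.} The strategy is to convert the problem from a statement about $A(\fn^q)$ into a statement about evaluating the Riccati transform $\Ric(A) = \sum_j a_j R_j$ at the element $q\fn^\dagger$, and then to isolate the finite set of exceptional $q$ as the roots of an explicit polynomial over the residue field. The key computational identity is that for $Y=\fn^q$, the identity $Y^{(j)}=Y\cdot R_j(Y^\dagger)$ (from the definition of $R_j=\Ric(\der^j)$) gives
\[
A(\fn^q)\ =\ \fn^q\,\Ric(A)(q\fn^\dagger)\ =\ \fn^q\sum_j a_j R_j(q\fn^\dagger),
\]
so after dividing by $\fn^q$ we have reduced the problem to showing that $v\big(\sum_j a_j R_j(q\fn^\dagger)\big)$ equals $\min_j v(a_j)+j v(\fn^\dagger)$ for all but finitely many $q\in\Q^>$.

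First I will check that Lemma~\ref{Riccatipower+} applies to $z:=q\fn^\dagger$: since $K$ is $\d$-valued with small derivation, $\der\mathcal O\subseteq\smallo$ by [ADH, 9.1.3(iv)], and since $q\in\Q^>\subseteq C\subseteq\mathcal O^\times$ and $\fn^\dagger\succeq 1$, we have $q\fn^\dagger\succeq 1$. Hence $R_j(q\fn^\dagger)=(q\fn^\dagger)^j(1+\varepsilon_j)$ with $\varepsilon_j=\varepsilon_j(q)\prec 1$ for each $j$. Substituting and separating out the $\varepsilon_j$-contributions gives
\[
\sum_j a_j R_j(q\fn^\dagger)\ =\ \sum_j a_j q^j(\fn^\dagger)^j\ +\ \sum_j a_j q^j (\fn^\dagger)^j\varepsilon_j,
\]
and the second sum has valuation strictly greater than $m:=\min_j v(a_j)+j v(\fn^\dagger)$, term by term. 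So it suffices to show that the first sum, which I will call $S(q)$, has valuation exactly $m$ for all but finitely many $q$.

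Next I will analyze $S(q)$. Let $J:=\{j:v(a_j)+jv(\fn^\dagger)=m\}\neq\emptyset$, and pick $j_*\in J$. For $j\notin J$ the term $a_jq^j(\fn^\dagger)^j$ has valuation $>m$, so only the partial sum over $J$ matters. Factoring out $a_{j_*}q^{j_*}(\fn^\dagger)^{j_*}$ (which has valuation $m$) yields
\[
S(q)\ \equiv\ a_{j_*}q^{j_*}(\fn^\dagger)^{j_*}\,P(q)\pmod{\{f\in K:vf>m\}},\qquad
P(q)\ :=\ \sum_{j\in J}\frac{a_j(\fn^\dagger)^{j-j_*}}{a_{j_*}}\,q^{j-j_*}.
\]
Every coefficient of $P$ lies in $\mathcal O^\times$ by the definition of $J$, so its residue $\bar P\in\res(K)[X]$ is a nonzero polynomial (indeed, its coefficient of $X^{j-j_*}$ is the nonzero residue $\overline{a_j(\fn^\dagger)^{j-j_*}/a_{j_*}}$ for each $j\in J$). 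Since $K$ has equicharacteristic zero, the natural map $\Q\to\res(K)$ is injective, so $\bar P$ has only finitely many zeros in $\Q^>$; for every other $q$ we get $P(q)\asymp 1$ in $K$, and hence $v(S(q))=m$, which combined with the earlier reduction gives the desired equality.

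The only real subtlety (the ``main obstacle'') is this last step: one must pin down the exceptional set of $q$ as the zeros in $\Q^>$ of a well-defined, nonzero polynomial over $\res(K)$, which requires checking that $P$ has unit coefficients (so that reduction makes sense and loses no information) and that $\Q$ injects into $\res(K)$ (so that finitely many zeros in $\res(K)$ translate to finitely many zeros in $\Q^>$). Both are straightforward given the hypotheses, but need to be explicit.
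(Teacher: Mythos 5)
Your proof is correct and is essentially the paper's argument: both pass through the Riccati identity $A(\fn^q)=\fn^q\sum_j a_jR_j(q\fn^\dagger)$, invoke Lemma~\ref{Riccatipower+} for $q\fn^\dagger\succeq 1$, and exhibit the exceptional $q$ as the zeros of a fixed nonzero polynomial attached to the indices where $v(a_j)+jv(\fn^\dagger)$ is minimal. The only differences are cosmetic: the paper works with constants $c_i\in C^\times$ (via $a_i(\fn^\dagger)^i\sim c_i\fm$, available since $K$ is $\d$-valued) where you reduce unit coefficients into $\res(K)$, and you should take $j_*=\min J$ so that your $P$ is an honest polynomial rather than a Laurent polynomial (harmless either way, since $q\neq 0$).
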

\begin{proof} Let $q\in \Q^{>}$ and take $b_0,\dots,b_r\in K$ with $A\fn^q=b_0+b_1\der+\cdots+b_r\der^r$.
Then 
$$b_0\ =\ A(\fn^q)\ =\ \fn^q \big(a_0 R_0(q\fn^\dagger)+a_1 R_1(q\fn^\dagger)+\cdots+
a_r R_r(q\fn^\dagger)\big).$$
Let $i$,~$j$ range over $\{0,\dots,r\}$.  
By Lemma~\ref{Riccatipower+}, $R_i(q\fn^\dagger) \sim q^i (\fn^\dagger)^i$ for all $i$. Take~$\fm$ (independent of $q$) such that
$v(\fm)=\min_j v(a_j)+jv(\fn^\dagger)$, and let $I$ be the nonempty set of
$i$ with $\fm\asymp a_i(\fn^\dagger)^i$. For $i\in I$ we take $c_i\in C^\times$ such that $a_i(\fn^\dagger)^i \sim c_i\fm$, and set $R:=\sum_{i\in I} c_i Y^i\in C[Y]^{\neq}$.
Therefore, if $R(q)\ne 0$, then 
$$\sum_{i\in I} a_iR_i(q\fn^\dagger)\ \sim\ \fm R(q).$$
Assume $R(q)\ne 0$ in what follows. Then
$$\sum_{i=0}^r a_iR_i(q\fn^\dagger)\ \sim\ \sum_{i\in I} a_iR_i(q\fn^\dagger)\ \sim\ \fm R(q)\ \asymp\ \fm,$$
hence $b_0\asymp \fm\fn^q$, in particular, $b_0\ne 0$.
\end{proof}

\begin{lemma}\label{lem:nepsilon} Assume $\fn^\dagger\succeq 1$ and $[\fv]<[\fn]$ for
$\fv:=\fv(A)$. Then 
$\big[\fv(A\fn^q)\big]<[\fn]$ for all $q\in \Q^>$, and for all but finitely many $q\in \Q^{>}$ 
we have 
$\fv(A\fn^q)\preceq \fv$, and thus~$[\fv]\le \big[\fv(A\fn^q)\big]$.
\end{lemma}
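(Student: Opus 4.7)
Part (a) should drop out immediately from Corollary~\ref{cor:An}. Applied with $\fn^q$ in place of $\fn$ (and recalling $v(\fn^q)=qv\fn$, so $[\fn^q]=[\fn]$ for $q\in\Q^>$), it yields $[\fv]<[\fn^q]\Leftrightarrow[\fv(A\fn^q)]<[\fn^q]$; since $[\fv]<[\fn]=[\fn^q]$, I conclude $[\fv(A\fn^q)]<[\fn]$.

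For part (b), the plan is to compute the coefficients of $A\fn^q=\sum_{i=0}^r b_i\der^i\in K[\der]$ explicitly and read off $\dwt(A\fn^q)$ for generic $q$. Expanding $\der^j\cdot \fn^q$ by the Leibniz rule and using $(\fn^q)^{(k)}/\fn^q=R_k(q\fn^\dagger)$, I arrive at
\[
b_i\ =\ \fn^q\,S_i(q),\qquad S_i(q)\ :=\ \sum_{j=i}^r\binom{j}{i}a_j\,R_{j-i}(q\fn^\dagger).
\]
Since $\fn^\dagger\succeq 1$ (so $q\fn^\dagger\succeq 1$) and $K$ is $\d$-valued (hence $\der\mathcal O\subseteq\smallo$), Lemma~\ref{Riccatipower+} supplies $R_k(q\fn^\dagger)=q^k(\fn^\dagger)^k(1+\varepsilon_k)$ with $\varepsilon_k\prec 1$.

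Next I introduce $\fm_i\in K^\times$ with $v(\fm_i)=\min_{j\geq i}\!\big(v(a_j)+(j-i)v(\fn^\dagger)\big)$ and form polynomials $P_i(Y):=\sum_{j\in J_i}\binom{j}{i}c_{i,j}Y^{j-i}\in C[Y]$, where $J_i\subseteq\{i,\dots,r\}$ picks out the $j$ attaining the minimum and $c_{i,j}\in C^\times$ is the residual constant with $a_j(\fn^\dagger)^{j-i}/\fm_i\sim c_{i,j}$ (available by $\d$-valuedness). Each $P_i$ is nonzero of degree $\le r-i$ (and $P_r=1$), so the exceptional set $E\subseteq\Q^>$ on which some $P_i$ vanishes is finite. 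For $q\in\Q^>\setminus E$ I would check that $S_i(q)\sim\fm_i P_i(q)\asymp\fm_i$, hence $b_i\asymp\fn^q\fm_i$. The crucial consequence is that $\dwt(A\fn^q)=m^*:=\max\{i:v(\fm_i)=\min_j v(\fm_j)\}$, an index independent of $q$. Then
\[
v\big(\fv(A\fn^q)\big)\ =\ v(b_r)-v(b_{m^*})\ =\ v(a_r)-v(\fm_{m^*})\ \ge\ v(a_r)-v(a_m)\ =\ v(\fv),
\]
where the estimate $v(\fm_{m^*})\le v(\fm_m)\le v(a_m)$ comes from taking $j=m$ in the definition of $\fm_m$. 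This delivers $\fv(A\fn^q)\preceq\fv$, and since $\fv,\fv(A\fn^q)\preceq 1$ so their valuations are non-negative, also $[\fv]\le[\fv(A\fn^q)]$.

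The trickiest step will be establishing $S_i(q)\sim\fm_i P_i(q)$ cleanly: one has to verify that the corrections coming from Lemma~\ref{Riccatipower+} and from replacing each $a_j(\fn^\dagger)^{j-i}/\fm_i$ by the constant $c_{i,j}$ jointly produce an error $\prec\fm_i$, which in particular uses $c\asymp 1$ for $c\in C^\times$ (true in any $\d$-valued field) to keep $\fm_i P_i(q)\asymp\fm_i$ off $E$. Everything else—part (a), the identification of $m^*$ as $q$-independent, and the final comparison of valuations—is essentially formal once this estimate is secured.
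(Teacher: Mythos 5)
Your proposal is correct, but it takes a longer, more computational route than the paper. The paper's own proof only analyzes the single coefficient $b_0 = A(\fn^q)$: after normalizing $a_m=1$ it invokes Lemma~\ref{qlA} to get $b_0\asymp\fm\fn^q$ off a finite exceptional set, and then closes with the short chain $b_0\,\fv(A\fn^q)\preceq b_n\,\fv(A\fn^q)=b_r=\fn^q\fv$ (with $n=\dwt(A\fn^q)$, and $b_0\preceq b_n$ by definition of $\dwt$), which gives $\fv(A\fn^q)\preceq\fv/\fm$; it only remains to note $\fm\succeq(\fn^\dagger)^m\succeq 1$. You instead compute all $b_i\asymp\fn^q\fm_i$ explicitly, identify $\dwt(A\fn^q)$ as a $q$-independent index $m^*$, and read off $v\big(\fv(A\fn^q)\big)=v(a_r)-v(\fm_{m^*})\ge v(\fv)$. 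This is essentially what the paper later does to prove the stronger Lemma~\ref{lem:nepsilon, refined} ($\fv(A)/\fv(A\fn^q)\asymp\fm$), so you have in effect derived Lemma~\ref{lem:nepsilon} from a slightly weaker version of that subsequent result. Your key estimate $S_i(q)\sim\fm_i P_i(q)$ goes through cleanly (the $o(1)$ corrections from Lemma~\ref{Riccatipower+} and from replacing $a_j(\fn^\dagger)^{j-i}/\fm_i$ by $c_{i,j}$ are swamped by the nonzero constant $P_i(q)\in C^\times\asymp 1$), and the final passage from $v(\fv)\le v(\fv(A\fn^q))$ to $[\fv]\le[\fv(A\fn^q)]$ is also valid since both elements are $\preceq 1$. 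What the paper's approach buys is economy: it avoids identifying $\dwt(A\fn^q)$ at all, needing only the trivial inequality $b_0\preceq b_n$, so that a single application of Lemma~\ref{qlA} (to $b_0$) suffices rather than $r+1$ applications (to each $b_i$). What yours buys is more information—it is essentially the engine for the refined version that follows—but at the cost of proving more than the statement requires.
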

\begin{proof} Let $q\in \Q^>$. Then $[\fv]<[\fn]=[\fn^q]$, so $[\fv(A\fn^q)]<[\fn^q]=[\fn]$ by Corollary~\ref{cor:An}.
To show the second part, let $m=\dwt(A)$. 
Replacing $A$ by $a_m^{-1} A$ we arrange $a_m=1$,  so  $a_r = \fv$, $A\asymp 1$.
Take $b_0,\dots,b_r$ with $A\fn^q=b_0+b_1\der+\cdots+b_r\der^r$.
As in the proof of Lemma~\ref{qlA} we obtain an $\fm$ and a polynomial $R(Y)\in C[Y]^{\ne}$ (both independent of $q$) 
such that $v(\fm)=\min_j v(a_j)+jv(\fn^\dagger)$, and  $b_0\asymp \fm\fn^q$ if~$R(q)\ne 0$. 
Assume $R(q)\ne 0$ in what follows; we show that then $\fv(A\fn^q)\preceq \fv$.  For~$n:= \dwt(A\fn^q)$, 
$$ b_0\fv(A\fn^q)\ \preceq\ b_n\fv(A\fn^q)\ =\ b_r\ =\ \fn^q\fv,$$
hence
$\fv(A\fn^q)\preceq \fv/\fm$.
It remains to note that  
$\fm\succeq a_m(\fn^\dagger)^m=(\fn^\dagger)^m\succeq 1$.  
\end{proof}
 
\begin{lemma}\label{lem:nepsilon, refined} Assume $\fn^\dagger\succeq 1$ and $\fm$ satisfies 
$$v\fm+v(A)\ =\ \min_{0\leq j\leq r}  v(a_j)+jv(\fn^\dagger).$$
Then  $[\fm]\leq[\fn^\dagger]$, with equality if $\dwt(A)>0$, and for all but finitely many $q\in\Q^>$, 
$$A\fn^q\ \asymp\ \fm\, \fn^q\, A, \qquad \fv(A)/\fv(A\fn^q)\ \asymp\ \fm.$$
\end{lemma}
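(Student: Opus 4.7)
The bound $[\fm]\leq[\fn^\dagger]$, with equality when $\dwt(A)>0$, is immediate from Lemma~\ref{lem:nepsilon, first part}: the element called $\fm_0$ there is characterized by exactly the defining property of our $\fm$, and its conclusion $(\fn^\dagger)^m\preceq\fm\preceq(\fn^\dagger)^r$ (for $m:=\dwt(A)$) translates to $rv(\fn^\dagger)\leq v\fm\leq mv(\fn^\dagger)\leq 0$, yielding both claims.

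For the two asymptotic statements I plan to adapt the proof of Lemma~\ref{lem:nepsilon}, but tracking every coefficient of $A\fn^q$ rather than only its constant term. Normalize by replacing $A$ with $a_m^{-1}A$; this preserves both $\fv(A)$ and $\fv(A\fn^q)$ and reduces to the case $A\asymp 1$, $\fv(A)=a_r$, $v(A)=0$, and $v\fm=\min_j\bigl(v(a_j)+jv(\fn^\dagger)\bigr)$. Writing $A\fn^q=b_0+b_1\der+\cdots+b_r\der^r$, the standard commutation rule combined with $(\fn^q)^{(n)}=\fn^q R_n(q\fn^\dagger)$ gives
\[
b_i\ =\ \fn^q\sum_{j\geq i}\binom{j}{i}a_j\, R_{j-i}(q\fn^\dagger).
\]
Since $q\fn^\dagger\succeq 1$, Lemma~\ref{Riccatipower+} gives $R_n(q\fn^\dagger)\sim q^n(\fn^\dagger)^n$. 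Introduce $\fm_i$ with $v(\fm_i):=\min_{j\geq i}\bigl(v(a_j)+(j-i)v(\fn^\dagger)\bigr)$, the set $I_i:=\{j\geq i:\,v(a_j(\fn^\dagger)^{j-i})=v(\fm_i)\}$, and constants $c_{i,j}\in C^\times$ with $a_j(\fn^\dagger)^{j-i}\sim c_{i,j}\fm_i$ for $j\in I_i$. The leading behaviour of $b_i$ then packages into the polynomial $R_i(Y):=\sum_{j\in I_i}\binom{j}{i}c_{i,j}\,Y^{j-i}\in C[Y]^{\neq}$, with $b_i\sim\fn^q\fm_i R_i(q)$.

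The pivotal step is to combine these into a single polynomial $R(Y):=\prod_{i=0}^r R_i(Y)\in C[Y]^{\neq}$, whose finite zero set is the exceptional set asserted in the lemma. For every $q\in\Q^>$ with $R(q)\neq 0$ the estimate $b_i\asymp\fn^q\fm_i$ holds simultaneously for all $i$. Lemma~\ref{lem:nepsilon, first part} provides $v(\fm_0)\leq v(\fm_i)$, so $v(A\fn^q)=\min_i v(b_i)=v(b_0)=v(\fn^q)+v\fm$; un-normalizing gives $A\fn^q\asymp\fm\fn^q A$. For the span, $b_r=a_r\fn^q\asymp\fv\fn^q$, while by definition of $\dwt$ the dominant coefficient $b_n$, $n:=\dwt(A\fn^q)$, satisfies $v(b_n)=v(A\fn^q)=v(\fm\fn^q)$, so $\fv(A\fn^q)=b_r/b_n\asymp\fv/\fm$ and hence $\fv(A)/\fv(A\fn^q)\asymp\fm$.

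The real difficulty is purely bookkeeping: producing the constants $c_{i,j}$ and the auxiliary polynomials $R_i$ uniformly in $i$, and then forcing a common finite exceptional set via $R=\prod R_i$. All analytic input reduces to Lemma~\ref{Riccatipower+} and the definition of $\fm$ in terms of the coefficients of $A$.
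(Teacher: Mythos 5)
Your proof is correct and follows essentially the same route as the paper: normalize so that $a_m=1$ for $m=\dwt(A)$, expand $b_i=\fn^q\sum_{j\geq i}{j\choose i}a_jR_{j-i}(q\fn^\dagger)$, compare with the $\fm_i$ of Lemma~\ref{lem:nepsilon, first part}, and read off both asymptotic statements from $v(\fm_0)\leq\cdots\leq v(\fm_r)$ and $b_r=a_r\fn^q$. The only difference is cosmetic: where you re-derive $b_i\asymp\fm_i\fn^q$ by building the polynomials $R_i$ and excluding the finitely many positive rational zeros of $\prod_i R_i$, the paper cites Lemma~\ref{qlA} applied to $A^{(i)}/i!$, which encapsulates exactly that argument.
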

\begin{proof}
Replacing $A$ by $a_m^{-1}A$ where $m=\dwt(A)$ we arrange $a_m=1$, so $a_r=\fv:=\fv(A)$ and~$A\asymp 1$.
Let~$i$,~$j$ range over $\{0,\dots,r\}$. Let $q\in\Q^>$, and
take~$b_i\in K$ such that $A\fn^q = \sum_i b_i \der^i$.
By [ADH, (5.1.3)] we have
$$b_i\ =\   \frac{1}{i!} A^{(i)}(\fn^q)\  =\ \fn^q  \frac{1}{i!}\Ric(A^{(i)})(q\fn^\dagger)\ =\ \fn^q \sum_{j\geq i} {j\choose i} a_j R_{j-i}(q\fn^\dagger).$$
Take $\fm_i\in K^\times$ as in Lemma~\ref{lem:nepsilon, first part}. 
Then  $\fm_0\asymp\fm$ (so $[\fm]\leq[\fn^\dagger]$, with equality if~$m>0$), and $\fm_r\asymp \fv$.
Lemma~\ref{qlA} applied to $A^{(i)}/i!$ instead of $A$ gives that for all but finitely $q\in\Q^>$ we have
$b_i \asymp \fm_i \fn^q$ for all~$i$. Assume that~$q\in \Q^{>}$ has this property.
From  $v(\fm)=v(\fm_0)\leq \cdots\leq v(\fm_r)=v(\fv)$ we obtain
$$v(\fm)+qv(\fn)\ =\ v(b_0)\ \leq\ \cdots\ \leq\ v(b_r)\ =\ v(\fv)+q\,v(\fn).$$
With $n=\dwt(A\fn^q)$ this gives $v(b_0)=\cdots=v(b_n)=v(A\fn^q)$. Thus
$$\fv(A\fn^q)\ =\ b_r/b_n\ \asymp\ b_r/b_0\ \asymp\ 
(\fn^q \fv)/(\fn^q\fm)\ =\ \fv/\fm$$
as claimed.
\end{proof}

\noindent
The next lemma  (not used later) is a more precise version  of Lemma~\ref{lem:nepsilon, refined}, but 
with an additional hypothesis on $\fn^\dagger$:

\begin{lemma}[${}^*$]\label{lem:nepsilon, refined, 2}
Assume  $\fn^\dagger\succ\fv(A)^{-1}$. Then  
$$  A(\fn) \sim   A\fn   \sim   a_r\fn(\fn^\dagger)^r \sim a_r\fn^{(r)}, \qquad  \fv(A\fn)  \sim (\fn^\dagger)^{-r}.$$
\end{lemma}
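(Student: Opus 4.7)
My plan is to use Riccati transforms to evaluate $\fn^{(k)}$, then compare the magnitudes of the terms appearing in $A(\fn)$ and in the coefficients of $A\fn\in K[\der]$ under the hypothesis $\fn^\dagger\succ\fv(A)^{-1}$.

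First, since $\fv(A)\preceq 1$ we have $\fn^\dagger\succ\fv(A)^{-1}\succeq 1$, so $\fn^\dagger\succ 1$. Lemma~\ref{Riccatipower} then gives $\fn^{(k)}/\fn=R_k(\fn^\dagger)\sim(\fn^\dagger)^k$ for every $k\geq 0$; in particular $a_r\fn^{(r)}\sim a_r\fn(\fn^\dagger)^r$. Writing $A(\fn)=\sum_i a_i\fn^{(i)}\asymp \sum_i a_i\fn(\fn^\dagger)^i$, the key comparison is: for $i<r$,
\[
\frac{a_i(\fn^\dagger)^i}{a_r(\fn^\dagger)^r}\ =\ \frac{a_i}{a_r}(\fn^\dagger)^{-(r-i)}\ \preceq\ \fv(A)^{-1}(\fn^\dagger)^{-(r-i)}\ \prec\ \fv(A)^{-1}\fv(A)^{r-i}\ =\ \fv(A)^{r-i-1}\ \preceq\ 1,
\]
using $a_i/a_r\preceq a_m/a_r=\fv(A)^{-1}$ and $(\fn^\dagger)^{-1}\prec\fv(A)$. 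Hence the $i=r$ term dominates, so $A(\fn)\sim a_r\fn(\fn^\dagger)^r$.

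Next, expanding the product in $K[\der]$,
\[
A\fn\ =\ \sum_j c_j\der^j,\qquad c_j\ =\ \sum_{i\geq j}\binom{i}{j}a_i\,\fn^{(i-j)},
\]
and $c_0=A(\fn)$ (evaluate both sides at $1$). The same asymptotic argument, applied to each $c_j$, shows the term $i=r$ dominates, whence $c_j\sim\binom{r}{j}a_r\fn(\fn^\dagger)^{r-j}$ for $j<r$, while $c_r=a_r\fn$. Because $\fn^\dagger\succ 1$, $v\bigl((\fn^\dagger)^{r-j}\bigr)$ is strictly increasing in $j$, so $v(c_j)>v(c_0)$ for $j\geq 1$. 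This gives $v(A\fn)=v(c_0)$ and $\dwt(A\fn)=0$, hence $A\fn\sim c_0=A(\fn)$ in $K[\der]$, and all four quantities are mutually $\sim$.

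Finally,
\[
\fv(A\fn)\ =\ \frac{c_r}{c_{\dwt(A\fn)}}\ =\ \frac{a_r\fn}{c_0}\ \sim\ \frac{a_r\fn}{a_r\fn(\fn^\dagger)^r}\ =\ (\fn^\dagger)^{-r}.
\]
The only real obstacle is the careful bookkeeping of these asymptotic inequalities, which however closely mirrors the techniques already used in Lemmas~\ref{lem:nepsilon} and~\ref{lem:nepsilon, refined}.
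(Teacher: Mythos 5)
Your proof is correct and takes essentially the same approach as the paper: express the coefficients of $A\fn$ via Riccati transforms, use $R_{k}(\fn^\dagger)\sim(\fn^\dagger)^{k}$ (from Lemma~\ref{Riccatipower}, which applies since $\fn^\dagger\succ 1$), and show the $i=r$ term dominates via the chain $a_i/a_r\preceq\fv(A)^{-1}$ and $(\fn^\dagger)^{-1}\prec\fv(A)$. The paper packages the domination as a single chain of $\succeq/\succ$ inequalities rather than a ratio estimate, but the content is identical.
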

\begin{proof}
Let~$i$,~$j$ range over $\{0,\dots,r\}$ and take $b_i\in K$ 
such that $A\fn = \sum_i b_i\der^i$, so~$b_i=\fn\sum_{j\geq i} {j\choose i}a_jR_{j-i}(\fn^\dagger)$.
By Lemma~\ref{Riccatipower}    we have $R_{j-i}(\fn^\dagger) \sim (\fn^\dagger)^{j-i}$  for~${i\leq j}$.
From  $\fn^\dagger\succ\fv^{-1}\succeq 1$ we get for $i\leq j<r$:
$$ a_r (\fn^\dagger)^{r-i}\  \succeq\  a_r \fn^\dagger (\fn^\dagger)^{j-i}\ 
\succ\ a_r \fv^{-1} (\fn^\dagger)^{j-i}\  \succeq\  a_j (\fn^\dagger)^{j-i}.$$
Therefore $b_i\sim \fn {r\choose i} a_r (\fn^\dagger)^{r-i}$, from which the first displayed equivalences follow.
Now~$\operatorname{dwt}(A\fn)=0$ and so $\fv(A\fn) = b_r/b_0 = (\fn a_r)/A(\fn)\sim (\fn^\dagger)^{-r}$ as claimed.
 \end{proof}

\noindent
Let $\fv\in K^\times$ with $\fv \nasymp 1$; so we have the proper convex subgroup of $\Gamma$ given by
$$\Delta(\fv)\ =\ \big\{\gamma\in \Gamma:\, \gamma=o(v\fv)\big\}\ =\ \big\{\gamma\in \Gamma:\, [\gamma]<[\fv]\big\}.$$ 
If $K$ is $H$-asymptotic, then we also have the convex subgroup
$$\Delta\ =\ \big\{\gamma\in\Gamma:\, \gamma^\dagger>v(\fv^\dagger) \big\}$$
of $\Gamma$ with $\Delta\subseteq\Delta(\fv)$.  If $K$ is $H$-asymptotic of  Hardy type (Section~\ref{sec:logder}), then we have ${\Delta = \Delta(\fv)}$,
and hence the relations $\preceq_{\Delta(\fv)}$, $\prec_{\Delta(\fv)}$, $\asymp_{\Delta(\fv)}$ agree 
with~$\preceq_\fv$,~$\prec_{\fv}$,~$\asymp_{\fv}$, respectively, from [ADH, p.~407]. 

\begin{cor}\label{cor:nepsilon} 
Suppose $\fn^\dagger\succeq 1$ and 
$[\fn^\dagger] < [\fv]$ where $\fv:=\fv(A)$ \textup{(}so $0\ne \fv\prec 1)$.  
Let $B\in K[\der]$ and $w\geq r$ be such that
$B \prec_{\Delta(\fv)} \fv^w A$. 
Then for all but finitely many~$q\in\Q^>$ we have $\fw:=\fv(A\fn^q)\asymp_{\Delta(\fv)}\fv$  and
$B\fn^q \prec_{\Delta(\fw)} \fw^w A\fn^q$.
\end{cor}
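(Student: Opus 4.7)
\smallskip
\noindent
\textbf{Proof plan for Corollary~\ref{cor:nepsilon}.}
Write $\Delta := \Delta(\fv)$. The strategy is to apply Lemma~\ref{lem:nepsilon, refined} twice---once to $A$ and once to $B$---and observe that the ``correction factors'' produced by that lemma are $\asymp_\Delta 1$, so the estimate $B \prec_\Delta \fv^w A$ transfers to $B\fn^q \prec_\Delta \fw^w A\fn^q$.

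First I would apply Lemma~\ref{lem:nepsilon, refined} to $A$: choose $\fm \in K^\times$ with $v\fm + v(A) = \min_{0\le j\le r} v(a_j) + j\,v(\fn^\dagger)$; that lemma gives $[\fm] \leq [\fn^\dagger]$, so by the hypothesis $[\fn^\dagger]<[\fv]$ we have $v\fm \in \Delta$, i.e.\ $\fm \asymp_\Delta 1$, and for all but finitely many $q \in \Q^>$ one has
\[
A\fn^q \ \asymp\ \fm\,\fn^q\, A, \qquad \fv/\fw \ \asymp\ \fm,
\]
where $\fw := \fv(A\fn^q)$. For such $q$, $v\fw - v\fv = -v\fm \in \Delta$, so $\fw \asymp_\Delta \fv$; in particular $v\fw \notin \Delta$ and $\Delta(\fw) = \Delta$, which is the first part of the conclusion.

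If $B=0$ the remaining inequality is trivial for those $q$ with $A\fn^q \neq 0$. Otherwise, apply Lemma~\ref{lem:nepsilon, refined} a second time, now to $B$ in place of $A$: this produces $\fm_B \in K^\times$ with $[\fm_B] \leq [\fn^\dagger] < [\fv]$, hence $\fm_B \asymp_\Delta 1$, and for all but finitely many $q \in \Q^>$ also $B\fn^q \asymp \fm_B\,\fn^q\, B$. For any $q$ satisfying both sets of ``generic'' conditions, I would then compute directly
\[
v(B\fn^q) \ =\ v(\fm_B) + v(\fn^q) + v(B), \qquad v(\fw^w A\fn^q) \ =\ w\,v(\fw) + v(\fm) + v(\fn^q) + v(A),
\]
so that
\[
v(\fw^w A\fn^q) - v(B\fn^q)\ =\ \bigl(w\,v(\fv) + v(A) - v(B)\bigr) \ +\ w\bigl(v(\fw) - v(\fv)\bigr) \ +\ v(\fm) - v(\fm_B).
\]
The first summand exceeds every element of $\Delta$ because $B \prec_\Delta \fv^w A$, while the three remaining summands all lie in $\Delta$. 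Hence $v(\fw^w A\fn^q) - v(B\fn^q) > \Delta$, i.e.\ $B\fn^q \prec_\Delta \fw^w A\fn^q$, which is the desired statement since $\Delta(\fw)=\Delta$.

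The proof is essentially bookkeeping, once one sees that both error terms $\fm,\fm_B$ arising from Lemma~\ref{lem:nepsilon, refined} are $\asymp_\Delta 1$ thanks to the bound $[\fm],[\fm_B]\le [\fn^\dagger] < [\fv]$. The only minor point to watch is that each application of that lemma excludes a finite set of $q$, but the union of two finite sets is finite, so the final statement remains valid for all but finitely many $q \in \Q^>$.
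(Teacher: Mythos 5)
Your proof follows essentially the same route as the paper's: apply Lemma~\ref{lem:nepsilon, refined} to both $A$ and $B$, observe that the resulting factors $\fm,\fm_B$ satisfy $[\fm],[\fm_B]\le[\fn^\dagger]<[\fv]$ and hence are $\asymp_{\Delta(\fv)}1$, and then transfer the bound $B\prec_{\Delta(\fv)}\fv^{w}A$; the paper just does the bookkeeping by the asymptotic chain $B\fn^q\asymp\fm_B\fn^q B\prec_{\Delta(\fv)}\fm\fn^q\fv^{w}A\asymp\fv^{w}A\fn^q$ instead of your explicit valuation decomposition, and your remark that $\Delta(\fw)=\Delta(\fv)$ is the same fact the paper uses implicitly. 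One slip in your final two sentences: with the convention $a\prec b\Leftrightarrow va>vb$, the hypothesis $B\prec_{\Delta}\fv^{w}A$ says $v(B)-v(\fv^{w}A)>\Delta$, so your first summand $w\,v(\fv)+v(A)-v(B)$ lies \emph{below} $\Delta$, not above it; consequently $v(\fw^{w}A\fn^q)-v(B\fn^q)$ is below $\Delta$, equivalently $v(B\fn^q)-v(\fw^{w}A\fn^q)>\Delta$, which is exactly what $B\fn^q\prec_{\Delta}\fw^{w}A\fn^q$ means — so your stated conclusion is correct, but the intermediate ``exceeds''/``$>\Delta$'' claims have the direction reversed and, taken literally, would yield the opposite relation.
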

\begin{proof} The case $B=0$ is trivial, so
assume $B\neq 0$.  Take $\fm$ as in Lemma~\ref{lem:nepsilon, refined}, and take $\fm_B$ likewise with $B$ in place of $A$. By this lemma, $[\fm],[\fm_B]\leq[\fn^\dagger]< [\fv]$, 
hence~$\fm, \fm_B\asymp_{\Delta(\fv)} 1$. 
Moreover, for all but finitely many $q\in\Q^>$ we have $A\fn^q \asymp\fm\fn^q A$,
$B\fn^q \asymp\fm_B\fn^q B$, and $\fv/\fw\asymp\fm$ where $\fw:=\fv(A\fn^q)$; assume that $q\in \Q^{>}$ has these properties. Then 
$B \prec_{\Delta(\fv)} \fv^w A$ yields 
$$B\fn^q\ \asymp\ \fm_B \fn^q B\ 
\prec_{\Delta(\fv)} \fm \fn^q \fv^w  A\ \asymp\ \fv^w A\fn^q.$$
Now $\fm\asymp_{\Delta(\fv)} 1$ gives $\fv\asymp_{\Delta(\fv)} \fw$, hence 
$B\fn^q \prec_{\Delta(\fw)} \fw^w A\fn^q$. 
\end{proof}

\subsection*{The behavior of the span under compositional conjugation}  If $K$ is $H$-asymptotic with asymptotic integration, then $\Psi\cap \Gamma^{>}\ne \emptyset$, but it is convenient not to require ``asymptotic integration'' in some lemmas below. Instead: {\em In this subsection~$K$ is $H$-asymptotic and ungrounded  with $\Psi\cap \Gamma^{>}\ne \emptyset$.}\/ 
We let~$\phi$,~$\fv$  range over~$K^\times$. We say that
$\phi$ is {\em active\/} if $\phi$ is active in $K$. 
Recall from [ADH, pp.~290--292] that $\derdelta$ denotes the derivation $\phi^{-1}\der$ of~$K^\phi$, and that
\begin{equation}\label{eq:Aphi}
A^\phi\ =\ a_r\phi^r\derdelta^r+\text{lower order terms in $\derdelta$.}
\end{equation}

\begin{lemma}\label{lem:v(Aphi)}
Suppose $\fv:=\fv(A)\prec^\flat 1$ and $\phi\preceq 1$ is active. Then 
$$A\ \asymp_{\Delta(\fv)}\ A^\phi,\qquad \fv\ \asymp_{\Delta(\fv)}\ \fv(A^\phi)\ \prec^\flat\ 1, \qquad \fv, \fv(A^\phi)\ \prec_{\phi}^\flat\ 1.
$$
\end{lemma}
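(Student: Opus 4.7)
The plan is to normalize $A$, establish a key archimedean inequality, and then expand the coefficients of $A^\phi$ directly. Since all three conclusions are invariant under scaling $A$ by an element of $K^\times$, I would first divide by $a_m$ to arrange $a_m=1$, so that $a_r=\fv$, $a_j\preceq 1$ for all $j$, and $a_j\prec 1$ for $j>m$.

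The crux is the archimedean inclusion
\[ (\ast)\qquad v(\phi)\ \in\ \Delta(\fv), \]
which in turn gives $\Gamma^\flat\cup\Gamma^\flat_\phi\subseteq\Delta(\fv)$ (using $\Gamma^\flat=\{\gamma:\psi(\gamma)>0\}$, $\psi(v(\fv))\le 0$, and the $H$-type comparison of archimedean classes). I would prove $(\ast)$ by contradiction: if $[v(\phi)]\ge[v(\fv)]$, pick $\delta\ne 0$ with $v(\phi)\le\psi(\delta)$, available since $\phi$ is active. Then $\psi(\delta)\ge v(\phi)>0$, so $\delta\in\Gamma^\flat$, and $v(\fv)>\Gamma^\flat$ forces $[\delta]<[v(\fv)]$. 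On the other hand $[\psi(\delta)]<[\delta]$ by [ADH, 9.2.10(iv)], and $[v(\phi)]\le[\psi(\delta)]$, so $[v(\fv)]\le[v(\phi)]\le[\psi(\delta)]<[\delta]<[v(\fv)]$, contradiction.

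Next I would expand $A^\phi=\sum_j b_j\derdelta^j$ using $\der^i=\sum_{j\le i}c_{i,j}\derdelta^j$ with $c_{0,0}=1$ and the recursion $c_{i+1,j}=\phi\,c_{i,j-1}+(c_{i,j})'$. An easy induction, invoking $\phi^\dagger\preceq 1$ (which follows from $\derdelta\mathcal O\subseteq\mathcal O$ applied to $\phi\in\mathcal O$), yields $c_{i,j}\preceq\phi^j$ with $c_{i,i}=\phi^i$. Consequently $b_r=a_r\phi^r=\fv\phi^r$, and for every $j$ the identity $b_j=a_j\phi^j+\sum_{i>j}a_ic_{i,j}$ gives $v(b_j)\ge jv(\phi)$; the inequality is strict for $j>m$ (then $a_j\prec 1$), while for $j=m$ the dominant contribution $a_m\phi^m=\phi^m$ yields $b_m\sim\phi^m$. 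In particular $v(A^\phi)\in[0,mv(\phi)]\subseteq\Delta(\fv)$ by $(\ast)$, giving $A\asymp_{\Delta(\fv)}A^\phi$.

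Finally, with $m':=\dwt(A^\phi)$, the strict inequalities $v(b_j)>v(b_m)$ for $j>m$ imply $m'\le m$, and
\[ v(\fv(A^\phi))-v(\fv)\ =\ r\,v(\phi)-v(A^\phi)\ \in\ \Delta(\fv), \]
yielding $\fv\asymp_{\Delta(\fv)}\fv(A^\phi)$. Since $v(\fv)>\Gamma^\flat$ and any $\epsilon\in\Delta(\fv)$ is archimedean-dominated by $v(\fv)$, the sum $v(\fv(A^\phi))=v(\fv)+\epsilon$ still strictly dominates every element of $\Gamma^\flat$, so $\fv(A^\phi)\prec^\flat 1$; the claims $\fv,\fv(A^\phi)\prec_\phi^\flat 1$ follow at once from $\Gamma^\flat_\phi\subseteq\Gamma^\flat$. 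The main obstacle is $(\ast)$; the coefficient expansion is routine once it is in hand.
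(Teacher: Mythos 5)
Your normalization, the expansion $A^\phi=\sum_j b_j\derdelta^j$ with $c_{i,j}\preceq\phi^j$, $c_{i,i}=\phi^i$, and the deduction of all three conclusions from $(\ast)$ are fine (and in fact replace the paper's appeal to [ADH, 11.1.4] together with the display for $A^\phi$ by a direct computation). The genuine gap is in your proof of the crux $(\ast)$: the step ``$[\psi(\delta)]<[\delta]$ by [ADH, 9.2.10(iv)]''. That lemma (as it is used elsewhere in this paper, e.g.\ in the proofs of Lemmas~\ref{lem:xi zeta} and Corollary~\ref{cor:psi(gamma)<0}) gives $\psi(\gamma)=o(\gamma)$ only under the hypothesis $\psi(\gamma)\le 0$. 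Your $\delta$ satisfies $\psi(\delta)\ge v(\phi)>0$, i.e.\ $\delta\in\Gamma^\flat$, and in that regime the inequality is simply false in general: in $\T$ take $\delta=v(\log x)$, so $\psi(\delta)=v\big(1/(x\log x)\big)$ and $[\psi(\delta)]=[v(x)]>[\delta]$. Thus the chain $[v(\fv)]\le[v(\phi)]\le[\psi(\delta)]<[\delta]<[v(\fv)]$ breaks at its third link, and $(\ast)$ is not established.

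The conclusion $(\ast)$ itself is true, and there are two short correct routes. The paper's: since $\phi$ is active with $\phi\preceq 1$ one has $\phi^\dagger\prec 1\preceq\fv^\dagger$, i.e.\ $\psi(v\phi)>0\ge\psi(v\fv)$ when $v\phi\neq 0$; in an $H$-type couple $[\alpha]\ge[\beta]$ (for nonzero $\alpha,\beta$) forces $\psi(\alpha)\le\psi(\beta)$, so $[v\phi]<[v\fv]$. Alternatively, your own chain can be repaired without comparing $[\psi(\delta)]$ to $[\delta]$: from $\Psi<(\Gamma^{>})'$ applied with $\gamma=\psi(\delta)>0$ one gets $\psi\big(\psi(\delta)\big)>0$, so $\psi(\delta)\in\Gamma^\flat\subseteq\Delta(\fv)$; then $0<v(\phi)\le\psi(\delta)$ and convexity of $\Delta(\fv)$ give $v(\phi)\in\Delta(\fv)$ directly. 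With either repair the rest of your argument goes through.
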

\begin{proof} From $\phi^\dagger\prec 1 \preceq \fv^\dagger$ we get $[\phi]<[\fv]$, so $\phi\asymp_{\Delta(\fv)} 1$. Hence $A^\phi \asymp_{\Delta(\fv)} A$
by~[ADH, 11.1.4]. For the rest we can arrange $A\asymp 1$, so $A^\phi\asymp_{\Delta(\fv)} 1$ and $\fv\asymp a_r$. In view of
\eqref{eq:Aphi}
this yields $\fv(A^\phi)\asymp_{\Delta(\fv)} a_r\phi^r\asymp_{\Delta(\fv)} \fv$. 
So $\fv(A^\phi)^\dagger\asymp \fv^\dagger\succeq 1$, which gives $\fv(A^\phi) \prec^{\flat} 1$, and also $\fv, \fv(A^\phi) \prec_{\phi}^\flat 1$.
\end{proof}

\begin{lemma}\label{lem:eventual value of fv} 
If $\nwt(A)=r$, then $\fv(A^\phi)=1$ eventually, and if $\nwt(A)<r$, then $\fv(A^\phi) \prec_\phi^\flat 1$ eventually.
\end{lemma}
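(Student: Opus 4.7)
The first claim is immediate from the definitions. If $\nwt(A) = r$, then by definition of $\nwt(A)$ as the eventual value of $\dwt(A^\phi)$ there is an active $\phi_0 \in K$ such that $\dwt(A^\phi) = r$ for every active $\phi \preceq \phi_0$. For such $\phi$ the leading coefficient $a_r\phi^r$ of $A^\phi$ (see \eqref{eq:Aphi}) is nonzero, so $\order(A^\phi) = r = \dwt(A^\phi)$, and the index $m$ in the definition $\fv(A^\phi) = a_r\phi^r/b_m^{(\phi)}$ is therefore $m = r$, giving $\fv(A^\phi) = 1$.

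For the second claim, assume $\nwt(A) < r$. The plan is to compute $v(\fv(A^\phi))$ eventually and then show that this value eventually lies outside the convex subgroup $\Gamma^\flat_\phi$. Using \eqref{eq:vAev} with $\gamma = 0$, noting that $\nwt_A(0) = \nwt(A)$ and $v_{A^\phi}(0) = v(A^\phi)$, one gets $v(A^\phi) = v_A^{\operatorname{ev}}(0) + \nwt(A)\,v\phi$ eventually. Combined with $v(a_r\phi^r) = v(a_r) + r\,v\phi$, this yields
\[
v\big(\fv(A^\phi)\big)\ =\ v(a_r\phi^r) - v(A^\phi)\ =\ c + d\,v\phi\qquad\text{eventually,}
\]
where $c := v(a_r) - v_A^{\operatorname{ev}}(0) \in \Gamma$ and $d := r - \nwt(A) \geq 1$. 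In particular $v(\fv(A^\phi))$ grows strictly with $v\phi$ as the latter ranges cofinally in $\Psi^\downarrow$.

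The statement $\fv(A^\phi) \prec^\flat_\phi 1$ amounts, by definition of $\Gamma^\flat_\phi = \{\gamma : \psi(\gamma) > v\phi\}$, to the inequality $\psi(c + d\,v\phi) \leq v\phi$. Since $\psi$ is slowly varying on $\Gamma^{\neq}$ (by [ADH, 6.5.4(ii)]), one has $\psi(c + d\,v\phi) = \psi(v\phi) + o(v\phi)$ once $v\phi$ is large enough that $c + d\,v\phi$ shares its archimedean class with $v\phi$; and in the present $H$-asymptotic setting with $\Psi \cap \Gamma^> \neq \emptyset$, one has $\psi(\gamma) < \gamma$ for every sufficiently large $\gamma \in \Psi^\downarrow$ via [ADH, 6.4.1] or [ADH, 9.2.10]. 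Combining these two estimates yields $\psi(c + d\,v\phi) < v\phi$ eventually, as required. The main technical point is making the slow-variation error term uniform with respect to $v\phi$ while keeping it controlled against the gap $v\phi - \psi(v\phi)$; I expect to handle this by splitting cases according to the archimedean class of $c$ relative to the eventual class of $v\phi$, since once the classes agree the estimate $\psi(c+d\,v\phi) = \psi(v\phi)$ holds outright.
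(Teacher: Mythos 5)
Your first paragraph is fine and matches the paper, and your computation $v\big(\fv(A^\phi)\big)=c+d\,v\phi$ eventually (with $c=v(a_r)-v_A^{\operatorname{ev}}(0)$, $d=r-\nwt(A)\geq 1$) is essentially the multiplicative identity the paper also uses. The genuine gap is in the last step, exactly where you defer to a future case split. Two problems. First, the reduction "$\psi(c+d\,v\phi)=\psi(v\phi)+o(v\phi)$ once $v\phi$ shares its archimedean class with $c+d\,v\phi$" is not available in general: for active $\phi$ the classes $[v\phi]$ need never reach $[c]$ (e.g.\ $A=\der+\ex^x$ over $\T$, where $c=v(\ex^{-x})$ while $[v\phi]=[vx]$ for all sufficiently small active $\phi$); in that situation your "classes agree" clause never triggers. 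Second, and more seriously, the delicate case is $c<0$ with $[c]$ comparable to the relevant $[v\phi]$: then $c+d\,v\phi$ can be positive but of \emph{strictly smaller} archimedean class than $v\phi$ (near-cancellation), so $\psi(c+d\,v\phi)\geq\psi(v\phi)$, possibly strictly, and neither "same class $\Rightarrow$ same $\psi$-value" nor the slow-variation bound gives $\psi(c+d\,v\phi)\leq v\phi$. Even where slow variation applies, the error is $o\big(c+(d-1)v\phi\big)$, not $o(v\phi)$, and combining an $o(\cdot)$ error with the strict inequality $\psi(v\phi)<v\phi$ requires comparing the error against the gap $v\phi-\psi(v\phi)$, which you do not do. So the proposal as written does not close.

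The missing idea is a normalization that makes $c\geq 0$, after which no slow variation is needed. The paper first replaces $A$ by $A^{\phi_0}$ for suitable active $\phi_0$ so that $n:=\nwt(A)=\dwt(A^\phi)=\dwm(A^\phi)$ for \emph{all} active $\phi\preceq 1$; then $v(A^\phi)=v(A)+n\,v\phi$ by [ADH, 11.1.11(i)], and
$$\fv(A^\phi)\ \asymp\ a_r\phi^r/a_n\phi^n\ =\ \fv(A)\,\phi^{r-n}\ \preceq\ \phi^{r-n}\ \preceq\ \phi,$$
using $\fv(A)\preceq 1$ (this is where your constant becomes nonnegative), $\phi\preceq 1$ and $r-n\geq 1$. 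In your notation this says $c+d\,v\phi\geq v\phi$, so only monotonicity of $\psi$ is needed, not slow variation. Finally, $\phi\prec^\flat_\phi 1$ eventually: take $x\in K^\times$ with $x\nasymp 1$, $x'\asymp 1$; then $x^{-1}\asymp x^\dagger$ is active, and for active $\phi\preceq x^{-1}$ one has $\psi(v\phi)\leq\psi(-vx)=-vx\leq v\phi$, i.e.\ $v\phi\notin\Gamma^\flat_\phi$. Hence $\fv(A^\phi)\preceq\phi\prec^\flat_\phi 1$ eventually. If you want to keep your un-normalized constant $c$, you must genuinely handle the near-cancellation case; the compositional conjugation is the clean way to avoid it.
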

\begin{proof}
Clearly, if $\nwt(A)=r$, then $\dwt(A^\phi)=r$ and so $\fv(A^\phi)=1$ eventually.
Suppose $\nwt(A)<r$. To show that $\fv(A^\phi) \prec_\phi^\flat 1$ eventually, we may replace $A$ by~$A^{\phi_0}$ for
suitable active $\phi_0$ and assume that $n:=\nwt(A) = \dwt(A^\phi) = \dwm(A^\phi)$ for all active $\phi\preceq 1$.
Thus $v(A^\phi)=v(A)+nv\phi$ for all active $\phi\preceq 1$ by [ADH, 11.1.11(i)]. Using~\eqref{eq:Aphi} 
we therefore obtain for active $\phi\preceq 1$:
$$\fv(A^\phi)\ \asymp\ a_r\phi^r/a_n\phi^n\ =\ \fv(A)\phi^{r-n}\ \preceq\ \phi^{r-n}\ \preceq\ \phi.$$
Take $x\in K^\times$ with $x\nasymp 1$ and $x'\asymp 1$; then $x\succ 1$, so $x^{-1}\asymp x^\dagger\prec 1$ is active. Hence for active $\phi\preceq x^{-1}$ we have $\phi \prec^\flat_\phi 1$ and thus $\fv(A^\phi)\prec^\flat_\phi 1$. 
\end{proof}

\begin{cor}\label{coruplnwteq}
The following conditions on $K$ are equivalent:
\begin{enumerate}
\item[\textup{(i)}] $K$ is $\upl$-free;
\item[\textup{(ii)}]  $\nwt(B)\leq 1$ for all $B\in K[\der]$ \textup{(}so~$\fv(B^\phi) \prec_\phi^\flat 1$ eventually\textup{)};
\item[\textup{(iii)}] $\nwt(B)\leq 1$ for all $B\in K[\der]$ of order $2$.
\end{enumerate}
\end{cor}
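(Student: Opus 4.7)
First, for (i) $\Rightarrow$ (ii) the plan is to invoke Corollary~\ref{cor:13.7.10} with $L := K$ and $\gamma := 0$: since $\nwt(B) = \nwt_B(0)$ for every nonzero $B \in K[\der]$, $\upl$-freeness delivers $\nwt(B) \leq 1$ at once. The parenthetical consequence---that $\fv(B^\phi) \prec_\phi^\flat 1$ eventually when $\order B \geq 2$---then follows from Lemma~\ref{lem:eventual value of fv}, since $\nwt(B) \leq 1 < \order B$ in that case. The implication (ii) $\Rightarrow$ (iii) is immediate by restricting the quantifier to operators of order $2$.

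The substance is in (iii) $\Rightarrow$ (i), which I will prove contrapositively: assume $K$ is not $\upl$-free, and exhibit $B \in K[\der]$ of order $2$ with $\nwt(B) = 2$. Unwinding the defining condition of $\upl$-freeness from Section~\ref{sec:uplupo-freeness}, its negation together with the standing hypothesis on $K$ yields an element $\upl \in K$ such that $\upl + \phi^\dagger \prec \phi$ holds for all sufficiently small active $\phi$ in $K$. In the sub-case where $K$ has asymptotic integration, $\upl$ is a pseudolimit of $(\upl_\rho)$ in $K$ and the estimate is [ADH, 11.5.9]; in the sub-case where $K$ has a gap $vf$ with $f \in K^\times$, the choice $\upl := -f^\dagger \in K$ satisfies the same estimate by the same reference.

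With $\upl$ in hand, set $B := (\der + \upl)\der = \der^2 + \upl\,\der \in K[\der]$, of order $2$. A direct computation using $\der = \phi\,\derdelta$ yields
$$B^\phi\ =\ \phi^2\derdelta^2 + \phi(\phi^\dagger + \upl)\derdelta.$$
Eventually $\phi^\dagger + \upl \prec \phi$, so the middle coefficient $\phi(\phi^\dagger + \upl)$ is eventually $\prec \phi^2$; hence $v(B^\phi) = 2v\phi$ is attained by the leading coefficient, $\dwt(B^\phi) = 2$ eventually, and therefore $\nwt(B) = 2$, contradicting (iii).

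The main obstacle is extracting, uniformly in the two sub-cases, the estimate $\upl + \phi^\dagger \prec \phi$ for \emph{all} sufficiently small active $\phi$ in $K$---not merely for $\phi$ of the special form $g^\dagger$ that appears directly from the negated quantifier. This is precisely the content of [ADH, 11.5.9] in each sub-case, and its careful use is the only non-routine ingredient of the argument.
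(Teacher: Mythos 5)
Your proof is correct and follows essentially the same route as the paper: (i)$\Rightarrow$(ii) via the $\nwt\leq 1$ bound for $\upl$-free $K$ together with Lemma~\ref{lem:eventual value of fv}, and for (iii)$\Rightarrow$(i) the identical witness $B=(\der+\upl)\der=\der^2+\upl\der$ with the same computation $B^\phi=\phi^2\derdelta^2+\phi(\phi^\dagger+\upl)\derdelta$ giving $\dwt(B^\phi)=2$ eventually. The only difference is bookkeeping: the paper obtains $\upl\in K$ with $\upl+\phi^\dagger\prec\phi$ for all active $\phi$ in one stroke from [ADH, 11.6.1], whereas you re-derive it via the gap/asymptotic-integration dichotomy (where the pseudolimit sub-case rests on [ADH, 11.5.6] rather than 11.5.9, a harmless misattribution).
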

\begin{proof}
The implication (i)~$\Rightarrow$~(ii) follows from [ADH, 13.7.10] and Lemma~\ref{lem:eventual value of fv}, and
(ii)~$\Rightarrow$~(iii) is clear. Suppose  $K$ is not $\upl$-free. Take~$\upl\in K$   such that $\phi^\dagger+\upl\prec\phi$ for all active $\phi$   ([ADH, 11.6.1]);
set~$B:=(\der+\upl)\der=\der^2+\upl\der$. Then for active $\phi$ we have
$B^\phi=\phi^2\big(\derdelta^2+(\phi^\dagger+\upl)\phi^{-1}\derdelta\big)$, so
$\dwt(B^\phi)=2$. Thus~(iii)~$\Rightarrow$~(i). 
\end{proof}

\noindent
Lemma~\ref{lem:v(Aphi)} leads to an  ``eventual'' version of Corollary~\ref{cor:Atwist}:

\begin{lemma}\label{cor:excev stability} Suppose $K$ is $\upl$-free and $B\in K[\der]$ is such that $\order(B)\leq r$ and~$B\prec_{\Delta(\fv)} \fv^{r+1}A$, where $\fv:=\fv(A)\prec^\flat 1$.  Then 
$\exc^{\ev}(A+B) = \exc^{\ev}(A)$.
\end{lemma}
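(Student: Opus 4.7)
The strategy is to apply Corollary~\ref{cor:Atwist} after compositional conjugation by suitable active $\phi \preceq 1$; since $\phi$ is active, the conjugated field $K^\phi$ has small derivation, which is a key hypothesis of that corollary.

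First I verify that the operator hypothesis of Corollary~\ref{cor:Atwist} transfers under compositional conjugation. Lemma~\ref{lem:v(Aphi)} (applicable since $\fv := \fv(A) \prec^\flat 1$) yields $A \asymp_{\Delta(\fv)} A^\phi$ and $\fv \asymp_{\Delta(\fv)} \fv(A^\phi)$ for every active $\phi \preceq 1$; Lemma~\ref{lem:same ndeg} gives $B \asymp^\flat B^\phi$, and since $v\fv \notin \Gamma^\flat$ (as $\fv \prec^\flat 1$) forces $\Gamma^\flat \subseteq \Delta(\fv)$, we also have $B \asymp_{\Delta(\fv)} B^\phi$. Hence the hypothesis $B \prec_{\Delta(\fv)} \fv^{r+1} A$ translates to $B^\phi \prec_{\Delta(\fv)} \fv(A^\phi)^{r+1} A^\phi$ in $K^\phi$; because $0 \in \Delta(\fv)$, this coarsened strict inequality implies the ordinary $B^\phi \prec \fv(A^\phi)^{r+1} A^\phi$, which is the operator hypothesis of Corollary~\ref{cor:Atwist} in $K^\phi$.

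Now fix $\gamma \in \Gamma$ and pick $\fn \in K^\times$ with $v\fn = \gamma$; the task reduces to $\nwt_{A+B}(\gamma) = \nwt_A(\gamma)$, which yields the set equality $\exc^{\ev}(A+B) = \exc^{\ev}(A)$. The remaining hypothesis of Corollary~\ref{cor:Atwist} reads $\fn^{\dagger_\phi} \preceq \fv(A^\phi)^{-1}$, i.e., $v\phi \leq \psi(\gamma) + v\fv(A^\phi)$ (trivial for $\gamma = 0$ by taking $\fn = 1$). The main obstacle is reconciling this upper bound on $v\phi$ with the ``eventually'' framework, which requires $v\phi$ arbitrarily large. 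The resolution is Lemma~\ref{lem:nepsilon, refined, 2}: applied to $A^\phi$ and $\fn$ in $K^\phi$, it shows that whenever $v\phi > \psi(\gamma) + v\fv(A^\phi)$ one has $\dwt(A^\phi \fn) = 0$. Consequently, if $\nwt_A(\gamma) \geq 1$, the eventual stabilization $\dwt_{A^\phi}(\gamma) = \nwt_A(\gamma) \geq 1$ automatically forces $v\phi \leq \psi(\gamma) + v\fv(A^\phi)$ throughout its range, placing us squarely inside Corollary~\ref{cor:Atwist}'s applicability; that corollary then gives $\dwt_{(A+B)^\phi}(\gamma) = \dwt_{A^\phi}(\gamma) = \nwt_A(\gamma)$, whence $\nwt_{A+B}(\gamma) \geq \nwt_A(\gamma)$. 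The reverse inequality follows by symmetry: by Lemma~\ref{lem:fv of perturbed op}, $\fv(A+B) \sim \fv$ and $(-B) \prec_{\Delta(\fv(A+B))} \fv(A+B)^{r+1}(A+B)$, so the same argument with $A+B$ in place of $A$ yields $\nwt_A(\gamma) \geq \nwt_{A+B}(\gamma)$. For $\gamma$ outside $\exc^{\ev}(A) \cup \exc^{\ev}(A+B)$ both Newton weights vanish, completing the proof.
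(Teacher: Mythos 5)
Your proof is correct in substance, but it follows a genuinely different route from the paper's. The paper uses $\upl$-freeness in an essential way: by Corollary~\ref{cor:13.7.10} the relevant Newton quantities are invariant under passing to extensions, so one may replace $K$ by an $\upo$-free, algebraically closed, newtonian extension, where $\exc^{\ev}(A)=v(\ker^{\neq}A)$ (Proposition~\ref{kerexc}) and $A$ splits; a kernel element $\fn$ with $v\fn=\gamma$ then gives the steepness bound $\fn^\dagger\preceq\fv^{-1}$ via Corollary~\ref{cor:bound on linear factors}, after which Corollary~\ref{cor:Atwist} is applied in $K$ and in each $K^\phi$. You instead extract the needed bound $\phi^{-1}\fn^\dagger\preceq\fv(A^\phi)^{-1}$ directly from the exceptionality of $\gamma$, via the contrapositive of the dominant-coefficient estimate in Lemma~\ref{lem:nepsilon, refined, 2}, and then invoke Corollary~\ref{cor:Atwist} in $K^\phi$; no extension is needed, and in fact your argument never uses $\upl$-freeness, so, written out carefully, it proves the statement under the standing hypotheses of this subsection alone --- a genuine strengthening (compare the paper's $r=1$ variant, Corollary~\ref{cor:excev stability, r=1}, which still assumes asymptotic integration). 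Two bookkeeping points should be made explicit to close the argument. First, Lemma~\ref{lem:nepsilon, refined, 2} lives in a subsection whose standing assumption is that the ambient field is $\d$-valued, which $K^\phi$ need not be here; this is harmless, since its proof uses only small derivation (via Lemma~\ref{Riccatipower}), and in any case the displayed conclusion $A^\phi(\fn)\sim A^\phi\fn$ already yields $\dwm_{A^\phi}(\gamma)=0$, contradicting $\gamma\in\exc^{\ev}(A)=\bigcap_\phi\exc(A^\phi)$, which is all the contrapositive requires --- but you should say one of these things rather than cite the lemma as if directly applicable. Second, Corollary~\ref{cor:Atwist} requires $\derdelta\mathcal O\subseteq\smallo$ in $K^\phi$, which is slightly more than the small derivation you invoke; it does hold, since $K$ is ungrounded and $H$-asymptotic, so $\Psi_{K^\phi}\cap\Gamma^{>}\neq\emptyset$ (the paper's proof uses the same fact tacitly when it applies that corollary in $K^\phi$). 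With these two remarks added, your symmetry step via Lemma~\ref{lem:fv of perturbed op} is fine and the proof is complete.
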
 
\begin{proof} By [ADH, 10.1.3, 11.7.18] and Corollary~\ref{cor:13.7.10} we can pass to an extension to arrange that $K$ is
$\upo$-free. Next, by [ADH, 11.7.23] and \eqref{eq:14.0.1} we extend further to arrange that $K$ is algebraically closed and newtonian, and thus $\d$-valued by Lemma~\ref{lem:ADH 14.2.5}. Then $\exc^{\ev}(A)=v(\ker^{\neq} A)$ by Proposition~\ref{kerexc}, and $A$ splits over~$K$ by~\eqref{eq:14.5.3}.  It remains to show that $\exc^{\ev}(A)\subseteq \exc^{\ev}(A+B)$: the reverse inclusion then follows by
interchanging $A$ and $A+B$, using $\fv(A)\sim \fv(A+B)$. 
Let~${\gamma\in\exc^{\ev}(A)}$. Take~$\fn\in\ker^{\neq} A$ with $v\fn=\gamma$.  Then $A\in K[\der](\der-\fn^\dagger)$
by [ADH, 5.1.21] and so~$\fn^\dagger\preceq \fv^{-1}$,
by [ADH, 5.1.22] and  Corollary~\ref{cor:bound on linear factors}. Now
$\exc^{\ev}(A)\subseteq \exc(A)$, so~${\gamma=v\fn\in \exc(A+B)}$ by Corollary~\ref{cor:Atwist}.  Let $\phi\preceq 1$ be active; it remains to show that then~$\gamma\in \exc\big((A+B)^\phi\big)$.  By Lemma~\ref{lem:v(Aphi)}, $A^\phi \asymp_{\Delta(\fv)} A$; 
also   $B^\phi\preceq B$ by~[ADH, 11.1.4].
Lemma~\ref{lem:v(Aphi)} gives $\fv \asymp_{\Delta(\fv)} \fv(A^\phi)$, hence
$B^\phi \prec_{\Delta(\fv)} \fv(A^\phi)^{r+1}A^\phi$. Thus with $K^\phi$, $A^\phi$, $B^\phi$ in the role of $K$, $A$, $B$, the above argument leading to $\gamma\in \exc(A+B)$ gives $\gamma\in \exc(A^\phi+B^\phi)=\exc\big((A+B)^\phi\big)$.
\end{proof} 

\noindent
For $r=1$ we can weaken the hypothesis of  $\upl$-freeness:

\begin{cor}\label{cor:excev stability, r=1}
Suppose $K$ has asymptotic integration,  $r=1$,  and $B\in K[\der]$ of order~$\leq 1$ satisfies  $B\prec_{\Delta(\fv)} \fv^{2}A$, where $\fv:=\fv(A)\prec^\flat 1$.  Then $\exc^{\ev}(A+B) = \exc^{\ev}(A)$. 
\end{cor}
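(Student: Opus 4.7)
The plan is to adapt the proof of Lemma~\ref{cor:excev stability}, exploiting two simplifications available at order one. First, $A$ splits trivially, so the excursion through Proposition~\ref{kerexc} (which in the higher-order proof was the reason for passing to a $\upl$-free, then $\upo$-free, then newtonian extension) can be avoided. Second, the identification of $\exc^{\ev}(A)$ with the $v\fn$ for which $\fn^\dagger-g\in\I(K)$, recalled in the first-order subsection of Section~\ref{sec:lindiff}, is available as soon as $K$ has asymptotic integration.  To begin, I would invoke Lemma~\ref{lem:fv of perturbed op} to read off $\order(A+B)=1$, $A+B\sim A$, and $\fv(A+B)\sim\fv$; consequently $\Delta(\fv(A+B))=\Delta(\fv)$, $\fv(A+B)\prec^{\flat}1$, and $-B\prec_{\Delta(\fv(A+B))}\fv(A+B)^{2}(A+B)$. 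Swapping the roles of $A$ and $A+B$ thus reduces me to the single inclusion $\exc^{\ev}(A)\subseteq\exc^{\ev}(A+B)$.

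After rescaling I may take $A=\der-g$; the condition $\fv=-g^{-1}\prec 1$ forces $g\succ 1$, so $\fv^{-1}\asymp g$. For $\gamma\in\exc^{\ev}(A)$, the first-order description of $\exc^{\ev}$ combined with asymptotic integration yields $\fn\in K^{\times}$ with $v\fn=\gamma$ and $\fn^{\dagger}-g\in\I(K)\subseteq\smallo$, so $\fn^{\dagger}\sim g$ and in particular $\fn^{\dagger}\preceq\fv^{-1}$.  Corollary~\ref{cor:Atwist}, applied with $r=1$ to $(A,B,\fn)$ and using that $\der\mathcal{O}\subseteq\smallo$ by [ADH,~9.1.3(iv)] and that $B\prec_{\Delta(\fv)}\fv^{2}A$ entails $B\prec\fv^{2}A$, then gives $v\fn\in\exc(A+B)$.

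To upgrade this to eventuality, I would fix an arbitrary active $\phi\preceq 1$ and repeat the argument with $K^{\phi},A^{\phi},B^{\phi},\fn$ in place of $K,A,B,\fn$. From $\Psi_{K^{\phi}}=\Psi-v\phi$ and $\Psi\cap\Gamma^{>}\neq\emptyset$ one still has $\Psi_{K^{\phi}}\cap\Gamma^{>}\neq\emptyset$, so $\derdelta\mathcal{O}\subseteq\smallo$ in $K^{\phi}$. Lemma~\ref{lem:v(Aphi)} provides $A^{\phi}\asymp_{\Delta(\fv)}A$ and $\fv(A^{\phi})\asymp_{\Delta(\fv)}\fv$, and combined with $B^{\phi}\preceq B$ it yields $B^{\phi}\prec\fv(A^{\phi})^{2}A^{\phi}$; the bound $\phi^{-1}\fn^{\dagger}\preceq\fv(A^{\phi})^{-1}$ comes from the valuation identity $v(\phi^{-1}\fn^{\dagger})=-v\phi-v\fv=-v\bigl(\fv(A^{\phi})\bigr)$, which one reads off by rewriting $A$ as $\phi\derdelta-g$ in $K^{\phi}[\derdelta]$. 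A second application of Corollary~\ref{cor:Atwist}, now inside $K^{\phi}$, then gives $v\fn\in\exc(A^{\phi}+B^{\phi})=\exc\bigl((A+B)^{\phi}\bigr)$, and since $\phi$ was arbitrary this produces $\gamma\in\exc^{\ev}(A+B)$. The hardest part will be the bookkeeping in this last paragraph: checking uniformly in active $\phi\preceq 1$ that Lemma~\ref{lem:v(Aphi)} transfers all three hypotheses of Corollary~\ref{cor:Atwist} from $K$ to $K^{\phi}$ without degradation, so that the bound $\fn^{\dagger}\preceq\fv^{-1}$ persists after twisting. Conceptually nothing new beyond Lemma~\ref{cor:excev stability} is needed; we simply trade Proposition~\ref{kerexc} (and its $\upl$-freeness prerequisites) for the first-order characterization of $\exc^{\ev}$ in Section~\ref{sec:lindiff}.
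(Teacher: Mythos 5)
Your argument is correct, but it reaches the key bound by a different route than the paper. The paper's proof is a two-step reduction: first replace $K$ by an immediate extension arranged (via Lemma~\ref{lem:achieve I(K) subseteq Kdagger}) to satisfy $\I(K)\subseteq K^\dagger$, so that Lemma~\ref{lem:v(ker)=exc, r=1} gives $\exc^{\ev}(A)=v(\ker^{\neq}A)$; then it repeats the proof of Lemma~\ref{cor:excev stability} verbatim, the witness $\fn$ being an actual kernel element, with the bound $\fn^\dagger\preceq\fv^{-1}$ (and its analogue in $K^\phi$) coming from the factorization $A\in K[\der](\der-\fn^\dagger)$ and Corollary~\ref{cor:bound on linear factors}. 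You instead stay inside $K$ and extract the witness directly from the first-order description $\exc^{\ev}(\der-g)=\{vy:\ v(g-y^\dagger)>\Psi\}$, getting $\fn^\dagger\sim g\asymp\fv^{-1}$, and you verify the twisted hypothesis of Corollary~\ref{cor:Atwist} in $K^\phi$ by the explicit computation $\fv(A^\phi)\asymp\phi\fv$ (consistent with Lemma~\ref{lem:v(Aphi)} since $[\phi]<[\fv]$). What each buys: your version avoids the extension step altogether, and in particular does not need the (tacit) invariance of $\exc^{\ev}$ under immediate extensions that underlies the paper's reduction (Lemma~\ref{lemexc, order 1}); the paper's version is shorter on the page because it reuses the displayed argument of Lemma~\ref{cor:excev stability} and never computes $\fv(A^\phi)$ explicitly. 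One small point of justification: your claim that $\Psi_{K^\phi}\cap\Gamma^{>}\ne\emptyset$ (needed for $\derdelta\mathcal O\subseteq\smallo$ in $K^\phi$) does not follow from $\Psi\cap\Gamma^{>}\ne\emptyset$ alone; use instead that $\phi$ is active, so $v\phi\in\Psi^{\downarrow}$, and that $K$ is ungrounded, so $\Psi$ has no largest element, whence some $\psi_1\in\Psi$ satisfies $\psi_1>v\phi$. With that repair the bookkeeping in your last paragraph goes through exactly as you describe.
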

{\sloppy
\begin{proof}
Using Lemma~\ref{lem:achieve I(K) subseteq Kdagger} we replace $K$ by an immediate extension to arrange~$\I(K)\subseteq K^\dagger$.
Then  $\exc^{\ev}(A)=v(\ker^{\neq} A)$ by Lemma~\ref{lem:v(ker)=exc, r=1}. Now argue as in the proof of Lem\-ma~\ref{cor:excev stability}.
\end{proof}}

\noindent
{\em In the next proposition and its corollary $K$ is $\d$-valued  with algebraically closed constant field $C$ and divisible group~$K^\dagger$ of logarithmic derivatives}.
We choose a complement $\Lambda$ of the $\Q$-linear subspace $K^\dagger$ of $K$. Then we have the set $\exc^{\operatorname{u}}(A)$ of ultimate exceptional values of $A$ with respect to $\Lambda$. The following stability result  will be crucial in Section~\ref{sec:ultimate}:

\begin{prop}\label{prop:stability of excu} 
Suppose $K$ is $\upo$-free, $\I(K)\subseteq K^\dagger$, and
$B\in K[\der]$ of order $\le r$ satisfies  $B\prec_{\Delta(\fv)} \fv^{r+1}A$, where $\fv:=\fv(A)\prec^\flat 1$. 
Then $\exc^{\operatorname{u}}(A+B)=\exc^{\operatorname{u}}(A)$.
\end{prop}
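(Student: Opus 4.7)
The plan is to reduce the global statement $\exc^{\operatorname{u}}(A+B)=\exc^{\operatorname{u}}(A)$ to the per-$\lambda$ statements $\exc^{\ev}(A_\lambda)=\exc^{\ev}(A_\lambda+B_\lambda)$ for $\lambda\in\Lambda$, since
\[
\exc^{\operatorname{u}}(A)\ =\ \bigcup_{\lambda\in\Lambda}\exc^{\ev}(A_\lambda)\qquad\text{and}\qquad (A+B)_\lambda\ =\ A_\lambda+B_\lambda.
\]
For each fixed $\lambda$, this would follow from Corollary~\ref{cor:excev stability} applied to the twisted pair $(A_\lambda,B_\lambda)$ in place of $(A,B)$. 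The $\upo$-freeness of $K$ gives the required $\upl$-freeness, and twisting preserves order; so what remains to verify are the two analytic hypotheses $\fv(A_\lambda)\prec^\flat 1$ and $B_\lambda\prec_{\Delta(\fv(A_\lambda))}\fv(A_\lambda)^{r+1}A_\lambda$.

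To establish these, I would use the explicit expansion $A_\lambda=\sum_i a_i(\der+\lambda)^i\in K[\der]$ (and similarly for $B_\lambda$) coming from the commutation $u^{-1}\der u=\der+\lambda$, and analyze the span using the twisting-subsection lemmas, in particular Lemmas~\ref{lem:An}, \ref{lem:Atwist}, \ref{twistprec}, and \ref{lem:nepsilon, refined}, which treat the analogous $A\mapsto A\fn$ operation. The crucial input is that valuations change by an $o(v\lambda)$ error under the twist (cf.\ Lemma~\ref{lem:6.1.3 consequ}), a ``small'' discrepancy that is absorbed into $\Delta(\fv(A))$ precisely when $[\lambda]\le[\fv(A)^{-1}]$. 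In that regime one obtains $\fv(A_\lambda)\asymp_{\Delta(\fv(A))}\fv(A)$ and $B_\lambda\asymp_{\Delta(\fv(A))}B$, and the two hypotheses transfer directly from those of the proposition; for $[\lambda]>[\fv(A)^{-1}]$, one shows by a separate argument that $\nwt_{A_\lambda}(\gamma)=0$ for all $\gamma$, so that $\exc^{\ev}(A_\lambda)$ is empty (and by Lemma~\ref{lem:fv of perturbed op}, since $\fv(A+B)\sim\fv(A)$, the same holds with $A+B$ in place of $A$), making the per-$\lambda$ equality trivial in that regime.

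The hard part will be the threshold case $[\lambda]\sim[\fv(A)^{-1}]$, where the twist is large enough to materially affect the span but not large enough to kill all exceptional values. Here the perturbation estimate $B_\lambda\prec_{\Delta(\fv(A_\lambda))}\fv(A_\lambda)^{r+1}A_\lambda$ is ``just barely enough,'' and a detailed Riccati-transform computation in the style of Lemma~\ref{lem:nepsilon, refined} is needed. A useful simplification is to invoke Remark~\ref{rem:excu for different Q, 1} and replace $(K,A,\Lambda)$ by $(K^\phi,A^\phi,\Lambda^\phi)$ for a suitable active~$\phi\preceq 1$, which makes the derivation as small as desired without changing $\exc^{\operatorname{u}}$; combined with Lemma~\ref{lem:v(Aphi)}, this normalizes $\fv$ to a controlled form and localizes all estimates within the flattening by $\Gamma^\flat_\phi$, where Lemma~\ref{twistprec} applies cleanly to propagate the perturbation bound through the twist.
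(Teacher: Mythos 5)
Your per-$\lambda$ decomposition is sound in principle---and in fact the paper's proof does yield the per-$\lambda$ equalities $\exc^{\ev}(A_\lambda+B_\lambda)=\exc^{\ev}(A_\lambda)$, via the disjoint-union formula \eqref{eq:excevOmega}---but your method of \emph{establishing} them is not what the paper does, and it has real gaps. The paper instead equips the differential fraction field $\Omega$ of $\Univ$ with a spectral extension of the valuation (Lemma~\ref{specex}); since $K$ is $\upo$-free, so is $\Omega$, and $A,B,\fv$ all live in $K$, so the hypothesis $B\prec_{\Delta(\fv)}\fv^{r+1}A$ transfers verbatim to $\Omega$. Then a single application of Lemma~\ref{cor:excev stability} in $\Omega$ gives $\exc^{\ev}_\Omega(A+B)=\exc^{\ev}_\Omega(A)$, and \eqref{eq:excevOmega} pulls this back. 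In other words, the spectral extension packages all the per-$\lambda$ information into one ordinary $\exc^{\ev}$ computation over a field where the previously proved stability lemma applies directly---no case split, no twist estimates. That is precisely the point of introducing spectral extensions.

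The concrete problems with your route: (a) In the regime $[\lambda]\le\big[\fv(A)^{-1}\big]$ you claim $\fv(A_\lambda)\asymp_{\Delta(\fv(A))}\fv(A)$, but from Lemma~\ref{lem:Atwist} one only gets $\lvert v(A_{\ltimes u})-v(A)\rvert\le r\,\lvert v\lambda\rvert$ for $\lambda=u^\dagger\succ 1$, which is $o\big(v\fv(A)\big)$ only when $[\lambda]<[\fv(A)]$ \emph{strictly}; your inequality admits equality, so the bound does not close, and your own ``hard'' threshold case $[\lambda]\sim[\fv(A)^{-1}]$ is exactly where this breaks and remains unaddressed. (b) The claim that $\exc^{\ev}(A_\lambda)=\emptyset$ whenever $[\lambda]>\big[\fv(A)^{-1}\big]$ is asserted ``by a separate argument'' but never given; its cleanest available justification (boundedness of eigenvalues as in Corollary~\ref{cor:evs v-small}, or finiteness of $\exc^{\operatorname{u}}(A)$ as in Proposition~\ref{prop:finiteness of excu(A)}) is itself proved in the paper via spectral extensions, so invoking it would be circular if the goal is to avoid them. (c) The technical tools you cite (Lemmas~\ref{lem:An}, \ref{lem:nepsilon, refined}, \ref{twistprec}) concern multiplicative conjugation $A\mapsto A\fn$; for the operation $A\mapsto A_\lambda=A_{\ltimes u}$ with $u^\dagger=\lambda$ you would need twist-analogues of these, which are not in the paper and would have to be developed from scratch. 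The upshot is that your plan replaces one application of an existing lemma in $\Omega$ with a new, substantial technical project, and two of its three stated cases are not actually proved.
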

\begin{proof}
Let $\Omega$ be the differential fraction field of the universal exponential extension $\Univ=K\big[\!\ex(\Lambda)\big]$ 
of $K$ from Section~\ref{sec:univ exp ext}.
Equip  $\Omega$ with a spectral extension of the valuation of $K$; 
see Section~\ref{sec:valuniv}. Apply Lem\-ma~\ref{cor:excev stability}
to $\Omega$ in place of $K$ to get~$\exc^{\ev}_\Omega(A+B)=\exc^{\ev}_\Omega(A)$. 
Hence $\exc^{\operatorname{u}}(A+B)=\exc^{\operatorname{u}}(A)$ by~\eqref{eq:excevOmega}.
\end{proof}

\noindent
In a similar manner we obtain an analogue of Corollary~\ref{cor:excev stability, r=1}:  

\begin{cor}[${}^*$]\label{cor:stability of excu} 
Suppose $K$ has asymptotic integration, $\I(K)\subseteq K^\dagger$, $r=1$, and~$B\in K[\der]$   satisfies~$\order(B)\leq 1$ and~$B\prec_{\Delta(\fv)} \fv^{2}A$, where $\fv:=\fv(A)\prec^\flat 1$. 
Then $\exc^{\operatorname{u}}(A+B)=\exc^{\operatorname{u}}(A)$.
\end{cor}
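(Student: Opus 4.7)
The plan is to parallel the proof of Proposition~\ref{prop:stability of excu}, substituting Corollary~\ref{cor:excev stability, r=1} for Lemma~\ref{cor:excev stability} at the crucial step. That is, let $\Omega$ denote the differential fraction field of the universal exponential extension $\Univ=K[\ex(\Lambda)]$ of $K$, equip $\Omega$ with a spectral extension of the valuation of $K$ (Section~\ref{sec:valuniv}), and apply Corollary~\ref{cor:excev stability, r=1} inside $\Omega$ to obtain $\exc^{\ev}_\Omega(A+B)=\exc^{\ev}_\Omega(A)$; the desired equality $\exc^{\operatorname{u}}(A+B)=\exc^{\operatorname{u}}(A)$ then follows via the decomposition~\eqref{eq:excevOmega}. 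The substitution is permitted because Corollary~\ref{cor:excev stability, r=1} weakens the $\upl$-freeness assumption of Lemma~\ref{cor:excev stability} to mere asymptotic integration, precisely matching the weakening of the global hypothesis from $\upo$-free (in Proposition~\ref{prop:stability of excu}) to asymptotic integration (here), at the cost of restricting to $r=1$.

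Verifying the hypotheses of Corollary~\ref{cor:excev stability, r=1} inside $\Omega$ is straightforward: $A$ retains order $1$ in $\Omega[\der]$; the gaussian valuations of $A$ and $B$ are unchanged in passing from $K[\der]$ to $\Omega[\der]$; $\psi_\Omega$ restricts to $\psi$ on $\Gamma$ by Lemma~\ref{lem:v(ex(Q))}, so $\fv\prec^\flat 1$ persists in $\Omega$; and $\Delta_\Omega(\fv)\cap\Gamma=\Delta_K(\fv)$, so the bound $B\prec_{\Delta(\fv)}\fv^2 A$ transfers verbatim. The single nontrivial point to check is that $\Omega$ itself has asymptotic integration, as required by Corollary~\ref{cor:excev stability, r=1}.

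This is the main obstacle. In the proof of Proposition~\ref{prop:stability of excu} asymptotic integration of $\Omega$ comes automatically from the inheritance of $\upo$-freeness, whereas under our weaker hypotheses one must appeal to Corollary~\ref{cor:Omega as int}, which assumes $K$ to be $\upl$-free. I would handle this by first extending $K$ to a $\upl$-free $\d$-valued $H$-asymptotic field $L$ with $\I(L)\subseteq L^\dagger$, algebraically closed constant field, and divisible $L^\dagger$, in which the same hypotheses on $A$, $B$, and $\fv$ still hold. Corollary~\ref{cor:Omega as int} then supplies asymptotic integration of the associated $\Omega_L$, and the strategy of the first paragraph, applied inside $\Omega_L$, yields $\exc^{\operatorname{u}}_L(A+B)=\exc^{\operatorname{u}}_L(A)$. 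For the descent back to $K$ one uses the key feature of the $r=1$ case: $\exc^{\operatorname{u}}(A)$ is the singleton $\{v(h)\}$, with $h\in K^\times$ determined (up to $C^\times$) by the decomposition $-a_0/a_1=\lambda_0+h^\dagger$ with $\lambda_0\in\Lambda$, $h\in K^\times$; this decomposition is preserved verbatim in any complement $\Lambda_L\supseteq\Lambda$ of $L^\dagger$, so that $\exc^{\operatorname{u}}_L(A)=\exc^{\operatorname{u}}_K(A)$ and likewise for $A+B$, closing the argument.
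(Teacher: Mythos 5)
Your overall framework is the right one (pass to $\Omega=\Frac(\Univ)$ with a spectral extension and come back via \eqref{eq:excevOmega}), and you correctly isolate the obstruction: Corollary~\ref{cor:excev stability, r=1} needs asymptotic integration, which Corollary~\ref{cor:Omega as int} only supplies for $\Omega$ when $K$ is $\upl$-free. But your fix for that obstruction has a genuine gap. You propose to embed $K$ into a $\upl$-free $\d$-valued $L$ with $\I(L)\subseteq L^\dagger$, algebraically closed constants and divisible $L^\dagger$, work in $\Omega_L$, and descend using the order-$1$ singleton description of $\exc^{\operatorname{u}}$. The descent, however, presupposes a complement $\Lambda_L$ of $L^\dagger$ with $\Lambda\subseteq\Lambda_L$, and such a complement exists if and only if $L^\dagger\cap K=K^\dagger$ (see Remark~\ref{rem:excu for different Q, 2}); nothing in your construction guarantees this, and it is not automatic for the kind of $L$ you need. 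Indeed, since $K$ is not assumed $\upl$-free, any $\upl$-free $L$ must have $\Gamma^<$ non-cofinal in $\Gamma_L^<$, so the tools the paper uses elsewhere to control logarithmic derivatives under extension (Corollary~\ref{cor:no new LDs}, or the real-case Lemmas~\ref{lem:LambdaL}, \ref{cor:LambdaL}) do not apply, and ultimate exceptional values — unlike eventual ones for order $1$ (Lemma~\ref{lemexc, order 1}) — are defined relative to $K^\dagger$ and are not stable under arbitrary extensions. Without $L^\dagger\cap K=K^\dagger$ (and $\I(L)\subseteq L^\dagger$, asymptotic integration of $L$, etc.) the claimed equality $\exc^{\operatorname{u}}_L(A)=\exc^{\operatorname{u}}(A)$, and likewise for $A+B$, is unjustified, so the argument does not close.

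The paper's proof avoids the issue entirely: asymptotic integration of $\Omega$ is never needed. Since $A$ has order $1$ and $\Omega$ is ungrounded (by Lemma~\ref{lem:v(ex(Q))}, using $\I(K)\subseteq K^\dagger$ and asymptotic integration of $K$), one has $\abs{\exc^{\ev}_\Omega(A)}\leq 1$ and $v(\ker^{\neq}_\Omega A)\subseteq\exc^{\ev}_\Omega(A)$ by [ADH, p.~481]; on the other hand $\dim_C\ker_\Omega A=1$ because every first-order operator over $K$ has a nonzero zero in $\Univ^\times$ (as $(\Univ^\times)^\dagger=K$). Hence $v(\ker^{\neq}_\Omega A)=\exc^{\ev}_\Omega(A)$, which is exactly the identity that the proof of Lemma~\ref{cor:excev stability} needs (splitting being trivial in order $1$); running that proof with $\Omega$ in place of $K$ gives $\exc^{\ev}_\Omega(A+B)=\exc^{\ev}_\Omega(A)$, and \eqref{eq:excevOmega} then yields $\exc^{\operatorname{u}}(A+B)=\exc^{\operatorname{u}}(A)$. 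If you want to salvage your version, replace the appeal to Corollary~\ref{cor:excev stability, r=1} (and the auxiliary extension $L$) by this direct verification of the kernel-versus-exceptional-value identity inside $\Omega$.
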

\begin{proof}
Let $\Omega$ be as in the proof of Proposition~\ref{prop:stability of excu}.
Then $\Omega$ is ungrounded by Lemma~\ref{lem:v(ex(Q))}, hence $\abs{\exc^{\ev}_\Omega(A)}\leq 1$ and $v(\ker^{\neq}_\Omega A)\subseteq\exc^{\ev}_\Omega(A)$ by [ADH, p.~481]. But~$\dim_C \ker_\Omega A=1$, so
$v(\ker_\Omega^{\neq} A)=\exc^{\ev}_\Omega(A)$. The proof of Lemma~\ref{cor:excev stability} with $\Omega$ in place of $K$ now gives $\exc^{\ev}_\Omega(A+B)=\exc^{\ev}_\Omega(A)$, so $\exc^{\operatorname{u}}(A+B)=\exc^{\operatorname{u}}(A)$ by~\eqref{eq:excevOmega}.
\end{proof}

\noindent
In the ``real''  case we have the following variant of Proposition~\ref{prop:stability of excu}: 

\begin{prop}\label{prop:stability of excu, real}  
Suppose $K=H[\imag]$, $\imag^2=-1$, where $H$ is a real closed $H$-field with asymptotic integration such that
$H^\dagger=H$ and $\I(H)\imag\subseteq K^\dagger$.
Let $B\in K[\der]$ of order~$\le r$ be such that~${B\prec_{\Delta(\fv)} \fv^{r+1}A}$ with $\fv:=\fv(A)\prec^\flat 1$. Let $\Lambda$ be a complement of
the subspace~$K^\dagger$ of the $\Q$-linear space $K$. 
Then $\exc^{\operatorname{u}}(A+B)=\exc^{\operatorname{u}}(A)$, where the ultimate exceptional values are with respect to $\Lambda$. 
\end{prop}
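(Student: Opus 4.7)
The strategy is to mirror the proof of Proposition~\ref{prop:stability of excu}, but since $K$ itself is only $\upl$-free (by $H^\dagger=H$ combined with asymptotic integration, cf.\ [ADH, 11.6.2, 11.6.8]) and not known to be $\upo$-free, I will first enlarge it to a field where $\upo$-freeness is available. By Corollary~\ref{cor:LambdaL}, $H$ has an $H$-closed extension $F$ with $C_F=C_H$; setting $L:=F[\imag]$, that corollary gives an $\upo$-free $\d$-valued $H$-asymptotic extension of $K$ with $C_L=C$, $\I(L)\subseteq L^\dagger$, and $L^\dagger\cap K=K^\dagger$. Since $L$ is algebraically closed, $L^\dagger$ is divisible, so $\Lambda$ extends to a complement $\Lambda_L\supseteq\Lambda$ of $L^\dagger$ in $L$, giving inclusions $\Univ_K=K[\ex(\Lambda)]\subseteq L[\ex(\Lambda_L)]=\Univ_L$ of universal exponential extensions, with corresponding differential fraction fields $\Omega_K\subseteq \Omega_L$.

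Next, equip $\Omega_L$ with a spectral extension of the valuation of $L$ (Lemma~\ref{specex}), and view $\Omega_K$ as a valued subfield, so that its induced valuation is a spectral extension of the valuation of $K$. Since $L$ is $\upo$-free, the remark following Lemma~\ref{specex} yields that $\Omega_L$ is $\upo$-free and hence $\upl$-free. The hypotheses of Lemma~\ref{cor:excev stability} transfer from $K$ to $\Omega_L$: the condition $\fv\prec^\flat 1$ only involves $v(\fv^\dagger)\leq 0$, which is unchanged under a valued differential extension with small derivation; and $\delta:=v(B)-v(\fv^{r+1}A)\in\Gamma_K$ satisfies $[\delta]\geq[\fv]$ in $\Gamma_K$ (from $\delta>\Delta_K(\fv)$), so by preservation of archimedean classes we still have $\delta>\Delta_{\Omega_L}(\fv)$, i.e., $B\prec_{\Delta(\fv)}\fv^{r+1}A$ in $\Omega_L$. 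Applying Lemma~\ref{cor:excev stability} in $\Omega_L$ therefore yields $\exc^{\ev}_{\Omega_L}(A+B)=\exc^{\ev}_{\Omega_L}(A)$.

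To descend this equality to $K$, I invoke Lemma~\ref{lem:excev cap GammaOmega}: all its hypotheses hold here (in particular $K$ is $\upl$-free, and the required properties of $L$ were verified above), so $\exc^{\ev}_{\Omega_L}(C)\cap\Gamma_{\Omega_K}=\exc^{\ev}_{\Omega_K}(C)$ for $C\in\{A,A+B\}$; hence $\exc^{\ev}_{\Omega_K}(A+B)=\exc^{\ev}_{\Omega_K}(A)$. Now the decomposition \eqref{eq:excevOmega} applied in $K$ writes each side as a disjoint union $\bigsqcup_{\lambda\in\Lambda}\bigl(\exc^{\ev}_K(C_\lambda)+v(\ex(\lambda))\bigr)$, and since by Lemma~\ref{lem:v(ex(Q))} the cosets $\Gamma_K+v(\ex(\lambda))$ are distinct for distinct $\lambda\in\Lambda$, matching summands coset-by-coset forces $\exc^{\ev}_K((A+B)_\lambda)=\exc^{\ev}_K(A_\lambda)$ for every $\lambda\in\Lambda$; taking the union over $\lambda$ gives $\exc^{\operatorname{u}}(A+B)=\exc^{\operatorname{u}}(A)$, as required. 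The main obstacle is securing $L$ simultaneously $\upo$-free and with $L^\dagger\cap K=K^\dagger$ (so that $\Lambda$ extends cleanly to $\Lambda_L$ and the descent via Lemma~\ref{lem:excev cap GammaOmega} is available), which is precisely what Corollary~\ref{cor:LambdaL} delivers.
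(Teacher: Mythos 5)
Your proof is correct and follows essentially the same route as the paper's: extend $K$ to $L=F[\imag]$ via Corollary~\ref{cor:LambdaL}, pass to $\Omega_L$ with a spectral extension, apply Lemma~\ref{cor:excev stability} there, descend with Lemma~\ref{lem:excev cap GammaOmega}, and conclude via \eqref{eq:excevOmega}. Your extra verifications (transfer of $\prec^\flat$ and $\prec_{\Delta(\fv)}$ to the extension, coset-by-coset matching) are steps the paper leaves implicit, but the argument is the same.
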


\begin{proof}
Take an $H$-closed extension $F$ of $H$ with $C_F=C_H$ as in Corollary~\ref{cor:LambdaL}. Then the algebraically closed $\d$-valued $H$-asymptotic extension $L:=F[\imag]$ of $K$ is $\upo$-free,  $C_L=C$, $\I(L)\subseteq L^\dagger$, and
$L^\dagger\cap K=K^\dagger$. 
Take  a complement $\Lambda_L\supseteq \Lambda$ of the subspace $L^\dagger$ of the $\Q$-linear space $L$.  Let $\Univ_L=L\big[\!\ex(\Lambda_L)\big]$ be
 the universal exponential extension  
of $L$ from Section~\ref{sec:univ exp ext}; it has the  universal exponential extension $\Univ:=K\big[\!\ex(\Lambda)\big]$ of $K$ as a differential subring. 
Let $\Omega$, $\Omega_L$ be the
differential fraction fields of $\Univ$, $\Univ_L$, respectively, and
equip  $\Omega_L$ with a spectral extension of the valuation of $L$; then the restriction of this valuation to
$\Omega$ is  a spectral extension of the valuation of $K$ (see remarks preceding Lemma~\ref{lem:excev cap GammaOmega}).  
Lem\-ma~\ref{cor:excev stability} applied
to $\Omega_L$ in place of $K$ yields~$\exc^{\ev}_{\Omega_L}(A+B)=\exc^{\ev}_{\Omega_L}(A)$,
hence $\exc^{\ev}_\Omega(A+B)=\exc^{\ev}_\Omega(A)$ by
Lemma~\ref{lem:excev cap GammaOmega} and thus $\exc^{\operatorname{u}}(A+B)=\exc^{\operatorname{u}}(A)$.
\end{proof}

\subsection*{The span of the linear part of a differential polynomial} 
{\it In this subsection~$P\in K\{Y\}^{\neq}$ has order $r$.}\/ Recall  that the {\it linear part}\/ of $P$ is the
differential operator 
$$L_P\ :=\ \sum_n \frac{\partial P}{\partial Y^{(n)}}(0)\,\der^n \in K[\der]$$
of order~$\leq r$.
We have $L_{P_{\times\fm}}=L_P\fm$ [ADH, p.~242]; hence items~\ref{lem:An}, \ref{cor:An} and~\ref{lem:Atwist} above yield information about the span of $L_{P_{\times\fm}}$ (provided $L_P\neq 0$). We now want to similarly investigate the span of the linear part 
$$L_{P_{+a}}\ =\ \sum_n \frac{\partial P}{\partial Y^{(n)}}(a)\,\der^n$$  
of the additive conjugate~$P_{+a}$ of $P$ by some $a\prec 1$.
In the next two lemmas we assume $\order(L_P)=r$ \textup{(}in particular, $L_P\neq 0$\textup{)}, 
$\fv(L_P)\prec 1$, and $a\prec 1$, we set
$$L:=L_P,\quad L^+:=L_{P_{+a}},\quad \fv:=\fv(L),$$ 
and set $L_n:=\frac{\partial P}{\partial Y^{(n)}}(0)$ and $L_n^+:=\frac{\partial P}{\partial Y^{(n)}}(a)$, so $L=\sum_n L_n\der^n$, $L^+=\sum_n L_n^+\der^n$.
Recall from~[ADH, 4.2] the decomposition of $P$ into homogeneous parts: $P=\sum_d P_d$ where $P_d=\sum_{\abs{\i}=d} P_{\i}Y^{\i}$;
we set $P_{>1}:=\sum_{d>1} P_d$.\index{decomposition!into homogeneous parts}

\begin{lemma}\label{lem:linear part, new}
Suppose $P_{>1} \prec_{\Delta(\fv)} \fv P_1$ and  $n\le r$. Then 
\begin{enumerate}
\item[$\mathrm{(i)}$] $L_r^+ \sim_{\Delta(\fv)} L_r$, and thus $\order(L^+)=\order(L)=r$; 
\item[$\mathrm{(ii)}$] if $L_n\asymp_{\Delta(\fv)} L$, then $L_n^+ \sim_{\Delta(\fv)} L_n$, and so $v(L_n^+)=v(L_n)$;
\item[$\mathrm{(iii)}$] if $L_n\prec_{\Delta(\fv)} L$, then $L_n^+ \prec_{\Delta(\fv)} L$, and so $v(L_n^+)> v(L)$.
\end{enumerate}
In particular, $L^+\sim_{\Delta(\fv)} L$, $\dwt L^+=\dwt L$, 
and $\fv(L^+)\sim_{\Delta(\fv)}\fv$. 
\end{lemma}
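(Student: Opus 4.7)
My plan is to reduce everything to a single uniform estimate: $L_n^+ - L_n \prec_{\Delta(\fv)} \fv L$ for every $n$. Once this is in hand, (i)--(iii) follow by inspection and the ``in particular'' statements are formal consequences.

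For the estimate, I would decompose $P = \sum_d P_d$ into its homogeneous parts and use that $\partial P_1 / \partial Y^{(n)} = L_n$ is constant in $Y$ while $\partial P_d / \partial Y^{(n)}$ for $d \ge 2$ is homogeneous of degree $d-1 \ge 1$ and thus vanishes at $0$. Hence
\[
L_n^+ - L_n\ =\ \sum_{d \geq 2} \frac{\partial P_d}{\partial Y^{(n)}}(a).
\]
Each summand $(\partial P_d / \partial Y^{(n)})(a)$ is dominated by $P_d$: its coefficients are integer multiples of those of $P_d$, while $a \prec 1$ together with the smallness of the derivation gives inductively that $a^{(k)} \prec 1$ for all $k$, and hence $a^\i \prec 1$ for $|\i| \ge 1$. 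Summing over $d \ge 2$ yields $v(L_n^+ - L_n) > v(P_{>1})$, and the hypothesis $P_{>1} \prec_{\Delta(\fv)} \fv P_1$ combined with $v(L) = v(P_1)$ then gives $L_n^+ - L_n \prec_{\Delta(\fv)} \fv L$, as required.

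From this the three items follow quickly. Since $v(L_r) = v(\fv) + v(L_{\dwt L}) = v(\fv L)$, the correction satisfies $L_r^+ - L_r \prec_{\Delta(\fv)} L_r$, giving (i). For (ii), the assumption $L_n \asymp_{\Delta(\fv)} L$ combined with $\fv \prec_{\Delta(\fv)} 1$ implies $L_n^+ - L_n \prec_{\Delta(\fv)} \fv L \prec_{\Delta(\fv)} L \asymp_{\Delta(\fv)} L_n$. For (iii), both $L_n$ and $L_n^+ - L_n$ are $\prec_{\Delta(\fv)} L$, so $L_n^+$ is as well. The ``in particular'' statements then fall out: $L^+ \sim_{\Delta(\fv)} L$ from the coefficient-wise bound applied to all $n$; $\dwt L^+ = \dwt L$ because (ii)--(iii) show the sets $\{n : v(L_n^\#) = v(L^\#)\}$ coincide; and $\fv(L^+) = L_r^+ / L_{\dwt L}^+ \sim_{\Delta(\fv)} L_r / L_{\dwt L} = \fv$ by (i)--(ii).

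The main obstacle is not conceptual but bookkeeping: one must carefully track the strict-versus-weak nature of the inequalities and repeatedly use that $v(\fv) > \Delta(\fv)$ to absorb shifts by $v(\fv)$ into the $\Delta(\fv)$-equivalence, particularly in case (ii) where $v(L_n) = v(L) + \delta$ for some $\delta \in \Delta(\fv)$ and one needs $v(\fv) + v(L) + \Delta(\fv)$ to exceed $v(L_n) + \Delta(\fv) = v(L) + \Delta(\fv)$.
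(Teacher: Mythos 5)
Your proof is correct and takes essentially the same route as the paper: both reduce the lemma to the uniform estimate $L_n^+-L_n\prec_{\Delta(\fv)}\fv L$ by tracing the difference back to $P_{>1}$ evaluated at $a\prec 1$ (using the small derivation), the only difference being that the paper isolates the $Y^{(n)}$-dependence via a decomposition $P=Q+(L_n+R)Y^{(n)}$ and bounds at the level of gaussian valuations before evaluating, while you expand by homogeneous parts and evaluate term by term. Items (i)--(iii) and the ``in particular'' statements then follow exactly as you describe.
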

\begin{proof}
Take $Q,R\in K\{Y\}$ with $\deg_{Y^{(n)}} Q\leq 0$ and  $R\in Y^{(n)}K\{Y\}$, such that
$$P\ =\ Q+(L_n+R)Y^{(n)},\qquad\text{so}\qquad  \frac{\partial P}{\partial Y^{(n)}}\  =\ \frac{\partial R}{\partial Y^{(n)}}Y^{(n)}+L_n+R.$$
Now $R\prec_{\Delta(\fv)}  \fv P_1$,
so $\frac{\partial P}{\partial Y^{(n)}}-L_n\prec_{\Delta(\fv)} \fv P_1$. In $K[\der]$ we thus have  
 $$ L_n^+-L_n\ =\ \frac{\partial P}{\partial Y^{(n)}}(a)-L_n\ \prec_{\Delta(\fv)}\   \fv L\ \asymp\ L_r.$$
So $L_n^+-L_n\prec_{\Delta(\fv)} L$ and (taking $r=n$)  
$L_r^+-L_r \prec_{\Delta(\fv)}  L_r$. This yields (i)--(iii). 
\end{proof}

\begin{lemma}\label{lem:linear part, split-normal, new}  
Suppose $P_{>1} \prec_{\Delta(\fv)} \fv^{m+1} P_1$, and let 
$A,B\in K[\der]$ be such that~$L=A+B$, $B\prec_{\Delta(\fv)} \fv^{m+1} L$.  
Then  
$$ L^+\ =\ A+B^+\ \text{ where $B^+\in K[\der]$, $B^+\ \prec_{\Delta(\fv)}\ \fv^{m+1} L^+$.}$$
In particular, $L-L^+\prec_{\Delta(\fv)} \fv^{m+1}L$. 
\end{lemma}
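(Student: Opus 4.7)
The plan is to carry out the same decomposition argument as in the proof of Lemma~\ref{lem:linear part, new}, but taking full advantage of the strengthened hypothesis $P_{>1}\prec_{\Delta(\fv)}\fv^{m+1}P_1$; this will upgrade the estimate $L^+\sim_{\Delta(\fv)}L$ of that lemma to the sharper estimate $L^+-L\prec_{\Delta(\fv)}\fv^{m+1}L$, from which the assertion about $B^+$ follows cheaply. First I would note that $\fv\prec 1$ (so $\fv^{m+1}\preceq\fv$) means the hypothesis of Lemma~\ref{lem:linear part, new} is in force, hence $\order L^+=r$ and $L^+\asymp_{\Delta(\fv)}L$.

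Next, fix $n\leq r$ and decompose $P=Q+(L_n+R)Y^{(n)}$ with $\deg_{Y^{(n)}}Q\leq 0$ and $R\in Y^{(n)}K\{Y\}$, as in the proof of Lemma~\ref{lem:linear part, new}. Every term of $RY^{(n)}$ has total degree $\geq 2$ in $Y,Y',\dots$, hence is a term of $P_{>1}$; since multiplication by $Y^{(n)}$ merely relabels the monomial indices, this gives $v(R)=v(RY^{(n)})\geq v(P_{>1})$, and likewise $v\big(\tfrac{\partial R}{\partial Y^{(n)}}Y^{(n)}\big)\geq v(R)$ (differentiation with respect to $Y^{(n)}$ in characteristic~$0$ does not lower valuation). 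Combined with the hypothesis, both $R$ and $\tfrac{\partial R}{\partial Y^{(n)}}Y^{(n)}$ are $\prec_{\Delta(\fv)}\fv^{m+1}P_1$, so
$$\textstyle\frac{\partial P}{\partial Y^{(n)}}-L_n\ =\ \frac{\partial R}{\partial Y^{(n)}}Y^{(n)}+R\ \prec_{\Delta(\fv)}\ \fv^{m+1}P_1.$$
Evaluating at $Y=a$, and using that $a\prec 1$ together with small derivation forces $a^{\i}\preceq 1$ for every multiindex $\i$ (so evaluation does not lower the valuation of a differential polynomial), I obtain $L_n^+-L_n\prec_{\Delta(\fv)}\fv^{m+1}P_1$. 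Since $v(P_1)=v(L)$, this reads $L_n^+-L_n\prec_{\Delta(\fv)}\fv^{m+1}L$ for each $n\leq r$; taking the minimum over $n$ gives the intermediate estimate $L^+-L\prec_{\Delta(\fv)}\fv^{m+1}L$, which is precisely the ``in particular'' clause.

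Finally I would set $B^+:=L^+-A\in K[\der]$ so that $L^+=A+B^+$ by construction. Then $B^+-B=L^+-L$, and combining the intermediate estimate with the hypothesis $B\prec_{\Delta(\fv)}\fv^{m+1}L$ yields $B^+\prec_{\Delta(\fv)}\fv^{m+1}L$; replacing $L$ by the $\Delta(\fv)$-equivalent $L^+$ gives $B^+\prec_{\Delta(\fv)}\fv^{m+1}L^+$, completing the proof. The main obstacle is just routine bookkeeping: ensuring at each step (passing from $P_{>1}$ to $R$, differentiating, evaluating at $a$) that the extra factor $\fv^m$ is preserved, no further hypothesis on $a$ being needed beyond $a\prec 1$.
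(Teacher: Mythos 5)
Your proof is correct and is essentially the paper's argument: the same decomposition $P=Q+(L_n+R)Y^{(n)}$ from the proof of Lemma~\ref{lem:linear part, new}, the same observation that the coefficients involved come from $P_{>1}$, the same evaluation at $a\prec 1$ (which cannot lower gaussian valuation, by small derivation), and the same final appeal to $L^+\sim_{\Delta(\fv)}L$. The only immaterial difference is bookkeeping: the paper folds $B_n$ into the coefficient estimate and bounds $L_n^+-A_n$ directly, whereas you first establish $L^+-L\prec_{\Delta(\fv)}\fv^{m+1}L$ and then add $B$ at the end to bound $B^+=L^+-A$.
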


\begin{proof}  
Let $A_n,B_n\in K$ be such that $A=\sum_n A_n\der^n$ and $B=\sum_n B_n\der^n$, so $L_n=A_n+B_n$. Let any $n$ (possibly~$>r$) be given and
take $Q,R\in K\{Y\}$ as in the proof of Lemma~\ref{lem:linear part, new}. Then $R\prec_{\Delta(\fv)} \fv^{m+1} P_1$. 
Since $B\prec_{\Delta(\fv)} \fv^{m+1} L$, this yields
$$\frac{\partial P}{\partial Y^{(n)}} - A_n\ =\ \frac{\partial R}{\partial Y^{(n)}}Y^{(n)} + B_n + R\ \prec_{\Delta(\fv)}\ \fv^{m+1} P_1.$$
We have $L_n^+=\frac{\partial P}{\partial Y^{(n)}}(a)$, so
$$L_n^+-A_n\ =\ \frac{\partial P}{\partial Y^{(n)}}(a) - A_n\ \prec_{\Delta(\fv)}\ \fv^{m+1} L.$$
By Lemma~\ref{lem:linear part, new} we have 
$L^+\sim_{\Delta(\fv)} L$, hence $B^+=L^+-A \prec_{\Delta(\fv)} \fv^{m+1} L^+$. 
\end{proof}

\section{Holes and Slots} \label{sec:holes}

\noindent
{\em Throughout this section $K$ is an $H$-asymptotic field with small derivation and with rational asymptotic integration. We set $\Gamma:= v(K^\times)$}. 
So $K$ is pre-$\d$-valued, $\Gamma\ne \{0\}$ has no least positive element, and $\Psi\cap \Gamma^{>}\ne \emptyset$. We let  $a$, $b$, $f$, $g$ range over $K$, and~$\phi$,~$\fm$,~$\fn$,~$\fv$,~ $\fw$ (possibly decorated) over $K^\times$.   As at the end of the previous section we shorten ``active in $K$'' to ``active''. 

\subsection*{Holes}
A {\bf hole}\/ in $K$ is a triple $(P,\fm,\hat a)$ where 
$P\in K\{Y\}\setminus K$ and
$\hat a$ is an element of~$\hat K\setminus K$, for some
immediate asymptotic extension $\hat K$ of $K$,
such that $\hat a\prec\fm$ and~$P(\hat a)=0$. (The extension $\hat K$ may vary with $\hat a$.) 
The {\bf order}\/, {\bf degree}\/, and {\bf complexity}\/ of a hole
$(P,\fm,\hat a)$ in~$K$ are defined as the order, (total) degree, and complexity, respectively, of the differential
polynomial $P$. 
A hole~$(P,\fm, \hat a)$ in~$K$ is called {\bf minimal}\/ if no hole in $K$ has smaller complexity; then $P$ is a minimal annihilator of $\hat a$ over $K$. \index{hole}\index{hole!minimal}\index{hole!complexity}\index{complexity!hole}\index{minimal!hole}\label{p:hole}

\medskip
\noindent
If $(P,\fm,\hat a)$ is a hole in $K$, then $\hat a$ is a $K$-external zero of $P$, in the sense of Section~\ref{sec:complements newton}.
Conversely, every $K$-external zero $\hat a$ of a differential polynomial $P\in K\{Y\}^{\neq}$ 
gives for every $\fm\succ \hat a$ a hole $(P,\fm,\hat a)$ in $K$.
By Proposition~\ref{14.0.1r} and Corollary~\ref{14.5.2.r}:

\begin{lemma}\label{lem:no hole of order <=r} 
Let $r\in\N^{\geq 1}$, and suppose $K$ is $\upl$-free. Then 
$$\text{$K$ is $\upo$-free and $r$-newtonian}\quad\Longleftrightarrow\quad\text{$K$ has no hole of order~$\leq r$.}$$
\end{lemma}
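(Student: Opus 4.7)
The plan is to reduce the statement to the two results cited just before it, namely Proposition~\ref{14.0.1r} and Corollary~\ref{14.5.2.r}, by establishing the following bookkeeping equivalence: \emph{$K$ has no hole of order $\leq r$ if and only if no $P\in K\{Y\}^{\ne}$ of order $\leq r$ has a $K$-external zero.}

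To verify the equivalence, suppose first that $\hat a$ is a $K$-external zero of some $P\in K\{Y\}^{\ne}$ of order $\leq r$, with $\hat a$ lying in an immediate asymptotic extension $\hat K$ of $K$ and $\hat a\notin K$. I note $P\notin K$: otherwise $P$ would be a nonzero constant and thus $P(\hat a)\ne 0$. Moreover, since $\hat K$ is an immediate extension of $K$, the value $v(\hat a)$ lies in $\Gamma\cup\{\infty\}$, and since $\hat a\ne 0$ in fact $v(\hat a)\in\Gamma$. Because $\Gamma\ne\{0\}$ and $\Gamma$ has no least positive element, I can pick $\fm\in K^\times$ with $v(\fm)<v(\hat a)$, which gives $\hat a\prec\fm$; hence $(P,\fm,\hat a)$ is a hole in $K$ of order $\leq r$. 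Conversely, given a hole $(P,\fm,\hat a)$ in $K$ of order $\leq r$, the definition of a hole directly exhibits $\hat a$ as a $K$-external zero of $P$, and $P\in K\{Y\}\setminus K$ in particular implies $P\ne 0$.

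With this equivalence in hand, the two implications are one-line applications of the cited results. For $(\Rightarrow)$, assume $K$ is $\upo$-free and $r$-newtonian; then by the equivalence (i)$\Leftrightarrow$(iii) in Corollary~\ref{14.5.2.r}, no $P\in K\{Y\}^{\ne}$ of order $\leq r$ has a $K$-external zero, so $K$ has no hole of order $\leq r$. For $(\Leftarrow)$, assume $K$ has no hole of order $\leq r$; the equivalence converts this to the hypothesis of Proposition~\ref{14.0.1r}, and together with the $\upl$-freeness of $K$ assumed in the statement, that proposition yields that $K$ is $\upo$-free and $r$-newtonian.

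There is no real obstacle here: the content of the lemma is entirely packaged in Proposition~\ref{14.0.1r} and Corollary~\ref{14.5.2.r}, and the only thing to check is the harmless translation between holes (which carry the auxiliary datum $\fm$) and $K$-external zeros. The one point that deserves a moment's attention is producing $\fm$ with $\hat a\prec\fm$, which uses only that $\Gamma\ne\{0\}$ lacks a least positive element—a standing hypothesis in this section.
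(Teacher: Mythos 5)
Your proof is correct and is essentially the paper's own argument: the paper proves this lemma exactly by noting that holes in $K$ of order $\leq r$ correspond to $K$-external zeros of differential polynomials of order $\leq r$ (a hole yields an external zero, and an external zero $\hat a$ of $P\in K\{Y\}^{\ne}$ gives a hole $(P,\fm,\hat a)$ for any $\fm\succ\hat a$), and then invoking Proposition~\ref{14.0.1r} and Corollary~\ref{14.5.2.r}. Your only cosmetic overhead is the appeal to ``$\Gamma^>$ has no least element'' when choosing $\fm$; any nonzero $\gamma\in\Gamma^>$ already gives $v(\hat a)-\gamma<v(\hat a)$, but this does not affect correctness.
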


\noindent
Thus for $\upo$-free $K$, being newtonian is equivalent to having no holes.   
Recall that~$K$ being henselian is equivalent to $K$ having no proper immediate algebraic valued field extension, and hence 
to $K$ having no hole of order $0$.

\medskip\noindent
Minimal holes are like the ``minimal counterexamples'' in certain combinatorial settings, and we need to understand such holes in a rather detailed way for later use in inductive arguments. Below we also consider the more general notion of {\em $Z$-minimal hole},
which has an important role to play as well.  We recall that $Z(K,\hat a)$ is the set of all $Q\in K\{Y\}^{\ne}$ that vanish at $(K,\hat a)$ as defined in  [ADH, 11.4].\label{p:Z(K,a)}

\begin{lemma}\label{lem:Z(K,hat a)}
Let $(P,\fm,\hat a)$ be a hole in $K$. Then $P\in Z(K,\hat a)$. 
If
$(P,\fm,\hat a)$ is minimal, then $P$ is an element of minimal complexity of $Z(K,\hat a)$.
\end{lemma}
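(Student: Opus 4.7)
The plan is to treat the two assertions in turn.

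For the first, since the ambient $\hat K$ is an immediate asymptotic extension of $K$ and $\hat a\in\hat K\setminus K$, the set $v(\hat a-K)\subseteq\Gamma$ is downward closed with no largest element, and the standard valuation-theoretic construction produces a pc-sequence $(a_\rho)$ in $K$ with $a_\rho\leadsto\hat a$ and $v(\hat a-a_\rho)$ cofinal in $v(\hat a-K)$. Because $P(\hat a)=0$, an appeal to [ADH, 6.8.1] lets me replace $(a_\rho)$ by a cofinal subsequence along which $P(a_\rho)\leadsto 0$; the definition of $Z(K,\hat a)$ in [ADH, 11.4] then yields $P\in Z(K,\hat a)$.

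For the second assertion I argue by contradiction: assume $(P,\fm,\hat a)$ is minimal while some $Q\in Z(K,\hat a)$ has $\cc(Q)<\cc(P)$. Choose a pc-sequence $(b_\rho)$ in $K$ with $b_\rho\leadsto\hat a$ and $Q(b_\rho)\leadsto 0$. The usual uniqueness properties of pseudolimits, applied to $\hat a\notin K$, show that $(b_\rho)$ is divergent in $K$; moreover, $Q$ witnesses that $(b_\rho)$ is of $\d$-algebraic type over $K$. By [ADH, 6.8.1, 6.9.2] there is then a minimal differential polynomial $Q^\ast\in K\{Y\}^{\ne}$ of $(b_\rho)$ over $K$ with $\cc(Q^\ast)\le\cc(Q)$, and [ADH, 6.9.3, 9.4.5] furnish an immediate asymptotic extension of $K$ containing an element $\hat b$ with $b_\rho\leadsto\hat b$ and $Q^\ast(\hat b)=0$.

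It remains to check that $(Q^\ast,\fm,\hat b)$ is a hole in $K$, which will contradict the minimality hypothesis and finish the proof. Divergence of $(b_\rho)$ in $K$ gives $\hat b\notin K$. Since $\hat a\prec\fm$ and $v(\hat a-b_\rho)$ is cofinal in $v(\hat a-K)\ni v\hat a$, for all large enough $\rho$ we have $v(\hat a-b_\rho)>v\hat a>v\fm$, whence $b_\rho\sim\hat a\prec\fm$. For any two pseudolimits of a common pc-sequence the valuations of their differences with $b_\rho$ agree eventually, so $v(\hat b-b_\rho)=v(\hat a-b_\rho)>v\fm$ eventually, giving $\hat b-b_\rho\prec\fm$; combined with $b_\rho\prec\fm$ this yields $\hat b\prec\fm$. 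The main technical obstacle is the lift in [ADH, 6.9.3, 9.4.5] from an immediate valued-differential extension to an immediate \emph{asymptotic} extension of $K$; the rest is routine bookkeeping with pseudoconvergence valuations.
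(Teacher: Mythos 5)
Your argument is correct in its essentials, but it takes a substantially longer route than the paper does and contains a couple of inaccurate citations.

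For the first assertion, the paper argues directly from the Newton-degree definition of $Z(K,\hat a)$: given any $a, \fv$ with $\hat a - a \prec \fv$, pick $\fn \asymp \hat a - a$ and apply [ADH, 11.2.1] to get $\ndeg_{\prec\fv} P_{+a} \geq \ndeg P_{+a,\times\fn} \geq 1$. You instead route through a pc-sequence $a_\rho \leadsto \hat a$ with $P(a_\rho) \leadsto 0$. That characterization of $Z(K,\hat a)$ is correct but it is a \emph{theorem} (it is [VDF, 4.6], cited elsewhere in the paper), not ``the definition of $Z(K,\hat a)$ in [ADH, 11.4]''. And [ADH, 6.8.1] does not itself say that $P(\hat a)=0$ forces $P(a_\rho)\leadsto 0$ along a cofinal subsequence --- it is the result about pc-sequences having $\d$-algebraic or $\d$-transcendental type; the pseudocontinuity you invoke needs a different reference.

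For the second assertion, the paper simply takes a $Q\in Z(K,\hat a)$ of minimal complexity, invokes [ADH, 11.4.8] to produce a $K$-external zero $\hat b$ of $Q$, and observes that for \emph{any} $\fn\succ\hat b$ the triple $(Q,\fn,\hat b)$ is a hole of complexity $\cc(Q)<\cc(P)$, contradicting minimality. You essentially reprove [ADH, 11.4.8] by hand --- passing to the minimal differential polynomial $Q^*$ of a suitable divergent pc-sequence via [ADH, 6.8.1, 6.9.2, 6.9.3, 9.4.5] --- which is fine, but you then do extra and unnecessary bookkeeping to arrange $\hat b\prec\fm$ so that your hole reuses the original monomial. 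Since the definition of ``minimal'' is purely a condition on complexity, you could have stopped once you had $\hat b\notin K$ and $Q^*(\hat b)=0$ and simply chosen a fresh $\fn\succ\hat b$, as the paper does. (Your cofinality argument for $\hat b\prec\fm$ also requires arranging that $v(\hat a-b_\rho)$ is cofinal in $v(\hat a-K)$, a hypothesis you state but do not impose when you first pick $(b_\rho)$; and the divergence of $(b_\rho)$ also needs that cofinality, so the order of your observations would need to be tightened up.) In short: the ideas all work, but the paper's route is shorter and avoids the parts of your argument that require the most care.
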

\begin{proof}
Let $a$, $\fv$ with $\hat a-a\prec\fv$. Since $\hat a\notin K$ lies in an immediate extension of~$K$ we can take $\fn$ with $\fn\asymp \hat a-a$.
By [ADH, 11.2.1] we then have $\ndeg_{\prec\fv} P_{+a}\geq\ndeg P_{+a,\times\fn}\geq 1$.
Hence $P\in Z(K,\hat a)$. Suppose $P$ is not of minimal complexity 
in~$Z(K,\hat a)$. Take  $Q\in Z(K,\hat a)$ of minimal
complexity. Then [ADH, 11.4.8] yields a $K$-external zero $\hat b$ of $Q$,
and any $\fn\succ \hat b$ gives a hole $(Q,\fn,\hat b)$ in $K$ of smaller complexity than $(P, \fm, \hat a)$.
\end{proof}

\noindent
In connection with the next result, note that $K$ being $0$-newtonian just means that~$K$ is henselian as a valued field. 

\begin{cor}\label{minholenewt} Suppose $K$ is $\upl$-free and has a minimal hole of order $r\ge 1$. Then~$K$ is $(r-1)$-newtonian,
and $\upo$-free if $r\geq 2$.  
\end{cor}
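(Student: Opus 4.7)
The plan is to invoke Lemma~\ref{lem:no hole of order <=r} after the combinatorial observation that minimality of $(P,\fm,\hat a)$ precludes the existence of any hole of strictly smaller order. Recall that the complexity of a hole is the triple $\cc(P)=(\order P,\deg_{Y^{(r)}}P,\deg P)\in\N^3$ ordered lexicographically, so any hole of order $r'<r$ has complexity lexicographically below $\cc(P)$. By minimality of $(P,\fm,\hat a)$, no such hole exists: $K$ admits no hole of order $\le r-1$.

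If $r\ge 2$, then $r-1\ge 1$, so Lemma~\ref{lem:no hole of order <=r} applies with $r-1$ in place of $r$. Combined with the $\upl$-freeness of $K$, it yields at once that $K$ is $\upo$-free and $(r-1)$-newtonian, which is exactly the desired conclusion in this case.

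If $r=1$, then no claim about $\upo$-freeness needs to be made, and we only have to show that $K$ is $0$-newtonian. As noted in the excerpt just after Lemma~\ref{lem:no hole of order <=r}, $0$-newtonianity of $K$ coincides with henselianity of the underlying valued field, which is in turn equivalent to the absence of holes of order~$0$; again minimality rules these out, and we are done.

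There is essentially no obstacle here beyond book-keeping: the substance is already packaged into Lemma~\ref{lem:no hole of order <=r}, and the only subtlety is the borderline case $r=1$, where one must resist the temptation to apply the lemma at $r-1=0$ (where it is not stated) and instead invoke directly the characterization of henselianity by the non-existence of order-$0$ holes.
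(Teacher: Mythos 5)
Your proof is correct and follows the paper's argument: minimality rules out holes of order $\le r-1$, Lemma~\ref{lem:no hole of order <=r} (applied with $r-1\ge 1$) gives the case $r\ge 2$, and for $r=1$ one notes directly that $0$-newtonianity is henselianity, equivalent to the absence of order-$0$ holes (and indeed needs no $\upl$-freeness). The paper's proof is just a terser statement of exactly this.
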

\begin{proof}
This is clear for $r=1$ (and doesn't need $\upl$-freeness), and for $r\geq 2$ 
follows from Lemma~\ref{lem:no hole of order <=r}.
\end{proof}

\begin{cor}\label{corminholenewt} Suppose $K$ is $\upo$-free and has a minimal hole of order $r\ge 2$. Assume also that $C$ is algebraically closed and $\Gamma$ is divisible. Then 
$K$ is $\d$-valued, $r$-linearly closed, and $r$-linearly newtonian.
\end{cor}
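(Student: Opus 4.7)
The plan is to assemble this as a short corollary of earlier results in the section, applied with $r-1$ in the role of $r$.

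First I would note that since $K$ is $\upo$-free it is $\upl$-free, so Corollary~\ref{minholenewt} (the hypothesis $r\geq 2$ is exactly what is needed to get the stronger conclusion there) gives that $K$ is $(r-1)$-newtonian. In particular, as $r-1\geq 1$, $K$ is $1$-linearly newtonian, and then Lemma~\ref{lem:ADH 14.2.5} --- whose hypotheses ($H$-asymptotic, asymptotic integration, $1$-linearly newtonian) are all met --- yields that $K$ is $\d$-valued. This handles the first claim.

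For the remaining two claims, I would simply invoke Corollary~\ref{14.5.3.r} with $r-1$ in place of $r$: we have just established that $K$ is $(r-1)$-newtonian, and by hypothesis $\Gamma$ is divisible, $C$ is algebraically closed, and $K$ is $\upo$-free (which is the standing assumption of that subsection). The conclusion of~\ref{14.5.3.r} is then that $K$ is $r$-linearly closed and $r$-linearly newtonian, which is exactly what we want.

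There is essentially no obstacle here; the work has already been done in the lemmas cited. The only thing to watch is the arithmetic of the indices, namely that $r\geq 2$ is precisely the condition that lets us apply Corollary~\ref{minholenewt} to get $(r-1)$-newtonianity with $r-1\geq 1$, and then feed that into~\ref{14.5.3.r} with $r-1$ in place of $r$ so that its conclusion becomes $r$-linearly closed/newtonian.
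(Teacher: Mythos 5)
Your proposal is correct and is essentially the paper's own argument: the paper's proof consists exactly of citing Corollary~\ref{minholenewt} (to get $(r-1)$-newtonianity, hence $1$-linear newtonianity), Lemma~\ref{lem:ADH 14.2.5} (for $\d$-valuedness, using that $\upl$-freeness gives asymptotic integration), and Corollary~\ref{14.5.3.r} applied with $r-1$ in place of $r$. Your bookkeeping of the index shift via the hypothesis $r\ge 2$ is exactly the intended use of these results.
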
 
\begin{proof} This follows from Lemma~\ref{lem:ADH 14.2.5}, Corollary~\ref{14.5.3.r}, and Corollary~\ref{minholenewt}.
\end{proof}

\noindent
Here is a   linear version of Lemma~\ref{lem:no hole of order <=r}: 

\begin{lemma}\label{lem:no hole of order <=r, deg 1}
If $K$ is $\upl$-free, then
$$\text{$K$ is $1$-linearly newtonian}\ \Longleftrightarrow\ \text{$K$ has no hole of degree~$1$ and order~$1$.}$$
If $r\in\N^{\geq 1}$ and $K$ is $\upo$-free, then
$$\text{$K$ is $r$-linearly newtonian}\ \Longleftrightarrow\ \text{$K$ has no hole of degree~$1$ and order~$\leq r$.}$$
\end{lemma}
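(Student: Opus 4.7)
The plan is to reduce both equivalences to the characterizations of linear newtonianity via $K$-external zeros already established in Lemmas~\ref{lem:char 1-linearly newt} and~\ref{lem:char r-linearly newt}, by showing that (for degree~$1$ polynomials of order~$\geq 1$) having a $K$-external zero is the same as generating a hole in $K$.

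First I would establish the following translation: for $P\in K\{Y\}$ with $\deg P = 1$ and $1\le \order P\le r$, the polynomial $P$ has a $K$-external zero if and only if there exists $\fm\in K^\times$ and $\hat a$ making $(P,\fm,\hat a)$ a hole in $K$. The ``if'' direction is immediate from the definitions. For ``only if'', given a $K$-external zero $\hat a$ of $P$ in an immediate asymptotic extension $\hat K$ of $K$, note that $\hat a\ne 0$ (because $0\in K$ but $\hat a\notin K$), so $v\hat a\in\Gamma$ by immediacy; since $\Gamma\neq\{0\}$, pick $\fm\in K^\times$ with $v\fm<v\hat a$, i.e.\ $\hat a\prec\fm$, giving the desired hole. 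I would also note that a degree $1$ polynomial $P=a_0+a_1Y$ of order $0$ with $a_1\ne 0$ has its unique zero $-a_0/a_1$ in $K$, so there are no holes of order $0$ and degree $1$; hence ``hole of degree $1$ and order $\leq r$'' with $r\ge 1$ coincides with ``hole of degree $1$ and order in $\{1,\dots,r\}$''.

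Next I apply the characterizations directly. For the first equivalence, assume $K$ is $\upl$-free; then $K$ has asymptotic integration (as already assumed), so Lemma~\ref{lem:char 1-linearly newt} gives: $K$ is $1$-linearly newtonian $\Leftrightarrow$ no $P\in K\{Y\}$ with $\deg P=\order P=1$ has a $K$-external zero $\Leftrightarrow$ $K$ has no hole of degree $1$ and order $1$, by the translation above. For the second equivalence, assume $K$ is $\upo$-free; then Lemma~\ref{lem:char r-linearly newt} gives: $K$ is $r$-linearly newtonian $\Leftrightarrow$ no $P\in K\{Y\}$ with $\deg P=1$ and $\order P\leq r$ has a $K$-external zero $\Leftrightarrow$ $K$ has no hole of degree $1$ and order $\leq r$.

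There is no real obstacle here: both directions are essentially bookkeeping, and all the substantive work has already been done in the cited lemmas. The only subtlety to flag is ensuring that a $K$-external zero actually produces a legitimate hole (one needs some $\fm$ with $\hat a\prec\fm$), which is handled by the remark that $v\hat a\in\Gamma$ and $\Gamma\neq\{0\}$.
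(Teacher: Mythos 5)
Your proposal is correct and takes essentially the same route as the paper: the paper's proof simply cites Lemma~\ref{lem:char 1-linearly newt} and Lemma~\ref{lem:char r-linearly newt}, relying on the translation between $K$-external zeros of $P$ and holes $(P,\fm,\hat a)$ recorded in the remarks just after the definition of ``hole.'' You spell out that translation (including the harmless order-$0$ edge case and the existence of $\fm\succ\hat a$), which the paper treats as already established; there is no substantive difference.
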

\begin{proof}
The first statement follows from Lemma~\ref{lem:char 1-linearly newt}, and the second statement from Lemma~\ref{lem:char r-linearly newt}.
\end{proof}

\begin{cor}\label{degmorethanone} If $K$ is $\upo$-free and has a minimal hole in $K$ of order~$r$ and degree~$>1$, then $K$ is $r$-linearly newtonian.
\end{cor}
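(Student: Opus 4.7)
The plan is to argue by contradiction using the characterization of $r$-linear newtonianity via the nonexistence of degree-$1$, order-$\leq r$ holes (Lemma~\ref{lem:no hole of order <=r, deg 1}) and then derive a hole of strictly smaller complexity than the given minimal hole, contradicting minimality.

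More concretely, suppose for contradiction that $K$ is \emph{not} $r$-linearly newtonian. Since $K$ is $\upo$-free and $r\geq 1$ (as there is a hole of order $r$, and a hole of order $0$ would force degree $\geq 1$ but here we have degree $>1$; in any event the hypothesis gives $r\geq 1$), Lemma~\ref{lem:no hole of order <=r, deg 1} produces a hole $(Q,\fn,\hat b)$ in $K$ of degree~$1$ and order $r'\leq r$. Then $Q\in K\{Y\}\setminus K$ has $\order Q=r'$, $\deg Q=1$, and consequently $\deg_{Y^{(r')}}Q=1$ as well (since this partial degree is bounded by the total degree). Hence $\cc(Q)=(r',1,1)$.

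Now let $(P,\fm,\hat a)$ be the given minimal hole in $K$, of order~$r$ and total degree $t:=\deg P >1$, and write $s:=\deg_{Y^{(r)}}P\geq 1$, so $\cc(P)=(r,s,t)$. Comparing lexicographically: if $r'<r$ then $(r',1,1)<(r,s,t)$ directly; if $r'=r$, then since $t>1$ we have $(1,1)<(s,t)$ (indeed, either $s>1$, or $s=1$ and $1<t$), whence $(r',1,1)=(r,1,1)<(r,s,t)$. In both cases $\cc(Q)<\cc(P)$, contradicting minimality of $(P,\fm,\hat a)$.

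Thus no such $(Q,\fn,\hat b)$ exists, so by Lemma~\ref{lem:no hole of order <=r, deg 1} the field $K$ is $r$-linearly newtonian. The only step that required any thought was the lexicographic complexity comparison, and that boiled down to the hypothesis $\deg P>1$; there is no genuine obstacle.
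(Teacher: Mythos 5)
Your proof is correct and is precisely the (unwritten) argument the paper intends: the corollary is stated as an immediate consequence of Lemma~\ref{lem:no hole of order <=r, deg 1}, since a hole of degree $1$ and order $\le r$ has complexity $(r',1,1)$, which is lexicographically smaller than $\cc(P)=(r,s,t)$ with $t>1$, contradicting minimality. The only wobble is your parenthetical justification that $r\ge 1$ (a hole of order $0$ can perfectly well have degree $>1$, so the hypothesis alone does not force $r\ge 1$), but this is immaterial: the statement is only meaningful, and only used, for $r\ge 1$, where your argument is exactly the paper's.
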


\begin{lemma}
Suppose $K$ has a hole $(P,\fm,\hat a)$ of degree $1$, and $L_P\in K[\der]^{\neq}$  splits over $K$. Then $K$ has a hole of complexity $(1,1,1)$.
\end{lemma}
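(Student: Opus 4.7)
The plan is to use the splitting of $L_P$ to peel off first-order factors until we hit a first-order obstruction living outside $K$, and then upgrade this obstruction to a hole of complexity $(1,1,1)$.

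Write $r:=\order P$ and, using the hypothesis that $L_P$ splits over $K$, factor
$L_P = a(\der-b_r)\cdots(\der-b_1)$
with $a\in K^\times$ and $b_1,\dots,b_r\in K$. Let $\hat K$ be an immediate asymptotic extension of $K$ containing $\hat a\in\hat K\setminus K$ with $\hat a\prec\fm$ and $P(\hat a)=0$. Since $\deg P=1$, we have $P=L_P+P(0)$, and so $L_P(\hat a)=-P(0)\in K$. Set $\hat a_0:=\hat a$ and recursively $\hat a_i:=(\der-b_i)(\hat a_{i-1})\in\hat K$ for $i=1,\dots,r$. Then $a\hat a_r=L_P(\hat a)=-P(0)\in K$, hence $\hat a_r\in K$, whereas $\hat a_0\notin K$.

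Let $i\in\{1,\dots,r\}$ be minimal with $\hat a_i\in K$, and put $j:=i-1$, $c:=\hat a_i\in K$. Then $\hat a_j\in\hat K\setminus K$ and the differential polynomial
$Q(Y)\ :=\ Y'-b_iY-c\ \in\ K\{Y\}$
has order $1$, degree $1$, and $\deg_{Y'}Q=1$, so $\cc(Q)=(1,1,1)$; moreover $Q(\hat a_j)=\hat a_j'-b_i\hat a_j-c=\hat a_i-c=0$. Since $\hat K$ is an immediate extension of $K$ we have $\Gamma_{\hat K}=\Gamma$, and $\hat a_j\ne 0$ (else $\hat a_j\in K$), so $v(\hat a_j)\in\Gamma$. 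Because $\Gamma\ne\{0\}$ is an ordered abelian group (hence unbounded below), we can choose $\fn\in K^\times$ with $v\fn<v(\hat a_j)$, i.e., $\hat a_j\prec\fn$. Then $(Q,\fn,\hat a_j)$ is a hole in $K$ of complexity $(1,1,1)$, as required.

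There is really no hard step here: the proof is essentially the standard trick of reducing a linear equation $L_P(y)=-P(0)$, once $L_P$ splits, to a tower of first-order equations, and locating the first place where the chain leaves $K$. The one thing to watch is that the intermediate elements $\hat a_i$ still lie in an \emph{immediate} asymptotic extension of $K$ (which is automatic, as $\hat K$ already is one), so that the relation $\hat a_j\prec\fn$ can be arranged by the usual choice of $\fn$ in $K^\times$.
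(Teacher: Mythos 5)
Your proof is correct and takes essentially the same approach as the paper's: both peel first-order factors off $L_P$ and locate the first point at which the resulting chain of elements leaves $K$; the one-step differential equation bridging that gap supplies the hole of complexity $(1,1,1)$. The paper packages this via a minimality argument (replacing the given hole by one of minimal order and peeling a single factor), while you unroll the chain $\hat a_0,\dots,\hat a_r$ explicitly, but the mechanism is identical.
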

\begin{proof}
Let $(P,\fm,\hat a)$ as in the hypothesis have minimal order. Then~${\order P\ge 1}$, so~$\order P = \order L_P$. 
Take $A,B\in K[\der]$ such that $\order A=1$ and  $L_P=AB$.
If $\order B=0$, then $(P,\fm,\hat a)$ has complexity $(1,1,1)$.
Assume $\order B\ge 1$. Then~$B(\hat a)\notin K$: otherwise, taking $Q\in K\{Y\}$ of degree~$1$ with
$L_Q=B$ and~$Q(0)=-B(\hat a)$ yields a hole $(Q,\fm,\hat a)$ in $K$ where $\deg Q=1$ and $L_Q$ splits over $K$,
and~$(Q,\fm,\hat a)$ has smaller order than $(P,\fm,\hat a)$.
Set $\hat b:=B(\hat a)$ 
and take $R\in K\{Y\}$ of degree $1$ with $L_R=A$ and $R(0)=P(0)$. Then
$$R(\hat b)\ =\ R(0)+L_R(\hat b)\ =\ P(0)+L_P(\hat a)\ =\ 
P(\hat a)\ =\ 0,$$
hence for any $\fn\succ\hat b$, $(R,\fn,\hat b)$ is a hole in $K$ of complexity $(1,1,1)$.
\end{proof}

\begin{cor}\label{cor:minhole deg 1}
Suppose $K$ is $\upo$-free,  $C$ is algebraically closed, and $\Gamma$ is divisible.
Then every minimal hole in $K$ of degree~$1$ has order~$1$.
If in addition $K$ is $1$-linearly newtonian, then  every minimal hole in $K$ has degree~$>1$. 
\end{cor}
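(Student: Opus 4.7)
I plan to prove both statements by contradiction, using the preceding lemma together with Corollary~\ref{corminholenewt} for part one, and Lemma~\ref{lem:no hole of order <=r, deg 1} for part two.

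For the first assertion, let $(P,\fm,\hat a)$ be a minimal hole in $K$ with $\deg P = 1$, and suppose toward contradiction that $r := \order P \geq 2$. Since $\deg P = 1$ and $\order P = r$, the complexity of this hole is $\cc(P) = (r,1,1)$. By Corollary~\ref{corminholenewt} (applied to this minimal hole of order $r \geq 2$, using the hypotheses that $C$ is algebraically closed and $\Gamma$ is divisible), $K$ is $r$-linearly closed. In particular $L_P \in K[\der]^{\neq}$ (nonzero since $\order L_P = \order P \geq 1$) splits over $K$. Then the lemma immediately preceding this corollary yields a hole in $K$ of complexity $(1,1,1)$, and since $(1,1,1) < (r,1,1)$ in the lexicographic ordering, this contradicts the minimality of $(P,\fm,\hat a)$. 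Hence $\order P = 1$, as required.

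For the second assertion, assume in addition that $K$ is $1$-linearly newtonian, and suppose toward contradiction that $(P,\fm,\hat a)$ is a minimal hole in $K$ with $\deg P = 1$. By part one just proved, $\order P = 1$, so $(P,\fm,\hat a)$ is a hole in $K$ of degree $1$ and order $1$. Since $K$ is $\upo$-free it is also $\upl$-free (by the implication recalled in the preliminaries of Section~\ref{sec:uplupo-freeness}), so Lemma~\ref{lem:no hole of order <=r, deg 1} applies and, from the hypothesis that $K$ is $1$-linearly newtonian, concludes that $K$ has no hole of degree $1$ and order $1$. This is the desired contradiction.

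The only mildly delicate point is verifying that the complexity comparison in the first part is genuine: with $\deg P = 1$ and $\order P = r$, one automatically has $\deg_{Y^{(r)}} P = 1$ (since the coefficient of $Y^{(r)}$ must be nonzero by the order assumption, and the total degree is $1$), which fixes the triple $\cc(P) = (r,1,1)$; the strict inequality $(1,1,1) < (r,1,1)$ for $r \geq 2$ is then immediate. No further calculation is needed, and both parts follow directly from the cited results.
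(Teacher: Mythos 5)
Your proof is correct and follows exactly the paper's route: the first part combines Corollary~\ref{corminholenewt} (to get $r$-linear closedness, hence splitting of $L_P$) with the lemma immediately preceding the corollary to produce a hole of complexity $(1,1,1)$ contradicting minimality, and the second part uses the first together with Lemma~\ref{lem:no hole of order <=r, deg 1}. Your added checks (that $\cc(P)=(r,1,1)$ and that $\upo$-freeness gives $\upl$-freeness) are exactly the details the paper leaves implicit.
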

\begin{proof}
The first statement follows from Corollary~\ref{corminholenewt} and the preceding lemma.
For the second statement, use the first and Lemma~\ref{lem:no hole of order <=r, deg 1}.  
\end{proof}

\noindent
Let $(P,\fm, \hat a)$ be a hole in $K$. We say $(P,\fm, \hat a)$ is {\bf $Z$-minimal} if
$P$ has minimal complexity in $Z(K,\hat a)$. Thus if
$(P,\fm,\hat a)$ is minimal, then it is $Z$-minimal by Lem\-ma~\ref{lem:Z(K,hat a)}.
If $(P,\fm,\hat a)$ is $Z$-minimal, then by [ADH, remarks following 11.4.3],  the differential polynomial~$P$ is a minimal annihilator of $\hat a$ over $K$.
Note also that~$\ndeg P_{\times\fm} \geq 1$ by~[ADH, 11.2.1]. In more detail:   \index{hole!Z-minimal@$Z$-minimal}\index{Z-minimal@$Z$-minimal!hole}

\begin{lemma}\label{lem:lower bd on ddeg}
Let $(P,\fm,\hat a)$ be a hole in $K$. Then for all $\fn$ with $\hat a \prec \fn \preceq \fm$,
$$1\ \le\ \dval P_{\times \fn}\ \le\ 
\ddeg P_{\times \fn}\ \le\ \ddeg P_{\times\fm}.$$ In particular, $\ddeg_{\prec \fm} P\ge 1$.
\end{lemma}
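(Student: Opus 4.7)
The plan is to dispatch the two easy inequalities quickly and concentrate on the lower bound $\dval P_{\times\fn}\ge 1$, then deduce the ``in particular'' statement. The middle inequality $\dval P_{\times\fn}\le\ddeg P_{\times\fn}$ is immediate from the definitions of dominant multiplicity and dominant degree. For the right inequality I would write $P_{\times\fn}=(P_{\times\fm})_{\times(\fn/\fm)}$ with $\fn/\fm\preceq 1$ and invoke the standard monotonicity of $\ddeg$ under multiplicative conjugation by an element $\preceq 1$, which is in [ADH, Section~6.6].

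The substantive step is $1\le\dval P_{\times\fn}$, which I would prove by contradiction. Set $Q:=P_{\times\fn}\in K\{Y\}$ and $z:=\hat a/\fn\in\hat K$, so $z\prec 1$ and $Q(z)=0$. Assuming $\dval Q=0$, so $Q_0\ne 0$ and $v(Q_0)=v(Q)$, I divide $Q$ by any $g\in K^\times$ with $vg=v(Q)$ to arrange $Q\in\O\{Y\}$ with $Q,Q_0\asymp 1$; then the reduction $\overline Q\in\res(K)\{Y\}$ satisfies $\overline Q(0)=\overline{Q_0}\ne 0$. Because $\hat K$ is an immediate extension of $K$ and $\hat a\notin K$, I pick a pc-sequence $(a_\rho)$ in $K$ with $a_\rho\leadsto\hat a$ and put $y_\rho:=a_\rho/\fn\in K$. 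Eventually $v(\hat a-a_\rho)>v\hat a$, so $a_\rho\asymp\hat a$ and hence $y_\rho\prec 1$. Since $K$ has small derivation, $\der$ is continuous in the valuation topology, so polynomial evaluation is continuous in every coordinate; consequently $Q(y_\rho)=P(a_\rho)\to P(\hat a)=0$ in $\hat K$, i.e.\ $v(Q(y_\rho))\to\infty$, so eventually $\overline{Q(y_\rho)}=0$ in $\res K$. On the other hand, small derivation also yields $\overline{y_\rho^{(k)}}=(\overline{y_\rho})^{(k)}=0$ for all $k$, and therefore
$$\overline{Q(y_\rho)}\ =\ \overline{Q}(\overline{y_\rho})\ =\ \overline{Q}(0)\ =\ \overline{Q_0}\ \neq\ 0,$$
the desired contradiction.

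For the ``in particular'' claim I would exploit that $\Gamma$ has no least positive element (because $\Psi\cap\Gamma^{>}\ne\emptyset$) to find $\fn\in K^\times$ with $v\fm<v\fn<v\hat a$, i.e.\ $\hat a\prec\fn\prec\fm$; applying the three-term chain to this $\fn$ gives $\ddeg P_{\times\fn}\ge\dval P_{\times\fn}\ge 1$, so $\ddeg_{\prec\fm}P\ge 1$. The main point of care in the argument above is the residue-field identity $\overline{Q(y_\rho)}=\overline{Q}(\overline{y_\rho})$, which needs the residue map to commute with differentiation on each $y_\rho^{(k)}$; this is exactly what small derivation supplies, so this is a careful verification rather than a genuine obstacle.
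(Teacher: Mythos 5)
Your treatment of the two easy inequalities and of the ``in particular'' clause (via $\Gamma^{>}$ having no least element) matches the paper's, which disposes of these by citing [ADH, 6.6] and the same density remark. The problem is in your core step. The claim ``$Q(y_\rho)=P(a_\rho)\to P(\hat a)=0$ in $\hat K$, i.e.\ $v(Q(y_\rho))\to\infty$'' conflates pseudoconvergence with convergence in the valuation topology. A pc-sequence $(a_\rho)$ in $K$ with $a_\rho\leadsto\hat a$ only has $v(\hat a-a_\rho)$ eventually strictly increasing; these values need not be cofinal in $\Gamma$, and $\hat a$ need not be a topological limit of any sequence from $K$ (in the situations this lemma is used for, e.g.\ special $\hat a$, the set $v(\hat a-K)$ has a nontrivial convex ``width'', which is precisely the case where pc-convergence is not topological convergence). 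So continuity of $\der$ and of evaluation gives you nothing here, and even if you knew $P(a_\rho)\leadsto P(\hat a)=0$ you would only get that $v(P(a_\rho))$ eventually strictly increases, not that it eventually exceeds $v(Q)=0$; hence ``eventually $\overline{Q(y_\rho)}=0$'' is not established, and the contradiction collapses. (A secondary, fixable point: ``eventually $v(\hat a-a_\rho)>v\hat a$'' is not automatic for an arbitrary pc-sequence pseudoconverging to $\hat a$; you must choose the $a_\rho$ with $v(\hat a-a_\rho)$ cofinal in $v(\hat a-K)$.)

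The detour through $K$ is also unnecessary, and removing it closes the gap: argue directly in $\hat K$ at $z=\hat a/\fn$. Since $\hat K$ is an immediate asymptotic extension of $K$, it has the same value group, hence small derivation by [ADH, 9.4.1], and the same residue field. After normalizing $Q=P_{\times\fn}$ so that $Q\asymp Q_0\asymp 1$, the assumption $\dval Q=0$ gives $\overline{Q_0}\neq 0$ in $\res(K)=\res(\hat K)$; but $z\prec 1$ and all $z^{(k)}\prec 1$, so the residue of $Q(z)$ equals $\overline{Q}(0)=\overline{Q_0}\neq 0$, contradicting $Q(z)=0$. This is exactly the paper's one-line argument: writing $\hat a=\fn\hat b$ with $\hat b\prec 1$, from $Q(\hat b)=0$ one gets $D_Q(0)=0$ and hence $\dval P_{\times\fn}\ge 1$. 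So your overall plan is sound, but as written the key step has a genuine gap that the direct residue computation in $\hat K$ (rather than an approximation from $K$) is needed to repair.
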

\begin{proof} Assume $\hat a \prec \fn \preceq \fm$. 
Then $\hat a = \fn \hat b$ with $\hat b\prec 1$; 
put $Q:=P_{\times\fn}\in K\{Y\}^{\neq}$. Then $Q(\hat b)=0$,  hence $D_Q(0)=0$ and so $\dval Q=\dval P_{\times \fn}\ge 1$. The rest follows from
[ADH, 6.6.5(ii), 6.6.7, 6.6.9] and $\Gamma^{>}$ having no least element.
\end{proof}

\noindent
In the next lemma, $(\upl_\rho)$, $(\upo_\rho)$ are pc-sequences in $K$ as in [ADH, 11.5, 11.7].
 
\begin{lemma}\label{lem:upl-free, not upo-free}
Suppose $K$ is $\upl$-free and 
$\upo\in K$ is such that $\upo_\rho\leadsto\upo$
\textup{(}so~$K$ is not $\upo$-free\textup{)}.  Then
we have a  hole $(P,\fm,\upl)$ in $K$ where $P= 2Y'+Y^2+\upo$ and~$\upl_\rho\leadsto\upl$, and
each such hole in $K$ is a $Z$-minimal hole in $K$.
\end{lemma}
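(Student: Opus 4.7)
My plan is to split the argument into two parts: first construct the hole, then establish $Z$-minimality via a short case analysis on the complexity of a minimal annihilator of $\upl$ over $K$.

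For existence, I invoke [ADH, 11.7.13] (the same reference used in the remark following Proposition~\ref{prop:hata special, r=1}). Applied to the given $\upo \in K$ with $\upo_\rho \leadsto \upo$, it yields an element $\upl$ in some immediate asymptotic extension of $K$, with $\upl\notin K$, such that $\upl_\rho \leadsto \upl$ and $\omega(\upl) = \upo$, equivalently $P(\upl) = 0$. Since $v(\upl) \in \Gamma \ne \{0\}$, any $\fm \in K^\times$ with $\fm \succ \upl$ then provides a hole $(P, \fm, \upl)$ of the required form.

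Next I would show that every such $\upl$ is transcendental over $K$. Since $K$ is $\upl$-free, $(\upl_\rho)$ must be of transcendental type over $K$: if it were of algebraic type, [ADH, 3.2.7] would produce a pseudolimit of $(\upl_\rho)$ in $K^{\operatorname{a}}$, contradicting $\upl$-freeness of $K^{\operatorname{a}}$ (equivalent to that of $K$ by [ADH, 11.6.8]). Then [ADH, 3.2.6] forces any pseudolimit of $(\upl_\rho)$, and in particular $\upl$, to be transcendental over $K$.

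Finally, let $(P, \fm, \upl)$ be any hole of the described form. By Lemma~\ref{lem:Z(K,hat a)} we have $P \in Z(K, \upl)$, so I can pick $Q \in Z(K, \upl)$ of minimal complexity, with $\cc(Q) \le \cc(P) = (1,1,2)$; by the remark following [ADH, 11.4.3] such a $Q$ is a minimal annihilator of $\upl$ over $K$, whence $Q(\upl) = 0$. Suppose for contradiction that $\cc(Q) < (1,1,2)$. If $\order Q = 0$, then $Q \in K[Y]^{\ne}$ annihilates $\upl$ and $\upl$ is algebraic over $K$. The only remaining case is $\cc(Q) = (1,1,1)$, where $Q = c_1 Y' + c_0 Y + c$ with $c_1 \in K^\times$ and $c_0, c \in K$; then $\upl' = d\upl + e$ with $d, e \in K$, which I substitute into $P(\upl) = 2\upl' + \upl^2 + \upo = 0$ to produce the algebraic relation $\upl^2 + 2d\upl + (2e + \upo) = 0$ over $K$. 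Either subcase contradicts the transcendence of $\upl$, so $\cc(Q) = (1,1,2)$ and the hole is $Z$-minimal. I anticipate no serious obstacle; the one slightly non-obvious step is to eliminate $\upl'$ against $P$ in the linear case to force algebraicity of $\upl$.
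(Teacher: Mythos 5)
Your proposal is correct, but it takes a more self-contained route than the paper. The paper's proof is essentially two citations: existence of $\upl$ with $P(\upl)=0$ and $\upl_\rho\leadsto\upl$ comes from [ADH, 11.7.13] (as you say), and then $Z$-minimality is obtained directly from [ADH, 11.4.13] and [ADH, 11.7.12], the latter asserting that $P=2Y'+Y^2+\upo$ is a minimal differential polynomial of $(\upl_\rho)$ over $K$. You instead re-derive the content of [ADH, 11.7.12] inline: first using [ADH, 11.6.8], [ADH, 3.2.7], and [ADH, 3.2.6] to conclude that $(\upl_\rho)$ is of transcendental type over $K$ and hence that any pseudolimit $\upl$ is transcendental over $K$, and then ruling out $\cc(Q)<(1,1,2)$ for a minimal-complexity $Q\in Z(K,\upl)$ by elimination (degree-$0$ order gives algebraicity; $\cc(Q)=(1,1,1)$ gives $\upl'\in K+K\upl$, which substituted into $P(\upl)=0$ again yields an algebraic relation). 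The case analysis is complete: since $\order Q\geq 1$ forces $\deg_{Y^{(1)}}Q\geq 1$, the only lexicographic triple strictly between the order-$0$ ones and $(1,1,2)$ is indeed $(1,1,1)$, and allowing $c_0=0$ covers the subcase $Q=c_1Y'+c$ as well. Your route costs more lines but makes the underlying mechanism transparent, whereas the paper leans on an already-packaged lemma from [ADH].
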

\begin{proof}
From [ADH, 11.7.13] we obtain $\upl$ in  an immediate asymptotic extension
of~$K$ such that~$\upl_\rho\leadsto\upl$ and $P(\upl)=0$.
Taking any $\fm$ with $\upl\prec\fm$ then yields a hole $(P,\fm,\upl)$ in~$K$ with $\upl_\rho\leadsto\upl$, and each such hole in $K$ 
is a $Z$-minimal hole in~$K$ by~[ADH,  11.4.13, 11.7.12].
\end{proof}

\begin{cor}\label{cor:upl-free, not upo-free}
If $K$ is $\upl$-free but not $\upo$-free, then each minimal hole in $K$ of positive order has complexity~$(1,1,1)$ or complexity $(1,1,2)$. If $K$ is a Liouville closed $H$-field and not $\upo$-free, then  $(P, \fm, \upl)$ is a minimal hole of complexity $(1,1,2)$,
where  $\upo$, $P$, $\upl$, $\fm$ are as in Lemma~\ref{lem:upl-free, not upo-free}.
\end{cor}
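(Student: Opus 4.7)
The first statement is essentially lexicographic bookkeeping against the complexity of an explicit witness hole. Since $K$ is $\upl$-free but not $\upo$-free, Lemma~\ref{lem:upl-free, not upo-free} applied to some $\upo\in K$ with $\upo_\rho\leadsto\upo$ supplies a hole $(P,\fm,\upl)$ with $P=2Y'+Y^2+\upo$; one reads off at once $\cc(P)=(1,1,2)$. Hence any minimal hole $(Q,\fn,\hat a)$ in $K$ has complexity $(r,s,t)\le(1,1,2)$ lexicographically. Imposing $r\ge 1$ forces $r=1$, which in turn forces $s=\deg_{Y^{(r)}} Q\ge 1$; combined with $s\le 1$ this gives $s=1$, and then $1=s\le t=\deg Q\le 2$ yields $t\in\{1,2\}$.

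For the second statement, assume $K$ is Liouville closed and not $\upo$-free. Every Liouville closed $H$-field is $\upl$-free (see [ADH, remarks after 11.6.2]), so the first statement applies and minimal holes of positive order in $K$ have complexity in $\{(1,1,1),(1,1,2)\}$. Corollary~\ref{cor:Liouville closed => 1-lin newt} gives that $K$ is $1$-linearly newtonian, and Lemma~\ref{lem:no hole of order <=r, deg 1} then rules out holes of complexity $(1,1,1)$. Order-zero minimal holes are excluded as well, because $K$, being real closed with convex valuation ring, is henselian and hence admits no proper immediate algebraic valued field extension. Therefore every minimal hole in $K$ has complexity $(1,1,2)$, and since $(P,\fm,\upl)$ from Lemma~\ref{lem:upl-free, not upo-free} already realizes this complexity, it is itself minimal.

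No real obstacle is anticipated; the argument reduces to a lexicographic comparison with the witness produced by Lemma~\ref{lem:upl-free, not upo-free}, supplemented by the two standard non-existence results (Lemma~\ref{lem:no hole of order <=r, deg 1} excludes the degree-one linear case, henselianity excludes the order-zero case). The only point requiring minor care is the observation that $r\ge 1$ forces $s\ge 1$ (because $Y^{(r)}$ must appear in $Q$) and that $s\le t$, so the admissible triples $(r,s,t)\le(1,1,2)$ with $r\ge 1$ are exactly $(1,1,1)$ and $(1,1,2)$.
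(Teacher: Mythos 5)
Your proof is correct and follows essentially the same route as the paper, whose entire stated proof is the citation of Corollary~\ref{cor:Liouville closed => 1-lin newt} and Lemma~\ref{lem:no hole of order <=r, deg 1} for the second part, with the lexicographic comparison against the witness from Lemma~\ref{lem:upl-free, not upo-free} left implicit. Your explicit exclusion of order-zero holes via henselianity of a real closed field with convex valuation ring is exactly the detail the paper leaves tacit (cf.\ its earlier remark that henselian means no holes of order $0$), so nothing is missing.
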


\noindent
Here the second part uses Corollary~\ref{cor:Liouville closed => 1-lin newt} and Lemma~\ref{lem:no hole of order <=r, deg 1}.

\subsection*{Slots}
In some arguments the notion of a hole in $K$ turns out to be too stringent. Therefore we introduce a more
flexible version of it:

\begin{definition}
A {\bf slot} in $K$ is a triple $(P,\fm,\hat a)$ where $P\in K\{Y\}\setminus K$ and~$\hat a$ is an element of~$\hat K\setminus K$, for some immediate asymptotic extension $\hat K$ of $K$, such that $\hat a\prec\fm$ and $P\in Z(K,\hat a)$. The {\bf order}, {\bf degree}, and {\bf complexity} of such a slot  in $K$ are defined to be the order, degree, and complexity of the differential polynomial $P$, respectively. A slot in $K$ of degree $1$ is also called a {\bf linear} slot in $K$. 
A slot $(P,\fm,\hat a)$ in $K$ is {\bf $Z$-minimal} if $P$ is of minimal complexity among elements of~$Z(K,\hat a)$. \index{slot}\index{slot!complexity}\index{complexity!slot}\index{slot!linear}\index{slot!Z-minimal@$Z$-minimal}\index{Z-minimal@$Z$-minimal!slot}\label{p:slot}
\end{definition}

\noindent
Thus  by Lemma~\ref{lem:Z(K,hat a)}, holes in $K$ are slots in $K$, and a hole in $K$ is $Z$-minimal iff  
it is $Z$-minimal as a slot in $K$. 
From  [ADH, 11.4.13] we obtain:

\begin{cor}\label{mindivmin} Let $(P,\fm,\hat a)$ be a $Z$-minimal slot in $K$ and $(a_\rho)$ be a divergent pc-sequence in $K$ such that $a_\rho\leadsto \hat a$. Then $P$ is a minimal differential polynomial of $(a_\rho)$ over $K$.
\end{cor}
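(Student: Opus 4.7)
The plan is to unfold the definition of $Z$-minimality and invoke the cited result [ADH, 11.4.13] directly. Since $(P,\fm,\hat a)$ is a $Z$-minimal slot in $K$, by definition $P \in Z(K,\hat a)$ is of minimal complexity among all elements of $Z(K,\hat a)$. Given a divergent pc-sequence $(a_\rho)$ in $K$ with $a_\rho \leadsto \hat a$, [ADH, 11.4.13] asserts precisely that any differential polynomial of minimal complexity in $Z(K,\hat a)$ is a minimal differential polynomial of $(a_\rho)$ over $K$. Applying this to $P$ yields the conclusion.

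The only thing worth checking is that the hypotheses of [ADH, 11.4.13] are in force here: namely, that $(a_\rho)$ is a divergent pc-sequence in $K$ of $\d$-algebraic type with $a_\rho\leadsto\hat a\in\hat K\setminus K$ for some immediate asymptotic extension $\hat K$ of $K$. Divergence and the pseudoconvergence relation come with the hypotheses of the corollary, and $\d$-algebraic type of $(a_\rho)$ follows because $P\in Z(K,\hat a)$ provides a nonzero annihilator: indeed, by the definition of $Z(K,\hat a)$, the sequence $\bigl(P(a_\rho)\bigr)$ eventually pseudoconverges to $0$, so $(a_\rho)$ cannot be of $\d$-transcendental type. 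Thus there is no real obstacle; the corollary is a one-line consequence of the definitions together with [ADH, 11.4.13]. The substantive content has been absorbed into that earlier result.
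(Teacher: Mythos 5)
Your proof is correct and takes essentially the same route as the paper, whose entire proof of this corollary is the citation of [ADH, 11.4.13] applied to an element of $Z(K,\hat a)$ of minimal complexity for a divergent pc-sequence $a_\rho\leadsto\hat a$. Your extra verification that $(a_\rho)$ is of $\d$-algebraic type is harmless and in any case absorbed into [ADH, 11.4.13], though the phrase that $\bigl(P(a_\rho)\bigr)$ pseudoconverges to $0$ ``by the definition of $Z(K,\hat a)$'' is a slight gloss: the definition is in terms of Newton degree conditions, and the link to pseudoconvergence of $\bigl(P(a_\rho)\bigr)$ is itself a lemma rather than the definition.
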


\noindent
We say that slots  $(P,\fm,\hat a)$ and $(Q,\fn,\hat b)$ in $K$ are {\bf equivalent}   if
$P=Q$, $\fm=\fn$, and $v(\hat a-a)=v(\hat b-a)$ for all $a$; note that then~$Z(K,\hat a)=Z(K,\hat b)$, so 
$(P,\fm,\hat a)$ is $Z$-minimal iff $(P,\fm,\hat b)$ is $Z$-minimal. Clearly this is an equivalence relation on the class of slots in $K$.
The following lemma often allows us to pass from a $Z$-minimal slot to
a $Z$-minimal hole: \index{slot!equivalence}

\begin{lemma}\label{lem:from cracks to holes}
Let $(P,\fm,\hat a)$ be a $Z$-minimal slot in $K$. Then  $(P,\fm, \hat a)$ is equivalent to a  $Z$-minimal hole  in $K$. 
\end{lemma}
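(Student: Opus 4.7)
The plan is to construct $\hat b$ in an immediate asymptotic extension of $K$ satisfying $P(\hat b)=0$ and $v(\hat a - a) = v(\hat b - a)$ for all $a\in K$, and then to observe that this automatically forces $(P,\fm,\hat b)$ to be a $Z$-minimal hole in $K$ equivalent to $(P,\fm,\hat a)$.

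First I would pick a divergent pc-sequence $(a_\rho)$ in $K$ with $a_\rho \leadsto \hat a$; such a sequence exists because $\hat a \notin K$ lies in an immediate asymptotic extension of $K$. By Corollary~\ref{mindivmin}, $P$ is a minimal differential polynomial of $(a_\rho)$ over $K$. Applying [ADH, 11.4.8] (together with [ADH, 9.4.5] to ensure that the resulting valued differential field extension is asymptotic) then produces $\hat b$ in an immediate asymptotic extension of $K$ with $P(\hat b) = 0$ and $a_\rho \leadsto \hat b$. Since $(a_\rho)$ is divergent in $K$, this forces $\hat b \notin K$.

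For the equivalence claim, fix $a \in K$. Both $v(\hat a - K)$ and $v(\hat b - K)$ are downward closed subsets of $\Gamma$ with no largest element, so for sufficiently large $\rho$ we have $v(\hat a - a_\rho) > v(\hat a - a)$ and $v(\hat b - a_\rho) > v(\hat b - a)$; hence
\[
v(\hat a - a)\ =\ v(a - a_\rho)\ =\ v(\hat b - a).
\]
This shows that the slots $(P,\fm,\hat a)$ and $(P,\fm,\hat b)$ are equivalent. Specializing to $a=0$ gives $v(\hat b) = v(\hat a) > v(\fm)$, so $\hat b \prec \fm$, and combined with $P(\hat b)=0$ and $\hat b \notin K$ this shows $(P,\fm,\hat b)$ is a hole in~$K$. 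The equivalence entails $Z(K,\hat b) = Z(K,\hat a)$, so the $Z$-minimality of $(P,\fm,\hat b)$ as a hole (equivalently, as a slot) is inherited from the $Z$-minimality of the slot $(P,\fm,\hat a)$.

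The only mildly delicate point is the invocation of [ADH, 11.4.8]: one needs the pseudolimit produced there to sit inside an \emph{asymptotic} extension of $K$, which is why the companion result [ADH, 9.4.5] is required. Beyond that citation, the argument is purely formal.
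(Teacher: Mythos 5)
Your proof is correct and follows the paper's route: both rest on [ADH, 11.4.8] producing $\hat b$ in an immediate asymptotic extension with $P(\hat b)=0$ realizing the same valuation data as $\hat a$. The paper simply cites [ADH, 11.4.8] as already giving $v(\hat a-a)=v(\hat b-a)$ for all $a\in K$, so your detour through the divergent pc-sequence, Corollary~\ref{mindivmin}, and [ADH, 9.4.5] is sound but just makes explicit what that citation already supplies.
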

\begin{proof}
By  [ADH, 11.4.8] we obtain $\hat b$ in an immediate asymptotic extension of $K$ with $P(\hat b)=0$
and $v(\hat a-a)=v(\hat b-a)$ for all $a$.
In particular  $\hat b\notin K$, $\hat b\prec\fm$, so~$(P,\fm,\hat b)$ is a hole in $K$ equivalent to $(P, \fm, \hat a)$.
\end{proof}

\noindent
By [ADH, 11.4.8] the extension below containing $\hat b$ is not required to be immediate:

\begin{cor}\label{corisomin}  
If  $(P,\fm, \hat{a})$ is a $Z$-minimal hole in $K$ and   $\hat b$ in an asymptotic extension of $K$ satisfies
$P(\hat b)=0$ and $v(\hat a-a)=v(\hat b-a)$ for all $a$, then there is an isomorphism 
$K\<\hat{a}\>\to K\<\hat{b}\>$ of valued differential fields over $K$ sending~$\hat{a}$ to $\hat{b}$. 
\end{cor}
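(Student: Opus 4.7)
\emph{Plan.} The statement should follow essentially directly from [ADH, 11.4.8], exactly as in the proof of Lemma~\ref{lem:from cracks to holes}, but with the roles of ``given pseudolimit'' and ``constructed pseudolimit'' interchanged. First I would fix a divergent pc-sequence $(a_\rho)$ in $K$ with $a_\rho\leadsto\hat a$; such a sequence exists because $\hat a$ lies in an immediate asymptotic extension of $K$ with $\hat a\notin K$, so $v(\hat a-K)\subseteq\Gamma$ has no largest element. Since $(P,\fm,\hat a)$ is a $Z$-minimal hole, Corollary~\ref{mindivmin} tells me that $P$ is a minimal differential polynomial of $(a_\rho)$ over $K$.

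Next I would check that $\hat b$ is also a pseudolimit of the same sequence $(a_\rho)$. Specializing the hypothesis $v(\hat a-a)=v(\hat b-a)$ to $a=a_\rho$ gives $v(\hat b-a_\rho)=v(\hat a-a_\rho)$, which is eventually strictly increasing, so $a_\rho\leadsto\hat b$. The same hypothesis yields $v(\hat b-K)=v(\hat a-K)$, a set without largest element, hence $\hat b\notin K$ as well. So I am in the situation of a pc-sequence in $K$ with minimal differential polynomial $P$ and two pseudolimits $\hat a$, $\hat b$, both annihilated by $P$ and both sitting in (possibly different) asymptotic extensions of $K$.

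Applying [ADH, 11.4.8] to this configuration then delivers an isomorphism $K\<\hat a\>\to K\<\hat b\>$ of valued differential fields over $K$ sending $\hat a\mapsto\hat b$. The only subtle point --- and this is precisely what the remark preceding the corollary is flagging --- is that the asymptotic extension containing $\hat b$ need \emph{not} be immediate over $K$; but [ADH, 11.4.8] is formulated in enough generality to accommodate this, requiring only that $\hat b$ pseudoconverge the sequence and be annihilated by the minimal differential polynomial. The compatibility with valuations is then automatic, since the valuation on $K\<\hat a\>$ is completely pinned down by $P$ together with the values $v(\hat a-a)$ for $a\in K$, and by assumption the analogous data on the $\hat b$-side coincide. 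So no serious obstacle is expected: the work consists in quoting the correct form of [ADH, 11.4.8] and verifying its hypotheses, which reduce to the two observations above.
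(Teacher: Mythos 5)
Your argument is correct and takes essentially the same route as the paper: there the corollary is obtained as an immediate consequence of [ADH, 11.4.8], whose uniqueness clause applies to $\hat b$ in an arbitrary asymptotic extension of $K$ — exactly the subtlety you flag. The detour through a divergent pc-sequence and Corollary~\ref{mindivmin} is harmless but not needed, since [ADH, 11.4.8] is phrased directly in terms of $\hat a$ and an element of $Z(K,\hat a)$ of minimal complexity, which is what $Z$-minimality of the hole provides.
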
 

\noindent
In particular, equivalent $Z$-minimal holes $(P,\fm, \hat{a})$, $(P,\fm,\hat b)$  in $K$
yield an isomorphism $K\<\hat{a}\>\to K\<\hat{b}\>$ of valued differential fields over $K$ sending~$\hat{a}$ to $\hat{b}$.

\medskip
\noindent
From Lemmas~\ref{lem:no hole of order <=r} and \ref{lem:from cracks to holes} we obtain:

\begin{cor}\label{cor:no dent of order <=r} 
Let $r\in\N^{\geq 1}$, and suppose $K$ is $\upo$-free. Then 
$$\text{$K$ is $r$-newtonian}\quad\Longleftrightarrow\quad\text{$K$ has no  slot of order~$\leq r$.}$$
\end{cor}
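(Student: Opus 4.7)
The plan is to reduce to Lemma~\ref{lem:no hole of order <=r} via Lemma~\ref{lem:from cracks to holes}, using the fact that $\upo$-freeness implies $\upl$-freeness, so the hypotheses of Lemma~\ref{lem:no hole of order <=r} are met.

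For the implication ($\Leftarrow$), since every hole in $K$ is a slot in $K$, the assumption that $K$ has no slot of order $\leq r$ implies in particular that $K$ has no hole of order $\leq r$. Because $K$ is $\upo$-free it is $\upl$-free, hence Lemma~\ref{lem:no hole of order <=r} gives that $K$ is $r$-newtonian (the $\upo$-freeness side of that biconditional is free, since we assume it).

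For the forward direction ($\Rightarrow$), I would argue by contraposition: suppose $K$ admits a slot $(P,\fm,\hat a)$ with $\order P\leq r$, and produce a hole in $K$ of order $\leq r$, contradicting Lemma~\ref{lem:no hole of order <=r}. Since $P\in Z(K,\hat a)$, this set is nonempty, so I can choose $Q\in Z(K,\hat a)$ of minimal complexity; then $\order Q\leq \order P\leq r$ and $Q\notin K$, so $(Q,\fm,\hat a)$ is a $Z$-minimal slot in $K$ of order $\leq r$. Lemma~\ref{lem:from cracks to holes} then provides an equivalent $Z$-minimal hole $(Q,\fm,\hat b)$ in $K$ of the same order $\leq r$. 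But by Lemma~\ref{lem:no hole of order <=r} (applicable because $K$ is $\upl$-free and $\upo$-free and, by hypothesis, $r$-newtonian), no such hole exists, a contradiction.

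There is no real obstacle here; the argument is essentially bookkeeping. The only small point to double-check is that an element of minimal complexity in $Z(K,\hat a)$ really lies in $K\{Y\}\setminus K$ (a nonzero constant cannot vanish at $(K,\hat a)$ in the sense of [ADH, 11.4]), which ensures that $(Q,\fm,\hat a)$ qualifies as a slot.
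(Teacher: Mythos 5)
Your proof is correct and is essentially the paper's argument: the paper derives this corollary precisely by combining Lemma~\ref{lem:no hole of order <=r} with Lemma~\ref{lem:from cracks to holes}, exactly as you do (holes are slots for one direction; pass to a minimal-complexity element of $Z(K,\hat a)$ and replace the resulting $Z$-minimal slot by an equivalent hole for the other). Your side remark that a minimal-complexity element of $Z(K,\hat a)$ cannot lie in $K$ is the right point to check, and it holds for the reason you give.
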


\noindent
Let $(P,\fm,\hat a)$ be a slot in $K$. Then $(bP, \fm, \hat a)$ for $b\neq 0$ is a slot in $K$ of the same complexity as $(P,\fm,\hat a)$, and
if $(P,\fm, \hat a)$ is $Z$-minimal, then so is~$(bP, \fm, \hat a)$;
likewise with ``hole in $K$'' in place of ``slot in $K$''. 
For active $\phi$ we have the {\bf compositional conjugate}  $(P^\phi,\fm,\hat a)$ by $\phi$ of $(P,\fm,\hat a)$:\index{slot!compositional conjugate}\index{conjugate!compositional} it is a
slot in $K^\phi$ of the same complexity as $(P, \fm, \hat a)$, it is $Z$-minimal if~$(P, \fm, \hat a)$ is, and it is a hole (minimal hole) in $K^\phi$ if~$(P,\fm,\hat a)$ is a hole (minimal hole, respectively) in $K$.
If the slots~$(P,\fm,\hat a)$, $(Q,\fn,\hat b)$ in $K$ are equivalent, then so are 
$(bP,\fm,\hat a)$, $(bQ,\fn,\hat b)$ for $b\neq 0$, as well as the slots~$(P^\phi,\fm,\hat a)$, $(Q^\phi,\fn,\hat b)$ in $K^\phi$ for active $\phi$.

\subsection*{Refinements and multiplicative conjugates} {\em In the rest of this section  $r$ ranges over natural numbers $\ge 1$ and $(P,\fm,\hat a)$ denotes a slot in $K$ of order $r$, so $P\notin K[Y]$ has order $r$. We set $w:=\wt(P)$, so $w\geq r\ge  1$.} Thus by [ADH, 4.3.2, 5.7.5]: 
$$\wt(P_{+a})\ =\ \wt(P_{\times \fn})\ =\ \wt(P^\phi)=w.$$ 
For $a$,~$\fn$ such that $\hat a-a\prec\fn\preceq\fm$ 
we obtain a slot $(P_{+a},\fn,{\hat a-a})$ in~$K$
of the same complexity as $(P,\fm,\hat a)$ [ADH, 4.3, 11.4].
Slots of this form are said to {\bf refine~$(P, \fm, \hat a)$}\/ and are called {\bf refinements}\/ of~$(P,\fm,\hat a)$.\index{slot!refinement}\index{refinement} 
A refinement of a refinement of~$(P,\fm,\hat a)$ is itself a refinement of~$(P,\fm,\hat a)$.
If $(P, \fm, \hat a)$ is $Z$-minimal, then so is any refinement of~$(P, \fm, \hat a)$. 
If~$(P,\fm,\hat a)$ is a hole in $K$, then so is each of its refinements, and likewise with ``minimal hole'' in place of ``hole''. 
For active~$\phi$, $(P_{+a},\fn,{\hat a-a})$ refines $(P,\fm,\hat a)$ iff~$(P^\phi_{+a},\fn,\hat a-a)$ refines $(P^\phi,\fm,\hat a)$.
If~$(P,\fm,\hat a)$ and~$(P,\fm,\hat b)$ are equivalent slots in~$K$ and
$(P_{+a},\fn,{\hat a-a})$ refines $(P,\fm,\hat a)$, then
$(P_{+a},\fn,{\hat b-a})$ refines~$(P,\fm,\hat b)$, and the slots
$(P_{+a},\fn,{\hat a-a})$, $(P_{+a},\fn,{\hat b-a})$ in $K$ are equivalent. Conversely, if~$(P,\fm,\hat a)$ and $(P,\fm,\hat b)$ are slots in $K$ with equivalent refinements, then $(P,\fm,\hat a)$ and $(P,\fm,\hat b)$ are equivalent.

\begin{lemma}\label{lem:refinements linearly ordered} 
Let $(P_{+a},\fn,\hat a-a)$ be a slot in $K$. Then $(P_{+a},\fn,\hat a-a)$ re\-fines~$(P,\fm,\hat a)$, or
$(P,\fm,\hat a)$ refines $(P_{+a},\fn,\hat a-a)$.
\end{lemma}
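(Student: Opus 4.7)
The proof is essentially trivial once one unpacks the definition of refinement. The plan is to do a case split based on the fact that the value group $\Gamma$ is totally ordered, so either $v\fm\leq v\fn$ or $v\fn\leq v\fm$, i.e.\ either $\fn\preceq\fm$ or $\fm\preceq\fn$.

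In the first case ($\fn\preceq\fm$), I claim $(P_{+a},\fn,\hat a-a)$ refines $(P,\fm,\hat a)$. By the definition of refinement recalled in the text, this requires $\hat a-a\prec\fn\preceq\fm$. The inequality $\hat a-a\prec\fn$ holds by the hypothesis that $(P_{+a},\fn,\hat a-a)$ is a slot in $K$, and $\fn\preceq\fm$ is the case assumption; the slot $(P_{+a},\fn,\hat a-a)$ is of the form required by the definition of a refinement of $(P,\fm,\hat a)$, with the same $a$ appearing in the additive conjugate.

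In the second case ($\fm\preceq\fn$), set $b:=-a\in K$. Then $(P_{+a})_{+b}=P$ and $(\hat a-a)-b=\hat a$, and the chain $\hat a\prec\fm\preceq\fn$ holds (the first inequality because $(P,\fm,\hat a)$ is a slot, the second by the case assumption). Hence $(P,\fm,\hat a)=\bigl((P_{+a})_{+b},\fm,(\hat a-a)-b\bigr)$ is a refinement of the slot $(P_{+a},\fn,\hat a-a)$.

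There is no serious obstacle; the only content is recognizing that the definition of refinement puts no constraint beyond (i) the differential polynomial being an additive conjugate by some element of $K$, (ii) the ``target'' of the translation being $\prec$ the new magnitude $\fn'$, and (iii) $\fn'$ being $\preceq$ the old magnitude. The totality of $\preceq$ on $K^{\times}$ does the rest.
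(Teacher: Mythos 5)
Your proof is correct and follows essentially the same argument as the paper: a case split on whether $\fn\preceq\fm$ or $\fm\preceq\fn$, with the first case giving a refinement directly from the definition and the second case recognizing $(P,\fm,\hat a)=\bigl((P_{+a})_{+(-a)},\fm,(\hat a-a)-(-a)\bigr)$ as a refinement of $(P_{+a},\fn,\hat a-a)$.
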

\begin{proof}
If $\fn\preceq\fm$, then $\hat a-a\prec\fn\preceq\fm$, so $(P_{+a},\fn,\hat a-a)$ refines~$(P,\fm,\hat a)$,
whereas if~$\fm\prec\fn$, then $(\hat a-a)-(-a)=\hat a\prec\fm\preceq\fn$, so  
$$(P,\fm,\hat a) = \big( (P_{+a})_{+(-a)}, \fm, ({\hat a-a})-(-a) \big)$$ refines $(P_{+a},\fn,\hat a-a)$.
\end{proof}

\begin{lemma}\label{lem:notin Z(K,hata)} 
Let $Q\in K\{Y\}^{\neq}$ be such that $Q\notin Z(K,\hat a)$. Then there is a re\-fine\-ment~$(P_{+a},\fn,\hat a-a)$ of $(P,\fm,\hat a)$ such that $\ndeg  Q_{+a,\times\fn}=0$ and $\hat a-a\prec\fn\prec\hat a$.
\end{lemma}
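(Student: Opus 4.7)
The plan is to construct the refinement explicitly. By the definition of $Z(K,\hat a)$ in [ADH, 11.4], the hypothesis $Q\notin Z(K,\hat a)$ yields $a_0\in K$ and $\fn_0\in K^\times$ with $\hat a-a_0\prec\fn_0$ and $\ndeg Q_{+a_0,\times\fn_0}=0$. Since $\hat K$ is an immediate extension of $K$, the set $v(\hat a-K)\subseteq\Gamma$ has no largest element, so I can pick $a\in K$ with $v(\hat a-a)$ large enough that simultaneously $\hat a-a\prec\fn_0$ and $\hat a-a\prec\hat a$.

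Setting $b:=(a-a_0)/\fn_0$, from $a-a_0=(a-\hat a)+(\hat a-a_0)\prec\fn_0$ I get $b\prec 1$, so the identity $Q_{+a,\times\fn_0}=(Q_{+a_0,\times\fn_0})_{+b}$ combined with [ADH, 11.2.3(i)] (asserting $\ndeg R_{+b}\leq\ndeg R$ for $b\preceq 1$) gives $\ndeg Q_{+a,\times\fn_0}\leq\ndeg Q_{+a_0,\times\fn_0}=0$. Now I pick $\fn\in K^\times$ with $v(\fn)$ strictly between $\max(v(\fn_0),v(\hat a))$ and $v(\hat a-a)$; such $\fn$ exists because $\hat a-a$ is strictly smaller in valuation than both $\fn_0$ and $\hat a$, and $\Gamma^{>}$ has no least element, so $\Gamma$ is densely ordered. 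Then $\hat a-a\prec\fn\prec\hat a\prec\fm$ and $\fn\preceq\fn_0$, so $(P_{+a},\fn,\hat a-a)$ is a refinement of $(P,\fm,\hat a)$ with $\hat a-a\prec\fn\prec\hat a$; the monotonicity of $\ndeg$ in multiplicative conjugates from [ADH, 11.2] yields $\ndeg Q_{+a,\times\fn}\leq\ndeg Q_{+a,\times\fn_0}=0$.

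The main nonroutine step is the first one: extracting the triple $(a_0,\fn_0)$ with $\ndeg Q_{+a_0,\times\fn_0}=0$ from the hypothesis $Q\notin Z(K,\hat a)$, which requires invoking the precise definition of $Z(K,\hat a)$ from [ADH, 11.4]. Once that is in hand, the rest is a careful combination of the standard behavior of $\ndeg$ under additive conjugation by small elements and under multiplicative conjugation by smaller elements, together with the key use of $\hat K$ being immediate, which allows choosing $a$ arbitrarily close to $\hat a$.
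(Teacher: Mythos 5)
Your proof is correct, and it takes a somewhat different route from the paper's. The paper works throughout with the relative Newton degree $\ndeg_{\prec\fv}$: it extracts $b,\fv$ with $\hat a-b\prec\fv$ and $\ndeg_{\prec\fv}Q_{+b}=0$, then finds $a$ with $\ndeg_{\prec\fv}Q_{+a}=0$, $\hat a-a\preceq\hat a$, $\hat a-a\prec\fv$ via a case distinction (taking $a:=b$ when $\hat a-b\preceq\hat a$, else $a:=0$ using [ADH, 11.2.7]), then further shrinks $\hat a-a$ below $\hat a$ before choosing $\fn$. You instead pass to a single $\fn_0$, choose $a$ with $v(\hat a-a)>\max(v\fn_0,v\hat a)$ all at once, rescale so that the passage from $a_0$ to $a$ becomes an additive conjugation by $b=(a-a_0)/\fn_0\prec 1$, and finish with the monotonicity of $\ndeg$ under $\preceq$ of multiplicative conjugates ([ADH, 11.2.5]). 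This elegantly sidesteps the paper's case analysis. Two small imprecisions to flag, neither fatal: first, the defining condition for $Q\notin Z(K,\hat a)$ directly produces a pair $(a_0,\fv)$ with $\ndeg_{\prec\fv}Q_{+a_0}=0$, and passing from that to your single monomial $\fn_0$ with $\hat a-a_0\prec\fn_0\prec\fv$ and $\ndeg Q_{+a_0,\times\fn_0}=0$ already requires the density of $\Gamma$ (which you do invoke later, so this is only an omission of order); second, ``$\hat a-a$ is strictly smaller in valuation'' should read ``strictly larger in valuation'' (i.e., strictly smaller under $\prec$). Also [ADH, 11.2.3(i)] in fact gives equality $\ndeg R_{+b}=\ndeg R$ for $b\preceq 1$, not merely $\leq$; you only need the weaker inequality, so this is harmless.
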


\begin{proof}
Take~$b$, $\fv$ such that $\hat a-b\prec \fv$ and $\ndeg_{\prec\fv} Q_{+b}=0$. 
We shall find an $a$ such that $\ndeg_{\prec\fv} Q_{+a}=0$, $\hat a-a\preceq\hat a$, and $\hat a-a\prec\fv$:
if~$\hat a-b\preceq\hat a$, we take $a:=b$; if~$\hat a-b\succ\hat a$,
then $-b\sim\hat a-b$ and so $\ndeg_{\prec\fv} Q=\ndeg_{\prec\fv} Q_{+b}= 0$ by [ADH, 11.2.7], hence $a:=0$ works. We next arrange $\hat a-a\prec\hat a$: if $\hat a -a\asymp \hat a$, take~$a_1$ with $\hat a-a_1\prec \hat a$, so $a-a_1\prec\fv$, hence
$\ndeg_{\prec\fv} Q_{+a_1}=\ndeg_{\prec\fv} Q_{+a}=0$, and thus $a$ can be replaced by $a_1$.
Since $\Gamma^{>}$ has no least element, we can choose $\fn$ with $\hat a-a\prec\fn\prec\hat a,\fv$, and then $(P_{+a},\fn,\hat a-a)$ refines $(P,\fm,\hat a)$ as desired.   
\end{proof}

\noindent
If $(P_{+a},\fm,\hat a-a)$ refines~$(P,\fm,\hat a)$,
then 
$D_{P_{+a,\times\fm}}=D_{P_{\times\fm,+(a/\fm)}}=D_{P_{\times\fm}}$ by [ADH, 6.6.5(iii)], and thus 
$$\ddeg P_{+a,\times\fm}\ =\ \ddeg P_{\times\fm}, \qquad
\dval P_{+a,\times\fm}\ =\ \dval P_{\times\fm}.$$ 
In combination with Lemma~\ref{lem:lower bd on ddeg} this has some useful consequences:

\begin{cor}\label{cor:ref 1}
Suppose  $(P,\fm,\hat a)$ is a hole in $K$ such that $\ddeg P_{\times\fm}=1$.
Then~$\ddeg_{\prec\fm} P=1$, and for all $\fn$ with $\hat a \prec \fn\preceq \fm$, $(P,\fn,{\hat a})$ refines $(P, \fm, \hat a)$ with~$\ddeg P_{\times \fn}= \dval P_{\times\fn}=1$.  
\end{cor}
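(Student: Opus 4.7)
} The strategy is to reduce everything to a direct application of the chain of inequalities in Lemma~\ref{lem:lower bd on ddeg}, together with the definition of $\ddeg_{\prec\fm}$ from [ADH, 6.6].

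First, I would dispose of the refinement statement. Given any $\fn$ with $\hat a \prec \fn \preceq \fm$, take $a=0$ in the definition of refinement: then $(P,\fn,\hat a) = (P_{+0}, \fn, \hat a - 0)$ satisfies $\hat a - 0 = \hat a \prec \fn \preceq \fm$, so $(P,\fn,\hat a)$ refines $(P,\fm,\hat a)$ by definition. Now apply Lemma~\ref{lem:lower bd on ddeg} to this $\fn$:
\[
1 \ \leq\ \dval P_{\times\fn}\ \leq\ \ddeg P_{\times\fn}\ \leq\ \ddeg P_{\times\fm}\ =\ 1,
\]
so all inequalities are equalities, giving $\dval P_{\times\fn} = \ddeg P_{\times\fn} = 1$ as required.

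Next, for $\ddeg_{\prec\fm} P = 1$: the inequality $\ddeg_{\prec\fm} P \geq 1$ is the final assertion of Lemma~\ref{lem:lower bd on ddeg}. For the matching upper bound, I would invoke the characterization of $\ddeg_{\prec\fm} P$ from [ADH, 6.6.5(ii), 6.6.7, 6.6.9], which expresses it as the supremum of $\ddeg P_{\times\fn}$ taken over $\fn \preceq \fm$ (equivalently, the eventual value as $v\fn \to v\fm^{-}$). Combined with $\ddeg P_{\times\fn} \leq \ddeg P_{\times\fm} = 1$ for all such $\fn$ (again from Lemma~\ref{lem:lower bd on ddeg}, or directly from [ADH, 6.6.7]), this yields $\ddeg_{\prec\fm} P \leq 1$.

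The hard part is essentially nil: this corollary is a bookkeeping consequence of Lemma~\ref{lem:lower bd on ddeg}. The one place requiring care is the upper bound $\ddeg_{\prec\fm} P \leq \ddeg P_{\times\fm}$, which is not made explicit in the statement of Lemma~\ref{lem:lower bd on ddeg} but follows from the [ADH] references cited in its proof; no new ideas are needed beyond identifying the right citation.
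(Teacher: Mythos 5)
Your proof is correct and follows essentially the same route as the paper, which derives the corollary directly from Lemma~\ref{lem:lower bd on ddeg} (refinement with $a=0$, the chain $1\le\dval P_{\times\fn}\le\ddeg P_{\times\fn}\le\ddeg P_{\times\fm}=1$, and monotonicity from the cited [ADH] facts for $\ddeg_{\prec\fm}P\le 1$). The only nitpick is that $\ddeg_{\prec\fm}P$ is the maximum of $\ddeg P_{\times\fn}$ over $\fn\prec\fm$, not $\fn\preceq\fm$, but this does not affect your argument.
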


\begin{cor}\label{cor:ref 2}
Suppose $(P_{+a},\fn,{\hat a-a})$ refines
the hole $(P,\fm,\hat a)$ in $K$. Then  
$$\ddeg P_{\times\fm}\ =\ 1\ \Longrightarrow\ \ddeg P_{+a,\times\fn}\ =\ \dval P_{+a,\times\fn}\ =\ 1.$$
\end{cor}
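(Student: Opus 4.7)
The plan is to deduce this from Lemma~\ref{lem:lower bd on ddeg} applied to an appropriate auxiliary hole, combined with the observation made just before Corollary~\ref{cor:ref 1} that an additive refinement with \emph{unchanged} multiplicative weight $\fm$ preserves $\ddeg P_{\times\fm}$ and $\dval P_{\times\fm}$.

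First I would observe that since $\hat a\notin K$ gives $\hat a-a\notin K$, and since $\hat a-a\prec\fn\preceq\fm$ forces $\hat a-a\prec\fm$, the triple $(P_{+a},\fm,\hat a-a)$ is itself a hole in $K$, and in fact a refinement of $(P,\fm,\hat a)$ in which $\fm$ is unchanged. The remark preceding Corollary~\ref{cor:ref 1} (via \mbox{$D_{P_{+a,\times\fm}} = D_{P_{\times\fm,+(a/\fm)}} = D_{P_{\times\fm}}$}, using [ADH, 6.6.5(iii)]) then yields
\[
\ddeg P_{+a,\times\fm}\ =\ \ddeg P_{\times\fm}\ =\ 1.
\]

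Next I would apply Lemma~\ref{lem:lower bd on ddeg} to the hole $(P_{+a},\fm,\hat a-a)$ in $K$ with the given $\fn$, which satisfies exactly the hypothesis $\hat a-a\prec\fn\preceq\fm$ required there. This gives the chain of inequalities
\[
1\ \leq\ \dval P_{+a,\times\fn}\ \leq\ \ddeg P_{+a,\times\fn}\ \leq\ \ddeg P_{+a,\times\fm}\ =\ 1,
\]
forcing equality throughout, which is the desired conclusion. There is no real obstacle here; the only subtlety is recognizing that one should interpolate through the auxiliary refinement $(P_{+a},\fm,\hat a-a)$ rather than trying to work directly with $P_{+a,\times\fn}$ and $P_{\times\fm}$ via an explicit comparison of dominant parts.
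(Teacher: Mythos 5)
Your proof is correct and is essentially the paper's own argument: the paper proves the corollary via exactly the chain $1\le \dval P_{+a,\times\fn}\le \ddeg P_{+a,\times\fn}\le \ddeg P_{+a,\times\fm}=\ddeg P_{\times\fm}$, using Lemma~\ref{lem:lower bd on ddeg} for the inequalities and the remark before Corollary~\ref{cor:ref 1} (via [ADH, 6.6.5(iii)]) for the final equality. Your only deviation is making the auxiliary hole $(P_{+a},\fm,\hat a-a)$ explicit, which is just a spelled-out version of the same reasoning.
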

\begin{proof} Use $$1\le \dval P_{+a,\times \fn}\le \ddeg P_{+a,\times \fn}\le \ddeg P_{+a,\times\fm}=\ddeg P_{\times \fm},$$ where the first inequality follows from Lemma~\ref{lem:lower bd on ddeg} applied to
$(P_{+a}, \fn, {\hat a -a})$. 
\end{proof}

\noindent
If $(P_{+a},\fm,\hat a-a)$ refines $(P,\fm,\hat a)$, then in analogy with $\ddeg$ and $\dval$, 
$$\ndeg P_{+a,\times\fm}\ =\ \ndeg P_{\times\fm}, \qquad
\nval P_{+a,\times\fm}\ =\ \nval P_{\times\fm}.$$ 
(Use compositional conjugation by active $\phi$.)
Lemma~\ref{lem:lower bd on ddeg} goes through for slots, provided we use
 $\ndeg$ and $\nval$ instead of 
$\ddeg$ and $\dval$: 

\begin{lemma}\label{lem:lower bd on ndeg}
Suppose $\hat a \prec \fn \preceq \fm$. Then
$$1\ \le\ \nval P_{\times \fn}\ \le\ 
\ndeg P_{\times \fn}\ \le\ \ndeg P_{\times\fm}.$$ 
\end{lemma}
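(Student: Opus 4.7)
The plan is to mimic the proof of Lemma~\ref{lem:lower bd on ddeg}, replacing dominant quantities by Newton-eventual ones. I set $\hat b := \hat a/\fn \prec 1$ and $Q := P_{\times \fn} \in K\{Y\}^{\ne}$. Since $\hat a \prec \fn \preceq \fm$, the triple $(P,\fn,\hat a)$ is itself a slot in $K$ (it either refines $(P,\fm,\hat a)$ or coincides with it), and under the multiplicative substitution $Y \mapsto \fn Y$ the slot $(P,\fn,\hat a)$ corresponds to a slot $(Q, 1, \hat b)$ in $K$; equivalently, $Q \in Z(K,\hat b)$ with $\hat b \prec 1$ in an immediate asymptotic extension of $K$.

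The leftmost inequality $\nval P_{\times \fn} = \nval Q \geq 1$ is the heart of the matter, and it follows by applying [ADH, 11.2.1] to the slot $(Q, 1, \hat b)$. This is the Newton-theoretic substitute for the step ``$Q(\hat b)=0$ implies $D_Q(0)=0$, so $\dval Q \geq 1$'' that appears in the proof of Lemma~\ref{lem:lower bd on ddeg}: in the slot case, $Q(\hat b)$ need not vanish pointwise, but the Newton-eventual vanishing built into $Q \in Z(K,\hat b)$ is exactly what [ADH, 11.2.1] converts into the bound $\nval Q \geq 1$. The middle inequality $\nval Q \leq \ndeg Q$ is universal, since $\dval R^\phi \leq \ddeg R^\phi$ for every active $\phi$ and every nonzero $R \in K\{Y\}$, and passage to eventual values preserves this. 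For the rightmost inequality, I write $P_{\times \fn} = (P_{\times \fm})_{\times(\fn/\fm)}$ with $\fn/\fm \preceq 1$ and invoke the monotonicity of Newton degree under multiplicative conjugation by elements of valuation $\geq 0$, a basic feature of [ADH, 11.2] (analogous to [ADH, 6.6.5(ii), 6.6.7, 6.6.9] used in the dominant-degree case of Lemma~\ref{lem:lower bd on ddeg}, and again using that $\Gamma^{>}$ has no least element when comparing across coarsenings).

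The only real obstacle is the lower bound. In the hole case one had pointwise vanishing to force $\dval Q \geq 1$, and the proof was a one-line Taylor expansion. For slots, where $Q(\hat b)$ is only ``Newton-small,'' the substitute is the ready-made fact [ADH, 11.2.1] — the same tool already used in the proof of Lemma~\ref{lem:Z(K,hat a)} to produce $\ndeg P_{+a,\times\fn}\geq 1$ there. Once this replacement is accepted, the structure of the proof is identical to that of Lemma~\ref{lem:lower bd on ddeg}.
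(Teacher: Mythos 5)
Your setup (passing to the refinement $(P,\fn,\hat a)$, conjugating multiplicatively to a slot $(Q,1,\hat b)$ with $\hat b=\hat a/\fn\prec 1$, the observation $\nval\le\ndeg$, and the monotonicity of $\ndeg$ under multiplicative conjugation, which is [ADH, 11.2.5]) matches the paper's reductions. The gap is exactly at the step you call ``the only real obstacle'': [ADH, 11.2.1] does not convert membership $Q\in Z(K,\hat b)$ into $\nval Q\ge 1$. As it is used throughout this paper (for instance in the proof of Lemma~\ref{lem:Z(K,hat a)}, and in the remark that a hole $(P,\fm,\hat a)$ satisfies $\ndeg P_{\times\fm}\ge 1$), that lemma requires an actual zero $y\preceq 1$ of the differential polynomial in a suitable extension, and delivers a lower bound on the Newton degree; in the hole case of Lemma~\ref{lem:lower bd on ddeg} one has such a zero, but for a slot $Q(\hat b)$ need not vanish, so 11.2.1 has no hypothesis to apply to. Nor can you cheaply manufacture a zero: Lemma~\ref{lem:from cracks to holes} and [ADH, 11.4.8] produce one only under $Z$-minimality, which is not assumed here. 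So the crucial inequality $\nval P_{\times\fn}\ge 1$ is left unproved.

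What the definition of $Z(K,\hat b)$ actually yields (take the approximation $0$ and the radius $1$, using $\hat b\prec 1$) is $\ndeg_{\prec 1}Q\ge 1$, which by the identity on [ADH, p.~483] means $\nval Q_{\times\fv}\ge 1$ for some $\fv\prec 1$; to get from there to $\nval Q\ge 1$ you still need the monotonicity of $\nval$ under multiplicative conjugation, again [ADH, 11.2.5]. This unwinding of the definition of $Z$ plus two applications of 11.2.5 is precisely the paper's proof (carried out with $\fm=\fn$ instead of after conjugating to radius $1$, an immaterial difference). In short: your proposal has the right architecture, but at the one step where the slot case genuinely differs from the hole case it substitutes a citation that does not apply; replacing the appeal to 11.2.1 by the definition of $Z(K,\hat b)$ together with [ADH, p.~483] and [ADH, 11.2.5] closes the gap and reproduces the paper's argument.
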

\begin{proof}
By [ADH, 11.2.3(iii), 11.2.5] it is enough to show~$\nval P_{\times\fn}\geq 1$.
Replacing~$(P,\fm,\hat a)$ by its refinement $(P,\fn,\hat a)$ we arrange~$\fm=\fn$.
Now $\Gamma^>$ has no  smallest element, so by definition of~$Z(K,\hat a)$ and [ADH, p.~483] we have  
$$1\ \leq\ \ndeg_{\prec\fm} P\ =\ \max\big\{\!\nval P_{\times\fv} : \fv\prec\fm\big\}.$$
Thus by [ADH, 11.2.5] we can take $\fv$ with 
$\hat a\prec\fv\prec\fm$ with $\nval P_{\times\fv}\geq 1$, and hence $\nval P_{\times\fm}\geq 1$, again  by [ADH, 11.2.5].
\end{proof}

\noindent
Lemma~\ref{lem:lower bd on ndeg} yields results 
analogous to Corollaries~\ref{cor:ref 1} and~\ref{cor:ref 2} above:

\begin{cor}\label{cor:ref 1n}
If $\ndeg P_{\times\fm}=1$, then 
for all $\fn$ with $\hat a \prec \fn\preceq \fm$, $(P,\fn,{\hat a})$ re\-fines~$(P, \fm, \hat a)$ and $\ndeg P_{\times \fn}= \nval P_{\times\fn}=1$. 
\end{cor}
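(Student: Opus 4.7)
The proof will be a direct, short analogue of Corollary~\ref{cor:ref 1}, with $\ndeg$ and $\nval$ playing the roles of $\ddeg$ and $\dval$, and with Lemma~\ref{lem:lower bd on ndeg} replacing Lemma~\ref{lem:lower bd on ddeg}.

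First I would verify the refinement claim. By definition, a refinement of $(P,\fm,\hat a)$ has the form $(P_{+a},\fn',\hat a - a)$ where $\hat a - a \prec \fn' \preceq \fm$. Taking $a=0$ and $\fn' = \fn$, the hypothesis $\hat a \prec \fn \preceq \fm$ gives exactly what is needed, so $(P,\fn,\hat a)$ refines $(P,\fm,\hat a)$.

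Next, for the equalities $\ndeg P_{\times\fn} = \nval P_{\times\fn} = 1$, I would apply Lemma~\ref{lem:lower bd on ndeg} to get the chain
\[
1 \ \leq\ \nval P_{\times\fn}\ \leq\ \ndeg P_{\times\fn}\ \leq\ \ndeg P_{\times\fm}\ =\ 1,
\]
using the hypothesis $\ndeg P_{\times\fm} = 1$ for the last equality. This forces equality throughout, yielding $\ndeg P_{\times\fn} = \nval P_{\times\fn} = 1$ as required.

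There is no real obstacle here; the work has already been done in Lemma~\ref{lem:lower bd on ndeg}. The only thing to note is that Lemma~\ref{lem:lower bd on ndeg} is stated for arbitrary slots, so the fact that $(P,\fm,\hat a)$ is just assumed to be a slot (not necessarily a hole) causes no difficulty---this is precisely why the ``$\n$'' versions of the key inequalities, rather than the ``$\d$'' versions used in Corollary~\ref{cor:ref 1}, are appropriate in the slot setting.
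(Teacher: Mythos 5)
Your proof is correct and is exactly the paper's argument: the paper states this corollary without a separate proof, as an immediate consequence of Lemma~\ref{lem:lower bd on ndeg} (taking $a=0$ for the refinement claim and squeezing $1 \le \nval P_{\times\fn} \le \ndeg P_{\times\fn} \le \ndeg P_{\times\fm} = 1$), just as you do.
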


\begin{cor}\label{cor:ref 2n}
If $(P_{+a},\fn,{\hat a-a})$ refines  $(P,\fm,\hat a)$, then  
$$\ndeg P_{\times\fm}\ =\ 1\ \Longrightarrow\ \ndeg P_{+a,\times\fn}\ =\ \nval P_{+a,\times\fn}\ =\ 1.$$
\end{cor}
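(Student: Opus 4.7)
The plan is to mimic the proof of Corollary~\ref{cor:ref 2} with $\ndeg$, $\nval$ replacing $\ddeg$, $\dval$, using the ingredients about Newton degrees and multiplicities that the paper has just set up immediately before this statement. The only non-obvious point is to observe that the identity $\ndeg P_{+a,\times\fm}=\ndeg P_{\times\fm}$ recorded above Lemma~\ref{lem:lower bd on ndeg}, which is stated only for refinements with the same asymptotic constraint $\fm$, still applies here because $(P_{+a},\fm,\hat a-a)$ is itself a refinement of $(P,\fm,\hat a)$ (indeed $\hat a-a\prec\fn\preceq\fm$).

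First I would apply Lemma~\ref{lem:lower bd on ndeg} to the slot $(P_{+a},\fn,\hat a-a)$ in $K$, which is legitimate since $(P_{+a},\fn,\hat a-a)$ refines $(P,\fm,\hat a)$ and the lemma gives, for any $\fv$ with $(\hat a-a)\prec\fv\preceq\fm$, the chain
\[
1\ \le\ \nval P_{+a,\times\fv}\ \le\ \ndeg P_{+a,\times\fv}\ \le\ \ndeg P_{+a,\times\fm}.
\]
Taking $\fv:=\fn$ yields
\[
1\ \le\ \nval P_{+a,\times\fn}\ \le\ \ndeg P_{+a,\times\fn}\ \le\ \ndeg P_{+a,\times\fm}.
\]

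Next, since $\hat a-a\prec\fn\preceq\fm$, the triple $(P_{+a},\fm,\hat a-a)$ is a refinement of $(P,\fm,\hat a)$ in the sense of the paper, so the identity stated immediately before Lemma~\ref{lem:lower bd on ndeg} gives $\ndeg P_{+a,\times\fm}=\ndeg P_{\times\fm}$. Combining this with the previous display and the hypothesis $\ndeg P_{\times\fm}=1$ forces all the terms to equal $1$, as desired.

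There is essentially no obstacle; the only small point to double-check is that the slot framework indeed permits us to invoke Lemma~\ref{lem:lower bd on ndeg} for a refinement of a slot (which is itself a slot of the same complexity, as noted right after the definition of refinement). Everything else is a one-line squeeze.
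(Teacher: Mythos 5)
Your argument is correct and is essentially the proof the paper intends: Corollary~\ref{cor:ref 2n} is obtained, exactly as in Corollary~\ref{cor:ref 2}, by squeezing the chain $1\le\nval P_{+a,\times\fn}\le\ndeg P_{+a,\times\fn}\le\ndeg P_{+a,\times\fm}$ from Lemma~\ref{lem:lower bd on ndeg} against the identity $\ndeg P_{+a,\times\fm}=\ndeg P_{\times\fm}$ noted just before that lemma. The only cosmetic point is that the lemma is properly invoked with $(P_{+a},\fm,\hat a-a)$ (the refinement with unchanged monomial, which is indeed a slot) as the ambient slot and $\fn$ as the intermediate monomial — which is precisely the chain you wrote, so nothing is missing.
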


\noindent
Any triple $(P_{\times\fn},\fm/\fn,\hat a/\fn)$ is also
a slot in~$K$, with the same complexity as $(P,\fm, \hat a)$; it is called the {\bf multiplicative conjugate}\/ of $(P,\fm,\hat a)$
by~$\fn$.\index{slot!multiplicative conjugate}\index{conjugate!multiplicative} If $(P,\fm,\hat a)$ is $Z$-mi\-ni\-mal, then so is any multiplicative conjugate.
If $(P,\fm,\hat a)$ is a hole in $K$, then so is any  multiplicative conjugate; likewise with ``minimal hole'' in place
of ``hole''. If two slots in  $K$ are equivalent, then
so are their multiplicative conjugates by $\fn$.

\medskip
\noindent
Refinements and multiplicative conjugates interact in the following way: Suppose the slot~$(P_{+a},\fn,\hat a-a)$ refines $(P,\fm,\hat a)$.
Multiplicative conjugation~$(P_{+a},\fn,{\hat a-a})$ in $K$ by $\fv$
then results in the slot~$(P_{+a,\times\fv},\fn/\fv,(\hat a-a)/\fv)$ in~$K$.
On the other hand, first taking the multiplicative conjugate
$(P_{\times\fv},\fm/\fv,\hat a/\fv)$ of~$(P,\fm,\hat a)$ by $\fv$ and
then refining to
$(P_{\times\fv,+a/\fv},\fn/\fv,\hat a/\fv-a/\fv)$
results in the same slot in $K$, thanks to the identity $P_{+a,\times\fv} = P_{\times\fv,+a/\fv}$.

\subsection*{Quasilinear slots}
Note that $\ndeg P_{\times \fm}\ge 1$ by Lemma~\ref{lem:lower bd on ndeg}. 
We call~$(P,\fm,\hat a)$   {\bf qua\-si\-li\-near}\index{slot!quasilinear}\index{quasilinear!slot} if~$P_{\times\fm}$ is quasilinear, that is, $\ndeg P_{\times\fm}=1$. If $(P,\fm, \hat a)$ is quasilinear, then so is any slot in $K$ equivalent to $(P,\fm,\hat a)$, any multiplicative conjugate of~$(P,\fm,\hat a)$, as well as any refinement of $(P,\fm, \hat a)$,  by Corollary~\ref{cor:ref 2n}. 
If $(P,\fm, \hat a)$ is linear, then it is quasilinear by Lemma~\ref{lem:lower bd on ndeg}. 

\medskip\noindent
Let $(a_\rho)$ be a divergent pc-sequence in $K$ with $a_\rho\leadsto\hat a$ and for each index $\rho$, let $\rho+1$ be the immediate successor of $\rho$ in the well-ordered index set, and let~$\fm_\rho\in K^\times$ be such that $\fm_\rho\asymp\hat a-a_\rho$. 
Take an index $\rho_0$ such that $\fm_\sigma\prec\fm_\rho\prec \fm$ for all~$\sigma>\rho\geq\rho_0$, cf.~[ADH, 2.2].

 \begin{lemma}\label{lem:pc vs dent} Let $\sigma\geq\rho\geq \rho_0$. Then 
  \begin{enumerate}
  \item[\textup{(i)}]  $(P_{+a_{\rho+1}},\fm_{\rho},\hat a-a_{\rho+1})$ is a refinement of $(P,\fm,\hat a)$;  
  \item[\textup{(ii)}] if $(P_{+a},\fn,\hat a-a)$ is a refinement of $(P,\fm,\hat a)$, then    $\fm_\rho\preceq\fn$ for all sufficiently large $\rho$, and for such $\rho$, $(P_{+a_{\rho+1}},\fm_{\rho},{\hat a-a_{\rho+1}})$ refines~$(P_{+a},\fn,\hat a-a)$;  
    \item[\textup{(iii)}]     $(P_{+a_{\sigma+1}},\fm_{\sigma},\hat a-a_{\sigma+1})$  refines $(P_{+a_{\rho+1}},\fm_{\rho},\hat a-a_{\rho+1})$.
  \end{enumerate}
 \end{lemma}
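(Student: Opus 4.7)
The proof is a straightforward verification from the definition of ``refinement'' once one isolates the key property of the divergent pc-sequence $(a_\rho)$ with $a_\rho \leadsto \hat a \notin K$: for every $b \in K$, we have $v(\hat a - a_\rho) > v(\hat a - b)$ eventually. Indeed, if this failed for some $b$, then since $v(\hat a - a_\rho)$ is eventually strictly increasing (as $a_\rho \leadsto \hat a$), it would be eventually strictly bounded above by $v(\hat a - b)$; then $v(a_\rho - b) = v(\hat a - a_\rho)$ eventually, so $v(a_\rho - b)$ would be eventually strictly increasing, giving $a_\rho \leadsto b \in K$ and contradicting divergence of $(a_\rho)$ in $K$. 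Equivalently, $v(\hat a - a_\rho)$ is cofinal in $v(\hat a - K)$. With this in hand, the three claims will follow mechanically from the definition of refinement and the identities $(P_{+c})_{+d} = P_{+(c+d)}$ for $c,d \in K$.

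For (i), since $\rho+1 > \rho \geq \rho_0$ the choice of $\rho_0$ yields $\fm_{\rho+1} \prec \fm_\rho \prec \fm$, and so $\hat a - a_{\rho+1} \asymp \fm_{\rho+1} \prec \fm_\rho \preceq \fm$, which together with $P_{+a_{\rho+1}}$ being the additive conjugate of $P$ by $a_{\rho+1}$ gives the refinement. For (ii), $\hat a - a \prec \fn$ gives $v(\hat a - a) > v\fn$, so the key property applied to $b = a$ yields $v(\hat a - a_\rho) > v(\hat a - a) > v\fn$ eventually, i.e., $\fm_\rho \prec \fn$ eventually; taking such $\rho$ with $\rho \geq \rho_0$, we then have $P_{+a_{\rho+1}} = (P_{+a})_{+(a_{\rho+1} - a)}$, $(\hat a - a) - (a_{\rho+1} - a) = \hat a - a_{\rho+1} \asymp \fm_{\rho+1} \prec \fm_\rho$, and $\fm_\rho \preceq \fn$, as required. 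For (iii), given $\sigma \geq \rho \geq \rho_0$, we have $\fm_\sigma \preceq \fm_\rho$ (equality if $\sigma = \rho$, strict if $\sigma > \rho$), while $\hat a - a_{\sigma+1} \asymp \fm_{\sigma+1} \prec \fm_\sigma$ (since $\sigma+1 > \sigma \geq \rho_0$), and $P_{+a_{\sigma+1}} = (P_{+a_{\rho+1}})_{+(a_{\sigma+1} - a_{\rho+1})}$ with $(\hat a - a_{\rho+1}) - (a_{\sigma+1} - a_{\rho+1}) = \hat a - a_{\sigma+1}$.

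I expect no real obstacles: the only substantive input is the cofinality property of $v(\hat a - a_\rho)$ in $v(\hat a - K)$, which is standard for pc-sequences divergent in $K$ with a pseudolimit outside $K$ (essentially [ADH, 2.2]), and the remaining work is bookkeeping with additive conjugates and the inequalities between the $\fm_\rho$ guaranteed by the choice of $\rho_0$. The only minor subtlety is being careful in (iii) to allow $\sigma = \rho$ (where the ``refinement'' is the slot itself), which causes no trouble since $\preceq$ admits equality.
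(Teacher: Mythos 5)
Your proof is correct and follows essentially the same route as the paper: (i) and (iii) are the same asymptotic bookkeeping with additive conjugates, and (ii) rests on getting $\fm_\rho\preceq\fn$ eventually and then checking the refinement conditions. The only difference is that you spell out the cofinality of $v(\hat a-a_\rho)$ in $v(\hat a-K)$, which the paper simply asserts (as a standard consequence of $(a_\rho)$ being a divergent pc-sequence, cf.\ [ADH, 2.2]); your derivation of it is accurate.
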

\begin{proof}
Part (i) follows from $\hat a-a_{\rho+1}\asymp\fm_{\rho+1}\prec\fm_\rho\preceq \fm$. For (ii) 
let~$(P_{+a},\fn,{\hat a-a})$ be a refinement of $(P,\fm,\hat a)$. Since $\hat a-a\prec\fn$,  we have $\fm_\rho\preceq\fn$ for all sufficiently large $\rho$. For such $\rho$,    with~$b:=a_{\rho+1}-a$ we have
$$(P_{+a_{\rho+1}},\fm_{\rho},\hat a-a_{\rho+1})\  =\ \big( (P_{+a})_{+b}, \fm_\rho, (\hat a-a)-b\big)$$
and 
$$(\hat a-a)-b\ =\ \hat a-a_{\rho+1}\ \asymp\ \fm_{\rho+1}\ \prec\ \fm_\rho\ \preceq\ \fn.$$
Hence  $(P_{+a_{\rho+1}},\fm_{\rho},{\hat a-a_{\rho+1}})$ refines~$(P_{+a},\fn,\hat a-a)$. Part (iii) follows from~(i) and~(ii).
\end{proof}

\noindent
Let $\mathbf a=c_K(a_\rho)$ be the cut defined by $(a_\rho)$ in $K$ and $\ndeg_{\mathbf a} P$ be the Newton degree of $P$ in $\mathbf a$ as introduced in [ADH, 11.2].
Then $\ndeg_{\mathbf a} P$ is the eventual value of~$\ndeg P_{+a_\rho,\times\fm_\rho}$.
Increasing $\rho_0$ we arrange that additionally for all 
$\rho\geq\rho_0$ we have~$\ndeg P_{+a_\rho,\times\fm_\rho}=\ndeg_{\mathbf a} P$. 

\begin{cor}\label{cor:quasilinear refinement}
$(P,\fm,\hat a)$ has a quasilinear refinement iff $\ndeg_{\mathbf a} P=1$.
\end{cor}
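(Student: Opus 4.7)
The plan is to derive both implications by tracking refinements along the pc-sequence $(a_\rho)$ via Lemma~\ref{lem:pc vs dent} and Corollary~\ref{cor:ref 2n}, with the bridge between the mismatched-index Newton degree $\ndeg P_{+a_{\rho+1},\times\fm_\rho}$ and the standard $\ndeg P_{+a_\rho,\times\fm_\rho}$ supplied by the shift-invariance $\ndeg Q_{+c} = \ndeg Q$ for $c\in K$ with $c\preceq 1$. The latter is a consequence of the Newton diagram theory of [ADH, Ch.~11] combined with the identity $(Q_{+c})^\phi = (Q^\phi)_{+c}$ for active $\phi$.

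For $(\Leftarrow)$, assume $\ndeg_{\mathbf a} P = 1$, so by our choice of $\rho_0$ we have $\ndeg P_{+a_\rho,\times\fm_\rho} = 1$ for every $\rho\geq\rho_0$. By Lemma~\ref{lem:pc vs dent}(i), the triple $(P_{+a_{\rho+1}},\fm_\rho,\hat a - a_{\rho+1})$ is a refinement of $(P,\fm,\hat a)$, and its quasilinearity is exactly the equality $\ndeg P_{+a_{\rho+1},\times\fm_\rho} = 1$. Since $a_{\rho+1}-a_\rho\asymp\fm_\rho$, the element $c := (a_{\rho+1}-a_\rho)/\fm_\rho$ satisfies $c\asymp 1$, and a direct computation gives $P_{+a_{\rho+1},\times\fm_\rho} = (P_{+a_\rho,\times\fm_\rho})_{+c}$; the shift-invariance then yields the desired equality, producing a quasilinear refinement.

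For $(\Rightarrow)$, suppose $(P_{+a},\fn,\hat a-a)$ is a quasilinear refinement of $(P,\fm,\hat a)$. By Lemma~\ref{lem:pc vs dent}(ii), the slot $(P_{+a_{\rho+1}},\fm_\rho,\hat a-a_{\rho+1})$ refines $(P_{+a},\fn,\hat a-a)$ for every sufficiently large $\rho$, and Corollary~\ref{cor:ref 2n} applied to this refinement of a quasilinear slot gives $\ndeg P_{+a_{\rho+1},\times\fm_\rho} = 1$. Running the shift-invariance in reverse, with $-c\asymp 1\preceq 1$, yields $\ndeg P_{+a_\rho,\times\fm_\rho} = 1$ for all such $\rho$, and hence $\ndeg_{\mathbf a} P = 1$.

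The main obstacle is to justify the shift-invariance $\ndeg Q_{+c} = \ndeg Q$ for $c\preceq 1$. The inequality $\ndeg Q_{+c}\leq\ndeg Q$ should follow from $\ddeg Q^\phi_{+c}\leq\ddeg Q^\phi$ for every sufficiently small active $\phi$, a version of [ADH,~6.6.5(i)] applied in the compositional-conjugate setting via $(Q_{+c})^\phi = (Q^\phi)_{+c}$, and then passing to the eventual value; the reverse inequality comes by applying the same bound to $(Q_{+c})_{-c} = Q$. Once this invariance is in hand, the remainder of the argument is a routine bookkeeping exercise threading the refinement machinery of Lemma~\ref{lem:pc vs dent} through the two directions.
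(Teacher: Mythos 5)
Your proof is correct and takes essentially the same route as the paper: both directions thread Lemma~\ref{lem:pc vs dent} and Corollary~\ref{cor:ref 2n} through the key equality $\ndeg P_{+a_{\rho+1},\times\fm_\rho}=\ndeg P_{+a_\rho,\times\fm_\rho}$. The only difference is that the additive shift-invariance of Newton degree you re-derive at the end (via $\ddeg$-invariance under conjugation by $c\preceq 1$ and compositional conjugation) is precisely what the paper cites as [ADH, 11.2.8] (compare [ADH, 11.2.4]), so it needs no separate argument.
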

\begin{proof}
By Lemma~\ref{lem:lower bd on ndeg} and [ADH, 11.2.8] we have
\begin{equation}\label{eq:quasilinear refinement}
1 \leq \ndeg P_{+a_{\rho+1},\times\fm_{\rho}} = \ndeg P_{+a_\rho,\times\fm_\rho}.
\end{equation}
Thus if $\ndeg_{\mathbf a} P=1$,  then   for $\rho\geq\rho_0$,
 the refinement $(P_{+a_{\rho+1}},\fm_{\rho},\hat a-a_{\rho+1})$ of~$(P,\fm,\hat a)$ is quasilinear. Conversely, if $(P_{+a},\fn,\hat a-a)$ is a quasilinear refinement of~$(P,\fm,\hat a)$, then
Lemma~\ref{lem:pc vs dent}(ii) yields a $\rho\geq\rho_0$ such that  $\fm_\rho\preceq\fn$, and
then~$(P_{+a_{\rho+1}},\fm_{\rho},{\hat a-a_{\rho+1}})$ in $K$ refines~$(P_{+a},\fn,\hat a-a)$ and hence  is also quasilinear, so
$\ndeg_{\mathbf a} P=\ndeg P_{+a_\rho,\times\fm_\rho}=1$ by \eqref{eq:quasilinear refinement}.  
\end{proof}

\begin{lemma}\label{lem:quasilinear refinement} Assume $K$ is $\d$-valued and $\upo$-free, and $\Gamma$ is divisible.
Then every $Z$-minimal slot in $K$ of positive order has a quasilinear refinement. 
\end{lemma}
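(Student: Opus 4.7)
The plan is to apply Corollary~\ref{cor:quasilinear refinement}. Let $(P,\fm,\hat a)$ be a $Z$-minimal slot in $K$ of positive order, and take a divergent pc-sequence $(a_\rho)$ in $K$ with $a_\rho\leadsto\hat a$; such a sequence exists since $\hat a$ lies in an immediate asymptotic extension of $K$ and $\hat a\notin K$. By Corollary~\ref{mindivmin}, $P$ is a minimal differential polynomial of $(a_\rho)$ over $K$. Setting $\mathbf a:=c_K(a_\rho)$, it suffices to show $\ndeg_{\mathbf a}P=1$, which by Corollary~\ref{cor:quasilinear refinement} will yield a quasilinear refinement of $(P,\fm,\hat a)$.

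The key input is [ADH, 14.5.1], which applies in our setting (since $K$ is $\upo$-free and $\d$-valued with divisible $\Gamma$, and $P$ is a minimal differential polynomial of the pc-sequence $(a_\rho)$) and yields $a\in K$ and $g\in K^\times$ such that $\hat a-a\asymp g$ and $\ndeg R=1$, where $R:=P_{+a,\times g}$. Fix $\fm_\rho\in K^\times$ with $\fm_\rho\asymp\hat a-a_\rho$. For large $\rho$ we have $a_\rho-a\asymp\hat a-a\asymp g$, so $u_\rho:=(a_\rho-a)/g\asymp 1$, while $\fv_\rho:=\fm_\rho/g\prec 1$. The identity
\[
P_{+a_\rho,\times\fm_\rho}(Y)\ =\ P(a_\rho+\fm_\rho Y)\ =\ R(u_\rho+\fv_\rho Y)\ =\ R_{+u_\rho,\times\fv_\rho}(Y),
\]
combined with the standard facts from [ADH,~11.2] that Newton degree is non-increasing under additive conjugation by an element $\preceq 1$ and under multiplicative conjugation by an element $\preceq 1$, yields $\ndeg P_{+a_\rho,\times\fm_\rho}\leq\ndeg R=1$ for such $\rho$. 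Conversely, Lemma~\ref{lem:pc vs dent}(i) exhibits the refinement $(P_{+a_{\rho+1}},\fm_\rho,\hat a-a_{\rho+1})$ of $(P,\fm,\hat a)$ for large $\rho$, so Lemma~\ref{lem:lower bd on ndeg} applied to this refinement gives $\ndeg P_{+a_{\rho+1},\times\fm_\rho}\geq 1$, and this coincides with $\ndeg P_{+a_\rho,\times\fm_\rho}$ eventually, as noted in the proof of Corollary~\ref{cor:quasilinear refinement}. Passing to the eventual value delivers $\ndeg_{\mathbf a}P=1$.

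The main obstacle is the appeal to [ADH, 14.5.1], which packages the delicate analysis of minimal differential polynomials of pc-sequences in $\upo$-free $\d$-valued $H$-asymptotic fields with divisible value group; this is precisely where the global hypotheses (in particular $\upo$-freeness and divisibility of $\Gamma$) are consumed. Once this input is secured, the remainder is a bookkeeping computation bounding $\ndeg_{\mathbf a}P$ from above by $1$ via a change of variables to the slot normalized by the pair $(a,g)$, and from below via the $Z$-minimality-driven refinement provided by Lemma~\ref{lem:pc vs dent}(i).
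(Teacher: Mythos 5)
Your proof is correct and follows essentially the same route as the paper's: take a divergent pc-sequence $(a_\rho)\leadsto\hat a$, apply Corollary~\ref{mindivmin} to see $P$ is a minimal differential polynomial of $(a_\rho)$, invoke [ADH, 14.5.1], and conclude via Corollary~\ref{cor:quasilinear refinement}. The only difference is that the paper reads $\ndeg_{\mathbf a}P=1$ directly off [ADH, 14.5.1], so your extra bookkeeping converting the pair $(a,g)$ into $\ndeg_{\mathbf a}P=1$ (which is fine) is not needed there.
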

\begin{proof}
Suppose $(P,\fm,\hat a)$ is $Z$-minimal. Take a divergent pc-sequence $(a_\rho)$ in $K$ such that 
$a_\rho\leadsto \hat a$.
Then $P$ is a minimal differential polynomial of $(a_\rho)$ over $K$,  by Corollary~\ref{mindivmin}. Hence 
$\ndeg_{\boldsymbol a} P=1$ by [ADH, 14.5.1], where ${\boldsymbol a}:=c_K(a_\rho)$. 
Now Corollary~\ref{cor:quasilinear refinement}  gives a quasilinear refinement of $(P,\fm,\hat a)$.
\end{proof}
 
\begin{remark}
Suppose $K$ is a real closed $H$-field that is  $\upl$-free but not $\upo$-free.
(For example, the real closure of the $H$-field $\R\langle\upo\rangle$
from [ADH, 13.9.1] satisfies these conditions, by~[ADH, 11.6.8, 11.7.23, 13.9.1].)
Take $(P,\fm,\upl)$  as in Lemma~\ref{lem:upl-free, not upo-free}. Then by Corollary~\ref{cor:quasilinear refinement} and
[ADH, 11.7.9],  $(P,\fm,\upl)$ has no quasilinear refinement. Thus Lemma~\ref{lem:quasilinear refinement}
 fails if 
``$\upo$-free'' is replaced by ``$\upl$-free''.
\end{remark}

\begin{lemma}\label{lem:zero of P}   
Let $L$ be an $r$-newtonian $H$-asymptotic extension of $K$ such that~$\Gamma^<$ is cofinal in $\Gamma_{L}^<$,  and suppose $(P,\fm,\hat a)$ is quasilinear. Then $P(\hat b)=0$ and $\hat b\prec\fm$ for some $\hat b\in L$.
\end{lemma}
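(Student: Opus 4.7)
The plan is to reduce the assertion to a direct application of the $r$-newtonianity of $L$. The subtlety is that $r$-newtonianity produces a zero in $\mathcal{O}_L$ rather than in $\smallo_L$, so applying it naively to $P_{\times\fm}\in L\{Y\}$ would only yield $\hat b\preceq\fm$, whereas the lemma demands the strict inequality $\hat b\prec\fm$. To circumvent this, I would first shrink $\fm$ slightly and apply $r$-newtonianity to the resulting refinement.

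First I would pick $\fn\in K^\times$ with $\hat a\prec\fn\prec\fm$. Such an $\fn$ exists: since $\hat K$ is an immediate extension of $K$ we have $v\hat a\in\Gamma$, and the standing hypothesis that $\Gamma^>$ has no least element makes $\Gamma$ densely ordered, so $\Gamma$ contains elements strictly between $v\fm$ and $v\hat a$. Corollary~\ref{cor:ref 1n} then guarantees that $(P,\fn,\hat a)$ refines $(P,\fm,\hat a)$ and remains quasilinear, i.e., $\ndeg P_{\times\fn}=1$.

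Next I would view $Q:=P_{\times\fn}$ as an element of $L\{Y\}$, still of order $r$. Here I rely on the invariance of the Newton degree under the passage from $K$ to $L$, which gives $\ndeg_L Q=\ndeg_K Q=1$. This invariance follows from the cofinality hypothesis $\Gamma^<\subseteq\Gamma_L^<$: it implies that $\Psi$ is cofinal in $\Psi_L$, which is the standard condition ensuring that Newton quantities do not change when extending (as used, for instance, around Lemma~\ref{lem:13.7.10}). Applying the $r$-newtonianity of $L$ to $Q$ then produces $y\in\mathcal{O}_L$ with $Q(y)=0$. Setting $\hat b:=\fn y\in L$ yields $P(\hat b)=P_{\times\fn}(y)=0$ together with $\hat b=\fn y\preceq\fn\prec\fm$.

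The main obstacle is the bookkeeping around Newton-degree invariance: once one trusts that $\Gamma^<$ cofinal in $\Gamma_L^<$ forces $\Psi$ cofinal in $\Psi_L$, and hence preservation of $\ndeg$, the rest of the argument is a routine assembly of Corollary~\ref{cor:ref 1n} and the definition of $r$-newtonianity. The refinement step is essential, not cosmetic: without shrinking $\fm$ to $\fn$, one cannot promote the $\preceq\fm$ supplied by $r$-newtonianity to the required $\prec\fm$.
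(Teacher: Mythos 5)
Your proof is correct and follows essentially the same route as the paper's: shrink $\fm$ to some $\fn$ with $\hat a\prec\fn\prec\fm$ (the paper cites Lemma~\ref{lem:lower bd on ndeg}, you cite its Corollary~\ref{cor:ref 1n}), invoke the invariance of $\ndeg P_{\times\fn}$ in passing from $K$ to $L$ under the cofinality hypothesis, and then apply $r$-newtonianity of $L$ to get a zero $\preceq\fn\prec\fm$.
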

\begin{proof}
Lemma~\ref{lem:lower bd on ndeg} and $\ndeg P_{\times\fm}=1$ gives $\fn\prec \fm$ with $\ndeg_{\times\fn} P=1$.
By [ADH, p.~480], $\ndeg P_{\times\fn}$ does not change in passing from $K$ to $L$. As $L$ is $r$-newtonian this yields $\hat b\preceq\fn$ in $L$   with $P(\hat b)=0$.
\end{proof}

\begin{cor}\label{cor:find zero of P} Let $K$ be  $\d$-valued and $\upo$-free, and $L$ a newtonian
$H$-asymptotic extension of $K$.
If $(P,\fm,\hat a)$ is quasilinear,  then $P(\hat b)=0$, $\hat b\prec\fm$  for some~$\hat b\in L$.
\end{cor}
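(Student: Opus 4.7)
The plan is to run the proof of Lemma~\ref{lem:zero of P} almost verbatim, substituting its appeal to [ADH, p.~480] (which needed $\Gamma^<$ cofinal in $\Gamma_L^<$) by an appeal to Lemma~\ref{lem:13.6.12}, which gives invariance of the Newton degree under \emph{any} ungrounded $H$-asymptotic extension, at the cost of requiring $K$ to be $\upo$-free.

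In detail: quasilinearity of $(P,\fm,\hat a)$ means $\ndeg_K P_{\times\fm}=1$. Since $\hat a$ lies in an immediate asymptotic extension of $K$, we have $v(\hat a)\in\Gamma$ with $v(\hat a)>v(\fm)$. Because $\Gamma^>$ has no least element, pick $\epsilon\in\Gamma^>$ with $\epsilon<v(\hat a)-v(\fm)$ and then $\fn\in K^\times$ with $v(\fn)=v(\fm)+\epsilon$, so $\hat a\prec\fn\prec\fm$. Lemma~\ref{lem:lower bd on ndeg} yields
\[
1\ \le\ \nval_K P_{\times\fn}\ \le\ \ndeg_K P_{\times\fn}\ \le\ \ndeg_K P_{\times\fm}\ =\ 1,
\]
so $\ndeg_K P_{\times\fn}=1$. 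Next, $L$ being newtonian forces $L$ to be ungrounded $H$-asymptotic (by the definition of newtonianity), hence Lemma~\ref{lem:13.6.12} (applicable since $K$ is $\upo$-free) applied to the differential polynomial $P_{\times\fn}\in K\{Y\}^{\neq}$ gives $\ndeg_L P_{\times\fn}=\ndeg_K P_{\times\fn}=1$. Since $L$ is $r$-newtonian for $r=\order P=\order P_{\times\fn}$, we obtain $\hat c\in\mathcal O_L$ with $P_{\times\fn}(\hat c)=0$. Setting $\hat b:=\fn\hat c\in L$ gives $P(\hat b)=0$ and $\hat b\preceq\fn\prec\fm$, as required.

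There is no serious obstacle: the work has already been done in Lemma~\ref{lem:13.6.12}, which is precisely what is needed to bypass the cofinality hypothesis in Lemma~\ref{lem:zero of P}. The only point to check carefully is the existence of an $\fn\in K^\times$ strictly between $\hat a$ and $\fm$, which is secured by the fact that $\Gamma^>$ has no least element (part of the standing assumption that $K$ has rational asymptotic integration).
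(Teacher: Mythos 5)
Your proof is correct, and it takes a genuinely different (though parallel) route from the paper's. The paper's proof is essentially a one-line reduction: it invokes Pynn-Coates' newtonization theorem \cite[Theorem~B]{Nigel19} to produce a newtonization $K^*$ of $K$ inside $L$; since $K^*$ is $\d$-algebraic over $K$, Theorem~\ref{thm:ADH 13.6.1} gives $\Gamma^<$ cofinal in $\Gamma_{K^*}^<$, so Lemma~\ref{lem:zero of P} applies with $K^*$ in place of $L$. Your proof instead re-runs the argument of Lemma~\ref{lem:zero of P} directly against $L$ itself, replacing the reference to the invariance statement on [ADH, p.~480] (which needs the cofinality of $\Psi$) with the stronger invariance from Lemma~\ref{lem:13.6.12} (which only needs $K$ to be $\upo$-free and $L$ ungrounded $H$-asymptotic). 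Both approaches are valid; yours is more self-contained inside the paper's own toolbox, avoiding the external newtonization theorem, at the cost of retracing a few lines of Lemma~\ref{lem:zero of P}. All the details you flag — existence of $\fn$ with $\hat a\prec\fn\prec\fm$ via $\Gamma^>$ having no least element, $\ndeg_K P_{\times\fn}=1$ via Lemma~\ref{lem:lower bd on ndeg}, ungroundedness of $L$ from the definition of newtonianity, and the final multiplicative conjugation — check out.
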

\begin{proof} By \cite[Theorem~B]{Nigel19}, $K$ has a newtonization $K^*$ inside $L$. Such  $K^*$ is $\d$-algebraic over $K$ by 
\eqref{eq:14.0.1},  
 so $\Gamma^{<}$ is cofinal in $\Gamma_{K^*}^{<}$ by Theorem~\ref{thm:ADH 13.6.1}. Thus 
we can apply Lemma~\ref{lem:zero of P} to $K^*$ in the role of $L$. 
\end{proof}

\noindent
Here is a variant of Lemma~\ref{lem:from cracks to holes}:

\begin{cor}\label{cor:find zero of P, 2}  Let $K$ be  $\d$-valued and $\upo$-free, and $L$ a newtonian
$H$-asymptotic extension of $K$.
Suppose   $\Gamma$ is divisible and  
 $(P,\fm,\hat a)$ is  $Z$-minimal. Then there exists~$\hat b\in L$ such that $K\<\hat b\>$ is an immediate extension of $K$ and~$(P,\fm,\hat b)$ is a hole in $K$ equivalent to~$(P,\fm,\hat a)$. \textup{(}Thus   if  $(P,\fm,\hat a)$ is also a  hole in $K$,
then there is an embedding~$K\langle \hat a\rangle\to L$ of valued differential fields over $K$.\textup{)}
\end{cor}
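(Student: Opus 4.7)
The plan is to reduce to the case of a $Z$-minimal hole, construct an embedding $\iota\colon K\langle\hat a\rangle\to L$ of valued differential fields over $K$, and then take $\hat b:=\iota(\hat a)$. Given such $\iota$, the element $\hat b\in L$ satisfies $P(\hat b)=0$, $\hat b\prec\fm$, $\hat b\notin K$, and $v(\hat a-a)=v(\hat b-a)$ for all $a\in K$, while $K\langle\hat b\rangle=\iota(K\langle\hat a\rangle)$ is immediate over $K$; so $(P,\fm,\hat b)$ is a hole in $K$ equivalent to $(P,\fm,\hat a)$, and the parenthetical consequence is just the existence of $\iota$.

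First, by Lemma~\ref{lem:from cracks to holes} the $Z$-minimal slot $(P,\fm,\hat a)$ is equivalent to a $Z$-minimal hole $(P,\fm,\hat a')$ in $K$; since the desired conclusion depends only on the equivalence class of $(P,\fm,\hat a)$ via Corollary~\ref{corisomin}, I may replace $\hat a$ by $\hat a'$ and assume that $\hat a\notin K$ lies in an immediate asymptotic extension $\hat K$ of $K$. Then $K\langle\hat a\rangle\subseteq\hat K$ is an immediate, $\d$-algebraic extension of $K$ with $P$ a minimal annihilator of $\hat a$ over $K$; by Theorem~\ref{thm:ADH 13.6.1}, $K\langle\hat a\rangle$ is itself $\upo$-free.

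To build $\iota$, I invoke~\eqref{eq:14.0.1} applied to $K\langle\hat a\rangle$ to obtain an immediate, $\d$-algebraic, $\upo$-free, newtonian extension $K^{\sharp}$ of $K\langle\hat a\rangle$; this $K^{\sharp}$ is simultaneously an immediate, $\d$-algebraic, newtonian $H$-asymptotic extension of $K$. On the other hand, by~\cite[Theorem~B]{Nigel19} (as in the proof of Corollary~\ref{cor:find zero of P}), $L$ contains a newtonization $K^{*}$ of $K$, and the uniqueness of newtonizations up to isomorphism over $K$ supplies an isomorphism $K^{\sharp}\cong K^{*}$ over $K$. Composing the inclusion $K\langle\hat a\rangle\hookrightarrow K^{\sharp}$ with this isomorphism and with the inclusion $K^{*}\hookrightarrow L$ yields the embedding $\iota\colon K\langle\hat a\rangle\to L$ over $K$, completing the argument.

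The main obstacle in this plan is the uniqueness of newtonizations of $\upo$-free $\d$-valued $H$-asymptotic fields up to isomorphism over $K$: the identification $K^{\sharp}\cong K^{*}$ rests on knowing that any two immediate, $\d$-algebraic, newtonian $H$-asymptotic extensions of $K$ are isomorphic over $K$. This uniqueness statement is where all the model-theoretic content of the argument lives, and it requires matching the construction of $K^{\sharp}$ supplied by~\eqref{eq:14.0.1} with the universal property of newtonizations in~\cite[Theorem~B]{Nigel19}; once this matching is in hand, the rest of the proof is a bookkeeping exercise combining Lemma~\ref{lem:from cracks to holes}, Theorem~\ref{thm:ADH 13.6.1}, and Corollary~\ref{corisomin}.
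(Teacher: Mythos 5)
Your proof follows essentially the same route as the paper's: reduce to a hole, show $K\langle\hat a\rangle$ is $\upo$-free, extend to a newtonian immediate $\d$-algebraic extension via~\eqref{eq:14.0.1}, and embed this into $L$. There is, however, a real gap exactly where you flag the ``main obstacle.'' You want an isomorphism $K^\sharp\cong K^*$ over $K$ from ``uniqueness of newtonizations,'' but that uniqueness only applies between \emph{newtonizations}; you have not argued that $K^\sharp$ is one. The fact you need is not in~\cite[Theorem~B]{Nigel19} (which gives existence of a newtonization inside $L$): it is [ADH, 14.5.4], which says that an immediate $\d$-algebraic newtonian extension of an $\upo$-free $\d$-valued field with divisible value group is a newtonization. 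That is the precise lemma the paper cites at this point. Once you have it, the detour through $K^*$ is also unnecessary: $K^\sharp$ being a newtonization gives the embedding $K^\sharp\to L$ directly by the universal property, without needing to exhibit $K^*$ or an abstract isomorphism.

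A smaller remark: you build the embedding $\iota\colon K\langle\hat a\rangle\to L$ first and then set $\hat b:=\iota(\hat a)$, whereas the paper first produces $\hat b$ (via Lemma~\ref{lem:quasilinear refinement} and [ADH, 11.4.8], essentially the content of Lemma~\ref{lem:from cracks to holes}) and then derives the embedding from Corollary~\ref{corisomin}. Both orderings are fine; yours is the mirror image and your verification that $(P,\fm,\hat b)$ is an equivalent hole (valuation preserved by $\iota$, $P(\hat b)=0$, $\hat b\notin K$, immediacy of $K\langle\hat b\rangle$) is correct. So: right architecture, one missing citation that you correctly identify as load-bearing but leave unresolved.
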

\begin{proof}
By Lemma~\ref{lem:quasilinear refinement} we may refine $(P,\fm,\hat a)$ to arrange that~$(P,\fm,\hat a)$ is quasilinear.
Then [ADH, 11.4.8] gives $\hat b$ in an immediate $H$-asymptotic extension of~$K$ with~$P(\hat b)=0$ and $v(\hat a-a)=v(\hat b-a)$ for all $a$. So $(P,\fm,\hat b)$ is a hole in $K$
  equivalent to~$(P,\fm,\hat a)$. 
The immediate $\d$-algebraic extension $K\langle \hat b\rangle$ of $K$ is $\upo$-free by Theorem~\ref{thm:ADH 13.6.1}. Then \eqref{eq:14.0.1}
gives a newtonian $\d$-algebraic immediate extension $M$ of $K\langle \hat b\rangle$ and thus of $K$. Then $M$ is a newtonization of $K$ by [ADH, 14.5.4] and thus embeds over $K$ into $L$. The rest follows from Corollary~\ref{corisomin}.
\end{proof}

\begin{remark}
Lemma~\ref{lem:quasilinear refinement} and Corollary~\ref{cor:find zero of P, 2}   go through with the hypothesis ``$\Gamma$~is divisible'' replaced by ``$K$ is henselian''.
The proofs are the same, using \cite[3.3]{Nigel19} in place of [ADH, 14.5.1] in the proof of Lemma~\ref{lem:quasilinear refinement},  and \cite[3.5]{Nigel19} in place of~[ADH, 14.5.4] in the proof of Corollary~\ref{cor:find zero of P, 2}.
\end{remark}

\noindent
For $r=1$ we can   weaken  the hypothesis of $\upo$-freeness in Corollary~\ref{cor:find zero of P, 2}:

\begin{cor}[${}^*$]\label{cor:find zero of P, 3}
Suppose $K$ is $\upl$-free and $\Gamma$ is divisible, and 
$(P,\fm,\hat a)$ is $Z$-minimal of order~$r=1$ with a quasilinear refinement.
Let
 $L$ be a newtonian $H$-asymptotic extension of $K$.
  Then there exists~$\hat b\in L$ such that $K\langle\hat b\rangle$ is an immediate extension of $K$ and
  $(P,\fm,\hat b)$ is a hole in $K$ equivalent to $(P,\fm,\hat a)$. \textup{(}So if 
  $(P,\fm,\hat a)$ is also a hole in $K$, then we have
  an embedding~$K\langle \hat a\rangle\to L$ of valued differential fields over $K$.\textup{)}
\end{cor}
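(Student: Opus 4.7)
My plan is to adapt the proof of Corollary~\ref{cor:find zero of P, 2} to the order-$1$ setting, replacing the appeals to $\upo$-freeness, \eqref{eq:14.0.1}, and the universal property of the newtonization by the order-$1$ existence result Corollary~\ref{cor:nmul=1}, which only requires $\upl$-freeness.

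First I reduce to the case that $(P,\fm,\hat a)$ is itself quasilinear. Writing the given quasilinear refinement as $(P_{+a_0},\fn_0,\hat a-a_0)$, it suffices to find $\hat c\in L$ such that $(P_{+a_0},\fn_0,\hat c)$ is a hole in $K$ equivalent to $(P_{+a_0},\fn_0,\hat a-a_0)$ with $K\langle\hat c\rangle$ immediate over $K$: for then $\hat b:=\hat c+a_0\in L$ satisfies $P(\hat b)=0$, $\hat b\asymp\hat a\prec\fm$, $v(\hat a-a)=v(\hat b-a)$ for all $a\in K$, and $K\langle\hat b\rangle=K\langle\hat c\rangle$ is immediate over $K$. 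So henceforth I assume $(P,\fm,\hat a)$ itself is quasilinear (in addition to being $Z$-minimal of order~$1$).

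Then [ADH, 11.4.8] applied to $(P,\fm,\hat a)$ produces $\hat b$ in an immediate $H$-asymptotic extension of $K$ with $P(\hat b)=0$ and $v(\hat a-a)=v(\hat b-a)$ for all $a\in K$, so that $(P,\fm,\hat b)$ is a hole in $K$ equivalent to $(P,\fm,\hat a)$, and $M:=K\langle\hat b\rangle$ is an immediate $\d$-algebraic asymptotic extension of $K$ with $\operatorname{trdeg}(M|K)\le 1$ (which has asymptotic integration by Proposition~\ref{prop:upl-free and as int}). Using Corollary~\ref{cor:nmul=1} and Zorn's lemma inside a sufficiently large ambient $H$-asymptotic extension of $M$, I extend $M$ to an immediate $\upl$-free $H$-asymptotic extension $M^*$ of $K$ that is $1$-linearly newtonian; by Proposition~\ref{prop:char 1-linearly newt}, such $M^*$ is automatically $\d$-valued and $1$-linearly surjective, with $\I(M^*)\subseteq (M^*)^\dagger$.

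The image of $\hat b\in M\subseteq M^*$ under any embedding $M^*\hookrightarrow L$ over $K$ then provides the required element, the equivalence claim being immediate from Corollary~\ref{corisomin}. The main obstacle is therefore establishing the necessary universal property of $M^*$: that every immediate $\upl$-free $1$-linearly newtonian extension of $K$ embeds over $K$ into every $1$-linearly newtonian $H$-asymptotic extension of $K$. This is the order-$1$ analog of [ADH, 14.5.4], and I expect it to follow by a standard back-and-forth argument, adjoining at each stage a single generator of $M^*$ over the current image; such a generator is the solution of a $Z$-minimal linear slot of order $\le 1$, for which the $1$-linear newtonianity of $L$ (via Proposition~\ref{prop:char 1-linearly newt}) provides a matching zero in $L$, and Corollary~\ref{corisomin} then upgrades the matching of cuts to a valued-differential-field embedding.
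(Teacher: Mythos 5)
There is a genuine gap, in fact two. First, your construction of the intermediate field $M^*$ is not justified: Corollary~\ref{cor:nmul=1} has $\upl$-freeness as a hypothesis, and $\upl$-freeness is \emph{not} inherited by immediate extensions (for instance $K=\R\langle\upo\rangle$ is $\upl$-free while its immediate extension $K\langle\upl\rangle$ is not, cf.\ Example~\ref{ex:rat as int and cofinality} and Lemma~\ref{lem:upl-free, not upo-free}). So already $M=K\langle\hat b\rangle$ may fail to be $\upl$-free, and the Zorn iteration of Corollary~\ref{cor:nmul=1} producing an immediate $\upl$-free $1$-linearly newtonian $M^*$ can stall at the very first step; nothing in the paper supplies such a ``$1$-linear newtonization'' of $K$.

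Second, and more seriously, the step that carries all the weight --- embedding $M^*$ (or even just $K\langle\hat b\rangle$) into $L$ over $K$ --- is exactly what you leave unproved: the ``order-$1$ analog of [ADH, 14.5.4]'' is only asserted, and the back-and-forth you sketch does not work as stated. Already the first generator $\hat b$ has minimal annihilator $P$ of order $1$ but possibly degree $>1$, so it is not the solution of a \emph{linear} slot, and $1$-linear newtonianity of $L$ gives you no zero to match it with. The real crux is transferring $\ndeg_{\mathbf a}P=1$ (where $\mathbf a=c_K(a_\rho)$ for a divergent pc-sequence $a_\rho\leadsto\hat a$) from $K$ to $L$: since $L$ is an arbitrary newtonian $H$-asymptotic extension, $\Psi$ need not be cofinal in $\Psi_L$, so the generic invariance of Newton quantities does not apply; this is precisely where the order-$1$ invariance Lemma~\ref{lem:11.2.13 invariant} (valid under rational asymptotic integration, hence under $\upl$-freeness) is needed, and you never invoke it. The paper's proof avoids any intermediate newtonized field altogether: it notes that the quasilinear refinement gives $\ndeg_{\mathbf a}P=1$ (Corollary~\ref{cor:quasilinear refinement}), that $P$ is a minimal differential polynomial of $(a_\rho)$ over $K$ [ADH, 11.4.13], transfers $\ndeg_{\mathbf a}P=1$ to $L$ by Lemma~\ref{lem:11.2.13 invariant}, and then applies [ADH, 14.1.10] to get $\hat b\in L$ with $P(\hat b)=0$ and $a_\rho\leadsto\hat b$, finishing with [ADH, 9.7.6] and Corollary~\ref{corisomin}. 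Your outer reduction to the quasilinear case and the final use of Corollary~\ref{corisomin} are fine, but without an argument replacing Lemma~\ref{lem:11.2.13 invariant} plus [ADH, 14.1.10] the proposal does not prove the statement.
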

\begin{proof}
Take a divergent pc-sequence $(a_\rho)$ in $K$ with~$a_\rho\leadsto\hat a$. Then $\ndeg_{\mathbf a}P=1$ for~$\mathbf a:=c_K(a_\rho)$, by
Corollary~\ref{cor:quasilinear refinement}, and~$P$ is a minimal differential polynomial of $(a_\rho)$ over~$K$, by [ADH, 11.4.13].
The equality $\ndeg_{\mathbf a}P=1$ remains valid when passing from $K$, $\mathbf a$ to $L$, $c_L(a_\rho)$, respectively,
by Lemma~\ref{lem:11.2.13 invariant}. Hence [ADH, 14.1.10] yields $\hat b\in L$ such that $P(\hat b)=0$ and~$a_\rho\leadsto\hat b$, so $v(\hat a-a)=v(\hat b-a)$ for all $a$.
Then~$K\langle\hat b\rangle$ is an immediate extension of $K$ by [ADH, 9.7.6], so~$(P,\fm,\hat b)$ is a hole in $K$ equivalent to $(P,\fm,\hat a)$.
For the rest use Corollary~\ref{corisomin}. 
\end{proof}

\subsection*{The linear part of a slot} 
We define the {\bf linear part}\/ of  $(P,\fm,\hat a)$ to be the linear part 
$L_{P_{\times\fm}}\in K[\der]$ of~$P_{\times\fm}$.\index{slot!linear part}\index{linear part!slot}. Recall: $\order(P)=r$.  By [ADH, p.~242] and \eqref{eq:separant fms}, 
$$L_{P_{\times\fm}}\ =\ L_P\, \fm\ =\ \sum_{n=0}^r \frac{\partial P_{\times\fm}}{\partial Y^{(n)}}(0)\,\der^n\ =\ \fm S_{P}(0)\der^r +\text{lower order terms in $\der$}.$$
The slot $(P,\fm,\hat a)$ has the same linear part as each of its multiplicative conjugates.
The linear part of a refinement $(P_{+a},\fn,\hat a-a)$ of $(P,\fm,\hat a)$ is
given by
\begin{align*}
L_{P_{+a,\times\fn}}\	=\ L_{P_{+a}}\fn\ 
							&=\ \sum_{m=0}^r \left(\sum_{n=m}^r {n\choose m} \fn^{(n-m)} \frac{\partial P}{\partial Y^{(n)}}(a)\right)\der^m\\ 
							&=\ \fn\,  S_P(a)\,\der^r +\text{lower order terms in $\der$.}
\end{align*}
(See [ADH, (5.1.1)].) 
By [ADH, 5.7.5] we have $(P^\phi)_d=(P_d)^\phi$ for $d\in \N$; in particular $L_{P^\phi}=(L_P)^\phi$
and so $\order(L_{P^\phi})=\order(L_P)$.
A particularly favorable situation occurs when $L_P$ splits over a given differential field extension $E$ of $K$ (which includes requiring $L_P\ne 0$). Typically, $E$ is an algebraic closure of $K$.  In any case, $L_P$ splits over $E$ iff $L_{P_{\times\fn}}$ splits over $E$, iff $L_{P^\phi}$ splits over $E^\phi$. Thus:

\begin{lemma}\label{lem:deg 1 cracks splitting}  
Suppose $\deg P=1$ and $L_P$  splits over $E$. Then the linear part of any refinement of $(P, \fm, \hat a)$ and any
multiplicative conjugate of $(P, \fm, \hat a)$ also splits over $E$, and any compositional conjugate of $(P, \fm, \hat a)$ by an active $\phi$
 splits over~$E^\phi$.
\end{lemma}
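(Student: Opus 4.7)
The plan is to observe that the three derived objects—refinement, multiplicative conjugate, compositional conjugate—each have a linear part of a very restricted form, and then to reduce their splitting to the splitting of $L_P$ by routine transformations already recorded in the paper.

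First I would exploit the key simplification afforded by $\deg P=1$: writing $P=P(0)+P_1$ with $P_1$ the homogeneous part of degree one, we have for every $a\in K$
\[
P_{+a}(Y)\ =\ P(0)+P_1(Y+a)\ =\ P(0)+P_1(a)+P_1(Y)\ =\ P(a)+P_1(Y),
\]
so $L_{P_{+a}}=P_1=L_P$. Consequently, the linear part of any refinement $(P_{+a},\fn,\hat a-a)$ of $(P,\fm,\hat a)$, which by the formula displayed in the subsection \textit{The linear part of a slot} equals $L_{P_{+a}}\fn$, is simply $L_P\fn$. Then Lemma~\ref{lem:split and twist}, applied with $\fn\in K^\times\subseteq E^\times$, transforms any splitting $(g_1,\ldots,g_r)$ of $L_P$ over $E$ into the splitting $(g_1-\fn^\dagger,\ldots,g_r-\fn^\dagger)$ of $L_P\fn$ over $E$. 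This settles the refinement case.

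For multiplicative conjugates, the paper already observes (in the subsection \textit{The linear part of a slot}) that $(P,\fm,\hat a)$ has the same linear part as each of its multiplicative conjugates, namely $L_{P_{\times\fm}}=L_P\fm$. So the same argument as above, applied with $\fm$ in place of $\fn$, shows $L_P\fm$ splits over $E$ and hence every multiplicative conjugate's linear part splits over $E$.

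For compositional conjugation by an active $\phi$, the slot $(P^\phi,\fm,\hat a)$ has linear part $L_{(P^\phi)_{\times\fm}}=L_{P^\phi}\,\fm=(L_P)^\phi\fm$, using the identity $L_{P^\phi}=(L_P)^\phi$ noted in the paper. By the general fact (stated in the paragraph just before Lemma~\ref{lem:split and compconj}) that splittings are preserved under compositional conjugation, a splitting of $L_P$ over $E$ yields a splitting of $(L_P)^\phi$ over $E^\phi$; one final application of Lemma~\ref{lem:split and twist}, now in $E^\phi[\derdelta]$, produces a splitting of $(L_P)^\phi\fm$ over $E^\phi$.

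I do not foresee any real obstacle: the proof amounts entirely to combining invariance statements that have already been proved. The only point that uses the hypothesis $\deg P=1$ essentially is the identification $L_{P_{+a}}=L_P$; without it, additive conjugation would alter the linear part and the argument for refinements would break down.
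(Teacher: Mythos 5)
Your proposal is correct and follows essentially the same route as the paper, which proves the lemma by the remark immediately preceding it: splitting of $L_P$ over $E$ is equivalent to splitting of $L_{P_{\times\fn}}=L_P\fn$ over $E$ and of $L_{P^\phi}=(L_P)^\phi$ over $E^\phi$, combined with the observation that $\deg P=1$ forces $L_{P_{+a}}=L_P$, so additive conjugation (hence refinement) does not change the linear part. Your explicit use of Lemmas~\ref{lem:split and twist} and~\ref{lem:split and compconj} just spells out the same invariance facts the paper invokes.
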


\noindent
Let $\i=(i_0,\dots,i_r)$ range over $\N^{1+r}$. As in [ADH, 4.2] we set 
$$P_{(\i)}\ :=\ \frac{P^{(\i)}}{\i !} \qquad\text{where
$P^{(\i)}\ :=\ 
\frac{\partial^{|\i|}P}{\partial^{i_0}Y\cdots \partial^{i_r}Y^{(r)}}$.}$$
If $\abs{\i}=i_0+\cdots+i_r\geq 1$, then $\cc(P_{(\i)})<\cc(P)$. Note that for $\i=(0,\dots,0,1)$ we have $P_{(\i)}=S_P\ne 0$, since $\order P =r$. 
We now aim for Corollary~\ref{cor:order L=r}.

\begin{lemma}\label{lem:ndeg coeff stabilizes, 1}
Suppose that $(P,\fm,\hat a)$ is $Z$-minimal. Then $(P,\fm,\hat a)$ has a refinement $(P_{+a},\fn,{\hat a-a})$
such that for all $\i$ with $\abs{\i}\geq 1$ and $P_{(\i)}\neq 0$,
$${\ndeg\,(P_{(\i)})_{+a,\times\fn}\ =\ 0}.$$ 
\end{lemma}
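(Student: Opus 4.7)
The approach is to exploit the $Z$-minimality of $(P,\fm,\hat a)$, which forces every proper partial derivative $P_{(\i)}$ (for $|\i|\geq 1$, $P_{(\i)}\neq 0$) to lie outside $Z(K,\hat a)$, then apply Lemma~\ref{lem:notin Z(K,hata)} to each such $\i$ and combine the resulting refinements via the linear-ordering property of refinements (Lemma~\ref{lem:refinements linearly ordered}).

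First, the index set $I:=\{\i:|\i|\geq 1,\ P_{(\i)}\neq 0\}$ is finite, since each $\i\in I$ has $|\i|\leq \deg P$ and $\order\i\leq r$. For every $\i\in I$ we have $\cc(P_{(\i)})<\cc(P)$, so $Z$-minimality of $(P,\fm,\hat a)$ gives $P_{(\i)}\notin Z(K,\hat a)$. Lemma~\ref{lem:notin Z(K,hata)} then produces, for each such $\i$, a refinement $S_\i=(P_{+a_\i},\fn_\i,\hat a-a_\i)$ of $(P,\fm,\hat a)$ with $\ndeg\,(P_{(\i)})_{+a_\i,\times\fn_\i}=0$ and $\hat a-a_\i\prec\fn_\i\prec\hat a$.

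Next, I would take a common refinement of the finitely many $S_\i$. Applying Lemma~\ref{lem:refinements linearly ordered} to each pair $S_\i,S_\j$ (treating one as the base slot) shows that either $S_\i$ refines $S_\j$ or $S_\j$ refines $S_\i$; iterating, the finite collection $\{S_\i:\i\in I\}$ has a smallest element $S_{\i_0}=(P_{+a},\fn,\hat a-a)$ which refines every $S_\i$. It then remains to check that the property $\ndeg\,(P_{(\i)})_{+a_\i,\times\fn_\i}=0$ is inherited by any refinement of $S_\i$. Writing $S_{\i_0}$ as $(P_{+a_\i+b},\fn,\hat a-a_\i-b)$ with $b\prec\fn_\i$ and $\fn\preceq\fn_\i$, we have $(P_{(\i)})_{+a_\i+b}=\big((P_{(\i)})_{+a_\i}\big)_{+b}$, and combining the invariance of Newton degree under additive shifts of size $\prec\fv$ with the monotonicity $\ndeg R_{\times\fn}\leq \ndeg R_{\times\fn_\i}$ for $\fn\preceq\fn_\i$ (both from [ADH, 11.2.3], as used already in Lemma~\ref{lem:lower bd on ndeg}) yields $\ndeg\,(P_{(\i)})_{+a,\times\fn}\leq 0$, hence equals~$0$. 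This holds for each $\i\in I$, giving the desired simultaneous vanishing at $S_{\i_0}$.

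The main technical point, and the only obstacle, lies in this last verification: confirming that once $\ndeg=0$ is achieved for some $P_{(\i)}$ at a refinement, it persists under all further refinements. Once this standard monotonicity is invoked correctly, the remainder is a finite iteration enabled by the linear-ordering lemma for refinements.
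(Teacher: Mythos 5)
Your proof is correct, but it follows a genuinely different route from the paper. The paper forms the single product $Q:=\prod_{\i}P_{(\i)}$ over all $\i$ with $\abs{\i}\geq 1$, $P_{(\i)}\neq 0$, notes $Q\notin Z(K,\hat a)$ by [ADH, 11.4.4] (multiplicativity), applies Lemma~\ref{lem:notin Z(K,hata)} \emph{once} to $Q$, and then reads off $\ndeg\,(P_{(\i)})_{+a,\times\fn}=0$ for every $\i$ from the additivity of Newton degree on products ([ADH, remarks before 11.2.6]). You instead apply Lemma~\ref{lem:notin Z(K,hata)} to each $P_{(\i)}$ separately, combine the finitely many resulting refinements via the pairwise comparability of refinements (Lemma~\ref{lem:refinements linearly ordered}, legitimately applied with one $S_{\j}$ as base slot, plus transitivity), and then verify that $\ndeg=0$ persists under further refinement; that persistence step is sound — small additive shifts $b\prec\fn_{\i}$ do not change $\ndeg\,(\,\cdot\,)_{\times\fn_\i}$ and passing to $\fn\preceq\fn_\i$ can only decrease it — and is in fact essentially the content of the paper's own Lemma~\ref{lem:ndeg coeff stabilizes, 2}, proved there by routing through $\hat a$. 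So your approach trades the product/multiplicativity trick for a common-refinement-plus-persistence argument; the paper's version is shorter here, while yours avoids invoking [ADH, 11.4.4], at the cost of duplicating the persistence lemma that the paper proves next anyway. Two small blemishes: the phrase ``additive shifts of size $\prec\fv$'' should read $\prec\fn_\i$ (no $\fv$ has been introduced), and the correct citations are [ADH, 11.2.4] for the additive invariance and [ADH, 11.2.5] for monotonicity in the monomial, not [ADH, 11.2.3].
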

\begin{proof}
Let $\i$ range over the (finitely many) elements of $\N^{1+r}$ satisfying $\abs{\i}\geq 1$ and~$P_{(\i)}\neq 0$.
Each $P_{(\i)}$ has smaller complexity than $P$, so~$P_{(\i)}\notin Z(K,\hat a)$.
Then~$Q:=\prod_{\i} P_{(\i)}\notin Z(K,\hat a)$ by [ADH, 11.4.4], so Lemma~\ref{lem:notin Z(K,hata)} gives a refinement~$(P_{+a},\fn,\hat a-a)$ of $(P,\fm,\hat a)$ with $\ndeg Q_{+a,\times\fn}=0$.
Then~$\ndeg\, (P_{(\i)})_{+a,\times\fn}=0$ for all $\i$, by [ADH, remarks before 11.2.6]. 
\end{proof}

\noindent
From [ADH, (4.3.3)] we   recall that $(P_{(\i)})_{+a}=(P_{+a})_{(\i)}$. Also recall that $(P_{+a})_{\i}=P_{(\i)}(a)$ by Tay\-lor expansion. In particular, if $P_{(\i)}=0$, then $(P_{+a})_{\i}=0$.  

\begin{lemma}\label{lem:ndeg coeff stabilizes, 2} 
Suppose $(P_{+a},\fn,\hat a-a)$ refines $(P,\fm,\hat a)$ and~$\i$ is such that~${\abs{\i}\geq 1}$, $P_{(\i)}\neq 0$, 
and $\ndeg\, (P_{(\i)})_{\times\fm}=0$.  Then
$$\ndeg\, (P_{(\i)})_{+a,\times\fn}\ =\ 0, \qquad {(P_{+a})_{\i}\ \sim\ P_{\i}}.$$
\end{lemma}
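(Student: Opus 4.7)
Set $Q := P_{(\i)}$, so $Q(0) = P_{\i}$, and by the Taylor expansion identity $(P_{+a})_{\boldsymbol j} = P_{(\boldsymbol j)}(a)$ recalled just before the lemma, $(P_{+a})_{\i} = Q(a)$. Thus the two claimed assertions become $\ndeg Q_{+a,\times\fn}=0$ and $Q(a)\sim Q(0)$. The refinement hypothesis $\hat a-a\prec\fn\preceq\fm$ together with $\hat a\prec\fm$ yields $a\prec\fm$ by the ultrametric inequality.

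For the first assertion, I would appeal to the standard monotonicity properties of $\ndeg$ from [ADH, 11.2] (analogous to the behavior of $\ddeg$ used in Corollary~\ref{cor:ref 2}): additive invariance $\ndeg R_{+a,\times\fm}=\ndeg R_{\times\fm}$ for $a\prec\fm$, and multiplicative monotonicity $\ndeg R_{\times\fn}\leq\ndeg R_{\times\fm}$ for $\fn\preceq\fm$. Applying these first to $R:=Q$ and then to $R:=Q_{+a}$, one gets
\[
\ndeg Q_{+a,\times\fn}\ \leq\ \ndeg Q_{+a,\times\fm}\ =\ \ndeg Q_{\times\fm}\ =\ 0,
\]
and since $Q\neq 0$ implies $Q_{+a,\times\fn}\neq 0$, in fact equality holds.

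For the second assertion, first observe that $P_{\i}=Q(0)\neq 0$: indeed $\ndeg Q_{\times\fm}=0$ means $\ddeg (Q_{\times\fm})^\phi=0$ eventually, and since the degree-$0$ homogeneous component of $(Q_{\times\fm})^\phi$ is $Q^\phi(0)=Q(0)$, this component must have finite valuation. Now decompose $Q(a)-Q(0)=\sum_{d\geq 1}Q_d(a)$ where $Q_d$ is the $d$-homogeneous part of $Q$, and use homogeneity to write $Q_d(a)=(Q_d)_{\times\fm}(a/\fm)$. Pick any active $\phi$ for which $\ddeg (Q_{\times\fm})^\phi=0$; using that evaluation at elements of $K$ is unaffected by passage to $K^\phi$, and that $((Q_d)_{\times\fm})^\phi=((Q^\phi)_{\times\fm})_d$, I would rewrite $Q_d(a)=((Q^\phi)_{\times\fm})_d(a/\fm)$. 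The choice of $\phi$ gives $((Q^\phi)_{\times\fm})_d\prec ((Q^\phi)_{\times\fm})_0=P_{\i}$ for $d\geq 1$, and since $a/\fm\prec 1$ in the small-derivation field $K^\phi$, the standard dominance bound [ADH, 6.1.3] yields $((Q^\phi)_{\times\fm})_d(a/\fm)\preceq ((Q^\phi)_{\times\fm})_d$. Combining these gives $Q_d(a)\prec P_{\i}$ for every $d\geq 1$, hence $Q(a)-Q(0)\prec P_{\i}$ and therefore $Q(a)\sim Q(0)$.

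The main obstacle is the second step: extracting from the asymptotic (``eventual'') statement $\ndeg Q_{\times\fm}=0$ a genuine valuation-theoretic bound in $K$ itself. This requires picking a single active $\phi$ that witnesses the degree-$0$ dominance, correctly matching the homogeneous decomposition with compositional conjugation via the identity $((Q_d)_{\times\fm})^\phi=((Q^\phi)_{\times\fm})_d$, and exploiting that $\sim$ and $\preceq$ are insensitive to the choice of $\phi$ while $R(y)$ for $y\in K$ is independent of the derivation.
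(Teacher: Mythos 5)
Your proof is correct, and while the first assertion is handled much as in the paper (the paper's chain routes through $\ndeg(P_{(\i)})_{+\hat a,\times\fn}$ as an intermediate, whereas yours stays with $a\in K$ throughout; both amount to the same standard facts from [ADH, 11.2]), your argument for the second assertion takes a genuinely different route. The paper notes that $\ndeg(P_{(\i)})_{+a,\times\fn}=0$ forces $P_{(\i)}\notin Z(K,\hat a)$, and then invokes [ADH, 11.4.3] to obtain $P_{(\i)}(a)\sim P_{(\i)}(\hat a)$; applying this also with $a=0$, $\fn=\fm$ gives $P_{(\i)}(0)\sim P_{(\i)}(\hat a)$, and hence $(P_{+a})_{\i}\sim P_{\i}$. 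You instead bypass $\hat a$ and $Z(K,\hat a)$ entirely: fixing a single active $\phi$ with $\ddeg(Q_{\times\fm})^\phi=0$, you bound $Q(a)-Q(0)=\sum_{d\geq 1}Q_d(a)$ term by term using the homogeneous decomposition, the commutation identities $((Q_d)_{\times\fm})^\phi=((Q^\phi)_{\times\fm})_d$, the strict dominance $((Q^\phi)_{\times\fm})_d\prec((Q^\phi)_{\times\fm})_0$ for $d\geq 1$, and the small-derivation estimate $R(y)\preceq R$ for $y\preceq 1$. (That last bound is more a consequence of $R(y)\preceq R_{\times y}$ together with [ADH, 4.5.1] than of [ADH, 6.1.3], but this is only a citation quibble.) The paper's route is shorter because it leans on a ready-made lemma and fits the $Z(K,\hat a)$ theme of the surrounding section; your route is self-contained, and it makes visible that the second conclusion is a purely finitary statement about $a\prec\fm$ and $\ndeg Q_{\times\fm}=0$, with $\hat a$ playing no role beyond supplying $a\prec\fm$ via the refinement hypothesis.
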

\begin{proof}
Using [ADH, 11.2.4, 11.2.3(iii), 11.2.5] we get
$$\ndeg\, (P_{(\i)})_{+a,\times\fn}\ =\ 
\ndeg\,(P_{(\i)})_{+\hat a,\times\fn}\ \leq\ 
\ndeg\, (P_{(\i)})_{+\hat a,\times\fm}\ =\ 
\ndeg\, (P_{(\i)})_{\times\fm}\ =\ 0,$$
so $\ndeg\,(P_{(\i)})_{+a,\times\fn}=0$. Thus $P_{(\i)}\notin Z(K,\hat a)$, hence $(P_{+a})_{\i}=P_{(\i)}(a)\sim P_{(\i)}(\hat a)$ by~[ADH, 11.4.3];
applying this to $a=0$, $\fn=\fm$ yields $P_{\i}=P_{(\i)}(0)\sim P_{(\i)}(\hat a)$. 
\end{proof}

\noindent
Combining Lemmas~\ref{lem:ndeg coeff stabilizes, 1} and~\ref{lem:ndeg coeff stabilizes, 2} gives:  

\begin{cor}\label{cor:order L=r}
Every $Z$-minimal slot in $K$ of order $r$ has a refinement $(P,\fm,\hat a)$ such that for all
refinements $(P_{+a},\fn,\hat a-a)$ of $(P,\fm,\hat a)$ and all $\i$ with $\abs{\i}\geq 1$ and~$P_{(\i)}\neq 0$ we have
$(P_{+a})_{\i}\sim P_{\i}$ \textup{(}and thus $\order L_{P_{+a}}=\order L_P=r$\textup{)}.
\end{cor}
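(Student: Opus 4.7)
The strategy is to combine Lemmas~\ref{lem:ndeg coeff stabilizes, 1} and~\ref{lem:ndeg coeff stabilizes, 2} in a single step of refinement, then exploit the key identity $(P_{+a})_{\i}=P_{(\i)}(a)$ that comes from Taylor expansion. There is essentially no extra work beyond invoking the two preceding lemmas and checking that the separant gives the order claim.

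First, let $(P_0,\fm_0,\hat a)$ be an arbitrary $Z$-minimal slot in $K$ of order $r$. Lemma~\ref{lem:ndeg coeff stabilizes, 1} supplies a refinement $(P_{0,+a_0},\fn_0,\hat a-a_0)$ such that $\ndeg\,((P_0)_{(\i)})_{+a_0,\times\fn_0}=0$ for every $\i$ with $\abs{\i}\geq 1$ and $(P_0)_{(\i)}\neq 0$. Rename this refinement as $(P,\fm,\hat a)$ (overwriting $\hat a$ by $\hat a-a_0$). Using $(P_0)_{(\i),+a_0}=(P_{0,+a_0})_{(\i)}=P_{(\i)}$ from [ADH, (4.3.3)], the displayed condition becomes
\[
\ndeg\,(P_{(\i)})_{\times\fm}\ =\ 0 \qquad \text{for all $\i$ with $\abs{\i}\geq 1$ and $P_{(\i)}\neq 0$.}
\]
Since $(P_0)_{(\i)}=0\Leftrightarrow P_{(\i)}=0$, the hypothesis of Lemma~\ref{lem:ndeg coeff stabilizes, 2} is preserved by passing to the refined slot.

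Next, let $(P_{+a},\fn,\hat a-a)$ be any refinement of $(P,\fm,\hat a)$. Fix any $\i$ with $\abs{\i}\geq 1$ and $P_{(\i)}\neq 0$. The condition above is exactly what Lemma~\ref{lem:ndeg coeff stabilizes, 2} requires, and applying that lemma yields $(P_{+a})_{\i}\sim P_{\i}$, which is the desired conclusion.

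For the parenthetical statement, specialize to $\i^{\ast}:=(0,\dots,0,1)$, so that $P_{(\i^{\ast})}=S_P\neq 0$ (since $\order P=r$). The general conclusion just proved gives $(P_{+a})_{\i^{\ast}}\sim P_{\i^{\ast}}$ with both sides nonzero; but $(P_{+a})_{\i^{\ast}}=S_P(a)$ is precisely the coefficient of $\der^r$ in $L_{P_{+a}}=\sum_n \tfrac{\partial P}{\partial Y^{(n)}}(a)\,\der^n$, so $\order L_{P_{+a}}=r=\order L_P$. The only subtlety to double-check is that the asymptotic relation $(P_{+a})_{\i}\sim P_{\i}$ indeed forces both sides to be nonzero when one of them is, which is built into the convention $f\sim g \Rightarrow f,g\neq 0$ recalled after [ADH, 3.1]; this is used implicitly for $\i=\i^{\ast}$. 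No step of the argument poses a real obstacle, since the groundwork is already done in Lemmas~\ref{lem:ndeg coeff stabilizes, 1} and~\ref{lem:ndeg coeff stabilizes, 2}.
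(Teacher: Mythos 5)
Your proof is correct and follows exactly the route the paper intends: the paper states the corollary as an immediate combination of Lemmas~\ref{lem:ndeg coeff stabilizes, 1} and~\ref{lem:ndeg coeff stabilizes, 2}, and your argument spells out precisely that combination, including the transfer of the $\ndeg$ condition via $(P_{(\i)})_{+a}=(P_{+a})_{(\i)}$ and the separant case $\i=(0,\dots,0,1)$ for the order claim.
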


\noindent
Here the condition ``of order $r$'' may seem irrelevant, but
is forced on us because refinements preserve order and by our convention that $P$ has order $r$.

\subsection*{Special slots} 
The slot $(P,\fm, \hat a)$ in $K$ is said to be {\bf special\/} if $\hat a/\fm$ is special over~$K$ in the sense of Section~\ref{sec:special elements}: some nontrivial convex subgroup $\Delta$ of $\Gamma$ is cofinal in~$v\big(\frac{\hat{a}}{\fm}-K\big)$.\index{slot!special}\index{special!slot}  
If $(P,\fm,\hat a)$ is special, then so are $(bP,\fm,\hat a)$ for $b\neq 0$, any multiplicative conjugate of $(P,\fm, \hat a)$, any compositional conjugate of $(P,\fm,\hat a)$, and any slot in $K$ equivalent to $(P,\fm,\hat a)$. Also, by Lemma~\ref{lem:special refinement}:

\begin{lemma}\label{speciallemma} If $(P,\fm, \hat a)$ is special, then so is any refinement.
\end{lemma}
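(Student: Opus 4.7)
The plan is to reduce the statement directly to Lemma~\ref{lem:special refinement}, which was established in Section~\ref{sec:holes} precisely with this application in mind. Recall that a slot $(P,\fm,\hat a)$ is called special exactly when the element $\hat a/\fm$ is special over $K$, i.e., some nontrivial convex subgroup of $\Gamma$ is cofinal in $v(\hat a/\fm - K)$.

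Assume $(P,\fm,\hat a)$ is special, and let $(P_{+a},\fn,\hat a - a)$ be a refinement. By definition of a refinement, we have $\hat a - a \prec \fn \preceq \fm$, and by the definition of a slot, $\hat a \prec \fm$. Thus the hypotheses of Lemma~\ref{lem:special refinement} are satisfied (with the refining data $a$, $\fn$), and that lemma yields that $(\hat a - a)/\fn$ is special over $K$. This is precisely the condition that the refinement $(P_{+a},\fn,\hat a - a)$ is special, completing the proof.

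There is no real obstacle here: the conceptual work was already absorbed into Lemma~\ref{lem:special refinement}, whose proof in turn reduced, via rescaling, to showing that translating $\hat a$ by an element of $K$ does not alter $v(\hat a - K)$, and that dividing by an element $\fn$ with $v\fn$ in the convex subgroup $\Delta$ cofinal in $v(\hat a - K)$ shifts but does not shrink the set $v(\hat a - K)$. Both of these are purely valuation-theoretic facts, independent of the differential-algebraic structure carried by the slot. Consequently the present lemma is essentially a relabelling of Lemma~\ref{lem:special refinement} in the language of slots and their refinements.
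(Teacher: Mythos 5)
Your proof is correct and is exactly the paper's argument: the paper derives Lemma~\ref{speciallemma} directly from Lemma~\ref{lem:special refinement}, just as you do, since a refinement $(P_{+a},\fn,\hat a-a)$ satisfies $\hat a-a\prec\fn\preceq\fm$ by definition. Nothing further is needed.
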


\noindent
Here is our main source of special slots: 

{\sloppy
\begin{lemma}\label{lem:special dents} 
Let $K$ be $r$-linearly newtonian, and $\upo$-free if~$r>1$. Suppose~$(P,\fm, \hat a)$ is  quasilinear, and  $Z$-minimal or a hole in $K$. Then  $(P,\fm, \hat a)$ is special.
\end{lemma}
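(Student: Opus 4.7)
The plan is to reduce to an application of Proposition~\ref{prop:hata special} (when $r>1$) or Proposition~\ref{prop:hata special, r=1} (when $r=1$), which directly establish specialness of zeros of differential polynomials of Newton degree $1$ under the right newtonianity hypotheses.

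First I would reduce to the hole case. If $(P,\fm,\hat a)$ is a $Z$-minimal slot (not necessarily a hole), Lemma~\ref{lem:from cracks to holes} supplies an equivalent $Z$-minimal hole $(P,\fm,\hat b)$ in $K$. Since quasilinearity depends only on $P$ and $\fm$, and specialness of a slot depends only on $P$, $\fm$, and the downward closed set $v(\hat a/\fm - K) = v(\hat b/\fm - K)$ (equality by equivalence), the hypothesis and conclusion of the lemma are preserved. So we may assume $(P,\fm,\hat a)$ is itself a hole.

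Second I would pass to the multiplicative conjugate $(P_{\times\fm},1,\hat a/\fm)$ by $\fm$. This is again a quasilinear hole in $K$: it has $\hat a/\fm\prec 1$ in the same immediate asymptotic extension of $K$ as $\hat a$, with $\hat a/\fm\notin K$; moreover $\order P_{\times\fm}=r$, $\ndeg P_{\times\fm}=\ndeg(P_{\times\fm})_{\times 1}=1$, and $P_{\times\fm}(\hat a/\fm)=P(\hat a)=0$. By definition, $(P,\fm,\hat a)$ is special iff $\hat a/\fm$ is special over $K$ iff $(P_{\times\fm},1,\hat a/\fm)$ is special. After renaming, it thus suffices to prove: if $(P,1,\hat a)$ is a quasilinear hole in $K$, then $\hat a$ is special over $K$.

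Finally I would invoke the special-element propositions. When $r>1$, the hypotheses of the lemma make $K$ an $\upo$-free, $r$-linearly newtonian $H$-asymptotic field, so Proposition~\ref{prop:hata special} applies to $P$ (with $\order P=r$, $\ndeg P=1$, $P(\hat a)=0$, $\hat a\prec 1$ in an immediate asymptotic extension, $\hat a\notin K$) and yields that $\hat a$ is special over $K$. When $r=1$, $K$ is a $1$-linearly newtonian $H$-asymptotic field with asymptotic integration (since $K$ has rational asymptotic integration by the standing assumptions of the section), so Proposition~\ref{prop:hata special, r=1} applies in the same way and gives the same conclusion. This completes the reduction.

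The substance of the argument is entirely packaged in the two special-element propositions, whose proofs rely on the neat-surjectivity style lemmas~\ref{neneat1}--\ref{neneat3} (respectively~\ref{neneat2, r=1}--\ref{neneat3, r=1}) to upgrade any $\gamma\in v(\hat a/\fm - K)\cap\Gamma^{>}$ to an element $\ge (4/3)\gamma$ of the same set, forcing a nontrivial convex subgroup to be cofinal in $v(\hat a/\fm-K)$. So there is no real obstacle in the present lemma beyond the purely formal reductions above; the only point to watch is that multiplicative conjugation and the passage from slot to equivalent hole both preserve quasilinearity, the ambient immediate asymptotic extension, and the relevant cut, which we have verified.
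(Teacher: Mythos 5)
Your proposal is correct and follows exactly the same route as the paper: reduce to a hole via Lemma~\ref{lem:from cracks to holes}, multiplicatively conjugate to arrange $\fm=1$, and then apply Proposition~\ref{nepropsp} (if $r>1$) or Proposition~\ref{nepropsp, r=1} (if $r=1$). The extra verifications you give (that the reductions preserve quasilinearity, the hole property, and the relevant cut) are all implicit in the paper's terser version but are exactly what makes the two steps legitimate.
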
}
\begin{proof} 
Use Lemma~\ref{lem:from cracks to holes} to arrange $(P,\fm,\hat a)$ is a hole in~$K$.
Next arrange $\fm=1$ by replacing  $(P,\fm,\hat a)$ with $(P_{\times\fm},1,\hat a/\fm)$. So $\ndeg P=1$, hence $\hat a$ is special over $K$ by Proposition~\ref{nepropsp} (if $r>1$) and~\ref{nepropsp, r=1} (if $r=1$).
\end{proof}

\noindent
Next an approximation result that will be needed in \cite{ADH5}:  

\begin{lemma}\label{lem:small P(a)}
Suppose $\fm=1$, $(P,1,\hat a)$ is special and $Z$-minimal, and  $\hat a-a\preceq\fn\prec 1$ for some $a$.
Then $\hat a-b\prec\fn^{r+1}$ for some $b$, and $P(b)\prec\fn P$ for any such~$b$.
\end{lemma}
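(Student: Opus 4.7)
The plan is to combine Lemma~\ref{Zp1}, which exploits specialness of $\hat a$ over $K$ together with $P\in Z(K,\hat a)$ to bound $v\big(P(\hat a)\big)$ from below, with a Taylor expansion at $b$ that reduces $P(b)-P(\hat a)$ to $Q(\hat a-b)$ for a differential polynomial $Q$ with $\val Q\ge 1$, to which Lemma~\ref{lem:diff operator at small elt} applies.

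For the existence of $b$, let $\Delta$ be the unique nontrivial convex subgroup of $\Gamma$ cofinal in $v(\hat a-K)$; one has $\Delta\subseteq v(\hat a-K)$ by the explicit description in Section~\ref{sec:special elements}. I would first note that $v(\hat a-K)$ is downward closed in $\Gamma$ (given $\gamma=v(\hat a-a')\in v(\hat a-K)$ and $\delta<\gamma$, choosing $c\in K^\times$ with $vc=\delta$ yields $\delta=v(\hat a-(a'+c))\in v(\hat a-K)$) and has no largest element since $\hat a\notin K$ lies in an immediate extension. From $v\fn\le v(\hat a-a)\in v(\hat a-K)$ and $v\fn>0$, cofinality of $\Delta$ in $v(\hat a-K)$ together with convexity force $v\fn\in\Delta$, so $(r+1)v\fn\in\Delta\subseteq v(\hat a-K)$; since this set has no largest element, some $b\in K$ satisfies $v(\hat a-b)>(r+1)v\fn$, i.e., $\hat a-b\prec\fn^{r+1}$.

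Now fix any $b\in K$ with $\hat a-b\prec\fn^{r+1}$. Picking $\fw\in K^\times$ with $\fw\asymp P$, we have $\fw^{-1}P\in Z(K,\hat a)$ and $\fw^{-1}P\asymp 1$; Lemma~\ref{Zp1} then applies (using that $K$ has small derivation with $\der\ne 0$, $\Gamma^>$ has no least element, and immediate asymptotic extensions of asymptotic fields are automatically strict) and yields $v\big(P(\hat a)\big)>v(\hat a-K)+vP$. Choosing $\gamma\in v(\hat a-K)$ with $\gamma>(r+1)v\fn$ gives $P(\hat a)\prec\fn^{r+1}P\preceq\fn P$. To handle the difference, set $Q(Y):=P_{+b}(Y)-P(b)=\sum_{|\i|\ge 1}P_{(\i)}(b)\,Y^{\i}\in K\{Y\}$, so $\val Q\ge 1$, $\order Q\le r$, and $Q(\hat a-b)=P(\hat a)-P(b)$.

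Since $b=\hat a-(\hat a-b)$ with $\hat a,\hat a-b\prec 1$, we have $b\prec 1$; since $K$ has small derivation, $b^{(k)}\preceq 1$ for all $k$, whence $b^{\j}\preceq 1$ for every $\j$. The coefficients of $P_{(\i)}$ are positive-integer multiples of coefficients of $P$, so $P_{(\i)}\preceq P$, and evaluating at such $b$ preserves this, giving $P_{(\i)}(b)\preceq P$ and hence $Q\preceq P$. Applying Lemma~\ref{lem:diff operator at small elt} to $Q$ inside the asymptotic extension $K\<\hat a\>$ (which retains small derivation as an immediate extension of $K$) with $\fm:=\fn$ and $m:=1$, the condition $\hat a-b\prec\fn^{r+1}\preceq\fn^{\order Q+1}$ yields $Q(\hat a-b)\prec\fn^{\val Q}Q\preceq\fn P$, and combining gives $P(b)=P(\hat a)-Q(\hat a-b)\prec\fn P$. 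The main delicate points will be the verification that Lemma~\ref{Zp1} is applicable in our $H$-asymptotic setting and the uniform coefficient bound $Q\preceq P$ obtained for any $b\prec 1$ from the small-derivation hypothesis; once these are in place, everything else is a direct application of the two cited lemmas.
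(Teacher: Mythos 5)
Your proof is correct, and it takes a mildly but genuinely different route from the paper's. Both arguments share the same skeleton: specialness of $\hat a$ forces $v\fn$ into the convex subgroup cofinal in $v(\hat a-K)$, hence produces $b$ with $\hat a-b\prec\fn^{r+1}$ (the paper asserts this in one line; your cofinality/convexity argument is exactly the implicit justification), and the final estimate comes from Taylor expansion together with Lemma~\ref{lem:diff operator at small elt} applied in the immediate (hence small-derivation) extension containing $\hat a$, plus the coefficient bound $Q\preceq P$ from $\hat a, b\prec 1$ and small derivation. The difference is in how the term $P(\hat a)$ is handled: the paper invokes Lemma~\ref{lem:from cracks to holes} to replace the $Z$-minimal slot by an equivalent hole, so that $P(\hat a)=0$ and one can expand about $\hat a$ with coefficients $P_{(\i)}(\hat a)\in\hat K$; you instead keep the slot as given, expand about $b\in K$, and control $P(\hat a)$ via Lemma~\ref{Zp1}, whose hypotheses you correctly verify (strictness of immediate asymptotic extensions, $\Gamma^{>}$ without least element, $P\in Z(K,\hat a)$ after scaling to $P\asymp 1$). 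The paper's route is shorter, but yours buys something: Lemma~\ref{Zp1} only needs $P\in Z(K,\hat a)$, so your argument never uses $Z$-minimality and thus proves the statement for an arbitrary special slot $(P,1,\hat a)$, whereas the paper's reduction to a hole is exactly where $Z$-minimality enters. Only trivial omissions: the degenerate cases $Q=0$ (impossible since $P\notin K$, and harmless anyway) and $P(\hat a)=0$ are not mentioned, but they cause no difficulty.
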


\begin{proof}
Using Lemma~\ref{lem:from cracks to holes} we arrange $P(\hat a)=0$.
The differential po\-ly\-no\-mial~$Q:=\sum_{\abs{\i}\geq 1} P_{(\i)}(\hat a)Y^{\i}\in \hat K\{Y\}$ has order~$\leq r$  and $\val(Q)\geq 1$,
and Taylor expansion   yields,  for all $a$:
$$P(a)\ =\ P(\hat a) + \sum_{\abs{\i}\geq 1} P_{(\i)}(\hat a)(a-\hat a)^{\i}\ =\ Q(a-\hat a).$$
Since $\hat a$ is special over $K$, we have $b$ with $\hat a-b\prec\fn^{r+1}$, and then by Lemma~\ref{lem:diff operator at small elt} we have $Q(b-\hat a)\prec\fn Q\preceq\fn P$.
\end{proof}

\section{The First Normalization Theorems}\label{sec:normalization}

\noindent
{\em Throughout this section $K$ is an $H$-asymptotic field with small derivation and with rational asymptotic integration. We set $\Gamma:= v(K^\times)$}. 
The notational conventions introduced in the last section remain in force:
$a$,~$b$,~$f$,~$g$ range over $K$; $\phi$,~$\fm$,~$\fn$,~$\fv$,~$\fw$ over $K^\times$. As at the end of Section~\ref{sec:span} we shall frequently use for $\fv\prec 1$ the coarsening of $v$ by the convex subgroup $\Delta(\fv)=\big\{\gamma\in \Gamma:\, \gamma=o(v\fv)\big\}$ of $\Gamma$.

We fix a slot $(P,\fm,\hat a)$ in $K$ of order $r\geq 1$, and set $w:=\wt(P)$ (so $w\geq r\geq 1$).
In the next subsections we introduce various conditions on~$(P,\fm,\hat a)$. These conditions will be shown to be related as follows:
\[
\xymatrix{ \text{ strictly normal } \ar@{=>}[r] & \text{ normal } \ar@{=>}[r] \ar@{=>}[d] & \text{ steep } \\
& \text{ quasilinear } \ar@{<=}[r] &  \text{ deep }  \ar@{=>}[u]}
\] 
Thus ``deep + strictly normal'' yields the rest. The main results of this section are Theorem~\ref{mainthm} and its variants \ref{cor:mainthm}, \ref{varmainthm}, and \ref{cor:achieve strong normality, 2}.

\subsection*{Steep and deep slots}
In this subsection, if $\order (L_{P_{\times \fm}})=r$, then we set
$$\fv\ :=\ \fv(L_{P_{\times\fm}}).$$ 
The slot~$(P,\fm,\hat a)$ in $K$ is said to be {\bf steep}\index{steep!slot}\index{slot!steep} if $\order(L_{P_{\times \fm}})=r$ and $\fv\prec^\flat 1$. 
Thus 
$$(P,\fm, \hat a) \text{ is steep }\Longleftrightarrow\ (P_{\times\fn},\fm/\fn,\hat a/\fn) \text{ is steep }\Longleftrightarrow\ (bP,\fm,\hat a) \text{ is steep}$$
for $b\neq 0$.   If $(P,\fm,\hat a)$ is steep, then so is any slot in $K$ equivalent to $(P,\fm,\hat a)$. If $(P, \fm, \hat a)$ is steep, then so is any slot $(P^\phi,\fm,\hat a)$ in~$K^\phi$
for active~$\phi\preceq 1$, 
by  Lemma~\ref{lem:v(Aphi)}, and thus $\nwt(L_{P_{\times \fm}})<r$.  
Below we tacitly use that if~$(P,\fm,\hat a)$ is steep, then
$$\fn\asymp_{\Delta(\fv)}\fv\ \Longrightarrow\ [\fn]=[\fv], \qquad \fn\prec 1,\ [\fn]=[\fv]\ \Longrightarrow\ \fn \prec^\flat 1.$$
Note also that if $(P,\fm,\hat a)$ is steep, then $\fv^\dagger \asymp_{\Delta(\fv)} 1$ by [ADH, 9.2.10(iv)].

\begin{lemma}\label{lem:steep1}
Suppose $(P,\fm,\hat a)$ is steep, $\hat a\prec\fn\preceq \fm$ and $[\fn/\fm]\leq [\fv]$. Then 
$$\order(L_{P_{\times\fn}})\ =\ r, \qquad \fv(L_{P_{\times\fn}})\ \asymp_{\Delta(\fv)}\ \fv,$$ 
so $(P,\fn,\hat a)$ is a steep refinement of 
$(P,\fm,\hat a)$.
\end{lemma}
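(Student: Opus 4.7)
Plan: First, note that $(P,\fn,\hat a)$ is the refinement $(P_{+a},\fn,\hat a-a)$ of $(P,\fm,\hat a)$ with $a=0$, since by hypothesis $\hat a\prec\fn\preceq\fm$. So the final clause of the lemma will follow once the two displayed identities are established and steepness is re-checked for the new slot.

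For the order claim I would use the formula from the subsection \emph{The linear part of a slot}: $L_{P_{\times\fm}}=\fm S_P(0)\der^r+\text{lower order in }\der$. Since by hypothesis $\order L_{P_{\times\fm}}=r$, we get $S_P(0)\ne0$, and hence $L_{P_{\times\fn}}=L_P\fn=\fn S_P(0)\der^r+\text{lower order in }\der$ also has order $r$.

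For the span comparison, the key move is to write $L_{P_{\times\fn}}=L_{P_{\times\fm}}\cdot(\fn/\fm)$ and apply Lemma~\ref{lem:An} with $A:=L_{P_{\times\fm}}$ and $\fn/\fm$ in the role of $\fn$, which gives
$$v\bigl(\fv(L_{P_{\times\fn}})\bigr)\ =\ v(\fv)+o\bigl(v(\fn/\fm)\bigr).$$
The hypothesis $[\fn/\fm]\le[\fv]$ ensures $o\bigl(v(\fn/\fm)\bigr)\subseteq\Delta(\fv)$: this is trivial when $\fn=\fm$ via the convention $o(0)=0$, and otherwise follows directly from the definitions of $o(\cdot)$ and $\Delta(\fv)=\{\gamma:[\gamma]<[\fv]\}$, splitting into the cases $[\fn/\fm]<[\fv]$ and $[\fn/\fm]=[\fv]$. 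Thus $v\bigl(\fv(L_{P_{\times\fn}})\bigr)-v(\fv)\in\Delta(\fv)$, which is the second displayed identity.

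Finally, to recover steepness of $(P,\fn,\hat a)$, set $\fw:=\fv(L_{P_{\times\fn}})$. From $\fw\asymp_{\Delta(\fv)}\fv$ and $\fv\prec1$ I would first conclude $[\fw]=[\fv]$ and $\fw\prec1$ (since $v\fv>0$ archimedeanly dominates any element of $\Delta(\fv)$, so $v\fw=v\fv+\delta>0$). Because the asymptotic couple of $K$ is of $H$-type, $\psi$ is constant on nonzero archimedean classes, so $\psi(v\fw)=\psi(v\fv)$, i.e.\ $\fw^\dagger\asymp\fv^\dagger\succeq1$. Together with $\fw\prec1$ this gives $\fw\prec^\flat1$, completing the verification that $(P,\fn,\hat a)$ is steep. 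The main (very mild) obstacle is the bookkeeping in the step $o\bigl(v(\fn/\fm)\bigr)\subseteq\Delta(\fv)$ and the subsequent transfer of the flatness condition from $\fv$ to $\fw$ via $H$-type invariance of $\psi$ on archimedean classes; no other substantial work is required.
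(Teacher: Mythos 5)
Your proof is correct and follows essentially the same approach as the paper: the key point in both is that twisting by a monomial whose archimedean class is $\leq[\fv]$ perturbs the valuation of the span only within $\Delta(\fv)$. The paper reduces to $\fm=1$ and does a direct computation using [ADH,~6.1.3], whereas you route the same estimate through Lemma~\ref{lem:An} without that reduction; you also spell out the two implications (from $\asymp_{\Delta(\fv)}$-equivalence to $[\cdot]$-equality, and then to $\prec^\flat 1$ via invariance of $\psi$ on archimedean classes) that the paper records as ``tacit use'' just before the lemma statement.
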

\begin{proof}
Replace $(P,\fm,\hat a)$, $\fn$ by $(P_{\times\fm},1,\hat a/\fm)$, $\fn/\fm$, respectively, to arrange~${\fm=1}$. Set~${L:= L_P}$ and $\tilde L:=L_{P_{\times\fn}}$.
Then $\tilde L=L\fn\asymp_{\Delta(\fv)} \fn L$ by [ADH, 6.1.3]. 
Hence 
$$\tilde L_r\ =\ \fn L_r\ \asymp\ \fn\fv L\ \asymp_{\Delta(\fv)}\ \fv\tilde L.$$
Since $\fv(\tilde L)\tilde L \asymp \tilde L_r$, this gives
$\fv(\tilde L)\tilde L\asymp_{\Delta(\fv)} \fv \tilde L$, and thus
$\fv(\tilde L)\asymp_{\Delta(\fv)} \fv$. 
\end{proof}

\noindent
If $(P,\fm,\hat a)$ is steep and linear, then $$L_{P_{+a,\times\fm}}=L_{P_{\times\fm,+(a/\fm)}}=L_{P_{\times\fm}},$$ so any refinement $(P_{+a},\fm,\hat a-a)$ of $(P,\fm,\hat a)$  is also steep and linear.

\begin{lemma}\label{lem:achieve steep} 
Suppose  $\order L_{P_{\times\fm}}=r$. Then $(P,\fm,\hat a)$ has a refinement $(P,\fn,\hat a)$ such that 
$\nwt L_{P_{\times\fn}}=0$, and $(P^\phi,\fn,\hat a)$
is steep, eventually.
\end{lemma}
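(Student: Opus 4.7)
The plan is to choose $\fn$ with $\hat a\prec\fn\preceq\fm$ such that $v\fn\notin\exc^{\ev}(L_P)$; the refinement $(P,\fn,\hat a)$ will then automatically satisfy both conditions. Set $A:=L_P\in K[\der]$; since $L_{P_{\times\fm}}=L_P\fm=A\fm$ and $\order L_{P_{\times\fm}}=r$, we have $A\ne 0$ of order $r$.

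First I would check that $\exc^{\ev}(A)$ is finite. For any active $\phi\preceq 1$ in $K$, the field $K^\phi$ has small derivation and $A^\phi\in K^\phi[\derdelta]$ has order $r$, so by the standard bound on exceptional values in [ADH, 5.6] we have $|\exc(A^\phi)|\leq r$; since $\exc^{\ev}(A)\subseteq\exc(A^\phi)$ by the very definition of $\exc^{\ev}$, finiteness follows. Next, the preamble of Section~\ref{sec:holes} recalls that $\Gamma^{>}$ has no least element, which (as is immediate for ordered abelian groups) forces $\Gamma$ to be densely ordered. Combined with $v\fm<v\hat a$, this makes the interval $[v\fm,v\hat a)\subseteq\Gamma$ infinite, so we can pick $\fn\in K^\times$ with $v\fm\leq v\fn<v\hat a$ and $v\fn\notin\exc^{\ev}(A)$. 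Then $\hat a\prec\fn\preceq\fm$, so $(P,\fn,\hat a)$ refines $(P,\fm,\hat a)$, and by definition of $\nwt_A$ and of $\exc^{\ev}(A)$,
\[
\nwt L_{P_{\times\fn}}\ =\ \nwt(A\fn)\ =\ \nwt_A(v\fn)\ =\ 0.
\]

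For the second claim, the operator $A\fn$ has order $r$ and Newton weight $0<r$, so Lemma~\ref{lem:eventual value of fv} applied to $A\fn$ yields $\fv\bigl((A\fn)^\phi\bigr)\prec_\phi^\flat 1$ eventually in $K^\phi$. Since
\[
(A\fn)^\phi\ =\ (L_P)^\phi\,\fn\ =\ L_{P^\phi}\,\fn\ =\ L_{P^\phi_{\times\fn}}
\]
still has order $r$, this is exactly the statement that $(P^\phi,\fn,\hat a)$ is a steep slot in $K^\phi$, eventually.

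The only non-routine step is the finiteness of $\exc^{\ev}(A)$, but this reduces at once to the bound $|\exc(A^\phi)|\leq\order(A^\phi)$ from [ADH, 5.6] applied in the valued differential field $K^\phi$ with small derivation; no hypothesis of $\upl$- or $\upo$-freeness on $K$ is needed.
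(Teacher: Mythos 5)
There is a genuine gap in your proof of the finiteness of $\exc^{\ev}(A)$ where $A := L_P$. You assert that ``by the standard bound on exceptional values in [ADH, 5.6] we have $\abs{\exc(A^\phi)}\leq r$,'' but no such bound exists and the claim is in fact false: already for $A=\der$ one computes $\exc(\der) = \{0\}\cup\{\gamma\in\Gamma^{\neq}: \psi(\gamma)>0\} = \{0\}\cup(\Gamma^\flat)^{\neq}$, which is infinite whenever $\Gamma^\flat\neq\{0\}$; and the same phenomenon persists for $\exc(\der^\phi) = \{0\}\cup(\Gamma^\flat_\phi)^{\neq}$ for any active $\phi$. What $\exc(A^\phi)$ \emph{does} satisfy is $v(\ker^{\neq}A^\phi)\subseteq\exc(A^\phi)$, and it is only the kernel that obeys the bound by $r$ (Remark~\ref{rem:kerexc}, under a $\d$-valued hypothesis).

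The statement you actually need is $\abs{\exc^{\ev}(A)}\leq r$, but this is established in the paper only under $\upl$-freeness (Corollary~\ref{cor:size of excev, strengthened}) or $\upo$-freeness (Corollary~\ref{cor:sum of nwts}). Section~\ref{sec:normalization}, where the lemma lives, assumes only that $K$ is $H$-asymptotic with small derivation and rational asymptotic integration; neither $\upl$- nor $\upo$-freeness is available. Without that, you have no way to guarantee that the (infinite) interval $[v\fm, v\hat a)$ contains a point outside $\exc^{\ev}(A)$, and your argument stalls. The paper sidesteps this by appealing to [ADH, 11.3.6], a result whose proof works directly from rational asymptotic integration and produces, for any nonzero differential polynomial, an $\fn$ in a prescribed range with $\nwt$ of the multiplicative conjugate equal to $0$ --- this is strictly stronger than what a finiteness-plus-density argument would give and avoids any count of $\exc^{\ev}$. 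The second half of your proof (order preservation and the appeal to Lemma~\ref{lem:eventual value of fv}) is correct and matches the paper's, but the first half as written does not close.
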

\begin{proof}
Replacing $(P,\fm,\hat a)$ by $(P_{\times\fm},1,\hat a/\fm)$ we  arrange $\fm=1$. 
Take $\fn_1$ with $\hat a\prec \fn_1 \prec 1$. Then $\order\, (P_1)_{\times\fn_1}=\order P_1=\order L_P=r$, and 
thus~${(P_1)_{\times\fn_1}\neq 0}$. So~[ADH, 11.3.6] applied to~$(P_1)_{\times\fn_1}$ in place of $P$
yields an $\fn$ with~$\fn_1\prec\fn\prec 1$ and $\nwt\, (P_1)_{\times\fn}=0$, so $\nwt L_{P_{\times\fn}}=0$.
Hence by Lemma~\ref{lem:eventual value of fv},  $(P^\phi,\fn,\hat a)$
is steep, eventually.
\end{proof}

\noindent
Recall that  the separant $S_P=\partial P/\partial Y^{(r)}$ of $P$ has lower complexity than $P$.
Below we sometimes use the identity $S_{P_{\times\fm}^\phi}=\phi^r (S_{P_{\times\fm}})^\phi$ from~\eqref{eq:separant fms}.

\medskip\noindent
The slot $(P,\fm,\hat a)$ in $K$ is said to be {\bf deep} if it is steep and for all active $\phi\preceq 1$,\index{slot!deep}\index{deep}
{\samepage
\begin{itemize}
\item[(D1)] $\ddeg S_{P^\phi_{\times\fm}}=0$ (hence $\ndeg S_{P_{\times\fm}}=0$), and 
\item[(D2)] $\ddeg P^\phi_{\times\fm}=1$ (hence $\ndeg P_{\times \fm}=1$).
\end{itemize}}\noindent
If $\deg P=1$, then (D1) is automatic, for all active $\phi\preceq 1$. 
If~$(P,\fm,\hat a)$ is deep, then so are $(P_{\times\fn},\m/\fn,\hat a/\fn)$ and $(bP,\fm,\hat a)$ for $b\neq 0$, as well as every slot in $K$ equivalent to  $(P,\fm,\hat a)$ and the slot $(P^\phi,\fm,\hat a)$ in $K^\phi$ for active~${\phi\preceq 1}$. Every deep slot in $K$ is quasilinear, by (D2).
If  $\deg P=1$, then~$(P,\fm,\hat a)$ is quasilinear iff~$(P^\phi,\fm,\hat a)$ is deep for some active $\phi\preceq 1$. 
Moreover, if $(P,\fm,\hat a)$ is a deep hole in $K$, then~$\dval P^\phi_{\times \fm}=1$ for all active $\phi\preceq 1$, by
(D2) and Lemma~\ref{lem:lower bd on ddeg}.

\begin{exampleNumbered}\label{ex:order 1 linear steep} 
Suppose $P=Y'+gY-u$ where $g,u\in K$ and $\fm=1$, $r=1$. Set~$L:=L_P=\der+g$ and $\fv:=\fv(L)$. Then 
$\fv=1$ if $g\preceq 1$, and $\fv=1/g$ if $g\succ 1$. Thus 
$$\text{$(P,1,\hat a)$ is steep} \quad\Longleftrightarrow\quad g\succ^\flat 1  \quad\Longleftrightarrow\quad  
\text{$g\succ 1$ and $g^\dagger \succeq 1$.}$$
Note that $(P,1,\hat a)$ is steep iff $L$ is steep as defined in Section~\ref{sec:lindiff}. 
Also, 
$$\text{$(P,1,\hat a)$ is deep} \quad\Longleftrightarrow\quad \text{$(P,1,\hat a)$ is steep and $g\succeq u$.}$$
Hence if $u=0$, then $(P,1,\hat a)$ is deep iff it is steep. 
\end{exampleNumbered}

\begin{lemma}\label{lem:eventually deep}
For steep $(P,\fm,\hat a)$, the following are equivalent:
\begin{enumerate}
\item[$\mathrm{(i)}$]
 $(P^\phi,\fm,\hat a)$ is deep, eventually; 
\item[$\mathrm{(ii)}$] 
$\ndeg S_{P_{\times\fm}}=0$ and $\ndeg P_{\times\fm}=1$.
\end{enumerate}
\end{lemma}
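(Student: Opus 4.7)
My plan is to unfold the definition of ``deep'' as applied to the compositional conjugate $(P^\phi,\fm,\hat a)$ in $K^\phi$, and then recognize the resulting nested ``eventually'' quantifier as (after collapse) the definition of Newton degree.

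First I would dispose of the steepness condition: since $(P,\fm,\hat a)$ is steep by hypothesis, the remarks following the definition of ``steep'' (a consequence of Lemma~\ref{lem:v(Aphi)}) give that $(P^\phi,\fm,\hat a)$ in $K^\phi$ is steep for every active $\phi\preceq 1$. Hence the content of (i) reduces to the eventual validity of (D1) and (D2) for $(P^\phi,\fm,\hat a)$ in $K^\phi$.

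Next I would translate those two conditions back to statements over $K$. Using $(P^\phi)^\psi = P^{\phi\psi}$ (hence $(P^\phi)^\psi_{\times\fm} = P^{\phi\psi}_{\times\fm}$), together with the separant identity $S_{P^\phi} = \phi^r (S_P)^\phi$ from \eqref{eq:separant fms} (whose unit factor $\phi^r$ does not affect $\ddeg$), conditions (D1) and (D2) for $(P^\phi,\fm,\hat a)$ read: for every $\psi$ active in $K^\phi$ with $\psi\preceq 1$,
\[
\ddeg S_{P^{\phi\psi}_{\times\fm}} = 0 \qquad\text{and}\qquad \ddeg P^{\phi\psi}_{\times\fm} = 1.
\]
The key observation is that $\psi$ is active in $K^\phi$ iff $\phi\psi$ is active in $K$ (since $f^{\dagger_\phi}=\phi^{-1}f^\dagger$), so the map $\psi\mapsto\phi\psi$ gives a bijection between the set of $\psi$ active in $K^\phi$ with $\psi\preceq 1$ and the set of $\phi'$ active in $K$ with $\phi'\preceq\phi$.

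Combining these, statement (i) unfolds to: there exists an active $\phi_0\preceq 1$ in $K$ such that for all active $\phi\preceq\phi_0$ in $K$ and all active $\phi'\preceq\phi$ in $K$, $\ddeg S_{P^{\phi'}_{\times\fm}}=0$ and $\ddeg P^{\phi'}_{\times\fm}=1$. The nested quantification collapses: setting $\phi=\phi'$ shows that this is equivalent to the single requirement that these equalities hold for all active $\phi'\preceq\phi_0$ in $K$. By the definition of Newton degree [ADH, 11.1] as the eventual value of $\ddeg$ under compositional conjugation, this last condition is precisely $\ndeg S_{P_{\times\fm}}=0$ and $\ndeg P_{\times\fm}=1$, which is (ii). No real obstacle is expected here; the one point calling for care is the bookkeeping of the correspondence between active elements of $K$ and $K^\phi$ and the attendant collapse of a double ``eventually'' to a single one.
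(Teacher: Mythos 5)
Your argument is correct, and it is exactly the routine unfolding the paper has in mind: the lemma is stated there without proof, being regarded as immediate from the definition of \emph{deep} (whose parenthetical remarks already record the implications towards (ii)), the preservation of steepness under conjugation by active $\phi\preceq 1$, the identity $(P^\phi)^\psi=P^{\phi\psi}$ together with $S_{Q^\theta}=\theta^r(S_Q)^\theta$, and the definition of $\ndeg$ as the eventual value of $\ddeg$ of compositional conjugates. Your bookkeeping of active elements of $K^\phi$ versus $K$ and the collapse of the double ``eventually'' is precisely the point that makes this work, so there is nothing to add.
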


\noindent
Note that if $\ddeg S_{P_{\times \fm}}=0$ or $\ndeg S_{P_{\times \fm}}=0$, then $S_{P_{\times\fm}}(0)\neq 0$, so $\order L_{P_{\times\fm}}=r$.

\begin{lemma}\label{lem:deep 1}
Suppose $(P_{+a},\fn,\hat a-a)$ refines the hole $(P,\fm,\hat a)$ in $K$. Then: 
\begin{enumerate}
\item[$\mathrm{(i)}$]  $\ddeg S_{P_{\times \fm}}=0\ \Longrightarrow\ \ddeg S_{P_{+a,\times\fn}}=0$;
\item[$\mathrm{(ii)}$] $\ddeg P_{\times \fm}=1\ \Longrightarrow\ \ddeg P_{+a,\times \fn}=1$;
\item[$\mathrm{(iii)}$] $\ndeg S_{P_{\times \fm}}=0\ \Longrightarrow\ S_P(a)\sim S_P(0)$.
\end{enumerate}
Thus if $(P,\fm, \hat a)$ is deep and $(P_{+a},\fn,\hat a-a)$ is steep, then $(P_{+a},\fn,\hat a-a)$ is deep. 
\end{lemma}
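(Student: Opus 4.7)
Statement (ii) is exactly Corollary~\ref{cor:ref 2} and needs no further argument. For statement (iii), the plan is to invoke Lemma~\ref{lem:ndeg coeff stabilizes, 2} with the multi-index $\i := (0,\dots,0,1) \in \N^{1+r}$. For this choice, $P_{(\i)} = S_P$, so via the separant identity $S_{P_{\times\fm}} = \fm\,(S_P)_{\times\fm}$ from~\eqref{eq:separant fms}, the lemma's hypothesis $\ndeg (P_{(\i)})_{\times\fm} = 0$ translates into our hypothesis $\ndeg S_{P_{\times\fm}}=0$. Taylor expansion $P_{+a} = \sum_\j P_{(\j)}(a)\,Y^\j$ identifies $(P_{+a})_\i = S_P(a)$ and $P_\i = S_P(0)$, so the conclusion $(P_{+a})_\i \sim P_\i$ of that lemma reads exactly $S_P(a)\sim S_P(0)$, as wanted.

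For (i), applying~\eqref{eq:separant fms} once more gives $S_{P_{+a,\times\fn}} = \fn\,((S_P)_{+a})_{\times\fn}$, so it suffices to show $\ddeg ((S_P)_{+a})_{\times\fn} = 0$, starting from $\ddeg S_{P_{\times\fm}} = \ddeg (S_P)_{\times\fm} = 0$. The refinement hypotheses $\hat a \prec \fm$ and $\hat a - a \prec \fn \preceq \fm$ force $a \prec \fm$, and then the invariance $D_{((S_P)_{+a})_{\times\fm}} = D_{(S_P)_{\times\fm}}$ from [ADH, 6.6.5(iii)] (as recalled in the paragraph preceding Corollary~\ref{cor:ref 1}) yields $\ddeg ((S_P)_{+a})_{\times\fm} = 0$. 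Finally, the monotonicity bound $\ddeg Q_{\times\fn}\le \ddeg Q_{\times\fm}$ for $\fn\preceq\fm$ (which is the upper-bound half of Lemma~\ref{lem:lower bd on ddeg} and depends only on [ADH, 6.6.5(ii), 6.6.7, 6.6.9], not on any zero condition) applied with $Q := (S_P)_{+a}$ drives $\ddeg ((S_P)_{+a})_{\times\fn}$ down to $0$.

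For the last implication (deep refined by steep $\Rightarrow$ deep), I would fix an arbitrary active $\phi \preceq 1$ and observe that $(P^\phi,\fm,\hat a)$ is a hole in $K^\phi$ whose refinement is $((P^\phi)_{+a},\fn,\hat a - a) = ((P_{+a})^\phi,\fn,\hat a - a)$, since compositional and additive conjugation commute. Deepness of $(P,\fm,\hat a)$ supplies its two defining inequalities $\ddeg S_{(P^\phi)_{\times\fm}} = 0$ and $\ddeg (P^\phi)_{\times\fm} = 1$; applying parts (i) and (ii) of the lemma inside $K^\phi$ transports these to $\ddeg S_{(P_{+a}^\phi)_{\times\fn}} = 0$ and $\ddeg (P_{+a}^\phi)_{\times\fn} = 1$, i.e.\ (D1) and (D2) for $(P_{+a},\fn,\hat a - a)$ at $\phi$. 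Since $\phi$ was arbitrary, combining with the given steepness of $(P_{+a},\fn,\hat a - a)$ yields its deepness.

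The principal point that requires care is the monotonicity invocation in (i): Lemma~\ref{lem:lower bd on ddeg} is stated for holes, but its proof of the upper bound $\ddeg P_{\times\fn}\le \ddeg P_{\times\fm}$ for $\fn\preceq\fm$ uses only Newton-polygon facts from [ADH, 6.6.5--6.6.9] and not the hypothesis that $\hat a$ is a zero; once this is noted, the argument for $Q = (S_P)_{+a}$ goes through verbatim. The passage to $K^\phi$ in the last step is routine, since the standing hypotheses on $K$ (small derivation, rational asymptotic integration, $H$-asymptotic) are preserved under compositional conjugation by an active $\phi\preceq 1$.
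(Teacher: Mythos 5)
Your proof is correct and follows essentially the same approach as the paper: parts~(ii) and~(iii) are argued exactly as in the paper, via Corollary~\ref{cor:ref 2} and Lemma~\ref{lem:ndeg coeff stabilizes, 2} with $\i=(0,\dots,0,1)$, respectively, and~(i) rests on the same Newton-polygon facts from [ADH, 6.6]. The only variation is in~(i): the paper's chain passes through the additive conjugate by $\hat a$, namely
$$\ddeg\,(S_P)_{+a,\times\fn}\ =\ \ddeg\,(S_P)_{+\hat a,\times\fn}\ \le\ \ddeg\,(S_P)_{+\hat a,\times\fm}\ =\ \ddeg\,(S_P)_{\times\fm},$$
whereas you stay inside $K$ by first noting $a\prec\fm$, using that to pass from $\ddeg(S_P)_{\times\fm}=0$ to $\ddeg\big((S_P)_{+a}\big)_{\times\fm}=0$, and only then applying monotonicity of $\ddeg$ in the monomial to descend from $\fm$ to $\fn$. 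Both reorderings appeal to the same two ingredients ([ADH, 6.6.5, 6.6.7]); yours has the minor advantage of never referring to $\hat a\in\hat K$. Your explicit compositional-conjugation argument for the concluding ``Thus'' is also correct and simply fills in a step the paper leaves to the reader.
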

\begin{proof} 
Suppose $\ddeg S_{P_{\times \fm}}=0$. Then $\ddeg S_{P_{+a,\times \fn}}=0$ follows from $$\ddeg S_{P_{+a,\times\fn}}\ =\ \ddeg\,(S_P)_{+a,\times\fn}\ \text{ and }\
\ddeg (S_P)_{\times\fm}\ =\ \ddeg S_{P_{\times\fm}}$$
(consequences of~\eqref{eq:separant fms}), and
$$\ddeg\,(S_P)_{+a,\times\fn}\ =\
\ddeg\,(S_P)_{+\hat a,\times\fn}\ \leq\ 
\ddeg\,(S_P)_{+\hat a,\times\fm}\ =\ \ddeg\,(S_P)_{\times\fm}$$
which holds by [ADH, 6.6.7]. This proves (i). Corollary~\ref{cor:ref 2} yields (ii), and (iii) is contained in Lemma~\ref{lem:ndeg coeff stabilizes, 2}. 
\end{proof}

\noindent
Lemmas~\ref{lem:from cracks to holes} and~\ref{lem:deep 1} give:

\begin{cor}\label{cor:steep refinement}
If  $(P,\fm,\hat a)$ is $Z$-minimal and deep, then each steep refinement of~$(P,\fm,\hat a)$ is deep.
\end{cor}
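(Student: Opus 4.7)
The plan is to reduce the statement for a $Z$-minimal slot to the already-established hole version in Lemma~\ref{lem:deep 1}, using the equivalence bridge provided by Lemma~\ref{lem:from cracks to holes}. Concretely, given the $Z$-minimal deep slot $(P,\fm,\hat a)$ and a steep refinement $(P_{+a},\fn,\hat a-a)$ of it, I would first invoke Lemma~\ref{lem:from cracks to holes} to obtain an element $\hat b$ in an immediate asymptotic extension of $K$ such that $(P,\fm,\hat b)$ is a $Z$-minimal hole in $K$ equivalent to $(P,\fm,\hat a)$.

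Next, I would transfer the hypotheses across the equivalence. Since deepness, steepness, and refinement-correspondence are all preserved by slot equivalence (as recorded in the subsections \emph{Steep and deep slots} and \emph{Refinements and multiplicative conjugates}), the hole $(P,\fm,\hat b)$ is itself deep, and $(P_{+a},\fn,\hat b-a)$ is a steep refinement of it: indeed, by the remarks on refinements of equivalent slots, $(P_{+a},\fn,\hat b-a)$ refines $(P,\fm,\hat b)$ and is equivalent to $(P_{+a},\fn,\hat a-a)$, and steepness is invariant under slot equivalence.

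Now Lemma~\ref{lem:deep 1} applies directly to the hole $(P,\fm,\hat b)$: the final clause of that lemma states that a steep refinement of a deep hole is again deep. Hence $(P_{+a},\fn,\hat b-a)$ is deep, and transporting this back across the equivalence yields that $(P_{+a},\fn,\hat a-a)$ is deep as required. The only thing to be careful about is that the three conditions defining deepness (steepness plus (D1) and (D2)) depend only on $P$, $\fm$ (respectively $P_{+a}$, $\fn$) and on the valued differential field $K$ — not on the chosen pseudolimit — so they pass unchanged through the equivalence. No genuine obstacle arises beyond bookkeeping, since all the analytic content has been packaged in Lemma~\ref{lem:deep 1}.
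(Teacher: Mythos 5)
Your proof is correct and is exactly the paper's argument: the paper derives this corollary by combining Lemma~\ref{lem:from cracks to holes} with Lemma~\ref{lem:deep 1}, just as you do, with the transfer across slot equivalence left implicit. Your extra bookkeeping (noting that deepness, steepness, and refinements pass through equivalence, and that these notions depend only on $P$, $\fm$ and not on the pseudolimit) simply spells out what the paper takes for granted.
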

 
\noindent
Here is another sufficient condition on refinements of deep holes to remain deep:

\begin{lemma}\label{lem:deep 2}
Suppose $(P,\fm,\hat a)$ is a deep hole in $K$, and $(P_{+a},\fn,\hat a-a)$ refines~$(P,\fm,\hat a)$ with $[\fn/\fm]\le[\fv]$. Then $(P_{+a},\fn,\hat a-a)$  is deep with $\fv(L_{P_{+a,\times\fn}})\asymp_{\Delta(\fv)}\fv$.
\end{lemma}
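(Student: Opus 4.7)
The plan is to first reduce to $\fm=1$ by multiplicative conjugation---both the hypothesis ``deep hole'' and the condition $[\fn/\fm]\leq[\fv]$ are preserved---then establish that $(P_{+a},\fn,\hat a-a)$ is steep with $\fv(L_{P_{+a,\times\fn}})\asymp_{\Delta(\fv)}\fv$, and finally invoke the last sentence of Lemma~\ref{lem:deep 1}, which says that a steep refinement of a deep hole is deep.

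After the reduction we have $a\prec 1$ (since both $\hat a$ and $\hat a-a$ are $\prec 1$), $[\fn]\leq[\fv]$, and $\fv=\fv(L_P)\prec^\flat 1$. The first task is to control $L_{P_{+a}}$ via Lemma~\ref{lem:linear part, new}, whose hypothesis $P_{>1}\prec_{\Delta(\fv)}\fv P_1$ has to be extracted from deepness. For this one selects an active $\phi_0\preceq 1$ whose scale separates $P_1$ from $P_{>1}$ by a factor of order $\fv$ modulo $\Delta(\fv)$: condition (D2) at $\phi_0$ gives $(P_1)^{\phi_0}\succ(P_{>1})^{\phi_0}$, and tracking the compositional-conjugation shifts $v((P_d)^{\phi_0})-v(P_d)$ via [ADH, 6.1.3] and [ADH, 11.1.4] converts this bare inequality into $P_{>1}\prec_{\Delta(\fv)}\fv P_1$. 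Lemma~\ref{lem:linear part, new} then yields $L_{P_{+a}}\sim_{\Delta(\fv)}L_P$, so $\order L_{P_{+a}}=r$ and $\fv(L_{P_{+a}})\sim_{\Delta(\fv)}\fv$. Since $L_{P_{+a,\times\fn}}=L_{P_{+a}}\fn$ and $[\fn]\leq[\fv]=[\fv(L_{P_{+a}})]$, Lemma~\ref{lem:An} then gives $\fv(L_{P_{+a,\times\fn}})\asymp_{\Delta(\fv)}\fv$.

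To conclude steepness, set $h:=\fv(L_{P_{+a,\times\fn}})$; then $h/\fv\asymp_{\Delta(\fv)}1$, so $v(h/\fv)\in\Delta(\fv)$ while $v\fv\notin\Delta(\fv)$. Lemma~\ref{lem:Hardy type} now gives $\psi(v(h/\fv))>\psi(v\fv)$, i.e., $(h/\fv)^\dagger\prec\fv^\dagger$, whence $h^\dagger\sim\fv^\dagger\succeq 1$ and $h\prec^\flat 1$. Thus $(P_{+a},\fn,\hat a-a)$ is steep with $\fv(L_{P_{+a,\times\fn}})\asymp_{\Delta(\fv)}\fv$, and Lemma~\ref{lem:deep 1} upgrades it to deepness.

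The main obstacle is extracting the $\Delta(\fv)$-coarsened inequality $P_{>1}\prec_{\Delta(\fv)}\fv P_1$ from condition (D2), which merely asserts that a dominant degree equals $1$ at all small active $\phi$. Making the right choice of $\phi_0$---small enough that the shifts $v((P_d)^{\phi_0})-v(P_d)$ fall outside $\Delta(\fv)$ by precisely the right amount so that $(P_1)^{\phi_0}$ dominates $(P_{>1})^{\phi_0}$ by more than $v\fv$ plus $\Delta(\fv)$---is the delicate technical point. The assumption $\fv\prec^\flat 1$, i.e., $\fv^\dagger\succeq 1$, is precisely what guarantees that suitable active $\phi_0$ are available.
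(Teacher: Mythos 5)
There is a genuine gap at the central step. You try to obtain the hypothesis of Lemma~\ref{lem:linear part, new}, namely $P_{>1}\prec_{\Delta(\fv)}\fv P_1$ (after arranging $\fm=1$), by ``extracting'' it from (D2) via a clever choice of active $\phi_0$. This cannot work: (D2) only says $\ddeg P^\phi_{\times\fm}=1$ for all active $\phi\preceq 1$, i.e.\ $(P^\phi_{\times\fm})_{>1}\prec(P^\phi_{\times\fm})_1$, and this gives no gap of size $v(\fv)$, let alone modulo $\Delta(\fv)$ --- the target inequality concerns $P$ itself, not a compositional conjugate, and is essentially a normality-type condition (compare (N2)); in this paper \emph{deep} does not imply \emph{normal}. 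Concretely, in an $H$-subfield of $\T$ containing $\R(x,\ex^x)$ take $P=\ex^{-x}Y''-Y+x^{-1}Y^2+f$: here $\fv\asymp\ex^{-x}\prec^\flat 1$, (D1) and (D2) hold for all active $\phi\preceq 1$, yet $P_{>1}=x^{-1}Y^2\succ_{\Delta(\fv)}\ex^{-x}\asymp\fv P_1$, so the inequality you need simply fails for a differential polynomial satisfying all the conditions you invoke. (A further symptom that something is off: your argument never uses $P(\hat a)=0$, so it would prove the statement for arbitrary deep slots, whereas the paper only gets the slot version for $Z$-minimal slots by a separate reduction in Corollary~\ref{cor:deep 2, cracks}. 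A smaller issue: your appeal to Lemma~\ref{lem:Hardy type} is unwarranted, since $K$ is not assumed of Hardy type; the fact you want there --- $\fn\asymp_{\Delta(\fv)}\fv$, $\fv\prec^\flat 1$ imply $\fn\prec^\flat 1$ --- is true but is justified in the paper by the remarks after the definition of \emph{steep}.)

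The paper's proof avoids Lemma~\ref{lem:linear part, new} entirely and uses the hole hypothesis in an essential way. It first passes to the intermediate refinement $(P_{+a},\fm,\hat a-a)$: Corollary~\ref{cor:ref 2} (which rests on Lemma~\ref{lem:lower bd on ddeg}, valid because $P(\hat a)=0$ and $\hat a-a\prec\fm$) gives $\ddeg P_{+a,\times\fm}=\dval P_{+a,\times\fm}=1$, so $(P_{+a,\times\fm})_1\sim P_{+a,\times\fm}\sim P_{\times\fm}\sim(P_{\times\fm})_1$ by [ADH, 4.5.1(i)]; and Lemma~\ref{lem:deep 1}(iii), i.e.\ (D1) for the separant, gives $S_{P_{+a,\times\fm}}(0)\sim S_{P_{\times\fm}}(0)$. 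Together these yield $\fv(L_{P_{+a,\times\fm}})\sim\fv$, hence $(P_{+a},\fm,\hat a-a)$ is steep and, by the last part of Lemma~\ref{lem:deep 1}, deep. Only then does Lemma~\ref{lem:steep1} (with $[\fn/\fm]\le[\fv]$) produce steepness of $(P_{+a},\fn,\hat a-a)$ with $\fv(L_{P_{+a,\times\fn}})\asymp_{\Delta(\fv)}\fv$, and a second application of Lemma~\ref{lem:deep 1} gives deepness. If you want to repair your write-up, replace the ``extraction'' step by this use of Corollary~\ref{cor:ref 2} and Lemma~\ref{lem:deep 1}(iii); your final steps (Lemma~\ref{lem:An}/\ref{lem:steep1} for the change of monomial and Lemma~\ref{lem:deep 1} to upgrade steep to deep) are then essentially the paper's.
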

\begin{proof}
From $(P,\fm,\hat a)$  we pass to the hole
$(P_{+a}, \fm, \hat a -a)$ and then to $(P_{+a},\fn,\hat a-a)$.  
We first show that $\order L_{P_{+a,\times \fm}}=r$ and 
$\fv(L_{P_{+a, \times \fm}})\sim \fv$, from which it follows that $(P_{+a},\fm,\hat a-a)$ is steep, hence deep by Lemma~\ref{lem:deep 1}.
By Corollary~\ref{cor:ref 2}, 
$$\ddeg P_{+a,\times \fm}\ =\ \dval P_{+a, \times \fm}\ =\ 1,$$ so
$(P_{+a,\times \fm})_1 \sim P_{+a,\times \fm}$. Also
$$(P_{\times \fm})_1\sim P_{\times \fm}\sim P_{\times \fm,+(a/\fm)}= P_{+a,\times \fm},$$  by [ADH, 4.5.1(i)], and
thus $(P_{+a,\times \fm})_1\sim (P_{\times\fm})_1$.
By~\eqref{eq:separant fms} and Lemma~\ref{lem:deep 1}(iii),
$$S_{P_{+a,\times \fm}}(0)\ =\ \fm S_P(a)\ \sim\ \fm S_P(0)\ =\ S_{P_{\times \fm}}(0),$$
so $S_{P_{+a,\times \fm}}(0)\sim S_{P_{\times \fm}}(0)$. This gives
$\fv(L_{P_{+a,\times\fm}})\sim \fv$ as promised.

Next, Lemma~\ref{lem:steep1} applied to $(P_{+a}, \fm, \hat a -a)$
in the role of $(P,\fm, \hat a)$ gives that~$(P_{+a}, \fn, \hat a -a)$ is steep with $\fv(L_{P_{+a,\times \fn}})\asymp_{\Delta(\fv)} \fv$.
Now  Lemma~\ref{lem:deep 1} applied to $(P_{+a}, \fm, {\hat a -a})$ and  $(P_{+a}, \fn, {\hat a -a})$ 
in the role of  $(P,\fm, \hat a)$ and  $(P_{+a}, \fn, {\hat a -a})$, respectively, gives that  $(P_{+a}, \fn, \hat a -a)$ is deep. 
\end{proof}

\noindent
Lemmas~\ref{lem:from cracks to holes} and~\ref{lem:deep 2} give a version for $Z$-minimal slots:

\begin{cor}\label{cor:deep 2, cracks}
Suppose $(P,\fm,\hat a)$ is $Z$-minimal and deep, and $(P_{+a},\fn,{\hat a-a})$ refines $(P,\fm,\hat a)$ with $[\fn/\fm]\leq[\fv]$,
where $\fv:=\fv(L_{P_{\times\fm}})$.
Then~$(P_{+a},\fn,{\hat a-a})$  is deep with $\fv(L_{P_{+a,\times\fn}})\asymp_{\Delta(\fv)}\fv$.
\end{cor}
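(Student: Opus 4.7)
The plan is to reduce the corollary to Lemma~\ref{lem:deep 2} via Lemma~\ref{lem:from cracks to holes}. Since $(P,\fm,\hat a)$ is a $Z$-minimal slot in $K$, Lemma~\ref{lem:from cracks to holes} supplies a $Z$-minimal hole $(P,\fm,\hat b)$ in $K$ equivalent to $(P,\fm,\hat a)$. Now the property of being steep and of being deep depends only on $P$ and $\fm$ (through the linear part $L_{P_{\times\fm}}$ and through the conditions $\ndeg S_{P_{\times\fm}}=0$, $\ndeg P_{\times\fm}=1$), not on the particular pseudolimit chosen; in particular, $\fv(L_{P_{\times\fm}})=\fv$ is the same for both, and $(P,\fm,\hat b)$ is a deep hole in $K$.

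Next I would transfer the refinement. Since $(P_{+a},\fn,\hat a-a)$ refines $(P,\fm,\hat a)$, by the remarks preceding Lemma~\ref{lem:refinements linearly ordered} the equivalent slot $(P,\fm,\hat b)$ admits the corresponding refinement $(P_{+a},\fn,\hat b-a)$, which moreover is equivalent to $(P_{+a},\fn,\hat a-a)$; in particular, $\hat b-a\prec\fn\preceq\fm$ so that $(P_{+a},\fn,\hat b-a)$ is a genuine refinement of the hole $(P,\fm,\hat b)$. The hypothesis $[\fn/\fm]\leq[\fv]$ carries over verbatim.

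Now I would invoke Lemma~\ref{lem:deep 2} with $(P,\fm,\hat b)$ and its refinement $(P_{+a},\fn,\hat b-a)$ in place of $(P,\fm,\hat a)$ and $(P_{+a},\fn,\hat a-a)$. The conclusion is that $(P_{+a},\fn,\hat b-a)$ is deep and satisfies $\fv(L_{P_{+a,\times\fn}})\asymp_{\Delta(\fv)}\fv$. Finally, since $(P_{+a},\fn,\hat a-a)$ and $(P_{+a},\fn,\hat b-a)$ are equivalent slots (hence agree in $P_{+a}$ and $\fn$), the deep property and the span estimate transfer back to $(P_{+a},\fn,\hat a-a)$, completing the proof.

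There is no real obstacle here; the only points to verify carefully are the two preservation facts (that deepness and $\fv$ depend only on $(P,\fm)$, and that equivalence of slots is compatible with the passage to a common refinement), both of which are recorded in the preceding subsections.
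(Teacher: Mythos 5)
Your proof is correct and follows exactly the route the paper itself indicates: pass from the $Z$-minimal slot to an equivalent $Z$-minimal hole via Lemma~\ref{lem:from cracks to holes}, apply Lemma~\ref{lem:deep 2}, and transfer back using that the deep property, the span $\fv(L_{P_{\times\fm}})$, and the refinement relation are all invariant under equivalence of slots. The two preservation facts you flag to check are indeed recorded verbatim in the surrounding text (the remark after the definition of \emph{deep} and the paragraph on equivalent slots and refinements), so nothing further is needed.
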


\noindent
Next we turn to the task of turning $Z$-minimal slots into deep ones.

\begin{lemma}\label{prop:normalize, q-linear} 
Every quasilinear $Z$-minimal slot in $K$ of order $r$ has a refinement~$(P,\fm,\hat a)$ such that:
\begin{enumerate}
\item[$\mathrm{(i)}$]
$\ndeg\,(P_{(\i)})_{\times\fm}=0$  for all $\i$ with $\abs{\i}\geq 1$ and $P_{(\i)}\neq 0$;
\item[$\mathrm{(ii)}$] $\ndeg P_{\times\fm}=\nval P_{\times\fm}=1$, and
\item[$\mathrm{(iii)}$] $\nwt L_{P_{\times\fm}}=0$. 
\end{enumerate}
\end{lemma}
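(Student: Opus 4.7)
The plan is to obtain the desired refinement in two stages, with one preparatory step whose sole purpose is to guarantee that later refinements preserve the properties already established. Throughout, note that quasilinearity and $Z$-minimality of a slot are preserved under refinement, so these two properties remain in force at every stage of the argument.

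Start with a quasilinear $Z$-minimal slot $(P_0,\fm_0,\hat a_0)$ of order $r$. First, I would apply Corollary~\ref{cor:order L=r} (which is the combined content of Lemmas~\ref{lem:ndeg coeff stabilizes, 1} and~\ref{lem:ndeg coeff stabilizes, 2}) to obtain a refinement $(P,\fm,\hat a)$ such that $\ndeg(P_{(\i)})_{\times\fm}=0$ for every $\i$ with $\abs{\i}\geq 1$ and $P_{(\i)}\neq 0$, and moreover this vanishing Newton-degree condition transfers to every further refinement of $(P,\fm,\hat a)$. So (i) already holds for $(P,\fm,\hat a)$. In addition, taking $\i_0=(0,\dots,0,1)$ gives $P_{(\i_0)}=S_P$; since $\ndeg (S_P)_{\times\fm}=0$ we have $(S_P)_{\times\fm}(0)=S_P(0)\neq 0$, whence $\order L_{P_{\times\fm}}=r$, and this persists under further refinement by the same corollary.

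Next, having $\order L_{P_{\times\fm}}=r$, I would apply Lemma~\ref{lem:achieve steep} to obtain a refinement of the form $(P,\fn,\hat a)$ (same $P$ and $\hat a$, merely shrinking $\fm$ to $\fn\preceq\fm$ with $\hat a\prec\fn$) such that $\nwt L_{P_{\times\fn}}=0$; this delivers (iii). It remains to check that (i) and (ii) still hold for $(P,\fn,\hat a)$. For (i), observe that $(P,\fn,\hat a)$ is a refinement of $(P,\fm,\hat a)$ corresponding to $a=0$, so Lemma~\ref{lem:ndeg coeff stabilizes, 2} applied to each relevant $\i$ yields $\ndeg(P_{(\i)})_{\times\fn}=\ndeg(P_{(\i)})_{+0,\times\fn}=0$. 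For (ii), quasilinearity of the refinement gives $\ndeg P_{\times\fn}=1$, and Lemma~\ref{lem:lower bd on ndeg} yields $1\le\nval P_{\times\fn}\le\ndeg P_{\times\fn}=1$, so $\nval P_{\times\fn}=1$ as well.

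The main conceptual obstacle, such as it is, lies in the compatibility of the two refinement steps: naively applying Lemma~\ref{lem:ndeg coeff stabilizes, 1} and then Lemma~\ref{lem:achieve steep} could in principle destroy the property (i) gained in the first step, or could leave us unable to apply Lemma~\ref{lem:achieve steep} at all if $\order L_P$ drops below $r$ under refinement. Both dangers are circumvented by routing everything through Corollary~\ref{cor:order L=r}, which packages Lemmas~\ref{lem:ndeg coeff stabilizes, 1} and~\ref{lem:ndeg coeff stabilizes, 2} precisely to make (i) and the condition $\order L_P=r$ inherited by all subsequent refinements; the second refinement is then just a choice of smaller $\fm$ with no change to $P$ or $\hat a$, so the persistence from Lemma~\ref{lem:ndeg coeff stabilizes, 2} is trivially available.
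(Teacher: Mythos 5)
Your proof is correct and follows essentially the same route as the paper: first refine via Lemma~\ref{lem:ndeg coeff stabilizes, 1} to secure (i) (with persistence from Lemma~\ref{lem:ndeg coeff stabilizes, 2}), note that (ii) is automatic from quasilinearity together with Lemma~\ref{lem:lower bd on ndeg}, and then shrink only the monomial to achieve (iii). The only cosmetic difference is that you invoke Lemma~\ref{lem:achieve steep} (supplying its hypothesis $\order L_{P_{\times\fm}}=r$ from (i) via the separant, exactly as the paper does in Corollary~\ref{cor:deepening, q-linear}), whereas the paper's proof repeats the underlying [ADH, 11.3.6] argument inline, using (ii) to see that $(P_1)_{\times\tilde\fm}\neq 0$.
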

\begin{proof}
By Corollary~\ref{cor:ref 1n}, any quasilinear $(P,\fm,\hat a)$  satisfies (ii). 
Any refinement  of a quasilinear $(P,\fm,\hat a)$ remains quasilinear, by Corollary~\ref{cor:ref 2n}. 
By Lemma~\ref{lem:ndeg coeff stabilizes, 1} and a subsequent remark any quasilinear $Z$-minimal slot in $K$ of order $r$ can be refined to a quasilinear $(P,\fm,\hat a)$ that satisfies (i), and by Lemma~\ref{lem:ndeg coeff stabilizes, 2}, any further refinement of such~$(P,\fm,\hat a)$ continues to satisfy (i). Thus to prove the lemma, assume we are given a
quasilinear $(P, \fm,\hat a)$ satisfying (i); it is enough to show that then $(P,\fm,\hat a)$ has a refinement
$(P,\fn,\hat a)$ satisfying (iii) with $\fn$ instead of $\fm$ (and thus also (i) and (ii) with $\fn$ instead of $\fm$).

 Take $\tilde\fm$ with $\hat a \prec\tilde\fm\prec\fm$. Then~$(P_{\times \tilde\fm})_1\ne 0$ by (ii), so 
[ADH, 11.3.6] applied to~$(P_1)_{\times\tilde\fm}$ in place of $P$ yields an~$\fn$ with $\tilde\fm\prec\fn\prec\fm$ and 
$\nwt\, (P_1)_{\times\fn}=0$. Hence the refinement~$(P,\fn,\hat a)$ of $(P,\fm,\hat a)$ 
satisfies (iii) with~$\fn$ instead of $\fm$.
\end{proof}

\begin{cor}\label{cor:deepening, q-linear}
Every quasilinear $Z$-minimal slot in $K$ of order $r$ has a re\-fine\-ment~$(P,\fm,\hat a)$ such that $\nwt L_{P_{\times\fm}}=0$, and~$(P^\phi,\fm,\hat a)$ is deep, eventually.
\end{cor}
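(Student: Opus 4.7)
The plan is to apply Lemma~\ref{prop:normalize, q-linear} to the given quasilinear $Z$-minimal slot in $K$ of order $r$ to obtain a refinement $(P,\fm,\hat a)$ satisfying conditions (i)--(iii) of that lemma. Condition (iii) immediately gives $\nwt L_{P_{\times\fm}}=0$, which is the first assertion of the corollary.

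For the second assertion I would first upgrade condition (i) into structural information about $L_{P_{\times\fm}}$. Taking $\i=(0,\dots,0,1)\in\N^{1+r}$, so $P_{(\i)}=S_P\ne 0$ (as $\order P=r\ge 1$), condition (i) yields $\ndeg\,(S_P)_{\times\fm}=0$; together with $S_{P_{\times\fm}}=\fm\,(S_P)_{\times\fm}$ from \eqref{eq:separant fms}, this gives $\ndeg S_{P_{\times\fm}}=0$, which in particular forces the constant term $S_{P_{\times\fm}}(0)=\fm S_P(0)$ to be nonzero; hence $S_P(0)\ne 0$, so $L_{P_{\times\fm}}=L_P\fm$ has order $r$. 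Combining with $\nwt L_{P_{\times\fm}}=0<r$ from (iii), Lemma~\ref{lem:eventual value of fv} supplies an active $\phi_0\preceq 1$ in $K$ with $\fv\bigl((L_{P_{\times\fm}})^{\phi_0}\bigr)\prec_{\phi_0}^\flat 1$, i.e.\ with $(P^{\phi_0},\fm,\hat a)$ steep as a slot in $K^{\phi_0}$ (using that compositional conjugation commutes with taking linear parts and multiplicative conjugation, cf.~[ADH, 5.7.5]).

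The final step will be a clean application of Lemma~\ref{lem:eventually deep} inside $K^{\phi_0}$. Since Newton degree is invariant under compositional conjugation, condition (ii) of that lemma transports from $K$ to $K^{\phi_0}$: we have $\ndeg_{K^{\phi_0}} S_{(P^{\phi_0})_{\times\fm}}=0$ and $\ndeg_{K^{\phi_0}} (P^{\phi_0})_{\times\fm}=1$. The implication (ii)$\Rightarrow$(i) of Lemma~\ref{lem:eventually deep}, applied to the steep slot $(P^{\phi_0},\fm,\hat a)$ in $K^{\phi_0}$, then yields that $\bigl((P^{\phi_0})^\psi,\fm,\hat a\bigr)=(P^{\phi_0\psi},\fm,\hat a)$ is deep in $K^{\phi_0\psi}$ eventually in $\psi$ active in $K^{\phi_0}$. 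Since $\phi\mapsto\phi_0\psi$ realizes a cofinal map from the active elements of $K^{\phi_0}$ into the active elements $\preceq\phi_0$ of $K$, this is exactly the statement that $(P^\phi,\fm,\hat a)$ is deep eventually for $\phi$ active in $K$. The main obstacle is really only bookkeeping --- ensuring that compositional conjugation is compatible with all the operations involved (separant, linear part, multiplicative conjugation) --- which is handled by \eqref{eq:separant fms} and the standard functoriality results from [ADH, 5.7].
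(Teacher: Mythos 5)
Your proof is correct and follows essentially the same route as the paper's: apply Lemma~\ref{prop:normalize, q-linear}, deduce $\ndeg S_{P_{\times\fm}}=0$ (hence $\order L_{P_{\times\fm}}=r$) from condition (i), use condition (iii) together with Lemma~\ref{lem:eventual value of fv} to get eventual steepness, and then invoke Lemma~\ref{lem:eventually deep} via condition (ii). You spell out the separant computation and the compositional-conjugation bookkeeping more explicitly than the paper does, but the underlying argument is identical.
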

\begin{proof} Given a quasilinear $Z$-minimal slot in $K$ of order $r$, we take a re\-fi\-ne\-ment~$(P,\fm,\hat a)$ as in Lemma~\ref{prop:normalize, q-linear}. Then $\ndeg S_{P_{\times \fm}}=0$ by (i) of that lemma, so $\order L_{P_{\times \fm}}=r$ by the remark that precedes Lemma~\ref{lem:deep 1}.
Then (iii) of Lem\-ma~\ref{prop:normalize, q-linear} and Lem\-ma~\ref{lem:eventual value of fv} give that
$(P^\phi, \fm,\hat a)$ is steep, eventually. Using now~$\ndeg S_{P_{\times \fm}}=0$ and~(ii) of Lemma~\ref{prop:normalize, q-linear} we obtain from Lemma~\ref{lem:eventually deep} that~$(P^\phi, \fm,\hat a)$ is deep, eventually.  
\end{proof}

\noindent
Lemma~\ref{lem:quasilinear refinement} and the previous lemma and its corollary now yield:

\begin{lemma}\label{prop:normalize} Suppose $K$ is $\d$-valued and $\upo$-free, and $\Gamma$ is divisible. Then
every $Z$-minimal slot in $K$ of order $r$ has a refinement $(P,\fm,\hat a)$ satisfying \textup{(i)}--\textup{(iii)} in Lem\-ma~\ref{prop:normalize, q-linear}.
\end{lemma}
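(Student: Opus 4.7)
The plan is essentially to chain together the two preceding lemmas. Given a $Z$-minimal slot $(P_0,\fm_0,\hat a_0)$ in $K$ of order $r$, I would first invoke Lemma~\ref{lem:quasilinear refinement} (which requires exactly the three hypotheses on $K$ and $\Gamma$ stated here) to obtain a quasilinear refinement $(P_1,\fm_1,\hat a_1)$ of $(P_0,\fm_0,\hat a_0)$.

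Next, I would use two standard facts about refinements that were noted in Section~\ref{sec:holes}: refinements preserve the order of a slot, and refinements of $Z$-minimal slots are again $Z$-minimal. In particular, $(P_1,\fm_1,\hat a_1)$ is a quasilinear $Z$-minimal slot in $K$ of order~$r$, so Lemma~\ref{prop:normalize, q-linear} applies to it and yields a refinement $(P,\fm,\hat a)$ of $(P_1,\fm_1,\hat a_1)$ satisfying conditions (i)--(iii).

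Finally, since a refinement of a refinement is itself a refinement (noted just after the definition of refinement in Section~\ref{sec:holes}), $(P,\fm,\hat a)$ is a refinement of the original slot $(P_0,\fm_0,\hat a_0)$. This gives the desired conclusion, so no real obstacle arises: the work has already been done in Lemmas~\ref{lem:quasilinear refinement} and~\ref{prop:normalize, q-linear}, and this lemma merely packages them together by dropping the quasilinearity hypothesis of the latter at the cost of the stronger hypotheses ``$\d$-valued, $\upo$-free, divisible $\Gamma$'' needed to invoke the former.
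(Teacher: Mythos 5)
Your proof is correct and is exactly the route the paper takes: the paper prefaces the statement with ``Lemma~\ref{lem:quasilinear refinement} and the previous lemma and its corollary now yield'' and gives no further argument, relying precisely on the chaining of Lemma~\ref{lem:quasilinear refinement} with Lemma~\ref{prop:normalize, q-linear} together with the facts you cite (refinements preserve order and $Z$-minimality, and a refinement of a refinement is a refinement). The one point worth stating explicitly (you have implicitly) is that throughout Section~\ref{sec:normalization} the convention $r\ge 1$ is in force, so ``of order $r$'' here matches the ``positive order'' hypothesis of Lemma~\ref{lem:quasilinear refinement}.
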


\begin{cor}\label{cor:deepening} Suppose $K$ is $\d$-valued and $\upo$-free, and $\Gamma$ is divisible. Then
every $Z$-minimal slot in $K$ of order $r$ has a quasilinear refinement $(P,\fm,\hat a)$ such that $\nwt L_{P_{\times\fm}}=0$, and~$(P^\phi,\fm,\hat a)$ is deep, eventually.
\end{cor}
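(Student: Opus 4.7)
The plan is to combine the two immediately preceding results. Given a $Z$-minimal slot $(P_0,\fm_0,\hat a_0)$ in $K$ of order $r$, first apply Lemma~\ref{lem:quasilinear refinement} (whose hypotheses $\d$-valued, $\upo$-free, and $\Gamma$ divisible match ours) to obtain a quasilinear refinement $(P_1,\fm_1,\hat a_1)$ of $(P_0,\fm_0,\hat a_0)$.

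Next I would observe that refining preserves $Z$-minimality (noted right after the definition of refinement in Section~\ref{sec:holes}), so $(P_1,\fm_1,\hat a_1)$ is itself a quasilinear $Z$-minimal slot in $K$ of order $r$. Apply Corollary~\ref{cor:deepening, q-linear} to $(P_1,\fm_1,\hat a_1)$ to produce a refinement $(P,\fm,\hat a)$ with $\nwt L_{P_{\times\fm}}=0$ and such that $(P^\phi,\fm,\hat a)$ is deep, eventually. Since a deep slot is quasilinear by (D2), $(P,\fm,\hat a)$ is itself quasilinear (one could alternatively invoke Corollary~\ref{cor:ref 2n} to note that refinements of quasilinear slots are quasilinear). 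Finally, a refinement of a refinement of $(P_0,\fm_0,\hat a_0)$ is a refinement of $(P_0,\fm_0,\hat a_0)$, by the transitivity observation preceding Lemma~\ref{lem:refinements linearly ordered}, so $(P,\fm,\hat a)$ is the desired refinement.

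There is no real obstacle here: the corollary is literally the concatenation of Lemma~\ref{lem:quasilinear refinement} (which upgrades arbitrary $Z$-minimal slots to quasilinear ones, and is where the hypotheses on $K$ and $\Gamma$ are used, via [ADH, 14.5.1]) and Corollary~\ref{cor:deepening, q-linear} (which does the actual deepening work, via Lemmas~\ref{prop:normalize, q-linear} and~\ref{lem:eventual value of fv} and the characterization of deepness in Lemma~\ref{lem:eventually deep}). The only small thing to check is the bookkeeping that the composite is still a refinement of the original slot and that quasilinearity is preserved through the second step, both of which are immediate from the earlier transitivity remarks.
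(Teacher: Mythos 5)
Your proposal is correct and matches the paper's own derivation: the corollary is stated right after the sentence "Lemma~\ref{lem:quasilinear refinement} and the previous lemma and its corollary now yield," i.e.\ it is obtained exactly by first producing a quasilinear refinement via Lemma~\ref{lem:quasilinear refinement} and then applying Corollary~\ref{cor:deepening, q-linear}, with the same bookkeeping (refinements of $Z$-minimal slots are $Z$-minimal, refinements of quasilinear slots are quasilinear, and refinements compose) that you spell out.
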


\subsection*{Approximating $Z$-minimal slots} In this subsection we set, as before, 
$$\fv\ :=\ \fv(L_{P_{\times\fm}}), $$ provided $L_{P_{\times \fm}}$ has order $r$. The next lemma is a key approximation result.  

\begin{lemma}\label{lem:good approx to hata}
Suppose  $(P,\fm,\hat a)$ is $Z$-minimal and steep,  and
$$\ddeg P_{\times\fm}\ =\ \ndeg P_{\times\fm}\ =\ 1,\qquad \ddeg S_{P_{\times_\fm}}\ =\ 0.$$ 
Then there exists an $a$ such that $\hat a-a \prec_{\Delta(\fv)} \fm$.
\end{lemma}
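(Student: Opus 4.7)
The proof will begin with standard reductions. Replacing $(P,\fm,\hat a)$ with the multiplicative conjugate $(P_{\times\fm}, 1, \hat a/\fm)$, which preserves $Z$-minimality, steepness, the values of $\ddeg P_{\times\fm}, \ndeg P_{\times\fm}, \ddeg S_{P_{\times\fm}}$, and the archimedean class $[\fv]$, we reduce to $\fm = 1$. By Lemma~\ref{lem:from cracks to holes}, we pass to an equivalent $Z$-minimal hole, arranging $P(\hat a) = 0$. Scaling $P$ by an element of $K^\times$, assume $P \asymp 1$. Then $\ddeg P = 1$ yields $P_{>1} \prec 1$ and $P_1 = L_P =: L \asymp 1$; the condition $\ddeg S_P = 0$ gives $S_P \asymp S_P(0) = L_r$; steepness gives $\fv = L_r/L_m \prec^\flat 1$, where $m = \dwt L$, so $L_m \asymp 1$ and $v(L_r) = v(\fv) > 0$. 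The goal becomes: produce $a \in K$ with $[\hat a - a] \geq [\fv]$.

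Since $v(\hat a - K)$ has no largest element, it suffices to show that $v(\hat a - K)$ is not contained in $\Delta(\fv)$. I would argue by iterative refinement: given $a \in K$ with $y := \hat a - a \prec 1$ and $0 < v(y) \in \Delta(\fv)$, I aim to produce $a' \in K$ with $v(\hat a - a')$ strictly larger than $v(y) + \epsilon$ for some fixed $\epsilon > 0$ representing a substantial fraction of $v(\fv)$. From $P(\hat a) = 0$ and Taylor expansion,
\[
0 \ =\ P_{+a}(y) \ =\ P(a) + L_{P_{+a}}(y) + (P_{+a})_{>1}(y).
\]
Since $P_{>1} \prec 1 \asymp P_1$ we have $P_{>1} \prec_{\Delta(\fv)} \fv P_1$ (using $\fv \asymp_{\Delta(\fv)} 1$ would fail, so rather we coarsen and exploit $\dwt$-invariance), and Lemma~\ref{lem:linear part, new} yields $L_{P_{+a}} \sim_{\Delta(\fv)} L$ with $\fv(L_{P_{+a}}) \sim_{\Delta(\fv)} \fv$ and unchanged $\dwt$. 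Lemma~\ref{lem:diff operator at small elt} bounds $(P_{+a})_{>1}(y) \prec y$ in a strong enough sense once $v(y)$ grows past a threshold. Hence $L_{P_{+a}}(y) \sim -P(a)$ to leading order, and the structural identity $L_r \asymp \fv$ pins down $v(P(a))$ in terms of $v(y)$. One then seeks $b \in K$ approximately solving $L(b) = -P(a)$ with $v(b)$ close to $v(y)$, and sets $a' := a + b$.

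The main obstacle is executing this improvement step \emph{within $K$} in the absence of any linear surjectivity or newtonianity hypothesis on $K$. This is where $Z$-minimality becomes decisive: since $S_P$, and more generally every differential polynomial of complexity strictly less than $P$, has complexity less than $P$, none of these lies in $Z(K,\hat a)$, so by Lemma~\ref{lem:notin Z(K,hata)} one can always refine $(P,1,\hat a)$ further. Coarsening the valuation by $\Delta(\fv)$, the operator $L$ has residue $\dot L$ of full order $r$ (Lemma~\ref{lem:dotfv}), and the required $b$ is obtained by first solving the equation in the coarsened residue structure $\dot K$ and lifting back. Iterating finitely many times, the valuations $v(\hat a - a_n)$ eventually escape $\Delta(\fv)$, producing the desired $a$. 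The trickiest technical point will be confirming that the coarsened linear equation admits a solution of the correct valuation—this is where the combination $\ddeg P = \ndeg P = 1$ with $\ddeg S_P = 0$ is precisely what is needed, as it forces the residue problem into a tame regime amenable to direct solution.
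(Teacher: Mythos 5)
Your proposal identifies the right setup (arrange $\fm=1$, $P\asymp 1$, note $\fv\asymp S_P(0)$), and you correctly sense that $Z$-minimality and Lemma~\ref{lem:notin Z(K,hata)} must be the engine, since no linear surjectivity is assumed. But from there your argument is a sketch of an iterative improvement scheme that you never close: you propose to produce, from $a$, a better $a'$ by solving a linear equation in the $\Delta(\fv)$-coarsened residue $\dot K$ and lifting, and then to ``iterate finitely many times'' until $v(\hat a-a_n)$ escapes $\Delta(\fv)$. Two things are missing. First, as you yourself concede, there is no justification that the coarsened linear problem has a solution of the correct valuation; nothing about $\dot K$ guarantees linear surjectivity. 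Second, even granting that step, there is no termination argument — the increments you gain could shrink, and you give no reason the finitely many iterations get you past $\Delta(\fv)$.

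The paper avoids both problems by a single, non-iterative trick. Write $P = Q + R_1 Y^{(r)} + \cdots + R_n(Y^{(r)})^n$ where $Q$ and the $R_i$ have order $<r$. Because $R_1(0)=S_P(0)\asymp\fv$ and $\ddeg S_P=0$, one gets $S_P\sim R_1(0)$, hence $R:=P-Q\sim R_1(0)Y^{(r)}\asymp\fv\prec_{\Delta(\fv)} 1\asymp P$, so $P\sim_{\Delta(\fv)} Q$. Now $Q$ has order $<r$, so $Q\notin Z(K,\hat a)$ by $Z$-minimality; Lemma~\ref{lem:notin Z(K,hata)} then produces a refinement $(P_{+a},\fn,\hat a-a)$ with $\fn\prec 1$ and $\ndeg Q_{+a,\times\fn}=0$. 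If the refinement did \emph{not} escape $\Delta(\fv)$ — i.e.\ $\hat a-a\asymp_{\Delta(\fv)} 1$, hence $\fn\asymp_{\Delta(\fv)}1$ — then $Q_{+a,\times\fn}\asymp_{\Delta(\fv)} Q$ and $R_{+a,\times\fn}\asymp_{\Delta(\fv)} R$, so $Q_{+a,\times\fn}\sim_{\Delta(\fv)} P_{+a,\times\fn}$; by Lemma~\ref{lem:same ndeg} and Corollary~\ref{cor:ref 2n} they would then have equal Newton degree, namely $1$, contradicting $\ndeg Q_{+a,\times\fn}=0$. The decomposition isolating the $Y^{(r)}$-free part $Q$, and the comparison of Newton degrees at the $\Delta(\fv)$-level, are the ideas you would need to close your argument; without them the proposal has a genuine gap.
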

\begin{proof}
We can arrange $\fm=1$ and $P\asymp 1$. Then $\ddeg P=1$
gives $P_1\asymp 1$, so~$S_P(0)\asymp\fv$.
Take $Q,R_1,\dots,R_n\in K\{Y\}$ ($n\geq 1$) of order~$<r$ such that $$ P\ =\ Q+R_1Y^{(r)}+\cdots+R_n(Y^{(r)})^n,\qquad S_P\ =\ R_1+\cdots+nR_n(Y^{(r)})^{n-1}.$$
Then $R_1(0)=S_P(0)\asymp\fv$. As $\ddeg S_P=0$, this gives $S_P \sim R_1(0)$,
hence $$R\ :=\ P-Q\ \sim\ R_1(0)Y^{(r)}\ \asymp\ \fv\  \prec_{\Delta(\fv)}\ 1\ \asymp\ P,$$ so $P\sim_{\Delta(\fv)} Q$. Thus $Q\ne 0$, and $Q\notin Z(K,\hat a)$ because $\order Q < r$.
Now Lem\-ma~\ref{lem:notin Z(K,hata)} gives a refinement~$(P_{+a},\fn,\hat a-a)$ of $(P, 1, \hat a)$ such that~${\ndeg Q_{+a,\times \fn}=0}$ and $\fn\prec 1$.
We claim that then $\hat a -a\prec_{\Delta(\fv)} 1$. (Establishing this claim finishes the proof.)
Suppose the claim is false. Then $\hat a-a\asymp_{\Delta(\fv)} 1$, so $\fn \asymp_{\Delta(\fv)} 1$,
hence~$Q_{+a,\times\fn} \asymp_{\Delta(\fv)} Q_{+a}\asymp Q$ by [ADH, 4.5.1]. Likewise,
$R_{+a,\times\fn} \asymp_{\Delta(\fv)} R$.
Using~$P_{+a,\times\fn} =  Q_{+a,\times\fn} +  R_{+a,\times\fn}$ gives
$Q_{+a,\times\fn} \sim_{\Delta(\fv)}  P_{+a,\times\fn}$, so
$Q_{+a,\times\fn} \sim^\flat  P_{+a,\times\fn}$. 
Then $\ndeg  Q_{+a,\times\fn}=\ndeg P_{+a,\times\fn}=1$ by Lemma~\ref{lem:same ndeg} and Corollary~\ref{cor:ref 2n}, a contradiction.
\end{proof}

\noindent
Lemmas~\ref{lem:lower bd on ddeg} and~\ref{lem:good approx to hata}, and a remark following the definition of {\em deep\/}  give:
 
\begin{cor}\label{cor:good approx to hata, deg 1} 
If $(P,\fm,\hat a)$ is $Z$-minimal, steep, and linear, then there exists an $a$ such that $\hat a-a \prec_{\Delta(\fv)} \fm$.
\end{cor}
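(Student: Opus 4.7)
The plan is to reduce directly to Lemma~\ref{lem:good approx to hata}, whose hypotheses simplify dramatically when $\deg P = 1$. First I would invoke Lemma~\ref{lem:from cracks to holes} to replace $(P,\fm,\hat a)$ by an equivalent $Z$-minimal hole $(P,\fm,\hat b)$ in $K$. Equivalent slots satisfy $v(\hat a - a) = v(\hat b - a)$ for every $a\in K$, while $\fv = \fv(L_{P_{\times\fm}})$ and steepness depend only on $P$ and $\fm$; hence the desired conclusion $\hat a - a \prec_{\Delta(\fv)} \fm$ for the original slot is equivalent to the same statement for the hole. In what follows I therefore assume $P(\hat a) = 0$.

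Next I would verify the hypotheses of Lemma~\ref{lem:good approx to hata}. Since $\deg P = 1$ and $\order P = r$, the separant $S_P$ is the coefficient of $Y^{(r)}$ in $P$, so $S_P \in K^\times$, and by \eqref{eq:separant fms} we have $S_{P_{\times\fm}} = \fm\, S_P(0) \in K^\times$; this is a differential polynomial of degree $0$, so $\ddeg S_{P_{\times\fm}} = 0$, exactly the remark following the definition of \emph{deep} (item (D1) being automatic when $\deg P = 1$). Since $\deg P_{\times\fm} = 1$, both $\ddeg P_{\times\fm}$ and $\ndeg P_{\times\fm}$ are at most $1$. For the reverse inequalities, applying Lemma~\ref{lem:lower bd on ddeg} with $\fn := \fm$ to the hole $(P,\fm,\hat a)$ yields $\ddeg P_{\times\fm}\geq \dval P_{\times\fm}\geq 1$, and Lemma~\ref{lem:lower bd on ndeg} analogously gives $\ndeg P_{\times\fm} \geq \nval P_{\times\fm} \geq 1$. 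Therefore $\ddeg P_{\times\fm} = \ndeg P_{\times\fm} = 1$.

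With these hypotheses in hand, together with the given $Z$-minimality and steepness, Lemma~\ref{lem:good approx to hata} applied to $(P,\fm,\hat a)$ produces an $a\in K$ with $\hat a - a \prec_{\Delta(\fv)} \fm$, completing the proof. There is no real obstacle here: the corollary is a specialization of Lemma~\ref{lem:good approx to hata} to the linear case, where the separant is a nonzero element of $K$ and the degree bound forces equality in the $\ddeg$ and $\ndeg$ computations. The only subtlety worth flagging is the passage to an equivalent hole, which is essential since Lemma~\ref{lem:lower bd on ddeg} is only available for holes, not arbitrary slots.
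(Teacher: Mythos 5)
Your proof is correct and follows essentially the same route the paper intends. The paper's one-line derivation cites Lemma~\ref{lem:lower bd on ddeg}, Lemma~\ref{lem:good approx to hata}, and the remark after the definition of \emph{deep}; you fill in the implicit details (passing to an equivalent hole via Lemma~\ref{lem:from cracks to holes} so that Lemma~\ref{lem:lower bd on ddeg} applies, and invoking Lemma~\ref{lem:lower bd on ndeg} for $\ndeg P_{\times\fm}\geq 1$), but the strategy is identical: the linear case makes $\ddeg S_{P_{\times\fm}}=0$ automatic and the degree bound forces $\ddeg P_{\times\fm}=\ndeg P_{\times\fm}=1$, after which Lemma~\ref{lem:good approx to hata} applies.
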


\begin{cor}\label{specialvariant}
Suppose $(P,\fm,\hat a)$ is $Z$-minimal, deep, and special. Then for all~$n\ge 1$ there is an~$a$ with $\hat a-a\prec \fv^n\fm$.
\end{cor}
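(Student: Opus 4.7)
The key observation is that the deepness of $(P,\fm,\hat a)$ makes Lemma~\ref{lem:good approx to hata} applicable, while specialness bootstraps the resulting single approximation into approximations by arbitrary powers of~$\fv$.

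First I would verify the hypotheses of Lemma~\ref{lem:good approx to hata}. Since $K$ has small derivation and $\Psi\cap\Gamma^{>}\ne\emptyset$, there exists $f\nasymp 1$ with $f^\dagger\prec 1$, so $\phi:=1$ is active. Applying the deepness conditions (D1) and (D2) with $\phi=1$ gives $\ddeg S_{P_{\times\fm}}=0$ and $\ddeg P_{\times\fm}=1$, and since deep slots are quasilinear we also have $\ndeg P_{\times\fm}=1$. Combined with $Z$-minimality and steepness, Lemma~\ref{lem:good approx to hata} yields some $a_1\in K$ with $\hat a-a_1\prec_{\Delta(\fv)}\fm$; setting $\gamma_1:=v(\hat a-a_1)-v\fm$, this says $\gamma_1>\Delta(\fv)$, so $[\gamma_1]\ge[\fv]$.

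Next I would exploit specialness. As $(P,\fm,\hat a)$ is special, $\hat a/\fm$ is special over $K$, hence the unique nontrivial convex subgroup $\Delta:=\{\delta\in\Gamma:|\delta|\in v(\hat a/\fm-K)\}$ of $\Gamma$ is cofinal in $v(\hat a/\fm-K)$, and the defining property shows that $\Delta^{\geq 0}=v(\hat a/\fm-K)^{\geq 0}$. Since $\gamma_1=v(\hat a/\fm-a_1/\fm)\in v(\hat a/\fm-K)\cap\Gamma^{>}$ and $\gamma_1>\Delta(\fv)$, we obtain $\gamma_1\in\Delta$ and $\Delta\supsetneq\Delta(\fv)$, using that convex subgroups of an ordered abelian group are linearly ordered by inclusion. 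From this I will deduce $v\fv\in\Delta$ by a short case distinction: if $v\fv\le\gamma_1$, convexity of $\Delta$ gives $v\fv\in\Delta$; otherwise $\gamma_1<v\fv$, which together with $[\gamma_1]\ge[\fv]$ forces $[\gamma_1]=[\fv]$ (as $[\gamma_1]>[\fv]$ would give $\gamma_1>nv\fv$ for all $n$, contradicting $\gamma_1<v\fv$), hence $v\fv\le k\gamma_1$ for some $k\ge 1$ and again $v\fv\in\Delta$ by convexity.

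Finally, $\Delta$ is a subgroup containing $v\fv$, so $(n+1)v\fv\in\Delta^{\geq 0}=v(\hat a/\fm-K)^{\geq 0}$ for every $n\ge 1$. Hence there is $a_0\in K$ with $v(\hat a/\fm-a_0)=(n+1)v\fv>nv\fv$, and setting $a:=a_0\fm\in K$ yields $\hat a-a\prec\fv^n\fm$, as required. The argument is essentially routine once Lemma~\ref{lem:good approx to hata} is in hand; the only mildly subtle step is the verification that $v\fv$ lies in the convex subgroup $\Delta$ supplied by specialness.
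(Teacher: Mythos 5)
Your proof is correct and follows essentially the same route as the paper's: apply Lemma~\ref{lem:good approx to hata} to get an approximation $\gamma := v(\hat a/\fm - a_1/\fm) > \Delta(\fv)$, deduce $v\fv$ lies in the convex subgroup $\Delta$ witnessing specialness, and use cofinality of $\Delta$ to bootstrap to arbitrary powers of~$\fv$. The only difference is cosmetic — the paper phrases $[\gamma]\geq[\fv]$ directly as $\gamma\geq v\fv/m$ for some $m\geq 1$ and invokes convexity once, where you make an explicit case distinction; both are the same argument.
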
 
\begin{proof} We arrange $\fm=1$ in the usual way. Let $\Delta$ be the convex subgroup of $\Gamma$ that is cofinal in $v(\hat a - K)$. Lemma~\ref{lem:good approx to hata} gives an element 
$\gamma\in v(\hat a -K)$
with $\gamma\ge \delta/m$ for some~$m\ge 1$. Hence
$v(\hat a -K)$ contains for every $n\ge 1$ an element $>n\delta$.
\end{proof}

\noindent
Combining Lemma~\ref{lem:special dents} with Corollary~\ref{specialvariant} yields:

\begin{cor}\label{cor:closer to minimal holes} 
If $K$ is   $r$-linearly newtonian, $\upo$-free if $r>1$, and
$(P,\fm,\hat a)$ is $Z$-minimal and deep, then
for all $n\ge 1$ there is an~$a$ such that~$\hat a-a\prec \fv^n\fm$.
\end{cor}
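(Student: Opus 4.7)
The plan is to combine the two cited results directly. First I would observe that since $(P,\fm,\hat a)$ is deep, it is in particular quasilinear (this is built into the definition of \emph{deep} via condition (D2), or, more precisely, it is recorded in the remark immediately following the definition that every deep slot in $K$ is quasilinear). So $(P,\fm,\hat a)$ is a $Z$-minimal quasilinear slot in $K$, and by the hypotheses on $K$ (it is $r$-linearly newtonian, and also $\upo$-free if $r>1$), Lemma~\ref{lem:special dents} applies and yields that $(P,\fm,\hat a)$ is special.

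At this point $(P,\fm,\hat a)$ is a $Z$-minimal, deep, and special slot in $K$, which is exactly the hypothesis of Corollary~\ref{specialvariant}. Applying Corollary~\ref{specialvariant} then produces, for every $n\geq 1$, an element $a\in K$ with $\hat a - a \prec \fv^n\fm$, which is the desired conclusion.

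There is no real obstacle here: the argument is a one-line combination of the two preceding results once one checks that being deep implies being quasilinear (so the "quasilinear" hypothesis of Lemma~\ref{lem:special dents} is in force). The only thing to be slightly careful about is the $r=1$ case, where the hypothesis of $\upo$-freeness is not needed and indeed is not imposed, matching exactly the formulation of Lemma~\ref{lem:special dents}.
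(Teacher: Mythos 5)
Your proof is correct and is exactly the route the paper takes: it states the corollary with the one-line justification ``Combining Lemma~\ref{lem:special dents} with Corollary~\ref{specialvariant} yields:''. Your spelled-out version, including the check that deep implies quasilinear via (D2) so that Lemma~\ref{lem:special dents} applies, is precisely the intended argument.
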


\subsection*{Normal slots} 
We say that our slot~$(P,\fm,\hat a)$ in $K$, with linear part $L$, is {\bf normal}\/ if  $\order L=r$ and, with $\fv:=\fv(L)$ and $w:=\wt(P)$,\index{slot!normal}\index{normal!slot} 
\begin{itemize}
\item[(N1)] $\fv\prec^\flat 1$;
\item[(N2)] $(P_{\times\fm})_{> 1}\prec_{\Delta(\fv)} \fv^{w+1} (P_{\times\fm})_1$.  
\end{itemize}
Note that then $\fv\prec 1$, $\dwt(L)<r$, $(P, \fm, \hat a)$ is steep, and
\begin{equation}\label{eq:N3}
P_{\times\fm}\sim_{\Delta(\fv)} P(0)+(P_{\times\fm})_1\qquad\text{ (so $\ddeg P_{\times\fm} \le 1$).}
\end{equation}
If $\order L=r$, $\fv:=\fv(L)$, and $L$ is monic, then $(P_{\times\fm})_1\asymp\fv^{-1}$, so that~(N2) is then equivalent to:
$(P_{\times\fm})_{> 1}\prec_{\Delta(\fv)} \fv^{w}$. 
If $\deg P=1$, then $\order L=r$ and~(N2) automatically holds,  
hence $(P,\fm,\hat a)$ is normal iff it is steep.  
Thus by Lemma~\ref{lem:eventual value of fv}:

\begin{lemma}\label{lem:deg1 normal} 
If $\deg P=1$ and $\nwt(L)<r$, then $(P^\phi,\fm,\hat a)$ is normal, eventually.
\end{lemma}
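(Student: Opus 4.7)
The plan is to reduce the claim to showing that $(P^\phi,\fm,\hat a)$ is steep eventually, and then to read off this latter statement directly from Lemma~\ref{lem:eventual value of fv}. The key structural observation is that degree $1$ makes condition (N2) vacuous. Indeed, since $\deg P=1$, we have $\deg P^\phi = 1$ and therefore $\deg (P^\phi)_{\times\fm}=1$ for every active $\phi$, so the higher-degree part $\bigl((P^\phi)_{\times\fm}\bigr)_{>1}$ is $0$, and (N2) holds trivially (provided the $\preceq$-relation is evaluated in $K^\phi$, the right-hand side being nonzero because the degree-$1$ part is). By the remark preceding the lemma, the slot $(P^\phi,\fm,\hat a)$ is then normal in $K^\phi$ if and only if it is steep in $K^\phi$.

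So it suffices to show that $(P^\phi,\fm,\hat a)$ is steep in $K^\phi$ eventually. Recall that the linear part of the compositional conjugate is the compositional conjugate of the linear part: using $L_{P_{\times\fm}} = L_P\fm = L\fm$ (well, more precisely, $L_{P_{\times\fm}}=L$ here in our notation) and $(L_{P_{\times\fm}})^\phi = L_{(P^\phi)_{\times\fm}}$ (from $(P^\phi)_d=(P_d)^\phi$ as stated before Lemma~\ref{lem:deg 1 cracks splitting}), I compute that the linear part in question is exactly $L^\phi \in K^\phi[\derdelta]$. Since $L$ has order $r$ with nonzero leading coefficient $a_r$, formula~\eqref{eq:Aphi} gives $L^\phi = a_r\phi^r\derdelta^r + \text{lower order terms}$, so $\order L^\phi = r$ for every $\phi$. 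This handles the order requirement in the definition of steep.

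For the span condition, by hypothesis $\nwt(L)<r$, so Lemma~\ref{lem:eventual value of fv} applied to $A:=L$ yields $\fv(L^\phi)\prec_\phi^\flat 1$ eventually. But $\prec_\phi^\flat$ is precisely the $\prec^\flat$ relation associated with the valued differential field $K^\phi$. Thus, eventually, $\fv(L_{(P^\phi)_{\times\fm}})\prec^\flat 1$ in $K^\phi$, which is exactly the remaining requirement for $(P^\phi,\fm,\hat a)$ to be steep (and hence normal) in $K^\phi$. There is no real obstacle here: the entire argument is bookkeeping, with Lemma~\ref{lem:eventual value of fv} doing all the genuine work.
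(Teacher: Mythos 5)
Your proof is correct and takes essentially the same route the paper does: the paper deliberately places the remark that for $\deg P=1$ condition (N2) is vacuous (so normal $\iff$ steep), then derives the lemma immediately from Lemma~\ref{lem:eventual value of fv}. You have simply unpacked that argument — including the routine verification that compositional conjugation preserves degree and the order of the linear part — and invoked the same key lemma.
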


\noindent
If $(P,\fm,\hat a)$ is normal, then so are
$(P_{\times\fn},\fm/\fn,\hat a/\fn)$ and
$(bP,\fm,\hat a)$ for  $b\neq 0$.
In particular, $(P,\fm,\hat a)$ is   normal iff $(P_{\times\fm},1,\hat a/\fm)$ is normal.  
If $(P,\fm,\hat a)$ is normal, then so is any equivalent slot. 
Hence by \eqref{eq:N3} and Lemmas~\ref{lem:lower bd on ddeg} and~\ref{lem:from cracks to holes}:

\begin{lemma}\label{lem:ddeg=dmul=1 normal} 
If $(P,\fm,\hat a)$ is normal, and $(P,\fm,\hat a)$ is $Z$-minimal or is a hole in~$K$, then $\ddeg P_{\times\fm} = \dval P_{\times\fm} = 1$.
\end{lemma}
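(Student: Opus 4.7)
The key observation is that normality gives the upper bound $\ddeg P_{\times\fm}\le 1$ for free via the relation $P_{\times\fm}\sim_{\Delta(\fv)} P(0)+(P_{\times\fm})_1$ recorded in \eqref{eq:N3}, since the right-hand side has total degree at most $1$ and $\sim_{\Delta(\fv)}$ refines $\asymp$. So the task reduces to proving the matching lower bounds $\ddeg P_{\times\fm}\ge 1$ and $\dval P_{\times\fm}\ge 1$.

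The hole case is immediate from Lemma~\ref{lem:lower bd on ddeg} applied with $\fn=\fm$: if $(P,\fm,\hat a)$ is a hole in $K$, then $\hat a\prec\fm$ combined with $P(\hat a)=0$ forces $1\le \dval P_{\times\fm}\le\ddeg P_{\times\fm}$, which together with the bound from \eqref{eq:N3} yields $\ddeg P_{\times\fm}=\dval P_{\times\fm}=1$.

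For the $Z$-minimal case, I would reduce to the hole case via Lemma~\ref{lem:from cracks to holes}, which produces a $Z$-minimal hole $(P,\fm,\hat b)$ in $K$ equivalent to $(P,\fm,\hat a)$. The normality of $(P,\fm,\hat a)$ is preserved under equivalence (as noted right before the lemma), so $(P,\fm,\hat b)$ is also normal; and the quantities $\ddeg P_{\times\fm}$ and $\dval P_{\times\fm}$ depend only on $P$ and $\fm$, not on $\hat a$ or $\hat b$. So the hole case, applied to $(P,\fm,\hat b)$, finishes the proof.

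There is no real obstacle here: the lemma is essentially bookkeeping, combining the normality estimate \eqref{eq:N3} with the general lower bound Lemma~\ref{lem:lower bd on ddeg} for holes, and using Lemma~\ref{lem:from cracks to holes} to transfer from $Z$-minimal slots to holes.
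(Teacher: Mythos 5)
Your proof is correct and follows exactly the route the paper takes: the paper derives this lemma precisely from \eqref{eq:N3} (upper bound $\ddeg P_{\times\fm}\le 1$), Lemma~\ref{lem:lower bd on ddeg} with $\fn=\fm$ (lower bound in the hole case), and Lemma~\ref{lem:from cracks to holes} to pass from a $Z$-minimal slot to an equivalent hole, noting that normality and the quantities $\ddeg P_{\times\fm}$, $\dval P_{\times\fm}$ depend only on $P$ and $\fm$.
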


\begin{example} 
Let $K\supseteq\R(\ex^x)$ be an $H$-subfield of $\T$, $\fm=1$, $r=2$. If $P=D+R$ where 
$$D\ =\ \ex^{-x}Y''-Y,\qquad R\ =\ f+\ex^{-4x}Y^5\quad (f\in K),$$
then $\fv=-\ex^{-x}\prec^\flat 1$, $P_1=D\sim -Y$, $w=2$, and
$P_{>1}=\ex^{-4x}Y^5 \prec_{\Delta(\fv)} \ex^{-3x}P_1$, so~$(P,1,\hat a)$ is normal.
However, if $P=D+S$ with $D$ as above and $S=f+\ex^{-3x}Y^5$ ($f\in K$), then
$P_{>1}=\ex^{-3x}Y^5 \succeq_{\Delta(\fv)} \ex^{-3x}P_1$, so $(P,1,\hat a)$ is not normal.
\end{example}

\begin{lemma}\label{lem:normal pos criterion}
Suppose $\order(L)=r$ and $\fv$ is such that \textup{(N1)} and~\textup{(N2)} hold, and $\fv(L)\asymp_{\Delta(\fv)} \fv$. Then 
$(P,\fm,\hat a)$ is   normal.
\end{lemma}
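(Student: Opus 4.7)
The plan is to verify directly that the two defining conditions for normality hold when we replace the auxiliary $\fv$ in (N1) and (N2) by $\fv_L := \fv(L)$. Throughout, $L = L_{P_{\times\fm}}$ has order $r$ by hypothesis, so $\fv_L$ is well-defined.

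First I would observe that since $\fv_L \asymp_{\Delta(\fv)} \fv$, we have $[\fv_L] = [\fv]$, and hence $\Delta(\fv_L) = \Delta(\fv)$. This lets us freely interchange the two coarsenings $\asymp_{\Delta(\fv)}$ and $\asymp_{\Delta(\fv_L)}$ throughout the argument.

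Next I would verify (N1) for $\fv_L$, i.e., $\fv_L \prec^\flat 1$. Recall that $\fv \prec^\flat 1$ means $v\fv > \Gamma^\flat$, where $\Gamma^\flat$ is the convex subgroup associated with the flattening. Since $v\fv$ is strictly greater than every element of the convex subgroup $\Gamma^\flat$, in particular $v\fv \notin \Gamma^\flat$. Because $\Delta(\fv)$ is the largest convex subgroup of $\Gamma$ not containing $v\fv$, we obtain $\Gamma^\flat \subseteq \Delta(\fv)$. Writing $v\fv_L = v\fv + \delta$ with $\delta \in \Delta(\fv)$, we need $v\fv_L > \gamma$ for every $\gamma \in \Gamma^\flat$. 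Such a $\gamma$ lies in $\Delta(\fv)$, so $\gamma - \delta \in \Delta(\fv)$; positive elements of $\Delta(\fv)$ are $o(v\fv) < v\fv$, which yields $v\fv_L = v\fv + \delta > \gamma$ in all cases. Hence $\fv_L \prec^\flat 1$.

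Finally I would check (N2) for $\fv_L$. From $\fv_L \asymp_{\Delta(\fv)} \fv$ we deduce $\fv_L^{w+1} \asymp_{\Delta(\fv)} \fv^{w+1}$, and multiplying by $(P_{\times\fm})_1$ gives
\[
\fv_L^{w+1}\,(P_{\times\fm})_1 \ \asymp_{\Delta(\fv)}\ \fv^{w+1}\,(P_{\times\fm})_1.
\]
Combined with the hypothesis (N2) for $\fv$, namely $(P_{\times\fm})_{>1} \prec_{\Delta(\fv)} \fv^{w+1}(P_{\times\fm})_1$, this yields
\[
(P_{\times\fm})_{>1}\ \prec_{\Delta(\fv)}\ \fv_L^{w+1}\,(P_{\times\fm})_1,
\]
and since $\Delta(\fv_L) = \Delta(\fv)$, this is exactly (N2) for $\fv_L$. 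Thus $(P,\fm,\hat a)$ is normal. No step is really an obstacle here — the only mildly subtle point is the set-theoretic inclusion $\Gamma^\flat \subseteq \Delta(\fv)$ used in verifying (N1), which follows from the maximality characterization of $\Delta(\fv)$ and the fact that $v\fv \notin \Gamma^\flat$.
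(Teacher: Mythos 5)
Your proof is correct and is essentially the paper's own argument: the paper likewise sets $\fw:=\fv(L)$, notes $[\fw]=[\fv]$ (so $\Delta(\fw)=\Delta(\fv)$ and $\fv\prec^\flat 1$ yields $\fw\prec^\flat 1$), and transfers (N2) via $\fv^{w+1}(P_{\times\fm})_1\asymp_{\Delta(\fv)}\fw^{w+1}(P_{\times\fm})_1$. Your more explicit verification of $\fv(L)\prec^\flat 1$ using $\Gamma^\flat\subseteq\Delta(\fv)$ is just a spelled-out version of the same step.
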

\begin{proof}
Put $\fw:=\fv(L)$. Then $[\fw]=[\fv]$, and so $\fv\prec^{\flat} 1$ gives $\fw\prec^{\flat} 1$. Also, 
$$(P_{\times\fm})_{> 1}\prec_{\Delta(\fv)} \fv^{w+1} (P_{\times\fm})_1\asymp_{\Delta(\fv)}\fw^{w+1}(P_{\times\fm})_1.$$ 
Hence (N1), (N2) hold with~$\fw$ in place of $\fv$. 
\end{proof}

\begin{lemma}\label{lem:normality comp conj} 
Suppose $(P,\fm,\hat a)$ is normal and $\phi\preceq 1$ is active. Then the slot~$(P^\phi,\fm,\hat a)$  in $K^\phi$ is normal.
\end{lemma}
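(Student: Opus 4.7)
The proof rests on one identity and two preservation lemmas. The identity is $L_{P^\phi_{\times\fm}}=(L_{P_{\times\fm}})^\phi$, which follows from the commutativity of compositional with multiplicative conjugation ($P^\phi_{\times\fm}=(P_{\times\fm})^\phi$) together with the compatibility of linear parts with compositional conjugation ($L_{Q^\phi}=(L_Q)^\phi$). By the same token $(P^\phi_{\times\fm})_d=((P_{\times\fm})_d)^\phi$ for every $d$. Set $A:=L_{P_{\times\fm}}$ and $\fv:=\fv(A)$, so normality of $(P,\fm,\hat a)$ supplies $\fv\prec^\flat 1$ together with the lopsidedness bound $(P_{\times\fm})_{>1}\prec_{\Delta(\fv)}\fv^{w+1}(P_{\times\fm})_1$.

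First I would verify (N1) for $(P^\phi,\fm,\hat a)$ in $K^\phi$. Since $A$ has order $r$, the leading-term formula~\eqref{eq:Aphi} shows $A^\phi$ has order $r$ as well, so $\fv':=\fv(A^\phi)$ is defined. Lemma~\ref{lem:v(Aphi)} applied to $A$ then delivers $\fv'\asymp_{\Delta(\fv)}\fv$ together with $\fv'\prec^\flat_\phi 1$, the latter being exactly (N1) for $(P^\phi,\fm,\hat a)$. From $\fv'\asymp_{\Delta(\fv)}\fv$ one reads off $[\fv']=[\fv]$ in $\Gamma$, hence $\Delta(\fv')=\Delta(\fv)$, a fact needed to compare (N2) in $K$ and in $K^\phi$.

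Next I would establish (N2) in $K^\phi$. The crucial observation is that $\fv\prec^\flat 1$ forces $\Gamma^\flat\subseteq\Delta(\fv)$, so the relation $\asymp^\flat$ refines $\asymp_{\Delta(\fv)}$. By Lemma~\ref{lem:same ndeg}, $Q^\phi\asymp^\flat Q$ for every nonzero $Q\in K\{Y\}$; applied to $Q=(P_{\times\fm})_{>1}$ and $Q=(P_{\times\fm})_1$ this yields $(P^\phi_{\times\fm})_{>1}\asymp_{\Delta(\fv)}(P_{\times\fm})_{>1}$ and $(P^\phi_{\times\fm})_1\asymp_{\Delta(\fv)}(P_{\times\fm})_1$. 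Combining with $\fv^{w+1}\asymp_{\Delta(\fv)}(\fv')^{w+1}$ transforms (N2) for $(P,\fm,\hat a)$ into
$$(P^\phi_{\times\fm})_{>1}\ \asymp_{\Delta(\fv)}\ (P_{\times\fm})_{>1}\ \prec_{\Delta(\fv)}\ \fv^{w+1}(P_{\times\fm})_1\ \asymp_{\Delta(\fv)}\ (\fv')^{w+1}(P^\phi_{\times\fm})_1,$$
which, in view of $\Delta(\fv')=\Delta(\fv)$, is (N2) for $(P^\phi,\fm,\hat a)$ in $K^\phi$.

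The only real bookkeeping obstacle is keeping the three convex subgroups $\Gamma^\flat$, $\Delta(\fv)$, $\Delta(\fv')$ in the correct relation; the inclusion $\Gamma^\flat\subseteq\Delta(\fv)$ coming from steepness does all the work, since it lets one upgrade the flat closeness $Q^\phi\asymp^\flat Q$ to $\asymp_{\Delta(\fv)}$-closeness, which is precisely the strength in which (N2) is formulated. No further input beyond Lemmas~\ref{lem:v(Aphi)} and~\ref{lem:same ndeg} is required.
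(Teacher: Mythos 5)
Your proof is correct and takes essentially the same route as the paper: Lemma~\ref{lem:v(Aphi)} supplies (N1) together with $\fv(L_{P^\phi_{\times\fm}})\asymp_{\Delta(\fv)}\fv$, and (N2) transfers because compositional conjugation by an active $\phi\preceq 1$ changes the homogeneous parts only up to $\asymp_{\Delta(\fv)}$. The only immaterial differences are that you derive this last estimate from Lemma~\ref{lem:same ndeg} upgraded via $\Gamma^\flat\subseteq\Delta(\fv)$, whereas the paper cites [ADH, 11.1.1] using $[\phi]<[\fv]$, and that you check $\Delta\big(\fv(L_{P^\phi_{\times\fm}})\big)=\Delta(\fv)$ directly instead of reducing to $\fm=1$ and invoking Lemma~\ref{lem:normal pos criterion}.
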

\begin{proof}
We arrange $\fm=1$ and put $\fv:=\fv(L)$, $\fw:=\fv(L_{P^\phi})$.
Now $L_{P^\phi}=L^\phi$, so~$\fv\asymp_{\Delta(\fv)} \fw$ and $\fv\prec_{\phi}^\flat 1$ by Lemma~\ref{lem:v(Aphi)}. By [ADH, 11.1.1], $[\phi]<[\fv]$, and (N2) we have
$$(P^\phi)_{>1}\ =\ (P_{>1})^\phi\ \asymp_{\Delta(\fv)}\ P_{>1}\ \prec_{\Delta(\fv)}\ \fv^{w+1} P_1\ \asymp_{\Delta(\fv)}\ \fv^{w+1} P^\phi_1,$$
which by Lemma~\ref{lem:normal pos criterion} applied to 
$(P^\phi, 1, \hat a)$ in the role of $(P, \fm, \hat a)$
gives normality of~$(P^\phi, 1, \hat a)$.  
\end{proof}

\begin{cor}\label{cor:normal=>quasilinear}  
Suppose $(P,\fm,\hat a)$ is normal.  Then $(P,\fm,\hat a)$ is quasilinear.
\end{cor}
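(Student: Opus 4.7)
The plan is to combine the observation \eqref{eq:N3} (which is a consequence of normality) with the stability of normality under compositional conjugation (Lemma~\ref{lem:normality comp conj}) to pin down $\ndeg P_{\times\fm}$ from above, and then to use Lemma~\ref{lem:lower bd on ndeg} to pin it down from below.

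First, I would observe that by the definition of normality applied directly to $(P,\fm,\hat a)$, the relation \eqref{eq:N3} yields $\ddeg P_{\times\fm}\le 1$. To upgrade this to a statement about the Newton degree, I would fix an active $\phi\preceq 1$ in $K$ and apply Lemma~\ref{lem:normality comp conj}: the slot $(P^\phi,\fm,\hat a)$ in $K^\phi$ is again normal. Since $(P^\phi)_{\times\fm}=(P_{\times\fm})^\phi$ and the linear and higher homogeneous parts behave accordingly, the instance of \eqref{eq:N3} for $(P^\phi,\fm,\hat a)$ gives $\ddeg (P_{\times\fm})^\phi\le 1$ for every active $\phi\preceq 1$. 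Taking the eventual value yields $\ndeg P_{\times\fm}\le 1$.

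Finally, Lemma~\ref{lem:lower bd on ndeg} (applied with $\fn=\fm$) gives $\ndeg P_{\times\fm}\ge \nval P_{\times\fm}\ge 1$. Combining the two inequalities gives $\ndeg P_{\times\fm}=1$, i.e., $(P,\fm,\hat a)$ is quasilinear.

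There is no real obstacle here: the argument is essentially a two-line consequence of \eqref{eq:N3}, Lemma~\ref{lem:normality comp conj}, and Lemma~\ref{lem:lower bd on ndeg}, with the only thing to double-check being the routine identity $(P^\phi)_{\times\fm}=(P_{\times\fm})^\phi$ (from [ADH,~5.7.5]) so that the ``$\ddeg\le 1$'' conclusion for the conjugated normal slot really translates into the bound on $\ddeg (P_{\times\fm})^\phi$ needed to compute $\ndeg P_{\times\fm}$.
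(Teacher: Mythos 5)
Your proof is correct and is essentially the paper's own argument: the paper likewise gets $\ndeg P_{\times\fm}\ge 1$ from Lemma~\ref{lem:lower bd on ndeg} and $\ndeg P_{\times\fm}\le 1$ from the parenthetical remark after \eqref{eq:N3} combined with Lemma~\ref{lem:normality comp conj}. Your extra check of the identity $(P^\phi)_{\times\fm}=(P_{\times\fm})^\phi$ is the only (routine) detail the paper leaves implicit.
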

\begin{proof} Lemma~\ref{lem:lower bd on ndeg} gives $\ndeg P_{\times \fm}\ge 1$. The parenthetical remark after~\eqref{eq:N3} above and
Lemma~\ref{lem:normality comp conj} give $\ndeg P_{\times \fm}\le 1$.  
\end{proof}

\noindent
Combining Lemmas~\ref{lem:ddeg=dmul=1 normal} and \ref{lem:normality comp conj} yields:

\begin{cor} 
If $(P,\fm,\hat a)$ is normal and  linear, and $(P,\fm,\hat a)$ is $Z$-minimal or a hole in~$K$, then
$(P,\fm,\hat a)$ is deep.
\end{cor}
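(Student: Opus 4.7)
The plan is to verify the three defining clauses of \emph{deep} for $(P,\fm,\hat a)$ by reducing them to consequences of normality that have already been established. Since $(P,\fm,\hat a)$ is normal, conditions (N1) and the equality $\order(L)=r$ give immediately that $(P,\fm,\hat a)$ is steep. Because $\deg P = 1$, the comment following the definition of \emph{deep} says that condition (D1), namely $\ddeg S_{P^\phi_{\times\fm}}=0$, is automatic for every active $\phi\preceq 1$; so there is nothing to prove there.

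The only real task is (D2): $\ddeg P^\phi_{\times\fm}=1$ for all active $\phi\preceq 1$. For a fixed such $\phi$, Lemma~\ref{lem:normality comp conj} guarantees that the slot $(P^\phi,\fm,\hat a)$ in $K^\phi$ is again normal (and linear, since $\deg P^\phi = \deg P = 1$). Moreover, the hypothesis that $(P,\fm,\hat a)$ is $Z$-minimal or a hole in $K$ is preserved under compositional conjugation, as was noted in the discussion of compositional conjugates of slots: $(P^\phi,\fm,\hat a)$ is $Z$-minimal in $K^\phi$, respectively a hole in $K^\phi$. Thus Lemma~\ref{lem:ddeg=dmul=1 normal} applies to $(P^\phi,\fm,\hat a)$ in $K^\phi$ and yields $\ddeg P^\phi_{\times\fm}=\dval P^\phi_{\times\fm}=1$, which is (D2).

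Combining these observations, $(P,\fm,\hat a)$ satisfies all three conditions in the definition of deep, so it is deep. The proof requires no genuine obstacle, merely a bookkeeping application of earlier lemmas; the only point to be careful about is ensuring that the $Z$-minimality or hole property transfers to $(P^\phi,\fm,\hat a)$, which is already recorded.
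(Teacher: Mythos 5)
Your proof is correct and follows exactly the paper's route: the paper derives this corollary by combining Lemma~\ref{lem:ddeg=dmul=1 normal} with Lemma~\ref{lem:normality comp conj}, which is precisely your argument for (D2), while steepness from normality and the automatic validity of (D1) for linear slots are the same bookkeeping remarks the paper relies on.
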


\noindent
There are a few occasions later where we need to change the ``monomial'' $\fm$ in~$(P,\fm,\hat a)$ while preserving key properties
of this slot. Here is what we need:  
{\sloppy
\begin{lemma}\label{ufm} Let $u\in K$, $u\asymp 1$. Then $(P,u\fm,\hat a)$ refines $(P,\fm,\hat a)$, and
if~${(P_{+a},\fn,\hat a -a)}$ refines $(P,\fm,\hat a)$, then so does~$(P_{+a},u\fn, \hat a -a)$. If $(P,\fm,\hat a)$ is qua\-si\-linear, respectively deep, respectively normal, then so is~$(P,u\fm,\hat a)$. 
\end{lemma}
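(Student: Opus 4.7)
The refinement claims are routine: since $u\asymp 1$ we have $u\fm\asymp\fm$, so $u\fm\preceq\fm$ and $\hat a\prec\fm\asymp u\fm$ give $\hat a\prec u\fm\preceq \fm$, showing that $(P,u\fm,\hat a)=(P_{+0},u\fm,\hat a-0)$ refines $(P,\fm,\hat a)$; the argument for $(P_{+a},u\fn,\hat a-a)$ is identical, using $u\fn\asymp\fn$. The preservation of quasilinearity then follows immediately from the fact that refinements of quasilinear slots are quasilinear (Corollary~\ref{cor:ref 2n} and the remark after the definition of \emph{quasilinear}).

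The key technical observation for the remaining claims is that multiplicative conjugation by $u\asymp 1$ preserves valuations of differential polynomials and of their homogeneous parts: for every $Q\in K\{Y\}^{\ne}$, $v(Q_{\times u})=v(Q)$ and $v((Q_{\times u})_d)=v(Q_d)$ for each $d$, so in particular $\ddeg Q_{\times u}=\ddeg Q$. Indeed, since $K$ has small derivation, $u^{(j)}\in\mathcal{O}$ for all $j\ge 0$, so expanding each $(uY)^{(k)}=\sum_j\binom{k}{j}u^{(j)}Y^{(k-j)}$ shows that each $Q_\i(uY)^\i$ is a polynomial in the $Y^{(l)}$ with coefficients in $Q_\i\mathcal{O}$, giving $v(Q_{\times u})\ge v(Q)$; applying the same reasoning to $u^{-1}\asymp 1$ and using $(Q_{\times u})_{\times u^{-1}}=Q$ yields equality. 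The identity $(Q_{\times u})_d=(Q_d)_{\times u}$ is immediate from the homogeneity of $(uY)^{\i}$ in $Y$ and its derivatives. Exactly the same argument works in $K^\phi$ for any active $\phi\preceq 1$, since $K^\phi$ also has small derivation.

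For deepness, setting $\fv:=\fv(L_{P_{\times\fm}})$, the inequality $[u\fm/\fm]=[u]\le[\fv]$ and Lemma~\ref{lem:steep1} give that $(P,u\fm,\hat a)$ is steep with $\fv(L_{P_{\times u\fm}})\asymp_{\Delta(\fv)}\fv$. For (D1) and (D2) at an active $\phi\preceq 1$, we use $P^\phi_{\times u\fm}=(P^\phi_{\times\fm})_{\times u}$ and $S_{P^\phi_{\times u\fm}}=u\cdot(S_{P^\phi_{\times\fm}})_{\times u}$ (both following from \eqref{eq:separant fms} together with the commutativity of multiplicative and compositional conjugation); the key observation applied in $K^\phi$ then transfers $\ddeg P^\phi_{\times\fm}=1$ and $\ddeg S_{P^\phi_{\times\fm}}=0$ to their counterparts for $u\fm$. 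For normality, we invoke Lemma~\ref{lem:normal pos criterion}: we have $\order L_{P_{\times u\fm}}=r$ and $\fv(L_{P_{\times u\fm}})\asymp_{\Delta(\fv)}\fv$, condition (N1) for $u\fm$ (with the test span $\fv$) is identical to (N1) for $\fm$, and (N2) reduces to (N2) for $(P,\fm,\hat a)$ via the identities $(P_{\times u\fm})_{>1}=((P_{\times\fm})_{>1})_{\times u}$ and $(P_{\times u\fm})_1=((P_{\times\fm})_1)_{\times u}$ together with the key observation preserving all valuations involved. The only real obstacle is bookkeeping: verifying carefully that multiplicative conjugation by $u\asymp 1$ commutes with (or suitably interacts with) the operations $Q\mapsto Q^\phi$, $Q\mapsto S_Q$, and $Q\mapsto Q_d$ in the required asymptotic sense.
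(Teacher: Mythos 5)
Your proof is correct and follows essentially the same route as the paper's: the refinement claims are immediate, quasilinearity is preserved by $\ndeg$-invariance, steepness comes from Lemma~\ref{lem:steep1}, deepness from \eqref{eq:separant fms} together with invariance of dominant degree under multiplicative conjugation by a unit, and normality from the invariance of the homogeneous parts $(P_{\times u\fm})_d$ combined with the span comparison. The only difference is that you verify from first principles (via small derivation and $(Q_{\times u})_{\times u^{-1}}=Q$) the invariance facts $v(Q_{\times u})=v(Q)$ and $(Q_{\times u})_d=(Q_d)_{\times u}$ which the paper simply cites from [ADH, 4.3, 4.5.1(ii), 6.6.5(ii), 11.2.3(iii)]; this is harmless.
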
}

{\sloppy
\begin{proof} 
The refinement claims are clearly true, and quasilinearity is preserved since~$\ndeg P_{\times u\fm}=\ndeg P_{\times \fm}$ by [ADH, 11.2.3(iii)]. 
``Steep'' is preserved by Lemma~\ref{lem:steep1}, and hence ``deep''
is preserved using~\eqref{eq:separant fms}  and [ADH, 6.6.5(ii)]. Normality is preserved because steepness is, 
$$(P_{\times u\fm})_d\ =\ (P_d)_{\times u\fm}\ \asymp\ (P_d)_{\times \fm}\ =\ (P_{\times \fm})_d\quad\text{ for all~$d\in \N$}$$
by [ADH, 4.3, 4.5.1(ii)], and $\fv(L_{P_{\times u\fm}})\asymp \fv(L_{P_{\times\fm}})$ by Lemma~\ref{fvmult}. 
\end{proof} }

\noindent
Here is a useful invariance property of normal slots:

\begin{lemma}\label{lem:excev normal}
Suppose $(P,\fm,\hat a)$ is normal and $a\prec\fm$.  Then $L_P$ and $L_{P_{+a}}$ have or\-der~$r$. 
If  in addition   $K$ is $\upl$-free or $r=1$, then $\exc^{\ev}(L_{P})=\exc^{\ev}(L_{P_{+a}})$. 
\end{lemma}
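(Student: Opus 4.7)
The natural first move is to reduce to the case $\fm=1$. Since $(P,\fm,\hat a)$ is normal iff $(P_{\times\fm},1,\hat a/\fm)$ is normal, and the identities
\[
L_{P_{+a,\times\fm}} \ =\ L_{(P_{\times\fm})_{+(a/\fm)}}, \qquad \exc^{\ev}(Bb)\ =\ \exc^{\ev}(B)-vb \text{ for $b\neq 0$},
\]
show that the desired conclusions for $(P,\fm,\hat a)$ and $a$ are equivalent to the corresponding conclusions for $(P_{\times\fm},1,\hat a/\fm)$ and $a/\fm\prec 1$, I may assume $\fm=1$.

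Under this reduction, normality gives $\order L_P=r$ and, for $\fv:=\fv(L_P)$ and $w:=\wt(P)$, the asymptotic bounds (N1) $\fv\prec^\flat 1$ and (N2) $P_{>1}\prec_{\Delta(\fv)}\fv^{w+1}P_1$. Since $w\geq r\geq 1$ and $\fv\prec 1$, we have $\fv^{w+1}\preceq \fv$, so in particular $P_{>1}\prec_{\Delta(\fv)}\fv P_1$. Lemma~\ref{lem:linear part, new} (applied with $n=r$) then yields $\order L_{P_{+a}}=r$, settling the first claim.

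For the second claim, apply Lemma~\ref{lem:linear part, split-normal, new} with $m=w$, $A=L_P$, and $B=0$; its hypothesis $P_{>1}\prec_{\Delta(\fv)}\fv^{w+1}P_1$ is precisely (N2). The conclusion gives
\[
L_{P_{+a}} - L_P\ \prec_{\Delta(\fv)}\ \fv^{w+1}L_P.
\]
Since $w\geq r$ and $\fv\prec 1$, we have $\fv^{w+1}\preceq\fv^{r+1}$ and hence $L_{P_{+a}}-L_P\prec_{\Delta(\fv)}\fv^{r+1}L_P$. Setting $\tilde B:=L_{P_{+a}}-L_P$, we have $\order\tilde B\le r$ and $\tilde B\prec_{\Delta(\fv)}\fv^{r+1}L_P$ with $\fv=\fv(L_P)\prec^\flat 1$.

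Now invoke the stability result: if $K$ is $\upl$-free, Lemma~\ref{cor:excev stability} (applied to $A=L_P$, $B=\tilde B$) gives $\exc^{\ev}(L_P+\tilde B)=\exc^{\ev}(L_P)$; if $r=1$, Corollary~\ref{cor:excev stability, r=1} gives the same conclusion using only the asymptotic integration already assumed for $K$. In either case,
\[
\exc^{\ev}(L_{P_{+a}})\ =\ \exc^{\ev}(L_P+\tilde B)\ =\ \exc^{\ev}(L_P),
\]
which on unwinding the reduction $\fm=1$ gives the desired equality in general. The only substantive technical point is the bookkeeping that turns the normality bound with exponent $w+1$ into the exponent $r+1$ demanded by the stability lemmas, but this is immediate from $w\geq r$ and $\fv\prec 1$.
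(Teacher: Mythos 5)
Your proposal is correct and follows essentially the same route as the paper's proof: reduce to $\fm=1$ (the paper works with $L_P\fm$ and the shift $\exc^{\ev}(L_P\fm)+v\fm$, which is the same bookkeeping), get the order statements from Lemma~\ref{lem:linear part, new}, derive $L_{P_{+a}}-L_P\prec_{\Delta(\fv)}\fv^{r+1}L_P$ from Lemma~\ref{lem:linear part, split-normal, new} (the paper takes $m=r$ directly, you take $m=w$ and then use $w\ge r$, a cosmetic difference), and conclude via Lemma~\ref{cor:excev stability} in the $\upl$-free case and Corollary~\ref{cor:excev stability, r=1} when $r=1$.
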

\begin{proof} $L_{P_{\times \fm}}=L_P\fm$ (so $L_P$ has order $r$), and 
$L_{P_{+a,\times \fm}}=L_{P_{\times \fm, +a/\fm}}= L_{P_{+a}}\fm$. 
The slot $(P_{\times\fm},1,\hat a/\fm)$ in $K$ is normal and $a/\fm\prec 1$.  Thus we can apply Lemma~\ref{lem:linear part, new}(i)  to  $\hat K$, $P_{\times\fm}$, $a/\fm$ in place of $K$, $P$, $a$ to give $\order L_{P_{+a}} = r$. Next, applying
likewise Lemma~\ref{lem:linear part, split-normal, new}  with $L:=L_{P_{\times \fm}}$, $\fv:=\fv(L_{P_{\times \fm}})$, $m=r$, $B=0$,   gives 
$$L_P\fm - L_{P_{+a}}\fm\ =\ L_{P_{\times\fm}} - L_{P_{\times\fm,+a/\fm}}\ \prec_{\Delta(\fv)}\ \fv^{r+1}L_{P}\fm.$$
Hence, if $K$ is $\upl$-free, then $\exc^{\ev}(L_P\fm)\ =\ \exc^{\ev}(L_{P_{+a}}\fm)$ by Lemma~\ref{cor:excev stability}, so
$$ \exc^{\ev}(L_{P})\ =\ \exc^{\ev}(L_P\fm)+v(\fm)\ =\ \exc^{\ev}(L_{P_{+a}}\fm)+v(\fm)\ =\ \exc^{\ev}(L_{P_{+a}}).$$
If $r=1$ we obtain the same equality from Corollary~\ref{cor:excev stability, r=1}.
\end{proof}

\subsection*{Normality under refinements} 
In this subsection we study how normality behaves under more general refinements. This is not needed to prove the main result of this section, Theorem~\ref{mainthm}, but is included to
obtain useful variants of it.

\begin{prop}\label{normalrefine}
Suppose  $(P,\fm,\hat a)$ is normal. Let a refinement
$(P_{+a},\fm,\hat a-a)$ of $(P,\fm,\hat a)$ be given. Then this refinement is also normal. 
\end{prop}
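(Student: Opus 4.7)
The plan is to reduce to the case $\fm = 1$ and then verify normality of the refined slot using the preservation tools for linear parts. First, multiplicative conjugation preserves normality, so by replacing $(P,\fm,\hat a)$ with $(P_{\times\fm},1,\hat a/\fm)$ and $a$ with $a/\fm$, I reduce to the case $\fm=1$. Then $a \prec 1$, and the task is to show that $(P_{+a},1,\hat a-a)$ is normal. Set $L:=L_P$, $L^+:=L_{P_{+a}}$, $\fv:=\fv(L)$, and $w:=\wt(P)$. By hypothesis $\order L=r$, $\fv \prec^\flat 1$, and $P_{>1} \prec_{\Delta(\fv)} \fv^{w+1}P_1$.

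Next, I would apply Lemma~\ref{lem:linear part, new} (whose hypotheses follow from $\fv \prec^\flat 1$, $a\prec 1$, and $P_{>1} \prec_{\Delta(\fv)} \fv^{w+1}P_1 \preceq \fv P_1$). Its conclusion yields $\order L^+=r$, $L^+ \sim_{\Delta(\fv)} L$, and $\fv(L^+) \sim_{\Delta(\fv)} \fv$; in particular, $\fv(L^+) \asymp_{\Delta(\fv)} \fv$. In view of Lemma~\ref{lem:normal pos criterion}, it then suffices to verify (N1) and (N2) for $(P_{+a},1,\hat a-a)$ with respect to the \emph{old} $\fv$. Condition (N1), namely $\fv \prec^\flat 1$, is immediate.

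For (N2), I would compute the coefficients of $P_{+a}$ via Taylor expansion: $(P_{+a})_{\i} = P_{(\i)}(a) = \sum_\j \binom{\i+\j}{\i} P_{\i+\j}\, a^\j$. Smallness of the derivation combined with $a \prec 1$ gives $a^{(k)}\preceq 1$ for all $k$, hence $a^\j \preceq 1$ for every multi-index $\j$. When $|\i|\geq 2$, every multi-index $\i+\j$ also satisfies $|\i+\j|\geq 2$, so each $P_{\i+\j}$ is a coefficient of $P_{>1}$, and hence $P_{\i+\j} \preceq P_{>1} \prec_{\Delta(\fv)} \fv^{w+1} P_1$. A gaussian coefficient-wise bound then gives $(P_{+a})_{>1} \prec_{\Delta(\fv)} \fv^{w+1}P_1$. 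Finally, $(P_{+a})_1 \asymp_{\Delta(\fv)} P_1$ (since the coefficients of $(P_{+a})_1$ and $P_1$ coincide with those of $L^+$ and $L$, respectively, and $L^+ \sim_{\Delta(\fv)} L$), so $(P_{+a})_{>1} \prec_{\Delta(\fv)} \fv^{w+1}(P_{+a})_1$, establishing (N2).

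The routine content is the Taylor identity together with the propagation of $\prec_{\Delta(\fv)}$ through coefficient-wise bounds in the gaussian valuation; no serious obstacle arises. The conceptual point is that the slack in (N2) (an exponent of $w+1$ rather than $1$) is precisely what absorbs the perturbation of the higher degree parts coming from additive conjugation by $a$, while Lemma~\ref{lem:linear part, new} ensures the linear part is scarcely affected.
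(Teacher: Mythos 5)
Your proof is correct and follows essentially the same route as the paper: reduce to $\fm=1$, invoke Lemma~\ref{lem:linear part, new} for the linear part, bound $(P_{+a})_{>1}\preceq P_{>1}$, and close with Lemma~\ref{lem:normal pos criterion}. The only cosmetic difference is that you unpack the Taylor expansion $(P_{+a})_{\i}=\sum_\j\binom{\i+\j}{\i}P_{\i+\j}\,a^\j$ explicitly where the paper cites [ADH, 4.5.1(i)] to get $(P_{\geq d})_{+a}\sim P_{\geq d}$.
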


{\sloppy
\begin{proof}
By the remarks following the definition of ``multiplicative conjugate'' in Sec\-tion~\ref{sec:holes} and after replacing the slots 
$(P,\fm,\hat a)$ and $(P_{+a},\fm,\hat a-a)$
in~$K$
by~$(P_{\times\fm},1,\hat a/\fm)$ and $\big(P_{\times\fm,+a/\fm},1,(\hat a-a)/\fm\big)$,
respectively, we arrange that $\fm=1$.  Let $\fv:=\fv(L_P)$.
By Lemma~\ref{lem:linear part, new} we have $\order(L_{P_{+a}})=r$, $\fv(L_{P_{+a}})\sim_{\Delta(\fv)} \fv$, and~$(P_{+a})_1\sim_{\Delta(\fv)} P_1$.
Using [ADH, 4.5.1(i)] we have for $d>1$ with~$P_d\ne 0$,
$$
(P_{+a})_{d}\ =\ \big((P_{\geq d})_{+a}\big){}_d\ \preceq\ (P_{\geq d})_{+a}\ \sim\  P_{\geq d}\ \preceq\ P_{>1},$$
and using (N2), this yields
$$ (P_{+a})_{>1}\ \preceq\ P_{>1}\ \prec_{\Delta(\fv)}\ \fv^{w+1}P_1\ \asymp\ \fv^{w+1}(P_{+a})_1.$$ 
Hence (N2) holds with $\fm=1$ and with $P$ replaced by $P_{+a}$. Thus $(P_{+a},1,\hat a-a)$
is normal,  by  Lemma~\ref{lem:normal pos criterion}.
\end{proof}}

\begin{prop}\label{easymultnormal}
Suppose   $(P,\fm,\hat a)$ is a normal hole in $K$,   $\hat{a}\prec \fn\preceq \fm$, and~$[\fn/\fm]\le\big[\fv(L_{P_{\times\fm}})\big]$. Then the refinement $(P,\fn,\hat a)$ of $(P,\fm,\hat a)$
is also  normal.
\end{prop}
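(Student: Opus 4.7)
The plan is to verify the two normality conditions (N1) and (N2) for $(P,\fn,\hat a)$ directly and then apply Lemma~\ref{lem:normal pos criterion}. I first reduce to the case $\fm=1$ by passing to the equivalent normal slot $(P_{\times\fm},1,\hat a/\fm)$ and renaming, so with $\fv:=\fv(L_P)$ the hypotheses become $\hat a\prec\fn\preceq 1$ and $[\fn]\le[\fv]$. Since $(P,1,\hat a)$ is steep, Lemma~\ref{lem:steep1} applies and yields $\order L_{P_{\times\fn}}=r$ together with $\fv':=\fv(L_{P_{\times\fn}})\asymp_{\Delta(\fv)}\fv$; in particular $\Delta(\fv')=\Delta(\fv)$ and $\fv'\prec^\flat 1$, establishing (N1) for $(P,\fn,\hat a)$ and reducing (N2) to the assertion $(P_{\times\fn})_{>1}\prec_{\Delta(\fv)}\fv^{w+1}(P_{\times\fn})_1$.

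The central step is a gaussian-type estimate on the homogeneous components: for $A\in K\{Y\}^{\ne}$ homogeneous of degree $d\ge 1$ and $\fn\in K^\times$, one has $v(A_{\times\fn})=v(A)+d\,v(\fn)+o(v(\fn))$. The case $d=1$ follows from Lemma~\ref{lem:6.1.3 consequ}, since $A_{\times\fn}=A\fn$ as operators when $A$ is linear. For $d\ge 2$ one expands $A(\fn Y)$ and observes that every resulting monomial has a coefficient equal to a coefficient of $A$ multiplied by a product of exactly $d$ factors drawn from $\{\fn,\fn',\fn'',\dots\}$; the needed bound on these factors is $\fn^{(k)}\preceq_{\Delta(\fv)}\fn$ for all $k\ge 0$, which follows from the Riccati identity $\fn^{(k)}/\fn=R_k(\fn^\dagger)$ together with the bound $\fn^\dagger\preceq_{\Delta(\fv)}\fv^\dagger\asymp_{\Delta(\fv)} 1$ (a consequence of $[\fn]\le[\fv]$ via the $H$-asymptotic inequality $\psi(v\fn)\ge\psi(v\fv)$) and the fact that the $\Delta(\fv)$-coarsening inherits small derivation from $K$ (using that $\psi(\Delta(\fv)^{\ne})\subseteq\Delta(\fv)$ by the slow variation of $\psi$). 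Since $[\fn]\le[\fv]$ also places $o(v(\fn))$ inside $\Delta(\fv)$, the estimate simplifies modulo $\Delta(\fv)$ to $v((P_d)_{\times\fn})=v(P_d)+d\,v(\fn)$ for each $d\ge 1$ with $P_d\ne 0$.

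Combining these modular estimates with the normality hypothesis $P_{>1}\prec_{\Delta(\fv)}\fv^{w+1}L_P$ and the trivial bound $(d-1)v(\fn)\ge 0$ for $d>1$ yields $(P_d)_{\times\fn}\prec_{\Delta(\fv)}\fv^{w+1}(P_1)_{\times\fn}$ for every $d>1$; summing over $d>1$ establishes (N2), and Lemma~\ref{lem:normal pos criterion} (applied with the original $\fv$, permissible because $\fv(L_{P_{\times\fn}})\asymp_{\Delta(\fv)}\fv$) concludes that $(P,\fn,\hat a)$ is normal. The main obstacle is the higher-degree gaussian estimate, particularly the verification that $\fn^{(k)}\preceq_{\Delta(\fv)}\fn$; once this derivative bookkeeping is in place, the remainder of the argument is a direct computation.
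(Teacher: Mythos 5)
Your proof is correct and follows essentially the same route as the paper's: reduce to $\fm=1$, use Lemma~\ref{lem:steep1} to get $\order L_{P_{\times\fn}}=r$ and $\fv(L_{P_{\times\fn}})\asymp_{\Delta(\fv)}\fv$, verify (N2) for $P_{\times\fn}$ via the estimate $(P_d)_{\times\fn}\asymp_{\Delta(\fv)}\fn^d P_d$ combined with $\fn\preceq 1$, $[\fn]\le[\fv]$ and the old (N2), and conclude with Lemma~\ref{lem:normal pos criterion}. The only differences are that the paper simply cites [ADH, 6.1.3] for the multiplicative-conjugation estimate (and, inessentially, uses $P\sim P_1$ from Lemma~\ref{lem:ddeg=dmul=1 normal}), whereas you re-derive the estimate by hand -- a derivation that, as written, only justifies the one-sided bound $(P_d)_{\times\fn}\preceq_{\Delta(\fv)}\fn^d P_d$ for $d\ge 2$, which is in fact all your argument needs, the two-sided case $d=1$ being covered by Lemma~\ref{lem:6.1.3 consequ} as you note.
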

\begin{proof} As in the proof of Lemma~\ref{lem:steep1}
we arrange $\fm=1$ and  
set $L:= L_P$, $\fv:=\fv(L)$, and $\tilde L:=L_{P_{\times\fn}}$, to obtain $[\fn]\le[\fv]$ and
$\fv(\tilde L)\asymp_{\Delta(\fv)} \fv$. 
Recall from [ADH, 4.3] that~$(P_{\times\fn})_{d}=(P_{d})_{\times\fn}$ for $d\in \N$.
For such $d$ we have by [ADH, 6.1.3],
$$
(P_{d})_{\times\fn}\ \asymp_{\Delta(\fv)}\ \fn^d P_d\ \preceq\ \fn^d P_{\geq d}.$$
In particular, $(P_{\times \fn})_1\asymp_{\Delta(\fv)} \fn P_1$.
By (N2) we also have, for $d>1$:
$$P_{\geq d}\ \preceq\ P_{>1}\ \prec_{\Delta(\fv)}\ \fv^{w+1} P_1.$$
By Lemma~\ref{lem:ddeg=dmul=1 normal} we have   $P \sim P_1$. For $d>1$ we have by [ADH, 6.1.3],
$$\fn^d P\ \asymp\ \fn^d P_1\ \asymp_{\Delta(\fv)}\ \fn^{d-1} (P_1)_{\times\fn}\ \preceq\ 
(P_1)_{\times\fn}\ =\ (P_{\times\fn})_1\ \preceq\ P_{\times\fn}$$
and thus 
$$(P_{\times\fn})_{d}\ =\ (P_d)_{\times\fn}\ \preceq_{\Delta(\fv)}\ \fn^d P_{\geq d}\ \prec_{\Delta(\fv)}\  \fv^{w+1} \fn^d P_1\ \preceq_{\Delta(\fv)}\ \fv^{w+1} (P_{\times\fn})_1.$$
Hence (N2) holds with $\fm$ replaced by $\fn$. Thus $(P,\fn,\hat a)$
is normal, using $\fv(\tilde L)\asymp_{\Delta(\fv)} \fv$ and Lem\-mas~\ref{lem:steep1} and~\ref{lem:normal pos criterion}.
\end{proof}

\noindent
From Lemma~\ref{lem:from cracks to holes} and Proposition~\ref{easymultnormal} we obtain:

\begin{cor}\label{corcorcor} Suppose $(P,\fm,\hat a)$ is normal and $Z$-minimal, $\hat a \prec \fn \preceq \fm$, and~$[\fn/\fm]\le \big[\fv(L_{P_{\times \fm}})\big]$. Then the refinement $(P,\fn, \hat a)$ of $(P,\fm, \hat a)$ is also normal.
\end{cor}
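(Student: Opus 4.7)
The plan is to reduce the statement for $Z$-minimal slots to the already-established statement for holes (Proposition~\ref{easymultnormal}) by passing through an equivalent hole. Concretely, I would first invoke Lemma~\ref{lem:from cracks to holes} to obtain a $Z$-minimal hole $(P,\fm,\hat b)$ in $K$ that is equivalent to $(P,\fm,\hat a)$. By the definition of equivalence, $v(\hat a - c) = v(\hat b - c)$ for all $c \in K$; taking $c = 0$ gives $v(\hat a) = v(\hat b)$, so the hypothesis $\hat a \prec \fn$ translates to $\hat b \prec \fn$, and $(P,\fn,\hat b)$ is a refinement of $(P,\fm,\hat b)$.

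Next I would observe that normality is preserved under equivalence of slots (noted in the discussion right after the definition of \emph{normal}): since $(P,\fm,\hat a)$ is normal, so is $(P,\fm,\hat b)$. The hypothesis $[\fn/\fm] \le [\fv(L_{P_{\times\fm}})]$ is a statement about $P$, $\fm$, $\fn$ alone and transfers verbatim. Thus Proposition~\ref{easymultnormal} applies to the normal hole $(P,\fm,\hat b)$ and yields that the refinement $(P,\fn,\hat b)$ is normal.

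Finally, to return to $\hat a$, I would use the remark (also from the discussion of equivalence and refinements in Section~\ref{sec:holes}) that equivalent slots have equivalent refinements: taking $a = 0$ in that remark applied to $(P,\fm,\hat a)$ and $(P,\fm,\hat b)$ shows that $(P,\fn,\hat a)$ and $(P,\fn,\hat b)$ are equivalent slots in $K$. Since $(P,\fn,\hat b)$ is normal and normality is preserved under equivalence, $(P,\fn,\hat a)$ is normal as well.

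There is essentially no obstacle here — the work has already been done in Proposition~\ref{easymultnormal}, and all that remains is routine bookkeeping to pass between the $Z$-minimal slot and its equivalent hole. The only point worth being slightly careful about is checking that each of the relevant properties (being a refinement, being normal, the inequality on archimedean classes) is invariant under the equivalence of slots, but all of these invariances have already been recorded in Sections~\ref{sec:holes} and the present section.
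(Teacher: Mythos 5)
Your proposal is correct and is exactly the paper's argument: the paper derives this corollary directly from Lemma~\ref{lem:from cracks to holes} and Proposition~\ref{easymultnormal}, i.e.\ by passing to an equivalent $Z$-minimal hole, applying the hole case, and transferring back via the invariance of normality and of refinements under equivalence of slots. The bookkeeping steps you spell out (equality of $v(\hat a)$ and $v(\hat b)$, preservation of normality under equivalence, equivalence of the refinements) are precisely the ones implicitly used there.
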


\noindent
{\it In the rest of this subsection $\fm=1$, $\hat a \prec \fn\prec 1$, $\order(L_P)=r$, and $[\fv]<[\fn]$ where~$\fv:=\fv(L_P)$.}\/ So $(P, \fn,\hat a)$ refines $(P,1,\hat a)$, $L_{P_{\times \fn}}=L_P\fn$, and
$\order L_{P_{\times \fn}}=r$. 

{\sloppy
\begin{lemma}\label{lem:normal from steep}
Suppose $(P,1,\hat a)$ is  steep, $\fv(L_{P_{\times\fn}})\preceq\fv$, and $P_{>1}\preceq P_1$.
Then~$(P,\fn,\hat a)$ is normal.
\end{lemma}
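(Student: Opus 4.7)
My plan is to verify the two defining conditions (N1) and (N2) of normality for $(P,\fn,\hat a)$ with respect to the span $\fw := \fv(L_{P_{\times\fn}})$. Note that $L_{P_{\times\fn}} = L_P\fn$ has order $r$ (since $L_P$ does and $\fn\ne 0$). For (N1), I need $\fw\prec^\flat 1$: since $\fw\preceq\fv\prec 1$ by hypothesis and steepness of $(P,1,\hat a)$, certainly $\fw\prec 1$. For $\fw^\dagger\succeq 1$, I would split into two cases. If $\fw\prec\fv$, then $\fw,\fv\prec 1$ and $H$-asymptoticness gives $\fw^\dagger\succeq\fv^\dagger\succeq 1$. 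If $\fw\asymp\fv$, write $\fw=u\fv$ with $u\asymp 1$; since $\Psi\cap\Gamma^>\ne\emptyset$, [ADH, 9.1.3(iv)] gives $\der\mathcal O\subseteq\smallo$, so $u^\dagger\prec 1$ and $\fw^\dagger=u^\dagger+\fv^\dagger\sim\fv^\dagger\succeq 1$.

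The substantive content is (N2), namely $(P_{\times\fn})_{>1}\prec_{\Delta(\fw)} \fw^{w+1}(P_{\times\fn})_1$. The key opening observation is that $[\fw]\le[\fv]<[\fn]$ gives $\Delta(\fw)\subseteq\Delta(\fn)$, so it suffices to prove the ostensibly stronger inequality with $\Delta(\fn)$ replacing $\Delta(\fw)$. To estimate the relevant homogeneous parts, I would mimic the computation in the proof of Proposition~\ref{easymultnormal} but use [ADH, 6.1.3] to obtain $(P_d)_{\times\fn}\asymp_{\Delta(\fn)}\fn^d P_d$ for each $d$ with $P_d\ne 0$ (the coarsening is $\Delta(\fn)$ rather than $\Delta(\fv)$ because here $[\fn]>[\fv]$ instead of $[\fn]\le[\fv]$). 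Using $P_{>1}\preceq P_1$ and $\fn\prec 1$, for each $d\geq 2$ this chains to
\[
(P_{\times\fn})_d\ =\ (P_d)_{\times\fn}\ \asymp_{\Delta(\fn)}\ \fn^d P_d\ \preceq\ \fn^d P_1\ \asymp_{\Delta(\fn)}\ \fn^{d-1}(P_{\times\fn})_1\ \preceq\ \fn(P_{\times\fn})_1,
\]
whence $(P_{\times\fn})_{>1}\preceq_{\Delta(\fn)}\fn(P_{\times\fn})_1$.

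The final step is to compare $\fn$ with $\fw^{w+1}$ modulo $\Delta(\fn)$. By Lemma~\ref{lem:An} we have $v\fw = v\fv + o(v\fn)$, which lies in $\Delta(\fn)$ because $v\fv\in\Delta(\fn)$ (from $[\fv]<[\fn]$); hence $(w+1)v\fw\in\Delta(\fn)$, whereas $v\fn>\Delta(\fn)$, so $v\fn-(w+1)v\fw>\Delta(\fn)$, i.e.\ $\fn\prec_{\Delta(\fn)}\fw^{w+1}$. Substituting this into the chain above yields $(P_{\times\fn})_{>1}\prec_{\Delta(\fn)}\fw^{w+1}(P_{\times\fn})_1$, and hence the desired $\prec_{\Delta(\fw)}$ version, proving (N2). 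The main delicacy is that multiplicative conjugation by a very small element $\fn$ (with $[\fn]>[\fv]$) naturally produces error terms measured by $\Delta(\fn)$ rather than the finer $\Delta(\fv)$ used in Proposition~\ref{easymultnormal}; what makes the proof go through is that $\Delta(\fw)\subseteq\Delta(\fn)$, so the coarser reduction is in fact sufficient to absorb the required $\fw^{w+1}$-correction.
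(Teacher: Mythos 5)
Your proposal is correct and follows essentially the same route as the paper's proof: reduce the $\Delta(\fw)$-dominance to $\Delta(\fn)$-dominance using $[\fw]<[\fn]$, then estimate $(P_{\times\fn})_d$ against $\fn^d P_d$ via [ADH, 6.1.3] and absorb $\fw^{w+1}$ because $v\fw\in\Delta(\fn)$. The only minor stylistic difference is that your (N1) case analysis is a bit more elaborate than necessary — $\fw\preceq\fv\prec^\flat 1$ gives $\fw\prec^\flat 1$ directly since $v\fw\geq v\fv>\Gamma^\flat$.
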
}
\begin{proof}
Put $\fw:=\fv(L_{P_{\times\fn}})$. Then~$[\fw]<[\fn]$ by Corollary~\ref{cor:An}, and $\fw\preceq \fv \prec^{\flat} 1$ gives~$\fw\prec^\flat 1$.  It remains to show that
$(P_{\times\fn})_{>1} \prec_{\Delta(\fw)} \fw^{w+1} (P_{\times\fn})_1$.
Using~$[\fn]>[\fw]$ it is enough that
$(P_{\times\fn})_{>1} \prec_\Delta \fw^{w+1} (P_{\times\fn})_1$, 
where $\Delta:=\Delta(\fn)$.  Since
$\fw\asymp_\Delta 1$, it is even enough that $(P_{\times\fn})_{>1} \prec_{\Delta}  (P_{\times\fn})_1$, to
be derived below. 
Let~$d>1$. Then by~[ADH, 6.1.3] and $P_d\preceq P_{>1}\preceq P_1$ we have
$$(P_{\times\fn})_d\ =\ (P_d)_{\times\fn}\
\asymp_\Delta\  P_d \fn^d\ \preceq\ P_1\,\fn^d.$$
 In view of
$\fn\prec_{\Delta} 1$ and $d>1$ we have
$$ P_1\,\fn^d\ \prec_\Delta\ P_1\, \fn\ \asymp_\Delta\ (P_1)_{\times\fn}\ =\ (P_{\times\fn})_1,$$
using again [ADH, 6.1.3]. Thus $(P_{\times\fn})_d \prec_\Delta (P_{\times\fn})_1$, as promised.
\end{proof}

\begin{cor}\label{cor:normal for small q, prepZ}
If $(P, 1, \hat a)$ is normal and~${\fv(L_{P_{\times\fn}})\preceq\fv}$, then $(P,\fn,\hat a)$ is normal.
\end{cor}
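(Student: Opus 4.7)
The plan is to reduce this directly to Lemma~\ref{lem:normal from steep}, whose hypotheses (once unpacked) are already essentially packaged inside normality of $(P,1,\hat a)$, together with the extra assumption $\fv(L_{P_{\times\fn}})\preceq\fv$.

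First, I would observe that the ambient hypotheses of the subsection, $\fm=1$, $\hat a\prec\fn\prec 1$, $\order(L_P)=r$, and $[\fv]<[\fn]$ for $\fv:=\fv(L_P)$, are exactly those needed to invoke Lemma~\ref{lem:normal from steep}. So it only remains to verify the three specific hypotheses of that lemma for $(P,1,\hat a)$: that it is steep, that $\fv(L_{P_{\times\fn}})\preceq\fv$, and that $P_{>1}\preceq P_1$.

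The second of these is given. The first is automatic from normality of $(P,1,\hat a)$: by definition, normality requires $\order L_P=r$ and $\fv=\fv(L_P)\prec^\flat 1$, which is exactly the steepness condition for $\fm=1$. For the third, normality also gives (N2), namely
\[
P_{>1}\ \prec_{\Delta(\fv)}\ \fv^{w+1}P_1,
\]
and since $\fv\prec 1$ (hence $\fv^{w+1}\preceq 1$) we conclude $P_{>1}\prec_{\Delta(\fv)} P_1$, which in particular yields $P_{>1}\preceq P_1$.

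With all three hypotheses of Lemma~\ref{lem:normal from steep} verified, that lemma delivers normality of $(P,\fn,\hat a)$, which is exactly the desired conclusion. There is no real obstacle here; the corollary is essentially a packaged restatement of Lemma~\ref{lem:normal from steep} in the special case where the hypothesis $P_{>1}\preceq P_1$ is upgraded to follow from (N2) for $(P,1,\hat a)$, and where the steepness assumption is absorbed into normality.
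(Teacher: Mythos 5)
Your proof is correct and follows exactly what the paper intends: the corollary is stated without proof precisely because it is the immediate specialization of Lemma~\ref{lem:normal from steep}, where steepness and the bound $P_{>1}\preceq P_1$ come for free from (N1) and (N2) in the definition of normality. Your verification of those two hypotheses is accurate.
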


\noindent
In the next lemma and its corollary $K$ is
$\d$-valued and for every $q\in \Q^{>}$ there is given an element $\fn^q$ of $K^\times$ such that $(\fn^q)^\dagger=q\fn^\dagger$;
the remark before Lemma~\ref{qlA} gives $v(\fn^q)=qv(\fn)$ for $q\in \Q^{>}$. 
Hence for $0 < q\leq 1$ in $\Q$ we have~$\hat a\prec\fn\preceq\fn^q\prec 1$, so $(P,\fn^q,\hat a)$ refines~$(P,1,\hat a)$. 

\begin{lemma}\label{lem:normal for small q}
Suppose $(P,1,\hat a)$ is  steep   and $P_{>1}\preceq P_1$.  Then $(P,\fn^q,\hat a)$ is normal,  for all but finitely many $q\in \Q$ with $0<q \le 1$. 
\end{lemma}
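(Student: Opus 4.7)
\medskip
\noindent
\textbf{Proof plan.} The strategy is to reduce the claim, for each admissible~$q$, to Lemma~\ref{lem:normal from steep} with $\fn^q$ in place of~$\fn$, so the work is in verifying its hypothesis on spans for all but finitely many $q$.

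First I would check that the subsection's standing hypotheses transfer from $\fn$ to $\fn^q$. For $0<q\le 1$ in~$\Q$ we have $v(\fn^q)=qv\fn\in(0,v\fn]$, so $\hat a\prec\fn\preceq\fn^q\prec 1$; also $[\fn^q]=[\fn]>[\fv]$, and $\order(L_P)=r$ by assumption.

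Next I would establish that $\fn^\dagger\succeq 1$, which is the key ingredient for applying Lemma~\ref{lem:nepsilon}. From $[\fv]<[\fn]$ with both $v\fv,v\fn>0$ one gets $n\cdot v\fv<v\fn$ for all $n\ge 1$; in particular $\fn\prec\fv\prec 1$. By $H$-asymptoticity this yields $\fn^\dagger\succeq\fv^\dagger$. Steepness of $(P,1,\hat a)$ means $\fv\prec^\flat 1$, i.e.\ $\fv^\dagger\succeq 1$; hence $\fn^\dagger\succeq 1$.

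Now I apply Lemma~\ref{lem:nepsilon} to the operator $A:=L_P$: its span is $\fv$, we have $[\fv]<[\fn]$ and $\fn^\dagger\succeq 1$, so for all but finitely many $q\in\Q^{>}$ we have $\fv(L_P\,\fn^q)\preceq\fv$. Since $L_{P_{\times\fn^q}}=L_P\,\fn^q$, this says $\fv(L_{P_{\times\fn^q}})\preceq\fv$ for all but finitely many $q\in\Q^{>}$, and in particular for all but finitely many $q\in\Q\cap(0,1]$.

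Finally, for each such $q$ I invoke Lemma~\ref{lem:normal from steep} with $\fn^q$ in the role of $\fn$: its three hypotheses ---~steepness of $(P,1,\hat a)$, $\fv(L_{P_{\times\fn^q}})\preceq\fv$, and $P_{>1}\preceq P_1$~--- all hold. The conclusion is that $(P,\fn^q,\hat a)$ is normal, which is precisely what we want. No step looks delicate; the one point that requires a moment's care is deducing $\fn^\dagger\succeq 1$ from $[\fv]<[\fn]$ together with steepness, and this is handled by the short argument above.
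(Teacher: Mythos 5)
Your proposal is correct and follows essentially the same route as the paper's proof: deduce $\fn^\dagger\succeq 1$ from $\fn\prec\fv\prec 1$ (via $H$-asymptoticity) and $\fv^\dagger\succeq 1$, invoke Lemma~\ref{lem:nepsilon} to get $\fv(L_{P_{\times\fn^q}})\preceq\fv$ for all but finitely many $q\in\Q^{>}$, and then apply Lemma~\ref{lem:normal from steep} with $\fn^q$ in place of $\fn$. Your extra check that the standing hypotheses ($\hat a\prec\fn^q\prec 1$, $[\fv]<[\fn^q]$) transfer to $\fn^q$ is a fine explicit addition that the paper leaves implicit.
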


{\sloppy
\begin{proof} We have $\fn^\dagger\succeq 1$ by $\fn\prec \fv\prec 1$ and $\fv^\dagger\succeq 1$.   
Lemma~\ref{lem:nepsilon} gives~$\fv(L_{P_{\times\fn^q}})\preceq \fv$ for all but finitely many $q\in \Q^{>}$. Suppose $\fv(L_{P_{\times\fn^q}})\preceq \fv$, $0<q\le 1$ in $\Q$. Then~$(P,\fn^q,\hat a)$ is normal by Lemma~\ref{lem:normal from steep} applied with $\fn^q$ instead of $\fn$.
\end{proof}}

\begin{cor}\label{cor:normal for small q}
If $(P, 1, \hat a)$ is normal, 
then $(P,\fn^q,\hat a)$ is  normal for all but finitely many $q\in \Q$ with~$0<q\le 1$.
\end{cor}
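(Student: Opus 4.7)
The plan is to derive Corollary~\ref{cor:normal for small q} directly from Lemma~\ref{lem:normal for small q} by verifying that the hypotheses of the lemma are automatic consequences of $(P,1,\hat a)$ being normal. Two conditions must be checked: (a) $(P,1,\hat a)$ is steep, and (b) $P_{>1}\preceq P_1$.

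Condition (a) is built into the definition of a normal slot: normality requires $\order(L_P)=r$ together with (N1), namely $\fv\prec^\flat 1$ where $\fv:=\fv(L_P)$, and this is exactly what steepness demands. So there is nothing to do here.

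For (b) I would invoke condition (N2) from the definition of normality, which in the case $\fm=1$ reads $P_{>1}\prec_{\Delta(\fv)}\fv^{w+1}P_1$. Since the relation $\prec_{\Delta(\fv)}$ is a coarsening of $\prec$ (an element is $\prec_{\Delta(\fv)}$-small exactly when its valuation exceeds every element of $\Delta(\fv)$, hence in particular exceeds $0$), we have $P_{>1}\prec \fv^{w+1}P_1$. Because $\fv\prec 1$ we get $\fv^{w+1}\preceq 1$, so $P_{>1}\prec P_1$; a fortiori $P_{>1}\preceq P_1$.

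With both hypotheses of Lemma~\ref{lem:normal for small q} in hand, the conclusion is immediate: for all but finitely many rationals $q$ with $0<q\leq 1$, the slot $(P,\fn^q,\hat a)$ is normal. There is no real obstacle—the only thing to be careful about is the passage between the coarsened asymptotic relation $\prec_{\Delta(\fv)}$ and the ordinary $\prec$, which is harmless in this direction.
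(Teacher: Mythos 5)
Your proposal is correct and is exactly the intended derivation: the paper states this corollary immediately after Lemma~\ref{lem:normal for small q} with no separate proof, since normality gives steepness by definition and (N2) with $\fm=1$ gives $P_{>1}\prec_{\Delta(\fv)}\fv^{w+1}P_1$, hence $P_{>1}\prec P_1$ as you argue. Your handling of the passage from $\prec_{\Delta(\fv)}$ to $\prec$ (using $0\in\Delta(\fv)$ and $\fv\prec 1$) is the right justification.
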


\subsection*{Normalizing} {\it If in this subsection
$\order(L_{P_{\times \fm}})=r$, then $\fv:=\fv(L_{P_{\times\fm}})$.}\/
Towards proving that normality can always be achieved we first show:

\begin{lemma}\label{lem:normalizing}
Suppose $\Gamma$ is divisible, $(P,\fm,\hat a)$ is a deep hole in $K$, and $\hat a-a \prec \fv^{w+2}\fm$ for some $a$. 
Then $(P,\fm,\hat a)$  has a refinement that is deep and normal.
\end{lemma}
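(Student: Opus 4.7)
My plan is to construct the desired refinement as $(P_{+a},\fn,\hat a-a)$ for a suitably chosen $\fn$ comparable to $\fv^{w+2}\fm$, and then verify deepness and normality using the earlier structural results. After multiplicatively conjugating by $\fm$ (and replacing $a$ by $a/\fm$) I reduce to the case $\fm=1$, so that $\fv=\fv(L_P)\prec^\flat 1$, $(P,1,\hat a)$ is a deep hole, and $\hat a-a\prec\fv^{w+2}$. Since $w+2\in\N$, the element $\fn:=\fv^{w+2}$ is in $K^\times$ with $\hat a-a\prec\fn\prec 1$ and $[\fn]=[\fv]$. Hence $(P_{+a},\fn,\hat a-a)$ refines $(P,1,\hat a)$ and satisfies $[\fn/\fm]\leq[\fv]$, so Lemma~\ref{lem:deep 2} yields at once that the refinement is deep with $\fw:=\fv(L_{P_{+a,\times\fn}})\asymp_{\Delta(\fv)}\fv$; in particular $\Delta(\fw)=\Delta(\fv)$, and (N1) for normality is granted by steepness.

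The main obstacle is verifying (N2), namely $(P_{+a,\times\fn})_{>1}\prec_{\Delta(\fv)}\fv^{w+1}(P_{+a,\times\fn})_1$. My strategy is to bound both sides in terms of $L_P$ and $P_{>1}$. For the linear part: deepness gives $\ddeg P=1$, so $P_{>1}\prec P_1\asymp L_P$; termwise application of [ADH, 4.5.1(i)] to each $(\partial P/\partial Y^{(n)})_{\geq 1}$ shows $L_{P_{+a}}-L_P\preceq P_{>1}\prec L_P$, so $L_{P_{+a}}\sim L_P$, and [ADH, 6.1.3] gives $(P_{+a,\times\fn})_1=L_{P_{+a}}\fn\asymp_{\Delta(\fn)}\fn L_P$. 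For the higher-degree part: for each $d\geq 2$, [ADH, 4.5.1(i)] gives $(P_{+a})_d\preceq P_{\geq d}\preceq P_{>1}$ and [ADH, 6.1.3] gives $((P_{+a})_d)_{\times\fn}\asymp_{\Delta(\fn)}\fn^d(P_{+a})_d$; summing over $d\geq 2$ and using $\fn\prec 1$ yields $(P_{+a,\times\fn})_{>1}\preceq_{\Delta(\fv)}\fn^2 P_{>1}$.

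With these bounds, (N2) reduces to $\fn^2 P_{>1}\prec_{\Delta(\fv)}\fv^{w+1}\fn L_P$, equivalently, using $\fn\asymp\fv^{w+2}$, to $\fv P_{>1}\prec_{\Delta(\fv)} L_P$. This follows from the computation $v(\fv P_{>1})-v(L_P)=v(\fv)+(v(P_{>1})-v(L_P))\geq v(\fv)>\Delta(\fv)$, where the last inequality holds because $v(\fv)>0$ and $v(\fv)\notin\Delta(\fv)$ by the very definition of $\Delta(\fv)$. The conceptual point behind taking $\fn\asymp\fv^{w+2}$ (rather than $\fv^{w+1}$) is that multiplicative conjugation shrinks degree-$d$ parts by $\fn^d\preceq\fn^2$ while the linear part is shrunk only by $\fn$; this buys a factor of $\fn\asymp\fv^{w+2}$ in the ratio, which beats the required $\fv^{w+1}$ with one $\fv$ to spare, comfortably placing the excess outside $\Delta(\fv)$. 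This completes the verification that $(P_{+a},\fn,\hat a-a)$ is both deep and normal.
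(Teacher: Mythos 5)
Your proof is correct and follows essentially the same route as the paper's: arrange $\fm=1$, refine by a power of $\fv$ just beyond $\fv^{w+1}$, obtain deepness and $\fv(L_{P_{+a,\times\fn}})\asymp_{\Delta(\fv)}\fv$ from Lemma~\ref{lem:deep 2}, and verify (N2) using [ADH, 4.5.1(i), 6.1.3] together with Lemma~\ref{lem:normal pos criterion}. The only deviations are cosmetic: the paper takes the fractional exponent $e=w+\frac{3}{2}$ (which is where divisibility of $\Gamma$ actually enters), while your integer choice $\fn=\fv^{w+2}$ sidesteps divisibility, and you route the degree-$d$ estimate through $(P_{+a})_d\preceq P_{>1}\prec P_1$ rather than through $(P_{+a})_d\preceq P_{+a}\sim(P_{+a})_1$ as in the paper.
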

\begin{proof}
Replacing $(P,\fm,\hat a)$ by $(P_{\times \fm}, 1, \hat a/\fm)$ and renaming we arrange $\fm=1$. Take~$a$ such that
$\hat a -a\prec \fv^{w+2}$.   
For $e:=w+\frac{3}{2}$, let $\fv^e$ be an element of~$K^\times$ with~$v(\fv^e)=e\,v(\fv)$. {\em Claim}: the refinement
$(P_{+a},\fv^e,\hat a-a)$ of~$(P,1,\hat a)$  is deep and normal.
By Lemma~\ref{lem:deep 2}, $(P_{+a},\fv^e,\hat a-a)$  is deep, so
we do have $\order(L_{P_{+a,\times\fv^e}})=r$ 
and $\fv(L_{P_{+a,\times\fv^e}})\prec^\flat 1$.
Lemma~\ref{lem:deep 2} also yields $\fv(L_{P_{+a,\times\fv^e}}) \asymp_{\Delta(\fv)} \fv$. Since~$\ddeg P=\dval P=1$,
we can use Corollary~\ref{cor:ref 2} for $\fn=\fv^e$ and for $\fn=1$ to obtain 
$$\ddeg P_{+a,\times\fv^e}\ =\ \dval P_{+a,\times\fv^e}\ =\ \ddeg P_{+a}\ =\ \dval P_{+a}=1$$
and thus 
$(P_{+a,\times\fv^e})_1 \sim P_{+a,\times\fv^e}$; also
$P_1\sim P\sim P_{+a}\sim (P_{+a})_1$,  where $P\sim P_{+a}$ follows from
$a\prec 1$ and [ADH, 4.5.1(i)]. 
Now let $d>1$. Then 
\begin{align*}
(P_{+a,\times\fv^e})_d	&\ \asymp_{\Delta(\fv)}\ (\fv^{e})^d (P_{+a})_d 
						 \ \preceq\ (\fv^{e})^d P_{+a}
						 \ \sim\ (\fv^{e})^d(P_{+a})_1 \\
						&\ \asymp_{\Delta(\fv)}\ (\fv^{e})^{d-1} (P_{+a,\times\fv^e})_1\ \prec_{\Delta(\fv)} \fv^{w+1} (P_{+a,\times\fv^e})_1,
\end{align*}
using [ADH, 6.1.3] for $\asymp_{\Delta(\fv)}$. So $(P_{+a},\fv^e,\hat a-a)$ is normal by Lemma~\ref{lem:normal pos criterion}.
\end{proof}

\noindent
We can now finally show:

\begin{theorem}\label{mainthm} 
Suppose $K$ is $\upo$-free and $r$-linearly newtonian, and $\Gamma$ is divisible. Then every $Z$-minimal slot in $K$
of order $r$ has a refinement $(P,\fm,\hat a)$ such that~$(P^\phi,\fm,\hat a)$ is deep and normal, eventually. 
\end{theorem}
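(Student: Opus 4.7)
The plan is to combine three tools from the preceding material: Corollary~\ref{cor:deepening}, which achieves deepness after a compositional conjugation; Corollary~\ref{cor:closer to minimal holes}, which provides a good approximation $a\in K$ to $\hat a$; and Lemma~\ref{lem:normalizing}, which performs the final normalization by an additive-plus-multiplicative refinement of a deep hole. The bookkeeping between these three steps---shuttling between a slot, an equivalent hole, and a compositional conjugate---is the main thing to keep straight.

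First I would apply Corollary~\ref{cor:deepening} to the given $Z$-minimal slot to obtain a quasilinear refinement $(P_1,\fm_1,\hat a_1)$ such that $(P_1^\phi,\fm_1,\hat a_1)$ is deep eventually; fix an active $\phi_0\preceq 1$ with $(P_1^{\phi_0},\fm_1,\hat a_1)$ deep. The compositional conjugate $K^{\phi_0}$ is still $\upo$-free, $r$-linearly newtonian, and has divisible value group $\Gamma$, and $(P_1^{\phi_0},\fm_1,\hat a_1)$ is still $Z$-minimal in $K^{\phi_0}$. Hence it suffices to prove: any $Z$-minimal deep slot in $K$ (under the present hypotheses) admits a refinement that is deep and normal. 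Indeed, applying this statement inside $K^{\phi_0}$ to $(P_1^{\phi_0},\fm_1,\hat a_1)$ produces a refinement of the shape $(P_{1,+a},\fn,\hat a_1-a)$ of the original slot in $K$ whose $\phi_0$-conjugate is deep and normal; and since both properties persist under further compositional conjugation by active elements $\preceq 1$ (Lemma~\ref{lem:normality comp conj} and the remark after the definition of ``deep''), the $\phi$-conjugate of this refinement is deep and normal in $K$ for every active $\phi\preceq\phi_0$, i.e., eventually.

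So suppose $(P,\fm,\hat a)$ is $Z$-minimal and deep in $K$, and set $w:=\wt(P)$ and $\fv:=\fv(L_{P_{\times\fm}})\prec^\flat 1$. Corollary~\ref{cor:closer to minimal holes} (with $n:=w+2$) yields $a\in K$ with $\hat a-a\prec\fv^{w+2}\fm$. I would then invoke Lemma~\ref{lem:from cracks to holes} to produce a $Z$-minimal hole $(P,\fm,\hat b)$ in $K$ equivalent to $(P,\fm,\hat a)$; since deepness depends only on $P$, $\fm$, and $K$, the hole $(P,\fm,\hat b)$ is still deep, and the equivalence $v(\hat a-c)=v(\hat b-c)$ for all $c\in K$ gives $\hat b-a\prec\fv^{w+2}\fm$ as well.

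Now Lemma~\ref{lem:normalizing}, applied to the deep hole $(P,\fm,\hat b)$ and the element $a$, yields a refinement of $(P,\fm,\hat b)$ that is deep and normal. Inspecting its proof, this refinement has the form $(P_{+a},\fn,\hat b-a)$ for some $\fn\in K^\times$ with $v(\fn)=(w+\tfrac{3}{2})v(\fv)+v(\fm)$, whose existence uses the divisibility of $\Gamma$. Because $v(\hat a-a)=v(\hat b-a)$, the analogous triple $(P_{+a},\fn,\hat a-a)$ is likewise a refinement of $(P,\fm,\hat a)$, equivalent to $(P_{+a},\fn,\hat b-a)$, and hence also deep and normal, since both properties depend only on the pair $(P_{+a},\fn)$. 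The genuinely hard step is Lemma~\ref{lem:normalizing} itself; what remains for the present theorem is the careful routing through compositional conjugates and the hole/slot equivalence, which I expect to be the main organizational obstacle but nothing more.
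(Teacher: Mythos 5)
Your proposal is correct and uses exactly the same key ingredients as the paper's proof (Lemma~\ref{lem:from cracks to holes}, Corollary~\ref{cor:deepening}, Corollary~\ref{cor:closer to minimal holes}, Lemma~\ref{lem:normalizing}, Lemma~\ref{lem:normality comp conj}); the only difference is cosmetic: the paper reduces to a hole at the very start and then stays in the hole category throughout, while you stay with slots as long as possible and pass to an equivalent hole only to feed Lemma~\ref{lem:normalizing}, pulling back afterwards via the fact that deepness and normality depend only on $(P,\fm)$. Both routings are valid, and the organizational care you flag as the ``main obstacle'' is exactly what the paper's proof handles in the same way.
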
 
\begin{proof}
By Lemma~\ref{lem:from cracks to holes} it is enough to show this for $Z$-minimal holes in $K$ of or\-der~$r$.
Given such a  hole in $K$, use Corollary~\ref{cor:deepening} to refine it to a hole $(P,\fm,\hat a)$ such that $(P^\phi,\fm,\hat a)$ is deep, eventually.  
Replacing $(P,\fm,\hat a)$ by~$(P^\phi,\fm,\hat a)$ for a suitable active $\phi\preceq 1$ we arrange that
$(P,\fm,\hat a)$ itself is deep.
Then an appeal to Corollary~\ref{cor:closer to minimal holes} followed by an application of Lemma~\ref{lem:normalizing} yields a deep and normal refinement of  $(P,\fm,\hat a)$. Now apply Lemma~\ref{lem:normality comp conj} to this refinement. 
\end{proof}

\noindent
Next we indicate some variants of Theorem~\ref{mainthm}:

\begin{cor}\label{cor:mainthm} Suppose $K$ is $\d$-valued
and $\upo$-free, and $\Gamma$ is divisible. Then every minimal hole in $K$
of order $r$ has a refinement $(P,\fm,\hat a)$ such that $(P^\phi,\fm,\hat a)$ is deep and normal, eventually.
\end{cor}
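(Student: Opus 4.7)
The plan is to bifurcate on the degree of the minimal hole. Given a minimal hole $(P,\fm,\hat a)$ in $K$ of order~$r$, it is $Z$-minimal by Lemma~\ref{lem:Z(K,hat a)}, and the only hypothesis of Theorem~\ref{mainthm} that is not among our standing assumptions is that $K$ be $r$-linearly newtonian. So the task reduces to either verifying that hypothesis or circumventing the use of Theorem~\ref{mainthm}.

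If $\deg P>1$, then Corollary~\ref{degmorethanone} directly supplies $r$-linear newtonianity of~$K$. With $K$ also $\upo$-free and $\Gamma$ divisible, all hypotheses of Theorem~\ref{mainthm} are satisfied, and applying it to the $Z$-minimal slot~$(P,\fm,\hat a)$ produces the desired refinement. If instead $\deg P=1$, then $\ndeg P_{\times\fm}\le \deg P=1$, while Lemma~\ref{lem:lower bd on ndeg} gives $\ndeg P_{\times\fm}\ge 1$, so the slot is quasilinear. I would then invoke Corollary~\ref{cor:deepening, q-linear}, which assumes only $Z$-minimality and quasilinearity (no $r$-linear newtonianity), to obtain a refinement $(P',\fm',\hat a')$ with $\nwt L_{P'_{\times\fm'}}=0$ such that $(P'^\phi,\fm',\hat a')$ is deep, eventually. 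Since additive conjugation and compositional conjugation both preserve total degree (the latter by [ADH, 5.7.5]), we have $\deg P'=\deg P'^\phi=1$, so $(P'^\phi_{\times\fm'})_{>1}=0$ and condition (N2) is trivially satisfied (one has $0\prec_{\Delta(\fv)}\fv^{w+1}(P'^\phi_{\times\fm'})_1$ because the right-hand side is nonzero). Hence in degree~$1$ normality is equivalent to steepness, and steepness is part of being deep, so $(P'^\phi,\fm',\hat a')$ is simultaneously deep and normal, eventually.

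No substantial obstacle is anticipated: the linear case is essentially trivial once one recognizes the vanishing of $(Q_{\times\fn})_{>1}$ in degree~$1$, and the higher-degree case reduces cleanly to Theorem~\ref{mainthm} via Corollary~\ref{degmorethanone}. The only mildly subtle point is the observation that $r$-linear newtonianity can be read off from the minimal hole in the degree~$>1$ case, which is exactly the content of Corollary~\ref{degmorethanone}, so this does not require any new argument. Note that the hypothesis ``$K$ is $\d$-valued'' in the corollary is not actually used in the proof sketched here; it is retained presumably for uniformity with surrounding statements.
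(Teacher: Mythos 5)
Your argument is correct and follows essentially the same route as the paper: the paper's proof likewise splits on $\deg P$, handling $\deg P>1$ via Corollary~\ref{degmorethanone} and Theorem~\ref{mainthm}, and the linear case by first deepening (Corollary~\ref{cor:deepening}) and then concluding eventual normality from $\deg P=1$ (Lemma~\ref{lem:deg1 normal}). The only divergence is that in the degree-one case you use the quasilinear deepening result (Corollary~\ref{cor:deepening, q-linear}) and get normality from deepness via steepness and the vacuity of (N2) in degree $1$, which is equally valid and indeed backs up your aside that $\d$-valuedness is not strictly needed, whereas the paper's uniform use of Corollary~\ref{cor:deepening} is where that hypothesis enters.
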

\begin{proof} Given a minimal hole in $K$ of order $r$, use Corollary~\ref{cor:deepening} to refine it to a hole~$(P,\fm,\hat a)$ in $K$ such that $\nwt L_{P_{\times \fm}}=0$ and $(P^\phi,\fm,\hat a)$ is deep, eventually.
If~${\deg P=1}$, then $(P^\phi,\fm,\hat a)$ is normal, eventually, by Lemma~\ref{lem:deg1 normal}. 
If~${\deg P>1}$, then~$K$ is $r$-linearly newtonian by
Corollary~\ref{degmorethanone}, so we can use Theorem~\ref{mainthm}.
\end{proof}

\noindent
For $r=1$ we can follow the proof of Theorem~\ref{mainthm},  using Corollary~\ref{cor:deepening, q-linear} in place of Corollary~\ref{cor:deepening}, to obtain:

\begin{cor}  \label{mainthm, r=1} 
If $K$ is  $1$-linearly newtonian and $\Gamma$ is divisible, then every quasilinear $Z$-minimal slot in $K$
of order $1$ has a refinement $(P,\fm,\hat a)$ such that~$(P^\phi,\fm,\hat a)$ is deep and normal, eventually. 
\end{cor}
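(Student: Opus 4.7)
The plan is to follow the proof template of Theorem~\ref{mainthm} essentially verbatim, with the single crucial substitution of Corollary~\ref{cor:deepening, q-linear} for Corollary~\ref{cor:deepening}. The point is that the $\upo$-freeness hypothesis in Theorem~\ref{mainthm} enters only through the appeal to Corollary~\ref{cor:deepening} (which in turn uses Lemma~\ref{lem:quasilinear refinement} to first secure quasilinearity); in the current corollary, quasilinearity is assumed outright, so we can bypass that step and use Corollary~\ref{cor:deepening, q-linear} instead, which is proved without any $\upo$-freeness.

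Concretely: starting with a quasilinear $Z$-minimal slot of order $1$, first apply Lemma~\ref{lem:from cracks to holes} to pass to an equivalent $Z$-minimal hole in $K$, which remains quasilinear since equivalence preserves $P$ and $\fm$. Apply Corollary~\ref{cor:deepening, q-linear}, valid in our ambient setting, to refine this hole to a hole $(P,\fm,\hat a)$ with $(P^\phi,\fm,\hat a)$ deep eventually. Replace $(P,\fm,\hat a)$ by its compositional conjugate $(P^{\phi_0},\fm,\hat a)$ in $K^{\phi_0}$ for suitable active $\phi_0\preceq 1$; since $1$-linear newtonianity and divisibility of the value group are invariant under compositional conjugation, we may assume $(P,\fm,\hat a)$ itself is deep over a field still satisfying the hypotheses.

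Now use Corollary~\ref{cor:closer to minimal holes} specialized to $r=1$, where the $\upo$-freeness condition is vacuous and only $1$-linear newtonianity is required, to produce $a\in K$ with $\hat a-a\prec \fv^{w+2}\fm$, where $\fv:=\fv(L_{P_{\times\fm}})$ and $w:=\wt(P)$. With $\Gamma$ divisible, Lemma~\ref{lem:normalizing} then yields a refinement of $(P,\fm,\hat a)$ that is both deep and normal. Finally, Lemma~\ref{lem:normality comp conj} together with the (immediate) preservation of deepness under compositional conjugation by active $\phi\preceq 1$ gives the desired eventual conclusion; transporting back through the equivalence from the first step delivers a refinement of the original slot with the stated property.

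I do not anticipate a genuine obstacle: each ingredient has been set up in this section with an eye toward this $r=1$ variant. The only thing to verify carefully is that all three of Corollary~\ref{cor:closer to minimal holes}, Lemma~\ref{lem:normalizing}, and Lemma~\ref{lem:normality comp conj} are available with the weaker hypothesis (no $\upo$-freeness) in the case $r=1$—which is indeed so, as inspection of their statements and the role of $\upo$-freeness in Theorem~\ref{mainthm} confirms. The mild bookkeeping nuisance is simply tracking that compositional conjugation in the intermediate step does not break $1$-linear newtonianity or the divisibility of $\Gamma$, both of which are transparent.
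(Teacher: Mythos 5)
Your proposal is correct and is essentially the paper's own argument: the paper simply states that one follows the proof of Theorem~\ref{mainthm} with Corollary~\ref{cor:deepening, q-linear} in place of Corollary~\ref{cor:deepening}, and you have expanded this into explicit steps, correctly observing that Corollary~\ref{cor:closer to minimal holes} drops the $\upo$-freeness hypothesis when $r=1$ and that Lemma~\ref{lem:normalizing} and Lemma~\ref{lem:normality comp conj} never required it. No gap.
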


\noindent
Here is another variant of Theorem~\ref{mainthm}:

\begin{prop}\label{varmainthm} If $K$ is $\d$-valued and $\upo$-free, and
$\Gamma$ is divisible, then every $Z$-minimal special slot
in $K$ of order $r$ has a refinement $(P,\fm,\hat a)$ such that $(P^\phi,\fm,\hat a)$ is deep and normal, eventually. 
\end{prop}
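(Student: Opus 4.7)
The plan is to adapt the proof of Theorem~\ref{mainthm} by using the hypothesis of specialness to bypass the use of Corollary~\ref{cor:closer to minimal holes} (which is where $r$-linear newtonianity was essential); the direct substitute is Corollary~\ref{specialvariant}, which grants good approximations $\hat a-a\prec\fv^n\fm$ for $Z$-minimal, deep, \emph{special} slots without any newtonianity hypothesis.

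First I would reduce to the case of a $Z$-minimal special hole in $K$ of order $r$. Given a $Z$-minimal special slot $(Q,\fn,\hat a)$ of order $r$, Lemma~\ref{lem:from cracks to holes} produces a $Z$-minimal hole $(Q,\fn,\hat b)$ in $K$ equivalent to it, and specialness is preserved under equivalence (by the remarks right after the definition of \emph{special}), so $(Q,\fn,\hat b)$ is special. Next, applying Corollary~\ref{cor:deepening} (whose hypotheses $K$ $\d$-valued, $\upo$-free, $\Gamma$ divisible hold) gives a quasilinear refinement of $(Q,\fn,\hat b)$ which becomes deep after compositional conjugation by some small active $\phi_0$; by Lemma~\ref{speciallemma} refinements of special slots are special, and compositional conjugates of special slots are special, so the resulting slot $(P,\fm,\hat a)$ in $K^{\phi_0}$ is a $Z$-minimal special deep hole (after replacing $K$ by $K^{\phi_0}$, still satisfying the standing hypotheses).

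Once $(P,\fm,\hat a)$ is a $Z$-minimal deep special hole, Corollary~\ref{specialvariant} applied with $n=w+2$ (where $w=\wt P$) yields an $a\in K$ with $\hat a-a\prec \fv^{w+2}\fm$, $\fv:=\fv(L_{P_{\times\fm}})$. Lemma~\ref{lem:normalizing} then produces a refinement $(P_{+a'},\fm',\hat a-a')$ of $(P,\fm,\hat a)$ that is deep and normal. Again by Lemma~\ref{speciallemma} this refinement remains special, although we no longer need that. Finally, Lemma~\ref{lem:normality comp conj} (together with the fact that deepness is preserved under compositional conjugation by active $\phi\preceq 1$, which is immediate from the definition) shows that $(P_{+a'}^{\phi},\fm',\hat a-a')$ is deep and normal for all sufficiently small active $\phi\preceq 1$, giving the desired conclusion.

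The only point requiring any care is to verify that the property of being special is stable under all the operations used in the reduction—passage to an equivalent hole via Lemma~\ref{lem:from cracks to holes}, refinement, and compositional conjugation. All three facts are recorded in the subsection on special slots, so no real obstacle arises; this is essentially a bookkeeping variant of Theorem~\ref{mainthm} in which the specialness hypothesis is used as a direct replacement for the specialness that $r$-linear newtonianity would have forced via Lemma~\ref{lem:special dents}.
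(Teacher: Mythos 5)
Your proposal is correct and follows essentially the same route as the paper: the paper's proof is precisely "follow the proof of Theorem~\ref{mainthm}, using Lemma~\ref{speciallemma} to preserve specialness in the initial refining, with Corollary~\ref{specialvariant} taking over the role of Corollary~\ref{cor:closer to minimal holes}." Your bookkeeping of specialness under equivalence, refinement, and compositional conjugation is exactly the point the paper relies on, so there is nothing to add.
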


\noindent
To establish this proposition we follow the proof of Theorem~\ref{mainthm}, using Lem\-ma~\ref{speciallemma} to preserve specialness in the initial refining. Corollary~\ref{specialvariant} takes over the role of Corollary~\ref{cor:closer to minimal holes}
in that proof.   

\medskip
\noindent
For linear slots in $K$ we can   weaken the hypotheses of Theorem~\ref{mainthm}:

\begin{cor}\label{mainthm deg 1}
Suppose   $\deg P=1$.  Then $(P,\fm,\hat a)$ has a refinement~$(P,\fn,\hat a)$ such that $(P^\phi,\fn,\hat a)$ is deep and normal, eventually.
Moreover, if $K$ is $\upl$-free and~${r>1}$, then $(P^\phi,\fm,\hat a)$ is deep and normal, eventually.
\end{cor}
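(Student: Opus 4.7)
The plan is to exploit that for a linear slot, the notion of normality collapses substantially: since $\deg P=1$, the remark following the definition of \emph{normal} gives $\text{normal}\Longleftrightarrow\text{steep}$ for such slots; condition (D1) is automatic because $S_P\in K^\times$ (it is just the coefficient of $\der^r$ in $L_P$, a nonzero element of $K$, since $\order P=r$), so $S_{P_{\times\fn}}=\fn S_P$ and $S_{P^\phi_{\times\fn}}=\phi^r\fn S_P$ are units, and $\ndeg S_{P_{\times\fn}}=0$ holds trivially; and condition (D2), namely $\ndeg P_{\times\fn}=1$, is automatic in the linear case from Lemma~\ref{lem:lower bd on ndeg} and $\deg P_{\times\fn}=\deg P=1$. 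Thus once eventual steepness of a suitable compositional conjugate is in hand, normality and deepness follow.

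For the first claim, I would first note that $\order L_{P_{\times\fm}}=\order L_P=r$ (since $\deg P=1$ and $\order P=r$), and then apply Lemma~\ref{lem:achieve steep} to obtain a refinement $(P,\fn,\hat a)$ of $(P,\fm,\hat a)$ with $\nwt L_{P_{\times\fn}}=0$ and $(P^\phi,\fn,\hat a)$ steep eventually. By Lemma~\ref{lem:deg1 normal} this refinement is also normal eventually, and by the observations above the eventual equalities $\ddeg P^\phi_{\times\fn}=\ndeg P_{\times\fn}=1$ and $\ddeg S_{P^\phi_{\times\fn}}=0$ give (D1), (D2). Taking $\phi_0$ small enough active in $K$, these hold for all $\phi_0\psi$ with $\psi$ active in $K^{\phi_0}$ and $\psi\preceq 1$ (since such $\phi_0\psi$ is active in $K$ with $\phi_0\psi\preceq\phi_0$), and $(P^{\phi_0})^\psi=P^{\phi_0\psi}$; this delivers deepness of $(P^{\phi_0},\fn,\hat a)$ as a slot over $K^{\phi_0}$.

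For the second claim, since $K$ is $\upl$-free Corollary~\ref{coruplnwteq} gives $\nwt L_{P_{\times\fm}}\leq 1<r$. Lemma~\ref{lem:eventual value of fv} then yields $\fv\!\big(L_{P^\phi_{\times\fm}}\big)=\fv\!\big((L_{P_{\times\fm}})^\phi\big)\prec_\phi^\flat 1$ eventually; combined with $\order L_{P^\phi_{\times\fm}}=r$, this shows $(P^\phi,\fm,\hat a)$ is steep, hence normal, eventually, with no refinement required. The verification of (D1) and (D2) proceeds identically, since they depend only on $\deg P=1$ and standard stabilization of $\ddeg$ under compositional conjugation to $\ndeg$. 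I do not foresee any serious obstacle: the only bookkeeping point is to verify the universal quantifier over active $\psi\preceq 1$ in $K^{\phi_0}$ in the definition of deepness, which dissolves once one observes, as above, that such $\psi$ correspond to elements $\phi_0\psi$ active in $K$ with $\phi_0\psi\preceq\phi_0$, pulling us back into the ``eventually'' regime in $K$ itself.
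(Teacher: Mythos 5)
Your proposal is correct and takes essentially the same route as the paper: both rest on Lemma~\ref{lem:achieve steep} for the refinement, the collapse of normality to steepness for linear slots (Lemma~\ref{lem:deg1 normal}), and Corollary~\ref{coruplnwteq} for the $\upl$-free case with $r>1$. Your hands-on verification of (D1) and (D2), together with the $\phi_0\psi$ bookkeeping, simply unwinds the remarks before Example~\ref{ex:order 1 linear steep} (via Corollary~\ref{cor:normal=>quasilinear}) that the paper cites instead, so there is no substantive difference.
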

\begin{proof}
By the remarks before Lemma~\ref{lem:deg1 normal}, $(P,\fm,\hat a)$ is normal iff it is steep. Moreover,
if $(P,\fm,\hat a)$ is normal, then it is quasilinear by Corollary~\ref{cor:normal=>quasilinear}, and hence~$(P^\phi,\fm,\hat a)$ is deep and normal, eventually,
by  the remarks before Example~\ref{ex:order 1 linear steep} and Lemma~\ref{lem:normality comp conj}.
By Lemma~\ref{lem:achieve steep}, $(P,\fm,\hat a)$ has a refinement~$(P,\fn,\hat a)$ such that $(P^\phi,\fn,\hat a)$ is steep, eventually.
This yields the first part. The second part follows from Corollary~\ref{coruplnwteq} and Lemma~\ref{lem:deg1 normal}.
\end{proof}

\begin{cor}\label{mainthm order 1}
Suppose $K$ is $\upl$-free, $\Gamma$ is divisible, and $(P, \fm, \hat a)$ is a quasilinear minimal hole in $K$ of order $r=1$.
Then $(P, \fm, \hat a)$ has a refinement~$(Q,\fn,\hat b)$ such that $(Q^\phi,\fn,\hat b)$ is deep and normal, eventually.
\end{cor}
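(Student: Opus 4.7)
The plan is to proceed by a case distinction on $\deg P$, reducing each case to an earlier result in the section.

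First I would treat the easy case $\deg P = 1$. Here $P$ is linear of order~$1$, so Corollary~\ref{mainthm deg 1} (applied to the slot $(P,\fm,\hat a)$ itself) immediately produces a refinement $(P,\fn,\hat a)$ with $(P^\phi,\fn,\hat a)$ deep and normal, eventually. Note that we are in a position to invoke Corollary~\ref{mainthm deg 1}: the first sentence of that corollary makes no additional hypothesis on $K$.

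The main case is $\deg P > 1$. The key observation is that minimality of the hole forces $K$ to be $1$-linearly newtonian. Indeed, $\cc(P) = (1,s,t)$ with $t = \deg P > 1$, so $\cc(P) >(1,1,1)$ lexicographically. Thus any hole in $K$ of complexity $(1,1,1)$ — equivalently, any hole in $K$ of order~$1$ and degree~$1$ — would contradict minimality of $(P,\fm,\hat a)$. Since $K$ has no hole of degree~$1$ and order~$1$ and is $\upl$-free by hypothesis, the first equivalence in Lemma~\ref{lem:no hole of order <=r, deg 1} yields that $K$ is $1$-linearly newtonian. Now $(P,\fm,\hat a)$ is a quasilinear hole, hence a quasilinear $Z$-minimal slot of order~$1$ by Lemma~\ref{lem:Z(K,hat a)}, and $\Gamma$ is divisible, so Corollary~\ref{mainthm, r=1} supplies the desired refinement $(Q,\fn,\hat b)$.

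Since no genuine obstacle arises — both cases are a direct bookkeeping reduction to previously proved statements — the only thing worth being careful about is the lexicographic comparison of complexities in the second case, and the fact that the hypothesis ``minimal hole'' (rather than merely $Z$-minimal) is precisely what lets us convert the absence of a hole of complexity $(1,1,1)$ into $1$-linear newtonianity via Lemma~\ref{lem:no hole of order <=r, deg 1}.
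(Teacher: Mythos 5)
Your proof is correct and follows the same route as the paper's: split on $\deg P$, handle $\deg P = 1$ by Corollary~\ref{mainthm deg 1}, and for $\deg P > 1$ derive $1$-linear newtonianity of $K$ from minimality via Lemma~\ref{lem:no hole of order <=r, deg 1} and then invoke Corollary~\ref{mainthm, r=1}. You merely spell out the lexicographic complexity comparison that the paper leaves implicit.
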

\begin{proof} The case $\deg P=1$ is part of Corollary~\ref{mainthm deg 1}.
If $\deg P>1$, 
then $K$ is $1$-linearly newtonian by Lemma~\ref{lem:no hole of order <=r, deg 1}, so we can use 
Corollary~\ref{mainthm, r=1}.
\end{proof}

\subsection*{Improving normality} 
{\it In this subsection $L:=L_{P_{\times\fm}}$.}\/ 
Note that if $(P,\fm,\hat a)$ is a normal hole in $K$, then $P_{\times\fm}\sim (P_{\times \fm})_1$ by Lemma~\ref{lem:ddeg=dmul=1 normal}.
We call our slot~$(P,\fm, \hat a)$ in $K$ {\bf strictly normal\/} if it is normal, but
with the condition~(N2) replaced by the stronger condition\index{slot!strictly normal}\index{strictly normal}\index{normal!strictly} 
\begin{enumerate}
\item[(N2s)] $(P_{\times\fm})_{\ne 1}\prec_{\Delta(\fv)} \fv^{w+1} (P_{\times \fm})_1$.
\end{enumerate}
Thus for normal $(P,\fm, \hat a)$ and $\fv=\fv(L)$ we have:
$$(P,\fm,\hat a) \text{ is strictly normal }\Longleftrightarrow\  P(0) \prec_{\Delta(\fv)} \fv^{w+1} (P_{\times \fm})_1.$$ 
So if $(P,\fm, \hat a)$ is  normal and $P(0)=0$, then $(P,\fm,\hat a)$ is strictly normal.
Note that if  $(P,\fm, \hat a)$ is strictly normal, then 
$$P_{\times \fm}\sim_{\Delta(\fv)} (P_{\times \fm})_1\qquad
\text{(and hence $\ddeg P_{\times\fm}=1$).}$$
If $(P,\fm,\hat a)$ is strictly normal, then so are
$(P_{\times\fn},\fm/\fn,\hat a/\fn)$ and
$(bP,\fm,\hat a)$ for  $b\neq 0$.
Thus $(P,\fm,\hat a)$ is strictly normal iff $(P_{\times\fm},1,\hat a/\fm)$ is strictly normal. 
If $(P,\fm,\hat a)$ is strictly normal, then so is every equivalent slot in~$K$.
The proof of Lemma~\ref{ufm} shows that if $(P,\fm,\hat a)$ is strictly normal and~${u\in K}$, $u\asymp 1$, then $(P,u\fm,\hat a)$ is also strictly normal. 
The analogue of
Lem\-ma~\ref{lem:normal pos criterion} goes through, with
$(P_{\times \fm})_{ \ne 1}$ instead of $(P_{\times \fm})_{>1}$
in the proof:

\begin{lemma}\label{lem:strongnormal pos criterion}
Suppose $\order(L)=r$ and $\fv$ are such that \textup{(N1)} and~\textup{(N2s)} hold, and $\fv(L)\asymp_{\Delta(\fv)} \fv$. Then 
$(P,\fm,\hat a)$ is strictly  normal.
\end{lemma}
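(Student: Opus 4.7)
My plan is to mimic the proof of Lemma~\ref{lem:normal pos criterion}, replacing $(P_{\times\fm})_{>1}$ by $(P_{\times\fm})_{\ne 1}$ throughout, as the parenthetical remark in the text explicitly suggests. The definition of strict normality requires verifying \textup{(N1)} and \textup{(N2s)} with $\fv(L)$ in place of the given $\fv$, since the formal definition of normality (and hence strict normality) of $(P,\fm,\hat a)$ always uses $\fv(L)$. So the task reduces to transferring the two asymptotic conditions from $\fv$ to $\fw:=\fv(L)$.

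First I would set $\fw := \fv(L)$ and exploit the hypothesis $\fw \asymp_{\Delta(\fv)} \fv$. This immediately gives $[\fw]=[\fv]$, hence $\Delta(\fw)=\Delta(\fv)$, so the coarsened asymptotic relations $\asymp_{\Delta(\fv)}$, $\prec_{\Delta(\fv)}$ coincide with $\asymp_{\Delta(\fw)}$, $\prec_{\Delta(\fw)}$. From $[\fw]=[\fv]$ and $\fv\prec^\flat 1$ (that is, $\fv\prec 1$ with $\fv^\dagger \succeq 1$) one then checks $\fw\prec^\flat 1$, establishing \textup{(N1)} with $\fw$.

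Next, for \textup{(N2s)}, note that $\fw\asymp_{\Delta(\fv)}\fv$ yields $\fw^{w+1}\asymp_{\Delta(\fv)}\fv^{w+1}$, so
\[
(P_{\times\fm})_{\ne 1}\ \prec_{\Delta(\fv)}\ \fv^{w+1}(P_{\times\fm})_1\ \asymp_{\Delta(\fv)}\ \fw^{w+1}(P_{\times\fm})_1,
\]
which, combined with $\Delta(\fw)=\Delta(\fv)$, is precisely \textup{(N2s)} for $\fw$ in place of $\fv$. Together with \textup{(N1)} for $\fw$, this is exactly what is required for $(P,\fm,\hat a)$ to be strictly normal. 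There is no genuine obstacle here; the only subtlety is remembering that the definition of (strict) normality is phrased intrinsically via $\fv(L)$, so the given auxiliary $\fv$ is merely a convenient companion whose properties, thanks to $\fv(L)\asymp_{\Delta(\fv)}\fv$, are inherited by $\fv(L)$.
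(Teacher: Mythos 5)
Your proposal is correct and matches the paper's own argument: the paper proves this lemma exactly by rerunning the proof of Lemma~\ref{lem:normal pos criterion} with $(P_{\times\fm})_{\ne 1}$ in place of $(P_{\times\fm})_{>1}$, i.e.\ setting $\fw:=\fv(L)$, using $\fw\asymp_{\Delta(\fv)}\fv$ to get $[\fw]=[\fv]$ (hence $\Delta(\fw)=\Delta(\fv)$ and $\fw\prec^\flat 1$), and transferring the estimate via $\fv^{w+1}(P_{\times\fm})_1\asymp_{\Delta(\fv)}\fw^{w+1}(P_{\times\fm})_1$. Your remark that the definition of (strict) normality is intrinsically stated with $\fv(L)$ is exactly the point being checked, so nothing is missing.
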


\noindent
Lemma~\ref{lem:normality comp conj} goes likewise through with ``strictly normal'' instead of ``normal'':

\begin{lemma}\label{lem:normality comp conj, strong}
If $(P,\fm,\hat a)$ is strictly normal and $\phi\preceq 1$ is active, then the slot~$(P^\phi,\fm,\hat a)$  in $K^\phi$ is strictly normal.
\textup{(}Hence if $(P,\fm,\hat a)$ is strictly normal, then $(P,\fm,\hat a)$ is quasilinear, and if
in addition $(P,\fm,\hat a)$ is linear, then it is deep.\textup{)} 
\end{lemma}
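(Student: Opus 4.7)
The plan is to mimic the proof of Lemma~\ref{lem:normality comp conj} essentially verbatim, only replacing each occurrence of the subscript ``$>1$'' by ``$\neq 1$'' and invoking Lemma~\ref{lem:strongnormal pos criterion} in place of Lemma~\ref{lem:normal pos criterion}. First I would reduce to $\fm=1$ by passing to the multiplicatively conjugated slot $(P_{\times\fm},1,\hat a/\fm)$, which preserves strict normality, and then set $\fv:=\fv(L_P)$ and $\fw:=\fv(L_{P^\phi})$. Because $L_{P^\phi}=(L_P)^\phi$ and $\phi\preceq 1$ is active, Lemma~\ref{lem:v(Aphi)} supplies $\fw\asymp_{\Delta(\fv)}\fv$ and $\fv\prec^\flat_\phi 1$, which takes care of~(N1) for $(P^\phi,1,\hat a)$ with respect to~$\fw$, and also gives $[\phi]<[\fv]$ via [ADH, 11.1.1].

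Next I would verify~(N2s) for $(P^\phi,1,\hat a)$. The key observation is that compositional conjugation commutes with taking homogeneous parts, i.e.\ $(P^\phi)_d=(P_d)^\phi$ for each $d$, so that $(P^\phi)_{\neq 1}=(P_{\neq 1})^\phi$; in particular, the ``constant term'' $P(0)=P_0$ is simply carried along untouched. Using [ADH, 11.1.4] together with $[\phi]<[\fv]$ and the strict-normality hypothesis (N2s) for $(P,1,\hat a)$, I obtain the chain
\[
(P^\phi)_{\neq 1}\ =\ (P_{\neq 1})^\phi\ \asymp_{\Delta(\fv)}\ P_{\neq 1}\ \prec_{\Delta(\fv)}\ \fv^{w+1}P_1\ \asymp_{\Delta(\fv)}\ \fv^{w+1}(P^\phi)_1,
\]
which is exactly the hypothesis of Lemma~\ref{lem:strongnormal pos criterion} applied to $(P^\phi,1,\hat a)$ with the auxiliary element $\fv$. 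That lemma then delivers strict normality of $(P^\phi,1,\hat a)$, and unwinding the multiplicative conjugation gives strict normality of $(P^\phi,\fm,\hat a)$.

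For the parenthetical consequences, strict normality trivially implies normality, so Corollary~\ref{cor:normal=>quasilinear} gives that a strictly normal $(P,\fm,\hat a)$ is quasilinear. If in addition $(P,\fm,\hat a)$ is linear, then (D1) holds automatically for every active $\phi\preceq 1$, and normality already forces $(P,\fm,\hat a)$ to be steep; the first part of the lemma just proved says that for each active $\phi\preceq 1$ the compositional conjugate $(P^\phi,\fm,\hat a)$ is again strictly normal, so in particular $\ddeg P^\phi_{\times\fm}=1$ by the remark immediately preceding Lemma~\ref{lem:strongnormal pos criterion}, which is (D2). Hence $(P,\fm,\hat a)$ is deep. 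There is no real obstacle: the proof is almost mechanical once one notes that the $d=0$ part of the ``non-linear'' mass is fixed by $P\mapsto P^\phi$, so the same estimate that worked for $P_{>1}$ in Lemma~\ref{lem:normality comp conj} also controls the full $P_{\neq 1}$.
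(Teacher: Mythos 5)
Your proof is correct and is exactly what the paper intends: the paper proves this lemma by remarking that the proof of Lemma~\ref{lem:normality comp conj} "goes likewise through" with $(P_{\times\fm})_{\neq 1}$ in place of $(P_{\times\fm})_{>1}$ and Lemma~\ref{lem:strongnormal pos criterion} in place of Lemma~\ref{lem:normal pos criterion}, which is precisely your argument (the choice of [ADH, 11.1.4] versus [ADH, 11.1.1] is immaterial). Your derivation of the parenthetical consequences — strict normality implies normality, hence quasilinearity via Corollary~\ref{cor:normal=>quasilinear}, and in the linear case (D1) is automatic, normality gives steepness, and (D2) follows by applying the first part together with the remark that strict normality forces $\ddeg P^\phi_{\times\fm}=1$ — is likewise the intended route.
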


\noindent
As to Proposition~\ref{normalrefine}, here is a weak version for strict normality:

\begin{lemma} \label{stronglynormalrefine}
Suppose $(P, \fm, \hat a)$ is a strictly normal hole in $K$ and $\hat a -a\prec_{\Delta(\fv)} \fv^{r+w+1}\fm$ where 
$\fv:=\fv(L)$. Then its refinement
$(P_{+a}, \fm, \hat a -a)$ is also strictly normal.
\end{lemma}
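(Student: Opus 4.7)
The plan is to reduce to $\fm=1$ by multiplicative conjugation (which preserves strict normality as noted in the excerpt), and then verify the three conditions for strict normality of the refinement $(P_{+a},1,\hat a-a)$ separately. Set $L:=L_P$, $\fv:=\fv(L)$, and $L^+:=L_{P_{+a}}$; note that $\hat a\prec 1$ and $a=\hat a-(\hat a-a)\prec 1$, so Lemma~\ref{lem:linear part, new} applies.

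For the linear part, strict normality gives $P_{>1}\prec_{\Delta(\fv)}\fv^{w+1}P_1$, so Lemma~\ref{lem:linear part, split-normal, new} applied with $A=L$, $B=0$, $m=w$ yields $L^+-L\prec_{\Delta(\fv)}\fv^{w+1}L^+$; hence $\order L^+=r$, $L^+\sim_{\Delta(\fv)}L$, and $\fv(L^+)\sim_{\Delta(\fv)}\fv\prec^\flat 1$. For homogeneous parts of degree $d\geq 2$, the computation from the proof of Proposition~\ref{normalrefine} (using [ADH,~4.5.1(i)] and $a\prec 1$) gives $(P_{+a})_d\preceq P_{\geq d}\preceq P_{>1}\prec_{\Delta(\fv)}\fv^{w+1}P_1$; combined with $P_1\asymp_{\Delta(\fv)}(P_{+a})_1$ this delivers $(P_{+a})_{>1}\prec_{\Delta(\fv)}\fv^{w+1}(P_{+a})_1$.

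The interesting step is the bound on the constant term $(P_{+a})_0=P(a)$. Since $(P,1,\hat a)$ is a hole, $P(\hat a)=0$, and Taylor expansion around $\hat a$ gives $P(a)=-\sum_{|\i|\geq 1}P_{(\i)}(\hat a)(\hat a-a)^{\i}$. The $|\i|=1$ contribution equals $L_{P_{+\hat a}}(a-\hat a)$; applying Lemma~\ref{lem:linear part, new} in $\hat K$ (with $\hat a$ in the role of $a$, permitted since $\hat a\prec 1$) gives $L_{P_{+\hat a}}\sim_{\Delta(\fv)}L\asymp P_1$, while Lemma~\ref{lem:diff operator at small elt} with $y=a-\hat a$, $\fm=\fv$, $m=w+1$ estimates $L_{P_{+\hat a}}(a-\hat a)\prec\fv^{w+1}L_{P_{+\hat a}}$. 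For $|\i|\geq 2$ I use $P_{(\i)}(\hat a)=(P_{+\hat a})_{\i}\preceq (P_{+\hat a})_{|\i|}\preceq P_{\geq|\i|}\preceq P_{>1}\prec_{\Delta(\fv)}\fv^{w+1}P_1$ (again invoking Proposition~\ref{normalrefine}'s estimate in $\hat K$) and $(\hat a-a)^{\i}\prec 1$, so each product is $\prec_{\Delta(\fv)}\fv^{w+1}P_1$. Summing and using $P_1\asymp_{\Delta(\fv)}(P_{+a})_1$ yields $(P_{+a})_0\prec_{\Delta(\fv)}\fv^{w+1}(P_{+a})_1$, so (N2s) holds for the refinement, and Lemma~\ref{lem:strongnormal pos criterion} concludes strict normality.

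The main obstacle will be converting the plain $\prec$-estimate furnished by Lemma~\ref{lem:diff operator at small elt} in the $|\i|=1$ case to the $\prec_{\Delta(\fv)}$-estimate actually required; this is precisely why the hypothesis demands the extra factor $\fv^r$ (for the $r$ derivatives) beyond the $\fv^{w+1}$ needed for normality. The point is that the inductive bounds $y^{(i)}\prec\fv^{r+w+1-i}$ in the proof of Lemma~\ref{lem:diff operator at small elt} upgrade to $y^{(i)}\prec_{\Delta(\fv)}\fv^{r+w+1-i}$ under the stronger hypothesis $y\prec_{\Delta(\fv)}\fv^{r+w+1}$, since in an $H$-asymptotic field with small derivation the $\psi$-increment appearing in $v(y')=vy+\psi(vy)$ does not spoil $\prec_{\Delta(\fv)}$-inequalities once $vy$ sufficiently exceeds $v\fv$.
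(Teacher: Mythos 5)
Your proof is correct and uses the same key ingredient as the paper's: bound $P(a)$ by exploiting $P(\hat a)=0$ and applying Lemma~\ref{lem:diff operator at small elt} to the element $a-\hat a$, which is exactly what the extra factor $\fv^r$ in the hypothesis is for. The organizational difference is that you Taylor-expand $P(a)$ around $\hat a$ and handle the contributions of size $|\i|=1$ and $|\i|\ge 2$ separately, whereas the paper uses the simpler decomposition $P(a)=P(0)+P_1(a)+P_{>1}(a)$: here $P(0)$ and $P_{>1}(a)\preceq P_{>1}$ are immediately controlled by (N2s), and only $P_1(a)=P_1(\hat a)+P_1(a-\hat a)$ (using additivity of $P_1$) needs the extra work. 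Your version requires bounding $(P_{+\hat a})_{\i}$ for $|\i|\ge 2$ separately, which the paper avoids. More importantly, you correctly flag the need to upgrade the plain $\prec$ furnished by Lemma~\ref{lem:diff operator at small elt} to a $\prec_{\Delta(\fv)}$ estimate, and your heuristic about the $\psi$-increment not spoiling coarsened inequalities is sound in spirit; but the clean way to formalize this, and what the paper actually does, is simply to apply Lemma~\ref{lem:diff operator at small elt} to the \emph{coarsened} valued differential field $(\hat K, \preceq_{\Delta(\fv)})$ in place of $(K,\preceq)$, so the $\prec_{\Delta(\fv)}$-estimate comes out directly rather than being retrofitted onto the ordinary one. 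You would do well to phrase the last step that way instead of trying to re-derive the inductive bounds.
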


\begin{proof} 
As in the proof of Proposition~\ref{normalrefine} we 
arrange $\fm=1$.
We can also assume~$P_1\asymp 1$. From $P=P(0)+P_1+P_{>1}$ we get 
$$P(a)\ =\ P(0)+ P_1(a) + P_{>1}(a),$$
where $P(0)\prec_{\Delta(\fv)} \fv^{w+1}$ and $P_{>1}(a) \preceq P_{>1}\prec_{\Delta(\fv)}\fv^{w+1}$ by (N2s) and $a\prec 1$;
 we   show that also $P_1(a)\prec_{\Delta(\fv)}\fv^{w+1}$. To see this note that
$$0\ =\ P(\hat a)\ =\ P(0)+ P_1(\hat a)+ P_{>1}(\hat a),$$ 
where as before $P(0), P_{>1}(\hat a) \prec_{\Delta(\fv)} \fv^{w+1}$, so $P_1(\hat a)\prec_{\Delta(\fv)} \fv^{w+1}$. 
Lemma~\ref{lem:diff operator at small elt} applied to~$(\hat K, \preceq_{\Delta(\fv)}, P_1)$ in place of $(K, \preceq, P)$, with $m=w+1$, $y=a-\hat a$,  yields~$P_1(a-\hat a)\prec_{\Delta(\fv)} \fv^{w+1}$,
hence $$P_1(a)\ =\ P_1(a -\hat a) + P_1(\hat a)\ \prec_{\Delta(\fv)}\ \fv^{w+1}$$ as claimed.  It remains to
use $\fv(L_{P_{+a}})\asymp_{\Delta(\fv)} \fv$ and the normality of $(P_{+a}, 1, \hat a -a)$ obtained from
Proposition~\ref{normalrefine} and its proof. 
\end{proof}

\noindent
We also have a version of Lemma~\ref{stronglynormalrefine} for $Z$-minimal slots, obtained from that lemma via Lemma~\ref{lem:from cracks to holes}: 

\begin{lemma} \label{stronglynormalrefine, cracks}
Suppose $(P, \fm, \hat a)$ is $Z$-minimal and strictly normal. Set $\fv:=\fv(L)$,
and suppose $\hat a -a\prec_{\Delta(\fv)} \fv^{r+w+1}\fm$. Then the refinement
$(P_{+a}, \fm, \hat a -a)$ of~$(P, \fm, \hat a)$ is   strictly normal.
\end{lemma}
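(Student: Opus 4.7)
The plan is to reduce the statement for $Z$-minimal slots to the already-established statement for holes (Lemma~\ref{stronglynormalrefine}) by passing through an equivalent $Z$-minimal hole, using that all the relevant properties (strict normality, refinement structure, the coarsened asymptotic relation on $K$) are preserved under the equivalence relation on slots defined in Section~\ref{sec:holes}.

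First, by Lemma~\ref{lem:from cracks to holes}, the $Z$-minimal slot $(P,\fm,\hat a)$ is equivalent to a $Z$-minimal hole $(P,\fm,\hat b)$ in $K$ (possibly in a different immediate asymptotic extension of $K$). Since strict normality is a property that depends only on $P$, $\fm$, and the coefficients of $P_{\times\fm}$, it is preserved under equivalence, so $(P,\fm,\hat b)$ is strictly normal. Moreover, the defining condition of equivalence gives $v(\hat b - a) = v(\hat a - a)$ as elements of $\Gamma$ (both extensions being immediate), and hence the hypothesis $\hat a - a \prec_{\Delta(\fv)} \fv^{r+w+1}\fm$ transfers verbatim to $\hat b - a \prec_{\Delta(\fv)} \fv^{r+w+1}\fm$, since $\prec_{\Delta(\fv)}$ is defined purely in terms of values in $\Gamma$ and the convex subgroup $\Delta(\fv)\subseteq\Gamma$.

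Next, Lemma~\ref{stronglynormalrefine} applied to the strictly normal hole $(P,\fm,\hat b)$ yields that its refinement $(P_{+a},\fm,\hat b - a)$ is strictly normal. Finally, by the remarks in Section~\ref{sec:holes} on how refinements interact with equivalence, the refinements $(P_{+a},\fm,\hat a - a)$ and $(P_{+a},\fm,\hat b - a)$ are themselves equivalent slots in $K$; invoking once more that strict normality is preserved under equivalence, we conclude that $(P_{+a},\fm,\hat a - a)$ is strictly normal.

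There is essentially no obstacle: everything is bookkeeping ensuring that the hypothesis and conclusion translate cleanly across the equivalence. The only point to be slightly careful about is that $\hat a$ and $\hat b$ may live in different immediate asymptotic extensions, but since $a\in K$ and the valuations on both extensions agree on $\Gamma$, the coarsened comparison $\prec_{\Delta(\fv)}$ makes sense uniformly and transfers without issue.
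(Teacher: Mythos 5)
Your proposal is correct and follows essentially the same route as the paper, which derives this lemma from the hole version (Lemma~\ref{stronglynormalrefine}) via Lemma~\ref{lem:from cracks to holes}, using that strict normality and the valuation-theoretic hypothesis are insensitive to replacing $\hat a$ by the equivalent $\hat b$. The bookkeeping you spell out (transfer of the $\prec_{\Delta(\fv)}$ hypothesis and of strict normality across equivalence) is exactly what the paper's one-line proof leaves implicit.
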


\noindent
Next two  versions of Proposition~\ref{easymultnormal}:

\begin{lemma}\label{lem:strongly normal refine, 1}
Suppose $(P,\fm,\hat a)$ is a strictly normal hole in $K$, $\hat a\prec\fn\preceq\fm$, and~$[\fn/\fm]< \big[\fv(L)\big]$. Then the refinement $(P,\fn,\hat a)$ of $(P,\fm,\hat a)$
 is strictly normal.
 \end{lemma}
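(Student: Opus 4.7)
The plan is to adapt the proof of Proposition~\ref{easymultnormal} to the stronger condition (N2s), exploiting that the hypothesis $[\fn/\fm]<[\fv(L)]$ is strict rather than just~$\leq$. After the usual reduction (multiplicative conjugation by $\fm$ together with Lemma~\ref{ufm}, or equivalently replacing $(P,\fm,\hat a)$ by $(P_{\times\fm},1,\hat a/\fm)$ and $\fn$ by $\fn/\fm$) we arrange $\fm=1$. Set $\fv:=\fv(L)$ and $\tilde L:=L_{P_{\times\fn}}$; the strict inequality $[\fn]<[\fv]$ means precisely $v\fn\in\Delta(\fv)$, i.e.~$\fn\asymp_{\Delta(\fv)} 1$. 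Lemma~\ref{lem:steep1} then yields $\order(\tilde L)=r$ and $\fw:=\fv(\tilde L)\asymp_{\Delta(\fv)}\fv$, so $(P,\fn,\hat a)$ is steep and (N1) holds for it.

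The remaining task is to verify (N2s), namely $(P_{\times\fn})_{\ne 1}\prec_{\Delta(\fv)}\fv^{w+1}(P_{\times\fn})_1$, after which Lemma~\ref{lem:strongnormal pos criterion} delivers strict normality. First, by [ADH, 6.1.3] together with $\fn\asymp_{\Delta(\fv)}1$ one has $(P_{\times\fn})_1=(P_1)_{\times\fn}\asymp_{\Delta(\fv)}\fn P_1\asymp_{\Delta(\fv)} P_1$. The case $d=0$ is then handled directly by (N2s) for $(P,1,\hat a)$:
\[
(P_{\times\fn})_0\ =\ P(0)\ \prec_{\Delta(\fv)}\ \fv^{w+1}P_1\ \asymp_{\Delta(\fv)}\ \fv^{w+1}(P_{\times\fn})_1.
\]
For $d>1$ I would copy the chain of estimates from the proof of Proposition~\ref{easymultnormal}: applying [ADH, 6.1.3] and the $d>1$ part of (N2s) (which is of course implied by (N2s)) gives
\[
(P_{\times\fn})_d\ =\ (P_d)_{\times\fn}\ \asymp_{\Delta(\fv)}\ \fn^d P_d\ \preceq\ \fn^d P_{>1}\ \prec_{\Delta(\fv)}\ \fv^{w+1}\fn^d P_1\ \asymp_{\Delta(\fv)}\ \fv^{w+1}(P_{\times\fn})_1,
\]
where at the last step we again use $\fn^{d-1}\asymp_{\Delta(\fv)}1$ coming from the strict hypothesis $[\fn]<[\fv]$.

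There is no real obstacle here; the only point where the hypothesis of \emph{strict} inequality $[\fn/\fm]<[\fv]$ is used (as opposed to the $\leq$ of Proposition~\ref{easymultnormal}) is precisely to get $\fn\asymp_{\Delta(\fv)} 1$, which simultaneously makes $(P_{\times\fn})_1\asymp_{\Delta(\fv)} P_1$ (needed for the $d=0$ clause of (N2s)) and absorbs the extra factors $\fn^{d-1}$ in the $d>1$ clause. Everything else is a straightforward transfer of the argument already established for normality.
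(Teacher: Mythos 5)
Your proof is correct and follows essentially the same route as the paper: arrange $\fm=1$, use $[\fn]<[\fv]$ to get $\fn\asymp_{\Delta(\fv)}1$, obtain $\order(\tilde L)=r$ and $\fv(\tilde L)\asymp_{\Delta(\fv)}\fv$, check (N2s) with the extra constant-term estimate $P(0)\prec_{\Delta(\fv)}\fv^{w+1}(P_{\times\fn})_1$, and conclude via Lemma~\ref{lem:strongnormal pos criterion}. The only cosmetic difference is that you re-derive the degree~$>1$ estimates directly (which the strict hypothesis $\fn\asymp_{\Delta(\fv)}1$ makes slightly simpler), whereas the paper simply imports them from the proof of Proposition~\ref{easymultnormal}.
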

\begin{proof}
As in the proof of Proposition~\ref{easymultnormal} we arrange $\fm=1$ and,
setting  $\fv:=\fv(L)$, $\tilde L:=L_{P_{\times\fn}}$, show
that $\order(\tilde L)=r$, $\fv(\tilde L)\asymp_{\Delta(\fv)} \fv$, and that~(N2) holds with~$\fm$ replaced by $\fn$. 
Now $[\fn]< [\fv]$ yields $\fn\asymp_{\Delta(\fv)} 1$; together with $(P_{\times\fn})_1\asymp_{\Delta(\fv)} \fn P_1$
this gives 
$P(0) \prec_{\Delta(\fv)} \fv^{w+1} P_1 \asymp_{\Delta(\fv)} \fv^{w+1} (P_{\times\fn})_1$. 
Hence (N2s) holds with $\fm$ replaced by~$\fn$. 
Lemma~\ref{lem:strongnormal pos criterion} now yields that  $(P,\fn,\hat a)$
is strictly normal.
\end{proof}

\begin{lemma}\label{lem:strongly normal refine, 2}  
Suppose $(P,\fm,\hat a)$ is a strictly normal hole in $K$ and $\hat a \prec_{\Delta(\fv)} \fm$ where $\fv:=\fv(L)$.
Assume also that for all $q\in\Q^>$ there is given an element $\fv^q$ of~$K^\times$ with $v(\fv^q)=q\,v(\fv)$.
Then for all sufficiently small $q\in\Q^>$ and~$\fn$ with~$\fn\asymp\fv^q\fm$ we have: $\hat a \prec \fn$
and the refinement $(P,\fn,\hat a)$ of $(P,\fm,\hat a)$ is strictly normal.
\end{lemma}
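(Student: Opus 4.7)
The plan is to reduce to the case $\fm=1$ by passing to the multiplicative conjugate $(P_{\times\fm},1,\hat a/\fm)$, which preserves strict normality and transforms the hypothesis $\hat a \prec_{\Delta(\fv)} \fm$ into $\hat a/\fm \prec_{\Delta(\fv)} 1$. By the invariance of strict normality under $\fn \mapsto u\fn$ for $u \in K$ with $u \asymp 1$ (the strict-normality analogue of Lemma~\ref{ufm} remarked after the definition of strictly normal), it suffices to show that for all sufficiently small $q \in \Q^>$ we have $\hat a \prec \fv^q$ and $(P,\fv^q,\hat a)$ is strictly normal.

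The first claim is straightforward: $\hat a \prec_{\Delta(\fv)} 1$ yields $v(\hat a) > \Delta(\fv)$, so $v(\hat a)>0$ and $[v(\hat a)] \ge [\fv]$, and by the definition of archimedean equivalence there is $t \in \Q^>$ with $v(\hat a) \ge t\,v(\fv)$. For any $q \in \Q^>$ with $q < t$ one then has $v(\hat a) \ge tv(\fv) > qv(\fv) = v(\fv^q)$, i.e., $\hat a \prec \fv^q$.

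For the strict normality of $(P,\fv^q,\hat a)$, I would apply Lemma~\ref{lem:steep1} with $\fn=\fv^q$ (using $\hat a \prec \fv^q$ and $[\fv^q]=[\fv]$) to conclude that $(P,\fv^q,\hat a)$ is steep with $\fv' := \fv(L_{P_{\times\fv^q}}) \asymp_{\Delta(\fv)} \fv$; in particular $\Delta(\fv') = \Delta(\fv)$ and (N1) holds. For (N2s), using $(P_{\times\fv^q})_d \asymp_{\Delta(\fv)} \fv^{qd} P_d$ (via [ADH, 6.1.3], applicable since $[\fv^q]=[\fv]$), the condition reduces, modulo $\asymp_{\Delta(\fv)}$, to $(P_{\times\fv^q})_{\ne 1} \prec_{\Delta(\fv)} \fv^{w+1+q} P_1$. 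For $d \ge 2$ this is automatic from the original (N2s): $\fv^{qd} P_d \prec_{\Delta(\fv)} \fv^{qd+w+1} P_1$ and $\fv^{qd+w+1} \prec_{\Delta(\fv)} \fv^{q+w+1}$ because $q(d-1)v(\fv)$ has class $[\fv] \notin \Delta(\fv)$ and is positive.

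The main obstacle is the case $d=0$, where $(P_{\times\fv^q})_0 = P(0)$ and I need $P(0) \prec_{\Delta(\fv)} \fv^{w+1+q} P_1$; the qualitative bound (N2s) of the original slot, $P(0) \prec_{\Delta(\fv)} \fv^{w+1} P_1$, is too weak, since $\fv^{w+1+q}$ is strictly smaller than $\fv^{w+1}$. The key step is to extract a quantitative rational improvement by archimedean-class bookkeeping: set $\gamma := v\!\bigl(P(0)/(\fv^{w+1}P_1)\bigr)$ (assuming $P(0) \ne 0$; the case $P(0)=0$ is trivial). Then $\gamma>0$ and $\gamma \notin \Delta(\fv)$, so $[\gamma] \ge [\fv]$, which by definition of archimedean equivalence yields $s_0 \in \Q^>$ with $\gamma \ge s_0\,v(\fv)$, i.e., $P(0) \preceq \fv^{w+1+s_0} P_1$. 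For any $q < s_0$ in $\Q^>$, the excess $(s_0-q)v(\fv)$ has class $[\fv]$ and is positive, so $\fv^{w+1+s_0} P_1 \prec_{\Delta(\fv)} \fv^{w+1+q} P_1$, giving (N2s) for $d=0$. Taking $q$ strictly less than $\min(t,s_0,1)$ and invoking Lemma~\ref{lem:strongnormal pos criterion} then produces the desired strict normality.
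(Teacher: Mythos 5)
Your proof is correct and takes essentially the same route as the paper's: after arranging $\fm=1$, the paper likewise extracts a rational margin $q_0\in\Q^>$ with $\hat a\prec\fv^{q_0}$ and $P(0)\prec_{\Delta(\fv)}\fv^{w+1+q_0}P_1$ (your archimedean-class bookkeeping just makes this explicit), runs the multiplicative-conjugation estimates from Proposition~\ref{easymultnormal} to get the order, span, and (N2) data for $\fn\asymp\fv^q$, and concludes via Lemma~\ref{lem:strongnormal pos criterion}. Your handling of general $\fn\asymp\fv^q\fm$ through the $u\asymp 1$ invariance and your direct $d\geq 2$ estimate are only cosmetic variations on the same argument.
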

\begin{proof}
We arrange $\fm=1$ as usual, and
take $q_0\in\Q^>$ with $\hat a \prec \fv^{q_0}$ and~$P(0) \prec_{\Delta(\fv)} \fv^{w+1+q_0} P_1$.
Let  $q\in\Q$, $0<q\leq q_0$, and suppose $\fn\asymp\fv^q$. Then $(P,\fn,\hat a)$ is a refinement of $(P,1,\hat a)$, and the proof of Proposition~\ref{easymultnormal} gives: $\tilde L:=L_{P_{\times\fn}}$ has order~$r$ with $\fv(\tilde L)\asymp_{\Delta(\fv)} \fv$, $\fn P_1\asymp_{\Delta(\fv)}(P_{\times\fn})_1$, and (N2) holds with $\fm$ replaced by~$\fn$. 
Hence $$P(0) \prec_{\Delta(\fv)} \fv^{w+1+q_0} P_1 \preceq \fv^{w+1} \fn P_1  \asymp_{\Delta(\fv)} \fv^{w+1} (P_{\times\fn})_1.$$ 
As in the proof of the  last lemma we conclude that $(P,\fn,\hat a)$ is strictly normal.
\end{proof}

\begin{remarkNumbered} \label{rem:strongly normal refine, 2}
In Lemmas~\ref{lem:strongly normal refine, 1} and \ref{lem:strongly normal refine, 2} we assumed that $(P,\fm,\hat a)$ is a strictly normal hole in $K$. By Lemma~\ref{lem:from cracks to holes} these lemmas go through if this hypothesis is replaced by
``$(P,\fm,\hat a)$ is a strictly normal $Z$-minimal slot in $K$''.
\end{remarkNumbered}

\noindent
We now turn to refining  a given normal hole to a  strictly normal hole. We only do this under additional hypotheses, tailored so that we may employ Lemma~\ref{lem:nepsilon, refined}. Therefore we assume in the rest of this subsection: {\em  $K$ is $\d$-valued and for all $\fv$ and $q\in\Q^>$   we are given
an element $\fv^q$ of $K^\times$ with $(\fv^q)^\dagger=q\fv^\dagger$.} 
Note that then $v(\fv^q)=q\,v(\fv)$ for such $q$. (In particular, $\Gamma$ is divisible.)  
We also adopt the
convention that if
$\order L=r$, then $\fv:=\fv(L)$.

\begin{lemma}\label{lem:achieve strong normality}
Suppose  $(P,\fm,\hat a)$ is a normal hole in $K$ and   $\hat a-a\preceq  \fv^{w+2}\fm$. Then the refinement
$(P_{+a},\fm,\hat a-a)$  of
 $(P,\fm,\hat a)$  is strictly normal.
\end{lemma}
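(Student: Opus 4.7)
The plan is to reduce to the case $\fm=1$ by passing to the multiplicative conjugate $(P_{\times\fm},1,\hat a/\fm)$, which preserves normality and being a hole, and transforms the hypothesis to $\hat a-a\preceq\fv^{w+2}$ (with the span of $L_{P_{\times\fm}}$ unchanged). Proposition~\ref{normalrefine} then delivers normality of the refinement $(P_{+a},1,\hat a-a)$, so all that remains is the extra condition for strict normality: $P(a)=P_{+a}(0)\prec_{\Delta(\fv)}\fv^{w+1}(P_{+a})_1$. Since $(P,1,\hat a)$ is a normal hole, Lemma~\ref{lem:ddeg=dmul=1 normal} yields $P\sim P_1\asymp L:=L_P$, and Lemma~\ref{lem:linear part, new}(i) gives $(P_{+a})_1\asymp_{\Delta(\fv)} P_1$, so the goal becomes $P(a)\prec_{\Delta(\fv)}\fv^{w+1}L$.

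Setting $\eta:=\hat a-a\ne 0$, the identity $0=P(\hat a)=P_{+a}(\eta)=P(a)+L_{P_{+a}}(\eta)+(P_{+a})_{>1}(\eta)$ rearranges to $P(a)=-L_{P_{+a}}(\eta)-(P_{+a})_{>1}(\eta)$. Normality of $(P_{+a},1,\hat a-a)$ together with $\eta\prec 1$ forces $(P_{+a})_{>1}(\eta)\preceq (P_{+a})_{>1}\prec_{\Delta(\fv)}\fv^{w+1}L$, while Lemma~\ref{lem:linear part, split-normal, new} (applied with $A=L$, $B=0$, $m=w$, whose hypothesis is precisely $(\mathrm{N2})$ for $(P,1,\hat a)$) gives $L_{P_{+a}}=L+B^+$ with $B^+\prec_{\Delta(\fv)}\fv^{w+1}L$, whence $B^+(\eta)\prec_{\Delta(\fv)}\fv^{w+1}L$. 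So it suffices to show $L(\eta)\prec_{\Delta(\fv)}\fv^{w+1}L$.

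For this key estimate I would write $\eta=\fv^{w+2}u$ with $u\in\hat K$ and $u\preceq 1$, which is legitimate since $\eta\preceq\fv^{w+2}$ and $\hat K$ is an immediate (hence equivalued) asymptotic extension of $K$. The smallness of the derivation of $\hat K$ gives $u^{(i)}\preceq 1$ for all $i$, and Leibniz yields $\eta^{(k)}=\sum_{j=0}^{k}\binom{k}{j}(\fv^{w+2})^{(j)}u^{(k-j)}$. The Riccati identity $(\fv^{w+2})^{(j)}=R_j\bigl((w+2)\fv^\dagger\bigr)\,\fv^{w+2}$, combined with Lemma~\ref{Riccatipower+} (whose hypothesis $\der\mathcal O\subseteq\smallo$ is provided by $\Psi\cap\Gamma^{>}\ne\emptyset$, and whose hypothesis $z\succeq 1$ follows from $\fv^\dagger\succeq 1$ since $\fv\prec^\flat 1$), gives $v\bigl((\fv^{w+2})^{(j)}\bigr)=(w+2)v(\fv)+j\,\psi(v(\fv))$. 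Because $(\Gamma,\psi)$ is of $H$-type we have $\psi(v(\fv))=o(v(\fv))$, hence $j\,\psi(v(\fv))\in\Delta(\fv)$ for each $j\le k$, and therefore $v(\eta^{(k)})\ge(w+2)v(\fv)+\delta_k$ for some $\delta_k\in\Delta(\fv)$.

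Writing $L=\sum_k L_k\der^k$ with $v(L_k)\ge v(L)$, this estimate yields $v(L(\eta))\ge v(L)+(w+2)v(\fv)+\delta$ for some $\delta\in\Delta(\fv)$; the difference $v(L(\eta))-v(L)-(w+1)v(\fv)\ge v(\fv)+\delta$ has archimedean class $[\fv]$ and so strictly exceeds $\Delta(\fv)$, giving $L(\eta)\prec_{\Delta(\fv)}\fv^{w+1}L$ as required. The main obstacle will be the derivative estimate in the third paragraph: a naive appeal to Corollary~\ref{cor:kth der of f, preceq} only gives $\eta^{(k)}\prec\fv^{w+2-k}$, which is insufficient for $r\ge 2$ because the strict improvement it provides need not exceed $\Delta(\fv)$. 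The refinement above, exploiting the explicit factorization $\eta=\fv^{w+2}u$ permitted by the immediate-extension hypothesis together with the $o$-smallness of $\psi$, is what absorbs the entire derivative cost into $\Delta(\fv)$.
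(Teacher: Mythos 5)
Your proof is correct, and its skeleton (reduce to $\fm=1$, invoke Proposition~\ref{normalrefine}, reduce strict normality to $P(a)\prec_{\Delta(\fv)}\fv^{w+1}P_1$, then Taylor-expand) is the same as the paper's; the difference lies entirely in how the decisive linear term is estimated. The paper expands at $\hat a$, so the term to control is $\hat L(a-\hat a)$ with $\hat L:=L_{P_{+\hat a}}\sim_{\Delta(\fv)}L$ over $\hat K$, and it bounds this by applying Lemma~\ref{lem:nepsilon, refined} to $\hat L$ over the $\d$-valued field $\hat K$: for a suitable rational $q$ with $w+1<q\le w+2$ one gets $\hat L\fv^q\asymp\fw\,\fv^q\,\hat L$ with $[\fw]\le[\fv^\dagger]$, whence $\hat L(a-\hat a)\preceq\hat L\fv^q\prec_{\Delta(\fv)}\fv^{w+1}\hat L$. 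You instead expand at $a$, peel off $L_{P_{+a}}=L+B^+$ via Lemma~\ref{lem:linear part, split-normal, new}, and control $L(\eta)$ by hand: the factorization $\eta=\fv^{w+2}u$ with $u\preceq 1$ (legitimate since $\hat K$ is immediate with small derivation), Leibniz, Lemma~\ref{Riccatipower+}, and $\psi(v\fv)=o(v\fv)$ give $v(\eta^{(k)})\ge(w+2)v\fv+k\psi(v\fv)$ with $k\psi(v\fv)\in\Delta(\fv)$, which yields $L(\eta)\prec_{\Delta(\fv)}\fv^{w+1}L$. This is essentially an unpacked, integer-exponent version of the same asymptotic fact; it buys you independence from the subsection's standing assumption that the rational powers $\fv^q$ exist (and, in principle, from $\d$-valuedness, since $\der\mathcal O\subseteq\smallo$ already follows from $\Psi\cap\Gamma^{>}\neq\emptyset$), at the cost of redoing the derivative bookkeeping that Lemma~\ref{lem:nepsilon, refined} encapsulates. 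One small inaccuracy: $\psi(v\fv)=o(v\fv)$ is not a consequence of $H$-type alone (it fails, e.g., for $\fv=x^{-1}$ in $\T$); it uses $\fv^\dagger\succeq 1$, i.e.\ (N1), together with the small derivation of $K$ -- this is exactly the [ADH, 9.2.10(iv)]-type fact the paper cites elsewhere -- so the step is valid in context, but the attribution should be corrected.
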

\begin{proof}
As usual we arrange that $\fm=1$.  By Proposition~\ref{normalrefine}, 
$(P_{+a},1,\hat a-a)$ is  normal; the proof of this proposition 
gives
$\order(L_{P_{+a}})=r$, $\fv(L_{P_{+a}})\sim_{\Delta(\fv)} \fv$, $(P_{+a})_1 \sim_{\Delta(\fv)} P_1$,
and (N2) holds with $\fm=1$ and~$P$ replaced by $P_{+a}$.
It  remains to show that
$P_{+a}(0) \prec_{\Delta(\fv)} \fv^{w+1} (P_{+a})_1$, equivalently, $P(a)\prec_{\Delta(\fv)} \fv^{w+1} P_1$.

Let $\hat L:=L_{P_{+\hat a}}\in\hat K[\der]$ and $R:=P_{>1}\in K\{Y\}$;
note that $P_{(\i)}=R_{(\i)}$ for $\abs{\i}>1$ and $R  \prec_{\Delta(\fv)} \fv^{w+1} P_1$. Hence
Taylor expansion and $P(\hat a)=0$ give
\begin{align*}
P(a)	&\ = \ P(\hat a)+\hat L(a-\hat a) + \sum_{\abs{\i} > 1} P_{(\i)}(\hat a)\cdot(a-\hat a)^\i \\
		&\ =\ \hat L(a-\hat a)+\sum_{\abs{\i} > 1} R_{(\i)}(\hat a)\cdot(a-\hat a)^\i \\
		& \qquad\qquad\qquad\text{where $R_{(\i)}(\hat a)\cdot(a-\hat a)^\i \ \prec_{\Delta(\fv)}\ \fv^{w+1} P_1$ for $\abs{\i} > 1$,}
\end{align*} 
so it is enough to show $\hat L(a-\hat a)\prec_{\Delta(\fv)}\fv^{w+1} P_1$.
Lemma~\ref{lem:linear part, new} applied to $(\hat K, \hat a)$ in place of $(K, a)$ gives
$\order \hat L = r$ and $\hat L\sim_{\Delta(\fv)} L$. Since $\hat K$ is $\d$-valued,
Lemma~\ref{lem:nepsilon, refined}
yields a $q\in\Q$ with $w+1<q\leq w+2$ and a $\fw$ such that $\hat L\fv^q \asymp \fw\, \fv^q\, \hat L$ where~$[\fw]\leq [\fv^\dagger]$ 
and 
hence $\fw \asymp_{\Delta(\fv)} 1$ (see the remark before Lemma~\ref{lem:steep1}).
With~$\fn\asymp a-\hat a$ we have $\fn\preceq\fv^{w+2}\preceq\fv^q\prec_{\Delta(\fv)} \fv^{w+1}$ and therefore
$$\hat L(a-\hat a)\ \preceq\ \hat L\fn\ \preceq\ \hat L\fv^q\ \asymp\ \fw\, \fv^q\, \hat L\ \asymp_{\Delta(\fv)}\ \fv^q \hat L\ \prec_{\Delta(\fv)}\ \fv^{w+1} \hat L.$$
Hence $\hat L(a-\hat a)\prec_{\Delta(\fv)}\fv^{w+1} P_1$ as required.
\end{proof}

\noindent
In particular, if  $(P,\fm,\hat a)$ is a normal hole in $K$ and   $\hat a\preceq  \fv^{w+2}\fm$, then  $(P,\fm,\hat a)$ is strictly  normal. 

\begin{cor}\label{cor:achieve strong normality, 1} 
Suppose $(P,\fm,\hat a)$ is $Z$-minimal, deep, and normal. If $(P,\fm,\hat a)$ is special, then
$(P,\fm,\hat a)$ has a deep and 
strictly normal refinement~$(P_{+a},\fm,{\hat a-a})$ where $\hat a-a\prec_{\Delta(\fv)}\fm$ and $\fv(L_{P_{+a,\times\fm}})\asymp_{\Delta(\fv)}\fv$. \textup{(}Note that
if $K$ is $r$-linearly newtonian, and  $\upo$-free if $r>1$, then $(P,\fm,\hat a)$ is special by Lemma~\ref{lem:special dents}.\textup{)} 
\end{cor}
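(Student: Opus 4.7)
The proof will bolt together three previous results. Specialness of $(P,\fm,\hat a)$ gives us, via Corollary~\ref{specialvariant}, an element $a \in K$ with $\hat a - a \prec \fv^{w+2}\fm$ for any prescribed exponent; choosing the exponent $w+2$ is exactly what is needed to feed into Lemma~\ref{lem:achieve strong normality}. First I will verify that such an $a$ automatically satisfies $\hat a - a \prec_{\Delta(\fv)} \fm$: since $(P,\fm,\hat a)$ is normal we have $\fv \prec^\flat 1$, so $v\fv > 0$ and $(w+2)v\fv$ lies in the archimedean class $[\fv]$, which is disjoint from $\Delta(\fv)$; because every element of $\Delta(\fv)$ has absolute value less than $(1/n)v\fv$ for all $n$, the value $(w+2)v\fv$ dominates $\Delta(\fv)$, so $v(\hat a - a) - v\fm \geq (w+2)v\fv > \Delta(\fv)$, which is the required inequality.

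Next I will produce the deep refinement with controlled span. The triple $(P_{+a},\fm,\hat a - a)$ is a refinement of $(P,\fm,\hat a)$, and Corollary~\ref{cor:deep 2, cracks} applied with $\fn = \fm$ (so that $[\fn/\fm] = 0 \leq [\fv]$) yields both that $(P_{+a},\fm,\hat a - a)$ is deep and that $\fv(L_{P_{+a,\times\fm}}) \asymp_{\Delta(\fv)} \fv$.

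The remaining task is strict normality, and here the only obstacle is that Lemma~\ref{lem:achieve strong normality} is phrased for holes rather than $Z$-minimal slots. I will circumvent this in the standard way: by Lemma~\ref{lem:from cracks to holes}, $(P,\fm,\hat a)$ is equivalent to a $Z$-minimal hole $(P,\fm,\hat b)$ in $K$. Equivalence preserves normality and deepness (as noted in the relevant subsections), so $(P,\fm,\hat b)$ is a deep, normal hole, and by definition of equivalence $v(\hat b - a) = v(\hat a - a)$, so $\hat b - a \preceq \fv^{w+2}\fm$. Lemma~\ref{lem:achieve strong normality} now applies directly to produce the strictly normal refinement $(P_{+a},\fm,\hat b - a)$ of $(P,\fm,\hat b)$.

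Finally, the two refinements $(P_{+a},\fm,\hat a - a)$ and $(P_{+a},\fm,\hat b - a)$ are equivalent (equivalence is preserved under refinement, as noted following the definition), and strict normality transfers across equivalent slots, so $(P_{+a},\fm,\hat a - a)$ is strictly normal as required. There is no genuinely hard step here — the real content was already distilled into Corollary~\ref{specialvariant}, Corollary~\ref{cor:deep 2, cracks}, and Lemma~\ref{lem:achieve strong normality}; the only mild subtleties are the archimedean-class computation turning $\prec \fv^{w+2}\fm$ into $\prec_{\Delta(\fv)} \fm$, and the equivalence/hole-passage bookkeeping needed to apply Lemma~\ref{lem:achieve strong normality} in the $Z$-minimal slot setting.
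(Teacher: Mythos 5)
Your proof is correct and follows essentially the same route as the paper's: Corollary~\ref{specialvariant} for the approximation $\hat a-a\preceq\fv^{w+2}\fm$, Lemma~\ref{lem:achieve strong normality} for strict normality, and Lemma~\ref{lem:deep 2} (in your case its slot version, Corollary~\ref{cor:deep 2, cracks}) for deepness and the span estimate. The only difference is bookkeeping: the paper invokes Lemma~\ref{lem:from cracks to holes} once at the outset to arrange that the slot is a hole, whereas you pass to an equivalent hole only at the strict-normality step and transfer back explicitly via equivalence, also spelling out the archimedean-class computation giving $\hat a-a\prec_{\Delta(\fv)}\fm$.
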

\begin{proof} By Lemma~\ref{lem:from cracks to holes} we arrange that $(P,\fm,\hat a)$ is a hole in $K$.
If $(P,\fm,\hat a)$ is special, Corollary~\ref{specialvariant}
gives an  $a$ such that $\hat a-a\preceq  \fv^{w+2}\fm$,  and then the refinement
$(P_{+a},\fm,\hat a-a)$  of $(P,\fm,\hat a)$  is strictly normal by Lemma~\ref{lem:achieve strong normality}, and deep 
with $\fv(L_{P_{+a,\times\fm}})\asymp_{\Delta(\fv)}\fv$ by Lemma~\ref{lem:deep 2}.
\end{proof}

\noindent
This leads to a useful variant of Theorem~\ref{mainthm}:

\begin{cor}\label{cor:achieve strong normality, 2}
Suppose $K$ is $\upo$-free and $r$-linearly newtonian. Then every $Z$-minimal slot in $K$ of order~$r$ has a refinement
$(P,\fm,\hat a)$ such that~$(P^\phi,\fm,\hat a)$ is deep and strictly normal, eventually.
\end{cor}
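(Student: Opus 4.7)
The strategy is to bootstrap from Theorem~\ref{mainthm} by applying Corollary~\ref{cor:achieve strong normality, 1} in a suitably chosen compositional conjugate of $K$, and then propagating the conclusion back out to all eventual $\phi$. Note first that all hypotheses of Theorem~\ref{mainthm} are available: $K$ is $\upo$-free and $r$-linearly newtonian by assumption, $\Gamma$ is divisible by the standing assumption of this subsection, and $K$ is $\d$-valued by Lemma~\ref{lem:ADH 14.2.5}. So I would apply Theorem~\ref{mainthm} to the given $Z$-minimal slot to get a refinement $(P,\fm,\hat a)$ and an active $\phi_*\preceq 1$ such that $(P^\phi,\fm,\hat a)$ is deep and normal as a slot in $K^\phi$ for every active $\phi\preceq\phi_*$.

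Next, fix one such active $\phi_0\preceq\phi_*$, and consider $(P^{\phi_0},\fm,\hat a)$ as a $Z$-minimal, deep, normal slot in the compositional conjugate $K^{\phi_0}$. Both $\upo$-freeness and $r$-linear newtonianity are preserved under compositional conjugation (see [ADH, 11.7, 14.2]), so the parenthetical remark in Corollary~\ref{cor:achieve strong normality, 1} applied inside $K^{\phi_0}$ delivers that $(P^{\phi_0},\fm,\hat a)$ is special there, and that corollary then yields an $a\in K$ such that $\big((P^{\phi_0})_{+a},\fm,\hat a-a\big)$ is a deep and strictly normal refinement of $(P^{\phi_0},\fm,\hat a)$ in $K^{\phi_0}$. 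Because additive and compositional conjugation commute, $(P^{\phi_0})_{+a}=(P_{+a})^{\phi_0}$, so $(P_{+a},\fm,\hat a-a)$ is a refinement of $(P,\fm,\hat a)$ (hence of the original $Z$-minimal slot) whose $\phi_0$-compositional conjugate is deep and strictly normal.

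It remains to see that $\big((P_{+a})^\phi,\fm,\hat a-a\big)$ is deep and strictly normal in $K^\phi$ for all active $\phi\preceq\phi_0$ in $K$. For any such $\phi$, the element $\phi/\phi_0$ is active in $K^{\phi_0}$ with $\phi/\phi_0\preceq 1$, and $(K^{\phi_0})^{\phi/\phi_0}=K^\phi$, with $\big((P_{+a})^{\phi_0}\big)^{\phi/\phi_0}=(P_{+a})^\phi$. Applying the remark after the definition of \emph{deep} (which states that deepness is preserved by compositional conjugation by actives $\preceq 1$) and Lemma~\ref{lem:normality comp conj, strong} (the analogous preservation for strict normality), both inside $K^{\phi_0}$, gives that $\big((P_{+a})^\phi,\fm,\hat a-a\big)$ is deep and strictly normal in $K^\phi$ for every active $\phi\preceq\phi_0$. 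This is the desired eventual conclusion.

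The real content is packaged in Corollary~\ref{cor:achieve strong normality, 1}, which in turn rests on Lemma~\ref{lem:achieve strong normality} (upgrading normality to strict normality along an approximation of $\hat a$) together with the specialness supplied by Lemma~\ref{lem:special dents} and the approximation furnished by Corollary~\ref{specialvariant}; after that, the argument above is a bookkeeping exercise in compositional conjugation, whose only subtle point is to choose a single $\phi_0$ small enough so that Theorem~\ref{mainthm} has already taken effect, and then treat $K^{\phi_0}$ as the new base field in which Corollary~\ref{cor:achieve strong normality, 1} can be invoked.
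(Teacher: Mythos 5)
Your proposal is correct and follows essentially the same route as the paper: refine via Theorem~\ref{mainthm} to arrange $(P^{\phi_0},\fm,\hat a)$ deep and normal, invoke Corollary~\ref{cor:achieve strong normality, 1} in $K^{\phi_0}$ (using the specialness supplied by its parenthetical remark) to get a deep, strictly normal refinement, and then push this to all active $\phi\preceq\phi_0$ via Lemma~\ref{lem:normality comp conj, strong} and the preservation of deepness under compositional conjugation. The only difference is bookkeeping: the paper phrases the last step directly in terms of $(P^\phi_{+a},\fm,\hat a-a)$ for active $\phi\preceq\phi_0$ in $K$, while you route it through $(K^{\phi_0})^{\phi/\phi_0}=K^\phi$, which amounts to the same thing.
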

\begin{proof}
Let a $Z$-minimal slot in $K$ of order~$r$ be given. Use Theorem~\ref{mainthm} to refine it to a slot $(P,\fm,\hat a)$ in $K$
with an active $\phi_0$ such that the slot 
$(P^{\phi_0},\fm,\hat a)$ in~$K^{\phi_0}$ is deep and normal.
Corollary~\ref{cor:achieve strong normality, 1} gives a deep and  strictly normal refinement~$(P^{{\phi}_0}_{+a},\fm,\hat a-a)$ of $(P^{\phi_0},\fm,\hat a)$.  By 
 Lemma~\ref{lem:normality comp conj, strong} the slot 
 $(P^\phi_{+a},\fm, {\hat a -a})$ in~$K^\phi$ is deep and strictly normal, for all active $\phi\preceq \phi_0$ (in $K$). Thus $(P_{+a},\fm, \hat a -a)$ refines the original $Z$-minimal slot in $K$ and has the desired property. 
\end{proof}

\noindent
Corollaries~\ref{degmorethanone} and~\ref{cor:achieve strong normality, 2} have the following consequence:

\begin{cor}\label{cor:achieve strong normality, 3} Suppose $K$ is $\upo$-free. Then
every minimal hole in $K$ of order $r$ and degree~$>1$ has a refinement  $(P,\fm,\hat a)$ such that $(P^\phi,\fm,\hat a)$ is deep and strictly normal, eventually. 
\end{cor}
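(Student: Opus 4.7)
The proof will be a short deduction combining the two corollaries cited by the hint. Given a minimal hole $(P_0,\fm_0,\hat a_0)$ in $K$ of order $r$ and degree $>1$, I would first invoke Corollary~\ref{degmorethanone} to conclude that $K$ is $r$-linearly newtonian; this is the only place where the degree hypothesis plays a role, since it rules out the possibility that $K$ fails to be $r$-linearly newtonian (which by Lemma~\ref{lem:no hole of order <=r, deg 1} would manifest itself in a hole of degree $1$ of order $\leq r$, necessarily of smaller complexity than $(P_0,\fm_0,\hat a_0)$, contradicting minimality).

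Next, I would observe that $(P_0,\fm_0,\hat a_0)$, being a minimal hole, is in particular $Z$-minimal: by Lemma~\ref{lem:Z(K,hat a)}, $P_0 \in Z(K,\hat a_0)$ has minimal complexity among elements of $Z(K,\hat a_0)$, since any element of $Z(K,\hat a_0)$ of smaller complexity than $P_0$ would via [ADH, 11.4.8] produce a hole in $K$ of smaller complexity. Thus $(P_0,\fm_0,\hat a_0)$ qualifies as a $Z$-minimal slot in $K$ of order $r$.

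Finally, since $K$ is $\upo$-free and $r$-linearly newtonian, Corollary~\ref{cor:achieve strong normality, 2} applies to this $Z$-minimal slot and yields a refinement $(P,\fm,\hat a)$ such that $(P^\phi,\fm,\hat a)$ is deep and strictly normal, eventually. Since refinements of a hole remain holes (as noted in the subsection on refinements), $(P,\fm,\hat a)$ is the desired refinement.

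There is no real obstacle here; the work has been done in the preceding two corollaries, and all that remains is to verify that the hypothesis ``minimal hole of degree $>1$'' delivers both (a) $r$-linear newtonianity of $K$ and (b) $Z$-minimality of the given slot, which together set up exactly the input required by Corollary~\ref{cor:achieve strong normality, 2}.
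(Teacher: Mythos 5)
Your proof is correct and follows exactly the same route as the paper, which simply combines Corollary~\ref{degmorethanone} (degree $>1$ plus minimality yields $r$-linear newtonianity) with Corollary~\ref{cor:achieve strong normality, 2} (deep, strictly normal refinement for $Z$-minimal slots over an $\upo$-free, $r$-linearly newtonian base). The intermediate observation that minimal holes are $Z$-minimal is already covered by Lemma~\ref{lem:Z(K,hat a)}, as you note.
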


\noindent
Corollary~\ref{cor:achieve strong normality, 1} also gives the following variant of Corollary~\ref{cor:achieve strong normality, 2}, where the role of  Theorem~\ref{mainthm}  in its proof is taken over by
Proposition~\ref{varmainthm}:

\begin{cor}\label{cor:achieve strong normality, 4} Suppose $K$ is $\upo$-free. Then every $Z$-minimal special  slot  in~$K$ of order~$r$ has a refinement~$(P,\fm,\hat a)$ such that $(P^\phi,\fm,\hat a)$
is deep and strictly normal, eventually.
\end{cor}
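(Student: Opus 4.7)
The plan is to mimic the proof of Corollary~\ref{cor:achieve strong normality, 2} almost verbatim, with Proposition~\ref{varmainthm} substituted for Theorem~\ref{mainthm}. The switch is possible precisely because specialness (rather than $r$-linear newtonianity) is the hypothesis that lets us invoke Corollary~\ref{cor:achieve strong normality, 1} at the key step, and because specialness is robust under every manipulation we shall perform: it survives refinement (Lemma~\ref{speciallemma}), compositional conjugation, and multiplication of $P$ by a nonzero scalar, according to the remarks immediately following the definition of ``special''.

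Concretely, given a $Z$-minimal special slot in $K$ of order $r$, I would first appeal to Proposition~\ref{varmainthm} to produce a refinement---which I still call $(P,\fm,\hat a)$---together with an active $\phi_0\preceq 1$ in $K$ such that $(P^{\phi},\fm,\hat a)$ is deep and normal in $K^{\phi}$ for every active $\phi\preceq \phi_0$ in $K$. Refining preserves $Z$-minimality and specialness, and so does compositional conjugation, so in particular the slot $(P^{\phi_0},\fm,\hat a)$ in $K^{\phi_0}$ is simultaneously $Z$-minimal, special, deep, and normal.

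At this point Corollary~\ref{cor:achieve strong normality, 1} applies to $(P^{\phi_0},\fm,\hat a)$ in $K^{\phi_0}$ and yields an $a\in K$ with $\hat a-a\prec_{\Delta(\fv)}\fm$ such that the refinement $(P^{\phi_0}_{+a},\fm,\hat a-a)$ is deep and strictly normal in $K^{\phi_0}$. Using the identity $P^{\phi_0}_{+a}=(P_{+a})^{\phi_0}$, this refinement is the compositional conjugate by $\phi_0$ of the slot $(P_{+a},\fm,\hat a-a)$ in $K$, which itself refines the original slot (the additive change of variable is independent of the derivation, and $\hat a-a\prec \fm$ is witnessed in either setting).

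Finally, I would invoke Lemma~\ref{lem:normality comp conj, strong} and the corresponding remark that compositional conjugation by an active element $\preceq 1$ preserves deepness (stated just after the definition of \emph{deep}) to conclude: for every active $\phi\preceq\phi_0$ in $K$, the slot $((P_{+a})^{\phi},\fm,\hat a-a)$ in $K^{\phi}$ is deep and strictly normal, which is precisely the eventual statement required. No single step is a real obstacle---the content of the argument is already contained in Proposition~\ref{varmainthm} and Corollary~\ref{cor:achieve strong normality, 1}---so the only care needed is bookkeeping: verifying that each property (deep, normal, $Z$-minimal, special) transfers correctly between $K$ and $K^{\phi_0}$ at each stage.
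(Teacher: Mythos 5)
Your proposal is correct and follows essentially the same route as the paper: the paper explicitly says the proof of Corollary~\ref{cor:achieve strong normality, 4} is that of Corollary~\ref{cor:achieve strong normality, 2} with Proposition~\ref{varmainthm} replacing Theorem~\ref{mainthm}, specialness (preserved by Lemma~\ref{speciallemma} and the remarks after the definition) supplying the hypothesis of Corollary~\ref{cor:achieve strong normality, 1}, and Lemma~\ref{lem:normality comp conj, strong} finishing the eventual statement. Your bookkeeping about transferring deep/normal/special/$Z$-minimal between $K$ and $K^{\phi_0}$ matches the paper's argument.
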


\section{Isolated Slots}\label{sec:isolated} 

\noindent
In this short section we study the concept of isolation, which plays well together with normality. 
{\em Throughout this section $K$ is an $H$-asymptotic field with small derivation and with rational asymptotic integration.}\/ 
We let $a$, $b$ range over $K$ and~$\phi$, $\fm$, $\fn$, $\fw$ over $K^\times$. We also let
$(P,\fm,\hat a)$ be a slot in $K$ of order $r\geq 1$.
Recall that~$v(\hat a-K)$ is a cut in $\Gamma$ without largest element.
Note that $v\big( (\hat a-a) - K \big) = v(\hat a-K)$
and~$v( \hat a \fn - K) = v(\hat a-K)+v\fn$. 

\begin{definition}\label{def:isolated}
We say that $(P,\fm,\hat a)$  is {\bf isolated}\index{slot!isolated} if for all $a\prec\fm$,
$$\order(L_{P_{+a}})=r\ \text{ and }\ \exc^{\ev}(L_{P_{+a}}) \cap v(\hat a-K)\ <\  v(\hat a-a);$$
equivalently, for all $a\prec \fm$:  $\order(L_{P_{+a}})=r$ and whenever
$\fw \preceq \hat a-a$ is such that~$v(\fw) \in \exc^{\ev}(L_{P_{+a}})$, then 
$\fw\prec \hat a-b$ for all $b$.
\end{definition}

\noindent
In particular,  if $(P,\fm,\hat a)$ is isolated, then $v(\hat a)\notin \exc^{\ev}(L_P)$. If $(P,\fm,\hat a)$ is isolated, then so is every equivalent slot in~$K$,
as well as $(bP,\fm,\hat a)$ for $b\neq 0$ and the slot~$(P^\phi,\fm,\hat a)$ in $K^\phi$ for active $\phi$ in $K$. Moreover:

\begin{lemma}\label{lem:isolated refinement}
If $(P,\fm,\hat a)$ is isolated, then so is any refinement~$(P_{+a},\fn,\hat a-a)$  of~it. 
\end{lemma}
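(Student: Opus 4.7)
The plan is to unfold the definitions on both sides and observe that a refinement reduces to the original slot after a translation of the parameter $a$, so isolation transfers mechanically. More precisely, suppose $(P_{+a_0},\fn,\hat a-a_0)$ refines $(P,\fm,\hat a)$, so $\hat a-a_0\prec\fn\preceq\fm$. To apply isolation of $(P,\fm,\hat a)$ at various $a\in K$ with $a\prec\fm$, I first need that $a_0\prec\fm$ itself; this follows from writing $a_0=\hat a-(\hat a-a_0)$ and using $\hat a\prec\fm$ together with $\hat a-a_0\prec\fn\preceq\fm$.

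Next, given an arbitrary $b\in K$ with $b\prec\fn$, I set $a:=a_0+b$, which again lies in $K$ and satisfies $a\prec\fm$ since both $a_0$ and $b$ are $\prec\fm$. The crucial algebraic identities to invoke are $(P_{+a_0})_{+b}=P_{+a}$ (so $L_{(P_{+a_0})_{+b}}=L_{P_{+a}}$), together with the set-theoretic equalities $v\bigl((\hat a-a_0)-K\bigr)=v(\hat a-K)$ and $v\bigl((\hat a-a_0)-b\bigr)=v(\hat a-a)$, both of which use nothing more than $a_0\in K$ and the definition of $v(\hat a - K)$.

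Applying now the hypothesis that $(P,\fm,\hat a)$ is isolated at $a$, we obtain $\order(L_{P_{+a}})=r$ and $\exc^{\ev}(L_{P_{+a}})\cap v(\hat a-K)<v(\hat a-a)$. Translated through the three identities above, these are exactly the two conditions required for $(P_{+a_0},\fn,\hat a-a_0)$ to be isolated at $b$. Since $b$ was an arbitrary element of $K$ with $b\prec\fn$, this finishes the verification.

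There is really no hard step here; the only thing to watch is not to confuse the two roles of the parameter (the $a$ witnessing isolation of the original slot versus the $b$ witnessing isolation of the refinement), which is resolved by the substitution $a=a_0+b$ and the identity $(P_{+a_0})_{+b}=P_{+a}$.
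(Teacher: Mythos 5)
Your proof is correct and is essentially the paper's argument: the paper just organizes the same verification by reducing to the two special cases $\fn=\fm$ and $a_0=0$, while you unfold the definition directly via the substitution $a=a_0+b$, the identity $(P_{+a_0})_{+b}=P_{+a}$, and the translation invariance $v\bigl((\hat a-a_0)-K\bigr)=v(\hat a-K)$. No gaps.
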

\begin{proof} For the case $\fn=\fm$, use $v\big( (\hat a-a) - K \big) = v(\hat a-K)$. The case $a=0$ is clear. The general case reduces to these two special cases.
\end{proof}

{\sloppy
\begin{lemma}\label{lem:isolated}
Suppose  $(P,\fm,\hat a)$ is isolated. Then the multiplicative con\-ju\-gate~$(P_{\times\fn},\fm/\fn,\hat a/\fn)$ of 
$(P,\fm,\hat a)$ by $\fn$ is isolated.
\end{lemma}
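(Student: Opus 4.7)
The plan is to reduce everything to the defining inequality for $(P,\fm,\hat a)$ via the substitution $a = \fn a'$ and an overall translation by $-v\fn$ in $\Gamma$. First I would unwind the multiplicative conjugate: for any $a' \in K$,
\[
(P_{\times\fn})_{+a'}(Y)\ =\ P\!\big(\fn(a'+Y)\big)\ =\ (P_{+\fn a'})_{\times\fn}(Y),
\]
so using $L_{Q_{\times\fn}} = L_Q\,\fn$ (see [ADH, p.~242], as already recalled before Lemma~\ref{lem:deg 1 cracks splitting}), we obtain
\[
L_{(P_{\times\fn})_{+a'}}\ =\ L_{P_{+\fn a'}}\,\fn.
\]
Setting $a := \fn a'$ establishes a bijection between $\{a' \in K : a' \prec \fm/\fn\}$ and $\{a \in K : a \prec \fm\}$, and since multiplying on the right by $\fn\ne 0$ does not change the order, the condition $\order L_{(P_{\times\fn})_{+a'}} = r$ is equivalent to $\order L_{P_{+a}} = r$, which holds by hypothesis.

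Next I would verify the exceptional-value condition. From the observation made at the start of Section~\ref{sec:lindiff} that $\exc^{\ev}(B\fn) = \exc^{\ev}(B) - v\fn$ for $B \in K[\der]^{\ne}$, we get
\[
\exc^{\ev}\!\big(L_{(P_{\times\fn})_{+a'}}\big)\ =\ \exc^{\ev}(L_{P_{+a}}) - v\fn.
\]
Since $K\fn = K$, we also have $v\!\big(\hat a/\fn - K\big) = v(\hat a - K) - v\fn$ and $v(\hat a/\fn - a') = v(\hat a - a) - v\fn$. The required inequality
\[
\exc^{\ev}\!\big(L_{(P_{\times\fn})_{+a'}}\big)\cap v\!\big(\hat a/\fn - K\big)\ <\ v(\hat a/\fn - a')
\]
is thus the translate by $-v\fn$ of
\[
\exc^{\ev}(L_{P_{+a}})\cap v(\hat a - K)\ <\ v(\hat a - a),
\]
which holds because $(P,\fm,\hat a)$ is isolated. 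There is no real obstacle here; the whole argument is a bookkeeping exercise, and the only point worth flagging is to make sure the identity $L_{(P_{\times\fn})_{+a'}} = L_{P_{+a}}\,\fn$ is written out correctly before invoking the translation-invariance of $\exc^{\ev}$ under right multiplication by a nonzero element.
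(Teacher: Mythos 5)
Your proof is correct and follows essentially the same route as the paper's: both rest on the identity $P_{\times\fn,+a'}=P_{+a'\fn,\times\fn}$, hence $L_{(P_{\times\fn})_{+a'}}=L_{P_{+a'\fn}}\,\fn$, combined with $\exc^{\ev}(A\fn)=\exc^{\ev}(A)-v\fn$ and the substitution $a=a'\fn$. The only cosmetic difference is that you phrase the conclusion as a translation by $-v\fn$ of the defining inequality, whereas the paper runs the same computation through the equivalent "$\fw$-formulation" of isolatedness; the content is identical.
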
}
\begin{proof} Let $a\prec\fm/\fn$. Then $a\fn\prec \fm$, so
$\order(L_{P_{\times \fn, +a}})=\order(L_{P_{+a\fn,\times \fn}})=\order(L_{P_{+a\fn}})=r$. Suppose $\fw\preceq (\hat a/\fn)-a$ and
$ v(\fw) \in \exc^{\ev}\big(L_{P_{\times\fn,+a}}\big)$.
Now $L_{P_{\times\fn,+a}} = L_{P_{+a\fn,\times\fn}} = L_{P_{+a\fn}}\fn$  and  thus $\fw\fn \preceq \hat a-a\fn$, 
$v(\fw\fn)\in \exc^{\ev}\big(P_{+a\fn}\big)$.
But~$(P,\fm,\hat a)$ is isolated, so $v(\fw\fn)> v(\hat a-K)$ and hence~$v(\fw)> v\big((\hat a/\fn)-K\big)$.
Thus $(P_{\times\fn},\fm/\fn,\hat a/\fn)$  is isolated.
\end{proof}

\begin{lemma}\label{lem:isolated normal}
Suppose   $K$ is $\upl$-free or   $r=1$, and $(P,\fm,\hat a)$ is normal. Then  
$$ \text{$(P,\fm,\hat a)$  is isolated} \quad\Longleftrightarrow\quad
\exc^{\ev}(L_P) \cap v(\hat a-K)\ \leq\  v\fm.$$
\end{lemma}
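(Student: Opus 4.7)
The plan is to deduce this equivalence from the invariance statement in Lemma~\ref{lem:excev normal}, which under either of the two alternative hypotheses ($K$ is $\upl$-free, or $r=1$) gives $\exc^{\ev}(L_{P_{+a}})=\exc^{\ev}(L_P)$ (and $\order L_{P_{+a}}=r$) for every $a\prec\fm$, because $(P,\fm,\hat a)$ is assumed normal. With this in hand, the definition of being isolated simplifies: $(P,\fm,\hat a)$ is isolated iff for every $a\prec\fm$ one has $\exc^{\ev}(L_P)\cap v(\hat a-K)<v(\hat a-a)$.

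For the direction ``$\Leftarrow$'', suppose $\exc^{\ev}(L_P)\cap v(\hat a-K)\le v\fm$, and let $a\prec\fm$. Since also $\hat a\prec\fm$, we have $\hat a-a\prec\fm$, i.e., $v(\hat a-a)>v\fm$. Thus any $\gamma\in \exc^{\ev}(L_P)\cap v(\hat a-K)$ satisfies $\gamma\le v\fm<v(\hat a-a)$, giving the isolation condition at $a$.

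For ``$\Rightarrow$'', suppose towards a contradiction that $(P,\fm,\hat a)$ is isolated but there is some $\gamma\in \exc^{\ev}(L_P)\cap v(\hat a-K)$ with $\gamma>v\fm$. Write $\gamma=v(\hat a-a)$ with $a\in K$. Then $\hat a-a\prec\fm$, and together with $\hat a\prec\fm$ this forces $a=\hat a-(\hat a-a)\prec\fm$. Invoking the invariance above we conclude $\gamma\in \exc^{\ev}(L_{P_{+a}})\cap v(\hat a-K)$. But isolation applied at this $a$ demands $\gamma<v(\hat a-a)=\gamma$, a contradiction.

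The only real obstacle is making sure the hypotheses of Lemma~\ref{lem:excev normal} are in force (normality plus ``$\upl$-free or $r=1$'') to justify $\exc^{\ev}(L_{P_{+a}})=\exc^{\ev}(L_P)$; once that is invoked, the argument is a routine unpacking of the definitions. Everything else is just manipulation of the cut $v(\hat a-K)$ and the observation that $a,\hat a\prec\fm$ implies $\hat a-a\prec\fm$.
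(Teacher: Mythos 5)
Your proof is correct and follows exactly the route the paper takes: invoke Lemma~\ref{lem:excev normal} to replace $\exc^{\ev}(L_{P_{+a}})$ by $\exc^{\ev}(L_P)$ uniformly in $a\prec\fm$, and then reduce the biconditional to the elementary observation that $a\prec\fm$ iff $\hat a-a\prec\fm$ (using $\hat a\prec\fm$). The paper's stated proof is just a compressed version of your argument.
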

\begin{proof} Use Lemma~\ref{lem:excev normal}; for the direction $\Rightarrow$, use also that  $\hat a-a\prec\fm$ iff $a\prec\fm$.
\end{proof}

\begin{lemma}\label{lem:isolated deg 1} 
Suppose  $\deg P=1$. Then
$$ \text{$(P,\fm,\hat a)$  is isolated} \quad\Longleftrightarrow\quad
\exc^{\ev}(L_P) \cap v(\hat a-K)\  \leq\  v\fm.$$
\end{lemma}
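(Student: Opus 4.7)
The plan is to exploit the fact that for $\deg P = 1$ the linear part is invariant under additive conjugation. Writing $P = L_P(Y) + P(0)$, for any $a \in K$ we have by $K$-linearity of $L_P$
\[
P_{+a}(Y)\ =\ L_P(Y+a) + P(0)\ =\ L_P(Y) + L_P(a) + P(0)\ =\ L_P(Y) + P(a),
\]
so $L_{P_{+a}} = L_P$. In particular, $\order L_{P_{+a}} = \order L_P = r$ (the latter since $P$ has order $r$ and degree $1$), so the first part of the isolated condition is automatic, and the definition reduces to: for all $a \prec \fm$,
\[
\exc^{\ev}(L_P) \cap v(\hat a - K)\ <\ v(\hat a - a).
\]

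For the implication $(\Leftarrow)$, assume $\exc^{\ev}(L_P) \cap v(\hat a - K) \leq v\fm$, and let $a \prec \fm$. Since also $\hat a \prec \fm$, we get $\hat a - a \prec \fm$, i.e., $v(\hat a - a) > v\fm$. Hence $\exc^{\ev}(L_P) \cap v(\hat a - K) \leq v\fm < v(\hat a - a)$, as needed.

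For the implication $(\Rightarrow)$, suppose $(P,\fm,\hat a)$ is isolated, and towards a contradiction let $\gamma \in \exc^{\ev}(L_P) \cap v(\hat a - K)$ with $\gamma > v\fm$. Pick $a \in K$ with $v(\hat a - a) = \gamma$. Then $\hat a - a \prec \fm$, and combined with $\hat a \prec \fm$ this yields $a = \hat a - (\hat a - a) \prec \fm$. By isolation applied at this $a$, every element of $\exc^{\ev}(L_P) \cap v(\hat a - K)$ is strictly less than $v(\hat a - a) = \gamma$; but $\gamma$ itself lies in this intersection, a contradiction.

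Neither direction presents any real obstacle; the entire content of the lemma is the identity $L_{P_{+a}} = L_P$ for degree-$1$ differential polynomials, which turns the definition of isolated (quantified over $a \prec \fm$) into a single inequality involving only $v\fm$.
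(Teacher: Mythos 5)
Your proof is correct and follows exactly the paper's route: the paper's proof is the one-line remark that $\order L_P = r$ and $L_{P_{+a}} = L_P$ for all $a$, which is precisely the identity you isolate and then unwind against the definition. The two directions you spell out are the intended (and routine) bookkeeping, using $\hat a \prec \fm$ so that $a \prec \fm$ iff $\hat a - a \prec \fm$.
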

\begin{proof}
Use that $\order L_P=r$ and $L_{P_{+a}}=L_P$ for all $a$.
\end{proof}

{\sloppy
\begin{prop}\label{prop:achieve isolated}
Suppose~$K$ is $\upl$-free or $r=1$, and $(P,\fm,\hat a)$ is normal.  
Then~$(P,\fm,\hat a)$ has an isolated refinement.
\end{prop}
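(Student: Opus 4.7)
The plan is to exploit the fact that $\exc^{\ev}(L_P)$ is a finite set and is invariant under additive conjugation by elements $\prec\fm$. Since $(P,\fm,\hat a)$ is normal and $K$ is $\upl$-free or $r=1$, Lemma~\ref{lem:excev normal} tells us that $L_P$ has order $r$ and that $\exc^{\ev}(L_{P_{+d}})=\exc^{\ev}(L_P)$ for every $d\prec\fm$; moreover, $S:=\exc^{\ev}(L_P)\cap v(\hat a-K)$ is finite, by Corollary~\ref{cor:size of excev, strengthened} in the $\upl$-free case and trivially for $r=1$ (where the order-$1$ operator $L_P$ has at most one eventual exceptional value).

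First I would dispose of the easy case: if $\gamma\le v\fm$ for every $\gamma\in S$, then Lemma~\ref{lem:isolated normal} shows that $(P,\fm,\hat a)$ itself is isolated. Otherwise, set $\gamma_{\max}:=\max S$, so $\gamma_{\max}>v\fm$. Since $\gamma_{\max}\in v(\hat a-K)$ and this cut has no largest element, I can successively pick $\gamma_1,\gamma_2\in v(\hat a-K)$ with $\gamma_{\max}<\gamma_1<\gamma_2$. Choosing $\fn\in K^\times$ with $v\fn=\gamma_1$ and $a\in K$ with $v(\hat a-a)=\gamma_2$ then yields $\hat a-a\prec\fn\prec\fm$, so $(P_{+a},\fn,\hat a-a)$ is a bona fide refinement of $(P,\fm,\hat a)$, and my claim is that this refinement is isolated.

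To verify this directly from the definition of isolated, fix any $b\prec\fn$. Note that $a\prec\fm$ (because $va\ge\min(v\hat a,v(\hat a-a))>v\fm$), and $b\prec\fn\preceq\fm$, so $a+b\prec\fm$; hence Lemma~\ref{lem:excev normal} gives $\order L_{P_{+a+b}}=r$ and $\exc^{\ev}(L_{P_{+a+b}})=\exc^{\ev}(L_P)$. Moreover $v((\hat a-a)-b)=v(\hat a-(a+b))\ge\min(v(\hat a-a),vb)>v\fn=\gamma_1>\gamma_{\max}$, so every $\gamma$ in $\exc^{\ev}(L_{(P_{+a})_{+b}})\cap v((\hat a-a)-K)=S$ satisfies $\gamma\le\gamma_{\max}<v((\hat a-a)-b)$, which is exactly the isolation inequality for the refinement.

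I do not anticipate a genuine obstacle: the finiteness of $\exc^{\ev}(L_P)$ together with its invariance under additive perturbation by elements $\prec\fm$ reduces the problem to a transparent choice of the new monomial $\fn$ and base point $a$ that push $v\fn$ past the (finitely many) bad values. The only mild subtlety is ensuring that $v(\hat a-K)$ contains values strictly above $\gamma_{\max}$, which is guaranteed because this cut has no largest element.
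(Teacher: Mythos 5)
Your proposal is correct and follows essentially the same route as the paper's proof: invoke Lemma~\ref{lem:isolated normal} for the case where $\exc^{\ev}(L_P)\cap v(\hat a-K)\leq v\fm$, otherwise use finiteness of $\exc^{\ev}(L_P)$ (Corollary~\ref{cor:size of excev, strengthened}, resp.\ the order-$1$ bound) to take the maximal bad value, refine past it, and verify isolation of the refinement via the invariance of $\exc^{\ev}$ under additive conjugation given by Lemma~\ref{lem:excev normal}. The only (immaterial) difference is that you choose $v\fn$ strictly above the maximal value rather than equal to it.
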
}
\begin{proof}
Suppose $(P,\fm,\hat a)$  is not already isolated.
Then Lemma~\ref{lem:isolated normal} gives $\gamma$~with
$$\gamma\in\exc^{\ev}(L_P)\cap v(\hat a-K),\quad 
\gamma>v\fm.$$
We have $|\exc^{\operatorname{\ev}}(L_P)|\le r$,  by [ADH, p.~481] if $r=1$,  and Corollary~\ref{cor:size of excev, strengthened} and $\upl$-freeness of $K$ if $r>1$.
Hence we can take $\gamma:= \max\exc^{\ev}(L_P)\cap v(\hat a-K)$, and then~$\gamma > v\fm$.
Take~$a$ and~$\fn$ with~$v(\hat a-a)>\gamma=v(\fn)$;
then $(P_{+a},\fn,\hat a-a)$ is a refinement of~$(P,\fm,\hat a)$ and~$a\prec\fm$. 
Let $b\prec \fn$; then 
$a+b\prec\fm$, so by~Lemma~\ref{lem:excev normal},
$$\order(L_{(P_{+a})_{+b}})\ =\ r, \qquad
\exc^{\ev}(L_{(P_{+a})_{+b}})\ =\ 
\exc^{\ev}(L_P).$$
Also $v\big((\hat a-a)-b\big)>\gamma$, hence
$$\exc^{\ev}\big(L_{(P_{+a})_{+b}}\big)  \cap v\big((\hat a-a)-K\big)\  =\ 
\exc^{\ev}(L_P)\cap v(\hat a-K)\ \le\ \gamma\ <\ v\big((\hat a-a)-b\big).$$
Thus $(P_{+a},\fn,\hat a-a)$  is isolated. 
\end{proof}

\begin{remarkNumbered}\label{rem:achieve isolated}
Proposition~\ref{prop:achieve isolated}  goes through if instead of assuming that  $(P,\fm,\hat a)$ is normal, we assume that
 $(P,\fm,\hat a)$ is linear. (Same argument,  
using Lem\-ma~\ref{lem:isolated deg 1} in place of Lemma~\ref{lem:isolated normal} and 
$L_{(P_{+a})_{+b}}=L_P$ in place of Lemma~\ref{lem:excev normal}.) 
\end{remarkNumbered}

\begin{cor}\label{cor:isolated r=1}
Suppose $r=1$, and $(P,\fm,\hat a)$ is normal or linear.
If~$\exc^{\ev}(L_P)=\emptyset$, then $(P,\fm,\hat a)$ is isolated.
If $\exc^{\ev}(L_P)\neq\emptyset$, so $\exc^{\ev}(L_P)=\{v\mathfrak g\}$ where $\mathfrak g\in K^\times$, then~$(P,\fm,\hat a)$ is isolated iff $\fm\preceq\mathfrak g$ or $\hat a-K\succ\mathfrak g$.
\end{cor}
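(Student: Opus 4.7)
The plan is to reduce the corollary to one of the isolation criteria proved just above, namely Lemma~\ref{lem:isolated normal} (in the normal case, where the disjunctive hypothesis ``$K$ is $\upl$-free or $r=1$'' holds thanks to $r=1$) or Lemma~\ref{lem:isolated deg 1} (in the linear case, which covers $\deg P=1$). Either way, these lemmas assert that under our hypothesis $(P,\fm,\hat a)$ is isolated if and only if
\[
\exc^{\ev}(L_P) \cap v(\hat a-K)\ \leq\ v\fm.
\]
From this uniform criterion both statements of the corollary will follow by a short case analysis, using that for $r=1$ we have $\lvert\exc^{\ev}(L_P)\rvert \leq 1$ by the remarks opening the subsection \emph{First-order operators} in Section~\ref{sec:lindiff}.

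First I would dispose of the case $\exc^{\ev}(L_P) = \emptyset$: the intersection on the left is empty, which is trivially $\leq v\fm$, so $(P,\fm,\hat a)$ is isolated. Next, assume $\exc^{\ev}(L_P) = \{v\mathfrak g\}$ with $\mathfrak g\in K^\times$. Since $v(\hat a-K)$ is a downward closed subset of $\Gamma$ without largest element, exactly one of the following two alternatives holds: (a)~$v\mathfrak g \notin v(\hat a-K)$, equivalently $v\mathfrak g > v(\hat a-a)$ for all $a\in K$, equivalently $\mathfrak g \prec \hat a - a$ for all $a$, which is exactly $\hat a-K \succ \mathfrak g$; or (b)~$v\mathfrak g \in v(\hat a-K)$.

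In alternative (a) the intersection is empty and $(P,\fm,\hat a)$ is isolated; note that here the condition $\fm\preceq\mathfrak g$ is not required. In alternative (b) the intersection equals $\{v\mathfrak g\}$, so the criterion becomes $v\mathfrak g \leq v\fm$, i.e., $\fm\preceq\mathfrak g$. Combining: isolated $\Leftrightarrow$ ($\hat a-K \succ \mathfrak g$) or (b holds and $\fm\preceq\mathfrak g$). To see this is equivalent to the form stated in the corollary (``$\fm\preceq\mathfrak g$ or $\hat a-K\succ\mathfrak g$''), one checks both directions: if $\fm\preceq\mathfrak g$ and we are not in case (a), then (b) holds and $v\mathfrak g\leq v\fm$, so isolated; conversely, if neither disjunct of the stated condition holds, then $\fm\succ\mathfrak g$ and (b) holds, so $v\mathfrak g > v\fm$ and $(P,\fm,\hat a)$ fails to be isolated.

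There is no real obstacle here; the only subtle point is taking care with the convention that an empty intersection satisfies any upper bound, and with the equivalence between ``$v\mathfrak g\notin v(\hat a-K)$'' and ``$\hat a-K\succ\mathfrak g$'', which uses that $v(\hat a-K)$ is a downward closed cut.
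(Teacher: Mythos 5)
Your proposal is correct and follows the paper's own route: the paper derives the corollary directly from Lemmas~\ref{lem:isolated normal} and~\ref{lem:isolated deg 1}, exactly as you do, with your case analysis on $v\mathfrak g\in v(\hat a-K)$ versus $v\mathfrak g\notin v(\hat a-K)$ being the routine bookkeeping the paper leaves implicit.
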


\noindent
This follows immediately from Lemmas~\ref{lem:isolated normal} and~\ref{lem:isolated deg 1}.
The results in the rest of this section are the {\em raison d'\^etre}\/ of isolated holes:

\begin{prop}\label{prop:2.12 isolated}  
Suppose $K$ is $\upo$-free and $(P,\fm,\hat a)$ is an isolated hole in~$K$ 
which is normal or linear.
Let $\hat b$ in an immediate
asymptotic extension of $K$ satisfy~${P(\hat b)=0}$ and $\hat b\prec\fm$. Then~$v(\hat a-a)=v(\hat b-a)$ for all~$a$, so~$\hat b\notin K$. \end{prop}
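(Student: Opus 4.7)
The plan is to work in a common asymptotic differential field extension $L$ of $K\langle\hat a\rangle$ and of $K\langle\hat b\rangle$ (such $L$ exists by standard valued differential field theory), and to show that $\hat c:=\hat b-\hat a\in L$ satisfies $v\hat c>v(\hat a-a)$ for every $a\in K$. By the ultrametric identity applied to $\hat b-a=(\hat a-a)+\hat c$, this is exactly the desired conclusion $v(\hat a-a)=v(\hat b-a)$ for all $a\in K$, and it forces $\hat b\notin K$ since $\hat a\notin K$. We may also freely assume $\hat c\neq 0$.

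For the linear case $\deg P=1$ the argument is short: since $P-L_P$ is the constant $P(0)$, we get $L_P(\hat c)=L_P(\hat b)-L_P(\hat a)=0$, so $\hat c\in\ker^{\ne}_L L_P$ and hence $v\hat c\in v(\ker^{\ne}_L L_P)\subseteq \exc^{\ev}_L(L_P)$. Using Lemma~\ref{lemexc, order 1} when $r=1$ and Lemma~\ref{lemexc} together with Theorem~\ref{thm:ADH 13.6.1} when $r>1$, this intersects $\Gamma$ in $\exc^{\ev}(L_P)$; combined with $v\hat c>v\fm$ (because $\hat a,\hat b\prec\fm$) and Lemma~\ref{lem:isolated deg 1}, one concludes $v\hat c>v(\hat a-K)$. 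The case where $v\hat c\notin\Gamma$ is handled by a direct position-of-$v\hat c$ analysis in $\Gamma_L$, using that $v(\hat a-K)\subseteq\Gamma$.

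For the normal case we assume toward a contradiction that $v\hat c\leqslant v(\hat a-a)$ for some $a\in K$. Since the cut $v(\hat a-K)$ has no largest element, we may choose $a^*\in K$ with $v(\hat a-a^*)>v\hat c$ strictly; note $a^*\prec\fm$ because $v\hat c>v\fm$. Replace $(P,\fm,\hat a)$ by its refinement $(P_{+a^*},\fm,\hat a-a^*)$, which is isolated by Lemma~\ref{lem:isolated refinement} and normal by Proposition~\ref{normalrefine}; the difference $\hat c=\hat b-\hat a$ is preserved, while the new $\hat a$ satisfies $v\hat a>v\hat c$, so in particular $\hat b\sim\hat c$. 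Taylor expansion then gives $P_{+\hat a}(\hat c)=P(\hat b)=0$ and $P_{+\hat a}(0)=P(\hat a)=0$, so writing $\tilde R:=(P_{+\hat a})_{>1}\in L\{Y\}$ one has $L_{P_{+\hat a}}(\hat c)=-\tilde R(\hat c)$. Transferring normality to $L$ via Lemmas~\ref{lem:linear part, new} and~\ref{lem:linear part, split-normal, new} applied with the small element $\hat a/\fm$ yields $\order L_{P_{+\hat a}}=r$, $\fv(L_{P_{+\hat a}})\asymp_{\Delta(\fv)}\fv$, and $\tilde R\prec_{\Delta(\fv)}\fv^{w+1}L_{P_{+\hat a}}$, where $\fv:=\fv(L_{P_{\times\fm}})$ and $w:=\wt P$.

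The contradiction now comes from a valuation comparison on the two sides of $L_{P_{+\hat a}}(\hat c)=-\tilde R(\hat c)$. The isolated hypothesis together with Lemma~\ref{lem:excev normal} (available because $\upo$-freeness implies $\upl$-freeness) and Lemma~\ref{lemexc} (using cofinality of $\Psi$ in $\Psi_L$ provided by Theorem~\ref{thm:ADH 13.6.1}) shows $v\hat c\notin \exc^{\ev}_L(L_{P_{+\hat a}})$, so Lemma~\ref{lem:ADH 14.2.7} gives $L_{P_{+\hat a}}(\hat c)\asymp (L_{P_{+\hat a}})^\phi\hat c$ eventually in $\phi$. On the other hand, Lemma~\ref{lem:diff operator at small elt} applied to $\tilde R$ at the point $\hat c\prec\fm$, in combination with the normality bound $\tilde R\prec_{\Delta(\fv)}\fv^{w+1}L_{P_{+\hat a}}$, forces $\tilde R(\hat c)\prec L_{P_{+\hat a}}(\hat c)$, contradicting the identity. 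The hardest step will be this final comparison: concretely, one must transport the isolation condition from $K$ to $L$ cleanly enough that $v\hat c$ avoids $\exc^{\ev}_L(L_{P_{+\hat a}})$, and then combine the normality inequality in $\Delta(\fv)$-coarsened form with the degree-$\geqslant 2$ bound on $\tilde R(\hat c)$ so that the power of $\fv$ supplied by normality genuinely outweighs the evaluation of the linear part, making subtle use of Section~\ref{sec:lindiff}.
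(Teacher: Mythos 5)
Your argument has a genuine gap at its very first step, and everything downstream depends on it. You place $\hat a$ and $\hat b$ in "a common asymptotic differential field extension $L$ of $K\langle\hat a\rangle$ and $K\langle\hat b\rangle$" and call this standard; it is not. The two elements live in possibly different immediate extensions of $K$, and amalgamating asymptotic (valued differential) extensions over $K$ into a single asymptotic field with small derivation, of $H$-type, satisfying the standing hypotheses needed for the lemmas you invoke (Lemma~\ref{lem:ADH 14.2.7}, Lemma~\ref{lem:diff operator at small elt}, etc.) is exactly the kind of control this theory has to work hard for; no such amalgamation result is available here. Moreover, even if some $L$ is produced, the quantity $v\hat c=v(\hat b-\hat a)$ depends on the chosen amalgam and need not lie in $\Gamma$, since $L$ need not be immediate over $K$. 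This breaks the crucial step where you transfer the isolation hypothesis to conclude $v\hat c\notin\exc^{\ev}_L(L_{P_{+\hat a}})$: the available comparison results (Lemma~\ref{lemexc}, Corollary~\ref{cor:13.7.10}) only identify $\exc^{\ev}_L(\cdot)\cap\Gamma$ with $\exc^{\ev}_K(\cdot)$, so exceptional values of $L_{P_{+\hat a}}$ outside $\Gamma$ are not excluded, and your isolation-based argument (and likewise the "direct position-of-$v\hat c$ analysis" you gesture at in the linear case) has nothing to say about them. In particular $v\hat c$ could realize the cut $v(\hat a-K)$ from inside $\Gamma_L$, which is precisely the situation you must rule out.

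The paper's proof avoids amalgamation altogether, and that is the point to absorb. After arranging $\fm=1$ and fixing $a\prec 1$ in $K$, one notes that $\ndeg P=1$ (Corollary~\ref{cor:normal=>quasilinear} in the normal case, Lemma~\ref{lem:lower bd on ndeg} in the linear case), so $P$ is in newton position at $a$, and the quantity $v^{\ev}(P,a)$ is defined over $K$ itself. Lemma~\ref{lem:14.3 complement} is then applied twice, separately in the extension containing $\hat a$ and in the one containing $\hat b$: isolation gives $v(\hat a-a)\notin\exc^{\ev}(L_{P_{+a}})$, hence $v(\hat a-a)=v^{\ev}(P,a)$, and once one knows $v(\hat b-a)\notin\exc^{\ev}(L_{P_{+a}})$ the same lemma gives $v(\hat b-a)=v^{\ev}(P,a)$ as well; since both candidate values lie in $\Gamma$ (the extensions are immediate), no values outside $\Gamma$ ever intervene. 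The remaining work, showing $\exc^{\ev}(L_{P_{+a}})\cap v(\hat b-K)\leq 0$, is done by a bootstrap: finiteness of $\exc^{\ev}$ (Corollary~\ref{cor:sum of nwts}) yields $b\prec 1$ with $v(\hat b-b)$ above the exceptional values in $v(\hat b-K)$, invariance of $\exc^{\ev}$ under additive conjugation (Lemma~\ref{lem:excev normal}, or $L_{P_{+a}}=L_P$ in the linear case) lets one run the first step with $b$ in place of $a$ to get $v(\hat a-b)=v(\hat b-b)$, and then Lemmas~\ref{lem:isolated normal} and~\ref{lem:isolated deg 1} finish. If you want to salvage your approach, you would have to either prove an amalgamation theorem with enough control on $v(\hat b-\hat a)$, or reformulate the comparison so that only $\Gamma$-valued data about $\hat a$ and $\hat b$ relative to $K$ enter — which is in effect what the $v^{\ev}(P,a)$ device accomplishes.
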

\begin{proof}
Replacing $(P,\fm,\hat a)$,~$\hat b$ by~$(P_{\times\fm},1,\hat a/\fm)$,~$\hat b/\fm$, we arrange $\fm=1$. Let~$a$ be given; we show $v(\hat a-a)=v(\hat b -a)$. This is clear if $a\succeq 1$, so assume~${a\prec 1}$. 
Corollary~\ref{cor:normal=>quasilinear} (if $(P,\fm,\hat a)$ is normal)
and Lemma~\ref{lem:lower bd on ndeg} (if $(P,\fm,\hat a)$ is linear) give~${\ndeg P=1}$. Thus $P$ is in newton position at $a$ by Corollary~\ref{cor:ref 2n}. Moreover~$v(\hat a-a) \notin \exc^{\ev}(L_{P_{+a}})$,
hence $v(\hat a-a)=v^{\ev}(P,a)$ by Lemma~\ref{lem:14.3 complement}. 
Likewise, if~$v(\hat b-a)\notin \exc^{\ev}(L_{P_{+a}})$, then 
$v(\hat b-a)=v^{\ev}(P,a)$ by Lemma~\ref{lem:14.3 complement}, so~$v({\hat a-a})=v(\hat b-a)$.

Thus to finish the proof it is enough to show that $\exc^{\ev}(L_{P_{+a}})\cap v(\hat b-K)\leq 0$. 
Now~$\abs{\exc^{\ev}(L_{P_{+a}})} \leq r$ by   Corollary~\ref{cor:sum of nwts}, so
we have $b\prec 1$ such that $$ \exc^{\ev}(L_{P_{+a}}) \cap v(\hat b-K)\ <\ v(\hat b-b),$$
in particular, $v(\hat b -b)\notin \exc^{\ev}(L_{P_{+a}})$. 
If $(P,\fm,\hat a)$ is normal, then
Lemma~\ref{lem:excev normal} gives
$$\exc^{\ev}(L_{P_{+a}})\ =\ \exc^{\ev}(L_P)\ =\ \exc^{\ev}(L_{P_{+b}}),$$
so by the above with $b$ instead of $a$ we have $v(\hat a-b)=v(\hat b-b)$. 
If $(P,\fm,\hat a)$ is linear, then~$L_{P_{+a}}=L_P=L_{P_{+b}}$, and we obtain likewise $v(\hat a-b)=v(\hat b-b)$. 
Hence
$$\exc^{\ev}(L_{P_{+a}})\cap v(\hat b-K)\ \subseteq\ 
\exc^{\ev}(L_{P_{+a}})\cap \Gamma^{< v(\hat a-b)}\ \subseteq\
\exc^{\ev}(L_P)\cap v(\hat a-K)\ \leq\ 0.$$
using Lemmas~\ref{lem:isolated normal} and~\ref{lem:isolated deg 1} for the last step.
\end{proof}

\noindent
Combining Proposition~\ref{prop:2.12 isolated} with Corollary~\ref{corisomin} yields:

\begin{cor}\label{cor:2.12 isolated}
Let $K$, $(P,\fm,\hat a)$, $\hat b$ be as in Proposition~\ref{prop:2.12 isolated}, and assume also that $(P,\fm,\hat a)$ is $Z$-minimal. Then there is an isomorphism $K\langle\hat a\rangle\to K\langle \hat b\rangle$ of valued differential fields
over $K$ sending~$\hat a$ to $\hat b$.
\end{cor}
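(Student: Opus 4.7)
The plan is to simply combine the two preceding results. Proposition~\ref{prop:2.12 isolated} already does the main analytic work: its hypotheses are exactly those now being assumed (together with $Z$-minimality, which is extra), and its conclusion is that $v(\hat a-a)=v(\hat b-a)$ for every $a\in K$. That proposition itself requires a genuine argument (using newton position at~$a$, Lemma~\ref{lem:14.3 complement} to identify $v(\hat a-a)$ and~$v(\hat b-a)$ with $v^{\ev}(P,a)$ once we know the relevant value is not in $\exc^{\ev}(L_{P_{+a}})$, and the isolatedness hypothesis together with the bound on~$|\exc^{\ev}|$ from Corollary~\ref{cor:sum of nwts} to arrange this). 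None of that needs to be redone here.

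With the cut identity $v(\hat a-a)=v(\hat b-a)$ for all $a\in K$ in hand, the second ingredient is Corollary~\ref{corisomin}: given a $Z$-minimal hole $(P,\fm,\hat a)$ in $K$, any $\hat b$ in an asymptotic extension of $K$ with $P(\hat b)=0$ that realizes the same cut in $K$ as $\hat a$ gives an isomorphism $K\langle\hat a\rangle\to K\langle\hat b\rangle$ of valued differential fields over $K$ with $\hat a\mapsto\hat b$. This is exactly the conclusion we want, so the proof reduces to citing these two statements in turn.

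So concretely, the proof I would write is: by Proposition~\ref{prop:2.12 isolated} we have $v(\hat a-a)=v(\hat b-a)$ for all $a\in K$; since $(P,\fm,\hat a)$ is a $Z$-minimal hole in~$K$ and~$\hat b$ lies in an (asymptotic, hence in particular any) asymptotic extension of~$K$ with~$P(\hat b)=0$, Corollary~\ref{corisomin} yields the desired isomorphism $K\langle\hat a\rangle\to K\langle\hat b\rangle$ over $K$ sending $\hat a$ to $\hat b$.

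There is no real obstacle and nothing to compute; the only point worth double-checking is the mild hypothesis bookkeeping, namely that an immediate asymptotic extension is in particular an asymptotic extension (so Corollary~\ref{corisomin} applies) and that the isolated-plus-normal-or-linear hole hypothesis carries over verbatim from Proposition~\ref{prop:2.12 isolated}. Both are immediate, so the corollary is just a two-line assembly.
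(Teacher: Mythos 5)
Your proposal is correct and matches the paper's argument exactly: the paper obtains this corollary precisely by combining Proposition~\ref{prop:2.12 isolated} (which gives $v(\hat a-a)=v(\hat b-a)$ for all $a\in K$) with Corollary~\ref{corisomin}. Nothing further is needed.
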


\noindent
Using the first Normalization Theorem~\ref{mainthm}, we now obtain:

\begin{cor}\label{cor:2.12 isolated, 2}
Suppose $K$ is $\upo$-free and $\Gamma$ is divisible. Then every minimal hole in $K$ of order $r$ has an
isolated refinement 
$(P,\fm,\hat a)$ such that for any $\hat b$ in an immediate
asymptotic extension of $K$ with~${P(\hat b)=0}$ and $\hat b\prec\fm$ there is an isomorphism~$K\langle\hat a\rangle\to K\langle \hat b\rangle$ of valued differential fields
over $K$ sending~$\hat a$ to $\hat b$.
\end{cor}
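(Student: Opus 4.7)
The plan is to refine the given minimal hole to one that is simultaneously isolated and either normal or linear, at which point Corollary~\ref{cor:2.12 isolated} will immediately deliver the desired isomorphism (using that minimal holes are $Z$-minimal). Throughout, the key structural fact will be that isolation is inherited by every refinement (Lemma~\ref{lem:isolated refinement}) and by every compositional conjugate (noted after Definition~\ref{def:isolated}).

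Let $(P_0,\fm_0,\hat a_0)$ be a minimal hole in $K$ of order $r\ge 1$. First I would observe that, except possibly when $r=1$ and $\deg P_0=1$, the field $K$ is $\d$-valued: if $r\ge 2$, Corollary~\ref{minholenewt} gives that $K$ is $(r-1)$-newtonian, hence in particular $1$-newtonian, and thus $\d$-valued by Lemma~\ref{lem:ADH 14.2.5}; if $r=1$ and $\deg P_0\ge 2$, then minimality rules out any hole of complexity $(1,1,1)$, so Lemma~\ref{lem:no hole of order <=r, deg 1}, applied using $\upl$-freeness (implied by $\upo$-freeness), yields $1$-linear newtonianity and again the $\d$-valued condition. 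With $\d$-valuation in hand I would apply Corollary~\ref{cor:mainthm} to produce a refinement which, after compositional conjugation, becomes deep and normal. In the remaining linear case $r=1$, $\deg P_0=1$, Corollary~\ref{mainthm deg 1} gives the same conclusion without needing $\d$-valuation. In both situations we end up with a deep and normal refinement $(P_1,\fm_1,\hat a_1)$, still linear in the linear case.

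Next I would invoke Proposition~\ref{prop:achieve isolated} (or Remark~\ref{rem:achieve isolated} in the linear case) to refine further to an isolated slot $(P_2,\fm_2,\hat a_2)$. In the linear case linearity is automatically preserved under refinement, so $(P_2,\fm_2,\hat a_2)$ is already isolated and linear, and Corollary~\ref{cor:2.12 isolated} applies at once. In the nonlinear case isolation may have been achieved at the cost of normality, so I would apply Corollary~\ref{cor:mainthm} a second time to $(P_2,\fm_2,\hat a_2)$---which is still a minimal hole, refinements of minimal holes being minimal---and pass to a compositional conjugate to obtain a refinement $(P,\fm,\hat a)$ that is again deep and normal. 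By the two preservation properties of isolation recalled above, $(P,\fm,\hat a)$ is simultaneously isolated and normal, and Corollary~\ref{cor:2.12 isolated} yields the desired isomorphism $K\langle\hat a\rangle\to K\langle\hat b\rangle$ for any $\hat b$ as specified.

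The main obstacle I anticipate is precisely that the isolation step of Proposition~\ref{prop:achieve isolated} need not respect normality (its proof modifies both the additive shift and the monomial $\fm$ without the monomial-change constraint required by Propositions~\ref{normalrefine} and~\ref{easymultnormal}), which is why a second normalization pass is unavoidable in the nonlinear case. Once one accepts this two-pass strategy, the proof is driven entirely by the fact that isolation, unlike normality, is robust under arbitrary refinement and under compositional conjugation.
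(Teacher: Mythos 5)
Your proposal is correct and follows essentially the same route as the paper's proof: first normalize, then isolate via Proposition~\ref{prop:achieve isolated} (or Remark~\ref{rem:achieve isolated} in the linear case), then re-normalize in a second pass using that isolation survives refinement and compositional conjugation, and finish with Corollary~\ref{cor:2.12 isolated} applied to the compositionally conjugated slot. The only differences are cosmetic: the paper reaches normality through Corollary~\ref{degmorethanone} and Theorem~\ref{mainthm} rather than your $\d$-valuedness detour into Corollary~\ref{cor:mainthm}, and, like you, it leaves implicit that the normality of the final slot lives in $K^\phi$ while the isomorphism obtained over $K^\phi$ is automatically one of valued differential fields over $K$.
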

\begin{proof}
Given a minimal linear hole in $K$ of order $r$, use Remark~\ref{rem:achieve isolated}  to refine it to an isolated minimal linear hole 
$(P,\fm,\hat a)$ in $K$ of order $r$, and use Corollary~\ref{cor:2.12 isolated}.
Suppose we are given a minimal nonlinear hole in $K$ of order $r$. Then $K$ is $r$-linearly newtonian by Corollary~\ref{degmorethanone}. Then Theorem~\ref{mainthm} yields a refinement~$(Q, \fw, \hat{d})$ of it and an active $\theta$
in $K$ such that  the minimal hole $(Q^\theta, \fw, \hat{d})$ in $K^\theta$ is normal. Proposition~\ref{prop:achieve isolated} gives an
isolated refinement $(Q^\theta_{+d}, \fv , \hat{d}-d)$ of $(Q^\theta, \fw, \hat{d})$. 
Suitably refining $(Q^\theta_{+d}, \fv , \hat{d}-d)$ further followed by compositionally conjugating with a suitable active element of $K^\theta$ yields by Theorem~\ref{mainthm} and Lemma~\ref{lem:isolated refinement} a
refinement~$(P, \fm, \hat a)$ of $(Q, \fw, \hat{d})$ (and thus of the originally given hole) and an active $\phi$ in $K$ such that
 $(P^\phi, \fm, \hat a)$ is both normal and isolated. Then $(P, \fm, \hat a)$ is isolated, and we can apply Corollary~\ref{cor:2.12 isolated} to $K^\phi$ and $(P^\phi, \fm, \hat a)$ in the role of $K$ and $(P,\fm,\hat a)$. 
 \end{proof}

\noindent
For $r=1$ we can replace ``$\upo$-free'' in  Proposition~\ref{prop:2.12 isolated} and Corollary~\ref{cor:2.12 isolated} by the weaker ``$\upl$-free'' (same proofs,  using Lemma~\ref{lem:14.3 complement, order 1} instead of Lem\-ma~\ref{lem:14.3 complement}):

\begin{prop}[${}^*$]\label{prop:2.12 isolated, r=1}
Suppose $K$ is $\upl$-free, $(P,\fm,\hat a)$ is an isolated   hole in~$K$ of order~$r=1$, and suppose $(P,\fm,\hat a)$ is normal or  linear. Let $\hat b$ in an immediate
asymptotic extension of $K$ satisfy~${P(\hat b)=0}$ and $\hat b\prec\fm$. Then~$v(\hat a-a)=v(\hat b-a)$ for all~$a$. \textup{(}Hence if~$(P,\fm,\hat a)$ is $Z$-minimal, then there is an isomorphism $K\langle\hat a\rangle\to K\langle \hat b\rangle$ of valued differential fields
over $K$ sending~$\hat a$ to $\hat b$.\textup{)}
\end{prop}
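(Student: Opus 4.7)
The plan is to follow the proof of Proposition~\ref{prop:2.12 isolated} almost verbatim, with the three hypotheses ``$K$ is $\upo$-free'' and the two appeals to $\upo$-freeness-dependent tools replaced by their order $1$ counterparts. First I would reduce to the case $\fm=1$ by passing to the multiplicative conjugate $(P_{\times\fm},1,\hat a/\fm)$ together with $\hat b/\fm$; this preserves isolation (Lemma~\ref{lem:isolated}), normality, linearity, and the hypothesis $P(\hat b)=0$, $\hat b\prec 1$. For a given $a$, the case $a\succeq 1$ is immediate, so I may assume $a\prec 1$. I then extract $\ndeg P=1$ either from Corollary~\ref{cor:normal=>quasilinear} (normal case) or from Lemma~\ref{lem:lower bd on ndeg} together with $\deg P=1$ (linear case), and conclude via Corollary~\ref{cor:ref 2n} that $P$ is in newton position at $a$.

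Because $(P,1,\hat a)$ is isolated, $v(\hat a-a)\notin\exc^{\ev}(L_{P_{+a}})$, so Lemma~\ref{lem:14.3 complement, order 1} (which is where the swap from $\upo$- to $\upl$-freeness happens, since that lemma in turn relies only on Propositions~\ref{prop:rat as int and cofinality} and~\ref{prop:upl-free and as int}) yields $v(\hat a-a)=v^{\ev}(P,a)$. It therefore suffices to show that also $v(\hat b-a)\notin\exc^{\ev}(L_{P_{+a}})$, because then another application of Lemma~\ref{lem:14.3 complement, order 1}, this time with $\hat b$ in the role of $\hat a$, will give $v(\hat b-a)=v^{\ev}(P,a)=v(\hat a-a)$.

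To obtain this, I would show $\exc^{\ev}(L_{P_{+a}})\cap v(\hat b-K)\leq 0$. Here the crucial observation is that the bound $|\exc^{\ev}(L_{P_{+a}})|\leq 1$ needed in the proof of Proposition~\ref{prop:2.12 isolated} is available for free in the order $1$ case from [ADH,~p.~481], with no recourse to Corollary~\ref{cor:sum of nwts}. This gives some $b\prec 1$ with $\exc^{\ev}(L_{P_{+a}})\cap v(\hat b-K)<v(\hat b-b)$. Applying the argument of the previous paragraph with $b$ in place of $a$ yields $v(\hat a-b)=v^{\ev}(P,b)$, and using Lemma~\ref{lem:excev normal} (whose statement explicitly permits $r=1$ in lieu of $\upl$-freeness, in the normal case) or the tautology $L_{P_{+a}}=L_P=L_{P_{+b}}$ (in the linear case), I get $\exc^{\ev}(L_{P_{+a}})=\exc^{\ev}(L_{P_{+b}})$, hence $v(\hat a-b)=v(\hat b-b)$. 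Inserting this equality produces $\exc^{\ev}(L_{P_{+a}})\cap v(\hat b-K)\subseteq\exc^{\ev}(L_P)\cap v(\hat a-K)$, which is $\leq 0$ by Lemmas~\ref{lem:isolated normal} and~\ref{lem:isolated deg 1}. The parenthetical isomorphism statement then follows from Corollary~\ref{corisomin}.

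The only point demanding care is the verification that every invocation of ``$\upo$-free'' in the proof of Proposition~\ref{prop:2.12 isolated} is either unnecessary in the order $1$ setting (as with the cardinality bound from [ADH,~p.~481] and with Lemma~\ref{lem:excev normal}) or can be replaced by the weaker ``$\upl$-free'' (via Lemma~\ref{lem:14.3 complement, order 1}); this is the main, but essentially bookkeeping, obstacle, and no new ideas beyond those already in the proof of Proposition~\ref{prop:2.12 isolated} are required.
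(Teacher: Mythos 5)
Your proposal is correct and matches the paper's intended argument exactly: the paper presents Proposition~\ref{prop:2.12 isolated, r=1} with the remark that it follows by the ``same proofs'' as Proposition~\ref{prop:2.12 isolated}, ``using Lemma~\ref{lem:14.3 complement, order 1} instead of Lemma~\ref{lem:14.3 complement}.'' Your additional care in noting that the cardinality bound $|\exc^{\ev}(L_{P_{+a}})|\leq 1$ is immediate from [ADH, p.~481] when $r=1$ (rather than from the $\upo$-free Corollary~\ref{cor:sum of nwts}), and that Lemma~\ref{lem:excev normal} explicitly covers the $r=1$ case, correctly fills in the bookkeeping the paper leaves implicit.
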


\noindent
This leads to an analogue of Corollary~\ref{cor:2.12 isolated, 2}: 

\begin{cor}[${}^*$]\label{cor:2.12 isolated, 2, r=1}
Suppose $K$ is $\upl$-free and $\Gamma$ is divisible.
 Then every quasilinear minimal  hole in $K$ of order~$r=1$ has an
isolated refinement 
$(P,\fm,\hat a)$ such that for any $\hat b$ in an immediate
asymptotic extension of $K$ with~${P(\hat b)=0}$ and $\hat b\prec\fm$ there is an isomorphism~$K\langle\hat a\rangle\to K\langle \hat b\rangle$ of valued differential fields
over $K$ sending~$\hat a$ to $\hat b$.
\end{cor}
\begin{proof}
Suppose we are given a quasilinear minimal  hole in $K$ of order~$r=1$.
Then Corollary~\ref{mainthm order 1} yields a refinement $(Q,\fw,\hat d)$ of it and an active $\theta$ in $K$
such that the quasilinear minimal hole $(Q^\theta,\fw,\hat d)$ in $K^\theta$ of order $1$ is normal. 
Proposition~\ref{prop:achieve isolated} gives an isolated refinement $(Q^{\theta}_{+d}, \fv, \hat d -d)$ of $(Q^\theta,\fw,\hat d)$,
and then Corollary~\ref{mainthm order 1} yields a refinement~$(P,\fm,\hat a)$ of $(Q,\fw,\hat d)$ and an active $\phi$ in $K$ such
that $(P^\phi,\fm,\hat a)$ is  normal and isolated.  Now apply Proposition~\ref{prop:2.12 isolated, r=1}
with $K^\phi$ and $(P^\phi, \fm, \hat a)$ in the role of $K$ and $(P,\fm,\hat a)$. 
\end{proof}

\noindent
Next a variant of Lemma~\ref{lem:no hole of order <=r}  for $r=1$ without assuming  $\upo$-freeness:

\begin{cor}[${}^*$]\label{cor:1-newt => no qlin Zmin dent order 1}
Suppose $K$ is  $1$-newtonian and $\Gamma$ is divisible. Then $K$ has no  quasilinear $Z$-minimal slot  of order $1$.
\end{cor}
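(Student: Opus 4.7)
The plan is to argue by contradiction, mimicking the strategy in the proof of Corollary~\ref{cor:2.12 isolated, 2, r=1} for $Z$-minimal slots in place of minimal holes, with the crucial twist at the end that $1$-newtonianity of $K$ itself supplies the zero that triggers the contradiction.

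Assume $(P_0,\fm_0,\hat a_0)$ is a quasilinear $Z$-minimal slot in $K$ of order $1$. By Lemma~\ref{lem:from cracks to holes}, I may replace it by an equivalent $Z$-minimal hole; equivalence preserves $P$ and $\fm$, so it remains quasilinear. Since $K$ is $1$-newtonian it is $1$-linearly newtonian, so Proposition~\ref{prop:char 1-linearly newt} gives that $K$ is $\upl$-free---the crucial structural hypothesis for what follows.

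Next I mirror the refinement process from the proof of Corollary~\ref{cor:2.12 isolated, 2, r=1}, using Corollary~\ref{mainthm, r=1} (designed for $Z$-minimal slots) in place of Corollary~\ref{mainthm order 1}. A first application of Corollary~\ref{mainthm, r=1} yields a refinement $(Q,\fw,\hat d)$ of $(P_0,\fm_0,\hat a_0)$ and an active $\theta$ in $K$ such that $(Q^\theta,\fw,\hat d)$ is normal in $K^\theta$. Proposition~\ref{prop:achieve isolated} (applicable since $K^\theta$ is $\upl$-free and the slot is normal) produces an isolated refinement $(Q^\theta_{+d},\fv,\hat d-d)$ of $(Q^\theta,\fw,\hat d)$, corresponding to the isolated $Z$-minimal hole $(Q_{+d},\fv,\hat d-d)$ in $K$. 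A second application of Corollary~\ref{mainthm, r=1} to this hole gives a further refinement $(P,\fm,\hat a)$ and an active $\phi$ in $K$ making $(P^\phi,\fm,\hat a)$ normal in $K^\phi$; by Lemma~\ref{lem:isolated refinement}, $(P,\fm,\hat a)$ remains isolated, and hence so does $(P^\phi,\fm,\hat a)$ in $K^\phi$.

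The endgame then combines $1$-newtonianity of $K$ with Proposition~\ref{prop:2.12 isolated, r=1}. Since $(P,\fm,\hat a)$ is a quasilinear slot in $K$ of order $1$ and $K$ itself is $1$-newtonian, Lemma~\ref{lem:zero of P} applied with $L:=K$ yields some $\hat b\in K$ with $P(\hat b)=0$ and $\hat b\prec\fm$. Since $P^\phi(\hat b)=P(\hat b)=0$, viewing $\hat b$ as an element of the trivial immediate asymptotic extension $K^\phi$ of itself and invoking Proposition~\ref{prop:2.12 isolated, r=1} for the normal isolated hole $(P^\phi,\fm,\hat a)$ in the $\upl$-free field $K^\phi$, I conclude $v(\hat a-a)=v(\hat b-a)$ for all $a\in K$. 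Setting $a:=\hat b$ forces $\hat a=\hat b\in K$, contradicting $\hat a\notin K$.

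The main obstacle is organizational rather than conceptual: I must verify that after the successive refinements each desired property---being a hole, $Z$-minimality, quasilinearity, isolation, and normality of the compositional conjugate---is simultaneously present in the final slot. The first three pass automatically through refinements; isolation is preserved by the final refinement step via Lemma~\ref{lem:isolated refinement}; and normality is restored by the second invocation of Corollary~\ref{mainthm, r=1}. Everything then aligns for the one-line contradiction via $1$-newtonianity.
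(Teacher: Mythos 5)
Your proof is correct and follows essentially the same route as the paper's: reduce to a hole via Lemma~\ref{lem:from cracks to holes}, get $\upl$-freeness from Proposition~\ref{prop:char 1-linearly newt}, refine to a normal isolated hole using Corollary~\ref{mainthm, r=1}, Proposition~\ref{prop:achieve isolated}, and Lemma~\ref{lem:isolated refinement}, and then derive the contradiction from Proposition~\ref{prop:2.12 isolated, r=1} against the zero in $K$ supplied by Lemma~\ref{lem:zero of P} with $L=K$. You merely spell out the refinement bookkeeping and the final identification $\hat a=\hat b$ in more detail than the paper does.
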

\begin{proof}
By Proposition~\ref{prop:char 1-linearly newt}, $K$ is $\upl$-free.
 Towards a contradiction, let $(P,\fm,\hat a)$ be a quasilinear $Z$-minimal slot in~$K$ of order~$1$.
 By Lemma~\ref{lem:from cracks to holes} we arrange that~$(P,\fm,\hat a)$  is a hole in $H$. Using Corollary~\ref{mainthm, r=1}, 
  Lemma~\ref{lem:isolated refinement} and the remark before it,
  and Proposition~\ref{prop:achieve isolated}, we can refine further so that
  $(P^\phi,\fm,\hat a)$ is normal and isolated for some active $\phi$ in $K$.
   Then there is no $y\in K$ with~$P(y)=0$ and~$y\prec \fm$, by
  Proposition~\ref{prop:2.12 isolated, r=1}, contradicting Lemma~\ref{lem:zero of P} for~$L=K$.
\end{proof}

\noindent
Finally, for isolated linear holes, without additional hypotheses:

\begin{lemma}\label{lem:isolated, d=1}
Suppose $(P,\fm,\hat a)$ is an  isolated linear hole in $K$, and~${\hat a-a}\prec\fm$.  Then
$P(a)\neq 0$, and $\gamma=v(\hat a-a)$ is the unique element of $\Gamma\setminus\exc^{\ev}(L_P)$ such that~$v^{\ev}_{L_P}(\gamma)=v\big(P(a)\big)$.  
\end{lemma}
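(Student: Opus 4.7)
The plan is to exploit that $\deg P=1$ so $P(Y)=P(0)+L_P(Y)$, which via $P(\hat a)=0$ gives the identity $P(a)=-L_P(\hat a-a)$. Thus computing $v(P(a))$ reduces to evaluating $L_P$ at the element $\hat a-a$ of an immediate asymptotic extension, and the isolation hypothesis will tell us that $v(\hat a-a)$ lies outside $\exc^{\ev}(L_P)$.

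First I would verify $a\prec\fm$: from $\hat a\prec\fm$ and $\hat a-a\prec\fm$, $a=\hat a-(\hat a-a)\prec\fm$. Since $\deg P=1$, $L_{P_{+a}}=L_P$ for every $a\in K$, so the isolation condition for $(P,\fm,\hat a)$ at $a$ (available since $a\prec\fm$) reads
$$\exc^{\ev}(L_P)\cap v(\hat a-K)\ <\ v(\hat a-a)\ =\ \gamma.$$
In particular $\gamma\in v(\hat a-K)\setminus\exc^{\ev}(L_P)$.

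Next I would transfer to the immediate $H$-asymptotic extension $\hat K\supseteq K$ in which $\hat a$ lives. Since $\hat K|K$ is immediate, $\Gamma_{\hat K}=\Gamma$ and $\Psi_{\hat K}=\Psi$; by Lemma~\ref{lemexc} this gives $\exc^{\ev}_{\hat K}(L_P)\cap\Gamma=\exc^{\ev}(L_P)$, so $\gamma\notin\exc^{\ev}_{\hat K}(L_P)$, and likewise $v^{\ev}_{L_P}$ computed in $\hat K$ agrees on $\Gamma$ with the one computed in $K$. Applying Lemma~\ref{lem:ADH 14.2.7} in $\hat K$ to the element $\hat a-a\in \hat K^\times$ yields
$$v\bigl(L_P(\hat a-a)\bigr)\ =\ v^{\ev}_{L_P}(\gamma)\ \in\ \Gamma.$$
From $P(a)=-L_P(\hat a-a)$ we then obtain $v(P(a))=v^{\ev}_{L_P}(\gamma)\in\Gamma$; in particular $P(a)\neq 0$.

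For uniqueness, Lemma~\ref{lem:ADH 14.2.7} also says that the restriction of $v^{\ev}_{L_P}$ to $\Gamma\setminus\exc^{\ev}(L_P)$ is strictly increasing, hence injective, so $\gamma$ is the only element of $\Gamma\setminus\exc^{\ev}(L_P)$ whose image under $v^{\ev}_{L_P}$ equals $v(P(a))$. No step presents a real obstacle here: the only mildly delicate point is ensuring that the ``eventual'' valuation data for $L_P$ are insensitive to passing from $K$ to the immediate extension $\hat K$, which is precisely what Lemma~\ref{lemexc} provides.
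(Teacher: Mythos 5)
Your proposal is correct and follows essentially the same route as the paper's proof: the paper likewise uses $\deg P=1$ to get $L_P(\hat a-a)=-P(a)$, obtains $\gamma\in\Gamma\setminus\exc^{\ev}(L_P)$ from the degree-one isolation criterion (Lemma~\ref{lem:isolated deg 1}, whose content is exactly your observation that $L_{P_{+a}}=L_P$), and then applies Lemma~\ref{lem:ADH 14.2.7}, with uniqueness coming from the strict monotonicity of $v^{\ev}_{L_P}$ on $\Gamma\setminus\exc^{\ev}(L_P)$. The only difference is that you make explicit the (routine) invariance of $\exc^{\ev}$ and $v^{\ev}_{L_P}$ when passing to the immediate extension $\hat K$, which the paper leaves tacit.
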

\begin{proof}
By Lemma~\ref{lem:isolated deg 1}, $\gamma:=v(\hat a-a)\in\Gamma\setminus\exc^{\ev}(L_P)$.
Since $\deg P =1$,    $$L_P(\hat a-a)\ =\ L_P(\hat a)-L_P(a)\ =\ -P(0)-L_P(a)\ =\ -P(a),$$ 
so $P(a)\neq 0$. By Lemma~\ref{lem:ADH 14.2.7},  
$v^{\ev}_{L_P}(\gamma)=v\big(L_P(\hat a-a)\big)=v\big(P(a)\big)$.
\end{proof}

\noindent
In \cite{ADHld} we shall prove a version of Proposition~\ref{prop:2.12 isolated} without the hypothesis that~$\hat b$ lies in an
immediate extension of $K$. 
In Section~\ref{sec:ultimate} below we consider, in a  more restricted setting, a variant of isolated slots, with 
ultimate exceptional values taking over the role played by exceptional values in Definition~\ref{def:isolated}.

\section{Holes of Order and Degree One}\label{sec:holes of c=(1,1,1)} 

\noindent
{\it In this section $K$ is a $\d$-valued field of $H$-type with small derivation and $\hat K$ is an immediate asymptotic extension of $K$.}\/ So $\hat{K}$ is also $\d$-valued of $H$-type with small derivation.
The main result of this section is Corollary~\ref{cor:achieve strong normality, general},  a version of 
Corollary~\ref{cor:achieve strong normality, 3} for minimal holes in $K$ of arbitrary degree.
We  let~$k$ range over~$\N$ (in addition to $m$, $n$, as usual).

\subsection*{An approximation}
Suppose $\xi\in K$, $\xi\succ 1$, and~$\zeta:=\xi^\dagger\succeq 1$.


\begin{lemma}\label{lem:xi zeta} 
{\samepage The elements $\xi$, $\zeta$ have the following asymptotic properties: 
\begin{enumerate}
\item[\textup{(i)}] $\zeta^n \prec \xi$ for all $n$;
\item[\textup{(ii)}] $\zeta^{(n)} \preceq \zeta^2$ for all $n$.
\end{enumerate}
Thus for each $P\in \mathcal O\{Z\}$ there is an $N\in\N$ with $P(\zeta)\preceq\zeta^N$, and hence $P(\zeta)\prec\xi$.}
\end{lemma}
\begin{proof}
Part (i)   follows from  [ADH, 9.2.10(iv)] for $\gamma=v(\xi)$.
As to (ii), if~$\zeta'\preceq\zeta$, then $\zeta^{(n)}\preceq\zeta$ by [ADH, 4.5.3], and we are done.
Suppose $\zeta'\succ\zeta$ and set~$\gamma:=v(\zeta)$. Then $\gamma,\gamma^\dagger < 0$, so $\gamma^\dagger=o(\gamma)$
by [ADH, 9.2.10(iv)] and hence $v(\zeta^{(n)})=\gamma+n\gamma^\dagger > 2\gamma=v(\zeta^2)$  by
[ADH, 6.4.1(iv)].
\end{proof} 

\noindent
Let now also $u\in K$ with~$u\preceq 1$, and
suppose $y\in \hat{K}$ satisfies 
$$y'+\xi y\,=\,u, \qquad y\,\preceq\, 1.$$ 
Then $y'\prec 1$, and $y'\xi^{-1}+y=u\xi^{-1}$, so $y-u\xi^{-1}=-y'\xi^{-1}\prec\xi^{-1}$, and thus $y\preceq \xi^{-1}$. Moreover:

\begin{lemma} \label{easybound} If $u\preceq \xi^{-n}$, $n\ge 1$,  then $y\prec \xi^{-n}$.
\end{lemma}
\begin{proof} Suppose $y\succeq \xi^{-n}$, $n\ge 1$. Since $\hat{K}$ is $H$-asymptotic, this gives $y^\dagger\preceq \big(\xi^{-n}\big)^\dagger\asymp \xi^\dagger\prec \xi$, and thus $u=y'+\xi y=y(y^\dagger +\xi) \asymp y\xi\succ \xi^{-n}$.
\end{proof} 

\noindent 
We  now use $u\xi^{-1}$ to start a sequence in $K$ approximating $y$.
Since $y$ is a fixed point of the map $$z\mapsto (u-z')\xi^{-1}\colon \hat{K}\to \hat{K},$$ this suggests approximating
$y$ by the sequence $(y_n)$ in $K$ where $$y_0\,:=\,u\xi^{-1}, \qquad y_{n+1}\,:=\,(u-y_n')\xi^{-1}.$$ This works (although we do not know how to specify a subset of $K$ containing $y_0$ that is closed under the above map and on which this map is contractive): 

\begin{prop}\label{cor:uaintfex, 1} For all $n$ we have $y-y_n\prec \xi^{-n}$. 
\end{prop} 
\begin{proof} For $g\in K$ we have $\left(\displaystyle\frac{g}{\xi^{k+1}}\right)'=\displaystyle\frac{g'-(k+1)\zeta g}{\xi^{k+1}}$, that is,
\begin{equation}\label{eq:gxik+1} \left(\frac{g}{\xi^{k+1}}\right)'+\xi\left(\frac{g}{\xi^{k+1}}\right)\ =\ \frac{g}{\xi^k}-  \frac{(k+1)\zeta g-g'}{\xi^{k+1}}.
\end{equation}
Define the differential polynomials $P_k\in \Q\{U,Z\}$ by
$$P_0\ :=\ U,\qquad P_{k+1}\ :=\ (k+1)ZP_k-P_k',$$
Induction on $n$ using the identity preceding \eqref{eq:gxik+1} with $g=P_k(u,\zeta)$ gives 
$$y_n\ =\ \sum_{k=0}^n \frac{P_k(u,\zeta)}{\xi^{k+1}}.$$ 
 For $g=P_k(u,\zeta)$, the   identity \eqref{eq:gxik+1} says
$$ \left(\frac{P_k(u,\zeta)}{\xi^{k+1}}\right)' +\xi\frac{P_k(\zeta,u)}{\xi^{k+1}}\ =\ \frac{P_k(u,\zeta)}{\xi^{k}}-\frac{P_{k+1}(u,\zeta)}{\xi^{k+1}},$$ 
Summing both sides for $k=0,\dots,n$ then yields
\begin{align*} y_n' + \xi y_n\ &=\ u-\frac{P_{n+1}(u,\zeta)}{\xi^{n+1}},\quad \text{ so in view of $y'+\xi y = u$:}\\
(y-y_n)'+\xi(y-y_n)\ &=\  \frac{P_{n+1}(u,\zeta)}{\xi^{n+1}}\ \prec \frac{1}{\xi^n} \qquad \text{using Lemma~\ref{lem:xi zeta} at the end}. 
\end{align*} 
Lemma~\ref{lem:xi zeta} also yields $y_n\preceq 1$. For $n=0$ we already know that $y-y_n\prec \xi^{-n}$. Let~$n\ge 1$. Then we apply Lemma~\ref{easybound} to
$\hat{K}$ in the role of both $K$ and $\hat{K}$, and with $y-y_n\preceq 1$ instead of $y$ and 
$ \frac{P_{n+1}(u,\zeta)}{\xi^{n+1}}$ instead of $u$ to obtain $y-y_n\prec \xi^{-n}$. 
\end{proof}

\subsection*{Slots of order and degree $1$}
{\it In this subsection we also assume that $K$ has rational asymptotic integration \textup{(}so slots in $K$ make sense\textup{)}, that $K$ is henselian, and that $(P,\fm,\hat f)$ is a slot in~$K$ with $\order P=\deg P=1$ and $\hat f\in \hat K\setminus K$.}\/ We  let~$f$ range over $K$, $\fn$ over $K^\times$, and $\phi$ over active elements of $K$.
Thus
\begin{align*}
P\				&=\  a(Y'+gY-u)\quad\text{where $a\in K^\times,\ g,u\in K$,}  \\
P_{\times\fn}\	&=\   a\fn\big(Y'+(g+\fn^\dagger)Y-\fn^{-1}u\big).
\end{align*} 
Since $K$ is henselian,   $(P,\fm,\hat f)$  is $Z$-minimal and thus equivalent to a hole in $K$, by Lem\-ma~\ref{lem:from cracks to holes}.
Also, $\nval P_{\times\fm}=\ndeg P_{\times\fm}=1$ by Lemma~\ref{lem:lower bd on ndeg}.
If~$(P,\fm,\hat f)$ is isolated, then $P(f)\neq 0$ for~$\hat f-f\prec\fm$ by Lemmas~\ref{lem:from cracks to holes} and~\ref{lem:isolated, d=1}, so, taking~$f=0$, we have $u\neq 0$.

\medskip\noindent
Set $\fv:=\fv(L_{P_{\times\fm}})$; thus $\fv=1$ if $g+\fm^\dagger\preceq 1$ and $\fv=1/(g+\fm^\dagger)$ otherwise. Hence from Example~\ref{ex:order 1 linear steep} and the remarks before Lemma~\ref{lem:deg1 normal} we obtain:
\begin{align*}
\text{$(P,\fm,\hat f)$ is normal}	&\quad\Longleftrightarrow\quad\text{$(P,\fm,\hat f)$ is steep}
\quad\Longleftrightarrow\quad \fv\prec^\flat 1, \\
\text{$(P,\fm,\hat f)$ is deep}		&\quad\Longleftrightarrow\quad \fv\prec^\flat 1 \text{ and }  u \preceq \fm/\fv.
\end{align*}
We have $P(0)=-au$, and if $\fv\prec 1$, then $(P_{\times\fm})_1 \sim (a\fm/\fv) Y$. Thus
$$\text{$(P,\fm,\hat f)$ is strictly normal}\quad\Longleftrightarrow\quad
\fv\prec^\flat 1 \text{ and } u \prec_{\Delta(\fv)} \fm\fv.$$
We say that $(P,\fm,\hat f)$ is {\bf balanced}\index{slot!balanced} if  $(P,\fm,\hat f)$ is steep and $P(0) \preceq S_{P_{\times\fm}}(0)$, equivalently, $(P,\fm,\hat f)$ is steep and $u \preceq \fm$.
Thus 
$$\text{$(P,\fm, \hat f)$ is strictly normal}\ \Longrightarrow\ \text{$(P,\fm, \hat f)$ is balanced}\ \Longrightarrow\ 
\text{$(P,\fm, \hat f)$ is deep},$$ 
and with $b\in K^\times$,
$$(P,\fm, \hat f) \text{ is balanced }\Longleftrightarrow (P_{\times\fn},\fm/\fn,\hat f/\fn) \text{ is balanced }\Longleftrightarrow (bP,\fm,\hat f) \text{ is balanced}.$$
If $(P,\fm,\hat f)$ is balanced, then so is any slot in $K$ equivalent to~$(P,\fm,\hat f)$.
Moreover, if~$(P,\fm,\hat f)$ is a hole in $K$, then $P(0)=-L_P(\hat f)$, so
$(P,\fm,\hat f)$ is balanced iff it is steep and $L_P(\hat f)\preceq S_{P_{\times\fm}}(0)$.
By Corollary~\ref{cor:good approx to hata, deg 1}, if $(P,\fm,\hat f)$ is steep, then  $\hat f-f\prec_{\Delta(\fv)}\fm$ for some $f$.
For balanced $(P,\fm,\hat f)$ we have a variant of this fact:

\begin{lemma}\label{lem:balanced good approx}
Suppose $(P,\fm,\hat f)$ is balanced. Then there is for all~$n$ an $f$ such that $\hat f-f\prec\fv^n\fm$.
\end{lemma}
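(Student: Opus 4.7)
My plan is to reduce to the case $\fm = 1$ via multiplicative conjugation, pass to an equivalent hole, and then apply Corollary~\ref{cor:uaintfex, 1} directly.

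First, I will replace $(P,\fm,\hat f)$ by its multiplicative conjugate $(P_{\times\fm},1,\hat f/\fm)$. Since balancedness, $Z$-minimality (which holds automatically here because $K$ is henselian), and the value of $\fv$ are all preserved under multiplicative conjugation, and since $\fm^\dagger \in K^\dagger$ so the hypothesis $g\notin \I(K)+K^\dagger$ passes to the new coefficient $g+\fm^\dagger$, I may assume $\fm=1$ from the outset. (The conclusion for the new slot, namely existence of $f'$ with $\hat f/\fm - f' \prec \fv^n$, gives the conclusion for the original slot via $f := f'\fm$.)

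Next, since $K$ is henselian and $\hat f\in \hat K\setminus K$, the slot $(P,1,\hat f)$ is $Z$-minimal, so Lemma~\ref{lem:from cracks to holes} gives an equivalent hole $(P,1,\hat f^\ast)$ with $P(\hat f^\ast)=0$. As $v(\hat f - a)=v(\hat f^\ast - a)$ for all $a\in K$, it suffices to produce, for each $n$, an $f\in K$ with $\hat f^\ast - f \prec \fv^n$. Renaming $\hat f^\ast$ to $\hat f$, I may assume that $P(\hat f)=0$, that is, $\hat f' + g\hat f = u$ (after dividing by $a$, which does not change anything of interest).

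Now balanced with $\fm=1$ means $\fv = 1/g \prec^\flat 1$ (so $g\succ^\flat 1$) and $u\preceq 1$; moreover, the standing hypothesis $g\notin \I(K)+K^\dagger$ is exactly the remaining hypothesis of Corollary~\ref{cor:uaintfex, 1}. Applying that corollary with $\xi := g$ and $f := u$ to the equation $\hat f' + g\hat f = u$ yields, for each $n$, an element $y_n\in K$ with $\hat f - y_n \prec g^{-n} = \fv^n$, which is the desired conclusion. The work is essentially all in the preceding asymptotic-expansion machinery (Proposition~\ref{uaintfex} and its corollary); the only subtlety in the present lemma is confirming that balancedness together with $g\notin \I(K)+K^\dagger$ is exactly the package of hypotheses needed to trigger that corollary after normalizing to $\fm=1$.
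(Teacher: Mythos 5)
Your proof is correct and follows essentially the same route as the paper's: pass to an equivalent hole (using $Z$-minimality from henselianity of $K$), normalize to $\fm=1$ by multiplicative conjugation, observe that balancedness gives $g=1/\fv\succ^\flat 1$ and $u\preceq 1$, and apply Corollary~\ref{cor:uaintfex, 1}. The only difference is the order of the two reductions, which is immaterial, and your check that $g\notin\I(K)+K^\dagger$ is preserved under multiplicative conjugation (since $\fm^\dagger\in K^\dagger$) is exactly the point left implicit in the paper.
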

\begin{proof}
Replacing $(P,\fm,\hat f)$ by an equivalent hole in $K$, we arrange that $(P,\fm,\hat f)$ is a hole in $K$, and replacing
$(P,\fm,\hat f)$  by $(P_{\times\fm},1,\hat f/\fm)$, that~$\fm=1$. Then~${\hat f}'+g\hat f=u$ with $g=1/\fv\succ^\flat 1$, and $u\preceq 1$. Now use Proposition~\ref{cor:uaintfex, 1}.
\end{proof}

\begin{cor}\label{cor:balanced -> strongly normal}
Suppose the subgroup $K^\dagger$ of $K$ is divisible and the slot $(P,\fm,\hat f)$ is balanced. Then $(P,\fm,\hat f)$ has a strictly normal refinement $(P_{+f},\fm,\hat f-f)$.
\end{cor}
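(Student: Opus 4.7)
The plan is to use Lemma~\ref{lem:balanced good approx} to approximate $\hat f$ by an element of $K$ to very high $\fv$-adic order, and then check strict normality of the resulting refinement by a direct computation of the new constant term. First, since $(P,\fm,\hat f)$ is $Z$-minimal ($K$ is henselian and $\order P=\deg P=1$), Lemma~\ref{lem:from cracks to holes} lets us replace $(P,\fm,\hat f)$ by an equivalent hole in $K$ and so assume $P(\hat f)=0$. Passing to the multiplicative conjugate $(P_{\times\fm},1,\hat f/\fm)$ we further arrange $\fm=1$; this preserves balancedness, the condition $g\notin\I(K)+K^\dagger$ (since $\fm^\dagger\in K^\dagger$), and strict normality of the eventual refinement. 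Writing $P=a(Y'+gY-u)$, we have $a\in K^\times$, $u\preceq 1$ (from balancedness), $g\succ^\flat 1$, and $\fv=1/g$.

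By Lemma~\ref{lem:balanced good approx}, applied with $n=3$, there is $f\in K$ with $\hat z:=\hat f-f\prec\fv^{3}$. Set $u_f:=u-f'-gf\in K$, so that $P_{+f}=a(Y'+gY-u_f)$; in particular $L_{P_{+f}}=L_P=a(\der+g)$, so $(P_{+f},1,\hat z)$ has the same span $\fv\prec^\flat 1$ and is thus still steep. Since $\hat f'+g\hat f=u$ by the hole equation, subtracting $f'+gf=u-u_f$ gives $u_f=\hat z'+g\hat z$, and to conclude strict normality of $(P_{+f},1,\hat z)$ it suffices to show $u_f\prec_{\Delta(\fv)}\fv$.

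From $g\asymp 1/\fv$ and $\hat z\prec\fv^{3}$ we get $g\hat z\prec\fv^{2}$. As $\hat K$ is an immediate asymptotic extension of $K$, it inherits small derivation from $K$ via \textup{[ADH, 9.4.1]}, so Corollary~\ref{cor:kth der of f} applied inside $\hat K$ with the ``$\fm$'' there taken to be $\fv$, $n=3$ and $k=1$ yields $\hat z'\prec\fv^{2}$. Hence $u_f\prec\fv^{2}\prec_{\Delta(\fv)}\fv$, where the second relation holds because $v(\fv)>\delta$ for every $\delta\in\Delta(\fv)$ (the definition of $\Delta(\fv)$ gives $n|\delta|<v(\fv)$ for all~$n$). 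Translating back through the multiplicative conjugation produces the desired strictly normal refinement $(P_{+\fm f},\fm,\hat f-\fm f)$ of the original slot. The only delicate point is the bound $\hat z'\prec\fv^{2}$, which depends on transferring the derivative estimate of Corollary~\ref{cor:kth der of f} from $K$ to its immediate asymptotic extension $\hat K$; everything else reduces to the formula $u_f=\hat z'+g\hat z$ and the asymptotic size comparisons already built into steepness.
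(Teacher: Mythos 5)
Your argument is correct, and its skeleton coincides with the paper's up to the key final step: both first pass to an equivalent hole via Lemma~\ref{lem:from cracks to holes} and then invoke Lemma~\ref{lem:balanced good approx} with $n=3$ to get $\hat f-f\prec\fv^{3}\fm=\fv^{w+2}\fm$ (here $w=\wt P=1$). At that point the paper simply cites the general Lemma~\ref{lem:achieve strong normality}, whose proof works for arbitrary order and degree but relies on the ambient hypothesis of that subsection that elements $\fv^{q}$ with $(\fv^{q})^\dagger=q\fv^\dagger$ exist for all $q\in\Q^{>}$ — which is exactly why the paper's statement carries the standing assumption that $K^\dagger$ is divisible. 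You instead exploit the order-$1$, degree-$1$ shape directly: additive conjugation leaves the linear part $a(\der+g)$ untouched, the new constant term is $u_f=\hat z'+g\hat z$ with $\hat z=\hat f-f$, and the bounds $g\hat z\prec\fv^{2}$ and $\hat z'\prec\fv^{2}$ (the latter via Corollary~\ref{cor:kth der of f} in the immediate extension $\hat K$, which is asymptotic with small derivation by [ADH, 9.4.1]) give $u_f\prec\fv^{2}\prec_{\Delta(\fv)}\fv$, which is precisely the strict-normality criterion stated for this subsection. This buys a shorter, self-contained verification that, incidentally, never uses divisibility of $K^\dagger$, so your proof in fact establishes the corollary without that standing hypothesis; the cost is that the computation is special to complexity $(1,1,1)$, whereas the paper's citation plugs into machinery valid in all orders and degrees. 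All the small bookkeeping points (preservation of balancedness, of $g\notin\I(K)+K^\dagger$, and of strict normality under multiplicative conjugation, and transferring the bound on $u_f\in K$ from $\hat K$ back to $K$) are handled correctly.
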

\begin{proof}
First arrange that $(P,\fm,\hat f)$ is a hole in $K$.
The previous lemma yields an~$f$ such that $\hat f-f\preceq\fv^3\fm$.
Then $(P_{+f},\fm,\hat f-f)$ is a strictly normal refinement of~$(P,\fm,\hat f)$, by Lemma~\ref{lem:achieve strong normality} (where the latter uses divisibility of $K^\dagger$).
\end{proof}

\noindent
Since $K$ is henselian and $\d$-valued, the condition in Corollary~\ref{cor:balanced -> strongly normal} that $K^\dagger$ is divisible is satisfied if the groups~$C^\times$ and $\Gamma$ are divisible.

\begin{lemma}\label{lem:balanced refinement}
Suppose $(P,\fm,\hat f)$ is  balanced with  $v\hat f\notin \exc^{\ev}(L_P)$ and~$\hat f - f \preceq \hat f$. Then the refinement $(P_{+f},\fm,\hat f-f)$ of $(P,\fm,\hat f)$
is balanced.
\end{lemma}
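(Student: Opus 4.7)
First I would use Lemma~\ref{lem:from cracks to holes} together with the observation (from the section introduction) that $(P,\fm,\hat f)$ is $Z$-minimal, to pass to an equivalent hole $(P,\fm,\hat b)$ in $K$. Since equivalence preserves $P$, $\fm$, the valuation of $\hat f-f$ (so also the hypothesis $\hat f-f\preceq\hat f$), the condition $v\hat f\notin\exc^{\ev}(L_P)$, and balancedness, and since $(P_{+f},\fm,\hat f-f)$ and $(P_{+f},\fm,\hat b-f)$ are equivalent, I may assume $P(\hat f)=0$. Write $P=a(Y'+gY-u)$; then $S_P=a$, hence $S_{(P_{+f})_{\times\fm}}(0)=a\fm$, and balancedness of $(P,\fm,\hat f)$ amounts to steepness together with $u\preceq\fm$. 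Because $\deg P=1$ we have $L_{P_{+f}}=L_P$, so steepness of the refinement $(P_{+f},\fm,\hat f-f)$ transfers for free; the content of the lemma is therefore the estimate $P_{+f}(0)=P(f)\preceq a\fm$. Using $P(\hat f)=0$ and linearity of $P$ in $(Y,Y')$,
\[
P(f)\ =\ P(\hat f)+L_P(f-\hat f)\ =\ -L_P(\hat f-f),
\]
so the task reduces to showing $L_P(\hat f-f)\preceq a\fm$, with $L_P(\hat f)=au\preceq a\fm$ serving as the natural comparison value.

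The main tool is the eventual valuation function $v^{\ev}_{L_P}$ from Section~\ref{sec:lindiff}. Since $\order L_P=1$, the set $\exc^{\ev}(L_P)\subseteq\Gamma$ has at most one element, and Lemma~\ref{lemexc, order 1} combined with $\Gamma_{\hat K}=\Gamma$ yields $\exc^{\ev}_{\hat K}(L_P)=\exc^{\ev}(L_P)$. Because $\hat K$ is an ungrounded $H$-asymptotic extension with asymptotic integration (all inherited from $K$ across the immediate extension), Lemma~\ref{lem:ADH 14.2.7} applies inside $\hat K$: on $\Gamma\setminus\exc^{\ev}(L_P)$ the map $v^{\ev}_{L_P}$ is strictly increasing, and $v(L_P(y))=v^{\ev}_{L_P}(vy)$ for every $y\in\hat K^\times$ with $vy\notin\exc^{\ev}(L_P)$. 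The argument then splits according to whether $v(\hat f-f)$ lands in $\exc^{\ev}(L_P)$.

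If $v(\hat f-f)\notin\exc^{\ev}(L_P)$, then strict monotonicity and $v(\hat f-f)\geq v\hat f$ give $v(L_P(\hat f-f))\geq v(L_P(\hat f))=v(au)\geq v(a\fm)$, settling this case. The main obstacle—but in fact a tractable one—is the other case, in which $v^{\ev}_{L_P}$ is not defined at the argument $v(\hat f-f)$. Suppose $v(\hat f-f)$ equals the unique exceptional value $\gamma_0$ of $L_P$. The hypothesis $v\hat f\notin\exc^{\ev}(L_P)$ forces $\gamma_0\neq v\hat f$, and $v(\hat f-f)\geq v\hat f$ upgrades to $\gamma_0>v\hat f$, hence $f\sim\hat f$ and $vf=v\hat f\notin\exc^{\ev}(L_P)$. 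Applying Lemma~\ref{lem:ADH 14.2.7} to $f$ instead of $\hat f-f$ gives $v(L_P(f))=v^{\ev}_{L_P}(v\hat f)=v(L_P(\hat f))=v(au)$, so $L_P(f)\asymp au$; then $L_P(\hat f-f)=L_P(\hat f)-L_P(f)=au-L_P(f)$ is a difference of two elements of valuation $v(au)$, so $L_P(\hat f-f)\preceq au\preceq a\fm$, as required.
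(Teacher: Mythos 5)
Your proof is correct, and it shares the paper's skeleton — pass to an equivalent hole via Lemma~\ref{lem:from cracks to holes} ($Z$-minimality coming from henselianity), observe that steepness is automatic since $L_{P_{+f}}=L_P$, and reduce everything to the estimate $P(f)=-L_P(\hat f-f)\preceq S_{P_{\times\fm}}(0)$ anchored at $L_P(\hat f)=au\preceq a\fm$ — but the key estimate is obtained by a genuinely different mechanism. The paper picks a single active $\phi$ with $v\hat f\notin\exc\big((L_P)^\phi\big)$ and runs the chain $L_P(\hat g)\preceq L_{P^\phi}\hat g\preceq L_{P^\phi}\hat f\asymp L_{P^\phi}(\hat f)=L_P(\hat f)\preceq S_P(0)$, using only $A(y)\preceq Ay$, monotonicity of $v_{A^\phi}$ [ADH, 4.5.1(iii)], and the defining property of eventual exceptional values; this needs no case distinction, since the inequality $L_P(\hat g)\preceq L_{P^\phi}\hat g$ holds whether or not $v(\hat g)$ is exceptional. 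You instead work with the eventual valuation function $v^{\ev}_{L_P}$ via Lemma~\ref{lem:ADH 14.2.7}, applied inside $\hat K$ after transferring the exceptional set with Lemma~\ref{lemexc, order 1} (together with $\exc^{\ev}(aA)=\exc^{\ev}(A)$ to reduce to a monic operator), which forces you to treat separately the case $v(\hat f-f)\in\exc^{\ev}(L_P)$; your handling of that case — $\gamma_0>v\hat f$ gives $f\sim\hat f$, so $v\big(L_P(f)\big)=v^{\ev}_{L_P}(v\hat f)=v\big(L_P(\hat f)\big)$ and then $L_P(\hat g)=L_P(\hat f)-L_P(f)\preceq au$ — is correct, as is your appeal to asymptotic integration of $\hat K$ (rational asymptotic integration does imply asymptotic integration here, since for H-type couples $\psi$ takes values in $\Gamma$ on the divisible hull, consistently with the paper's own use earlier in the section). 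In short: your route buys a statement phrased purely in terms of $v^{\ev}_{L_P}$ at the cost of an extra case and two more lemmas; the paper's choice of one compositional conjugate makes the argument shorter and case-free.
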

\begin{proof} By Lemma~\ref{lem:from cracks to holes}  we arrange $(P,\fm,\hat f)$ is a hole. 
Replacing $(P,\fm,\hat f)$ and $f$ by $(P_{\times\fm},1,\hat f/\fm)$ and $f/\fm$ we arrange next that~$\fm=1$. 
By the remark preceding Lemma~\ref{lem:achieve steep},  $(P_{+f},1,\hat f-f)$ is steep.  
Take~$\phi$ such that $v\hat f\notin\exc\big((L_P)^\phi\big)$, and set~$\hat g:=\hat f-f$, so $0\neq\hat g\preceq\hat f$.
Recall from [ADH, 5.7.5] that~$L_{P^\phi}=(L_P)^\phi$ and hence
$L_{P^\phi}(\hat f)=L_P(\hat f)$ and $L_P(\hat g)=L_{P^\phi}(\hat g)$.
Thus
$$L_{P_{+f}}(\hat g)\ =\ L_P(\hat g)\  \preceq\  L_{P^\phi}\hat g\  \preceq\ L_{P^\phi}\hat f\ \asymp\ L_{P^\phi}(\hat f)\  =L_P(\hat f)\    \preceq\ S_P(0)\ =\ S_{P_{+f}}(0),$$
using [ADH, 4.5.1(iii)] to get the second $\preceq$ and $v\hat f \notin \exc(L_{P^\phi})$  to get   $\asymp$; the last $\preceq$ uses $(P,1,\hat f)$ being a hole.
Therefore~$(P_{+f},1,\hat g)$ is balanced.
\end{proof}

\noindent
Combining Lemmas~\ref{lem:isolated refinement} and~\ref{lem:balanced refinement} yields:

\begin{cor}\label{cor:balanced refinement}
If $(P,\fm,\hat f)$ is balanced and isolated, and $\hat f -f \preceq \hat f$, then the refinement~$(P_{+f},\fm,\hat f-f)$ of $(P,\fm,\hat f)$ is also balanced and isolated. 
\end{cor}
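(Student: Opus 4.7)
The plan is straightforward: this is essentially just a composition of Lemma~\ref{lem:isolated refinement} and Lemma~\ref{lem:balanced refinement}, with the only nonroutine step being to verify the side hypothesis $v\hat f\notin\exc^{\ev}(L_P)$ needed to invoke the latter.

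First, I would observe that the isolation hypothesis on $(P,\fm,\hat f)$ forces $v\hat f\notin\exc^{\ev}(L_P)$. Indeed, applying Definition~\ref{def:isolated} with $a=0$ (which qualifies since $0\prec\fm$ trivially) gives
\[ \exc^{\ev}(L_P)\cap v(\hat f-K)\ <\ v(\hat f). \]
Since $0\in K$ contributes $v(\hat f-0)=v\hat f$ to $v(\hat f-K)$, the element $v\hat f$ itself lies in $v(\hat f-K)$. If we had $v\hat f\in\exc^{\ev}(L_P)$, then $v\hat f$ would lie in the intersection displayed above, contradicting $v\hat f<v\hat f$.

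Second, with $v\hat f\notin\exc^{\ev}(L_P)$ established and the hypothesis $\hat f-f\preceq\hat f$ given, Lemma~\ref{lem:balanced refinement} applies directly and yields that the refinement $(P_{+f},\fm,\hat f-f)$ is balanced. Independently, Lemma~\ref{lem:isolated refinement} tells us that every refinement of an isolated slot is isolated, so $(P_{+f},\fm,\hat f-f)$ remains isolated.

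There is no substantive obstacle here; the only thing to be careful about is that ``isolated'' at $a=0$ must be parsed so as to extract the statement about $v\hat f$. Once that is done, both conclusions (balanced, isolated) transfer to the refinement via the respective lemmas, yielding the corollary.
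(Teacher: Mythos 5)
Your proof is correct and matches the paper's, which simply combines Lemma~\ref{lem:isolated refinement} with Lemma~\ref{lem:balanced refinement}; the side condition $v\hat f\notin\exc^{\ev}(L_P)$ that you verify via the $a=0$ instance of isolation is exactly the remark already recorded immediately after Definition~\ref{def:isolated}.
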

 
\noindent
We call  $(P,\fm,\hat f)$ {\bf proper}\index{slot!proper}\index{proper!slot} if the differential polynomial $P$ is proper as defined in Section~\ref{sec:complements newton} (that is, $u\neq 0$ and $g+u^\dagger\succ^\flat 1$). If~$(P,\fm,\hat f)$ is proper, then so are~$(bP,\fm,\hat f)$ for $b\neq 0$ and $(P_{\times\fn},\fm/\fn,\hat f/\fn)$, as well as each refinement $(P,\fn,\hat f)$ of~$(P,\fm,\hat f)$ and each slot in $K$ equivalent to $(P,\fm,\hat f)$. By Lemma~\ref{lem:proper compconj},
if $(P,\fm,\hat f)$ is proper, then so is~$(P^\phi,\fm,\hat f)$ for $\phi\preceq 1$.

\begin{lemma}\label{lem:proper => balanced}
Suppose $(P,\fm,\hat f)$ is proper and $\fm\asymp u$. Then $(P,\fm,\hat f)$ is balanced.
\end{lemma}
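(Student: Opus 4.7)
The plan is to verify directly the two defining conditions of \emph{balanced} for the slot $(P,\fm,\hat f)$: namely, that it is steep and that $u\preceq\fm$. The second condition is immediate from $\fm\asymp u$. For steepness, recall from Example~\ref{ex:order 1 linear steep} that with $P=a(Y'+gY-u)$ of order~$1$, steepness of $(P,\fm,\hat f)$ is equivalent to $g+\fm^\dagger\succ^\flat 1$, i.e., $g+\fm^\dagger\succ 1$ and $(g+\fm^\dagger)^\dagger\succeq 1$. Meanwhile, properness gives us exactly the analogous statement with $\xi:=g+u^\dagger$ in place of $g+\fm^\dagger$. So the task reduces to transferring $\succ^\flat 1$ from $\xi$ to $g+\fm^\dagger$, using only that $\fm\asymp u$.

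The key technical fact I will use is the following consequence of $\der\mathcal O\subseteq\smallo$ (valid here by [ADH, 9.1.3(iv)], since $K$ is $\d$-valued with $\Psi\cap\Gamma^{>}\ne\emptyset$): whenever $h_1,h_2\in K^\times$ satisfy $h_1\asymp h_2$, then $h_1/h_2\in\mathcal O^\times$ gives $(h_1/h_2)'\in\der\mathcal O\subseteq\smallo$ and hence
\[
h_1^\dagger - h_2^\dagger\ =\ (h_1/h_2)^\dagger\ \asymp\ (h_1/h_2)'\ \prec\ 1.
\]
Applying this first with $h_1:=\fm$, $h_2:=u$ yields $\fm^\dagger - u^\dagger\prec 1$, and therefore
\[
g+\fm^\dagger\ =\ \xi + (\fm^\dagger - u^\dagger)\ \sim\ \xi
\]
(using $\xi\succ 1$ from properness). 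In particular $g+\fm^\dagger\succ 1$ and $g+\fm^\dagger\asymp\xi$.

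Now I apply the same key fact a second time, to $h_1:=g+\fm^\dagger$ and $h_2:=\xi$, obtaining $(g+\fm^\dagger)^\dagger - \xi^\dagger\prec 1$. Since properness yields $\xi^\dagger\succeq 1$, this gives $(g+\fm^\dagger)^\dagger\sim\xi^\dagger\succeq 1$, so $g+\fm^\dagger\succ^\flat 1$ as required. Combined with $u\preceq\fm$, this establishes that $(P,\fm,\hat f)$ is balanced. No genuine obstacle appears; the only care needed is to distinguish $\asymp$ from $\sim$ and to check that the two applications of the ``$\asymp$-preserves-$\dagger$-up-to-$\smallo$'' principle are correctly set up, both of which rest on the uniform availability of $\der\mathcal O\subseteq\smallo$ in the standing hypotheses of this section.
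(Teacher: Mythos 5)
Your proof is correct and is essentially the paper's argument: the paper simply arranges $\fm=1$ first by passing to the multiplicative conjugate $(P_{\times\fm},1,\hat f/\fm)$ (properness being preserved under multiplicative conjugation), after which $u\asymp 1$ and the transfer of $\succ^\flat 1$ from $g+u^\dagger$ to $g+\fm^\dagger$ via $\der\mathcal O\subseteq\smallo$ is left implicit. You carry out that same logarithmic-derivative estimate directly, without the normalization, which is fine.
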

\begin{proof}
Replacing $(P,\fm,\hat f)$ by $(P_{\times\fm},1,\hat f/\fm)$, we arrange $\fm=1$.
Then $u\asymp 1$ and thus $(P,1,\hat f)$ is balanced.
\end{proof}

\begin{prop}\label{prop:balanced refinement}
Suppose $(P,\fm,\hat f)$ is proper and $v\hat f\notin\exc^{\ev}(L_P)$. Then~$(P,\fm,\hat f)$ has a balanced re\-fine\-ment.
\end{prop}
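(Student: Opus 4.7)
The plan is to split into two cases based on whether $u\preceq\fm$ or $u\succ\fm$, after first using Lemma~\ref{lem:from cracks to holes} and a multiplicative conjugation to reduce to the situation where $(P,\fm,\hat f)$ is a hole in $K$ with $\fm=1$. Thus $P=\alpha(Y'+gY-u)$ with $\alpha,u\in K^\times$, $\hat f\prec 1$, and $\hat f'+g\hat f=u$; propriety gives $\xi:=g+u^\dagger\succ^\flat 1$, and Lemma~\ref{lem:proper asymp} applied to $\hat f$ (using the hypothesis $v\hat f\notin\exc^{\ev}(L_P)$) yields $\hat f\sim u/\xi$.

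In the easy case $u\preceq 1$, I would set $\fn:=u\in K^\times$ and $f:=0$: then $\fn\preceq\fm$ and $\hat f\prec u=\fn$ (since $v\hat f=vu-v\xi>vu$ because $\xi\succ 1$), so $(P,\fn,\hat f)$ refines $(P,1,\hat f)$. Propriety is invariant under multiplicative conjugation and $\fn\asymp u$, so Lemma~\ref{lem:proper => balanced} yields that $(P,\fn,\hat f)$ is balanced.

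For the harder case $u\succ 1$ I would take $\fn:=u/\xi^m\in K^\times$ for sufficiently large $m$ and use the asymptotic expansion of Corollary~\ref{cor:uaintfex, 1} applied to $\hat h:=\hat f/u\in\hat K$, which satisfies $\hat h'+\xi\hat h=1$: assuming $\xi\notin\I(K)+K^\dagger$, this produces $h_m\in K$ with $\hat h-h_m\prec\xi^{-m}$. Setting $f:=uh_m$, we get $\hat f-f=u(\hat h-h_m)\prec u/\xi^m=\fn$ and $\fn\preceq 1$ for $m$ large enough, so $(P_{+f},\fn,\hat f-f)$ refines $(P,1,\hat f)$. Balancedness then reduces to two checks: (i) steepness, $g+\fn^\dagger=\xi-m\xi^\dagger\sim\xi\succ^\flat 1$, using $\xi^\dagger\prec\xi$ as a consequence of the $H$-asymptotic identity $\psi(v\xi)=o(v\xi)$; (ii) $u^+\preceq\fn$, which follows by setting $\epsilon:=\hat h-h_m$ and computing via Lemma~\ref{fex} that $u^+/\alpha=u(\epsilon'+\xi\epsilon)=u\cdot P_{m+1}(\zeta,1)/\xi^{m+1}$, together with the bound $P_{m+1}(\zeta,1)\prec\xi$ from Lemma~\ref{lem:xi zeta}.

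The main obstacle will be the subcase $\xi\in\I(K)+K^\dagger$ (equivalently, $\exc^{\ev}(L_P)\neq\emptyset$) within the case $u\succ 1$, where Corollary~\ref{cor:uaintfex, 1} does not apply. In this subcase $\exc^{\ev}(L_P)=\{v\mathfrak{h}\}$ for some $\mathfrak{h}\in K^\times$ with $g+\mathfrak{h}^\dagger\in\I(K)$, and the hypothesis $v\hat f\notin\exc^{\ev}(L_P)$ manifests as $v\hat f\neq v\mathfrak{h}$. I would handle this by first multiplicatively conjugating the slot by $\mathfrak{h}$, which produces a slot whose coefficient $\tilde g:=g+\mathfrak{h}^\dagger\in\I(K)$ is very small and whose unique exceptional value is moved to $0$ (with $v\tilde{\hat f}\neq 0$), and then adapting the integration-by-parts argument to the equation $\tilde{\hat f}'+\tilde g\tilde{\hat f}=\tilde u$: exploiting the smallness of $\tilde g$, one obtains $K$-approximations of $\tilde{\hat f}$ of any prescribed accuracy by iteratively applying asymptotic integration, and untwisting by $\mathfrak{h}$ then yields a balanced refinement of the original slot.
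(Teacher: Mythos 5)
Your first case ($u\preceq\fm$: pass to $(P,u,\hat f)$ and invoke Lemma~\ref{lem:proper => balanced}) is exactly the paper's argument. The gap is in the case $u\succ\fm$. There you aim for an approximation $\hat f-f\prec u/\xi^m$ and a new monomial $\fn\asymp u/\xi^m$, which forces you to invoke Corollary~\ref{cor:uaintfex, 1} and hence to assume $\xi\notin\I(K)+K^\dagger$; but the proposition only assumes $v\hat f\notin\exc^{\ev}(L_P)$, and the subcase $g\in\I(K)+K^\dagger$ (equivalently $\exc^{\ev}(L_P)=\{v\mathfrak h\}\neq\emptyset$ with $v\hat f\neq v\mathfrak h$) is genuinely allowed. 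Your patch for that subcase does not work as described: after twisting by $\mathfrak h$ the coefficient $\tilde g\in\I(K)$ is small, so the twisted operator is no longer steep and the whole integration-by-parts machinery (which needs $\xi\succ^\flat 1$) is unavailable; and ``$K$-approximations of any prescribed accuracy'' are simply not there to be had, since the slot is not assumed special and $v(\hat f-K)$ is merely a downward closed set without largest element -- there is no reason it contains $v(u/\xi^m)$, so no $f\in K$ with $\hat f-f\prec u/\xi^m$ need exist. (When $\exc^{\ev}(L_P)\neq\emptyset$ the homogeneous equation has solutions of valuation $v\mathfrak h$ in extensions, and approximability of $\hat f$ typically stalls around that level.)

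The underlying problem is that you are trying to prove much more than balancedness requires. Balanced only asks for steepness plus $u\preceq\fm$ for the refined slot, so a single additive step with the crude approximation $f:=u/g$ suffices, with no change of $\fm$ and no expansion: since $\nval P=1$ and $u\succ 1$ give $u\prec g$ (Lemma~\ref{lem:proper nmul 1}), one gets $u^\dagger\preceq g^\dagger\prec g$, hence $g\sim\xi\succ^\flat 1$, so $(P,1,\hat f)$ is already steep, $\hat f\sim u/g=f\prec 1$, and $P_{+f}(0)=P(f)=af'\prec a=S_{P_{+f}}(0)$ because $f\prec 1$ and the derivation is small. Thus $(P_{+f},1,\hat f-f)$ is a balanced refinement, with no case distinction on whether $g\in\I(K)+K^\dagger$; this is the paper's proof. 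Your finer approximation scheme (when it applies) proves something closer to strict normality, but as written it does not cover the stated hypotheses.
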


\begin{proof}
We arrange $\fm=1$ as usual.
By Lemmas~\ref{lem:proper asymp} and~\ref{lem:from cracks to holes} we have 
$$\hat f\  \sim\  u/(g+u^\dagger) \prec^\flat u.$$ Hence if $u\preceq 1$, then
$(P,u,\hat f)$ refines $(P,1,\hat f)$, and so $(P,u,\hat f)$ is balanced by Lem\-ma~\ref{lem:proper => balanced}.
Assume now that $u\succ 1$. Then $1\prec u\prec g$ by Lemma~\ref{lem:proper nmul 1} and~$\nval P=1$, and hence $u^\dagger\preceq g^\dagger \prec  g$. So~$g\sim g+u^\dagger \succ^\flat 1$, hence~$(P,1,\hat f)$ is steep, and~$\hat f\sim u/g$. Set $f:=u/g\prec 1$; then~$(P_{+f},1,\hat f-f)$ is a steep refinement of~$(P,1,\hat f)$. Moreover
$$P_{+f}(0) =P(f) = af'\prec a=S_{P_{+f}}(0),$$
hence  $(P_{+f},1,\hat f-f)$ is balanced.
\end{proof}

\begin{cor}\label{nonamecor}
Suppose $K$ is $\upl$-free.  Then there exists $\phi\preceq 1$ and  a re\-fine\-ment~$(P_{+f},\fn,{\hat f-f})$ of $(P,\fm,\hat f)$ such that $(P^\phi_{+f},\fn,\hat f-f)$ is balanced.
\end{cor}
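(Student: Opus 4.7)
The plan is to apply Proposition~\ref{prop:balanced refinement} inside a suitable compositional conjugate $K^\phi$, after a preliminary refinement that makes both hypotheses of that proposition---properness of $P^\phi$ and $v(\hat f-{\cdot})\notin\exc^{\ev}$---available. First, by Lemma~\ref{lem:from cracks to holes} we may replace $(P,\fm,\hat f)$ by an equivalent hole in $K$, so assume $P(\hat f)=0$ and write $P=a(Y'+gY-u)$ with $a\in K^\times$ and $g,u\in K$. Since $\deg P=1$, $L_{P_{+f_0}}=L_P$ for every $f_0\in K$, hence $\exc^{\ev}(L_{P_{+f_0}})=\exc^{\ev}(L_P)$; and since $K$ is $\upl$-free with $\order L_P=1$, this set contains at most one element, call it $\gamma_0$ if nonempty.

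Next I would arrange, by one preliminary refinement, that $u\neq 0$ and $v\hat f\notin\exc^{\ev}(L_P)$. Refining by $f_0\in K$ with $f_0\prec\fm$ replaces $u$ by $u-f_0'-gf_0$, which vanishes only on the (at most one-dimensional, over $C$) affine subspace of solutions in $K$ of $Y'+gY=u$. The condition $v(\hat f-f_0)\neq\gamma_0$ rules out at most a single valuation for $f_0$; the delicate case is $v\hat f=\gamma_0$, which forces $f_0\sim\hat f$, and such $f_0\in K$ can be produced by immediacy of $\hat K\supseteq K$. There is enough room below $\fm$ to pick $f_0$ meeting both requirements. Replacing $(P,\fm,\hat f)$ by the refinement $(P_{+f_0},\fm,\hat f-f_0)$, we may thus assume $u\neq 0$ and $v\hat f\notin\exc^{\ev}(L_P)$.

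With $u\neq 0$ and $K$ still $\upl$-free, Lemma~\ref{lem:proper evt} yields an active $\phi_0$ in $K$ such that $P^\phi$ is proper in $K^\phi$ for every active $\phi\prec\phi_0$; using $\Psi\cap\Gamma^{>}\neq\emptyset$ we may choose such a $\phi$ with $\phi\preceq 1$. The conjugate $K^\phi$ inherits the standing hypotheses (henselian, $\d$-valued of $H$-type, small derivation, rational asymptotic integration), and $\exc^{\ev}_{K^\phi}(L_{P^\phi})=\exc^{\ev}_K(L_P)$ as subsets of $\Gamma$, so $v\hat f\notin\exc^{\ev}_{K^\phi}(L_{P^\phi})$. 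Applying Proposition~\ref{prop:balanced refinement} to the slot $(P^\phi,\fm,\hat f)$ in $K^\phi$ then produces a balanced refinement $(P^\phi_{+f_1},\fn,\hat f-f_1)$ of it. Setting $f:=f_0+f_1$ and using the identity $(P^\phi)_{+g}=(P_{+g})^\phi$, the slot $(P_{+f},\fn,\hat f-f)$ refines the original $(P,\fm,\hat f)$ and its $\phi$-conjugate $(P^\phi_{+f},\fn,\hat f-f)$ is balanced, as required.

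The main obstacle I expect lies in the second step: meeting both constraints on $f_0$ simultaneously, particularly in the edge case $v\hat f=\gamma_0$. There the requirement $v(\hat f-f_0)>\gamma_0$ pins $f_0$ down to the $\sim$-class of $\hat f$, and one must verify that within this class there is still an $f_0\in K$ avoiding the single $C$-coset of $\ker(\der+g)\cap K$ excluded by $u-f_0'-gf_0\neq 0$; this uses that the $\sim$-class of $\hat f$ in $K$ has much larger $C$-dimension than that coset. Once this bookkeeping is done, the appeals to Lemma~\ref{lem:proper evt} and Proposition~\ref{prop:balanced refinement} finish the argument directly.
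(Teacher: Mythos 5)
Your proof is correct, and its engine is the same as the paper's: arrange that the (new) constant term is nonzero and that $v$ of the slot's element avoids $\exc^{\ev}(L_P)$, then get properness eventually from Lemma~\ref{lem:proper evt} and conclude with Proposition~\ref{prop:balanced refinement} applied in $K^\phi$. Where you diverge is the preliminary step: the paper simply invokes Remark~\ref{rem:achieve isolated} to pass to an isolated refinement, and then both needed facts fall out at once ($u\neq 0$ by the remark before Lemma~\ref{lem:excev empty}, and $v\hat f\notin\exc^{\ev}(L_P)$ by the remark right after Definition~\ref{def:isolated}, both being preserved under compositional conjugation since $\exc^{\ev}(A^\phi)=\exc^{\ev}(A)$ and isolatedness is preserved). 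You instead hand-craft a single additive refinement achieving exactly these two (weaker) conditions, using that the zero set of $P$ in $K$ is a coset of $\ker_K(\der+g)$ of $C$-dimension at most $1$ while the relevant set of admissible $f_0$ is much larger. That argument is sound: in your ``delicate case'' one can make the dimension count precise (if all $f_0\sim\hat f$ in $K$ were zeros of $P$, then $f_0\smallo\subseteq Ch$ for a kernel element $h$, which is impossible since $\smallo$ meets infinitely many valuations while $C^\times\subseteq\mathcal O^\times$), though note the case is not actually forced on you--since $\Gamma^>$ has no least element you could just as well take $v\fm<vf_0<v\hat f$, making $v(\hat f-f_0)=vf_0\neq\gamma_0$. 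So your route buys a self-contained, more elementary preliminary step at the cost of redoing, in an ad hoc and weaker form, what the isolatedness machinery (Proposition~\ref{prop:achieve isolated}/Remark~\ref{rem:achieve isolated}) already packages; citing that remark would compress your second paragraph to one line.
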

\begin{proof}  
Using Remark~\ref{rem:achieve isolated} we can replace~$(P,\fm,\hat f)$ by a refinement to arrange that~$(P,\fm,\hat f)$ is isolated. Then $u\neq 0$
by a remark at the beginning of this subsection, so by Lemma~\ref{lem:proper evt} ,  $P^\phi$ is proper, eventually.
Now apply  Proposition~\ref{prop:balanced refinement} to a proper (and isolated) $(P^\phi,\fm,\hat f)$ with $\phi\preceq 1$.
\end{proof}

\begin{cor}\label{cor:strongly normal d=1}
Suppose $K$ is $\upl$-free and $K^\dagger$ is divisible. Then $(P,\fm,\hat f)$ has a
refinement~$(P_{+f},\fn,\hat f-f)$ with strictly normal $(P_{+f}^\phi,\fn,\hat f -f)$ for some $\phi\preceq 1$.
\end{cor}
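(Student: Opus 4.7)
The plan is to chain together Corollary~\ref{nonamecor} (which, using $\upl$-freeness, yields a balanced compositional-conjugate refinement) and Corollary~\ref{cor:balanced -> strongly normal} (which upgrades balanced to strictly normal, given the extra hypothesis $g\notin \I(K)+K^\dagger$ and divisibility of $K^\dagger$). The only genuine point to check is that the hypothesis $g\notin \I(K)+K^\dagger$ is both available over $K$ and transfers to the compositional conjugate $K^\phi$ so that Corollary~\ref{cor:balanced -> strongly normal} can be applied there.

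Write $P=a(Y'+gY-u)$ with $a\in K^\times$ and $g,u\in K$. Under our standing assumptions $\der K=K$ and $\I(K)\subseteq K^\dagger$, Lemma~\ref{lem:excev empty} gives $\exc^{\ev}(L_P)=\emptyset$, which by the remark preceding that lemma is equivalent to $g\notin \I(K)+K^\dagger$. Now invoke Corollary~\ref{nonamecor}, which uses $\upl$-freeness of $K$, to produce an active $\phi\preceq 1$ in $K$ and a refinement $(P_{+f_0},\fn,\hat f-f_0)$ of $(P,\fm,\hat f)$ such that the slot $(P_{+f_0}^\phi,\fn,\hat f-f_0)$ over $K^\phi$ is balanced.

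Next I verify that Corollary~\ref{cor:balanced -> strongly normal} applies over $K^\phi$ to this balanced slot. Additive conjugation does not alter the coefficient $g$, and compositional conjugation by $\phi$ divides it by $\phi$, so $P_{+f_0}^\phi=a\phi\bigl(\derdelta Y+(g/\phi)Y-(u-f_0'-gf_0)/\phi\bigr)$. Using the identities $\I(K^\phi)=\phi^{-1}\I(K)$ and $(K^\phi)^\dagger=\phi^{-1}K^\dagger$, we have
\[
g/\phi\notin \I(K^\phi)+(K^\phi)^\dagger \ \Longleftrightarrow\ g\notin \I(K)+K^\dagger,
\]
and the latter holds by the previous paragraph. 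The field $K^\phi$ is henselian, $\d$-valued of $H$-type with small derivation and rational asymptotic integration (these all persist under compositional conjugation by an active $\phi\preceq 1$), and its group of logarithmic derivatives $\phi^{-1}K^\dagger$ is divisible because $K^\dagger$ is. So all hypotheses of Corollary~\ref{cor:balanced -> strongly normal} are met for $(P_{+f_0}^\phi,\fn,\hat f-f_0)$ in $K^\phi$.

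That corollary then produces $f_1\in K$ with $\hat f-f_0-f_1\prec \fn$ and such that $\bigl((P_{+f_0}^\phi)_{+f_1},\fn,\hat f-f_0-f_1\bigr)$ is strictly normal in $K^\phi$. Since additive and compositional conjugation commute, $(P_{+f_0}^\phi)_{+f_1}=(P_{+f})^\phi$ with $f:=f_0+f_1$, and $(P_{+f},\fn,\hat f-f)$ is a refinement of $(P_{+f_0},\fn,\hat f-f_0)$ and hence, by transitivity, of $(P,\fm,\hat f)$. Therefore $(P_{+f}^\phi,\fn,\hat f-f)$ is strictly normal, as required. The hardest part is essentially bookkeeping: tracking that the non-trivial arithmetic condition on $g$ is preserved under compositional conjugation; once that is observed, the result falls out by composing the two corollaries.
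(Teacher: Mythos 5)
Your proof is correct and takes essentially the same route as the paper's: chain Corollary~\ref{nonamecor} with Corollary~\ref{cor:balanced -> strongly normal}, using Lemma~\ref{lem:excev empty} to supply the hypothesis that the coefficient of $Y$ avoids $\I+{}^\dagger$. The only immaterial difference is that the paper invokes Lemma~\ref{lem:excev empty} with $K^\phi$ in the role of $K$ applied to the balanced slot, whereas you invoke it over $K$ and transfer the condition via $\I(K^\phi)=\phi^{-1}\I(K)$ and $(K^\phi)^\dagger=\phi^{-1}K^\dagger$, which amounts to the same verification.
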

\begin{proof}  Corollary~\ref{nonamecor} yields a refinement $(P_{+f_1}, \fn_1, \hat f-f_1)$ of $(P, \fm,\hat f)$ and a~$\phi\preceq 1$ such that
$(P^\phi_{+f_1},\fn_1,\hat f-f_1)$ is balanced. With $K^\phi$ 
and ${(P^\phi_{+f_1}, \fn_1, \hat f-f_1)}$
in the roles of~$K$  and $(P,\fm, \hat f)$, respectively, we can apply Corollary~\ref{cor:balanced -> strongly normal} to~$(P^\phi_{+f_1}, \fn_1, \hat f-f_1)$ to give a strictly normal refinement $(P^{\phi}_{f_1+f_2}, \fn, \hat f-f_1-f_2)$ of it.
Thus for $f:=f_1+f_2$ the refinement $(P_{+f},\fn, \hat f-f)$ of $(P,\fm,\hat f)$ has the property that $(P_{+f}^\phi,\fn,\hat f -f)$ is strictly normal. 
\end{proof}

\noindent
Combining this corollary with Corollaries~\ref{cor:minhole deg 1},~\ref{cor:achieve strong normality, 3}, and Lemma~\ref{lem:normality comp conj, strong} yields:

{\sloppy
\begin{cor}\label{cor:achieve strong normality, general}
If $K$ is $\upo$-free and algebraically closed, then every minimal hole in $K$ of order $\ge 1$ has
a re\-fine\-ment~$(Q,\fn,\hat g)$ such that~$(Q^\phi,\fn,\hat g)$ is deep and strictly normal, eventually.  
\end{cor}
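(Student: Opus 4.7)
The plan is to split into two cases according to the degree of a given minimal hole $(P,\fm,\hat a)$ in $K$ of order $r\geq 1$. Note that $K$ being algebraically closed forces its constant field $C$ to be algebraically closed and its value group $\Gamma$ to be divisible, and also makes $K^\times$ divisible so that its quotient $K^\dagger=K^\times/C^\times$ is divisible as well. Moreover $K$ is henselian, and by the standing hypothesis of the section, $\d$-valued of $H$-type with small derivation and rational asymptotic integration. Since $K$ is $\upo$-free it is in particular $\upl$-free.

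For the case $\deg P>1$, Corollary~\ref{cor:achieve strong normality, 3}, whose only assumption on $K$ is $\upo$-freeness, immediately produces a refinement $(Q,\fn,\hat g)$ of $(P,\fm,\hat a)$ such that $(Q^\phi,\fn,\hat g)$ is deep and strictly normal, eventually, and there is nothing more to do.

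For the case $\deg P=1$, I would first apply Corollary~\ref{cor:minhole deg 1}: its hypotheses ($\upo$-freeness, $C$ algebraically closed, $\Gamma$ divisible) are all met, so every minimal hole of degree $1$ in $K$ has order $1$, and in particular $r=1$. Thus $(P,\fm,\hat a)$ is a hole in $K$ of order and degree $1$. I would then invoke Corollary~\ref{cor:strongly normal d=1}, whose hypotheses are also satisfied ($K$ is $\upl$-free, $\der K=K$, $\I(K)\subseteq K^\dagger$, and $K^\dagger$ is divisible, as noted above). This yields a refinement $(Q,\fn,\hat g):=(P_{+f},\fn,\hat a-f)$ of $(P,\fm,\hat a)$ together with an active $\phi_0\preceq 1$ in $K$ such that $(Q^{\phi_0},\fn,\hat g)$ is strictly normal. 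Since this slot is linear, the parenthetical remark in Lemma~\ref{lem:normality comp conj, strong} (strictly normal $+$ linear $\Rightarrow$ deep) shows it is also deep.

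It remains to upgrade the conclusion from ``for this $\phi_0$'' to ``eventually''. For any $\psi\preceq 1$ active in $K^{\phi_0}$, Lemma~\ref{lem:normality comp conj, strong} applied inside $K^{\phi_0}$ gives that $((Q^{\phi_0})^\psi,\fn,\hat g)$ is strictly normal; identifying $(K^{\phi_0})^\psi=K^{\phi_0\psi}$ and $(Q^{\phi_0})^\psi=Q^{\phi_0\psi}$, and using that the active elements $\phi\preceq\phi_0$ in $K$ are precisely the products $\phi_0\psi$ of this form, one concludes that $(Q^\phi,\fn,\hat g)$ is strictly normal (hence, being linear, deep) for all active $\phi\preceq\phi_0$ in $K$, which is exactly the ``eventually'' clause. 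The main (and really only) obstacle in this plan is the verification that Corollary~\ref{cor:strongly normal d=1} is applicable in the degree-$1$ case, which hinges on the divisibility of $K^\dagger$; this is automatic here from algebraic closedness of $K$, so the argument should go through cleanly.
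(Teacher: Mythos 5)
Your proposal is correct and follows essentially the same route as the paper, which obtains this corollary precisely by combining Corollary~\ref{cor:strongly normal d=1} (for the degree-one case, after using Corollary~\ref{cor:minhole deg 1} to reduce to order~$1$), Corollary~\ref{cor:achieve strong normality, 3} (for degree $>1$), and Lemma~\ref{lem:normality comp conj, strong} to pass from one active $\phi_0$ to all active $\phi\preceq\phi_0$ and to get deepness from linearity. Your observations that algebraic closedness of $K$ supplies the algebraically closed constant field, divisible $\Gamma$ and $K^\dagger$, and henselianity are exactly the hypotheses checks the paper leaves implicit.
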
 }

\newpage 

\part{Holes in $H$-Fields}\label{part:dents in H-fields}

\medskip

\noindent
Here we focus on holes in the algebraic closure $K$ of a Liouville closed $H$-field~$H$ with small derivation.
After the preliminary Sections~\ref{sec:aux} and~\ref{sec:approx linear diff ops} we come in Sections~\ref{sec:split-normal holes}--\ref{sec:repulsive-normal} to 
the technical heart of Chapter~\ref{part:dents in H-fields}.
  Section~\ref{sec:split-normal holes} shows that every minimal hole in~$K$ gives rise to a $Z$-minimal slot
  $(Q, \fn, \hat b)$ in $H$ such that the slot~$(Q^\phi,\fn,\hat b)$ in~$H^\phi$ is eventually {\it split-normal}, meaning {\em normal with its linear part ``asymptotically'' splitting over $K^\phi$}; see
Definition~\ref{SN} for the precise definition, and Theorem~\ref{thm:split-normal} for the main result of this section.
When $H$ is a Hardy field as in \cite{ADH5}, this asymptotic splitting will allow us to define 
a contractive operator on a space of real-valued functions; this operator then has a fixed point whose germ~$y$ satisfies 
$$Q(y)\ =\ 0,\qquad  y\prec\fn.$$ 
A main difficulty then lies in guaranteeing that  such germs $y$ have similar asymptotic properties as~$\hat b$.  
Sections~\ref{sec:ultimate} and~\ref{sec:repulsive-normal} prepare the ground for dealing with this:
In Section~\ref{sec:ultimate} we strengthen the concept of isolated slot  to {\it ultimate}\/ slot (in $H$, or in~$K$).
This relies on the ultimate exceptional values of linear differential operators over $K$ introduced in Chapter~\ref{part:universal exp ext}. 
In Section~\ref{sec:repulsive-normal} we single out among
split-normal slots those that are {\it repulsive-normal}, 
culminating in the proof of Theorem~\ref{thm:repulsive-normal}: an analogue of Theorem~\ref{thm:split-normal} producing from a minimal hole in $K$  and for small enough active $\phi>0$ in $H$  a deep
repulsive-normal ultimate slot in~$H^\phi$. This is further improved in Theorem~\ref{thm:strongly repulsive-normal}.

\section{Some Valuation-Theoretic Lemmas}\label{sec:aux}

\noindent
The present section contains preliminaries for the next section on approximating  splittings of linear differential operators; these facts
in turn are used in  Section~\ref{sec:split-normal holes} on split-normality. 
We shall often deal with real closed fields with extra structure, denoted usually by $H$, since the results in this section about such $H$ will be applied to $H$-fields (and to Hardy fields in \cite{ADH5}). 
We begin by summarizing some purely valuation-theoretic facts.

\subsection*{Completion and specialization of real closed valued fields}
Let $H$ be a real closed valued field whose valuation ring $\mathcal O$ is convex in $H$ (with respect to the unique ordering on $H$ making~$H$ an ordered field). Using [ADH, 3.5.15] we equip the algebraic closure $K=H[\imag]$ \textup{(}$\imag^2=-1$\textup{)} of~$H$ with its unique valuation ring lying over~$\mathcal O$, which is $\mathcal O + \mathcal O\imag$. We set $\Gamma:= v(H^\times)$, so $\Gamma_K=\Gamma$. 

\begin{lemma}\label{lem:Kc real closed}
The completion $H^{\operatorname{c}}$ of the valued field $H$ is real closed, its valuation ring is convex in $H^{\operatorname{c}}$, and there is a unique valued field embedding 
$H^{\operatorname{c}}\to K^{\operatorname{c}}$ over~$H$. Identifying $H^{\operatorname{c}}$ with its image under this embedding we have $H^{\operatorname{c}}[\imag]=K^{\operatorname{c}}$.
\end{lemma}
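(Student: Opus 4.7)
The plan is to reduce everything to properties of $K^{\operatorname{c}}$ and then descend to $H^{\operatorname{c}}$ via Artin--Schreier. First I would verify that $K^{\operatorname{c}}$ is algebraically closed: as a completion it is henselian, and the immediate extension $K \subseteq K^{\operatorname{c}}$ preserves the divisible value group and algebraically closed residue field of $K$, so by a standard result on henselian fields of equicharacteristic zero, $K^{\operatorname{c}}$ is algebraically closed. The universal property of valued-field completion then yields a unique valued field embedding $H^{\operatorname{c}} \to K^{\operatorname{c}}$ over $H$, with which I identify $H^{\operatorname{c}}$ as a subfield of $K^{\operatorname{c}}$.

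Next I would show $K^{\operatorname{c}} = H^{\operatorname{c}}[\imag]$. The description $\mathcal O_K = \mathcal O + \mathcal O\imag$ gives $v(a + b\imag) = \min(v(a), v(b))$ for $a, b \in H$, so any Cauchy sequence $(a_n + b_n\imag)$ in $K$ converging to $z \in K^{\operatorname{c}}$ yields Cauchy sequences $(a_n)$, $(b_n)$ in $H$ with limits $a, b \in H^{\operatorname{c}}$ and $z = a + b\imag$. Real closedness of $H^{\operatorname{c}}$ then follows from Artin--Schreier applied to the algebraically closed field $K^{\operatorname{c}} = H^{\operatorname{c}}[\imag]$, provided I exclude $\imag \in H^{\operatorname{c}}$: if $a^2 = -1$ with $a \in H^{\operatorname{c}}$, then any Cauchy approximation $(a_n)$ in $H$ of $a$ gives $v(a_n^2 + 1) \to \infty$, contradicting the fact that $a_n^2 + 1 \geq 1$ in the real closed field $H$ forces $v(a_n^2 + 1) \leq v(1) = 0$ by convexity of $\mathcal O$.

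The main obstacle is then the convexity of $\mathcal O_{H^{\operatorname{c}}}$ in $H^{\operatorname{c}}$, which I endow with the unique ordering making the real closed field $H^{\operatorname{c}}$ ordered; this ordering restricts to that of $H$ since in both cases positivity is characterized by being a nonzero square. I plan to transfer convexity from $H$ by square-based approximation. Given $0 < a \leq b$ in $H^{\operatorname{c}}$ with $v(b) \geq 0$, the case $a = b$ is immediate, so assume $a < b$. Write $a = c^2$ and $b - a = d^2$ for nonzero $c, d \in H^{\operatorname{c}}$, approximate $c, d$ by Cauchy sequences $(c_n), (d_n)$ in $H$ in the valuation topology, and set $a_n := c_n^2$ and $b_n := c_n^2 + d_n^2$. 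Then $a_n, b_n \in H^{\geq 0}$, with $a_n \to a$ and $b_n \to b$ in valuation, so eventually $v(a_n) = v(a)$ and $v(b_n) = v(b) \geq 0$; moreover $a_n < b_n$ in $H$ whenever $d_n \neq 0$, which holds eventually. Convexity of $\mathcal O$ in $H$ then yields $v(a_n) \geq v(b_n) \geq 0$, whence $v(a) \geq 0$.
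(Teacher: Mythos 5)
Your first step has a genuine gap: it is \emph{not} true that the completion of a valued field is henselian once the value group $\Gamma$ has rank $>1$. Newton's iteration produces $v(a_{n+1}-a_n)\geq 2^n\gamma$ for a fixed $\gamma\in\Gamma^{>}$, and when $\Gamma$ is not archimedean the multiples $2^n\gamma$ are bounded in $\Gamma$, so the iterates need not form a c-sequence for the valuation topology and completeness fails to yield Hensel's lemma. Here $H$ is an arbitrary real closed valued field with convex valuation ring (in the intended applications an $H$-field whose value group is far from archimedean), so you cannot deduce henselianity of $K^{\operatorname{c}}$ from completeness. What you actually need --- that the completion of an algebraically closed valued field is again algebraically closed --- is a true statement in [ADH, 3.2], but it is not a corollary of henselianity; the paper avoids the issue entirely by citing [ADH, 3.5.20] directly for the first two claims about $H^{\operatorname{c}}$ and [ADH, 3.2.29] for $K^{\operatorname{c}}=H^{\operatorname{c}}K=H^{\operatorname{c}}[\imag]$.

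Once you replace the faulty premise by the correct citation the rest of your route is sound and is a hands-on alternative to the paper's references: the coordinatewise formula $v(a+b\imag)=\min(va,vb)$ does yield $K^{\operatorname{c}}=H^{\operatorname{c}}[\imag]$; your exclusion of $\imag$ from $H^{\operatorname{c}}$ is correct, using that convexity of $\smallo$ forces $v(x)\leq 0$ for any $x\geq 1$ in $H$; and your square-based transfer of convexity to $\mathcal O_{H^{\operatorname{c}}}$ works. One small point: since $\Gamma$ may have uncountable cofinality, the completion should be handled via well-ordered c-sequences as in [ADH, 3.2] rather than ordinary Cauchy sequences, though that change is purely notational.
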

\begin{proof} For the first two claims, see [ADH, 3.5.20].  By [ADH, 3.2.20] we have a unique valued field embedding 
$H^{\operatorname{c}}\to K^{\operatorname{c}}$ over~$H$, and viewing $H^{\operatorname{c}}$ as a valued subfield of~$K^{\operatorname{c}}$ via this embedding we have $K^{\operatorname{c}}=H^{\operatorname{c}}K=H^{\operatorname{c}}[\imag]$ by [ADH, 3.2.29].
\end{proof}

\noindent
We identify $H^{\operatorname{c}}$ with its image in $K^{\operatorname{c}}$ as in the previous lemma.
Fix a  convex subgroup $\Delta$ of $\Gamma$. Let $\dot{\mathcal O}$ be the valuation ring of the coarsening
of $H$ by $\Delta$, with maximal ideal $\dot{\smallo}$. Then by [ADH, 3.5.11 and subsequent remarks] $\dot{\mathcal O}$ and $\dot{\smallo}$ are convex in $H$, the specialization $\dot H=\dot{\mathcal O}/\dot{\smallo}$ of~$H$ by $\Delta$ is naturally an ordered and valued field, and the valuation ring of $\dot{H}$ is convex in $\dot{H}$. Moreover,
$\dot{H}$ is even real closed by [ADH, 3.5.16]. 
 Likewise, the coarsening of $K$ by $\Delta$ has valuation ring 
$\dot{\mathcal O}_K$ with maximal ideal $\dot{\smallo}_K$
and valued residue field $\dot K$. Thus $\dot{\mathcal O}_K$ lies over $\dot{\mathcal O}$ by [ADH,~3.4, subsection {\it Coarsening and valued field extensions}\/], so
$(K, \dot{\mathcal O}_K)$ is a valued field extension of
$(H, \dot{\mathcal{O}})$. In addition:  

\begin{lemma}\label{lem:dotK real closed}
$\dot K$ is a valued field extension of $\dot H$ and an algebraic closure of $\dot H$.
\end{lemma}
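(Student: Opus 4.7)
My plan is to split the proof into the two assertions and handle them in turn, reducing both to facts about the ``complex conjugation'' structure $K = H[\imag]$ together with the explicit description of the valuation of $K$ via the absolute value on $H$.

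First I would establish that $\dot K$ is a valued field extension of $\dot H$. This is essentially a standard consequence of the fact already recorded just before the lemma, namely that $(K,\dot{\mathcal O}_K)$ is a valued field extension of $(H,\dot{\mathcal O})$: by the cited material from [ADH,~3.4] on coarsening and valued field extensions, the residue morphism $\dot{\mathcal O}\to\dot H$ is the restriction of the residue morphism $\dot{\mathcal O}_K\to\dot K$, and the induced valuation on $\dot K$ (with value group $\Delta$) restricts to the induced valuation on $\dot H$. So no real work is needed here beyond quoting that reference.

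Next I would observe that $v(\imag)=0\in\Delta\subseteq\Delta^{\uparrow}$, so $\imag\in\dot{\mathcal O}_K^{\times}$, and its image $\dot\imag\in\dot K$ satisfies $\dot\imag^{2}=-1$ and $\dot\imag\neq 0$. Since $\dot H$ is real closed (by the remarks preceding the lemma), $-1$ is not a square in $\dot H$, so $\dot\imag\notin\dot H$ and the degree-$2$ extension $\dot H[\dot\imag]\subseteq\dot K$ is algebraically closed, hence is already an algebraic closure of $\dot H$. Thus to finish it suffices to show the reverse inclusion $\dot K\subseteq\dot H[\dot\imag]$.

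The main step, and the only one requiring a little computation, is this last inclusion. Here I would use that the valuation on $K=H[\imag]$ is given by $v(c)=v(|c|)$ with $|a+b\imag|=\sqrt{a^{2}+b^{2}}$ (as recalled in Section~\ref{sec:logder}). Given any $z\in\dot{\mathcal O}_K$, write $z=a+b\imag$ with $a,b\in H$. Since the valuation ring of $H$ is convex, for $a,b\in H$ we have $\max(|a|,|b|)\le\sqrt{a^{2}+b^{2}}\le\sqrt{2}\,\max(|a|,|b|)$ with $\sqrt{2}\asymp 1$, and hence
\[
v(a+b\imag)\ =\ v(\sqrt{a^{2}+b^{2}})\ =\ \min\bigl(v(a),v(b)\bigr).
\]
From $z\in\dot{\mathcal O}_K$ we get $v(z)\in\Delta^{\uparrow}$, so $v(a),v(b)\in\Delta^{\uparrow}$, i.e., $a,b\in\dot{\mathcal O}$. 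Therefore $\dot z=\dot a+\dot b\,\dot\imag\in\dot H[\dot\imag]$, giving $\dot K\subseteq\dot H[\dot\imag]$ and thus equality. The hardest part is this last computation, but it is quite short once one recalls the description of the extension of a convex valuation to $H[\imag]$ via the absolute value.
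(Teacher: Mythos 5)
Your proof is correct and takes essentially the same route as the paper: observe that $\dot\imag$ makes sense in $\dot K$, deduce $\dot K=\dot H[\dot\imag]$, and use that $\dot H$ is real closed. The paper is terser, appealing to "general valuation theory" for the fact that $\dot K$ is algebraic over $\dot H$ and then noting $\dot K=\dot H[\imag]$ in passing, whereas you derive $\dot K\subseteq\dot H[\dot\imag]$ explicitly from $v(a+b\imag)=\min(va,vb)$ — a small extra step that is in the spirit of the same argument (and could even be shortened slightly by invoking directly that $\mathcal O_K=\mathcal O+\mathcal O\imag$, recorded just before Lemma~\ref{lem:Kc real closed}).
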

\begin{proof} The second part follows by general valuation theory from $K$ being an algebraic closure of $H$. In fact, with the image of $\imag\in \mathcal{O}_K\subseteq \dot{\mathcal O}_K$
in $\dot{K}$ denoted by the same symbol, we have
$\dot{K}=\dot{H}[\imag]$.
\end{proof}

\noindent
Next, let $\hat H$ be an immediate valued field extension of $H$. We equip $\hat H$ with the unique field ordering making it an ordered field extension of $H$ in which $\mathcal O_{\hat H}$ is convex; see [ADH, 3.5.12].
Choose $\imag$ in a field extension of $\hat H$ with $\imag^2=-1$.  Equip~$\hat H[\imag]$ with the unique valuation ring of $\hat H[\imag]$ that lies over $\mathcal O_{\hat H}$, namely~${\mathcal O_{\hat H} + \mathcal{O}_{\hat H}\imag}$  [ADH, 3.5.15]. Let  $\hat a =\hat b+\hat c\,\imag\in \hat H[\imag]\setminus H[\imag]$ with  $\hat b,\hat c\in \hat H$, and let~$b$,~$c$ range over $H$.
Then
$$v\big(\hat a-(b+c\imag)\big)\ =\ \min\!\big\{ v(\hat b-b),v(\hat c-c) \big\}$$
and thus $v\big( \hat a - H[\imag] \big) \subseteq v(\hat b-H)$ and $v\big( \hat a - H[\imag] \big) \subseteq v(\hat c-H)$.

\begin{lemma}\label{lem:same width}
We have $v(\hat b-H)\subseteq v(\hat c-H)$ or $v(\hat c-H)\subseteq v(\hat b-H)$. Moreover, the following are equivalent:
\begin{enumerate}
\item[$\mathrm{(i)}$] $v(\hat b-H)\subseteq v(\hat c-H)$;
\item[$\mathrm{(ii)}$] for all $b$ there is a $c$ with $v\big(\hat a-(b+c\imag)\big)=v(\hat b-b)$;
\item[$\mathrm{(iii)}$]  $v\big( \hat a - H[\imag] \big) = v(\hat b-H)$.
\end{enumerate}
\end{lemma}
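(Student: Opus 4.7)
The plan is to reduce both assertions to the elementary fact that the sets $v(\hat b-H)$ and $v(\hat c-H)$ are downward closed subsets of $\Gamma$, combined with the displayed identity
\[
v\big(\hat a-(b+c\imag)\big)\ =\ \min\!\big\{v(\hat b-b),\,v(\hat c-c)\big\}
\]
and the inclusions $v\big(\hat a-H[\imag]\big)\subseteq v(\hat b-H)$ and $v\big(\hat a-H[\imag]\big)\subseteq v(\hat c-H)$ recorded just before the lemma.

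For the first assertion I would first verify that $v(\hat b-H)$ is downward closed in $\Gamma$: given $v(\hat b-b_0)=\gamma$ and $\delta<\gamma$ in $\Gamma$, pick $t\in H^\times$ with $vt=\delta$ (using $\Gamma=v(H^\times)$) and set $b:=b_0+t$, so that $v(\hat b-b)=\delta$; the same holds for $v(\hat c-H)$. Any two downward closed subsets of the totally ordered set $\Gamma$ are comparable under inclusion (if $\gamma\in A\setminus B$, then $B\subseteq\Gamma^{\leq\gamma}\subseteq A$), which gives the dichotomy.

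For the equivalence I would prove (i)$\Rightarrow$(ii)$\Rightarrow$(iii)$\Rightarrow$(i). Assuming (i), given $b$, set $\beta:=v(\hat b-b)\in v(\hat b-H)\subseteq v(\hat c-H)$ and choose $c$ with $v(\hat c-c)\geq\beta$; the displayed identity then yields $v\big(\hat a-(b+c\imag)\big)=\beta$, so (ii) holds. Assuming (ii), the inclusion $v\big(\hat a-H[\imag]\big)\subseteq v(\hat b-H)$ is automatic, while (ii) supplies the reverse inclusion, giving (iii). Finally, (iii) together with the automatic inclusion $v\big(\hat a-H[\imag]\big)\subseteq v(\hat c-H)$ yields $v(\hat b-H)\subseteq v(\hat c-H)$, which is (i).

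There is essentially no obstacle here: the whole argument is a routine unwinding of the definitions once one has the displayed identity and the downward-closedness of $v(\hat b-H)$, $v(\hat c-H)$. The only minor subtlety is the choice of $c$ in (i)$\Rightarrow$(ii) when $\hat c\in H$, which is handled trivially by taking $c=\hat c$ so that $v(\hat c-c)=\infty$; the case $\hat c\notin H$ uses that $v(\hat c-H)$ is downward closed and (by immediacy of $\hat H\supseteq H$) has no largest element, ensuring a suitable $c$ exists.
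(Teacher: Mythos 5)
Your proof is correct and follows essentially the same route as the paper: comparability of the downward closed sets $v(\hat b-H)$, $v(\hat c-H)$ for the dichotomy, and the cycle (i)$\Rightarrow$(ii)$\Rightarrow$(iii)$\Rightarrow$(i) via the displayed $\min$-identity and the automatic inclusions $v(\hat a-H[\imag])\subseteq v(\hat b-H),\,v(\hat c-H)$. The only (harmless) deviation is in (i)$\Rightarrow$(ii): you take $c$ with $v(\hat c-c)\geq v(\hat b-b)$, which already exists because $v(\hat b-b)\in v(\hat c-H)$, so your appeal to "no largest element" is not even needed, whereas the paper splits into the cases $\hat c\in H$ and $\hat c\notin H$ and uses a strict inequality.
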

\begin{proof} For the first assertion, use that $v(\hat b-H), v(\hat c-H)\subseteq \Gamma_\infty$ are downward closed.  Suppose $v(\hat b-H)\subseteq v(\hat c-H)$, and let $b$ be given. 
If $\hat c\in H$, then for $c:=\hat c$ we have $v\big(\hat a-(b+c\imag)\big)=v(\hat b-b)$. Suppose $\hat c\notin H$. Then $v(\hat c-H)\subseteq\Gamma$ does not have a largest element and
$v(\hat b-b)\in v(\hat c-H)$, so we have~$c$ with $v(\hat b-b)<v(\hat c-c)$; thus 
$$v\big(\hat a-(b+c\imag)\big)=\min\!\big\{v(\hat b-b),v(\hat c-c)\big\}=v(\hat b-b).$$ 
This shows~(i)~$\Rightarrow$~(ii). Moreover,
(ii)~$\Rightarrow$~(iii) follows from $v\big( \hat a - H[\imag] \big) \subseteq v(\hat b-H)$, and (iii)~$\Rightarrow$~(i) from~$v\big( \hat a - H[\imag] \big) \subseteq v(\hat c-H)$. 
\end{proof} 

\noindent
So if $v(\hat b-H)\subseteq v(\hat c-H)$, then:  $\hat a$ is special over $H[\imag]\ \Longleftrightarrow\ \hat b$ is special over $H$. 
 
\medskip
\noindent
To apply Lemma~\ref{lem:same width} to $H$-fields we assume in the next lemma more generally that~$H$ is equipped with a derivation making it a $\d$-valued field and that $\hat H$ is equipped with a derivation~$\der$ making it an asymptotic field extension of $H$; then $\hat H$ is also $\d$-valued
with the same constant field as $H$ [ADH, 9.1.2].
 
\begin{lemma}\label{lem:same width, der}
Suppose $H$ is closed under integration. Then we have:
$$v(\hat b-H)\subseteq v(\hat c-H)\ \Longrightarrow\ v(\der\hat b-H)\subseteq v(\der\hat c-H).$$
\end{lemma}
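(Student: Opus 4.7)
The plan is to show that for each given $b \in H$, there exists $c \in H$ with $v(\der\hat b - b) \leq v(\der\hat c - c)$, since $v(\der\hat c - H)$ is downward closed in $\Gamma_\infty$. First I would dispose of the trivial cases. If $\hat b \in H$, then $v(\hat b - H) = \Gamma_\infty$, so the hypothesis forces $v(\hat c - H) = \Gamma_\infty$, i.e., $\hat c \in H$, making $\hat a \in H[\imag]$---contradicting $\hat a \notin H[\imag]$. And if $\hat c \in H$, then $\der\hat c \in H$ (as $H$ is a differential subfield of $\hat H$), so $v(\der\hat c - H) = \Gamma_\infty$, and the desired inclusion is automatic. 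So I may assume $\hat b, \hat c \notin H$.

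Given $b$, I would choose $B_0 \in H$ with $B_0' = b$ using that $H$ is closed under integration, so that $\der\hat b - b = \der(\hat b - B_0)$. The goal is then to invoke [ADH, 9.1.4], which states that in an asymptotic field, for nonzero $f, g$ with $g \nasymp 1$, the relation $f \preceq g$ implies $f' \preceq g'$. To bring $\hat b - B_0$ into the required shape, I would use that $\hat H$ is a $\d$-valued field extension of $H$ with the same constant field $C_H$ [ADH, 9.1.2]: if $\hat b - B_0 \asymp 1$, pick $d \in C_H$ with $\hat b - B_0 \sim d$ and replace $B_0$ by $B := B_0 + d$; this preserves $B' = b$ since $d' = 0$, and yields $\hat b - B \prec 1$, nonzero because $\hat b \notin H$. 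Otherwise set $B := B_0$. Either way, $\hat b - B$ is nonzero with $\hat b - B \nasymp 1$, and $B' = b$.

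Next, using the hypothesis $v(\hat b - H) \subseteq v(\hat c - H)$, pick $C_0 \in H$ with $\hat c - C_0 \preceq \hat b - B$. I would apply the same adjustment trick on the $\hat c$-side: if $\hat c - C_0 \asymp 1$ (which then forces $\hat b - B \succ 1$), there exists $d' \in C_H$ with $\hat c - C_0 \sim d'$, and replacing $C_0$ by $C := C_0 + d'$ yields $\hat c - C \prec 1$, nonzero because $\hat c \notin H$, and still $\hat c - C \preceq \hat b - B$; otherwise set $C := C_0$. In every case $f := \hat c - C$ and $g := \hat b - B$ are nonzero, $\nasymp 1$, and satisfy $f \preceq g$, so [ADH, 9.1.4] delivers $\der(\hat c - C) \preceq \der(\hat b - B)$. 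Setting $c := \der C \in H$ gives $v(\der\hat c - c) \geq v(\der\hat b - b)$, completing the proof.

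The main---if modest---obstacle is the case where $\hat b - B_0$ or $\hat c - C_0$ lies in the archimedean class of $1$, where the passage from $\preceq$ to $\preceq$ on derivatives breaks down. This is exactly what the constant adjustment (leveraging the equality $C_{\hat H} = C_H$ of constant fields in a $\d$-valued extension) is designed to resolve, while the assumption that $H$ is closed under integration is what allows us to translate an $H$-translate of $\hat b$ into a derivative of an $H$-translate of $\der \hat b$.
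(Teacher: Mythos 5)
Your proof is correct and follows essentially the same route as the paper's: pick an antiderivative in $H$ (using closure under integration), shift it by a constant of $C_H = C_{\hat H}$ to make the difference $\nasymp 1$, use the hypothesis to find a corresponding $H$-translate of $\hat c$, and then apply the asymptotic-field property to pass to derivatives. The only cosmetic differences are that the paper picks $h$ with $\hat b - g \asymp \hat c - h$ (so only one constant adjustment is needed, and $\asymp$ is preserved under $\der$ by [ADH, 9.1.4]), whereas you use $\preceq$ and hence need a second adjustment on the $\hat c$-side; and the paper leaves the degenerate cases $\hat b \in H$, $\hat c \in H$ implicit, where you treat them explicitly.
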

\begin{proof} Assume $v(\hat b-H)\subseteq v(\hat c-H)$.
Let $b\in H$, and take $g\in H$ with $g'=b$;    adding a suitable constant to $g$ we   arrange $\hat b-g\nasymp 1$.
Next, take $h\in H$ with $\hat b-g\asymp \hat c-h$. Then 
$$\der\hat b-b\ =\ \der(\hat b-g)\ \asymp\ \der(\hat c-h)\ =\ \der\hat c -h',$$
so $v(\der\hat b - b)\in v(\der\hat c -H)$.
\end{proof}

\subsection*{Embedding into the completion}
{\it In this subsection $K$ is an asymptotic field, $\Gamma:= v(K^\times)\neq\{0\}$, and 
$L$ is an asymptotic field extension of $K$ such that $\Gamma$ is cofinal in $\Gamma_L$.}\/

\begin{lemma}\label{cdercon}
Let $a\in L$ and let $(a_\rho)$ be a c-sequence in $K$ with $a_\rho\to a$ in $L$. Then for each $n$, $(a_\rho^{(n)})$ is a c-sequence in $K$ with $a_\rho^{(n)}\to a^{(n)}$ in $L$. 
\end{lemma}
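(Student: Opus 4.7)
The plan is to induct on $n$, the base case $n=0$ being immediate. For the inductive step, applying the statement at level $1$ to the sequence $(a_\rho^{(n)})$ (which by the inductive hypothesis is a c-sequence in $K$ with limit $a^{(n)}$ in $L$) reduces everything to the case $n=1$: given a c-sequence $(x_\rho)$ in $K$ with $x_\rho\to x$ in $L$, I must show $(x_\rho')$ is a c-sequence in $K$ and $x_\rho'\to x'$ in $L$. If $x\in K$, cofinality of $\Gamma$ in $\Gamma_L$ yields $x_\rho\to x$ already in $K$, so the conclusion follows by applying the argument below inside $K$ alone. Thus I may assume $x\notin K$, and after discarding finitely many initial terms that $x_\rho\ne x$ for all $\rho$.

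The crux will be the following claim: \emph{for every $\gamma\in\Gamma$ there exists $b\in K^\times$ with $b\prec 1$ and $v(b')>\gamma$.} Granting this, fix $\gamma\in\Gamma$ and choose such $b$. Since $(x_\rho)$ is a c-sequence converging in $L$ to $x$, some $\rho_0$ satisfies $v(x_\rho-x_\sigma)>v(b)$ and $v_L(x-x_\rho)>v(b)$ for all $\rho,\sigma\ge\rho_0$; hence $x_\rho-x_\sigma\prec b$ in $K$ (when nonzero) and $x-x_\rho\prec b$ in $L$. The defining axiom of the asymptotic fields $K$ and $L$ then yields $(x_\rho-x_\sigma)'\prec b'$ in $K$ and $(x-x_\rho)'\prec b'$ in $L$; unwinding,
\[
v(x_\rho'-x_\sigma')\ >\ v(b')\ >\ \gamma\qquad\text{and}\qquad v_L(x'-x_\rho')\ >\ v_L(b')\ =\ v(b')\ >\ \gamma.
\]
The first inequality shows $(x_\rho')$ is a c-sequence in $K$; the second, together with the cofinality of $\Gamma$ in $\Gamma_L$, shows $x_\rho'\to x'$ in $L$.

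I expect the main obstacle to be the claim, which amounts to asserting that the map $v_\der\colon \Gamma^>\to\Gamma$, $\beta\mapsto\beta+\psi(\beta)$, has cofinal image. I would prove this via slow variation of $\psi$ in the asymptotic couple of $K$. Fix any $\beta_0\in\Gamma^>$; for any $\beta\in\Gamma^>$ with $\beta>\beta_0$, slow variation gives $|\psi(\beta)-\psi(\beta_0)|=o(\beta-\beta_0)$, hence in particular $2(\psi(\beta_0)-\psi(\beta))<\beta-\beta_0$ in $\Gamma$, equivalently
\[
v_\der(\beta)\ =\ \beta+\psi(\beta)\ >\ (\beta+\beta_0)/2 + \psi(\beta_0)\qquad\text{in $\mathbb{Q}\Gamma$.}
\]
Since $\Gamma^>$ has no largest element, $\beta$ may be chosen large enough in $\Gamma$ that $\beta+\beta_0>2\gamma-2\psi(\beta_0)$ in $\Gamma$, whence $v_\der(\beta)>\gamma$; any $b\in K^\times$ with $vb=\beta$ then witnesses the claim.
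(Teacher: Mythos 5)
Your proof is correct and follows essentially the same route as the paper: reduce by induction to $n=1$, then apply the asymptotic-field axiom to a nonzero element $\prec 1$ whose derivative has valuation beyond a prescribed $\gamma$. The only difference is that the paper simply cites [ADH, 9.2.6] for the existence of such an element (taken in $L$, for $\gamma\in\Gamma_L$), whereas you reprove this cofinality of $(\Gamma^{>})'$ via the slow variation of $\psi$ (which indeed holds for arbitrary asymptotic couples, cf.\ [ADH, 6.5.4(ii)]) with the witness in $K^\times$, and then pass to convergence in $L$ using cofinality of $\Gamma$ in $\Gamma_L$.
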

\begin{proof}
By induction on $n$ it suffices to treat the case $n=1$. Let
$\gamma\in\Gamma_L$; we need to show the existence of an index $\sigma$ such that $v(a'-a_\rho')>\gamma$ for all $\rho>\sigma$.
By [ADH, 9.2.6]   we have $f\in L^\times$ with $f\prec 1$ and $v(f') \geq \gamma$. 
Take $\sigma$ such that $v(a-a_\rho) > vf$ for all $\rho>\sigma$.
Then $v(a'-a_\rho') > v(f') \geq \gamma$ for $\rho>\sigma$. \end{proof}

\noindent
Let  $K^{\operatorname{c}}$ be the completion of the valued differential field $K$; then 
$K^{\operatorname{c}}$ is   asymptotic   by [ADH, 9.1.6].  Lemma~\ref{cdercon} and [ADH, 3.2.13 and 3.2.15] give:

\begin{cor}\label{klemb}
Let $(a_i)_{i\in I}$ be a family of elements of $L$ such that  $a_i$  is the limit in~$L$
of a c-sequence in $K$, for each $i\in I$. 
Then there is a unique embedding~$K\big\<(a_i)_{i\in I}\big\>\to K^{\operatorname{c}}$ of valued differential fields over $K$.
\end{cor}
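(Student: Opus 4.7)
The plan is to first identify $K\langle(a_i)_{i\in I}\rangle$ with the pure field extension $K\bigl(a_i^{(n)} : i\in I,\ n\in\N\bigr)$ of $K$ inside $L$. This is immediate from the definitions: the latter is a differential subfield of $L$ containing all $a_i$, while $K\langle(a_i)_{i\in I}\rangle$ is closed under $\der$ and hence contains every $a_i^{(n)}$. By Lemma~\ref{cdercon}, for each $i$ and each $n$ the element $a_i^{(n)}\in L$ is the limit in $L$ of a c-sequence in $K$ (take derivatives term-by-term of a c-sequence in $K$ approximating $a_i$). Applying [ADH, 3.2.13, 3.2.15] to the family of all $a_i^{(n)}$ then yields a unique valued field embedding
\[
\iota\colon K\langle(a_i)_{i\in I}\rangle \to K^{\operatorname{c}}
\]
over $K$, under which each $a_i^{(n)}$ is sent to the limit in $K^{\operatorname{c}}$ of the associated c-sequence.

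The remaining task is to check that $\iota$ commutes with the derivation. Recall that the derivation of $K^{\operatorname{c}}$ is continuous, being the unique continuous extension of that of $K$. Fix $i$ and $n$, and a c-sequence $(a_{i,\rho})$ in $K$ with $a_{i,\rho}\to a_i$ in $L$. By Lemma~\ref{cdercon} applied inside $L$, $(a_{i,\rho}^{(n)})$ is a c-sequence in $K$ with $a_{i,\rho}^{(n)}\to a_i^{(n)}$ in $L$, so by construction $\iota(a_i^{(n)})$ is the limit in $K^{\operatorname{c}}$ of this same sequence. Then Lemma~\ref{cdercon} applied inside $K^{\operatorname{c}}$ itself---valid because $K^{\operatorname{c}}$ is an immediate asymptotic extension of $K$, so $\Gamma$ is cofinal in $\Gamma_{K^{\operatorname{c}}}=\Gamma$---yields $a_{i,\rho}^{(n+1)}\to \iota(a_i^{(n)})'$ in $K^{\operatorname{c}}$. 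But the same sequence also converges to $\iota(a_i^{(n+1)})$ by the very definition of $\iota$, so uniqueness of limits in the Hausdorff valued field $K^{\operatorname{c}}$ forces $\iota(a_i^{(n+1)})=\iota(a_i^{(n)})'$. This identity on the generators propagates to all of $K\langle(a_i)_{i\in I}\rangle$, whence $\iota$ is a morphism of valued differential fields over $K$.

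Finally, uniqueness of $\iota$ as a valued differential embedding over $K$ is inherited from its uniqueness as a mere valued field embedding given by [ADH, 3.2.15], since any such differential embedding is in particular a valued field embedding over $K$ and must therefore agree with $\iota$. The only genuine content of the argument is the interaction between taking derivatives and taking c-limits, and this is absorbed entirely by Lemma~\ref{cdercon}; consequently I do not anticipate any serious obstacle beyond the bookkeeping around simultaneously approximating each $a_i^{(n)}$, which the lemma handles uniformly.
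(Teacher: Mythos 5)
Your proof is correct and takes essentially the same route the paper intends: identify the differential subfield as $K\bigl(a_i^{(n)}\bigr)$, apply Lemma~\ref{cdercon} to see that all $a_i^{(n)}$ are c-limits, invoke [ADH, 3.2.13, 3.2.15] for the unique valued field embedding, and then use Lemma~\ref{cdercon} once more (inside $K^{\operatorname{c}}$, with continuity of its derivation) to check compatibility with $\der$. You have simply written out the bookkeeping that the paper leaves implicit behind its one-line citation.
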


\noindent
Next suppose that $H$ is a real closed asymptotic field whose
valuation ring $\mathcal{O}$ is convex in $H$ with $\mathcal{O}\ne H$, the asymptotic extension  $\hat H$ of $H$
is immediate, and~$\imag$ is an element of an asymptotic
extension of~$\hat H$ with $\imag^2=-1$.  Then $\imag\notin \hat H$, and we identify~$H^{\operatorname{c}}$ with a valued subfield of $H[\imag]^{\operatorname{c}}$
as in Lemma~\ref{lem:Kc real closed}, so that
$H^{\operatorname{c}}[\imag]=H[\imag]^{\operatorname{c}}$  as in that lemma.  Using also  Lemma~\ref{cdercon}  we see that
$H^{\operatorname{c}}$ is actually a valued {\em differential\/} subfield of the asymptotic field $H[\imag]^{\operatorname{c}}$, and so $H^{\operatorname{c}}[\imag]=H[\imag]^{\operatorname{c}}$ also as {\em asymptotic\/} fields.   Thus by Corollary~\ref{klemb} applied to $K:=H$ and
$L:=\hat{H}$:

\begin{cor}\label{cor:embed into Kc}
Let $a\in \hat{H}[\imag]$ be the limit in $\hat{H}[\imag]$ of a c-sequence in~$H[\imag]$. Then~$\Re a$,~$\Im a$ are limits in $\hat{H}$ of c-sequences in $H$, hence
there is a unique embedding
$H[\imag]\big\<\!\Re a,\Im a\big\>\to H^{\operatorname{c}}[\imag]$ of valued differential fields over $H[\imag]$.
\end{cor}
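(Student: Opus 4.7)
The plan is to reduce the corollary to an application of Corollary~\ref{klemb} after decomposing the given c-sequence into real and imaginary parts. Write $a = b + c\imag$ with $b := \Re a$, $c := \Im a \in \hat{H}$, and fix a c-sequence $(a_\rho)$ in $H[\imag]$ with $a_\rho\to a$ in $\hat{H}[\imag]$; decompose $a_\rho = b_\rho + c_\rho\imag$ with $b_\rho, c_\rho \in H$.

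The first task is to show $(b_\rho)$, $(c_\rho)$ are c-sequences in $H$ with $b_\rho\to b$, $c_\rho\to c$ in $\hat{H}$. The key observation is that since $\mathcal{O}$ is convex in $H$ (so $\mathcal{O}_{H[\imag]}=\mathcal{O}+\mathcal{O}\imag$), for all $x,y\in H$ we have
\[
v(x+y\imag)\ =\ \min\!\big(v(x),v(y)\big),
\]
and the same identity holds in $\hat H[\imag]$ because $\hat{H}$ is immediate over $H$ (so $\mathcal{O}_{\hat{H}}$ is convex in $\hat{H}$ with the same value group $\Gamma$). Applying this to the differences $a_\rho - a_\sigma$ and $a - a_\rho$, and using that $\Gamma_{H[\imag]} = \Gamma = \Gamma_{\hat{H}[\imag]}$, one reads off that for every $\gamma\in\Gamma$, eventually $v(b_\rho - b_\sigma), v(c_\rho - c_\sigma) > \gamma$ and $v(b - b_\rho), v(c - c_\rho) > \gamma$. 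This gives the desired c-sequences and their limits in $\hat{H}$.

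For the existence and uniqueness of the embedding, apply Corollary~\ref{klemb} with $K := H[\imag]$ and $L := \hat{H}[\imag]$, the cofinality hypothesis being trivial since both value groups equal $\Gamma$. Since $b$ and $c$, viewed as elements of $\hat{H}[\imag]$, are limits of c-sequences in $H \subseteq H[\imag]$, that corollary produces a unique embedding $H[\imag]\langle b,c\rangle \to H[\imag]^{\operatorname{c}}$ of valued differential fields over $H[\imag]$, and $H[\imag]^{\operatorname{c}} = H^{\operatorname{c}}[\imag]$ by the identification from Lemma~\ref{lem:Kc real closed} and the remarks preceding Corollary~\ref{cor:embed into Kc}.

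There is no serious obstacle here: the argument is routine decomposition and application of the already-established Corollary~\ref{klemb}. The only point requiring slight care is justifying the formula $v(x+y\imag) = \min(v(x),v(y))$ in both $H[\imag]$ and $\hat{H}[\imag]$, which follows directly from convexity of the valuation rings together with the explicit description $\mathcal{O}_{H[\imag]} = \mathcal{O} + \mathcal{O}\imag$.
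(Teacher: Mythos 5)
Your proposal is correct and follows essentially the paper's own (largely implicit) argument: decompose the c-sequence into real and imaginary parts using $v(x+y\imag)=\min\big(v(x),v(y)\big)$ in $H[\imag]$ and $\hat H[\imag]$, and then invoke Corollary~\ref{klemb} together with the identification $H[\imag]^{\operatorname{c}}=H^{\operatorname{c}}[\imag]$ established just before the statement. The only cosmetic difference is that the paper cites Corollary~\ref{klemb} with $K:=H$, $L:=\hat H$ and passes to the complexification implicitly, whereas you apply it directly with $K:=H[\imag]$, $L:=\hat H[\imag]$; the substance is the same.
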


\section{Approximating Linear Differential Operators}\label{sec:approx linear diff ops}

\noindent
{\em In this section $K$ is a valued differential field with small derivation,}\/  $\Gamma:= v(K^\times)$.
For later use we prove here Corollaries~\ref{cor:approx LP+f} and~\ref{cor:approx LP+f, real, general} and consider {\em strong splitting}\/.\index{splitting!strong}\index{linear differential operator!strong splitting} 
Much of this section rests on the following basic estimate for linear differential operators which split over $K$:

\begin{lemma}\label{lem:approxB}
Let   $b_1,\dots,b_r\in K$ and $n$ be given.
Then there exists $\gamma_0\in\Gamma^{\geq}$ such that  for all $b_1^{\smallbullet},\dots,b_r^{\smallbullet}\in K$ and $\gamma\in\Gamma$ with $\gamma>\gamma_0$ and $v(b_i-b_i^{\smallbullet})\geq (n+r)\gamma$ for~$i=1,\dots,r$, we have $v(B-B^{\smallbullet})  \geq vB+n\gamma$, where
$$B\ :=\ (\der-b_1)\cdots(\der-b_r)\in K[\der],\quad B^{\smallbullet}\ :=\ (\der-b_1^{\smallbullet})\cdots(\der-b_r^{\smallbullet})\in K[\der].$$ 
\end{lemma}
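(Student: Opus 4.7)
My plan is to proceed by induction on $r\geq 1$. For the base case $r=1$, we have $B=\der-b_1$ and $B^{\smallbullet}=\der-b_1^{\smallbullet}$, so $B-B^{\smallbullet}=b_1^{\smallbullet}-b_1$ and $v(B)=\min(0,v(b_1))\leq 0$. Taking $\gamma_0:=0$ then suffices: for any $\gamma>0$ with $v(b_1-b_1^{\smallbullet})\geq (n+1)\gamma$, we get $v(B-B^{\smallbullet})\geq (n+1)\gamma\geq v(B)+n\gamma$ since $\gamma\geq 0\geq v(B)$.

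For the inductive step, set $C:=(\der-b_2)\cdots(\der-b_r)$ and $C^{\smallbullet}:=(\der-b_2^{\smallbullet})\cdots(\der-b_r^{\smallbullet})$, so that $B=(\der-b_1)C$, $B^{\smallbullet}=(\der-b_1^{\smallbullet})C^{\smallbullet}$, and
\[
B-B^{\smallbullet}\ =\ (\der-b_1)(C-C^{\smallbullet})\ +\ (b_1^{\smallbullet}-b_1)\,C^{\smallbullet}.
\]
Apply the inductive hypothesis to $b_2,\dots,b_r$ with $n+1$ in place of $n$: there is $\gamma_1\in\Gamma^{\geq}$ such that for every $\gamma>\gamma_1$ and $b_i^{\smallbullet}$ with $v(b_i-b_i^{\smallbullet})\geq (n+1+r-1)\gamma=(n+r)\gamma$ we have $v(C-C^{\smallbullet})\geq v(C)+(n+1)\gamma$; in particular $v(C-C^{\smallbullet})>v(C)$, so $v(C^{\smallbullet})=v(C)$. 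Now use Lemma~\ref{lem:6.1.3 consequ} to expand $v(B)=v(\der-b_1)+v(C)+o(v(C))$ and $v\big((\der-b_1)(C-C^{\smallbullet})\big)=v(\der-b_1)+v(C-C^{\smallbullet})+o(v(C-C^{\smallbullet}))$, while the second summand above is estimated directly as $v\big((b_1^{\smallbullet}-b_1)C^{\smallbullet}\big)=v(b_1^{\smallbullet}-b_1)+v(C)\geq (n+r)\gamma+v(C)$.

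Combining these three estimates, the desired inequality $v(B-B^{\smallbullet})\geq v(B)+n\gamma$ reduces (using $v(\der-b_1)\leq 0$) to two inequalities in $\Gamma$ of the shape ``$\gamma\geq\delta$'', where each $\delta$ is an explicit finite expression built from $v(\der-b_1)$ and $o$-terms of the form $o(v(C))$ and $o(v(C-C^{\smallbullet}))$ produced by Lemma~\ref{lem:6.1.3 consequ}. The main obstacle, and the only nontrivial point, is the bookkeeping of these error terms: since each such $o(v(C))$-term is, by definition, archimedean-negligible compared to $v(C)$, and $v(\der-b_1)$ is a fixed element of $\Gamma$, we can choose $\gamma_0\in\Gamma^{\geq}$ with $\gamma_0\geq\gamma_1$ large enough that $r\gamma_0$ dominates the finite collection of specific $o$-errors plus $|v(\der-b_1)|$ that appear; then any $\gamma>\gamma_0$ satisfies both reduced inequalities simultaneously, closing the induction.
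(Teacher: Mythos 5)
Your inductive setup (peeling off the first factor $\der-b_1$) is a reasonable variant of the paper's proof, which peels off the last factor instead, but the final bookkeeping paragraph contains a genuine gap. You lump the error terms together as ``a finite collection of specific $o$-errors'' and propose choosing $\gamma_0$ so that $r\gamma_0$ dominates them. That works for $\epsilon_1 := v(B)-v(\der-b_1)-v(C) = o\bigl(v(C)\bigr)$, which is one fixed element of $\Gamma$. But the error $\epsilon_2 := v\bigl((\der-b_1)(C-C^{\smallbullet})\bigr)-v(\der-b_1)-v(C-C^{\smallbullet}) = o\bigl(v(C-C^{\smallbullet})\bigr)$ is \emph{not} a specific quantity: it depends on $C^{\smallbullet}$, hence on $\gamma$ and the $b_i^{\smallbullet}$, and both $v(C-C^{\smallbullet})$ and $|\epsilon_2|$ can grow as $\gamma$ grows. ``Archimedean-negligible relative to $v(C-C^{\smallbullet})$'' does not by itself bound $\epsilon_2$ relative to $\gamma$ when $\bigl[v(C-C^{\smallbullet})\bigr]>[\gamma]$. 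So the claim that a single choice of $\gamma_0$ dominates these errors is not justified as written.

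The paper's right-to-left decomposition $B(\der-b_{r+1})-B^{\smallbullet}(\der-b^{\smallbullet}_{r+1}) = B^{\smallbullet}(b^{\smallbullet}_{r+1}-b_{r+1})+E(\der-b_{r+1})$ is designed to avoid exactly this: the piece $E(\der-b_{r+1})$ is handled via [ADH, 6.1.5] with the \emph{fixed} argument $\beta_{r+1}=v(b_{r+1})$, making the error $o(\beta_{r+1})$ a genuinely bounded quantity that one defeats by taking $\gamma_0\geq|\beta_{r+1}|$; the other piece, via [ADH, 6.1.4], produces an error $o(\gamma)$, which is automatically negligible relative to $\gamma$. Your route can probably be salvaged --- for instance, once $\gamma_0\geq 2|v(C)|$ you have $v(C-C^{\smallbullet})$ and $\Delta:=v(C-C^{\smallbullet})-v(C)\geq(n+1)\gamma$ in the same archimedean class, whence $\epsilon_2=o(\Delta)$ and $\Delta+\epsilon_2>\tfrac{m}{m+1}\Delta$ for every $m\geq 1$, from which the needed inequality follows with a bit more algebra --- but this is precisely the careful estimate that your paragraph waves away, and it is not at all ``only bookkeeping''. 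You also need to separate the degenerate cases ($C-C^{\smallbullet}=0$; $b_1=0$ or $b_1\asymp 1$), as the paper does.
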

\begin{proof}
By induction on $r\in\N$. The case $r=0$ is clear (any $\gamma_0\in\Gamma^{\geq}$ works).  Suppose the lemma holds for a certain $r$.
Let $b_1,\dots,b_{r+1}\in K$  and $n$ be given. Set~$\beta_i:=vb_i$ ($i=1,\dots,r+1$). Take~$\gamma_0$ as in the lemma applied to
$b_1,\dots,b_r$ and~$n+1$ in place of $n$, and
let $\gamma_1:=\gamma_0$ if $b_{r+1}=0$, $\gamma_1:=\max\big\{\gamma_0,\abs{\beta_{r+1}}\big\}$ otherwise. 
Let $b_1^{\smallbullet},\dots,b_{r+1}^{\smallbullet}\in K$ and $\gamma\in\Gamma$ with
$\gamma>\gamma_1$ and $v(b_i-b_i^{\smallbullet})\geq (n+r+1)\gamma$ for $i=1,\dots,r+1$.
Set 
$$B\ :=\ (\der-b_1)\cdots(\der-b_r),\qquad B^{\smallbullet}\ :=\ (\der-b_1^{\smallbullet})\cdots(\der-b_r^{\smallbullet}),\qquad E\ :=\ B-B^{\smallbullet}.$$
Then
$$B(\der-b_{r+1})\  =\ B^{\smallbullet}(\der-b^{\smallbullet}_{r+1})+ 
 B^{\smallbullet}(b^{\smallbullet}_{r+1}-b_{r+1})+E(\der-b_{r+1}).$$
Inductively we have  $vE \geq vB+(n+1)\gamma$. Suppose $E\ne 0$ and $0\neq b_{r+1}\nasymp 1$. Then by
[ADH, 6.1.5],  
\begin{align*}
v_E(\beta_{r+1})-v_B(\beta_{r+1})\ &=\ vE-vB+o(\beta_{r+1}) \\
												&\geq\ (n+1)\gamma + o(\beta_{r+1})\\
												&\geq\ n\gamma + |\beta_{r+1}|+o(\beta_{r+1})\ >\  n\gamma.
\end{align*}
Hence, using $E(\der-b_{r+1})=E\der - Eb_{r+1}$ and $v(E\der)=v(E)\ne v_E(\beta_{r+1})$,
\begin{align*}
v\big(E(\der-b_{r+1})\big)\ =\ \min\!\big\{vE,v_E(\beta_{r+1})\big\}\ &>\  \min\!\big\{ vB,v_B(\beta_{r+1})\big\}+n\gamma \\
																&=\ v\big(B(\der-b_{r+1})\big)+n\gamma,
\end{align*}
where for the last equality we use $vB\ne v_B(\beta_{r+1})$. Also,
$$v\big( B^{\smallbullet}(b^{\smallbullet}_{r+1}-b_{r+1}) \big) = v_{B^{\smallbullet}}\big(v(b^{\smallbullet}_{r+1}-b_{r+1})\big) \geq v_{B^{\smallbullet}}\big((n+r+1)\gamma\big)=v_B\big((n+r+1)\gamma\big)$$
where we use [ADH, 6.1.7] for the last equality. Moreover,    by [ADH, 6.1.4],
$$v_B\big((n+r+1)\gamma\big)-n\gamma\ \geq\ vB+(r+1)\gamma+o(\gamma)\ >\ vB\ 	\geq\   v\big(B(\der-b_{r+1})\big).$$
This yields the desired result for $E\ne 0$, $0\ne b_{r+1}\nasymp 1$. The cases $E\ne 0$, $b_{r+1}=0$ and $E=0$, $0\ne b_{r+1}\nasymp 1$ are simpler versions of the above, and so is the case~$E\ne 0$, $b_{r+1}\asymp 1$ using
[ADH, 5.6.1(i)]. The remaining cases, $E=0$, $b_{r+1}=0$ and $E=0$, $b_{r+1}\asymp 1$, are even simpler to handle. 
\end{proof}

\begin{cor}\label{cor:approx A}
Let   $a,b_1,\dots,b_r\in K$, $a\neq 0$.
Then there exists $\gamma_0\in\Gamma^{\geq}$ such that  for all $a^{\smallbullet},b_1^{\smallbullet},\dots,b_r^{\smallbullet}\in K$ and $\gamma\in\Gamma$ with $\gamma>\gamma_0$, $v(a-a^{\smallbullet})\geq va+\gamma$, and~$v(b_i-b_i^{\smallbullet})\geq (r+1)\gamma$ for $i=1,\dots,r$, we have
$v(A-A^{\smallbullet}) \geq vA+\gamma$, where
$$A\ :=\ a(\der-b_1)\cdots(\der-b_r)\in K[\der],\quad A^{\smallbullet}\ :=\ a^{\smallbullet}(\der-b_1^{\smallbullet})\cdots(\der-b_r^{\smallbullet})\in K[\der].$$ 
\end{cor}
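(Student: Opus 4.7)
The plan is to reduce the statement directly to Lemma~\ref{lem:approxB} by factoring out the leading coefficients $a$ and $a^{\smallbullet}$ and then using a short additive decomposition of $A - A^{\smallbullet}$. Write
$$B\ :=\ (\der-b_1)\cdots(\der-b_r), \qquad B^{\smallbullet}\ :=\ (\der-b_1^{\smallbullet})\cdots(\der-b_r^{\smallbullet}),$$
so that $A = aB$ and $A^{\smallbullet} = a^{\smallbullet} B^{\smallbullet}$, and hence
$$A - A^{\smallbullet}\ =\ a(B - B^{\smallbullet}) + (a - a^{\smallbullet})B^{\smallbullet}.$$
The goal is to show that, under the stated hypotheses, each of the two summands on the right has valuation at least $vA + \gamma = va + vB + \gamma$.

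For the first summand I would invoke Lemma~\ref{lem:approxB} with $n := 1$, which yields some $\gamma_0 \in \Gamma^{\geq}$ such that whenever $\gamma > \gamma_0$ and $v(b_i - b_i^{\smallbullet}) \geq (r+1)\gamma$ for $i=1,\dots,r$, we get $v(B - B^{\smallbullet}) \geq vB + \gamma$. Note that the exponent $(r+1)$ in the corollary matches $(n+r)$ with $n=1$, which is why this choice of $n$ is forced. Multiplying by $a$ gives $v(a(B - B^{\smallbullet})) \geq vA + \gamma$.

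For the second summand I would first observe that $v(B - B^{\smallbullet}) > vB$ forces $vB^{\smallbullet} = vB$ (since $B^{\smallbullet} = B - (B - B^{\smallbullet})$ and valuations are subtraction-stable under a strict inequality). Combined with the hypothesis $v(a - a^{\smallbullet}) \geq va + \gamma$, this yields
$$v\bigl((a-a^{\smallbullet})B^{\smallbullet}\bigr)\ =\ v(a-a^{\smallbullet}) + vB\ \geq\ va + \gamma + vB\ =\ vA + \gamma.$$
Taking the minimum of the two bounds gives $v(A - A^{\smallbullet}) \geq vA + \gamma$ with the same $\gamma_0$ as above.

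There is no real obstacle here; the only subtlety is choosing $n=1$ in Lemma~\ref{lem:approxB} (matching the $(r+1)\gamma$ in the corollary), and observing the invariance $vB^{\smallbullet} = vB$, which is what makes the second summand's bound come out to $vA + \gamma$ rather than something worse.
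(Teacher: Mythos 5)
Your proposal is correct and follows essentially the same route as the paper's proof: the same decomposition $A-A^{\smallbullet}=a(B-B^{\smallbullet})+(a-a^{\smallbullet})B^{\smallbullet}$, the same appeal to Lemma~\ref{lem:approxB} with $n=1$, and the same observation that $vB^{\smallbullet}=vB$ handles the second summand.
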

\begin{proof}
Take $\gamma_0$ as in the previous lemma applied to $b_1,\dots,b_r$ and $n=1$, and let~$B=(\der-b_1)\cdots(\der-b_r)$, $A=aB$. Let $a^{\smallbullet},b_1^{\smallbullet},\dots,b_r^{\smallbullet}\in K$ and  $\gamma\in\Gamma$ be such that~$\gamma>\gamma_0$, $v(a-a^{\smallbullet})\geq va+\gamma$, and $v(b_i-b_i^{\smallbullet})\geq (r+1)\gamma$ for $i=1,\dots,r$. Set~$B^{\smallbullet}:=(\der-b_1^{\smallbullet})\cdots(\der-b_r^{\smallbullet})$, $A^{\smallbullet}:=a^{\smallbullet} B^{\smallbullet}$.
Then 
$$E\ :=\ A-A^{\smallbullet}\ =\ a(B-B^{\smallbullet})+(a-a^{\smallbullet})B^{\smallbullet}.$$
Lemma~\ref{lem:approxB} gives $vB^{\smallbullet}=vB$, and so {\samepage
$$v\big(a(B-B^{\smallbullet})\big)\ \geq\ va+vB+\gamma\ =\ vA+\gamma, \quad
v\big( (a-a^{\smallbullet})B^{\smallbullet} \big)\ =\ v(a-a^{\smallbullet})+vB\ \geq\ vA+\gamma,$$
so $vE\geq vA+\gamma$.}
\end{proof}

\noindent
{\em In the rest of this subsection we assume $P\in K\{Y\}\setminus K$,
set $r:=\order P$, and let~$\i$,~$\j$ range over $\N^{1+r}$}. 

\begin{lemma} For $\delta:= v\big(P-P(0)\big)$ and all $h\in \smallo$ we have $v\big(P_{+h} - P\big) \geq \delta+\textstyle\frac{1}{2}vh$.
\end{lemma}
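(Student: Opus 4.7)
The plan is to expand $P_{+h}(Y) - P(Y)$ via Taylor's formula around $Y$ and bound each resulting term's gaussian valuation separately. Writing
\[
P_{+h}(Y) - P(Y)\ =\ \sum_{|\j|\ge 1} P_{(\j)}(Y)\, h^{\j},
\]
with $\j = (j_0,\dots,j_r)\in \N^{1+r}$, $h^{\j} := h^{j_0}(h')^{j_1}\cdots(h^{(r)})^{j_r}$, and $P_{(\j)} = P^{(\j)}/\j!$ as in [ADH, 4.2], it suffices to bound $v(P_{(\j)}\,h^{\j}) = v(P_{(\j)}) + v(h^{\j})$ from below by $\delta + \tfrac12 v(h)$ for each $\j$ with $|\j|\ge 1$.

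For the coefficient factor, differentiating the expansion $P=\sum_{\i}P_{\i}Y^{\i}$ gives $P_{(\j)} = \sum_{\i\ge\j}\binom{\i}{\j}P_{\i}\,Y^{\i-\j}$, so its coefficients (as a differential polynomial in $Y$) are nonzero $\Z$-multiples of the $P_{\i}$ with $\i\ge\j$; since $\j\ne 0$ forces $\i\ne 0$ and $K$ has characteristic zero, this yields $v(P_{(\j)}) \ge \min\{v(P_{\i}):\i\ne 0\} = v(P - P(0)) = \delta$.

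For the scalar factor $h^{\j}$, I would use that in the asymptotic-field setting implicit in this part, small derivation gives $v(h^{(k)}) \ge v(h) + o(v(h))$ for $h\in\smallo^{\ne}$ and $k\ge 0$: this follows by induction on $k$ from $v(h^{(k+1)}) = v(h^{(k)}) + \psi(v(h^{(k)}))$ together with $\psi(\gamma) = o(\gamma)$ for $\gamma>0$ in $H$-asymptotic fields (cf.\ [ADH, 9.2.10]). Unwinding the $o$-notation with $n=2$ yields $v(h^{(k)}) \ge \tfrac12 v(h)$, whence $v(h^{\j}) = \sum_k j_k\, v(h^{(k)}) \ge |\j|\cdot \tfrac12 v(h) \ge \tfrac12 v(h)$ for $|\j|\ge 1$.

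Combining these two estimates, each summand satisfies $v(P_{(\j)}\,h^{\j}) \ge \delta + \tfrac12 v(h)$, hence so does the sum, proving the lemma. The main obstacle is the scalar estimate on $v(h^{(k)})$: with only ``small derivation'' there need not be any quantitative link between $v(h^{(k)})$ and $v(h)$ beyond both being positive, so the proof really does draw on the $H$-asymptotic structure (via $\psi(\gamma)=o(\gamma)$) carried throughout the part.
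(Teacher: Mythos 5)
Your decomposition $P_{+h}-P=\sum_{|\j|\ge 1}P_{(\j)}h^{\j}$ and the coefficient estimate $v(P_{(\j)})\ge\delta$ are fine, and in fact this is essentially the paper's route: the paper works coefficient-wise, writing $(P_{+h})_{\i}=P_{\i}+Q(h)$ with $Q:=\sum_{|\j|\ge 1}\binom{\i+\j}{\i}P_{\i+\j}Y^{\j}$, so that $v(Q)\ge\delta$, and then gets $v(Q(h))\ge v(Q_{\times h})\ge v(Q)+vh+o(vh)\ge\delta+\frac12 vh$ by citing [ADH, 6.1.4]. The difference, and the gap in your write-up, is in how you justify the scalar estimate $v(h^{(k)})\ge\frac12 vh$. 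You derive it from the identity $v(h^{(k+1)})=v(h^{(k)})+\psi\big(v(h^{(k)})\big)$ together with $\psi(\gamma)=o(\gamma)$, but both of these presuppose an (\,$H$-)asymptotic field, whereas the standing hypothesis of this section — and hence of the lemma — is only that $K$ is a valued differential field with small derivation; there is no asymptotic couple $\psi$ available, so this step does not prove the lemma as stated. (Even granting asymptotic structure, the induction needs a little care, since $\psi$ is evaluated at $v(h^{(k)})$ rather than at $v(h)$.)

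Moreover, your closing diagnosis is the wrong way around: it is not true that ``with only small derivation there need not be any quantitative link between $v(h^{(k)})$ and $v(h)$''. The estimate of [ADH, 6.1.4] is proved there for valued differential fields with small derivation, with no asymptotic hypotheses, and applying it to $Q=Y^{(k)}$ (or to $Q=Y'$ and iterating) gives exactly $v(h^{(k)})\ge vh+o(vh)\ge\frac12 vh$ for $0\neq h\in\smallo$. So the lemma does not draw on the $H$-asymptotic structure at all; the difficulty you correctly identify is resolved inside the small-derivation framework of [ADH, Chapter 6]. If you replace your $\psi$-based argument by an appeal to (or a reproof of) that estimate, the rest of your argument goes through and coincides with the paper's proof.
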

\begin{proof} Note that $\delta\in\Gamma$ and $v(P_{\j})\ge \delta$ for all $\j$ with $\abs{\j}\geq 1$.
Let $h\in \smallo^{\ne}$ and~$\i$ be given; we claim that  $v\big( (P_{+h})_{\i} - P_{\i}\big) \geq \delta+\textstyle\frac{1}{2}vh$.
By [ADH, (4.3.1)] we have
$$(P_{+h})_{\i}\ 	=\  P_{\i}+   Q(h)\quad\text{where $Q(Y):=
\sum_{\abs{\j}\geq 1}  {\i+\j \choose \i} P_{\i+\j}\,Y^{\j}\in K\{Y\}$.}$$
From $Q(0)=0$ and [ADH, 6.1.4] we obtain
$$v(Q_{\times h})\ \geq\  v(Q)  + vh+o(vh)\ \geq\ \delta+\textstyle\frac{1}{2}vh.$$
Together with $v\big(Q(h)\big)\ge v(Q_{\times h})$ this yields the lemma.
\end{proof}

\begin{cor}\label{cor:approx P+f}
Let $f\in K$. Then there exists $\delta\in\Gamma$ such that for all $f^{\smallbullet}\in K$ with $f-f^{\smallbullet}\prec 1$  we have
$v\big(P_{+f^{\smallbullet}} - P_{+f}\big) \geq \delta+\textstyle\frac{1}{2}v(f^{\smallbullet}-f)$.
\end{cor}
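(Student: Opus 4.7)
The plan is to derive this corollary as a direct application of the preceding lemma, after translating the differential polynomial by $f$. Specifically, note that if $f-f^{\smallbullet}\prec 1$ then $h := f^{\smallbullet}-f\in\smallo$, and by the additivity property of additive conjugation,
\[
P_{+f^{\smallbullet}}\ =\ (P_{+f})_{+h}.
\]
Thus the statement we want reduces to an estimate on $v\bigl((P_{+f})_{+h} - P_{+f}\bigr)$, which is exactly what the preceding lemma supplies once we apply it to $P_{+f}$ in place of $P$.

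Concretely, I would set $Q := P_{+f}\in K\{Y\}$ (this is still of order $r$, and lies in $K\{Y\}\setminus K$ since additive conjugation is a differential-ring automorphism of $K\{Y\}$ over $K$), and let $\delta := v\bigl(Q - Q(0)\bigr) = v\bigl(P_{+f} - P(f)\bigr)\in\Gamma$. Applying the preceding lemma to $Q$ and $h = f^{\smallbullet}-f\in\smallo$, I obtain
\[
v\bigl(Q_{+h} - Q\bigr)\ \geq\ \delta + \tfrac{1}{2}vh,
\]
which upon unwinding is precisely
\[
v\bigl(P_{+f^{\smallbullet}} - P_{+f}\bigr)\ \geq\ \delta + \tfrac{1}{2}v(f^{\smallbullet}-f),
\]
as required. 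The element $\delta$ depends only on $f$ (and $P$), not on $f^{\smallbullet}$, so this is the uniform constant asserted in the statement.

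There is essentially no obstacle: the only point to verify is that the preceding lemma applies, i.e.\ that $Q=P_{+f}\in K\{Y\}\setminus K$ (clear, since additive conjugation is an automorphism fixing $K$) and that the hypothesis $h\in\smallo$ is equivalent to the stated condition $f-f^{\smallbullet}\prec 1$ (immediate). No additional computation or case analysis is needed.
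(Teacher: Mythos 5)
Your proof is correct and coincides with the paper's one-line argument: apply the preceding lemma to $P_{+f}$ in place of $P$, with $h = f^{\smallbullet}-f$, and use $(P_{+f})_{+h} = P_{+f^{\smallbullet}}$. Nothing further to add.
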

\begin{proof}
Take  $\delta$   as
in the preceding lemma with  $P_{+f}$  in place of $P$ and $h=f^{\smallbullet}-f$.
\end{proof}

\begin{cor}\label{corcorappr}
Let $a,b_1,\dots,b_r,f\in K$ be such that
$$A\ :=\ L_{P_{+f}}\ =\ a(\der-b_1)\cdots(\der-b_r),\qquad a\ \neq\ 0.$$
Then there exists $\gamma_1\in\Gamma^{\geq}$ such that for all $a^{\smallbullet}, b_1^{\smallbullet},\dots,b_r^{\smallbullet},f^{\smallbullet}\in K$ and $\gamma\in\Gamma$, if 
$$\gamma>\gamma_1,\ v(a-a^{\smallbullet})\geq va +\gamma,\ v(b_i-b_i^{\smallbullet})\geq (r+1)\gamma\ (i=1,\dots,r), \text{ and }
v(f-f^{\smallbullet})\geq 4\gamma,$$ 
then
\begin{enumerate}
\item[$\mathrm{(i)}$] $v\big( P_{+f^{\smallbullet}}  - P_{+f}\big) \geq  vA+\gamma$; and
\item[$\mathrm{(ii)}$] $L_{P_{+f^{\smallbullet}}} = a^{\smallbullet}(\der-b_1^{\smallbullet})\cdots(\der-b_r^{\smallbullet}) + E$  where $vE\geq vA+\gamma$.
\end{enumerate}
\end{cor}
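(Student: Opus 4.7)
\medskip
\noindent
\textbf{Proof plan.} The corollary combines two approximation estimates: Corollary~\ref{cor:approx P+f} (an estimate for how $P_{+f}$ varies with $f$) and Corollary~\ref{cor:approx A} (an estimate for how a split operator varies with its roots and leading coefficient). My plan is to use Corollary~\ref{cor:approx P+f} to get (i) directly, then combine (i) with Corollary~\ref{cor:approx A} to get (ii), using the elementary fact that $v(L_Q)\geq v(Q)$ for any $Q\in K\{Y\}$ and that $Q\mapsto L_Q$ is $K$-linear.

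First, apply Corollary~\ref{cor:approx P+f} to $P$ and $f$ to produce some $\delta\in\Gamma$ such that $v(P_{+f^{\smallbullet}}-P_{+f})\geq \delta+\tfrac{1}{2}v(f^{\smallbullet}-f)$ whenever $f-f^{\smallbullet}\prec 1$. Next, apply Corollary~\ref{cor:approx A} to $a,b_1,\dots,b_r$ to produce $\gamma_0\in\Gamma^{\geq}$ such that, for the indicated approximations, $v(A-A^{\smallbullet})\geq vA+\gamma$, where $A^{\smallbullet}:=a^{\smallbullet}(\der-b_1^{\smallbullet})\cdots(\der-b_r^{\smallbullet})$. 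Now choose $\gamma_1\in\Gamma^{\geq}$ with $\gamma_1\geq\gamma_0$ and $\gamma_1+\delta\geq vA$; for instance take $\gamma_1$ to be the maximum of $\gamma_0$ and any nonnegative element of $\Gamma$ that exceeds $vA-\delta$.

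Then for $a^{\smallbullet},b_1^{\smallbullet},\dots,b_r^{\smallbullet},f^{\smallbullet}\in K$ and $\gamma\in\Gamma$ with $\gamma>\gamma_1$ satisfying the four hypotheses, note that $v(f-f^{\smallbullet})\geq 4\gamma>0$, so in particular $f-f^{\smallbullet}\prec 1$ and the estimate from Corollary~\ref{cor:approx P+f} yields
\[
v(P_{+f^{\smallbullet}}-P_{+f})\ \geq\ \delta+\tfrac{1}{2}\cdot 4\gamma\ =\ \delta+2\gamma\ \geq\ vA+\gamma,
\]
which is (i). For (ii), $K$-linearity of $Q\mapsto L_Q$ gives $L_{P_{+f^{\smallbullet}}}=L_{P_{+f}}+L_{P_{+f^{\smallbullet}}-P_{+f}}=A+L_{P_{+f^{\smallbullet}}-P_{+f}}$; since the coefficients of $L_Q$ are coefficients of $Q$ we have $v(L_Q)\geq v(Q)$ for every $Q\in K\{Y\}$, so by (i),
\[
v\bigl(L_{P_{+f^{\smallbullet}}}-A\bigr)\ =\ v\bigl(L_{P_{+f^{\smallbullet}}-P_{+f}}\bigr)\ \geq\ v(P_{+f^{\smallbullet}}-P_{+f})\ \geq\ vA+\gamma.
\]
Since $\gamma>\gamma_0$, Corollary~\ref{cor:approx A} gives $v(A-A^{\smallbullet})\geq vA+\gamma$, so
\[
E\ :=\ L_{P_{+f^{\smallbullet}}}-A^{\smallbullet}\ =\ (L_{P_{+f^{\smallbullet}}}-A)+(A-A^{\smallbullet})
\]
satisfies $vE\geq vA+\gamma$, establishing (ii).

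There is no real obstacle here; the argument is essentially a matter of bookkeeping, combining the two already-proved estimates. The only mild subtlety is the need to absorb the constant $\delta$ from Corollary~\ref{cor:approx P+f} and the threshold $\gamma_0$ from Corollary~\ref{cor:approx A} into a single $\gamma_1$, which is achieved simply by taking the appropriate maximum in $\Gamma^{\geq}$, using that $\Gamma$ has no smallest positive element only implicitly (we just need an upper bound in $\Gamma^{\geq}$, which exists because $\Gamma$ contains $\max\{\gamma_0,vA-\delta,0\}$ up to choosing any sufficiently large element).
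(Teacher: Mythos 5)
Your proof is correct and follows essentially the same route as the paper: take $\delta$ from Corollary~\ref{cor:approx P+f} and $\gamma_0$ from Corollary~\ref{cor:approx A}, set $\gamma_1=\max\{\gamma_0,vA-\delta\}$, and conclude via the linearity of $Q\mapsto L_Q$ and $v(L_Q)\geq v(Q)$ — the paper merely leaves these verifications implicit.
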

\begin{proof}
Take $\gamma_0$ as in Corollary~\ref{cor:approx A} applied to $a,b_1,\dots,b_r$, and
take  $\delta$   as
in  Corollary~\ref{cor:approx P+f}.
Then $\gamma_1:=\max\{\gamma_0,vA-\delta\}$  has the required property.
\end{proof}

\noindent
In the next result $L$ is a valued differential field extension of $K$ with small derivation such that~$\Gamma$ is cofinal in $\Gamma_L$.
Then the natural inclusion~$K\to L$ extends uniquely to an embedding~$K^{\operatorname{c}}\to L^{\operatorname{c}}$ 
of valued fields by [ADH, 3.2.20]. It is easy to check that this is even an embedding of valued {\em differential\/} fields; we identify~$K^{\operatorname{c}}$ with a valued differential subfield of $L^{\operatorname{c}}$ via this embedding.   

\begin{cor}\label{cor:approx LP+f}
Let $a,b_1,\dots,b_r\in L^{\operatorname{c}}$ and $f\in K^{\operatorname{c}}$ be such that in $L^{\operatorname{c}}[\der]$, 
$$A\ :=\ L_{P_{+f}}\ =\ a(\der-b_1)\cdots(\der-b_r),\qquad a,f\neq 0,\quad \fv:=\fv(A)\prec 1,$$
and let $w\in\N$. Then there  are $a^{\smallbullet},b_1^{\smallbullet},\dots,b_r^{\smallbullet}\in L$ and $f^{\smallbullet}\in K$ such that 
$$a^{\smallbullet}\ \sim\ a, \qquad f^{\smallbullet}\ \sim\ f, \qquad A^{\smallbullet}:=L_{P_{+f^{\smallbullet}}}\ \sim\ A,\qquad \order A^{\smallbullet}\ =\ r, \qquad
 \fv(A^{\smallbullet})\ \sim\ \fv,$$
and such that for $\Delta:=\big\{\alpha\in \Gamma_L:\, \alpha=o\big(v(\fv)\big)\big\}$ we have in $L[\der]$,
$$A^{\smallbullet}\ =\ a^{\smallbullet}(\der-b_1^{\smallbullet})\cdots(\der-b_r^{\smallbullet}) + E,\qquad  E\prec_{\Delta} \fv^{w+1} A.$$
\end{cor}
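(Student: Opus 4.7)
The plan is to deduce this from Corollary~\ref{corcorappr} applied with $L^{\operatorname{c}}$ in place of $K$, viewing $P$ as an element of $L^{\operatorname{c}}\{Y\}$ and $A$ as an element of $L^{\operatorname{c}}[\der]$. The identification $K^{\operatorname{c}}\subseteq L^{\operatorname{c}}$ from the paragraph preceding the statement lets us regard $f,f^\smallbullet$ below as elements of $L^{\operatorname{c}}$, so this application is legitimate and produces some $\gamma_1\in\Gamma_{L^{\operatorname{c}}}^{\geq}=\Gamma_L^{\geq}$ (the equality because $L^{\operatorname{c}}$ is immediate over $L$).

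Next I will pick $\gamma\in\Gamma_L$ large. Precisely, I want $\gamma>\gamma_1$, $\gamma>vf$ (so that the approximation below will satisfy $f^\smallbullet\sim f$), and $\gamma-(w+1)v(\fv)>\Delta$. The last requirement is achievable: since $\fv\prec 1$ we have $v(\fv)>0$ and $v(\fv)>\Delta$ by the very definition of $\Delta$, so e.g.\ $\gamma=2(w+1)v(\fv)$ gives $\gamma-(w+1)v(\fv)=(w+1)v(\fv)>\Delta$, and we can further enlarge $\gamma$. I then use density of $L$ in $L^{\operatorname{c}}$ to select $a^\smallbullet,b_1^\smallbullet,\dots,b_r^\smallbullet\in L$ with $v(a-a^\smallbullet)\geq va+\gamma$ and $v(b_i-b_i^\smallbullet)\geq(r+1)\gamma$. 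For $f^\smallbullet$, the cofinality of $\Gamma$ in $\Gamma_L$ supplies some $\delta\in\Gamma$ with $\delta\geq 4\gamma$; then density of $K$ in $K^{\operatorname{c}}$ produces $f^\smallbullet\in K$ with $v(f-f^\smallbullet)\geq\delta\geq 4\gamma$. Corollary~\ref{corcorappr} now yields $v(P_{+f^\smallbullet}-P_{+f})\geq vA+\gamma$ and, setting $B^\smallbullet:=(\der-b_1^\smallbullet)\cdots(\der-b_r^\smallbullet)$ and $A^\smallbullet:=L_{P_{+f^\smallbullet}}$,
\[
A^\smallbullet\ =\ a^\smallbullet B^\smallbullet + E, \qquad vE\ \geq\ vA+\gamma.
\]
Note $A^\smallbullet\in L[\der]$ since $f^\smallbullet\in K\subseteq L$ and $P\in K\{Y\}$.

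It remains to verify the stated asymptotics. From $vE\geq vA+\gamma>vA+(w+1)v(\fv)+\Delta$ we get $E\prec_\Delta\fv^{w+1}A$. Corollary~\ref{cor:approx A} gives $v(A-a^\smallbullet B^\smallbullet)\geq vA+\gamma>vA$, so $a^\smallbullet B^\smallbullet\sim A$, whence $A^\smallbullet\sim A$. The choices immediately give $a^\smallbullet\sim a$ and $f^\smallbullet\sim f$. For the remaining claims, write $A=\sum a_i\der^i$, $a^\smallbullet B^\smallbullet=\sum c_i\der^i$, $E=\sum e_i\der^i$, and set $m:=\dwt A$; then $v(c_i-a_i)\geq vA+\gamma$ and $v(e_i)\geq vA+\gamma$ for all $i$. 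Using $va=vA+v(\fv)$ together with $\gamma>v(\fv)$, the leading coefficient $c_r+e_r=a^\smallbullet+e_r$ satisfies $v(e_r)\geq vA+\gamma>va$, hence $c_r+e_r\sim a^\smallbullet\neq 0$, so $\order A^\smallbullet=r$. For $i>m$ with $a_i\prec a_m$ we have $v(a_i)>vA$, so $v(c_i+e_i)\geq\min(v(a_i),vA+\gamma)>vA$, while $c_m+e_m\sim a_m$ gives $v(c_m+e_m)=vA$; thus $\dwt A^\smallbullet=m$. Finally, $\fv(A^\smallbullet)=(c_r+e_r)/(c_m+e_m)\sim a_r/a_m=\fv$ since the relative errors $(c_r+e_r-a_r)/a_r$ and $(c_m+e_m-a_m)/a_m$ have valuation $\geq\gamma-v(\fv)>0$ and $\geq\gamma>0$ respectively. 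The one subtle point—really just bookkeeping—is matching the four layers $K\subseteq L\subseteq L^{\operatorname{c}}$ and $K\subseteq K^{\operatorname{c}}\subseteq L^{\operatorname{c}}$ so that a single $\gamma$ governs all approximations; this is where the cofinality hypothesis on $\Gamma$ in $\Gamma_L$ is needed, specifically to secure the approximation $f^\smallbullet\in K$ to the prescribed accuracy $4\gamma\in\Gamma_L$.
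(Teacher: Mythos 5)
Your proof is correct and takes essentially the same route as the paper: fix $\gamma_1$ from Corollary~\ref{corcorappr} applied with $L^{\operatorname{c}}$ in place of $K$, pick $\gamma$ large enough (in particular $\gamma-(w+1)v(\fv)>\Delta$, which immediately gives $E\prec_\Delta\fv^{w+1}A$ from $vE\geq vA+\gamma$), approximate $a,b_i$ in $L$ and $f$ in $K$ using density and the cofinality of $\Gamma$ in $\Gamma_L$, and then check the asymptotics on $A^\smallbullet$. One small bookkeeping remark: when you invoke Corollary~\ref{cor:approx A} to get $v(A-a^\smallbullet B^\smallbullet)\geq vA+\gamma$, this has its own threshold $\gamma_0$, and $\gamma>\gamma_1$ does not a priori guarantee $\gamma>\gamma_0$ if Corollary~\ref{corcorappr} is used as a black box (although its proof does set $\gamma_1\geq\gamma_0$). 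You can either add $\gamma>\gamma_0$ to your list of requirements on $\gamma$, or — more economically, and this is essentially what the paper does — skip Corollary~\ref{cor:approx A} and read the coefficient estimates directly off part (i) of Corollary~\ref{corcorappr}: $v(P_{+f^\smallbullet}-P_{+f})\geq vA+\gamma$ already gives $v(a_n^\smallbullet-a_n)\geq vA+\gamma$ for the coefficients of $A^\smallbullet$ and $A$, from which $A^\smallbullet\sim A$, $\order A^\smallbullet=r$, $\dwt A^\smallbullet=\dwt A$, and $\fv(A^\smallbullet)\sim\fv$ follow exactly as in your coefficient-by-coefficient analysis, using only $\gamma>v(\fv)$.
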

\begin{proof} 
Let $\gamma_1\in \Gamma_L^{\ge}$ be as in Corollary~\ref{corcorappr} applied to $L^{\operatorname{c}}$ in place of $K$, and take~$\gamma_2\in\Gamma$ such that $\gamma_2\geq \max\{\gamma_1,\frac{1}{4}vf\}+vA$ and
$\gamma_2\geq v\big((P_{+f})_{\i} \big)$ for all $\i$ with~$(P_{+f})_{\i}\neq 0$.
Let $\gamma\in\Gamma$ and $\gamma>\gamma_2$. Then 
$\gamma-vA>\gamma_1$. By the
density of~$K$,~$L$ in~$K^{\operatorname{c}}$,~$L^{\operatorname{c}}$, respectively, we can take
$a^{\smallbullet},b_1^{\smallbullet},\dots,b_r^{\smallbullet}\in L$ and $f^{\smallbullet}\in K$ such that
$$v(a-a^{\smallbullet})\ \ge\ va+(\gamma-vA),\qquad v(b_i-b_i^{\smallbullet})\geq (r+1)(\gamma-vA)\ \text{ for $i=1,\dots,r$,}$$ and 
$v(f-f^{\smallbullet})\geq 4(\gamma-vA) > vf$. Then $a^{\smallbullet}\sim a$, $f^{\smallbullet}\sim f$, and by Corollary~\ref{corcorappr},
$$v\big( P_{+f^{\smallbullet}}  - P_{+f}\big)\ \geq\ \gamma, \quad
A^{\smallbullet}:= L_{P_{+f^{\smallbullet}}}\ =\ a^{\smallbullet}(\der-b_1^{\smallbullet})\cdots(\der-b_r^{\smallbullet}) + E, \quad vE\geq \gamma.$$
Hence 
 $(P_{+f^{\smallbullet}})_{\i} \sim (P_{+f})_{\i}$ if $(P_{+f})_{\i}\neq 0$, and
 $v\big((P_{+f^{\smallbullet}})_{\i}\big)> \gamma_2\ge vA$ if $(P_{+f})_{\i}=0$, so~${A^{\smallbullet}\sim A}$, $\order A^{\smallbullet}=r$, and $\fv(A^{\smallbullet})\sim \fv$.
Choosing $\gamma$ so that also $\gamma > v(\fv^{w+1} A)+\Delta$  we achieve in addition that $E\prec_{\Delta} \fv^{w+1} A$. 
\end{proof}

\subsection*{Keeping it real} 
{\em In this subsection $H$ is a real closed $H$-asymptotic field with small derivation whose valuation ring is convex, with 
$\Gamma:= v(H^\times)\ne \{0\}$. In addition, $K$ is the asymptotic extension $H[\imag]$ of $H$ with $\imag^2=-1$.}\/ Then $H^{\operatorname{c}}$ is real closed and~${H^{\operatorname{c}}[\imag]=K^{\operatorname{c}}}$ as a valued field extension of $H$ according to Lemma~\ref{lem:Kc real closed}, and as an asymptotic field extension of $H$ by the discussion after Corollary~\ref{klemb}.
Using the real splittings from Definition~\ref{def:real splitting} we show here that we can ``preserve the reality of $A$'' in Corollary~\ref{cor:approx LP+f}.

\begin{lemma}\label{lem:approx real} Let $A\in H^{\operatorname{c}}[\der]$ be of order $r\ge 1$ and let $(g_1,\dots, g_r)\in H^{\operatorname{c}}[\imag]^r$
be a real splitting of $A$ over $H^{\operatorname{c}}[\imag]$. Then for every
$\gamma\in \Gamma$ there are $g_1^{\smallbullet},\dots,g_r^{\smallbullet}$
in~$H[\imag]$
such that $v(g_i-g_i^{\smallbullet}) > \gamma$ for $i=1,\dots,r$, 
$$A^{\smallbullet}\ :=\ (\der-g_1^{\smallbullet})\cdots(\der-g_r^{\smallbullet})\in H[\der],$$ 
and~$(g_1^{\smallbullet},\dots, g_r^{\smallbullet})$ is a real splitting of $A^{\smallbullet}$ over $H[\imag]$. 
\end{lemma}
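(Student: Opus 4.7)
The plan is to reduce to approximating individual building blocks of the real splitting, then concatenate. By Definition~\ref{def:real splitting}, the tuple $(g_1,\dots,g_r)$ arises by concatenating splittings of factors $A_1,\dots,A_m \in H^{\operatorname{c}}[\der]$, each monic of order $1$ or $2$ and splitting over $H^{\operatorname{c}}[\imag]$, with $A = A_1 \cdots A_m$. If for each such $A_i$ we can produce some $A_i^{\smallbullet} \in H[\der]$ monic of the same order, splitting over $H[\imag]$ via a real splitting whose roots are $\gamma$-close (for a prescribed $\gamma$) to those of $A_i$, then concatenating yields a real splitting of $A^{\smallbullet} := A_1^{\smallbullet} \cdots A_m^{\smallbullet} \in H[\der]$ of the required form. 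So it is enough to treat each block separately.

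For order-$1$ factors $A_i = \der - f$ with $f \in H^{\operatorname{c}}$, density of $H$ in $H^{\operatorname{c}}$ (Lemma~\ref{lem:Kc real closed}) gives $f^{\smallbullet} \in H$ with $v(f - f^{\smallbullet})$ arbitrarily large. For an order-$2$ factor $A_i = (\der - f)(\der - g)$ splitting over $H^{\operatorname{c}}[\imag]$, the discussion in the subsection on real splittings yields two cases: either $f, g \in H^{\operatorname{c}}$ (handled as in the order-$1$ case), or $g = a + b\imag$ and $f = a - b\imag + b^\dagger$ with $a, b \in H^{\operatorname{c}}$ and $b \neq 0$. In the latter case, density produces $a^{\smallbullet}, b^{\smallbullet} \in H$ with $b^{\smallbullet} \sim b$ (so in particular $b^{\smallbullet} \neq 0$) and $v(a - a^{\smallbullet})$, $v(b - b^{\smallbullet})$ as large as desired; set $g^{\smallbullet} := a^{\smallbullet} + b^{\smallbullet}\imag$ and $f^{\smallbullet} := a^{\smallbullet} - b^{\smallbullet}\imag + (b^{\smallbullet})^\dagger$, so that $(\der - f^{\smallbullet})(\der - g^{\smallbullet}) \in H[\der]$ by the discussion preceding Lemma~\ref{hkspl}.

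The one nontrivial point is controlling $v(f - f^{\smallbullet})$, which reduces to bounding $v\big(b^\dagger - (b^{\smallbullet})^\dagger\big)$. Writing $\epsilon := b^{\smallbullet} - b$ and using $b^{\smallbullet} \sim b$, we compute
$$(b^{\smallbullet})^\dagger - b^\dagger\ =\ \frac{b\,(b^{\smallbullet})' - b^{\smallbullet}\,b'}{b\,b^{\smallbullet}}\ =\ \frac{b\,\epsilon' - \epsilon\,b'}{b\,b^{\smallbullet}},$$
with $b\,b^{\smallbullet} \asymp b^2$. Since the derivation on $H^{\operatorname{c}}$ is small, it is continuous in the valuation topology; more precisely, Lemma~\ref{cdercon} applied to a c-sequence in $H$ converging to $b$ shows that $v(\epsilon')$ can be made arbitrarily large by taking $v(\epsilon)$ large. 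So choosing $v(\epsilon) = v(b - b^{\smallbullet})$ sufficiently large forces $v\big((b^{\smallbullet})^\dagger - b^\dagger\big) > \gamma$, hence $v(f - f^{\smallbullet}) > \gamma$ and $v(g - g^{\smallbullet}) > \gamma$, completing the construction block by block. The main obstacle is this last step; everything else is routine approximation using density and the structure of real splittings.
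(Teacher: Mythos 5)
Your proposal is correct and follows the same skeleton as the paper's own proof: reduce to factors of order one or two via the definition of a real splitting, approximate each factor using density of $H$ in $H^{\operatorname{c}}$, and observe that the only nontrivial step is controlling $v\big(b^\dagger-(b^{\smallbullet})^\dagger\big)$ in the genuinely complex order-two case. The one place you deviate is in how you settle that step. The paper argues directly with the asymptotic couple of $H^{\operatorname{c}}$: it picks a threshold $\gamma^{\smallbullet}$ so that $\alpha'>\gamma$ for every nonzero $\alpha>\gamma^{\smallbullet}-vb$ (which uses that $\psi$ is slowly varying, hence $(\Gamma^{>})'$ is cofinal in $\Gamma$), writes $b=b^{\smallbullet}(1+\varepsilon)$, and reads off $v\big((b/b^{\smallbullet})^\dagger\big)=v(\varepsilon')>\gamma$. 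You instead compute $(b^{\smallbullet})^\dagger-b^\dagger$ as a quotient and invoke Lemma~\ref{lem:Kc real closed}'s companion Lemma~\ref{cdercon}. That route is also sound, and ultimately rests on the same cofinality fact, but your phrasing is a little loose: Lemma~\ref{cdercon} literally says that along a c-sequence $(b_\rho)$ in $H$ with $b_\rho\to b$ the derivatives $(b_\rho')$ also converge to $b'$, so by picking $b^{\smallbullet}=b_\rho$ far enough along the sequence you can make $v(\epsilon)$ and $v(\epsilon')$ \emph{simultaneously} large; it does not, by itself, establish the implication ``$v(\epsilon)$ large $\Rightarrow$ $v(\epsilon')$ large'' that the words ``by taking $v(\epsilon)$ large forces'' suggest. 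That stronger implication is in fact true, but for the asymptotic-couple reason the paper uses, not because of Lemma~\ref{cdercon}. Either rephrase the step as ``choose $b^{\smallbullet}$ far enough along a c-sequence'' or switch to the paper's direct asymptotic-couple argument; everything else in your write-up is fine.
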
  
\begin{proof} We can reduce to the case where $r=1$ or $r=2$.
If $r=1$, then the lemma holds trivially, so suppose $r=2$. Then again the lemma holds trivially if~$g_1, g_2\in H^{\operatorname{c}}$, so we can assume instead that 
$$g_1\ =\ a-b\imag+b^\dagger, \quad g_2\ =\ a+b\imag, \qquad a\in H^{\operatorname{c}},\ b\in (H^{\operatorname{c}})^\times.$$
Let $\gamma\in\Gamma$ be given. The density of $H$ in $H^{\operatorname{c}}$ gives $a^{\smallbullet}\in H$ with $v(a-a^{\smallbullet})\ge\gamma$.
Next, choose $\gamma^{\smallbullet}\in\Gamma$ such that $\gamma^{\smallbullet}\ge \max\{\gamma,vb\}$ and $\alpha' >\gamma$ for all nonzero $\alpha>\gamma^{\smallbullet}-vb$ in~$\Gamma$,
and take $b^{\smallbullet}\in H$ with $v(b-b^{\smallbullet})>\gamma^{\smallbullet}$. Then $v(b-b^{\smallbullet})>\gamma$ and $b\sim b^{\smallbullet}$.
In fact, $b=b^{\smallbullet}(1+\varepsilon)$ where $v\varepsilon+vb=v(b-b^{\smallbullet})> \gamma^{\smallbullet}$ and so
$v\big( (b/b^{\smallbullet})^\dagger \big) = v(\varepsilon')>\gamma$. Set~$g_1^{\smallbullet}:= a^{\smallbullet}-b^{\smallbullet}\imag+{b^{\smallbullet}}^\dagger$ and $g_2^{\smallbullet}:= a^{\smallbullet}+b^{\smallbullet}\imag$. Then
\begin{align*} v(g_1 - g_1^{\smallbullet})\ &=\ 
v\big( a-a^{\smallbullet}+(b/b^{\smallbullet})^\dagger + (b^{\smallbullet}-b)\imag \big)\ >\ \gamma,\quad v(g_2-g_2^{\smallbullet})\ >\ \gamma,\\
( \der- g_1^{\smallbullet}) \cdot( \der-g_2^{\smallbullet})\ &=\  \der^2-\big(2a^{\smallbullet}+ b^{\smallbullet}{}^\dagger\big)\der + \big((-a^{\smallbullet})'+a^{\smallbullet}{}^2 + a^{\smallbullet} b^{\smallbullet}{}^\dagger + b^{\smallbullet}{}^2\big)\ \in\ H[\der].
\end{align*} 
Hence $(g_1^{\smallbullet}, g_2^{\smallbullet})$ is a real splitting of 
$A^{\smallbullet}:=(\der-g_1^{\smallbullet})(\der-g_2^{\smallbullet})\in H[\der]$.
\end{proof}

\noindent 
In the next two corollaries $a\in (H^{\operatorname{c}})^{\times}$ and
$b_1,\dots,b_r\in K^{\operatorname{c}}$ are such that 
$$A\ :=\ a(\der - b_1)\cdots(\der-b_r)\in H^{\operatorname{c}}[\der],$$
$(b_1,\dots,b_r)$ is a real splitting of $A$ over 
$K^{\operatorname{c}}$, and $\fv:=\fv(A)\prec 1$.  We set $\Delta:=\Delta(\fv)$.

\begin{cor}\label{cor:approx LP+f, real}  
Suppose  $A = L_{P_{+f}}$ with $P\in H\{Y\}$
of order $r\ge 1$ and $f$ in~$(H^{\operatorname{c}})^{\times}$. Let $\gamma\in \Gamma$ and $w\in \N$. Then there is $f^{\smallbullet}\in H^\times$ such that $v(f^{\smallbullet}- f) \ge \gamma$, 
\begin{equation}\label{eq:approx LP+f, real}\
f^{\smallbullet}\ \sim\ f, \quad A^{\smallbullet}\ :=\ L_{P_{+f^{\smallbullet}}}\ \sim\ A,\quad \order A^{\smallbullet}\ =\ r, \quad
 \fv(A^{\smallbullet})\ \sim\ \fv,
\end{equation}
and  we have $a^{\smallbullet}\in H^\times$, $b_1^{\smallbullet},\dots, b_r^{\smallbullet}\in K$, and $B^{\smallbullet},E^{\smallbullet}\in H[\der]$ with 
$A^{\smallbullet}= B^{\smallbullet} + E^{\smallbullet}$,  $E^{\smallbullet}\prec_{\Delta} \fv^{w+1} A$, such that
$$B^{\smallbullet}\ =\ a^{\smallbullet}(\der-b_1^{\smallbullet})\cdots(\der-b_r^{\smallbullet}),\qquad
v(a-a^{\smallbullet}),\
v(b_1-b_1^{\smallbullet}),\ \dots\ ,v(b_r-b_r^{\smallbullet})\ \geq\ \gamma,$$
and $(b_1^{\smallbullet},\dots, b_r^{\smallbullet})$ is a real splitting of $B^{\smallbullet}$ over $K$. 
\end{cor}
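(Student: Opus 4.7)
\medskip

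The plan is to follow the strategy of Corollary~\ref{cor:approx LP+f} almost verbatim, but to use Lemma~\ref{lem:approx real} as a substitute for the straightforward density argument when approximating the factorization, so that the resulting product $(\der - b_1^{\smallbullet}) \cdots (\der - b_r^{\smallbullet})$ still lies in $H[\der]$ and in fact admits a real splitting over $K$. Everything else (the leading coefficient $a$, the additive shift $f$) we still approximate through density of $H$ in $H^{\operatorname{c}}$. The coupling between the various approximation parameters will be governed by a single application of Corollary~\ref{corcorappr} inside $K^{\operatorname{c}}[\der]$.

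First, I would invoke Corollary~\ref{corcorappr} with $K^{\operatorname{c}}$ in place of $K$ and with the given data $a,b_1,\dots,b_r,f \in K^{\operatorname{c}}$, obtaining a threshold $\gamma_1\in\Gamma^{\ge}$. I would then choose $\gamma_{\ast}\in\Gamma$ large enough that simultaneously $\gamma_{\ast}>\gamma_1$, $\gamma_{\ast}\ge\gamma$, $\gamma_{\ast}>vf$ (so that the approximation to $f$ forces $f^{\smallbullet}\sim f$), $\gamma_{\ast}>v(\fv)$ (so that perturbations by valuation $\ge vA+\gamma_{\ast}$ are $\prec \fv A$, allowing Lemma~\ref{lem:fv of perturbed op} to give $\fv(A^{\smallbullet})\sim\fv$), and finally $\gamma_{\ast}-(w+1)v(\fv)>\Delta$ (so that such perturbations are $\prec_{\Delta}\fv^{w+1}A$). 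Next, I use density of $H$ in $H^{\operatorname{c}}$ to pick $a^{\smallbullet}\in H$ with $v(a-a^{\smallbullet})\ge va+\gamma_{\ast}$ (so $a^{\smallbullet}\in H^{\times}$ since $a^{\smallbullet}\sim a$) and $f^{\smallbullet}\in H^{\times}$ with $v(f-f^{\smallbullet})\ge 4\gamma_{\ast}$. For the splitting factors, I apply Lemma~\ref{lem:approx real} to the monic operator $a^{-1}A\in H^{\operatorname{c}}[\der]$ and to the threshold $(r+1)\gamma_{\ast}\in\Gamma$: it delivers $b_1^{\smallbullet},\dots,b_r^{\smallbullet}\in H[\imag]=K$ with $v(b_i-b_i^{\smallbullet})>(r+1)\gamma_{\ast}$, such that the product $(\der-b_1^{\smallbullet})\cdots(\der-b_r^{\smallbullet})$ lies in $H[\der]$ and $(b_1^{\smallbullet},\dots,b_r^{\smallbullet})$ is a real splitting of it over $K$.

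I then set $B^{\smallbullet}:=a^{\smallbullet}(\der-b_1^{\smallbullet})\cdots(\der-b_r^{\smallbullet})\in H[\der]$ and $A^{\smallbullet}:=L_{P_{+f^{\smallbullet}}}\in H[\der]$, and define $E^{\smallbullet}:=A^{\smallbullet}-B^{\smallbullet}\in H[\der]$. The hypotheses of Corollary~\ref{corcorappr} (in $K^{\operatorname{c}}[\der]$) are met with $\gamma_{\ast}$ in the role of $\gamma$, so its conclusion (ii) yields $vE^{\smallbullet}\ge vA+\gamma_{\ast}$ directly, and conclusion (i) gives $v(P_{+f^{\smallbullet}}-P_{+f})\ge vA+\gamma_{\ast}$, whence $v(A^{\smallbullet}-A)\ge vA+\gamma_{\ast}$ because the coefficients of $L_Q$ are a subset of those of $Q\in K^{\operatorname{c}}\{Y\}$. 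By the choices of $\gamma_{\ast}$ this forces $A^{\smallbullet}\sim A$, $\order A^{\smallbullet}=r$, $\fv(A^{\smallbullet})\sim\fv$ (via Lemma~\ref{lem:fv of perturbed op}), and $E^{\smallbullet}\prec_{\Delta}\fv^{w+1}A$, while $v(f-f^{\smallbullet})\ge 4\gamma_{\ast}\ge\gamma$ and $f^{\smallbullet}\sim f$ are clear.

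The only non-routine ingredient is Lemma~\ref{lem:approx real}, which is what forces the product of $(\der-b_i^{\smallbullet})$ to stay inside $H[\der]$ and to remain a real splitting; the rest is purely a matter of choosing $\gamma_{\ast}$ big enough to absorb the threshold from Corollary~\ref{corcorappr} together with the three conditions coming from \eqref{eq:approx LP+f, real} and the $\prec_{\Delta}$-estimate. I expect the main (mild) obstacle to be the careful simultaneous bookkeeping of these conditions; once these are organized, no further asymptotic work beyond what was already done for Corollary~\ref{cor:approx LP+f} is required.
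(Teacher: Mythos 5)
Your argument is correct and is essentially the paper's own proof: the paper likewise runs the machinery of Corollary~\ref{cor:approx LP+f} (i.e.\ Corollary~\ref{corcorappr} over $K^{\operatorname{c}}$) with $H$, $K$ in the roles of $K$, $L$, uses density of $H$ in $H^{\operatorname{c}}$ to choose $a^{\smallbullet}$ and $f^{\smallbullet}$, and replaces the density choice of the $b_i^{\smallbullet}$ by Lemma~\ref{lem:approx real}, only packaging the thresholds slightly differently (via $\gamma_2$ and maxima, where you use a single $\gamma_{\ast}$ together with Lemma~\ref{lem:fv of perturbed op}). One cosmetic repair: since $va$ may be negative, you should require $v(a-a^{\smallbullet})\ge\max\!\big\{va+\gamma_{\ast},\gamma\big\}$ (density permits this, and the paper does exactly this), so that the stated inequality $v(a-a^{\smallbullet})\ge\gamma$ actually follows.
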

\begin{proof} We apply Corollary~\ref{cor:approx LP+f} with $H$,~$K$ in the role of $K$,~$L$, and take~$\gamma_1$,~$\gamma_2$ as in the proof of that corollary. We can assume $\gamma>\gamma_2$, so that $\gamma-vA>0$.
The density of $H$ in $H^{\operatorname{c}}$ gives $a^{\smallbullet}\in H$ such that $v(a-a^{\smallbullet})\geq \max\!\big\{va+(\gamma-vA), \gamma\big\}$ (so $a^{\smallbullet}\sim a$),
and Lemma~\ref{lem:approx real} gives $b_1^{\smallbullet},\dots,b_r^{\smallbullet}\in K$ such that
$v(b_i-b_i^{\smallbullet})\geq \max\!\big\{(r+1)(\gamma-vA), \gamma\big\}$ for $i=1,\dots,r$, and 
$(b_1^{\smallbullet},\dots, b_r^{\smallbullet})$ is a real splitting of $$B^{\smallbullet}\ :=\ a^{\smallbullet}(\der-b_1^{\smallbullet})\cdots(\der-b_r^{\smallbullet})\in H[\der]$$ over $K$.
Take $f^{\smallbullet}\in H$ with $v(f-f^{\smallbullet})\geq \max\!\big\{4(\gamma-vA),\gamma\big\}$.
Then \eqref{eq:approx LP+f, real} follows from the proof of Corollary~\ref{cor:approx LP+f}. We can increase $\gamma$ so that $\gamma> v(\fv^{w+1}A)+\Delta$, and then we have
$A^{\smallbullet}-B^{\smallbullet} \prec_\Delta \fv^{w+1} A$.
\end{proof}

\noindent
This result persists after multiplicative conjugation:

\begin{cor}\label{cor:approx LP+f, real, general}
Suppose  $A= L_{P_{+f,\times\fm}}$ with $P\in H\{Y\}$ of order $r\ge 1$, and $f$ in~$(H^{\operatorname{c}})^{\times}$,  $\fm\in H^\times$. Let $\gamma\in\Gamma$, $w\in\N$. Then there is $f^{\smallbullet}\in H^\times$ such that 
$$v(f^{\smallbullet}-f) \geq \gamma,\quad f^{\smallbullet}\ \sim\ f, \quad A^{\smallbullet}\ :=\ L_{P_{+f^{\smallbullet},\times\fm}}\ \sim\ A,\quad 
\order A^{\smallbullet}\ =\ r, \quad
 \fv(A^{\smallbullet})\ \sim\ \fv,$$
and we have $a^{\smallbullet}\in H^\times$, $b_1^{\smallbullet},\dots, b_r^{\smallbullet}\in K$, and $B^{\smallbullet},E^{\smallbullet}\in H[\der]$ with the properties stated in the previous corollary.
\end{cor}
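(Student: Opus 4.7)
The plan is to reduce to Corollary~\ref{cor:approx LP+f, real} by absorbing the multiplicative conjugation by $\fm$ into the differential polynomial. Specifically, I would set $Q := P_{\times\fm} \in H\{Y\}$ (of order $r$) and $g := f/\fm \in (H^{\operatorname{c}})^\times$. A brief computation from the definitions of additive and multiplicative conjugation gives $Q_{+g}(Y) = P\big(\fm(Y+g)\big) = P(\fm Y+f) = P_{+f,\times\fm}(Y)$, so $A = L_{P_{+f,\times\fm}} = L_{Q_{+g}}$. Thus the real splitting $(b_1,\dots,b_r)$ of $A$ over $K^{\operatorname{c}}$, the leading coefficient $a$, the span $\fv$, and the convex subgroup $\Delta = \Delta(\fv)$ are unchanged by this rewriting.

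Next, I would apply Corollary~\ref{cor:approx LP+f, real} to $Q$ in place of $P$, to $g$ in place of $f$, and to some $\gamma^* \in \Gamma$ chosen large enough that both $\gamma^* \geq \gamma$ and $\gamma^* + v\fm \geq \gamma$. This produces $g^{\smallbullet} \in H^\times$ with $v(g^{\smallbullet}-g) \geq \gamma^*$ and $g^{\smallbullet}\sim g$, together with $a^{\smallbullet} \in H^\times$, $b_1^{\smallbullet},\dots,b_r^{\smallbullet} \in K$, and $B^{\smallbullet}, E^{\smallbullet} \in H[\der]$ such that $A^{\smallbullet} := L_{Q_{+g^{\smallbullet}}}$ satisfies $A^{\smallbullet} \sim A$, $\order A^{\smallbullet} = r$, $\fv(A^{\smallbullet}) \sim \fv$, $A^{\smallbullet} = B^{\smallbullet} + E^{\smallbullet}$, $E^{\smallbullet} \prec_\Delta \fv^{w+1}A$, together with the factorization $B^{\smallbullet} = a^{\smallbullet}(\der - b_1^{\smallbullet})\cdots(\der - b_r^{\smallbullet})$, the bounds $v(a-a^{\smallbullet}), v(b_i-b_i^{\smallbullet}) \geq \gamma^*$, and the real splitting property of $(b_1^{\smallbullet},\dots,b_r^{\smallbullet})$ for $B^{\smallbullet}$ over $K$.

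To conclude, I would set $f^{\smallbullet} := g^{\smallbullet}\fm \in H^\times$. The same identity as in the first paragraph yields $Q_{+g^{\smallbullet}} = P_{+f^{\smallbullet},\times\fm}$ and hence $L_{P_{+f^{\smallbullet},\times\fm}} = A^{\smallbullet}$. Moreover, $f^{\smallbullet} \sim f$ (since $g^{\smallbullet} \sim g$), and $v(f^{\smallbullet}-f) = v(g^{\smallbullet}-g) + v\fm \geq \gamma^* + v\fm \geq \gamma$; since $\gamma^* \geq \gamma$, the approximation bounds $v(a-a^{\smallbullet}), v(b_i-b_i^{\smallbullet}) \geq \gamma$ hold, and all remaining conclusions transfer directly from Corollary~\ref{cor:approx LP+f, real}. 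No genuine obstacle is anticipated here: the present corollary is essentially a bookkeeping extension of its non-``general'' predecessor, with the substitution $(Q, g) \leftrightarrow (P_{\times\fm}, f/\fm)$ doing the real work, and the only care required is choosing $\gamma^*$ large enough to absorb the shift by $v\fm$.
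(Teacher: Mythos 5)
Your proposal is correct and follows essentially the same route as the paper: the paper also sets $Q:=P_{\times\fm}$, $g:=f/\fm$, applies Corollary~\ref{cor:approx LP+f, real} to $Q$, $g$ (with the valuation parameter shifted to absorb $v\fm$), and then takes $f^{\smallbullet}:=g^{\smallbullet}\fm$, using the identity $P_{+f,\times\fm}=P_{\times\fm,+f/\fm}$. Your choice of $\gamma^*$ large enough to cover both bounds is just a slightly more explicit way of handling the same bookkeeping.
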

\begin{proof}
Put $Q:=P_{\times\fm}\in H\{Y\}$, $g:=f/\fm\in H^{\operatorname{c}}$; then $Q_{+g}=P_{+f,\times\fm}$. Applying the previous corollary to $Q$, $g$ in place of $P$, $f$ yields
$g^{\smallbullet}\in H^\times$, $a^{\smallbullet}\in H^\times$, and~${b_1^{\smallbullet},\dots,b_r^{\smallbullet}\in K}$  such that $v(g^{\smallbullet}-g) \geq\ \gamma-v\fm$,
$$g^{\smallbullet}\ \sim\ g, \qquad A^{\smallbullet}\ :=\ L_{Q_{+g^{\smallbullet}}}\ \sim\ A,\qquad 
\order A^{\smallbullet}\ =\ r, \qquad
 \fv(A^{\smallbullet})\ \sim\ \fv$$
and
$A^{\smallbullet} = B^{\smallbullet} + E^{\smallbullet}$, with $B^{\smallbullet},E^{\smallbullet}\in H[\der]$, $E^{\smallbullet}\prec_{\Delta} \fv^{w+1} A$,
and   
$$B^{\smallbullet}\ =\ a^{\smallbullet}(\der-b_1^{\smallbullet})\cdots(\der-b_r^{\smallbullet}),\qquad
v(a-a^{\smallbullet}),\
v(b_1-b_1^{\smallbullet}),\ \dots\ ,v(b_r-b_r^{\smallbullet})\ \geq\ \gamma,$$
and $(b_1^{\smallbullet},\dots, b_r^{\smallbullet})$ is a real splitting of $B^{\smallbullet}$ over $K$. 
Therefore  $f^{\smallbullet}:=g^{\smallbullet}\fm\in H^\times$ and~$a^{\smallbullet},b_1^{\smallbullet},\dots,b_r^{\smallbullet}$ have the required properties.
\end{proof}

\subsection*{Strong splitting} 
{\em In this subsection $H$ is a real closed $H$-field with small derivation and asymptotic integration}. Thus $K:=H[\imag]$ is a $\d$-valued extension of $H$.
Let~$A\in K[\der]^{\neq}$ have order $r\ge 1$ and set $\fv:=\fv(A)$, and let  $f$, $g$, $h$ (possibly subscripted) range over $K$. 
Recall from Section~\ref{sec:diff ops and diff polys} that  a splitting of $A$ over $K$ is an $r$-tuple $(g_1,\dots,g_r)$ such that
$$A\ =\ f(\der-g_1)\cdots(\der-g_r)\quad\text{where $f\neq 0$.}$$
We call such a splitting $(g_1,\dots,g_r)$ of $A$ over $K$ {\bf strong} if 
$\Re g_j\succeq \fv^\dagger$ for $j=1,\dots,r$, and
we say that $A$ {\bf  splits strongly over~$K$} if there is a strong splitting of $A$ over~$K$.
This notion is mainly of interest for $\fv\prec 1$, since otherwise $\fv=1$, and then any splitting of 
$A$  over $K$ is a strong splitting of $A$ over $K$. 

\begin{lemma}\label{lem:Ah splits strongly} 
Let $(g_1,\dots,g_r)$ be a strong splitting of $A$ over $K$. If ${h\neq 0}$, then $(g_1,\dots,g_r)$ is a strong splitting of $hA$ over $K$. If  $h\asymp 1$, then ${(g_1-h^\dagger,\dots,g_r-h^\dagger)}$ is a strong splitting of $Ah$  over $K$.
\end{lemma}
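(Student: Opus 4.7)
The first statement is essentially definitional: writing $A=f(\der-g_1)\cdots(\der-g_r)$ with $f\in K^\times$, we have $hA=(hf)(\der-g_1)\cdots(\der-g_r)$, so $(g_1,\dots,g_r)$ is a splitting of $hA$ over $K$ by the convention for splittings of non-monic operators in Section~\ref{sec:diff ops and diff polys}. Strongness persists because the span is invariant under left scalar multiplication: $\fv(hA)=\fv(A)=\fv$ as noted after the definition of span in Section~\ref{sec:span}, so the condition $\Re g_j\succeq\fv^\dagger$ is literally the same for $A$ and for $hA$.

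For the second statement, Lemma~\ref{lem:split and twist} applied with $\fn:=h$ immediately yields that $(g_1-h^\dagger,\dots,g_r-h^\dagger)$ is a splitting of $Ah$ over $K$, so the substantive content is verifying strongness $\Re(g_j-h^\dagger)\succeq\fv(Ah)^\dagger$. For this I would establish three ingredients. First, $h^\dagger\prec 1$, hence $\Re h^\dagger\prec 1$: this uses $h\asymp 1$ together with $\der\mathcal O_K\subseteq\smallo_K$, which holds because $H$ is $H$-asymptotic with asymptotic integration, forcing $\Psi\cap\Gamma^{>}\neq\emptyset$ and hence $\der\mathcal O\subseteq\smallo$ by [ADH,~9.1.3(iv)], a property that passes to $K=H[\imag]$. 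Second, $\fv(Ah)\sim\fv$: writing $Ah=h\cdot A_{\ltimes h}$ and expanding $A_{\ltimes h}=h^{-1}Ah$ via Leibniz gives $(A_{\ltimes h})_r=a_r$ and $(A_{\ltimes h})_j=a_j+\sum_{i>j}\binom{i}{j}h^{-1}a_i h^{(i-j)}$ for $j<r$; since $h^{(k)}\prec 1$ for $k\geq 1$ and $a_i\prec a_m$ for $i>m:=\dwt(A)$, the perturbation at level $j=m$ is $\prec a_m$, so $\dwt(Ah)=m$ and $\fv(Ah)=a_r/(A_{\ltimes h})_m\sim a_r/a_m=\fv$. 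Third, writing $\fv(Ah)=\fv(1+\varepsilon)$ with $\varepsilon\prec 1$ yields $\fv(Ah)^\dagger=\fv^\dagger+(1+\varepsilon)^\dagger$ with $(1+\varepsilon)^\dagger\prec 1$, by the same small-derivation estimate.

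With these ingredients, the verification splits into cases on the size of $\fv^\dagger$. In the principal case $\fv^\dagger\succeq 1$, one has $\fv(Ah)^\dagger\sim\fv^\dagger$ and $\Re g_j\succeq\fv^\dagger\succeq 1\succ\Re h^\dagger$, hence $\Re(g_j-h^\dagger)\sim\Re g_j\succeq\fv^\dagger\asymp\fv(Ah)^\dagger$. The remaining case $\fv^\dagger\prec 1$ (in particular the trivial subcase $\fv=1$, where the hypothesis on $\Re g_j$ is vacuous) requires more delicate bookkeeping inside $\smallo_K$ using the estimates above. I expect this edge-case analysis, together with the coefficient-level expansion needed to secure $\fv(Ah)\sim\fv$, to be the main obstacle: both the correction $(1+\varepsilon)^\dagger$ to $\fv^\dagger$ and the perturbation $\Re h^\dagger$ must be controlled so that subtracting $\Re h^\dagger$ does not destroy the inequality $\Re g_j\succeq\fv(Ah)^\dagger$.
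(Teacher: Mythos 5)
Your treatment of the first statement and your reduction of the second to Lemma~\ref{lem:split and twist} match the paper, and your coefficient computation giving $\fv(Ah)\sim\fv(A)$ is correct (it also follows at once from Lemma~\ref{fvmult} with $b=1$). But the proof is genuinely incomplete: the case you set aside, $\fv\prec 1$ with $\fv^\dagger\prec 1$, is not an edge case at all (in $\T$ it occurs already for $\fv\asymp x^{-1}$), and the estimates you actually prove, $\Re h^\dagger\prec 1$ and $(1+\varepsilon)^\dagger\prec 1$, are too weak to settle it: subtracting a quantity that is merely $\prec 1$ from $\Re g_j\succeq\fv^\dagger$ can a priori destroy the inequality when $\fv^\dagger\prec 1$, and the same problem affects your comparison of $\fv(Ah)^\dagger$ with $\fv^\dagger$.

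The missing ingredient is the sharper bound $h^\dagger\prec\fv^\dagger$ whenever $0\neq\fv\prec 1$, and this is precisely the one fact the paper's proof invokes. Since $K=H[\imag]$ is $\d$-valued, hence pre-$\d$-valued, we have $f'\prec\fv^\dagger$ for all $f\preceq 1$, so every element of $\I(K)$ is $\prec\fv^\dagger$; and $h\asymp 1$ gives $h^\dagger\in(\mathcal O_K^\times)^\dagger\subseteq\I(K)$, whence $\Re h^\dagger\preceq h^\dagger\prec\fv^\dagger$. Applying the same bound to the unit $u\asymp 1$ with $\fv(Ah)=u\fv$ gives $\fv(Ah)^\dagger\sim\fv^\dagger$. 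Then for each $j$ we get $\Re h^\dagger\prec\fv^\dagger\preceq\Re g_j$, so $\Re(g_j-h^\dagger)\sim\Re g_j\succeq\fv^\dagger\asymp\fv(Ah)^\dagger$, which proves strongness uniformly for all $\fv\prec 1$; no case distinction on the size of $\fv^\dagger$ is needed. The remaining case $\fv=1$ is settled by $\fv(Ah)\asymp\fv=1$, hence $\fv(Ah)=1$ (cf.\ Corollary~\ref{cor:111}), so the strongness requirement is vacuous there. Small-derivation estimates alone, which is all you use, cannot close this gap; the pre-$\d$-valued inequality is essential.
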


{\sloppy
\begin{proof}
The first statement is clear, so suppose $h\asymp 1$. Now use Lemma~\ref{lem:split and twist}
 and the fact that   $\fv \prec 1$ implies~${\Re h^\dagger\preceq h^\dagger\prec \fv^\dagger}$. If $\fv=1$, then use that $\fv(Ah)=1$ by Corollary~\ref{cor:111}. 
\end{proof}}

\begin{lemma}\label{lem:order 1 splits strongly}
Suppose $g\asymp\Re g$. Then $A=\der-g$ splits strongly over $K$.
\end{lemma}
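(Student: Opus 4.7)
The plan is to observe that $A=\der-g$ is already of order $1$, so its only splitting over $K$ is $(g)$ (with leading factor $f=1$), and the lemma will follow once I verify that this splitting is strong, i.e., that $\Re g\succeq\fv^\dagger$ for $\fv:=\fv(A)$. I will split into two cases according to whether $g\preceq 1$ or $g\succ 1$, using the example that follows the definition of the span in Section~\ref{sec:span}: $\fv(-g+\der)=1$ if $g\preceq 1$, and $\fv(-g+\der)=-1/g$ if $g\succ 1$.

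In the first case, $\fv=1$ gives $\fv^\dagger=0$, so $\Re g\succeq\fv^\dagger$ is automatic. In the second case, $\fv\asymp 1/g$, hence $v(\fv^\dagger)=v(g^\dagger)$, and it suffices to show $g^\dagger\prec\Re g$. The hypothesis $g\asymp\Re g$ combined with $g\succ 1$ forces $\Re g\succ 1$, and writing $u:=g/\Re g$ we have $u\asymp 1$ and $g^\dagger=(\Re g)^\dagger+u^\dagger$.

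For the two summands I use two standard facts. First, since $H$ is a $\d$-valued $H$-field with small derivation, its algebraic extension $K=H[\imag]$ is $\d$-valued with small derivation and $\res(K)=C_K=C_H[\imag]$, so any $u\in\O_K^\times$ can be written $u=c+\epsilon$ with $c\in C_K^\times$ and $\epsilon\in\smallo_K$; then $u'=\epsilon'\in\smallo_K$ gives $u^\dagger\prec 1\preceq\Re g$. Second, since $\Re g\in H$ with $\Re g\succ 1$ and the asymptotic couple of $H$ is of $H$-type with small derivation, [ADH, 9.2.10(iv)] gives $\psi(v(\Re g))=o(v(\Re g))$, which (since $v(\Re g)<0$) forces $\psi(v(\Re g))>v(\Re g)$, i.e., $(\Re g)^\dagger\prec\Re g$. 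Adding yields $g^\dagger\prec\Re g$, as required.

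There is no serious obstacle here; the proof is a short case distinction followed by two standard computations about logarithmic derivatives in $H$-asymptotic fields with small derivation. The only thing to be slightly careful about is ensuring that when passing from $H$ to the complexification $K$ the small-derivation and $\d$-valued properties still give $u^\dagger\prec 1$ for $u\asymp 1$ in $K$, which is immediate from $\res(K)=C_K$.
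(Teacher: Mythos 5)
Your overall strategy is fine and the inequality you ultimately need is true, but one step is justified by a statement that is false as you quote it. You claim that, since the asymptotic couple of $H$ is of $H$-type with small derivation, [ADH, 9.2.10(iv)] yields $\psi(v(\Re g))=o(v(\Re g))$ for every $\Re g\succ 1$. Whatever the exact hypotheses of that item, it cannot give this unconditional estimate, because the estimate is false: in a Hardy field take $\Re g=\log x$, so $(\Re g)^\dagger=1/(x\log x)$ and $\big[v\big((\Re g)^\dagger\big)\big]=[v(x)]>[v(\log x)]$, so $\psi(v(\Re g))$ is certainly not $o(v(\Re g))$. (Indeed, everywhere this paper invokes [ADH, 9.2.10(iv)] it does so under a sign condition such as $\psi(\gamma)\leq 0$, i.e.\ $(\Re g)^\dagger\succeq 1$, which you have not arranged.) What you actually need is only $(\Re g)^\dagger\prec\Re g$, and that is true for a simpler reason: smallness of the derivation applied to $1/\Re g\prec 1$ gives $(1/\Re g)'\prec 1$, i.e.\ $(\Re g)^\dagger\prec\Re g$. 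With that repair the rest of your argument (the identity $v(\fv^\dagger)=v(g^\dagger)$ from $\fv\asymp 1/g$, the decomposition $g^\dagger=(\Re g)^\dagger+u^\dagger$ with $u=g/\Re g\asymp 1$, and $u^\dagger\prec 1$ from $\d$-valuedness and small derivation) goes through, as does your trivial case $g\preceq 1$.

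For comparison, the paper's proof skips the detour through $g^\dagger$ entirely: in the only nontrivial case $\fv\prec 1$ (equivalently $g\succ 1$), smallness of the derivation applied to $\fv$ itself gives $\fv'\prec 1$, hence $\fv^\dagger\prec 1/\fv\asymp g\asymp\Re g$; the case $\fv=1$ is covered by the remark that then every splitting is strong. So the one-line computation you need to fix the step about $(\Re g)^\dagger$ already proves the whole lemma when applied to $\fv$, with no decomposition of $g$ and no appeal to [ADH, 9.2.10(iv)].
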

\begin{proof}
Assuming $\fv\prec 1$ gives $\fv'\prec 1$, so  $\fv^\dagger \prec 1/\fv\asymp g\asymp\Re g$.
\end{proof}

\noindent
In particular, every $A\in H[\der]^{\ne}$ of order~$1$ splits strongly over $K$.

\begin{lemma}\label{lem:split strongly compconj}
Suppose $(g_1,\dots,g_r)$ is a strong splitting of $A$  over $K$ and $\fv\prec^\flat 1$. Let $\phi\preceq 1$ be active in $H$ and set $h_j:=\phi^{-1}\big(g_j-(r-j)\phi^\dagger\big)$   for $j=1,\dots,r$. Then~$(h_1,\dots,h_r)$ is a strong splitting of $A^\phi$ over $K^\phi=H^\phi[\imag]$.
\end{lemma}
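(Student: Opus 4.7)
\medskip

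\noindent\textbf{Proof plan.} The splitting identity itself is immediate: Lemma~\ref{lem:split and compconj} applied to $A = f(\der-g_1)\cdots(\der-g_r)$ shows
$$A^\phi\ =\ f\phi^r(\derdelta-h_1)\cdots(\derdelta-h_r)\quad\text{in $K^\phi[\derdelta]$,}$$
so $(h_1,\dots,h_r)$ is already a splitting of $A^\phi$ over $K^\phi$. The real content of the lemma is therefore the strength condition: writing $\fw:=\fv(A^\phi)$, and recalling that the logarithmic derivative in $K^\phi$ is given by $y^{\dagger_\phi}=\phi^{-1}y^\dagger$, I must verify
$$\Re h_j\ \succeq\ \fw^{\dagger_\phi}\ =\ \phi^{-1}\fw^\dagger\qquad (j=1,\dots,r).$$
Since $\phi\in H$ is real, $\Re h_j=\phi^{-1}\bigl(\Re g_j-(r-j)\phi^\dagger\bigr)$, so after clearing the common factor $\phi^{-1}$ this reduces to
$$\Re g_j\ -\ (r-j)\phi^\dagger\ \succeq\ \fw^\dagger\qquad (j=1,\dots,r).\tag{$\ast$}$$

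The plan is to establish $(\ast)$ by showing separately that $\Re g_j-(r-j)\phi^\dagger\asymp\Re g_j\succeq\fv^\dagger$ and that $\fw^\dagger\preceq\fv^\dagger$. For the first, Lemma~\ref{lem:v(Aphi)} (applied in the form used in its proof) gives $\phi^\dagger\prec 1\preceq\fv^\dagger$, and since strength of the original splitting gives $\Re g_j\succeq\fv^\dagger\succeq 1$, the correction $(r-j)\phi^\dagger$ is absorbed. For the second, Lemma~\ref{lem:v(Aphi)} also gives $\fw\asymp_{\Delta(\fv)}\fv$, so I can write $\fw=u\fv$ with $u\in K^\times$ satisfying $v(u)\in\Delta(\fv)$, and then $\fw^\dagger=u^\dagger+\fv^\dagger$.

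The main step of the plan is to show $u^\dagger\preceq\fv^\dagger$, which I would split into two cases. If $v(u)=0$, i.e.\ $u\asymp 1$, then small derivation on $K$ (inherited from $H$) yields $u'\prec 1$, hence $u^\dagger\prec 1\preceq\fv^\dagger$. If $v(u)\neq 0$, then $[u]<[\fv]$ by definition of $\Delta(\fv)$, and the $H$-asymptotic property of $K$ gives $\psi\bigl(v(u)\bigr)>\psi(v\fv)$, i.e.\ $u^\dagger\prec\fv^\dagger$. In either case $\fw^\dagger=u^\dagger+\fv^\dagger\preceq\fv^\dagger$, and chaining with the estimates above yields $(\ast)$. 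The only mild obstacle is keeping track that $\fv$ and $\fw$ lie in $K$ rather than $H$, but this causes no trouble because $K=H[\imag]$ is $H$-asymptotic with the same $\psi$-map as $H$ on $\Gamma$, and the required small-derivation estimates transfer from $H$ to $K$ componentwise.
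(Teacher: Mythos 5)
Your proof is correct and follows essentially the same route as the paper's: the splitting identity comes from Lemma~\ref{lem:split and compconj}, the correction terms $(r-j)\phi^\dagger$ are absorbed via $\phi^\dagger\prec 1\preceq\fv^\dagger$, and the comparison of $\fw^\dagger:=\fv(A^\phi)^\dagger$ with $\fv^\dagger$ rests on Lemma~\ref{lem:v(Aphi)} (the paper quotes $\fw^\dagger\asymp\fv^\dagger$ from that lemma's proof, where you re-derive $\fw^\dagger\preceq\fv^\dagger$ from $\fw\asymp_{\Delta(\fv)}\fv$). Two micro-claims are harmlessly overstated: from $[u]<[\fv]$, $H$-asymptoticity only yields $\psi(vu)\geq\psi(v\fv)$ (strict inequality would need Hardy type), and small derivation alone only gives $u'\preceq 1$ for $u\asymp 1$ (for $u'\prec 1$ use that $K$ is $\d$-valued here); but the non-strict estimates already give $\fw^\dagger\preceq\fv^\dagger$, which is all your chain requires.
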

\begin{proof}
By Lemma~\ref{lem:split and compconj}, $(h_1,\dots,h_r)$ is a splitting of $A^\phi$ over $K^\phi$. 
We have~$\phi^\dagger\prec 1\preceq\fv^\dagger$, 
so $\Re h_j\sim \phi^{-1} \Re g_j\succeq \phi^{-1}\fv^\dagger$ for $j=1,\dots,r$.
Set $\fw:=\fv(A^\phi)$ and~$\derdelta:=\phi^{-1}\der$. 
Lem\-ma~\ref{lem:v(Aphi)} gives
$\fv^\dagger\asymp \fw^\dagger$, so 
$\phi^{-1} \fv^\dagger  \asymp \derdelta(\fw)/\fw$. 
\end{proof}

\noindent
{\em In the next two results we assume that for all $q\in \Q^{>}$ and
$\fn\in H^\times$  there is given an element
$\fn^q\in H^\times$ such that $(\fn^q)^\dagger=q\fn^\dagger$ \textup{(}and thus
$v(\fn^q)=q\,v(\fn)$\textup{)}}. 

\begin{lemma}\label{lem:split strongly multconj} 
Suppose $(g_1,\dots,g_r)$ is a splitting of $A$  over $K$, $\fv\prec 1$, $\fn\in H^\times$, and $[\fv]\le[\fn]$. Then for all $q\in \Q^{>}$ with at most~$r$ exceptions, $(g_1-q\fn^\dagger,\dots,g_r-q\fn^\dagger)$ is a strong splitting of $A\fn^q$   over $K$.  
\end{lemma}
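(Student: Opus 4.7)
The plan is first to extract the splitting property from Lemma~\ref{lem:split and twist} and then to verify the strength condition by a two-step archimedean analysis. Since $(\fn^{q})^{\dagger}=q\fn^{\dagger}$, Lemma~\ref{lem:split and twist} immediately gives that $(g_{1}-q\fn^{\dagger},\dots,g_{r}-q\fn^{\dagger})$ is a splitting of $A\fn^{q}$ over $K$ for every $q\in\Q^{>}$. As $\fn^{\dagger}\in H$, we have $\Re(g_{j}-q\fn^{\dagger})=\Re g_{j}-q\fn^{\dagger}$, so the task reduces to checking $\Re g_{j}-q\fn^{\dagger}\succeq\fv(A\fn^{q})^{\dagger}$ for $j=1,\dots,r$, with at most $r$ exceptional $q$. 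Note that $\fv\prec 1$ together with $[\fv]\le[\fn]$ forces $\fn\nasymp 1$, so $\fn^{\dagger}\neq 0$.

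Next I would identify the bad $q$. For each $j$, if $\Re g_{j}\not\asymp\fn^{\dagger}$ (including $\Re g_{j}=0$), then $\Re g_{j}-q\fn^{\dagger}\asymp\max(|\Re g_{j}|,|\fn^{\dagger}|)\succeq\fn^{\dagger}$ for \emph{every} $q\in\Q^{>}$; if instead $\Re g_{j}\asymp\fn^{\dagger}$, then $\Re g_{j}/\fn^{\dagger}\in\mathcal O_{H}^{\times}$ has a residue $\bar c_{j}$ in the residue field of $H$, and there is at most one exceptional value $q_{j}\in\Q^{>}$ --- namely $q_{j}=\bar c_{j}$ when $\bar c_{j}\in\Q^{>}$ --- because for $q\ne q_{j}$ the residue of $(\Re g_{j}/\fn^{\dagger})-q$ is nonzero, yielding $\Re g_{j}-q\fn^{\dagger}\asymp\fn^{\dagger}$. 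So at most $r$ rationals $q_{1},\dots,q_{r}$ are excluded, and for every other $q\in\Q^{>}$ one has $\Re g_{j}-q\fn^{\dagger}\succeq\fn^{\dagger}$ for all $j$.

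It then suffices to show $\fv(A\fn^{q})^{\dagger}\preceq\fn^{\dagger}$. If $\fv(A\fn^{q})\asymp 1$, then in fact $\fv(A\fn^{q})=1$ (the defining quotient equals $1$ exactly when $\dwt(A\fn^{q})$ attains the order of $A\fn^{q}$), so $\fv(A\fn^{q})^{\dagger}=0$ and the inequality is automatic. Otherwise $\fv(A\fn^{q})\prec 1$; Lemma~\ref{lem:An} gives $v\fv(A\fn^{q})=v\fv+o(v\fn)$, hence $[\fv(A\fn^{q})]\le[\fv]\le[\fn]$. The $H$-asymptotic axiom makes $\psi$ non-increasing on $|\Gamma^{\ne}|$ and invariant under multiplication by positive integers (via $(h^{C})^{\dagger}=Ch^{\dagger}$ for $C\in\N^{\ge 1}$), so the archimedean inequality passes through $\psi$ to yield $v(\fv(A\fn^{q})^{\dagger})\ge v(\fn^{\dagger})$, that is, $\fv(A\fn^{q})^{\dagger}\preceq\fn^{\dagger}$. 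Combined with the previous paragraph this establishes strength for every $q\in\Q^{>}\setminus\{q_{1},\dots,q_{r}\}$.

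The main obstacle is the archimedean bookkeeping in the residue step --- confirming that each $j$ contributes at most one excluded rational --- together with the vanishing-span edge case, where one has to recognise that $\fv(A\fn^{q})\asymp 1$ actually forces $\fv(A\fn^{q})=1$ so that no further control on $\fn^{\dagger}$ is needed, and that the passage from $[\fv(A\fn^{q})]\le[\fn]$ to $\fv(A\fn^{q})^{\dagger}\preceq\fn^{\dagger}$ goes through under the bare $H$-asymptotic hypothesis, without invoking Hardy type.
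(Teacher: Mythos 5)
Your proof is correct and takes essentially the same route as the paper's: split $A\fn^q$ via Lemma~\ref{lem:split and twist}, deduce $[\fv(A\fn^q)]\le[\fn]$ from Lemma~\ref{lem:An} so that $\fv(A\fn^q)^\dagger\preceq\fn^\dagger$ by $H$-asymptoticity, and exclude for each $j$ the at most one $q\in\Q^>$ with $\Re g_j\sim q\fn^\dagger$ (your residue criterion is exactly this condition), giving at most $r$ exceptions. One small caveat: your intermediate inequality $[\fv(A\fn^q)]\le[\fv]$ is not justified when $[\fv]<[\fn]$, since Lemma~\ref{lem:An} only controls the correction term as $o(v\fn)$; but the endpoint $[\fv(A\fn^q)]\le[\fn]$, which is all you actually use, does follow, so the argument stands.
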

\begin{proof} 
Let $q\in \Q^{>}$. Then
$(g_1-q\fn^\dagger,\dots,g_r-q\fn^\dagger)$ is a splitting of $A\fn^q$ over $K$, by Lemma~\ref{lem:split and twist}. Moreover, $\big[\fv(A\fn^q)\big]\le [\fn]$,  by Lemma~\ref{lem:An}, so
$\fv(A\fn^q)^\dagger \preceq \fn^\dagger$.
Thus if $\Re g_j\not\sim q\fn^\dagger$ for $j=1,\dots,r$, then
$(g_1-q\fn^\dagger,\dots,g_r-q\fn^\dagger)$ is a strong splitting of~$A\fn^q$   over $K$. 
\end{proof}

\begin{cor}\label{cor:split strongly multconj}
Let $(P,\fm,\hat a)$ be a steep slot in $K$ of order $r\ge 1$ whose linear part~$L:=L_{P_{\times\fm}}$ splits over $K$ and such that $\hat a\prec_{\Delta} \fm$ for $\Delta:=\Delta\big(\fv(L)\big)$.  Then for all sufficiently small $q\in \Q^{>}$, any $\fn\asymp|\fv(L)|^q\fm$ in $K^\times$ gives a  steep refinement~$\big(P,\fn,\hat a\big)$ of $(P,\fm,\hat a)$ whose linear part $L_{P_{\times\fn}}$ splits strongly over $K$.
\end{cor}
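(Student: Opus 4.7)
The plan is to verify three things in turn for $\fn\asymp|\fv(L)|^q\fm$ with small enough $q\in\Q^>$: (a) that $(P,\fn,\hat a)$ refines $(P,\fm,\hat a)$, i.e., $\hat a\prec\fn\preceq\fm$; (b) that $(P,\fn,\hat a)$ is steep; and (c) that $L_{P_{\times\fn}}$ splits strongly over $K$. Set $\fv:=\fv(L)$ (so $\fv\prec^\flat 1$ by steepness of $(P,\fm,\hat a)$) and $\Delta:=\Delta(\fv)$. For any $q\in\Q^>$ we have $v\fn=qv\fv+v\fm$ and $[\fn/\fm]=[\fv]$, so $\fn\preceq\fm$ since $|\fv|^q\preceq 1$.

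For (a), I must show $v(\hat a/\fm)>qv\fv$. The hypothesis gives $v(\hat a/\fm)>\Delta$, and $v\fv>0$ with $v\fv\notin\Delta$. A short archimedean argument---splitting into the cases $v(\hat a/\fm)\geq v\fv$ (where $0<q<1$ suffices) or $0<v(\hat a/\fm)<v\fv$ (where both have common archimedean class $[\fv]$ by convexity of $\Delta$, so $nv(\hat a/\fm)\geq v\fv$ for some $n\geq 1$ and then $0<q<1/n$ suffices)---shows the inequality holds for all $q$ below a positive threshold depending only on $v(\hat a/\fm)$ and $v\fv$. Step (b) is then immediate from Lemma~\ref{lem:steep1}, applied with $\hat a\prec\fn\preceq\fm$ and $[\fn/\fm]=[\fv]$; it also yields $\fv(L_{P_{\times\fn}})\asymp_{\Delta}\fv$.

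Step (c) is the heart of the argument. Using $L_{P_{\times\fm}}=L_P\fm$ and $L_{P_{\times\fn}}=L_P\fn$, write $L_{P_{\times\fn}}=L\cdot(\fn/\fm)$ in $K[\der]$, and factor $\fn/\fm=|\fv|^q\cdot u$ with $u\asymp 1$ in $K$, so that $L_{P_{\times\fn}}=(L\cdot|\fv|^q)\cdot u$. Apply Lemma~\ref{lem:split strongly multconj} to the given splitting of $L$ over $K$, with the element $|\fv|\in H^\times$ in the role of $\fn$ there (noting $[|\fv|]=[\fv]$ and $\fv\prec 1$), to obtain that $L\cdot|\fv|^q$ splits strongly over $K$ for all $q\in\Q^>$ with at most $r$ exceptions. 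Then Lemma~\ref{lem:Ah splits strongly}, applied to the twist by $u\asymp 1$, transfers this to a strong splitting of $L_{P_{\times\fn}}$ itself over $K$.

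Finally, the threshold from (a) and the finitely many exceptional rationals from (c) can be accommodated simultaneously: since the exceptions form a finite subset of $\Q^>$, they have a positive minimum, and taking $q$ below both this minimum and the threshold from (a) yields all claimed properties. The main obstacle lies in step (a): establishing $\hat a\prec\fn$ (rather than merely $\hat a\prec_\Delta\fn$) requires exploiting the archimedean comparison between $v(\hat a/\fm)$ and $v\fv$, which is precisely why the hypothesis must be formulated as $\hat a\prec_\Delta\fm$ with $\Delta=\Delta(\fv)$ rather than some weaker separation condition.
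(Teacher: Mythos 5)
Your proof is correct and follows essentially the same route as the paper: establish via Lemma~\ref{lem:steep1} that $(P,\fn,\hat a)$ is a steep refinement for small $q$, then apply Lemma~\ref{lem:split strongly multconj} with $|\fv(L)|$ in the role of $\fn$ and transfer the strong splitting through the unit factor $u\asymp 1$ via Lemma~\ref{lem:Ah splits strongly}, using $L_{P_{\times\fn}}=L\cdot(\fn/\fm)$. The archimedean argument you spell out in step (a) is precisely the detail the paper leaves implicit when invoking Lemma~\ref{lem:steep1}.
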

\begin{proof} Note that $|f|\asymp f$ for all $f$.  Lemma~\ref{lem:steep1} gives $q_0\in \Q^{>}$ such that
for all~$q\in \Q^{>}$ with $q\le q_0$ and any $\fn\asymp|\fv(L)|^q\fm$,  
$(P,\fn, \hat a)$ is a steep refinement of~$(P,\fm, \hat a)$. Now apply Lemma~\ref{lem:split strongly multconj} with $L$, $\fv(L)$, $|\fv(L)|$ in the respective roles
of~$A$,~$\fv$,~$\fn$, and use Lemma~\ref{lem:Ah splits strongly} and the fact that for $\fn\asymp |\fv(L)|^q\fm$ we have~$L_{P_{\times \fn}}=L\cdot\fn/\fm=L|\fv(L)|^qh$ with~$h\asymp 1$. 
 \end{proof} 

\noindent
We finish this section with a useful fact on slots in $K$. 
Given such a slot $(P,\fm,\hat a)$, the element $\hat a$ lies in an immediate asymptotic extension of~$K$ that might not be of the form 
$\hat H[\imag]$ with $\hat H$ an immediate $H$-field extension of $H$. 
By the next lemma we can nevertheless often reduce to this situation, and more: 

\begin{lemma}\label{lem:hole in hat K}
Suppose $H$ is $\upo$-free. Then every $Z$-minimal slot in $K$ of positive order  is equivalent to a hole
$(P,\fm,\hat b)$  in $K$ with $\hat b\in\hat K=\hat H[\imag]$ for some immediate $\upo$-free newtonian $H$-field extension $\hat H$ of $H$.
\end{lemma}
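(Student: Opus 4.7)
The plan is to combine Lemma~\ref{lem:from cracks to holes} (to replace the slot by an equivalent hole) with an $H$-field version of \eqref{eq:14.0.1} (to build $\hat H$) and then apply Corollary~\ref{cor:find zero of P, 2} (to locate a pseudolimit inside $\hat H[\imag]$). Let $(Q,\fn,\hat a)$ be the given $Z$-minimal slot in $K$ of positive order. First I would invoke Lemma~\ref{lem:from cracks to holes} to produce a $Z$-minimal hole $(Q,\fn,\hat c)$ in $K$ equivalent to $(Q,\fn,\hat a)$, with $\hat c$ lying in some immediate asymptotic extension of $K$; this reduces the problem to finding a suitable $\hat b$ inside~$\hat K=\hat H[\imag]$ for an appropriate $\hat H$.

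Next I would construct $\hat H$ as follows. Since $H$ is $\upo$-free, the asymptotic field underlying $H$ admits, by \eqref{eq:14.0.1}, an immediate $\d$-algebraic newtonian $\upo$-free asymptotic extension. Equipping this extension with the unique ordering that makes its valuation ring convex (using that $H$ is real closed, as in Part~\ref{part:dents in H-fields} where $H$ is Liouville closed) yields an immediate $\upo$-free newtonian $H$-field extension $\hat H$ of $H$; the $H$-field axioms are preserved because the residue field and value group are unchanged. Set $\hat K:=\hat H[\imag]$. Then $\hat K$ is an immediate extension of $K=H[\imag]$, is $\upo$-free (by  [ADH, 11.7.23], since $\hat H$ is real closed and $\upo$-free), and is newtonian by \eqref{eq:14.5.7} applied to $\hat H$, using that $\hat H$ is $\d$-valued $\upo$-free with divisible value group $\Gamma_H$ (divisibility coming from real closedness of $H$) and that $\hat K$ is algebraic over $\hat H$.

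Finally, apply Corollary~\ref{cor:find zero of P, 2} to the $Z$-minimal hole $(Q,\fn,\hat c)$ in $K$, with $L=\hat K$. This yields $\hat b\in\hat K=\hat H[\imag]$ such that $K\langle\hat b\rangle$ is immediate over $K$ and $(Q,\fn,\hat b)$ is a hole in $K$ equivalent to $(Q,\fn,\hat c)$, hence equivalent to the original slot $(Q,\fn,\hat a)$.

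The main obstacle is the middle step: producing $\hat H$ as a genuine $H$-field (not merely an asymptotic field) that is simultaneously immediate, $\upo$-free, and newtonian over $H$. This relies on the fact that \eqref{eq:14.0.1} transports through to the $H$-field category under the real closedness available from the Part~\ref{part:dents in H-fields} conventions. Once $\hat H$ is in hand, the transfer of newtonianity from $\hat H$ to $\hat K=\hat H[\imag]$ through \eqref{eq:14.5.7} and the final invocation of Corollary~\ref{cor:find zero of P, 2} are routine.
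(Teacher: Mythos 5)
Your proof is correct and follows essentially the same route as the paper: obtain $\hat H$ from \eqref{eq:14.0.1} (an immediate extension of the $H$-field $H$ being again an $H$-field), transfer newtonianity to $\hat K=\hat H[\imag]$ via \eqref{eq:14.5.7}, and conclude with Corollary~\ref{cor:find zero of P, 2} applied with $L:=\hat K$. Your initial appeal to Lemma~\ref{lem:from cracks to holes} is harmless but unnecessary, since Corollary~\ref{cor:find zero of P, 2} applies directly to the $Z$-minimal slot.
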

\begin{proof}
Let $(P,\fm,\hat a)$ be a $Z$-minimal slot in $K$ of order $\ge 1$.
Take an immediate $\upo$-free newtonian $H$-field extension $\hat H$ of $H$; such $\hat H$ exists by \eqref{eq:14.0.1}. 
 Then $\hat K=\hat H[\imag]$ is also newtonian by~\eqref{eq:14.5.7}. 
Now apply  Corollary~\ref{cor:find zero of P, 2} with $L:=\hat K$  to obtain~$\hat b\in\hat K$ such that~$(P,\fm,\hat b)$ is a hole in $K$ equivalent to  $(P,\fm,\hat a)$.
\end{proof}

\section{Split-Normal Slots}\label{sec:split-normal holes}

\noindent
{\em In this section $H$ is a real closed $H$-field with small derivation and  asymptotic integration. We let $\mathcal{O}:= \mathcal{O}_H$ be its valuation ring and  $C:= C_H$ its constant field.  
We fix an immediate asymptotic extension $\hat H$ of~$H$ with valuation ring $\hat{\mathcal{O}}$ and an element~$\imag$ of an asymptotic extension of $\hat H$ with $\imag^2=-1$}. 
Then $\hat H$ is also an $H$-field by [ADH, 10.5.8], $\imag\notin\hat H$ and $K:=H[\imag]$ is an algebraic closure of $H$. With~$\hat K:= \hat H[\imag]$ we have the inclusion diagram
$$\xymatrix{ {\hat H} \ar@{-}[r]  &  \hat K ={\hat H}[\imag]  \\ 
 H \ar@{-}[u] \ar@{-}[r] &    K = H[\imag] \ar@{-}[u] }$$
By [ADH, 3.5.15, 10.5.7],
$K$ and $\hat K$ are $\d$-valued with valuation rings $\mathcal{O}+\mathcal{O}\imag$ and~$\hat{\mathcal{O}}+\hat{\mathcal{O}}\imag$ and
with the same constant field~$C[\imag]$, and $\hat K$ is an immediate extension of~$K$. Thus $H$, $K$, $\hat H$, $\hat K$ have the same
$H$-asymptotic couple $(\Gamma, \psi)$.

\begin{lemma} 
Let  $\hat a\in\hat H\setminus H$. Then $Z(H,\hat a) = Z\big(K,\hat a\big)\cap H\{Y\}$.
\end{lemma}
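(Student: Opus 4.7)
The plan is to establish both inclusions by combining two observations: (i) $\hat a$ induces ``the same cut'' on $H$ as on $K$, in the sense that $v(\hat a - K) = v(\hat a - H)$; and (ii) for $P \in H\{Y\}$, $a \in H$, $\fv \in H^\times$, the Newton degree $\ndeg_{\prec\fv} P_{+a}$ is the same whether computed in $H$ or in $K$. I will work with the characterization, implicit in the proof of Lemma~\ref{lem:Z(K,hat a)}, that $P \in Z(L,\hat a)$ iff $\ndeg_{\prec\fv} P_{+a} \geq 1$ for all $a \in L$, $\fv \in L^\times$ with $\hat a - a \prec \fv$.

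First I would prove (i). Writing $a = a_1 + a_2\imag \in K$ with $a_1, a_2 \in H$, and using $\hat a \in \hat H$, the $\hat H$- and $\hat H\imag$-components of $\hat a - a$ in $\hat K = \hat H \oplus \hat H\imag$ are $\hat a - a_1$ and $-a_2$, so $v(\hat a - a) = \min\!\big(v(\hat a - a_1), v(a_2)\big)$. The inclusion $v(\hat a - H) \subseteq v(\hat a - K)$ is then trivial, and for the reverse I use that $v(\hat a - H)$ is downward closed in $\Gamma$ (since $\hat H$ is immediate over $H$ and $\hat a \notin H$): if $v(a_2) < v(\hat a - a_1)$, then shifting $a_1$ by an element of $H$ of valuation $v(a_2)$ produces $a_1' \in H$ with $v(\hat a - a_1') = v(a_2)$. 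For (ii), I invoke [ADH, p.~480]: since $K = H[\imag]$ is an $H$-asymptotic extension of $H$ with $\Gamma_K = \Gamma_H = \Gamma$, and hence $\Psi_K = \Psi_H$, the Newton quantities of $P \in H\{Y\}$ are unaffected by the passage from $H$ to $K$.

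With (i) and (ii) in hand, the inclusion $Z(K,\hat a) \cap H\{Y\} \subseteq Z(H,\hat a)$ is immediate: restrict the $K$-hypothesis to $a \in H$, $\fv \in H^\times$, and apply (ii). For the reverse inclusion, take $P \in Z(H,\hat a)$ and arbitrary $a = a_1+a_2\imag \in K$, $\fv \in K^\times$ with $\hat a - a \prec \fv$; then $\hat a - a_1 \prec \fv$ and $a_2 \prec \fv$. Pick $\fv_0 \in H^\times$ with $\fv_0 \asymp \fv$ (possible since $\Gamma_K = \Gamma_H$). The $Z(H,\hat a)$-hypothesis gives $\ndeg_{\prec\fv_0} P_{+a_1} \geq 1$ in $H$, and (ii) lifts this to $K$, where $\fv \asymp \fv_0$ yields $\ndeg_{\prec\fv} P_{+a_1} \geq 1$. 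Since $a_2\imag \prec \fv$, [ADH, 11.2.7] gives $\ndeg_{\prec\fv} P_{+a} = \ndeg_{\prec\fv}(P_{+a_1})_{+a_2\imag} = \ndeg_{\prec\fv} P_{+a_1} \geq 1$, so $P \in Z(K,\hat a)$. The main subtlety is (i), whose proof crucially exploits $\hat a \in \hat H$ rather than the weaker $\hat a \in \hat K$; nothing deeper arises, as the rest is careful bookkeeping with Newton-degree invariance.
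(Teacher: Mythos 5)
Your proof is correct and follows essentially the same route as the paper's: one inclusion via invariance of the Newton quantities of $P\in H\{Y\}^{\neq}$ under passage from $H$ to its algebraic closure $K$, the other by writing $a=a_1+a_2\imag$, using $v(\hat a-a)=\min\bigl(v(\hat a-a_1),v(a_2)\bigr)$ (which is where $\hat a\in\hat H$ enters) to get $\hat a-a_1,\ a_2\prec\fv$, and then invoking [ADH, 11.2.7]. Your preliminary claim (i), that $v(\hat a-K)=v(\hat a-H)$, is true but is not actually needed beyond this valuation computation, and the paper dispenses with it.
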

\begin{proof}
The inclusion ``$\supseteq$'' is obvious since  the Newton degree of a differential polynomial $Q\in H\{Y\}^{\neq}$
does not change when $H$ is replaced by its algebraic closure; see~[ADH, 11.1]. 
Conversely, let $P\in Z(H,\hat a)$. Then for all $\fv\in H^\times$ and~$a\in H$ such that $a-\hat a\prec\fv$ we have $\ndeg_{\prec\fv} H_{+a}\geq 1$.
Let $\fv\in H^\times$ and $z\in K$ be such that $z-\hat a\prec\fv$.
Take $a,b\in H$ such that $z=a+b\imag$. Then $a-\hat a,b\imag\prec\fv$ and hence~$\ndeg_{\prec\fv} P_{+z}=\ndeg_{\prec\fv} P_{+a}\geq 1$, using [ADH, 11.2.7].
Thus $P\in Z\big(K,\hat a\big)$.
\end{proof}

\begin{cor} \label{cor:holes in K vs holes in K[i]} 
Let $(P,\fm,\hat a)$ be a slot in $H$ with $\hat a\in\hat H$. Then 
$(P,\fm,\hat a)$ is also a slot in $K$, and if
   $(P,\fm,\hat a)$  is $Z$-minimal as a slot in $K$, then 
$(P,\fm,\hat a)$  is $Z$-minimal as a slot in $H$.
Moreover, $(P,\fm,\hat a)$ is a hole in $H$ iff  $(P,\fm,\hat a)$ is a hole in $K$, and
if $(P,\fm,\hat a)$ is a minimal hole in $K$, then $(P,\fm,\hat a)$ is a minimal hole in $H$.
\end{cor}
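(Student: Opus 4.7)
The proof will proceed in stages mirroring the three assertions of the statement, with the preceding lemma $Z(H,\hat a)=Z(K,\hat a)\cap H\{Y\}$ as the main engine. The one non-cosmetic point I need to isolate first is the observation that any immediate asymptotic extension $\tilde H$ of $H$ satisfies $\imag\notin\tilde H$: indeed $\tilde H$ is a pre-$H$-field by [ADH, 10.5.8], hence an ordered field, and so $-1$ is not a square in $\tilde H$. In particular $\hat a\in\hat H\setminus H$ forces $\hat a\notin K$, since otherwise $\hat a=a+b\imag$ with $a,b\in H$, and $b\ne 0$ would give $\imag=(\hat a-a)/b\in\hat H$, while $b=0$ would give $\hat a=a\in H$.

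To see that $(P,\fm,\hat a)$ is a slot in $K$, note that $P\in H\{Y\}\setminus H$ implies $P\in K\{Y\}\setminus K$ (if $P$ equalled $a+b\imag\in K$ with $a,b\in H$, then comparing the degree-$0$ parts of $P$ inside $K\{Y\}$ would force $b=0$ and hence $P\in H$). The extension $\hat K=\hat H[\imag]$ of $K$ is immediate asymptotic, as recorded at the start of the section, and the observation above gives $\hat a\in\hat K\setminus K$. The condition $\hat a\prec\fm$ is unchanged, and $P\in Z(H,\hat a)\subseteq Z(K,\hat a)$ by the preceding lemma. For the $Z$-minimality transfer, suppose $P$ has minimal complexity in $Z(K,\hat a)$; since every $Q\in Z(H,\hat a)$ also lies in $Z(K,\hat a)$, we have $\cc(Q)\geq\cc(P)$ for all such $Q$, so $P$ has minimal complexity in $Z(H,\hat a)$ as well.

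The hole equivalence is then immediate: $(P,\fm,\hat a)$ is a slot in both $H$ and $K$ (the first by hypothesis, the second just proved), and being a hole only adds the intrinsic condition $P(\hat a)=0$ to the slot requirements. For the minimal hole statement, I need the converse direction of the slot transfer as well: any hole $(Q,\fn,\hat b)$ in $H$, with $\hat b$ living in some immediate pre-$H$-field extension $\tilde H$ of $H$, is a hole in $K$, by exactly the same argument (applied to $\tilde H$ and $\tilde H[\imag]$ in place of $\hat H$ and $\hat K$: one checks $Q\in K\{Y\}\setminus K$, $\hat b\in \tilde H[\imag]\setminus K$, and $Q(\hat b)=0$ is intrinsic). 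Therefore if $(P,\fm,\hat a)$ is a minimal hole in $K$, then $\cc(P)\leq\cc(Q)$ for every hole $(Q,\fn,\hat b)$ in $H$, so $(P,\fm,\hat a)$ is a minimal hole in $H$.

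I do not foresee a genuine obstacle; the only non-formal input is the $\imag\notin\tilde H$ observation, which already underlies the setup of the section. The rest is a bookkeeping exercise with the definitions and the preceding lemma.
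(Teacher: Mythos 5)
Your proof is correct and follows essentially the same route as the paper's: the first two claims reduce to the inclusion $Z(H,\hat a)\subseteq Z(K,\hat a)$ from the preceding lemma, the hole equivalence is just the observation that $P(\hat a)=0$ is intrinsic, and the minimal-hole claim is obtained by applying the first part of the corollary (with $\widetilde H$ in place of $\hat H$) to an arbitrary hole $(Q,\fn,\hat b)$ in $H$. The only difference is that you make explicit some bookkeeping the paper leaves as ``obvious'' — notably the observation $\imag\notin\widetilde H$, which is indeed the one non-formal ingredient.
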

\begin{proof}
The first three claims are obvious from $\hat K$ being an immediate extension of $K$ and the previous lemma.
Suppose $(P,\fm,\hat a)$ is minimal as a hole  in $K$.
Let~$(Q,\fn,\tilde{b})$ be a hole in~$H$;
thus $\tilde{b}\in \widetilde{H}$ where $\widetilde{H}$ is an immediate asymptotic extension of $H$.
By the first part of the corollary applied to $(Q,\fn,\tilde{b})$ and $\widetilde{H}$ in place of
$(P,\fm,\hat a)$ and~$\hat H$,  respectively, $(Q,\fn,\tilde{b})$ is also a hole  in $K$. Hence $\cc(P) \leq \cc(Q)$, proving the last claim. 
\end{proof}

\noindent
In the next subsection we define the notion of a {\it split-normal}\/ slot in $H$. Later in this section we employ the results of Sections~\ref{sec:normalization}--\ref{sec:approx linear diff ops} to show, under suitable hypotheses on $H$, that minimal holes in $K$ of  order~$\geq 1$ give rise to a split-normal $Z$-minimal slots in $H$. (Theorem~\ref{thm:split-normal}.)
We then investigate which kinds of refinements preserve split-normality, and also consider a strengthening of split-normality.

\subsection*{Defining split-normality} {\em In this subsection  $b$ ranges over $H$ and $\fm, \fn$ over $H^\times$. Also, $(P, \fm, \hat a)$ is a slot in $H$ of order $r\ge 1$ with $\hat a\in \hat H\setminus H$ and linear part~$L:=L_{P_{\times \fm}}$. Set $w:=\wt(P)$, so $w\ge r$; if $\order L=r$, we set $\fv:=\fv(L)$}.

\begin{definition}\label{SN}  
We say that $(P,\fm,\hat a)$ is {\bf split-normal} if $\order L=r$, and \index{slot!split-normal}\index{split-normal}
\begin{itemize}
\item[(SN1)] $\fv\prec^\flat 1$; 
\item[(SN2)]  $(P_{\times\fm})_{\geq 1}=Q+R$ where $Q, R\in H\{Y\}$, $Q$ is homogeneous of degree~$1$ and order~$r$,  $L_Q$ splits over $K$, and $R\prec_{\Delta(\fv)} \fv^{w+1} (P_{\times\fm})_1$. 
\end{itemize}
\end{definition}

\noindent
Note that in (SN2) we do not require that $Q=(P_{\times\fm})_1$. 

\begin{lemma}\label{splnormalnormal} Suppose $(P,\fm,\hat a)$ is split-normal.  Then $(P,\fm, \hat a)$ is normal, and with~$Q$,~$R$ as in \textup{(SN2)} we have $(P_{\times \fm})_1-Q\prec_{\Delta(\fv)} \fv^{w+1}(P_{\times \fm})_1$, so $(P_{\times \fm})_1\sim Q$. 
\end{lemma}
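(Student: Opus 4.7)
The plan is to decompose by homogeneous parts and compare term by term. The key structural observation is that $(P_{\times\fm})_{\ge 1}$ splits as $(P_{\times\fm})_1 + (P_{\times\fm})_{>1}$ according to homogeneous degree, while split-normality offers an alternative decomposition $Q + R$ where $Q$ is pure degree~$1$.

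First I would write $R = R_1 + R_{>1}$ where $R_1$ is the homogeneous degree-$1$ part of $R$ and $R_{>1}$ its degree $>1$ part; $Q$ being homogeneous of degree $1$ then forces, by uniqueness of the decomposition into homogeneous parts,
\[
(P_{\times\fm})_1 \ =\ Q+R_1,\qquad (P_{\times\fm})_{>1}\ =\ R_{>1}.
\]
Next I would use the elementary fact that taking a homogeneous part never decreases the gaussian valuation: since $v(R) = \min_d v(R_d)$, we have $R_1\preceq R$ and $R_{>1}\preceq R$, and hence the same holds with $\preceq_{\Delta(\fv)}$ in place of $\preceq$ (since $\preceq_{\Delta(\fv)}$ is coarser than $\preceq$). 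Combining with the bound $R\prec_{\Delta(\fv)}\fv^{w+1}(P_{\times\fm})_1$ from (SN2) yields
\[
(P_{\times\fm})_1 - Q\ =\ R_1\ \prec_{\Delta(\fv)}\ \fv^{w+1}(P_{\times\fm})_1,
\]
which is the second assertion, and
\[
(P_{\times\fm})_{>1}\ =\ R_{>1}\ \prec_{\Delta(\fv)}\ \fv^{w+1}(P_{\times\fm})_1,
\]
which is condition (N2). Together with (SN1) $=$ (N1) and $\order L = r$ (both built into the definition of split-normal), this gives that $(P,\fm,\hat a)$ is normal.

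Finally, since (SN1) gives $\fv\prec 1$, hence $\fv^{w+1}\prec 1$, we have $\fv^{w+1}(P_{\times\fm})_1\prec (P_{\times\fm})_1$, so $(P_{\times\fm})_1 - Q \prec_{\Delta(\fv)}(P_{\times\fm})_1$, which immediately yields $(P_{\times\fm})_1\sim Q$. There is no real obstacle here; the only point worth stating carefully is the passage $R\prec_{\Delta(\fv)}\cdots\Rightarrow R_1,R_{>1}\prec_{\Delta(\fv)}\cdots$, which rests on the fact that for any convex subgroup $\Delta$ of $\Gamma$ and any homogeneous part $R_d$ of $R$ one has $v_\Delta(R_d)\ge v_\Delta(R)$ — a direct consequence of $v(R_d)\ge v(R)$.
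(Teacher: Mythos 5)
Your proof is correct and follows essentially the same route as the paper: decompose $R=R_1+R_{>1}$ by homogeneous parts, note $(P_{\times\fm})_1=Q+R_1$ and $(P_{\times\fm})_{>1}=R_{>1}$, and use $R_1,R_{>1}\preceq R\prec_{\Delta(\fv)}\fv^{w+1}(P_{\times\fm})_1$ to get both the asymptotic estimate on $(P_{\times\fm})_1-Q$ and condition (N2). The only addition is your explicit justification of $(P_{\times\fm})_1\sim Q$, which the paper leaves to the reader and which you handle correctly.
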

\begin{proof} We have $(P_{\times \fm})_1=Q+R_1$ and $R_1\preceq R \prec_{\Delta(\fv)} \fv^{w+1} (P_{\times\fm})_1$,     and thus $${(P_{\times \fm})_1-Q\,\prec_{\Delta(\fv)}}\, \fv^{w+1}(P_{\times \fm})_1.$$ Now $(P,\fm,\hat a)$ is normal because $(P_{\times \fm})_{>1}= R_{>1} \prec_{\Delta(\fv)} \fv^{w+1}(P_{\times \fm})_1$.
\end{proof}

\noindent
If $(P,\fm,\hat a)$ is normal and $(P_{\times\fm})_{1}=Q+R$  where $Q, R\in H\{Y\}$, $Q$ is homogeneous of degree~$1$ and order $r$,  $L_Q$ splits over $K$, and $R\prec_{\Delta(\fv)} \fv^{w+1} (P_{\times\fm})_1$, then~$(P,\fm, \hat a)$ is split-normal. 
Thus  if $(P,\fm,\hat a)$ is normal and $L$ splits over $K$, then $(P,\fm,\hat a)$ is split-normal; in particular, if
$(P,\fm,\hat a)$ is normal  of order $r=1$, then it is split-normal. 
If~$(P,\fm,\hat a)$ is  split-normal, then so are $(bP,\fm,\hat a)$ for $b\neq 0$ and~$(P_{\times\fn},\fm/\fn,\hat a/\fn)$. 
Note also that if  $(P,\fm,\hat a)$ is split-normal, 
then with $Q$ as in~(SN2) we have $\fv(L)\sim\fv(L_Q)$, by Lemma~\ref{lem:fv of perturbed op}. If $(P,\fm, \hat a)$ is split-normal and $H$ is $\upl$-free, then $\exc^{\ev}(L)=\exc^{\ev}(L_Q)$ with $Q$ as in (SN2),  by
 Lemmas~\ref{splnormalnormal} and~\ref{cor:excev stability}.

\begin{lemma}\label{lem:split-normal comp conj}
Suppose $(P,\fm,\hat a)$ is split-normal and  $\phi\preceq 1$ is active in $H$ and~$\phi>0$ \textup{(}so $H^\phi$ is still an $H$-field\textup{)}. Then the slot $(P^\phi,\fm,\hat a)$ in $H^\phi$ is split-normal.
\end{lemma}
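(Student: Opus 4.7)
The plan is to mimic the proof of Lemma~\ref{lem:normality comp conj} (preservation of normality under compositional conjugation), using the extra structure provided by the splitting condition in (SN2).

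First, since Lemma~\ref{splnormalnormal} gives that $(P,\fm,\hat a)$ is normal, Lemma~\ref{lem:normality comp conj} already yields that $(P^\phi,\fm,\hat a)$ is normal in $H^\phi$. In particular, writing $L^\phi := (L_{P_{\times\fm}})^\phi = L_{(P^\phi)_{\times\fm}}$, we obtain $\order L^\phi = r$ and $\fv(L^\phi) \prec^\flat_\phi 1$, which is~(SN1) for $(P^\phi,\fm,\hat a)$.

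For (SN2), take the decomposition $(P_{\times\fm})_{\geq 1} = Q + R$ in $H\{Y\}$ provided by~(SN2) for $(P,\fm,\hat a)$. Since compositional conjugation by $\phi$ is a differential $H$-algebra isomorphism that commutes with the decomposition into homogeneous parts (by [ADH,~5.7.5]), applying it gives the corresponding decomposition $(P^\phi_{\times\fm})_{\geq 1} = Q^\phi + R^\phi$ in $H^\phi\{Y\}$. Here $Q^\phi$ is homogeneous of degree~$1$, and $L_{Q^\phi} = (L_Q)^\phi$, which has the same order as $L_Q$, so $\order L_{Q^\phi} = r$. Moreover $L_{Q^\phi} = (L_Q)^\phi$ splits over $K^\phi$ by Lemma~\ref{lem:split and compconj}, since $L_Q$ splits over $K$.

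The main step is to verify that $R^\phi \prec_{\Delta(\fv(L^\phi))} \fv(L^\phi)^{w+1}(P^\phi_{\times\fm})_1$. By Lemma~\ref{lem:v(Aphi)} we have $\fv(L^\phi)\asymp_{\Delta(\fv)}\fv$, so in particular $\Delta(\fv(L^\phi))=\Delta(\fv)$, and the target relation may be tested with respect to $\Delta(\fv)$. Since $\fv\prec^\flat 1$ and $\phi\preceq 1$ is active, [ADH,~11.1.1] gives $[\phi]<[\fv]$, whence by [ADH,~11.1.4] the compositional conjugation by $\phi$ satisfies $S^\phi \asymp_{\Delta(\fv)} S$ for every $S\in H\{Y\}^{\neq}$. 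Applying this to both $R$ and $(P_{\times\fm})_1$, and combining with (SN2) for $(P,\fm,\hat a)$, we obtain
\[ R^\phi\ \asymp_{\Delta(\fv)}\ R\ \prec_{\Delta(\fv)}\ \fv^{w+1}(P_{\times\fm})_1\ \asymp_{\Delta(\fv)}\ \fv(L^\phi)^{w+1}(P^\phi_{\times\fm})_1, \]
as required. The main obstacle is bookkeeping: tracking that the coarsening $\Delta(\fv)$ is unaffected by passing to $H^\phi$ and that the individual factors transform correctly under $\phi$-conjugation; no new conceptual input is needed beyond Lemmas~\ref{lem:v(Aphi)} and~\ref{lem:split and compconj} and the estimates from [ADH,~11.1].
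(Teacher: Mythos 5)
Your proposal is correct and follows essentially the same route as the paper: Lemma~\ref{lem:v(Aphi)} (invoked here through the normality-preservation Lemma~\ref{lem:normality comp conj}, whose proof is the same computation) for (SN1) and for $\fv(L^\phi)\asymp_{\Delta(\fv)}\fv$, Lemma~\ref{lem:split and compconj} for the splitting of $L_{Q^\phi}$ over $K^\phi$, and the estimates $R^\phi\asymp_{\Delta(\fv)}R$, $(P^\phi_{\times\fm})_1\asymp_{\Delta(\fv)}(P_{\times\fm})_1$ from [ADH, 11.1] for (SN2). The only cosmetic difference is that the paper first arranges $\fm=1$ and verifies (SN1) directly rather than via normality, which changes nothing of substance.
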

\begin{proof}
We first arrange $\fm=1$. Note that $L_{P^\phi}=L^\phi$ has order $r$. Put $\fw:=\fv(L_{P^\phi})$, and take $Q$, $R$ as in (SN2). Then 
$\fv\asymp_{\Delta(\fv)}\fw\prec^\flat_\phi 1$ by   Lemma~\ref{lem:v(Aphi)}. Moreover, $L_{Q^\phi}=L_Q^\phi$ splits over $K^\phi$; see [ADH, p.~291] or Lemma~\ref{lem:split and compconj}. By
[ADH, 11.1.4],  
$$R^\phi\ \asymp_{\Delta(\fv)} R\ \prec_{\Delta(\fv)}\ \fv^{w+1} P_1\ \asymp_{\Delta(\fv)}\ \fw^{w+1} P_1^\phi,$$
so $(P^\phi,\fm,\hat a)$  is split-normal. 
\end{proof}

\noindent
Recall: ``$(P^\phi,\fm, \hat a)$ is    
split-normal, eventually'' means that there is an active $\phi_0$ in~$H$ such that $(P^\phi,\fm, \hat a)$ is split-normal for all active $\phi\preceq \phi_0$ in $H$.
Since we need to preserve $H$ being an $H$-field when compositionally conjugating, we say:  {\em $(P^\phi,\fm, \hat a)$ is eventually 
split-normal\/} if there exists an active $\phi_0$ in $H$ such that $(P^\phi,\fm, \hat a)$ is split-normal for all active $\phi\preceq \phi_0$ in $H$ with $\phi>0$. 
We use this terminology in a similar way with ``split-normal'' replaced by other properties of slots of  order~$r \geq 1$ in real closed $H$-fields with small derivation and asymptotic integration, such as ``deep'' and ``deep and split-normal''.

\subsection*{Achieving split-normality} {\em Assume $H$ is $\upo$-free and $(P,\fm,\hat a)$ is a minimal hole in $K=H[\imag]$ of order $r\ge 1$, with  $\fm\in H^\times$ and $\hat a\in\hat K\setminus K$.}\/ Note that then~$K$ is $\upo$-free by [ADH, 11.7.23],  $K$ is
$(r-1)$-newtonian by Corollary~\ref{minholenewt}, and~$K$ is $r$-linearly closed  by Corollary~\ref{corminholenewt}.
In particular, the linear part of $(P,\fm,\hat a)$ is~$0$ or splits over~$K$. 
If  $\deg P=1$, then $r=1$ by
Corollary~\ref{cor:minhole deg 1}. 
If $\deg P >1$, then
$K$ and~$H$ are $r$-linearly newtonian by Corollary~\ref{degmorethanone} and Lemma~\ref{lem:descent r-linear newt}.  
In particular, if~$H$ is $1$-linearly newtonian, then $H$ is $r$-linearly newtonian.  
{\em In this subsection we let $a$ range over $K$, $b$, $c$ over $H$, and $\fn$ over $H^\times$}.

{\sloppy
\begin{lemma}\label{lem:hole in K with same c as P} 
Let~$(Q,\fn,\hat b)$ be a hole in $H$ with $\cc(Q)\le \cc(P)$ and
$\hat b\in \hat H$. Then~$\cc(Q)=\cc(P)$, $(Q,\fn,\hat b)$ is minimal and remains a minimal hole in $K$. The linear part of~$(Q,\fn,\hat b)$
is $0$ or splits over~$K$, and $(Q,\fn,\hat b)$ has a refinement~$(Q_{+b},\fp,\hat b-b)$ \textup{(}in $H$\textup{)} such that $(Q^\phi_{+b},\fp,\hat b-b)$ is eventually deep and split-normal. 
\end{lemma}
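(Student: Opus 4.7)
The plan is to deduce (1)--(3) quickly from the framework already built up, and then obtain~(4) by a single application of Corollary~\ref{cor:mainthm}, using the $r$-linear closedness of $K$ to upgrade ``normal'' to ``split-normal'' for free.

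First, since $\hat b\in\hat H$, Corollary~\ref{cor:holes in K vs holes in K[i]} shows that $(Q,\fn,\hat b)$ is also a hole in $K$. By minimality of $(P,\fm,\hat a)$ in~$K$ we have $\cc(P)\le\cc(Q)$, and combined with the hypothesis $\cc(Q)\le\cc(P)$ this gives $\cc(Q)=\cc(P)$, whence $(Q,\fn,\hat b)$ is itself a minimal hole in $K$. Moreover, any hole in $H$ of strictly smaller complexity than $(Q,\fn,\hat b)$ would, again by Corollary~\ref{cor:holes in K vs holes in K[i]}, be a hole in $K$ of complexity less than $\cc(P)$, contradicting minimality of $(P,\fm,\hat a)$; so $(Q,\fn,\hat b)$ is minimal in $H$ as well. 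This establishes~(1) and~(2). For~(3), recall from the remarks preceding the lemma that $K$ is $r$-linearly closed, so the linear part $L_{Q_{\times\fn}}\in H[\der]\subseteq K[\der]$, being of order $\le r$, is either $0$ or splits over $K$.

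For~(4), note that $H$ is a real closed $H$-field (so $\d$-valued with divisible value group~$\Gamma$) and $\upo$-free, and $(Q,\fn,\hat b)$ is a minimal hole in $H$ of order~$r$ by~(2). Thus Corollary~\ref{cor:mainthm} applies and yields a refinement $(Q_{+b},\fp,\hat b-b)$ of $(Q,\fn,\hat b)$ together with an active $\phi_0>0$ in $H$ such that $(Q^\phi_{+b},\fp,\hat b-b)$ is deep and normal for every active $\phi\preceq\phi_0$ in $H$ with $\phi>0$.

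It remains to upgrade ``normal'' to ``split-normal''. Fix such a $\phi$, and let $L^\phi$ be the linear part of the deep and normal slot $(Q^\phi_{+b},\fp,\hat b-b)$ in~$H^\phi$. By definition of deep (via steep), $\order L^\phi=r$, so in particular $L^\phi\ne 0$. The ring isomorphism $A\mapsto A^\phi\colon K[\der]\to K^\phi[\derdelta]$ and Lemma~\ref{lem:split and compconj} show that $K^\phi$, being the compositional conjugate of the $r$-linearly closed field $K$, is itself $r$-linearly closed; hence $L^\phi$ splits over $K^\phi=H^\phi[\imag]$. Taking $Q:=(Q^\phi_{+b,\times\fp})_1$ and $R:=(Q^\phi_{+b,\times\fp})_{>1}$ in the defining condition~(SN2) and using the normality inequality $R\prec_{\Delta(\fv)}\fv^{w+1}(Q^\phi_{+b,\times\fp})_1$ from~(N2), we conclude that $(Q^\phi_{+b},\fp,\hat b-b)$ is split-normal, as desired.

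The argument has no serious obstacle; the only point that needs to be in place is the $r$-linear closedness of $K$ (and hence of $K^\phi$), which is already recorded in the remarks preceding the lemma as a consequence of Corollary~\ref{corminholenewt} applied to the minimal hole $(P,\fm,\hat a)$ in~$K$.
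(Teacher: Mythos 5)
Your proof is correct and follows essentially the same route as the paper: Corollary~\ref{cor:holes in K vs holes in K[i]} for the complexity and minimality claims, $r$-linear closedness of $K$ (from Corollary~\ref{corminholenewt}) for the splitting of the linear part, Corollary~\ref{cor:mainthm} for the eventually deep and normal refinement, and the observation that ``normal with splitting linear part'' yields split-normal. The only cosmetic differences are that you split $L^\phi$ directly over the $r$-linearly closed $K^\phi$, where the paper splits the linear part over $K$ and transports the splitting via compositional conjugation, and that you reuse the letter $Q$ for the degree-one part in (SN2), which clashes with the $Q$ of the hole.
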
 }
\begin{proof}
By Corollary~\ref{cor:holes in K vs holes in K[i]}, 
$(Q,\fn,\hat b)$ is a  hole in $K$, and this hole in $K$ is minimal with~$\cc(Q)=\cc(P)$, since $(P,\fm,\hat a)$ is minimal. By Corollary~\ref{cor:holes in K vs holes in K[i]} again, $(Q,\fn,\hat b)$  as a hole in $H$ is also minimal.  Since $K$ is $r$-linearly closed, the  linear part of~$(Q,\fn,\hat b)$ is~$0$ or  splits over $K$. 
Corollary~\ref{cor:mainthm} (with $H$ instead of $K$) gives a refinement~$(Q_{+b},\fp,\hat b-b)$ of the minimal hole $(Q,\fn,\hat b)$ in $H$
such that $(Q_{+b}^\phi,\fp,\hat b-b)$ is deep and normal, eventually. Thus the linear part of $(Q_{+b},\fp,\hat b-b)$ is not $0$, and
as $\cc(Q_{+b})=\cc(P)$, this linear part splits over $K$. Hence for active $\phi$ in $H$ the linear part of
$(Q_{+b}^\phi,\fp,\hat b-b)$ splits over $K^\phi=H^\phi[\imag]$. Thus $(Q^\phi_{+b},\fp,\hat b-b)$ is eventually split-normal.
\end{proof}

\noindent
Now $\hat a=\hat b + \hat c\, \imag$ with $\hat b, \hat c\in \hat H$, and $\hat b, \hat c \prec \fm$. Moreover, $\hat b\notin H$ or $\hat c\notin H$.  
Since $\hat a$  is differentially algebraic over~$H$, so is its conjugate  $\hat b - \hat c\, \imag$, and therefore  its 
real and imaginary parts~$\hat b$ and~$\hat c$ are differentially algebraic over~$H$; thus $Z(H,\hat b)\ne \emptyset$ for $\hat b\notin H$, and
$Z(H,\hat c)\ne \emptyset$ for $\hat c\notin H$. More precisely:

\begin{lemma}\label{kb} We have $\operatorname{trdeg}\!\big(H\<\hat b\>|H\big)\le 2r$. If $\hat b\notin H$, then $Z(H,\hat b)\cap H[Y]=\emptyset$, so
$1\le \order Q \le 2r$ for all $Q\in Z(H,\hat b)$ of minimal complexity. 
These statements also hold for $\hat c$ instead of $\hat b$. 
\end{lemma}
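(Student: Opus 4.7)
The plan is to combine a transcendence-degree bound obtained via complex conjugation with the algebraic maximality of $H$. First, since $P$ is a minimal annihilator of $\hat a$ over $K$ of order $r$, we have $\operatorname{trdeg}\!\big(K\langle\hat a\rangle|K\big)\le r$. The complex-conjugation automorphism of $\hat K$ over $\hat H$ sends $\hat a=\hat b+\hat c\,\imag$ to $\overline{\hat a}=\hat b-\hat c\,\imag$, hence $\operatorname{trdeg}\!\big(K\langle\overline{\hat a}\rangle|K\big)\le r$, and so $\operatorname{trdeg}\!\big(K\langle\hat a,\overline{\hat a}\rangle|K\big)\le 2r$. Since $\hat b=\frac{1}{2}(\hat a+\overline{\hat a})$ and $\hat c=\frac{1}{2\imag}(\hat a-\overline{\hat a})$, the differential subfields $K\langle\hat b,\hat c\rangle$ and $K\langle\hat a,\overline{\hat a}\rangle$ of $\hat K$ coincide, giving $\operatorname{trdeg}\!\big(K\langle\hat b,\hat c\rangle|K\big)\le 2r$. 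Then $H\langle\hat b\rangle\subseteq K\langle\hat b,\hat c\rangle$ together with $K$ being algebraic over $H$ yields $\operatorname{trdeg}\!\big(H\langle\hat b\rangle|H\big)\le 2r$, and the same reasoning gives $\operatorname{trdeg}\!\big(H\langle\hat c\rangle|H\big)\le 2r$.

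For the second claim, assume $\hat b\notin H$ and suppose towards a contradiction that $Z(H,\hat b)\cap H[Y]\ne\emptyset$. Then any $Q\in Z(H,\hat b)$ of minimal complexity belongs to $H[Y]$, i.e., $\order Q=0$. Since $\hat H$ is an immediate extension of $H$ with $\hat b\notin H$, the set $v(\hat b-H)$ is a downward closed subset of $\Gamma$ without largest element, and picking $b_\rho\in H$ with $v(\hat b-b_\rho)$ strictly increasing and cofinal in $v(\hat b-H)$ yields a pc-sequence $(b_\rho)$ in $H$ with $b_\rho\leadsto\hat b$ that is divergent in $H$ (any pseudo-limit $b_0\in H$ would give $v(\hat b-b_0)\in v(\hat b-H)$ exceeding all $v(\hat b-b_\rho)$, contradicting cofinality). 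By Corollary~\ref{mindivmin}, $Q$ is a minimal differential polynomial of $(b_\rho)$ over $H$; as $\order Q=0$, this makes $(b_\rho)$ of algebraic type over $H$. But $H$ is real closed with convex valuation ring, hence henselian, and being of equicharacteristic zero, it is algebraically maximal by [ADH, 3.3.26]; thus every pc-sequence of algebraic type in $H$ has a pseudo-limit in $H$, contradicting the divergence of $(b_\rho)$.

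Finally, the upper bound $\order Q\le 2r$ on minimal $Q\in Z(H,\hat b)$ when $\hat b\notin H$ follows from the transcendence-degree bound: the elements $\hat b,\hat b',\dots,\hat b^{(2r)}$ of $H\langle\hat b\rangle$ are algebraically dependent over $H$, yielding a nonzero $R\in H\{Y\}$ of order at most $2r$ with $R(\hat b)=0$. Then $R\in Z(H,\hat b)$, so any minimal $Q\in Z(H,\hat b)$ satisfies $\cc(Q)\le\cc(R)$, in particular $\order Q\le 2r$. The corresponding statements for $\hat c$ in place of $\hat b$ follow by the symmetric argument (the roles of $\hat b$ and $\hat c$ enter the setup symmetrically once complex conjugation has been used). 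The main point in the proof is the invocation of algebraic maximality of $H$; the remaining steps are routine applications of transcendence-degree bookkeeping and the formalism of $Z(H,\hat b)$ developed in Section~\ref{sec:holes}.
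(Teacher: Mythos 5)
Your proof is correct, and for two of the three claims it is essentially the paper's argument: the bound $\operatorname{trdeg}(H\<\hat b\>|H)\le 2r$ via $\hat b\in H\<\hat a,\hat b-\hat c\,\imag\>$ and conjugation, and the bound $\order Q\le 2r$ by producing an annihilator $R$ of $\hat b$ of order $\le 2r$, noting $R\in Z(H,\hat b)$ (via Lemma~\ref{lem:Z(K,hat a)}) and comparing complexities. Where you genuinely diverge is the middle claim $Z(H,\hat b)\cap H[Y]=\emptyset$: the paper applies [ADH, 11.4.8] to a minimal-complexity $Q\in Z(H,\hat b)$ to obtain a zero $f\notin H$ of $Q$ in an immediate asymptotic extension, and concludes at once that $Q\notin H[Y]$ (an ordinary polynomial over the real closed, hence algebraically maximal, $H$ has no such zero); you instead reduce to a divergent pc-sequence $b_\rho\leadsto\hat b$, use Corollary~\ref{mindivmin} to see that an order-$0$ minimal $Q$ would make $(b_\rho)$ of algebraic type, and contradict algebraic maximality of the henselian, equicharacteristic-zero field $H$. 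Both routes hinge on the same underlying fact (no proper immediate algebraic extension of $H$); the paper's is a one-line citation, yours is a little longer but gives the mechanism explicitly. Two small bookkeeping points you should make explicit: invoking Corollary~\ref{mindivmin} requires packaging $(Q,\fm,\hat b)$ as a $Z$-minimal slot, so fix $\fm\in H^\times$ with $\hat b\prec\fm$ (the $\fm$ of the ambient hole works, since $\hat a\prec\fm$ gives $\hat b\prec\fm$) and note $Q\notin H$; and your citation ``[ADH, 3.3.26]'' for henselian~$+$~equicharacteristic zero~$\Rightarrow$~algebraically maximal may not be the exact item number, though the fact is indeed in [ADH, 3.3] and, together with the equivalence of algebraic maximality with every algebraic-type pc-sequence having a pseudolimit, is all you need.
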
 
\begin{proof} The first statement follows from $\hat b \in  H\<\hat b +\hat c\, \imag, \hat b - \hat c\, \imag\>$. Suppose $\hat b\notin H$. If~$Q\in Z(H,\hat b)$ has minimal complexity, then [ADH, 11.4.8] yields an element $f$ in a proper immediate asymptotic extension of $H$ with  $Q(f)=0$, so $Q\notin H[Y]$.
\end{proof}

\begin{lemma}\label{lem:r=deg P=1}
Suppose $\deg P=1$ and $\hat b\notin H$.  
Let $Q\in Z(H,\hat b)$ be of minimal complexity; then either $\order{Q}=1$, or $\order{Q}=2$, $\deg{Q}=1$.
Let $\hat{Q}\in H\{Y\}$ be a minimal annihilator of
$\hat b$ over $H$;
then either $\order{\hat{Q}}=1$, or $\order{\hat{Q}}=2$, $\deg{\hat{Q}}=1$, and $L_{\hat{Q}}\in H[\der]$ splits over $K$.
\end{lemma}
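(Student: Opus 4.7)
My plan is to exploit the fact that $P$ has order and degree $1$. The hypothesis $\deg P=1$ together with Corollary~\ref{cor:minhole deg 1} forces $r=1$, so I can write $P=a(Y'+gY-u)$ with $a\in K^\times$ and $g,u\in K$, giving $A(\hat a)=u$ where $A:=\der+g\in K[\der]$ is monic of order $1$. The key step is to pass from the order-$1$ operator $A$ over $K$ to its ``real counterpart'' $L:=\operatorname{lclm}(A,\bar A)\in H[\der]$. By Example~\ref{ex:lclm compl conj}, $L$ has order $1$ if $g\in H$ and order $2$ if $g\notin H$; in either case the explicit formula there shows $L$ splits over $K$. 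Choosing $B\in K[\der]$ with $L=BA$, Lemma~\ref{lem:lclm compl conj} gives $L(\hat a)=B(u)$, whence $L(\hat b)=\Re\bigl(B(u)\bigr)\in H$.

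Setting $\tilde Q(Y):=L(Y)-\Re\bigl(B(u)\bigr)\in H\{Y\}$ then yields a degree-$1$ annihilator of $\hat b$ of order at most $2$ and complexity at most $(2,1,1)$. For any $\fn\in H^\times$ with $\hat b\prec\fn$, the triple $(\tilde Q,\fn,\hat b)$ is a hole in $H$, so $\tilde Q\in Z(H,\hat b)$ by Lemma~\ref{lem:Z(K,hat a)} (applied with $H$ in place of the generic $K$ of Section~\ref{sec:holes}). Consequently the minimal-complexity element $Q\in Z(H,\hat b)$ and the minimal annihilator $\hat Q$ both satisfy $\cc(\cdot)\leqslant(2,1,1)$, and the lexicographic ordering on $\N^3$ forces each of them to have either order $1$, or order $2$ combined with degree $1$. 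This settles the statement about $Q$ and the order/degree part for $\hat Q$.

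For the splitting claim I would assume $\order\hat Q=2$ and analyze the left ideal $J:=\{A\in H[\der]:A(\hat b)\in H\}$ of the left-PID $H[\der]$, writing $J=H[\der]M$ for some $M\in H[\der]$. Now $\order M=0$ would force $\hat b\in H$, while $\order M=1$ would produce the degree-$1$, order-$1$ annihilator $M(Y)-M(\hat b)\in H\{Y\}$ of $\hat b$, contradicting the minimality of $\hat Q$; hence $\order M=2$. Since $L_{\hat Q},L\in J$ both have order $2$, each is a scalar multiple of $M$ in $H^\times$, so $L_{\hat Q}$ and $L$ agree up to a factor in $H^\times$, and this transports the splitting of $L$ over $K$ to $L_{\hat Q}$. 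Incidentally the same ideal-theoretic analysis shows that if $g\in H$ (so $\order L=1$) then $\order M\leqslant 1$, which forces $\order\hat Q=1$. The main subtlety to watch will be the clean ideal-theoretic extraction of $M$ together with the bookkeeping that $L_{\hat Q}$ inherits a splitting over $K$ once it is a scalar multiple of $L$ from $H^\times$; reading off $\cc(\tilde Q)$ and invoking the formula for $\operatorname{lclm}(A,\bar A)$ are essentially routine by comparison.
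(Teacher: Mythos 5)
Your proposal is correct, and its first half is the paper's own argument: $r=1$ from Corollary~\ref{cor:minhole deg 1}, the annihilator $\tilde Q(Y)=L(Y)-\Re\big(B(u)\big)$ built from $L=\operatorname{lclm}(A,\overline{A})$ via Example~\ref{ex:lclm compl conj} and Lemma~\ref{lem:lclm compl conj}, and the resulting bound $\cc(Q)\leq\cc(\hat Q)\leq\cc(\tilde Q)\leq(2,1,1)$. One point to patch: that complexity bound alone does not exclude order $0$, which your phrase ``the lexicographic ordering forces\dots'' glosses over. For $Q$ you need Lemma~\ref{kb} (which the paper cites at exactly this spot), and for $\hat Q$ you can note that an annihilator in $H[Y]^{\neq}$ would make $\hat b$ algebraic over the real closed field $H$ inside the ordered extension $\hat H$, forcing $\hat b\in H$.

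For the splitting claim you take a genuinely different route. You pass to the left ideal $J=\{A\in H[\der]:A(\hat b)\in H\}$ of $H[\der]$ with monic generator $M$; assuming $\order\hat Q=2$, you rule out $\order M=0$ (it gives $\hat b\in H$) and $\order M=1$ (it gives the lower-complexity annihilator $M(Y)-M(\hat b)$), and since $L$ and $L_{\hat Q}$ lie in $J$ (the latter because $\deg\hat Q=1$), both must then have order $2$, hence are $H^\times$-multiples of $M$; so $L_{\hat Q}$ is an $H^\times$-multiple of $L$ and inherits its splitting over $K$. This is sound, including the side remark that $\order L=1$ would force $\order\hat Q=1$. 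The paper instead compares $\hat Q$ directly with $\tilde Q$: if $\cc(\hat Q)<\cc(\tilde Q)$ then $\order\hat Q=1$, while if $\cc(\hat Q)=\cc(\tilde Q)=(2,1,1)$, eliminating $Y''$ between the two degree-one annihilators and invoking minimality of $\hat Q$ gives $\hat Q=a\tilde Q$ with $a\in H^\times$, so $L_{\hat Q}=aL_{\tilde Q}$ splits over $K$. Both arguments encode the same fact, namely that any two order-$2$, degree-$1$ annihilators of $\hat b$ over $H$ are proportional; yours is more structural, the paper's is shorter since $\tilde Q$ is already in hand.
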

\begin{proof}
Recall that $r=1$ by Corollary~\ref{cor:minhole deg 1}. Example~\ref{ex:lclm compl conj} and
Lemma~\ref{lem:lclm compl conj}  
give a $\tilde Q\in H\{Y\}$ of degree $1$ and order~$1$ or $2$ such that $\tilde Q(\hat b)=0$ and~$L_{\tilde Q}$
splits over $K$. Then $\cc(\tilde Q)=(1,1,1)$ or $\cc(\tilde Q)=(2,1,1)$, which proves the claim about~$Q$, using also Lemma~\ref{kb}. 
Also, $\tilde{Q},\hat{Q}\in Z(H,\hat b)$, hence $\cc(Q)\leq \cc(\hat Q)\leq\cc(\tilde Q)$. If~$\cc(\hat{Q})=\cc(\tilde Q)$, then $\hat{Q}=a\tilde Q$ for some~$a\in H^\times$. The claim about $\hat Q$ now follows easily. 
\end{proof}

\noindent
By Corollary~\ref{cor:mainthm} and Lemma~\ref{ufm}, our minimal hole $(P,\fm,\hat a)$ in $K$ has a refinement $(P_{+a},\fn,\hat a-a)$ such that eventually $(P_{+a}^\phi,\fn,\hat a-a)$ is deep and normal. 
Moreover, as $K$ is $r$-linearly closed, the
linear part of $(P_{+a}^\phi,\fn,\hat a-a)$, for active~$\phi$ in~$H$,  splits over $K^\phi=H^\phi[\imag]$. Our main goal in this subsection is to prove  analogues
of these facts for  suitable $Z$-minimal slots $(Q,\fm,\hat b)$ or $(R,\fm,\hat c)$ in~$H$:

 \begin{theorem}\label{thm:split-normal} Recall from just before Lemma~\ref{kb} that $\hat a= \hat b + \hat c\, \imag$ with $\hat b, \hat c\in \hat H$. 
If $H$ is $1$-linearly newtonian, then one of the following holds:
\begin{list}{}{\leftmargin=2em  \labelwidth=2em}
\item[$\mathrm{(i)}$] $\hat b\notin H$ and some $Z$-minimal slot $(Q,\fm,\hat b)$ in $H$ has  a re\-fine\-ment~${(Q_{+b},\fn,\hat b-b)}$ such that $(Q^\phi_{+b},\fn,\hat b-b)$ is eventually deep and split-nor\-mal; 
\item[$\mathrm{(ii)}$] $\hat c\notin H$ and some $Z$-minimal slot $(R,\fm,\hat c)$ in $H$ has a refinement~${(R_{+c},\fn,\hat c-c)}$ such that $(R^\phi_{+c},\fn,\hat c-c)$ is eventually deep and split-normal. 
\end{list}
\end{theorem}

\noindent
Lemmas~\ref{lem:hat c in K}, \ref{lem:hat b in K} and Corollaries~\ref{cor:evsplitnormal, 1}--\ref{cor:r=deg P=1, 2}
below are more precise (only Corollary~\ref{cor:r=deg P=1, 1} has $H$ being $1$-linearly newtonian as a hypothesis) and together give Theorem~\ref{thm:split-normal}.  
We first deal with the case where $\hat b$ or $\hat c$ is in $H$:

\begin{lemma}\label{lem:hat c in K}
Suppose $\hat c\in H$. Then some hole $(Q,\fm,\hat b)$ in $H$ has the same complexity
as $(P,\fm,\hat a)$. Any such hole $(Q,\fm,\hat b)$ in $H$ is minimal and has a refinement~$(Q_{+b},\fn,\hat b-b)$ such that $(Q^\phi_{+b},\fn,\hat b-b)$ is eventually deep and split-normal. 
\end{lemma}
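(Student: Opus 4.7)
\medskip
\noindent
\emph{Plan.} The setup gives $\hat a = \hat b + \hat c\,\imag$ with $\hat c \in H$ and $\hat a \notin K$, forcing $\hat b \notin H$. Since $\hat K = \hat H \oplus \hat H\,\imag$ with $\imag \asymp 1$, the value $v(\hat a) = \min(v(\hat b), v(\hat c))$ lies above $v(\fm)$, so both $\hat b$ and $\hat c$ are in $\fm\,\hat{\mathcal O}_{\hat H}$; in particular $\hat b \prec \fm$. Thus any $Q \in H\{Y\} \setminus H$ with $Q(\hat b) = 0$ produces a hole $(Q, \fm, \hat b)$ in $H$.

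\medskip
\noindent
The main step is to produce such a $Q$ with $\cc(Q) \leq \cc(P)$. Since $\hat c\,\imag \in K$, the translate $\widetilde P := P_{+\hat c\,\imag}$ lies in $K\{Y\}$ with $\cc(\widetilde P) = \cc(P)$, and satisfies $\widetilde P(\hat b) = P(\hat a) = 0$. Decomposing coefficients along $K = H \oplus H\,\imag$ we write $\widetilde P = Q_1 + Q_2\,\imag$ with $Q_1, Q_2 \in H\{Y\}$; then $Q_1(\hat b) = Q_2(\hat b) = 0$. For each $\i$, the coefficient $\widetilde P_{\i}$ is nonzero iff $(Q_1)_{\i}$ or $(Q_2)_{\i}$ is nonzero, so both $Q_1$ and $Q_2$ have order $\leq r = \order P$, and whenever one of them retains order $r$ and $\deg_{Y^{(r)}}$ equal to that of $\widetilde P$, its total degree is still bounded by $\deg P$. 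Hence $\cc(Q_i) \leq \cc(P)$ for $i = 1, 2$. Not both $Q_1, Q_2$ can lie in $H$ since $\widetilde P \notin K$, so at least one, say $Q_1$, lies in $H\{Y\} \setminus H$. Taking a minimal annihilator $Q$ of $\hat b$ in $H\{Y\}$, we then have $\cc(Q) \leq \cc(Q_1) \leq \cc(P)$, and $(Q, \fm, \hat b)$ is a hole in $H$ with $\cc(Q) \leq \cc(P)$ and $\hat b \in \hat H$.

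\medskip
\noindent
Lemma~\ref{lem:hole in K with same c as P} now applies to $(Q, \fm, \hat b)$ (in the role of $(Q, \fn, \hat b)$ there) and delivers the first assertion $\cc(Q) = \cc(P)$. Likewise, any hole $(Q, \fm, \hat b)$ in $H$ with $\cc(Q) = \cc(P)$ trivially satisfies the hypothesis $\cc(Q) \leq \cc(P)$ of Lemma~\ref{lem:hole in K with same c as P}, which yields simultaneously that $(Q, \fm, \hat b)$ is minimal (as a hole both in $H$ and in $K$) and admits a refinement $(Q_{+b}, \fn, \hat b - b)$ such that $(Q_{+b}^\phi, \fn, \hat b - b)$ is eventually deep and split-normal. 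The only delicate bookkeeping is the complexity comparison between $\widetilde P$ and its real/imaginary parts, but this is entirely formal; once that is settled the statement reduces immediately to Lemma~\ref{lem:hole in K with same c as P}.
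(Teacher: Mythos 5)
Your proposal is correct and follows essentially the same route as the paper: decompose $P_{+\hat c\,\imag}$ into real and imaginary parts over $H$, note both vanish at $\hat b$ and have complexity at most $\cc(P)$ while at least one lies in $H\{Y\}\setminus H$, then feed the resulting hole into Lemma~\ref{lem:hole in K with same c as P} for both assertions. Your extra step of passing to a minimal annihilator of $\hat b$ over $H$ is harmless but unnecessary, since the nonzero part itself already serves as $Q$.
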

\begin{proof}
Let $A,B\in H\{Y\}$ be such that $P_{+\hat c\,\imag}(Y)=A(Y)+B(Y)\,\imag$.
Then $A(\hat b)=B(\hat b)=0$. If $A\ne 0$, then $\cc(A)\le \cc(P)$ gives that
$Q:=A$ has the desired property by Lemma~\ref{lem:hole in K with same c as P}. If $B\ne 0$, then likewise $Q:= B$ has the desired property.
The rest also follows from that lemma. \end{proof}

\noindent
Thus if $\hat{c}\in H$, we obtain a strong version of (i) in Theorem~\ref{thm:split-normal}. Likewise, the next lemma gives a strong version of (ii) in Theorem~\ref{thm:split-normal} if $\hat{b}\in H$.

\begin{lemma}\label{lem:hat b in K} 
Suppose $\hat b\in H$. Then there is a hole $(R,\fm,\hat c)$ in $H$ with the same complexity
as $(P,\fm,\hat a)$. Every such hole in $H$ is minimal and   has a refinement~$(R_{+c},\fn,\hat c-c)$ such that $(R^\phi_{+c},\fn,\hat c-c)$ is eventually deep and split-normal. 
\end{lemma}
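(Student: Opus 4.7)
The plan is to mirror the proof of Lemma~\ref{lem:hat c in K}, with the roles of real and imaginary parts swapped. The key idea is that, since $\hat b\in H$, we can pass to the additive conjugate $P_{+\hat b}\in K\{Y\}$, and the equation $P(\hat a)=0$ becomes $P_{+\hat b}(\hat c\,\imag)=0$; the real and imaginary parts of this identity will yield the required hole for $\hat c$ over $H$.

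First I will expand $P_{+\hat b}(\hat c\,\imag)$ using that $(\hat c\,\imag)^{(k)}=\hat c^{(k)}\,\imag$ for all $k$, so $(\hat c\,\imag)^{\i}=\hat c^{\i}\,\imag^{|\i|}$ for each multi-index~$\i$. Collecting terms according to $|\i|\bmod 4$ and splitting each coefficient of $P_{+\hat b}$ into its real and imaginary parts, I obtain a decomposition
\[
P_{+\hat b}(\hat c\,\imag)\ =\ R_1(\hat c)+R_2(\hat c)\,\imag, \qquad R_1,R_2\in H\{Y\},
\]
where both $R_1$ and $R_2$ are supported on the same set of multi-indices as $P_{+\hat b}$, so $\cc(R_j)\leq \cc(P_{+\hat b})=\cc(P)$ for $j=1,2$. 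From $P_{+\hat b}(\hat c\,\imag)=0$ I then read off $R_1(\hat c)=R_2(\hat c)=0$. Next I observe that $R_1$ and $R_2$ cannot both vanish as differential polynomials, since that would force $P_{+\hat b}=0$, contradicting $P\notin K$; and if some $R_j\in H$ (i.e.\ is a constant), then $R_j=R_j(\hat c)=0$, so any nonzero $R_j$ in fact lies in $H\{Y\}\setminus H$. Combined with $\hat c\prec\fm$ and $\hat c\notin H$ (the latter because $\hat b\in H$ and $\hat a\notin K$), this yields a hole $(R_j,\fm,\hat c)$ in $H$ with $\cc(R_j)\leq\cc(P)$.

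Choosing $R$ to be such a nonzero $R_j$ produces the desired hole $(R,\fm,\hat c)$ in $H$. For the remaining claims, I invoke Lemma~\ref{lem:hole in K with same c as P} applied to $(R,\fm,\hat c)$ in place of $(Q,\fn,\hat b)$: this forces $\cc(R)=\cc(P)$, shows that every hole in $H$ of complexity $\leq\cc(P)$ is actually minimal (both in $H$ and as a hole in $K$), and yields a refinement $(R_{+c},\fn,\hat c-c)$ such that $(R^\phi_{+c},\fn,\hat c-c)$ is eventually deep and split-normal. The only slightly delicate point is setting up the parity-based decomposition producing $R_1$ and $R_2$; there is no genuine obstacle, as the minimality and refinement assertions are immediate from Lemma~\ref{lem:hole in K with same c as P}.
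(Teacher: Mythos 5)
Your argument is correct, but it takes a more hands-on route than the paper. The paper disposes of this lemma in one line: it applies Lemma~\ref{lem:hat c in K} to the multiplicative conjugate $(P_{\times\imag},\fm,-\imag\hat a)$, which is again a minimal hole in $K$ of the same complexity, and whose decomposition $-\imag\hat a=\hat c-\hat b\,\imag$ has imaginary part $-\hat b\in H$ and real part $\hat c$; so the roles of real and imaginary parts are swapped by the change of variable $Y\mapsto\imag Y$ rather than by redoing the computation. Your parity bookkeeping is in fact exactly what this conjugation hides: since $(P_{\times\imag})_{+(-\hat b\imag)}=(P_{+\hat b})_{\times\imag}=\sum_{\i}Q_{\i}\imag^{\abs{\i}}Y^{\i}$, its real/imaginary decomposition over $H\{Y\}$ is precisely your $R_1+R_2\imag$, and evaluating at $\hat c$ recovers your identity $P_{+\hat b}(\hat c\,\imag)=R_1(\hat c)+R_2(\hat c)\imag$. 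Both routes then finish identically via Lemma~\ref{lem:hole in K with same c as P}. What the paper's reduction buys is brevity and the conceptual point that Lemma~\ref{lem:hat b in K} is literally Lemma~\ref{lem:hat c in K} after conjugation; what your direct argument buys is self-containedness, at the cost of the $\imag^{\abs{\i}}$ case analysis. Two trivial touch-ups: the supports of $R_1,R_2$ are only \emph{contained in} that of $P_{+\hat b}$ (which still gives $\cc(R_j)\le\cc(P)$ for nonzero $R_j$), and you should note explicitly that $R_j(\hat c)\in\hat H$ while $\imag\notin\hat H$, which is what lets you split $P(\hat a)=0$ into $R_1(\hat c)=R_2(\hat c)=0$.
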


\noindent
This follows by applying Lemma~\ref{lem:hat c in K}  with $(P,\fm,\hat a)$ replaced by  the minimal hole~$\big(P_{\times\imag},\fm,-\imag\hat a\big)$ in $K$, which has the same complexity as $(P,\fm,\hat a)$.

\medskip
\noindent
{\em We assume in the rest of this subsection that $\hat b, \hat c\notin H$ and that 
$Q\in Z(H,\hat b)$ has minimal complexity}.
Hence~$(Q,\fm,\hat b)$ is a $Z$-minimal slot in $H$, and so is every refinement of $(Q,\fm,\hat b)$.
If~$(P_{+a},\fn,\hat a-a)$ is a refinement of~$(P,\fm,\hat a)$ and $b=\Re a$, then
$(Q_{+b},\fn,\hat b-b)$ is a refinement of $(Q,\fm,\hat b)$.
Conversely,  if~$(Q_{+b},\fn,\hat b-b)$ is a refinement of~$(Q,\fm,\hat b)$ and $v\big({\hat b-H}\big)\subseteq v({\hat c-H})$, then
Lemma~\ref{lem:same width} yields a refinement $(P_{+a},\fn,\hat a-a)$ of~$(P,\fm,\hat a)$ with $\Re a=b$. Recall from that lemma
that~$v({\hat b-H})\subseteq v({\hat c-H})$ is equivalent to~$v({\hat a-K})=v({\hat b-H})$; in this case, 
 $(P,\fm,\hat a)$ is special iff $(Q,\fm,\hat b)$ is special. 
 Recall also that if $(Q,\fm,\hat b)$ is deep, then so is each of its refinements 
$(Q_{+b},\fm,\hat b-b)$, by Corollary~\ref{cor:deep 2, cracks}.

\medskip
\noindent 
Here is a key technical fact underlying Theorem~\ref{thm:split-normal}:

\begin{prop}\label{evsplitnormal}
Suppose  the hole $(P,\fm,\hat a)$ in $K$ is special, the slot~$(Q,\fm,\hat b)$ in~$H$ is normal,
and $v\big({\hat b-H}\big)\subseteq v({\hat c-H})$. Then some refinement $(Q_{+b},\fm,{\hat b-b})$ of $(Q,\fm,\hat b)$ has the property that $(Q^\phi_{+b},\fm,{\hat b-b})$ is eventually split-normal.
\end{prop}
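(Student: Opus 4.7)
The strategy is to exploit specialness of $(P,\fm,\hat a)$ to obtain very good approximations $a \in K$ to $\hat a$, transfer these to approximations $b \in H$ of $\hat b$ via Lemma~\ref{lem:same width}, and then use the $\operatorname{lclm}$ construction (Lemma~\ref{lem:lclm compl conj}) to convert the splitting of $L_{P_{+a}}$ over $K$ (available because the minimality of $(P,\fm,\hat a)$ makes $K$ be $r$-linearly closed, by Corollary~\ref{corminholenewt}) into an operator in $H[\der]$ that splits over~$K$ and approximates the linear part of a refinement of $(Q,\fm,\hat b)$. Normality of that refinement and of each of its compositional conjugates by small active $\phi > 0$ is then automatic from Proposition~\ref{normalrefine} and Lemma~\ref{lem:normality comp conj}, and the remaining task is to verify the splitting condition~(SN2).

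Concretely, first I would refine $(P,\fm,\hat a)$ using Theorem~\ref{mainthm} together with Corollary~\ref{cor:achieve strong normality, 1} so that, after compositional conjugation by some fixed $\theta > 0$ in $H$, it becomes a deep and strictly normal minimal hole in $K^\theta$; specialness passes to this refinement by Lemma~\ref{speciallemma}. Corollary~\ref{specialvariant} then supplies, for each preassigned $N$, an element $a \in K$ with $\hat a - a \prec \fv^N \fm$, where $\fv$ denotes the span of the linear part. Writing $a = b + c\imag$ with $b,c \in H$, the hypothesis $v(\hat b - H) \subseteq v(\hat c - H)$ combined with Lemma~\ref{lem:same width} permits us (adjusting $c$ within its class if necessary) to arrange $\hat b - b \asymp \hat a - a$, so that $\hat b - b \prec \fv^N \fm$ as well; this $b$ is our refinement parameter. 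Now apply Lemma~\ref{lem:lclm compl conj} to $A := L_{P_{+a}} \in K[\der]$: by Lemma~\ref{hkspl} and $r$-linear closure of $K$, the operator $M := \operatorname{lclm}(A,\overline A) \in H[\der]$ admits a real splitting over $K$ in the sense of Definition~\ref{def:real splitting} and hence splits over $K$, has order at most $2r$, and by Lemma~\ref{lem:lclm compl conj} satisfies $M(\hat b - b) = \Re B(-P_{+a}(\hat a - a))$ for the cofactor $B = M/A$; strict normality of $(P_{+a},\fm,\hat a-a)$ and the $\fv^N$-smallness of $\hat a - a$ bound this expression to be tiny of controlled valuation.

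This produces an operator $M \in H[\der]$ which, with respect to the valuation estimates, plays the role of a splitting approximation to the linear part attached to $\hat b - b$. To convert this into a verification of (SN2) for $L_{Q^\phi_{+b,\times\fm}}$, I would then feed $M$ (and its compositional conjugate, using Lemma~\ref{lem:split and compconj}) into Corollary~\ref{cor:approx LP+f, real, general} applied with $P$ replaced by our $Q$: the c-sequence identification of Corollary~\ref{cor:embed into Kc}, applied to the approximants just constructed, places $\hat b$ into $H^{\operatorname{c}}$ as a $c$-limit (after multiplicative conjugation by $\fm$ and coarsening by the convex subgroup witnessing specialness), and there the corollary produces the required $B^\bullet \in H^\phi[\der]$ with a real splitting over $K^\phi$ approximating the linear part within the error tolerance $\prec_{\Delta(\fv)} \fv^{w+1}$ required by (SN2). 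The minimality of $\cc(Q)$ in $Z(H,\hat b)$ (Corollary~\ref{mindivmin}, Lemma~\ref{lem:from cracks to holes}) is then used to transfer the construction from $M$ (which is built from $L_P$) back onto the actual linear part of $Q_{+b,\times\fm}$.

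The principal technical obstacle is twofold. First, the order of $L_Q$ is bounded only by $2r$ (Lemma~\ref{kb}), potentially strictly greater than $r = \order L_P$, so the splitting inherited from $L_P$ via $\operatorname{lclm}$ need not directly produce a factor of the right order; reconciling the orders demands a careful use of minimality of $\cc(Q)$ together with the factorization lemmas of Section~\ref{sec:diff ops and diff polys}. Second—and this is why the conclusion is ``eventually'' split-normal—the estimates from Corollary~\ref{cor:approx LP+f, real, general}, the compositional-conjugation estimates of Lemma~\ref{lem:v(Aphi)}, and the splittings of Lemma~\ref{lem:split and compconj} all must be coordinated so that a single choice of $b$, independent of $\phi$, makes the $\prec_{\Delta(\fv)} \fv^{w+1}$ bound hold uniformly for all sufficiently small active $\phi > 0$. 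Propagating the smallness of $\hat b - b$ through the $\operatorname{lclm}$ and through compositional conjugation, and then matching up the resulting error terms, is the crux of the argument.
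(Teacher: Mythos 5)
There is a genuine gap at the heart of your argument: you never establish that the linear part of (a refinement of) $(Q,\fm,\hat b)$ — as opposed to some auxiliary operator — splits, even approximately, over $K$. Condition (SN2) asks for a decomposition of $(Q_{+b,\times\fm})_{\geq 1}$ itself as $Q'+R$ with $L_{Q'}$ splitting over $K$ and $R$ small relative to $\fv^{w+1}(Q_{+b,\times\fm})_1$. Your surrogate, the operator $M=\operatorname{lclm}(L_{P_{+a}},\overline{L_{P_{+a}}})\in H[\der]$ from Lemma~\ref{lem:lclm compl conj}, does split over $K$ and does make $M(\hat b-b)$ small (since $L_{P_{+a}}(\hat a-a)$ equals minus the higher-degree part of $P_{+a}$ at $\hat a-a$), but an operator of order $\leq 2r$ that approximately annihilates $\hat b-b$ carries no information about the coefficients of $L_{Q_{+b}}$, and your proposed ``transfer via minimality of $\cc(Q)$ and the factorization lemmas of Section~\ref{sec:diff ops and diff polys}'' is precisely the unproved step: those lemmas manipulate given splittings, they do not produce a splitting of the linear part of a minimal annihilator. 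In the same vein, when you invoke Corollary~\ref{cor:approx LP+f, real, general} ``with $P$ replaced by $Q$'', its hypothesis is a real splitting of $L_{Q_{+\hat b}}$ over the completion's algebraic closure — exactly what you do not have.

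The paper's proof supplies this missing input by a different mechanism: after arranging $\Gamma^\flat\subseteq\Delta$ (the convex group witnessing specialness) it passes to the $\Delta$-specializations $\dot H\subseteq\dot K$ and their completions, where $\dot{\hat a}$ and $\dot{\hat b}=\Re\dot{\hat a}$ become elements of $\dot K^{\operatorname{c}}$ and $\dot H^{\operatorname{c}}$ via Corollary~\ref{cor:embed into Kc}, and $\dot P$, $\dot Q$ become minimal annihilators of $\dot{\hat a}$, $\dot{\hat b}$ by Corollary~\ref{cor:ZKhata}. Minimality of the hole makes $K$ $(r-1)$-newtonian, hence $\dot K^{\operatorname{c}}$ is $\upo$-free, $(r-1)$-newtonian (Corollary~\ref{cor:Kc newtonian}) and so $r$-linearly closed (Corollary~\ref{14.5.3.r}); then [ADH, 5.1.37] — a genuine theorem about linear parts of minimal annihilators, with no analogue in your argument — yields that the linear part $A$ of $\dot Q_{+\dot{\hat b}}$ splits over $\dot K^{\operatorname{c}}$, with a real splitting by Lemma~\ref{hkspl}. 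Only then does the approximation step (Corollary~\ref{cor:approx LP+f, real} applied to $\dot Q$, plus Lemma~\ref{lem:lift real splitting} and Lemma~\ref{lem:dotfv}) produce $b\in H$ and $B\in H[\der]$ with $L_{Q_{+b}}=B+E$, $B$ really split over $K$ and $E\prec_{\Delta(\fv)}\fv^{w+1}L_{Q_{+b}}$, i.e.\ (SN2) for the refinement $(Q_{+b},\fm,\hat b-b)$, which is normal by Proposition~\ref{normalrefine}. Finally, note that your second ``principal obstacle'' is not where the difficulty lies: the ``eventually'' is cheap, since split-normality, once achieved, persists under compositional conjugation by active $0<\phi\preceq 1$ by Lemma~\ref{lem:split-normal comp conj}; no uniform-in-$\phi$ bookkeeping is needed.
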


\begin{proof}
Replacing $(P,\fm,\hat a)$, $(Q,\fm,\hat b)$ by $(P_{\times\fm},1,\hat a/\fm)$, $(Q_{\times\fm},1,\hat b/
\fm)$, respectively, we reduce to the case $\fm=1$; then $\hat a, \hat b\prec 1$.
Since $\hat a$ is special over~$K=H[\imag]$,
$$\Delta\ :=\ \big\{ \delta\in\Gamma:\  \abs{\delta}\in v(\hat a-K) \big\}$$
is a convex subgroup of~$\Gamma$ which is cofinal in $v(\hat a-K)$ and hence
in $v(\hat b-H)$, so $\hat b$ is special over $H$.  Compositionally conjugate $H$, $\hat H$, $K$, $\hat K$ 
by a suitable active~$\phi\preceq 1$ in $H^{>}$, and replace $P$, $Q$ by $P^\phi$, $Q^\phi$, 
to arrange  $\Gamma^\flat\subseteq\Delta$; in particular, $\Psi\subseteq v(\hat b-H)$ and  $\psi(\Delta^{\neq})\subseteq\Delta$.
Multiplying $P$, $Q$ by suitable elements of $H^\times$ we also arrange that~$P,Q\asymp 1$. 
By Lemma~\ref{lem:split-normal comp conj} it suffices to show that then~$(Q,1,\hat b)$ has a split-normal refinement
$(Q_{+b},1,\hat b-b)$, and this is what we shall do. 

Note that $H$,~$\hat H$,~$K$,~$\hat K$ have small derivation, so the specializations~$\dot H$, $\dot{\hat H}$, $\dot K$, $\dot{\hat K}$ of~$H$, $\hat H$, $K$, $\hat K$, respectively, by~$\Delta$, are valued differential fields with small derivation. These specializations are asymptotic with 
asymptotic couple~$(\Delta,\psi|\Delta^{\neq})$, and of $H$-type with asymptotic integration, by [ADH, 9.4.12]; in addition they are $\d$-valued, by [ADH, 10.1.8]. 
The natural inclusions $\dot{\mathcal O}\to\dot{\mathcal O}_K$,  
$\dot{\mathcal O}\to\dot{\mathcal O}_{\hat H}$, $\dot{\mathcal O}_{\hat H} \to \dot{\mathcal O}_{\hat K}$, and
$\dot{\mathcal O}_K\to\dot{\mathcal O}_{\hat K}$ 
induce valued differential field embeddings
$\dot H\to\dot K$, $\dot H\to\dot{\hat H}$, $\dot{\hat H}\to\dot{\hat K}$ and $\dot K\to\dot{\hat K}$, which we make into inclusions by
the usual identifications; see [ADH, pp.~405--406].
 By Lemma~\ref{lem:dotK real closed} and the remarks preceding it, $\dot H$ is real closed with convex valuation ring and $\dot K$ is an algebraic closure of $\dot H$. Moreover,~$\dot{\hat H}$ is an immediate
extension of $\dot H$ and $\dot{\hat K}$ is an immediate
extension of $\dot K$.
Denoting the image of $\imag$ under the residue morphism $\dot{{\mathcal O}}_{\hat K}\to \dot{\hat K}$ by the same symbol, we then have $\dot K=\dot H[\imag]$, $\dot{\hat K}=\dot{\hat H}[\imag]$, and $\imag\notin\dot{\hat H}$. This gives the following inclusion diagram:
$$\xymatrix{ \dot{\hat H} \ar@{-}[r]  & \dot{\hat K} =\dot{\hat H}[\imag]  \\ 
\dot H \ar@{-}[u] \ar@{-}[r] &   \dot K =\dot H[\imag] \ar@{-}[u] }$$
Now $\hat a\in\mathcal O_{\hat K}\subseteq\dot{\mathcal O}_{\hat K}$ and $\hat b,\hat c\in\mathcal O_{\hat H}\subseteq\dot{\mathcal O}_{\hat H}$, and 
$\dot{\hat a}=\dot{\hat b} + \dot{\hat c}\,\imag$, $\Re \dot{\hat a}=\dot{\hat b}$, $\Im\dot{\hat a}=\dot{\hat c}$. 
For all~$a\in\dot{\mathcal O}_K$ we have $v(\dot{\hat a}-\dot a)=v(\hat a-a)\in\Delta$, hence
$\dot{\hat a}\notin \dot K$; likewise~${v(\hat b -b)}\in \Delta$
for all $b\in \dot{\mathcal O}$, so $\dot{\hat b}\notin \dot H$. Moreover, for all $\delta\in\Delta$ there is an $a\in\dot{\mathcal O}_K$ with~${v(\dot{\hat a}-\dot a)}=\delta$;
hence~$\dot{\hat a}$ is the limit of a c-sequence in~$\dot K$. This leads us to consider the completions~$\dot{H}^{\operatorname{c}}$ and $\dot{K}^{\operatorname{c}}$ of $\dot{H}$ and $\dot{K}$.
By [ADH, 4.4.11] and Lemma~\ref{lem:Kc real closed}, these  yield an inclusion diagram of valued differential field extensions:
$$\xymatrix{ \dot{H}^{\operatorname{c}} \ar@{-}[r]  & \dot{K}^{\operatorname{c}}=\dot{H}^{\operatorname{c}}[\imag]  \\ 
\dot H \ar@{-}[u] \ar@{-}[r] &   \dot K =\dot H[\imag] \ar@{-}[u] }$$
where $\dot H^{\operatorname{c}}$ is real closed with algebraic closure $\dot K^{\operatorname{c}}=
\dot H^{\operatorname{c}}[\imag]$. These completions are $\d$-valued by [ADH, 9.1.6]. By Corollary~\ref{cor:Kc newtonian},
$\dot{K}$ and $\dot K^{\operatorname{c}}$ are $\upo$-free and ${(r-1)}$-newtonian;  thus~$\dot K^{\operatorname{c}}$ is $r$-linearly closed by Corollary~\ref{14.5.3.r}. We identify the valued differential subfield   $\dot K\big\<\!\Re\dot{\hat a},\Im\dot{\hat a}\big\>$ of $\dot{\hat K}$ with its image under
the embedding into~$\dot K^{\operatorname{c}}$ over $\dot K$ from
Corollary~\ref{cor:embed into Kc}; then $\dot{\hat a}\in \dot{K}^{\operatorname{c}}$ and
$\dot{\hat b}=\Re\dot{\hat a}\in \dot H^{\operatorname{c}}$.  This leads to the next inclusion diagram:
$$\xymatrix{ \dot{H}^{\operatorname{c}} \ar@{-}[r]  & \dot{K}^{\operatorname{c}}   \\ 
\dot H\<\dot{\hat b}\>\ar@{-}[u] & \dot K\<\dot{\hat a}\>\ar@{-}[u] \\
\dot H \ar@{-}[u] \ar@{->}[r] &   \dot K   \ar@{-}[u] }$$
By Corollary~\ref{cor:ZKhata}, 
$\dot P\in\dot K\{Y\}$ is a minimal annihilator of $\dot{\hat a}$ over $\dot K$ and has the same complexity as $P$. Likewise, 
$\dot Q\in\dot H\{Y\}$ is a minimal annihilator of $\dot{\hat b}$ over
$\dot H$ and has the same complexity as $Q$. 
Let $s:=\order Q=\order\dot Q$, so $1\le s\le 2r$ by Lemma~\ref{kb}, and    
the linear part~$A\in \dot{H}^{\operatorname{c}}[\der]$ of $\dot Q_{+\dot{\hat b}}$ has order $s$ as well. By [ADH, 5.1.37] applied to $\dot H^{\operatorname{c}}$, $\dot H$, $\dot P$, $\dot Q$, $\dot{\hat a}$ in the role of $K$, $F$, $P$, $S$, $f$, respectively, $A$ splits over~$\dot K^{\operatorname{c}}=\dot H^{\operatorname{c}}[\imag]$, so Lemma~\ref{hkspl} gives a real splitting $(g_1,\dots, g_s)$ of $A$ over $\dot{K}^{\operatorname{c}}$: 
$$ A\ =\ f(\der-g_1)\cdots(\der-g_s),\qquad
f,g_1,\dots,g_s\in \dot K^{\operatorname{c}},\ f\neq 0.$$
The slot $(Q,1,\hat b)$ in $H$ is normal, so $\fv(L_{Q_{+\hat b}}) \sim \fv(L_Q)\prec^\flat 1$ by Lemma~\ref{lem:linear part, new}, hence 
$\fv(A)\prec^\flat 1$ in $\dot K^{\operatorname{c}}$ by Lemma~\ref{lem:dotfv}. 
Then 
Corollary~\ref{cor:approx LP+f, real}  gives $a,b\in\dot{\mathcal O}$  and~$b_1,\dots,b_s\in\dot{\mathcal O}_K$ with $\dot a, \dot b\ne 0$ in
$\dot H$ such that for the linear part $\tilde{A}\in \dot{H}[\der]$ of~$\dot Q_{+\dot b}$,
$$\dot b\ \sim\ \dot{\hat b},\qquad \tilde{A}\ \sim\ A,\qquad \order \tilde{A}\ =\ s, \qquad
\fw\ :=\ \fv(\tilde{A})\ \sim\ \fv(A),$$
and such that for $w:=\wt(Q)$ and with $\Delta(\fw)\subseteq \Delta$: 
$$ \tilde{A} =  \tilde{B} +  \tilde{E},\ \tilde{B} = \dot a(\der-\dot b_1)\cdots(\der-\dot b_s)\in \dot{H}[\der],\quad 
\tilde{E}\in \dot{H}[\der],\quad  \tilde{E} \prec_{\Delta(\fw)}  \fw^{w+1} \tilde{A}, $$ 
and $(\dot{b}_1,\dots, \dot{b}_s)$ is a real splitting of $\tilde{B}$ over $\dot{K}$.  Lemma~\ref{lem:lift real splitting} shows that we can change $b_1,\dots, b_s$ if necessary, without changing $\dot{b}_1,\dots, \dot{b}_s$, to arrange that $B:=a(\der-b_1)\cdots (\der-b_s)$ lies in $\dot{\mathcal{O}}[\der]\subseteq H[\der]$ and
$(b_1,\dots, b_s)$ is a real splitting of $B$ over~$K$.  
Now $\hat b-b\prec \hat b\prec 1$, so
$(Q_{+b},1,\hat b-b)$ is a refinement of the normal slot~$(Q,1,\hat b)$. Hence
$(Q_{+b},1,\hat b-b)$ is normal by Proposition~\ref{normalrefine}, so
$\fv:=\fv(L_{Q_{+ b}})\prec^\flat 1$.
By Lem\-ma~\ref{lem:dotfv} we have $\dot\fv=\fw$, so $\Delta(\fv)=\Delta(\fw)\subseteq \Delta$. Hence in $H[\der]$:
$$L_{Q_{+b}}\ =\ B +   E, \quad E\in \dot{\mathcal O}[\der],\  E\prec_{\Delta(\fv)} \fv^{w+1} L_{Q_{+ b}}.$$
Thus $(Q_{+b},1,\hat b-b)$ is split-normal.
\end{proof}

\noindent
Recall from the beginning of this subsection that if~${\deg P>1}$, 
then $K=H[\imag]$ is $r$-linearly new\-tonian;  this allows
us to remove the assumptions that~$(P,\fm,\hat a)$ is special and~$(Q,\fm,\hat b)$ is normal in Proposition~\ref{evsplitnormal}, by reducing to that case:

\begin{cor}  \label{cor:evsplitnormal, 1}
Suppose  $\deg P>1$ and $v(\hat b -H)\subseteq v(\hat c - H)$.
Then~$(Q,\fm,\hat b)$  has a special refinement~$(Q_{+b},\fn,\hat b-b)$ such that $(Q^\phi_{+b},\fn,{\hat b-b})$ is eventually deep and split-normal. 
\end{cor}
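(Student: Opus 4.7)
The strategy is to combine Theorem~\ref{mainthm} applied to the minimal hole $(P,\fm,\hat a)$ in~$K$ with Proposition~\ref{evsplitnormal}, using the hypothesis $v(\hat b - H)\subseteq v(\hat c-H)$ to transfer the refinement of~$P$ in~$K$ to a refinement of~$Q$ in~$H$. Since $\deg P>1$, Corollary~\ref{degmorethanone} combined with Lemma~\ref{lem:descent r-linear newt} gives that both $K$ and $H$ are $r$-linearly newtonian; $K$~is $\upo$-free because $H$~is and $\Gamma$~is divisible since $H$~is real closed. Theorem~\ref{mainthm} then yields a refinement $(P_{+a},\fn',\hat a-a)$ of $(P,\fm,\hat a)$ and an active $\phi_0>0$ in $H$ with $\phi_0\preceq 1$ such that $(P^\phi_{+a},\fn',\hat a-a)$ is deep and normal for every active $0<\phi\preceq\phi_0$ in~$H$. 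Since $K^\phi$ remains $r$-linearly newtonian and $\upo$-free, Lemma~\ref{lem:special dents} applies and $(P^\phi_{+a},\fn',\hat a-a)$ is also special.

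Next I transfer this to the $H$-side. Real closedness of $H$ and Lemma~\ref{ufm} allow me to replace $\fn'$ by $\fn:=|\fn'|\in H^\times$ while preserving deep, normal, and special. Writing $a=b+c\,\imag$ with $b,c\in H$, the hypothesis $v(\hat b-H)\subseteq v(\hat c-H)$ together with Lemma~\ref{lem:same width}(ii) lets me (after replacing $c$ by another element of $H$ while keeping $b=\Re a$ fixed) arrange $v(\hat a-a-K)=v(\hat b-b-H)$. Hence $\hat b-b\prec\fn$, so $(Q_{+b},\fn,\hat b-b)$ is a refinement of the $Z$-minimal slot $(Q,\fm,\hat b)$ in~$H$; specialness of $\hat a-a$ over~$K^\phi$ transfers to specialness of $\hat b-b$ over~$H^\phi$, and the valuation-containment hypothesis persists as $v(\hat b-b-H)\subseteq v(\hat c-c-H)$.

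With these preparations, I would apply Proposition~\ref{evsplitnormal} in the compositionally conjugated setting, with $K^\phi$, $(P^\phi_{+a},\fn,\hat a-a)$, and (a further refinement of) $(Q^\phi_{+b},\fn,\hat b-b)$ playing the roles of $K$, $(P,\fm,\hat a)$, $(Q,\fm,\hat b)$, to obtain an eventually split-normal refinement of $(Q,\fm,\hat b)$. The main obstacle will be arranging that the $Q$-side slot is normal as required by Proposition~\ref{evsplitnormal}: Lemma~\ref{kb} only bounds $\order Q$ by $2r$, so a direct application of Theorem~\ref{mainthm} to $(Q,\fm,\hat b)$ would require $H$ to be $(\order Q)$-linearly newtonian, which is not guaranteed. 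I expect to handle this by pre-normalizing $(Q,\fm,\hat b)$ through a careful argument that exploits the minimal-annihilator relationship between $P$ and~$Q$ (as encoded in the proof of Proposition~\ref{evsplitnormal} via [ADH, 5.1.37]), so that normality of a suitable refinement of $(Q,\fm,\hat b)$ follows from the deep-normal structure already established for~$(P^\phi_{+a},\fn,\hat a-a)$. Once Proposition~\ref{evsplitnormal} delivers an eventually split-normal refinement, a final deepening step---using Lemma~\ref{splnormalnormal} to note that split-normality entails normality, and invoking the deepening techniques of Section~\ref{sec:normalization} (for instance along the lines of Corollary~\ref{cor:deepening, q-linear}, combined with the preservation of split-normality under compositional conjugation provided by Lemma~\ref{lem:split-normal comp conj})---upgrades the refinement to deep and split-normal, as required.
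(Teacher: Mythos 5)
There is a genuine gap, and it is exactly at the point you flag as "the main obstacle": Proposition~\ref{evsplitnormal} needs the $Q$-side slot to be \emph{normal}, and since Lemma~\ref{kb} only gives $\order Q\le 2r$, you cannot invoke Theorem~\ref{mainthm} for $(Q,\fm,\hat b)$ (that would need $H$ to be $(\order Q)$-linearly newtonian). Your proposed fix --- "pre-normalizing $(Q,\fm,\hat b)$ through a careful argument that exploits the minimal-annihilator relationship between $P$ and $Q$" --- is a placeholder, not an argument; nothing in the deep-normal structure of the refined $P$-hole transfers normality to $Q$ in any direct way, and the paper does not attempt such a transfer. What the paper actually does is sidestep linear newtonianity of $H$ entirely by exploiting \emph{specialness}: first refine $(P,\fm,\hat a)$ to be quasilinear (Lemma~\ref{lem:quasilinear refinement}, with Lemma~\ref{ufm} to keep $\fn\in H^\times$); since $K$ is $r$-linearly newtonian, Proposition~\ref{prop:hata special} makes this quasilinear hole special; via $v(\hat a-K)=v(\hat b-H)$ (Lemma~\ref{lem:same width}) the corresponding $Z$-minimal slot $(Q_{+b},\fn,\hat b-b)$ in $H$ is then special; and now Proposition~\ref{varmainthm} --- the variant of the Normalization Theorem for $Z$-minimal \emph{special} slots, which carries no newtonianity hypothesis at all --- produces a refinement and an active $\phi_0$ making the $Q$-side slot deep and normal. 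You actually have specialness of the $Q$-side slot in hand in your own setup (via Lemma~\ref{lem:special dents} on the $P$-side and Lemma~\ref{lem:same width}), but you never bring in Proposition~\ref{varmainthm}, which is the tool that closes the gap.

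Two smaller points. First, after normalizing the $Q$-side one must also re-match a $P$-side refinement with $\Re a=b$ (possible since refinements of $(P,\fm,\hat a)$ stay quasilinear by Corollary~\ref{cor:ref 2n} and $v(\hat b-H)\subseteq v(\hat c-H)$ gives the matching via Lemma~\ref{lem:same width}(ii)), and use Lemma~\ref{speciallemma} so that the compositionally conjugated $P$-hole fed into Proposition~\ref{evsplitnormal} is still special. Second, your "final deepening step" is unnecessary and risky: Proposition~\ref{evsplitnormal} returns a refinement with the \emph{same} monomial, so deepness of the already-deep $Q$-side slot is preserved automatically (remark before Proposition~\ref{evsplitnormal}, resting on Corollary~\ref{cor:deep 2, cracks}), whereas re-running a deepening procedure such as Corollary~\ref{cor:deepening, q-linear} changes the monomial and is not known to preserve split-normality.
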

\begin{proof}
By Lemmas~\ref{lem:quasilinear refinement} and~\ref{ufm}, the hole $(P,\fm,\hat a)$ in $K$ has a quasilinear refinement~${(P_{+a},\fn,\hat a-a)}$. (The use of Lemma~\ref{ufm} is because we require~${\fn\in H^\times}$.)
Let $b=\Re a$. Then, using Lemma~\ref{lem:same width} for the second equality,
$$v\big((\hat a-a)-K\big)\ =\ v(\hat a - K)\ =\ v(\hat b -H)\ =\ {v\big((\hat b-b)-H\big)},$$
and~${(Q_{+b},\fn,\hat b-b)}$ is a $Z$-minimal refinement of $(Q,\fm,\hat b)$.
We replace~$(P,\fm,\hat a)$ and~$(Q,\fm,\hat b)$ by
$(P_{+a},\fn,\hat a-a)$ and $(Q_{+b},\fn,\hat b-b)$, respectively, to arrange that the hole~$(P,\fm,\hat a)$ in $K$ is quasilinear.
Then
by Proposition~\ref{prop:hata special} and $K$ being $r$-linearly newtonian,  $(P,\fm,\hat a)$ is special. 
Hence~$(Q,\fm,\hat b)$ is also special, so Proposition~\ref{varmainthm} gives a refinement~$(Q_{+b},\fn,\hat b-b)$ of $(Q,\fm,\hat b)$ and an active~${\phi_0\in H^{>}}$ such that $(Q^{\phi_0}_{+b},\fn,{\hat b-b})$ is deep and normal. 
Refinements of~$(P,\fm,\hat a)$ remain quasilinear, by Corollary~\ref{cor:ref 2n}. Since $v(\hat b-H)\subseteq v(\hat c -H)$ we have
a refinement~$(P_{+a},\fn,\hat a-a)$ of~$(P,\fm,\hat a)$ with~$\Re a=b$.
Then by Lemma~\ref{speciallemma} the minimal hole $(P^{\phi_0}_{+a},\fn,{\hat a-a})$ in~$H^{\phi_0}[\imag]$ is special. Now apply Proposition~\ref{evsplitnormal} with~$H^{\phi_0}$, $(P^{\phi_0}_{+a},\fn,\hat a-a)$, $(Q^{\phi_0}_{+b},\fn,{\hat b-b})$
in place of $H$, $(P,\fm,\hat a)$,   $(Q,\fm,\hat b)$, respectively: it gives~${b_0\in H}$ and a 
refinement 
$$\big( (Q^{\phi_0}_{+b})_{+b_0},\fn, (\hat b-b)-b_0\big)\ =\ 
\big(Q^{\phi_0}_{+(b+b_0)},\fn, {\hat b - (b+b_0)}\big)$$ of 
$(Q^{\phi_0}_{+b},\fn,{\hat b-b})$, and thus a refinement~$\big(Q_{+(b+b_0)},\fn, {\hat b - (b+b_0)}\big)$ of $(Q_{+b},\fn,{\hat b-b})$, such that  $\big(Q^\phi_{+(b+b_0)}, \fn,{ \hat b - (b+b_0)}\big)$  is eventually split-normal. 
By the remark before Proposition~\ref{evsplitnormal}, $\big(Q^{\phi}_{+(b+b_0)},\fn, {\hat b - (b+b_0)}\big)$ is also eventually deep. 
\end{proof}

\noindent
Recall that $v(\hat b-H)\subseteq v(\hat c-H)$ or  $v(\hat c-H)\subseteq v(\hat b-H)$. The following corollary concerns the second case:

\begin{cor} \label{cor:evsplitnormal, 2}
If  $\deg P>1$, $v(\hat c-H)\subseteq v(\hat b-H)$, and $R\in Z(H,\hat c)$ has minimal complexity,  then the $Z$-minimal slot~$(R,\fm,\hat c)$ in $H$ has a special refinement~$(R_{+c},\fn,\hat c-c)$   such that $(R^\phi_{+c},\fn,{\hat c-c})$ is eventually deep and split-normal.
\end{cor}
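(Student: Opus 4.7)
My plan is to deduce Corollary~\ref{cor:evsplitnormal, 2} from Corollary~\ref{cor:evsplitnormal, 1} by applying the latter not to the originally given minimal hole $(P,\fm,\hat a)$ in $K$, but to the multiplicatively conjugated hole $(P_{\times\imag},\fm,-\imag\hat a)$. The point is that in the global setup of this subsection, the minimal hole $(P,\fm,\hat a)$ in $K$ is really only used to fix $\hat a$ (and hence $\hat b=\Re\hat a$, $\hat c=\Im\hat a$); nothing in the statement or proof of Corollary~\ref{cor:evsplitnormal, 1} requires that $\hat a$ itself be fixed, only that \emph{some} minimal hole in $K$ of degree $>1$ with the appropriate real/imaginary parts is given.

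First I would verify that $(P_{\times\imag},\fm,-\imag\hat a)$ is indeed a minimal hole in $K$. It is a hole because $P_{\times\imag}(-\imag\hat a)=P(\hat a)=0$, $-\imag\hat a\prec\fm$ (as $\hat a\prec\fm$), and $-\imag\hat a\notin K$ (as $\hat a\notin K$); it has the same complexity as $(P,\fm,\hat a)$ and is thus minimal, since multiplicative conjugation preserves complexity. Moreover $\deg P_{\times\imag}=\deg P>1$, so the degree hypothesis of Corollary~\ref{cor:evsplitnormal, 1} is satisfied. The real and imaginary parts of $-\imag\hat a=\hat c-\hat b\,\imag$ are $\hat c$ and $-\hat b$ respectively; both lie in $\hat H\setminus H$ by the standing assumption of this subsection.

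Next I would translate the hypothesis of Corollary~\ref{cor:evsplitnormal, 2}. The inclusion hypothesis required by Corollary~\ref{cor:evsplitnormal, 1} for the new hole reads $v\big(\!\Re(-\imag\hat a)-H\big)\subseteq v\big(\!\Im(-\imag\hat a)-H\big)$, i.e., $v(\hat c-H)\subseteq v(-\hat b-H)=v(\hat b-H)$, which is precisely our standing hypothesis $v(\hat c-H)\subseteq v(\hat b-H)$. The differential polynomial $R\in Z(H,\hat c)$ of minimal complexity plays the role of the ``$Q$'' in Corollary~\ref{cor:evsplitnormal, 1} for the new hole, because $\hat c$ is the real part of the new $\hat a$.

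Finally, applying Corollary~\ref{cor:evsplitnormal, 1} to $(P_{\times\imag},\fm,-\imag\hat a)$ and the minimal-complexity element $R\in Z(H,\hat c)$ yields a special refinement $(R_{+c},\fn,\hat c-c)$ of the $Z$-minimal slot $(R,\fm,\hat c)$ in $H$ such that $(R^\phi_{+c},\fn,\hat c-c)$ is eventually deep and split-normal, which is exactly the desired conclusion. There is no real obstacle here beyond checking that the symmetry under $y\mapsto\imag y$ carries the hypothesis across cleanly; the substantive work is already contained in Corollary~\ref{cor:evsplitnormal, 1} (ultimately in Proposition~\ref{evsplitnormal}).
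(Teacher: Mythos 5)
Your proposal is correct and is exactly the paper's argument: the paper's proof consists of the single line "Apply Corollary~\ref{cor:evsplitnormal, 1} to the minimal hole $(P_{\times \imag},\fm,-\imag\hat a)$ in~$H[\imag]$", and your verification that this conjugated hole is minimal of the same degree, has real part $\hat c$ and imaginary part $-\hat b$, and that $v(\hat c-H)\subseteq v(-\hat b-H)=v(\hat b-H)$ supplies precisely the routine details the paper leaves implicit.
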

\begin{proof}
Apply Corollary~\ref{cor:evsplitnormal, 1} to the minimal hole $(P_{\times \imag},\fm,-\imag\hat a)$ in~$H[\imag]$. 
\end{proof}

\noindent
In the next two corollaries we handle the case $\deg P=1$.
Recall from Lemma~\ref{lem:r=deg P=1} that then
$\order{Q}=1$ or $\order{Q}=2$, $\deg{Q}=1$. 
Theorem~\ref{mainthm} gives:

\begin{cor}\label{cor:r=deg P=1, 1}
Suppose $H$ is $1$-linearly newtonian and $\order{Q}=1$. 
Then the slot~$(Q,\fm,\hat b)$ in $H$  has a refinement~$(Q_{+b},\fn,\hat b-b)$   such that $(Q^\phi_{+b},\fn,{\hat b-b})$ is eventually deep and split-normal. 
\end{cor}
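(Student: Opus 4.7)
The plan is to reduce this corollary to a direct application of Theorem~\ref{mainthm} to the slot $(Q,\fm,\hat b)$ in $H$. First I would verify the hypotheses. In this subsection it is standing that $H$ is $\upo$-free. Since $H$ is real closed, its value group $\Gamma$ is divisible, and by the hypothesis of the corollary $H$ is $1$-linearly newtonian. Moreover, the triple $(Q,\fm,\hat b)$ is a $Z$-minimal slot in $H$ of order $\order Q=1$: we have $\hat b\in\hat H\setminus H$ with $\hat b\prec\fm$ (since $\hat a\prec\fm$, $\fm\in H^\times$, and $\hat b=\Re\hat a$), and $Q$ was chosen of minimal complexity in $Z(H,\hat b)$.

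Next, I would invoke Theorem~\ref{mainthm} (with $H$ and $r=1$ in the roles of $K$ and $r$ there) to obtain a refinement $(Q_{+b},\fn,\hat b-b)$ of $(Q,\fm,\hat b)$ and an active element $\phi_0$ in $H$ such that $(Q^\phi_{+b},\fn,\hat b-b)$ is deep and normal for all active $\phi\preceq\phi_0$ in $H$. Since $H$ is real closed, the positive active elements are cofinal among the active elements, so we may assume $\phi_0>0$ and restrict to active $\phi\preceq\phi_0$ with $\phi>0$; this guarantees that $H^\phi$ remains an $H$-field, matching the precise meaning of ``eventually'' fixed just after Lemma~\ref{lem:split-normal comp conj}.

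Finally, refinements preserve the order, so $\order Q_{+b}=1$, and hence $\order Q^\phi_{+b}=1$ as well. By the remark immediately following Lemma~\ref{splnormalnormal}, every normal slot of order $1$ is split-normal, because the linear part then trivially splits over any field and the decomposition in (SN2) is obtained by taking $Q:=(P_{\times\fm})_1$ and $R:=0$. Consequently $(Q^\phi_{+b},\fn,\hat b-b)$ is split-normal for every active $\phi\preceq\phi_0$ with $\phi>0$, which together with the deep and normal property yields that $(Q^\phi_{+b},\fn,\hat b-b)$ is eventually deep and split-normal.

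There is no substantive obstacle in this proof: it is a packaging of the Normalization Theorem together with the observation that in order $1$ split-normality reduces to normality. The only minor points to attend to are the verification that Theorem~\ref{mainthm} applies (the $1$-linear newtonianity is precisely the extra hypothesis we have), and the handling of the sign condition $\phi>0$ in the ``eventually'' clause, which is harmless because $H$ is real closed.
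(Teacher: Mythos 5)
Your proof is correct and is essentially the paper's own argument: the corollary is stated there as an immediate consequence of Theorem~\ref{mainthm} applied to the $Z$-minimal slot $(Q,\fm,\hat b)$ of order $1$ (with $\Gamma$ divisible since $H$ is real closed, and $H$ $\upo$-free and $1$-linearly newtonian), combined with the remark after Lemma~\ref{splnormalnormal} that a normal slot of order $1$ is automatically split-normal. One tiny quibble: in the witnessing decomposition for (SN2) you should take $R:=(P_{\times\fm})_{>1}$ (which normality bounds by $\fv^{w+1}(P_{\times\fm})_1$), not $R:=0$, since $Q$ need not be linear; this does not affect the validity of your argument.
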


\begin{cor} \label{cor:r=deg P=1, 2}
Suppose $\deg P=1$ and $\order Q=2$, $\deg Q=1$.  
Let $\hat{Q}\in H\{Y\}$ be a minimal annihilator of $\hat b$ over~$H$.
Then $\big(\hat{Q},\fm,\hat b\big)$ is a $Z$-minimal hole in~$H$ and has a refinement~$\big({\hat{Q}_{+b},\fn,\hat b-b}\big)$  such that $\big(\hat{Q}^\phi_{+b},\fn,{\hat b-b}\big)$ is eventually deep and split-normal. 
\end{cor}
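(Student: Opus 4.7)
The plan is first to pin down the complexity of $\widehat{Q}$ and then to invoke the normalization machinery from Section~\ref{sec:normalization} together with the splitting provided by Lemma~\ref{lem:r=deg P=1}. Since $\widehat{Q}(\hat b)=0$ and $\hat b\in \hat H\setminus H$ (the case treated in the present subsection), the triple $(\widehat{Q},\fm,\hat b)$ is a hole in $H$, hence a slot, so $\widehat{Q}\in Z(H,\hat b)$ by Lemma~\ref{lem:Z(K,hat a)}; as $Q$ has minimal complexity $(2,1,1)$ in $Z(H,\hat b)$, we conclude $\cc(\widehat{Q})\ge(2,1,1)$. On the other hand, Lemma~\ref{lem:r=deg P=1} asserts that $\order\widehat{Q}\in\{1,2\}$ and in the second case $\deg\widehat{Q}=1$ with $L_{\widehat{Q}}\in H[\der]$ splitting over $K$. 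The lower bound $(2,1,1)$ rules out $\order\widehat{Q}=1$, forcing $\cc(\widehat{Q})=(2,1,1)=\cc(Q)$; hence $\widehat{Q}$ has minimal complexity in $Z(H,\hat b)$, so $(\widehat{Q},\fm,\hat b)$ is a $Z$-minimal hole in $H$.

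Next, the slot $(\widehat{Q},\fm,\hat b)$ is linear (of degree~$1$) with $\order\widehat{Q}=2>1$, and $H$ is $\upo$-free, hence $\upl$-free. The second assertion of Corollary~\ref{mainthm deg 1}, applied with $H$ and $\widehat{Q}$ in the roles of $K$ and $P$, therefore gives that $(\widehat{Q}^\phi,\fm,\hat b)$ is deep and normal, eventually.

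Finally, since $L_{\widehat{Q}}$ splits over $K$, Lemma~\ref{lem:deg 1 cracks splitting} shows that for every active $\phi>0$ in $H$, the linear part $L_{\widehat{Q}^\phi_{\times\fm}}=L_{\widehat{Q}}^\phi\cdot\fm$ of the slot $(\widehat{Q}^\phi,\fm,\hat b)$ in $H^\phi$ splits over $K^\phi=H^\phi[\imag]$. Because $\deg\widehat{Q}=1$, condition~(SN2) for split-normality holds trivially by taking its~$Q$ to be $(\widehat{Q}^\phi_{\times\fm})_1$ and its~$R$ to be $0$, while~(SN1) is already subsumed by steepness, hence by deepness. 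Combining these two observations, $(\widehat{Q}^\phi,\fm,\hat b)$ is deep and split-normal, eventually. Taking $b:=0$ and $\fn:=\fm$ exhibits the required refinement of $(\widehat{Q},\fm,\hat b)$ (it refines itself), completing the argument. The one point requiring care is the complexity computation fixing $\cc(\widehat{Q})=(2,1,1)$; the rest is a routine assembly of the prior results, with no serious obstacle.
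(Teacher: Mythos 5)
Your proposal is correct. It has the same two-step skeleton as the paper's proof: first pin down $\cc(\hat Q)=\cc(Q)=(2,1,1)$ together with the splitting of $L_{\hat Q}$ over $K$, then feed the resulting linear $Z$-minimal slot into the normalization results for degree-one slots and use Lemma~\ref{lem:deg 1 cracks splitting} to pass the splitting to compositional conjugates, so that normal $+$ split linear part gives split-normal. The differences are worth recording. For the complexity step you argue from the \emph{statement} of Lemma~\ref{lem:r=deg P=1} plus $\hat Q\in Z(H,\hat b)$ (via Lemma~\ref{lem:Z(K,hat a)}) and the lexicographic lower bound $\cc(\hat Q)\geq(2,1,1)$, whereas the paper appeals to the \emph{proof} of that lemma; both are fine. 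For the normalization step the paper invokes Corollary~\ref{cor:deepening} (using $\d$-valuedness and divisibility of $\Gamma$) to produce a genuine refinement whose linear part has Newton weight $0$, and then Lemma~\ref{lem:deg1 normal}; you instead use the ``moreover'' clause of Corollary~\ref{mainthm deg 1}, which packages Corollary~\ref{coruplnwteq} ($\upl$-freeness gives $\nwt\leq 1<r=2$) with Lemma~\ref{lem:deg1 normal}, and this buys a slightly stronger conclusion: no additive or multiplicative change is needed, i.e.\ $(\hat Q^\phi,\fm,\hat b)$ itself is eventually deep and split-normal, and the required ``refinement'' can be taken to be the trivial one $b=0$, $\fn=\fm$ (which is indeed a refinement, since $\hat b\prec\fm\preceq\fm$). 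Your verification of (SN1)/(SN2) with $Q=(\hat Q^\phi_{\times\fm})_1$ and $R=0$ is exactly the remark following Lemma~\ref{splnormalnormal}, and the order-$r$ requirement on that $Q$ holds because normality forces $\order L_{\hat Q^\phi_{\times\fm}}=r$.
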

\begin{proof}
By the proof of Lemma~\ref{lem:r=deg P=1} we have $\cc(Q)=\cc(\hat{Q})$ 
(hence  $\big(\hat{Q},\fm,\hat b\big)$  is a $Z$-minimal hole in $H$)
and $L_{\hat{Q}}$ splits over $H[\imag]$. Corollary~\ref{cor:deepening} gives a refine\-ment~$\big({\hat{Q}_{+b},\fn,\hat b-b}\big)$ of $\big(\hat{Q},\fm,\hat b\big)$ whose linear part has Newton weight~$0$ and such that the slot $\big({\hat{Q}^\phi_{+b},\fn,\hat b-b}\big)$ in $H^\phi$ is deep, eventually.  Moreover, by Lemmas~\ref{lem:deg1 normal} and~\ref{lem:deg 1 cracks splitting}, $\big(\hat{Q}^\phi_{+b},\fn,{\hat b-b}\big)$ is normal and its linear part splits over $H^{\phi}[\imag]$, eventually. Thus $\big(\hat{Q}^\phi_{+b},\fn,{\hat b-b}\big)$ is eventually deep and split-normal. 
\end{proof}

\noindent
This concludes the proof of Theorem~\ref{thm:split-normal}.

\subsection*{Split-normality and refinements}
We now study the behavior of split-normality under refinements. {\it In this subsection $a$ ranges over $H$ and $\fm$, $\fn$, $\fv$ range over $H^\times$.
Let $(P,\fm,\hat a)$ be a slot in $H$ of order $r\geq 1$ with $\hat a\in\hat H\setminus H$, and $L:=L_{P_{\times\fm}}$, $w:=\wt(P)$.}\/
Here is the split-normal analogue of Lemma~\ref{lem:normal pos criterion}:

\begin{lemma}\label{lem:split-normal criterion}
Suppose $\order(L)=r$ and $\fv$ is such that \textup{(SN1)} and~\textup{(SN2)} hold, and $\fv(L)\asymp_{\Delta(\fv)} \fv$. Then $(P,\fm,\hat a)$ is split-normal.
\end{lemma}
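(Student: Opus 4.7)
The plan is to mimic the proof of the analogous Lemma~\ref{lem:normal pos criterion} almost verbatim, exploiting that the definition of \emph{split-normal} depends on $\fv = \fv(L)$ only through the archimedean class $[\fv]$, hence is insensitive to replacing $\fv(L)$ by anything $\asymp_{\Delta(\fv(L))}$-equivalent to it.

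Concretely, I would set $\fw := \fv(L)$ and first check that (SN1) holds with $\fw$ in place of the hypothesized $\fv$: since $\fw \asymp_{\Delta(\fv)} \fv$ we have $[\fw]=[\fv]$, so $\Delta(\fw) = \Delta(\fv)$, and from $\fv \prec^{\flat} 1$ we deduce $\fw \prec^{\flat} 1$. Next, take the decomposition $(P_{\times\fm})_{\geq 1} = Q + R$ with $Q$, $R$ as in (SN2) for the hypothesized $\fv$: the properties of $Q$ (homogeneous of degree $1$, order $r$, with $L_Q$ splitting over $K$) are intrinsic and do not involve $\fv$, so they persist. For the remainder term, the identity $\Delta(\fw)=\Delta(\fv)$ together with $\fw^{w+1} \asymp_{\Delta(\fv)} \fv^{w+1}$ yields
\[
R \ \prec_{\Delta(\fv)}\ \fv^{w+1}(P_{\times\fm})_1\ \asymp_{\Delta(\fw)}\ \fw^{w+1}(P_{\times\fm})_1,
\]
so $R \prec_{\Delta(\fw)} \fw^{w+1}(P_{\times\fm})_1$. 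Hence (SN1) and (SN2) hold with $\fw = \fv(L)$ in place of $\fv$, which is precisely what it means for $(P,\fm,\hat a)$ to be split-normal.

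There is no real obstacle here; the only thing to double-check is the bookkeeping that replacing $\fv$ by an $\asymp_{\Delta(\fv)}$-equivalent element preserves both the coarsening $\Delta(\cdot)$ and powers up to the relevant asymptotic relation, which is immediate from $[\fw]=[\fv]$. This parallels exactly the proof of Lemma~\ref{lem:normal pos criterion}, with (N2) replaced throughout by (SN2) and with the additional (splitting) data for $Q$ transported unchanged.
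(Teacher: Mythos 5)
Your proposal is correct and is essentially the paper's own proof: the paper simply says to repeat the argument of Lemma~\ref{lem:normal pos criterion} with $R$ from (SN2) in place of $(P_{\times\fm})_{>1}$, which is exactly the bookkeeping you carry out (same $\fw:=\fv(L)$, same use of $[\fw]=[\fv]$ and $\Delta(\fw)=\Delta(\fv)$, with the splitting data for $Q$ transported unchanged).
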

\begin{proof} Same as that of~\ref{lem:normal pos criterion}, but with $R$ as in (SN2) instead of $(P_{\times \fm})_{>1}$.
\end{proof}

\noindent
Now split-normal analogues of Propositions~\ref{normalrefine} and~\ref{easymultnormal}:

\begin{lemma}\label{splitnormalrefine}
Suppose $(P,\fm,\hat a)$ is split-normal. Let a refinement $(P_{+a},\fm,{\hat a-a})$ of $(P,\fm,\hat a)$ be given. Then $(P_{+a},\fm,\hat a-a)$ is also split-normal.
\end{lemma}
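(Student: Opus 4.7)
The plan is to mimic the proof of Proposition~\ref{normalrefine} on preservation of normality under refinement, but upgrade it by invoking Lemma~\ref{lem:linear part, split-normal, new} to track the splitting portion of the linear part. First I would reduce to the case $\fm = 1$ by replacing $(P,\fm,\hat a)$, $(P_{+a},\fm,\hat a-a)$ with their multiplicative conjugates $(P_{\times\fm},1,\hat a/\fm)$ and $(P_{\times\fm,+a/\fm},1,(\hat a-a)/\fm)$, which preserves (strict or non-strict) split-normality by a remark following Definition~\ref{SN}. After this reduction, $a\prec 1$ (because $\hat a-a\prec 1$ and $\hat a\prec 1$), and Proposition~\ref{normalrefine} already gives that $(P_{+a},1,\hat a-a)$ is normal, with $\order L_{P_{+a}}=r$, $\fv(L_{P_{+a}})\sim_{\Delta(\fv)}\fv$, and $(P_{+a})_1\sim_{\Delta(\fv)} P_1$, where $\fv:=\fv(L_P)$.

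Next I would exploit split-normality of $(P,1,\hat a)$: by (SN2), $P_{\geq 1}=Q+R$ with $Q$ homogeneous of degree $1$ and order $r$, $L_Q$ splitting over $K$, and $R\prec_{\Delta(\fv)}\fv^{w+1}P_1$; in particular $R_1\preceq R\prec_{\Delta(\fv)}\fv^{w+1}P_1$ and hence $L_P=L_Q+L_{R_1}$ with $L_{R_1}\prec_{\Delta(\fv)}\fv^{w+1}L_P$, while also $P_{>1}\preceq R\prec_{\Delta(\fv)}\fv^{w+1}P_1$. Thus the hypothesis of Lemma~\ref{lem:linear part, split-normal, new} is met with $L=L_P$, $A=L_Q$, $B=L_{R_1}$ and $m=w$. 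Applying that lemma yields $L_{P_{+a}}=L_Q+B^+$ with $B^+\in K[\der]$ and $B^+\prec_{\Delta(\fv)}\fv^{w+1}L_{P_{+a}}$.

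Now I would assemble the required decomposition of $(P_{+a})_{\geq 1}$. Let $R^+_1\in H\{Y\}$ be the homogeneous degree-$1$ differential polynomial with linear part $B^+$; set $Q^+:=Q$ and $R^+:=R^+_1+(P_{+a})_{>1}$, so that $(P_{+a})_{\geq 1}=Q^++R^+$ with $Q^+$ homogeneous of degree~$1$, of order~$r$, and with $L_{Q^+}=L_Q$ splitting over $K$. It remains to check $R^+\prec_{\Delta(\fv)}\fv^{w+1}(P_{+a})_1$. For the linear piece, $R^+_1=B^+\prec_{\Delta(\fv)}\fv^{w+1}L_{P_{+a}}\asymp_{\Delta(\fv)}\fv^{w+1}(P_{+a})_1$, using $(P_{+a})_1\sim_{\Delta(\fv)} P_1$. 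For the higher-degree pieces, the computation from the proof of Proposition~\ref{normalrefine} (using [ADH,~4.5.1(i)] for $d>1$) gives $(P_{+a})_d\preceq P_{>1}\prec_{\Delta(\fv)}\fv^{w+1}P_1\asymp_{\Delta(\fv)}\fv^{w+1}(P_{+a})_1$. Summing, $R^+\prec_{\Delta(\fv)}\fv^{w+1}(P_{+a})_1$.

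Finally, I would conclude via Lemma~\ref{lem:split-normal criterion} applied to $(P_{+a},1,\hat a-a)$ with the same $\fv$: (SN1) holds since $\fv\prec^\flat 1$, (SN2) holds by the above decomposition, and $\fv(L_{P_{+a}})\asymp_{\Delta(\fv)}\fv$ was already noted. Hence $(P_{+a},1,\hat a-a)$ is split-normal, as desired. The main technical obstacle is verifying that the hypotheses of Lemma~\ref{lem:linear part, split-normal, new} are correctly met with $m=w$; once that is in place, the rest is bookkeeping, analogous to the routine passage from normal to split-normal via Lemma~\ref{lem:split-normal criterion}.
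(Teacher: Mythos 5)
Your proposal is correct and follows essentially the same route as the paper: arrange $\fm=1$, extract $\order(L_{P_{+a}})=r$, $(P_{+a})_1\sim_{\Delta(\fv)}P_1$, $\fv(L_{P_{+a}})\sim_{\Delta(\fv)}\fv$ and the bound on $(P_{+a})_{>1}$ as in the proof of Proposition~\ref{normalrefine}, then apply Lemma~\ref{lem:linear part, split-normal, new} with $A=L_Q$ (and $m=w$) to keep the splitting summand, and finish with Lemma~\ref{lem:split-normal criterion}. Your explicit verification of the hypotheses of Lemma~\ref{lem:linear part, split-normal, new} and the bookkeeping with $R^+$ is just a spelled-out version of what the paper leaves implicit.
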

\begin{proof}
As in the proof of Proposition~\ref{normalrefine} we arrange $\fm=1$ and show for $\fv:= \fv(L_P)$, using Lemmas~\ref{lem:linear part, new} and~\ref{splnormalnormal}, that $\order(L_{P_{+a}})=r$ and $$(P_{+a})_1\sim_{\Delta(\fv)} P_1, \quad \fv(L_{P_{+a}})\sim_{\Delta(\fv)}\fv, \quad (P_{+a})_{>1} \prec_{\Delta(\fv)} \fv^{w+1} (P_{+a})_1.$$
Now take $Q$, $R$ as in (SN2) for $\fm=1$. Then $P_1=Q+R_1$, and so by Lemma~\ref{lem:linear part, split-normal, new} for $A=L_Q$ we
obtain $(P_{+a})_1 - Q \prec_{\Delta(\fv)} \fv^{w+1} (P_{+a})_1$, and thus
$(P_{+a})_{\geq 1}-Q \prec_{\Delta(\fv)}\fv^{w+1} (P_{+a})_1$.
Hence (SN2) holds with $\fm=1$ and $P_{+a}$ instead of~$P$. Thus the slot~$(P_{+a},\fm,\hat a-a)$ in $H$ is split-normal by Lemma~\ref{lem:split-normal criterion}.
\end{proof}

{\sloppy

\begin{lemma}\label{easymultsplitnormal}
Suppose $(P,\fm,\hat a)$ is split-normal,  $\hat{a}\prec \fn\preceq \fm$, and $[\fn/\fm]\le[\fv]$, $\fv:=\fv(L)$.
Then the refinement $(P,\fn,\hat a)$ of $(P,\fm,\hat a)$
is split-normal: if~$\fm$,~$P$,~$Q$,~$\fv$ are as in \textup{(SN2)}, then \textup{(SN2)} holds with $\fn$, $Q_{\times \fn/\fm}$, $R_{\times \fn/\fm}$, $\fv(L_{P_{\times \fn}})$ in place of~$\fm$,~$Q$,~$R$,~$\fv$.
\end{lemma}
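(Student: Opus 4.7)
The plan is to mimic the proof of Proposition~\ref{easymultnormal}, keeping track additionally of the splitting of the linear part. After multiplicatively conjugating by $\fm$, I would replace $(P,\fm,\hat a)$ and $\fn$ by $(P_{\times\fm},1,\hat a/\fm)$ and $\fn/\fm$, reducing to the case $\fm=1$, so that $Q$ and $R$ from (SN2) now satisfy $P_{\geq1}=Q+R$ with $Q$ homogeneous of degree $1$ and order $r$, $L_Q$ splits over $K$, and $R\prec_{\Delta(\fv)}\fv^{w+1}P_1$. Set $L:=L_P$, $\fv:=\fv(L)$, $\tilde L:=L_{P_{\times\fn}}$. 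The hypothesis $[\fn]\le[\fv]$ together with Lemma~\ref{lem:steep1} (applied to the steep slot $(P,1,\hat a)$) gives $\order(\tilde L)=r$ and $\fv(\tilde L)\asymp_{\Delta(\fv)}\fv$; in particular $\Delta(\fv(\tilde L))=\Delta(\fv)$ and (SN1) for $\tilde L$ follows from (SN1) for $L$.

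The natural candidate decomposition is $(P_{\times\fn})_{\geq1}=Q_{\times\fn}+R_{\times\fn}$. Clearly $Q_{\times\fn}$ is homogeneous of degree $1$ and of order $r$, and its linear part $L_{Q_{\times\fn}}=L_Q\,\fn$ still splits over $K$ by Lemma~\ref{lem:split and twist}. So the only real content is the estimate
\[
R_{\times\fn}\ \prec_{\Delta(\fv)}\ \fv^{w+1}\,(P_{\times\fn})_1.
\]
The key observation is that if we write $R=\sum_{d\geq1}R_d$ (homogeneous decomposition), then $R_d\preceq R$ for every $d\geq1$, and by [ADH, 6.1.3]
\[
(R_d)_{\times\fn}\ \asymp_{\Delta(\fv)}\ \fn^{d}R_d\ \preceq\ \fn R_d\ \preceq\ \fn R,
\]
since $\fn\preceq1$ and $d\geq1$. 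Summing and using (SN2) for the original slot and $\fn P_1\asymp_{\Delta(\fv)}(P_1)_{\times\fn}=(P_{\times\fn})_1$, we obtain
\[
R_{\times\fn}\ \preceq_{\Delta(\fv)}\ \fn R\ \prec_{\Delta(\fv)}\ \fn\,\fv^{w+1}P_1\ \asymp_{\Delta(\fv)}\ \fv^{w+1}(P_{\times\fn})_1,
\]
which is exactly what is needed. Split-normality of $(P,\fn,\hat a)$ then follows via Lemma~\ref{lem:split-normal criterion} applied with $\fv$ in place of the actual span $\fv(\tilde L)$; since $\fv(\tilde L)\asymp_{\Delta(\fv)}\fv$, the displayed estimate rephrases to (SN2) with $\fv(\tilde L)$ instead of $\fv$. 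Undoing the initial reduction turns $\fn$, $Q_{\times\fn}$, $R_{\times\fn}$ into $\fn$, $Q_{\times\fn/\fm}$, $R_{\times\fn/\fm}$ as stated.

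I expect the main obstacle to be purely notational, i.e., the careful bookkeeping needed to control the homogeneous pieces $R_d$ uniformly; the one substantive point is the observation $R_d\preceq R$ together with $\fn^d\preceq\fn$, which turns the multi-degree decomposition into a single clean bound. All other ingredients (Lemma~\ref{lem:steep1} for steepness transfer, Lemma~\ref{lem:split and twist} for preservation of the splitting under twist by $\fn$, and Lemma~\ref{lem:split-normal criterion} for the final verification) are already available from earlier in the paper.
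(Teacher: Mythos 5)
Your proof is correct and follows essentially the same route as the paper's: reduce to $\fm=1$, get $\order(\tilde L)=r$ and $\fv(\tilde L)\asymp_{\Delta(\fv)}\fv$ from Lemma~\ref{lem:steep1}, take the decomposition $(P_{\times\fn})_{\geq 1}=Q_{\times\fn}+R_{\times\fn}$, and estimate via [ADH, 6.1.3] using $[\fn]\le[\fv]$ and $\fn\preceq 1$. The only (harmless) difference is that you bound all homogeneous parts of $R_{\times\fn}$ uniformly in one stroke, whereas the paper bounds $(R_1)_{\times\fn}$ directly and disposes of $(P_{\times\fn})_{>1}=(R_{>1})_{\times\fn}$ by citing Proposition~\ref{easymultnormal}.
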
 }
\begin{proof} Set $\tilde{L}:= L_{P_{\times\fn}}$. Lemma~\ref{lem:steep1} gives $\order(\tilde{L})=r$ and $\fv(\tilde{L})\asymp_{\Delta(\fv)}\fv$. Thus~$(P_{\times\fn})_{>1} \prec_{\Delta(\fv)} \fv^{w+1} (P_{\times\fn})_1$ by Proposition~\ref{easymultnormal}. Now arrange $\fm=1$ in the usual way, and
take~$Q$,~$R$ as in (SN2) for $\fm=1$.  Then $$(P_{\times\fn})_1\ =\ (P_1)_{\times\fn}\ =\ Q_{\times\fn}+(R_1)_{\times\fn}, \qquad (P_{\times \fn})_{>1}\ =\ (R_{\times \fn})_{>1}\ =\ (R_{>1})_{\times \fn}$$ by~[ADH,~4.3], where
$Q_{\times\fn}$ is homogeneous of degree $1$ and order $r$, and $L_{Q_{\times\fn}}=L_Q\fn$ splits over~$K$.
Using [ADH, 4.3, 6.1.3] and $[\fn]\leq[\fv]$ we obtain
$$(R_1)_{\times\fn}\ \asymp_{\Delta(\fv)}\ \fn R_1\ \preceq\ \fn R\ \prec_{\Delta(\fv)}\ \fn\fv^{w+1}  P_1\ \asymp_{\Delta(\fv)} \fv^{w+1} (P_1)_{\times\fn}\ =\ \fv^{w+1} (P_{\times\fn})_1.$$
Hence (SN2) holds for $\fn, Q_{\times \fn}, R_{\times \fn}, \fv(\tilde{L})$ in place of $\fm, Q, R,\fv$. 
\end{proof}

\noindent 
Recall our standing assumption in this section that $H$ is a real closed $H$-field. Thus~$H$ is $\d$-valued, and
for all $\fn$ and $q\in \Q^{>}$ we have $\fn^q\in H^\times$ such that $(\fn^q)^\dagger=q\fn^\dagger$. {\em In the 
rest of this section we fix such an $\fn^q$ for all $\fn$ and $q\in\Q^{>}$.}\/ 
Now we upgrade Corollary~\ref{cor:normal for small q}  with ``split-normal'' instead of ``normal'':

\begin{lemma}\label{splnq}
Suppose $\fm=1$, $(P,1,\hat a)$ is split-normal, $\hat a \prec \fn\prec 1$, and for~$\fv:=\fv(L_P)$ we have $[\fn^\dagger]<[\fv]<[\fn]$. 
Then $(P,\fn^q,\hat a)$ is a split-normal refinement
of~$(P,1,\hat a)$ for all but finitely many $q\in \Q$ with $0 < q < 1$. 
\end{lemma}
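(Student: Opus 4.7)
The plan is to verify conditions (SN1) and (SN2) for $(P,\fn^q,\hat a)$ for all but finitely many $q\in\Q$ with $0<q<1$. First, from $\hat a \prec \fn \prec 1$ and $0<q<1$ we get $\hat a \prec \fn \prec \fn^q \prec 1$, so $(P,\fn^q,\hat a)$ does refine $(P,1,\hat a)$. Write $P_{\ge 1} = Q + R$ from (SN2) for $(P,1,\hat a)$, so that $Q$ is homogeneous of degree $1$ and order $r$, $L_Q$ splits over $K$, and $R \prec_{\Delta(\fv)} \fv^{w+1} P_1$. Then $(P_{\times\fn^q})_{\ge 1} = Q_{\times\fn^q} + R_{\times\fn^q}$ by [ADH, 4.3], and the linear part $L_{Q_{\times\fn^q}} = L_Q \fn^q$ still splits over $K$ by Lemma~\ref{lem:split and twist}. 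It remains to verify the span condition and the bound on $R_{\times\fn^q}$.

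Set $\tilde L := L_{P_{\times\fn^q}} = L_P\fn^q$ and $\fw := \fv(\tilde L)$. From $\fv\prec 1$, $\fv^\dagger\succeq 1$ and $\fn\prec 1$ we get $\fn^\dagger\succeq 1$. Applying Lemma~\ref{lem:nepsilon, refined} to $L_P$ in place of $A$ yields for all but finitely many $q\in\Q^>$ an $\fm\in H^\times$ with $\fv/\fw \asymp \fm$ and $[\fm]\le[\fn^\dagger] < [\fv]$, hence $\fm \asymp_{\Delta(\fv)} 1$, so $\fw \asymp_{\Delta(\fv)} \fv$. In particular $[\fw]=[\fv]$, so $\Delta(\fw)=\Delta(\fv)$, and since the asymptotic couple of $H$ is of $H$-type, $\psi(v\fw) = \psi(v\fv)\le 0$, giving $\fw\prec^\flat 1$; this is (SN1) for $(P,\fn^q,\hat a)$.

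For (SN2), decompose $R = R_1 + R_{>1}$. The linear part $(R_1)_{\times\fn^q}$ corresponds to the operator $L_R\fn^q$ with $v(R_1)=v(L_R)$ and $v(P_1)=v(L_P)$, so the hypothesis $R_1\prec_{\Delta(\fv)}\fv^{w+1}P_1$ translates to $L_R\prec_{\Delta(\fv)}\fv^{w+1}L_P$. Applying Corollary~\ref{cor:nepsilon} with $A=L_P$, $B=L_R$ then yields, for all but finitely many $q$,
$$(R_1)_{\times\fn^q}\ \prec_{\Delta(\fw)}\ \fw^{w+1}(P_1)_{\times\fn^q}\ =\ \fw^{w+1}(P_{\times\fn^q})_1.$$
For $R_{>1} = \sum_{d\ge 2} R_d$: since $v(\fn^q)=qv(\fn)\notin\Delta(\fv)$ for $q>0$, we have $\fn^q \prec_{\Delta(\fv)} 1$, and for $d\ge 2$, $\fn^{qd}\prec_{\Delta(\fv)}\fn^q$. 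A Riccati computation in the spirit of Lemma~\ref{lem:nepsilon, refined}, using $(\fn^q)^{(j)}/\fn^q = R_j(q\fn^\dagger)$ with $q\fn^\dagger\asymp_{\Delta(\fv)}1$, gives for all but finitely many $q$ the estimate $(R_d)_{\times\fn^q}\preceq_{\Delta(\fv)}\fn^{qd}R_d$ for each $d$, as well as $(P_1)_{\times\fn^q}\asymp_{\Delta(\fv)}\fn^q P_1$. Combining with $R_{>1}\prec_{\Delta(\fv)}\fv^{w+1}P_1$ yields $(R_{>1})_{\times\fn^q}\prec_{\Delta(\fv)}\fv^{w+1}(P_{\times\fn^q})_1$. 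Summing both contributions and invoking Lemma~\ref{lem:split-normal criterion} concludes split-normality of $(P,\fn^q,\hat a)$.

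The main technical point is the degree-$d$ estimate $(R_d)_{\times\fn^q}\preceq_{\Delta(\fv)}\fn^{qd}R_d$ for generic $q$: Corollary~\ref{cor:nepsilon} covers only the linear case, so for $d\ge 2$ one needs to track the expansion of $R_d(\fn^q Y)$ via the Leibniz formula and Riccati identities, excluding the finitely many rational values of $q$ for which the (polynomial in $q$) leading coefficient of this expansion vanishes—this is the same phenomenon that forces the "all but finitely many $q$" caveat in Lemma~\ref{lem:nepsilon, refined}. Once that calculation is in hand, the rest of the argument is bookkeeping in the coarsening $v_{\Delta(\fv)}$.
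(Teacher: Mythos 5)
Your strategy is workable and, for the linear correction term, coincides with the paper's: both apply Corollary~\ref{cor:nepsilon} to $A=L_P$, $B=L_R$ to get $(R_1)_{\times\fn^q}\prec_{\Delta(\fw)}\fw^{w+1}(P_{\times\fn^q})_1$ for all but finitely many $q$, and your (SN1) step via Lemma~\ref{lem:nepsilon, refined} is correct. The genuine gap is the degree $d\ge 2$ part, which you yourself call ``the main technical point'' and then defer: the estimate $(R_d)_{\times\fn^q}\preceq_{\Delta(\fv)}\fn^{qd}R_d$ is asserted, not proved. Note that citing [ADH, 6.1.3] would not suffice, since it only controls the error modulo $\Delta(\fn)\supseteq\Delta(\fv)$; what actually makes the finer bound true is the hypothesis $[\fn^\dagger]<[\fv]$, which places the Riccati factors $(\fn^q)^{(j)}/\fn^q\asymp(\fn^\dagger)^j$ (Lemma~\ref{Riccatipower+}) inside $\Delta(\fv)$, and for this one-sided $\preceq$ bound no values of $q$ need excluding at all -- the exclusions in the argument come only from the $\asymp$-comparisons of Lemma~\ref{lem:nepsilon, refined} and Corollary~\ref{cor:nepsilon}, so your attribution of the ``all but finitely many $q$'' caveat to this step is off target. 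A smaller slip: $\fn^\dagger\succeq 1$ does not follow from ``$\fv\prec 1$, $\fv^\dagger\succeq 1$, $\fn\prec 1$'' alone; you need $\fn\prec\fv$, which follows from $[\fv]<[\fn]$ and $\fv,\fn\prec 1$, and then $H$-type gives $\fn^\dagger\succeq\fv^\dagger\succeq 1$.

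Moreover, the deferred computation is unnecessary. Since $Q$ is homogeneous of degree $1$, we have $P_{>1}=R_{>1}$, so the bound you need on $(R_{>1})_{\times\fn^q}$ is precisely condition (N2) for the refinement $(P,\fn^q,\hat a)$; as split-normal implies normal, Corollary~\ref{cor:normal for small q} already yields that $(P,\fn^q,\hat a)$ is a normal refinement for all but finitely many $q\in\Q$ with $0<q<1$, which delivers both (SN1) and the degree $>1$ estimate at once. This is how the paper argues, reducing the whole proof to the single application of Corollary~\ref{cor:nepsilon} above plus the trivial observation that $L_{Q_{\times\fn^q}}=L_Q\fn^q$ still splits over $K$.
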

\begin{proof}
Corollary~\ref{cor:normal for small q} gives that $(P,\fn^q,\hat a)$ is a normal refinement of $(P,1,\hat a)$ for all but finitely many $q\in \Q$ with $0<q<1$. 
Take~$Q$,~$R$  as in (SN2) for $\fm=1$. Then $L = L_Q + L_R$ where $L_Q$ splits over~$H[\imag]$ and $L_R \prec_{\Delta(\fv)} \fv^{w+1} L$,
for $\fv:=\fv(L)$.
Applying Corollary~\ref{cor:nepsilon} to $A:=L$, $B:=L_R$ we obtain: 
$L_R\fn^q \prec_{\Delta(\fw)} \fw^{w+1} L\fn^q$, $\fw:=\fv(L\fn^q)$, for all but finitely many $q\in \Q^>$. 

Let $q\in \Q$ be such that $0<q<1$, $(P,\fn^q,\hat a)$ is a normal refinement of $(P,1,\hat a)$, and $L_R\fn^q \prec_{\Delta(\fw)} \fw^{w+1} L\fn^q$, with $\fw$ as above. 
Then $(P_{\times\fn^q})_1 = Q_{\times\fn^q}+(R_1)_{\times\fn^q}$ where~$Q_{\times\fn^q}$ is homogeneous of degree~$1$ and order $r$,  $L_{Q_{\times\fn^q}}=L_Q\fn^q$ splits over~$H[\imag]$,
and $(R_1)_{\times\fn^q} \prec_{\Delta(\fw)} \fw^{w+1} (P_{\times\fn^q})_1$ for $\fw:=\fv(L_{P_{\times\fn^q}})$. Since  $(P,\fn^q, \hat a)$ is normal, we also have  
$(P_{\times\fn^q})_{>1}\prec_{\Delta(\fw)} \fw^{w+1} (P_{\times\fn^q})_1$. Thus $(P,\fn^q,\hat a)$ is split-normal. 
\end{proof}

\begin{remark} We do not know if in this last lemma we can drop the assumption $[\fn^\dagger]<[\fv]$. 
\end{remark}

\subsection*{Strengthening split-normality} 
{\em In this subsection $a, b$ range over $H$ and $\fm,\fn$ over $H^\times$,
and $(P,\fm,\hat a)$ is a slot in $H$ of order~$r\geq 1$ and weight $w:=\wt(P)$, so~${w\geq 1}$, and $L:=L_{P_{\times\fm}}$. If
$\order L=r$, we set $\fv:= \fv(L)$}. 

With an eye towards later use in connection with fixed point theorems over Hardy fields we strengthen here the concept of
split-normality; in the next subsection we  show how to improve Theorem~\ref{thm:split-normal} accordingly.
See the last subsection of Section~\ref{sec:approx linear diff ops} for the notion of strong splitting.

\begin{definition}\label{def:SSN}
{\samepage Call 
$(P,\fm,\hat a)$ {\bf almost strongly split-normal} \index{slot!almost strongly split-normal}\index{almost strongly!split-normal}\index{split-normal!almost strongly} if $\order L=r$, 
$\fv\prec^\flat 1$, and the following strengthening of (SN2) holds:
\begin{list}{*}{\addtolength\itemindent{-3.5em}\addtolength\leftmargin{0.5em}}
\item[(SN2as)]  $(P_{\times\fm})_{\geq 1}=Q+R$ where $Q, R\in H\{Y\}$, $Q$ is homogeneous of degree~$1$ and order~$r$,  $L_Q$ splits strongly over $K$, and $R\prec_{\Delta(\fv)} \fv^{w+1} (P_{\times\fm})_1$. 
\end{list}}\noindent
We say that $(P,\fm,\hat a)$ is {\bf strongly split-normal}
 if $\order L =r$, $\fv\prec^\flat 1$, and the following condition is satisfied:\index{slot!strongly split-normal}\index{strongly!split-normal}\index{split-normal!strongly}
\begin{enumerate}
\item[(SN2s)]  
$P_{\times\fm}=Q+R$ where $Q,R\in H\{Y\}$, $Q$ is homogeneous of degree~$1$ and order~$r$, 
$L_Q$   splits strongly over $K$, and $R\prec_{\Delta(\fv)} \fv^{w+1} (P_{\times\fm})_1$.
\end{enumerate}
\end{definition}

\noindent
To facilitate use of (SN2s) we observe:

\begin{lemma}\label{SN2suse} Suppose $(P,\fm,\hat a)$ is strongly split-normal and $P_{\times\fm}=Q+R$ as in {\rm(SN2s)}. Then $Q\sim (P_{\times \fm})_1$, 
$\fv_Q:= \fv(L_Q)\sim \fv$, so $R\prec_{\Delta(\fv)} \fv_Q^{w+1}Q$.
\end{lemma}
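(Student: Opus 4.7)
\medskip
\noindent
\emph{Proof plan.} Let me outline how to derive the three conclusions from the hypothesis $P_{\times\fm} = Q + R$ of (SN2s), together with $\order L = r$, $\fv \prec^\flat 1$, and $R \prec_{\Delta(\fv)} \fv^{w+1}(P_{\times\fm})_1$. I will write $R = R_0 + R_1 + R_{\geq 2}$ for the decomposition into homogeneous parts; since $Q$ is homogeneous of degree $1$, we have $P(0)\fm = R_0$, $(P_{\times\fm})_1 = Q + R_1$, and $(P_{\times\fm})_d = R_d$ for $d \geq 2$.

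First I would prove $Q \sim (P_{\times\fm})_1$. The hypothesis on $R$ in (SN2s) implies in particular that $R_1 \preceq R \prec_{\Delta(\fv)} \fv^{w+1}(P_{\times\fm})_1$, hence $R_1 \prec \fv^{w+1}(P_{\times\fm})_1$; since $\fv \prec 1$ (as $\fv \prec^\flat 1$) and $w+1 \geq 2$, we get $\fv^{w+1}(P_{\times\fm})_1 \prec (P_{\times\fm})_1$, so $R_1 \prec (P_{\times\fm})_1$. From $(P_{\times\fm})_1 = Q + R_1$ this gives $Q \sim (P_{\times\fm})_1$.

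Next I would prove $\fv_Q \sim \fv$ via Lemma~\ref{lem:fv of perturbed op}. The linear part $L$ of $P_{\times\fm}$ equals $L_{(P_{\times\fm})_1} = L_Q + L_{R_1}$. Since the valuation of a homogeneous degree-one differential polynomial agrees with that of its associated operator, we have $v(L) = v((P_{\times\fm})_1)$ and $v(L_{R_1}) = v(R_1)$. Combining this with the estimate above, $L_{R_1} \prec_{\Delta(\fv)} \fv^{w+1} L$, and in particular $L_{R_1} \prec \fv \cdot L = \fv(L)\cdot L$, using again $w+1 \geq 2$ and $\fv \prec 1$. Since $\order(L_{R_1}) \leq r$, Lemma~\ref{lem:fv of perturbed op} applied with $A := L$ and $B := -L_{R_1}$ (so that $A + B = L_Q$) yields $\fv_Q = \fv(L_Q) \sim \fv(L) = \fv$.

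Finally, the last inequality follows immediately: from $Q \sim (P_{\times\fm})_1$ and $\fv_Q \sim \fv$ we get $\fv_Q^{w+1} Q \sim \fv^{w+1}(P_{\times\fm})_1$, and hence $R \prec_{\Delta(\fv)} \fv_Q^{w+1} Q$. The only genuine subtlety is ensuring the hypotheses of Lemma~\ref{lem:fv of perturbed op} hold---namely the strict domination $L_{R_1} \prec \fv \cdot L$---but this comes for free from $w \geq 1$ together with $\fv \prec 1$, so there is no real obstacle.
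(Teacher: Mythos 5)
Your proof is correct and follows essentially the same route as the paper: decompose $(P_{\times\fm})_1=Q+R_1$, note $R_1\prec_{\Delta(\fv)}\fv^{w+1}(P_{\times\fm})_1$, and apply Lemma~\ref{lem:fv of perturbed op} with $A:=L$, $B:=-L_{R_1}$ (whose part (i) also yields $Q\sim(P_{\times\fm})_1$ directly). The extra details you supply — identifying valuations of homogeneous degree-one polynomials with those of their linear parts and checking $L_{R_1}\prec\fv L$ — are exactly the checks the paper leaves implicit.
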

\begin{proof} We have $(P_{\times \fm})_1=Q+R_1$, so 
$Q=(P_{\times \fm})_1-R_1$ with $R_1\prec_{\Delta(\fv)} \fv^{w+1}(P_{\times \fm})_1$. Now apply Lemma~\ref{lem:fv of perturbed op} to $A:=L$ and $B:=-L_{R_1}$. 
\end{proof}

\noindent
If $(P,\fm,\hat a)$  is almost strongly split-normal, then
$(P,\fm,\hat a)$  is split-normal and hence normal by Lemma~\ref{splnormalnormal}. 
If $(P,\fm,\hat a)$ is normal and $L$ splits strongly over~$K$, then~$(P,\fm,\hat a)$ is  almost strongly split-normal;
in particular, if $(P,\fm,\hat a)$ is  normal of order $r=1$, then~$(P,\fm,\hat a)$ is almost strongly split-normal, by Lemma~\ref{lem:order 1 splits strongly}.
Moreover:

\begin{lemma}\label{lem:char strong split-norm}
The following are equivalent:
\begin{enumerate}
\item[\textup{(i)}] $(P,\fm,\hat a)$ is strongly split-normal;
\item[\textup{(ii)}] $(P,\fm,\hat a)$ is almost strongly split-normal and strictly normal;
\item[\textup{(iii)}] $(P,\fm,\hat a)$ is almost strongly split-normal and $P(0)\prec_{\Delta(\fv)} \fv^{w+1} (P_1)_{\times\fm}$.
\end{enumerate}
\end{lemma}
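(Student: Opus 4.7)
The plan is to verify the chain of implications (i) $\Rightarrow$ (ii) $\Rightarrow$ (iii) $\Rightarrow$ (i), exploiting the fact that the only difference between (SN2s) and (SN2as) is whether the constant term $P(0) = P_{\times\fm}(0)$ gets absorbed into the remainder $R$. Throughout, set $L_1 := (P_{\times\fm})_1 = (P_1)_{\times\fm}$, and note that since $Q$ in any of these decompositions is homogeneous of degree~$1$, we have $Q(0)=0$, so the constant term of the remainder always equals $P(0)$, and consequently $P(0)\preceq R$ via the definition of the gaussian valuation.

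For (i) $\Rightarrow$ (ii), write $P_{\times\fm}=Q+R$ as in (SN2s). Then $(P_{\times\fm})_{\geq1}=Q+(R-R(0))$, and since $R-R(0)\preceq R\prec_{\Delta(\fv)}\fv^{w+1}L_1$, the decomposition witnesses almost strong split-normality. For strict normality, I will observe that $(P_{\times\fm})_{>1}=R_{>1}\preceq R\prec_{\Delta(\fv)}\fv^{w+1}L_1$ (since $Q$ has degree exactly $1$), and $P(0)=R(0)\preceq R\prec_{\Delta(\fv)}\fv^{w+1}L_1$; combining these gives $(P_{\times\fm})_{\neq1}\prec_{\Delta(\fv)}\fv^{w+1}L_1$, which is (N2s).

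For (ii) $\Rightarrow$ (iii), I will simply note that $P(0)$ is a coefficient of $(P_{\times\fm})_{\neq1}$, hence $P(0)\preceq(P_{\times\fm})_{\neq1}\prec_{\Delta(\fv)}\fv^{w+1}L_1$ by (N2s), which is exactly the required inequality (using $L_1=(P_1)_{\times\fm}$).

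For (iii) $\Rightarrow$ (i), take a decomposition $(P_{\times\fm})_{\geq1}=Q+R$ witnessing almost strong split-normality, and put $R^*:=R+P(0)$. Then $P_{\times\fm}=P(0)+(P_{\times\fm})_{\geq1}=Q+R^*$, and since both $R\prec_{\Delta(\fv)}\fv^{w+1}L_1$ and $P(0)\prec_{\Delta(\fv)}\fv^{w+1}L_1$ (the latter being the extra hypothesis), the ultrametric triangle inequality gives $R^*\prec_{\Delta(\fv)}\fv^{w+1}L_1$, verifying (SN2s). Since $L_Q$ unchanged still splits strongly over $K$, strong split-normality holds.

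No serious obstacle is expected: each implication is a one-line manipulation of the decomposition plus an application of the fact that $P(0)\preceq R$ automatically (coefficient bound) combined with the standing assumption on $R$. The only thing to watch is using $(P_1)_{\times\fm}=(P_{\times\fm})_1$ consistently when comparing conditions (iii) and the internal formulation of (SN2s)/(SN2as).
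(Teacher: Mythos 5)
Your proof is correct and follows essentially the same route as the paper: shuffle the constant term $P(0)=R(0)$ between the remainder and the decomposition, using $P(0)\preceq R$ resp.\ the ultrametric inequality, with the identity $(P_1)_{\times\fm}=(P_{\times\fm})_1$. The only cosmetic difference is that for (i)$\Rightarrow$(ii) you verify (N2s) directly, whereas the paper first notes normality and then invokes its stated characterization of strict normality via $P(0)\prec_{\Delta(\fv)}\fv^{w+1}(P_{\times\fm})_1$; the substance is identical.
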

\begin{proof}
Suppose $(P,\fm,\hat a)$ is strongly split-normal, and
let $Q$, $R$ be as in (SN2s). Then~$(P_{\times\fm})_{\geq 1}=Q+R_{\geq 1}$, $L_Q$   splits strongly over $K$, and $R_{\geq 1} \prec_{\Delta(\fv)} \fv^{w+1} (P_{\times\fm})_1$.
Hence  $(P,\fm,\hat a)$  is almost strongly split-normal, and thus normal.
Also~$P(0)=R(0)\prec_{\Delta(\fv)} \fv^{w+1} (P_{\times\fm})_1$, so $(P,\fm,\hat a)$ is strictly normal.
This shows~(i)~$\Rightarrow$~(ii), and~(ii)~$\Rightarrow$~(iii) is clear.
For (iii)~$\Rightarrow$~(i) suppose~$(P,\fm,\hat a)$ is almost strongly split-normal and 
$P(0)\prec_{\Delta(\fv)} \fv^{w+1} (P_1)_{\times\fm}$. Take~$Q$,~$R$ as in~(SN2as). Then $P_{\times\fm} = Q+\tilde R$ where 
$\tilde R:=P(0)+R\prec_{\Delta(\fv)}\fv^{w+1} (P_1)_{\times\fm}$.
Thus $(P,\fm,\hat a)$ is strongly split-normal.
\end{proof}

\begin{cor}\label{cor:strongly splitting => strongly split-normal}
If $L$ splits strongly over $K$, then
$$\text{$(P,\fm,\hat a)$ is strongly split-normal }\ \Longleftrightarrow\   \text{$(P,\fm,\hat a)$ is strictly normal.}$$
\end{cor}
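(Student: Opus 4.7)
The corollary is a direct consequence of the characterization of strong split-normality in Lemma~\ref{lem:char strong split-norm}, and the plan is essentially to verify that under the extra hypothesis on $L$, almost strong split-normality comes for free from normality.

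The forward direction is immediate: if $(P,\fm,\hat a)$ is strongly split-normal, then by the implication (i)$\Rightarrow$(ii) of Lemma~\ref{lem:char strong split-norm} it is in particular strictly normal, and the extra hypothesis on $L$ is not even needed here.

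For the converse, assume that $L$ splits strongly over $K$ and that $(P,\fm,\hat a)$ is strictly normal. The first thing to observe is that strict normality implies normality, so by the definition of normality one has $\order L=r$ and $\fv:=\fv(L)\prec^\flat 1$, and moreover $(P_{\times\fm})_{>1}\prec_{\Delta(\fv)}\fv^{w+1}(P_{\times\fm})_1$ from condition~(N2). I would then take the obvious decomposition
\[
(P_{\times\fm})_{\ge 1}\ =\ Q + R,\qquad Q:=(P_{\times\fm})_1,\quad R:=(P_{\times\fm})_{>1},
\]
so that $Q$ is homogeneous of degree $1$ and order $r$, $L_Q=L$ splits strongly over $K$ by hypothesis, and $R\prec_{\Delta(\fv)}\fv^{w+1}(P_{\times\fm})_1$ by the above. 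This verifies (SN2as), so $(P,\fm,\hat a)$ is almost strongly split-normal. Combining almost strong split-normality with strict normality and invoking the implication (ii)$\Rightarrow$(i) of Lemma~\ref{lem:char strong split-norm} then gives that $(P,\fm,\hat a)$ is strongly split-normal.

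There is no real obstacle here: both directions reduce, via Lemma~\ref{lem:char strong split-norm}, to the trivial observation that when the linear part $L$ itself already splits strongly over $K$, the ``splitting'' ingredient of (SN2as) is automatic with the canonical choice $Q=(P_{\times\fm})_1$, so that almost strong split-normality is equivalent to normality. The whole argument is therefore a short bookkeeping proof rather than a computation.
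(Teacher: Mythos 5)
Your proof is correct and follows essentially the same route as the paper: the paper notes just before the corollary that normality together with strong splitting of $L$ over $K$ yields almost strong split-normality (your canonical decomposition $Q=(P_{\times\fm})_1$, $R=(P_{\times\fm})_{>1}$ is exactly how that remark is verified), and the corollary is then immediate from the equivalence (i)$\Leftrightarrow$(ii) of Lemma~\ref{lem:char strong split-norm}.
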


\noindent
The following diagram summarizes some implications between these variants of normality, for slots $(P,\fm,\hat a)$ in $H$
of order $r\ge 1$. (See also the diagram on p.~\pageref{section:flowchart}.)  
$$\xymatrix@L=6pt{	\text{strongly split-normal\ } \ar@{=>}[r] \ar@{=>}[d]&  \text{\ almost strongly split-normal} \ar@{=>}[r] &  \text{\ split-normal} \ar@{=>}[d] \\ 
\text{strictly normal\ } \ar@{=>}[rr] & & \text{\ normal}   }$$
If $(P,\fm,\hat a)$ is almost strongly split-normal, then  so are
$(bP,\fm,\hat a)$ for $b\neq 0$ and $(P_{\times\fn},\fm/\fn,\hat a/\fn)$, and likewise with ``strongly''
in place of ``almost strongly''.  

\medskip
\noindent
Here is a version of Lemma~\ref{splitnormalrefine} for (almost) strong split-normality:

\begin{lemma}\label{stronglysplitnormalrefine}
Suppose $(P_{+a},\fm,\hat a-a)$ refines $(P,\fm,\hat a)$.
If $(P,\fm,\hat a)$ is almost strongly split-normal, then so is $(P_{+a},\fm,\hat a-a)$.
If  $(P,\fm,\hat a)$ is   strongly split-normal, $Z$-minimal, and 
$\hat a - a \prec_{\Delta(\fv)} \fv^{r+w+1}\fm$, then $(P_{+a},\fm,\hat a-a)$ is strongly split-normal. 
\end{lemma}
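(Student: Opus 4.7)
The plan is to reduce both parts to the multiplicative-conjugate situation $\fm=1$ (via the usual replacement $(P,\fm,\hat a)\leadsto(P_{\times\fm},1,\hat a/\fm)$) and then keep the splitting operator $L_Q$ coming from (SN2as) unchanged across the refinement, since strong splitting is an intrinsic property of the operator and its own span.

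For Part~1, take $Q,R$ as in (SN2as) for $(P,1,\hat a)$, so $P_{\geq 1}=Q+R$, $Q$ homogeneous of degree $1$ and order $r$, $L_Q$ splits strongly over $K$, and $R\prec_{\Delta(\fv)}\fv^{w+1}P_1$. Since almost strong split-normality implies split-normality, Lemma~\ref{splitnormalrefine} applied to $(P,1,\hat a)$ gives that $(P_{+a},1,\hat a-a)$ is split-normal; in particular $\order L_{P_{+a}}=r$, $\fv(L_{P_{+a}})\sim_{\Delta(\fv)}\fv$, and $(P_{+a})_{>1}\prec_{\Delta(\fv)}\fv^{w+1}(P_{+a})_1$. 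Set $Q^+:=Q$; then $L_{Q^+}=L_Q$ still splits strongly over $K$ (strong splitting depends only on $L_Q$ and $\fv(L_Q)$, which are unchanged). Now apply Lemma~\ref{lem:linear part, split-normal, new} with $A:=L_Q$ and $B:=L_P-L_Q$: the hypotheses $P_{>1}\prec_{\Delta(\fv)}\fv^{w+1}P_1$ (from normality of $(P,1,\hat a)$) and $B\prec_{\Delta(\fv)}\fv^{w+1}L_P$ (from $R\prec_{\Delta(\fv)}\fv^{w+1}P_1$) hold, and the conclusion gives $L_{P_{+a}}=L_Q+B^+$ with $B^+\prec_{\Delta(\fv)}\fv^{w+1}L_{P_{+a}}$. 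Translating this back to polynomials, $(P_{+a})_1=Q+S$ with $S\prec_{\Delta(\fv)}\fv^{w+1}(P_{+a})_1$, so $R^+:=(P_{+a})_{\geq 1}-Q=S+(P_{+a})_{>1}$ also satisfies $R^+\prec_{\Delta(\fv)}\fv^{w+1}(P_{+a})_1$. This establishes (SN2as) for $(P_{+a},1,\hat a-a)$ with the auxiliary $\fv$; the argument of Lemma~\ref{lem:split-normal criterion}, transposed to the almost-strongly-split-normal setting, upgrades this to (SN2as) with $\fv(L_{P_{+a}})$ in place of $\fv$, completing Part~1.

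For Part~2, suppose $(P,\fm,\hat a)$ is strongly split-normal and $Z$-minimal, and that $\hat a-a\prec_{\Delta(\fv)}\fv^{r+w+1}\fm$. By Lemma~\ref{lem:char strong split-norm} the slot $(P,\fm,\hat a)$ is both almost strongly split-normal and strictly normal, so Part~1 yields that $(P_{+a},\fm,\hat a-a)$ is almost strongly split-normal, while Lemma~\ref{stronglynormalrefine, cracks} (which requires exactly the hypothesis $\hat a-a\prec_{\Delta(\fv)}\fv^{r+w+1}\fm$) shows $(P_{+a},\fm,\hat a-a)$ is strictly normal. Applying Lemma~\ref{lem:char strong split-norm} once more then gives that $(P_{+a},\fm,\hat a-a)$ is strongly split-normal.

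The principal obstacle is conceptual rather than computational: one must recognize that strong splitting of $L_Q$ is not lost under refinement because $L_Q$ and its span $\fv(L_Q)\sim\fv$ are unchanged, and that the ``strict'' half of strong split-normality (the bound on $P(0)$) is already handled by the existing Lemma~\ref{stronglynormalrefine, cracks}. Once this separation is seen, everything else is a matter of citing Lemmas~\ref{splitnormalrefine}, \ref{lem:linear part, split-normal, new}, \ref{stronglynormalrefine, cracks}, and \ref{lem:char strong split-norm}.
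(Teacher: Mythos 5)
Your proof is correct and follows essentially the same route as the paper, which simply cites Lemma~\ref{splitnormalrefine} ``and its proof'' for the first part (the crucial observation being, as you spell out, that the same $Q$ from (SN2as) works for the refined slot, so strong splitting of $L_Q$ is automatically preserved) and then combines this with Lemmas~\ref{stronglynormalrefine, cracks} and~\ref{lem:char strong split-norm} for the second part. Your write-up is a faithful expansion of the paper's terse argument.
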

\begin{proof}
The first part follows from Lemma~\ref{splitnormalrefine} and its proof.
In combination with Lemmas~\ref{stronglynormalrefine, cracks} and \ref{lem:char strong split-norm}, this also yields the second part.
\end{proof} 
 
\begin{lemma}\label{stronglysplitnormalrefine, q}
Suppose that $(P,\fm,\hat a)$ is split-normal and  $\hat{a}\prec_{\Delta(\fv)} \fm$.
Then for all sufficiently small  $q\in\Q^>$, any $\fn\asymp\fv^q\fm$ yields  an almost strongly split-normal refinement $(P,\fn,\hat a)$ of~$(P,\fm,\hat a)$.
\end{lemma}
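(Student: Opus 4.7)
\smallskip
\noindent
\textbf{Proof plan for Lemma~\ref{stronglysplitnormalrefine, q}.}
My plan is to combine the preservation of split-normality under refinements by $\fn\asymp\fv^q\fm$ (Lemma~\ref{easymultsplitnormal}) with the strengthening of a splitting to a strong splitting under multiplicative conjugation (Lemma~\ref{lem:split strongly multconj}), the crucial point being that although the full linear part $L=L_{P_{\times\fm}}$ need not split over~$K$, by (SN2) it has a dominant piece $L_Q$ that does.

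First I would fix a decomposition $(P_{\times\fm})_{\ge 1}=Q+R$ as in (SN2), with $L_Q$ splitting over $K$ and $R\prec_{\Delta(\fv)}\fv^{w+1}(P_{\times\fm})_1$, and note that by Lemma~\ref{lem:fv of perturbed op} applied to $L=L_Q+L_{R_1}$ we have $\fv(L_Q)\sim\fv$. The hypothesis $\hat a\prec_{\Delta(\fv)}\fm$ means $v(\hat a/\fm)>\Delta(\fv)$, so $[\hat a/\fm]\ge[\fv]$. In both cases $[\hat a/\fm]=[\fv]$ and $[\hat a/\fm]>[\fv]$, it follows that $qv(\fv)<v(\hat a/\fm)$ for every sufficiently small $q\in\Q^>$, so $\hat a\prec\fn$ for any $\fn\asymp\fv^q\fm$. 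Since $[\fn/\fm]=[\fv^q]=[\fv]$, Lemma~\ref{easymultsplitnormal} applies and yields that $(P,\fn,\hat a)$ is split-normal, with decomposition
\[
(P_{\times\fn})_{\ge 1}\ =\ Q_{\times\fn/\fm} + R_{\times\fn/\fm},
\]
that $\fv_{\mathrm{new}}:=\fv(L_{P_{\times\fn}})\asymp_{\Delta(\fv)}\fv$ (hence $\Delta(\fv_{\mathrm{new}})=\Delta(\fv)$ and $\fv_{\mathrm{new}}\prec^\flat 1$), that $\order L_{P_{\times\fn}}=r$, and that $R_{\times\fn/\fm}\prec_{\Delta(\fv_{\mathrm{new}})}\fv_{\mathrm{new}}^{w+1}(P_{\times\fn})_1$.

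Next I would upgrade the splitting of $L_Q$ to a strong splitting. Write $\fn=\fv^q\fm\cdot h$ with $h\asymp 1$, so that $L_{Q_{\times\fn/\fm}}=L_Q\cdot(\fn/\fm)=L_Q\fv^q h$. Fix a splitting $L_Q=f(\der-g_1)\cdots(\der-g_r)$ over $K$; since $\fv(L_Q)\sim\fv\prec 1$ and $[\fv(L_Q)]=[\fv]\le[\fv]$, Lemma~\ref{lem:split strongly multconj} (applied with $L_Q$ and $\fv$ in the roles of $A$ and $\fn$) shows that for all but finitely many $q\in\Q^>$, the tuple $(g_1-q\fv^\dagger,\ldots,g_r-q\fv^\dagger)$ is a strong splitting of $L_Q\fv^q$ over $K$. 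Then Lemma~\ref{lem:Ah splits strongly} converts this into a strong splitting of $L_Q\fv^q h=L_{Q_{\times\fn/\fm}}$ over $K$.

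Finally, combining these two inputs, the decomposition $(P_{\times\fn})_{\ge 1}=Q_{\times\fn/\fm}+R_{\times\fn/\fm}$ witnesses (SN2as) for $(P,\fn,\hat a)$ with respect to $\fv_{\mathrm{new}}$: the piece $Q_{\times\fn/\fm}$ is homogeneous of degree $1$ and order $r$, its linear part splits strongly over $K$, and the remainder is controlled. Together with $\order L_{P_{\times\fn}}=r$ and $\fv_{\mathrm{new}}\prec^\flat 1$ this gives that $(P,\fn,\hat a)$ is almost strongly split-normal, as required. The only real obstacle is the bookkeeping in step two, ensuring that the finitely many ``bad'' values of $q$ from Lemma~\ref{lem:split strongly multconj} can be avoided simultaneously with the smallness condition on $q$ needed in step one; but both are constraints excluding at most finitely many $q\in\Q^>$, so cofinitely many $q$ work.
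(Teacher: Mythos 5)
Your proof is correct and takes essentially the same route as the paper: the paper also fixes $Q$, $R$ as in (SN2), uses Lemma~\ref{lem:split strongly multconj} (implicitly together with Lemma~\ref{lem:Ah splits strongly}, which you spell out) to make $L_{Q_{\times\fn/\fm}}$ split strongly for all but finitely many small $q$, and then invokes Lemma~\ref{easymultsplitnormal} to transport the decomposition and conclude almost strong split-normality of $(P,\fn,\hat a)$. Your closing remark that "both constraints exclude at most finitely many $q$" is slightly off (the smallness requirement $\hat a\prec\fn$ excludes infinitely many large $q$), but since the exceptional set from Lemma~\ref{lem:split strongly multconj} is finite, the conclusion "for all sufficiently small $q\in\Q^>$" still follows, so this is only a wording slip, not a gap.
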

\begin{proof}
We arrange $\fm=1$, so $\hat{a}\prec_{\Delta(\fv)} 1$. Take $Q$, $R$ as in~(SN2) with $\fm=1$, and take~$q_0\in\Q^>$ such that $\hat a\prec \fv^{q_0}\prec 1$. By Lemma~\ref{lem:split strongly multconj} we
can decrease~$q_0$ so that for all $q\in\Q$ with $0<q\leq q_0$ and any $\fn\asymp \fv^q$, $L_{Q_{\times\fn}}=L_Q\fn$ splits strongly over~$K$. 
Suppose $q\in \Q$, $0<q\leq q_0$, and $\fn\asymp\fv^q$.
Then  $(P,\fn,\hat a)$ is an almost strongly split-normal refinement of $(P, 1,\hat a)$, by Lemma~\ref{easymultsplitnormal}. 
\end{proof}

\begin{cor}\label{cor:deep and almost strongly split-normal}
Suppose that $(P,\fm,\hat a)$ is $Z$-minimal, deep, and split-normal. Then $(P,\fm,\hat a)$ has a refinement which is deep and
almost strongly split-normal.
\end{cor}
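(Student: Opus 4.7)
Write $\fv:=\fv(L_{P_{\times\fm}})$. The overall strategy is first to shift $\hat a$ by an element of $H$ so as to force $\hat a$ into the coarsening neighbourhood controlled by $\Delta(\fv)$, and then apply Lemma~\ref{stronglysplitnormalrefine, q} to pull the span down by a small rational power, while verifying via Corollary~\ref{cor:deep 2, cracks} that deepness survives at each step.

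The first step is to invoke Lemma~\ref{lem:good approx to hata}. Since $(P,\fm,\hat a)$ is deep it is in particular steep, and taking $\phi=1$ (which is active in $H$, as $H$ has small derivation and $\Gamma\neq\{0\}$) in (D1) and (D2) yields $\ddeg S_{P_{\times\fm}}=0$ and $\ddeg P_{\times\fm}=1$; deepness also forces $\ndeg P_{\times\fm}=1$. Thus Lemma~\ref{lem:good approx to hata} applies and provides $a\in H$ with $\hat a-a\prec_{\Delta(\fv)}\fm$. I then replace $(P,\fm,\hat a)$ by its refinement $(P_{+a},\fm,\hat a-a)$: this is still $Z$-minimal, it remains split-normal by Lemma~\ref{splitnormalrefine}, and applying Corollary~\ref{cor:deep 2, cracks} with the trivial multiplicative change $\fn=\fm$ (so $[\fn/\fm]=[1]\leq[\fv]$) shows it remains deep and that $\fv':=\fv(L_{P_{+a,\times\fm}})\asymp_{\Delta(\fv)}\fv$; in particular $\Delta(\fv')=\Delta(\fv)$ and $\hat a-a\prec_{\Delta(\fv')}\fm$.

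The second step is now to apply Lemma~\ref{stronglysplitnormalrefine, q} to the split-normal slot $(P_{+a},\fm,\hat a-a)$, whose relevant span is $\fv'$ and which satisfies the hypothesis $\hat a-a\prec_{\Delta(\fv')}\fm$ just verified. This yields a sufficiently small $q\in\Q^{>}$ and, for any $\fn\asymp(\fv')^q\fm$ in $H^\times$, an almost strongly split-normal refinement $(P_{+a},\fn,\hat a-a)$ of $(P_{+a},\fm,\hat a-a)$.

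It remains to confirm that this new refinement is still deep. Since $q\in\Q^{>}$, we have $[\fn/\fm]=[(\fv')^q]=[\fv']=[\fv]$, so $[\fn/\fm]\leq[\fv]$, and Corollary~\ref{cor:deep 2, cracks} applied to the $Z$-minimal deep slot $(P_{+a},\fm,\hat a-a)$ guarantees that $(P_{+a},\fn,\hat a-a)$ is deep. As this slot refines the original $(P,\fm,\hat a)$ and is both deep and almost strongly split-normal, the corollary follows. The delicate point of the argument is the compatibility of the three notions of ``smallness'' in play: the proximity $\hat a-a\prec_{\Delta(\fv)}\fm$ needed to enter the hypothesis of Lemma~\ref{stronglysplitnormalrefine, q}, the power $q$ chosen small enough (in particular so that $\hat a-a\prec\fn$ genuinely holds, as in the proof of that lemma), and the archimedean identity $[\fn/\fm]=[\fv]$ that keeps deepness intact via Corollary~\ref{cor:deep 2, cracks}.
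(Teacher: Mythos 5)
Your proof is correct and follows the same route as the paper's: Lemma~\ref{lem:good approx to hata} to get $\hat a - a \prec_{\Delta(\fv)}\fm$ (you carefully verify its hypotheses from deepness, which the paper leaves implicit), then Corollary~\ref{cor:deep 2, cracks} and Lemma~\ref{splitnormalrefine} to keep deep and split-normal under the additive refinement, and finally Lemma~\ref{stronglysplitnormalrefine, q} together with another application of Corollary~\ref{cor:deep 2, cracks} for the multiplicative refinement.
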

\begin{proof}  
Lemma~\ref{lem:good approx to hata} gives $a$ such that $\hat a - a \prec_{\Delta(\fv)} \fm$. By Corollary~\ref{cor:deep 2, cracks}, the re\-fine\-ment~$(P_{+a},\fm,\hat a-a)$ of
$(P,\fm,\hat a)$ is   deep  with $\fv(L_{P_{+a,\times \fm}})\asymp_{\Delta(\fv)} \fv$, and by Lemma~\ref{splitnormalrefine} it is also
split-normal. 
Now apply Lemma~\ref{stronglysplitnormalrefine, q} to $(P_{+a},\fm,\hat a-a)$ in place of~$(P,\fm,\hat a)$  and again use Corollary~\ref{cor:deep 2, cracks}.
\end{proof}

\noindent
We now turn to the behavior of these properties under compositional conjugation.

\begin{lemma}\label{lem:strongly split-normal compconj}
Let $\phi$ be active in $H$ with $0<\phi\preceq 1$.
If $(P,\fm,\hat a)$ is almost strongly split-normal, then so is 
the slot $(P^\phi,\fm,\hat a)$ in $H^\phi$. Likewise with ``strongly''
in place of ``almost strongly''.
\end{lemma}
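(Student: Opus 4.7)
The plan is to adapt the proof of Lemma~\ref{lem:split-normal comp conj} to keep track of the extra strong-splitting condition in (SN2as) and (SN2s). The main new ingredient is Lemma~\ref{lem:split strongly compconj}, which shows that strong splittings of an operator $A$ over $K$ with $\fv(A)\prec^\flat 1$ transfer to strong splittings of $A^\phi$ over $K^\phi$; everything else is already lined up in the split-normal case.

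First, I would reduce to $\fm=1$, using that $(P^\phi)_{\times\fm}=(P_{\times\fm})^\phi$ and that multiplicative conjugation preserves (almost) strong split-normality, as noted after Definition~\ref{def:SSN}. Now suppose $(P,1,\hat a)$ is almost strongly split-normal, and take $Q,R$ as in (SN2as). Then $L_Q$ splits strongly over $K$, and by Lemma~\ref{lem:fv of perturbed op} applied to $L=L_Q+L_{R_1}$ with $L_{R_1}\prec_{\Delta(\fv)}\fv^{w+1}L$, we have $\fv(L_Q)\sim \fv\prec^\flat 1$. Hence Lemma~\ref{lem:split strongly compconj} applies to $L_Q$ and yields that $L_{Q^\phi}=L_Q^\phi$ splits strongly over $K^\phi$.

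Next, I would verify (SN2as) for $(P^\phi,1,\hat a)$ with respect to $H^\phi$. Since compositional conjugation respects the decomposition into homogeneous parts [ADH,~5.7.5], we have $(P^\phi)_{\geq 1}=Q^\phi+R^\phi$ with $Q^\phi$ homogeneous of degree $1$ and order $r$. Put $\fw:=\fv(L_{P^\phi})$. By Lemma~\ref{lem:v(Aphi)}, $\fw\asymp_{\Delta(\fv)}\fv$ and $\fw\prec_\phi^\flat 1$; in particular $[\fw]=[\fv]$, so $\Delta(\fw)=\Delta(\fv)$. Using [ADH,~11.1.4] we have $R^\phi\asymp_{\Delta(\fv)} R$ and $(P^\phi)_1=(P_1)^\phi\asymp_{\Delta(\fv)} P_1$, so
\[
R^\phi\ \asymp_{\Delta(\fv)}\ R\ \prec_{\Delta(\fv)}\ \fv^{w+1}P_1\ \asymp_{\Delta(\fv)}\ \fw^{w+1}(P^\phi)_1,
\]
which is (SN2as) for $(P^\phi,1,\hat a)$. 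Together with $\order L^\phi=r$ and $\fw\prec_\phi^\flat 1$, this shows that $(P^\phi,\fm,\hat a)$ is almost strongly split-normal in $H^\phi$.

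For the strongly split-normal case, I would apply the equivalence in Lemma~\ref{lem:char strong split-norm}: it suffices to check that $(P^\phi,\fm,\hat a)$ is almost strongly split-normal (just proved) and that $P^\phi(0)\prec_{\Delta(\fw)}\fw^{w+1}(P^\phi_1)_{\times\fm}$. But $P^\phi(0)=P(0)$, and with $\fm=1$ the strong split-normality of $(P,1,\hat a)$ gives $P(0)\prec_{\Delta(\fv)}\fv^{w+1}P_1$, which transforms into $P^\phi(0)\prec_{\Delta(\fw)}\fw^{w+1}(P^\phi)_1$ exactly as in the display above. The only real obstacle is the transfer of the strong-splitting condition, and this is precisely what Lemma~\ref{lem:split strongly compconj} takes care of once we observe $\fv(L_Q)\sim\fv\prec^\flat 1$.
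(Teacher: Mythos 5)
Your proof is correct and follows essentially the same route as the paper's own proof: reduce to $\fm=1$, take $Q,R$ as in (SN2as), transport the estimate via [ADH, 11.1.4] and Lemma~\ref{lem:v(Aphi)} as in the proof of Lemma~\ref{lem:split-normal comp conj}, and use Lemma~\ref{lem:split strongly compconj} to carry the strong splitting of $L_Q$ over to $L_{Q^\phi}$ (your explicit check that $\fv(L_Q)\sim\fv\prec^\flat 1$ before invoking that lemma is a worthwhile detail that the paper leaves implicit). For the ``strongly'' case the paper appeals to Lemma~\ref{lem:char strong split-norm}(ii) together with preservation of strict normality under compositional conjugation (Lemma~\ref{lem:normality comp conj, strong}), while you use Lemma~\ref{lem:char strong split-norm}(iii) and re-derive the $P(0)$-estimate directly; these are interchangeable finishes of the same argument.
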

\begin{proof}
We arrange $\fm=1$, 
assume $(P,\fm,\hat a)$ is almost strongly split-normal, and take~$Q$,~$R$ as in~(SN2as).
The proof of Lemma~\ref{lem:split-normal comp conj} shows that with $\fw:=\fv(L_{P^\phi})$
we have $\fw\prec^\flat_\phi 1$ and  $(P^\phi)_{\geq 1} = Q^\phi + R^\phi$ where
$Q^\phi\in H^\phi\{Y\}$ is homogeneous of degree~$1$ and order~$r$, $L_{Q^\phi}$ splits over~$H^\phi[\imag]$, and
$R^\phi \prec_{\Delta(\fw)} \fw^{w+1} (P^\phi)_1$. By Lemma~\ref{lem:split strongly compconj}, $L_{Q^\phi}=L_Q^\phi$ even  splits strongly over $H[\imag]$.
Hence~$(P^\phi,\fm,\hat a)$ is almost strongly split-normal.
The rest follows from Lemma~\ref{lem:char strong split-norm} and 
the fact that  if~$(P,\fm,\hat a)$ is strictly normal, then so is  $(P^\phi,\fm,\hat a)$.
\end{proof}

\noindent
If $H$ is $\upo$-free and $r$-linearly newtonian, then by Corollary~\ref{cor:achieve strong normality, 2}, every $Z$-minimal slot in~$H$ of order~$r$
has a refinement $(P,\fm,\hat a)$  such that the slot~$(P^\phi,\fm,\hat a)$ in $H^\phi$ is eventually deep and strictly normal.
Corollary~\ref{cor:5.30real} of the next lemma 
is a variant of this fact for strong split-normality.

\begin{lemma}\label{5.30real} 
Assume $H$ is $\upo$-free and $r$-linearly newtonian, and every~${A\in H[\der]}$ of order~$r$ splits over $K$.  
Suppose $(P,\fm,\hat a)$ is $Z$-minimal. Then there is a refinement $(P_{+a},\fn,\hat a-a)$  of $(P,\fm,\hat a)$ and an active $\phi$ in $H$
with $0<\phi\preceq 1$   such that~$(P^\phi_{+a},\fn,\hat a-a)$  is  deep and strictly normal, and its   linear part splits strongly over $K^\phi$ \textup{(}so $(P^\phi_{+a},\fn,\hat a-a)$  is strongly split-normal by Corollary~\ref{cor:strongly splitting => strongly split-normal}\textup{)}.
\end{lemma}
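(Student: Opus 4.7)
My plan is to combine an initial normalization, an additive refinement exploiting specialness, and a final multiplicative refinement by a small rational power of the span. First, I apply Corollary~\ref{cor:achieve strong normality, 2} to the given $Z$-minimal slot $(P,\fm,\hat a)$ to obtain a refinement $(P_{+a_1},\fm_1,\hat a-a_1)$ and a positive active $\phi_0\preceq 1$ in $H$ such that $(P^{\phi_0}_{+a_1},\fm_1,\hat a-a_1)$ is deep and strictly normal in $H^{\phi_0}$. All hypotheses of the lemma transfer to $H^{\phi_0}$: $\upo$-freeness and $r$-linear newtonianity are preserved under compositional conjugation, and every order-$r$ operator in $H^{\phi_0}[\derdelta]$ is of the form $B^{\phi_0}$ with $B\in H[\der]$, hence splits over $K^{\phi_0}$ by the assumption on $H$. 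Working henceforth in $H^{\phi_0}$, write $S_1=(P_1,\fm_1,\hat a_1)$ for this deep strictly normal $Z$-minimal slot, with linear part $L_1$, span $\fv_1:=\fv(L_1)\prec^\flat 1$, and $w:=\wt(P)$.

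\textbf{Paragraph 2 (Additive refinement).} Since $S_1$ is deep and strictly normal it is quasilinear, so Lemma~\ref{lem:special dents} (invoking $r$-linear newtonianity of $H^{\phi_0}$, and $\upo$-freeness if $r>1$) gives that $S_1$ is special. Corollary~\ref{specialvariant} then supplies $a_2\in H$ with $\hat a_1-a_2\prec\fv_1^{r+w+1}\fm_1$. Lemma~\ref{stronglynormalrefine, cracks} yields that the refinement $S_2=(P_{1,+a_2},\fm_1,\hat a_1-a_2)$ is strictly normal, and being strictly normal it is steep, so Corollary~\ref{cor:steep refinement} preserves deepness. The new span $\fv_2:=\fv(L_{P_{1,+a_2,\times\fm_1}})$ satisfies $\fv_2\sim_{\Delta(\fv_1)}\fv_1$ by Lemma~\ref{lem:linear part, new}, so $\Delta(\fv_2)=\Delta(\fv_1)$; moreover, as $(r+w+1)v(\fv_1)\notin\Delta(\fv_1)$, the bound $\hat a_1-a_2\prec\fv_1^{r+w+1}\fm_1$ upgrades to $\hat a_1-a_2\prec_{\Delta(\fv_2)}\fm_1$. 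Finally, $L_2:=L_{P_{1,+a_2,\times\fm_1}}$, being an order-$r$ operator in $H^{\phi_0}[\derdelta]$, splits over $K^{\phi_0}$ by the standing hypothesis.

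\textbf{Paragraph 3 (Multiplicative refinement and obstacle).} I now refine $S_2$ multiplicatively by $\fn:=|\fv_2|^q\fm_1\in H^\times$ for a sufficiently small rational $q>0$. Corollary~\ref{cor:split strongly multconj}, applied to $S_2$ viewed as a steep slot in $K^{\phi_0}$ (whose linear part $L_2$ splits over $K^{\phi_0}$ and which satisfies $\hat a_1-a_2\prec_{\Delta(\fv_2)}\fm_1$), yields that for all sufficiently small $q\in\Q^>$ the refinement $(P_{1,+a_2},\fn,\hat a_1-a_2)$ has its linear part splitting strongly over $K^{\phi_0}$. Lemma~\ref{lem:strongly normal refine, 2}, applied via Remark~\ref{rem:strongly normal refine, 2} to the strictly normal $Z$-minimal slot $S_2$, shows that for $q$ small enough this refinement is strictly normal, and Corollary~\ref{cor:deep 2, cracks} gives that it is deep (since $[\fn/\fm_1]=[\fv_2]\le[\fv_2]$). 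Setting $a:=a_1+a_2$ and $\phi:=\phi_0$, the slot $(P^\phi_{+a},\fn,\hat a-a)$ in $H^\phi$ is the required refinement. The main technical obstacle is orchestrating the two refinements consistently: the additive step in Paragraph~2 is indispensable precisely because it produces the bound $\hat a_1-a_2\prec_{\Delta(\fv_2)}\fm_1$ that enables Lemma~\ref{lem:strongly normal refine, 2} to preserve strict normality through the multiplicative step; a direct multiplicative refinement from Corollary~\ref{cor:split strongly multconj} would only preserve normality via Proposition~\ref{easymultnormal}, short of the strict normality which, combined with strong splitting, expresses strong split-normality via Corollary~\ref{cor:strongly splitting => strongly split-normal}.
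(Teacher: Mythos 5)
Your proof is correct and follows essentially the paper's own route: where the paper obtains ``deep, strictly normal, and $\hat a-a\prec_{\Delta(\fv)}\fm$'' in one stroke from Theorem~\ref{mainthm} followed by Corollary~\ref{cor:achieve strong normality, 1}, you enter via Corollary~\ref{cor:achieve strong normality, 2} and then re-derive the approximation through Lemma~\ref{lem:special dents}, Corollary~\ref{specialvariant}, and Lemma~\ref{stronglynormalrefine, cracks}, and the final multiplicative step (Corollary~\ref{cor:split strongly multconj}, Corollary~\ref{cor:deep 2, cracks}, Remark~\ref{rem:strongly normal refine, 2}) is identical to the paper's. One pedantic point: Lemma~\ref{stronglynormalrefine, cracks} requires $\hat a_1-a_2\prec_{\Delta(\fv_1)}\fv_1^{r+w+1}\fm_1$, not merely $\hat a_1-a_2\prec\fv_1^{r+w+1}\fm_1$, so take $n=r+w+2$ in Corollary~\ref{specialvariant}; since $v(\fv_1)>\Delta(\fv_1)$ this yields the coarsened estimate you need.
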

\begin{proof} 
For any active $\phi$ in $H$ with $0<\phi\preceq 1$ we may replace~$H$,~$(P,\fm,\hat a)$  by~$H^\phi$,~$(P^\phi,\fm,\hat a)$, respectively. We may
 also replace~$(P,\fm,\hat a)$ by any of its refinements. 
Now Theorem~\ref{mainthm} gives a refinement~$(P_{+a},\fn,\hat a-a)$ of $(P,\fm,\hat a)$ and an active~$\phi$ in $H$ such that~$0<\phi\preceq 1$ and $(P^\phi_{+a},\fn,\hat a-a)$ is deep and   normal.
Replacing~$H$,~$(P,\fm,\hat a)$ by~$H^\phi$,~$(P^\phi_{+a},\fn,{\hat a-a})$, respectively,  
we thus arrange that~$(P,\fm,\hat a)$ itself is deep and   normal. We show that then the lemma holds with~$\phi=1$. 
For this we first replace~$(P,\fm,\hat a)$  by a suitable refinement~$(P_{+a},\fm,{\hat a-a})$ to arrange by
Corollary~\ref{cor:achieve strong normality, 1}  that~$(P,\fm,\hat a)$ is strictly normal and $\hat a \prec_{\Delta(\fv)} \fm$. 
Now $L$ splits over~$K$, so by 
Corollary~\ref{cor:split strongly multconj},
for sufficiently small~${q\in\Q^>}$, any $\fn\asymp|\fv|^q\fm$ gives a  refinement~$(P,\fn,\hat a)$  of~$(P,\fm,\hat a)$ whose linear part~$L_{P_{\times\fn}}$ has order $r$ and splits strongly over $K$.
For each such $\fn$,  $(P,\fn,\hat a)$ is deep  by Corollary~\ref{cor:deep 2, cracks}, and for some such $\fn$,
 $(P,\fn,\hat a)$ is also strictly normal, by Remark~\ref{rem:strongly normal refine, 2}.
\end{proof}

\noindent
The previous lemma in combination with Lemma~\ref{lem:strongly split-normal compconj}  yields:
 
\begin{cor}\label{cor:5.30real}
With the same assumptions on $H$, $K$ as in Lemma~\ref{5.30real},  every $Z$-minimal slot in $H$ of order $r$ has a refinement $(P,\fm,\hat a)$ such that
$(P^\phi,\fm,\hat a)$ is eventually deep and strongly split-normal. 
\end{cor}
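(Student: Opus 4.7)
The plan is to apply Lemma~\ref{5.30real} and then upgrade ``for some $\phi$'' to ``eventually'' by invoking the stability of the relevant properties under further compositional conjugation.

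First I would apply Lemma~\ref{5.30real} to the given $Z$-minimal slot in $H$ of order $r$. This produces a refinement, which I rename $(P,\fm,\hat a)$, together with an active $\phi_0$ in $H$ satisfying $0<\phi_0\preceq 1$, such that the slot $(P^{\phi_0},\fm,\hat a)$ in $H^{\phi_0}$ is deep, strictly normal, and has linear part that splits strongly over $K^{\phi_0}$; in particular, by Corollary~\ref{cor:strongly splitting => strongly split-normal}, $(P^{\phi_0},\fm,\hat a)$ is strongly split-normal.

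Next I would show that the relevant properties persist for all active $\phi\preceq\phi_0$ in $H$ with $\phi>0$. For such $\phi$, set $\psi:=\phi/\phi_0$, so $\psi$ is active in $H^{\phi_0}$ with $0<\psi\preceq 1$, and
\[
(P^\phi,\fm,\hat a)\ =\ \big((P^{\phi_0})^\psi,\fm,\hat a\big).
\]
By Lemma~\ref{lem:strongly split-normal compconj} applied in $H^{\phi_0}$ with $\psi$ in place of $\phi$, strong split-normality of $(P^{\phi_0},\fm,\hat a)$ passes to $\big((P^{\phi_0})^\psi,\fm,\hat a\big)=(P^\phi,\fm,\hat a)$. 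Similarly, deepness is stable under compositional conjugation by active elements $\preceq 1$ (this is part of the remarks immediately following the definition of ``deep'' in Section~\ref{sec:normalization}), so $(P^\phi,\fm,\hat a)$ is also deep. Thus $(P^\phi,\fm,\hat a)$ is deep and strongly split-normal for every active $\phi$ in $H$ with $0<\phi\preceq\phi_0$, which is precisely what ``eventually'' demands.

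There is no real obstacle here: the corollary is essentially a packaging of Lemma~\ref{5.30real} with the preservation lemmas. The only thing worth double-checking is that Lemma~\ref{lem:strongly split-normal compconj}, though stated for compositional conjugation of an $H$-field, applies after we have already moved to $H^{\phi_0}$ (which is again an $H$-field by the choice $\phi_0>0$), and that the ``active and $\leq 1$'' hypothesis on $\psi$ is satisfied by construction; both checks are immediate.
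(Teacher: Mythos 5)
Your proof is correct and follows essentially the same route as the paper, which obtains the corollary directly by combining Lemma~\ref{5.30real} with the compositional-conjugation stability in Lemma~\ref{lem:strongly split-normal compconj} (together with the remark that deepness persists under conjugation by active $\phi\preceq 1$). Your explicit passage through $\psi=\phi/\phi_0$ and the check that $H^{\phi_0}$ satisfies the section's standing hypotheses just spell out what the paper leaves implicit.
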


\noindent
For $r=1$ the splitting assumption is automatically satisfied (and this is the case most relevant later). 
We do not know whether ``every
$A\in H[\der]^{\ne}$ of order~$\le r$ splits over $K$'' is strictly weaker than ``$K$ is $r$-linearly closed''.

\subsection*{Achieving strong split-normality}
We make the same assumptions as in the subsection {\it Achieving split-normality}\/: {\em $H$ is $\upo$-free and $(P,\fm,\hat a)$ is a minimal hole in $K=H[\imag]$ of order $r\ge 1$, with  $\fm\in H^\times$ and $\hat a\in\hat K\setminus K$.}\/ Recall: $K$ is also
$\upo$-free by [ADH, 11.7.23], and if $H$ is $1$-linearly newtonian, then $H$ is $r$-linearly newtonian.  
We have  $$\hat a\ =\ \hat b + \hat c\, \imag,\qquad \hat b, \hat c\in \hat H.$$
We let  $a$ range over $K$, $b$, $c$  over $H$, and $\fn$ over~$H^\times$. 
In connection with the next two lemmas we note that given 
an active $\phi$ in~$H$ with~${0<\phi\preceq 1}$, 
if 
$(P,\fm,\hat a)$ is normal (strictly normal, respectively), then
so is~$(P^\phi,\fm,\hat a)$, by Lemma~\ref{lem:normality comp conj} (Lemma~\ref{lem:normality comp conj, strong}, respectively);
moreover, if  the linear part of $(P,\fm,\hat a)$ splits strongly over $K$, then the linear part of $(P^\phi,\fm,\hat a)$ splits strongly over $K^\phi=H^\phi[\imag]$,
by Lem\-ma~\ref{lem:split strongly compconj}. Here is a ``complex'' version of Lemma~\ref{5.30real}, with a similar proof:

\begin{lemma}\label{lem:achieve strong splitting}  
For some refinement $(P_{+a},\fn,\hat a-a)$  of $(P,\fm,\hat a)$ and active $\phi$ in~$H$ with $0<\phi\preceq 1$,
the hole $(P^\phi_{+a},\fn,\hat a-a)$ in $K^\phi$ is deep and  normal, its  linear part splits 
strongly over $K^\phi$, and it is moreover strictly normal if $\deg P>1$. 
\end{lemma}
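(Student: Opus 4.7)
The plan is to imitate the proof of Lemma~\ref{5.30real} (its "real" analogue), exploiting that $K$ is automatically $r$-linearly closed here, so no extra splitting hypothesis is needed. First I would observe that, by the remarks opening this subsection, $K$ is $\upo$-free, $(r-1)$-newtonian, and $r$-linearly closed (Corollaries~\ref{minholenewt} and~\ref{corminholenewt}); in particular every monic element of $K[\der]$ of order $\leq r$ splits over $K$. Since $K$ is $\upo$-free and $\Gamma$ is divisible (as $K$ is algebraically closed), Corollary~\ref{cor:mainthm} applied to the minimal hole $(P,\fm,\hat a)$ in $K$ produces a refinement $(P_{+a_0},\fn_0,\hat a-a_0)$ and an active $\phi_0$ in $H$ with $0<\phi_0\preceq 1$ such that $(P^{\phi_0}_{+a_0},\fn_0,\hat a-a_0)$ is deep and normal in $K^{\phi_0}$. (One can arrange $\phi_0\in H$ because active elements of $H$ are cofinal among active elements of $K$ under $\preceq$.) After replacing $H$, $(P,\fm,\hat a)$ by $H^{\phi_0}$, $(P^{\phi_0}_{+a_0},\fn_0,\hat a-a_0)$, it suffices to prove the lemma with $\phi=1$, i.e., to exhibit a refinement of the (now already deep and normal) hole $(P,\fm,\hat a)$ with linear part splitting strongly over $K$, and strictly normal if in addition $\deg P>1$.

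Next I would split into cases on $\deg P$. If $\deg P=1$, then $r=1$ by Corollary~\ref{cor:minhole deg 1}, the linear part $L:=L_{P_{\times\fm}}$ has order $1$, and so splits strongly over $K$ automatically by Lemma~\ref{lem:order 1 splits strongly}; no strict normality is claimed in this case, and we are done. If $\deg P>1$, then $K$ is $r$-linearly newtonian by Corollary~\ref{degmorethanone}, and hence $(P,\fm,\hat a)$ is special by Lemma~\ref{lem:special dents}. Corollary~\ref{cor:achieve strong normality, 1} then yields a refinement $(P_{+a_1},\fm,\hat a-a_1)$ which is deep and strictly normal with $\hat a-a_1\prec_{\Delta(\fv)}\fm$ and $\fv(L_{P_{+a_1,\times\fm}})\asymp_{\Delta(\fv)}\fv$, where $\fv:=\fv(L)$. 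Replacing $(P,\fm,\hat a)$ by this refinement, I reduce to the situation that $(P,\fm,\hat a)$ is already deep, strictly normal, with $\hat a\prec_{\Delta(\fv)}\fm$, and with linear part splitting over $K$.

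The final step performs a multiplicative conjugation by a small rational power of $|\fv|$. Since $L$ splits over $K$, Corollary~\ref{cor:split strongly multconj} gives $q_1\in\Q^{>}$ such that for every $q\in\Q^{>}$ with $q\leq q_1$ and every $\fn\in K^\times$ with $\fn\asymp|\fv|^q\fm$, the refinement $(P,\fn,\hat a)$ is steep with $L_{P_{\times\fn}}$ splitting strongly over $K$. On the other hand, Lemma~\ref{lem:strongly normal refine, 2} (valid for $Z$-minimal slots by Remark~\ref{rem:strongly normal refine, 2}, applied with the element $|\fv|^q\in H^\times$ in the role of $\fv^q$) provides $q_2\in\Q^{>}$ such that for all $q\in\Q^{>}$ with $q\leq q_2$ and every $\fn\asymp|\fv|^q\fm$, we have $\hat a\prec\fn$ and $(P,\fn,\hat a)$ is strictly normal. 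Taking any positive rational $q\leq\min(q_1,q_2)$ and setting $\fn:=|\fv|^q\fm\in H^\times$, the resulting refinement $(P,\fn,\hat a)$ is a hole in $K$ that is strictly normal and whose linear part splits strongly over $K$; it is also deep by Corollary~\ref{cor:deep 2, cracks}, since $[\fn/\fm]=[|\fv|^q]=[\fv]$ for any $q\in\Q^{>}$.

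The main technical obstacle is mostly bookkeeping: checking that the various smallness conditions on $q$ can be imposed simultaneously, and that one can arrange the intermediate $\phi_0$ in the initial reduction to lie in $H$ (not just in $K$). A subtler point is verifying that after the preparatory refinement in the case $\deg P>1$, the hypothesis $\hat a\prec_{\Delta(\fv)}\fm$ needed by Lemma~\ref{lem:strongly normal refine, 2} still holds with the \emph{new} span; this is why one must appeal to the precise conclusion $\fv(L_{P_{+a_1,\times\fm}})\asymp_{\Delta(\fv)}\fv$ of Corollary~\ref{cor:achieve strong normality, 1}, so that the convex subgroup $\Delta(\fv)$ does not change under the preparatory refinement.
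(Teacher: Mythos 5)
Your argument for the case $\deg P>1$ is essentially the paper's: reduce via Corollary~\ref{cor:mainthm} to a deep and normal hole, use specialness to refine to a strictly normal hole with $\hat a\prec_{\Delta(\fv)}\fm$, and then multiplicatively conjugate by $|\fv|^q$ for small $q\in\Q^{>}$, invoking Corollary~\ref{cor:split strongly multconj}, Lemma~\ref{lem:deep 2} (or Corollary~\ref{cor:deep 2, cracks}), and Lemma~\ref{lem:strongly normal refine, 2}; your detour through Corollary~\ref{cor:achieve strong normality, 1} instead of Corollary~\ref{cor:closer to minimal holes} plus Lemma~\ref{lem:achieve strong normality} is only a repackaging. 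The genuine gap is your case $\deg P=1$. There the linear part $L=L_{P_{\times\fm}}$ lies in $K[\der]$, not in $H[\der]$: writing $L=f(\der-g)$ with $f,g\in K$, the unique splitting is $(g)$, and it is strong precisely when $\Re g\succeq\fv^\dagger$. Lemma~\ref{lem:order 1 splits strongly} requires $g\asymp\Re g$, and the remark after it about order-one operators splitting strongly is only for operators with coefficients in $H$. For a hole in $K$ of complexity $(1,1,1)$ nothing forces $g\asymp\Re g$ (take $g$ purely imaginary with $g\succ^\flat 1$: then $\Re g=0\prec\fv^\dagger$ and $\der-g$ does not split strongly over $K$ at all). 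So "splits strongly over $K$ automatically" is false, and your proof stops too early in this case.

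The repair is to treat all degrees uniformly, as the paper does: after arranging the hole deep and normal, apply Lemma~\ref{lem:good approx to hata} (which needs only $Z$-minimality, steepness, and $\ddeg P_{\times\fm}=\ndeg P_{\times\fm}=1$, $\ddeg S_{P_{\times\fm}}=0$, hence no hypothesis on $\deg P$) to get $a$ with $\hat a-a\prec_{\Delta(\fv)}\fm$, refine by $a$, and only then invoke Corollary~\ref{cor:split strongly multconj} together with Lemma~\ref{lem:deep 2} and Proposition~\ref{easymultnormal}; the strict-normality upgrade via Corollary~\ref{cor:closer to minimal holes} and Lemmas~\ref{lem:achieve strong normality},~\ref{lem:strongly normal refine, 2} is then reserved for $\deg P>1$, exactly matching the statement. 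One further bookkeeping point you gloss over: by the conventions of this subsection the monomial $\fn$ of the refinement must lie in $H^\times$, whereas Corollary~\ref{cor:mainthm} applied to the hole in $K$ only yields $\fn\in K^\times$; this is why the paper passes through Lemma~\ref{ufm} (replace $\fn$ by $u\fn\in H^\times$ with $u\asymp1$), and your final $\fn=|\fv|^q\fm$ is only in $H^\times$ if the monomial produced by your first reduction already was.
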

\begin{proof} For any active $\phi$ in $H$ with $0<\phi\preceq 1$ 
we may replace 
 $H$ and $(P,\fm,\hat a)$  by~$H^\phi$ and the minimal hole $(P^\phi,\fm,\hat a)$ in $K^\phi$. We may
 also replace~$(P,\fm,\hat a)$   by any of its refinements $(P_{+a},\fn,\hat a-a)$. 
As noted before Theorem~\ref{thm:split-normal}, Corollary~\ref{cor:mainthm} and Lemma~\ref{ufm} give a refinement 
$(P_{+a},\fn,\hat a-a)$ of $(P,\fm,\hat a)$ and an active~${\phi}$ in $H$
with $0<\phi\preceq 1$ such that   $(P^\phi_{+a},\fn,\hat a-a)$ is deep and normal.
Replacing $H$, $(P,\fm,\hat a)$ by~$H^\phi$,~$(P^\phi_{+a},\fn,{\hat a-a})$, respectively,  
we thus arrange that~$(P,\fm,\hat a)$ itself is deep and normal. We show that then the lemma holds with $\phi=1$. 

Set $L:=L_{P_{\times\fm}}$ and $\fv:=\fv(L)$.
Lemma~\ref{lem:good approx to hata} gives $a$ with~${\hat a - a \prec_{\Delta(\fv)} \fm}$. 
If~${\deg P>1}$, then~$K$ is $r$-linearly newtonian and we use Corollary~\ref{cor:closer to minimal holes} to take~$a$ such that  
even $\hat a - a \preceq \fv^{w+2}\fm$.
Replacing~$(P,\fm,\hat a)$  by $(P_{+a},\fm,\hat a-a)$, we thus arrange by Lemma~\ref{lem:deep 2} and 
Proposition~\ref{normalrefine} that $\hat a \prec_{\Delta(\fv)} \fm$, and also by Lemma~\ref{lem:achieve strong normality} that~$(P,\fm,\hat a)$  is  strictly normal if $\deg P>1$. 
Now $L$ splits over~$K$, since~$K$ is $r$-linearly closed by Corollary~\ref{corminholenewt}.
Then by Corollary~\ref{cor:split strongly multconj}, for sufficiently small~${q\in\Q^>}$, any $\fn\asymp|\fv|^q\fm$ gives a refinement $(P,\fn,\hat a)$   of~$(P,\fm,\hat a)$ whose linear part~$L_{P_{\times\fn}}$ splits strongly over $K$. 
For such $\fn$,  $(P,\fn,\hat a)$ is deep by Lemma~\ref{lem:deep 2} and normal by Proposition~\ref{easymultnormal}. 
If~$(P,\fm,\hat a)$ is strictly normal, then for some such~$\fn$, 
 $(P,\fn,\hat a)$ is also strictly normal, thanks to Lemma~\ref{lem:strongly normal refine, 2}.  
\end{proof}

\noindent
We now remove the $\deg P>1$ condition in Lemma~\ref{lem:achieve strong splitting}: 

\begin{lemma}\label{lem:achieve strong splitting, d=1}
For some refinement~$(P_{+a},\fn,\hat a-a)$  of $(P,\fm,\hat a)$ and active $\phi$ in~$H$ with~$0<\phi\preceq 1$,
the hole~$(P^\phi_{+a},\fn,\hat a-a)$ in $K^\phi$ is deep and strictly normal, and its  linear part splits 
strongly over $K^\phi$. 
\end{lemma}
\begin{proof}  
Thanks to Lemma~\ref{lem:achieve strong splitting} we need only consider the case $\deg P=1$. Then we have~$r=1$ by Corollary~\ref{cor:minhole deg 1}. 
As in the proof of Lem\-ma~\ref{lem:achieve strong splitting} we may replace~$H$ and $(P,\fm,\hat a)$ for any active $\phi\preceq 1$ in $H^{>}$ by $H^\phi$ and~$(P^\phi, \fm,\hat a)$, and
 also~$(P,\fm,\hat a)$   by any of its refinements $(P_{+a},\fn,\hat a-a)$. Recall here that  $\fn\in H^\times$.
 Hence using a remark preceding Lemma~\ref{lem:strongnormal pos criterion} and using also Corollary~\ref{cor:strongly normal d=1}   
 we   arrange that~$(P,\fm,\hat a)$ is strictly normal, and thus balanced and deep. We  show that then the lemma holds with $\phi=1$. 

 Set $L:=L_{P_{\times\fm}}$, $\fv:=\fv(L)$.
 Lemma~\ref{lem:balanced good approx} yields $a$ with~$\hat a-a \preceq \fv^4\fm$.
Replacing~$(P,\fm,\hat a)$  by $(P_{+a},\fm,\hat a-a)$ arranges that $\hat a \prec_{\Delta(\fv)} \fm$, by Lemmas~\ref{lem:deep 2} and~\ref{stronglynormalrefine}. As in the proof of Lemma~\ref{lem:achieve strong splitting}, for sufficiently small~${q\in\Q^>}$, any~$\fn\asymp|\fv|^q\fm$ now gives a strictly normal and deep refinement $(P,\fn,\hat a)$   of~$(P,\fm,\hat a)$ whose linear part splits strongly over $K$.
\end{proof}

\begin{remark}
Suppose we replace our standing assumption that $H$ is $\upo$-free and $(P, \fm, \hat a)$ is a minimal hole in $K$ by the assumption that $H$ is $\upl$-free and $(P, \fm, \hat a)$ is a slot in $K$ of order and degree $1$ (so $K$ is $\upl$-free by [ADH, 11.6.8] and $(P, \fm,\hat a)$ is $Z$-minimal). 
Then Lemma~\ref{lem:achieve strong splitting, d=1} goes through with ``hole" replaced by ``slot''. Its proof also goes through with the references to Lemmas~\ref{lem:deep 2} and~\ref{stronglynormalrefine} replaced by references to Corollary~\ref{cor:deep 2, cracks} and Lemma~\ref{stronglynormalrefine, cracks}. The end of that proof refers to the end of the proof of  Lemma~\ref{lem:achieve strong splitting}, and there one should replace Proposition~\ref{easymultnormal}  by Corollary~\ref{corcorcor}, and  Lemma~\ref{lem:strongly normal refine, 2} by  Remark~\ref{rem:strongly normal refine, 2}. 
\end{remark}

\noindent
In the remainder of this subsection we prove the following variant of Theorem~\ref{thm:split-normal}:

\begin{theorem}\label{thm:strongly split-normal} If $H$ is $1$-linearly newtonian, then one of the following holds: 
\begin{list}{}{\leftmargin=2em, \labelwidth=2em}
\item[\textup{(i)}] $\hat b\notin H$ and some $Z$-minimal slot $(Q,\fm,\hat b)$ in $H$ has a refinement~${(Q_{+b},\fn,\hat b-b)}$ such that $(Q^\phi_{+b},\fn,\hat b-b)$ is eventually deep and almost strongly split-normal; 
\item[\textup{(ii)}] $\hat c\notin H$ and some $Z$-minimal slot $(R,\fm,\hat c)$ in $H$ has a refinement~${(R_{+c},\fn,\hat c-c)}$ such that $(R^\phi_{+c},\fn,\hat c-c)$ is eventually deep and almost strongly split-normal. 
\end{list}
Moreover, if $H$ is $1$-linearly newtonian and either  $\deg P>1$, or $\hat b\notin H$ and   $Z(H,\hat b)$ contains an element of order~$1$, or $\hat c\notin H$ and   $Z(H,\hat c)$ contains an element of order~$1$,
then \textup{(i)} holds with ``almost'' omitted, or  \textup{(ii)} holds with ``almost'' omitted. 
\end{theorem}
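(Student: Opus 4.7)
The plan is to upgrade the statement of Theorem~\ref{thm:split-normal} using Corollary~\ref{cor:deep and almost strongly split-normal} for the main part, and Lemma~\ref{lem:char strong split-norm} combined with Corollary~\ref{cor:achieve strong normality, 1} for the moreover part. Applying Theorem~\ref{thm:split-normal} already produces, WLOG in case~(i), a $Z$-minimal slot $(Q,\fm,\hat b)$ in $H$ with $\hat b\notin H$ and a refinement $(Q_{+b_0},\fn_0,\hat b-b_0)$ such that $(Q^\phi_{+b_0},\fn_0,\hat b-b_0)$ is eventually deep and split-normal. Fixing a small enough active $\phi_0>0$ in $H$ so that $(Q^{\phi_0}_{+b_0},\fn_0,\hat b-b_0)$ is deep and split-normal in $H^{\phi_0}$, I would apply Corollary~\ref{cor:deep and almost strongly split-normal} to this slot, producing a further refinement $(Q^{\phi_0}_{+b},\fn,\hat b-b)$ that is deep and almost strongly split-normal. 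Undoing the compositional conjugation exhibits $(Q_{+b},\fn,\hat b-b)$ as a refinement of $(Q,\fm,\hat b)$, and Lemma~\ref{lem:strongly split-normal compconj} (together with the analogous preservation of deepness under compositional conjugation with $0<\phi\preceq\phi_0$) then gives the eventual property required in~(i); case~(ii) is symmetric.

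For the moreover part, by Lemma~\ref{lem:char strong split-norm}, it suffices to arrange, in addition, strict normality. I would show that in each subcase of the hypothesis, the slot $(Q,\fm,\hat b)$ (or the analogous $(R,\fm,\hat c)$) admits a refinement that is special in the sense of Section~\ref{sec:holes}, and then apply Corollary~\ref{cor:achieve strong normality, 1} to obtain a further refinement that is deep and strictly normal. Specialness arises as follows: if $\deg P>1$, then $K$ is $r$-linearly newtonian by Corollary~\ref{degmorethanone} (so the minimal hole $(P,\fm,\hat a)$ is special by Lemma~\ref{lem:special dents} after quasilinearization), and in Cases A, B of the proof of Theorem~\ref{thm:split-normal} the minimal hole $(Q,\fm,\hat b)$ in $H$ has complexity $\cc(P)$, hence is made special by Lemma~\ref{lem:descent r-linear newt} and Lemma~\ref{lem:special dents} applied in $H$, while in Case~C the specialness of $(Q,\fm,\hat b)$ is already built into the proof of Corollary~\ref{cor:evsplitnormal, 1} (via Proposition~\ref{varmainthm}) and likewise for Corollary~\ref{cor:r=deg P=1, 2}. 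In the remaining moreover subcase, where $Z(H,\hat b)$ contains an element of order~$1$, a minimal complexity element of $Z(H,\hat b)$ has order~$1$, so $(Q,\fm,\hat b)$ is a linear slot of order~$1$; Corollary~\ref{mainthm, r=1} and Lemma~\ref{lem:special dents} (using that $H$ is $1$-linearly newtonian) then produce a special refinement, and symmetrically for $\hat c$.

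Given such a special slot, after refining further to make it deep and almost strongly split-normal as in the first paragraph, Corollary~\ref{cor:achieve strong normality, 1} yields a refinement $(Q^{\phi_0}_{+b+b_2},\fn,\hat b-b-b_2)$ (with $b_2\in H$) that is deep and strictly normal. The first assertion of Lemma~\ref{stronglysplitnormalrefine} guarantees that this additive refinement preserves almost strong split-normality, so by Lemma~\ref{lem:char strong split-norm} the resulting slot is strongly split-normal. Eventual preservation of all these properties under further compositional conjugation by active $\phi>0$ in $H$ with $\phi\preceq\phi_0$ follows from Lemmas~\ref{lem:normality comp conj, strong} and~\ref{lem:strongly split-normal compconj}. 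The main obstacle I anticipate is the careful bookkeeping of how specialness, deepness, normality, strict normality, and (almost strong) split-normality interact through the three stages of refinement in each subcase; in particular, in Case~C with $\deg P>1$, ensuring that the specialness obtained from Proposition~\ref{evsplitnormal} survives the subsequent application of Corollary~\ref{cor:deep and almost strongly split-normal} and Corollary~\ref{cor:achieve strong normality, 1}, which is handled by the fact that refinements of special slots remain special (Lemma~\ref{speciallemma}).
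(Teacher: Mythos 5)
Your proposal is essentially correct and follows the same overall strategy as the paper: the first part is obtained from Theorem~\ref{thm:split-normal} by upgrading ``split-normal'' to ``almost strongly split-normal'' via Corollary~\ref{cor:deep and almost strongly split-normal} (packaged in the paper as Lemmas~\ref{lem:refine to almost strongly split-normal, Q} and~\ref{lem:refine to almost strongly split-normal, R}), and the ``moreover'' part is obtained by establishing specialness in each subcase and then invoking Corollary~\ref{cor:achieve strong normality, 1}, Lemma~\ref{stronglysplitnormalrefine}, and Lemma~\ref{lem:char strong split-norm}, exactly as in Lemmas~\ref{lem:refine to almost strongly split-normal, order(Q)=r}--\ref{lem:evstronlysplitnormal, 2}. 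The only mild divergence is your handling of the order-$1$ subcase, where the paper reaches for Corollary~\ref{cor:5.30real} directly rather than routing through Corollary~\ref{mainthm, r=1} and Lemma~\ref{lem:special dents}; both work, though note that ``a minimal complexity element of $Z(H,\hat b)$ has order $1$'' only forces the slot to have order $1$, not to be \emph{linear} (degree $1$) --- fortunately your argument only needs the order.
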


\noindent
Towards the proof of this theorem we first show:

\begin{lemma}\label{lem:refine to almost strongly split-normal, Q}
Suppose $\hat b\notin H$ and $(Q,\fm,\hat b)$ is  a $Z$-minimal slot in $H$ with a refinement~${(Q_{+b},\fn,\hat b-b)}$  such that $(Q^\phi_{+b},\fn,\hat b-b)$ is eventually deep and split-normal.
Then $(Q,\fm,\hat b)$ has a refinement~${(Q_{+b},\fn,\hat b-b)}$  such that $(Q^\phi_{+b},\fn,\hat b-b)$ is eventually deep and almost strongly split-normal.
\end{lemma}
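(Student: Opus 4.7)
The plan is to reduce to applying Corollary~\ref{cor:deep and almost strongly split-normal} after a suitable compositional conjugation, and then to verify that the ``almost strongly split-normal'' property persists under further compositional conjugation. This makes the proof rather direct, not requiring any new input.

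First I would take an active $\phi_0 > 0$ in $H$, $\phi_0 \preceq 1$, such that the slot $(Q^\phi_{+b},\fn,\hat b-b)$ in $H^\phi$ is deep and split-normal for every active $\phi \preceq \phi_0$ in $H$ with $\phi>0$ (possible by the hypothesis on $(Q,\fm,\hat b)$). The slot $(Q^{\phi_0}_{+b},\fn,\hat b-b)$ in $H^{\phi_0}$ is in particular $Z$-minimal, deep, and split-normal, so Corollary~\ref{cor:deep and almost strongly split-normal} applied in $H^{\phi_0}$ yields a refinement of this slot that is deep and almost strongly split-normal. By the correspondence between refinements of a slot and refinements of its compositional conjugate, this refinement has the form $(Q^{\phi_0}_{+(b+b_0)}, \fn', \hat b-(b+b_0))$ for some $b_0 \in H$ and $\fn' \in H^\times$ with $(Q_{+(b+b_0)}, \fn', \hat b-(b+b_0))$ a refinement of $(Q_{+b},\fn,\hat b-b)$ in $H$, and hence of $(Q,\fm,\hat b)$.

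Next I would verify that this refinement works eventually. Let $\psi$ be active in $H^{\phi_0}$ with $0 < \psi \preceq 1$; equivalently, $\phi := \phi_0\psi$ is active in $H$ with $0 < \phi \preceq \phi_0$. Since $(Q^{\phi_0}_{+(b+b_0)}, \fn', \hat b-(b+b_0))$ is deep and almost strongly split-normal in $H^{\phi_0}$, Lemmas~\ref{lem:eventually deep} and~\ref{lem:strongly split-normal compconj} (and the definition of ``eventually deep'') imply that after conjugating further by $\psi$ the slot remains deep and almost strongly split-normal in $(H^{\phi_0})^\psi = H^{\phi}$. Since $(Q^{\phi_0}_{+(b+b_0)})^\psi = Q^{\phi}_{+(b+b_0)}$, this precisely says that $(Q^\phi_{+(b+b_0)}, \fn', \hat b-(b+b_0))$ is deep and almost strongly split-normal in $H^\phi$ for all active $\phi \preceq \phi_0$ in $H$ with $\phi > 0$.

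There is no real obstacle here: the work has already been done in Corollary~\ref{cor:deep and almost strongly split-normal} (which in turn rests on Lemmas~\ref{lem:good approx to hata} and~\ref{stronglysplitnormalrefine, q}, i.e., on choosing an appropriate additive improvement via specialness and a small multiplicative conjugation by a rational power of $\fv$ to achieve strong splitting of the linear part). The only thing to check is the compatibility of these operations with eventual compositional conjugation, which is routine via Lemmas~\ref{lem:split-normal comp conj} and~\ref{lem:strongly split-normal compconj}.
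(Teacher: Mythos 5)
Your proof is correct and follows essentially the same route as the paper: apply Corollary~\ref{cor:deep and almost strongly split-normal} to $(Q^{\phi_0}_{+b},\fn,\hat b-b)$ in $H^{\phi_0}$, read the resulting refinement as a refinement $\big(Q_{+(b+b_0)},\fn_0,\hat b-(b+b_0)\big)$ of $(Q,\fm,\hat b)$, and propagate to all smaller active $\phi$ via Lemma~\ref{lem:strongly split-normal compconj}. The only quibble is the citation of Lemma~\ref{lem:eventually deep} for preservation of deepness: what you actually need is the remark after the definition of \emph{deep} (deepness is preserved under compositional conjugation by active $\phi\preceq 1$), which is also what the paper uses implicitly.
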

\begin{proof}
Let ${(Q_{+b},\fn,\hat b-b)}$ be a refinement of $(Q,\fm,\hat b)$  and
let $\phi_0$ be active in $H$ such that $0<\phi_0\preceq 1$
and $(Q^{\phi_0}_{+b},\fn,\hat b-b)$ is   deep and split-normal.
Then  Corollary~\ref{cor:deep and almost strongly split-normal}
yields a refinement $\big((Q^{\phi_0}_{+b})_{+b_0},\fn_0,(\hat b-b)-b_0\big)$ of $(Q^{\phi_0}_{+b},\fn,\hat b-b)$ which is deep and
almost strongly split-normal. Hence 
$$\big((Q_{+b})_{+b_0},\fn_0,(\hat b-b)-b_0\big)\ =\ \big( Q_{+(b+b_0)},\fn_0,\hat b - (b+b_0) \big)$$
is  a refinement of $(Q,\fm,\hat b)$, and $\big( Q^\phi_{+(b+b_0)},\fn_0,\hat b - (b+b_0) \big)$ is eventually deep and almost strongly split-normal by Lemma~\ref{lem:strongly split-normal compconj}.
\end{proof}

\noindent
Likewise:

\begin{lemma}\label{lem:refine to almost strongly split-normal, R}
Suppose $\hat c\notin H$, and $(R,\fm,\hat c)$ is  a $Z$-minimal slot in $H$ with   a refinement~${(R_{+c},\fn,\hat c-c)}$  such that $(R^\phi_{+c},\fn,\hat c-c)$ is eventually deep and split-normal.
Then $(R,\fm,\hat c)$  has a refinement~${(R_{+c},\fn,\hat c-c)}$  such that $(R^\phi_{+c},\fn,\hat c-c)$ is eventually deep and almost strongly split-normal.
\end{lemma}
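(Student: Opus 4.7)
The plan is to argue that this lemma follows by the same reasoning as Lemma~\ref{lem:refine to almost strongly split-normal, Q}, since the hypotheses and conclusion are structurally identical after the substitution $Q \rightsquigarrow R$, $b \rightsquigarrow c$, $\hat b \rightsquigarrow \hat c$; the word ``Likewise'' in the excerpt is a signal that nothing in the argument was specific to the real-part variables. In particular, the proof does not exploit the passage from the minimal hole $(P,\fm,\hat a)$ in $K$ to the slot $(Q,\fm,\hat b)$ in $H$ via real parts; it only uses that we have a $Z$-minimal slot in $H$ together with a refinement whose eventual compositional conjugate is deep and split-normal.

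First, I would unpack the hypothesis: fix a refinement $(R_{+c},\fn,\hat c-c)$ of $(R,\fm,\hat c)$ and an active $\phi_0$ in $H$ with $0<\phi_0\preceq 1$ such that the slot $(R^{\phi_0}_{+c},\fn,\hat c-c)$ in $H^{\phi_0}$ is deep and split-normal. Being a refinement of the $Z$-minimal slot $(R,\fm,\hat c)$, the slot $(R_{+c},\fn,\hat c-c)$ is still $Z$-minimal, and compositional conjugation preserves $Z$-minimality, so $(R^{\phi_0}_{+c},\fn,\hat c-c)$ is a $Z$-minimal, deep, split-normal slot in $H^{\phi_0}$.

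Next, I would invoke Corollary~\ref{cor:deep and almost strongly split-normal} (applied in the $H$-field $H^{\phi_0}$) to produce a refinement
\[
\bigl((R^{\phi_0}_{+c})_{+c_0},\fn_0,(\hat c-c)-c_0\bigr)\ =\ \bigl(R^{\phi_0}_{+(c+c_0)},\fn_0,\hat c-(c+c_0)\bigr)
\]
of $(R^{\phi_0}_{+c},\fn,\hat c-c)$ which is deep and almost strongly split-normal in $H^{\phi_0}$. Rewriting this as a compositional conjugate, the slot $\bigl(R_{+(c+c_0)},\fn_0,\hat c-(c+c_0)\bigr)$ in $H$ is a refinement of $(R,\fm,\hat c)$, since $R_{+(c+c_0)} = (R_{+c})_{+c_0}$ and $\fn_0\preceq\fn\preceq\fm$ with $\hat c-(c+c_0)\prec\fn_0$.

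Finally, Lemma~\ref{lem:strongly split-normal compconj} transports ``almost strongly split-normal'' through compositional conjugation by active elements $\phi\preceq\phi_0$ with $0<\phi\preceq 1$ (viewed inside $H^{\phi_0}$, these correspond bijectively to such active elements in $H$), so $\bigl(R^\phi_{+(c+c_0)},\fn_0,\hat c-(c+c_0)\bigr)$ is eventually almost strongly split-normal, and by Corollary~\ref{cor:steep refinement} together with preservation of deepness under compositional conjugation (part of the definition of ``deep''), it is also eventually deep. There is no real obstacle here, since everything reduces to quoting Corollary~\ref{cor:deep and almost strongly split-normal} and Lemma~\ref{lem:strongly split-normal compconj}; the only bookkeeping to be careful with is composing the two refinements $c\mapsto c+c_0$ correctly and verifying that the eventual quantifier on $\phi$ in $H$ matches the one provided in $H^{\phi_0}$.
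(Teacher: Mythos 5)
Your proposal is correct and matches the paper's argument: the paper proves this lemma by simply saying "Likewise," i.e., by the same argument as for the $\hat b$-version, which is exactly the chain you reproduce (refine via Corollary~\ref{cor:deep and almost strongly split-normal} in $H^{\phi_0}$, rewrite as a refinement of $(R,\fm,\hat c)$, then transport by Lemma~\ref{lem:strongly split-normal compconj} and the remark that deepness passes to compositional conjugates). Your extra citation of Corollary~\ref{cor:steep refinement} is unnecessary but harmless.
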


\noindent
Theorem~\ref{thm:split-normal} and the two lemmas above give the first part of Theorem~\ref{thm:strongly split-normal}.  
We break up the proof of the ``moreover'' part  into several cases, along the lines of the proof of Theorem~\ref{thm:split-normal}. We begin with the case where $\hat b\in H$ or~$\hat c\in H$.

\begin{lemma}\label{lem:refine to almost strongly split-normal, order(Q)=r}
Suppose $H$ is $1$-linearly newtonian, $\hat b\notin H$, $(Q,\fm,\hat b)$ is a $Z$-minimal slot in $H$ of order $r$, and some 
refinement~${(Q_{+b},\fn,\hat b-b)}$ of $(Q,\fm, \hat b)$ is such that~$(Q^\phi_{+b},\fn,\hat b-b)$ is eventually deep and split-normal.
Then $(Q,\fm,\hat b)$ has a refinement~${(Q_{+b},\fn,\hat b-b)}$  with $(Q^\phi_{+b},\fn,\hat b-b)$ eventually deep and strongly split-normal.
\end{lemma}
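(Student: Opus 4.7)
The strategy is to combine Lemma~\ref{lem:refine to almost strongly split-normal, Q} (which already yields almost strong split-normality) with a further refinement that upgrades normality to strict normality, since by Lemma~\ref{lem:char strong split-norm} we have ``strongly split-normal = almost strongly split-normal + strictly normal''. The upgrading will come from Corollary~\ref{cor:achieve strong normality, 1}, once specialness is established.

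First I would apply Lemma~\ref{lem:refine to almost strongly split-normal, Q} to replace the given refinement by one (still denoted $(Q_{+b},\fn,\hat b-b)$) such that $(Q^\phi_{+b},\fn,\hat b-b)$ is eventually deep and almost strongly split-normal. Fix an active $\phi_0>0$ in $H$ with $\phi_0\preceq 1$ for which the slot $(Q^{\phi_0}_{+b},\fn,\hat b-b)$ in $H^{\phi_0}$ is deep and almost strongly split-normal (hence in particular normal and quasilinear).

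Next I would verify that $(Q^{\phi_0}_{+b},\fn,\hat b-b)$ is special in~$H^{\phi_0}$, so as to apply Lemma~\ref{lem:special dents}. For this observe that $H$ is $r$-linearly newtonian: if $\deg P>1$ this follows from Corollary~\ref{degmorethanone} combined with Lemma~\ref{lem:descent r-linear newt}; if $\deg P=1$, then Corollary~\ref{cor:minhole deg 1} (applicable since $K$ is $\upo$-free, $C_H[\imag]$ is algebraically closed, and $\Gamma$ is divisible) forces $r=1$, so ``$1$-linearly newtonian'' already gives ``$r$-linearly newtonian''. Both $\upo$-freeness and $r$-linear newtonianity are preserved under compositional conjugation by active elements, so $H^{\phi_0}$ is $\upo$-free and $r$-linearly newtonian. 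The slot $(Q^{\phi_0}_{+b},\fn,\hat b-b)$ is also $Z$-minimal (since refinement and compositional conjugation preserve $Z$-minimality) and quasilinear, whence Lemma~\ref{lem:special dents} yields specialness.

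Now Corollary~\ref{cor:achieve strong normality, 1} produces $b'\in H$ such that the refinement $(Q^{\phi_0}_{+b+b'},\fn,\hat b-b-b')$ of $(Q^{\phi_0}_{+b},\fn,\hat b-b)$ is deep and strictly normal (note that $\fn$ is preserved by this refinement). By the first half of Lemma~\ref{stronglysplitnormalrefine}, this same refinement remains almost strongly split-normal; combining this with strict normality via Lemma~\ref{lem:char strong split-norm} shows that $(Q^{\phi_0}_{+b+b'},\fn,\hat b-b-b')$ is strongly split-normal in~$H^{\phi_0}$. Setting $b^*:=b+b'$, the triple $(Q_{+b^*},\fn,\hat b-b^*)$ is a refinement of $(Q,\fm,\hat b)$. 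For any active $\phi\preceq\phi_0$ in $H$ with $\phi>0$, the element $\phi/\phi_0$ is active in $H^{\phi_0}$ with $0<\phi/\phi_0\preceq 1$; by the preservation of deepness under compositional conjugation by such elements (recorded right after the definition of deep) and by Lemma~\ref{lem:strongly split-normal compconj}, the slot $(Q^\phi_{+b^*},\fn,\hat b-b^*)$ in $H^\phi$ is deep and strongly split-normal, giving the claim.

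The only step requiring real care is the verification of $r$-linear newtonianity of $H^{\phi_0}$, which rests on the case analysis on $\deg P$ already built into the standing assumptions of this subsection; the remainder is a routine assembly of Lemmas~\ref{lem:refine to almost strongly split-normal, Q}, \ref{lem:special dents}, \ref{stronglysplitnormalrefine}, \ref{lem:char strong split-norm}, \ref{lem:strongly split-normal compconj}, and Corollary~\ref{cor:achieve strong normality, 1}.
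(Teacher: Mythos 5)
Your proof is correct and follows essentially the same route as the paper: apply Lemma~\ref{lem:refine to almost strongly split-normal, Q} first, fix a $\phi_0$ making the compositional conjugate deep and almost strongly split-normal, upgrade to strictly normal via Corollary~\ref{cor:achieve strong normality, 1} (whose applicability rests on $H^{\phi_0}$ being $r$-linearly newtonian and $\upo$-free), then combine Lemma~\ref{stronglysplitnormalrefine} and Lemma~\ref{lem:char strong split-norm} to get strongly split-normal, and finish with Lemma~\ref{lem:strongly split-normal compconj}. The paper simply writes ``$H$ is $1$-linearly newtonian, hence $r$-linearly newtonian'' and relies on the parenthetical in Corollary~\ref{cor:achieve strong normality, 1} for specialness, whereas you spell out the supporting case analysis on $\deg P$ and invoke Lemma~\ref{lem:special dents} explicitly; both amount to the same justification already recorded in the standing remarks at the start of the subsection \emph{Achieving split-normality}.
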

\begin{proof}
Lemma~\ref{lem:refine to almost strongly split-normal, Q} gives a refinement 
$(Q_{+b},\fn,\hat b-b)$ of $(Q,\fm,\hat b)$ with $(Q^\phi_{+b},\fn,\hat b-b)$ eventually deep and almost strongly split-normal.
We  upgrade this to ``strongly split-normal'' as follows: Take active $\phi_0$ in $H$ with $0<\phi_0\preceq 1$
such that the slot~$(Q^{\phi_0}_{+b},\fn,\hat b-b)$ in $H^{\phi_0}$ is  deep and almost strongly split-normal.  
Now $H$ is $1$-linearly newtonian, hence $r$-linearly newtonian. Therefore
Corollary~\ref{cor:achieve strong normality, 1} yields a deep and strictly normal refinement
$\big( (Q^{\phi_0}_{+b})_{+b_0},\fn, (\hat b - b)-b_0 \big)$
of $\big( Q^{\phi_0}_{+b},\fn,\hat b - b \big)$. 
By
Lemma~\ref{stronglysplitnormalrefine}, this refinement is still almost  strongly split-normal,   thus strongly split-normal by
Lemma~\ref{lem:char strong split-norm}. 
Then by  Lemma~\ref{lem:strongly split-normal compconj},  
$\big( Q_{+(b+b_0)},\fn,{\hat b - (b+b_0)} \big)$
is  a refinement of $(Q,\fm,\hat b)$ such that $\big( Q^\phi_{+(b+b_0)},\fn,\hat b - (b+b_0) \big)$ is eventually deep and strongly split-normal.
\end{proof}

\noindent
Lemmas~\ref{lem:hat c in K} and~\ref{lem:refine to almost strongly split-normal, order(Q)=r} give the following:

\begin{cor}\label{cor:hat c in K, strong}
Suppose $H$ is $1$-linearly newtonian and $\hat c\in H$. Then there is a hole $(Q,\fm,\hat b)$ in $H$ of the same complexity
as $(P,\fm,\hat a)$. Every such hole $(Q,\fm,\hat b)$ in $H$ is minimal and has a refinement~$(Q_{+b},\fn,\hat b-b)$ such that $(Q^\phi_{+b},\fn,\hat b-b)$ is eventually deep and strongly split-normal.
\end{cor}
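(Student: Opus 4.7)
The plan is to combine Lemma~\ref{lem:hat c in K} with Lemma~\ref{lem:refine to almost strongly split-normal, order(Q)=r}, using only that the complexity match forces order $r$ and that $\hat c\in H$ forces $\hat b\notin H$.

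First, I would invoke Lemma~\ref{lem:hat c in K} directly: since $\hat c\in H$, there exists a hole $(Q,\fm,\hat b)$ in $H$ with $\cc(Q)=\cc(P)$, every such hole is minimal, and every such hole admits a refinement $(Q_{+b},\fn,\hat b-b)$ for which $(Q^\phi_{+b},\fn,\hat b-b)$ is eventually deep and split-normal. So the existence statement and minimality in the corollary are immediate, and what remains is to upgrade ``split-normal'' to ``strongly split-normal'' for every such hole.

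Next, fix an arbitrary such hole $(Q,\fm,\hat b)$ in $H$ with $\cc(Q)=\cc(P)$. Since complexity determines order, $\order Q=r$. Also, $\hat a=\hat b+\hat c\,\imag\notin K$ together with $\hat c\in H$ forces $\hat b\notin H$. Thus $(Q,\fm,\hat b)$ is a $Z$-minimal slot (in fact a minimal hole) in $H$ of order $r$, and by the previous paragraph some refinement $(Q_{+b},\fn,\hat b-b)$ of $(Q,\fm,\hat b)$ is eventually deep and split-normal after compositional conjugation. Since $H$ is $1$-linearly newtonian by hypothesis, the assumptions of Lemma~\ref{lem:refine to almost strongly split-normal, order(Q)=r} are met, and that lemma produces a refinement $(Q_{+b},\fn,\hat b-b)$ of $(Q,\fm,\hat b)$ with $(Q^\phi_{+b},\fn,\hat b-b)$ eventually deep and strongly split-normal, which is the desired conclusion.

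There is no real obstacle here beyond verifying the hypotheses of the two lemmas; the only small point worth checking carefully is that $\hat b\notin H$ (so that $(Q,\fm,\hat b)$ is indeed a hole/slot, not a trivial configuration) and that $\order Q=r$, both of which follow cleanly from $\hat a\notin K$, $\hat c\in H$, and $\cc(Q)=\cc(P)$.
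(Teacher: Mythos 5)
Your proposal is correct and takes essentially the same route as the paper, which derives the corollary precisely by combining Lemma~\ref{lem:hat c in K} with Lemma~\ref{lem:refine to almost strongly split-normal, order(Q)=r}. The small verifications you flag (that $\hat b\notin H$ follows from $\hat a\notin K$ and $\hat c\in H$, and that $\cc(Q)=\cc(P)$ forces $\order Q=r$) are exactly the right details to confirm, and your handling of the universal quantifier over all holes $(Q,\fm,\hat b)$ of the given complexity matches what the statement requires.
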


\noindent
Just as Lemma~\ref{lem:hat c in K} gave rise to Lemma~\ref{lem:hat b in K},  Corollary~\ref{cor:hat c in K, strong} leads to:

\begin{cor}\label{cor:hat b in K, strong}
Suppose $H$ is $1$-linearly newtonian and $\hat b\in H$. Then there is a hole~$(R,\fm,\hat c)$ in $H$ of the same complexity
as $(P,\fm,\hat a)$. Every such hole in $H$ is minimal and   has a refinement~$(R_{+c},\fn,\hat c-c)$ such that $(R^\phi_{+c},\fn,\hat c-c)$ is eventually deep and strongly split-normal. 
\end{cor}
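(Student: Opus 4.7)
The plan is to reduce to Corollary~\ref{cor:hat c in K, strong} by the same ``multiplication by $\imag$'' trick that was used to derive Lemma~\ref{lem:hat b in K} from Lemma~\ref{lem:hat c in K}. Concretely, I would consider the minimal hole $(P_{\times\imag},\fm,-\imag\hat a)$ in $K$, noting that multiplicative conjugation by $\imag\in K^\times$ preserves complexity and the property of being a minimal hole. Writing $-\imag\hat a=\hat c+(-\hat b)\imag$, we see that the real part of $-\imag\hat a$ is $\hat c\in\hat H$ and its imaginary part is $-\hat b$, which lies in $H$ by hypothesis. Thus the hole $(P_{\times\imag},\fm,-\imag\hat a)$ in $K$ satisfies the hypothesis of Corollary~\ref{cor:hat c in K, strong} (with $\hat c$ replaced by $-\hat b$ and $\hat b$ replaced by $\hat c$).

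Applying that corollary then furnishes a hole $(R,\fm,\hat c)$ in $H$ of the same complexity as $(P_{\times\imag},\fm,-\imag\hat a)$, hence of the same complexity as $(P,\fm,\hat a)$. The existence statement of the corollary follows. For the second assertion: given any hole $(R,\fm,\hat c)$ in $H$ of the same complexity as $(P,\fm,\hat a)$, the second part of Corollary~\ref{cor:hat c in K, strong} (again applied via the $\imag$-twist to reduce to the ``$\hat c\in H$'' situation) tells us such $(R,\fm,\hat c)$ is minimal and admits a refinement $(R_{+c},\fn,\hat c-c)$ such that $(R_{+c}^\phi,\fn,\hat c-c)$ is eventually deep and strongly split-normal.

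Since every step is a direct transcription of the $\hat c\in H$ case via the symmetry $\hat a\mapsto -\imag\hat a$ (which swaps real and imaginary parts up to sign), there is no substantive obstacle; the only thing to verify carefully is that multiplicative conjugation by $\imag$ preserves minimality of holes in $K$ and the complexity invariant, which is immediate from the definitions and a remark in Section~\ref{sec:holes}. Thus no new estimates or refinements beyond those already assembled in Corollary~\ref{cor:hat c in K, strong} are required.
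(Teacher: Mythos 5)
Your proposal is correct and is exactly the paper's argument: the paper derives this corollary from Corollary~\ref{cor:hat c in K, strong} "just as Lemma~\ref{lem:hat c in K} gave rise to Lemma~\ref{lem:hat b in K}", i.e.\ by applying it to the minimal hole $(P_{\times\imag},\fm,-\imag\hat a)$ in $K$, whose real and imaginary parts are $\hat c$ and $-\hat b$ and whose complexity and minimality are preserved under multiplicative conjugation by $\imag$. Your write-up just makes this one-line reduction explicit.
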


\noindent
In the following two lemmas we assume that $\hat b,\hat c\notin H$. Let $Q\in Z(H,\hat b)$ be of minimal complexity, so $(Q,\fm,\hat b)$ is a $Z$-minimal slot in $H$, as is each of its refinements. 
The next lemma strengthens Corollary~\ref{cor:evsplitnormal, 1}:

{\sloppy
\begin{lemma}\label{lem:evstronlysplitnormal, 1}
Suppose $\deg P>1$ and $v(\hat b-H)\subseteq v(\hat c-H)$. Then
$(Q,\fm,\hat b)$ has a refinement $(Q_{+b},\fn,\hat b-b)$ such that $(Q^\phi_{+b},\fn,\hat b-b)$ is eventually deep and strongly split-normal.
\end{lemma}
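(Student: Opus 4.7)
The approach is to combine Corollary~\ref{cor:evsplitnormal, 1}, which already supplies a \emph{special} refinement whose compositional conjugate is eventually deep and split-normal, with the upgrading strategy used in the proof of Lemma~\ref{lem:refine to almost strongly split-normal, order(Q)=r}. The key point is that the order of $Q$ (which need not be $r$ here) becomes irrelevant because specialness is obtained directly from Corollary~\ref{cor:evsplitnormal, 1} and then propagated through subsequent refinements via Lemma~\ref{speciallemma}, instead of being derived via Lemma~\ref{lem:special dents} as in the proof of Lemma~\ref{lem:refine to almost strongly split-normal, order(Q)=r}.

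First I would apply Corollary~\ref{cor:evsplitnormal, 1} to $(Q,\fm,\hat b)$ to obtain a special refinement $(Q_{+b_0},\fn_0,\hat b-b_0)$ such that $(Q^\phi_{+b_0},\fn_0,\hat b-b_0)$ is eventually deep and split-normal. Next I would apply Lemma~\ref{lem:refine to almost strongly split-normal, Q} (whose hypotheses are met, since $\hat b\notin H$ and $(Q_{+b_0},\fn_0,\hat b-b_0)$ is $Z$-minimal) to pass to a further refinement $(Q_{+b_1},\fn_1,\hat b-b_1)$ such that $(Q^\phi_{+b_1},\fn_1,\hat b-b_1)$ is eventually deep and almost strongly split-normal; this slot remains $Z$-minimal and, by Lemma~\ref{speciallemma}, also special.

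The heart of the argument is then to choose an active $\phi_0\in H$ with $0<\phi_0\preceq 1$ such that $(Q^{\phi_0}_{+b_1},\fn_1,\hat b-b_1)$ is deep and almost strongly split-normal in $H^{\phi_0}$. This slot is $Z$-minimal, special, deep, and normal (almost strong split-normality entails normality by Lemma~\ref{splnormalnormal}), so Corollary~\ref{cor:achieve strong normality, 1} applied inside $H^{\phi_0}$ produces a deep and strictly normal refinement $(Q^{\phi_0}_{+b_2},\fn_1,\hat b-b_2)$ with the same monomial $\fn_1$. By the first part of Lemma~\ref{stronglysplitnormalrefine}, almost strong split-normality is preserved under this additive refinement; combined with strict normality, the implication (ii)~$\Rightarrow$~(i) of Lemma~\ref{lem:char strong split-norm} then yields that $(Q^{\phi_0}_{+b_2},\fn_1,\hat b-b_2)$ is strongly split-normal in $H^{\phi_0}$.

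Finally, Lemma~\ref{lem:strongly split-normal compconj} applied inside $H^{\phi_0}$ shows that for every active $\phi'$ in $H^{\phi_0}$ with $0<\phi'\preceq 1$, the slot $(Q^{\phi_0\phi'}_{+b_2},\fn_1,\hat b-b_2)$ is deep and strongly split-normal in $H^{\phi_0\phi'}$; as active $\phi=\phi_0\phi'$ ranges over the active elements of~$H$ with $0<\phi\preceq\phi_0$, this gives the desired conclusion that $(Q^\phi_{+b_2},\fn_1,\hat b-b_2)$ is eventually deep and strongly split-normal. The principal obstacle is really one of bookkeeping: one must verify at each of the four successive refinement steps that $Z$-minimality, specialness, deepness, and the relevant strengthening of normality all survive the passage to the next refinement (or compositional conjugate), so that the hypotheses of Corollary~\ref{cor:achieve strong normality, 1} in particular remain available precisely where needed.
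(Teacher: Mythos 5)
Your proposal is correct and follows essentially the same route as the paper, whose proof is precisely: combine Corollary~\ref{cor:evsplitnormal, 1} with Lemma~\ref{lem:refine to almost strongly split-normal, Q} to get an eventually deep and almost strongly split-normal refinement, then repeat the upgrading argument of Lemma~\ref{lem:refine to almost strongly split-normal, order(Q)=r} (Corollary~\ref{cor:achieve strong normality, 1}, Lemma~\ref{stronglysplitnormalrefine}, Lemma~\ref{lem:char strong split-norm}, Lemma~\ref{lem:strongly split-normal compconj}). Your one deviation is a good one: since $\order Q$ may exceed $r$, securing the specialness needed for Corollary~\ref{cor:achieve strong normality, 1} by propagating the special refinement from Corollary~\ref{cor:evsplitnormal, 1} via Lemma~\ref{speciallemma}, rather than via $r$-linear newtonianity of $H$ as in the template proof, is exactly the right bookkeeping.
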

\begin{proof}
Corollary~\ref{cor:evsplitnormal, 1} and Lemma~\ref{lem:refine to almost strongly split-normal, Q} give a refinement 
$(Q_{+b},\fn,\hat b-b)$ of~$(Q,\fm,\hat b)$ and an active $\phi_0$ in $H$ with $0<\phi_0\preceq 1$
such that the slot $(Q^{\phi_0}_{+b},\fn,{\hat b-b})$ in $H^{\phi_0}$ is  deep and almost strongly split-normal.  From $\deg P >1$ we obtain that~$H$ is $r$-linearly newtonian. Now argue as in the proof of Lemma~\ref{lem:refine to almost strongly split-normal, order(Q)=r}.
\end{proof}}

\noindent
Similarly we obtain a strengthening of Corollary~\ref{cor:evsplitnormal, 2}, using that corollary and Lemma~\ref{lem:refine to almost strongly split-normal, R} in place of Corollary~\ref{cor:evsplitnormal, 1} and Lemma~\ref{lem:refine to almost strongly split-normal, Q} in the proof:

\begin{lemma}\label{lem:evstronlysplitnormal, 2}
If $\deg P >1$, $v(\hat c-H)\subseteq v(\hat b-H)$, and $R\in Z(H,\hat c)$ has minimal complexity, then the $Z$-minimal slot~$(R,\fm,\hat c)$ in $H$ has a refinement~$(R_{+c},\fn,\hat c-c)$   such that $(R^\phi_{+c},\fn,{\hat c-c})$ is eventually deep and strongly split-normal. 
\end{lemma}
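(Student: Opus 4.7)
The plan is to mirror the argument for Lemma~\ref{lem:evstronlysplitnormal, 1}, with the roles of $\hat b$ and $\hat c$ (and of $Q$ and $R$) interchanged. Concretely, I would first invoke Corollary~\ref{cor:evsplitnormal, 2}, which under the present hypotheses ($\deg P>1$, $v(\hat c-H)\subseteq v(\hat b-H)$, $R\in Z(H,\hat c)$ of minimal complexity) produces a \emph{special} refinement $(R_{+c_1},\fn_1,\hat c-c_1)$ of the $Z$-minimal slot $(R,\fm,\hat c)$ such that $(R^\phi_{+c_1},\fn_1,\hat c-c_1)$ is eventually deep and split-normal. Then Lemma~\ref{lem:refine to almost strongly split-normal, R} applied to this refinement yields a further refinement $(R_{+c_2},\fn_2,\hat c-c_2)$ of $(R,\fm,\hat c)$ (with $c_2=c_1+c_1'$ in the notation of that lemma) such that $(R^\phi_{+c_2},\fn_2,\hat c-c_2)$ is eventually deep and almost strongly split-normal. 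Note that specialness is preserved along the way by Lemma~\ref{speciallemma}.

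Next, I would fix an active $\phi_0$ in $H$ with $0<\phi_0\preceq 1$ such that the slot $(R^{\phi_0}_{+c_2},\fn_2,\hat c-c_2)$ in $H^{\phi_0}$ is deep and almost strongly split-normal. This slot is $Z$-minimal (as a refinement of a $Z$-minimal slot), normal (being almost strongly split-normal), and special. The hypothesis $\deg P>1$ ensures that $H$, and hence $H^{\phi_0}$, is $r$-linearly newtonian with $r=\order P$, so in particular $1$-linearly newtonian; combined with $\upo$-freeness this puts us in a position to invoke Corollary~\ref{cor:achieve strong normality, 1} to the slot $(R^{\phi_0}_{+c_2},\fn_2,\hat c-c_2)$, producing a deep and strictly normal refinement $\bigl(R^{\phi_0}_{+(c_2+c_3)},\fn_2,\hat c-(c_2+c_3)\bigr)$.

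By the first part of Lemma~\ref{stronglysplitnormalrefine}, this refinement remains almost strongly split-normal, and then by the equivalence (i)$\Leftrightarrow$(ii) in Lemma~\ref{lem:char strong split-norm} it is in fact strongly split-normal. Finally, Lemma~\ref{lem:strongly split-normal compconj} together with the stability of deepness under compositional conjugation (noted in the subsection on steep and deep slots) implies that $\bigl(R^\phi_{+(c_2+c_3)},\fn_2,\hat c-(c_2+c_3)\bigr)$ is deep and strongly split-normal for all active $\phi\preceq\phi_0$ in $H$ with $\phi>0$, giving the desired refinement $(R_{+c},\fn,\hat c-c)$ of $(R,\fm,\hat c)$ with $c:=c_2+c_3$ and $\fn:=\fn_2$. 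The only mildly delicate point is verifying that the hypotheses of Corollary~\ref{cor:achieve strong normality, 1}---in particular, specialness---survive all the refinement steps; this is routine once one tracks that each intermediate refinement inherits specialness (Lemma~\ref{speciallemma}), $Z$-minimality, and the relevant order. No new ideas beyond the template of the proof of Lemma~\ref{lem:evstronlysplitnormal, 1} are required.
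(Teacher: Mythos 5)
Your proposal is correct and follows essentially the same route as the paper: the paper's proof of this lemma is precisely "repeat the proof of Lemma~\ref{lem:evstronlysplitnormal, 1}, using Corollary~\ref{cor:evsplitnormal, 2} and Lemma~\ref{lem:refine to almost strongly split-normal, R} in place of Corollary~\ref{cor:evsplitnormal, 1} and Lemma~\ref{lem:refine to almost strongly split-normal, Q}," which you spell out, including the concluding steps via Corollary~\ref{cor:achieve strong normality, 1}, Lemmas~\ref{stronglysplitnormalrefine}, \ref{lem:char strong split-norm}, and~\ref{lem:strongly split-normal compconj}. One small remark: since $\order R$ may exceed $r$, the appeal to $r$-linear newtonianity plus $\upo$-freeness is not what licenses Corollary~\ref{cor:achieve strong normality, 1} here; what does is exactly the specialness you track from Corollary~\ref{cor:evsplitnormal, 2} via Lemma~\ref{speciallemma}, so your argument stands as written.
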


\medskip\noindent
{\em Proof  of Theorem~\ref{thm:strongly split-normal}}. As indicated after the statement of Lemma~\ref{lem:refine to almost strongly split-normal, R},
we only need to complete the proof of the ``moreover" part.
Thus, suppose $H$ is $1$-linearly newtonian.
If~$\hat b\in H$,   then $\hat c\notin H$ and
Corollary~\ref{cor:hat b in K, strong} yields a strong version of~(ii)
with ``almost'' omitted.  Likewise, if $\hat c\in H$, then $\hat b\notin H$ and Corollary~\ref{cor:hat c in K, strong} yields a strong version of~(i),
with ``almost'' omitted.
In the rest of the proof we assume~${\hat b,\hat c\notin H}$.
By Lemma~\ref{lem:same width} we have $v({\hat b-H})\subseteq v({\hat c-H})$ or  $v(\hat c-H)\subseteq v({\hat b-H})$,
and thus Lemmas~\ref{lem:evstronlysplitnormal, 1} and \ref{lem:evstronlysplitnormal, 2} take care of the case~${\deg P >1}$. 
If~$Z(H,\hat b)$ contains an element of order~$1$, and $Q\in Z(H,\hat b)$ has minimal complexity, then $\order Q =1$ by Lemma~\ref{kb}, so Corollary~\ref{cor:5.30real} and the remark following it
yield (i) with ``almost'' omitted. Likewise, if $Z(H,\hat c)$ contains an element of order~$1$, then~(ii) holds with ``almost'' omitted.
\qed

\subsection*{Revisiting newtonianity} We now use our results about isolated holes and split-normality  to obtain
with Corollary~\ref{corfirstnewtchar} a sharper first-order characterization of newtonianity than provided by our definition of this notion in [ADH]. 

{\it Let $H$ be a real closed $H$-field with small derivation and asymptotic integration}.
Let $P\in H\{Y\}^{\ne}$ have order $r\geqslant 1$ and weight $w$. Just for the next corollary, call $P$
{\bf strongly split-normal} if the following conditions are satisfied:\index{differential polynomial!strongly split-normal}\index{strongly!split-normal}\index{split-normal!strongly}
\begin{enumerate}
\item  $L_P$ has order $r$ and $\fv:=\fv(L_{P})\prec^\flat 1$; and
\item 	$P=Q+R$ where   $Q\in H\{Y\}$  is homogeneous of degree $1$, $\order Q=r$,
$L_Q$ splits strongly over $K$, and $R\prec_{\Delta(\fv)} \fv^{w+1}P_1$.
\end{enumerate}
Call $P$ {\bf eventually deep and strongly split-normal}\/ if $\ndeg S_P=0$ and for all small enough active $\phi>0$ in $H$,
the differential polynomial $P^\phi\in H^\phi\{Y\}$ is strongly split-normal with respect to $H^\phi$.
Note: $\ndeg P=\nval P=1$ for such~$P$.

{\samepage
\begin{cor}\label{corfirstnewtchar} 
Assume $H$ is $\upo$-free. Then the following are equivalent: \begin{enumerate}
\item[\rm(i)] $H$ is newtonian;
\item[\rm(ii)]  $H[\imag]$   is $1$-linearly newtonian and
    every eventually deep and strongly split-normal $P$ in $H\{Y\}^{\ne}$ of order $\geqslant 1$ has
  a zero~$y\prec 1$ in $H$. 
  \end{enumerate} 
\end{cor}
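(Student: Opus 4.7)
The direction (i) $\Rightarrow$ (ii) is routine. If $H$ is newtonian, then, since $H$ is $\upo$-free with divisible value group, $K=H[\imag]=H(C_K)$ is newtonian by~\eqref{eq:14.5.7}, hence $1$-linearly newtonian. If $P\in H\{Y\}^{\ne}$ of order~$\geq 1$ is eventually deep and strongly split-normal, then unpacking the definition (the decomposition $P^\phi=Q+R$ with $Q$ homogeneous of degree $1$ and $R\prec_{\Delta(\fv)}\fv^{w+1}P_1^\phi$) shows $\ddeg P^\phi=\dval P^\phi=1$ eventually, so $\ndeg P=\nval P=1$; hence $P$ is quasilinear with $\nval P=1$, and newtonianity of~$H$ yields a zero $y\prec 1$ of $P$ in $H$.

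For the substantive direction (ii) $\Rightarrow$ (i), Lemma~\ref{lem:descent r-linear newt} first gives that $H$ is $1$-linearly newtonian. Arguing contrapositively, suppose $H$ is not newtonian; then $K=H(C_K)$ is also not newtonian by~\eqref{eq:14.5.6}, so $K$ admits a minimal hole $(P,\fm,\hat a)$ of some order $r\geq 1$. Writing $\hat a=\hat b+\hat c\,\imag$ with $\hat b,\hat c\in\hat H$, the plan is to apply Theorem~\ref{thm:strongly split-normal}---supplemented by Corollary~\ref{cor:hat b in K, strong} or~\ref{cor:hat c in K, strong} when $\hat b\in H$ or $\hat c\in H$---and then Lemma~\ref{lem:refine to almost strongly split-normal, order(Q)=r} (using that $H$ is $1$-linearly newtonian) to upgrade ``almost strongly'' to ``strongly''. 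This yields a $Z$-minimal slot $(Q_0,\fm_0,\hat b_0)$ in $H$ with $\hat b_0\notin H$ and a refinement $(Q_1,\fm_1,\hat b_1)$ whose $\phi$-conjugate is eventually deep and strongly split-normal.

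The crux is then to upgrade $(Q_1,\fm_1,\hat b_1)$ so that it is additionally \emph{isolated}. Fix a small active $\phi_0>0$ in $H$ for which $(Q_1^{\phi_0},\fm_1,\hat b_1)$ is deep and strongly split-normal in $H^{\phi_0}$; since $H^{\phi_0}$ is $\upl$-free and the slot is normal, Proposition~\ref{prop:achieve isolated} yields an isolated refinement, which remains deep and split-normal via Corollary~\ref{cor:deep 2, cracks} together with Lemmas~\ref{splitnormalrefine} and~\ref{easymultsplitnormal} (after a preliminary refinement, if needed, to arrange $[\fn/\fm]\leq[\fv]$). A further application of Lemma~\ref{lem:refine to almost strongly split-normal, order(Q)=r} inside $H^{\phi_0}$ restores eventual strong split-normality, and by Lemma~\ref{lem:isolated refinement} isolation persists through this second refinement. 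After multiplicative conjugation to normalize the monomial to $1$ and replacement by an equivalent hole via Lemma~\ref{lem:from cracks to holes}, we obtain an isolated $Z$-minimal hole $(\tilde Q,1,\tilde b)$ in $H$ with $\tilde b\notin H$, $\tilde b\prec 1$, $\tilde Q(\tilde b)=0$, such that $\tilde Q\in H\{Y\}^{\ne}$ is eventually deep and strongly split-normal as a differential polynomial. Hypothesis~(ii) applied to $\tilde Q$ then yields a zero $y\prec 1$ of $\tilde Q$ in $H$, and Proposition~\ref{prop:2.12 isolated} applied in $H^{\phi_0}$ to the isolated normal hole $(\tilde Q^{\phi_0},1,\tilde b)$ and the zero $y\in H^{\phi_0}$ forces $v(\tilde b-a)=v(y-a)$ for every $a\in H$; taking $a=y$ yields $\tilde b=y\in H$, contradicting $\tilde b\notin H$. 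The principal obstacle throughout is this interleaving of refinements---one must isolate (in $H^{\phi_0}$) \emph{and} preserve eventual strong split-normality---made possible by the fact that isolation, once achieved, is robust under all further refinements by Lemma~\ref{lem:isolated refinement}.
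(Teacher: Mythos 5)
Your overall architecture is the paper's: a minimal hole $(P,\fm,\hat a)$ in $K=H[\imag]$, a $Z$-minimal slot in $H$ at $\hat b$ or $\hat c$, made isolated via Proposition~\ref{prop:achieve isolated}, and the contradiction obtained from hypothesis (ii) together with Proposition~\ref{prop:2.12 isolated}; the direction (i)$\Rightarrow$(ii) and your endgame are fine. But the step upgrading ``almost strongly split-normal'' to ``strongly split-normal'' has a genuine gap. Lemma~\ref{lem:refine to almost strongly split-normal, order(Q)=r} is stated (and its proof only works) for a $Z$-minimal slot $(Q,\fm,\hat b)$ in $H$ whose order equals $r$, the order of the minimal hole in $K$: the proof needs $H$ to be linearly newtonian at the order of the slot, which in the standing context comes from $\deg P>1$ (or $r=1$). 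By Lemma~\ref{kb} a minimal-complexity $Q\in Z(H,\hat b)$ can have any order between $1$ and $2r$, so the lemma does not apply as you use it --- and you use it twice, the second time after isolating. What actually makes the upgrade work is $\deg P>1$, which you never establish: since $H[\imag]$ is $1$-linearly newtonian by (ii), and $K$ is $\upo$-free with algebraically closed constant field and divisible value group, Corollary~\ref{cor:minhole deg 1} gives $\deg P>1$, and then the ``moreover'' clause of Theorem~\ref{thm:strongly split-normal} --- that is, Lemmas~\ref{lem:evstronlysplitnormal, 1} and~\ref{lem:evstronlysplitnormal, 2}, together with Corollaries~\ref{cor:hat c in K, strong} and~\ref{cor:hat b in K, strong} when $\hat c\in H$ or $\hat b\in H$ --- yields strongly split-normal refinements for a slot of arbitrary order. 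That is the route the paper takes.

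A second, smaller problem is your claim that the isolating refinement can be made to preserve ``deep and split-normal'' after ``a preliminary refinement, if needed, to arrange $[\fn/\fm]\le[\fv]$'': the monomial $\fn$ produced by Proposition~\ref{prop:achieve isolated} is forced by the relevant exceptional value, and no mechanism is offered for arranging that archimedean-class inequality. The paper avoids the issue entirely: it first arranges normality (Corollary~\ref{cor:evsplitnormal, 1}), then isolates, accepting that normality may be lost, and then --- since isolation persists under all further refinements and compositional conjugation (Lemma~\ref{lem:isolated refinement}) --- re-runs Lemma~\ref{lem:evstronlysplitnormal, 1} to regain eventual deep strong split-normality on a further refinement, finally passing to an equivalent hole via Lemma~\ref{lem:from cracks to holes}. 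If you adopt that order of operations, record $\deg P>1$ at the outset, and add the preliminary arrangement $\hat a\in\hat H[\imag]$ with $\fm\in H^\times$ via Lemma~\ref{lem:hole in hat K} (needed for Theorem~\ref{thm:strongly split-normal} to be applicable at all), your argument closes up and coincides with the paper's.
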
}
\begin{proof}
The  direction (i)~$\Rightarrow$~(ii) is clear from \eqref{eq:14.5.7} and  [ADH, 14.2.11]. For (ii)~$\Rightarrow$~(i), 
suppose  $H$ is not newtonian and  $H[\imag]$   is $1$-linearly newtonian. 
 By Proposition~\ref{prop:2.12 isolated} it is enough to show that then $H$ has an isolated hole $(Q,\fm, \hat b)$ such that for all small enough active $\phi>0$ in $H$ the hole $(Q^\phi, \fm, \hat b)$ in $H^\phi$ is deep and
strongly split-normal. We set $K:= H[\imag]$ and let  $b$, $c$ range over $H$.  

Lemma~\ref{lem:no hole of order <=r} and subsequent remarks
give a minimal hole~$(P,\fm,\hat a)$ in $K$ of order~$r\geqslant 1$, where~$\fm\in H^\times$. Then~$\deg P>1$ by Corollary~\ref{cor:minhole deg 1}.  
By Lemma~\ref{lem:hole in hat K} we arrange that~${\hat a\in \hat K:= \hat H[\imag]}$ where~$\hat H$ is an immediate $\upo$-free newtonian $H$-field extension of~$H$, so 
$\hat a = \hat b + \hat c\imag$ with $\hat b, \hat c \in \hat H$.  
Then $v(\hat b-H)\subseteq v(\hat c-H)$ or~$v(\hat c-H)\subseteq v(\hat b-H)$ by Lemma~\ref{lem:same width}; we assume
$v(\hat b-H)\subseteq v(\hat c-H)$. (The other case is similar.)
The equivalence (i)~$\Leftrightarrow$~(iii) of that lemma then gives $\hat b\notin H$. 
 
Take $Q\in Z(H,\hat b)$ of minimal complexity. Then $(Q, \fm, \hat b)$ is a $Z$-minimal slot
in $H$, of positive order by Lemma~\ref{kb}. 
Given any refinement
  $(Q_{+b}, \fn, \hat b -b)$   of~$(Q,\fm, \hat b)$, Lemma~\ref{lem:same width} gives $c$ with $v(\hat a -a)=v(\hat b-b)$ for $a:=b+c\imag$, and we may then replace~$(P,\fm,\hat a)$ and $(Q,\fm,\hat b)$ by   $(P_{+a}, \fn, \hat a -a)$ and $(Q_{+b}, \fn, \hat b -b)$, respectively,
whenever convenient.
Likewise, for  any active $\phi$ in $H$ with $0<\phi\preccurlyeq 1$, we can also replace $H$, $K$, $(P, \fm, \hat a)$, $(Q, \fm, \hat b)$ by $H^\phi$, $K^\phi$, $(P^\phi, \fm, \hat a)$, $(Q^\phi, \fm, \hat b)$.

Suppose now that $\hat c\notin H$. 
Use Corollary~\ref{cor:evsplitnormal, 1} to arrange  that~$(Q,\fm,\hat b)$  is  normal.
Next, use Proposition~\ref{prop:achieve isolated} to arrange that  $(Q,\fm,\hat b)$ is  isolated, but possibly no longer normal.
Being isolated persists under refinement, so we can
use Lemma~\ref{lem:evstronlysplitnormal, 1}  to arrange that $(Q^\phi,\fm,\hat b)$ is eventually  deep and strongly split-normal. With Lemma~\ref{lem:from cracks to holes}, changing $\hat b$ if necessary, we arrange that
$(Q,\fm, \hat b)$ is an isolated hole in $H$, not just an isolated slot in $H$, thus achieving our goal.

Finally, suppose that $\hat c\in H$. Then  use Corollary~\ref{cor:hat c in K, strong} and Proposition~\ref{prop:achieve isolated} to choose 
$Q$ such that $(Q,\fm, \hat b)$ is a minimal  and isolated hole in $H$ with the property that
$(Q^\phi,\fm,\hat b)$ is eventually  deep and strongly split-normal. 
\end{proof}

\section{Ultimate Slots}\label{sec:ultimate}

\noindent 
{\em In this section $H$ is a Liouville closed $H$-field
with small derivation, $\hat H$ is an immediate asymptotic extension 
of~$H$, and $\imag$ is an element of an asymptotic extension of $\hat H$
with $\imag^2=-1$.} 
Then~$\hat H$ is an $H$-field, $\imag\notin\hat H$,
$K:=H[\imag]$  is an algebraic closure of $H$, and $\hat K:=\hat H[\imag]$ is an immediate $\d$-valued extension of $K$. (See the beginning of Section~\ref{sec:split-normal holes}.) Let $C$ be the constant field of $H$, let $\mathcal{O}$ denote the valuation ring of $H$ and $\Gamma$ its value group. Accordingly, the constant field of $K$ is $C_K=C[\imag]$ and the valuation ring of~$K$ is~$\mathcal{O}_K=\mathcal{O}+\mathcal{O}\imag$.  
Let $\fm$, $\fn$, $\fw$ range over $H^\times$ and $\phi$ over the  elements of $H^>$ which are active in~$H$ (and hence in $K$).

\medskip
\noindent
In Section~\ref{sec:logder} we introduced
$$W\ :=\ \big\{\!\wr(a,b):\ a,b\in H,\ a^2+b^2=1\big\}.$$
Note that $W$ is a subspace of the 
$\Q$-linear space $H$, because $W\imag=S^\dagger$ where 
$$S\ :=\ \{a+b\imag:\ a,b\in H,\ a^2+b^2=1\}$$ is a divisible subgroup of $K^\times$. We have $K^\dagger=H +W\imag$ by Lemma~\ref{lem:logder}. 
Thus there exists a complement $\Lambda$ of the subspace~$K^\dagger$ of $K$ such that $\Lambda\subseteq H\imag$, and in this section we fix such $\Lambda$ and let~$\lambda$ range over $\Lambda$. 
 Let $\Univ=K\big[\! \ex(\Lambda) \big]$ be the universal exponential extension of~$K$ defined in Section~\ref{sec:univ exp ext}.

\medskip
\noindent
For $A\in K[\der]^{\ne}$ we have its set~$\exc^{\operatorname{u}}(A)\subseteq \Gamma$ of ultimate exceptional values, 
which a priori might depend on our choice of~$\Lambda$. We now make good on a promise from Section~\ref{sec:valuniv} by showing under the mild assumption $\I(K)\subseteq K^\dagger$ and with our restriction~$\Lambda\subseteq H\imag$ there is no such dependence:

\begin{cor}\label{prop:excu independent of Q} Suppose $\I(K)\subseteq K^\dagger$. Then for $A\in K[\der]^{\neq}$,
the set $\exc^{\operatorname{u}}(A)$ of ultimate exceptional values of $A$    does not depend on this choice.
\end{cor}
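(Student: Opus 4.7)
The plan is to reduce to Corollary~\ref{cor:excu for different Q}. Let $\Lambda^*$ be a second complement of the $\Q$-linear subspace $K^\dagger$ of $K$ with $\Lambda^*\subseteq H\imag$, and let $\lambda\mapsto\lambda^*$ be the induced group isomorphism $\Lambda\to\Lambda^*$, determined by $\lambda-\lambda^*\in K^\dagger$. According to Corollary~\ref{cor:excu for different Q}, it suffices to verify that $\lambda-\lambda^*\in (\mathcal{O}_K^\times)^\dagger$ for every $\lambda$; once this is established we obtain $\nwt_{A_\lambda}(\gamma)=\nwt_{A_{\lambda^*}}(\gamma)$ for all $\gamma$, hence $\exc^{\ev}(A_\lambda)=\exc^{\ev}(A_{\lambda^*})$, and therefore both the union $\bigcup_\lambda \exc^{\ev}(A_\lambda)=\exc^{\operatorname{u}}(A)$ and the sum $\sum_\lambda\abs{\exc^{\ev}(A_\lambda)}$ (which governs whether $A$ is terminal) are the same whether computed with respect to $\Lambda$ or to $\Lambda^*$.

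To check that $\lambda-\lambda^*\in (\mathcal{O}_K^\times)^\dagger$, first note that both $\lambda$ and $\lambda^*$ lie in $H\imag$, so $\lambda-\lambda^*\in H\imag\cap K^\dagger$. Since $H$ is Liouville closed we have $H^\dagger=H$, and therefore Lemma~\ref{lem:logder} gives the direct sum decomposition $K^\dagger=H\oplus W\imag$ with $W\subseteq H$. Intersecting with $H\imag$ then yields $H\imag\cap K^\dagger=W\imag$, so $\lambda-\lambda^*\in W\imag$.

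Now by definition $W\imag=S^\dagger$, where $S=\{a+b\imag:a,b\in H,\ a^2+b^2=1\}$ is a subgroup of $K^\times$. Since the valuation ring of $K$ is $\mathcal{O}_K=\mathcal{O}+\mathcal{O}\imag$ and each element $s=a+b\imag\in S$ satisfies $\abs{s}^2=a^2+b^2=1$, we have $S\subseteq \mathcal{O}_K^\times$. Hence $W\imag=S^\dagger\subseteq (\mathcal{O}_K^\times)^\dagger$, which delivers the desired inclusion and completes the reduction to Corollary~\ref{cor:excu for different Q}.

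The argument is essentially bookkeeping once one has the decomposition $K^\dagger=H\oplus W\imag$; the only substantive step is the identification $H\imag\cap K^\dagger=W\imag$, which is immediate from the Liouville closedness hypothesis $H^\dagger=H$ and Lemma~\ref{lem:logder}. The hypothesis $\I(K)\subseteq K^\dagger$ plays no role in the proof itself—it is part of the standing context in which ultimate exceptional values are being developed—so the conclusion in fact holds without it, although we will always invoke it together with the conclusion in the sequel.
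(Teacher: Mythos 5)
Your proof is correct, and while it follows the same overall strategy as the paper — reduce everything to Corollary~\ref{cor:excu for different Q} (with Lemma~\ref{lem:excu for different Q}) by checking $\lambda-\lambda^*\in(\mathcal O_K^\times)^\dagger$ — it verifies that one substantive containment by a different and more elementary argument. The paper gets it from the hypothesis $\I(K)\subseteq K^\dagger$: by Lemma~\ref{lem:W and I(F)} this gives $W=\I(H)$, so $\lambda-\lambda^*\in H\imag\cap K^\dagger=\I(H)\imag\subseteq\I(K)$, and then Lemma~\ref{pldv} (again using $\I(K)\subseteq K^\dagger$) gives $\I(K)=(\mathcal O_K^\times)^\dagger$. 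You instead use $H\imag\cap K^\dagger=W\imag=S^\dagger$ together with $S\subseteq\mathcal O_K^\times$, which follows from convexity of $\mathcal O$ alone and needs neither of those lemmas nor the hypothesis. Your closing observation is therefore sound: for this particular independence statement all that matters is that both complements lie in $H\imag$, so the assumption $\I(K)\subseteq K^\dagger$ can be dropped — a mild strengthening the paper does not record. The only caveat is that ``terminal'' is introduced in Section~\ref{sec:valuniv} under standing assumptions that include $\I(K)\subseteq K^\dagger$, so dropping the hypothesis means reading the defining identity $\sum_\lambda\abs{\exc^{\ev}(A_\lambda)}=\order A$ verbatim in the wider setting; the paper keeps the hypothesis because it is in force in every later application and because its route ties in with the identification $K^\dagger=H\oplus\I(H)\imag$ that is used repeatedly in that section, whereas your route buys a shorter argument and the extra generality.
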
 
\begin{proof}
Let $\Lambda^*\subseteq H\imag$ also be a complement of $K^\dagger$. Let   $\lambda\mapsto \lambda^*$ be the $\Q$-linear bijection $\Lambda\to \Lambda^*$ with $\lambda- \lambda^*\in W\imag$ for all $\lambda$. Then by Lemmas~\ref{pldv} and~\ref{lem:W and I(F)},
$$\lambda- \lambda^*\in \I(H)\imag\ \subseteq\ \I(K)\ \subseteq\  (\mathcal O_K^\times)^\dagger$$  for all $\lambda$. Now use  Lemma~\ref{lem:excu for different Q} and Corollary~\ref{cor:excu for different Q}.
\end{proof}

\begin{cor}\label{cor:excu independent of Q} 
Suppose $\I(K)\subseteq K^\dagger$. Let $A=\der-g\in K[\der]$ where $g\in K$ and let~$\mathfrak g\in H^\times$ be such that $\mathfrak g^\dagger=\Re g$. Then
$$\exc^{\operatorname{u}}(A)\ =\ v_{\g}(\ker_{\Univ}^{\neq} A)\ =\ \{v\mathfrak g\}.$$
In particular, if $\Re g\in \I(H)$, then~$\exc^{\operatorname{u}}(A)=\{0\}$.
\end{cor}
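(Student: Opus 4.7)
\emph{Plan.} The first equality $\exc^{\operatorname{u}}(A)=v_{\g}(\ker^{\neq}_{\Univ}A)$ and the fact that this set is a singleton are immediate from Lemma~\ref{lem:excu, r=1} (applied with $r=1$, using the hypothesis $\I(K)\subseteq K^\dagger$). So the task reduces to exhibiting one explicit nonzero element of $\ker_{\Univ}A$ with gaussian valuation equal to $v\mathfrak{g}$.

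I would build this element as $y=u\,\ex(\lambda_0)$ for appropriate $u\in K^\times$ and $\lambda_0\in\Lambda$. To find $\lambda_0$, note that by Lemma~\ref{lem:logder} we have $K^\dagger=H\oplus W\imag$; combined with the hypothesis $\Lambda\subseteq H\imag$ and $K=K^\dagger\oplus\Lambda$, this gives the internal direct sum decomposition $H\imag=W\imag\oplus\Lambda$. Decompose $g=\Re g+(\Im g)\imag$: since $\Re g=\mathfrak{g}^\dagger\in H=H^\dagger$ (using $H$ Liouville closed), the ``non-$K^\dagger$'' part of $g$ sits in $H\imag$, so write $(\Im g)\imag=w\imag+\lambda_0$ with $w\in W$, $\lambda_0\in\Lambda$. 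Then $g-\lambda_0=\mathfrak{g}^\dagger+w\imag\in K^\dagger$.

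Next I produce $u$ with $u^\dagger=g-\lambda_0$. Since $W\imag=S^\dagger$ (where $S=\{a+b\imag:a,b\in H,\ a^2+b^2=1\}$ from Lemma~\ref{lem:logder}), choose $s\in S$ with $s^\dagger=w\imag$, and set $u:=\mathfrak{g}\, s\in K^\times$. Then $u^\dagger=\mathfrak{g}^\dagger+s^\dagger=\mathfrak{g}^\dagger+w\imag=g-\lambda_0$. A direct computation now yields
\[
A(u\,\ex(\lambda_0))\ =\ (u\,\ex(\lambda_0))'-g\,u\,\ex(\lambda_0)\ =\ (u^\dagger+\lambda_0-g)\,u\,\ex(\lambda_0)\ =\ 0,
\]
so $y:=u\,\ex(\lambda_0)\in\ker^{\neq}_{\Univ}A$. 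Because $S\subseteq\mathcal{O}_K^\times$ (as $|s|=1$ with $\mathcal{O}_K$ the convex hull of $\mathcal{O}[\imag]$), we have $s\asymp 1$ and hence $v(u)=v\mathfrak{g}$; by the definition of the gaussian extension, $v_{\g}(y)=v(u)=v\mathfrak{g}$. Since $|\exc^{\operatorname{u}}(A)|=1$, this forces $\exc^{\operatorname{u}}(A)=\{v\mathfrak{g}\}$.

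For the final assertion, suppose $\Re g\in\I(H)$. By Lemma~\ref{lem:W and I(F)}, the hypothesis $\I(K)\subseteq K^\dagger$ forces $\I(H)\subseteq H^\dagger$; then Lemma~\ref{pldv} (applicable since $H$ is $\d$-valued) gives $\I(H)=(\mathcal{O}^\times)^\dagger$. Hence $\Re g=\mathfrak{g}^\dagger$ for some $\mathfrak{g}\in\mathcal{O}^\times$, so $v\mathfrak{g}=0$; any other choice of $\mathfrak{g}$ differs by a factor in $C^\times$ and so has the same valuation, giving $\exc^{\operatorname{u}}(A)=\{0\}$. The only mild obstacle is the bookkeeping in the decomposition $H\imag=W\imag\oplus\Lambda$; once that is in hand, the rest is a one-line verification.
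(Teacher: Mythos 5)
Your proof is correct, and it takes a mildly but genuinely different route in the second half. Both you and the paper reduce everything to Lemma~\ref{lem:excu, r=1}: write $g=f^\dagger+\lambda$ with $f\in K^\times$, $\lambda\in\Lambda$, so that $\exc^{\operatorname{u}}(A)=v_{\g}(\ker^{\neq}_\Univ A)=\{vf\}$, and then the issue is to see $vf=v\mathfrak g$. The paper does this by a case split: if $g\notin K^\dagger$ it invokes Corollary~\ref{prop:excu independent of Q} to \emph{change} $\Lambda$ so that $(\Im g)\imag\in\Lambda$ and takes $f:=\mathfrak g$; if $g\in K^\dagger$ it writes $g=(\mathfrak g h)^\dagger$ with $h^\dagger=(\Im g)\imag\in\I(H)\imag\subseteq\I(K)$, so $h\asymp 1$, and takes $f:=\mathfrak g h$, $\lambda:=0$. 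You instead keep $\Lambda$ fixed, use the decomposition $H\imag=W\imag\oplus\Lambda$ (which is implicit in the paper's remark that $\Lambda=\Lambda_H\imag$ with $\Lambda_H$ a complement of $\I(H)=W$ in $H$) together with $W\imag=S^\dagger$ from Lemma~\ref{lem:logder}, and exhibit the explicit kernel element $\mathfrak g\,s\,\ex(\lambda_0)$ with $s\in S\subseteq\mathcal O_K^\times$, so its gaussian valuation is $v\mathfrak g$ on the nose. This handles the two cases uniformly ($\lambda_0=0$ exactly when $g\in K^\dagger$) and avoids appealing to the independence-of-$\Lambda$ corollary, at the cost of the small bookkeeping with $S$ and $W$; the paper's version is a bit shorter given that Corollary~\ref{prop:excu independent of Q} has just been proved. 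Your treatment of the ``in particular'' clause (via $\I(H)=(\mathcal O^\times)^\dagger$ from Lemmas~\ref{lem:W and I(F)} and~\ref{pldv}, plus uniqueness of $\mathfrak g$ up to a factor in $C^\times$) matches what the paper leaves implicit.
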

\begin{proof}
Let $f\in K^\times$ and $\lambda$ be such that $g=f^\dagger+\lambda$. Then 
$$\exc^{\operatorname{u}}(A)\ =\ v_{\g}(\ker_{\Univ}^{\neq} A)\ =\ \{vf\}$$ by  Lemma~\ref{lem:excu, r=1} and its proof.
Recall that $K^\dagger = H+\I(H)\imag$ by Lemma~\ref{lem:W and I(F)} and remarks preceding it, so $g\in K^\dagger$ iff $\Im g\in\I(H)$.
Consider first the case $g\notin K^\dagger$. Then by Corollary~\ref{prop:excu independent of Q} we can change
$\Lambda$ if necessary to arrange $\lambda:=(\Im g)\imag\in\Lambda$ so that we can take $f:=\mathfrak g$ in the above.
Now suppose~$g\in K^\dagger$. Then $g=(\mathfrak g h)^\dagger$ where $h\in K^\times$, $h^\dagger=(\Im g)\imag$.  Then 
we can take $f:=\mathfrak g h$, $\lambda:=0$, and we have $h\asymp 1$ since~$h^\dagger\in \I(H)\imag\subseteq\I(K)$.
\end{proof}

\begin{cor}\label{cor:LambdaL, purely imag}
Suppose  $\I(K)\subseteq K^\dagger$, and let $F$ be a Liouville closed $H$-field extension of $H$, and $L:=F[\imag]$. 
Then the subspace~$L^\dagger$ of the  $\Q$-linear space $L$ has a
complement~$\Lambda_L$ with $\Lambda\subseteq\Lambda_L\subseteq F\imag$. For any such $\Lambda_L$ and 
  $A\in K[\der]^{\neq}$ we have~~$\exc^{\ev}(A_\lambda)=\exc^{\ev}_L(A_\lambda)\cap\Gamma$ for all $\lambda$, and thus  
  $\exc^{\operatorname{u}}(A) \subseteq \exc^{\operatorname{u}}_L(A)$, where $\exc^{\operatorname{u}}_L(A)$ is the
set of  ultimate exceptional values of $A\in L[\der]^{\ne}$  with respect to $\Lambda_L$.
\end{cor}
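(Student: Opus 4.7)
\medskip

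\noindent\emph{Proof proposal.}
The plan is to establish the three assertions in turn, using that $H$ and $F$ are Liouville closed $H$-fields (hence real closed, have $H^\dagger=H$ and $F^\dagger=F$, convex valuation rings, and asymptotic integration), so that earlier results apply cleanly.

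First I would verify the baseline identity $L^\dagger\cap K=K^\dagger$. Since $F$ is a real closed $H$-asymptotic extension of $H$ with asymptotic integration and convex valuation ring (as a Liouville closed $H$-field), Lemma~\ref{lem:LambdaL} directly yields $L^\dagger\cap K=K^\dagger$. In particular, $\Lambda\cap L^\dagger\subseteq\Lambda\cap K^\dagger=\{0\}$.

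Next I would construct $\Lambda_L$. Lemma~\ref{lem:logder} applied to $F$ gives $L^\dagger=F^\dagger\oplus W_F\imag$ where $W_F:=\{\wr(a,b):a,b\in F,\ a^2+b^2=1\}\subseteq F$; since $F$ is Liouville closed we have $F^\dagger=F$, so
\[
L^\dagger\ =\ F\oplus W_F\imag\qquad\text{(as $\Q$-linear subspaces of $L=F\oplus F\imag$).}
\]
Hence $L^\dagger\cap F\imag=W_F\imag$ and $L^\dagger+F\imag=L$, so $L/L^\dagger\cong F\imag/W_F\imag$ as $\Q$-linear spaces. Now $\Lambda\subseteq H\imag\subseteq F\imag$ and $\Lambda\cap W_F\imag\subseteq\Lambda\cap L^\dagger=\{0\}$; by elementary linear algebra we can extend $\Lambda$ to a $\Q$-linear complement $\Lambda_L$ of $W_F\imag$ in $F\imag$. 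Then $\Lambda\subseteq\Lambda_L\subseteq F\imag$ and, using $L=L^\dagger+F\imag=L^\dagger+W_F\imag+\Lambda_L=L^\dagger+\Lambda_L$ together with the computation: if $x\in L^\dagger\cap\Lambda_L$ then $x=y\imag$ with $y\in F$ and $x=a+b\imag$ with $a\in F$, $b\in W_F$, forcing $a=0$ and $x\in W_F\imag\cap\Lambda_L=\{0\}$, we conclude that $\Lambda_L$ is indeed a complement of $L^\dagger$ in $L$.

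For the equality $\exc^{\ev}(A_\lambda)=\exc^{\ev}_L(A_\lambda)\cap\Gamma$ (for $\lambda\in\Lambda$), note that $H$ being Liouville closed is $\upl$-free by [ADH,~11.6], and hence so is its algebraic closure $K$ by [ADH,~11.6.8]. Remark~\ref{rem:excu for different Q, 2}, applied to each $A_\lambda\in K[\der]^{\ne}$ (via Corollary~\ref{cor:13.7.10}), then gives the desired equality.

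Finally, since $\Lambda\subseteq\Lambda_L$, we obtain
\[
\exc^{\operatorname{u}}(A)\ =\ \bigcup_{\lambda\in\Lambda}\exc^{\ev}(A_\lambda)\ \subseteq\ \bigcup_{\mu\in\Lambda_L}\exc^{\ev}_L(A_\mu)\ =\ \exc^{\operatorname{u}}_L(A).
\]
The only subtle point is the existence of $\Lambda_L$ with the prescribed containments, and that reduces, via $L^\dagger\cap K=K^\dagger$ from Lemma~\ref{lem:LambdaL}, to the linear-algebra extension argument sketched above; the rest is direct bookkeeping with the preservation results already established.
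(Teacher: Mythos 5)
Your proof is correct and follows essentially the same route as the paper's: the decomposition $L^\dagger=F\oplus W_F\imag$ (with $F^\dagger=F$) together with $L^\dagger\cap K=K^\dagger$ from Lemma~\ref{lem:LambdaL} to produce a complement $\Lambda_L\supseteq\Lambda$ inside $F\imag$, and then $\upl$-freeness of $K$ plus Remark~\ref{rem:excu for different Q, 2} for the statements about $\exc^{\ev}$ and $\exc^{\operatorname{u}}$. The only difference is that you spell out the elementary linear-algebra extension step that the paper leaves implicit.
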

\begin{proof}
By the remarks at the beginning of this subsection applied to $F$, $L$ in place of~$H$,~$K$ we have 
$L^\dagger=F+W_F\imag$ where
$W_F$ is a subspace of the $\Q$-linear space~$F$. Also  $K^\dagger=H + \I(H)\imag$ by Lemma~\ref{lem:W and I(F)}, and $L^\dagger\cap K= K^\dagger$ by Lemma~\ref{lem:LambdaL}.  This yields a
complement~$\Lambda_L$ of $L^\dagger$ in $L$ with
$\Lambda\subseteq\Lambda_L\subseteq F\imag$. 
 Since~$H$ is Liouville closed and hence $\upl$-free by [ADH, 11.6.2], its algebraic closure~$K$ is $\upl$-free by [ADH, 11.6.8].   Now the rest follows from  remarks preceding Lemma~\ref{lem:excu 1}. 
\end{proof}

\noindent
Given $A\in K[\der]^{\neq}$, let $\exc^{\operatorname{u}}(A^\phi)$ be the set of ultimate exceptional values
of the linear differential operator $A^\phi\in K^\phi[\derdelta]$, $\derdelta=\phi^{-1}\der$, with respect to  $\Lambda^\phi=\phi^{-1}\Lambda$. 
We summarize some properties of ultimate exceptional values used later in this section:

\begin{lemma}\label{lem:excu properties}
Let $A\in K[\der]^{\neq}$ have order~$r$. Then for all $b\in K^\times$ and all $\phi$,
$$\exc^{\operatorname{u}}(bA)\ =\ \exc^{\operatorname{u}}(A),\quad  \exc^{\operatorname{u}}(Ab)\ =\ \exc^{\operatorname{u}}(A)-vb,
\quad \exc^{\operatorname{u}}(A^\phi)\ =\ \exc^{\operatorname{u}}(A).$$
Moreover, if  $\I(K)\subseteq K^\dagger$, then: 
\begin{enumerate}
\item[\rm{(i)}] $\abs{\exc^{\operatorname{u}}(A)} \leq r$;
\item[\rm{(ii)}] $\dim_{C[\imag]} \ker_{\Univ} A=r\ \Longrightarrow\ 
\exc^{\operatorname{u}}(A)=v_{\g}(\ker_{\Univ}^{\neq} A)$; 
\item[\rm{(iii)}] under the assumption that $\fv:=\fv(A)\prec^\flat 1$ and  $B\prec_{\Delta(\fv)} \fv^{r+1}A$ where~$B\in K[\der]$ has order~$\leq r$,
we have $\exc^{\operatorname{u}}(A+B)=\exc^{\operatorname{u}}(A)$;
\item[\rm(iv)] for $r=1$ we have $\abs{\exc^{\operatorname{u}}(A)} = 1$ and $ \exc^{\operatorname{u}}(A)=v_{\g}(\ker_{\Univ}^{\neq} A)$.
\end{enumerate}
\end{lemma}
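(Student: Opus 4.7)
The three displayed equalities at the start follow immediately from Remark~\ref{rem:excu for different Q, 1}, which records exactly these transformation rules for $\exc^{\operatorname{u}}$ under left multiplication, right multiplication, and compositional conjugation by an active element (the last uses that $\Lambda^\phi=\phi^{-1}\Lambda$ is a complement of $(K^\phi)^\dagger$ in $K^\phi$). No extra hypothesis is needed for these. So the work lies in (i)--(iv), under the standing additional assumption $\I(K)\subseteq K^\dagger$.

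The plan for (i) and (ii) is to reduce to Proposition~\ref{prop:finiteness of excu(A), real} applied to the pair $(H,K)$. The hypotheses of that proposition are:  $H$ real closed with small derivation and asymptotic integration, $H^\dagger=H$, and $\I(H)\imag\subseteq K^\dagger$. Since $H$ is Liouville closed with small derivation it is automatically real closed with asymptotic integration and satisfies $H^\dagger=H$ (cf.~[ADH, 10.6]). Also, $\I(H)\subseteq\I(K)\cap H$ follows directly from the definition of $\I$, so $\I(H)\imag\subseteq\I(K)\subseteq K^\dagger$ by assumption. The proposition is stated for an arbitrary complement $\Lambda$ of $K^\dagger$, but by Corollary~\ref{prop:excu independent of Q} the set $\exc^{\operatorname{u}}(A)$ is independent of the choice of $\Lambda$ under $\I(K)\subseteq K^\dagger$, so the outcomes transfer to our fixed $\Lambda\subseteq H\imag$. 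The proposition then delivers both the bound $\abs{\exc^{\operatorname{u}}(A)}\leq r$ and, whenever $\dim_{C[\imag]}\ker_{\Univ}A=r$, the equality $v_{\g}(\ker_{\Univ}^{\neq}A)=\exc^{\operatorname{u}}(A)$ (via the inequality chain in Lemma~\ref{lem:finiteness of excu(A)} becoming an equality).

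For~(iii), the plan is the same: apply Proposition~\ref{prop:stability of excu, real} to $(H,K)$. Its hypotheses are identical to those of Proposition~\ref{prop:finiteness of excu(A), real}, verified exactly as above, and its assumption on $A,B$ is precisely the one imposed in~(iii). Once again, independence of $\Lambda$ via Corollary~\ref{prop:excu independent of Q} allows us to transfer the conclusion $\exc^{\operatorname{u}}(A+B)=\exc^{\operatorname{u}}(A)$ back to our fixed $\Lambda$. Finally, (iv) is essentially content-free given what we have already cited: for $r=1$, Lemma~\ref{lem:excu, r=1} asserts under $\I(K)\subseteq K^\dagger$ that $v_{\g}(\ker_{\Univ}^{\neq}A)=\exc^{\operatorname{u}}(A)$ and $\abs{\exc^{\operatorname{u}}(A)}=1$, which is~(iv).

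There is no serious obstacle: the lemma is a packaging of previously established results, and the only step that requires a small check is the verification of the hypotheses of Propositions~\ref{prop:finiteness of excu(A), real} and~\ref{prop:stability of excu, real} in the present Liouville-closed setting, together with the appeal to Corollary~\ref{prop:excu independent of Q} to justify working with the restricted complement $\Lambda\subseteq H\imag$ while still invoking results stated for arbitrary complements.
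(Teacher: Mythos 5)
Your proof is correct and follows essentially the same route as the paper: the displayed equalities via Remark~\ref{rem:excu for different Q, 1}, (i)--(ii) via Proposition~\ref{prop:finiteness of excu(A), real}, (iii) via Proposition~\ref{prop:stability of excu, real}, and (iv) via the order-one computation (the paper cites Corollary~\ref{cor:excu independent of Q}, which is itself a consequence of the Lemma~\ref{lem:excu, r=1} you invoke). The only cosmetic difference is your extra appeal to Corollary~\ref{prop:excu independent of Q}: it is harmless but unnecessary, since Propositions~\ref{prop:finiteness of excu(A), real} and~\ref{prop:stability of excu, real} are stated for an arbitrary complement of $K^\dagger$ and hence apply directly to the fixed $\Lambda\subseteq H\imag$.
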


\begin{proof}
For the displayed equalities, see Remark~\ref{rem:excu for different Q, 1}. Now assume~$\I(K)\subseteq K^\dagger$. Then~$K^\dagger=H+\I(H)\imag$, 
so (i) and (ii) follow from  Proposition~\ref{prop:finiteness of excu(A), real}
and (iii) from Proposition~\ref{prop:stability of excu, real}. Corollary~\ref{cor:excu independent of Q} yields (iv). 
\end{proof}

\noindent
Recall from Lemma~\ref{lem:ADH 14.2.5} that if $K$ is $1$-linearly newtonian, then $\I(K)\subseteq K^\dagger$. 

Suppose $\I(K)\subseteq K^\dagger$. Then
$K^\dagger=H+\I(H)\imag$, so our $\Lambda$ has the form~$\Lambda_H\imag$ with~$\Lambda_H$ a complement of $\I(H)$ in $H$. Conversely, any complement~$\Lambda_H$ of~$\I(H)$ in $H$ yields a complement $\Lambda=\Lambda_H\imag$ of $K^\dagger$ in $K$ with $\Lambda\subseteq H\imag$. Now $\I(H)$ is a $C$-linear subspace of $H$, so 
$\I(H)$ has a complement $\Lambda_H$ in $H$ that is a $C$-linear subspace of $H$, and then $\Lambda:=\Lambda_H\imag$ is also a $C$-linear subspace of $K$. 

\begin{lemma}\label{lladd}
Suppose $\I(K)\subseteq K^\dagger$ and $g\in K$, $g-\lambda\in K^\dagger$. Then 
$$\Im g \in \I(H)\ \Longleftrightarrow\  
\lambda=0, \qquad \Im g \notin \I(H)\ \Longrightarrow\ \lambda \sim (\Im g)\imag.$$
\end{lemma}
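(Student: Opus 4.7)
Using the hypothesis $\I(K) \subseteq K^\dagger$ together with Lemma~\ref{lem:W and I(F)} and $H^\dagger = H$ (from $H$ being Liouville closed), we have $K^\dagger = H \oplus \I(H)\imag$ as an internal direct sum of $\Q$-linear subspaces. The remarks immediately preceding the lemma then let us write $\Lambda = \Lambda_H\imag$ for a $\Q$-linear complement $\Lambda_H$ of $\I(H)$ in $H$, and accordingly $\lambda = \mu\imag$ with $\mu \in \Lambda_H$. Putting $b := \Im g$ and $a := \Re g$, the condition $g - \lambda \in K^\dagger = H \oplus \I(H)\imag$ becomes simply $b - \mu \in \I(H)$ (the real part $a$ being automatically in $H$).

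The first equivalence is now formal: if $b \in \I(H)$, then $\mu = b - (b - \mu) \in \I(H) \cap \Lambda_H = \{0\}$, giving $\lambda = 0$; conversely, $\lambda = 0$ forces $\mu = 0$ and hence $b = b - \mu \in \I(H)$.

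For the implication $\Im g \notin \I(H) \Rightarrow \lambda \sim (\Im g)\imag$, it suffices (after multiplication by $\imag$, which preserves the valuation and hence the relation $\sim$) to show $b \sim \mu$, where now $\mu \neq 0$ by the first equivalence just established. The key input is that $\I(H)$, being an $\mathcal{O}$-submodule of $H$ (clear from the defining formula $\I(H) = \{y \in H : y \preceq f' \text{ for some } f \in \mathcal{O}\}$), is closed downward under $\preceq$: if $x \preceq y \in \I(H)$ with $x \in H$, then $x \in \I(H)$. I proceed by trichotomy on the valuation of $b - \mu$ relative to $b$. If $b - \mu \asymp b$, then $b \preceq b - \mu \in \I(H)$ forces $b \in \I(H)$, contradiction. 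If $b - \mu \succ b$, then from $\mu = b - (b - \mu)$ and $v(b - \mu) < vb$ we get $\mu \sim -(b - \mu)\in \I(H)$, whence $\mu \in \I(H) \cap \Lambda_H = \{0\}$, contradicting $\mu \neq 0$. The remaining case $b - \mu \prec b$ gives exactly $b \sim \mu$, as required. The proof is essentially a bookkeeping exercise once the decomposition $K^\dagger = H \oplus \I(H)\imag$ is in hand; no step presents a real obstacle, and the three-case analysis above, resting on the downward closure of $\I(H)$, is its technical heart.
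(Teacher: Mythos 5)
Your proof is correct and follows essentially the same approach as the paper's: decompose $\Lambda$ as $\Lambda_H\imag$, reduce to $\Im(g)-\mu\in\I(H)$, use the direct sum for the equivalence, and exploit that $\I(H)$ is an $\mathcal O_H$-submodule (hence downward $\preceq$-closed) for the asymptotic part. The paper's version of the last step is marginally tighter --- it compares $\Im(g)-\mu$ directly against $\mu\notin\I(H)$ to get $\Im(g)-\mu\prec\mu$ in one stroke, rather than trichotomizing against $\Im(g)$ --- but the two arguments are the same in substance.
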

\begin{proof}
Recall that $\Lambda=\Lambda_H\imag$ where $\Lambda_H$ is a complement of $\I(H)$ in $H$, so $\lambda=\lambda_H\imag$ where $\lambda_H\in\Lambda_H$. Also, $K^\dagger=H\oplus\I(H)\imag$, hence $\Im(g)-\lambda_H\in \I(H)$; this proves the displayed equivalence. Suppose $\Im g \notin \I(H)$; since $\I(H)$  is an $\mathcal O_H$-submodule of $H$ and $\lambda_H\notin\I(H)$, we then have $\Im(g)-\lambda_H\prec\lambda_H$,
so $\lambda =\lambda_H\imag\sim \Im(g)\imag$.
\end{proof}

\begin{cor}\label{cor:evs v-small}
 Suppose $\I(K)\subseteq K^\dagger$,  $A\in K[\der]^{\neq}$ has order $r$, $\dim_{C[\imag]}\ker_{\Univ}A=r$, and
 $\lambda$ is an eigenvalue of $A$ with respect to $\Lambda$. Then $\lambda\preceq\fv(A)^{-1}$.
\end{cor}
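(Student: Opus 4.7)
The plan is to combine the splitting of $A$ over $K$ provided by Corollary~\ref{corbasiseigenvalues} with the asymptotic bound on linear factors from Corollary~\ref{cor:bound on linear factors}, and then transfer information from the splitting factors to $\lambda\in\Lambda\subseteq H\imag$ itself via Lemma~\ref{lladd}.

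First, since $\dim_{C[\imag]}\ker_{\Univ}A = r = \order A$, Corollary~\ref{corbasiseigenvalues} produces a splitting
\[
A\ =\ a(\der-a_1)\cdots(\der-a_r),\qquad a\in K^\times,\ a_1,\dots,a_r\in K,
\]
whose spectrum is exactly $\{[a_1],\dots,[a_r]\}\subseteq K/K^\dagger$. Since $\lambda$ is an eigenvalue of $A$ with respect to $\Lambda$, we have $[\lambda]=[a_i]$ for some $i$, equivalently, $a_i-\lambda\in K^\dagger$.

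Next, Corollary~\ref{cor:bound on linear factors} yields $a_i\preceq\fv(A)^{-1}$. Because $\mathcal O$ is convex in the real closed field $H$, writing $a_i=\Re a_i + (\Im a_i)\imag$ gives $v(a_i)=\min\{v(\Re a_i),v(\Im a_i)\}$ via $|a_i|^2 = (\Re a_i)^2+(\Im a_i)^2$ and convexity applied to a sum of squares; in particular, $v(\Im a_i)\ge v(a_i)$.

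Finally, apply Lemma~\ref{lladd} with $g:=a_i$. If $\Im a_i\in\I(H)$, then $\lambda=0$, and the conclusion is trivial. Otherwise $\lambda\sim (\Im a_i)\imag$, whence
\[
v(\lambda)\ =\ v(\Im a_i)\ \ge\ v(a_i)\ \ge\ v\!\big(\fv(A)^{-1}\big),
\]
i.e.\ $\lambda\preceq\fv(A)^{-1}$, as desired. The plan involves no serious obstacle: the only subtlety is recognizing that the hypothesis $\Lambda\subseteq H\imag$ (together with $\I(K)\subseteq K^\dagger$) is precisely what makes Lemma~\ref{lladd} applicable to pass from the bound on the splitting factor $a_i$ to a bound on the purely imaginary representative $\lambda$.
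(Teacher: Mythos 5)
Your proof is correct and follows essentially the same route as the paper's: obtain a splitting $A=a(\der-a_1)\cdots(\der-a_r)$ from Corollary~\ref{corbasiseigenvalues}, bound the factors by Corollary~\ref{cor:bound on linear factors}, match $\lambda$ to some $a_i$ modulo $K^\dagger$, and conclude via Lemma~\ref{lladd}. The only addition is the (correct and useful) explicit observation that $v(\Im a_i)\ge v(a_i)$ because $\mathcal O_K=\mathcal O+\mathcal O\imag$, which is how one actually extracts $\lambda\preceq\fv(A)^{-1}$ from the bound on $a_i$; the paper leaves this implicit.
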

\begin{proof}
Take $f\neq 0$ and $g_1,\dots,g_r$ in $K$ with   $A=f(\der-g_1)\cdots(\der-g_r)$.
By Corollary~\ref{cor:bound on linear factors} we have $g_1,\dots,g_r\preceq\fv(A)^{-1}$, and
so Corollary~\ref{corbasiseigenvalues} gives $j\in\{1,\dots,r\}$ with $g_j-\lambda\in K^\dagger$. Now use Lemma~\ref{lladd}.
\end{proof}

\subsection*{Ultimate slots in $H$}
{\it In this subsection  $a$, $b$ range over $H$.}\/
Also, $(P,\fm,\hat a)$ is a slot in $H$ of order $r\geq 1$, where $\hat a\in \hat H\setminus H$. Recall that~$L_{P_{\times\fm}}=L_P\fm$, so
if $(P,\fm,\hat a)$ is normal, then $L_P$ has order $r$. 

\begin{cor} \label{4.7} 
Suppose $\I(K)\subseteq K^\dagger$  and the slot $(P,\fm,\hat a)$ is split-normal with linear part $L:=L_{P_{\times\fm}}$.
Then with~$Q$ and~$R$ as in~\textup{(SN2)} 
we have $\exc^{\operatorname{u}}(L)=\exc^{\operatorname{u}}(L_Q)$.
\end{cor}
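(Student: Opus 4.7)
The strategy is to apply Lemma~\ref{lem:excu properties}(iii), which gives stability of $\exc^{\operatorname{u}}$ under small perturbations, with $A := L_Q$ and $B := L - L_Q$. Since $L = L_{P_{\times\fm}} = L_{(P_{\times\fm})_1}$ and $(P_{\times\fm})_1 = Q + R_1$, we have $L - L_Q = L_{R_1}$ (the linear part of $R_1$, or equivalently of $R$). All I need to do is check the three hypotheses of Lemma~\ref{lem:excu properties}(iii): that $A$ has order~$r$, that $\fv(A)\prec^\flat 1$, that $\order(B)\leq r$, and that $B \prec_{\Delta(\fv(A))} \fv(A)^{r+1}A$.

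Split-normality gives $\order Q = r$, so $\order L_Q = r$ and $\order(L - L_Q) = \order(L_{R_1}) \leq r$. By the remarks following Lemma~\ref{lem:split-normal comp conj}, Lemma~\ref{lem:fv of perturbed op} applied to $L = L_Q + L_{R_1}$ yields $\fv(L_Q) \sim \fv := \fv(L)$, so $\fv(L_Q) \prec^\flat 1$ and $\Delta(\fv(L_Q)) = \Delta(\fv)$. The only nontrivial step is the estimate on $B = L - L_Q = L_{R_1}$: from $R_1 \preceq R \prec_{\Delta(\fv)} \fv^{w+1}(P_{\times\fm})_1$ (using (SN2) and $R_1$ being the degree-$1$ part of $R$), and the identity $v(L_{R_1}) = v(R_1)$ (gaussian valuation of a linear differential polynomial equals valuation of the associated operator), we obtain
\[
 L_{R_1}\ \prec_{\Delta(\fv)}\ \fv^{w+1} (P_{\times\fm})_1\ \sim_{\Delta(\fv)}\ \fv^{w+1} L,
\]
where for the last equivalence one uses $(P_{\times\fm})_1 = L(Y)$ so that the two sides have equal gaussian valuation. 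Since $w \geq r$ and $\fv \prec 1$, we have $v(\fv^{w+1} L) \geq v(\fv^{r+1} L)$, hence $L_{R_1} \prec_{\Delta(\fv)} \fv^{r+1} L$. Finally, $L \sim_{\Delta(\fv)} L_Q$ (since their difference $L_{R_1}$ is $\prec_{\Delta(\fv)} \fv^{w+1} L \preceq L$) and $\fv \sim \fv(L_Q)$, so $\fv^{r+1} L \sim_{\Delta(\fv)} \fv(L_Q)^{r+1} L_Q$, which yields the desired estimate $L - L_Q \prec_{\Delta(\fv(L_Q))} \fv(L_Q)^{r+1} L_Q$.

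The main (minor) obstacle is just being careful with the interplay between the various $\sim$, $\sim_{\Delta(\fv)}$, and $\prec_{\Delta(\fv)}$ relations, and with replacing $L$ by $L_Q$ in the right-hand side of the perturbation estimate. Once these bookkeeping steps are done, Lemma~\ref{lem:excu properties}(iii) applies and yields $\exc^{\operatorname{u}}(L) = \exc^{\operatorname{u}}(L_Q + L_{R_1}) = \exc^{\operatorname{u}}(L_Q)$, as required. Note that the hypothesis $\I(K) \subseteq K^\dagger$ is precisely what is needed to invoke item~(iii) of Lemma~\ref{lem:excu properties}.
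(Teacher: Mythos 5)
Your proof is correct and is essentially the paper's argument: the paper likewise combines the split-normality estimate $(P_{\times\fm})_1 - Q \prec_{\Delta(\fv)} \fv^{w+1}(P_{\times\fm})_1$ (Lemma~\ref{splnormalnormal}) with the perturbation stability of ultimate exceptional values (Lemma~\ref{lem:excu properties}(iii)). The only, inessential, difference is that you take $A=L_Q$ and perturb by $L_{R_1}$, which forces the extra bookkeeping with $\fv(L_Q)\sim\fv$ and the transfer of the estimate, whereas taking $A=L$ and $B=L_Q-L$ lets (SN1) and the estimate from Lemma~\ref{splnormalnormal} be applied verbatim.
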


\noindent
This follows from Lemmas~\ref{splnormalnormal} and~\ref{lem:excu properties}(iii).  In a similar vein we have an analogue of Lemma~\ref{lem:excev normal}:

{\sloppy
\begin{lemma}\label{lem:excu normal}
Suppose $(P,\fm,\hat a)$ is normal and $a\prec\fm$.  Then $L_P$ and $L_{P_{+a}}$ have or\-der~$r$, and if $\I(K)\subseteq K^\dagger$, then $\exc^{\operatorname{u}}(L_{P})=\exc^{\operatorname{u}}(L_{P_{+a}})$. 
\end{lemma}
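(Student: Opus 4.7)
The plan is to mimic the proof of Lemma~\ref{lem:excev normal} almost verbatim, substituting the ultimate-exceptional-value stability result (Lemma~\ref{lem:excu properties}(iii)) for its eventual analogue.

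First, the order assertions about $L_P$ and $L_{P_{+a}}$ do not require $\I(K)\subseteq K^\dagger$ and are already established in Lemma~\ref{lem:excev normal}: from $L_{P_{\times\fm}}=L_P\fm$ and the normality of $(P,\fm,\hat a)$ we have $\order L_P=r$, and applying Lemma~\ref{lem:linear part, new}(i) to the normal slot $(P_{\times\fm},1,\hat a/\fm)$ with the small displacement $a/\fm\prec 1$ gives $\order L_{P_{+a}}=r$ via the identity $L_{P_{+a,\times\fm}}=L_{P_{\times\fm,+a/\fm}}=L_{P_{+a}}\fm$.

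Next, assume $\I(K)\subseteq K^\dagger$. Set $L:=L_{P_{\times\fm}}=L_P\fm$ and $\fv:=\fv(L)$, so $\fv\prec^\flat 1$ by normality. Applying Lemma~\ref{lem:linear part, split-normal, new} to the normal slot $(P_{\times\fm},1,\hat a/\fm)$ with $A:=L$, $B:=0$, and $m:=r$—exactly as in the proof of Lemma~\ref{lem:excev normal}—yields
\[
L_P\fm-L_{P_{+a}}\fm\ =\ L_{P_{\times\fm}}-L_{P_{\times\fm,+a/\fm}}\ \prec_{\Delta(\fv)}\ \fv^{r+1}L_P\fm.
\]
View $L_P\fm$ and $L_{P_{+a}}\fm$ as elements of $K[\der]$ of order $r$, both with span $\asymp\fv\prec^\flat 1$. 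Lemma~\ref{lem:excu properties}(iii), applied with $A:=L_P\fm$ and $B:=L_{P_{+a}}\fm-L_P\fm$, then gives $\exc^{\operatorname{u}}(L_{P_{+a}}\fm)=\exc^{\operatorname{u}}(L_P\fm)$.

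Finally, the translation identity $\exc^{\operatorname{u}}(Ab)=\exc^{\operatorname{u}}(A)-vb$ from Lemma~\ref{lem:excu properties} applied to $A:=L_P$ and $A:=L_{P_{+a}}$ respectively (each with $b:=\fm$) converts the equality just obtained into $\exc^{\operatorname{u}}(L_P)=\exc^{\operatorname{u}}(L_{P_{+a}})$, completing the proof. There is no real obstacle here beyond checking that the ultimate analogues of the stability lemma (Lemma~\ref{cor:excev stability} and Corollary~\ref{cor:excev stability, r=1}) used for the $\exc^{\ev}$-version apply in the $\exc^{\operatorname{u}}$-setting; this is precisely the content of Lemma~\ref{lem:excu properties}(iii), which in turn rests on Proposition~\ref{prop:stability of excu, real}, and thus the argument goes through uniformly in $r$ with the single hypothesis $\I(K)\subseteq K^\dagger$.
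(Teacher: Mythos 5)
Your proof is correct and follows essentially the same route as the paper's own argument: passing to $(P_{\times\fm},1,\hat a/\fm)$, invoking Lemma~\ref{lem:linear part, split-normal, new} (and Lemma~\ref{lem:linear part, new}(i) for the order assertion) to get $L_P\fm-L_{P_{+a}}\fm\prec_{\Delta(\fv)}\fv^{r+1}L_P\fm$, then applying the stability result Lemma~\ref{lem:excu properties}(iii) together with the translation identity $\exc^{\operatorname{u}}(A\fm)=\exc^{\operatorname{u}}(A)-v\fm$. The only cosmetic difference is that you spell out the order claim via Lemma~\ref{lem:linear part, new}(i) explicitly, whereas the paper compresses both conclusions into a single reference.
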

\begin{proof} 
We have $L_{P_{\times \fm}}=L_P\fm$ and  $L_{P_{+a,\times \fm}}=L_{P_{\times \fm, +a/\fm}}= L_{P_{+a}}\fm$.  The slot~$(P_{\times\fm},1,\hat a/\fm)$ in $H$ is normal and $a/\fm\prec 1$.   Lemma~\ref{lem:linear part, split-normal, new} applied to~$\hat H$,~$P_{\times\fm}$,~$\hat a/\fm$ in place of $K$, $P$, $a$, respectively,  gives: $L_P$ and $L_{P_{+a}}$ have order~$r$, and 
$$L_P\fm - L_{P_{+a}}\fm\ =\ L_{P_{\times\fm}} - L_{P_{\times\fm,+a/\fm}}\ \prec_{\Delta(\fv)}\ \fv^{r+1}L_{P}\fm$$ where $\fv:=\fv(L_{P}\fm)\prec^\flat 1$ by (N1). Suppose now that  $\I(K)\subseteq K^\dagger$. Then
$$\exc^{\operatorname{u}}(L_P)\ =\ \exc^{\operatorname{u}}(L_P\fm)+v(\fm)\ =\ \exc^{\operatorname{u}}(L_{P_{+a}}\fm) + v(\fm)\ =\  \exc^{\operatorname{u}}(L_{P_{+a}})$$
by Lemma~\ref{lem:excu properties}(iii).
\end{proof}}

\noindent
The notion introduced below is modeled on that of ``isolated slot'' (Definition~\ref{def:isolated}): 

\begin{definition}\label{def:ultimate}
Call $(P,\fm,\hat a)$   {\bf ultimate} if for all $a\prec\fm$,\index{ultimate!slot}\index{slot!ultimate}
$$\order(L_{P_{+a}})=r\ \text{ and }\ \exc^{\operatorname{u}}(L_{P_{+a}}) \cap v(\hat a-H)\ <\  v(\hat a-a);$$
equivalently, for all $a\prec \fm$:  $\order(L_{P_{+a}})=r$ and whenever
$\fw \preceq \hat a-a$ is such that~$v(\fw) \in \exc^{\operatorname{u}}(L_{P_{+a}})$, then 
$\fw\prec \hat a-b$ for all $b$. (Thus if $(P,\fm,\hat a)$ is ultimate, then it is isolated.) 
\end{definition}

\noindent
If $(P,\fm,\hat a)$ is ultimate, then so is every equivalent slot in $H$ and $(bP,\fm,\hat a)$ for~$b\neq 0$, as well as the slot~$(P^\phi,\fm,\hat a)$ in $H^\phi$ (by Lemma~\ref{lem:excu properties}). The proofs of the next two lemma are like those of their ``isolated'' versions, Lemmas~\ref{lem:isolated refinement} and~\ref{lem:isolated}:

\begin{lemma}\label{lem:ultimate refinement}
If $(P,\fm,\hat a)$ is ultimate, then so is any of its refinements. 
\end{lemma}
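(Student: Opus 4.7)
The plan is to adapt the proof of Lemma~\ref{lem:isolated refinement} by reducing to two special cases and combining them. Let a refinement $(P_{+a_0},\fn,\hat a-a_0)$ of $(P,\fm,\hat a)$ be given; by the definition of refinement $\hat a-a_0\prec\fn\preceq\fm$, which (together with $\hat a\prec\fm$ and the ultrametric inequality applied to $a_0=\hat a-(\hat a-a_0)$) forces $a_0\prec\fm$. To verify ultimateness of the refinement I must check that for every $b\prec\fn$ both $\order L_{(P_{+a_0})_{+b}}=r$ and $\exc^{\operatorname{u}}\!\big(L_{(P_{+a_0})_{+b}}\big)\cap v\big((\hat a-a_0)-H\big) < v\big((\hat a-a_0)-b\big)$ hold.

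First I handle the case $\fn=\fm$: given $b\prec\fm$, I rewrite $(P_{+a_0})_{+b}=P_{+(a_0+b)}$ with $a_0+b\prec\fm$, so ultimateness of $(P,\fm,\hat a)$ supplies $\order L_{P_{+(a_0+b)}}=r$ together with $\exc^{\operatorname{u}}(L_{P_{+(a_0+b)}})\cap v(\hat a-H) < v\big(\hat a-(a_0+b)\big)$. The translation $b\mapsto a_0+b$ is a bijection $H\to H$, so $v\big((\hat a-a_0)-H\big)=v(\hat a-H)$ and $v\big((\hat a-a_0)-b\big)=v\big(\hat a-(a_0+b)\big)$; the inequality transfers, giving ultimateness of $(P_{+a_0},\fm,\hat a-a_0)$. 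For the complementary special case $a_0=0$ with $\hat a\prec\fn\preceq\fm$, any $b\prec\fn$ satisfies $b\prec\fm$, and the conditions on $L_{P_{+b}}$ follow immediately from ultimateness of $(P,\fm,\hat a)$, since $v(\hat a-b)$ and $v(\hat a-H)$ are unchanged.

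The general case then factors as a composition: starting from $(P,\fm,\hat a)$, first apply the $\fn=\fm$ step to obtain ultimateness of $(P_{+a_0},\fm,\hat a-a_0)$, and then apply the $a_0=0$ step to this slot (playing the role of $(P,\fm,\hat a)$) to tighten $\fm$ to $\fn$, using $\hat a-a_0\prec\fn\preceq\fm$. There is no serious obstacle: the argument is bookkeeping, exploiting the translation invariance $v(\hat a-H)=v\big((\hat a-a_0)-H\big)$, the identity $(P_{+a_0})_{+b}=P_{+(a_0+b)}$, and the fact that both ingredients $\order L_{P_{+c}}$ and $\exc^{\operatorname{u}}(L_{P_{+c}})$ appearing in the definition of \emph{ultimate} depend only on the additive conjugate $P_{+c}$ and not on the particular factorization of $c$ as a sum.
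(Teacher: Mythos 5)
Your proof is correct and follows essentially the same route as the paper: the paper proves the lemma "like its isolated version" (Lemma~\ref{lem:isolated refinement}), whose proof is precisely the reduction to the two special cases $\fn=\fm$ (via translation invariance of $v(\hat a-H)$ and the identity $(P_{+a_0})_{+b}=P_{+(a_0+b)}$) and $a_0=0$, composed to give the general case. Your write-up just makes this bookkeeping explicit.
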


\begin{lemma}\label{lem:ultmult}
If  $(P,\fm,\hat a)$ is ultimate, then so is any of its multiplicative con\-ju\-gates.
\end{lemma}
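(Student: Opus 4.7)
The plan is to imitate the proof of Lemma~\ref{lem:isolated} verbatim, replacing the set of eventual exceptional values $\exc^{\ev}$ by the set of ultimate exceptional values $\exc^{\operatorname{u}}$ throughout. The one nontrivial input needed is that $\exc^{\operatorname{u}}$ transforms under right-multiplication by $\fn\in H^\times$ in exactly the same way as $\exc^{\ev}$ does, namely $\exc^{\operatorname{u}}(A\fn) = \exc^{\operatorname{u}}(A) - v\fn$; this is given to us by Lemma~\ref{lem:excu properties}.

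Concretely, let $\fn\in H^\times$, so that the multiplicative conjugate of $(P,\fm,\hat a)$ by $\fn$ is $(P_{\times\fn},\fm/\fn,\hat a/\fn)$; we must show it is ultimate. Let $a\in H$ with $a\prec\fm/\fn$. Then $a\fn\prec\fm$, so by ultimacy of $(P,\fm,\hat a)$ we have $\order(L_{P_{+a\fn}})=r$, and using the identity $L_{P_{\times\fn,+a}} = L_{P_{+a\fn,\times\fn}} = L_{P_{+a\fn}}\fn$ we get $\order(L_{P_{\times\fn,+a}}) = r$.

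Now suppose $\fw\in H^\times$ satisfies $\fw\preceq (\hat a/\fn)-a$ and $v(\fw)\in \exc^{\operatorname{u}}(L_{P_{\times\fn,+a}})$. The displayed identity together with Lemma~\ref{lem:excu properties} yields
\[
v(\fw)\ \in\ \exc^{\operatorname{u}}(L_{P_{+a\fn}}\fn)\ =\ \exc^{\operatorname{u}}(L_{P_{+a\fn}}) - v\fn,
\]
so $v(\fw\fn)\in \exc^{\operatorname{u}}(L_{P_{+a\fn}})$, and clearly $\fw\fn \preceq \hat a-a\fn$. Since $(P,\fm,\hat a)$ is ultimate and $a\fn\prec\fm$, it follows that $v(\fw\fn)>v(\hat a-H)$, and hence $v(\fw) > v\big((\hat a/\fn) - H\big)$, as required.

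There is no real obstacle here; the argument is a purely formal translation, and the only substantive ingredient beyond the definitions is the transformation formula for $\exc^{\operatorname{u}}$ under right-multiplication, which is already recorded in Lemma~\ref{lem:excu properties}.
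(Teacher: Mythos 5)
Your proof is correct and follows exactly the approach the paper intends: the paper explicitly says Lemma~\ref{lem:ultmult} is proved "like" Lemma~\ref{lem:isolated}, and your argument is the verbatim translation of that proof with $\exc^{\ev}$ replaced by $\exc^{\operatorname{u}}$. You also correctly identify that the one nontrivial input, the identity $\exc^{\operatorname{u}}(A\fn)=\exc^{\operatorname{u}}(A)-v\fn$, is supplied by Lemma~\ref{lem:excu properties} (and does not need the hypothesis $\I(K)\subseteq K^\dagger$ from items (i)--(iv) of that lemma).
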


\noindent
The ultimate condition is most useful in combination with other properties: 

\begin{lemma}\label{lem:ultimate normal}
If $\I(K)\subseteq K^\dagger$ and $(P,\fm,\hat a)$ is normal, then 
$$ \text{$(P,\fm,\hat a)$  is ultimate} \quad\Longleftrightarrow\quad
\exc^{\operatorname{u}}(L_P) \cap v(\hat a-H) \leq v\fm.$$
\end{lemma}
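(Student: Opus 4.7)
The plan is to follow exactly the strategy of Lemma~\ref{lem:isolated normal}, but with ``$\exc^{\operatorname{u}}$'' in place of ``$\exc^{\ev}$'' throughout, and with Lemma~\ref{lem:excu normal} taking over the role of Lemma~\ref{lem:excev normal}. The hypothesis $\I(K)\subseteq K^\dagger$ is what makes Lemma~\ref{lem:excu normal} available in this setting.

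First I would reduce the definition of \emph{ultimate} to a cleaner form. By Lemma~\ref{lem:excu normal}, for any $a\in H$ with $a\prec\fm$ we have $\order(L_{P_{+a}})=r$ and $\exc^{\operatorname{u}}(L_{P_{+a}})=\exc^{\operatorname{u}}(L_P)$. Consequently, $(P,\fm,\hat a)$ is ultimate if and only if
$$\exc^{\operatorname{u}}(L_P) \cap v(\hat a - H)\ <\ v(\hat a - a)\qquad\text{for all $a\prec\fm$.}$$

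Next I would handle the two directions. For $(\Leftarrow)$, assume $\exc^{\operatorname{u}}(L_P)\cap v(\hat a-H)\leq v\fm$; then for any $a\prec\fm$ we have $\hat a-a\prec\fm$ (using $\hat a\prec\fm$), so $v(\hat a-a)>v\fm\geq \exc^{\operatorname{u}}(L_P)\cap v(\hat a-H)$, which gives the reduced form of ultimate. For $(\Rightarrow)$, assume $(P,\fm,\hat a)$ is ultimate, and take $\gamma\in \exc^{\operatorname{u}}(L_P)\cap v(\hat a-H)$, say $\gamma=v(\hat a-b)$ for some $b\in H$. Suppose toward a contradiction that $\gamma>v\fm$. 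Then $\hat a-b\prec\fm$, and combining with $\hat a\prec\fm$ yields $b\prec\fm$. Applying ultimate with $a:=b$ gives $\gamma<v(\hat a-b)=\gamma$, the desired contradiction; hence $\gamma\leq v\fm$.

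There is no real obstacle here: the whole argument is essentially formal once Lemma~\ref{lem:excu normal} is in hand, and the only subtlety is the elementary observation that $\hat a\prec\fm$ together with $\hat a-b\prec\fm$ forces $b\prec\fm$, which is what lets the forward direction feed back into the definition of ultimate.
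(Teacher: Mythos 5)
Your proof is correct and follows essentially the same route as the paper: the paper's proof is exactly "use Lemma~\ref{lem:excu normal} together with the equivalence $\hat a-a\prec\fm\Leftrightarrow a\prec\fm$," which is what your expanded argument does in both directions. No gaps.
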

\begin{proof} Use Lemma~\ref{lem:excu normal} and the equivalence  $\hat a-a\prec\fm\Leftrightarrow a\prec\fm$.
\end{proof}

\noindent
The ``ultimate'' version of Lemma~\ref{lem:isolated deg 1} has the same proof: 

\begin{lemma}\label{lem:ultimate deg 1} 
If  $\deg P=1$, then
$$ \text{$(P,\fm,\hat a)$  is ultimate} \quad\Longleftrightarrow\quad
\exc^{\operatorname{u}}(L_P) \cap v(\hat a-H) \leq v\fm.$$
\end{lemma}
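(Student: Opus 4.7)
The plan is to follow the proof of Lemma~\ref{lem:isolated deg 1} essentially verbatim. The key observation is that when $\deg P = 1$, the differential polynomial $P$ has the form $P(Y) = P(0) + L_P(Y)$, so Taylor expansion yields $P_{+a}(Y) = P(0) + L_P(a) + L_P(Y)$ and hence $L_{P_{+a}} = L_P$ for every $a \in H$. In particular, $\order(L_{P_{+a}}) = \order(L_P) = r$ for all $a$, so the order condition in the definition of ``ultimate'' is automatic, and the remaining condition becomes: for all $a \prec \fm$,
\[
\exc^{\operatorname{u}}(L_P) \cap v(\hat a - H) \ <\  v(\hat a - a).
\]

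For the direction ($\Rightarrow$), assume $(P,\fm,\hat a)$ is ultimate and suppose towards a contradiction that some $\gamma \in \exc^{\operatorname{u}}(L_P) \cap v(\hat a - H)$ satisfies $\gamma > v\fm$. Choose $a_0 \in H$ with $v(\hat a - a_0) = \gamma$; then $\hat a - a_0 \prec \fm$, and since $\hat a \prec \fm$ this gives $a_0 \prec \fm$. The ultimate condition applied to $a_0$ yields $\exc^{\operatorname{u}}(L_P) \cap v(\hat a - H) < \gamma$, contradicting $\gamma \in \exc^{\operatorname{u}}(L_P) \cap v(\hat a - H)$. Hence $\exc^{\operatorname{u}}(L_P) \cap v(\hat a - H) \leq v\fm$.

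For the converse ($\Leftarrow$), assume $\exc^{\operatorname{u}}(L_P) \cap v(\hat a - H) \leq v\fm$ and let $a \prec \fm$. Then $v(\hat a - a) > v\fm$, and combining with the assumption we obtain
\[
\exc^{\operatorname{u}}(L_{P_{+a}}) \cap v(\hat a - H)\ =\ \exc^{\operatorname{u}}(L_P) \cap v(\hat a - H)\ \leq\ v\fm\ <\ v(\hat a - a),
\]
verifying that $(P,\fm,\hat a)$ is ultimate. There is no real obstacle here: the identity $L_{P_{+a}} = L_P$ reduces the problem to a tautology about the downward-closed set $v(\hat a - H) \subseteq \Gamma_\infty$ relative to the cutoff $v\fm$, exactly as in the isolated case.
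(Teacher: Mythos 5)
Your proof is correct and is essentially the paper's own argument: the paper proves this (like its isolated analogue, Lemma~\ref{lem:isolated deg 1}) by the single observation that $\order L_P=r$ and $L_{P_{+a}}=L_P$ for all $a$, which is exactly the reduction you carry out. Your two directions merely spell out the routine details ($a\prec\fm\Leftrightarrow\hat a-a\prec\fm$ and the cutoff comparison with $v\fm$) that the paper leaves implicit.
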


\noindent
The next proposition is the ``ultimate'' version of Proposition~\ref{prop:achieve isolated}:

\begin{prop}\label{prop:achieve ultimate}
Suppose $\I(K)\subseteq K^\dagger$, 
and~$(P,\fm,\hat a)$  is normal. 
Then   $(P,\fm,\hat a)$  has an ultimate refinement.
\end{prop}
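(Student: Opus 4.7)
The plan is to copy almost verbatim the proof of Proposition~\ref{prop:achieve isolated}, replacing eventual exceptional values throughout by ultimate exceptional values. The hypothesis $\I(K)\subseteq K^\dagger$ is precisely what makes this substitution work: it activates the finiteness bound $\abs{\exc^{\operatorname{u}}(L_P)}\le r$ from Lemma~\ref{lem:excu properties}(i), the characterization of ultimate normal slots in Lemma~\ref{lem:ultimate normal}, and the invariance $\exc^{\operatorname{u}}(L_P)=\exc^{\operatorname{u}}(L_{P_{+a}})$ for $a\prec\fm$ from Lemma~\ref{lem:excu normal}.

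If $(P,\fm,\hat a)$ is already ultimate we are done, so assume it is not. First I would invoke Lemma~\ref{lem:ultimate normal} to produce some $\gamma\in\exc^{\operatorname{u}}(L_P)\cap v(\hat a-H)$ with $\gamma>v\fm$. Since $\abs{\exc^{\operatorname{u}}(L_P)}\le r$ is finite, I may then take $\gamma$ to be the maximum of $\exc^{\operatorname{u}}(L_P)\cap v(\hat a-H)$, which is still $>v\fm$. Next, since $v(\hat a-H)$ has no largest element, I would pick $a\in H$ and $\fn\in H^\times$ satisfying $v(\hat a-a)>\gamma=v\fn$. Then $a\prec\fm$ and $(P_{+a},\fn,\hat a-a)$ is a refinement of $(P,\fm,\hat a)$; it remains normal by Proposition~\ref{normalrefine}.

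The verification that this refinement is ultimate amounts to the following: for each $b\in H$ with $b\prec\fn$, we need $\order L_{(P_{+a})_{+b}}=r$ and
\[
\exc^{\operatorname{u}}\bigl(L_{(P_{+a})_{+b}}\bigr)\cap v\bigl((\hat a-a)-H\bigr)\ <\ v\bigl((\hat a-a)-b\bigr).
\]
Since $a+b\prec\fm$ and $(P,\fm,\hat a)$ is normal, Lemma~\ref{lem:excu normal} applied to $(P,\fm,\hat a)$ and $a+b$ gives $\order L_{P_{+(a+b)}}=r$ and $\exc^{\operatorname{u}}(L_{P_{+(a+b)}})=\exc^{\operatorname{u}}(L_P)$. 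Combined with $v\bigl((\hat a-a)-H\bigr)=v(\hat a-H)$ and the maximality of $\gamma$, the left-hand side of the displayed inequality is $\le\gamma$, while the right-hand side is $\ge\min\{v(\hat a-a),vb\}>\gamma$ by choice of $a$ and $b$.

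There is no genuine obstacle here: the argument is a mechanical transcription of the isolated case, and the only thing worth double-checking is that each place where the original proof used $|\exc^{\ev}|\le r$ or the invariance lemma for $\exc^{\ev}$ has a corresponding input available for $\exc^{\operatorname{u}}$ under the assumption $\I(K)\subseteq K^\dagger$; this is exactly what Lemma~\ref{lem:excu properties}(i) and Lemma~\ref{lem:excu normal} provide.
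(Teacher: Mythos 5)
Your proof is correct and follows the paper's argument essentially verbatim, as you intended. One minor quibble: the aside that $(P_{+a},\fn,\hat a-a)$ ``remains normal by Proposition~\ref{normalrefine}'' is both unnecessary (the verification of ultimateness never uses it, since Lemma~\ref{lem:excu normal} is applied to the original slot $(P,\fm,\hat a)$ at $a+b$) and not actually justified by Proposition~\ref{normalrefine}, which handles only refinements that keep the monomial $\fm$ unchanged; the paper's proof wisely omits this claim.
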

\begin{proof}
Suppose $(P,\fm,\hat a)$ is not already ultimate.
Then Lem\-ma~\ref{lem:ultimate normal} gives $\gamma$ with
$$\gamma\in\exc^{\operatorname{u}}(L_P)\cap v(\hat a-H),\quad 
\gamma>v\fm.$$
Lemma~\ref{lem:excu properties}(i) gives $|\exc^{\operatorname{u}}(L_P)|\le r$, 
so we can take $$\gamma\ :=\  \max\exc^{\operatorname{u}}(L_P)\cap v(\hat a-H),$$ and then $\gamma > v\fm$.
Take~$a$ and $\fn$ with $v(\hat a-a)>\gamma=v(\fn)$;
then $(P_{+a},\fn,\hat a-a)$ is a refinement of~$(P,\fm,\hat a)$ and $a\prec\fm$. 
Let $b\prec \fn$; then 
$a+b\prec\fm$, so by~Lemma~\ref{lem:excu normal},
$$\order(L_{(P_{+a})_{+b}})\ =\ r, \qquad
\exc^{\operatorname{u}}(L_{(P_{+a})_{+b}})\ =\ 
\exc^{\operatorname{u}}(L_P).$$
Also $v\big((\hat a-a)-b\big)>\gamma$, hence
$$\exc^{\operatorname{u}}\big(L_{(P_{+a})_{+b}}\big)  \cap v\big((\hat a-a)-H\big)\  =\ 
\exc^{\operatorname{u}}(L_P)\cap v(\hat a-H)\ \le\ \gamma\ <\ v\big((\hat a-a)-b\big).$$
Thus $(P_{+a},\fn,\hat a-a)$  is ultimate.
\end{proof}

\begin{remarkNumbered}\label{rem:achieve ultimate}
Proposition~\ref{prop:achieve ultimate} goes through if instead of assuming that $(P,\fm,\hat a)$  is normal, we assume
that $(P,\fm,\hat a)$  is linear. (Same argument, using Lem\-ma~\ref{lem:ultimate deg 1} in place of Lemma~\ref{lem:ultimate normal}.)
\end{remarkNumbered}

\noindent
Finally, here is a consequence of Corollaries~\ref{corevisu},~\ref{cor:excu independent of Q}, and   Lem\-ma~\ref{lem:ultimate normal}, where we recall that $\order(L_{P_{\times \fm}})=\order(L_P\fm)=\order(L_P)$: 

\begin{cor}\label{cor:ultimate order=1}
Suppose  $\I(K)\subseteq K^\dagger$ and $(P,\fm,\hat a)$ is normal of order~$r=1$. 
Then~$L_{P}=f(\der-g)$ with $f\in H^\times$, $g\in H$, and for $\mathfrak g\in H^\times$ with $\mathfrak g^\dagger=g$ we have:
$$\text{$(P,\fm,\hat a)$  is ultimate} \quad\Longleftrightarrow\quad 
\text{$(P,\fm,\hat a)$  is isolated} \quad\Longleftrightarrow\quad 
 \text{$\mathfrak g \succeq \fm$ or $\mathfrak g \prec \hat a-H$.}$$ 
\textup{(}In particular, if~$g\in\I(H)$ and~$\fm\preceq 1$,
then~$(P,\fm,\hat a)$  is ultimate.\textup{)}
\end{cor}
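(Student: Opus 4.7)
The plan is to reduce both ``ultimate'' and ``isolated'' to a single condition on a single value $v\mathfrak g \in \Gamma$, and then to unpack that condition. First, since $(P,\fm,\hat a)$ is normal of order $1$, the operator $L_{P_{\times\fm}} = L_P\,\fm$ has order $1$, so $L_P = f(\der - g)$ with $f \in H^\times$ and $g \in H$ (simply by reading off the coefficients of $L_P \in H[\der]$). Liouville closure of $H$ forces $H^\dagger = H$, which provides $\mathfrak g \in H^\times$ with $\mathfrak g^\dagger = g$, unique up to $C^\times$ and thus with a well-defined valuation $v\mathfrak g$.

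Next I identify the two exceptional value sets that control ultimateness and isolation. Because $g \in H$ and hence $\Re g = g$, Corollary~\ref{cor:excu independent of Q} combined with the invariance $\exc^{\operatorname{u}}(fA) = \exc^{\operatorname{u}}(A)$ from Lemma~\ref{lem:excu properties} yields $\exc^{\operatorname{u}}(L_P) = \{v\mathfrak g\}$. On the ``eventual'' side, $H$ is $1$-linearly newtonian by Corollary~\ref{cor:Liouville closed => 1-lin newt}, hence $\I(H) \subseteq H^\dagger$ by Lemma~\ref{lem:ADH 14.2.5}, so Lemma~\ref{lem:v(ker)=exc, r=1} applied inside $H$ gives $\exc^{\ev}(L_P) = v(\ker_H^{\neq} L_P) = \{v\mathfrak g\}$. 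Lemmas~\ref{lem:ultimate normal} and~\ref{lem:isolated normal} (the latter applies because $r=1$) now reduce both ``ultimate'' and ``isolated'' to the single condition $\{v\mathfrak g\} \cap v(\hat a - H) \leq v\fm$, giving the first equivalence. Since $v(\hat a - H)$ is downward closed with no largest element, this condition splits as a disjunction: either $v\mathfrak g \notin v(\hat a - H)$, equivalent to $v\mathfrak g > v(\hat a - a)$ for all $a \in H$, i.e., $\mathfrak g \prec \hat a - H$; or $v\mathfrak g \in v(\hat a - H)$, in which case the intersection condition reduces to $v\mathfrak g \leq v\fm$, i.e., $\mathfrak g \succeq \fm$. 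That produces the second equivalence.

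For the parenthetical, suppose $g \in \I(H)$ and $\fm \preceq 1$. Then $\I(H) = (1+\smallo)^\dagger$ via Lemma~\ref{lem:ADH 14.2.5} lets us choose $\mathfrak g \in 1+\smallo$, so $\mathfrak g \asymp 1$; combined with $\fm \preceq 1$ this gives $\mathfrak g \succeq \fm$, hence $(P,\fm,\hat a)$ is ultimate. No step is really hard; the only mildly subtle point is the identification $\exc^{\operatorname{u}}(\der - g) = \{v\mathfrak g\}$, since $g$ may or may not lie in $K^\dagger$, but Corollary~\ref{cor:excu independent of Q} is designed to handle both cases uniformly, so no genuine obstacle arises.
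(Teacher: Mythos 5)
Your argument is correct and is essentially the paper's own (unwritten) proof: the corollary is stated there as a direct consequence of Corollaries~\ref{corevisu} and~\ref{cor:excu independent of Q} and Lemma~\ref{lem:ultimate normal} (with Lemma~\ref{lem:isolated normal} for the isolated side), which is exactly the reduction of both properties to $\{v\mathfrak g\}\cap v(\hat a-H)\leq v\fm$ that you carry out. Your detour for the eventual side through $1$-linear newtonianity of $H$ and Lemma~\ref{lem:v(ker)=exc, r=1}, where the paper just invokes Corollary~\ref{corevisu}, is immaterial.
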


\subsection*{Ultimate slots in $K$}
{\it In this subsection, $a$, $b$ range over $K=H[\imag]$.}\/
Also $(P,\fm,\hat a)$ is a slot in $K$ of order $r\geq 1$, where $\hat a\in \hat K\setminus K$.
Lemma~\ref{lem:excu normal} goes through in this setting, with $H$ in the proof replaced by $K$:

\begin{lemma}\label{lem:excu normal, K} 
Suppose $(P,\fm,\hat a)$ is normal, and $a\prec\fm$.  Then $L_P$ and $L_{P_{+a}}$ have order $r$, and if $\I(K)\subseteq K^\dagger$, then $\exc^{\operatorname{u}}(L_{P})=\exc^{\operatorname{u}}(L_{P_{+a}})$.
\end{lemma}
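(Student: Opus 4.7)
The plan is to mirror the proof of Lemma~\ref{lem:excu normal} almost verbatim, substituting $K$ and $\hat K$ for $H$ and $\hat H$ throughout. The ingredients carry over because Lemma~\ref{lem:linear part, split-normal, new} is stated for an arbitrary valued differential field, and Lemma~\ref{lem:excu properties}(iii) is a statement about operators in $K[\der]$ under the hypothesis $\I(K)\subseteq K^\dagger$.

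First I would use the identities $L_{P_{\times\fm}}=L_P\fm$ and $L_{P_{+a,\times\fm}}=L_{P_{\times\fm,+a/\fm}}=L_{P_{+a}}\fm$ from [ADH, p.~242] to reduce bookkeeping to the multiplicative conjugate $(P_{\times\fm},1,\hat a/\fm)$, which is a normal slot in $K$ with $a/\fm\prec 1$. Then I would apply Lemma~\ref{lem:linear part, split-normal, new} to $\hat K$, $P_{\times\fm}$, and $a/\fm$ in place of $K$, $P$, and $a$ there, noting that the coefficients of $P_{\times\fm}$ lie in $K\subseteq\hat K$. This application yields both $\order L_P=\order L_{P_{+a}}=r$ and the perturbation estimate
$$L_P\fm - L_{P_{+a}}\fm\ =\ L_{P_{\times\fm}}-L_{P_{\times\fm,+a/\fm}}\ \prec_{\Delta(\fv)}\ \fv^{r+1}L_P\fm,$$
where $\fv:=\fv(L_P\fm)\prec^\flat 1$ by condition (N1) for the normal slot $(P,\fm,\hat a)$.

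Now assume $\I(K)\subseteq K^\dagger$. Set $A:=L_P\fm\in K[\der]$ of order $r$ and $B:=L_{P_{+a}}\fm-A\in K[\der]$ of order $\leq r$; the estimate just obtained says $B\prec_{\Delta(\fv)}\fv^{r+1}A$, so Lemma~\ref{lem:excu properties}(iii) applies to yield $\exc^{\operatorname{u}}(A+B)=\exc^{\operatorname{u}}(A)$, i.e. $\exc^{\operatorname{u}}(L_{P_{+a}}\fm)=\exc^{\operatorname{u}}(L_P\fm)$. Combining this with the identity $\exc^{\operatorname{u}}(X\fm)=\exc^{\operatorname{u}}(X)-v\fm$ (also from Lemma~\ref{lem:excu properties}) applied to both $X=L_P$ and $X=L_{P_{+a}}$ gives $\exc^{\operatorname{u}}(L_P)=\exc^{\operatorname{u}}(L_{P_{+a}})$, as required.

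There is no real obstacle: the only thing to check is that Lemma~\ref{lem:excu properties}(iii) is available in the setting where the linear operator lies in $K[\der]$, and indeed that part of Lemma~\ref{lem:excu properties} was stated precisely for $K[\der]^{\neq}$ under the standing assumption $\I(K)\subseteq K^\dagger$ of this section. Thus the proof is essentially a mechanical transcription of the proof of Lemma~\ref{lem:excu normal} with $H,\hat H$ replaced by $K,\hat K$.
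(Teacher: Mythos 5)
Your proposal is correct and coincides with the paper's own treatment: the paper proves Lemma~\ref{lem:excu normal, K} simply by remarking that the proof of Lemma~\ref{lem:excu normal} goes through with $H$, $\hat H$ replaced by $K$, $\hat K$, which is exactly the transcription you carry out (reduction to $(P_{\times\fm},1,\hat a/\fm)$, the perturbation estimate from Lemma~\ref{lem:linear part, split-normal, new}, then Lemma~\ref{lem:excu properties}(iii) together with $\exc^{\operatorname{u}}(A\fm)=\exc^{\operatorname{u}}(A)-v\fm$).
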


\noindent
We adapt Definition~\ref{def:ultimate} to slots in $K$:
call $(P,\fm,\hat a)$    {\bf ultimate} if for all $a\prec\fm$ we have $\order(L_{P_{+a}})=r$ and
$\exc^{\operatorname{u}}(L_{P_{+a}}) \cap v(\hat a-K) < v(\hat a-a)$.\index{ultimate!slot}\index{slot!ultimate} If $(P,\fm,\hat a)$ is ultimate, then it is isolated. 
Moreover, if $(P,\fm,\hat a)$ is ultimate, then so is~$(bP,\fm,\hat a)$ for $b\neq 0$  as well as
the slot~$(P^\phi,\fm,\hat a)$ in $K^\phi$. 
Lemmas~\ref{lem:ultimate refinement} and~\ref{lem:ultmult} go through in the present context,  and so do Lemmas~\ref{lem:ultimate normal} and~\ref{lem:ultimate deg 1} with $H$ replaced by $K$.   The analogue of Proposition~\ref{prop:achieve ultimate} 
follows likewise:

\begin{prop}\label{prop:achieve ultimate, K} 
If  $\I(K)\subseteq K^\dagger$ and~$(P,\fm,\hat a)$  is normal, then~$(P,\fm,\hat a)$  has an ultimate refinement.
\end{prop}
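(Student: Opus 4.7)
The plan is to carry out the exact analogue of the argument proving Proposition~\ref{prop:achieve ultimate}, with $H$ replaced by $K$ throughout and the displayed asymptotic references adjusted accordingly. First I would assume that $(P,\fm,\hat a)$ is not already ultimate; then by the $K$-version of Lemma~\ref{lem:ultimate normal} (which the excerpt explicitly tells us goes through with $H$ replaced by $K$), there exists some $\gamma\in\exc^{\operatorname{u}}(L_P)\cap v(\hat a-K)$ with $\gamma>v\fm$. Since $L_P\in K[\der]^{\ne}$ of order $r$, Lemma~\ref{lem:excu properties}(i) under the hypothesis $\I(K)\subseteq K^\dagger$ gives $|\exc^{\operatorname{u}}(L_P)|\leq r<\infty$, so the set $\exc^{\operatorname{u}}(L_P)\cap v(\hat a-K)$ has a maximum; take $\gamma$ to be this maximum, which still satisfies $\gamma>v\fm$.

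Next I would pick $a\in K$ and $\fn\in K^\times$ with $v(\hat a-a)>\gamma=v(\fn)$; since $\gamma>v\fm$, we have $\hat a-a\prec\fn\prec\fm$, so $(P_{+a},\fn,\hat a-a)$ is a refinement of $(P,\fm,\hat a)$ and $a\prec\fm$. I then claim this refinement is ultimate. To verify this, let $b\in K$ with $b\prec\fn$; then $a+b\prec\fm$, so Lemma~\ref{lem:excu normal, K} applied to the normal slot $(P,\fm,\hat a)$ with $a+b$ in place of $a$ yields $\order(L_{(P_{+a})_{+b}})=r$ and $\exc^{\operatorname{u}}(L_{(P_{+a})_{+b}})=\exc^{\operatorname{u}}(L_{P_{+a+b}})=\exc^{\operatorname{u}}(L_P)$. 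Combined with $v\big((\hat a-a)-b\big)>\gamma$ and the maximal choice of $\gamma$, this gives
\[
\exc^{\operatorname{u}}\big(L_{(P_{+a})_{+b}}\big)\cap v\big((\hat a-a)-K\big)\ =\ \exc^{\operatorname{u}}(L_P)\cap v(\hat a-K)\ \leq\ \gamma\ <\ v\big((\hat a-a)-b\big),
\]
which is precisely the defining condition for $(P_{+a},\fn,\hat a-a)$ to be ultimate as a slot in $K$.

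I do not expect any genuine obstacle: every ingredient is already available in the text. The one point requiring a moment of care is verifying that the $K$-analogues of Lemmas~\ref{lem:ultimate normal} and~\ref{lem:excu normal} (which the excerpt invokes in the remark preceding the proposition) really do hold verbatim with $H$ replaced by $K$, and that Lemma~\ref{lem:excu properties}(i) applies since its hypothesis $\I(K)\subseteq K^\dagger$ is exactly what we assume. All of these are flagged in the excerpt, so the proof is essentially a translation of that of Proposition~\ref{prop:achieve ultimate}.
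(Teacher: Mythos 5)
Your proposal is correct and is essentially the paper's own argument: the paper proves the $H$-version (Proposition~\ref{prop:achieve ultimate}) by exactly this maximality argument and then states that the $K$-analogue "follows likewise," which is the translation you carry out, using Lemma~\ref{lem:excu normal, K}, the $K$-version of Lemma~\ref{lem:ultimate normal}, and Lemma~\ref{lem:excu properties}(i). The only cosmetic point is that by the section's conventions $\fn$ should be taken in $H^\times$ (possible since $\Gamma=v(H^\times)=v(K^\times)$), which does not affect the argument.
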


\begin{remarkNumbered}\label{rem:achieve ultimate, K} 
Proposition~\ref{prop:achieve ultimate, K} also holds if instead of assuming that $(P,\fm,\hat a)$  is normal, we assume
that $(P,\fm,\hat a)$  is linear. 
\end{remarkNumbered}

\noindent
Corollary~\ref{cor:excu independent of Q} and the $K$-versions of Lem\-mas~\ref{lem:ultimate normal} and~\ref{lem:ultimate deg 1} yield:

\begin{cor}\label{cor:ultimate order=1, K}
Suppose $\I(K)\subseteq K^\dagger$, $r=1$, and $(P,\fm,\hat a)$ is normal or linear. Then
$L_P=f(\der-g)$ with $f\in K^\times, g\in K$, and for $\mathfrak g\in H^\times$ with
$\mathfrak g^\dagger=\Re g$ we have:
$$\text{$(P,\fm,\hat a)$  is ultimate} \quad\Longleftrightarrow\quad  \text{$\mathfrak g\succeq \fm$ or 
$\mathfrak g \prec \hat a-K $.}$$ 
\textup{(}In particular, if~$\Re g\in\I(H)$ and~$\fm\preceq 1$, 
then~$(P,\fm,\hat a)$  is ultimate.\textup{)} 
\end{cor}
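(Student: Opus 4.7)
\smallskip\noindent
\emph{Proof plan.} The argument is essentially the $K$-analogue of Corollary~\ref{cor:ultimate order=1}, and my plan is to mimic the reasoning there, substituting the $K$-versions of the relevant preceding lemmas. First, I would unpack the shape of $L_P$. Since $(P,\fm,\hat a)$ is a slot in $K$ of order $r=1$, both the normality and the linearity hypotheses yield $\order(L_P)=1$: in the normal case, $\order(L_{P_{\times\fm}})=r=1$ and $L_{P_{\times\fm}}=L_P\fm$; in the linear case, $\order(L_P)=\order(P)=1$ directly. So we may write $L_P=f(\der-g)$ with $f\in K^\times$, $g\in K$. Because $H$ is Liouville closed, $H^\dagger=H$, and hence there is some $\mathfrak g\in H^\times$ with $\mathfrak g^\dagger=\Re g$; any two such $\mathfrak g$ differ by an element of $C^\times\subseteq\mathcal O_H^\times$, so the valuation $v\mathfrak g\in\Gamma$ is canonical.

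Next, I would compute $\exc^{\operatorname{u}}(L_P)$. By Corollary~\ref{cor:excu independent of Q} applied to $A:=\der-g\in K[\der]$ (using $\I(K)\subseteq K^\dagger$), we have $\exc^{\operatorname{u}}(\der-g)=\{v\mathfrak g\}$. Since $L_P=f(\der-g)$ with $f\in K^\times$, the invariance $\exc^{\operatorname{u}}(bA)=\exc^{\operatorname{u}}(A)$ from Lemma~\ref{lem:excu properties} gives $\exc^{\operatorname{u}}(L_P)=\{v\mathfrak g\}$. Now I invoke the $K$-version of Lemma~\ref{lem:ultimate normal} (in the normal case) or of Lemma~\ref{lem:ultimate deg 1} (in the linear case), explicitly provided by the text just before Proposition~\ref{prop:achieve ultimate, K} and Remark~\ref{rem:achieve ultimate, K}: in either case,
\[
(P,\fm,\hat a) \text{ is ultimate}\ \Longleftrightarrow\ \exc^{\operatorname{u}}(L_P)\cap v(\hat a-K)\ \leq\ v\fm\ \Longleftrightarrow\ \{v\mathfrak g\}\cap v(\hat a-K)\ \leq\ v\fm.
\]

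Now I would finish by a case distinction on whether $v\mathfrak g$ lies in the downward closed set $v(\hat a-K)$. If $v\mathfrak g\notin v(\hat a-K)$, then $v\mathfrak g>v(\hat a-a)$ for every $a\in K$, i.e.\ $\mathfrak g\prec \hat a-K$, and the intersection above is empty; in this case $(P,\fm,\hat a)$ is ultimate unconditionally. If $v\mathfrak g\in v(\hat a-K)$, then the condition becomes $v\mathfrak g\leq v\fm$, i.e.\ $\mathfrak g\succeq\fm$. Combining, $(P,\fm,\hat a)$ is ultimate iff $\mathfrak g\succeq\fm$ or $\mathfrak g\prec\hat a-K$, which is the displayed equivalence.

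For the parenthetical, suppose $\Re g\in\I(H)$ and $\fm\preceq 1$. Since $H$ is Liouville closed it is in particular a $\d$-valued $H$-field with $\I(H)\subseteq H=H^\dagger$; applying Lemma~\ref{pldv} to $H$ yields $\I(H)=(\mathcal O_H^\times)^\dagger$. Thus $\mathfrak g^\dagger=\Re g\in(\mathcal O_H^\times)^\dagger$, so after adjusting $\mathfrak g$ by a constant (which does not change $v\mathfrak g$) we may take $\mathfrak g\in\mathcal O_H^\times$, whence $\mathfrak g\asymp 1\succeq\fm$ and the equivalence gives that $(P,\fm,\hat a)$ is ultimate. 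The proof is essentially routine bookkeeping given the preceding results; the only point requiring care is to verify that both the normal and the linear hypotheses funnel into the same equivalence via the correct $K$-versions of Lemmas~\ref{lem:ultimate normal} and~\ref{lem:ultimate deg 1}, which is precisely what the text sets up just before Remark~\ref{rem:achieve ultimate, K}.
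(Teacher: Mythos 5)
Your proposal is correct and follows essentially the same route the paper intends: the paper derives this corollary directly from Corollary~\ref{cor:excu independent of Q} (giving $\exc^{\operatorname{u}}(L_P)=\{v\mathfrak g\}$ via the invariance under multiplication by $f\in K^\times$) together with the $K$-versions of Lemmas~\ref{lem:ultimate normal} and~\ref{lem:ultimate deg 1}, exactly as you do, and your case distinction on whether $v\mathfrak g\in v(\hat a-K)$ plus the treatment of the parenthetical via $\I(H)=(\mathcal O_H^\times)^\dagger$ is the intended bookkeeping.
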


\subsection*{Using the norm to characterize being ultimate\astr} We employ here the  ``norm''~$\dabs{\,\cdot\,}$ on $\Univ$
and the gaussian extension $v_{\g}$ of the valuation of $K$ from Section~\ref{sec:group rings}. 

\begin{lemma}\label{uudag} For $u\in \Univ^\times$ we have $\dabs{u}^\dagger=\Re u^\dagger$.
\end{lemma}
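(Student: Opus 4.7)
The plan is to use the structure of $\Univ^\times$ together with the standing assumption that $\Lambda \subseteq H\imag$. Since $\Univ^\times = K^\times\ex(\Lambda)$ (Lemma~\ref{lem:only trivial units}), any $u\in \Univ^\times$ can be written uniquely as $u = a\ex(\lambda)$ with $a\in K^\times$ and $\lambda\in\Lambda$, and by our choice of $\Lambda$ we have $\lambda = h\imag$ for some $h\in H$.

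First I would compute the two sides separately. On the derivation side, since $\ex(\lambda)^\dagger = \lambda$ we have
\[
u^\dagger \ =\  a^\dagger + \lambda \ =\ a^\dagger + h\imag,
\]
and because $h\in H$ we get $\Re u^\dagger = \Re a^\dagger$. On the norm side, since $\ex(\lambda)\in \ex(\Lambda)\subseteq \Univ^\times$ and $\ex(\Lambda)$ is an orthonormal family (Lemma~\ref{lem:inner prod intrinsic} gives $\dabs{\ex(\lambda)}=1$), Lemma~\ref{lem:submult} yields $\dabs{u} = \dabs{a\ex(\lambda)} = \abs{a}\cdot\dabs{\ex(\lambda)} = \abs{a}$, so $\dabs{u}^\dagger = \abs{a}^\dagger$.

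The final step is to invoke Corollary~\ref{cor:logder abs value}, which asserts precisely that $\abs{a}^\dagger = \Re a^\dagger$ for $a\in K[\imag]^\times = K^\times$. Combining this with the two computations above gives
\[
\dabs{u}^\dagger \ =\ \abs{a}^\dagger \ =\ \Re a^\dagger \ =\ \Re u^\dagger,
\]
as required. There is no real obstacle here; the only point that must be checked carefully is the hypothesis $\Lambda\subseteq H\imag$ which forces the contribution of $\lambda$ to $u^\dagger$ to be purely imaginary, and hence invisible to $\Re(\,\cdot\,)$—this is exactly what makes the norm, which depends only on the ``radial'' part $a$, track the real part of the logarithmic derivative.
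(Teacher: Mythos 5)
Your proof is correct and is essentially the paper's own argument: write $u=a\ex(\lambda)$ with $a\in K^\times$, use $\Lambda\subseteq H\imag$ so that $\Re u^\dagger=\Re a^\dagger$, note $\dabs{u}=\abs{a}$, and conclude via Corollary~\ref{cor:logder abs value} (applied with $H$ in the role of the real closed base field there). The only differences are cosmetic: you justify $\dabs{u}=\abs{a}$ via Lemmas~\ref{lem:inner prod intrinsic} and~\ref{lem:submult}, where the paper reads it off directly from the spectral decomposition.
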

\begin{proof} For $u=f\ex(\lambda)$, $f\in K^\times$ we have $\dabs{u}=|f|$ and
$u^\dagger=f^\dagger + \lambda$, so 
$$\dabs{u}^\dagger\ =\ |f|^\dagger\ =\ \Re f^\dagger\ =\ \Re u^\dagger,$$
using Corollary~\ref{cor:logder abs value} for the second equality. 
\end{proof}

\noindent
Using  Corollary~\ref{cor:valuation and norm}, Lemma~\ref{uudag}, and [ADH, 10.5.2(i)] we obtain:

\begin{lemma}\label{Wlem}
Let  $\mathfrak W\subseteq H^\times$ be $\preceq$-closed.
Then for all $u\in\Univ^\times$,
$$\dabs{u} \in \mathfrak W \quad \Longleftrightarrow\quad
v_{\g}u \in v(\mathfrak W) \quad \Longleftrightarrow\quad 
\text{$\Re u^\dagger < \fn^\dagger$ for all $\fn\notin \mathfrak W$.}$$
\end{lemma}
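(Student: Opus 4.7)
The plan is to reduce both equivalences to three ingredients that are already available: Corollary~\ref{cor:valuation and norm} (relating $\dabs{u}$ to $v_{\g}u$), Lemma~\ref{uudag} (relating $\dabs{u}^\dagger$ to $\Re u^\dagger$), and the strict monotonicity of the logarithmic derivative in a pre-$H$-field from [ADH, 10.5.2(i)]. The $\preceq$-closedness of $\mathfrak{W}$ will be used in the form: $f\in\mathfrak{W}$ iff $vf\in v(\mathfrak{W})$ for $f\in H^\times$, and equivalently, $H^\times\setminus\mathfrak{W}$ is closed upward under $\succeq$.

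For the first equivalence, Corollary~\ref{cor:valuation and norm} gives $\dabs{u}\asymp_{\g} u$, so $v\dabs{u}=v_{\g}u$. Since $\dabs{u}\in H^\times$ and $\preceq$-closedness makes membership in $\mathfrak{W}$ depend only on the value, we immediately get $\dabs{u}\in\mathfrak{W}\Leftrightarrow v\dabs{u}\in v(\mathfrak{W})\Leftrightarrow v_{\g}u\in v(\mathfrak{W})$.

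For the second equivalence, Lemma~\ref{uudag} lets me rewrite the third condition as: $\dabs{u}^\dagger<\fn^\dagger$ for all $\fn\notin\mathfrak{W}$. Assume first $\dabs{u}\in\mathfrak{W}$ and let $\fn\in H^\times\setminus\mathfrak{W}$; if $\fn\preceq\dabs{u}$, then $\preceq$-closedness would force $\fn\in\mathfrak{W}$, a contradiction, so $\dabs{u}\prec\fn$ in $H^{>}$, and [ADH, 10.5.2(i)] delivers the strict inequality $\dabs{u}^\dagger<\fn^\dagger$. For the converse, if $\dabs{u}\notin\mathfrak{W}$, take $\fn:=\dabs{u}$ in the third condition to get the absurdity $\dabs{u}^\dagger<\dabs{u}^\dagger$; hence $\dabs{u}\in\mathfrak{W}$.

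No step presents a genuine obstacle; the content is the conjunction of the three cited results together with the definition of $\preceq$-closedness, and the proof is essentially a two- or three-line bookkeeping argument.
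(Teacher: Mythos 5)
Your proof is correct and takes exactly the route the paper intends: the paper gives no written argument, merely deriving the lemma "using Corollary~\ref{cor:valuation and norm}, Lemma~\ref{uudag}, and [ADH, 10.5.2(i)]," which are precisely the three ingredients you combine, with $\preceq$-closedness of $\mathfrak W$ handling the reduction to values and the strict inequality. Your write-up just supplies the short bookkeeping the paper leaves implicit.
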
 

\noindent
Let $(P,\fm,\hat a)$ be a slot in $H$ of order $r\ge 1$. 
Applying Lemma~\ref{Wlem} to the set~$\mathfrak W = \{ \fw:\ \fw\prec\hat a-H\}$---so $v(\mathfrak W)=\Gamma\setminus v(\hat a-H)$---we obtain a  reformulation of the condition ``$(P,\fm,\hat a)$ is ultimate'' 
in terms of the ``norm'' $\dabs{\,\cdot\,}$ on $\Univ$:

{\samepage
\begin{cor}
The following are equivalent \textup{(}with $a$ ranging over $H$\textup{)}:
\begin{enumerate}
\item[\rm{(i)}] $(P,\fm,\hat a)$  is ultimate;
\item[\rm{(ii)}] for  all $a\prec\fm$: $\order(L_{P_{+a}})=r$ and whenever~$u\in\Univ^\times$, $v_{\g}u \in \exc^{\operatorname{u}}(L_{P_{+a}})$, and
$\dabs{u}\preceq \hat a-a$, then  $\dabs{u}\prec \hat a-H$; 
\item[\rm{(iii)}] for  all $a\prec\fm$: $\order(L_{P_{+a}})=r$ and whenever~$u\in\Univ^\times$, $v_{\g}u \in \exc^{\operatorname{u}}(L_{P_{+a}})$, and 
$\dabs{u}\preceq \hat a-a$, then 
$\Re u^\dagger < \fn^\dagger$ for all $\fn$ with 
 $v(\fn)\in v(\hat a-H)$.
\end{enumerate}
\end{cor}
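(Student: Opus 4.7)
The plan is to leverage the paragraph immediately preceding the corollary: Lemma~\ref{Wlem} applied to $\mathfrak{W} := \{\fw \in H^\times : \fw \prec \hat a - H\}$ yields (ii)~$\Leftrightarrow$~(iii) at once, and Corollary~\ref{cor:valuation and norm} bridges the ``real'' formulation of being ultimate (in terms of $\fw \in H^\times$, from the second display in Definition~\ref{def:ultimate}) with the ``universal'' formulation in (ii) (in terms of $u \in \Univ^\times$).

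First, for (ii)~$\Leftrightarrow$~(iii), I verify the hypotheses of Lemma~\ref{Wlem}. The set $\mathfrak{W}$ is $\preceq$-closed in $H^\times$: if $\fn \preceq \fw \in \mathfrak{W}$, then $\fn \preceq \fw \prec \hat a - b$ for every $b \in H$, so $\fn \in \mathfrak{W}$. Moreover, $\fn \in H^\times \setminus \mathfrak{W}$ iff $\fn \succeq \hat a - b$ for some $b$, which, since $v(\hat a - H) \subseteq \Gamma$ is downward closed, is equivalent to $v\fn \in v(\hat a - H)$. Lemma~\ref{Wlem} then directly yields, for $u \in \Univ^\times$,
\[
\dabs{u} \prec \hat a - H \ \Longleftrightarrow\ \Re u^\dagger < \fn^\dagger \text{ for all } \fn \in H^\times \text{ with } v\fn \in v(\hat a - H).
\]
Since the hypotheses in (ii) and (iii) are identical (namely $\order(L_{P_{+a}})=r$, $v_{\g}u \in \exc^{\operatorname{u}}(L_{P_{+a}})$, and $\dabs{u}\preceq \hat a-a$), this equivalence passes through and gives (ii)~$\Leftrightarrow$~(iii).

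For (i)~$\Leftrightarrow$~(ii), the key is Corollary~\ref{cor:valuation and norm}, which gives $\dabs{u} \asymp_{\g} u$ for all $u \in \Univ^\times$, hence in particular $v(\dabs{u}) = v_{\g}u$. For (i)~$\Rightarrow$~(ii), fix $a \prec \fm$ and suppose $u \in \Univ^\times$ satisfies $v_{\g}u \in \exc^{\operatorname{u}}(L_{P_{+a}}) \subseteq \Gamma = v(H^\times)$ and $\dabs{u} \preceq \hat a - a$; choose $\fw \in H^\times$ with $\fw \asymp \dabs{u}$, so that $v\fw = v_{\g}u \in \exc^{\operatorname{u}}(L_{P_{+a}})$ and $\fw \preceq \hat a - a$, and the second formulation of being ultimate yields $\fw \prec \hat a - H$, whence $\dabs{u} \asymp \fw \prec \hat a - H$. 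For (ii)~$\Rightarrow$~(i), given $\fw \in H^\times$ with $v\fw \in \exc^{\operatorname{u}}(L_{P_{+a}})$ and $\fw \preceq \hat a - a$, take $u := \fw \in H^\times \subseteq \Univ^\times$; since $\dabs{\fw} = |\fw| \asymp \fw \preceq \hat a - a$, hypothesis (ii) forces $|\fw| \prec \hat a - H$ and hence $\fw \prec \hat a - b$ for every $b$, verifying the ultimate condition.

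There is no real obstacle here: the corollary is a formal consequence of the two preceding technical results, and the only mild care needed is in correctly identifying $\mathfrak{W}$, verifying that its complement in $H^\times$ corresponds under $v$ to $v(\hat a-H)$, and using $\dabs{u} \asymp_{\g} u$ to pass between elements of $H^\times$ and elements of $\Univ^\times$ at a prescribed valuation.
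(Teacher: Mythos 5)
Your proposal is correct and follows essentially the same route the paper indicates: apply Lemma~\ref{Wlem} to $\mathfrak{W} = \{\fw : \fw \prec \hat a - H\}$ (for which $v(\mathfrak{W}) = \Gamma \setminus v(\hat a - H)$) to get (ii)~$\Leftrightarrow$~(iii), and use Corollary~\ref{cor:valuation and norm} (that is, $\dabs{u} \asymp_{\g} u$) together with the ``second formulation'' in Definition~\ref{def:ultimate} to pass between elements of $H^\times$ and of $\Univ^\times$ for (i)~$\Leftrightarrow$~(ii). The details you supply (identifying $\fn \notin \mathfrak{W}$ with $v\fn \in v(\hat a - H)$, and taking $u := \fw$ for (ii)~$\Rightarrow$~(i)) are exactly the ones implicit in the paper's one-sentence remark preceding the corollary.
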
}

\subsection*{A counterexample\astr} 
Suppose  $\I(K)\subseteq K^\dagger$ and $H$ is not  $\upo$-free. 
(Example~\ref{ex:Gehret} provides such $H$.) 
Let $(\upl_{\rho})$ and~$(\upo_{\rho})$ be as in Lemma~\ref{lem:upl-free, not upo-free} with
$H$ in the role of~$K$ there. That lemma yields a minimal hole $(P,\fm, \upl)$ in $H$ with $P= 2Y'+Y^2+\upo$ ($\upo\in H$). This is a good source of counterexamples: 

\begin{lemma}\label{lem:counterex}
The minimal hole $(P,\fm,\upl)$ in $H$ is ultimate,  and none of its refinements is quasilinear or normal.
\end{lemma}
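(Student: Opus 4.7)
The claim has two parts: that $(P,\fm,\upl)$ is ultimate, and that no refinement of $(P,\fm,\upl)$ is quasilinear or normal. I would dispose of the second part first, as it is almost immediate from earlier material: the Remark following Lemma~\ref{lem:quasilinear refinement} observes that, for a $(P,\fm,\upl)$ as in Lemma~\ref{lem:upl-free, not upo-free}, Corollary~\ref{cor:quasilinear refinement} together with [ADH, 11.7.9] implies $\ndeg_{\boldsymbol a}P>1$ (where $\boldsymbol a:=c_H(\upl_\rho)$), so $(P,\fm,\upl)$ has no quasilinear refinement. Since every normal slot is quasilinear by Corollary~\ref{cor:normal=>quasilinear}, no refinement is normal either.

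For the ultimate property, let $a\in H$ with $a\prec\fm$. Since $P=2Y'+Y^2+\upo$, we have $L_P=2\der$ and hence $L_{P_{+a}}=2(\der+a)=2(\der-g)$ with $g:=-a\in H$, so in particular $\order L_{P_{+a}}=1$. Because $H$ is Liouville closed, $H^\dagger=H$, so we can choose $\mathfrak g_a\in H^\times$ with $\mathfrak g_a^\dagger=-a$; by Corollary~\ref{cor:excu independent of Q} (noting $\Re g=g\in H$) this gives
\[ \exc^{\operatorname{u}}(L_{P_{+a}})\ =\ \{v\mathfrak g_a\}. \]
Thus the ultimate condition reduces to showing that for every $a\in H$ with $a\prec\fm$, either $v\mathfrak g_a\notin v(\upl-H)$, or $v\mathfrak g_a<v(\upl-a)$. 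For the base case $a=0$ we have $\mathfrak g_0\in C^\times$ and hence $v\mathfrak g_0=0$; since $\upl_\rho\in\smallo$ eventually (by [ADH,~11.5.1]) and $\upl_\rho\leadsto\upl$, one gets $v\upl=v\upl_\rho>0$ for large $\rho$, so indeed $v\mathfrak g_0=0<v\upl=v(\upl-0)$.

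For general $a\prec\fm$ the approach is to exploit the Riccati equation $2\upl'+\upl^2+\upo=0$ satisfied by $\upl$. Setting $\hat b:=\upl-a$ and expanding $P_{+a}(\hat b)=P(\upl)=0$ yields
\[ (\der+a)(\hat b)\ =\ -\tfrac12\hat b^{\,2}-\tfrac12 P(a). \]
Since $(\der+a)\mathfrak g_a=0$, writing $\hat b=\mathfrak g_a\,y$ with $y\in\hat H^\times$ converts this into the first-order equation
\[ y'\ =\ -\tfrac12\mathfrak g_a\,y^2\,-\,P(a)/(2\mathfrak g_a). \]
The plan is to combine this with the pc-sequence structure $\upl_\rho\leadsto\upl$, the identity $\psi(v\mathfrak g_a)=va$ coming from $\mathfrak g_a^\dagger=-a$, and the fact (from $H$ being $\upl$-free) that no element of $H$ is a pseudolimit of $(\upl_\rho)$, in order to rule out simultaneously $v(\upl-a)\leq v\mathfrak g_a$ and $v\mathfrak g_a\in v(\upl-H)$. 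The main obstacle is exactly this step: one must show that if $\hat b=\mathfrak g_a y$ with $y\succeq 1$, then the above Riccati-type equation forces $v\mathfrak g_a$ to exceed every value in the cut $v(\upl-H)$, so that $v\mathfrak g_a\notin v(\upl-H)$. Once that comparison is established, the ultimate property follows.
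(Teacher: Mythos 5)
The part of your argument showing that no refinement is quasilinear or normal is correct and is exactly the paper's: invoke Corollary~\ref{cor:quasilinear refinement} and [ADH, 11.7.9] to rule out quasilinear refinements, then Corollary~\ref{cor:normal=>quasilinear} to rule out normal ones.

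For the ultimate property, however, your proposal has a genuine gap, as you yourself flag: after reducing via Corollary~\ref{cor:excu independent of Q} to checking that for every $a\prec\fm$ in $H$ and $\mathfrak g_a\in H^\times$ with $\mathfrak g_a^\dagger=-a$, one has $v\mathfrak g_a\notin v(\upl-H)$ or $v\mathfrak g_a<v(\upl-a)$, you set up a Riccati-type auxiliary equation but never close the loop. The paper does not go this route. The missing key fact is that $v(\upl-H)=\Psi$, which is a structural property of the pc-sequence $(\upl_\rho)$: for $\upl$ with $\upl_\rho\leadsto\upl$, the valuations $v(\upl-a)$ as $a$ ranges over $H$ fill out exactly the $\Psi$-set. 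Once that identity is in hand, the condition to verify becomes: for all $a\prec\fm$ with $b:=\mathfrak g_a$ satisfying $b^\dagger=-a$ and $vb\in\Psi$, we have $\upl-a\prec b$; and this is precisely the content of [ADH, 11.5.6] (the characterization of pseudolimits of $(\upl_\rho)$ in terms of the $\upl$-cut). Your observation that $\psi(v\mathfrak g_a)=va$ is correct and relevant, but on its own it does not produce the needed comparison; you would also need to use that no $a\in H$ is a pseudolimit of $(\upl_\rho)$ together with how $\Psi$ sits as a cut in $\Gamma$, at which point you would essentially be re-deriving $v(\upl-H)=\Psi$. So the proof is not complete as written; the Riccati detour can be dropped in favor of citing $v(\upl-H)=\Psi$ and [ADH, 11.5.6] directly.
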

\begin{proof} 
Let $a\in H$.  Then
$P_{+a} = 2Y'+2aY+Y^2 + P(a)$
and thus~$L_{P_{+a}}=2(\der+a)$, so for $b\in H^\times$ with $b^\dagger=-a$ we have
$\exc^{\operatorname{u}}(L_{P_{+a}})=\{vb\}$, by Corollary~\ref{cor:excu independent of Q}. 
Thus~$(P,\fm,\upl)$ is ultimate iff $\upl-a\prec b$ for all $a\prec\fm$ in $H$ and $b\in H^\times$ with~$b^\dagger=-a$
and $vb\in v(\upl-H)$; the latter holds by~[ADH, 11.5.6] since $v(\upl-H)=\Psi$.
Hence~$(P,\fm,\upl)$ is ultimate. No refinement of $(P,\fm,\upl)$ is    quasilinear  by Corollary~\ref{cor:quasilinear refinement} and
[ADH, 11.7.9], and so by Corollary~\ref{cor:normal=>quasilinear}, no refinement of $(P,\fm,\upl)$ is normal.
\end{proof}

\section{Repulsive-Normal Slots}\label{sec:repulsive-normal}

\noindent
{\it In this section   $H$ is a real closed $H$-field with
small derivation and asymptotic integration, with $\Gamma:= v(H^\times)$.
Also $K:= H[\imag]$ with $\imag^2=-1$ is an algebraic closure of $H$.}\/ We study here the concept of a repulsive-normal slot in $H$,  which strengthens that of a  split-normal slot in $H$.  
Despite their name, 
repulsive-normal slots will turn out to have attractive analytic properties in the realm of Hardy fields.

\subsection*{Attraction and repulsion}
In this subsection~$a$, $b$ range over $H$,  $\fm$,~$\fn$  over $H^\times$, $f$,~$g$,~$h$ (possibly with subscripts)   over $K$, and $\gamma$, $\delta$ over $\Gamma$.\index{element!repulsive}\index{repulsive!element}\index{element!attractive} 
We say that~$f$ is {\bf attractive} if $\Re f\succeq 1$ and $\Re f < 0$, and
{\bf repulsive} if $\Re f\succeq 1$ and~$\Re f > 0$.
If~$\Re f \sim \Re g$, then $f$ is attractive iff $g$ is attractive, and likewise with
``repulsive'' in place of ``attractive''.
Moreover, if $a>0$, $a\succeq 1$, and $f$ is attractive (repulsive), then~$af$ is attractive (repulsive, respectively). 

\begin{definition}
Let $\gamma>0$; we say $f$ is  {\bf $\gamma$-repulsive}\index{element!gamma-repulsive@$\gamma$-repulsive}\index{S-repulsive@$S$-repulsive!element} if $v(\Re f) < \gamma^\dagger$ or~${\Re f > 0}$.
Given $S\subseteq\Gamma$, we say $f$ is {\bf $S$-repulsive} if $f$ is
$\gamma$-repulsive for all $\gamma\in S\cap\Gamma^>$, equivalently,
$\Re f >0$, or $v(\Re f) < \gamma^\dagger$ for all $\gamma\in S\cap \Gamma^{>}$.
\end{definition}

\noindent
Note the following implications for $\gamma>0$:
\begin{align*} \text{$f$ is $\gamma$-repulsive}\quad &\Longrightarrow\quad   \Re f\neq 0,\\
 \text{$f$ is $\gamma$-repulsive, $\Re g\sim\Re f$}\quad &\Longrightarrow\quad  \text{$g$ is $\gamma$-repulsive}.
\end{align*} 
The following is easy to show:

\begin{lemma}\label{lem:repulsive 1}
Suppose $\gamma>0$ and $\Re f \succeq 1$. Then    $f$ is $\gamma$-repulsive iff $v(\Re f) < \gamma^\dagger$ or~$f$ is repulsive.
Hence, if $f$ is repulsive, then  $f$ is $\Gamma$-repulsive; the converse of this implication holds if $\Psi$ is not
bounded from below in $\Gamma$.
\end{lemma}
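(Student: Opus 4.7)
The proof is essentially an unwinding of definitions, and I expect no serious obstacle. Let me outline the plan.

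For the first assertion, under the hypothesis $\Re f \succeq 1$, the definition of ``repulsive'' (namely, $\Re f \succeq 1$ and $\Re f > 0$) collapses to just $\Re f > 0$. So ``$v(\Re f) < \gamma^\dagger$ or $f$ is repulsive'' is literally the same as ``$v(\Re f) < \gamma^\dagger$ or $\Re f > 0$'', which is the definition of $\gamma$-repulsive. Thus the first sentence of the lemma follows by pure definition-chasing.

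The forward implication of the ``hence'' part is similarly direct: if $f$ is repulsive then $\Re f > 0$, and this single disjunct witnesses $\gamma$-repulsiveness for every $\gamma \in \Gamma^{>}$, so $f$ is $\Gamma$-repulsive.

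For the converse, assume $\Psi$ is not bounded from below in $\Gamma$ and that $f$ is $\Gamma$-repulsive. The plan is to show by contradiction that $\Re f > 0$ (which together with $\Re f \succeq 1$ gives repulsiveness). Suppose instead that $\Re f \leq 0$. Note that $\Re f \succeq 1$ forces $\Re f \ne 0$, so in fact $\Re f < 0$. Then for every $\gamma \in \Gamma^{>}$, the ``$\Re f > 0$'' disjunct in $\gamma$-repulsiveness fails, so we must have $v(\Re f) < \gamma^\dagger = \psi(\gamma)$. Since $\psi(-\gamma) = \psi(\gamma)$, we have $\psi(\Gamma^{>}) = \Psi$, and our hypothesis provides $\gamma \in \Gamma^{>}$ with $\psi(\gamma) \leq v(\Re f)$, contradicting the previous inequality. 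Hence $\Re f > 0$, and $f$ is repulsive. This completes the proof sketch; no step requires more than a line or two, and there is no real obstacle.
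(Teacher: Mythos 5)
Your proof is correct and is exactly the routine definition-chasing the paper has in mind when it labels the lemma "easy to show" and omits a proof: under $\Re f\succeq 1$ repulsiveness reduces to $\Re f>0$, the forward implication is immediate, and the converse follows by contradiction from the unboundedness of $\Psi$ below together with $\gamma^\dagger=\psi(\gamma)$ and $\Psi=\psi(\Gamma^{>})$. Nothing to add.
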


\noindent  
Let $\gamma, \delta>0$. If $f$ is $\gamma$-repulsive and $a>0$, $a\succeq 1$, then $af$ is $\gamma$-repulsive.
If $f$ is $\gamma$-repulsive and $\delta$-repulsive, then $f$ is $(\gamma+\delta)$-repulsive. If $f$ is $\gamma$-repulsive
and~$\gamma>\delta$, then  $f$ is $(\gamma-\delta)$-repulsive. Moreover:

\begin{lemma}\label{lem:repulsive 2}
Suppose $\gamma\geq\delta=v\fn>0$. Set $g:=f-\fn^\dagger$. Then:
$$\text{$f$ is $\gamma$-repulsive}\quad\Longleftrightarrow\quad\text{$f$ is $\delta$-repulsive and $g$ is $\gamma$-repulsive.}$$
\end{lemma}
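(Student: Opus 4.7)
The plan is to exploit two simple preliminary observations and then run a case analysis driven by the defining disjunction of ``$\gamma$-repulsive''. First, the asymptotic couple of $H$ is of $H$-type, so on positives $\psi$ is (weakly) decreasing; combined with $\gamma \geq \delta > 0$ this gives $\gamma^\dagger = \psi(\gamma) \leq \psi(\delta) = \delta^\dagger = v(\fn^\dagger)$. Second, since $\fn^\dagger = (-\fn)^\dagger$, I may assume $\fn > 0$; then $\fn \prec 1$ in the $H$-field $H$ forces $\fn' < 0$ (apply the $H$-field axiom to $1/\fn > C$), hence $\fn^\dagger < 0$. These two facts reduce the problem to elementary manipulations of the relations $\asymp, \prec, \succ$ between $\Re f$, $\Re g$, and $\fn^\dagger$.

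For the forward direction, the first conjunct ``$f$ is $\delta$-repulsive'' is immediate from $\gamma^\dagger \leq \delta^\dagger$: either disjunct defining $\gamma$-repulsiveness implies the corresponding disjunct for $\delta$. To show $g = f - \fn^\dagger$ is $\gamma$-repulsive, I split on why $f$ is. If $v(\Re f) < \gamma^\dagger$, then $v(\Re f) < v(\fn^\dagger)$, so $\Re g \sim \Re f$ and $v(\Re g) = v(\Re f) < \gamma^\dagger$. If $\Re f > 0$, I further split on $\Re f$ versus $\fn^\dagger$: when $\Re f \succ \fn^\dagger$, $\Re g \sim \Re f > 0$; when $\Re f \asymp \fn^\dagger$, write $\Re f \sim c\fn^\dagger$ with $c \in C^{\ne}$, and positivity of $\Re f$ combined with $\fn^\dagger < 0$ forces $c < 0$, so $\Re g \sim (c-1)\fn^\dagger > 0$; when $\Re f \prec \fn^\dagger$, $\Re g \sim -\fn^\dagger > 0$.

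For the backward direction I use both hypotheses. Start from $g$ being $\gamma$-repulsive. If $v(\Re g) < \gamma^\dagger \leq v(\fn^\dagger)$, then $\Re g \succ \fn^\dagger$ and $\Re f \sim \Re g$, giving $v(\Re f) < \gamma^\dagger$. Otherwise $\Re g > 0$; if $\Re g \succ \fn^\dagger$ then $\Re f \sim \Re g > 0$, and if $\Re g \preceq \fn^\dagger$ then $\Re f \asymp \fn^\dagger$ (via cancellation if $\Re g \asymp \fn^\dagger$, or directly if $\Re g \prec \fn^\dagger$), so $v(\Re f) = \delta^\dagger$, and the $\delta$-repulsiveness hypothesis forces $\Re f > 0$, which is what is needed for $\gamma$-repulsiveness.

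The main obstacle is the borderline subcase $\Re g \asymp \fn^\dagger$ (and its forward analogue $\Re f \asymp \fn^\dagger$), where leading constants can conspire to flip signs or raise valuation; it is exactly here that one must use the sign information $\fn^\dagger < 0$ coming from the $H$-field structure, and in the backward direction the $\delta$-repulsiveness of $f$ becomes indispensable, ruling out the otherwise problematic configuration where $\Re f$ would land at valuation $\delta^\dagger$ with the wrong sign.
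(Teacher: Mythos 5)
Your argument is correct and follows essentially the same route as the paper: both rest on the two facts $\gamma^\dagger\leq\delta^\dagger=v(\fn^\dagger)$ and $\fn^\dagger<0$, and then do an elementary case analysis on the defining disjunction; the paper's version is just more economical (in the forward direction, $\Re f>0$ gives at once $\Re g=\Re f-\fn^\dagger>\Re f>0$, with no need for your three-way split via constants, and in the backward direction it cases on $\Re f$ rather than on $\Re g$). One intermediate claim of yours is inaccurate, though harmlessly so: in the backward subcase $\Re g\asymp\fn^\dagger$ with $\Re g>0$, leading terms can cancel (e.g.\ $\Re g\sim-\fn^\dagger$), so one only gets $v(\Re f)\geq\delta^\dagger$, not $v(\Re f)=\delta^\dagger$; since all you actually use is that $v(\Re f)<\delta^\dagger$ fails, so that $\delta$-repulsiveness of $f$ forces $\Re f>0$, the conclusion stands after replacing the equality by the inequality.
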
 
\begin{proof} Note that $\gamma\geq\delta>0$ gives $\gamma^\dagger \leq \delta^\dagger$.
Suppose $f$ is $\gamma$-repulsive;   by our remark, $f$ is $\delta$-repulsive. Now if $v(\Re f) < \gamma^\dagger$, then $\Re g\sim\Re f$,   whereas if~${\Re f>0}$, then~$\Re g=\Re f-\fn^\dagger > \Re f>0$; in both cases, $g$ is $\gamma$-repulsive.
Conversely, suppose $f$ is $\delta$-repulsive and $g$ is $\gamma$-repulsive.
If $\Re f>0$, then clearly~$f$ is $\gamma$-repulsive. Otherwise, $v(\Re f)<\delta^\dagger$, hence $\Re g\sim \Re f$, so $f$ is also $\gamma$-repulsive.
\end{proof}

\noindent
In a similar way we deduce a useful characterization of repulsiveness:

\begin{lemma}\label{lem:repulsive 3}
Suppose $\gamma=v\fm>0$. Set $g:=f-\fm^\dagger$. Then:
$$\text{$f$ is repulsive}\quad\Longleftrightarrow\quad\text{$\Re f\succeq 1$, $f$ is $\gamma$-repulsive, and
 $g$ is repulsive.}$$
\end{lemma}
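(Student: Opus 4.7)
The plan is to reduce both directions of the biconditional to a single sign fact in $H$-fields: that $\fm\prec 1$ forces $\fm^\dagger<0$. This follows from the $H$-field axiom applied to $1/\abs{\fm}$, which lies above $C$ because $v\fm=\gamma>0$ gives $\fm\in\smallo^{\ne}$; indeed $(1/\abs{\fm})'>0$ then yields $\abs{\fm}^\dagger<0$, and $\fm^\dagger=\abs{\fm}^\dagger$. With this in hand, the rest of the argument is elementary asymptotic bookkeeping on $\Re g=\Re f-\fm^\dagger$, parallel in spirit to Lemma~\ref{lem:repulsive 2} but tracking the strict positivity demanded by ``repulsive'' rather than the weaker disjunction defining ``$\gamma$-repulsive''.

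For the forward direction, repulsiveness of $f$ gives $\Re f\succeq 1$ and also makes $f$ $\gamma$-repulsive immediately via the $\Re f>0$ clause. For repulsiveness of $g$: both $\Re f>0$ and $-\fm^\dagger>0$, so $\Re g=\Re f+(-\fm^\dagger)>0$, while $\Re g\ge\Re f\succeq 1$ yields $\Re g\succeq 1$.

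For the converse, the three hypotheses provide $\Re f\succeq 1$ at once, and only $\Re f>0$ remains to be established. If $\Re f>0$ outright, we are done; otherwise $\gamma$-repulsiveness of $f$ forces its other disjunct $v(\Re f)<\gamma^\dagger=v(\fm^\dagger)$, i.e.\ $\Re f\succ\fm^\dagger$, whence $\Re g\sim\Re f$, and the assumed $\Re g>0$ then gives $\Re f>0$ by sign-preservation of $\sim$ in the ordered field $H$ (whose valuation ring is convex). No serious obstacle arises; the only point worth flagging is that the $\gamma$-repulsiveness hypothesis is genuinely needed in $(\Leftarrow)$---otherwise the pathological scenario $\Re f<0$ with $\abs{\Re f}\preceq\abs{\fm^\dagger}$ would permit $\Re g>0$ to survive through cancellation with the positive term $-\fm^\dagger$.
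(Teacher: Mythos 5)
Your proof is correct and follows essentially the same route as the paper's: the same case split on the two disjuncts defining $\gamma$-repulsiveness, the computation $\Re g=\Re f-\fm^\dagger$ with $\fm^\dagger<0$, and sign-preservation of $\sim$ under the convex valuation. The only differences are cosmetic: you justify $\fm^\dagger<0$ explicitly via the $H$-field axiom (which the paper leaves tacit) and get $\gamma$-repulsiveness of $f$ directly from the $\Re f>0$ clause rather than by citing Lemma~\ref{lem:repulsive 1}.
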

\begin{proof}
Suppose $f$ is repulsive; then by Lemma~\ref{lem:repulsive 1}, $f$ is $\gamma$-repulsive. Moreover,
$\Re g=\Re f-\fm^\dagger > \Re f>0$, hence $\Re g\succeq 1$ and
$\Re g>0$, that is, $g$ is repulsive.
Conversely, suppose $\Re f\succeq 1$, $f$ is $\gamma$-repulsive, and $g$ is repulsive.
If $v(\Re f) < \gamma^\dagger$, then $\Re f\sim \Re g$;
otherwise $\Re f>0$. In both cases,   $f$ is repulsive.
\end{proof}

\begin{cor}\label{cor:repulsive}
Suppose $f$ is $\gamma$-repulsive where $\gamma=v\fm>0$, and $\Re f\succeq 1$. Then~$f$ is repulsive iff $f-\fm^\dagger$ is repulsive, and $f$ is attractive iff $f-\fm^\dagger$ is attractive.
\end{cor}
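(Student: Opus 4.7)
The first equivalence --- $f$ repulsive iff $f-\fm^\dagger$ repulsive --- reads off immediately from Lemma~\ref{lem:repulsive 3}: that lemma asserts that $f$ is repulsive precisely when $\Re f\succeq 1$, $f$ is $\gamma$-repulsive, and $f-\fm^\dagger$ is repulsive. The hypotheses of our corollary already supply the first two of these three conditions, so the lemma collapses to the desired equivalence.

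For the attractive equivalence, I would proceed by a direct case split based on the definition of $\gamma$-repulsiveness. Unpacking that definition, the hypothesis that $f$ is $\gamma$-repulsive means either $\Re f>0$ or $v(\Re f)<\gamma^\dagger$, and I will handle these two subcases separately.

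In the first subcase, $\Re f>0$ forces $f$ not to be attractive; and in this subcase $f$ is in fact repulsive, since also $\Re f\succeq 1$. By the equivalence already proved, $f-\fm^\dagger$ is then repulsive too, so $\Re(f-\fm^\dagger)>0$ and $f-\fm^\dagger$ is not attractive either. Thus both sides of the desired equivalence fail and it holds trivially. In the second subcase, I would note that $v(\fm^\dagger)=\psi(v\fm)=\gamma^\dagger$, so $v(\Re f)<v(\fm^\dagger)$, which translates to $\Re f\succ \fm^\dagger$ and hence $\Re(f-\fm^\dagger)\sim \Re f$. Since $\sim$ refines $\asymp$ and preserves sign, $\Re(f-\fm^\dagger)$ and $\Re f$ lie in the same $\asymp$-class and have the same sign, so the defining conditions of attractiveness ($\Re\cdot\succeq 1$ and $\Re\cdot<0$) are simultaneously satisfied or not by $f$ and by $f-\fm^\dagger$.

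No step presents a genuine obstacle; this corollary is essentially a packaging of Lemma~\ref{lem:repulsive 3} together with a ``sign/size transfer'' observation afforded by $\Re(f-\fm^\dagger)\sim \Re f$ in the dominant subcase. The only background fact used --- $v(\fm^\dagger)=\gamma^\dagger$ for $\gamma=v\fm>0$ --- is immediate from the definition of $\psi$ in the asymptotic couple of $H$.
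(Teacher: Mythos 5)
Your proof is correct and takes essentially the same route as the paper: the first equivalence is read off from Lemma~\ref{lem:repulsive 3} in exactly the same way, and your case split on the two disjuncts of $\gamma$-repulsiveness (with $\Re f\succ\fm^\dagger$ forcing $\Re(f-\fm^\dagger)\sim\Re f$ in the main case) is the same underlying computation the paper uses. The only difference is organizational: the paper obtains the attractive equivalence by negating the repulsive one and deriving a contradiction from $\Re f\sim\fm^\dagger$, whereas you argue forwards, which is equally valid.
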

\begin{proof}
The first equivalence is immediate from Lemma~\ref{lem:repulsive 3};
this equivalence yields 
\begin{align*}
\text{$f$ is attractive} &\ \Longleftrightarrow\  \text{$f$ is not repulsive} 
\ \Longleftrightarrow\  \text{$f-\fm^\dagger$ is not repulsive} \\
&\ \Longleftrightarrow \ \text{$\Re f-\fm^\dagger\prec 1$ or $f-\fm^\dagger$ is attractive.}
\end{align*}
Thus if $f-\fm^\dagger $ is attractive, so is $f$. Now assume towards a contradiction that~$f$ is attractive and $f-\fm^\dagger$ is not. 
Then $\Re f <0$ and $\Re f -\fm^\dagger \prec 1$ by the above equivalence, so
 $\Re f\sim\fm^\dagger$ thanks to $\Re f\succeq 1$. But $f$ is $\gamma$-repulsive, that is, 
 $\Re f\succ \fm^\dagger$ or $\Re f >0$, a contradiction.
\end{proof}

\begin{lemma}\label{lem:make repulsive}
Suppose  $\gamma=v\fm>0$ and $v(\Re g)\geq\gamma^\dagger$. Then for all sufficiently large~$c\in C^{>}$ we have 
$\Re g-c\fm^\dagger > 0$ \textup{(}and hence $g-c\fm^\dagger$ is  $\Gamma$-repulsive\textup{)}.
\end{lemma}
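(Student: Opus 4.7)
My plan is to reduce to the case $\fm>0$, use the $H$-field axioms to pin down the sign of $\fm^\dagger$, and then do a two-case analysis on the relative size of $\Re g$ and $\fm^\dagger$.

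First, observe that $(-\fm)^\dagger=\fm^\dagger$, so replacing $\fm$ by $|\fm|$ does not change the hypothesis or the conclusion; thus assume $\fm>0$. Since $\gamma=v\fm>0$ we have $0<\fm\prec 1$, so $1/\fm>0$ is positive infinite, hence $1/\fm>C$. Because $H$ is an $H$-field, $(1/\fm)'>0$, which gives $\fm'<0$ and therefore $\fm^\dagger=\fm'/\fm<0$. Set $h:=\Re g\in H$; the hypothesis $v(h)\geq\gamma^\dagger=v(\fm^\dagger)$ means $h\preceq\fm^\dagger$.

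Now split into two cases. If $h\prec\fm^\dagger$, then for every $c\in C^{>}$ we have $h-c\fm^\dagger\sim -c\fm^\dagger$, and since $-c\fm^\dagger>0$ we conclude $h-c\fm^\dagger>0$. If instead $h\asymp\fm^\dagger$, then $h/\fm^\dagger\asymp 1$ in $H$; since $H$ is $\d$-valued (being an $H$-field), there is a unique $c_0\in C$ with $h/\fm^\dagger\sim c_0$, i.e., $h\sim c_0\fm^\dagger$. Choosing any $c\in C^{>}$ with $c>c_0$, we get $h-c\fm^\dagger\sim(c_0-c)\fm^\dagger$, and the right-hand side is positive because $c_0-c<0$ and $\fm^\dagger<0$; hence $h-c\fm^\dagger>0$.

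In both cases, for all sufficiently large $c\in C^{>}$, $\Re(g-c\fm^\dagger)=h-c\fm^\dagger>0$. By the definition of $\gamma$-repulsive, the condition $\Re f>0$ alone makes $f$ $\gamma$-repulsive for every $\gamma\in\Gamma^{>}$, so $g-c\fm^\dagger$ is $\Gamma$-repulsive. No real obstacle arises — the argument is a short computation once the sign $\fm^\dagger<0$ is extracted from the $H$-field axioms and the $\d$-valued property is used to produce the constant $c_0$ in the tangent case.
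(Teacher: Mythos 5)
Your proof is correct and follows essentially the same route as the paper: the same case split on $v(\Re g)>\gamma^\dagger$ versus $v(\Re g)=\gamma^\dagger$, with the constant $c_0$ chosen so that $\Re g\sim c_0\fm^\dagger$ in the tangent case. The only difference is that you spell out the routine facts the paper leaves implicit (the sign $\fm^\dagger<0$ from the $H$-field axiom and the $\d$-valued property producing $c_0$), which is harmless.
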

\begin{proof}
If $v(\Re g)>\gamma^\dagger$, then $\Re g-c\fm^\dagger\sim-c\fm^\dagger>0$ for all $c\in C^>$.
Suppose~$v(\Re g)=\gamma^\dagger$. Take $c_0\in C^\times$ with $\Re g\sim c_0\fm^\dagger$;
then  $\Re g-c\fm^\dagger >0$ for~$c>c_0$.
\end{proof}

\noindent
{\em In the rest of this subsection we assume that $S\subseteq \Gamma$}. If $f$ is $S$-repulsive, then so is~$af$ for $a>0$, $a\succeq 1$.
If $S>0$, $\delta>0$, and $f$ is $S$-repulsive and $\delta$-repulsive, then $f$ is $(S+\delta)$-repulsive.

\begin{lemma}\label{lem:repulsive hata, 1}
Suppose $f$ is $S$-repulsive and $0<\delta=v\fn \in S$. Then
\begin{enumerate}
\item[\textup{(i)}] $f$ is $(S-\delta)$-repulsive;
\item[\textup{(ii)}]   $g:=f-\fn^\dagger$ is $S$-repulsive.
\end{enumerate}
\end{lemma}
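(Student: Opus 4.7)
The plan is to dispatch (i) directly using the bullet-point remarks preceding the lemma, and (ii) via a case split on the dichotomy in the definition of $\delta$-repulsive, with Lemma~\ref{lem:repulsive 2} covering the easy range.

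For (i), I would pick any $\gamma \in (S-\delta) \cap \Gamma^>$ and write $\gamma = \sigma - \delta$ with $\sigma \in S$. From $\gamma > 0$ we get $\sigma > \delta > 0$, so $\sigma \in S \cap \Gamma^>$, and hence $f$ is $\sigma$-repulsive by hypothesis. The remark immediately before the lemma, namely that $\gamma$-repulsiveness combined with $\gamma > \delta$ implies $(\gamma-\delta)$-repulsiveness, then gives that $f$ is $(\sigma - \delta)$-repulsive, i.e., $\gamma$-repulsive.

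For (ii), let $\gamma \in S \cap \Gamma^>$; the goal is to show $g$ is $\gamma$-repulsive. If $\gamma \geq \delta$, Lemma~\ref{lem:repulsive 2} applies: since $f$ is both $\gamma$-repulsive (because $\gamma \in S$) and $\delta$-repulsive (because $\delta \in S$), that lemma immediately delivers $\gamma$-repulsiveness of $g = f - \fn^\dagger$.

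The main obstacle is the case $\gamma < \delta$, where Lemma~\ref{lem:repulsive 2} does not directly apply. Here I would exploit the defining dichotomy of $\delta$-repulsiveness of $f$: either $\Re f > 0$ or $v(\Re f) < \delta^\dagger$. In the first subcase, replace $\fn$ by $|\fn|$ (which does not alter $\fn^\dagger$) to arrange $\fn > 0$; since $0 < \fn \prec 1$ in the $H$-field $H$, we have $1/\fn > C$ and hence $(1/\fn)' > 0$, so $\fn^\dagger = \fn'/\fn < 0$, giving $\Re g = \Re f - \fn^\dagger > \Re f > 0$ and thus $g$ is (trivially) $\gamma$-repulsive. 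In the second subcase, $v(\Re f) < \delta^\dagger = v(\fn^\dagger)$ forces $\Re g = \Re f - \fn^\dagger \sim \Re f$, so $\Re g$ shares both valuation and sign with $\Re f$, and the $\gamma$-repulsiveness of $f$ (applied via $\gamma \in S \cap \Gamma^>$) transfers directly to $g$. This completes (ii).
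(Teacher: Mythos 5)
Your proof is correct and follows essentially the same route as the paper: part (i) is the identical shift-and-monotonicity argument, and in (ii) the case $\gamma\geq\delta$ is handled by Lemma~\ref{lem:repulsive 2} exactly as in the paper. The only difference is cosmetic: for $\gamma<\delta$ the paper simply notes that $g$ is $\delta$-repulsive (the case $\gamma=\delta$ already done) and that repulsiveness passes down to smaller positive values, whereas you re-run the dichotomy of $\delta$-repulsiveness by hand (using $\fn^\dagger<0$ in the sign case and $\Re g\sim\Re f$ in the valuation case), which amounts to the same verification.
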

\begin{proof}
Let $\gamma\in (S-\delta)$, $\gamma>0$.
Then $\gamma+\delta\in S$,  so
$f$ is $(\gamma+\delta)$-repulsive, hence $\gamma$-repulsive.
This shows (i).
For (ii), suppose   $\gamma\in S$, $\gamma>0$; we  need to show that  $g$ is $\gamma$-repulsive.
If $\gamma\geq\delta$, then $g$ is $\gamma$-repulsive
by Lemma~\ref{lem:repulsive 2}. Taking $\gamma=\delta$ we see that $g$ is $\delta$-repulsive, hence if $\gamma<\delta$, then
$g$ is also $\gamma$-repulsive.
\end{proof}

\noindent
Let $A\in K[\der]^{\neq}$ have order $r\ge 1$. An {\bf $S$-repulsive splitting}\index{linear differential operator!S-repulsive splitting@$S$-repulsive splitting}\index{splitting!S-repulsive@$S$-repulsive}\index{S-repulsive@$S$-repulsive!splitting} of $A$ over $K$ is a splitting $(g_1,\dots,g_r)$ of $A$ over $K$ where  $g_1,\dots,g_r$ are $S$-repulsive.
An $S$-repulsive splitting of $A$ over $K$ remains an
$S$-repulsive splitting of~$hA$ over $K$ for $h\neq 0$.
We say that  {\bf $A$ splits $S$-repulsively} over $K$ if
there is an $S$-repulsive splitting of $A$ over $K$.
From Lemmas~\ref{lem:split and twist} and~\ref{lem:repulsive hata, 1} we obtain:

{\sloppy
\begin{lemma}\label{lem:repulsive hata, 2}
Suppose $(g_1,\dots,g_r)$ is an $S$-repulsive splitting of $A$ over $K$ and~${0<\delta=v\fn \in S}$.
Then $(g_1,\dots,g_r)$ is  an $(S-\delta)$-repulsive splitting of $A$ over~$K$, and~$(h_1,\dots,h_r):={(g_1-\fn^\dagger,\dots,g_r-\fn^\dagger)}$ is an $S$-repulsive splitting of
$A\fn$ over $K$.  
\textup{(}Hence~${(h_1,\dots,h_r)}$  is also an  $(S-\delta)$-re\-pul\-sive splitting of $A\fn$  over $K$.\textup{)}
\end{lemma}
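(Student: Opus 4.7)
The proof splits into the two claims, and for each one the strategy is simply to combine one of the two cited lemmas (applied componentwise) with the definition of $S$-repulsive splitting. The essential point is that ``$S$-repulsive'' is a conjunction of conditions on each individual factor $g_i$, so everything reduces to the corresponding one-variable assertions in Lemma~\ref{lem:repulsive hata, 1}.

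For the first claim, I would simply note that $(g_1,\dots,g_r)$ being an $S$-repulsive splitting of $A$ over $K$ means that each $g_i$ is $S$-repulsive (in addition to $A = c(\der-g_1)\cdots(\der-g_r)$ for some $c\in K^\times$). By Lemma~\ref{lem:repulsive hata, 1}(i) each $g_i$ is then $(S-\delta)$-repulsive, and since the factorization of $A$ does not change, $(g_1,\dots,g_r)$ is an $(S-\delta)$-repulsive splitting of $A$ over~$K$.

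For the second claim, Lemma~\ref{lem:split and twist} guarantees that $(h_1,\dots,h_r) = (g_1-\fn^\dagger,\dots,g_r-\fn^\dagger)$ is a splitting of $A\fn$ over $K$, so the only remaining content is $S$-repulsiveness of each $h_i$. But this is exactly the conclusion of Lemma~\ref{lem:repulsive hata, 1}(ii) applied to $f:=g_i$ for $i=1,\dots,r$. Hence $(h_1,\dots,h_r)$ is an $S$-repulsive splitting of $A\fn$ over $K$, as required.

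Finally, the parenthetical remark follows by applying the first claim (already proved) to the $S$-repulsive splitting $(h_1,\dots,h_r)$ of $A\fn$ over $K$. No step is an obstacle here: once the definitions are unpacked, the argument is a mechanical componentwise application of the two cited lemmas, together with the fact that twisting by $\fn$ to pass from $A$ to $A\fn$ subtracts $\fn^\dagger$ from each factor in the splitting.
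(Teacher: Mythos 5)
Your proof is correct and matches the paper's argument: the paper derives this lemma directly from Lemma~\ref{lem:split and twist} and Lemma~\ref{lem:repulsive hata, 1}, applied componentwise exactly as you do, with the parenthetical remark following by applying the first claim to $A\fn$.
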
}

\noindent
Note that if $\phi$ is active in $H$ with $0<\phi\preceq 1$, and $f$ is $\gamma$-repulsive (in~$K$), then~$\phi^{-1}f$ is $\gamma$-repulsive in $K^\phi=H^\phi[\imag]$.

{\sloppy
\begin{lemma}\label{lem:repulsive splitting compconj} 
Suppose $(g_1,\dots,g_r)$ is an $S$-repulsive splitting of $A$ over $K$  and~${S\cap \Gamma^{>}}\not\subseteq\Gamma^\flat$.
Let~$\phi$ be active in $H$ with $0<\phi\prec 1$, and set $h_j:=g_j-(r-j)\phi^\dagger$ for~$j=1,\dots,r$. 
Then $(\phi^{-1}h_1,\dots,\phi^{-1}h_r)$ is an $S$-repulsive splitting of $A^\phi$ over~$K^\phi$.
\end{lemma}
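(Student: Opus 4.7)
The plan is to first invoke Lemma~\ref{lem:split and compconj} to conclude that $(\phi^{-1}h_1,\dots,\phi^{-1}h_r)$ is indeed a splitting of $A^\phi$ over $K^\phi$; what remains is to verify that each $\phi^{-1}h_j$ is $S$-repulsive in $K^\phi$. The key observation I would then establish is a passage between the two fields: in $K^\phi$ the $\psi$-function sends $\gamma\in\Gamma^{\neq}$ to $\psi^\phi(\gamma)=\psi(\gamma)-v\phi$, and since $\phi\in H^{>}$ we have $\Re(\phi^{-1}h_j)=\phi^{-1}\Re h_j$ (which has the same sign as $\Re h_j$), while $v(\Re(\phi^{-1}h_j))=v(\Re h_j)-v\phi$. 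Cancelling $v\phi$ on both sides of the inequality $v(\Re(\phi^{-1}h_j))<\psi^\phi(\gamma)$ reduces the task to: \emph{each $h_j=g_j-(r-j)\phi^\dagger$ is $S$-repulsive in $K$} (with respect to the original $\psi$).

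Next I would use the $H$-field axiom to extract a sign: from $0<\phi\prec 1$ we have $1/\phi>C_H$, hence $(1/\phi)'>0$, giving $\phi'<0$ and thus $\phi^\dagger=\phi'/\phi<0$. Consequently $-(r-j)\phi^\dagger\geq 0$ for each $j$. Now $g_j$ is $S$-repulsive in one of two ways. If $\Re g_j>0$, then $\Re h_j=\Re g_j-(r-j)\phi^\dagger>0$, so $h_j$ is $S$-repulsive trivially. Otherwise $v(\Re g_j)<\psi(\gamma)$ for every $\gamma\in S\cap\Gamma^{>}$, and here I would invoke the hypothesis $S\cap\Gamma^{>}\not\subseteq\Gamma^\flat$ to pick $\gamma_0\in S\cap\Gamma^{>}$ with $\psi(\gamma_0)\leq 0$; then $v(\Re g_j)<\psi(\gamma_0)\leq 0$, so $\Re g_j\succ 1$.

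A case analysis based on comparing $v(\Re g_j)$ with $v(\phi^\dagger)=\psi(v\phi)$ then handles the remaining situations: if $v(\Re g_j)<\psi(v\phi)$, then $\Re h_j\sim\Re g_j$, so $v(\Re h_j)=v(\Re g_j)<\psi(\gamma)$ for all $\gamma\in S\cap\Gamma^{>}$ and $h_j$ is $S$-repulsive; if $v(\Re g_j)>\psi(v\phi)$, then $\Re h_j\sim -(r-j)\phi^\dagger>0$, so $h_j$ is $S$-repulsive; and if $v(\Re g_j)=\psi(v\phi)$, then the inequality $v(\Re g_j)<\psi(\gamma)$ upgrades to $\psi(v\phi)<\psi(\gamma)$, and one examines $\Re h_j=(a-(r-j))\phi^\dagger$, where $a\in H$ satisfies $\Re g_j=a\phi^\dagger$, $a\asymp 1$. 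If the residue of $a$ differs from the integer $r-j$, no cancellation occurs and $v(\Re h_j)=\psi(v\phi)<\psi(\gamma)$ for all $\gamma$; otherwise one uses the specific form of the cancellation to keep the estimate.

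The main obstacle I anticipate is precisely this last subcase, $v(\Re g_j)=v(\phi^\dagger)$ with residue equal to $r-j$: one must show that the resulting $\Re h_j$ still satisfies either $\Re h_j>0$ or $v(\Re h_j)<\psi(\gamma)$ for every $\gamma\in S\cap\Gamma^{>}$, and this is where the interplay between $\phi$ being active in $H$ and $\gamma_0\notin\Gamma^\flat$ is doing real work: the inequality $\psi(v\phi)<\psi(\gamma_0)\leq 0$ together with the structural constraints on active elements forces $\Re h_j$ into one of the two desired alternatives.
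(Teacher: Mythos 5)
Your opening moves match the paper's proof: Lemma~\ref{lem:split and compconj} gives the splitting of $A^\phi$ over $K^\phi$; the cancellation of $v\phi$ reduces everything to showing each $h_j$ is $S$-repulsive in $K$ (this is exactly the remark the paper records just before the lemma); the $H$-field axiom gives $\phi^\dagger<0$, which settles the case $\Re g_j>0$; and the hypothesis $S\cap\Gamma^{>}\not\subseteq\Gamma^\flat$ correctly yields $\Re g_j\succ 1$ in the remaining case. All of that is fine.

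The gap is your final case analysis comparing $v(\Re g_j)$ with $v(\phi^\dagger)=\psi(v\phi)$: the subcase $v(\Re g_j)=\psi(v\phi)$ with residue cancellation is left unproved, and you only gesture at ``structural constraints on active elements'' doing the work. (Your subcase $v(\Re g_j)>\psi(v\phi)$ also misstates $\Re h_j\sim-(r-j)\phi^\dagger>0$ when $j=r$, though there $h_r=g_r$ anyway.) In fact both of these subcases are vacuous, but seeing this requires exactly the fact you never isolate: for $\phi$ active in $H$ with $0<\phi\preceq 1$ one has $\phi^\dagger\prec 1$, i.e.\ $\psi(v\phi)>0$. (For $\phi\asymp 1$ this is smallness of the derivation; for $\phi\prec 1$, activeness gives $0<v\phi\leq\psi(\delta)$ for some $\delta\neq 0$, and then $H$-type together with the slow variation of $\psi$ forces $\psi(v\phi)\geq\abs{\delta}>0$.) With this in hand your whole case split collapses: since $\Re g_j\succ 1\succ\phi^\dagger$, we get $\Re h_j\sim\Re g_j$, hence $v(\Re h_j)=v(\Re g_j)<\gamma^\dagger$ for every $\gamma\in S\cap\Gamma^{>}$, so $h_j$ is $S$-repulsive — which is precisely the paper's one-line finish. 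As written, your proposal identifies the right reduction but does not close the argument; the missing ingredient is the bound $\phi^\dagger\prec 1$ for active $\phi\preceq 1$.
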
}
\begin{proof}
By Lem\-ma~\ref{lem:split and compconj}, $(\phi^{-1}h_1,\dots,\phi^{-1}h_r)$ is   splitting of $A^\phi$ over~$K^\phi$. 
Let $j\in\{1,\dots,r\}$. If $\Re g_j>0$, then $\phi^\dagger < 0$ yields  $\Re h_j\geq \Re g_j>0$.
Otherwise, 
$v(\Re g_j)<\gamma^\dagger$ whenever $0<\gamma\in S$; in particular, $\Re g_j\succ 1\succ\phi^\dagger$,
so $\Re h_j\sim  \Re g_j$. In both cases $h_j$ is $S$-repulsive, so
$\phi^{-1}h_j$ is $S$-repulsive in $K^\phi$.
\end{proof}

\begin{prop}\label{prop:repulsive splitting}
Suppose $S\cap \Gamma^{>}\neq\emptyset$, $nS\subseteq S$ for all $n\geq 1$,   the ordered constant field~$C$ of~$H$ is ar\-chi\-me\-dean, and 
$(g_1,\dots,g_r)$ is a splitting of $A$ over $K$.  Then there exists~$\gamma\in S\cap \Gamma^{>}$
such that for any $\fm$ with $\gamma=v\fm$: $({g_1-n\fm^\dagger},\dots,{g_r-n\fm^\dagger})$ is an $S$-repulsive splitting of
$A\fm^n$ over $K$, for all big enough $n$.
\end{prop}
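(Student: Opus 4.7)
By iterating Lemma~\ref{lem:split and twist}, for any $\fm\in H^\times$ and $n\ge 1$ the tuple $(g_1-n\fm^\dagger,\dots,g_r-n\fm^\dagger)$ is already a splitting of $A\fm^n$ over $K$, so only the $S$-repulsivity has to be arranged. Set $\beta_j:=v(\Re g_j)\in\Gamma_\infty$ and partition $\{1,\dots,r\}=J_1\sqcup J_2$, where $J_1=\{j:\beta_j<\psi(\delta)\text{ for all }\delta\in S\cap\Gamma^{>}\}$ and $J_2$ is its complement; for each $j\in J_2$ pick a witness $\delta_j\in S\cap\Gamma^{>}$ with $\beta_j\ge\psi(\delta_j)$. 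The key step is then to choose $\gamma\in S\cap\Gamma^{>}$ such that $\psi(\gamma)\le\psi(\delta_j)$ for every $j\in J_2$.

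Once such a $\gamma$ is at hand, fix $\fm\in H^{>}$ with $v\fm=\gamma$; then $\fm\prec 1$ in the $H$-field $H$, so $\fm^\dagger<0$ and $v(\fm^\dagger)=\psi(\gamma)$. For $j\in J_1$ we have $\beta_j<\psi(\gamma)$, so $v\!\big(\Re(g_j-n\fm^\dagger)\big)=\beta_j$ for every $n\ge 1$, and by the definition of $J_1$ this is strictly below $\psi(\delta)$ for every $\delta\in S\cap\Gamma^{>}$; hence $g_j-n\fm^\dagger$ is $S$-repulsive. For $j\in J_2$ we have $\beta_j\ge\psi(\delta_j)\ge\psi(\gamma)=v(\fm^\dagger)$, i.e.\ $\Re g_j\preceq\fm^\dagger$, and Lemma~\ref{lem:make repulsive} supplies some $c\in C^{>}$ with $\Re g_j-c\fm^\dagger>0$; because $C$ is archimedean, any $n\in\N$ with $n\ge c$ satisfies $\Re(g_j-n\fm^\dagger)>0$, so $g_j-n\fm^\dagger$ is $\Gamma$-repulsive, in particular $S$-repulsive. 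Finiteness of $J_2$ then lets us take a single $n$ that works for all $j\in J_2$ simultaneously, finishing the verification.

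\textbf{Main obstacle.} The crux is producing $\gamma\in S\cap\Gamma^{>}$ with $\psi(\gamma)\le\psi(\delta_j)$ for every $j\in J_2$. A natural candidate is $\delta^{*}:=\max_{j\in J_2}\delta_j\in S$ (the maximum exists in the totally ordered $\Gamma$, and is one of the $\delta_j$), which gives $[\delta^{*}]\ge[\delta_j]$ for all $j\in J_2$. To turn this inequality on archimedean classes into the desired inequality on $\psi$-values one needs the Hardy-type property of the asymptotic couple $(\Gamma,\psi)$, so that $\psi$ is an order-reversing bijection between $[\Gamma^{\neq}]$ and $\Psi$. This Hardy-type property is exactly what $C$ archimedean buys us, via Corollary~\ref{cor:Hardy type arch C} (the closure of $H$ under $C$-powers needed there being supplied by the ambient Part~4 hypothesis that $H$ is Liouville closed). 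The hypothesis $nS\subseteq S$ ensures that $S$ is large enough to contain $\delta^{*}$ and all its positive integer multiples, giving the flexibility used when adjusting $\gamma$ within $S$.
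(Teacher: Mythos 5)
Your main argument is sound and runs essentially parallel to the paper's proof: the paper likewise isolates the problematic indices (there: the $j$ for which $g_j$ is not $S$-repulsive), takes $\gamma\in S\cap\Gamma^{>}$ to be a maximum of witnesses so that $v(\Re g_j)\geq\gamma^\dagger$ for those $j$, and then combines Lemma~\ref{lem:make repulsive} with the archimedean hypothesis on $C$ (to pass from ``sufficiently large $c\in C^{>}$'' to ``all big enough $n\in\N$'') to force $\Re(g_j-n\fm^\dagger)>0$. The one genuine difference is your treatment of the unproblematic indices: the paper sends them through Lemma~\ref{lem:repulsive hata, 1}(ii), which is exactly where $v(\fm^n)=n\gamma\in S$, i.e.\ the hypothesis $nS\subseteq S$, gets used, whereas your direct computation for $j\in J_1$ (and the positivity argument for the $S$-repulsive members of $J_2$) bypasses that hypothesis entirely; that variant is correct.

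The flaw is in your ``main obstacle'' paragraph. First, the inequality $\psi(\gamma)\leq\psi(\delta_j)$ for $\gamma:=\max_j\delta_j$ needs no Hardy-type input: since $H$ is an $H$-field, its asymptotic couple is of $H$-type, so $\psi$ is non-increasing on $\Gamma^{>}$, and $0<\delta_j\leq\gamma$ gives $\psi(\gamma)\leq\psi(\delta_j)$ immediately — this is the same monotonicity the paper records as ``if $g$ is not $\gamma$-repulsive, then $g$ is not $\delta$-repulsive for all $\delta\geq\gamma$''. Second, and more seriously, your justification as written rests on a hypothesis that is not in force: there is no ``ambient Part~4 hypothesis that $H$ is Liouville closed'' at this point — the standing assumption of this section is only that $H$ is a real closed $H$-field with small derivation and asymptotic integration, and Liouville closedness enters only later in the subsection; moreover Corollary~\ref{cor:Hardy type arch C} requires $H$ to be closed under powers, which real closedness alone does not give. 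Note also that this proposition (via Corollary~\ref{cor:repulsive splitting}) is applied in the proof of Proposition~\ref{evrepnormal} to $\dot H^{\operatorname{c}}$, a completion of a specialization of $H$, which need not be Liouville closed, so the argument must not depend on that assumption. Fortunately the repair is a one-liner: drop the detour through archimedean classes and quote $H$-type monotonicity of $\psi$; everything else in your argument, including the correct use of the archimedean constant field in the Lemma~\ref{lem:make repulsive} step, stands.
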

\begin{proof}
Let~$J$ be the set of $j\in\{1,\dots,r\}$ such that
$g_j$ is not $S$-repulsive. If  $\gamma >0$ and $g$ is not $\gamma$-repulsive, then~$g$ is not
$\delta$-repulsive, for all $\delta\geq\gamma$. 
Hence we can take $\gamma\in S\cap \Gamma^{>}$ such that $g_j$ is not $\gamma$-repulsive, for all $j\in J$.  
Suppose $\gamma=v\fm$.
Lemma~\ref{lem:make repulsive} yields $m\geq 1$ such that for all $n\geq m$, setting $\fn:=\fm^n$,
$g_j-\fn^\dagger$ is $\Gamma$-repulsive for all $j\in J$. For such $\fn$ we have $v\fn\in S$, so
by Lemma~\ref{lem:repulsive hata, 1}(ii), $g_j-\fn^\dagger$ is also $S$-repulsive for $j\notin J$. 
\end{proof}

\begin{cor}\label{cor:repulsive splitting}
If  $C$ is archimedean and~$(g_1,\dots,g_r)$ is a splitting of $A$ over $K$,
 then there exists $\gamma>0$ such that for all $\fm$ with $\gamma=v\fm$: ${(g_1-n\fm^\dagger,\dots,g_r-n\fm^\dagger)}$ is a $\Gamma$-repulsive splitting of
$A\fm^n$ over $K$, for all big enough $n$. If $\Gamma\neq\Gamma^\flat$ then we can choose such $\gamma>\Gamma^\flat$.
\end{cor}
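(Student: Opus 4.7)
The first assertion is essentially an instance of Proposition~\ref{prop:repulsive splitting} with $S := \Gamma$. To check the hypotheses of that proposition: the condition $nS \subseteq S$ for all $n \geq 1$ holds trivially, the ordered constant field $C$ is archimedean by assumption, and $S \cap \Gamma^{>} = \Gamma^{>} \neq \emptyset$ because $H$ has asymptotic integration, which forces $\Gamma \neq \{0\}$. Since a function is $S$-repulsive with $S = \Gamma$ precisely when it is $\Gamma$-repulsive, the proposition produces $\gamma \in \Gamma^{>}$ with exactly the desired property.

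For the second assertion, I would not re-apply the proposition with a different $S$ (which is awkward because the natural candidate $\{\delta \in \Gamma : \delta > \Gamma^\flat\} \cup \{\delta\le 0\}$ would only give $S$-repulsiveness, not the full $\Gamma$-repulsiveness required), but instead revisit the proof of Proposition~\ref{prop:repulsive splitting} and observe that the $\gamma$ produced there may be enlarged freely within $\Gamma^{>}$. Indeed, in that proof one forms the finite set $J$ of indices $j$ for which $g_j$ fails to be $\Gamma$-repulsive, and then chooses $\gamma \in \Gamma^{>}$ such that each $g_j$ with $j \in J$ is not $\gamma$-repulsive. The crucial monotonicity -- already invoked in the proof -- is that if $g$ is not $\gamma$-repulsive and $\gamma' \geq \gamma$ in $\Gamma^{>}$, then $\gamma'^\dagger \leq \gamma^\dagger$ forces $v(\Re g) \geq \gamma^\dagger \geq \gamma'^\dagger$, while $\Re g \leq 0$ is preserved, so $g$ is not $\gamma'$-repulsive either. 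Assuming $\Gamma \neq \Gamma^\flat$, the set $\{\gamma' \in \Gamma : \gamma' > \Gamma^\flat\}$ is nonempty; we replace $\gamma$ by any element $\gamma'$ with $\gamma' \geq \gamma$ and $\gamma' > \Gamma^\flat$, and the remainder of the argument (applying Lemma~\ref{lem:make repulsive} to the $g_j$ with $j \in J$, and Lemma~\ref{lem:repulsive hata, 1}(ii) to the $g_j$ with $j \notin J$, using $n\gamma' \in \Gamma$) carries over unchanged.

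There is no real obstacle here: the proof is an inspection and mild strengthening of the proposition just established. The only point requiring a moment's care is that the enlargement of $\gamma$ must respect the property used as input to Lemma~\ref{lem:make repulsive}, namely $v(\Re g_j) \geq \gamma^\dagger$ for $j \in J$, and this is exactly what the monotonicity observation above secures.
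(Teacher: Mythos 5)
Your proposal is correct and follows the same route as the paper's own proof, which reads simply ``Taking $S=\Gamma$ this follows from Proposition~\ref{prop:repulsive splitting} and its proof.'' You spell out the relevant monotonicity (that $\gamma$ may be enlarged within $\Gamma^{>}$ while preserving the property that each $g_j$ with $j\in J$ is not $\gamma$-repulsive), which is exactly the content implicitly invoked by the phrase ``and its proof'' for the second assertion.
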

\begin{proof} Taking $S=\Gamma$ this
follows from Proposition~\ref{prop:repulsive splitting} and its proof.
\end{proof}

\noindent
In logical jargon, the condition that $C$ is archimedean is not {\em first-order}. But it is satisfied when $H$ is a Hardy field, the case
where the results of this section will be applied. For other possible uses we indicate here a first-order variant of Proposition~\ref{prop:repulsive splitting} with essentially the same proof: 

\begin{cor} 
Suppose  $(g_1,\dots,g_r)$ is a splitting of $A$ over $K$. Then there exists $\fm\prec 1$ such that for all sufficiently large $c\in C^{>}$ and all~$\fn$, if $\fn^\dagger=c\fm^\dagger$, then~$({g_1-\fn^\dagger},\dots,{g_r-\fn^\dagger})$ is a $\Gamma$-repulsive splitting of
$A\fn$ over $K$.
\end{cor}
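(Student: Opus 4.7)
\medskip

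The plan is to mimic the proof of Proposition~\ref{prop:repulsive splitting} with $S=\Gamma$, replacing the use of integer exponents (which relied on $C$ being archimedean) by scalar multiplication by elements of $C^{>}$, which is available because the Liouville closed $H$-field $H$ is closed under powers (so Lemma~\ref{OrderedVectorSpace} applies and $\Gamma$ is naturally an ordered $C$-vector space).

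First I would let $J\subseteq\{1,\dots,r\}$ be the set of indices $j$ for which $g_j$ is not $\Gamma$-repulsive. If $J=\emptyset$ any $\fm\prec 1$ works, so assume $J\ne\emptyset$. The key monotonicity used in the proof of Proposition~\ref{prop:repulsive splitting} — that if $g$ is not $\gamma$-repulsive and $\delta\ge\gamma>0$, then $g$ is not $\delta$-repulsive — carries over here: it rests on $\psi$ being weakly decreasing on $\Gamma^{>}$ ($H$-type). Hence I can choose a single $\gamma\in\Gamma^{>}$ large enough that $g_j$ fails to be $\gamma$-repulsive for every $j\in J$, i.e.\ $v(\Re g_j)\ge\gamma^\dagger$ and $\Re g_j\le 0$ for all $j\in J$. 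Pick any $\fm\in H^{\times}$ with $v\fm=\gamma$; then $\fm\prec 1$.

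Next, for each $j\in J$, Lemma~\ref{lem:make repulsive} applied to $g:=g_j$ yields $c_j\in C^{>}$ such that $\Re g_j-c\fm^\dagger>0$ for all $c\ge c_j$ in $C^{>}$. Let $c_0:=\max\{c_j:j\in J\}\in C^{>}$. Now let $c\in C^{>}$ with $c\ge c_0$ and let $\fn\in H^{\times}$ satisfy $\fn^\dagger=c\fm^\dagger$ (such $\fn$ exists since $H$ is closed under powers, and then $v\fn=c\gamma\in\Gamma^{>}$ by Lemma~\ref{OrderedVectorSpace}). For $j\in J$ we have $\Re(g_j-\fn^\dagger)=\Re g_j-c\fm^\dagger>0$, so $g_j-\fn^\dagger$ is $\Gamma$-repulsive directly from the definition. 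For $j\notin J$, $g_j$ is already $\Gamma$-repulsive, and since $v\fn\in\Gamma$, Lemma~\ref{lem:repulsive hata, 1}(ii) (with $S=\Gamma$) gives that $g_j-\fn^\dagger$ is $\Gamma$-repulsive as well.

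Finally, by Lemma~\ref{lem:split and twist}, $(g_1-\fn^\dagger,\dots,g_r-\fn^\dagger)$ is a splitting of $A\fn$ over $K$. Combined with the previous paragraph, this is a $\Gamma$-repulsive splitting of $A\fn$ over $K$, which is what was required. There is no real obstacle: the entire argument is a routine adaptation of Proposition~\ref{prop:repulsive splitting} in which scalar multiplication by $C^{>}$ replaces multiplication by positive integers, and the only point requiring care is that the $C$-vector space structure on $\Gamma$ is available here thanks to Liouville closedness of $H$, so that $v\fn=c\,v\fm$ is well-defined and positive.
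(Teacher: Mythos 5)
Your overall strategy is the intended one: the paper offers no separate argument for this corollary, saying only that it has essentially the same proof as Proposition~\ref{prop:repulsive splitting}, and your skeleton (choose $\gamma\in\Gamma^{>}$ so that $g_j$ is not $\gamma$-repulsive for every $j$ in the bad set $J$, take $v\fm=\gamma$, apply Lemma~\ref{lem:make repulsive} to the indices in $J$, handle the remaining indices via Lemma~\ref{lem:repulsive hata, 1}(ii), and invoke Lemma~\ref{lem:split and twist} to see that the twisted tuple splits $A\fn$) is exactly that adaptation.

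There is, however, one unjustified step. At this point of the section $H$ is only a real closed $H$-field with small derivation and asymptotic integration: it is \emph{not} assumed Liouville closed (that hypothesis enters only later in the subsection), nor closed under powers, and Lemma~\ref{OrderedVectorSpace} carries the standing assumption that $H$ is an $H$-field closed under powers. The corollary is deliberately a first-order variant free of such extra hypotheses; existence of $\fn$ with $\fn^\dagger=c\fm^\dagger$ is not asserted (the statement is conditional on it), so that part of your appeal is harmless, but your claim that $v\fn=c\,v\fm\in\Gamma^{>}$ rests on unavailable assumptions, and it is genuinely needed: Lemma~\ref{lem:repulsive hata, 1}(ii) requires $0<v\fn$, not merely $v\fn\in\Gamma$ as you write. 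The gap closes easily from the actual hypotheses. Since $0<\abs{\fm}\prec 1$ in the $H$-field $H$ we have $\fm^\dagger=\abs{\fm}^\dagger<0$, so $\fn^\dagger=c\fm^\dagger<0$ and $v(\fn^\dagger)=v(\fm^\dagger)\in\Psi$; if $\fn\succ 1$ then $\abs{\fn}>C$ gives $\fn^\dagger=\abs{\fn}^\dagger>0$, and if $\fn\asymp 1$ then $\fn^\dagger\in\I(H)$, i.e.\ $v(\fn^\dagger)>\Psi$ -- both impossible. Hence $\fn\prec 1$ and Lemma~\ref{lem:repulsive hata, 1}(ii) applies. (Alternatively, avoid that lemma altogether: for $j\notin J$, either $\Re g_j>0$, and then $\Re(g_j-\fn^\dagger)=\Re g_j-c\fm^\dagger>0$ because $\fm^\dagger<0$; or $v(\Re g_j)<\Psi\ni v(\fn^\dagger)$, so $\Re(g_j-\fn^\dagger)\sim\Re g_j$; either way $g_j-\fn^\dagger$ is $\Gamma$-repulsive.) With this repair, and with the extraneous appeal to Liouville closedness deleted, your proof is correct.
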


\noindent
In connection with this corollary we recall from~\cite[p.~105]{AvdD3}
that $H$ is said to be {\it closed under powers}\/ if for all $c\in C$ and~$\fm$ there is an $\fn$
with $c\fm^\dagger=\fn^\dagger$.  

\medskip
\noindent
{\it In the rest of this section $\hat H$ is an immediate asymptotic extension of~$H$ and $\imag$ with~$\imag^2=-1$ lies in an asymptotic extension of $\hat H$. Also $K:= H[\imag]$ and $\hat K:=\hat H[\imag]$.}

\medskip
\noindent
Let $\hat a\in\hat H\setminus H$, so 
$v({\hat a-H})$
is a downward closed subset of $\Gamma$. 
We say that~$f$ is~{\bf $\hat a$-repulsive} if $f$ is~$v({\hat a-H})$-repulsive;\index{element!a-repulsive@$\hat a$-repulsive} that is, $\Re f>0$, or $\Re f \succ \fm^\dagger$ for all~$a$,~$\fm$
with $\fm\asymp\hat a-a\prec 1$. 
(Of course, this notion is only interesting if $v({\hat a-H})\cap\Gamma^>\neq \emptyset$, since otherwise every $f$ is $\hat a$-repulsive.)
Various earlier results give:

{\samepage
\begin{lemma}\label{lem:hata-repulsive}
Suppose $f$ is $\hat a$-repulsive.  Then
\begin{enumerate}
\item[\textup{(i)}] $b>0,\ b\succeq 1\ \Longrightarrow\ bf$ is $\hat a$-repulsive; 
\item[\textup{(ii)}] $f$ is   $(\hat a-a)$-repulsive;  
\item[\textup{(iii)}] $\fm \asymp 1\ \Longrightarrow\ f$ is $\hat a\fm$-repulsive; 
\item[\textup{(iv)}] $\fn\asymp \hat a-a\prec 1\ \Longrightarrow\ f$ is $\hat a/\fn$ repulsive and $f-\fn^\dagger$ is $\hat a$-repulsive.
\end{enumerate}
\end{lemma}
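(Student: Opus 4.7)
The plan is to reduce each of the four items to the general facts about $S$-repulsive elements established earlier in this subsection, with $S := v(\hat a - H)$, exploiting how $S$ transforms under the operations $\hat a \mapsto \hat a - a$, $\hat a \mapsto \hat a\fm$ (for $\fm \asymp 1$), and $\hat a \mapsto \hat a/\fn$. For every claim the main point is to identify the relevant downward-closed subset of $\Gamma$ and to invoke the corresponding preservation result.

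First I will handle (i): being $\hat a$-repulsive means being $S$-repulsive for $S = v(\hat a - H)$, and the remark immediately preceding the definition of $\hat a$-repulsiveness notes that if $f$ is $S$-repulsive and $b > 0$, $b \succeq 1$, then $bf$ is $S$-repulsive. For (ii) and (iii) I will simply compute: for any $a \in H$,
\[
v\bigl((\hat a - a) - H\bigr) \ =\ v(\hat a - H),
\]
since the map $b \mapsto b + a$ is a bijection on $H$; and if $\fm \asymp 1$, then $v(\hat a\fm - b) = v\fm + v(\hat a - b/\fm) = v(\hat a - b/\fm)$, so $v(\hat a\fm - H) = v(\hat a - H)$. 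Hence in both cases being $\hat a$-repulsive coincides with being $(\hat a - a)$-repulsive, respectively $\hat a\fm$-repulsive, and there is nothing further to prove.

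For (iv), suppose $\fn \asymp \hat a - a \prec 1$, and set $\delta := v\fn = v(\hat a - a) \in S \cap \Gamma^{>}$. A direct computation using $\hat a/\fn - b = (\hat a - b\fn)/\fn$ and the fact that $b \mapsto b\fn$ is a bijection on $H$ shows that
\[
v(\hat a/\fn - H)\ =\ v(\hat a - H) - v\fn\ =\ S - \delta.
\]
Lemma~\ref{lem:repulsive hata, 1}(i) then yields that $f$ is $(S - \delta)$-repulsive, i.e., $\hat a/\fn$-repulsive. For the second half of (iv), Lemma~\ref{lem:repulsive hata, 1}(ii) applied to $\delta = v\fn \in S$ gives directly that $f - \fn^\dagger$ is $S$-repulsive, i.e., $\hat a$-repulsive. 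Since no obstacles arise beyond these bookkeeping computations, the whole lemma will follow in a few lines.
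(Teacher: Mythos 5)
Your proof is correct and follows the paper's own route: the paper likewise disposes of (i)–(iii) via the remark that $S$-repulsiveness is preserved under multiplication by $b>0$, $b\succeq 1$ and the invariance of $v(\hat a-H)$ under translation by $a\in H$ and multiplication by units, and handles (iv) by exactly your application of Lemma~\ref{lem:repulsive hata, 1}(i),(ii) with $S=v(\hat a-H)$ and $\delta=v\fn\in S\cap\Gamma^{>}$.
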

}

\noindent
For (iv), use Lemma~\ref{lem:repulsive hata, 1}. 
An {\bf $\hat a$-repulsive splitting} of $A$ over $K$ is a
$v({\hat a-H})$-re\-pul\-sive splitting $(g_1,\dots,g_r)$ of $A$ over $K$:\index{linear differential operator!a-repulsive splitting@$\hat a$-repulsive splitting}\index{splitting!a-repulsive@$\hat a$-repulsive} 
$$A\ =\ f(\der-g_1)\cdots(\der-g_r)\qquad \text{where $f\neq 0$ and $g_1,\dots,g_r$  are $\hat a$-repulsive.}$$
We say
that {$A$ \bf splits $\hat a$-repulsively} over $K$ if it splits $v({\hat a-H})$-repulsively over~$K$.
Thus if $A$ splits $\hat a$-repulsively over $K$, then so does $hA$ ($h\neq 0$), and 
$A$ splits $(\hat a-a)$-repulsively over $K$, and splits $\hat a\fm$-repulsively over $K$ for $\fm\asymp 1$.
Moreover, from Lemma~\ref{lem:repulsive hata, 2} we obtain:

\begin{cor}\label{cor:repulsive hata} 
Suppose $(g_1,\dots,g_r)$ is an $\hat a$-repulsive splitting of $A$ over $K$ and~$\fn\asymp \hat a-a\prec 1$. 
Then $(g_1,\dots,g_r)$ is  an $\hat a/\fn$-repulsive splitting of $A$ over~$K$ and~${(g_1-\fn^\dagger,\dots,g_r-\fn^\dagger)}$ is an $\hat a$-repulsive splitting of
$A\fn$ over $K$.  
\end{cor}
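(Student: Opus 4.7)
The plan is to derive this corollary as essentially a direct invocation of Lemma~\ref{lem:repulsive hata, 2}, which was already set up precisely for this kind of application. Put $S := v(\hat a - H)$ and $\delta := v\fn$. The hypothesis $\fn\asymp\hat a-a$ gives $\delta = v(\hat a-a) \in v(\hat a - H) = S$, and the hypothesis $\fn\prec 1$ gives $\delta > 0$, so the numerical hypotheses of Lemma~\ref{lem:repulsive hata, 2} are satisfied. By definition of an $\hat a$-repulsive splitting, $(g_1,\dots,g_r)$ is $S$-repulsive, so the lemma applies.

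Lemma~\ref{lem:repulsive hata, 2} then yields two conclusions. First, $(g_1,\dots,g_r)$ is an $(S-\delta)$-repulsive splitting of $A$ over $K$. To identify this with the first assertion of the corollary, I compute
\[
v(\hat a/\fn - H)\ =\ v(\hat a - \fn H) - v\fn\ =\ v(\hat a - H) - v\fn\ =\ S - \delta,
\]
using $\fn H = H$ since $\fn \in H^\times$. Thus an $(S-\delta)$-repulsive splitting of $A$ over $K$ is by definition an $\hat a/\fn$-repulsive splitting, as required.

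Second, Lemma~\ref{lem:repulsive hata, 2} gives that $(g_1 - \fn^\dagger,\dots,g_r - \fn^\dagger)$ is an $S$-repulsive splitting of $A\fn$ over $K$, which is by definition an $\hat a$-repulsive splitting of $A\fn$ over $K$. This is precisely the second assertion. The parenthetical remark then follows by combining the two: applying the first assertion to $A\fn$ together with the new splitting (or, equivalently, reapplying Lemma~\ref{lem:repulsive hata, 2}(i) to the transformed splitting).

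There is no real obstacle here; the lemma was designed exactly for this reformulation. The only thing to check carefully is the identification $v(\hat a/\fn - H) = v(\hat a - H) - v\fn$, which rests on the invariance of $H$ under multiplication by $\fn \in H^\times$. No further input from the theory of asymptotic fields or of $H$-asymptotic couples is needed.
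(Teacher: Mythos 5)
Your proof is correct and follows exactly the paper's route: the corollary is stated there as an immediate consequence of Lemma~\ref{lem:repulsive hata, 2}, applied with $S=v(\hat a-H)$ and $\delta=v\fn\in S\cap\Gamma^>$, together with the identification $v(\hat a/\fn-H)=v(\hat a-H)-v\fn$ coming from $\fn H=H$. (Only note that the corollary as stated contains no parenthetical remark, so your final sentence about one is superfluous.)
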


\noindent
Taking $S:=v(\hat a -H)$ in Proposition~\ref{prop:repulsive splitting} we obtain:

\begin{cor}\label{cor:repulsive hata, 1} 
If $\hat a\preceq 1$ is special over $H$, $C$ is archimedean,  and $A$ splits over~$K$, then $A\fn$ splits $\hat a$-repulsively over $K$ for some~$a$ and $\fn\asymp\hat a-a\prec 1$.
\end{cor}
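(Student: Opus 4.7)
The plan is to apply Proposition~\ref{prop:repulsive splitting} directly with $S := v(\hat a-H)$, as the remark preceding the corollary hints. The main work is checking the hypotheses of that proposition and then massaging the output so that the element $\fn$ produced is actually of the form $\fm^n$ with $\fm^n\asymp\hat a-a$ for some $a\in H$.

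First I would verify the two nontrivial hypotheses on $S$. Since $\hat a$ is special over $H$, the subsection on special elements provides a nontrivial convex subgroup $\Delta$ of $\Gamma$ that is cofinal in $v(\hat a-H)$, explicitly given by $\Delta = \{\delta\in\Gamma : |\delta|\in v(\hat a-H)\}$; since $v(\hat a-H)$ is downward closed (as noted just before Lemma~\ref{lem:special refinement}), it follows that $\Delta\subseteq S$, and nontriviality of $\Delta$ gives $S\cap\Gamma^{>}\neq\emptyset$. For $nS\subseteq S$ with $n\geq 1$, split into cases: if $\gamma\in S$ with $\gamma\leq 0$, use that $v(\hat a)\in S$ with $v(\hat a)\geq 0$ (as $\hat a\preceq 1$) so $0\in S$ by downward closure, giving $n\gamma\leq 0\in S$; if $\gamma>0$, take $\delta\in\Delta$ with $\delta\geq\gamma$ by cofinality, note $n\delta\in\Delta\subseteq S$ since $\Delta$ is a subgroup, and conclude $n\gamma\leq n\delta$ gives $n\gamma\in S$ by downward closure.

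Next I would apply Proposition~\ref{prop:repulsive splitting} to the given splitting $(g_1,\dots,g_r)$ of $A$ over $K$, obtaining some $\gamma\in S\cap\Gamma^{>}$ such that for any $\fm$ with $v\fm=\gamma$, the tuple $(g_1-n\fm^\dagger,\dots,g_r-n\fm^\dagger)$ is an $S$-repulsive splitting of $A\fm^n$ over $K$ for all sufficiently large $n$. Inspecting the proof of that proposition, the parameter $\gamma$ may be enlarged arbitrarily within $S\cap\Gamma^{>}$ (since ``$g_j$ is not $\gamma$-repulsive'' persists upon replacing $\gamma$ by any larger element of $S\cap\Gamma^{>}$). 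Using cofinality of $\Delta$ in $S$, I therefore arrange that $\gamma\in\Delta\cap\Gamma^{>}$.

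Now fix $\fm\in H^\times$ with $v\fm=\gamma$, so $\fm\prec 1$. For each $n\geq 1$, we have $v\fm^n=n\gamma\in\Delta\subseteq v(\hat a-H)$, so by definition of $v(\hat a-H)$ there exists $a\in H$ with $v(\hat a-a)=n\gamma$, i.e., $\fn:=\fm^n\asymp\hat a-a$, and clearly $\fn\prec 1$. Choosing $n$ large enough that the conclusion of Proposition~\ref{prop:repulsive splitting} applies yields an $S$-repulsive splitting of $A\fn$ over $K$; since $S=v(\hat a-H)$, this is by definition an $\hat a$-repulsive splitting, and $A\fn$ splits $\hat a$-repulsively over $K$ as required. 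No step is a significant obstacle: the only subtlety is ensuring that the $\gamma$ produced by the proposition can be chosen inside $\Delta$ (so that each $n\gamma$ remains in $S$ and hence is realized as $v(\hat a-a)$ for some $a\in H$), and this is handled by the enlargement remark above.
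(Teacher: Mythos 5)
Your proof is correct and follows essentially the same route as the paper, which simply takes $S:=v(\hat a-H)$ and invokes Proposition~\ref{prop:repulsive splitting} together with its proof; your verification of $S\cap\Gamma^{>}\neq\emptyset$ and $nS\subseteq S$ and the extraction of $\fn=\fm^n\asymp\hat a-a$ is exactly the intended filling-in of details. (The step arranging $\gamma\in\Delta$ is harmless but superfluous, since $nS\subseteq S$ already guarantees $n\gamma\in S=v(\hat a-H)$.)
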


\noindent
Recall that in Section~\ref{sec:approx linear diff ops} we defined a splitting $(g_1,\dots,g_r)$ of $A$ over $K$ to be {\it strong}\/ if~$\Re g_j \succeq\fv(A)^\dagger$ for $j=1,\dots,r$.

\begin{lemma}\label{lem:hata-repulsive splitting compconj}
Suppose $\hat a-a\prec^\flat 1$ for some $a$. Let $(g_1,\dots,g_r)$ be an $\hat a$-repulsive splitting of $A$ over $K$, let $\phi$ be active in $H$ with $0<\phi\prec 1$, 
and set $$h_j\ :=\ \phi^{-1}\big( g_j - (r-j) \phi^\dagger \big) \qquad (j=1,\dots,r).$$ Then
$(h_1,\dots,h_r)$ is an $\hat a$-repulsive splitting of~$A^\phi$ over $K^\phi=H^\phi[\imag]$.
If $\fv(A)\prec^{\flat} 1$ and $(g_1,\dots,g_r)$ is strong, then $(h_1,\dots,h_r)$ is strong.
\end{lemma}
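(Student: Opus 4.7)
The plan is to handle the two assertions separately, both of which rest on showing that the corrections $-(r-j)\phi^\dagger$ introduced by the compositional conjugation (cf.\ Lemma~\ref{lem:split and compconj}) are negligible compared to $\Re g_j$, so that $\Re h_j \sim \phi^{-1}\Re g_j$ in the relevant cases. Once this absorption is established, everything else is routine: Lemma~\ref{lem:split and compconj} already provides that $(h_1,\dots,h_r)$ is a splitting of $A^\phi$ over $K^\phi$, so only the repulsiveness (first claim) and the strength (second claim) of the $h_j$'s need verification.

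For the first claim, fix $j\in\{1,\dots,r\}$. If $\Re g_j>0$, then since $H$ is an $H$-field and $0<\phi\prec 1$, I would argue that $\phi^\dagger<0$ (because $1/\phi>C$ forces $(1/\phi)'>0$), so $-(r-j)\phi^\dagger\geq 0$, whence $\Re h_j>0$ and $h_j$ is $\hat{a}$-repulsive in $K^\phi$ trivially. Otherwise $\Re g_j\not>0$, and $\hat{a}$-repulsiveness of $g_j$ gives $v(\Re g_j)<\gamma^\dagger$ for all $\gamma\in v(\hat{a}-H)\cap\Gamma^>$. Here the hypothesis $\hat{a}-a\prec^\flat 1$ enters decisively: taking $\gamma:=v(\hat{a}-a)\in v(\hat{a}-H)\cap\Gamma^>$ yields $\gamma\notin\Gamma^\flat$, hence $\gamma^\dagger=\psi(\gamma)\leq 0$, and so $\Re g_j\succ 1$. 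Since $\phi$ is active we have $\phi^\dagger\prec 1\prec\Re g_j$, so $\Re g_j-(r-j)\phi^\dagger\sim\Re g_j$ and $\Re h_j\sim\phi^{-1}\Re g_j$. The $\hat{a}$-repulsiveness of $h_j$ in $K^\phi$ at any $\gamma\in v(\hat{a}-H)\cap\Gamma^>$ translates (via $\psi_\phi(\gamma)=\psi(\gamma)-v\phi$) to $v(\Re g_j)<\gamma^\dagger$, which is exactly the $\hat{a}$-repulsiveness of $g_j$.

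For the second claim I would simply reproduce the argument of Lemma~\ref{lem:split strongly compconj}: setting $\fv:=\fv(A)$, the hypothesis $\fv\prec^\flat 1$ yields $\fv^\dagger\succeq 1$, while $\phi$ active gives $\phi^\dagger\prec 1\preceq\fv^\dagger$. Hence for each $j$, $\Re g_j\succeq\fv^\dagger\succ\phi^\dagger$ implies $\Re g_j-(r-j)\phi^\dagger\sim\Re g_j$, so $\Re h_j\sim\phi^{-1}\Re g_j\succeq\phi^{-1}\fv^\dagger$. Lemma~\ref{lem:v(Aphi)} delivers $\fv^\dagger\asymp\fv(A^\phi)^\dagger$, so $\phi^{-1}\fv^\dagger\asymp\phi^{-1}\fv(A^\phi)^\dagger=\fv(A^\phi)^{\dagger_\phi}$ and therefore $\Re h_j\succeq\fv(A^\phi)^{\dagger_\phi}$, which is the strength condition in $K^\phi$.

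The main obstacle is in the first claim: the case $\Re g_j\not>0$ forces us to convert the $\hat{a}$-repulsiveness inequality $v(\Re g_j)<\gamma^\dagger$, which is a priori compatible with $\Re g_j\preceq 1$, into the much stronger $\Re g_j\succ 1$ that is needed to absorb the correction term $(r-j)\phi^\dagger$. The hypothesis $\hat{a}-a\prec^\flat 1$ is precisely what achieves this: it produces an element of $v(\hat{a}-H)\cap\Gamma^>$ whose $\psi$-value is nonpositive, forcing $\Re g_j$ to have negative valuation. Without this hypothesis the argument would break down, since $\phi^\dagger$ could become comparable to $\Re g_j$ and perturb the sign of the real part.
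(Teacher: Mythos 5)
Your proof is correct and takes essentially the same approach as the paper: the paper's proof is a one-line citation to Lemmas~\ref{lem:split strongly compconj} and~\ref{lem:repulsive splitting compconj}, and you have simply unrolled both of those, additionally making explicit the (implicit) translation of the hypothesis $\hat a - a\prec^\flat 1$ into the hypothesis $v(\hat a-H)\cap\Gamma^>\not\subseteq\Gamma^\flat$ required by Lemma~\ref{lem:repulsive splitting compconj}. The key step you identify — using $\gamma=v(\hat a-a)\notin\Gamma^\flat$ to force $\Re g_j\succ 1$ when $\Re g_j\not>0$, so that the correction $(r-j)\phi^\dagger$ is absorbed — is exactly the mechanism in the paper's proof of Lemma~\ref{lem:repulsive splitting compconj}.
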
 

\noindent
This follows from Lemmas~\ref{lem:split strongly compconj} and~\ref{lem:repulsive splitting compconj}. 

\begin{lemma}\label{lem:achieve strong repulsive splitting} 
Suppose $\fv:=\fv(A)\prec 1$ and $\hat a \prec_{\Delta(\fv)} 1$.
Let~$(g_1,\dots,g_r)$ be an $\hat a$-repulsive splitting of $A$ over $K$. Then for all sufficiently small $q\in\Q^>$ and any~$\fn\asymp\abs{\fv}^q$, $(g_1-\fn^\dagger,\dots,g_r-\fn^\dagger)$ is a strong $\hat a/\fn$-repulsive splitting of $A\fn$ over $K$.
\end{lemma}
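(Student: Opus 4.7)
The plan is to obtain the two properties — strong and $\hat a/\fn$-repulsive — from independent arguments, combining results from Sections~\ref{sec:approx linear diff ops} and~\ref{sec:repulsive-normal}, and then restrict $q$ so that both conclusions hold simultaneously.

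For the strong part, I will apply Lemma~\ref{lem:split strongly multconj} to the given splitting $(g_1,\dots,g_r)$, with $|\fv|\in H^\times$ in the role of $\fn$ there; since $[\fv] = [|\fv|]$, the hypothesis $[\fv]\leq[|\fv|]$ is automatic. This produces, for all $q\in\Q^>$ with at most $r$ exceptions, a strong splitting $(g_1 - q|\fv|^\dagger,\dots,g_r - q|\fv|^\dagger)$ of $A|\fv|^q$ over $K$. Given $\fn\asymp|\fv|^q$, write $\fn=|\fv|^q u$ with $u\in H^\times$, $u\asymp 1$; then $\fn^\dagger = q|\fv|^\dagger + u^\dagger$ and $A\fn = A|\fv|^q\cdot u$. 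Applying Lemma~\ref{lem:Ah splits strongly} with this $u$ converts the strong splitting of $A|\fv|^q$ into the strong splitting $(g_1-\fn^\dagger,\dots,g_r-\fn^\dagger)$ of $A\fn$ over~$K$.

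For the $\hat a/\fn$-repulsive part, set $S:=v(\hat a - H)$ and note $v(\hat a/\fn - H) = S - v\fn$. I will show that for all sufficiently small $q\in\Q^>$, $v\fn = qv\fv$ lies in $S\cap\Gamma^>$: the hypothesis $\hat a\prec_{\Delta(\fv)} 1$ means $v\hat a > \Delta(\fv)$, so either $[v\hat a] > [v\fv]$ (and any $q>0$ satisfies $qv\fv < v\hat a$) or $[v\hat a]=[v\fv]$, in which case $v\hat a \geq v\fv/n$ for some $n\geq 1$, so $q<1/n$ suffices; in both cases $qv\fv\leq v\hat a$, hence $qv\fv\in S$ (take $a\in H$ with $va = qv\fv$). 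Then Lemma~\ref{lem:repulsive hata, 2}, applied with $S$ and $\delta:=v\fn$ to the $S$-repulsive splitting $(g_1,\dots,g_r)$ of $A$, yields that $(g_1-\fn^\dagger,\dots,g_r-\fn^\dagger)$ is an $S$-repulsive splitting of $A\fn$ over $K$, and therefore also $(S-\delta)$-repulsive, i.e., $\hat a/\fn$-repulsive.

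Combining these two arguments for $q\in\Q^>$ small enough to satisfy both (avoiding the finitely many exceptions from Lemma~\ref{lem:split strongly multconj} and ensuring $qv\fv\leq v\hat a$), we conclude that $(g_1-\fn^\dagger,\dots,g_r-\fn^\dagger)$ is a strong $\hat a/\fn$-repulsive splitting of $A\fn$ over~$K$. There is no substantial obstacle beyond bookkeeping: the key ingredients are already packaged in Lemmas~\ref{lem:split strongly multconj}, \ref{lem:Ah splits strongly}, and \ref{lem:repulsive hata, 2}, and the only delicate point is verifying that small $q$ suffices to place $v\fn$ inside the cut $v(\hat a - H)$, for which $\hat a\prec_{\Delta(\fv)} 1$ is exactly the condition needed.
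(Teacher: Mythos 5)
Your proof is correct and follows essentially the same route as the paper's: the strong part via Lemma~\ref{lem:split strongly multconj} (with $|\fv|$ in the role of $\fn$ there) combined with Lemma~\ref{lem:Ah splits strongly}, and the repulsive part by placing $v\fn=q\,v\fv$ inside $v(\hat a-H)\cap\Gamma^{>}$ using $\hat a\prec_{\Delta(\fv)}1$ and then invoking Lemma~\ref{lem:repulsive hata, 2}, with a final shrinking of $q$ to avoid the finitely many exceptional values. The only cosmetic difference is that the paper routes the repulsive step through Corollary~\ref{cor:repulsive hata} (choosing $q_0$ with $\hat a\prec|\fv|^{q_0}\prec 1$), which is just the specialization of the lemma you apply directly.
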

\begin{proof}
Take $q_0\in\Q^>$ with $\hat a\prec |\fv|^{q_0}\prec 1$. Then for any $q\in\Q$ with $0<q\leq q_0$ and any $\fn\asymp\abs{\fv}^q$,
$(g_1-\fn^\dagger,\dots,g_r-\fn^\dagger)$ is an $\hat a/\fn$-repulsive splitting of $A\fn$ over~$K$, by Corollary~\ref{cor:repulsive hata}.
Using Lemmas~\ref{lem:split strongly multconj} and~\ref{lem:Ah splits strongly} (in that order) we can decrease~$q_0$ so that for all~$q\in\Q$ with~$0<q\leq q_0$ and $\fn\asymp\abs{\fv}^q$,
$(g_1-\fn^\dagger,\dots,g_r-\fn^\dagger)$ is also a strong splitting of~$A\fn$ over $K$.
\end{proof}

\noindent
{\it In the rest of this subsection we assume that $H$ is Liouville closed with~$\I(K)\subseteq K^\dagger$.}\/
 We choose a complement~$\Lambda\subseteq H\imag$ of $K^\dagger$ in $K$ as in Section~\ref{sec:ultimate} and set $\Univ := K\big[\!\ex(\Lambda)\big]$.
 We then have the set $\exc^{\operatorname{u}}(A)\subseteq\Gamma$ of ultimate exceptional values of $A$ 
 (which doesn't depend on $\Lambda$ by Corollary~\ref{prop:excu independent of Q}).
Recall from Corollary~\ref{cor:Hardy type arch C} that $H$ is of Hardy type iff $C$ is archimedean.
{\it  We now assume $r=1$ and~$\hat a\prec 1$ is special over $H$, and let 
$\Delta$ be the nontrivial convex subgroup of $\Gamma$ that is cofinal in $v(\hat a - H)$}. 

\begin{lemma}\label{lem:aAH}
Suppose $C$ is archimedean and 
$\exc^{\operatorname{u}}(A) \cap v(\hat a-H) < 0$. Then $A$ splits $\hat a$-repulsively over $K$.
\end{lemma}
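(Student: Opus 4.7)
The plan is as follows. Since $r=1$, write $A=f(\der-g)$ with $f\in K^\times$ and $g\in K$; a splitting of $A$ over $K$ is determined by $g$, and I will show that $g$ itself is $\hat a$-repulsive, so that $(g)$ is the desired splitting.

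Because $H$ is Liouville closed, pick $\mathfrak{g}\in H^{>}$ with $\mathfrak{g}^\dagger=\Re g$. Then $\exc^{\operatorname{u}}(A)=\exc^{\operatorname{u}}(\der-g)=\{v\mathfrak{g}\}$ by Lemma~\ref{lem:excu properties} and Corollary~\ref{cor:excu independent of Q}, so the hypothesis $\exc^{\operatorname{u}}(A)\cap v(\hat a-H)<0$ reads: if $v\mathfrak{g}\in v(\hat a-H)$, then $v\mathfrak{g}<0$. A preliminary observation is that $v(\hat a-H)\cap\Gamma^{>}=\Delta^{>}$: the inclusion $\subseteq$ comes from $\Delta$ being cofinal in $v(\hat a-H)$ (so $v(\hat a-H)\subseteq\Delta^{\downarrow}$), while the reverse inclusion uses the explicit description $\Delta=\{\delta\in\Gamma:|\delta|\in v(\hat a-H)\}$ of the convex subgroup attached to a special element.

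I now split into two cases. \emph{Case A:} $v\mathfrak{g}\notin v(\hat a-H)$. Since $v(\hat a-H)$ is downward closed, $v\mathfrak{g}$ exceeds every element of $v(\hat a-H)$, hence $v\mathfrak{g}>\Delta$ and in particular $v\mathfrak{g}\notin\Delta$. Because $C$ is archimedean and $H$ is closed under powers (being Liouville closed), Corollary~\ref{cor:Hardy type arch C} gives that $H$ is of Hardy type, so Lemma~\ref{lem:Hardy type} yields $\psi(v\mathfrak{g})<\psi(\gamma)$ for every $\gamma\in\Delta^{\neq}$. Rewriting, $v(\Re g)=v(\mathfrak{g}^\dagger)<\psi(\gamma)=\gamma^\dagger$ for each positive $\gamma\in v(\hat a-H)=\Delta^{>}$, which is exactly the condition for $g$ to be $\hat a$-repulsive. \emph{Case B:} $v\mathfrak{g}\in v(\hat a-H)$. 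The hypothesis forces $v\mathfrak{g}<0$, so $\mathfrak{g}\succ 1$; together with $\mathfrak{g}>0$ this gives $\mathfrak{g}>C$, so the defining axiom of an $H$-field yields $\mathfrak{g}'>0$, whence $\Re g=\mathfrak{g}^\dagger=\mathfrak{g}'/\mathfrak{g}>0$. Thus $g$ is $\hat a$-repulsive by the first clause of the definition.

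No step looks particularly subtle: the two cases exhaust the hypothesis cleanly, with Case B being an immediate consequence of the $H$-field ordering axioms together with $\mathfrak{g}\succ 1$, and Case A being the only place where the archimedean hypothesis on $C$ genuinely enters, through Hardy type providing the necessary strict inequality between $\psi$-values across the convex subgroup $\Delta$.
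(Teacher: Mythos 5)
Your proof is correct and follows essentially the same route as the paper: identify the unique ultimate exceptional value as $v\mathfrak g$ with $\mathfrak g\in H^{>}$, $\mathfrak g^\dagger=\Re g$, then split into the case $\mathfrak g\succ 1$ (giving $\Re g>0$ via the $H$-field axiom) and the case $v\mathfrak g>\Delta$ (giving $v(\Re g)<\gamma^\dagger$ via Hardy type and Lemma~\ref{lem:Hardy type}). The only cosmetic difference is that you cite Corollary~\ref{cor:excu independent of Q} for $\exc^{\operatorname{u}}(A)=\{v\mathfrak g\}$, whereas the paper re-derives this on the spot using $b=\dabs{u}$ for $u\in\Univ^\times$ with $u^\dagger=g$ together with Lemmas~\ref{lem:excu, r=1} and~\ref{uudag}.
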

\begin{proof}
We may arrange $A=\der-f$. 
Take $u\in\Univ^\times$  with $u^\dagger=f$, and   $b:=\dabs{u}\in H^>$. 
Then $\exc^{\operatorname{u}}(A)=\{vb\}$ by Lemma~\ref{lem:excu, r=1} and its proof, hence 
$$\exc^{\operatorname{u}}(A) \cap v(\hat a-H) < 0 \quad\Longleftrightarrow\quad \text{$b\succ 1$ or $vb>\Delta$,}$$ 
and $\Re f =  b^\dagger$ by Lemma~\ref{uudag}.
If $b\succ 1$, then $\Re f>0$, and if $vb>\Delta$, then for all $\delta\in\Delta^{\neq}$ we have $\psi(vb)<\psi(\delta)$ by Lemma~\ref{lem:Hardy type}, so $\Re f \succ \fm^\dagger$ for all $a$, $\fm$ with~$\hat a-a\asymp\fm\prec 1$.
In both cases $A$ splits $\hat a$-repulsively over $K$. 
\end{proof}

\begin{lemma}
Suppose $A\in H [\der]$ and $\fv(A)\prec 1$.
Then $0\notin\exc^{\operatorname{u}}(A)$, and if
$A$ splits $\hat a$-repulsively over $K$, then $\exc^{\operatorname{u}}(A) \cap v(\hat a-H) < 0$.
\end{lemma}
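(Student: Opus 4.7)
The plan is to unwind the hypotheses $r=1$ and $\fv(A)\prec 1$ to write $A=a(\der-f)$ with $a,f\in H$ and $f\succ 1$, and then reduce both claims to statements about $v\mathfrak{f}$, where $\mathfrak{f}\in H^{>}$ satisfies $\mathfrak{f}^\dagger=f$. Such $\mathfrak{f}$ exists because $H$ is Liouville closed and real closed, and Corollary~\ref{cor:excu independent of Q} (applied to the operator $\der-f$ over $K$, with $\Re f=f$) then identifies $\exc^{\operatorname{u}}(A)=\exc^{\operatorname{u}}(\der-f)=\{v\mathfrak{f}\}$.

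For the first assertion $0\notin\exc^{\operatorname{u}}(A)$, I would argue by contradiction: if $v\mathfrak{f}=0$, i.e.\ $\mathfrak{f}\asymp 1$, then the smallness of the derivation on $H$ forces $\mathfrak{f}'\prec 1$ and hence $f=\mathfrak{f}^\dagger\prec 1$, contradicting $f\succ 1$ (which is what $\fv(A)=1/f\prec 1$ encodes).

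For the second assertion, suppose $A$ splits $\hat a$-repulsively over $K$; then the only factor $f$ (and hence $\Re f=f$) is itself $\hat a$-repulsive, so either $f>0$, or $vf<\psi(\gamma)$ for every $\gamma\in v(\hat a-H)\cap\Gamma^{>}$. In the first case, the $H$-field axioms applied to $\mathfrak{f}>0$ with $\mathfrak{f}^\dagger=f\succ 1$ rule out both $\mathfrak{f}\prec 1$ (which would give $\mathfrak{f}^\dagger<0$, contradicting $f>0$) and $\mathfrak{f}\asymp 1$ (which would give $\mathfrak{f}^\dagger\prec 1$, contradicting $f\succ 1$), so $\mathfrak{f}\succ 1$ and thus $v\mathfrak{f}<0$. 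In the second case, $v\mathfrak{f}\neq 0$ gives $vf=v(\mathfrak{f}^\dagger)=\psi(v\mathfrak{f})$; suppose toward a contradiction that $v\mathfrak{f}\in v(\hat a-H)\cap\Gamma^{>}$, then taking $\gamma:=v\mathfrak{f}$ in the repulsivity condition would yield $\psi(v\mathfrak{f})<\psi(v\mathfrak{f})$, a contradiction. Hence $v\mathfrak{f}\notin v(\hat a-H)\cap\Gamma^{>}$, i.e., either $v\mathfrak{f}<0$ or $v\mathfrak{f}\notin v(\hat a-H)$. In all cases $\exc^{\operatorname{u}}(A)\cap v(\hat a-H)=\{v\mathfrak{f}\}\cap v(\hat a-H)\subseteq\Gamma^{<0}$, as required.

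No serious obstacle is anticipated here: once one uses Corollary~\ref{cor:excu independent of Q} to collapse $\exc^{\operatorname{u}}(A)$ to the single archimedean value $v\mathfrak{f}$, everything reduces to the ``self-referential'' observation that $v\mathfrak{f}\in v(\hat a-H)\cap\Gamma^{>}$ immediately violates the $\hat a$-repulsivity condition at $\gamma=v\mathfrak{f}$, together with the standard $H$-field sign calculus for logarithmic derivatives.
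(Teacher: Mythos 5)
Your argument is correct and follows essentially the same route as the paper's: factor out to $A = \der - f$ with $f\in H$, use Corollary~\ref{cor:excu independent of Q} to identify $\exc^{\operatorname{u}}(A)=\{v\mathfrak{f}\}$ with $\mathfrak{f}^\dagger = f = -1/\fv(A) \succ 1$, and then split into the two alternatives of the repulsivity condition exactly as the paper does (the paper works with $b:=\dabs{u}$, which coincides with your $\mathfrak{f}$). The only superficial differences are cosmetic (a sign typo $\fv(A)=1/f$ for $-1/f$, and the first claim is phrased via $\der\mathcal{O}\subseteq\smallo$ rather than just "small derivation," but that holds here since $H$ is $\d$-valued).
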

\begin{proof}
We again arrange $A=\der-f$ and
take $u$, $b$ as in the proof of Lemma~\ref{lem:aAH}. Then
$f\in H$ and $b^\dagger=f=-1/\fv(A)\succ 1$, so $b\nasymp 1$, and thus $0\notin \{vb\}=\exc^{\operatorname{u}}(A)$. Now suppose $A$ splits $\hat a$-repulsively over $K$, that is,
$f>0$ or $f\succ\fm^\dagger$ for all $a$, $\fm$ with $\hat a-a\asymp\fm\prec 1$.
In the first case $f=b^\dagger$ and $b\nasymp 1$ yield $b\succ 1$.
In the second case  $\psi(vb)=vf<\psi(\delta)$ for all $\delta\in\Delta^{\neq}$, hence $vb\notin\Delta$.
\end{proof}

\noindent
Combining Lemma~\ref{lem:order 1 splits strongly} with the previous two lemmas yields:

\begin{cor}\label{cor:split rep ultimate}
Suppose $A\in H [\der]$ and $\fv(A)\prec 1$, and $H$ is of Hardy type. Then~$A$ splits strongly over $K$, and we have the equivalence
$$\text{$A$ splits $\hat a$-repulsively over $K$} \ \Longleftrightarrow\ \exc^{\operatorname{u}}(A) \cap v(\hat a-H)\ \leq\ 0.$$
\end{cor}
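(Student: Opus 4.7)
The plan is to deduce the corollary by assembling the three preceding lemmas in a routine way, noting that ``$H$ of Hardy type'' supplies exactly the archimedean hypothesis needed in Lemma~\ref{lem:aAH}.

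First, for strong splittability: Since $A\in H[\der]$ has order $r=1$, write $A=a(\der-g)$ with $a\in H^\times$ and $g\in H$. As $g\in H$, we have $\Re g=g$ and hence $g\asymp\Re g$ trivially, so $\der-g$ splits strongly over $K$ by Lemma~\ref{lem:order 1 splits strongly}. Multiplying on the left by the nonzero scalar $a$ preserves strong splittability by the first part of Lemma~\ref{lem:Ah splits strongly}, so $A$ itself splits strongly over $K$.

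For the equivalence: The forward direction is immediate from the second displayed lemma preceding the corollary, which tells us that if $A$ splits $\hat a$-repulsively over $K$ then $\exc^{\operatorname{u}}(A)\cap v(\hat a-H)<0$, in particular $\leq 0$. For the reverse direction, suppose $\exc^{\operatorname{u}}(A)\cap v(\hat a-H)\leq 0$. The same lemma also gives $0\notin\exc^{\operatorname{u}}(A)$ (using $\fv(A)\prec 1$), so in fact $\exc^{\operatorname{u}}(A)\cap v(\hat a-H)<0$. Now invoke Corollary~\ref{cor:Hardy type arch C}: since $H$ is a Liouville closed $H$-field, $\Gamma\ne\{0\}$, and $H$ is of Hardy type, the constant field $C$ is archimedean. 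Therefore Lemma~\ref{lem:aAH} applies, yielding an $\hat a$-repulsive splitting of $A$ over $K$.

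There is no real obstacle here; the work has already been done in the two lemmas and in Lemma~\ref{lem:order 1 splits strongly}. The only mildly delicate point is ensuring we convert the weak inequality $\leq 0$ to the strict inequality $<0$ required by Lemma~\ref{lem:aAH}, which is exactly what the statement $0\notin\exc^{\operatorname{u}}(A)$ (a consequence of $A\in H[\der]$ together with $\fv(A)\prec 1$) delivers.
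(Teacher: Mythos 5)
Your proof is correct and follows exactly the route the paper intends: the paper derives the corollary by "combining Lemma~\ref{lem:order 1 splits strongly} with the previous two lemmas," using the remark that $H$ of Hardy type is equivalent (here, via Corollary~\ref{cor:Hardy type arch C}) to $C$ being archimedean, which is precisely how you convert the hypothesis for Lemma~\ref{lem:aAH}. Your handling of the weak versus strict inequality via $0\notin\exc^{\operatorname{u}}(A)$ is the same small point the paper's lemmas are set up to deliver.
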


\subsection*{Defining repulsive-normality}
{\it In this subsection
 $(P, \fm, \hat a)$ is a slot in $H$ of or\-der~$r\ge 1$ with $\hat a\in \hat H\setminus H$ and linear part $L:=L_{P_{\times \fm}}$.} 
 Set $w:=\wt(P)$;
 if $\order L=r$, set $\fv:=\fv(L)$.
We let $a$, $b$ range over $H$ and $\fn$ over $H^\times$.

\begin{definition}  
Call $(P,\fm,\hat a)$   {\bf repulsive-normal} if $\order L=r$, and\index{slot!repulsive-normal}\index{repulsive-normal}
\begin{itemize}
\item[(RN1)] $\fv\prec^\flat 1$; 
\item[(RN2)]  $(P_{\times\fm})_{\geq 1}=Q+R$ where $Q,R\in H\{Y\}$, $Q$ is homogeneous of degree~$1$ and order~$r$,  $L_Q$ splits $\hat a/\fm$-repulsively over $K$, and $R\prec_{\Delta(\fv)} \fv^{w+1} (P_{\times\fm})_1$. 
\end{itemize}
\end{definition}

\noindent
Compare this with ``split-normality'' from Definition~\ref{SN}: clearly repulsive-normal implies split-normal, and hence normal.
If $(P,\fm,\hat a)$ is normal and $L$ splits $\hat a/\fm$-repulsively over $K$, then~$(P,\fm,\hat a)$ is repulsive-normal. 
If~$(P,\fm,\hat a)$ is  repulsive-normal, then so are $(bP,\fm,\hat a)$ for $b\neq 0$ and $(P_{\times\fn},\fm/\fn,\hat a/\fn)$.

\begin{lemma}\label{lem:repulsive-normal comp conj}
Suppose $(P,\fm,\hat a)$ is repulsive-normal and  $\phi$ is active in $H$ such that~${0<\phi\prec 1}$, and $\hat a-a\prec^\flat \fm$
for some $a$. Then the slot $(P^\phi,\fm,\hat a)$ in $H^\phi$ is re\-pul\-sive-nor\-mal.
\end{lemma}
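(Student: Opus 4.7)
The plan is to reduce to the case $\fm = 1$ by the standard multiplicative conjugation (both ``repulsive-normal'' and the hypothesis $\hat a - a \prec^\flat \fm$ transform correctly: $(P^\phi, \fm, \hat a)$ is repulsive-normal iff $(P^\phi_{\times \fm}, 1, \hat a/\fm)$ is, and $\hat a - a \prec^\flat \fm$ becomes $\hat a/\fm - a/\fm \prec^\flat 1$ in $H^\phi$ since the flattening is defined in terms of value groups and asymptotic couples that are preserved). So we may assume $\fm = 1$, and we have some $a\in H$ with $\hat a - a \prec^\flat 1$.

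Next, apply Lemma~\ref{lem:split-normal comp conj} to $(P,1,\hat a)$ to conclude that $(P^\phi,1,\hat a)$ is split-normal. More importantly, extracting from that proof: if $Q, R$ witness (RN2) for $(P,1,\hat a)$ (so in particular $L_Q$ splits $\hat a$-repulsively over $K$), then $Q^\phi$ and $R^\phi$ witness (SN2) for $(P^\phi,1,\hat a)$, because $P_{\geq 1}^\phi = (P_{\geq 1})^\phi = Q^\phi + R^\phi$, the differential polynomial $Q^\phi$ is still homogeneous of degree $1$ and order $r$, and the estimate $R^\phi \prec_{\Delta(\fv)} \fw^{w+1} P_1^\phi$ (with $\fw := \fv(L_{P^\phi})$) follows from $R^\phi \asymp_{\Delta(\fv)} R$ and $\fv \asymp_{\Delta(\fv)} \fw$ via [ADH, 11.1.4] and Lemma~\ref{lem:v(Aphi)}, exactly as in the proof of Lemma~\ref{lem:split-normal comp conj}.

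The only new ingredient is upgrading the splitting of $L_{Q^\phi} = (L_Q)^\phi$ from ``splits over $K^\phi$'' to ``splits $\hat a$-repulsively over $K^\phi$''. This is precisely the content of Lemma~\ref{lem:hata-repulsive splitting compconj}: the assumption $\hat a - a \prec^\flat 1$ for some $a \in H$ and $0 < \phi \prec 1$ active is exactly the hypothesis of that lemma, so starting from an $\hat a$-repulsive splitting $(g_1,\dots,g_r)$ of $L_Q$ over $K$ we obtain an $\hat a$-repulsive splitting $(\phi^{-1}(g_j - (r-j)\phi^\dagger))_{j}$ of $(L_Q)^\phi$ over $K^\phi$.

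Combining these observations gives (RN1) (from $\fw \prec^\flat_\phi 1$) and (RN2) (from the decomposition $P_{\geq 1}^\phi = Q^\phi + R^\phi$ together with the repulsive splitting of $L_{Q^\phi}$), which yields repulsive-normality of $(P^\phi, 1, \hat a)$ in $H^\phi$. The only potentially delicate point is verifying that the hypothesis $\hat a - a \prec^\flat \fm$ is used only to invoke Lemma~\ref{lem:hata-repulsive splitting compconj} (which requires the strict $\phi \prec 1$, matching our hypothesis), and that the strictness is indeed what forces us to exclude $\phi \asymp 1$ here even though Lemma~\ref{lem:split-normal comp conj} allows $\phi \preceq 1$.
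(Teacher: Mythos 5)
Your proposal is correct and follows essentially the same route as the paper's proof: arrange $\fm=1$, reuse the decomposition $P^\phi_{\geq 1}=Q^\phi+R^\phi$ and the estimate from the proof of Lemma~\ref{lem:split-normal comp conj}, and invoke Lemma~\ref{lem:hata-repulsive splitting compconj} (which is where the hypotheses $\hat a-a\prec^\flat\fm$ and $0<\phi\prec 1$ enter) to get an $\hat a$-repulsive splitting of $L_{Q^\phi}$ over $K^\phi$.
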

\begin{proof}
First arrange $\fm=1$, and let $Q$, $R$ be as in (RN2) for $\fm=1$. 
Now $(P^\phi,1,\hat a)$ is split-normal by Lemma~\ref{lem:split-normal comp conj}. In fact, 
$P^\phi_{\geq 1} = Q^\phi+R^\phi$, and
the proof
of this lemma shows that $R^\phi \prec_{\Delta(\fw)} \fw^{w+1} P_1^\phi$
where $\fw:=\fv(L_{P^\phi})$.
By Lemma~\ref{lem:hata-repulsive splitting compconj}, $L_{Q^\phi}=L_Q^\phi$ splits $\hat a$-repulsively over $K^\phi$.
So  $(P^\phi,1,\hat a)$ is repulsive-normal.
\end{proof}

\noindent
If $\order L=r$, $\fv\prec^\flat 1$, and $\hat a-a\prec_{\Delta(\fv)} \fm$, then $\hat a - a\prec^\flat \fm$. Thus we obtain from
Lemmas~\ref{lem:good approx to hata} and \ref{lem:repulsive-normal comp conj} the following result:

\begin{cor}\label{cor:repulsive-normal comp conj}  
Suppose  $(P,\fm,\hat a)$ is $Z$-minimal, deep, and repulsive-normal. Let~$\phi$ be active in $H$ with $0<\phi\prec 1$. Then the slot $(P^\phi,\fm,\hat a)$ in $H^\phi$ is repulsive-normal.
\end{cor}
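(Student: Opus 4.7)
The proof is essentially a direct reduction to Lemma~\ref{lem:repulsive-normal comp conj} via Lemma~\ref{lem:good approx to hata}. My plan is as follows.

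First, I would verify that the hypotheses of Lemma~\ref{lem:good approx to hata} hold for $(P,\fm,\hat a)$. Since $(P,\fm,\hat a)$ is deep, it is steep by definition. Specializing the properties (D1) and (D2) in the definition of ``deep'' to the active element $\phi=1$ in $H$ gives $\ddeg S_{P_{\times\fm}}=0$ and $\ddeg P_{\times\fm}=1$; the parenthetical remark attached to (D2) additionally yields $\ndeg P_{\times\fm}=1$. Together with $Z$-minimality (which is part of the hypothesis), all the premises of Lemma~\ref{lem:good approx to hata} are met, so that lemma supplies an element $a\in H$ with $\hat a - a \prec_{\Delta(\fv)} \fm$, where $\fv:=\fv(L_{P_{\times\fm}})$.

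Next, I would invoke the observation stated immediately before the corollary: since $\order L=r$ and $\fv\prec^\flat 1$ (both from the steepness implied by deepness), the relation $\hat a - a \prec_{\Delta(\fv)} \fm$ upgrades to $\hat a - a \prec^\flat \fm$. (The point is that $\fv\prec^\flat 1$ forces $v\fv\notin\Gamma^\flat$, and then H-type monotonicity of $\psi$ yields $\Gamma^\flat\subseteq\Delta(\fv)$, so $\prec_{\Delta(\fv)}$ refines $\prec^\flat$.)

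Finally, with the given active $\phi$ satisfying $0<\phi\prec 1$ and the element $a\in H$ just found witnessing $\hat a - a\prec^\flat \fm$, all hypotheses of Lemma~\ref{lem:repulsive-normal comp conj} are in place, and applying it to $(P,\fm,\hat a)$ directly gives that $(P^\phi,\fm,\hat a)$ is repulsive-normal, as required. There is no real obstacle here; the content of the corollary is simply that, for $Z$-minimal deep slots, the technical side-condition ``$\hat a-a\prec^\flat \fm$ for some $a$'' in Lemma~\ref{lem:repulsive-normal comp conj} can be arranged automatically by the approximation Lemma~\ref{lem:good approx to hata}.
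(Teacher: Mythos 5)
Your proof is correct and is exactly what the paper has in mind: the corollary is derived by combining Lemma~\ref{lem:good approx to hata} (whose hypotheses you correctly extract from ``$Z$-minimal, deep'' by specializing (D1), (D2) at $\phi=1$) with the remark preceding the corollary and then Lemma~\ref{lem:repulsive-normal comp conj}. Your justification of the upgrade $\hat a-a\prec_{\Delta(\fv)}\fm\Rightarrow \hat a-a\prec^\flat\fm$ (namely $\fv\prec^\flat 1$ gives $v\fv>\Gamma^\flat$, and convexity of $\Gamma^\flat$ then forces $\Gamma^\flat\subseteq\Delta(\fv)$) matches the intended reasoning.
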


\noindent
Before we turn to the task of obtaining repulsive-normal slots, we deal with the preservation of repulsive-normality under refinements.

\begin{lemma}\label{lemrnlem}
Suppose $(P,\fm,\hat a)$ is repulsive-normal, and let $Q$, $R$ be as in~\textup{(RN2)}.
Let $(P_{+a},\fn,\hat a-a)$ be a steep refinement of $(P,\fm,\hat a)$ where $\fn\prec\fm$ or
$\fn=\fm$. Suppose
$$(P_{+a,\times\fn})_{\geq 1} - Q_{\times\fn/\fm} \prec_{\Delta(\fw)} \fw^{w+1}(P_{+a,\times\fn})_1
\qquad\text{where $\fw:=\fv(L_{P_{+a,\times\fn}})$.}$$
Then  $(P_{+a},\fn,\hat a-a)$ is repulsive-normal.
\end{lemma}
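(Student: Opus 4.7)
The plan is to verify (RN1) and (RN2) for the refined slot $(P_{+a},\fn,\hat a-a)$. Condition (RN1) is immediate from steepness: it gives $\order L_{P_{+a,\times\fn}}=r$ and $\fw:=\fv(L_{P_{+a,\times\fn}})\prec^\flat 1$. For (RN2), I take $Q':=Q_{\times\fn/\fm}$ and $R':=(P_{+a,\times\fn})_{\geq 1}-Q_{\times\fn/\fm}$; then $Q'$ is homogeneous of degree $1$ and of order $r$, since multiplicative conjugation preserves these, and $L_{Q'}=L_Q\cdot(\fn/\fm)$. The bound $R'\prec_{\Delta(\fw)}\fw^{w+1}(P_{+a,\times\fn})_1$ is exactly the hypothesis of the lemma.

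The substantive step is verifying that $L_{Q'}$ splits $(\hat a-a)/\fn$-repulsively over $K$. By (RN2) for $(P,\fm,\hat a)$, $L_Q$ has an $\hat a/\fm$-repulsive splitting $(g_1,\dots,g_r)$ over $K$; that is, an $S_0$-repulsive splitting with $S_0:=v(\hat a/\fm-H)=v(\hat a-H)-v\fm$. Set $\delta:=v(\fn/\fm)=v\fn-v\fm\geq 0$. The case $\fn=\fm$ is trivial, so assume $\fn\prec\fm$, giving $\delta>0$; the refinement condition $\hat a-a\prec\fn$ then yields $v\fn\in v(\hat a-H)$, so $\delta\in S_0$. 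I then apply Lemma~\ref{lem:repulsive hata, 2} to $(g_1,\dots,g_r)$ with $\fn/\fm$ in place of the lemma's $\fn$: its parenthetical remark shows that the twisted tuple $(g_1-(\fn/\fm)^\dagger,\dots,g_r-(\fn/\fm)^\dagger)$ is an $(S_0-\delta)$-repulsive splitting of $L_Q\cdot(\fn/\fm)=L_{Q'}$ over $K$. Using $a\in H$ one computes $S_0-\delta=v(\hat a-H)-v\fn=v\big((\hat a-a)/\fn-H\big)$, so this is precisely an $(\hat a-a)/\fn$-repulsive splitting.

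The only delicate point is tracking two shifts simultaneously: the twist of each $g_j$ by the logarithmic derivative $(\fn/\fm)^\dagger$ arising from multiplicative conjugation, and the translation of the ``repulsion set'' from $S_0=v(\hat a/\fm-H)$ to $S_0-\delta=v((\hat a-a)/\fn-H)$. Since Lemma~\ref{lem:repulsive hata, 2} already bundles both adjustments into a single statement, I do not expect any genuine obstacle beyond this bookkeeping.
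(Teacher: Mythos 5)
Your proof is correct and follows essentially the same route as the paper: the paper also reduces (RN2) for the refinement to showing that $L_{Q_{\times\fn/\fm}}=L_Q\cdot(\fn/\fm)$ splits $(\hat a-a)/\fn$-repulsively, obtaining this from the $\hat a/\fm$-repulsive splitting of $L_Q$ via Corollary~\ref{cor:repulsive hata}, which is just the packaged form of Lemma~\ref{lem:repulsive hata, 2} that you invoke directly. Your explicit bookkeeping of $S_0$, $\delta=v(\fn/\fm)\in S_0$ (using downward closedness of $v(\hat a-H)$) and of (RN1) via steepness is exactly what the paper leaves implicit.
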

\begin{proof}
By (RN2), $L_Q$ splits $\hat a/\fm$-repulsively over $K$, so $L_Q$ also splits
$(\hat a-a)/\fm$-repulsively over $K$. We have $(\hat a-a)/\fm\prec \fn/\fm \prec 1$ or
$(\hat a-a)/\fm\prec 1=\fn/\fm$, so~$L_Q$ splits $(\hat a-a)/\fn$-repulsively over $K$ by the first part of Corollary~\ref{cor:repulsive hata}, and hence~$L_{Q_{\times\fn/\fm}}=L_Q\cdot(\fn/\fm)$  splits $(\hat a-a)/\fn$-repulsively over $K$ by the second part of that
Corollary~\ref{cor:repulsive hata}. Thus $(P_{+a},\fn,{\hat a-a})$ is repulsive-normal.
\end{proof}

\noindent
The proofs of Lemmas~\ref{splitnormalrefine},~\ref{easymultsplitnormal},~\ref{splnq} give the following repulsive-normal analogues of these lemmas, using also Lemma~\ref{lemrnlem}; for Lemma~\ref{lem:5.21 repulsive-normal} below we adopt the notational conventions about $\fn^q$ ($q\in\Q^>$) stated
before Lemma~\ref{splnq}.

\begin{lemma}\label{lem:5.18 repulsive-normal}
If $(P,\fm,\hat a)$ is repulsive-normal and $(P_{+a},\fm,\hat a-a)$ is a refinement of $(P,\fm,\hat a)$, then $(P_{+a},\fm,\hat a-a)$ is also repulsive-normal.
\end{lemma}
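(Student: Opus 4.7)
The proof plan is to reduce to Lemma~\ref{lemrnlem}, which has already been set up exactly for this purpose. First I would note that every repulsive-normal slot is in particular split-normal, so by Lemma~\ref{splitnormalrefine} the refinement $(P_{+a},\fm,\hat a-a)$ is automatically split-normal; in particular, its linear part $L_{P_{+a,\times\fm}}$ has order $r$ and the relevant span condition $\fv(L_{P_{+a,\times\fm}})\asymp_{\Delta(\fv)}\fv$ (with $\fv:=\fv(L_{P_{\times\fm}})$) holds, so the refinement is steep.

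Next I would use the pair $(Q,R)$ witnessing (RN2) for $(P,\fm,\hat a)$ and verify the hypothesis of Lemma~\ref{lemrnlem} in the case $\fn=\fm$. The key point is that the proof of Lemma~\ref{splitnormalrefine} shows, after arranging $\fm=1$ for convenience, that $(P_{+a})_{\geq 1}-Q\prec_{\Delta(\fv)}\fv^{w+1}(P_{+a})_1$; in the original coordinates this translates into $(P_{+a,\times\fm})_{\geq 1}-Q_{\times\fn/\fm}\prec_{\Delta(\fw)}\fw^{w+1}(P_{+a,\times\fn})_1$ with $\fn=\fm$ (so $Q_{\times\fn/\fm}=Q$ and $\fw\asymp_{\Delta(\fv)}\fv$). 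Hence all hypotheses of Lemma~\ref{lemrnlem} are satisfied, and that lemma hands back the $(\hat a-a)/\fm$-repulsive splitting of $L_{Q_{\times\fn/\fm}}=L_Q$ over $K$ via Lemma~\ref{lem:hata-repulsive}(ii) and Corollary~\ref{cor:repulsive hata}.

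There is no real obstacle here beyond a careful bookkeeping of which pieces of the split-normal argument survive: the $Q$ already witnessing repulsive-normality of $(P,\fm,\hat a)$ is reused unchanged, and only the remainder term $R$ is absorbed into a new remainder for $P_{+a,\times\fm}$. The main things to check are therefore that (i) steepness and the order of $L_Q$ are preserved (this is Lemma~\ref{lem:linear part, new}), and (ii) the $\hat a/\fm$-repulsive splitting of $L_Q$ translates to an $(\hat a-a)/\fm$-repulsive splitting of the same operator, which is immediate from Lemma~\ref{lem:hata-repulsive}(ii) since $\hat a-a\prec\fm$. Combining these with Lemma~\ref{lemrnlem} then gives that $(P_{+a},\fm,\hat a-a)$ is repulsive-normal, completing the proof.
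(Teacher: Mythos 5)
Your proof is correct and follows essentially the same route as the paper, which obtains this lemma by rerunning the proof of Lemma~\ref{splitnormalrefine} (in particular the estimate $(P_{+a,\times\fm})_{\geq 1}-Q\prec_{\Delta(\fv)}\fv^{w+1}(P_{+a,\times\fm})_1$ and the comparison $\fv(L_{P_{+a,\times\fm}})\sim_{\Delta(\fv)}\fv$) and then feeding the result into Lemma~\ref{lemrnlem} with $\fn=\fm$. Your bookkeeping of steepness and of reusing the same $Q$ matches the intended argument.
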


\begin{lemma}\label{lem:5.19 repulsive-normal}
Suppose   $(P,\fm,\hat a)$ is repulsive-normal, $\hat a \prec \fn\prec\fm$, and~${[\fn/\fm]\le\big[\fv]}$. Then the refinement $(P,\fn,\hat a)$ of $(P,\fm,\hat a)$
is repulsive-normal: if $\fm$, $P$, $Q$, $\fv$ are as in \textup{(RN2)}, then \textup{(RN2)} holds with $\fn$, $Q_{\times \fn/\fm}$, $R_{\times \fn/\fm}$, $\fv(L_{P_{\times \fn}})$ in place of~$\fm$,~$Q$,~$R$,~$\fv$. 
\end{lemma}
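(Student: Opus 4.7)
The plan is to reduce the proof to two facts we have already established: Lemma \ref{easymultsplitnormal}, which handles the split-normal content, and Lemma \ref{lemrnlem}, which then upgrades split-normality of the refinement to repulsive-normality.

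First I would invoke Lemma \ref{easymultsplitnormal} on $(P,\fm,\hat a)$ (which is repulsive-normal, hence split-normal): since $\hat a\prec\fn\preceq\fm$ and $[\fn/\fm]\le[\fv]$, that lemma yields that $(P,\fn,\hat a)$ is a split-normal refinement of $(P,\fm,\hat a)$, and moreover that condition (SN2) is witnessed by the data $\fn$, $Q_{\times\fn/\fm}$, $R_{\times\fn/\fm}$, $\fw:=\fv(L_{P_{\times\fn}})$ in place of $\fm$, $Q$, $R$, $\fv$. In particular $(P,\fn,\hat a)$ is steep, $\order L_{P_{\times\fn}}=r$, and
$$(P_{\times\fn})_{\ge 1}\ =\ Q_{\times\fn/\fm}+R_{\times\fn/\fm},\qquad R_{\times\fn/\fm}\ \prec_{\Delta(\fw)}\ \fw^{w+1}(P_{\times\fn})_1.$$

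The remaining task is to promote the splitting over $K$ of $L_{Q_{\times\fn/\fm}}=L_Q\cdot(\fn/\fm)$ to an $\hat a/\fn$-repulsive splitting. This is precisely what Lemma \ref{lemrnlem} accomplishes: applied with $a=0$ and our steep refinement $(P,\fn,\hat a)$, whose displayed asymptotic condition $(P_{\times\fn})_{\ge 1}-Q_{\times\fn/\fm}\prec_{\Delta(\fw)}\fw^{w+1}(P_{\times\fn})_1$ was just verified, it concludes that $(P,\fn,\hat a)$ is repulsive-normal. Internally, Lemma \ref{lemrnlem} uses that $L_Q$ splits $\hat a/\fm$-repulsively over $K$ and that $\hat a/\fm\prec\fn/\fm\preceq 1$ to invoke Corollary \ref{cor:repulsive hata}: first the observation that $\hat a$-repulsiveness passes to $(\hat a-a)$-repulsiveness (here trivially with $a=0$), and then its multiplicative version, which converts an $(\hat a/\fm)/(\fn/\fm)=\hat a/\fn$-repulsive splitting of $L_Q$ into an $\hat a/\fn$-repulsive splitting of $L_Q\cdot(\fn/\fm)=L_{Q_{\times\fn/\fm}}$.

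There is essentially no obstacle here; the work has already been done in Lemmas \ref{easymultsplitnormal} and \ref{lemrnlem}, and the hypothesis $[\fn/\fm]\le[\fv]$ is exactly what is needed so that the steepness (via Lemma \ref{lem:steep1}) and the split-normality computation of Lemma \ref{easymultsplitnormal} both go through. The only minor bookkeeping point is to confirm that the new span $\fw=\fv(L_{P_{\times\fn}})$ satisfies $\fw\asymp_{\Delta(\fv)}\fv$, so that $\Delta(\fw)=\Delta(\fv)$ and the asymptotic estimate in (RN2) transfers cleanly; this is provided by Lemma \ref{lem:steep1} applied to $(P,\fm,\hat a)$.
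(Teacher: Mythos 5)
Your proof is correct and follows essentially the same route as the paper, which obtains this lemma from the proof of Lemma~\ref{easymultsplitnormal} together with Lemma~\ref{lemrnlem}; invoking the statement of Lemma~\ref{easymultsplitnormal} directly (to get (SN2) with the witnesses $\fn$, $Q_{\times\fn/\fm}$, $R_{\times\fn/\fm}$, $\fv(L_{P_{\times\fn}})$ and hence steepness) and then applying Lemma~\ref{lemrnlem} with $a=0$ is exactly the intended argument.
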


\begin{lemma}\label{lem:5.21 repulsive-normal}
Suppose $\fm=1$, $(P,1,\hat a)$ is repulsive-normal, $\hat a \prec \fn\prec 1$, and for~$\fv:=\fv(L_P)$ 
we have $[\fn^\dagger]<[\fv]<[\fn]$;
then $(P,\fn^q,\hat a)$ is a repulsive-normal refinement
of~$(P,1,\hat a)$ for all but finitely many~$q\in \Q$ with $0<q<1$. 
\end{lemma}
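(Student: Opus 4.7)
\medskip

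\noindent
\textbf{Proof proposal.}
The plan is to bootstrap off Lemma~\ref{splnq}: all of its hypotheses are built into the present statement (repulsive-normality implies split-normality, and the inequalities $[\fn^\dagger]<[\fv]<[\fn]$ transfer verbatim). Applying it will supply, for all but finitely many $q\in\Q$ with $0<q<1$, a split-normal refinement $(P,\fn^q,\hat a)$ of $(P,1,\hat a)$ whose (SN2)-witnessing decomposition is exactly $(P_{\times\fn^q})_{\geq 1}=Q_{\times\fn^q}+(R_{\geq 1})_{\times\fn^q}$, with $Q$, $R$ as in (RN2) for $(P,1,\hat a)$ and with linear part $L_{Q_{\times\fn^q}}=L_Q\cdot\fn^q$. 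In particular (SN1), the estimate for $R$, and the normality of the refinement are already delivered; the only item left to upgrade from (SN2) to (RN2) is the mode of splitting of $L_Q\cdot\fn^q$, which must be $\hat a/\fn^q$-repulsive rather than merely splitting.

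For this upgrade I would apply Lemma~\ref{lem:repulsive hata, 2} with $S:=v(\hat a-H)$, with $\fn^q$ in place of $\fn$, and with $A:=L_Q$ (to which (RN2) for $(P,1,\hat a)$ attaches an $\hat a$-repulsive, i.e., $S$-repulsive, splitting $(g_1,\dots,g_r)$). The only thing to verify is that $\delta:=v\fn^q=qv\fn$ lies in $S\cap\Gamma^>$. Since $\hat a\prec\fn\prec 1$ and $0<q<1$, we have $0<qv\fn<v\fn<v\hat a=v(\hat a-0)$; the downward-closedness of $v(\hat a-H)$ then places $\delta$ in $S\cap\Gamma^>$ as required. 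Lemma~\ref{lem:repulsive hata, 2} then produces the $(S-\delta)$-repulsive splitting
\[
\big(g_1-(\fn^q)^\dagger,\ \dots,\ g_r-(\fn^q)^\dagger\big)
\]
of $L_Q\cdot\fn^q=L_{Q_{\times\fn^q}}$ over $K$, and $S-\delta=v(\hat a-H)-qv\fn=v(\hat a/\fn^q-H)$, so this is precisely an $\hat a/\fn^q$-repulsive splitting.

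Combining this with the split-normal data already obtained from Lemma~\ref{splnq} verifies (RN2) for $(P,\fn^q,\hat a)$ and finishes the argument. There is no genuine obstacle: the only subtlety is the observation that the valuation of the twisting factor $\fn^q$ is small enough to sit inside $v(\hat a-H)$, so that the repulsiveness is preserved under the twist. Thus the proof is essentially a one-line repulsive enhancement of the proof of Lemma~\ref{splnq}, with the hypothesis $[\fn^\dagger]<[\fv]<[\fn]$ serving only the split-normal half.
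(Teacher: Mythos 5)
Your proof is correct and essentially the same as what the paper intends: rerun the proof of Lemma~\ref{splnq} to get the split-normal decomposition $(P_{\times\fn^q})_{\geq 1}=Q_{\times\fn^q}+R'$ with remainder small in the required sense, then upgrade the splitting of $L_{Q_{\times\fn^q}}=L_Q\fn^q$ to $\hat a/\fn^q$-repulsive via the repulsiveness-preservation machinery, where the key observation is that $v(\fn^q)$ lies in $v(\hat a-H)\cap\Gamma^{>}$. The only cosmetic difference is that you appeal directly to Lemma~\ref{lem:repulsive hata, 2}, whereas the paper routes this last step through the auxiliary Lemma~\ref{lemrnlem} (which packages the same content).
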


\subsection*{Achieving repulsive-normality}
In this subsection we adopt the setting of the subsection {\it Achieving split-normality}\/ 
of Section~\ref{sec:split-normal holes}: {\it $H$ is $\upo$-free and $(P,\fm,\hat a)$ is a minimal hole in $K$ of order $r\geq 1$, $\fm\in H^\times$, and $\hat a\in \hat K\setminus K$, with
$\hat a = \hat b + \hat c \imag$, $\hat b, \hat c\in \hat H$.}\/ 
We let $a$ range over $K$, $b$, $c$ over $H$, and~$\fn$ over $H^\times$. We prove here the following variant of Theorem~\ref{thm:split-normal}:

{\sloppy
\begin{theorem}\label{thm:repulsive-normal}   
Suppose the constant field $C$ of $H$ is archimedean and $\deg P>1$. Then one of the following conditions is satisfied:
\begin{list}{}{\leftmargin=2em \labelwidth=2em}
\item[$\mathrm{(i)}$] $\hat b\notin H$ and some $Z$-minimal  slot $(Q,\fm,\hat b)$ in $H$ has a special refinement~${(Q_{+b},\fn,\hat b-b)}$ such that $(Q^\phi_{+b},\fn,\hat b-b)$ is eventually deep and repulsive-nor\-mal; 
\item[$\mathrm{(ii)}$] $\hat c\notin H$ and some $Z$-minimal  slot $(R,\fm,\hat c)$ in $H$ has a special  refinement~${(R_{+c},\fn,\hat c-c)}$ such that   $(R^\phi_{+c},\fn,\hat c-c)$ is eventually deep and repulsive-normal. 
\end{list}
\end{theorem}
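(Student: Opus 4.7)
First I note that $\deg P>1$ combined with Corollary~\ref{degmorethanone} forces $K$ to be $r$-linearly newtonian, and then Lemma~\ref{lem:descent r-linear newt} yields that $H$ itself is $r$-linearly newtonian. The overall strategy is to obtain a deep split-normal refinement as in Theorem~\ref{thm:split-normal}, and then to upgrade split-normality to repulsive-normality by a secondary refinement that exploits the archimedean-$C$ hypothesis through Proposition~\ref{prop:repulsive splitting}.

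I begin with the same case analysis as in Theorem~\ref{thm:split-normal}.  If $\hat c\in H$, Lemma~\ref{lem:hat c in K} yields a minimal (hence $Z$-minimal) hole $(Q,\fm,\hat b)$ in $H$ with $\cc(Q)=\cc(P)$, whose refinement $(Q_{+b},\fn,\hat b-b)$ satisfies that $(Q^\phi_{+b},\fn,\hat b-b)$ is eventually deep and split-normal; being quasilinear and $Z$-minimal, and with $H$ being $r$-linearly newtonian, this refinement is special by Lemma~\ref{lem:special dents}.  The case $\hat b\in H$ is symmetric via the minimal hole $(P_{\times\imag},\fm,-\imag\hat a)$ in $K$ and Lemma~\ref{lem:hat b in K}.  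If $\hat b,\hat c\notin H$, Lemma~\ref{lem:same width} lets me assume (again up to the swap $(P,\fm,\hat a)\leftrightarrow(P_{\times\imag},\fm,-\imag\hat a)$) that $v(\hat b-H)\subseteq v(\hat c-H)$, and then Corollary~\ref{cor:evsplitnormal, 1}, applied to $Q\in Z(H,\hat b)$ of minimal complexity, produces the required deep, split-normal, and special refinement.

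At this point I have a deep split-normal $Z$-minimal special slot $(S,\fn,\hat d)$ in $H$; after multiplicative conjugation I may assume $\fn=1$.  Write $L:=L_S$, $\fv:=\fv(L)\prec^\flat 1$, and decompose $L_{\ge 1}=L_{Q_0}+L_R$ as in (SN2), with $L_{Q_0}=f(\der-g_1)\cdots(\der-g_r)$ splitting over $K$ and $L_R\prec_{\Delta(\fv)}\fv^{w+1}L$.  By specialness $v(\hat d-H)$ is cofinal with a nontrivial convex subgroup $\Delta$ of $\Gamma$; by Corollary~\ref{specialvariant} we have $n\,v(\fv)\in v(\hat d-H)$ for every $n\ge 1$, so $\Z\,v(\fv)\subseteq\Delta$ and $S:=v(\hat d-H)\cap\Gamma^{>}$ satisfies $nS\subseteq S$.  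Since $C$ is archimedean, Proposition~\ref{prop:repulsive splitting} yields a $\gamma\in S$, represented as $v(\fm_0)$ for some $\fm_0\in H^\times$, and an $n_0\in\N$ such that $(g_1-n\fm_0^\dagger,\ldots,g_r-n\fm_0^\dagger)$ is an $S$-repulsive — hence $\hat d$-repulsive — splitting of $L_{Q_0}\fm_0^n$ over $K$ for all $n\ge n_0$.  I choose $\gamma$ in the same archimedean class as $v(\fv)$, then $n\ge n_0$ with $[\fm_0^n]\le[\fv]$ and $v(\hat d)>nv(\fm_0)$ (both possible by the cofinality of $\Delta$ in $v(\hat d-H)$ and the fact that multiples of $v(\fv)$ lie in $S$), and refine $(S,1,\hat d)$ to $(S,\fm_0^n,\hat d)$.

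The hard part — and where most of the paper's machinery does its work — is verifying that this refined slot, and its compositional conjugates by small active $\phi>0$, are deep and repulsive-normal.  Under $[\fm_0^n]\le[\fv]$, Corollary~\ref{cor:deep 2, cracks} preserves deepness with $\fv(L\fm_0^n)\asymp_{\Delta(\fv)}\fv$; the shifted splitting $(g_j-n\fm_0^\dagger)$ is $\hat d$-repulsive by construction; and Corollary~\ref{cor:nepsilon} controls how the $L_R$ error term transforms under the multiplicative twist, so that after the refinement the decomposition still satisfies the size bound in (SN2), now with the $\hat d$-repulsive splitting in place, yielding (RN2).  Eventual behavior under $\phi$-conjugation then follows from Lemmas~\ref{lem:normality comp conj}, \ref{lem:split-normal comp conj}, and~\ref{lem:repulsive-normal comp conj} together with Corollary~\ref{cor:repulsive-normal comp conj}.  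The delicate point that pulls everything together is the simultaneous arrangement of (i) $v(\hat d)>nv(\fm_0)$ so the refinement is legitimate, (ii) $[\fm_0^n]\le[\fv]$ so deepness and the normality structure survive, and (iii) the shifted splitting being $\hat d$-repulsive — and it is precisely the archimedean hypothesis on $C$, via Proposition~\ref{prop:repulsive splitting}, that lets all three be realized by a single choice of $\fm_0$ and $n$.
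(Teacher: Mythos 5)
There is a genuine gap at the heart of your ``upgrade split-normal to repulsive-normal by twisting'' step. Proposition~\ref{prop:repulsive splitting} does not let you prescribe the archimedean class of the element $\gamma\in S\cap\Gamma^>$ it produces: to repulsivize a factor $g_j$ with $\Re g_j\le 0$ one needs $\Re g_j\preceq\fm_0^\dagger$ so that Lemma~\ref{lem:make repulsive} applies, and the only a priori bound on the factors coming from (SN2) is $g_j\preceq\fv^{-1}$ (Corollary~\ref{cor:bound on linear factors}). A non-$\hat d$-repulsive factor may well satisfy $\fv^\dagger\prec\Re g_j<0$, and then the required $\fm_0$ must have $\psi(v\fm_0)\le v(\Re g_j)<\psi(v\fv)$, forcing $[\fm_0]>[\fv]$ (recall $\psi$ is constant on archimedean classes and decreasing). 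So your simultaneous arrangement of (ii) $[\fm_0^n]\le[\fv]$ and (iii) repulsiveness of the shifted splitting is in general impossible: choosing $\gamma$ ``in the same archimedean class as $v(\fv)$'' is exactly what Proposition~\ref{prop:repulsive splitting} does not give you. Once $[\fm_0^n]>[\fv]$, the preservation results you invoke (Corollary~\ref{cor:deep 2, cracks}, Lemma~\ref{easymultsplitnormal}, Lemma~\ref{lem:5.19 repulsive-normal}) no longer apply, and the available large-twist substitutes (Corollary~\ref{cor:nepsilon}, Lemma~\ref{splnq}, Lemma~\ref{lem:5.21 repulsive-normal}) require $[\fn^\dagger]<[\fv]$ -- a hypothesis which is not automatic here and which the paper explicitly says it does not know how to remove (remark after Lemma~\ref{splnq}). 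So the twist-in-place strategy stalls precisely where the real work lies.

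The paper's proof of Proposition~\ref{evrepnormal} avoids this by not twisting the (SN2) decomposition of the slot over $H$ at all. It passes to the specialization $\dot H$ by the convex subgroup $\Delta$ cofinal in $v(\hat b-H)$ and to its completion $\dot H^{\operatorname{c}}$, takes a real splitting of the linear part of $\dot Q_{+\dot{\hat b}}$ (at the exact center $\dot{\hat b}$), applies Corollary~\ref{cor:repulsive splitting} there to repulsivize after multiplying by $\dot\fn^n$, and then rebuilds the decomposition for the refined slot $(Q_{+b},\fn^n,\hat b-b)$ at a new center $b$ via the approximation results (Corollary~\ref{cor:approx LP+f, real, general} together with Lemma~\ref{lem:lift real splitting}), with the error absorbed because it is measured in $\dot H$, i.e.\ modulo $\Delta$. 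Normality of the refinement is then established by a two-case argument -- Corollary~\ref{corcorcor} when $[\fn]\le[\fv(L_{Q_{+b}})]$, and Lemma~\ref{lem:nepsilon} plus Corollary~\ref{cor:normal for small q, prepZ} when $[\fn]$ is larger -- and deepness is recovered afterwards from Corollary~\ref{cor:steep refinement} in the proof of Corollary~\ref{cor:evrepnormal, 1}, rather than being forced through an archimedean-class constraint on the twisting monomial. If you want to salvage your outline, you would have to supply exactly this kind of mechanism for the case $[\fm_0^n]>[\fv]$; as written, the proposal assumes it away. (Your opening case analysis, by contrast, is essentially fine, though the paper does not need the cases $\hat b\in H$, $\hat c\in H$ separately: since $\deg P>1$, Corollaries~\ref{cor:evrepnormal, 1} and~\ref{cor:evrepnormal, 2} cover them.)
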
}

\noindent
To establish this theorem we need to take up the approximation arguments in the proof of Theorem~\ref{thm:split-normal} once again. While in that proof we  treated the cases~${\hat b\in H}$ and~${\hat c\in H}$ separately to obtain stronger results in those cases (Lem\-mas~\ref{lem:hat c in K}, \ref{lem:hat b in K}), 
here  we proceed  differently and first show a repulsive-normal version of  
Proposition~\ref{evsplitnormal} which also applies to those cases. {\it In the rest of
 this subsection we assume that  $C$ is archimedean.}\/

{\sloppy
\begin{prop}\label{evrepnormal}
Suppose  the hole $(P,\fm,\hat a)$ in $K$ is special and $v({\hat b-H})\subseteq v({\hat c-H})$ \textup{(}so $\hat b\notin H$\textup{)}. Let~$(Q,\fm,\hat b)$ be a $Z$-minimal deep normal slot in~$H$. Then~$(Q,\fm,\hat b)$ has a  repulsive-normal refinement.
\end{prop}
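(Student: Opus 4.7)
The plan is to follow the structure of the proof of Proposition~\ref{evsplitnormal} and augment it with a twisting step that converts the split-normal refinement it produces into a repulsive-normal one. After reducing to $\fm=1$ via multiplicative conjugation, the hypothesis $v(\hat b-H)\subseteq v(\hat c-H)$ together with Lemma~\ref{lem:same width} yields $v(\hat a-K)=v(\hat b-H)$, so $\hat b$ is special over $H$, with the same nontrivial convex subgroup $\Delta\subseteq\Gamma$ cofinal in $v(\hat b-H)$ as in $v(\hat a-K)$. Applying Proposition~\ref{evsplitnormal} to $(P,1,\hat a)$ and the deep normal slot $(Q,1,\hat b)$ yields a split-normal refinement $(Q_{+b_0},1,\hat b-b_0)$, which by Corollary~\ref{cor:deep 2, cracks} is also deep; after renaming, I work with this refinement and its split-normal decomposition $Q_{\geq 1}=Q'+R$, where $L_{Q'}$ splits over $K$.

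Next, I invoke Proposition~\ref{prop:repulsive splitting} with $A=L_{Q'}$ and $S:=v(\hat b-H)$: the hypotheses are satisfied since $\Delta\subseteq S$ gives $nS\subseteq S$ and $S\cap\Gamma^{>}\neq\emptyset$, and $C$ is archimedean by assumption. This produces $\gamma\in S\cap\Gamma^{>}$ and $\fn\in H^\times$ with $v\fn=\gamma$ such that for all sufficiently large $n$ the twisted splitting of $L_{Q'}\fn^{n}$ is $\hat b$-repulsive. Using deepness of $(Q,1,\hat b)$ and $Z$-minimality together with Corollary~\ref{cor:closer to minimal holes}, I can then pick $b_1\in H$ making $\hat b-(b_0+b_1)$ as small as desired; in particular, I arrange $\hat b-(b_0+b_1)\prec\fn^{n}$ for the chosen $n$. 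By Lemma~\ref{splitnormalrefine} the further refinement $(Q_{+b_0+b_1},1,\hat b-(b_0+b_1))$ remains split-normal, and Lemma~\ref{lem:linear part, new} ensures that $L_{Q_{+b_0+b_1}}$ and $L_{Q_{+b_0}}$ differ only by a small error, so the splitting structure of $L_{Q'}$ persists under this controllable perturbation.

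Finally, I consider the multiplicative refinement $(Q_{+b_0+b_1},\fn^{n},\hat b-(b_0+b_1))$. Its slot condition holds by construction, and its linear part, which up to the small error is $L_{Q'}\fn^{n}$, splits $\hat b$-repulsively over $K$ by the work of the preceding paragraph, hence $(\hat b-(b_0+b_1))/\fn^{n}$-repulsively via Corollary~\ref{cor:repulsive hata}. An invocation of Lemma~\ref{lemrnlem} (with appropriate bookkeeping of how the split-normal decomposition carries over under the multiplicative conjugation) then shows that this refinement of the original $(Q,\fm,\hat b)$ is repulsive-normal, as desired.

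The hard part will be the archimedean compatibility of the two monomial scales: to apply Lemma~\ref{easymultnormal} inside Lemma~\ref{lemrnlem}, I need $[v(\fn^{n})]\leq[v(\fv)]$ where $\fv:=\fv(L_{Q_{+b_0+b_1}})$. Proposition~\ref{prop:repulsive splitting} affords some flexibility in the choice of $\gamma$, and I expect this, together with an extra compositional conjugation by a small active $\phi>0$ that shrinks $\fv$ in a controlled way per Lemma~\ref{lem:v(Aphi)}, to resolve the incompatibility while preserving the deep, split-normal, and incipient repulsive properties already established. If a single multiplicative step is obstructed, one can alternate additive steps preserving repulsive-normality (Lemma~\ref{lem:5.18 repulsive-normal}) with small multiplicative adjustments (Lemma~\ref{lem:5.19 repulsive-normal}) to drive the construction through.
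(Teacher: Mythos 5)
Your overall strategy—twist a splitting by a large power $\fn^n$ of a monomial with $v\fn\in v(\hat b-H)\cap\Gamma^{>}$, using Proposition~\ref{prop:repulsive splitting} and the archimedean constant field, and use specialness of $\hat b$ to refine additively so that $\hat b-b\prec\fn^n$—is indeed the idea behind the paper's proof. But as written there are two genuine gaps. First, and centrally (you flag it yourself but do not close it): you never prove that the twisted slot $(Q_{+b_0+b_1},\fn^n,\hat b-(b_0+b_1))$ is normal, nor the (RN2) estimate relative to the \emph{new} span $\fw:=\fv(L_{Q_{+b_0+b_1},\times\fn^n})$. None of the tools you cite covers this regime: Lemma~\ref{easymultnormal} and Lemma~\ref{lem:5.19 repulsive-normal} require $[\fn^n]\le[\fv]$; Lemma~\ref{lemrnlem} presupposes steepness of the twisted slot \emph{and} exactly the estimate to be proved; Lemmas~\ref{splnq} and~\ref{lem:5.21 repulsive-normal} only handle powers $\fn^q$ with $0<q<1$, not large $n$. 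Your proposed fixes do not work as stated: by Lemma~\ref{lem:v(Aphi)}, compositional conjugation leaves $\fv$ unchanged up to $\asymp_{\Delta(\fv)}$, so it cannot repair an incompatibility between $[\fn]$ and $[\fv]$; and conjugating by an active $\phi\prec 1$ would in any case only produce a repulsive-normal slot in $H^\phi$, whereas the proposition demands a repulsive-normal refinement of $(Q,\fm,\hat b)$ in $H$ itself. Second, Proposition~\ref{evsplitnormal} does not give what your first step assumes: its conclusion is that $(Q^\phi_{+b},\fm,\hat b-b)$ is \emph{eventually} split-normal, not that the refinement is split-normal in $H$; so using it as a black box already forfeits the non-eventual conclusion. (A smaller repairable point: Corollary~\ref{cor:closer to minimal holes} needs $r$-linear newtonianity, which is not assumed here; since $v(\hat b-H)=v(\hat a-K)$ makes $(Q,\fm,\hat b)$ special, cofinality of the convex subgroup, or Corollary~\ref{specialvariant}, gives the approximations you need.)

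This scale-compatibility issue is precisely what the paper's proof is engineered to handle, and it is why the paper does not quote Proposition~\ref{evsplitnormal} but redoes the approximation argument: using that $(Q,\fm,\hat b)$ is deep, Lemma~\ref{lem:good approx to hata} gives $\Gamma^\flat\subsetneq\Delta$ (so no initial compositional conjugation is needed); one passes to the completed specialization $\dot H^{\operatorname{c}}$ by $\Delta$, applies Corollary~\ref{cor:repulsive splitting} there to choose the twisting monomial with $\dot\fn\prec^\flat 1$ and, via Lemma~\ref{lem:nepsilon}, with $\fv(A\dot\fn^n)\preceq\fv(A)$ when $[\fv(A)]<[\dot\fn]$; then Corollary~\ref{cor:approx LP+f, real, general} and Lemma~\ref{lem:lift real splitting} pull the twisted, $\Delta$-repulsive real splitting back to $H[\der]$ together with the error bound $E\prec_{\Delta(\fw)}\fw^{w+1}L_{Q_{+b,\times\fn^n}}$; finally normality of $(Q_{+b},\fn^n,\hat b-b)$ is verified by a two-case argument (Corollary~\ref{corcorcor} when $[\fn]\le[\fv(L_{Q_{+b}})]$, Corollary~\ref{cor:normal for small q, prepZ} otherwise). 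Your proposal would need an argument of comparable strength at exactly this point; as it stands it asserts the conclusion of that analysis rather than proving it.
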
}
\begin{proof}
As in the proof of Proposition~\ref{evsplitnormal} we first arrange $\fm=1$, and set
$$\Delta\ :=\ \big\{ \delta\in\Gamma:\  \abs{\delta}\in v\big(\hat a-K\big) \big\},$$
a convex subgroup of~$\Gamma$ which is cofinal in $v(\hat a-K) = v({\hat b-H})$,
so $\hat b$ is special over~$H$. 
Lemma~\ref{lem:good approx to hata} applied to $(Q,1,\hat b)$ and $\fv(L_{Q})\prec^\flat 1$ gives that 
$\Gamma^\flat$ is strictly contained in $\Delta$. To show that~$(Q,1,\hat b)$ has a repulsive-normal refinement, we follow the proof of Proposition~\ref{evsplitnormal}, skipping the initial compositional conjugation, and arranging first that $P,Q\asymp 1$. Recall from that proof that $\dot{\hat a}\in \dot{K}^{\operatorname{c}}=\dot{H}^{\operatorname{c}}[\imag]$ and~$\Re \dot{\hat a}=\dot{\hat b}\in \dot{H}^{\operatorname{c}}\setminus \dot{H}$, with $\dot{\hat b} \prec 1$, $\dot{Q}\in \dot{H}\{Y\}$, and so~$\dot{Q}_{+\dot{\hat b}}\in \dot{H}^{\operatorname{c}}\{Y\}$. Let~$A\in \dot{H}^{\operatorname{c}}[\der]$ be the linear part
of~$\dot Q_{+\dot{\hat b}}$. Recall from that proof that $1\le s:=\order Q = \order A \le 2r$ and that 
$A$ splits over $\dot{K}^{\operatorname{c}}$. Then Lemma~\ref{hkspl} gives a {\em real\/} splitting
 $(g_1,\dots,g_s)$ of~$A$ over $\dot K^{\operatorname{c}}$:
$$A\ =\ f (\der-g_1)\cdots (\der-g_s), \qquad 0\ne f\in \dot{H}^{\operatorname{c}},\  g_1,\dots, g_s\in \dot K^{\operatorname{c}}.$$
It follows easily from [ADH, 10.1.8] that the real closed $\d$-valued field $\dot{H}$ is an $H$-field, and so its completion $\dot{H}^{\operatorname{c}}$ is also a real closed $H$-field by [ADH, 10.5.9]. Recall also that $\Delta=v(\dot H^\times)$ is the value group  of $\dot H^{\operatorname{c}}$ and properly contains $\Gamma^\flat$. Thus we can apply Corollary~\ref{cor:repulsive splitting} with~$\dot{H}^{\operatorname{c}}$ in the role of $H$ to get $\fn\in\dot{\mathcal O}$ with~$0\neq \dot\fn\prec^\flat 1$ and~$m$ such that for all $n > m$,
$(h_1,\dots,h_s):=({g_1-n\dot\fn^\dagger},\dots,g_s-n\dot\fn^\dagger)$ is a $\Delta$-repulsive splitting of~$A\dot\fn^n$ over $\dot K^{\operatorname{c}}$, so $\Re h_1,\dots, \Re h_s\ne 0$. For any $n$, $A\dot\fn^n$ is the 
linear part  of  $\dot Q_{+\dot{\hat b},\times\dot\fn^n}\in \dot H^{\operatorname{c}}\{Y\}$,
and  $(h_1,\dots,h_s)$ is also a real splitting
of~$A\dot\fn^n$ over $\dot K^{\operatorname{c}}$:
$$A\dot\fn^n\ =\ \dot{\fn}^n f (\der-h_1)\cdots (\der-h_s). $$
By increasing $m$ we arrange that for all $n > m$  we have
$g_j\not\sim n\dot\fn^\dagger$  ($j=1,\dots,s$), and also $\fv(A\dot\fn^n) \preceq \fv(A)$
provided $\big[\fv(A)\big] < [\dot\fn]$;
for the latter part use Lemma~\ref{lem:nepsilon}.
Below we assume $n > m$. Then $\fv(A\dot\fn^n) \prec 1$: to see this use 
Corollary~\ref{corAfv1}, $\fv(A)\prec 1$, and $g_j \preceq h_j$ $(j=1,\dots,s$).  Note  that
$h_1,\dots, h_s\succeq 1$.
We now apply 
Co\-rol\-lary~\ref{cor:approx LP+f, real, general} to
$\dot H$, $\dot K$, $\dot Q$, $s$, $\dot\fn^n$, $\dot{\hat b}$, $\dot\fn^n f$, $h_1,\dots,h_s$
in place of $H$, $K$, $P$, $r$, $\fm$, $f$, $a$, $b_1,\dots,b_r$, respectively, and any $\gamma\in\Delta$
with $\gamma>v(\dot\fn^n), v(\Re h_1),\dots,v(\Re h_s)$.
This  gives $a,b\in\dot{\mathcal O}$  and $b_1,\dots,b_s\in\dot{\mathcal O}_K$ such that $\dot a, \dot b \ne 0$ in
$\dot H$ and such that for the linear part $\tilde{A}\in \dot{H}[\der]$ of $\dot Q_{+\dot b,\times\dot\fn^n}$ we have
$$\dot b\ - \dot{\hat b} \ \prec\ \dot\fn^n,\qquad \tilde{A}\ \sim\ A\dot{\fn}^n,\qquad \order \tilde{A}\ =\ s, \qquad
\fw\ :=\ \fv(\tilde{A})\ \sim\ \fv(A\dot\fn^n),$$
and such that for $w:=\wt(Q)$ and with $\Delta(\fw)\subseteq \Delta$: 
\begin{align*} \tilde{A}\ =\  \tilde{B} +  \tilde{E},& \quad \tilde{B}\ =\ \dot a(\der-\dot b_1)\cdots(\der-\dot b_s)\in \dot{H}[\der],\quad   \tilde{E}\in \dot{H}[\der],\\
   v(\dot b_1-  h_1),&\dots,v(\dot b_s- h_s)\ >\ \gamma,\quad   \tilde{E}\ \prec_{\Delta(\fw)}\  \fw^{w+1} \tilde{A},  
   \end{align*}
and $(\dot{b}_1,\dots, \dot{b}_s)$ is a real splitting of $\tilde{B}$ over $\dot{K}$.  This real splitting over $\dot{K}$ has a consequence that will be crucial at the end of the proof: by changing $b_1,\dots, b_s$ if necessary, without changing $\dot{b}_1,\dots, \dot{b}_s$ we arrange that $B:=a(\der-b_1)\cdots (\der-b_s)$ lies in $\dot{\mathcal{O}}[\der]\subseteq H[\der]$ and that
$(b_1,\dots, b_s)$ is a real splitting of $B$ over $K$.   (Lemma~\ref{lem:lift real splitting}.)

 Since $\Re\dot{b}_1\sim \Re h_1,\dots, \Re \dot{b}_s\sim \Re h_s$,
 the implication just before Lemma~\ref{lem:repulsive 1} gives that $(\dot{b}_1,\dots, \dot{b}_s)$ is a $\Delta$-repulsive splitting of $\tilde{B}$ 
over $\dot K$.   
Now $\hat b-b\prec\fn^n\prec 1$, so~$(Q_{+b},1,\hat b-b)$ is a refinement of the normal slot
$(Q,1,\hat b)$ in~$H$, hence $(Q_{+b},1,\hat b-b)$ is normal by Proposition~\ref{normalrefine}.
We claim  that the refinement $(Q_{+b},\fn^n,\hat b-b)$ of~$(Q_{+b},1,\hat b-b)$ is also normal.
If $[\fn] \leq \big[\fv(L_{Q_{+b}})\big]$, this claim holds by Corollary~\ref{corcorcor}.
From Lemma~\ref{lem:linear part, new} and~\ref{lem:dotfv} we obtain:
\begin{align*} \order L_{Q_{+b}}\ &=\ \order L_{Q}\ =\ \order L_{Q_{+\hat b}}\ =\ s,\\
 \fv(L_{Q_{+b}})\ \sim\ \fv(L_Q)\ &\sim\ \fv(L_{Q_{+\hat b}}),\qquad v\big(\fv(L_{Q_{+ \hat b}})\big)\ =\ v\big(\fv(A)\big),
 \end{align*} 
so $v\big(\fv(L_{Q_{+b}})\big)=v\big(\fv(A)\big)$.    Moreover,  by Lemma~\ref{lem:dotfv} and the facts about $\tilde{A}$, 
$$   v\big(\fv(L_{Q_{+ b,\times\fn^n}})\big)\ =\  v\big(\fv(\tilde{A})  \big)\ =\ v\big(\fv(A\dot\fn^n)\big)\ =\ v(\fw). $$
Suppose  $ \big[\fv(L_{Q_{+b}})\big] < [\fn]$. Then $[\fv(A)] < [\dot{\fn}]$, so $\fv(A\dot\fn^n)\preceq\fv(A)$ using $n>m$. Now the asymptotic relations among the various $\fv(\dots)$ above
give $$\fv(L_{Q_{+b,\times\fn^n}})\ \preceq\ \fv(L_{Q_{+b}}), $$ 
hence $(Q_{+b},\fn^n,\hat b-b)$  is normal by Corollary~\ref{cor:normal for small q, prepZ} applied to $H$ and the normal slot~$(Q_{+b},1,\hat b-b)$ in $H$ in the role of $K$ and $(P,1,\hat a)$, respectively. 
Put~$\fv:=\fv(L_{Q_{+b,\times\fn^n}})$, so $\fv\asymp\fw$. Note that $Q_{+b,\times \fn^n}\in \dot{\mathcal{O}}\{Y\}$, so the image of~$L_{Q_{+b,\times \fn^n}}\in \dot{\mathcal{O}}[\der]$ in 
$\dot{H}[\der]$ is $\tilde{A}$. Thus in $H[\der]$ we have:
$$L_{Q_{+b,\times\fn^n}}\ =\ B +   E \qquad \text{where $E\in \dot{\mathcal O}[\der]$, $\ E\prec_{\Delta(\fv)} \fv^{w+1} L_{Q_{+ b,\times\fn^n}}$.}$$
Now $\dot{b}_1,\dots, \dot{b}_s$ are $\Delta$-repulsive, so $b_1,\dots, b_s$ are $\Delta$-repulsive, hence $$B\ =\ a(\der-b_1)\cdots(\der-b_s)$$ splits $\Delta$-repulsively,  and thus $(\hat b-b)/\fn^n$-repulsively.
Therefore $(Q_{+b},\fn^n,\hat b-b)$ is re\-pul\-sive-normal.
\end{proof}

\noindent
Instead of assuming in the above proposition that $(P,\fm,\hat a)$ is special and $(Q,\fm,\hat b)$ is deep and normal, we can assume, as with  Corollary~\ref{cor:evsplitnormal, 1}, that~$\deg P>1$:

\begin{cor}\label{cor:evrepnormal, 1}
Suppose $\deg P>1$ and $v({\hat b}-H)\subseteq v({\hat c}-H)$. Let~$Q\in Z(H,\hat b)$ have minimal complexity.
Then the $Z$-minimal slot~$(Q,\fm,\hat b)$ in $H$ has a special  refinement~$(Q_{+b},\fn,\hat b-b)$ such that 
$(Q^\phi_{+b},\fn,{\hat b-b})$ is eventually deep and repulsive-normal. 
\end{cor}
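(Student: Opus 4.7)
The strategy is to mirror closely the proof of Corollary~\ref{cor:evsplitnormal, 1}, replacing the appeal to Proposition~\ref{evsplitnormal} by one to Proposition~\ref{evrepnormal}, and then propagating repulsive-normality through compositional conjugation via Corollary~\ref{cor:repulsive-normal comp conj}.

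First I would use Lemmas~\ref{lem:quasilinear refinement} and~\ref{ufm} to refine the minimal hole $(P,\fm,\hat a)$ in $K$ to a quasilinear hole $(P_{+a},\fn,\hat a-a)$ with $\fn\in H^\times$. Setting $b:=\Re a$, Lemma~\ref{lem:same width} combined with the hypothesis $v(\hat b-H)\subseteq v(\hat c-H)$ gives $v\big((\hat a-a)-K\big)=v\big((\hat b-b)-H\big)$, so $(Q_{+b},\fn,\hat b-b)$ is a $Z$-minimal refinement of $(Q,\fm,\hat b)$ still satisfying the same cofinality inclusion. After replacing $(P,\fm,\hat a)$ and $(Q,\fm,\hat b)$ by these refinements we may assume that $(P,\fm,\hat a)$ itself is quasilinear. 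Since $\deg P>1$, Corollary~\ref{degmorethanone} shows that $K$ is $r$-linearly newtonian, and Proposition~\ref{nepropsp} then gives that the quasilinear hole $(P,\fm,\hat a)$ is special. Cofinality transfers specialness to $(Q,\fm,\hat b)$ via Lemma~\ref{lem:same width}.

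Next, apply Proposition~\ref{varmainthm} to the $Z$-minimal special slot $(Q,\fm,\hat b)$ to obtain a refinement $(Q_{+b},\fn,\hat b-b)$ and an active $\phi_0\in H^{>}$ with $\phi_0\preceq 1$ such that $(Q^{\phi_0}_{+b},\fn,\hat b-b)$ is deep and normal; by Lemma~\ref{speciallemma} this refinement remains special. Using Lemma~\ref{lem:same width} to pick $a$ with $\Re a=b$, the corresponding refinement $(P_{+a},\fn,\hat a-a)$ of $(P,\fm,\hat a)$ stays quasilinear (Corollary~\ref{cor:ref 2n}) and continues to satisfy the cofinality inclusion. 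Compositionally conjugating by $\phi_0$, the hole $(P^{\phi_0}_{+a},\fn,\hat a-a)$ in $K^{\phi_0}$ is still quasilinear, and $K^{\phi_0}$ remains $r$-linearly newtonian, so Proposition~\ref{nepropsp} again yields that it is special.

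Now apply Proposition~\ref{evrepnormal} with $H^{\phi_0}$, $(P^{\phi_0}_{+a},\fn,\hat a-a)$, and the $Z$-minimal deep normal slot $(Q^{\phi_0}_{+b},\fn,\hat b-b)$ in $H^{\phi_0}$ in place of $H$, $(P,\fm,\hat a)$, and $(Q,\fm,\hat b)$, respectively. This yields a repulsive-normal refinement $\big(Q^{\phi_0}_{+(b+b_0)},\fn_0,\hat b-(b+b_0)\big)$ of $(Q^{\phi_0}_{+b},\fn,\hat b-b)$. Inspection of the proof of Proposition~\ref{evrepnormal} shows that the refining monomial satisfies $[\fn_0/\fn]\le\big[\fv(L_{Q^{\phi_0}_{+b,\times\fn}})\big]$, so Corollary~\ref{cor:deep 2, cracks} gives that this refinement is also deep. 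Passing back to $H$, the resulting slot $\big(Q_{+(b+b_0)},\fn_0,\hat b-(b+b_0)\big)$ is a special refinement of the original $(Q,\fm,\hat b)$, and Corollary~\ref{cor:repulsive-normal comp conj} then yields that $\big(Q^{\phi}_{+(b+b_0)},\fn_0,\hat b-(b+b_0)\big)$ is deep and repulsive-normal for every active $\phi>0$ in $H$ with $\phi\preceq\phi_0$ sufficiently small.

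The main obstacle will be verifying that deepness survives the step through Proposition~\ref{evrepnormal}, since that proposition as stated only produces a repulsive-normal refinement. This forces us to examine the internal construction in its proof, where the refining monomial $\fn_0$ is chosen as a multiplicative conjugate by $\dot\fn^n$ with $[\dot\fn]>[\fv(L_Q)]$ (i.e.\ in the range governed by Corollary~\ref{cor:deep 2, cracks}); that is exactly the hypothesis needed to preserve deepness, so no additional argument is required beyond citing this corollary explicitly. A secondary bookkeeping point is the preservation of the inclusion $v(\hat b-H)\subseteq v(\hat c-H)$ under each refinement, but this is automatic because both sides translate by the same element of $\Gamma$ under additive conjugation.
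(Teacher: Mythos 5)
Your overall route is the same as the paper's: refine to arrange quasilinearity and hence specialness of $(P,\fm,\hat a)$ and $(Q,\fm,\hat b)$, apply Proposition~\ref{varmainthm} to get a deep and normal slot $(Q^{\phi_0}_{+b},\fn,\hat b-b)$, transfer the refinement to the $K$-side via Lemma~\ref{lem:same width}, feed the data into Proposition~\ref{evrepnormal}, and finish with Corollary~\ref{cor:repulsive-normal comp conj}. The one genuine gap is your justification that the refinement produced by Proposition~\ref{evrepnormal} is deep. You claim that inspection of its proof shows the refining monomial $\fn_0$ satisfies $[\fn_0/\fn]\le\big[\fv(L_{Q^{\phi_0}_{+b,\times\fn}})\big]$, so that Corollary~\ref{cor:deep 2, cracks} applies. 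That is not what the construction gives: there the monomial is $\fn^n$ with $\fn\in\dot{\mathcal O}$, $\dot\fn\prec^\flat 1$, chosen via Corollary~\ref{cor:repulsive splitting} applied in $\dot H^{\operatorname{c}}$, and no comparison of $[\fn^n]$ with $[\fv]$ is arranged; indeed the proof of Proposition~\ref{evrepnormal} explicitly treats the two cases $[\fn]\le\big[\fv(L_{Q_{+b}})\big]$ and $\big[\fv(L_{Q_{+b}})\big]<[\fn]$ when verifying normality (using Corollary~\ref{corcorcor} in the first case and Corollary~\ref{cor:normal for small q, prepZ} in the second). Your closing paragraph even asserts $[\dot\fn]>\big[\fv(L_Q)\big]$, which is the opposite of the hypothesis of Corollary~\ref{cor:deep 2, cracks}, so this step is internally inconsistent as well as unsupported.

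The fix is short and is what the paper does: the refinement produced by Proposition~\ref{evrepnormal} is repulsive-normal, hence normal, hence steep; since $(Q^{\phi_0}_{+b},\fn,\hat b-b)$ is $Z$-minimal and deep, Corollary~\ref{cor:steep refinement} shows that every steep refinement of it is deep. With that substitution your argument goes through; the remaining bookkeeping (specialness of the final refinement via Lemma~\ref{speciallemma}, persistence of deepness and repulsive-normality under all sufficiently small active $\phi>0$) is handled as you indicate.
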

\begin{proof}
The beginning of the subsection {\it Achieving split-normality}\/
of Section~\ref{sec:split-normal holes} and~$\deg P >1$ give that $K$ is $r$-linearly newtonian.
Lemmas~\ref{lem:quasilinear refinement} and~\ref{ufm} yield a quasilinear refinement~${(P_{+a},\fn,\hat a-a)}$ of 
our hole $(P,\fm,\hat a)$ in $K$.  
Set $b:=\Re a$. By  Lemma~\ref{lem:same width} we have
$$v\big((\hat a-a)-K\big)\ =\ v({\hat a} - K)\ =\ v\big({\hat b} -H\big)\ =\ {v\big((\hat b-b)-H\big)}.$$
Replacing~$(P,\fm,\hat a)$ and $(Q,\fm,\hat b)$ by
$(P_{+a},\fn,\hat a-a)$ and $(Q_{+b},\fn,\hat b-b)$, respectively, we arrange that~$(P,\fm,\hat a)$ is quasilinear.
Then
by Proposition~\ref{prop:hata special} and~$K$ being $r$-linearly newtonian,  $(P,\fm,\hat a)$ is special; 
hence so is~$(Q,\fm,\hat b)$. Proposition~\ref{varmainthm} gives a refinement~$(Q_{+b},\fn,\hat b-b)$ of $(Q,\fm,\hat b)$ and an active $\phi_0\in H^{>}$ such that~$(Q^{\phi_0}_{+b},\fn,{\hat b-b})$ is deep and normal. 
Refinements of $(P,\fm,\hat a)$ remain quasilinear by Corollary~\ref{cor:ref 2n}.  Since $v(\hat b-H)\subseteq v(\hat c -H)$, Lemma~\ref{lem:same width}(ii) gives 
a refinement~$(P_{+a},\fn,\hat a-a)$ of~$(P,\fm,\hat a)$ with $\Re a=b$.
By Lemma~\ref{speciallemma} the minimal hole~$(P^{\phi_0}_{+a},\fn,\hat a-a)$ in~$K^{\phi_0}$ is special.  Proposition~\ref{evrepnormal} applied to~$(P^{\phi_0}_{+a},\fn,{\hat a-a})$,  $(Q^{\phi_0}_{+b},\fn,\hat b-b)$ in place of~$(P,\fm,\hat a)$, $(Q,\fm,\hat b)$, respectively,   gives us~${b_0\in H}$, $\fn_0\in H^\times$ and a repulsive-normal
refinement $\big(Q^{\phi_0}_{+(b+b_0)},\fn_0, {\hat b - (b+b_0)}\big)$ of~$(Q^{\phi_0}_{+b},\fn,{\hat b-b})$. 
This refinement is steep and hence deep by Corollary~\ref{cor:steep refinement}, since~$(Q^{\phi_0}_{+b},\fn,{\hat b-b})$ is deep.  Thus 
by Corollary~\ref{cor:repulsive-normal comp conj},    $\big(Q_{+(b+b_0)},\fn_0, {\hat b - (b+b_0)}\big)$ is a refinement   
of~$(Q,\fm,\hat b)$ such that that
  $\big(Q^\phi_{+(b+b_0)}, \fn_0,{ \hat b - (b+b_0)}\big)$ is eventually deep and re\-pul\-sive-normal. As a refinement of $(Q, \fm, \hat b)$, it is special. 
\end{proof}

\noindent
In the same way that Corollary~\ref{cor:evsplitnormal, 1}   gave rise to Corollary~\ref{cor:evsplitnormal, 2},  
Corollary~\ref{cor:evrepnormal, 1} gives rise to the following:

\begin{cor} \label{cor:evrepnormal, 2}
If  $\deg P>1$, $v(\hat c-H)\subseteq v(\hat b-H)$, and $R\in Z(H,\hat c)$ has minimal complexity,  then the $Z$-minimal slot~$(R,\fm,\hat c)$ in $H$ has a special refinement~$(R_{+c},\fn,\hat c-c)$   such that $(R^\phi_{+c},\fn,{\hat c-c})$ is eventually deep and repulsive-normal.
\end{cor}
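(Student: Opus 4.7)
The plan is to reduce to Corollary~\ref{cor:evrepnormal, 1} by rotating the minimal hole $(P,\fm,\hat a)$ in $K$ through multiplication by $\imag$, exactly as Corollary~\ref{cor:evsplitnormal, 2} was deduced from Corollary~\ref{cor:evsplitnormal, 1}. Concretely, consider the multiplicative conjugate
\[
(P_{\times\imag},\fm,-\imag\hat a)
\]
of $(P,\fm,\hat a)$ in $K=H[\imag]$. Since $\imag\in K^\times$ and $P\in H\{Y\}\subseteq K\{Y\}$, this is again a slot in $K$ of the same complexity as $(P,\fm,\hat a)$, and it inherits minimality as a hole in $K$ from $(P,\fm,\hat a)$; moreover $\deg(P_{\times\imag})=\deg P>1$.

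Write $-\imag\hat a=\hat c+(-\hat b)\imag$, so the real and imaginary parts of $-\imag\hat a\in\hat K\setminus K$ are $\hat c$ and $-\hat b$, respectively. The hypothesis $v(\hat c-H)\subseteq v(\hat b-H)$ then translates, after this rotation, into
\[
v\bigl(\Re(-\imag\hat a)-H\bigr)\ =\ v(\hat c-H)\ \subseteq\ v(\hat b-H)\ =\ v(-\hat b-H)\ =\ v\bigl(\Im(-\imag\hat a)-H\bigr),
\]
which is exactly the hypothesis needed to feed $(P_{\times\imag},\fm,-\imag\hat a)$ into Corollary~\ref{cor:evrepnormal, 1}, playing the roles that were played by $(P,\fm,\hat a)$, $\hat b$, $\hat c$ there. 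Since $Z(H,\hat c)=Z\bigl(H,\Re(-\imag\hat a)\bigr)$, the given $R\in Z(H,\hat c)$ of minimal complexity plays the role of $Q$ in Corollary~\ref{cor:evrepnormal, 1}, and $(R,\fm,\hat c)$ is a $Z$-minimal slot in $H$. Applying that corollary directly yields a special refinement $(R_{+c},\fn,\hat c-c)$ of $(R,\fm,\hat c)$ with $(R^\phi_{+c},\fn,\hat c-c)$ eventually deep and repulsive-normal, which is the desired conclusion.

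There is essentially no obstacle beyond bookkeeping: all relevant properties (being a minimal hole in $K$, the complexity, and the hypothesis on the $v$-widths) are preserved under multiplicative conjugation by $\imag$, and $Z$-sets and the notions of deep, special, and repulsive-normal slots depend only on the real part $\hat c$ of $-\imag\hat a$ and on $R$. Thus the whole argument is the formal transport of Corollary~\ref{cor:evrepnormal, 1} across the automorphism $a\mapsto -\imag a$ of $K$ (restricted to the relevant data over $H$).
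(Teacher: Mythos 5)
Your proof is correct and follows exactly the paper's route: the paper derives Corollary~\ref{cor:evrepnormal, 2} from Corollary~\ref{cor:evrepnormal, 1} by applying the latter to the minimal hole $(P_{\times\imag},\fm,-\imag\hat a)$ in $K$, just as Corollary~\ref{cor:evsplitnormal, 2} is deduced from Corollary~\ref{cor:evsplitnormal, 1}. Your additional bookkeeping (the decomposition $-\imag\hat a=\hat c+(-\hat b)\imag$, the preservation of minimality and complexity under conjugation by $\imag$, and the translation of the width hypothesis) is accurate and merely spells out what the paper leaves implicit.
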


\medskip\noindent
{\em Completing the proof of Theorem~\ref{thm:repulsive-normal}}. 
By Lemma~\ref{lem:same width} we have  $$v(\hat b-H)\subseteq v(\hat c-H)\ \text{ or }\ v(\hat c-H)\subseteq v(\hat b-H),$$ hence
the last two corollaries yield Theorem~\ref{thm:repulsive-normal}.  \qed

\subsection*{Strengthening repulsive-normality}
In this subsection we adopt the setting of the subsection {\it Strengthening split-normality}\/ of Section~\ref{sec:split-normal holes}.
Thus~$(P,\fm,\hat a)$ is a slot in $H$ of order $r\geq 1$ and weight $w:=\wt(P)$, and $L:=L_{P_{\times\fm}}$. If $\order L=r$,
we set~$\fv:=\fv(L)$. We let $a$, $b$ range over $H$ and $\fm$, $\fn$ over~$H^\times$.

\begin{definition}\label{def:strongly repulsive-normal}
We say that 
$(P,\fm,\hat a)$ is {\bf almost strongly repulsive-normal} if $\order L=r$,\index{slot!almost strongly repulsive-normal}\index{almost strongly!repulsive-normal}\index{repulsive-normal!almost strongly}  
$\fv\prec^\flat 1$, and there are $Q, R\in H\{Y\}$ such that
\begin{list}{*}{\addtolength\itemindent{-3.5em}\addtolength\leftmargin{0.5em}}
\item[(RN2as)]  $(P_{\times\fm})_{\geq 1}=Q+R$, $Q$ is homogeneous of degree~$1$ and order~$r$,  $L_Q$ has a strong $\hat a/\fm$-repulsive splitting over $K$, and $R\prec_{\Delta(\fv)} \fv^{w+1} (P_{\times\fm})_1$. 
\end{list}
We say that $(P,\fm,\hat a)$ is {\bf strongly repulsive-normal}\index{slot!strongly repulsive-normal}\index{strongly!repulsive-normal} \index{repulsive-normal!strongly} 
 if $\order L =r$, $\fv\prec^\flat 1$, and there are $Q, R\in H\{Y\}$ such that:
\begin{enumerate}
\item[(RN2s)]  
$P_{\times\fm}=Q+R$, $Q$ is homogeneous of degree~$1$ and order~$r$, 
$L_Q$ has a strong $\hat a/\fm$-repulsive splitting over $K$, and $R\prec_{\Delta(\fv)} \fv^{w+1} (P_{\times\fm})_1$.
\end{enumerate}
\end{definition}

\noindent
If $(P,\fm,\hat a)$ is almost strongly repulsive-normal, then $(P,\fm,\hat a)$ is almost strongly split-normal; likewise without ``almost''. 
Thus we can augment our diagram from Sec\-tion~\ref{sec:split-normal holes} as follows, the implications holding for slots of order $\ge 1$
in real closed $H$-fields with small derivation and asymptotic integration: $$\xymatrix@L=6pt{	\parbox{4.5em}{strongly repulsive-normal} \ar@{=>}[r] \ar@{=>}[d]&  \ \parbox{4.5em}{almost strongly repulsive-normal} \ar@{=>}[r] \ar@{=>}[d] &  \ \parbox{4.5em}{repulsive-normal} \ar@{=>}[d] \\ 
\parbox{3.5em}{strongly split-normal} \ar@{=>}[r] \ar@{=>}[d]&  \ \parbox{3.5em}{almost strongly split-normal} \ar@{=>}[r] & \ \parbox{3em}{split-normal} \ar@{=>}[d] \\ 
\parbox{3.5em}{strictly normal} \ar@{=>}[rr] & & \ \parbox{3em}{normal}}  $$
\newline
Adapting the proof of Lemma~\ref{lem:char strong split-norm} gives:

\begin{lemma}\label{lem:char strong rep-norm}
The following are equivalent:
\begin{enumerate}
\item[\textup{(i)}] $(P,\fm,\hat a)$ is strongly repulsive-normal;
\item[\textup{(ii)}] $(P,\fm,\hat a)$ is almost strongly repulsive-normal and strictly normal;
\item[\textup{(iii)}] $(P,\fm,\hat a)$ is almost strongly repulsive-normal and $P(0)\prec_{\Delta(\fv)} \fv^{w+1} (P_1)_{\times\fm}$.
\end{enumerate}
\end{lemma}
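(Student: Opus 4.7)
The plan is to adapt the proof of Lemma~\ref{lem:char strong split-norm} essentially verbatim, exploiting the fact that the differences between (SN2as)/(SN2s) and (RN2as)/(RN2s) affect only the splitting condition imposed on $L_Q$, and this condition is preserved under the transformations $Q \leftrightarrow Q$ used in the cited proof (i.e., the homogeneous degree-$1$ part is not altered when we move parts of $P(0)$ between the ``good'' summand and the ``error'' summand).

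For (i)$\Rightarrow$(ii), I would assume $(P,\fm,\hat a)$ is strongly repulsive-normal and fix $Q,R$ as in (RN2s). Then $(P_{\times\fm})_{\geq 1}=Q+R_{\geq 1}$ with $R_{\geq 1}\preceq R\prec_{\Delta(\fv)}\fv^{w+1}(P_{\times\fm})_1$, and $L_Q$ still has a strong $\hat a/\fm$-repulsive splitting over $K$. This gives (RN2as), so $(P,\fm,\hat a)$ is almost strongly repulsive-normal. Moreover, $P(0)=R(0)\prec_{\Delta(\fv)}\fv^{w+1}(P_{\times\fm})_1$, and normality (which follows as $(P,\fm,\hat a)$ is repulsive-normal, hence split-normal, hence normal by Lemma~\ref{splnormalnormal}) together with this bound yields strict normality. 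The implication (ii)$\Rightarrow$(iii) is immediate since strict normality is exactly the condition $P(0)\prec_{\Delta(\fv)}\fv^{w+1}(P_{\times\fm})_1$ added on top of normality.

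For (iii)$\Rightarrow$(i), I would take $Q,R$ as in (RN2as), set $\tilde R := P(0)+R$, and observe that $P_{\times\fm}=Q+\tilde R$, with $\tilde R\prec_{\Delta(\fv)}\fv^{w+1}(P_{\times\fm})_1$ because both $P(0)$ (by hypothesis and Lemma~\ref{splnormalnormal} applied to the underlying split-normal data to replace $(P_1)_{\times\fm}$ by $(P_{\times\fm})_1$ up to $\sim$) and $R$ satisfy this bound. Since $L_Q$ is unchanged, it still has a strong $\hat a/\fm$-repulsive splitting over $K$, establishing (RN2s). The main thing to be careful about is the identification $(P_1)_{\times\fm}\sim(P_{\times\fm})_1$ used to pass between the two formulations of the $P(0)$ bound; this is immediate from $(P_{\times\fm})_1=(P_1)_{\times\fm}$.

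There is no substantive obstacle here: the whole content is that strict normality is precisely the ``$P(0)$ small'' refinement of normality, and this interacts trivially with whichever splitting-type condition is demanded of $L_Q$, be it splitting over $K$, strong splitting, $\hat a/\fm$-repulsive splitting, or strong $\hat a/\fm$-repulsive splitting. So the proof is a word-for-word transcription of that of Lemma~\ref{lem:char strong split-norm}, with ``splits strongly over $K$'' replaced by ``has a strong $\hat a/\fm$-repulsive splitting over $K$'' throughout, and with the appeal to Lemma~\ref{splnormalnormal} available since every repulsive-normal slot is split-normal.
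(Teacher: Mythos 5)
Your proof is correct and follows essentially the same route as the paper, which itself only says that the statement is obtained by adapting the proof of Lemma~\ref{lem:char strong split-norm}; your transcription (shuffling $P(0)$ between the degree-$\geq 1$ summand and the error term, noting that $Q$ and hence the splitting property of $L_Q$ are untouched, and using $(P_{\times\fm})_1=(P_1)_{\times\fm}$) is exactly that adaptation.
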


\begin{cor}\label{cor:strongly rep splitting => strongly rep-normal} 
If $L$ has a strong $\hat a/\fm$-repulsive splitting  over $K$, then: 
\begin{align*}
\text{$(P,\fm,\hat a)$ is almost strongly repulsive-normal }&\ \Longleftrightarrow\   \text{$(P,\fm,\hat a)$ is  normal,} \\
\text{$(P,\fm,\hat a)$ is strongly repulsive-normal }&\ \Longleftrightarrow\   \text{$(P,\fm,\hat a)$ is strictly normal.}
\end{align*}
\end{cor}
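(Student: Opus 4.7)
The proof will mirror that of Corollary~\ref{cor:strongly splitting => strongly split-normal}, with ``strong $\hat a/\fm$-repulsive splitting'' replacing ``strong splitting''. The key point is that when $L_Q$ is allowed to have a strong $\hat a/\fm$-repulsive splitting (as opposed to an arbitrary splitting, or arbitrary strong splitting), the hypothesis furnishes this for free with the choice $Q=(P_{\times\fm})_1$, so that (RN2as) reduces to (N2).

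For the first equivalence, the forward direction ($\Rightarrow$) is immediate from the implication diagram already established: almost strongly repulsive-normal implies split-normal (every (RN2as) decomposition is in particular an (SN2) decomposition, since a strong $\hat a/\fm$-repulsive splitting of $L_Q$ is a splitting over $K$), which implies normal by Lemma~\ref{splnormalnormal}. For the converse ($\Leftarrow$), assume $(P,\fm,\hat a)$ is normal. Then $\order L=r$ and $\fv:=\fv(L)\prec^\flat 1$. Set
\[ Q\ :=\ (P_{\times\fm})_1,\qquad R\ :=\ (P_{\times\fm})_{>1}. \]
Then $(P_{\times\fm})_{\geq 1}=Q+R$, $Q$ is homogeneous of degree $1$, and since the linear part of a differential polynomial coincides with the linear part of its degree-$1$ homogeneous component we have $L_Q=L$, so $\order Q=r$. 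By hypothesis, $L_Q=L$ admits a strong $\hat a/\fm$-repulsive splitting over $K$. Finally, condition (N2) reads $R\prec_{\Delta(\fv)}\fv^{w+1}(P_{\times\fm})_1=\fv^{w+1}Q$, giving the required estimate. Thus (RN2as) holds, so $(P,\fm,\hat a)$ is almost strongly repulsive-normal.

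For the second equivalence, the forward direction ($\Rightarrow$) is exactly the implication (i)$\Rightarrow$(ii) of Lemma~\ref{lem:char strong rep-norm}, which says that strongly repulsive-normal implies strictly normal. For the converse ($\Leftarrow$), suppose $(P,\fm,\hat a)$ is strictly normal; in particular it is normal, so by the first equivalence it is almost strongly repulsive-normal, and then Lemma~\ref{lem:char strong rep-norm}, (ii)$\Rightarrow$(i), yields strong repulsive-normality.

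There is no real obstacle here: once the parallel statement Corollary~\ref{cor:strongly splitting => strongly split-normal} is taken as a model, the entire argument amounts to verifying the definitions, with Lemma~\ref{lem:char strong rep-norm} packaging the passage between strict normality and the ``strong'' versus ``almost strong'' variants. The only thing one has to be careful about is that the choice $Q=(P_{\times\fm})_1$ really does give $\order Q=r$, which follows from $L_Q=L$ together with normality (guaranteeing $\order L=r$) rather than from the splitting hypothesis alone.
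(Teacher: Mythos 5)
Your proof is correct and takes the approach the paper intends (the paper leaves this corollary without explicit proof, treating it as immediate from Lemma~\ref{lem:char strong rep-norm} and the definitions, exactly as you argue). The one nontrivial observation—that one may take $Q=(P_{\times\fm})_1$, so that $L_Q=L$ and (RN2as) collapses to (N2) with the splitting hypothesis supplied for free—is correctly isolated and justified.
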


\noindent
If $(P,\fm,\hat a)$ is almost strongly repulsive-normal, then  so are
$(bP,\fm,\hat a)$ for $b\neq 0$ and $(P_{\times\fn},\fm/\fn,\hat a/\fn)$, and likewise with ``strongly''
in place of ``almost strongly''.  
The proof of the next lemma is like that of Lemma~\ref{stronglysplitnormalrefine},
 using Lemmas~\ref{lem:5.18 repulsive-normal} and~\ref{lem:char strong rep-norm} in place of
 Lemmas~\ref{splitnormalrefine} and~\ref{lem:char strong split-norm}, respectively.

\begin{lemma}\label{stronglyrepnormalrefine}
Suppose $(P_{+a},\fm,\hat a-a)$ refines $(P,\fm,\hat a)$.
If $(P,\fm,\hat a)$ is almost strongly repulsive-normal, then so is $(P_{+a},\fm,\hat a-a)$.
If  $(P,\fm,\hat a)$ is   strongly repulsive-normal, $Z$-minimal, and 
$\hat a - a \prec_{\Delta(\fv)} \fv^{r+w+1}\fm$, then $(P_{+a},\fm,\hat a-a)$ is strongly repulsive-normal. 
\end{lemma}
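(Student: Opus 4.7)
The plan is to mirror the proof of Lemma~\ref{stronglysplitnormalrefine} step by step, replacing the split-normal ingredients with their repulsive-normal counterparts. As usual, I would first reduce to $\fm = 1$ by passing to the multiplicative conjugate $(P_{\times\fm}, 1, \hat a/\fm)$, since all the relevant properties (almost strongly repulsive-normal, strongly repulsive-normal, strictly normal, $Z$-minimality, and the dominance relation $\hat a - a \prec_{\Delta(\fv)} \fv^{r+w+1}\fm$) are preserved under this conjugation.

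For the first assertion, suppose $(P,1,\hat a)$ is almost strongly repulsive-normal and fix $Q,R \in H\{Y\}$ witnessing (RN2as), so $P_{\geq 1} = Q + R$ with $L_Q$ admitting a strong $\hat a$-repulsive splitting over $K$ and $R \prec_{\Delta(\fv)} \fv^{w+1}P_1$. The proof of Lemma~\ref{splitnormalrefine} (applied here via its repulsive-normal analogue Lemma~\ref{lem:5.18 repulsive-normal}, whose proof proceeds by the same computation) shows that $\order(L_{P_{+a}}) = r$, $\fv(L_{P_{+a}}) \sim_{\Delta(\fv)} \fv$, and that with the \emph{same} $Q$ we can write $(P_{+a})_{\geq 1} = Q + R^+$ where $R^+ \in H\{Y\}$ satisfies $R^+ \prec_{\Delta(\fv)} \fv^{w+1}(P_{+a})_1$; this uses Lemma~\ref{lem:linear part, split-normal, new} applied with $A = L_Q$. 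Since $Q$ is unchanged, $L_Q$ retains its strong splitting over $K$, and because $v\bigl((\hat a - a) - H\bigr) = v(\hat a - H)$, this splitting is simultaneously a strong $(\hat a - a)$-repulsive splitting of $L_Q$ over $K$. Thus (RN2as) holds for $(P_{+a}, 1, \hat a - a)$, proving it is almost strongly repulsive-normal.

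For the second assertion, assume in addition that $(P,\fm,\hat a)$ is strongly repulsive-normal and $Z$-minimal, and $\hat a - a \prec_{\Delta(\fv)} \fv^{r+w+1}\fm$. By Lemma~\ref{lem:char strong rep-norm}, $(P,\fm,\hat a)$ is in particular strictly normal, so Lemma~\ref{stronglynormalrefine, cracks} applies to give that $(P_{+a},\fm,\hat a - a)$ is strictly normal. Combining this with the first part, which gives that the refinement is almost strongly repulsive-normal, the equivalence (i)$\Leftrightarrow$(ii) of Lemma~\ref{lem:char strong rep-norm} concludes that $(P_{+a},\fm,\hat a-a)$ is strongly repulsive-normal.

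There is no real obstacle here: the only substantive point is the observation that the strong $\hat a$-repulsive splitting of $L_Q$ transfers without change to a strong $(\hat a - a)$-repulsive splitting, which follows immediately from $v(\hat a - H) = v((\hat a - a) - H)$ and the definition of $\hat a$-repulsiveness in terms of this set; everything else is a bookkeeping consequence of the analogous split-normal result and the characterization of strong repulsive-normality as the conjunction of almost strong repulsive-normality with strict normality.
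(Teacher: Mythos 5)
Your proposal is correct and follows essentially the same route as the paper: the first part by redoing the split-normal refinement computation (Lemmas~\ref{splitnormalrefine}/\ref{lem:5.18 repulsive-normal} via Lemma~\ref{lem:linear part, split-normal, new}) while noting that the strong $\hat a$-repulsive splitting of $L_Q$ persists because $v\bigl((\hat a-a)-H\bigr)=v(\hat a-H)$, and the second part by combining Lemma~\ref{stronglynormalrefine, cracks} with the characterization in Lemma~\ref{lem:char strong rep-norm}. This is exactly the substitution of repulsive-normal for split-normal ingredients that the paper's proof indicates.
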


\noindent
Here is the key to achieving almost strong repulsive-normality; its proof is similar to that of Lemma~\ref{stronglysplitnormalrefine, q}:

\begin{lemma}\label{stsprr}
Suppose that $(P,\fm,\hat a)$ is repulsive-normal and $\hat a \prec_{\Delta(\fv)} \fm$. Then for all sufficiently small $q\in\Q^>$,
any $\fn\asymp\fv^q\fm$ yields an almost strongly repulsive-normal refinement $(P,\fn,\hat a)$ of $(P,\fm,\hat a)$.
\end{lemma}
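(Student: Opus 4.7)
My plan is to adapt the proof of Lemma~\ref{stronglysplitnormalrefine, q} line by line, substituting the repulsive-normal analogues of the split-normal ingredients at each step. First I would reduce to the case $\fm=1$ by passing to the multiplicative conjugate $(P_{\times\fm},1,\hat a/\fm)$: multiplicative conjugation preserves repulsive-normality, it sends $\fv$ to itself, and it turns the assumption $\hat a\prec_{\Delta(\fv)}\fm$ into $\hat a/\fm\prec_{\Delta(\fv)} 1$. Then I would fix $Q$, $R$ as in (RN2), so $P_{\ge 1}=Q+R$ where $Q$ is homogeneous of degree~$1$ and order~$r$, $L_Q$ admits an $\hat a$-repulsive splitting $(g_1,\dots,g_r)$ over $K$, and $R\prec_{\Delta(\fv)}\fv^{w+1} P_1$.

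Next I would verify that Lemma~\ref{lem:achieve strong repulsive splitting} applies to the operator $L_Q$. From $(P_{\times\fm})_1-Q=R_1\preceq R\prec_{\Delta(\fv)}\fv^{w+1} L$, Lemma~\ref{lem:fv of perturbed op} gives $L_Q\sim L$ and in particular $\fv(L_Q)\sim\fv$, so $\fv(L_Q)\prec^\flat 1$ and $\Delta\big(\fv(L_Q)\big)=\Delta(\fv)$, whence $\hat a\prec_{\Delta(\fv(L_Q))} 1$. Lemma~\ref{lem:achieve strong repulsive splitting} then supplies a $q_0\in\Q^>$ such that for every $q\in\Q$ with $0<q\le q_0$ and every $\fn\asymp|\fv|^q$, the tuple $(g_1-\fn^\dagger,\dots,g_r-\fn^\dagger)$ is a strong $\hat a/\fn$-repulsive splitting of $L_Q\fn$ over $K$. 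Because the fixed elements $\fv^q,|\fv|^q\in H^\times$ both have logarithmic derivative $q\fv^\dagger$, they differ by a constant in $C^\times$ of absolute value~$1$, so $\fv^q\asymp|\fv|^q$; consequently the condition $\fn\asymp\fv^q$ is equivalent to $\fn\asymp|\fv|^q$.

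Finally, for any such $q$ and $\fn$, Lemma~\ref{lem:5.19 repulsive-normal} (noting $[\fn]\leq[\fv]$, which is automatic for $q\leq 1$) produces a repulsive-normal refinement $(P,\fn,\hat a)$ of $(P,1,\hat a)$ with decomposition $(P_{\times\fn})_{\geq 1}=Q_{\times\fn}+R_{\times\fn}$. Its linear part $L_{Q_{\times\fn}}=L_Q\fn$ carries the strong $\hat a/\fn$-repulsive splitting produced in the previous step, so (RN2as) holds for $(P,\fn,\hat a)$ with data $(Q_{\times\fn},R_{\times\fn})$. Combined with the condition $\fv(L_{P_{\times\fn}})\prec^\flat 1$, already guaranteed by repulsive-normality of $(P,\fn,\hat a)$, this yields almost strong repulsive-normality.

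I expect no serious obstacle: all ingredients are prepared in the preceding lemmas, and the skeleton of the argument mirrors the proof of Lemma~\ref{stronglysplitnormalrefine, q}, with Lemma~\ref{lem:achieve strong repulsive splitting} playing the role of Lemma~\ref{lem:split strongly multconj} and Lemma~\ref{lem:5.19 repulsive-normal} that of Lemma~\ref{easymultsplitnormal}. The only point that calls for explicit verification is the asymptotic identity $\fv(L_Q)\sim\fv(L_{P_{\times\fm}})$, which is needed to transport the hypotheses $\fv\prec^\flat 1$ and $\hat a\prec_{\Delta(\fv)}\fm$ from $L$ to $L_Q$ before invoking Lemma~\ref{lem:achieve strong repulsive splitting}.
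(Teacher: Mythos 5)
Your proposal is correct and follows essentially the same route as the paper's proof: reduce to $\fm=1$, apply Lemma~\ref{lem:achieve strong repulsive splitting} to the operator $L_Q$ from (RN2) to get a strong $\hat a/\fn$-repulsive splitting of $L_Q\fn$ for $\fn\asymp\fv^q$ with $q$ small, and then invoke Lemma~\ref{lem:5.19 repulsive-normal} to conclude (RN2as) for the refinement $(P,\fn,\hat a)$. The extra checks you spell out (that $\fv(L_Q)\sim\fv$ via Lemma~\ref{lem:fv of perturbed op}, and that $\fv^q\asymp\abs{\fv}^q$) are exactly the points the paper leaves implicit, so they are welcome but not a deviation.
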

\begin{proof}
First arrange $\fm=1$. Take $Q$, $R$ as in (RN2) for $\fm=1$.  
Then Lemma~\ref{lem:achieve strong repulsive splitting}  gives $q_0\in \Q^{>}$ such that $\hat a\prec\fv^{q_0}$ and
for all $q\in\Q$ with~${0<q\leq q_0}$ and~$\fn\asymp\fv^q$,
$L_{Q_{\times\fn}}=L_Q\fn$ has a strong $\hat a/\fn$-repulsive splitting over $K$. 
Now  Lem\-ma~\ref{lem:5.19 repulsive-normal}  yields that~$(P,\fn,\hat a)$ is almost strongly repulsive-normal for such~$\fn$.
\end{proof}

\noindent
Using this lemma we now adapt the proof of Corollary~\ref{cor:deep and almost strongly split-normal} to obtain:

\begin{cor}\label{cor:deep and almost strongly repulsive-normal}
Suppose $(P,\fm,\hat a)$ is $Z$-minimal, deep, and repulsive-nor\-mal.
Then $(P,\fm,\hat a)$ has a deep and almost strongly repulsive-nor\-mal refinement.
\end{cor}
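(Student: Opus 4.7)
The plan is to mimic the proof of Corollary~\ref{cor:deep and almost strongly split-normal}, replacing its appeals to split-normal tools with their repulsive-normal counterparts developed in this subsection. Set $\fv:=\fv(L_{P_{\times\fm}})$. Since $(P,\fm,\hat a)$ is deep (hence steep, with $\fv\prec^\flat 1$) and $Z$-minimal with $\ddeg P_{\times\fm}=\ndeg P_{\times\fm}=1$ and $\ddeg S_{P_{\times\fm}}=0$ by the definition of deep together with Lemma~\ref{lem:ddeg=dmul=1 normal} applied to the normality coming from repulsive-normality, Lemma~\ref{lem:good approx to hata} produces $a\in H$ with $\hat a-a\prec_{\Delta(\fv)}\fm$.

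Next, I pass to the refinement $(P_{+a},\fm,\hat a-a)$ of $(P,\fm,\hat a)$. This refinement is again $Z$-minimal, and by Corollary~\ref{cor:deep 2, cracks} it is deep with $\fv(L_{P_{+a,\times\fm}})\asymp_{\Delta(\fv)}\fv$. By Lemma~\ref{lem:5.18 repulsive-normal} it is also repulsive-normal. Thus we have arranged, after this first refinement step, that the hypotheses of Lemma~\ref{stsprr} (with $\hat a-a$ in place of $\hat a$) are satisfied.

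Applying Lemma~\ref{stsprr} to $(P_{+a},\fm,\hat a-a)$ then yields some sufficiently small $q\in\Q^>$ such that for any $\fn\asymp\fv(L_{P_{+a,\times\fm}})^q\fm\asymp_{\Delta(\fv)}\fv^q\fm$ in $H^\times$, the further refinement $(P_{+a},\fn,\hat a-a)$ is almost strongly repulsive-normal. Finally, since $[\fn/\fm]=[\fv^q]\le[\fv]=[\fv(L_{P_{+a,\times\fm}})]$ for small $q>0$, a second application of Corollary~\ref{cor:deep 2, cracks} (with $(P_{+a},\fm,\hat a-a)$ in the role of the ambient deep $Z$-minimal slot) shows that $(P_{+a},\fn,\hat a-a)$ is still deep. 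This refinement of the original $(P,\fm,\hat a)$ therefore has both required properties.

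The only mildly delicate point—the one playing the role of the ``main obstacle''—is the bookkeeping around the span: after the additive refinement the relevant span is $\fv(L_{P_{+a,\times\fm}})$, not $\fv$, so to invoke Lemma~\ref{stsprr} and to re-invoke Corollary~\ref{cor:deep 2, cracks} one must use that these two spans are $\Delta(\fv)$-equivalent (provided by Corollary~\ref{cor:deep 2, cracks} itself in the first step) and that $\Delta(\fv)=\Delta(\fv(L_{P_{+a,\times\fm}}))$. Once this is in place, the chain of implications is routine.
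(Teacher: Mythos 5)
Your proposal is correct and follows essentially the same route as the paper's proof: first use Lemma~\ref{lem:good approx to hata} to get $a$ with $\hat a-a\prec_{\Delta(\fv)}\fm$, pass to the refinement $(P_{+a},\fm,\hat a-a)$ which is deep with $\fv(L_{P_{+a,\times\fm}})\asymp_{\Delta(\fv)}\fv$ (Corollary~\ref{cor:deep 2, cracks}) and repulsive-normal (Lemma~\ref{lem:5.18 repulsive-normal}), then apply Lemma~\ref{stsprr} and re-use Corollary~\ref{cor:deep 2, cracks} to keep deepness. The extra bookkeeping you spell out (the span after the additive step being $\Delta(\fv)$-equivalent to $\fv$, and $[\fn/\fm]=[\fv^q]\le[\fv]$) is exactly what makes the paper's terse final sentence work.
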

\begin{proof} Lemma~\ref{lem:good approx to hata} gives $a$ such that $\hat a - a \prec_{\Delta(\fv)} \fm$. By Corollary~\ref{cor:deep 2, cracks}, the refinement~$(P_{+a},\fm,{\hat a-a})$ of
$(P,\fm,\hat a)$ is deep  with $\fv(L_{P_{+a,\times \fm}})\asymp_{\Delta(\fv)} \fv$, and by Lem\-ma~\ref{lem:5.18 repulsive-normal} it is also
repulsive-normal.
Now apply Lemma~\ref{stsprr} to~${(P_{+a},\fm,\hat a-a)}$ in place of~$(P,\fm,\hat a)$ and again use Corollary~\ref{cor:deep 2, cracks} to preserve being deep.
\end{proof}

\noindent
Next we adapt the proof of Lemma~\ref{lem:strongly split-normal compconj}
to obtain a result about the behavior of (almost) repulsive-normality under compositional conjugation:

\begin{lemma}\label{pqr}
Suppose $\phi$ is active in $H$ with $0<\phi\prec 1$, and there exists $a$ with~$\hat a-a\prec^\flat\fm$. If $(P,\fm,\hat a)$ is almost strongly repulsive-normal, then so is the slot~$(P^\phi,\fm,\hat a)$ in $H^\phi$.  Likewise with ``strongly'' in place of ``almost strongly''.
\end{lemma}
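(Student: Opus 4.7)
The argument closely follows that of Lemma~\ref{lem:strongly split-normal compconj}, with the splitting of $L_Q$ over $K$ replaced by a strong $\hat a/\fm$-repulsive splitting, and the use of Lemma~\ref{lem:split strongly compconj} replaced by Lemma~\ref{lem:hata-repulsive splitting compconj}. First I would replace $(P,\fm,\hat a)$ by its multiplicative conjugate $(P_{\times\fm},1,\hat a/\fm)$ to arrange $\fm=1$; both the hypothesis ``almost strongly repulsive-normal'' and its counterpart in $H^\phi$ are preserved under multiplicative conjugation, and the assumption $\hat a-a\prec^\flat\fm$ becomes $\hat a/\fm - a/\fm\prec^\flat 1$, so after renaming we have an $a\in H$ with $\hat a-a\prec^\flat 1$.

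Now assume $(P,1,\hat a)$ is almost strongly repulsive-normal and take $Q,R\in H\{Y\}$ as in (RN2as) with $\fm=1$. Set $\fv:=\fv(L_P)$ and $\fw:=\fv(L_{P^\phi})=\fv(L_P^\phi)$. The proof of Lemma~\ref{lem:split-normal comp conj} shows $\fv\asymp_{\Delta(\fv)}\fw\prec^\flat_\phi 1$ and that $(P^\phi)_{\geq 1}=Q^\phi+R^\phi$, where $Q^\phi\in H^\phi\{Y\}$ is homogeneous of degree~$1$ and order~$r$, and $R^\phi\prec_{\Delta(\fw)}\fw^{w+1}(P^\phi)_1$. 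The only new ingredient is to upgrade the splitting of $L_Q$: since $(P,1,\hat a)$ is split-normal by the implications recorded after Definition~\ref{def:strongly repulsive-normal}, Lemma~\ref{splnormalnormal} (or rather the argument underlying Lemma~\ref{SN2suse}) gives $\fv(L_Q)\sim\fv\prec^\flat 1$. Applying Lemma~\ref{lem:hata-repulsive splitting compconj}, using $\hat a-a\prec^\flat 1$ together with $\fv(L_Q)\prec^\flat 1$, shows that $L_{Q^\phi}=L_Q^\phi$ inherits a strong $\hat a$-repulsive splitting over $K^\phi$. Hence $(P^\phi,1,\hat a)$ is almost strongly repulsive-normal.

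For the ``strongly'' part, I would invoke Lemma~\ref{lem:char strong rep-norm}: if $(P,\fm,\hat a)$ is strongly repulsive-normal, it is, in particular, strictly normal. Since strict normality is preserved under compositional conjugation (by the remark in the proof of Lemma~\ref{lem:strongly split-normal compconj}, or directly because $P(0)\prec_{\Delta(\fv)}\fv^{w+1}(P_1)_{\times\fm}$ translates into the corresponding inequality for $P^\phi$ via $\fv\asymp_{\Delta(\fv)}\fw$ and $P(0)^\phi=P(0)$), the slot $(P^\phi,\fm,\hat a)$ is strictly normal, and combined with the almost strong repulsive-normality established above, another appeal to Lemma~\ref{lem:char strong rep-norm} gives strong repulsive-normality.

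The main obstacle is merely bookkeeping: one has to verify that the hypothesis $\fv(L_Q)\prec^\flat 1$ needed to import the ``strong'' clause from Lemma~\ref{lem:hata-repulsive splitting compconj} really follows from (RN1) together with the decomposition (RN2as); but this is exactly the content of the observation $\fv(L_Q)\sim\fv$, itself a direct consequence of the perturbation estimate Lemma~\ref{lem:fv of perturbed op} applied to the difference $L_{(P_{\times\fm})_1}-L_Q\prec_{\Delta(\fv)}\fv^{w+1}L_{P_{\times\fm}}$.
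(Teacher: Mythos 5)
Your proposal is correct and follows essentially the same route as the paper: arrange $\fm=1$, take $Q,R$ as in (RN2as), reuse the computation from Lemma~\ref{lem:split-normal comp conj} for the estimate on $R^\phi$, upgrade the splitting of $L_{Q^\phi}=L_Q^\phi$ via Lemma~\ref{lem:hata-repulsive splitting compconj}, and settle the ``strongly'' case through Lemma~\ref{lem:char strong rep-norm} together with preservation of strict normality under compositional conjugation. Your explicit check that $\fv(L_Q)\sim\fv\prec^\flat 1$ (needed for the strong clause of Lemma~\ref{lem:hata-repulsive splitting compconj}) is a detail the paper leaves implicit, but it does not change the argument.
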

\begin{proof}
We arrange $\fm=1$, 
assume $(P,\fm,\hat a)$ is almost strongly repulsive-normal, and take~$Q$,~$R$ as in~(RN2as).
The proof of Lemma~\ref{lem:split-normal comp conj} shows that with $\fw:=\fv(L_{P^\phi})$
we have $\fw\prec^\flat_\phi 1$ and  $(P^\phi)_{\geq 1} = Q^\phi + R^\phi$ where
$Q^\phi\in H^\phi\{Y\}$ is homogeneous of degree~$1$ and order~$r$, $L_{Q^\phi}$ splits over~$K^\phi$, and
$R^\phi \prec_{\Delta(\fw)} \fw^{w+1} (P^\phi)_1$. By Lemma~\ref{lem:hata-repulsive splitting compconj}, $L_{Q^\phi}=L_Q^\phi$ has even a strong  $\hat a$-repulsive splitting over~$K$.
Hence~$(P^\phi,\fm,\hat a)$ is almost strongly repulsive-normal.
For the rest we use Lem\-ma~\ref{lem:char strong rep-norm} and 
the fact that  if $(P,\fm,\hat a)$ is strictly normal, then so is  $(P^\phi,\fm,\hat a)$.
\end{proof}

\noindent
Lemma~\ref{lem:good approx to hata}, the remark preceding Corollary~\ref{cor:repulsive-normal comp conj},  and Lemma~\ref{pqr} yield:

\begin{cor}\label{cor:strongly repulsive-normal compconj}
Suppose $(P,\fm,\hat a)$ is 
$Z$-minimal and deep, and $\phi$ is active in~$H$ with $0<\phi\prec 1$.
If $(P, \fm, \hat a)$ is almost strongly repulsive-normal, then so
is the slot~$(P^\phi,\fm,\hat a)$ in $H^\phi$. Likewise with ``strongly'' in place of ``almost strongly''.
\end{cor}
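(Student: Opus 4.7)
The plan is to chain together the three ingredients already indicated in the statement of the corollary: Lemma~\ref{lem:good approx to hata}, the remark preceding Corollary~\ref{cor:repulsive-normal comp conj}, and Lemma~\ref{pqr}. First, I would unpack what ``$Z$-minimal and deep'' gives us. By definition, deepness means $(P,\fm,\hat a)$ is steep, so $\order L = r$ and $\fv := \fv(L) \prec^\flat 1$ where $L := L_{P_{\times\fm}}$; and the conditions (D1), (D2) applied with $\phi = 1$ (permissible because the derivation of $H$ is small, so $1$ is active and $\preceq 1$) give $\ddeg S_{P_{\times\fm}} = 0$ and $\ddeg P_{\times\fm} = 1$. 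Combined with Lemma~\ref{lem:ddeg=dmul=1 normal} (applicable since $(P,\fm,\hat a)$ is normal by Lemma~\ref{splnormalnormal} and $Z$-minimal), this yields $\ddeg P_{\times\fm} = \dval P_{\times\fm} = 1$, and in particular $\ndeg P_{\times\fm} = 1$.

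Second, the $Z$-minimality and steepness of $(P,\fm,\hat a)$, together with $\ddeg P_{\times\fm} = \ndeg P_{\times\fm} = 1$ and $\ddeg S_{P_{\times\fm}} = 0$, are exactly the hypotheses of Lemma~\ref{lem:good approx to hata}. That lemma then furnishes an element $a \in H$ with $\hat a - a \prec_{\Delta(\fv)} \fm$. Now I invoke the short remark preceding Corollary~\ref{cor:repulsive-normal comp conj}: since $\order L = r$, $\fv \prec^\flat 1$, and $\hat a - a \prec_{\Delta(\fv)} \fm$, we conclude $\hat a - a \prec^\flat \fm$.

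Finally, with such an $a$ in hand, and with $\phi$ active in $H$ satisfying $0 < \phi \prec 1$, Lemma~\ref{pqr} applies literally: if $(P,\fm,\hat a)$ is almost strongly repulsive-normal, then so is $(P^\phi,\fm,\hat a)$ in $H^\phi$; and the same lemma includes the ``strongly'' version, yielding the last assertion of the corollary. The whole argument is short because the heavy lifting has been done in Lemma~\ref{pqr} (the analogue of Lemma~\ref{lem:strongly split-normal compconj} transferred to the repulsive setting) and in the pattern of Corollary~\ref{cor:repulsive-normal comp conj}. I do not anticipate a real obstacle here: the only step requiring care is verifying that deepness plus $Z$-minimality genuinely produce the three numerical hypotheses needed to fire Lemma~\ref{lem:good approx to hata}, and this is routine from the definition of deepness and Lemma~\ref{lem:ddeg=dmul=1 normal}.
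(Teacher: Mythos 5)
Your proof is correct and takes essentially the same route as the paper, whose own proof consists precisely of citing Lemma~\ref{lem:good approx to hata}, the remark preceding Corollary~\ref{cor:repulsive-normal comp conj}, and Lemma~\ref{pqr}. (One small caveat: the fact that $1$ is active in $H$, so that (D1), (D2) may be read off at $\phi=1$, uses asymptotic integration together with small derivation rather than small derivation alone—but both are standing assumptions in this section, so the step is fine.)
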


\noindent
In the case $r=1$, ultimateness yields almost strong repulsive-normality, under suitable assumptions;
more precisely:  

\begin{lemma}\label{lem:rep-norm ultimate}
Suppose $H$ is Liouville closed and of Hardy type, and $\I(K)\subseteq K^\dagger$. 
Assume also that   $(P,\fm,\hat a)$ is normal and special, of order  $r=1$. Then~
$$\text{$(P,\fm,\hat a)$ is ultimate} \quad\Longleftrightarrow\quad\text{$L$ has a strong $\hat a/\fm$-repulsive splitting over $K$,}$$ in which case  $(P,\fm,\hat a)$ is almost strongly repulsive-normal.
\end{lemma}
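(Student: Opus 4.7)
My plan is to run the proof through the two already-available characterizations: Lemma~\ref{lem:ultimate normal} characterizes ``ultimate'' for normal slots via ultimate exceptional values of $L_P$, while Corollary~\ref{cor:split rep ultimate} gives exactly such a characterization of $\hat a/\fm$-repulsive splittings over $K$ in the Hardy-type setting. The bulk of the work is then a routine translation between $\exc^{\operatorname{u}}(L_P)$ and $\exc^{\operatorname{u}}(L)$ via Lemma~\ref{lem:excu properties}, together with the observation that for $r=1$, a strong splitting is automatic and unique.

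More concretely, write $L:=L_{P_{\times\fm}}=L_P\fm$; since $r=1$, we have $L=f^*(\der-g^*)$ with $f^*\in H^\times$ and $g^*\in H$, so every splitting of $L$ over $K$ is $(g^*)$ (up to the scalar $f^*$). Normality gives $\fv:=\fv(L)\prec^\flat 1$ (hence $\fv\prec 1$), and $L\in H[\der]$. Hardy type, Liouville closedness, and $\I(K)\subseteq K^\dagger$ let me invoke Corollary~\ref{cor:split rep ultimate} applied to $L$ and to $\hat a/\fm$ (which is $\prec 1$ and special over $H$ because $(P,\fm,\hat a)$ is special): the corollary asserts that $L$ splits \emph{strongly} over $K$, and that
\[
L\text{ splits $\hat a/\fm$-repulsively over }K \ \Longleftrightarrow\ \exc^{\operatorname{u}}(L)\cap v(\hat a/\fm - H)\ \leq\ 0.
\]
Because the unique splitting $(g^*)$ of $L$ over $K$ is automatically strong, ``has a strong $\hat a/\fm$-repulsive splitting over $K$'' is equivalent to ``splits $\hat a/\fm$-repulsively over $K$''.

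Next, Lemma~\ref{lem:ultimate normal} (applicable since $\I(K)\subseteq K^\dagger$ and $(P,\fm,\hat a)$ is normal) gives
\[
(P,\fm,\hat a)\text{ is ultimate}\ \Longleftrightarrow\ \exc^{\operatorname{u}}(L_P)\cap v(\hat a - H)\ \leq\ v\fm.
\]
From $L=L_P\fm$ and Lemma~\ref{lem:excu properties} we have $\exc^{\operatorname{u}}(L)=\exc^{\operatorname{u}}(L_P)-v\fm$, and of course $v(\hat a/\fm - H)=v(\hat a - H)-v\fm$ since $\fm\in H^\times$; subtracting $v\fm$ from both sides thus transforms this characterization into $\exc^{\operatorname{u}}(L)\cap v(\hat a/\fm - H)\leq 0$, which by the previous paragraph is equivalent to $L$ having a strong $\hat a/\fm$-repulsive splitting over $K$. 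This yields the stated equivalence.

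For the ``in which case'' assertion, assume $(P,\fm,\hat a)$ is ultimate and set $Q:=(P_{\times\fm})_1$ and $R:=(P_{\times\fm})_{>1}$, so that $(P_{\times\fm})_{\geq 1}=Q+R$ and $L_Q=L_{P_{\times\fm}}=L$. Normality gives (N1) $\fv\prec^\flat 1$ and (N2) $R\prec_{\Delta(\fv)}\fv^{w+1}(P_{\times\fm})_1$ with $w=\wt P$, while the just-proved equivalence produces a strong $\hat a/\fm$-repulsive splitting of $L_Q=L$ over $K$; this is exactly (RN2as), so $(P,\fm,\hat a)$ is almost strongly repulsive-normal. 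I do not foresee a genuine obstacle—the whole argument is a careful bookkeeping of the $v\fm$-shift between $L$ and $L_P$, made clean by the fact that for $r=1$ the splitting is unique and automatically strong.
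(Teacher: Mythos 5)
Your proof is correct and follows essentially the same route as the paper's: both combine Lemma~\ref{lem:ultimate normal} with Corollary~\ref{cor:split rep ultimate}, and both use the uniqueness of the order-$1$ splitting to absorb strongness. The paper simply states the Lemma~\ref{lem:ultimate normal} characterization already in its shifted form $\exc^{\operatorname{u}}(L)\cap v\big((\hat a/\fm)-H\big)\le 0$ (leaving the $v\fm$-bookkeeping implicit, which you spell out via Lemma~\ref{lem:excu properties}), and it dispatches the ``in which case'' clause by citing Corollary~\ref{cor:strongly rep splitting => strongly rep-normal} rather than unpacking (RN2as) directly as you do; both of these are faithful abbreviations of what you wrote.
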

\begin{proof}
By Lemma~\ref{lem:ultimate normal}, $(P,\fm,\hat a)$ is ultimate iff
$\exc^{\operatorname{u}}(L)\cap v\big((\hat a/\fm)-H\big)\leq 0$, and the latter is
equivalent to $L$ having a strong $\hat a/\fm$-repulsive splitting over $K$, by Corollary~\ref{cor:split rep ultimate}.
For the rest use  Corollary~\ref{cor:strongly rep splitting => strongly rep-normal}.
\end{proof}

\noindent
Liouville closed $H$-fields are $1$-linearly newtonian by Corollary~\ref{cor:Liouville closed => 1-lin newt}, so in view of Lemma~\ref{lem:special dents}  and Corollary~\ref{cor:normal=>quasilinear} we may replace the hypothesis ``$(P,\fm,\hat a)$ is special'' in the previous lemma
by ``$(P,\fm,\hat a)$ is $Z$-minimal or a hole in $H$''. 
This leads to repulsive-normal analogues of Lemma~\ref{5.30real} and Corollary~\ref{cor:5.30real} for $r=1$:

\begin{lemma}  \label{5.30real rep-norm} 
Assume $H$ is Liouville closed   and of Hardy type, and $\I(K)\subseteq K^\dagger$.  
Suppose $(P,\fm,\hat a)$ is $Z$-minimal and quasilinear of order $r=1$. Then there is a refinement $(P_{+a},\fn,\hat a-a)$  of $(P,\fm,\hat a)$ and an active $\phi$ in $H$
with $0<\phi\preceq 1$   such that~$(P^\phi_{+a},\fn,{\hat a-a})$  is  deep,  strictly normal, and ultimate  \textup{(}so $(P^\phi_{+a},\fn,\hat a-a)$  is strongly repulsive-normal by Lem\-mas~\ref{lem:rep-norm ultimate} and~\ref{lem:char strong rep-norm}\textup{)}.
\end{lemma}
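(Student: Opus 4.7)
The plan is to mimic the proof of Lemma~\ref{5.30real}, with the strong-splitting condition on the linear part replaced by ultimateness, which for order $r=1$ is controlled by Corollary~\ref{cor:ultimate order=1} and Lemma~\ref{lem:rep-norm ultimate}. The standing hypotheses pass through compositional conjugation by active $\phi>0$ in $H$ with $\phi\preceq 1$ and through refinement: Liouville closedness and Hardy type are preserved by compositional conjugation (the latter since it depends only on the asymptotic couple), $\I(K)\subseteq K^\dagger$ transforms to $\I(K^\phi)=\phi^{-1}\I(K)\subseteq\phi^{-1}K^\dagger=(K^\phi)^\dagger$, and being $Z$-minimal and quasilinear is preserved. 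Since $H$ is Liouville closed it is $1$-linearly newtonian with divisible value group by Corollary~\ref{cor:Liouville closed => 1-lin newt}, so Corollary~\ref{mainthm, r=1} yields a refinement $(P_{+a_0},\fn_0,\hat a-a_0)$ and active $\phi_0$ such that $(P^{\phi_0}_{+a_0},\fn_0,\hat a-a_0)$ is deep and normal in $H^{\phi_0}$. Replacing $H$ and the slot by $H^{\phi_0}$ and this refinement, it suffices to prove the lemma with $\phi=1$ assuming $(P,\fm,\hat a)$ itself deep and normal.

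By Lemma~\ref{lem:special dents} the slot is then special, and Corollary~\ref{cor:achieve strong normality, 1} delivers a refinement $(P_{+a_1},\fm,\hat a-a_1)$ that remains deep and is moreover strictly normal, with $\hat a-a_1\prec_{\Delta(\fv)}\fm$ and $\fv(L_{P_{+a_1,\times\fm}})\asymp_{\Delta(\fv)}\fv$. A further relabeling reduces us to the situation where $(P,\fm,\hat a)$ itself is a $Z$-minimal, special, deep, strictly normal slot with $\hat a\prec_{\Delta(\fv)}\fm$. It remains to arrange ultimateness while maintaining these properties.

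By Corollary~\ref{cor:ultimate order=1}, ultimateness of $(P,\fm,\hat a)$ amounts to $\mathfrak g\succeq\fm$ or $\mathfrak g\prec\hat a-H$, where $L_P=f(\der-g)$ with $f,g\in H$ and $\mathfrak g\in H^\times$ satisfies $\mathfrak g^\dagger=g$; equivalently, by Lemma~\ref{lem:rep-norm ultimate}, to the existence of a strong $\hat a/\fm$-repulsive splitting of $L_{P_{\times\fm}}$ over $K$. If this condition already holds, we are done. Otherwise, since $H$ is of Hardy type so that $C$ is archimedean by Corollary~\ref{cor:Hardy type arch C}, $\hat a/\fm\prec 1$ is special over $H$, and $L_{P_{\times\fm}}$ splits over $K$ (trivially, as $r=1$), Corollary~\ref{cor:repulsive hata, 1} yields $a_2\in H$ and $\fn''\in H^\times$ with $\fn''\asymp\hat a/\fm-a_2\prec 1$ such that $L_{P_{\times\fm}}\fn''$ splits $\hat a/\fm$-repulsively over $K$. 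We then refine $(P,\fm,\hat a)$ so that the linear part of the refinement is asymptotic to $L_{P_{\times\fm}}\fn''$: pass to $(P_{+a_2\fm},\fn,\hat a-a_2\fm)$ for an appropriately chosen $\fn\in H^\times$ with $\hat a-a_2\fm\prec\fn\preceq\fm$ and $\fn\asymp\fn''\fm$. This refinement preserves depth (Lemma~\ref{lem:deep 2}) and strict normality (Lemmas~\ref{stronglynormalrefine} and~\ref{lem:strongly normal refine, 2}), and a further multiplicative refinement by $|\fv|^q\fm$ for sufficiently small $q\in\Q^>$ upgrades the $\hat a/\fm$-repulsive splitting of the new linear part to a strong one via Lemma~\ref{lem:achieve strong repulsive splitting}. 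By Lemma~\ref{lem:rep-norm ultimate}, the resulting slot is ultimate.

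The main obstacle is coordinating the refinements, multiplicative adjustments, and parameter choices so that depth, strict normality, and the strong $\hat a$-repulsive splitting producing ultimateness all coexist in the final refinement. The refinement ratios $\fn/\fm$ must satisfy $[\fn/\fm]\le[\fv]$ for Lemma~\ref{lem:deep 2} to preserve depth, while the parameter $q$ in Lemma~\ref{lem:achieve strong repulsive splitting} must be small enough yet still compatible with the other asymptotic constraints. Reconciling these relies crucially on the Hardy type hypothesis, which via Proposition~\ref{prop:Hahn space property} translates archimedean-class inequalities on $\Gamma$ into inequalities between values of $\psi$, supplying the rigidity to produce a single refinement enjoying all three properties simultaneously.
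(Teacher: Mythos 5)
Your first step (reduce via Corollary~\ref{mainthm, r=1} to the case where the slot itself is deep and normal, then use Lemma~\ref{lem:special dents} and Corollary~\ref{cor:achieve strong normality, 1}) matches the paper, but the overall strategy has a genuine gap: you postpone ultimateness to the very end, after strict normality has been secured, and then need further refinements to create it — and those refinements are exactly the kind that can destroy what you have. The data $a_2$, $\fn''$ produced by Corollary~\ref{cor:repulsive hata, 1} come with no quantitative control: Lemma~\ref{stronglynormalrefine} preserves strict normality only when $\hat a-a\prec_{\Delta(\fv)}\fv^{r+w+1}\fm$, Lemma~\ref{lem:strongly normal refine, 2} only for $\fn\asymp\fv^q\fm$ with $q\in\Q^>$ small, and Corollary~\ref{cor:deep 2, cracks} preserves depth only when $[\fn/\fm]\leq[\fv]$; none of these is guaranteed for $\hat a/\fm-a_2$ and $\fn''\asymp\hat a/\fm-a_2$, whose archimedean class is dictated by the choice of $\gamma$ in Proposition~\ref{prop:repulsive splitting} and need not be $\leq[\fv]$. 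Your closing paragraph names this coordination problem as ``the main obstacle'' and then only gestures at Hardy-type ``rigidity'' — that is not an argument, and it is precisely the step a proof must supply. There is also a smaller untreated point: for nonlinear $P$ the repulsive splitting you construct is of $L_{P_{\times\fm}}\fn''$, while the refined slot's linear part is $L_{P_{+a_2\fm}}\fn$; transferring the splitting across that perturbation is not covered by any lemma you cite (the paper's stability results, e.g.\ Lemma~\ref{lem:excu properties}(iii), concern ultimate exceptional values, not splittings, and you do not route through them).

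The paper's proof avoids all of this by reversing the order and never touching repulsive splittings directly: after arranging that $(P,\fm,\hat a)$ is normal, it applies Proposition~\ref{prop:achieve ultimate} (available since $\I(K)\subseteq K^\dagger$) to get an ultimate refinement, and then exploits the key robustness fact — Lemma~\ref{lem:ultimate refinement} together with invariance under compositional conjugation — so that ultimateness survives every subsequent step for free. One then reapplies Corollary~\ref{mainthm, r=1} to make the slot deep and normal, and finally Corollary~\ref{cor:achieve strong normality, 1} to make it deep and strictly normal, with ultimateness intact and $\phi=1$ at the end; the strong repulsive-normality is only harvested afterwards via Lemmas~\ref{lem:rep-norm ultimate} and~\ref{lem:char strong rep-norm}, exactly as in the parenthetical of the statement. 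In short: the robust property (ultimate) must be installed first and the fragile one (strictly normal) last; your proposal does it the other way around and leaves the resulting compatibility problem unresolved.
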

\begin{proof}
For any active $\phi$ in $H$ with $0<\phi\preceq 1$ we may replace~$H$,~$(P,\fm,\hat a)$  by~$H^\phi$,~$(P^\phi,\fm,\hat a)$. We may
 also replace~$(P,\fm,\hat a)$ by any of its refinements. 
Since~$H$ is $1$-linearly newtonian,
Corollary~\ref{mainthm, r=1} gives a refinement~$(P_{+a},\fn,\hat a-a)$ of $(P,\fm,\hat a)$ and an active~$\phi$ in $H$ such that~$0<\phi\preceq 1$ and $(P^\phi_{+a},\fn,\hat a-a)$ is   normal.
Replacing~$H$,~$(P,\fm,\hat a)$ by~$H^\phi$,~$(P^\phi_{+a},\fn,{\hat a-a})$,  
we arrange that~$(P,\fm,\hat a)$ itself is normal. 
Then $(P,\fm,\hat a)$ has an ultimate refinement by Proposition~\ref{prop:achieve ultimate},  and 
applying Corollary~\ref{mainthm, r=1} to this refinement and using Lemma~\ref{lem:ultimate refinement}, we  obtain an ultimate refinement  $(P_{+a},\fn,\hat a-a)$ of~$(P,\fm,\hat a)$  and an active $\phi$ in $H$ with $0<\phi\preceq 1$
 such that the $Z$-minimal slot~$(P_{+a}^\phi,\fn,\hat a-a)$ in $H^\phi$ is deep, normal, and ultimate.
 Again replacing~$H$,~$(P,\fm,\hat a)$ by~$H^\phi$,~$(P^\phi_{+a},\fn,{\hat a-a})$,  
we   arrange that~$(P,\fm,\hat a)$  is deep, normal, and ultimate.
Corollary~\ref{cor:achieve strong normality, 1} yields a deep and strictly normal refinement~$(P_{+a},\fm,{\hat a-a})$ of
$(P,\fm,\hat a)$; this refinement is still ultimate by Lemma~\ref{lem:ultimate refinement}.
Hence $(P_{+a},\fm,{\hat a-a})$ is a refinement of $(P,\fm,\hat a)$ as required,  with~$\phi=1$. 
\end{proof}

\noindent
Combining Lemmas~\ref{lem:quasilinear refinement} and~\ref{5.30real rep-norm} with Corollary~\ref{cor:strongly repulsive-normal compconj} yields:

{\sloppy
\begin{cor}\label{cor:5.30real rep-norm}
Assume $H$ is Liouville closed, $\upo$-free, and of Hardy type, and~$\I(K)\subseteq K^\dagger$.  
Then  every $Z$-minimal slot in $H$ of order $r=1$ has a refinement~$(P,\fm,\hat a)$ such that $(P^\phi,\fm,\hat a)$ is eventually deep, ultimate, and strongly re\-pul\-sive-normal. 
\end{cor}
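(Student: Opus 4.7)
The plan is to combine three ingredients: Lemma~\ref{lem:quasilinear refinement} to pass to a quasilinear refinement, Lemma~\ref{5.30real rep-norm} to obtain an initial compositional conjugate with all of the desired properties, and Corollary~\ref{cor:strongly repulsive-normal compconj} to upgrade that single active element to an ``eventually'' statement.

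Starting from a $Z$-minimal slot $(P_0,\fm_0,\hat a_0)$ in $H$ of order~$1$, I would first note that $H$ is $\d$-valued, $\upo$-free, and has divisible value group, since $H$ is an $\upo$-free Liouville closed (hence real closed) $H$-field. Lemma~\ref{lem:quasilinear refinement} applied with $H$ in the role of $K$ therefore gives a quasilinear refinement $(P_1,\fm_1,\hat a_1)$ of $(P_0,\fm_0,\hat a_0)$, still $Z$-minimal of order~$1$. Next, Lemma~\ref{5.30real rep-norm} applies to $(P_1,\fm_1,\hat a_1)$, because $H$ is Liouville closed, of Hardy type, and satisfies $\I(K)\subseteq K^\dagger$; it produces a refinement $(P,\fm,\hat a)$ of $(P_1,\fm_1,\hat a_1)$ (and hence of the original slot) together with an active $\phi_0>0$ in $H$ with $\phi_0\preceq 1$ such that $(P^{\phi_0},\fm,\hat a)$ is deep, strictly normal, and ultimate. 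By Lemmas~\ref{lem:rep-norm ultimate} and~\ref{lem:char strong rep-norm} it is then also strongly repulsive-normal.

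To promote this single witness $\phi_0$ to an ``eventually'' statement, I would pick any active $\phi^*$ in $H$ with $\phi^*\prec\phi_0$, and verify the three properties for every active $\phi$ in $H$ with $0<\phi\preceq\phi^*$. For such $\phi$, the ratio $\phi/\phi_0$ is active in $H^{\phi_0}$ with $0<\phi/\phi_0\prec 1$, and $(P^\phi,\fm,\hat a)=\big((P^{\phi_0})^{\phi/\phi_0},\fm,\hat a\big)$ viewed in $H^\phi=(H^{\phi_0})^{\phi/\phi_0}$. Deepness of $(P^{\phi_0},\fm,\hat a)$ transfers to $(P^\phi,\fm,\hat a)$ directly from the defining conditions~(D1) and~(D2), while ultimateness transfers by the remarks following Definition~\ref{def:ultimate}. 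Finally, $(P^{\phi_0},\fm,\hat a)$ is $Z$-minimal, deep, and strongly repulsive-normal, so Corollary~\ref{cor:strongly repulsive-normal compconj} applied inside $H^{\phi_0}$ yields that $(P^\phi,\fm,\hat a)$ is strongly repulsive-normal.

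The substantive technical content has already been packed into Lemmas~\ref{lem:quasilinear refinement} and~\ref{5.30real rep-norm}, so no genuine obstacle remains. The only delicate point in this assembly is to apply Corollary~\ref{cor:strongly repulsive-normal compconj} to the slot at level~$\phi_0$ rather than to the original slot, exploiting the $Z$-minimality and deepness acquired at that intermediate step.
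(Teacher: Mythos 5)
Your proposal is correct and follows essentially the same route as the paper, which obtains this corollary precisely by combining Lemma~\ref{lem:quasilinear refinement}, Lemma~\ref{5.30real rep-norm}, and Corollary~\ref{cor:strongly repulsive-normal compconj}. Your extra care in spelling out how deepness and ultimateness persist under further compositional conjugation, and in applying Corollary~\ref{cor:strongly repulsive-normal compconj} at the level of $H^{\phi_0}$, is exactly the intended assembly.
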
}

\noindent
In the next subsection we show how minimal holes of degree $>1$ in $K$ give rise to deep, ultimate,  strongly repulsive-normal,   $Z$-minimal slots in $H$.

\subsection*{Achieving strong repulsive-normality}
Let $H$ be an $\upo$-free Liouville closed $H$-field with small derivation and constant field $C$, and $(P,\fm,\hat a)$ a minimal hole of order~$r\ge 1$ in~$K:=H[\imag]$. Other conventions are as in the subsection {\it Achieving repulsive-normality.}\/
Our goal is to prove a version of Theorem~\ref{thm:repulsive-normal} with ``repulsive-normal'' improved to 
``strongly repulsive-normal~+~ultimate'': 

{\samepage\sloppy
\begin{theorem}\label{thm:strongly repulsive-normal} 
Suppose  $C$ is archimedean, $\I(K)\subseteq K^\dagger$,  and $\deg P>1$. Then one of the following conditions is satisfied:
\begin{enumerate}
\item[$\mathrm{(i)}$] $\hat b\notin H$ and some $Z$-minimal slot $(Q,\fm,\hat b)$ in $H$ has a special refinement~${(Q_{+b},\fn,\hat b-b)}$ such that $(Q^\phi_{+b},\fn,\hat b-b)$ is eventually deep, strongly re\-pul\-sive-normal, and ultimate; 
\item[$\mathrm{(ii)}$] $\hat c\notin H$ and some $Z$-minimal slot $(R,\fm,\hat c)$ in $H$ has a special refinement~${(R_{+c},\fn,\hat c-c)}$ such that $(R^\phi_{+c},\fn,\hat c-c)$ is eventually deep, strongly re\-pul\-sive-normal, and ultimate. 
\end{enumerate}
\end{theorem}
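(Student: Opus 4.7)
The strategy is to augment the proof of Theorem~\ref{thm:repulsive-normal} with two extra enforcement steps: one enforcing the \emph{ultimate} property, and one upgrading \emph{repulsive-normal} to \emph{strongly repulsive-normal}. Three preservation facts make it possible to maintain these properties through the argument. First, ultimateness is invariant under every refinement (Lemma~\ref{lem:ultimate refinement}) and, since $\exc^{\operatorname{u}}(A^\phi)=\exc^{\operatorname{u}}(A)$ by Lemma~\ref{lem:excu properties}, under every compositional conjugation. Second, the hypothesis $\deg P>1$ forces $K$, and hence via Lemma~\ref{lem:descent r-linear newt} also $H$, to be $r$-linearly newtonian (Corollary~\ref{degmorethanone}), so Corollary~\ref{cor:achieve strong normality, 1} is applicable to any $Z$-minimal deep normal special slot and produces a deep strictly normal purely additive refinement; combined with Lemmas~\ref{stronglyrepnormalrefine} and~\ref{lem:char strong rep-norm} this upgrades almost strongly repulsive-normal to strongly repulsive-normal. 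Third, on a $Z$-minimal deep slot, strong repulsive-normality is preserved under compositional conjugation by any active $0<\phi\prec 1$ in $H$ (Corollary~\ref{cor:strongly repulsive-normal compconj}).

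By Lemma~\ref{lem:same width} either $v(\hat b-H)\subseteq v(\hat c-H)$ or $v(\hat c-H)\subseteq v(\hat b-H)$; the second case reduces to the first by replacing $(P,\fm,\hat a)$ with $(P_{\times\imag},\fm,-\imag\hat a)$, so assume the first. Let $Q\in Z(H,\hat b)$ have minimal complexity. Following the proof of Corollary~\ref{cor:evrepnormal, 1}, we first refine to make the minimal hole quasilinear (Lemmas~\ref{lem:quasilinear refinement} and~\ref{ufm}), then invoke Proposition~\ref{prop:hata special} to force the hole, hence also the slot $(Q,\fm,\hat b)$, to be special, and next use Propositions~\ref{varmainthm} and~\ref{evrepnormal} combined with compositional conjugation by some active $\phi_0>0$ in $H$ with $0<\phi_0\preceq 1$ to reach a $Z$-minimal, special, deep, repulsive-normal slot in $H^{\phi_0}$.

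Now enforce ultimateness: since this slot is normal and $\I(K^{\phi_0})\subseteq(K^{\phi_0})^\dagger$, Proposition~\ref{prop:achieve ultimate} yields an ultimate refinement. This refinement may destroy deep and repulsive-normal, but it preserves $Z$-minimality, specialness, and ultimateness. Re-running the cycle consisting of Proposition~\ref{varmainthm} (which needs only $Z$-minimality and specialness) and Proposition~\ref{evrepnormal} (which needs the hole in $K$ to be special, an invariant under refinement and compositional conjugation, and the slot in $H$ to be deep and normal, now produced by this second application of Proposition~\ref{varmainthm}), together with a further compositional conjugation by some active $\phi_1$ in $H^{\phi_0}$ with $0<\phi_1\preceq 1$, restores deep and repulsive-normal while ultimateness is retained. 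Finally, Corollary~\ref{cor:deep and almost strongly repulsive-normal} promotes repulsive-normal to almost strongly repulsive-normal, and then Corollary~\ref{cor:achieve strong normality, 1} plus Lemmas~\ref{stronglyrepnormalrefine} and~\ref{lem:char strong rep-norm} upgrade this to strongly repulsive-normal via a purely additive refinement. Denote the resulting slot refinement of $(Q,\fm,\hat b)$ in $H$ by $(Q_{+b},\fn,\hat b-b)$. By Corollary~\ref{cor:strongly repulsive-normal compconj} and Lemma~\ref{lem:excu properties}, for every sufficiently small active $\phi>0$ in $H$, the slot $(Q^\phi_{+b},\fn,\hat b-b)$ in $H^\phi$ is deep, strongly repulsive-normal, and ultimate, as required.

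The main obstacle is the re-establishment phase after Proposition~\ref{prop:achieve ultimate}: one must check that a repeated application of Proposition~\ref{varmainthm} followed by Proposition~\ref{evrepnormal} still succeeds starting from a slot with only $Z$-minimality, specialness, and ultimateness in hand (normality of the input being unavailable). This works because Proposition~\ref{varmainthm} has no prior normality assumption, and its conclusion supplies the deep and normal input that Proposition~\ref{evrepnormal} needs, while the special status of the underlying hole in $K$ is preserved by refinement and compositional conjugation (Lemma~\ref{speciallemma} and subsequent remarks). The care point throughout is to verify that each refinement and compositional conjugation preserves $Z$-minimality, specialness, and ultimateness, which the general principles of Sections~\ref{sec:holes},~\ref{sec:normalization}, and~\ref{sec:ultimate} guarantee.
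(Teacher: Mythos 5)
Your argument is correct and is essentially the paper's own proof: the same case split via Lemma~\ref{lem:same width} with the reduction to $(P_{\times\imag},\fm,-\imag\hat a)$, the same chain of inputs—Proposition~\ref{prop:hata special} for specialness, Propositions~\ref{varmainthm} and~\ref{evrepnormal} for deep and repulsive-normal, Proposition~\ref{prop:achieve ultimate} for ultimate, Corollary~\ref{cor:deep and almost strongly repulsive-normal} together with Corollary~\ref{cor:achieve strong normality, 1}, Lemmas~\ref{stronglyrepnormalrefine} and~\ref{lem:char strong rep-norm} for the upgrade to strongly repulsive-normal, and Corollary~\ref{cor:strongly repulsive-normal compconj} for the eventual statement—relying on the same persistence of specialness and ultimateness under refinement and compositional conjugation. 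The only difference is ordering: the paper enforces ultimateness right after arranging special and normal (Proposition~\ref{varmainthm} then Proposition~\ref{prop:achieve ultimate}), so a single pass through the repulsive-normal machinery suffices, whereas you enforce it after your first pass and therefore need a second, otherwise redundant, pass—harmless, just slightly less economical.
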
}

\noindent
The proof of this theorem rests on the following two lemmas, where the standing assumption that $H$ is Liouville closed can be dropped.

\begin{lemma}\label{lem:refine to almost strongly repulsive-normal, Q}
Suppose $\hat b\notin H$ and $(Q,\fm,\hat b)$ is  a $Z$-minimal slot in $H$ with a refinement~${(Q_{+b},\fn,\hat b-b)}$  such that $(Q^\phi_{+b},\fn,\hat b-b)$ is eventually deep and repulsive-normal.
Then $(Q,\fm,\hat b)$ has a refinement~${(Q_{+b},\fn,\hat b-b)}$  such that $(Q^\phi_{+b},\fn,\hat b-b)$ is eventually  deep  and almost strongly repulsive-normal.
\end{lemma}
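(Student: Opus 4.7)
The plan is to mimick the proof of the analogous split-normal version, Lemma~\ref{lem:refine to almost strongly split-normal, Q}, replacing the appeals to Corollary~\ref{cor:deep and almost strongly split-normal} and Lemma~\ref{lem:strongly split-normal compconj} by their repulsive-normal counterparts Corollary~\ref{cor:deep and almost strongly repulsive-normal} and Corollary~\ref{cor:strongly repulsive-normal compconj}.

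First I would fix a refinement $(Q_{+b},\fn,\hat b-b)$ of $(Q,\fm,\hat b)$ and an active $\phi_0$ in $H$ with $0<\phi_0\preceq 1$ such that $(Q^{\phi_0}_{+b},\fn,\hat b-b)$ is a deep and repulsive-normal slot in $H^{\phi_0}$; this slot is $Z$-minimal since $Z$-minimality is preserved by refinement and compositional conjugation. Working now inside $H^{\phi_0}$, I would apply Corollary~\ref{cor:deep and almost strongly repulsive-normal} to obtain a deep and almost strongly repulsive-normal refinement
\[
\big((Q^{\phi_0}_{+b})_{+b_0},\fn_0,(\hat b-b)-b_0\big)\ =\ \big(Q^{\phi_0}_{+(b+b_0)},\fn_0,\hat b-(b+b_0)\big)
\]
of $(Q^{\phi_0}_{+b},\fn,\hat b-b)$. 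Composing the two refinements, $\big(Q_{+(b+b_0)},\fn_0,\hat b-(b+b_0)\big)$ refines $(Q,\fm,\hat b)$ in $H$, and it inherits $Z$-minimality.

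Next I would propagate the properties from $\phi_0$ to all sufficiently small active $\phi$ in $H$. The slot $\big(Q^{\phi_0}_{+(b+b_0)},\fn_0,\hat b-(b+b_0)\big)$ in $H^{\phi_0}$ is $Z$-minimal, deep, and almost strongly repulsive-normal. Applying Corollary~\ref{cor:strongly repulsive-normal compconj} (with $H^{\phi_0}$ playing the role of~$H$) shows that for every $\theta$ active in $H^{\phi_0}$ with $0<\theta\prec 1$, the slot
\[
\big((Q^{\phi_0})^{\theta}_{+(b+b_0)},\fn_0,\hat b-(b+b_0)\big)\ =\ \big(Q^{\phi_0\theta}_{+(b+b_0)},\fn_0,\hat b-(b+b_0)\big)
\]
in $H^{\phi_0\theta}=(H^{\phi_0})^\theta$ is again almost strongly repulsive-normal, and it remains deep by the general remarks on compositional conjugation of deep slots following the definition of \emph{deep}. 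Since the active elements in $H^{\phi_0}$ correspond precisely to the active elements of~$H$ that are $\preceq$ a suitable bound, this means that $\big(Q^{\phi}_{+(b+b_0)},\fn_0,\hat b-(b+b_0)\big)$ is eventually deep and almost strongly repulsive-normal (as $\phi$ ranges over active elements in~$H$ with $\phi>0$), which is the desired conclusion.

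The argument is essentially routine bookkeeping; the only mildly subtle step is matching "eventually in $H$" with "eventually in $H^{\phi_0}$" after the initial compositional conjugation, but this is handled by the standard identification of active elements up to compositional conjugation. All heavy lifting has already been done in Corollaries~\ref{cor:deep and almost strongly repulsive-normal} and~\ref{cor:strongly repulsive-normal compconj}.
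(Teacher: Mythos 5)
Your proposal is correct and follows essentially the same route as the paper: refine and compositionally conjugate to reach a deep repulsive-normal slot, apply Corollary~\ref{cor:deep and almost strongly repulsive-normal} to get a deep, almost strongly repulsive-normal refinement, and then propagate to all sufficiently small active $\phi$ via Corollary~\ref{cor:strongly repulsive-normal compconj}. Your extra remarks on $Z$-minimality and on matching "eventually" in $H$ versus $H^{\phi_0}$ are exactly the bookkeeping the paper leaves implicit.
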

\begin{proof}
We adapt the proof of Lemma~\ref{lem:refine to almost strongly split-normal, Q}.
 Let ${(Q_{+b},\fn,\hat b-b)}$ be a refinement of~$(Q,\fm,\hat b)$  and
let $\phi_0$ be active in $H$ such that $0<\phi_0\preceq 1$
and $(Q^{\phi_0}_{+b},\fn,\hat b-b)$ is   deep and repulsive-normal.
Then  Corollary~\ref{cor:deep and almost strongly repulsive-normal}
yields a refinement $$\big((Q^{\phi_0}_{+b})_{+b_0},\fn_0,(\hat b-b)-b_0\big)$$ of $(Q^{\phi_0}_{+b},\fn,\hat b-b)$ which is deep and
almost strongly repulsive-normal. Hence 
$$\big((Q_{+b})_{+b_0},\fn_0,(\hat b-b)-b_0\big)\ =\ \big( Q_{+(b+b_0)},\fn_0,\hat b - (b+b_0) \big)$$
is  a refinement of $(Q,\fm,\hat b)$, and $\big( Q^\phi_{+(b+b_0)},\fn_0,\hat b - (b+b_0) \big)$ is eventually deep and almost strongly repulsive-normal by Corollary~\ref{cor:strongly repulsive-normal compconj}. 
\end{proof}

\noindent
In the same way we obtain: 

\begin{lemma}\label{lem:refine to almost strongly repulsive-normal, R}
Suppose  $\hat c\notin H$ and $(R,\fm,\hat c)$ is  a $Z$-minimal slot in $H$ with a refinement~${(R_{+c},\fn,\hat c-c)}$  such that $(R^\phi_{+c},\fn,\hat c-c)$ is eventually deep and repulsive-normal.
Then $(R,\fm,\hat c)$  has a refinement~${(R_{+c},\fn,\hat c-c)}$  such that $(R^\phi_{+c},\fn,\hat c-c)$ is eventually deep and almost strongly repulsive-normal.
\end{lemma}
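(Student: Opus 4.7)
The plan is to mirror the proof of Lemma~\ref{lem:refine to almost strongly repulsive-normal, Q} essentially verbatim, with the roles of $Q$, $b$, $\hat b$ replaced by $R$, $c$, $\hat c$; indeed the text preceding the statement already signals that ``same way'' transfer. All the key tools are already in place, and the argument will go through without any appeal to the specific nature of $\hat c = \Im \hat a$ versus $\hat b = \Re \hat a$.

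First I would fix a refinement $(R_{+c},\fn,\hat c-c)$ of $(R,\fm,\hat c)$ together with an active $\phi_0 \in H$ with $0 < \phi_0 \preceq 1$ such that the slot $(R^{\phi_0}_{+c},\fn,\hat c-c)$ in $H^{\phi_0}$ is deep and repulsive-normal; these exist by the hypothesis, after shrinking the range of $\phi$ if necessary to a single witness $\phi_0$. Since $(R,\fm,\hat c)$ is $Z$-minimal, so is its refinement $(R_{+c},\fn,\hat c-c)$, and hence so is its compositional conjugate $(R^{\phi_0}_{+c},\fn,\hat c-c)$ in $H^{\phi_0}$ (compositional conjugation preserves complexity and $Z$-minimality, by the remarks after the definition of compositional conjugation in Section~\ref{sec:holes}).

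Next I would invoke Corollary~\ref{cor:deep and almost strongly repulsive-normal} applied inside $H^{\phi_0}$ to the deep, repulsive-normal, $Z$-minimal slot $(R^{\phi_0}_{+c},\fn,\hat c-c)$. This produces $c_0 \in H$ and $\fn_0 \in H^\times$ such that
\[
\big((R^{\phi_0}_{+c})_{+c_0}, \fn_0, (\hat c-c)-c_0\big)\ =\ \big(R^{\phi_0}_{+(c+c_0)}, \fn_0, \hat c-(c+c_0)\big)
\]
is a refinement which is deep and almost strongly repulsive-normal. Consequently $\big(R_{+(c+c_0)}, \fn_0, \hat c-(c+c_0)\big)$ is a refinement of $(R,\fm,\hat c)$ (refinements composing to refinements).

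Finally I would apply Corollary~\ref{cor:strongly repulsive-normal compconj} to transfer almost strong repulsive-normality along compositional conjugation: for every active $\phi \preceq \phi_0$ in $H$ with $\phi > 0$, the slot $\big(R^\phi_{+(c+c_0)}, \fn_0, \hat c-(c+c_0)\big)$ in $H^\phi$ remains deep and almost strongly repulsive-normal. This is exactly the conclusion required. I do not foresee a genuine obstacle: every ingredient (Corollaries~\ref{cor:deep and almost strongly repulsive-normal} and~\ref{cor:strongly repulsive-normal compconj}, and the stability of $Z$-minimality and refinement under compositional conjugation) is stated for arbitrary $Z$-minimal slots in a real closed $H$-field with small derivation and asymptotic integration, with no reference to whether the datum originates from a real or an imaginary part, so the proof is purely formal once Lemma~\ref{lem:refine to almost strongly repulsive-normal, Q} is in hand.
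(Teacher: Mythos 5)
Your proposal is correct and is essentially the paper's own argument: the paper proves the $Q$-version by combining Corollary~\ref{cor:deep and almost strongly repulsive-normal} with Corollary~\ref{cor:strongly repulsive-normal compconj} exactly as you do, and then obtains the $R$-version by the same verbatim transfer, so your proof matches the intended one.
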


\noindent
Theorem~\ref{thm:repulsive-normal} and the two lemmas above give  Theorem~\ref{thm:repulsive-normal} with ``re\-pul\-sive-normal'' improved to ``almost strongly repulsive-normal''. 
We now upgrade this further to ``strongly repulsive-normal~+~ultimate'' (under an extra assumption).  

\medskip
\noindent
Recall from Lemma~\ref{lem:same width}  that 
$v(\hat b-H)\subseteq v(\hat c-H)$ or $v(\hat c-H)\subseteq v(\hat b-H)$.
Thus the next two lemmas finish the proof of Theorem~\ref{thm:strongly repulsive-normal}.

\begin{lemma}\label{Zdsrnu1}
Suppose   $C$ is archimedean,  $\I(K)\subseteq K^\dagger$, $\deg P>1$, and $$v(\hat b-H)\ \subseteq\ v({\hat c-H}).$$
Let~$Q\in Z(H,\hat b)$ have minimal complexity.
Then the $Z$-minimal slot~$(Q,\fm,\hat b)$ in~$H$ has a special refinement~$(Q_{+b},\fn,{\hat b-b})$ such that $(Q^\phi_{+b},\fn,{\hat b-b})$ is eventually  deep, strongly repulsive-normal, and ultimate.
\end{lemma}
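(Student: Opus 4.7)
The plan is to chain refinements and compositional conjugations: first obtain repulsive-normality, then ultimateness, then re-establish almost strong repulsive-normality while preserving ultimateness, and finally upgrade to strong repulsive-normality via strict normality.

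First, apply Corollary~\ref{cor:evrepnormal, 1} to obtain a special refinement $(Q_{+b_1}, \fn_1, \hat b - b_1)$ of $(Q, \fm, \hat b)$ such that $(Q^\phi_{+b_1}, \fn_1, \hat b - b_1)$ is eventually deep and repulsive-normal. Fix an active $\phi_1 > 0$ in $H$ with $\phi_1 \preceq 1$ realizing this, and replace $H$, $K$, the minimal hole $(P, \fm, \hat a)$, and the slot $(Q, \fm, \hat b)$ by $H^{\phi_1}$, $K^{\phi_1}$, the corresponding refined minimal hole (with $\Re a_1 = b_1$, possible by Lemma~\ref{lem:same width}), and $(Q^{\phi_1}_{+b_1}, \fn_1, \hat b - b_1)$, respectively. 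These replacements preserve all standing hypotheses (in particular, $\I(K^{\phi_1}) = \phi_1^{-1}\I(K) \subseteq \phi_1^{-1}K^\dagger = K^{\phi_1,\dagger}$). We are now in the same setup as before, but with the additional feature that the current $Z$-minimal slot is deep, normal (being repulsive-normal), and special.

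Next, apply Proposition~\ref{prop:achieve ultimate} to this normal slot to obtain an ultimate refinement (which may lose repulsive-normality). Since ultimateness is preserved under further refinement (Lemma~\ref{lem:ultimate refinement}) and under compositional conjugation, apply Corollary~\ref{cor:evrepnormal, 1} again to this ultimate $Z$-minimal slot, followed by Lemma~\ref{lem:refine to almost strongly repulsive-normal, Q}, to obtain a further special refinement whose compositional conjugate by some active $\phi_2 > 0$, $\phi_2 \preceq 1$ is deep and almost strongly repulsive-normal. Compositionally conjugate by $\phi_2$ as before. The resulting slot is $Z$-minimal, special, deep, almost strongly repulsive-normal, and ultimate.

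Finally, since $\deg P > 1$, as explained at the start of the subsection \emph{Achieving split-normality} in Section~\ref{sec:split-normal holes}, $H$ is $r$-linearly newtonian. Apply Corollary~\ref{cor:achieve strong normality, 1}, but using the sharper form of Corollary~\ref{specialvariant} to choose $b'$ with $\hat b - b' \prec_{\Delta(\fv)} \fv^{r+w+1}\fm$; then by the proof of Lemma~\ref{lem:achieve strong normality}, the resulting refinement is deep and strictly normal. By Lemma~\ref{stronglyrepnormalrefine} this refinement remains almost strongly repulsive-normal, hence by Lemma~\ref{lem:char strong rep-norm} it is strongly repulsive-normal; and by Lemma~\ref{lem:ultimate refinement} it remains ultimate. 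Undoing the compositional conjugations via Corollary~\ref{cor:strongly repulsive-normal compconj} (together with the compositional-conjugation invariance of ultimateness) yields a refinement of the original $(Q, \fm, \hat b)$ whose compositional conjugate is eventually deep, strongly repulsive-normal, and ultimate.

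The main obstacle is the delicate coordination of three interacting properties: ultimateness must be introduced \emph{after} normality is achieved (since Proposition~\ref{prop:achieve ultimate} requires a normal slot), but \emph{before} the strengthening to strong repulsive-normality (so that it is preserved under the further refinements by Lemma~\ref{lem:ultimate refinement}). The crucial facts that enable this orchestration are that $Z$-minimality and specialness of the slot, together with the minimal-hole structure of $(P, \fm, \hat a)$ in $K$ (and $\deg P>1$), persist under all refinements and compositional conjugations, so that Corollary~\ref{cor:evrepnormal, 1} and Corollary~\ref{cor:achieve strong normality, 1} can be re-invoked in the subsequent stages.
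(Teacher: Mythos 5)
Your proof is correct and follows essentially the same route as the paper's: introduce specialness and normality first (you do this via Corollary~\ref{cor:evrepnormal, 1}, the paper does it by invoking the method of that corollary together with Proposition~\ref{varmainthm}), arrange ultimateness via Proposition~\ref{prop:achieve ultimate} while the slot is normal, re\-establish (almost strong) repulsive-normality by a second pass through Corollary~\ref{cor:evrepnormal, 1} and Lemma~\ref{lem:refine to almost strongly repulsive-normal, Q} (noting that specialness and ultimateness survive refinements and compositional conjugation), and finally upgrade to strong repulsive-normality with Corollary~\ref{cor:achieve strong normality, 1}, Lemma~\ref{stronglyrepnormalrefine}, and Lemma~\ref{lem:char strong rep-norm}. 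One small observation: in your final step you invoke a ``sharper form of Corollary~\ref{specialvariant}'' to ensure $\hat b - b' \prec_{\Delta(\fv)}\fv^{r+w+1}\fm$, apparently in order to use the second part of Lemma~\ref{stronglyrepnormalrefine}; this is unnecessary, since the \emph{first} part of that lemma already gives that \emph{every} refinement $(P_{+a},\fm,\hat a-a)$ of an almost strongly repulsive-normal slot remains almost strongly repulsive-normal, after which Lemma~\ref{lem:char strong rep-norm} yields strong repulsive-normality from the strict normality supplied by Corollary~\ref{cor:achieve strong normality, 1}. This detour is harmless, but the paper's route is simpler at this point.
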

\begin{proof}  Here are two ways of modifying $(Q, \fm, \hat b)$. First, let  $(Q_{+b}, \fn, \hat b -b)$ be a refinement of 
$(Q,\fm, \hat b)$. Lemma~\ref{lem:same width} gives $c\in H$ with $v(\hat a -a)=v(\hat b-b)$ with $a:=b+c\imag$, and so the minimal
hole $(P_{+a}, \fn, \hat a -a)$  in $K$ is a refinement of $(P,\fm,\hat a)$ that relates to $(Q_{+b}, \fn, \hat b -b)$
as $(P, \fm, \hat a)$ relates to $(Q, \fm, \hat b)$. So we can replace~$(P, \fm, \hat a)$ and $(Q, \fm, \hat b)$ by $(P_{+a}, \fn, \hat a -a)$
and $(Q_{+b}, \fn, \hat b -b)$, whenever convenient. Second, let $\phi$ be active in $H$ with $0<\phi\preceq 1$. 
Then we can likewise replace $H$, $K$, $(P, \fm, \hat a)$, $(Q, \fm, \hat b)$ by $H^\phi$, $K^\phi$, $(P^\phi, \fm, \hat a)$, $(Q^\phi, \fm, \hat b)$. 

In this way we first arrange as in the proof of Corollary~\ref{cor:evrepnormal, 1}  that~$(Q,\fm,\hat b)$  is special. Next, we use Proposition~\ref{varmainthm} likewise to arrange  that
$(Q, \fm, \hat b)$ is also normal.  By Propositions~\ref{prop:achieve ultimate} (where the assumption $\I(K)\subseteq K^\dagger$ comes into play) and~\ref{normalrefine}  we arrange that $(Q, \fm, \hat b)$ is
ultimate as well.  The properties ``special'' and ``ultimate''  persist under further refinements and compositional conjugations. 

Now Corollary~\ref{cor:evrepnormal, 1} and Lemma~\ref{lem:refine to almost strongly repulsive-normal, Q} 
give a refinement 
$(Q_{+b},\fn,{\hat b-b})$ of the slot~$(Q,\fm,\hat b)$ in $H$ and an active $\phi_0$ in $H$ with $0<\phi_0\preceq 1$
such that the slot~$(Q^{\phi_0}_{+b},\fn,{\hat b-b})$ in~$H^{\phi_0}$ is  deep and almost strongly repulsive-normal. 
Corollary~\ref{cor:achieve strong normality, 1} then yields a deep and strictly normal refinement
$$\big( (Q^{\phi_0}_{+b})_{+b_0},\fn, (\hat b - b)-b_0 \big)$$
of $\big( Q^{\phi_0}_{+b},\fn,{\hat b - b }\big)$.  
This refinement is still almost  strongly re\-pul\-sive-normal by
Lem\-ma~\ref{stronglyrepnormalrefine},   and therefore strongly repulsive-normal by
Lemma~\ref{lem:char strong rep-norm}. 
Co\-rol\-lary~\ref{cor:strongly repulsive-normal compconj} then gives that~$\big( Q_{+(b+b_0)},\fn,{\hat b - (b+b_0) }\big)$
is  a special refinement of our slot~$(Q,\fm,\hat b)$ such that~$\big( Q^\phi_{+(b+b_0)},\fn,{\hat b - (b+b_0) }\big)$ is eventually 
deep and strongly re\-pul\-sive-nor\-mal. 
\end{proof}

\noindent
Likewise: 

\begin{lemma}\label{adersu} 
Suppose $C$ is archimedean,  $\I(K)\subseteq K^\dagger$, $\deg P >1$, and 
$$v(\hat c-H)\ \subseteq\ v(\hat b-H).$$ Let~$R\in Z(H,\hat c)$ have minimal complexity. Then the $Z$-minimal slot~$(R,\fm,\hat c)$ in~$H$ has a special refinement~$(R_{+c},\fn,\hat c-c)$   such that $(R^\phi_{+c},\fn,{\hat c-c})$ is eventually  deep, strongly repulsive-normal, and ultimate. 
\end{lemma}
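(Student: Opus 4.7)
The plan is to deduce Lemma~\ref{adersu} from Lemma~\ref{Zdsrnu1} by the standard trick of multiplying $\hat a$ by $-\imag$, the same device used to derive Lemma~\ref{lem:hat b in K} from Lemma~\ref{lem:hat c in K} and Corollary~\ref{cor:evrepnormal, 2} from Corollary~\ref{cor:evrepnormal, 1}. Concretely, I consider the triple $(P_{\times\imag},\fm,-\imag\hat a)$. Since $P_{\times\imag}(-\imag\hat a)=P(\hat a)=0$, and multiplicative conjugation by $\imag\in K$ preserves the complexity of a differential polynomial, $(P_{\times\imag},\fm,-\imag\hat a)$ is a minimal hole in $K$ of the same complexity as $(P,\fm,\hat a)$; in particular $\deg P_{\times\imag}=\deg P>1$.

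Next I decompose $-\imag\hat a=\hat c+(-\hat b)\imag$, so the real and imaginary parts of $-\imag\hat a$ are $\hat c$ and $-\hat b$, both elements of $\hat H$. The hypothesis $v(\hat c-H)\subseteq v(\hat b-H)$ becomes $v(\hat c-H)\subseteq v(-\hat b-H)$, which is exactly the inclusion required by Lemma~\ref{Zdsrnu1} when that lemma is applied with $(P_{\times\imag},\fm,-\imag\hat a)$ in place of $(P,\fm,\hat a)$. Since $R\in Z(H,\hat c)=Z(H,\Re(-\imag\hat a))$ has minimal complexity, $R$ plays the role that the minimal-complexity element of $Z(H,\hat b)$ plays in Lemma~\ref{Zdsrnu1}.

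Applying Lemma~\ref{Zdsrnu1} in this way now produces a special refinement $(R_{+c},\fn,\hat c-c)$ of the $Z$-minimal slot $(R,\fm,\hat c)$ in $H$ such that $(R^\phi_{+c},\fn,\hat c-c)$ is eventually deep, strongly repulsive-normal, and ultimate, which is precisely the conclusion of Lemma~\ref{adersu}. The only thing one should verify before invoking Lemma~\ref{Zdsrnu1} is that all its standing hypotheses ($C$ archimedean, $\I(K)\subseteq K^\dagger$, $H$ Liouville closed and $\upo$-free with small derivation, and $\deg>1$ for the minimal hole) continue to hold for $(P_{\times\imag},\fm,-\imag\hat a)$; these are all invariant under multiplicative conjugation by $\imag$, so nothing further is required. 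There is no real obstacle here: the entire content of Lemma~\ref{adersu} is encoded in the symmetry $\hat a\leftrightarrow -\imag\hat a$ between real and imaginary parts, and the work has already been done in the proof of Lemma~\ref{Zdsrnu1}.
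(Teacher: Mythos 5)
Your proposal is correct and is essentially the paper's own argument: the paper's "Likewise" refers precisely to applying Lemma~\ref{Zdsrnu1} to the minimal hole $(P_{\times\imag},\fm,-\imag\hat a)$, whose real and imaginary parts are $\hat c$ and $-\hat b$, exactly as in the earlier passages deriving Lemma~\ref{lem:hat b in K} and Corollary~\ref{cor:evrepnormal, 2}. Your check that $v(-\hat b-H)=v(\hat b-H)$ and that complexity, degree, and the standing hypotheses are preserved under multiplicative conjugation by $\imag$ is all that is needed.
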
 

\section{The Main Theorem}

\noindent
We prove here the Normalization Theorem from the introduction, as a corollary of Theorem~\ref{thm:strongly repulsive-normal}. It is accordingly less detailed than the latter, but more user-friendly.  It is what will get used at a key stage in \cite{ADH5}. 

\begin{cor} \label{c4543} Let $H$ be an
$\upo$-free Liouville closed $H$-field with small derivation, archimedean  ordered constant field $C$, and $1$-linearly newtonian algebraic closure~$H [\imag]$. Suppose $H$ is not newtonian. Then
for some $Z$-minimal special hole~$(Q, 1, \hat b)$ in~$H$ with $\order Q\geq 1$ and some active~$\phi > 0$ in~$H$ with $\phi \preceq 1$, the hole
$(Q^{\phi}, 1, \hat b)$ in $H^{\phi}$ is deep, strongly repulsive-normal,
and ultimate.
\end{cor}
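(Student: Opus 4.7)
The plan is to derive the corollary from Theorem~\ref{thm:strongly repulsive-normal} applied to a well-chosen minimal hole in $K=H[\imag]$. First I would produce such a hole with monomial in $H^\times$. Since $H$ is $\upo$-free but not newtonian, Lemma~\ref{lem:no hole of order <=r} provides a hole in $H$, which by Corollary~\ref{cor:holes in K vs holes in K[i]} is also a hole in $K$; so $K$ admits a minimal hole $(P,\fm,\hat a)$. Using Lemma~\ref{lem:hole in hat K} I would arrange $\hat a\in\hat H[\imag]$ for an immediate $\upo$-free newtonian $H$-field extension $\hat H$ of $H$. Since the inclusion $H\hookrightarrow K$ induces an equality of value groups, replacing $\fm\in K^\times$ by any $h\in H^\times$ with $vh=v\fm$ preserves both minimality and the condition $\hat a\prec\fm$; renaming $h$ to $\fm$, I achieve $\fm\in H^\times$.

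Next I would verify the hypotheses of Theorem~\ref{thm:strongly repulsive-normal}. The constant field $C$ is real closed (any $y\in H$ with $y^2\in C^{>}$ satisfies $2yy'=0$, hence $y\in C$), so $C_K=C[\imag]$ is algebraically closed and $\Gamma$ is divisible; $K$ is $\upo$-free by [ADH,~11.7.23]; $K$ is henselian because $H$ is, so $\order P\geq 1$; by Lemma~\ref{lem:ADH 14.2.5}, $\I(K)\subseteq K^\dagger$ holds since $K$ is $H$-asymptotic, has asymptotic integration, and is $1$-linearly newtonian. Corollary~\ref{cor:minhole deg 1} applied to $K$ then forces $\deg P>1$. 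With $C$ archimedean by assumption, Theorem~\ref{thm:strongly repulsive-normal} applies. Writing $\hat a=\hat b+\hat c\,\imag$ with $\hat b,\hat c\in\hat H$, case~(i) (case~(ii) being symmetric) supplies a $Z$-minimal slot $(Q_0,\fm,\hat b)$ in $H$ with $\hat b\notin H$ and a special refinement $(Q_1,\fn,\hat b-b)$, where $Q_1=Q_{0,+b}$, $b\in H$, $\fn\in H^\times$, such that $(Q_1^\phi,\fn,\hat b-b)$ is deep, strongly repulsive-normal, and ultimate for all sufficiently small active $\phi>0$ in $H$ with $\phi\preceq 1$.

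Finally I would reshape this into the sought form. The refinement is $Z$-minimal, so Lemma~\ref{lem:from cracks to holes} yields an equivalent $Z$-minimal \emph{hole} $(Q_1,\fn,\tilde b)$ in $H$. Each of ``special,'' ``deep,'' ``strongly repulsive-normal,'' and ``ultimate'' is preserved under slot equivalence and commutes with compositional conjugation (see the remarks following the respective definitions), so this hole still satisfies the eventual three-property assertion. Multiplicatively conjugating by $\fn$ then produces $(Q,1,\hat b^{\smallbullet})$ with $Q:=Q_{1,\times\fn}$ and $\hat b^{\smallbullet}:=\tilde b/\fn$, preserving all five properties; $\order Q=\order Q_1\geq 1$ by Lemma~\ref{kb}. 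Selecting any single active $\phi>0$ in $H$ with $\phi\preceq 1$ witnessing all three eventual properties simultaneously concludes the proof. The substantive content lies entirely in Theorem~\ref{thm:strongly repulsive-normal}; the main care required here is verifying its hypotheses---in particular, using the $1$-linear newtonianity of $K$ to force $\deg P>1$ and to obtain $\I(K)\subseteq K^\dagger$---and then faithfully transporting its conclusion across both the slot-to-hole equivalence and the multiplicative conjugation to the desired standardized form $(Q,1,\hat b^{\smallbullet})$.
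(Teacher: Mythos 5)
Your proof is correct and follows essentially the same route as the paper: obtain a minimal hole in $K=H[\imag]$ with monomial in $H^\times$, force $\deg P>1$ via Corollary~\ref{cor:minhole deg 1}, invoke Theorem~\ref{thm:strongly repulsive-normal}, and normalize the resulting slot to the form $(Q,1,\hat b)$ by multiplicative conjugation and passing from a $Z$-minimal slot to an equivalent hole (Lemma~\ref{lem:from cracks to holes}). The only differences are cosmetic — you verify a few hypotheses in more explicit detail and swap the order of the final slot-to-hole step and the multiplicative conjugation, both of which preserve all the relevant properties.
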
  
\begin{proof} By Proposition~\ref{prop:char 1-linearly newt}, $K:=H[\imag]$ is $1$-linearly surjective and $\I(K)\subseteq K^\dagger$. As~$H$ is not newtonian, 
neither is $K$, by \eqref{eq:14.5.6},  so Lemma~\ref{lem:no hole of order <=r} and subsequent remarks
give a minimal hole~$(P,\fm,\hat a)$ in $K$ of order $r\geqslant 1$, where~$\fm\in H^\times$. Then~$\deg P>1$ by Corollary~\ref{cor:minhole deg 1}.  
By Lemma~\ref{lem:hole in hat K} we arrange that~${\hat a\in \hat K:= \hat H[\imag]}$ where
$\hat H$ is an immediate $\upo$-free newtonian $H$-field extension of~$H$. 
Now~$\hat a = \hat b + \hat c\imag$ with $\hat b, \hat c \in \hat H$.  
By Theorem~\ref{thm:strongly repulsive-normal}  there are two cases: 

{\sloppy
\begin{enumerate}
\item  $\hat b\notin H$ and some $Z$-minimal slot $(Q,\fm,\hat b)$ in $H$ has a special refinement~${(Q_{+b},\fn,\hat b-b)}$ such that $(Q^\phi_{+b},\fn,\hat b-b)$ is eventually deep, strongly re\-pul\-sive-normal, and ultimate; 
\item  $\hat c\notin H$ and some $Z$-minimal slot $(R,\fm,\hat c)$ in $H$ has a special refinement~${(R_{+c},\fn,\hat c-c)}$ such that $(R^\phi_{+c},\fn,\hat c-c)$ is eventually deep, strongly re\-pul\-sive-normal, and ultimate. 
\end{enumerate}}

\noindent
Assume $Q$, $\fm$, $\hat b$ are as in Case (1). (Case (2) goes the same way.) Lemma~\ref{kb} gives~$1\leqslant \order Q \leqslant 2r$. 
Multiplicatively conjugating by $\fn$ and renaming $Q_{+b,\times \fn} $ and $\frac{\hat b -b}{\fn}$ as $Q$ and $\hat b$ we arrange that $(Q, 1, \hat b)$ is a $Z$-minimal special slot such that
$(Q^\phi, 1, \hat b)$ is eventually deep, strongly repulsive-normal, and ultimate (using Lemma~\ref{lem:ultmult} to preserve  {\em ultimate}\/). 
With Lemma~\ref{lem:from cracks to holes}, changing $\hat b$ if necessary, we arrange that $(Q, 1, \hat b)$ is a hole in $H$, not just a slot in $H$.  
\end{proof}

\printindex

\part*{List of Symbols}

\medskip

{\small
\begin{longtable}{m{8em} m{30em} m{4em}}

& {\bf Asymptotic differential algebra} \\[0.6em]

$a^\dagger$   &  logarithmic derivative $a^\dagger=a'/a$ of nonzero $a$ \dotfill & \pageref{p:adagger} \\[0.4em]

$\wr(a,b)$   &  the Wronskian $ab'-a'b$ of $(a,b)$  \dotfill & \pageref{p:wr(a,b)} \\[0.4em]

$\omega$   &  the function $z\mapsto -(2z'+z^2)$ \dotfill & \pageref{p:omega} \\[0.4em]

$\sigma$   &  the function $y\mapsto \omega(-y^\dagger)+y^2$ \dotfill & \pageref{p:omega} \\[0.4em]

$\mathcal O_K$, $\smallo_K$   &  valuation ring of a valued field $K$ and  its maximal ideal \dotfill & \pageref{p:valring} \\[0.4em]

$\Gamma_K$, $\res(K)$  &  value group and residue field of a valued field $K$  \dotfill & \pageref{p:valring} \\[0.4em]

$\preceq$, $\prec$, $\asymp$, $\sim$      &  asymptotic relations on  a valued field  \dotfill & \pageref{p:asymprels} \\[0.4em]

$v_{\Delta}$, $\dot v$ &  $\Delta$-coarsening  of the  valuation $v$ \dotfill & \pageref{p:coarsening} \\[0.4em]

\parbox[c]{8em}{$\preceq_\Delta$, $\prec_\Delta$, $\asymp_\Delta$, $\sim_\Delta$, $\dot\preceq$, $\dot\prec$, $\dot\asymp$, $\dot\sim$}      &  asymptotic relations on  a   valued field coarsened by $\Delta$ \dotfill & \pageref{p:coarsening} \\[1em]

$\dot K$      &  specialization of a valued field $K$ \dotfill & \pageref{p:coarsening} \\[0.4em]

$a_\rho\leadsto a$      & the sequence $(a_\rho)$ pseudoconverges to $a$  \dotfill & \pageref{p:pc} \\[0.4em]

$[\gamma]$   &  archimedean class of $\gamma$  \dotfill & \pageref{p:[gamma]} \\[0.4em]

$[\fm]$   &  archimedean class of $v\fm$  \dotfill & \pageref{p:Delta} \\[0.4em]

$\preceq^\flat$, $\prec^\flat$, $\asymp^\flat$, $\sim^\flat$       &  flattened asymptotic relations on  an $H$-asymptotic field  \dotfill & \pageref{p:flatten} \\[0.4em]

$\preceq^\flat_\phi$, $\prec^\flat_\phi$, $\asymp^\flat_\phi$, $\sim^\flat_\phi$       &  flattened asymptotic relations on  the compositional conjugate   by $\phi$  \dotfill & \pageref{p:flatten} \\[0.4em]

$\operatorname{dv}(K)$  &  $\d$-valued hull of the pre-$\d$-valued field $K$  \dotfill & \pageref{p:dv(K)} \\[0.4em]

$\I(K)$   & special definable $\mathcal O$-submodule of the asymptotic field~$K$  \dotfill &  \pageref{p:I(K)} \\[0.4em]

$\Upg(H)$, $\Upl(H)$, $\Upd(H)$   & special definable subsets of the   pre-$H$-field $H$  \dotfill &  \pageref{p:special subsets} \\[0.4em]

$H^{\trig}$, $H^{\tl}$   &  trigonometric closure of  $H$, trigonometric-Liouville closure of $H$ \dotfill &  \pageref{p:Htrig}, \pageref{p:Htl} \\[0.6em]

& {\bf Linear differential operators} \\[0.6em]

$R[\der]$   &  ring of linear differential operators over the differential ring $R$   \dotfill &  \pageref{p:Rder} \\[0.4em]

$\ker A$   & kernel of $A$ \dotfill & \pageref{p:kerA} \\[0.4em]

$A_{\ltimes u}$   &  twist of  $A$ by $u$ \dotfill & \pageref{p:twist} \\[0.4em]

$A^*$   &  adjoint of the linear differential operator  $A$ \dotfill & \pageref{p:A*} \\[0.4em]

$\Ric(A)$   &  Riccati transform of   $A$   \dotfill &  \pageref{p:Ric(A)} \\[0.4em]

$\mult_a(A)$   & multiplicity of     $A$ at $a\in K$  \dotfill & \pageref{p:multa} \\[0.4em]

$\mult_\alpha(A)$   & multiplicity of    $A$ at~${\alpha\in K/K^\dagger}$  \dotfill & \pageref{p:multalpha} \\[0.4em]

$\Sigma(A)$   &  spectrum of   $A$ \dotfill & \pageref{p:SigmaA} \\[0.4em]

$\dwm(A)$    &  dominant weighted multiplicity of $A$  \dotfill & \pageref{p:dwm A} \\[0.4em]

 $\dwt(A)$   &  dominant weight of $A$  \dotfill & \pageref{p:dwm A} \\[0.4em]

$\dwm_A(\gamma)$   &  dominant weighted multiplicity of $Ay$ where $\gamma=vy$  \dotfill & \pageref{p:dwm A} \\[0.4em]

$\dwt_A(\gamma)$   &  dominant weight of $Ay$ where $\gamma=vy$  \dotfill & \pageref{p:dwm A} \\[0.4em]

$\nwt_A(\gamma)$   &  eventual value of $\dwt_{A^\phi}(\gamma)$  \dotfill & \pageref{p:dwm A} \\[0.4em]

$v_A^{\ev}(\gamma)$  &  eventual value of $v_{A^\phi}(\gamma) - \nwt_A(\gamma)v\phi$   \dotfill & \pageref{p:vAev} \\[0.4em]

$\exc(A)$   &  set of   exceptional values of   $A$ \dotfill & \pageref{p:exc A} \\[0.4em]

$\exc^{\operatorname{e}}(A)$   &  set of eventual exceptional values of   $A$ \dotfill & \pageref{p:excev} \\[0.4em]

$\exc^{\operatorname{u}}(A)$   &  set of ultimate exceptional values of   $A$ \dotfill & \pageref{p:excu} \\[0.4em]

$\fv(A)$   &  span of  $A$ \dotfill & \pageref{p:span} \\[0.6em]





& {\bf Differential polynomials} \\[0.6em]

$R\{Y\}$   &  ring of differential polynomials over the differential ring $R$   \dotfill &  \pageref{p:RY} \\[0.4em]

$P_{+a}$   &  additive conjugate of $P$ by $a$  \dotfill &  \pageref{p:P+a} \\[0.4em]

$P_{\times a}$   &  multiplicative conjugate of $P$ by $a$  \dotfill &  \pageref{p:Pxa} \\[0.4em]

$P^\phi$   &  compositional conjugate of $P$ by $\phi$  \dotfill &  \pageref{p:Pphi} \\[0.4em]

$\order P$   &  order of $P$   \dotfill &  \pageref{p:order P} \\[0.4em]

$\deg P$   &  (total) degree of $P$   \dotfill &  \pageref{p:deg P} \\[0.4em]

$\val P$   &  multiplicity of $P$  (at $0$) \dotfill &  \pageref{p:val P} \\[0.4em]

$\wt P$   &  weight of $P$   \dotfill &  \pageref{p:wt P} \\[0.4em]

$\ddeg P$   &  dominant degree of $P$   \dotfill &  \pageref{p:ddeg P} \\[0.4em]

$\dval P$   &  dominant multiplicity of $P$ (at $0$)  \dotfill &  \pageref{p:ddeg P} \\[0.4em]

$\dwt P$   &  dominant weight of $P$   \dotfill &  \pageref{p:ddeg P} \\[0.4em]

$\ndeg P$   &  Newton degree of $P$   \dotfill &    \pageref{p:newton quants} \\[0.4em]

$\nval P$   &  Newton multiplicity of $P$  (at $0$)  \dotfill &  \pageref{p:newton quants} \\[0.4em]

$\nwt P$   &  Newton weight of $P$   \dotfill &  \pageref{p:newton quants} \\[0.4em]

$v^{\ev}(P)$   &  eventual value of $v(P^\phi)-\nwt(P)v\phi$   \dotfill &  \pageref{p:vevP} \\[0.4em]

$P_d$   &  homogeneous part of degree $d$ of $P$   \dotfill &  \pageref{p:Pd} \\[0.4em]

$S_P$   &  separant of the differential polynomial $P$   \dotfill &  \pageref{p:separant} \\[0.4em]

$\cc(P)$   &  complexity of  $P$   \dotfill &  \pageref{p:complexity} \\[0.4em]

$L_P$   & linear part of  $P$   \dotfill &  \pageref{p:lin part} \\[0.4em]

$\Ric(P)$   &  Riccati transform of   $P$   \dotfill &  \pageref{p:Ric} \\[0.4em]

$Z(K,\hat a)$  &   set of all $P\in K\{Y\}^{\ne}$ that vanish at $(K,\hat a)$  \dotfill &  \pageref{p:Z(K,a)} \\[0.6em]

& {\bf Universal exponential extension} \\[0.6em]

$K^\dagger$   &  group of logarithmic derivatives of $K$ \dotfill &  \pageref{p:Kdagger} \\[0.4em]

$\Lambda$   &  complement of $K^\dagger$ \dotfill &  \pageref{p:complement} \\[0.4em]

$\Univ_K$   & universal exponential extension of    $K$  \dotfill &  \pageref{p:UK} \\[0.4em]

$v_{\g}$   & gaussian extension of the valuation of $K$ to $K[G]$  \dotfill &  \pageref{p:vg} \\[0.4em]

$\preceq_{\g}$, $\prec_{\g}$, $\asymp_{\g}$   & dominance relations associated to $v_{\g}$  \dotfill &  \pageref{p:vg} \\[0.6em]








& {\bf Slots} \\[0.6em]

$(P,\fm,\hat a)$   &  slot in $K$  \dotfill &  \pageref{p:hole}, \pageref{p:slot} \\[0.4em]

$\Delta(\fm)$   & convex subgroup of all $\gamma\in\Gamma$ with $[\gamma]<[\fm]$  \dotfill &  \pageref{p:Delta} \\[0.4em]

\end{longtable}}

\newpage
 
\part*{Errata and Comments to [ADH]}

\medskip

\noindent
The changes below apply to the edition published by Princeton University Press, and are already reflected in the versions posted on the  arXiv  and on our personal web pages (as of September 2025). We thank Allen Gehret for pointing out most of the errors left in that edition. Linguistic slips like missing commas or articles are not listed below unless they might mislead. Citations are to the bibliography of~[ADH].

\bigskip

\noindent
{\bf Acknowledgments:}

\smallskip\noindent 
The date of September 2015 on p.~xiv indicates when the manuscript 
was first submitted to Princeton University Press. The published version incorporates some changes and additions made since then. 

\medskip\noindent
{\bf Dramatis Personae:}

\smallskip\noindent
In the item for ``$\upo$-free'' under the heading ``Asymptotic Fields'', 
$f-\omega(g^{\dagger\dagger})\succeq g^\dagger$ should be $f-\omega(g^{\dagger\dagger})\succeq (g^\dagger)^2$.

\medskip\noindent
{\bf Introduction and Overview:}
\begin{enumerate}
\item In the subsection {\bf The special cuts $\upg$, $\upl$ and $\upo$} the definition of $\upo_\rho$ should have $\upl_{\rho}$ instead of $\upl_n$.
\end{enumerate}

\medskip\noindent
{\bf Chapter 1:} 
\begin{enumerate}
\item The first sentence of the subsection {\bf Irreducibility} in Section~1.1 should be:   {\em Let $X$ and $Y$ be topological spaces}.

\item In the second line of Section~1.2, ``$R$-modules'' should be ``left $R$-modules''. 

\item In the subsection {\bf Localization of modules} in Section~1.4 the formula for addition should have $s_2x_1+s_1x_2$ in the numerator.

\item In the subsection {\bf Tensor products} in Section~1.7, the $H$ in the 4th line should be a $B$, and $M\otimes N$ in the fifth line and at the end of the second display after that should be $M\otimes_R N$.

\item In the subsection {\bf Rational rank} in Section~1.7, in the line following the display: $\Q\otimes_{\Z}N$ should be $\Q\otimes_{\Z}M$.    

\item In the 4th line of the proof of Lemma~1.8.12, the second ``$:=$'' should be~``$=$''.

\item In the 6th line of the proof of Lemma~1.8.13, ``$(a,b)\!\to$'' should be ``$(a,b)\mapsto$''.

\item In Corollary 1.9.6 one should add the assumption that $L$ is separably generated over $K$, that is, $L$ is separably algebraic over an intermediate field~$K(B)$ with~$B\subseteq L$ algebraically independent over $K$. This assumption is satisfied if $\operatorname{char} K=0$. 
Corollary 1.9.7 is still correct as stated, but its proof requires for positive characteristic a variant of Corollary 1.9.6, namely: {\em $L$ is separably algebraic over $K$ iff every derivation on $L$ extending the trivial derivation on~$K$ is trivial}. (This variant with a proof, as in [249, pp.~370--371] is now included in the arXiv version.)   
Lemma~1.9.8 should be restricted to the case~$\operatorname{char} K=0$.
\end{enumerate}   

\medskip\noindent
{\bf Chapter 2:}

\begin{enumerate}
\item The display in the statement of lemma 2.2.21 should have $v$ instead of $\nu$. 
\item In the 4th paragraph of Section~2.3, replace ``valued subgroup of $(G,S,v)$'' by ``valued subgroup of $(G',S',v')$''. 
\end{enumerate} 

{\samepage
\medskip\noindent
{\bf Chapter 3:}

\begin{enumerate}
\item In the second sentence of the proof of Proposition~3.1.21, one can omit
``with $\mathfrak q\cap A=\mathfrak q'\cap A=\mathfrak m$'' since this condition is automatically satisfied.
\item The proof of 3.2.11 can be simplified by replacing the part ``By the Taylor identity \dots'' with the sentence: ``By Proposition~3.2.1 we have~$P(a_\rho) \leadsto P(a)=0$, hence $(a_\rho)$ is of algebraic type over $K$.''

\item F.-V.~Kuhlmann pointed out that in the ``Notes and comments'' to Section~3.2 we misattribute Corollary~3.2.26 to Krull~[229].  An
early source for a  result of this kind is Theorem~11 in O. Schilling's book,
\begin{quote}
 {\it The Theory of Valuations,}
Mathematical Surveys, no. 4, American Mathematical Society, New York,  1950.
\end{quote}
This book refers for this theorem to I. Kaplansky's unpublished Ph.D.~thesis
\begin{quote}
 {\it Maximal Fields with Valuations,}  Harvard University, 1941.
\end{quote}

\item Replace ``theorem'' by ``proposition'' in the sentence following the statement of Proposition~3.4.22. 

\item Marcus Tressl alerted us to an error in the proof of Theorem 3.6.11: replace the condition $\bf{K}\preceq \bf{F}$ in the first sentence of the proof by $\bf{K}\subseteq \bf{F}$, so that Zorn's lemma can be applied as indicated in the next sentence.

\item Right after Lemma~3.7.6, replace 

    ``open ball of the form $\{y: v(y-f)>vf\}$ where $f\in K^\times$'' by 

``open ball of the form $\{y: v(y-f)>vg\}$ where $f,g\in K^\times$, $f\succeq g$''. 
\end{enumerate}}

\medskip\noindent
{\bf Chapter 4:}

\begin{enumerate}
\item In the first sentence of the proof of 4.1.10, omit {\it be.}

\item The last three sentences of the proof of 4.6.12 can be shortened to: 
{\it Then by Lemma~1.3.10, $a$ is algebraic over $K$, so $a$ is algebraic over $C$ by Lem\-ma~4.1.2.}
\end{enumerate}

\medskip\noindent
{\bf Chapter 5:}

\begin{enumerate}

\item In line 5 of Section 5.5, replace $K[\der]$ by $R[\der]$.

\item In the third line of the proof of Lemma 5.5.14, replace ``$F\in \GL_n(K)$'' by~``$F\in \GL_n(R)$''.

\item In Lemma 5.7.3, replace $$\text{``$\Q[\phi,\dots, \der^n(\phi)]=\Q[\phi,\dots, \derdelta^n(\phi)]$''}$$ by  
   $$\text{``$\Q[\phi,\dots, \der^n(\phi),\phi^{-1}]=\Q[\phi,\dots, \derdelta^n(\phi), \phi^{-1}]$''.}$$
\end{enumerate} 

\medskip\noindent
{\bf Chapter 6:}

\begin{enumerate}
\item In the second to last line of the proof of Lemma 6.1.9, replace $C$ by $D_0$.

\item In the second line before the first display in the proof of Theorem 6.3.2 there is a misplaced parenthesis in $K[Y,\dots, Y^{(r-1)}]$.

\item In the last line of the proof of Lemma 6.6.5, replace (ii) by (iii). 

\end{enumerate}

\medskip\noindent
{\bf Chapter 7:}

\begin{enumerate}
\item In the third line of the proof of Proposition~7.5.6, replace $E$ by $E^\times$.
\end{enumerate}

\medskip\noindent
{\bf Chapter 8:}

\begin{enumerate}
\item In the proof of Corollary 8.3.2, $(E, \Gamma, \k_E)$ should be $(E, \k_E, \Gamma)$.
\item A few lines before Corollary 8.3.3, the formula $\theta_{\operatorname{v}}(v_1,\dots,v_k,y)$ should be~$\theta_{\operatorname{v}}(v_1,\dots, v_k,z)$. 
\item In the proof of Proposition~8.4.12, third line from the bottom,  ``$\Gamma_{K_3}=\Gamma_{K_3}$'' should be 
``$\Gamma_{K_2}=\Gamma_{K_3}$''. 
\end{enumerate}

\medskip\noindent
{\bf Chapter 9:}

\begin{enumerate}
\item Two lines before Corollary~9.1.10, (3) should be (2). 

\item Replace ``Lemma'' in the last line of the proof of Lemma 9.2.17 by ``Corollary''.  

\item The correction following Lemma~3.7.6 leads to a corresponding correction in describing the  condition $z\in G_i$ when $s_i\ne 0$, in
the proof of Lemma~9.7.3. 
  
\item Verifying (AC3) in proof of~Lemma~9.8.2 
can be shortened using 
$$\max\big\{\psi^\alpha(\gamma+k\alpha):\ \gamma\in \Gamma,\ k\in \Z,\ \gamma+k\alpha\ne 0\big\}\ =\ \beta-\alpha.$$

\item In proof of Lemma~9.9.3, insert right after ``$v$-slow on the right'' the phrase\newline
``, where $v$ is the standard valuation of $\Gamma$''. 
\end{enumerate}

\medskip\noindent
{\bf Chapter 10:}

\begin{enumerate}

\item In Lemma~10.5.12, add ``If $K$ is an $H$-field, then so is $K(y)$ with that ordering, and $C_{K(y)}=C$'' and
 in its proof refer to the remarks after Lemma~10.2.3. 

\item In the last sentence of the third paragraph in the
``Notes and comments'' to Section~10.6, ``not not'' should be ``not''.
\end{enumerate}

\medskip\noindent
{\bf Chapter 11:}

\begin{enumerate}
\item In the last display before Lemma~11.1.4, the expression $\{\gamma:\ \gamma< (\Gamma^{>})'\}$ should be  replaced by $\{\gamma\in \Gamma:\ \gamma< (\Gamma^{>})'\}$.
\item In Lemma 11.2.3(ii), complete to ``$\nval P=\nval P_{+a}$'' at the end. 
\item In proof of Lemma~11.6.3, replace 
$v(s-a^{\dagger})\in (\Gamma_F^{>})'$ by 
$v(s-a^{\dagger})\in (\Gamma_F^{>})'\cup \{\infty\}$.

\item In last sentence of proof of Lemma~11.6.14, replace $\sim sf$ by $\sim -sf$.

\item In proof of Proposition~11.6.17, end of the fourth paragraph, replace 
$\lambda$ by~$\upl$.

\item In last display before 11.7.16, $f_n^\dagger$ should be $v(f_n^\dagger)$.

\item In second part of Lemma~11.8.5, omit the assumption that $K$ has asymptotic integration and replace $=$ at end of proof by $\subseteq$.

\item Omit the proof of Corollary~11.8.13; it has an 
erroneous forward reference.  

\end{enumerate}

\medskip\noindent
{\bf Chapter 12:}

\begin{enumerate}
\item In the statement of Lemma~12.6.3, the last part should be $[g]'=[g']$.
\end{enumerate}

\medskip\noindent
{\bf Chapter 13:}

\begin{enumerate}
\item In the {\it Notes and comments}\/ to~13.3, replace ``$n_0=2\dwv(P)$'' by ``$n_0=\dwv(P)+m+2$ where $m$
is such that $P{\uparrow^m}\in\T_{\exp}\{Y\}$''.
(We thank Julian Ziegler-Hunts for pointing this out.)
\end{enumerate}

\medskip\noindent
{\bf Chapter 14:}

\begin{enumerate}
\item In the line following the statement of Theorem~14.0.1, it would be better to refer to Corollary~11.7.13 than to Corollary~11.7.10. 
\item In the third line of the proof of  Lemma~14.1.8, replace $K$ by  $K^\times$.

\item In the last line of the last display preceding Proposition 14.2.18, replace $Y''$ by $Y''Y$. 

\item In Lemma 14.3.2 (iii), replace {\it~at newton position} by {\it in newton position}.

\item In the proof of Lemma 14.3.2, after
``$\nval P_{+b}=\nval P_{+a}=1$''  add ``by Lemma 11.2.3'' (referring to the addition to 11.2.3(ii) made above).
In the next to last display in that proof, $(gy)^{\j}$ is to be taken in the sense of $K^\phi$: 
$(gy)^{\j}=(gy)^{j_0}(\derdelta(gy))^{j_1}\cdots$ with $\derdelta=\phi^{-1}\der$. 

\item After Corollary~14.5.11, replace {\it~In Section 16.1}\/ by {\it In Section~16.2}.  
\end{enumerate}

\medskip\noindent
{\bf Appendix A:}

\begin{enumerate}
\item In the sixth line before the subsection ``Representing $\T$ ...'' on p.~719, replace~$[v(\ell_{n-1}]$ by $[v(\ell_{n-1})]$.
\end{enumerate} 

{\samepage
\medskip\noindent
{\bf Appendix B:}

\begin{enumerate}
\item In the example following B.5.15, replace ``Then $V\setminus W$ is infinite \dots'' by ``If~${V\neq W}$, then $V\setminus W$ is infinite \dots''
\item In Example~B.6.1(4) add the axiom $\forall x\forall y( x\leq y \vee y \leq x)$ to $\operatorname{Or}$.
\item In the remark following the definition of ``proper filter on $\Lambda$'' in B.7 omit ``either''.
\item In the displayed equivalences in the proof of B.7.7 replace $\mathcal F$ by $\mathcal U$.
\item In B.10.15 replace ``abelian groups'' by ``torsion-free abelian groups''.
\item In B.11.11(ii) replace ``elementary extension $\mathbf M^*$ of $\mathbf M$'' by
 ``model~$\mathbf M^*$ of~$\Sigma$ extending $\mathbf M$'',
 and replace the label $\preceq$ in the accompanying figure by $\subseteq$.  In B.11.12 replace ``elementary extension of $M$'' by ``model of $\Sigma$ extending $M$''.  (Thanks to Cezar Port for noting this.)
\item In B.12.15, replace ``singletons'' by ``singletons $\{a\}$ where $a\in K$''.
\item Add to the ``Notes and comments'' of Section~B.12 that  Corollaries~B.12.9 and B.12.11, with different proofs, are from:
\begin{quote}
A. H. Lightstone, A. Robinson, {\it On the representation of Herbrand functions in algebraically closed fields,}
J. Symb. Lo\-gic~{\bf 22} (1957), 187--204. 
\end{quote}
\end{enumerate}}

\newpage

\newlength\templinewidth
\setlength{\templinewidth}{\textwidth}
\addtolength{\templinewidth}{-2.25em}

\patchcmd{\thebibliography}{\list}{\printremarkbeforebib\list}{}{}

\let\oldaddcontentsline\addcontentsline
\renewcommand{\addcontentsline}[3]{\oldaddcontentsline{toc}{part}{References}}

\def\printremarkbeforebib{\bigskip\hskip1em The citation [ADH] refers to our book \\

\hskip1em\parbox{\templinewidth}{
M. Aschenbrenner, L. van den Dries, J. van der Hoeven,
\textit{Asymptotic Differential Algebra and Model Theory of Transseries,} Annals of Mathematics Studies, vol.~195, Princeton University Press, Princeton, NJ, 2017.
}

\bigskip

} 

\bibliographystyle{amsplain}

\end{document}